\numberwithin{equation}{section}
\numberwithin{figure}{section}
\newtheorem{theorem}{Theorem}[section]
\newtheorem{remark}[theorem]{Remark}
\newtheorem{lemma}[theorem]{Lemma}
\newtheorem{proposition}[theorem]{Proposition}
\newtheorem{corollary}[theorem]{Corollary}
\newtheorem{definition}[theorem]{Definition}
\newtheorem{example}[theorem]{Example}
\let\C\relax
\newcommand{\C}{\mathbf{C}}
\newcommand{\D}{\mathbf{D}}
\newcommand{\E}{\mathbf{E}}
\newcommand{\h}{\mathbf{H}}
\newcommand{\N}{\mathbf{N}}
\newcommand{\Z}{\mathbf{Z}}
\newcommand{\p}{\mathbf{P}}
\newcommand{\Q}{\mathbf{Q}}
\newcommand{\R}{\mathbf{R}}
\newcommand{\Fh}{\mathfrak {h}}
\newcommand{\Fg}{\mathfrak {g}}
\newcommand{\CA}{\mathcal {A}}
\newcommand{\CC}{\mathcal {C}}
\newcommand{\CD}{\mathcal {D}}
\newcommand{\CE}{\mathcal {E}}
\newcommand{\CF}{\mathcal {F}}
\newcommand{\CI}{\mathcal {I}}
\newcommand{\CK}{\mathcal {K}}
\newcommand{\CL}{\mathcal {L}}
\newcommand{\CT}{\mathcal {T}}
\newcommand{\CW}{\mathcal {W}}
\newcommand{\CX}{\mathcal {X}}
\newcommand{\CZ}{\mathcal {Z}}
\newcommand{\CH}{\mathcal {H}}
\newcommand{\SLE}{{\rm SLE}}
\newcommand{\CLE}{{\rm CLE}}
\newcommand{\dist}{\mathrm{dist}}
\newcommand{\diam}{\mathrm{diam}}
\newcommand{\im}{\mathrm{Im}}
\newcommand{\re}{\mathrm{Re}}
\newcommand{\one}{{\bf 1}}
\newcommand{\wt}{\widetilde}
\newcommand{\wh}{\widehat}
\newcommand{\ol}{\overline}
\newcommand{\ul}{\underline}
\newcommand{\giv}{\,|\,}
\newcommand{\BCLE}{\mathrm{BCLE}}
\newcommand{\cwBCLE}{\BCLE^{\boldsymbol {\circlearrowright}}}
\newcommand{\quant}[3][]{{{\mathfrak q}}_{#3}\if\relax\detokenize{#1}\relax\else^{#1}\fi(#2)}
\newcommand{\median}[2][]{{\mathfrak m}_{#2}\if\relax\detokenize{#1}\relax\else^{#1}\fi}
\newcommand{\mediant}[2][]{\wt{\mathfrak m}_{#2}\if\relax\detokenize{#1}\relax\else^{#1}\fi}
\newcommand{\lebneb}[1]{{\mathfrak N}_{#1}}
\newcommand{\Fd}{\mathfrak d}
\newcommand{\met}[3]{\Fd(#1,#2;#3)}
\newcommand{\metres}[4]{\Fd^{#1}(#2,#3;#4)}
\newcommand{\metapprox}[4]{\Fd_{#1}(#2,#3;#4)}
\newcommand{\mettapprox}[4]{\wt{\Fd}_{#1}(#2,#3;#4)}
\newcommand{\metapproxres}[5]{\Fd_{#1}^{#2}(#3,#4;#5)}
\DeclarePairedDelimiter\abs{\lvert}{\rvert}
\DeclarePairedDelimiter\norm{\lVert}{\rVert}
\newcommand*{\defeq}{\mathrel{\mathop:}=}
\newcommand*{\eqdef}{=\mathrel{\mathop:}}
\newcommand*{\mmiddle}[1]{\mathrel{}\middle#1\mathrel{}}
\newcommand*{\wtwt}[1]{\widetilde{\raisebox{0pt}[0.9\height]{$\widetilde{#1}$}}}
\newcommand*{\whwh}[1]{\widehat{\raisebox{0pt}[0.9\height]{$\widehat{#1}$}}}
\newcommand*{\whwt}[1]{\widehat{\raisebox{0pt}[0.9\height]{$\widetilde{#1}$}}}
\newcommand*{\Cont}{\operatorname{Cont}}
\newcommand*{\Fill}{\operatorname{fill}}
\newcommand*{\sle}[1]{$\SLE_{#1}$}
\newcommand*{\slek}{\sle{\kappa}}
\newcommand*{\slekp}{\sle{\kappa'}}
\newcommand*{\slekr}[1]{$\SLE_{\kappa}(#1)$}
\newcommand*{\cle}[1]{$\CLE_{#1}$}
\newcommand*{\clek}{\cle{\kappa}}
\newcommand*{\clekp}{\cle{\kappa'}}
\newcommand{\dGH}{d_{\mathrm {GH}}}
\newcommand{\dH}{d_{\mathrm {H}}}
\newcommand{\dGHf}{d_{\mathrm {GHf}}}
\newcommand{\dP}{d_{\mathrm {P}}}
\newcommand*{\metregions}[1][]{\mathfrak{C}\if\relax\detokenize{#1}\relax\else_{#1}\fi} 
\newcommand*{\cserial}{c_{\mathrm{s}}} 
\newcommand*{\cparallel}{c_{\mathrm{p}}} 
\newcommand*{\ac}[1]{\mathbf{a}_{#1}} 
\newcommand*{\act}[1]{\wt{\mathbf{a}}_{#1}}
\newcommand{\metapproxac}[4]{\Fd_{#1,\mathbf{a}_{#1}}(#2,#3;#4)}
\newcommand{\metapproxacres}[5]{\Fd_{#1,\mathbf{a}_{#1}}^{#2}(#3,#4;#5)}
\newcommand{\mettapproxacres}[5]{\wt{\Fd}_{#1,\wt{\mathbf{a}}_{#1}}^{#2}(#3,#4;#5)}
\newcommand*{\distE}{\operatorname{dist_E}}
\newcommand*{\diamE}{\operatorname{diam_E}}
\newcommand*{\dpath}[1][]{d_{\mathrm{path}}\if\relax\detokenize{#1}\relax\else^{#1}\fi} 
\newcommand*{\len}[2]{L_{#1}(#2)}
\newcommand*{\lenmetres}[2]{\len{\Fd^{#1}}{#2}}
\newcommand*{\paths}[4]{P(#1,#2;#3;#4)}
\newcommand*{\spaths}[4]{P_s(#1,#2;#3;#4)}
\newcommand*{\dsle}{{d_{\SLE}}}
\newcommand*{\double}{{\mathrm{dbl}}}
\newcommand*{\ddouble}{{d_{\double}}} 
\newcommand*{\angledouble}{{\theta_{\double}}} 
\newcommand{\bestexp}{{\alpha_{\mathrm{apr}}}}
\newcommand{\bubbleexp}{{\alpha_{\mathrm{bubble}}}}
\newcommand{\GFF}{{\mathrm{GFF}}}
\newcommand{\bad}{{\mathrm{bad}}}
\newcommand{\bubble}{{\mathrm{bubble}}}
\newcommand*{\pmed}{p_{\mathrm{int}}}
\newcommand*{\intpts}[1]{\CX^{\mathrm{int}}_{#1}} 
\newcommand*{\intptsapprox}[2]{\CX^{\mathrm{int}}_{#1,#2}}
\newcommand*{\intptsapproxbubble}[2]{\CX^{\mathrm{int}}_{#1,#2}}
\begin{document}

\title[Tightness of metrics for non-simple CLEs]{Tightness of approximations to metrics on non-simple conformal loop ensemble gaskets}

\author{Valeria Ambrosio, Jason Miller, and Yizheng Yuan}

\begin{abstract}
We study a class of approximation schemes aimed at constructing conformally covariant metrics defined in the gasket of a conformal loop ensemble (CLE$_\kappa$) for $\kappa \in (4,8)$. This is the range of parameter values so that the loops of a CLE$_\kappa$ intersect themselves, each other, and the domain boundary. Its gasket is the closure of the union of the set of points not surrounded by a loop. The class of approximation schemes includes approximations to the geodesic metric and to the resistance metric. We show that the laws of these approximations are tight, and that every subsequential limit is a non-trivial metric on the CLE$_\kappa$ gasket satisfying a natural list of properties. Subsequent work of the second two authors will show that the limits exist and are conformally covariant both in the setting of the geodesic and resistance metrics. We conjecture that the geodesic (resp.\ resistance) metric describes the scaling limit of the chemical distance (resp.\ resistance) metric associated with discrete models that converge in the limit to CLE$_\kappa$ for $\kappa \in (4,8)$ (e.g., critical percolation for $\kappa=6$).
\end{abstract}

\date{\today}
\maketitle

\setcounter{tocdepth}{1}

\tableofcontents

\parindent 0 pt
\setlength{\parskip}{0.20cm plus1mm minus1mm}

\section{Introduction}

\subsection{Overview}

The \emph{Schramm-Loewner evolution} ($\SLE_\kappa$; $\kappa \geq 0$) is the canonical model of a conformally invariant non-crossing random planar curve which lives in a simply connected domain.  It was introduced by Schramm in \cite{s2000sle} as the candidate for the scaling limit of the loop-erased random walk and since its introduction it has been conjectured to describe the scaling limit of the interfaces that arise in many different lattice models in two dimensions.  Such convergence results have now been proved in a number of cases \cite{s2001percolation,ss2009dgff,s2010ising,lsw2004lerw}.  The \emph{conformal loop ensembles} ($\CLE_\kappa$; $\kappa \in [8/3,8]$) were introduced in \cite{s2009cle,sw2012cle} and are the loop variant of $\SLE_\kappa$.  A $\CLE_\kappa$ consists of a countable collection of non-crossing loops in a simply connected domain each of which looks locally like an $\SLE_\kappa$.  The $\CLE_\kappa$'s are conjectured to describe the joint scaling limit of all of the interfaces in many discrete lattice models in two dimensions and such convergence results have been established in a number of cases \cite{bh2019ising,cn2006cle,ks2019fkising,lsw2004lerw}.  We remark that a number of convergence results towards $\SLE_\kappa$ and $\CLE_\kappa$ have been proved in the setting of random lattices \cite{s2016hc,kmsw2019bipolar,gm2021percolation,gm2021saw,gkmw2018active,lsw2017schnyder} using the framework developed in \cite{s2016zipper,dms2021mating}.

The behavior of an $\SLE_\kappa$ becomes increasingly fractal as the value of $\kappa > 0$ increases.  In particular, $\SLE_\kappa$ curves are simple for $\kappa \in (0,4]$, self-intersecting but not space-filling for $\kappa \in (4,8)$, and are space-filling for $\kappa \geq 8$ \cite{rs2005basic}.  Moreover, the Hausdorff dimension of the range of an $\SLE_\kappa$ was computed in \cite{rs2005basic,b2008dimension} and is given by $\min(1+\kappa/8,2)$.

A $\CLE_\kappa$ is a countable collection of loops in a simply connected domain which do not cross themselves or each other and which locally look like an $\SLE_\kappa$.  They are defined for $\kappa \in [8/3,8]$ where at the extreme $\kappa=8/3$ it consists of the empty collection of loops and at the other extreme $\kappa=8$ it corresponds to a single space-filling loop.  When $\kappa \in (8/3,4]$, the loops are simple and do not intersect the domain boundary or each other while for $\kappa \in (4,8)$ the loops intersect themselves, each other, and the domain boundary.   If $\Gamma$ is a $\CLE_\kappa$ then the \emph{carpet} (resp.\ \emph{gasket}) of $\Gamma$ is the set of points which are not surrounded by a loop of $\Gamma$ for $\kappa \in (8/3,4]$ (resp.\ $\kappa \in (4,8)$).  The reason for the difference in terminology is that the case $\kappa \in (8/3,4]$ can be thought of as corresponding to a random analog of the Sierpi\'nski carpet (since the loops do not intersect each other) while the case $\kappa \in (4,8)$ can be thought of as corresponding to a random analog of the Sierpi\'nski gasket (since the loops do intersect each other). For $\kappa \in (4,8)$, the \emph{thin gasket} of $\Gamma$ is defined to be the set of points which are not disconnected from the boundary of the domain by any loop in $\Gamma$. The dimension of the $\CLE_\kappa$ carpet/gasket was computed in \cite{ssw2009radii,msw2014dimension,nw2011soup}, and the conjectural dimension of the thin gasket was recently computed in \cite{nqsz-backbone}.  The canonical conformally covariant volume measure on the $\CLE_\kappa$ carpet/gasket was constructed in \cite{ms2022clemeasure} and gives the conjectured scaling limit of the properly renormalized measure which assigns unit mass to each vertex of a discrete model which converges to $\CLE_\kappa$ in the scaling limit.

The goal of this paper is to initiate the program constructing the \emph{chemical distance metric} and the \emph{effective resistance metric} for $\CLE_\kappa$ in the regime that $\kappa \in (4,8)$.  The chemical distance (or intrinsic) metric describes the length of the shortest path between each pair of points in the gasket. The effective resistance metric describes a diffusion process on the gasket. Recall that the $\CLE_\kappa$ gasket is a fractal set, and therefore we need an approximation procedure to construct these metrics. Our main result will be the tightness of a class of approximation procedures. We will introduce a list of abstract conditions for approximations of $\CLE_\kappa$ metrics which encompass certain natural approximations to the chemical distance and to the resistance metric. For these (approximate) $\CLE_\kappa$ metrics we prove some estimates that will also be useful in subsequent work. We further show that from the subsequential limits we can construct metrics on the $\CLE_\kappa$ gasket and they also satisfy our list of axioms. In particular, we obtain useful estimates for the $\CLE_\kappa$ metrics constructed from the subsequential limits.  (We remark that the tightness result for the chemical distance metric in the regime $\kappa \in (8/3,4)$ was proved in \cite{m2021tightness}, but the argument in \cite{m2021tightness} is very different from the one presented in this article as in the former setting the loops do not intersect each other or the domain boundary while they do in the present setting.)

It will be shown in subsequent work of the second two authors that there is a unique geodesic (resp.\ resistance) metric on the $\CLE_\kappa$ gasket that is conformally covariant and Markovian, and further that their approximations converge to these metrics. We conjecture that the geodesic $\CLE_\kappa$ metric is equal to the scaling limit of the chemical distance metric for those models which converge to the corresponding $\CLE_\kappa$ in the limit. Similarly, we conjecture that the $\CLE_\kappa$ resistance metric is equal to the scaling limit of the effective resistance on the clusters of these models, or equivalently that the simple random walk on these models converge to the diffusion process on the corresponding $\CLE_\kappa$ gasket. In the case of two-dimensional critical percolation on the triangular lattice, forthcoming work \cite{dmmy2025percolation} will show that the geodesic (resp.\ resistance) metric on \cle{6} describes the scaling limit of the chemical distance (resp.\ effective resistance) metric on its clusters.

\begin{figure}[ht]
\centering
\includegraphics[width=0.4\textwidth]{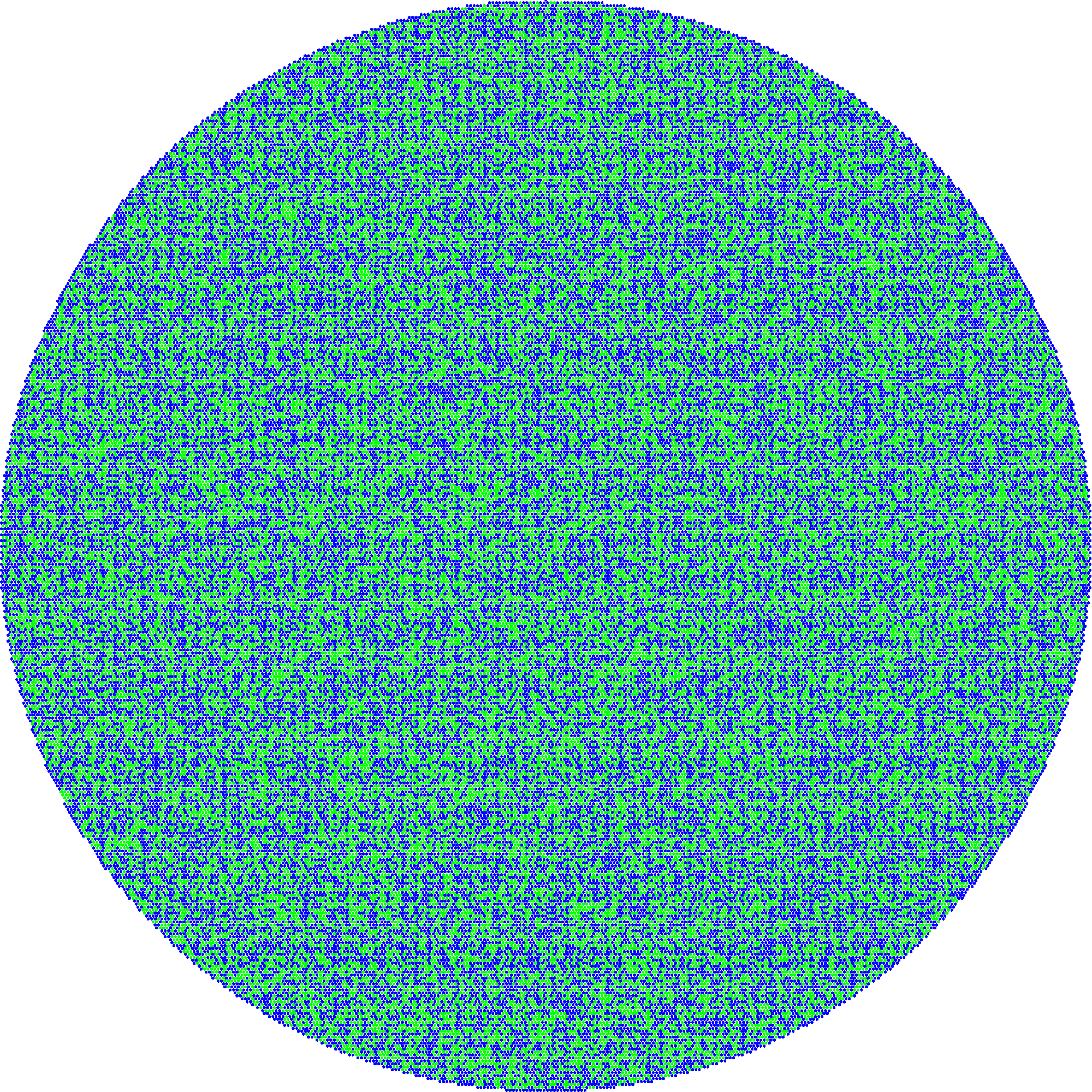} \hspace{0.05\textwidth} \includegraphics[width=0.4\textwidth]{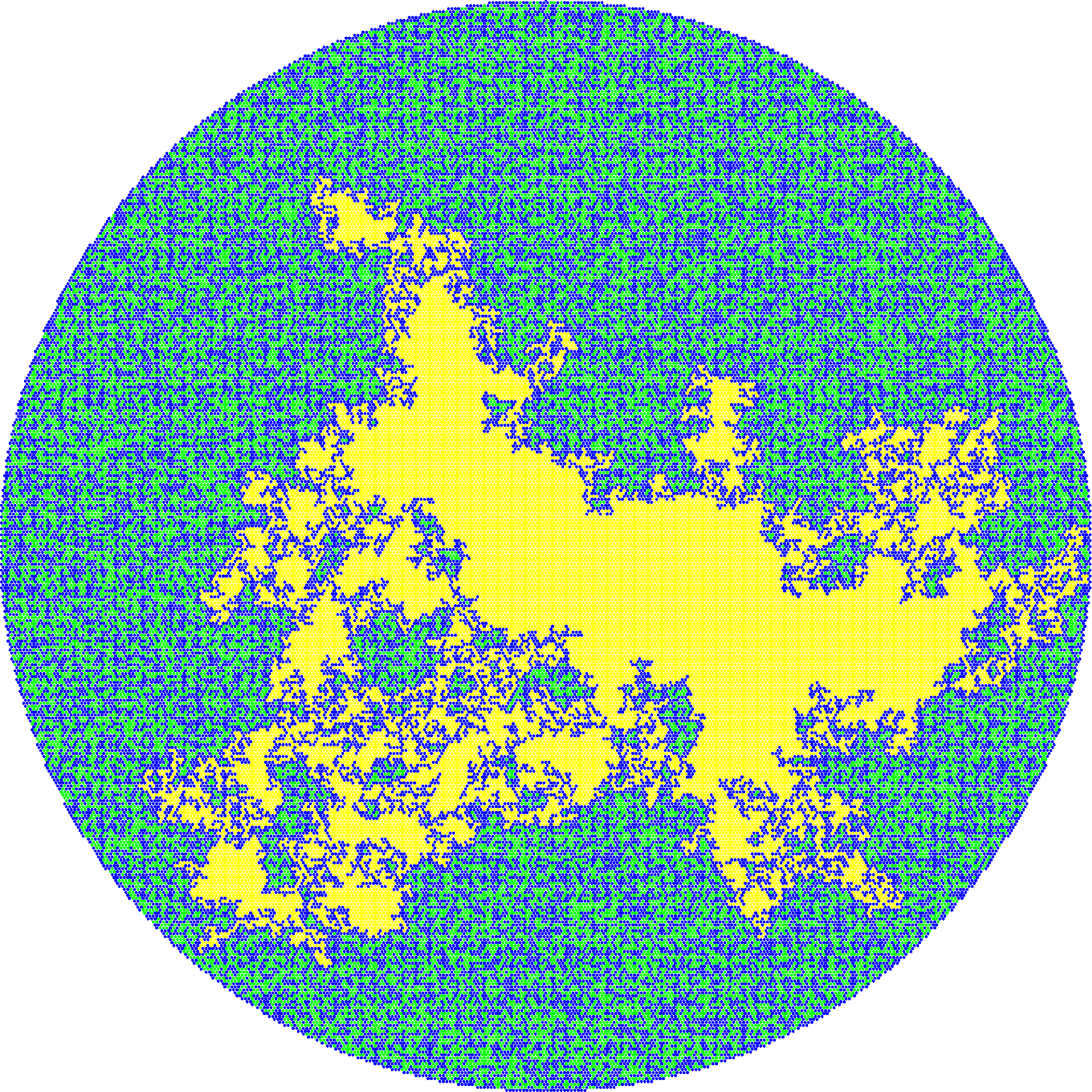}

\includegraphics[width=0.6\textwidth]{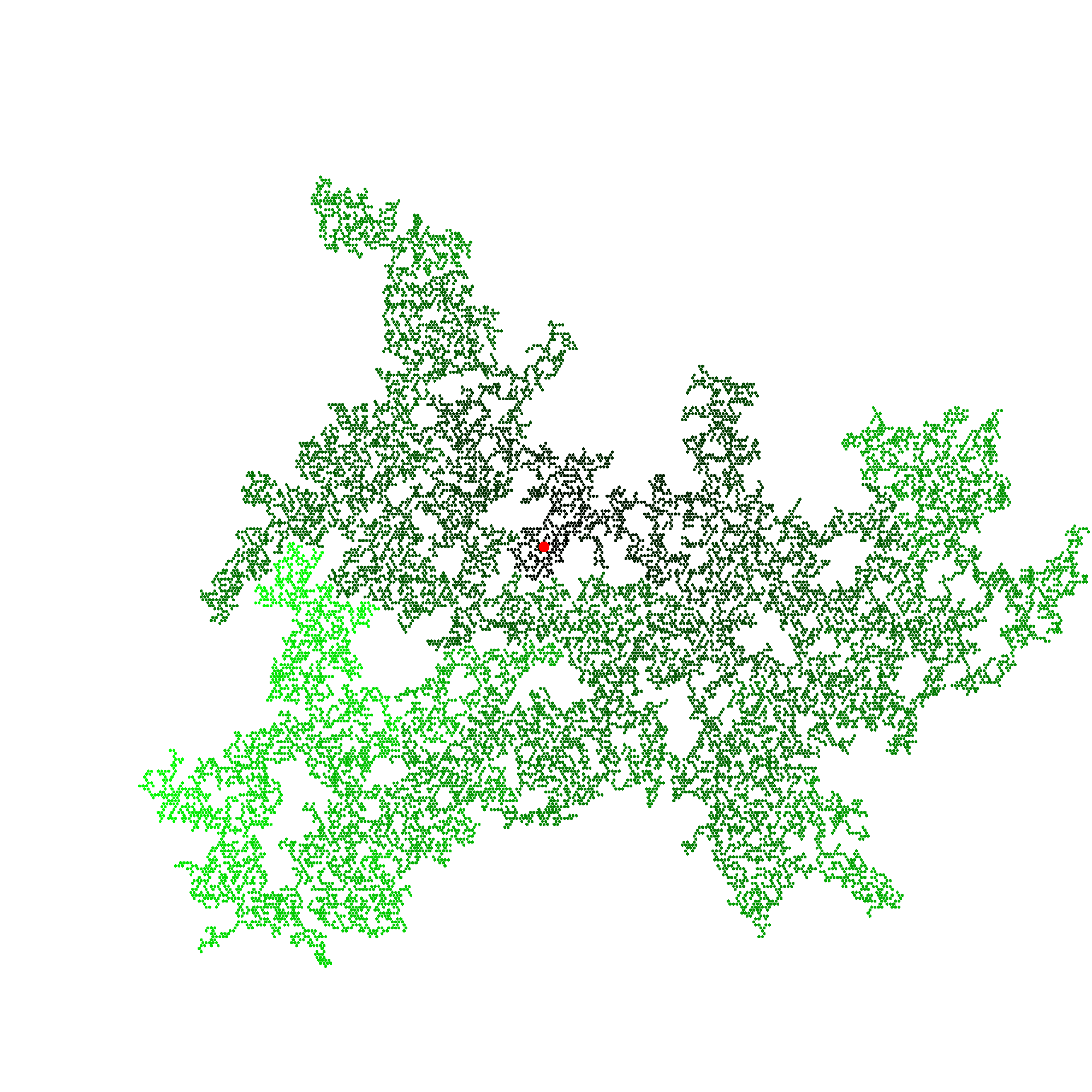}
\caption{\textbf{Top left:} Critical percolation configuration generated on the hexagonal lattice in the unit disk. \textbf{Top right:} Regions surrounded by the outermost cluster that surrounds the origin. \textbf{Bottom:} The corresponding cluster of open sites, colored according to their graph distance to a point (shown in red) near the center.}
\end{figure}

In the rest of the paper we will denote by $\kappa'\in(4,8)$ the parameter in the intersecting regime to be consistent with \cite{ms2016ig1,ms2017ig4} and we will write $\kappa = 16/\kappa' \in (2,4)$. Suppose that $\Gamma$ is a $\CLE_{\kappa'}$ and $\Upsilon_\Gamma$ is its gasket.  Since some points in $\Upsilon_\Gamma$ are visited by loops of $\Gamma$ twice, we will need to distinguish the so-called prime ends in $\Upsilon_\Gamma$.  We will give the precise definition just below, but the idea is that a point visited by a loop of $\Gamma$ twice may correspond to two different points in $\Upsilon_\Gamma$.  We can build some metrics on $\Upsilon_\Gamma$ that are approximate versions of the intrinsic metric and the resistance metric, respectively. We will phrase our main result for a class of so-called \emph{approximate \clekp{} metrics} which includes the approximations mentioned above. Our main result is that for such approximations $(\metapprox{\epsilon}{\cdot}{\cdot}{\Gamma})_{\epsilon > 0}$ there are normalization constants $\median{\epsilon}$ such that the family of $\median{\epsilon}^{-1} \metapprox{\epsilon}{\cdot}{\cdot}{\Gamma}$ is tight. Further, every subsequential limit yields again a \clekp{} metric in the class of metrics that we consider. We will let $\median{\epsilon}$ be the median of some appropriately chosen quantity in terms of $\metapprox{\epsilon}{\cdot}{\cdot}{\Gamma}$ which will be defined in Section~\ref{se:intersections_setup}. Before we state our main results, we need to formally define the class of approximate \clekp{} metrics and specify the topology with respect to which we will prove tightness.

\subsection{Approximate CLE metrics}
\label{se:assumptions}

\newcommand*{\epsexp}{a_0} 

We define a class of ``approximate CLE metrics'' for which we will show a tightness result. We begin by describing the setup we consider.

\begin{definition}
\label{def:admissible_path}
Let $\Gamma$ be a collection of loops in $\C$. We say that a path $\gamma$ is \emph{admissible for $\Gamma$} if it does not cross any loop of $\Gamma$.\footnote{This means that for any subsegment $\ell \subseteq \CL$ of a loop $\CL \in \Gamma$, if we let $\varphi$ be a conformal transformation from the unbounded connected component of $\C\setminus\ell$ to $\C\setminus\ol{\D}$, then $\varphi(\gamma \setminus \Fill(\ell))$ extends to a continuous path in $\C\setminus\D$ (where $\Fill(\ell)$ denotes the complement of the unbounded component of $\C \setminus \ell$).}. For each $U \subseteq \C$, we let $\paths{x}{y}{U}{\Gamma}$ denote the set of admissible paths for $\Gamma$ from $x$ to $y$ within $U$.
\end{definition}

Suppose that $\CL$ is a random non-self-crossing loop in $\C$ (we will always be considering the case where $\CL$ is a particular \clekp{} loop), and let $D$ be the regions surrounded by $\CL$ (i.e.\ the union of connected components of $\C \setminus \CL$ with winding number $1$). Given $\CL$, let $\Gamma_D$ be a conditionally independent \clekp{} in each connected component of $D$, and let $\Gamma = \{\CL\} \cup \Gamma_D$.  Let $\Upsilon_\Gamma$ be the gasket of $\Gamma_D$ (viewed as a metric space embedded in the plane, see Section~\ref{subsec:topology}). For $U \subseteq \C$ and $x,y \in U \cap \Upsilon_\Gamma$, we let
\begin{equation}\label{eq:dpath}
 \dpath[U](x,y) = \inf\{ \diamE(\gamma) : \gamma \in \paths{x}{y}{U}{\Gamma} \},
\end{equation}
where $\diamE$ denotes the diameter with respect to the Euclidean metric.  We write $\dpath(x,y) = \dpath[\ol{D}](x,y)$.

For each open, simply connected $U \subseteq \C$, let $\Gamma_{U^*} \subseteq \Gamma_D$ be the collection of loops that are entirely contained in $U$, and let $U^* \subseteq U \cap D$ be the set of points that are not on or inside any loop of $\Gamma_D\setminus\Gamma_{U^*}$. We view $U^*$ as the metric space equipped with the metric
\[ d_{U^*}(x,y) = \inf\{ \diamE(\gamma) : \gamma \in \paths{x}{y}{U}{\Gamma\setminus\Gamma_{U^*}} \} . \]
Recall \cite[Lemma~3.2]{gmq2021sphere} that the conditional law of $\Gamma_{U^*}$ given $\Gamma \setminus \Gamma_{U^*}$ is that of a conditionally independent collection of \clekp{}'s in each connected component of $U^*$.

We will assume that there is an ``internal metric'' defined in each region consisting of components that are bounded between a finite number of \clekp{} loops. More precisely, suppose $\CL_1,\ldots,\CL_n \in \Gamma_D$, and $V \subseteq D$ is a union of connected components of
$D \setminus (\CL_1 \cup \cdots \cup \CL_n)$ that are not inside the loops $\CL_1,\ldots,\CL_n$. Let
\[ d_{\ol{V}}(x,y) = \inf\{ \diamE(\gamma) : \gamma \in \paths{x}{y}{\ol{V}}{\{\CL,\CL_1,\ldots,\CL_n\}} \} . \]
We let $\metregions$ be the collection of regions $V \subseteq D$ as described above (for some choice of $\CL_1,\ldots,\CL_n$) such that $\ol{V}$ is simply connected with respect to $d_{\ol{V}}$ (where $\ol{V}$ denotes the completion with respect to $d_{\ol{V}}$). For $U \subseteq \C$ open, we let $\metregions[U] = \{ V \in \metregions : \ol{V} \subseteq U \}$.

\begin{remark}
 In order to prove tightness of the metrics, we only need to consider simply connected $\ol{V}$ in the definition above. But we can also construct internal metrics for general $\ol{V}$ if we are given approximations for general $\ol{V}$ that satisfy the same axioms.
\end{remark}

We now define what we mean by an (approximate) \clekp{} metric. Our prominent examples will be (approximations to) geodesic and resistance metrics on the \clekp{} gasket (see Section~\ref{se:met_examples} for more details). The series law and the parallel law are signature properties of the resistance metric. We will need a slightly different variant which we call the generalized parallel law. For geodesic metrics, both are trivially satisfied. Further, we will introduce approximate versions of these properties, due to the reason that the approximations of these metrics will satisfy them only under extra restrictions (due to the fact that there is some interference in an $\epsilon$-neighborhood where $\epsilon$ is the spacial scale of approximation).

Suppose that $\epsilon \ge 0$ (zero is allowed). Suppose that we have a family of random functions
\[ \metapproxres{\epsilon}{V}{\cdot}{\cdot}{\Gamma}\colon (\ol{V} \cap \Upsilon_\Gamma) \times (\ol{V} \cap \Upsilon_\Gamma) \to [0,\infty]  \]
for each $V \in \metregions$, and they satisfy
\[
 \metapproxres{\epsilon}{V}{x}{y}{\Gamma} = \metapproxres{\epsilon}{V}{y}{x}{\Gamma}
 \quad\text{and}\quad
 \metapproxres{\epsilon}{V}{x}{y}{\Gamma} \le \metapproxres{\epsilon}{V}{x}{z}{\Gamma}+\metapproxres{\epsilon}{V}{z}{y}{\Gamma} .
\]
In the case $\epsilon = 0$, we assume additionally that $\metres{V}{x}{x}{\Gamma} = 0$ for every $x \in \ol{V} \cap \Upsilon_\Gamma$, and that the functions $\metres{V}{\cdot}{\cdot}{\Gamma}$ are continuous with respect to $\dpath[\ol{V}]$. (When $\epsilon > 0$, we do not assume this. In either case we do not assume them to be positive off the diagonal.)
These functions are not required to be determined by $\Gamma$, and we view $(\Gamma,\metapprox{\epsilon}{\cdot}{\cdot}{\Gamma})$ as a random variable with values in the product space.

\begin{figure}[ht]
\centering
\includegraphics[width=0.3\textwidth]{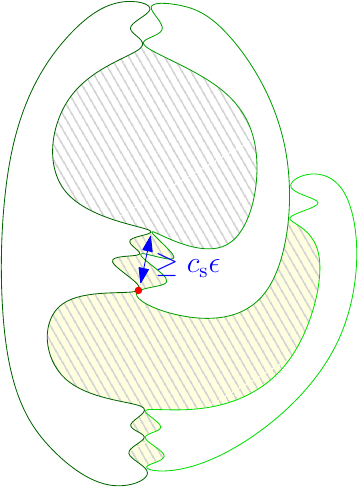}
\hspace{0.05\textwidth}
\includegraphics[width=0.45\textwidth]{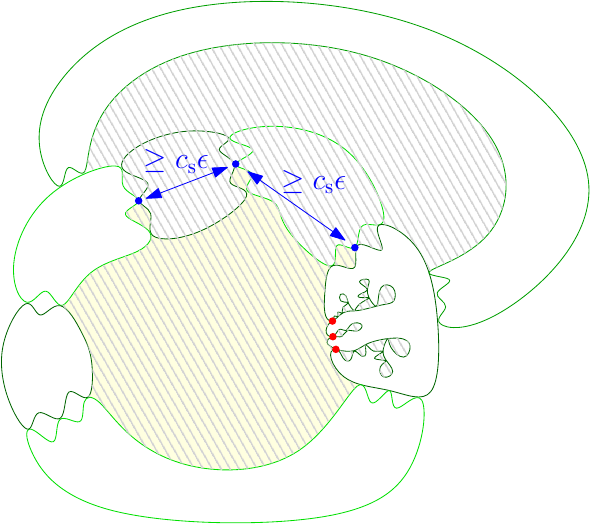}
\caption{\textbf{Left:} Illustration of the compatibility axiom. Shown is an example of regions $V \subseteq V'$ ($V$ is shaded in yellow, and $V'$ is tiled in grey) where $V' \setminus V$ is contained in a ``dead end'' which is separated by the point $u$ shown in red. The compatibility axiom states that in the region below the red dot we have $\metapproxres{\epsilon}{V}{\cdot}{\cdot}{\Gamma} = \metapproxres{\epsilon}{V'}{\cdot}{\cdot}{\Gamma}$. \textbf{Right:} Illustration of the extra condition in the monotonicity axiom. Shown is an example of regions $V \subseteq V'$ for which the monotonicity property holds ($V$ is shaded in yellow, and $V'$ is tiled in grey). The regions $V' \setminus V$ are either contained in ``dead ends'' (separated by the red dots) or they are separated by $\{z_1,z_2,...\}$ (blue dots) satisfying condition~\eqref{it:mon_large_loops}.}
\label{fi:axioms_comp_mon}
\end{figure}

\begin{figure}[ht]
\centering
\includegraphics[width=0.3\textwidth]{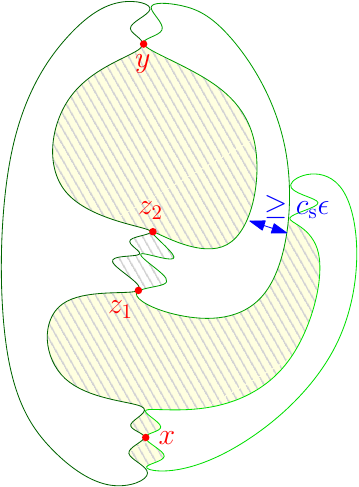}
\hspace{0.05\textwidth}
\includegraphics[width=0.45\textwidth]{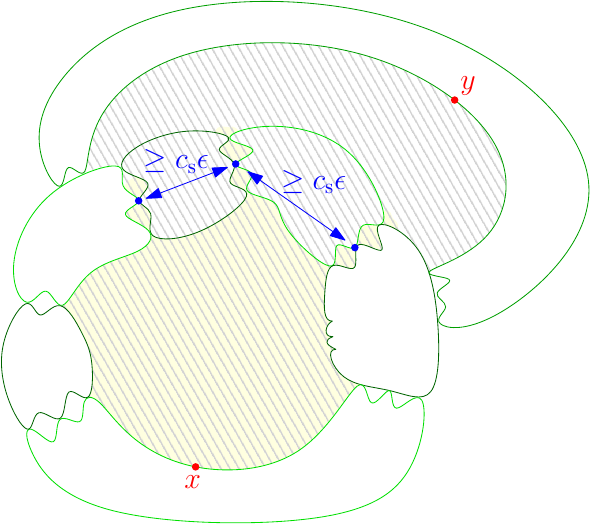}
\caption{\textbf{Left:} Illustration of the series law. It holds under the extra condition that the two yellow regions have Euclidean distance at least $\cserial\epsilon$ from each other. The region $V$ is tiled in grey. \textbf{Right:} Illustration of the generalized parallel law with $N=3$. The region $V$ is tiled in grey, the points $z_1,...,z_N$ are shown in blue. The region $V_x$ needs to contain a $\cserial\epsilon$-neighborhood of $K_x$ in $V$ (shaded in yellow).}
\label{fi:axioms_series_parallel}
\end{figure}

Let $\cserial \ge 0$ and $\cparallel(N) > 0$, $N \in \N$, be constants. The constant $\cparallel(N)$ serves to quantify the generalized parallel law (we have $\cparallel(N) = N$ in the case of resistance metrics, and $\cparallel(N) = 1$ in the case of geodesic metrics). We think of $\cserial\epsilon$ as the Euclidean range in which the approximations of the metric in $V$ may be affected by a $\cserial\epsilon$-neighborhood of $V$. Suppose that the following properties hold almost surely.

\textbf{Separability:} 
For every $V \in \metregions$,
\begin{equation}
\label{eq:separability_axiom}
 \metapproxres{\epsilon}{V}{x}{y}{\Gamma} = \lim_{V' \searrow V} \metapproxres{\epsilon}{V'}{x}{y}{\Gamma} ,\quad x,y \in \ol{V} \cap \Upsilon_\Gamma .
\end{equation}
The limit is in the sense that convergence holds for any decreasing sequence of $V'_n \in \metregions$ with $V'_n \supseteq V$ and $\sup_{u \in \ol{V'_n} \cap \Upsilon_\Gamma} \dpath[\ol{V'_n}](u,V) \to 0$.

This requirement guarantees that the entire collection of internal metrics is determined by a countable subset of them. In particular, the family lives in a Polish space.

\textbf{Markovian property:} Let $U \subseteq \C$ be open, simply connected. The conditional law of the collection $(\metapproxres{\epsilon}{V}{\cdot}{\cdot}{\Gamma})_{V \in \metregions[U]}$ given $\Gamma\setminus\Gamma_{U^*}$ and $(\metapproxres{\epsilon}{V'}{\cdot}{\cdot}{\Gamma})_{V' \in \metregions[\C\setminus\ol{U}]}$ is almost surely measurable with respect to~$U^*$.

\textbf{Translation invariance:} Let $U \subseteq \C$ be open, simply connected. There exists a probability kernel $\mu^{U^*}$ such that for each $z \in \C$, the conditional law of $(\metapproxres{\epsilon}{V}{\cdot}{\cdot}{\Gamma})_{V \in \metregions[U+z]}$ given $(U+z)^*$ is $(T_z)_* \mu^{U^*}(\cdot -z)$ where $T_z \Fd(x,y) = \Fd(x-z,y-z)$ denotes translation by $z$.

\textbf{Compatibility:} Let $V,V' \in \metregions$, $V \subseteq V'$, and $x,y \in \ol{V} \cap \Upsilon_\Gamma$ such that for every $u \in V' \setminus V$ there is a point $z \in \ol{V}$ with $\dpath[\ol{V'}](z, V' \setminus V) \ge \cserial\epsilon$ that separates $u$ from $x,y$ in $\ol{V'} \cap \Upsilon_\Gamma$. Then $\metapproxres{\epsilon}{V}{x}{y}{\Gamma} = \metapproxres{\epsilon}{V'}{x}{y}{\Gamma}$.

See the left side of Figure~\ref{fi:axioms_comp_mon} for an illustration. We remark that for (the approximations of) geodesic metrics, the extra condition $\dpath[\ol{V'}](z, V' \setminus V) \ge \cserial\epsilon$ is not necessary. For the approximations of the resistance metric, however, we will explain in \cite{my2025resuniqueness} that this property holds only under this extra condition.

\textbf{Monotonicity:} 
Let $V,V' \in \metregions$, $V \subseteq V'$. Let $x,y \in \ol{V} \cap \Upsilon_\Gamma$ and suppose one of the following:
\begin{enumerate}[(i)]
 \item\label{it:mon_dead_ends} For every $u \in V' \setminus V$ there is a point $z \in \ol{V}$ that separates $u$ from $x,y$ in $\ol{V'} \cap \Upsilon_\Gamma$.
 \item\label{it:mon_large_loops} There exist $z_1,z_2,\ldots \in \ol{V}$ with $\abs{z_i-z_{i'}} \ge \cserial\epsilon$ for $i\neq i'$ such that no \emph{simple} admissible path in $\ol{V}$ from $x$ to $y$ intersects $\{z_1,z_2,\ldots\}$, and the set $\{z_1,z_2,\ldots\}$ separates $x,y$ from $V' \setminus V$ in $\ol{V'} \cap \Upsilon_\Gamma$.
\end{enumerate}
Then there are points $x',y' \in \ol{V} \cap \Upsilon_\Gamma$ with $\dpath[\ol{V}](x',x) \le \epsilon$, $\dpath[\ol{V}](y',y) \le \epsilon$ such that
\begin{equation}\label{eq:approx_monotonicity_axiom}
 \metapproxres{\epsilon}{V'}{x'}{y'}{\Gamma} \le \metapproxres{\epsilon}{V}{x}{y}{\Gamma} .
\end{equation}

\begin{remark}
 For (the approximations of) geodesic metrics, we have $\metapproxres{\epsilon}{V'}{\cdot}{\cdot}{\Gamma} \le \metapproxres{\epsilon}{V}{\cdot}{\cdot}{\Gamma}$ for \emph{every} $V,V'$ with $V \subseteq V'$. For the approximations of the resistance metric, however, we will explain in \cite{my2025resuniqueness} that this property holds only under extra restrictions which are described in~\eqref{it:mon_dead_ends},\eqref{it:mon_large_loops}, and that we may need to move the points $x,y$ to some nearby points $x',y'$. See the right side of Figure~\ref{fi:axioms_comp_mon} for an illustration of the conditions~\eqref{it:mon_dead_ends},\eqref{it:mon_large_loops}.
\end{remark}

\textbf{Series law:} Let $V \in \metregions$, and $x,y,z_1,z_2 \in \ol{V} \cap \Upsilon_\Gamma$ be such that $z_1$ separates $x$ from $y,z_2$ in $\ol{V} \cap \Upsilon_\Gamma$, and $z_2$ separates $y$ from $x,z_1$ in $\ol{V} \cap \Upsilon_\Gamma$, and such that $\distE(K_x,K_y) \ge \cserial\epsilon$ where $K_x$ (resp.\ $K_y$) are the connected components of $\ol{V} \cap \Upsilon_\Gamma \setminus \{z_1,z_2\}$ containing $x$ (resp.\ $y$). (Here, $\distE$ denotes the distance with respect to the Euclidean metric.) Then 
\[ \metapproxres{\epsilon}{V}{x}{y}{\Gamma} \ge \metapproxres{\epsilon}{V}{x}{z_1}{\Gamma}+\metapproxres{\epsilon}{V}{z_2}{y}{\Gamma}.\]
(See the left side of Figure~\ref{fi:axioms_series_parallel} for an illustration.)

Further, if $z$ separates $x$ from $y$ in $\ol{V} \cap \Upsilon_\Gamma$, then
\[ \metapproxres{\epsilon}{V}{x}{y}{\Gamma} \ge \metapproxres{\epsilon}{V}{x}{z}{\Gamma} . \]

\textbf{Generalized parallel law:} Let $V \in \metregions$, and let $x,y,z_1,\ldots,z_N \in \ol{V} \cap \Upsilon_\Gamma$ such that $\abs{z_i-z_{i'}} \ge \cserial\epsilon$ for $i \neq i'$, and $x,y$ are separated in $\ol{V} \cap \Upsilon_\Gamma \setminus\{z_1,\ldots,z_N\}$. Let $K_x$ be the connected component of $\ol{V} \cap \Upsilon_\Gamma \setminus\{z_1,\ldots,z_N\}$ containing $x$, and let $V_x \subseteq V$ such that $\ol{V_x} \supseteq \{u \in V : \dpath[\ol{V}](u,K_x) \le \cserial\epsilon\}$. Then
\[ \cparallel(N)\metapproxres{\epsilon}{V}{x}{y}{\Gamma} \ge \min_{i}\metapproxres{\epsilon}{V_x}{x}{z_i}{\Gamma} . \]

See the right side of Figure~\ref{fi:axioms_series_parallel} for an illustration. For (the approximations of) geodesic metrics we have $\cparallel(N) = 1$ for every $N$, and it suffices that $\ol{V_x} \supseteq K_x$. For the approximations of the resistance metric, however, we need to require $\ol{V_x}$ to contain a $\cserial\epsilon$-neighborhood of $K_x$ in $V$ (this will be explained in \cite{my2025resuniqueness}). For instance, this holds with $V_x = V$.

\begin{remark}
 In fact, for the main results of this paper, we only need to require the generalized parallel law above to hold with $V_x = V$. The variant with general $V_x$ is only used in the proof of Theorem~\ref{th:nondegeneracy}.
\end{remark}

\begin{definition}\label{def:cle_metric}
 Suppose that $\CL$ is a random non-self-crossing loop in $\C$, and let $D$ be the regions surrounded by $\CL$. Given $\CL$, let $\Gamma_D$ be a conditionally independent \clekp{} in each connected component of $D$, and let $\Gamma = \{\CL\} \cup \Gamma_D$. We call a family of random metrics $(\metapproxres{\epsilon}{V}{\cdot}{\cdot}{\Gamma})_{V \in \metregions}$ coupled with $\Gamma$ that satisfies the assumptions above an \emph{approximate \clekp{} metric}. In the case $\epsilon = 0$ we call $(\metres{V}{\cdot}{\cdot}{\Gamma})_{V \in \metregions}$ a \emph{\clekp{} metric}. We call $\metapproxres{\epsilon}{V}{\cdot}{\cdot}{\Gamma}$ the \emph{internal metric in $V$}.
\end{definition}

We use the shorthand notation $\metapprox{\epsilon}{\cdot}{\cdot}{\Gamma}$ to refer to the family of internal metrics. For $U$ open, simply connected, we refer to $(\metapproxres{\epsilon}{V}{\cdot}{\cdot}{\Gamma})_{V \in \metregions[U]}$ as the \emph{internal metrics within $U^*$}.

Our goal will be to construct \clekp{} metrics via approximation schemes. Suppose $\cserial \ge 0$, $\cparallel(N) > 0$ are fixed constants. Suppose $\metapprox{\epsilon}{\cdot}{\cdot}{\Gamma}$ is an approximate \clekp{} metric for each $\epsilon \in (0,1]$, with respect to the same constants $\cserial \ge 0$, $\cparallel(N) > 0$. The goal of this paper is to show tightness of the family $(\median{\epsilon}^{-1}\metapprox{\epsilon}{\cdot}{\cdot}{\Gamma})$ for suitable renormalization constants $\median{\epsilon} > 0$. Note that our assumptions on the approximate \clekp{} metrics are only valid when we are spatially further away than the approximation scale $\epsilon > 0$. On smaller scales they will not give us any useful bounds. Therefore we need to assume additionally that the distances across smaller scales are small compared to the renormalization factor $\median{\epsilon}$. Roughly speaking, we define $\median{\epsilon}$ as the median (or any quantile) of the $\metapprox{\epsilon}{\cdot}{\cdot}{\Gamma}$-distance ``across the region bounded between two intersecting macroscopic \clekp{} loops'' which we describe now. The definition that we will give below is not exactly the same as the one that we use in the proofs of the paper (see Section~\ref{se:intersections_setup}).  It is a simplified version that serves to give the reader a feel for the setup, but it will turn out that the two definitions are comparable for good approximation schemes (defined below).  In order to differentiate between the definition of $\median{\epsilon}$ given in Section~\ref{se:intersections_setup} and the definition we will describe just below, we will denote the latter by $\wh{\mathfrak{m}}_\epsilon$.

\begin{figure}[ht]
\centering
\includegraphics[width=0.45\textwidth]{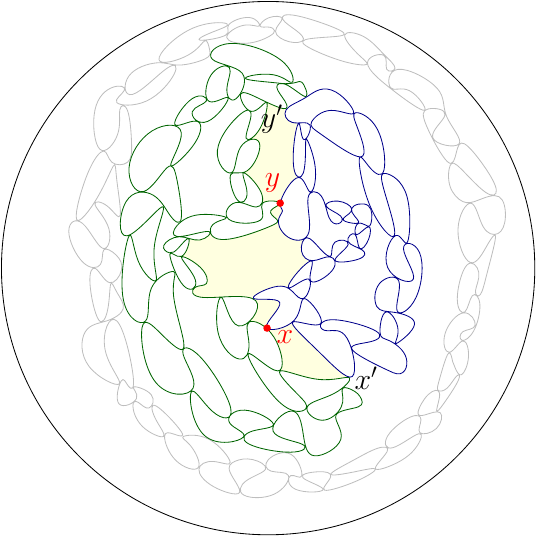}
\caption{\label{fi:region_intersecting_loops} We renormalize the approximate \clekp{} metrics by the median of the distance across the region between two typical intersecting \clekp{} loops as illustrated in the figure. The loops $\CL_1,\CL_2$ are shown in green and blue, respectively, and the region $U_{x',y'}$ is shown in yellow.}
\end{figure}

In the following, we let $\Gamma_\D$ be a nested \clekp{} in $\D$, and let $\CL$ be the outermost loop of $\Gamma_\D$ that surrounds~$0$. Then $\Gamma_D$ consists of the loops of $\Gamma_\D$ in the next nesting level inside $\CL$. This choice of $\CL$ is made just for concreteness; as we comment in Remark~\ref{rm:domain_choice} below, the concrete choices are not important as long as we consider interior clusters of a nested \clekp{}.

For $0<r_1<r_2$, we write $A(z,r_1,r_2) = B(z,r_2)\setminus\ol{B(z,r_1)}$.  Let $E$ be the event that there are two loops $\CL_1,\CL_2 \in \Gamma_D$ and four intersection points $x,y \in \CL_1 \cap \CL_2 \cap B(0,1/4)$, $x',y' \in \CL_1 \cap \CL_2 \cap A(0,1/2,3/4)$ such that if $U_{x',y'}$ is the union of connected components between the outer boundaries of $\CL_1,\CL_2$ from $x'$ to $y'$, then $x,y \in \ol{U}_{x',y'}$. On the event $E$, we let $X_\epsilon = \metapproxres{\epsilon}{U_{x',y'}}{x}{y}{\Gamma}$ where $x$ (resp.\ $y$) is the first (resp.\ last) point on $\ol{U}_{x',y'} \cap \CL_1 \cap \CL_2 \cap B(0,1/4)$ (see Figure~\ref{fi:region_intersecting_loops}). We let $\wh{\mathfrak{m}}_\epsilon$ be the median of $X_\epsilon$ conditionally on $E$.

\begin{definition}\label{def:good_approximation}
 Suppose that we have the setup above where $\CL$ is the outermost loop of $\Gamma_\D$ that surrounds~$0$. Suppose that $\metapprox{\epsilon}{\cdot}{\cdot}{\Gamma}$ is an approximate \clekp{} metric for each $\epsilon \in {(0,1]}$ with respect to the same constants $\cserial \ge 0$, $\cparallel(N) > 0$. We say that $(\metapprox{\epsilon}{\cdot}{\cdot}{\Gamma})_{\epsilon \in {(0,1]}}$ is a \emph{good approximation scheme} if there exists a constant $\ac{\epsilon} > 0$ for each $\epsilon \in {(0,1]}$ and a constant $\epsexp > 0$ such that
\begin{equation}\label{eq:eps_bound_ass}
 \lim_{\epsilon \searrow 0} \frac{\ac{\epsilon}}{\epsilon^{\epsexp}\median{\epsilon}} = 0
\end{equation}
and for each $r>0$ we have
\begin{equation}\label{eq:approx_error_asymp}
 \lim_{\epsilon \searrow 0} \p\left[ \sup_{V \in \metregions} \sup_{\substack{\dpath[\ol{V}](x,y) < \epsilon ,\\ \dpath(x, \Upsilon_\Gamma \setminus \ol{V}) \ge r}} \metapproxres{\epsilon}{V}{x}{y}{\Gamma} \le \ac{\epsilon} \right] = 1 .
\end{equation}
\end{definition}

In our approximations to the geodesic metric (see Section~\ref{se:met_examples}), we can bound $\metapproxres{\epsilon}{V}{x}{y}{\Gamma}$ by a small deterministic constant $\ac{\epsilon}$ whenever $\dpath[\ol{V}](x,y) < \epsilon$. For the approximations to the resistance metric considered in \cite{my2025resuniqueness}, it turns out that there is no such deterministic bound. Therefore we need to phrase the condition~\eqref{eq:approx_error_asymp} as a probabilistic bound. The condition~\eqref{eq:eps_bound_ass} says that the error $\ac{\epsilon}$ is asymptotically negligible compared to the normalizing factor $\median{\epsilon}$.
We keep the assumption~\eqref{eq:eps_bound_ass}, \eqref{eq:approx_error_asymp} separate from the definition of an approximate \clekp{} metric in Definition~\ref{def:cle_metric} because it is not compatible with scaling the metric in Lemma~\ref{le:scaled_metric}. The assumption~\eqref{eq:eps_bound_ass}, \eqref{eq:approx_error_asymp} is technical but rather mild, and is satisfied for all ``reasonable'' approximation schemes. Note also that the assumption~\eqref{eq:eps_bound_ass} includes that $\median{\epsilon} > 0$, in particular the approximating metrics are not identical to zero.

The following scaling property is immediate from the definition.

\begin{lemma}
\label{le:scaled_metric}
 Suppose $\metapprox{\epsilon}{\cdot}{\cdot}{\Gamma}$ is an approximate \clekp{} metric coupled with $\Gamma$. Let $\lambda > 0$, and define $\mettapprox{\lambda^{-1}\epsilon}{\cdot}{\cdot}{\lambda^{-1}\Gamma} = \metapprox{\epsilon}{\lambda\cdot}{\lambda\cdot}{\Gamma}$. This defines an approximate \clekp{} metric.
\end{lemma}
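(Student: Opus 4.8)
The plan is to verify directly that each of the defining properties of an approximate \clekp{} metric is preserved under pushforward by the dilation $\Phi_\lambda(z) = \lambda^{-1}z$, with the approximation scale changing from $\epsilon$ to $\lambda^{-1}\epsilon$ and the structural constants $\cserial, \cparallel(N)$ left unchanged. Since every piece of the setup is built out of Euclidean diameters/distances, of purely topological separation statements, and of metric inequalities that are $1$-homogeneous, this should reduce to a mechanical dictionary check; the lemma is really a restatement of the conformal covariance of $\CLE_{\kappa'}$.

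First I would record the scaling dictionary. As $\Phi_\lambda$ is a conformal automorphism of $\C$, the configuration $\lambda^{-1}\Gamma = \{\lambda^{-1}\CL\}\cup\Phi_\lambda(\Gamma_D)$ is again of the form required in Definition~\ref{def:cle_metric}: $\lambda^{-1}\CL$ is a random non-self-crossing loop, its surrounded region is $\lambda^{-1}D$, and by the conformal (in particular scale) invariance of $\CLE_{\kappa'}$, $\Phi_\lambda(\Gamma_D)$ is a conditionally independent \clekp{} in each connected component of $\lambda^{-1}D$. Moreover $\Upsilon_{\lambda^{-1}\Gamma} = \lambda^{-1}\Upsilon_\Gamma$ as metric spaces embedded in the plane (Section~\ref{subsec:topology}); a path $\gamma$ is admissible for $\Gamma$ iff $\lambda^{-1}\gamma$ is admissible for $\lambda^{-1}\Gamma$ (the crossing condition of Definition~\ref{def:admissible_path} is conformally natural); $\diamE(\lambda^{-1}A) = \lambda^{-1}\diamE(A)$ and $\distE(\lambda^{-1}A,\lambda^{-1}B) = \lambda^{-1}\distE(A,B)$; hence $\paths{\lambda^{-1}x}{\lambda^{-1}y}{\lambda^{-1}U}{\lambda^{-1}\Gamma} = \Phi_\lambda\big(\paths{x}{y}{U}{\Gamma}\big)$, so $\dpath[\lambda^{-1}\ol{V}](\lambda^{-1}x,\lambda^{-1}y)$ for $\lambda^{-1}\Gamma$ equals $\lambda^{-1}\dpath[\ol{V}](x,y)$ for $\Gamma$; and $V\in\metregions$ for $\Gamma$ iff $\lambda^{-1}V\in\metregions$ for $\lambda^{-1}\Gamma$, with $\ol{\lambda^{-1}V} = \lambda^{-1}\ol{V}$ and $\metregions[\lambda^{-1}U] = \Phi_\lambda(\metregions[U])$. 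The definition $\wt{\Fd}^{\,W}_{\lambda^{-1}\epsilon}(\cdot,\cdot;\lambda^{-1}\Gamma) := \Fd^{\lambda W}_\epsilon(\lambda\cdot,\lambda\cdot;\Gamma)$ (with $\lambda W = \Phi_\lambda^{-1}(W)$) then makes sense for each region $W$ of $\lambda^{-1}\Gamma$, matching the statement of the lemma.

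Given the dictionary, each axiom is a one-liner. Symmetry, the triangle inequality, the vanishing $\Fd^V(x,x;\Gamma)=0$, and $\dpath[\ol V]$-continuity (when $\epsilon=0$) transfer verbatim, involving no Euclidean thresholds. Separability transfers because the hypothesis $\sup_u\dpath[\ol{V'_n}](u,V)\to0$ is scale-covariant. The Markovian property and translation invariance transfer because $\Phi_\lambda$ intertwines the relevant $\sigma$-algebras and, for the latter, conjugates the kernel (the kernel for $\lambda^{-1}\Gamma$ over $(\lambda^{-1}U)^*$ is the pushforward of $\mu^{U^*}$ under $\Phi_\lambda$, and the equivariance identity commutes with dilation after relabeling $z\mapsto\lambda^{-1}z$). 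For Compatibility, both cases of Monotonicity, the Series law, and the Generalized parallel law, the separation hypotheses are topological (stated inside $\ol V\cap\Upsilon_\Gamma$), hence preserved by $\Phi_\lambda$; the quantitative hypotheses all read ``$\distE\ge\cserial\epsilon$'', ``$\dpath\ge\cserial\epsilon$'', ``$\abs{z_i-z_{i'}}\ge\cserial\epsilon$'', or ``$\ol{V_x}\supseteq\{u:\dpath[\ol V](u,K_x)\le\cserial\epsilon\}$'', each of which under $\Phi_\lambda$ becomes the same condition with $\epsilon$ replaced by $\lambda^{-1}\epsilon$, which is exactly the approximation scale of $\wt{\Fd}_{\lambda^{-1}\epsilon}$; and the conclusions are $1$-homogeneous in the metric (e.g.\ $\Fd^{V'}_\epsilon(x',y';\Gamma)\le\Fd^V_\epsilon(x,y;\Gamma)$ in~\eqref{it:mon_dead_ends}, \eqref{it:mon_large_loops}, with the perturbed points $x',y'$ at $\dpath[\ol V]$-distance $\le\epsilon$ mapping to points at distance $\le\lambda^{-1}\epsilon$), hence also preserved. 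The constants $\cserial$ and $\cparallel(N)$ are the same for $\wt{\Fd}$ as for $\Fd$. There is no genuine obstacle; the only point requiring care is the bookkeeping of the approximation scale — every threshold and every displacement that reads $\epsilon$ for $\Fd_\epsilon$ must read $\lambda^{-1}\epsilon$ for $\wt{\Fd}_{\lambda^{-1}\epsilon}$ — and this is automatic once the scaling dictionary above is in place.
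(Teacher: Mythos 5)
Your proposal is correct, and it follows exactly the route the paper intends: the paper states the lemma as ``immediate from the definition'' and does not spell out the verification, while your write-up is simply the routine check that each axiom is preserved under the dilation once one notes that all Euclidean thresholds, all separation conditions, and all metric inequalities in Definition~\ref{def:cle_metric} are $1$-homogeneous in the spatial scale with $\epsilon$ relabeled as $\lambda^{-1}\epsilon$. There is no gap; you have just made explicit the scaling dictionary (paths, $\dpath$, $\metregions$, the kernel $\mu^{U^*}$ conjugated by dilation) that makes the lemma ``immediate.''
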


We note that the Markovian property implies that the conditional law of the internal metrics within $U^*$ depends only on the region $U^*$ and not on the choice of $U$ from which it arises.

\begin{lemma}\label{le:cond_laws_compatible}
 Let $U \subseteq U'$ be open, simply connected. On the event $(U')^* \subseteq U$, the conditional law of $(\metapproxres{\epsilon}{V}{\cdot}{\cdot}{\Gamma})_{V \in \metregions[U']}$ given $(U')^*$ is the same as the conditional law of $(\metapproxres{\epsilon}{V}{\cdot}{\cdot}{\Gamma})_{V \in \metregions[U]}$ given $U^*$.
\end{lemma}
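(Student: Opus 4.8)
The plan is to reduce the statement to the Markovian property by a conditioning argument that "fills in" the region between $U$ and $U'$. First I would observe that since $U \subseteq U'$, we have $\Gamma_{U^*} \subseteq \Gamma_{(U')^*}$, and on the event $(U')^* \subseteq U$ the region $(U')^*$ is itself a union of connected components of $U'$ cut off by loops of $\Gamma_D \setminus \Gamma_{(U')^*}$, all of which happen to lie inside $U$. The key point is that, by \cite[Lemma~3.2]{gmq2021sphere} recalled in the text, the conditional law of $\Gamma_{(U')^*}$ given $\Gamma \setminus \Gamma_{(U')^*}$ is a conditionally independent collection of \clekp's in the components of $(U')^*$; and the same holds for $\Gamma_{U^*}$ given $\Gamma \setminus \Gamma_{U^*}$. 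On the event $(U')^* \subseteq U$, these two descriptions refer to the \emph{same} set of loops filling the \emph{same} region, so the conditional law of the full loop configuration restricted to (the gasket in) $(U')^*$ agrees whether we view it as arising from $U$ or from $U'$.

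Next I would apply the Markovian property twice. Applied with the domain $U'$, it says that the conditional law of $(\metapproxres{\epsilon}{V}{\cdot}{\cdot}{\Gamma})_{V \in \metregions[U']}$ given $\Gamma \setminus \Gamma_{(U')^*}$ and the internal metrics in $\metregions[\C \setminus \ol{U'}]$ is a.s.\ measurable with respect to $(U')^*$. On the event $(U')^* \subseteq U$, every $V \in \metregions[U']$ in fact satisfies $\ol{V} \subseteq (U')^* \subseteq U$, so $\metregions[U'] \subseteq \metregions[U]$ on this event; thus the conditional law in question is determined by $(U')^*$ alone. Applied with the domain $U$, the Markovian property similarly gives that the conditional law of $(\metapproxres{\epsilon}{V}{\cdot}{\cdot}{\Gamma})_{V \in \metregions[U]}$ given the corresponding $\sigma$-algebra is measurable with respect to $U^*$; restricting to those $V$ with $\ol{V} \subseteq (U')^*$ and using that on our event $(U')^*$ is a union of connected components of $U^*$, the conditional law of $(\metapproxres{\epsilon}{V}{\cdot}{\cdot}{\Gamma})_{V \in \metregions[U']}$ given $U^*$ depends only on the components of $U^*$ contained in $(U')^*$, i.e.\ only on $(U')^*$. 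So both conditional laws, given the respective region, are the same measurable function of $(U')^*$, which is the claim.

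The step I expect to be the main obstacle is the careful bookkeeping needed to see that, on the event $(U')^* \subseteq U$, conditioning on $(U')^*$ and conditioning on $U^*$ genuinely give the same information about the internal metrics indexed by $\metregions[U']$: one has to check that the components of $U^*$ \emph{not} contained in $(U')^*$ contribute nothing, which uses that a region $V$ with $\ol{V} \subseteq (U')^*$ is unaffected by loops and metrics outside $(U')^*$ — this is precisely where the Markovian property (rather than mere measurability of the loop ensemble) is essential, since the internal metrics are not assumed to be determined by $\Gamma$. A secondary point requiring care is measurability: the event $\{(U')^* \subseteq U\}$ is measurable with respect to $\Gamma \setminus \Gamma_{(U')^*}$, so conditioning on it is harmless and commutes with the conditioning on $(U')^*$; I would state this explicitly to make the two-sided comparison rigorous.
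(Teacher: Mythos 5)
Your proposal heads in the right direction --- the engine of the argument is the Markovian property --- but there is a genuine gap, and it stems from a misreading of the geometry of $U^*$ and $(U')^*$.

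The paper's proof hinges on the single observation that on the event $E = \{(U')^*\subseteq U\}$ one in fact has the \emph{equality} $(U')^* = U^*$. (Since $U\subseteq U'$, we always have $\Gamma_{U^*}\subseteq\Gamma_{(U')^*}$ and hence $U^*\subseteq (U')^*$; the event $E$ forces the opposite inclusion, because a loop $\CL\subseteq U'$ with $\CL\not\subseteq U$ would produce points of $(U')^*$ outside $U$.) You instead work under the assumption of a strict inclusion --- your text refers to ``the components of $U^*$ \emph{not} contained in $(U')^*$'', which, besides having the inclusion backwards, presupposes that such components exist. Once one sees that $(U')^*=U^*$ on $E$, the proof collapses to two lines: conditioning on $(U')^*$ and on $U^*$ is the same conditioning, and the Markovian property (with the extra observation that $(U')^*$ is measurable with respect to $\Gamma\setminus\Gamma_{U^*}$) shows via the tower property that the conditional law given both regions is measurable with respect to $U^*$ alone, hence agrees with the conditional law given $U^*$.

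Without that equality, your argument does not close. You establish that each of the two conditional laws is a measurable function of $(U')^*$, but you never argue that they are the \emph{same} function; the final sentence asserting this is not supported by the preceding steps. Two kernels can both be measurable with respect to the same $\sigma$-algebra and still differ. The paper's use of the identity $(U')^*=U^*$ together with the tower property is precisely what identifies the two kernels, and this is the missing idea. A secondary problem: your assertion that ``every $V\in\metregions[U']$ in fact satisfies $\ol{V}\subseteq(U')^*$'' is false in general --- a region $V$ can have $\ol{V}\subseteq U'$ while being bounded by a loop that exits $U'$, in which case the portion of $\ol{V}$ lying on that loop is not in $(U')^*$. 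This claim is not needed once one has $(U')^*=U^*$, so the remedy is to prove and then use that equality rather than to try to route around it.
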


\begin{proof}
 Let $E$ be the event that $(U')^* \subseteq U$, and in particular $(U')^* = U^*$. Therefore on $E$ the conditional law of $(\metapproxres{\epsilon}{V}{\cdot}{\cdot}{\Gamma})_{V \in \metregions[U]}$ given $(U')^*$ and $U^*$ is the same as the conditional law of $(\metapproxres{\epsilon}{V}{\cdot}{\cdot}{\Gamma})_{V \in \metregions[U']}$ given $(U')^*$. On the other hand, by the Markovian property, the conditional law of $(\metapproxres{\epsilon}{V}{\cdot}{\cdot}{\Gamma})_{V \in \metregions[U]}$ given $(U')^*$ and $U^*$ is measurable with respect to $U^*$, and therefore the same as its conditional law given just $U^*$.
\end{proof}

We introduce another technical notation which will be used in the proofs of this paper. The reason is that the assumptions on the approximate \clekp{} metrics fail to produce estimates for scales smaller than $\epsilon$. For smaller scales we would like to use the bound~\eqref{eq:eps_bound_ass} for the approximation errors. However, the assumption~\eqref{eq:approx_error_asymp} only holds asymptotically, but in this paper we will need estimates that hold uniformly for all approximate \clekp{} metrics. Therefore we will introduce the following notation in the case $\epsilon > 0$. For $\ac{\epsilon} > 0$ fixed, we let
\begin{equation}
\label{eq:shortcutted_metric}
 \metapproxacres{\epsilon}{V}{x}{y}{\Gamma} = \inf_{(u_i)} \sum_i \begin{cases} \metapproxres{\epsilon}{V}{u_i}{u_{i+1}}{\Gamma} & \text{if}\quad \dpath[\ol{V}](u_i,u_{i+1}) \ge \epsilon,\\ \ac{\epsilon} & \text{if}\quad \dpath[\ol{V}](u_i,u_{i+1}) < \epsilon , \end{cases}
\end{equation}
where the infimum is taken over finite sequences $(u_1,\ldots,u_L)$ with $u_1 = x$, $u_L = y$. As $\epsilon \searrow 0$, the condition~\eqref{eq:approx_error_asymp} implies that $\metapproxres{\epsilon}{V}{\cdot}{\cdot}{\Gamma} = \metapproxacres{\epsilon}{V}{\cdot}{\cdot}{\Gamma}$ in components away from $\Upsilon_\Gamma \setminus \ol{V}$ with probability converging to $1$.

For the metric $\mettapprox{\lambda^{-1}\epsilon}{\cdot}{\cdot}{\Gamma} = \metapprox{\epsilon}{\lambda\cdot}{\lambda\cdot}{\lambda\Gamma}$ in Lemma~\ref{le:scaled_metric} we set $\act{\lambda^{-1}\epsilon} = \ac{\epsilon}$ and $\mettapproxacres{\lambda^{-1}\epsilon}{V}{\cdot}{\cdot}{\Gamma} = \metapproxacres{\epsilon}{\lambda V}{\lambda\cdot}{\lambda\cdot}{\lambda\Gamma}$.

Note that the family $(\metapproxacres{\epsilon}{V}{\cdot}{\cdot}{\Gamma})_{V \in \metregions}$ also satisfies the compatibility and the generalized parallel law. To see the compatibility, observe that the series law implies that if a sequence $(u_i)$ in~\eqref{eq:shortcutted_metric} contains a point $u_{i_0}$ such that for some $u \in (V' \setminus V) \cap \Upsilon_\Gamma$, the corresponding point $z$ does not separate $u_{i_0}$ from $u$, then $(u_i)$ is not optimal in~\eqref{eq:shortcutted_metric}. To see the generalized parallel law, observe that for each sequence $(u_i)$ in~\eqref{eq:shortcutted_metric} there is a pair $u_i,u_{i+1}$ that are separated by $\{z_1,\ldots,z_N\}$.

\subsubsection{Examples}
\label{se:met_examples}

\noindent\textbf{Approximation of geodesic metric (chemical distance).}  To construct a geodesic metric on the \clekp{} gasket, we use \emph{geodesic approximation schemes} which are defined in Section~\ref{se:geodesic_metric}. Suppose that for each admissible path $\gamma$ we can define an approximation $\lebneb{\epsilon}(\gamma)$ of its fractal length. We let $\metapproxres{\epsilon}{V}{x}{y}{\Gamma}$ be the infimum of $\lebneb{\epsilon}(\gamma)$ over all admissible paths within $\ol{V}$ from $x$ to $y$. Examples for such approximations include the following (see Example~\ref{ex:geodesic_approx}).

\emph{Example 1:} Let $\lebneb{\epsilon}(\gamma)$ be the Lebesgue measure of the $\epsilon$-neighborhood of $\gamma$. (Note that we can set $\ac{\epsilon} = 4\pi\epsilon^2$.)

\emph{Example 2:} Let $\lebneb{\epsilon}(\gamma)$ be the largest number $N$ such that there exist $t_1 < t_2 < \cdots < t_N$ with $\abs{\gamma(t_j)-\gamma(t_{j-1})} \ge \epsilon$ for each $j$. (Note that we can set $\ac{\epsilon} = 1$.)

We will show in \cite{my2025geouniqueness} that all geodesic approximation schemes converge to the same limit (up to a multiplicative constant). This is the \emph{intrinsic metric on the \clekp{} gasket}, also called \emph{chemical distance}. In \cite{dmmy2025percolation} we will show that the chemical distance metric on critical percolation converges to this metric.

\noindent\textbf{Effective resistance.}  We will explain in \cite{my2025resuniqueness} that for certain graph approximations of the \clekp{} gasket, the associated effective resistance metrics satisfy our definitions of approximate \clekp{} metrics. As we will explain there, the extra restrictions in the compatibility and monotonicity axioms are needed for these approximations. The scaling limit of these approximations will be the unique (up to a multiplicative constant) \emph{\clekp{} resistance metric}. This will give rise to the \emph{diffusion process on the \clekp{} gasket}. We will further show in \cite{dmmy2025percolation}, that the random walk on a critical percolation cluster converges to this diffusion process.

\noindent\textbf{Subsequential limits.}  We show in Section~\ref{se:construction_metric} that subsequential limits of approximate \clekp{} metrics give rise to \clekp{} metrics with $\epsilon = 0$.

\subsection{Topology}
\label{subsec:topology}

We will now describe the topology with respect to which we will prove tightness.  In the case of the simple $\CLE$s \cite{m2021tightness}, tightness was proved with respect to a topology which was defined in terms of the Euclidean metric.  In the case of the non-simple $\CLE$s, it is natural to consider the ``prime ends'' in the gasket rather than points since each double point of a loop corresponds to two ``prime ends'' which have a positive distance from each other. Therefore the associated $\CLE$ metric cannot be continuous with respect to the ordinary Euclidean metric. This leads us to view the gasket as an abstract metric space equipped with an embedding into the plane. We equip the set of CLE gaskets with the Gromov-Hausdorff-function topology which is defined in Appendix~\ref{app:ghf}. (This topology has also been considered in \cite{bck2017tightness} in the context of trees embedded in the plane.) Moreover, we also need to keep track of the separation points in the gasket which play an essential role in the definition of the CLE metrics in Section~\ref{se:assumptions}. Since the usual Gromov-Hausdorff topology does not see which points are separating, we add in this extra information by encoding them as additional subsets of the space.

Suppose that~$\CL$ is a random non-self-crossing loop in $\C$, and let $D$ be the regions surrounded by~$\CL$. Given~$\CL$, let $\Gamma_D$ be a conditionally independent \clekp{} in each connected component of $D$, and let $\Gamma = \{\CL\} \cup \Gamma_D$.  Let~$\Upsilon_\Gamma$ be the gasket of $\Gamma_D$. For each open, simply connected $U \subseteq \C$, we consider the gasket $\Upsilon_{\Gamma_{U^*}}$ of $\Gamma_{U^*}$ which consists of conditionally independent \clekp{} in each connected component of $U^*$. We view~$\Upsilon_{\Gamma_{U^*}}$ as the tuple $(X,\dpath[U],\Pi)$ where $(X,\dpath[U])$ is the metric space completion of $\Upsilon_{\Gamma_{U^*}} \setminus \bigcup \Gamma$ with respect to the metric $\dpath[U](x,y) = \inf\{ \diam(\gamma) : \gamma \in \paths{x}{y}{U}{\Gamma} \}$ in~\eqref{eq:dpath}, and $\Pi\colon X \to \ol{U^*}$ is the embedding of the gasket into the plane (where double points of loops in $\Gamma$ have two distinct preimages in $X$). We encode the separation points in $X$ by the sets
\[ A_n = \{ (x,y,z_1,\ldots,z_n) \in X^{2+n} : \text{The set $\{z_1,\ldots,z_n\}$ separates $x$ from $y$} \} . \]
Finally, we regard each $\metapproxres{\epsilon}{U}{\cdot}{\cdot}{\Gamma}$ as a function defined on $X^2$. We can naturally extend $\Pi$ to $X^2$ by $\Pi(x,y) = (\Pi(x),\Pi(y))$. We view $(X^2,\dpath[U],\Pi,\Fd)$ as the space $(X^2,\dpath[U])$\footnote{Here $\dpath[U]((x,y),(x',y')) = \dpath[U](x,x')+\dpath[U](y,y')$ denotes the product metric.} equipped with the function $(\Pi,\Fd)$. We equip the space of such tuples with the Gromov-Hausdorff-function topology introduced in Appendix~\ref{app:ghf}.\footnote{In the GHf topology we can view the compact subsets $A_n$ as constant $1$ functions defined on $A_n$.}

In this paper, we will identify $(\Upsilon_{\Gamma_{U^*}}, \Fd)$ with the infinite tuple $((X^2,\dpath[U],\Pi,\Fd),(X^{2+n},A_n)_{n\in\N})$ and equip the space of such tuples with the product topology. Note that by Lemma~\ref{le:ghf_subspace}, when taking limits of such tuples in the product topology, the limiting spaces remain compatible.

\begin{lemma}\label{le:ghf_subspace_proj}
 Suppose $(X_n,d_n,\Pi_n) \to (X,d,\Pi)$ in the GHf topology where $\Pi_n\colon X_n \to \C$, $\Pi\colon X \to \C$ are continuous. Let $V \subseteq \C$ be open, and let $X'_n = \ol{\Pi_n^{-1}(V)}$, $X' = \ol{\Pi^{-1}(V)}$. Suppose that $(X'_n,d_n,\Pi_n) \to (\wt{X}',\wt{d},\wt{\Pi})$ in the GHf topology. Then there is an isometric embedding $\psi\colon X' \to \wt{X}'$ such that $\Pi\big|_{X'} = \wt{\Pi}\circ\psi$.
\end{lemma}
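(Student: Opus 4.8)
The statement is essentially a functoriality property of the GHf topology with respect to restriction to a closed preimage of an open set, and the natural approach is to reduce it to the definition of GHf convergence via correspondences (or via embeddings into a common metric space). The plan is as follows. First I would fix a realization of the GHf convergence $(X_n,d_n,\Pi_n) \to (X,d,\Pi)$: by the characterization of GHf convergence in Appendix~\ref{app:ghf}, there exist a compact metric space $(Z,d_Z)$ and isometric embeddings $\iota_n\colon X_n \to Z$, $\iota\colon X \to Z$ such that $\iota_n(X_n) \to \iota(X)$ in the Hausdorff distance on $Z$, and the functions $\Pi_n\circ\iota_n^{-1}$ converge uniformly (in the appropriate sense for functions defined on Hausdorff-converging sets) to $\Pi\circ\iota^{-1}$. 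Similarly, using the assumed convergence $(X'_n,d_n,\Pi_n) \to (\wt X',\wt d,\wt\Pi)$, after passing to a further common space I may assume there are isometric embeddings $\jmath_n\colon X'_n \to Z$, $\jmath\colon \wt X' \to Z$ with $\jmath_n(X'_n) \to \jmath(\wt X')$ in Hausdorff distance and $\wt\Pi\circ\jmath^{-1}$ the uniform limit of $\Pi_n\circ\jmath_n^{-1}$. Since $X'_n \subseteq X_n$ as metric spaces (the metric $d_n$ restricted), I would further arrange, by an Arzelà–Ascoli / compactness argument on the space of correspondences between $\iota_n(X'_n)$ and $\jmath_n(X'_n)$, that along a subsequence the identity correspondence on $X'_n$ converges; concretely, after relabeling $Z$ once more I may take $\iota_n\big|_{X'_n} = \jmath_n$ for all $n$.

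With these embeddings in place, the proof is a limiting argument. For each $n$, $\iota_n(X'_n) = \iota_n(\ol{\Pi_n^{-1}(V)}) \subseteq \iota_n(X_n)$. As $n\to\infty$, $\iota_n(X_n) \to \iota(X)$ and $\iota_n(X'_n) = \jmath_n(X'_n) \to \jmath(\wt X')$, both in Hausdorff distance in $Z$; hence $\jmath(\wt X') \subseteq \iota(X)$. This inclusion of subsets of $Z$ gives an isometric map $\psi_0\colon \wt X' \to X$, namely $\psi_0 = \iota^{-1}\circ\jmath$, and I want its inverse-direction analogue: I claim $\iota(X') \subseteq \jmath(\wt X')$, which then yields the desired isometric embedding $\psi = \jmath^{-1}\circ\iota\big|_{X'}\colon X' \to \wt X'$. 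To see $\iota(X') \subseteq \jmath(\wt X')$: take $x \in X' = \ol{\Pi^{-1}(V)}$, so there are $x^{(k)} \in \Pi^{-1}(V)$ with $x^{(k)} \to x$; pick $p^{(k)}_n \in X_n$ with $\iota_n(p^{(k)}_n) \to \iota(x^{(k)})$. Because $\Pi_n\circ\iota_n^{-1} \to \Pi\circ\iota^{-1}$ uniformly and $\Pi(x^{(k)}) \in V$ with $V$ open, for each fixed $k$ we have $\Pi_n(p^{(k)}_n) \in V$ for all $n$ large, i.e.\ $p^{(k)}_n \in \Pi_n^{-1}(V) \subseteq X'_n$ for large $n$; hence $\iota_n(p^{(k)}_n) = \jmath_n(p^{(k)}_n)$ lies in $\jmath_n(X'_n)$, and since $\jmath_n(X'_n) \to \jmath(\wt X')$ in Hausdorff distance, the limit point $\iota(x^{(k)})$ lies in the closed set $\jmath(\wt X')$. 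Letting $k\to\infty$ and using that $\jmath(\wt X')$ is closed gives $\iota(x) \in \jmath(\wt X')$, as desired. A diagonal argument over $k$ handles the case where one must choose $n$ depending on $k$.

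Finally, I would check the compatibility of the embeddings with the boundary maps: by construction $\wt\Pi\circ\jmath^{-1}$ is the uniform limit of $\Pi_n\circ\iota_n^{-1}\big|_{\iota_n(X'_n)}$, while $\Pi\circ\iota^{-1}$ is the uniform limit of $\Pi_n\circ\iota_n^{-1}$ on all of $\iota_n(X_n)$; restricting the latter to $\iota_n(X'_n) = \jmath_n(X'_n)$ and taking limits, these two limits must agree on $\jmath(\wt X') = \iota(X')$, i.e.\ $\wt\Pi\circ\psi = \Pi$ on $X'$, which is exactly $\Pi\big|_{X'} = \wt\Pi\circ\psi$. This completes the proof.

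\textbf{Main obstacle.} The technical heart is the bookkeeping that lets one realize \emph{all four} spaces ($X_n$, $X$, $X'_n$, $\wt X'$) and \emph{both} limits inside a single ambient compact space $Z$ simultaneously, in such a way that $\iota_n$ restricted to $X'_n$ coincides with $\jmath_n$. This requires a compactness argument (over correspondences, or equivalently over the choices of ambient space) together with a diagonal extraction along a subsequence; a priori one only gets this along a subsequence, but since the conclusion is about the \emph{limiting} spaces $X$, $\wt X'$ — which are fixed — the subsequential statement upgrades to the full statement. The openness of $V$ is used precisely once, and crucially: it guarantees that points approximating a point of $\Pi^{-1}(V)$ eventually land in $\Pi_n^{-1}(V)$, which is what ties $X'$ to the $X'_n$; the reverse inclusion (into $\wt X'$) would fail for a general (non-open) $V$, and indeed the analogous statement for $V$ closed would require $\Pi$ to be, in a suitable sense, an open map, which is not assumed.
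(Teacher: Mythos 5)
Your argument is correct and follows essentially the same route as the paper: embed everything in a common compact space so that Hausdorff convergence and $d_\infty(\Pi_n,\Pi)\to 0$ hold, use the openness of $V$ to produce $x_n\in\Pi_n^{-1}(V)$ approximating any $x\in\Pi^{-1}(V)$, and conclude $X'=\ol{\Pi^{-1}(V)}$ lies in the Hausdorff limit of the $X'_n$, which is identified with $\wt X'$. The one organizational difference is that the paper outsources the "realize both limits compatibly and identify the subsequential Hausdorff limit of the $X'_n$ with $\wt X'$" step to Lemma~\ref{le:ghf_subspace} (built on Lemma~\ref{le:ghf_common_embedding} and Proposition~\ref{pr:ghf_positive}), while you reconstruct it by hand via a correspondence/Arzel\`a--Ascoli argument to force $\iota_n|_{X'_n}=\jmath_n$ --- slightly heavier than needed, since one can simply \emph{define} $\jmath_n:=\iota_n|_{X'_n}$ and then identify the resulting Hausdorff limit with $\wt X'$ by uniqueness of GHf limits.
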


\begin{proof}
 By Lemma~\ref{le:ghf_subspace}, $(\wt{X}',\wt{d},\wt{\Pi})$ is isometric to a subspace of $X'$ which we will identify with $\wt{X}'$. We need to show that $X' \subseteq \wt{X}'$. It suffices to show that $\Pi^{-1}(V) \subseteq \wt{X}'$. By Lemma~\ref{le:ghf_common_embedding}, we can assume that $(X_n,d_n)$ and $(X,d)$ are embedded in a compact space $(W,d_W)$ such that $d_\infty(\Pi,\Pi_n) \to 0$. Then, for each $x \in \Pi^{-1}(V)$ there is a sequence $x_n \to x$ with $\Pi_n(x_n) \to \Pi(x)$. Since $V$ is open, we have $x_n \in \Pi_n^{-1}(V)$ for large $n$. This shows the claim.
\end{proof}

\begin{lemma}\label{le:gasket_compact}
 Suppose that we have the setup above with $\CL$ being a loop of a \clekp{} in $\D$. Almost surely, for each open, simply connected $U \subseteq \D$ and $V \Subset U$, the space $(\Upsilon_{\Gamma_{U^*}} \cap \ol{V}, \dpath[U])$ equipped with the topology specified above is compact.
\end{lemma}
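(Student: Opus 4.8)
The plan is to show that the metric space $(\Upsilon_{\Gamma_{U^*}} \cap \ol{V}, \dpath[U])$ is complete and totally bounded. Completeness is essentially built into the construction: $(X,\dpath[U])$ was defined as a metric completion, so it suffices to check that $\Upsilon_{\Gamma_{U^*}} \cap \ol{V}$ is a closed subset of that completion. Since $\Pi$ is continuous and $V \Subset U$, the preimage $\Pi^{-1}(\ol{V})$ is closed in $X$, hence complete; and $\Upsilon_{\Gamma_{U^*}} \cap \ol{V}$ differs from this preimage only by the points of $\ol{V}$ lying on loops of $\Gamma_{U^*}$, which are already included in the completion as prime ends. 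So the real content is \emph{total boundedness}.

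First I would fix $\delta > 0$ and argue that only finitely many loops of $\Gamma_{U^*}$ have Euclidean diameter at least $\delta$ and intersect $\ol{V}$; this is the standard local finiteness property of $\CLE_{\kappa'}$ (each connected component of $U^*$ carries an independent $\CLE_{\kappa'}$, and across the compactly contained region $\ol{V}$ there are a.s.\ finitely many such macroscopic loops). Removing the filled interiors of these finitely many large loops from $\ol{V}$ leaves a set which, together with those loop boundaries, can be covered by finitely many Euclidean balls of small radius; the key point is that within each such small ball, \emph{all} remaining loops (and hence the remaining gasket) have small $\dpath[U]$-diameter. Here is where the distinction between Euclidean distance and $\dpath[U]$ matters: two gasket points close in the plane but on opposite sides of a pinch point of a loop can be far in $\dpath[U]$. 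So the covering argument must be run not by Euclidean balls alone but by the connected components cut out by the macroscopic loops: having cut along the finitely many loops of diameter $\ge \delta$, each remaining piece is surrounded by loops of diameter $< \delta$, and an admissible path staying in such a piece has Euclidean diameter $< C\delta$, giving a $\dpath[U]$-bound.

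The main obstacle I anticipate is making the last step quantitative: bounding the $\dpath[U]$-diameter of a ``small'' region in terms of $\delta$. One cannot simply say a region of Euclidean diameter $O(\delta)$ has $\dpath[U]$-diameter $O(\delta)$, because admissibility forces paths to go around loops, potentially lengthening the Euclidean diameter of the path itself — but note $\dpath[U]$ is defined via $\diamE(\gamma)$, the Euclidean diameter of the path, not its length, so a path winding around a small loop still has small Euclidean diameter provided the loop it winds around is small. Thus the estimate reduces to: any two points of the gasket lying in a common connected component of $\ol{V}$ minus the $\ge\delta$-diameter loops are joined by an admissible path of Euclidean diameter $\lesssim \delta$. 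I would prove this by taking a Euclidean near-geodesic between the two points and surgically pushing it off each small loop it crosses; each surgery increases the Euclidean diameter by at most the diameter of that loop, which is $<\delta$, and one checks (again using local finiteness, now at every scale $\delta/2, \delta/4,\dots$, or via a Borel–Cantelli / sum-of-diameters argument for $\CLE_{\kappa'}$) that the total of such corrections is finite and $\to 0$ as $\delta \to 0$. Combining: given $\delta$, cover by the finitely many macroscopic loop boundaries (themselves compact, so finitely many $\delta$-balls each) plus finitely many components of small $\dpath[U]$-diameter, yielding a finite $C\delta$-net. Letting $\delta \to 0$ gives total boundedness, and with completeness this gives compactness.
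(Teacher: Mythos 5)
Your proof splits into completeness (fine, same as the paper) and total boundedness, and it is the latter where the approach diverges from the paper and where there is a genuine gap. The paper covers $\ol V$ by $m = O(r^{-4})$ Euclidean balls $B_E(x_i,r^2)$ and, inside each, considers the equivalence classes of points in $\Upsilon_\Gamma \cap B_E(x_i,r^2)$ under admissible connectivity \emph{within the concentric larger ball} $B_E(x_i,r)$. Each such class automatically has $\dpath[U]$-diameter $\le 4r$, and the number of classes per ball is bounded (with probability $1-O(r)$) by the number of crossings of the annulus $A(x_i,r^2,r)$ by the CLE, which is controlled by the annulus-crossing estimate from \cite[Lemma~4.4]{amy-cle-resampling}. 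So the whole argument rests on a crossing estimate at a fixed pair of scales $(r^2,r)$, not on removing large loops.

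Your approach — discard loops of diameter $\ge\delta$ and claim the remainder decomposes into pieces of small $\dpath[U]$-diameter — does not work as stated. First, the connected components of $\ol V$ minus the filled $\ge\delta$-diameter loops need not be Euclidean-small at all (a region of $\ol V$ may simply contain no macroscopic loop), so ``an admissible path staying in such a piece has Euclidean diameter $< C\delta$'' fails outright. Second, and more seriously, even after restricting to a small Euclidean ball, the gasket inside it can split into many distinct $\dpath[U]$-far clusters through cascades of \emph{small} loops nested near a pinch, and removing the $\ge\delta$ loops does nothing about these. Your surgery argument is meant to handle this, but it tacitly requires that the sum of Euclidean diameters of the small loops a near-geodesic must be pushed off of is bounded by $O(\delta)$; each detour can push the path a further $\diamE(\CL_j)$ away from the original geodesic, and the cumulative displacement controls the Euclidean diameter of the surgically modified path, not the individual detours. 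For $\CLE_{\kappa'}$ the number of loops of diameter $\ge\epsilon$ in a fixed region grows polynomially in $\epsilon^{-1}$ with exponent strictly larger than $1$, so the relevant sum of diameters is not $o(1)$ and your ``sum-of-diameters / Borel--Cantelli'' remark does not close the gap. What the argument is really missing is precisely the content of the paper's annulus-crossing count: the quantity to bound is not a sum of loop diameters but the number of disjoint crossings of $A(x_i,r^2,r)$, since that is what counts the distinct admissible-connectivity classes inside $B_E(x_i,r^2)$.
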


\begin{proof}
 The space $\Upsilon_{\Gamma_{U^*}} \cap \ol{V}$ is complete as a closed subspace of a complete metric space. We need to show that the space is totally bounded.
 
 We denote with the subscript `E' distances with respect to the Euclidean metric and with the subscript `path' distances with respect to the $\dpath$ metric.
 
 Suppose that $0<r<\distE(V,\partial U)$. Let $\{x_i\}_{i=1,\dots,m}$ be such that $\Upsilon_\Gamma \cap \ol{V} \subseteq \bigcup_{i=1}^m B_{\mathrm E}(x_i, r^{2})$. For each $y\in B_{\mathrm E}(x_i, r^{2})$ let
 \[ V_{x_i}(y)=\left\{z\in B_{\mathrm E}(x_i, r^{2}) \ :\  \dpath[B_{\mathrm E}(x_i,r)](y,z)<\infty\right\} . \]
 Note that $V_{x_i}(y)\subseteq B_{\mathrm{path}}^{U}(y, 2r)$ for each  $i=1,\ldots,m$. Hence, it suffices to argue that there exists a constant $K=K(r)$ such that a.s.\ for every $i=1,\ldots,m$ there are $y_1,\ldots, y_K\in \Upsilon_\Gamma \cap B_{\mathrm E}(x_i, r^{2})$ so that  $\Upsilon_\Gamma \cap B_{\mathrm E}(x_i, r^{2})=\bigcup_{j=1}^K V_{x_i}(y_j)$.
 
 Suppose that $K$ is the minimal number such that this holds. Then the CLE contains at least $2K$ crossings of $A_{\mathrm E}(x_i, r^{2}, r)$ for some $i$. By \cite[Lemma~4.4]{amy-cle-resampling} we can choose $K$ large enough so that for each $x \in r^{2}\Z^2 \cap V$ the probability of $2K$ crossings of $A_{\mathrm E}(x, 2r^{2}, r)$ is at most $O(r^5)$. Taking a union bound we conclude that on an event with probability $1-O(r)$ this does not occur for any $x_i$, $i=1,\ldots,m$. Hence, a.s.\ for $r$ small enough we can cover $\Upsilon_{\Gamma_{U^*}} \cap \ol{V}$ with $O(Kr^{-4})$ balls of radius $r$ w.r.t.\ $\dpath[U]$.
\end{proof}

\subsection{Main results}
\label{subsec:main_statement}

In our main theorem statements we consider the following setup. Let $\kappa' \in (4,8)$. Let $\Gamma_\D$ be a nested $\CLE_{\kappa'}$ in $\D$, let $\CL$ be the outermost loop of $\Gamma_\D$ that surrounds~$0$, and let $D$ be the regions surrounded by $\CL$. Let $\Gamma_D$ be the loops of $\Gamma_\D$ contained in $\ol{D}$, and let $\Gamma = \{\CL\} \cup \Gamma_D$. Let $\metapprox{\epsilon}{\cdot}{\cdot}{\Gamma} = (\metapproxres{\epsilon}{V}{\cdot}{\cdot}{\Gamma})_{V \in \metregions}$ be a family of approximate \clekp{} metrics coupled with $\Gamma$, with some fixed constants $\cserial,\cparallel(N)$, and assume that $(\metapprox{\epsilon}{\cdot}{\cdot}{\Gamma})_{\epsilon \in (0,1]}$ is a good approximation scheme in the sense of Definition~\ref{def:good_approximation}.

\begin{remark}\label{rm:domain_choice}
 The particular choice of the ambient domain $\D$ is not important. It is just useful in the proofs to work with a bounded domain. In the proofs we only need to consider small regions $V \in \metregions$ in the interior of $\D$, and therefore we can equally consider an arbitrary simply connected domain in place of $\D$, or also a whole-plane \clekp{}.
 
 Similarly, the choice of the cluster is not important. We will see in the proof that our results hold for all interior clusters of $\Gamma_\D$.
 
 On the other hand, it \emph{is} important to consider interior clusters.  The reason for this is that one expects that geodesics in the $\CLE_{\kappa'}$ gasket are fractal and have dimension strictly larger than $1$.  If we consider a region $D$ with smooth boundary, then it is natural to expect that the metric will degenerate along $\partial D$ in the (subsequential) limit as $\epsilon \to 0$.
\end{remark}

\begin{theorem}\label{th:tightness_metrics}
Let $\cserial,\cparallel(N)$ be given. Consider the setup described in the preceding paragraph. Suppose that $(\metapprox{\epsilon}{\cdot}{\cdot}{\Gamma})_{\epsilon \in (0,1]}$ is a good approximation scheme as in the Definition~\ref{def:good_approximation}. Then there exists a family of normalizing constants $\median{\epsilon} > 0$ such that for each subsequence $(\epsilon_n)$ with $\epsilon_n \to 0$, if $V \in \metregions$ is chosen according to some probability distribution given $\Gamma$, the laws of $\median{\epsilon_n}^{-1} \metapproxres{\epsilon_n}{V}{\cdot}{\cdot}{\Gamma}$ restricted to subsets with positive $\dpath$-distance to $\Upsilon_\Gamma \setminus \ol{V}$ are tight with respect to the topology described in Section~\ref{subsec:topology}.
\end{theorem}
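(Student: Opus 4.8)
The plan is to reduce tightness of the full tuple to tightness of the rescaled internal metric $\median{\epsilon}^{-1}\metapproxres{\epsilon}{V}{\cdot}{\cdot}{\Gamma}$, viewed as a (possibly discontinuous) function on the base space $(\Upsilon_{\Gamma_{U^*}}\cap\ol{V},\dpath)$, which crucially does not depend on $\epsilon$, and then to verify a Gromov-Hausdorff-function precompactness criterion (cf.\ Appendix~\ref{app:ghf}): for each $\delta>0$ there should be, with probability at least $1-\delta$ uniformly in $\epsilon$ along the subsequence, (a) a deterministic bound on the number of radius-$\eta$ $\dpath$-balls covering the base space, (b) a deterministic bound on $\sup\,\median{\epsilon}^{-1}\metapproxres{\epsilon}{V}{\cdot}{\cdot}{\Gamma}$, and (c) a deterministic modulus $\omega$ with $\omega(\rho)\to0$ as $\rho\to0$ such that $\median{\epsilon}^{-1}\metapproxres{\epsilon}{V}{x}{y}{\Gamma}\le\omega(\rho)$ whenever $\dpath(x,y)\le\rho$. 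Point (a) is the quantitative content of the proof of Lemma~\ref{le:gasket_compact} (counting annulus crossings of the underlying $\CLE_{\kappa'}$ via \cite[Lemma~4.4]{amy-cle-resampling} and a union bound). The embedding $\Pi$ needs no separate control, being $1$-Lipschitz from $\dpath$ to the Euclidean metric with values in the bounded set $\ol{\D}$, and the sets $A_n$ are $\Gamma$-measurable, so once the base space is controlled so are these components. Since all the estimates below are uniform over points at $\dpath$-distance $\ge r$ from $\Upsilon_\Gamma\setminus\ol{V}$, one obtains tightness of the restricted object for each $r>0$ and then intersects over a sequence $r\downarrow0$. So only (b) and (c) remain, and both come down to comparing crossing distances of $\CLE_{\kappa'}$ gasket regions at scale $\epsilon$ with $\median{\epsilon}$.

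\textbf{Cross-scale regularity, hence (c).} Take $\median{\epsilon}$ to be the normalization of Section~\ref{se:intersections_setup}, which one first checks is comparable to $\wh{\mathfrak m}_\epsilon$ for good approximation schemes, hence comparable to the median of the $\metapprox{\epsilon}{\cdot}{\cdot}{\Gamma}$-crossing distance across a unit-scale region between two intersecting $\CLE_{\kappa'}$ loops. The key lemma is that there are $c,C>0$ so that for $0<\rho\le1$ with $\cserial\epsilon\le\rho$, the median of the $\metapprox{\epsilon}{\cdot}{\cdot}{\Gamma}$-crossing distance across a region between two intersecting $\CLE_{\kappa'}$ loops of Euclidean diameter $\rho$ is at most $C\rho^{c}\median{\epsilon}$. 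This is a series-law plus law-of-large-numbers argument: by $\CLE_{\kappa'}$ self-similarity (\cite[Lemma~3.2]{gmq2021sphere}), across a diameter-$1$ crossing region one finds $n\gtrsim\rho^{-c}$ disjoint scale-$\rho$ crossing subregions in series, so the \textbf{series law} forces $\median{\epsilon}\gtrsim\mathrm{med}(\sum_{i=1}^{n}X_i)$, where — conditionally on the macroscopic configuration, by the \textbf{Markovian property} and \textbf{translation invariance} of the metric together with $\CLE_{\kappa'}$ self-similarity — the $X_i$ are independent with medians comparable to that of a scale-$\rho$ crossing; since $\mathrm{med}(\sum_i X_i)\ge\tfrac n4\,\mathrm{med}(X_1)$ by the law of large numbers (vacuous if $\mathrm{med}(X_1)=0$), the bound follows. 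Granting this, (c) follows: given $x,y\in\ol{V}\cap\Upsilon_\Gamma$ with $\dpath(x,y)<\rho$, there is a $\CLE_{\kappa'}$ loop (or short chain of loops) cutting off a diameter-$O(\rho)$ region containing $x,y$ and separated from the rest of $V$; apply the crossing estimate there, boost the median bound to a high-probability bound (see next paragraph), and transport it to $\metapproxres{\epsilon}{V}{\cdot}{\cdot}{\Gamma}$ using \textbf{monotonicity} and \textbf{compatibility} (moving $x,y$ by at most $\epsilon$); below scale $\epsilon$ use~\eqref{eq:approx_error_asymp} instead, working throughout with the shortcutted metric~\eqref{eq:shortcutted_metric} and absorbing $\ac{\epsilon}$ via~\eqref{eq:eps_bound_ass}. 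A union bound over an $\eta$-net of $\Upsilon^r:=\{x:\dpath(x,\Upsilon_\Gamma\setminus\ol{V})\ge r\}$ and the triangle inequality for $\metapproxres{\epsilon}{V}{\cdot}{\cdot}{\Gamma}$ then upgrade this to a uniform modulus $\omega(\rho)=C\rho^{c}$ on $\Upsilon^r$.

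\textbf{Macroscopic upper bound, hence (b).} Here one wants $\metapproxres{\epsilon}{V}{x}{y}{\Gamma}\le C(r)\median{\epsilon}$ with probability tending to $1$, uniformly over $x,y\in\Upsilon^r$. I would prove this by a multi-scale percolation argument: tile the $r/2$-interior of $V$ by dyadic squares down to scale $\epsilon$, and call a square at scale $2^{-k}$ \emph{good} if a suitable $\CLE_{\kappa'}$-gasket crossing of it can be realized with $\metapprox{\epsilon}{\cdot}{\cdot}{\Gamma}$-cost at most a constant times the scale-$2^{-k}$ crossing bound from the previous paragraph. By definition of $\median{\epsilon}$ as a median, a single candidate crossing is good with probability $\ge\tfrac12$; using the self-similar ``parallel'' structure of $\CLE_{\kappa'}$ (several near-independent candidate necklaces in each square, made independent via the Markovian property) this is boosted to probability $\ge1-\delta_0$ for any fixed $\delta_0$, after which a Liggett-Schonmann-Stacey-type stochastic domination (good squares have high probability and depend only on the $\CLE_{\kappa'}$ near them) makes the good squares percolate, yielding w.h.p.\ a connected hierarchical chain of good squares joining $x$ to $y$. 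One then concatenates the corresponding crossings along the chain — using \textbf{compatibility} and \textbf{monotonicity} to realize the resulting bound for $\metapproxres{\epsilon}{V}{\cdot}{\cdot}{\Gamma}$ itself and the series law to bound each step — and sums the contributions across scales, which, for a suitably greedily chosen chain, decay geometrically and sum to $O_r(\median{\epsilon})$; a union bound over an $\eta$-net makes this uniform over $\Upsilon^r$.

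\textbf{Main obstacle.} The hard part will be the macroscopic upper bound: constructing, uniformly over all $V\in\metregions$ and all approximate $\CLE_{\kappa'}$ metrics with the prescribed constants $\cserial,\cparallel$, a path through the gasket of $\metapprox{\epsilon}{\cdot}{\cdot}{\Gamma}$-cost $O(\median{\epsilon})$, together with the probabilistic bookkeeping — boosting the per-square probability, percolation of good squares, and the summation over scales, which requires genuine care to avoid a spurious logarithmic factor. For $\kappa'\in(4,8)$ this really uses the fractal geometry of the gasket — necklaces of mutually intersecting loops, non-degeneracy of annulus-crossing and intersection events at every scale, and independence across scales from the Markovian structure — and admits no soft shortcut. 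A persistent secondary difficulty is that the axioms say nothing below scale $\epsilon$, so everything must be run with the shortcutted metric~\eqref{eq:shortcutted_metric} and sub-$\epsilon$ contributions absorbed through~\eqref{eq:eps_bound_ass}--\eqref{eq:approx_error_asymp}, with all constants checked to be uniform over $V$ and over the (non-unique) approximate metrics, so that the conclusion is genuine tightness rather than tightness of a single scheme.
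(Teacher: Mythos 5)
Your reduction of Theorem~\ref{th:tightness_metrics} to (a) a covering bound on the base space $(\Upsilon_{\Gamma}\cap\ol V,\dpath)$, (b) a macroscopic bound, and (c) a $\dpath$-modulus of continuity, via the Gromov--Hausdorff-function tightness criterion, is correct and matches the paper: (a) is Lemma~\ref{le:gasket_compact}, the criterion is Proposition~\ref{pr:ghf_tightness_hoelder_approx}, and (b) plus (c) are packaged into the uniform H\"older estimate of Proposition~\ref{prop:interior_tightness}. Your ``cross-scale regularity'' lemma is also correct: it is Lemma~\ref{le:median_scaling}, and its proof is indeed the series-law self-similarity argument you sketch (cf.\ Lemmas~\ref{le:a_priori_int_fl}, \ref{le:a_priori_disc}). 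The handling of sub-$\epsilon$ scales via the shortcutted metric~\eqref{eq:shortcutted_metric} and~\eqref{eq:eps_bound_ass}--\eqref{eq:approx_error_asymp} also matches the paper.

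The gap is in the step you yourself flag as the main obstacle: upgrading the \emph{quantile} bound of Lemma~\ref{le:median_scaling} (a bound on the median of a single scale-$\rho$ crossing) to the \emph{uniform} tail control needed for a modulus of continuity. The paper's chaining in Section~\ref{subsec:thm1_proof} requires, for each scale $\delta$ and point $z$, that the ``bad'' event $G_{z,\delta}^c$ has probability $O(\delta^b)$ for \emph{every} $b>0$ (Lemma~\ref{le:tightness_loop_chain}), so that the union bound over the polynomially-many relevant scales and locations converges. This superpolynomial tail is precisely Proposition~\ref{prop:intersection_crossing_exponent}, whose proof occupies all of Sections~\ref{se:intersection_exponent} and~\ref{se:bubble_exponent} via the bootstrap of the exponent $\bestexp$. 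Your proposal omits this entirely and substitutes an LLN-plus-percolation argument, and I do not believe that substitute works, for two structural reasons. First, the generalized parallel law is a \emph{lower} bound $\cparallel(N)\Fd(x,y)\ge\min_i\Fd(x,z_i)$: given that $\Fd(x,y)$ is small it lets you find a cheap separating point $z_i$ (this is how the paper chains in Lemmas~\ref{le:concatenate_overlapping_intervals} and~\ref{le:gasket_interior_bound}), but it does not allow you to combine $N$ independent candidate crossings and conclude that the square's crossing is cheap if at least one candidate is cheap; if the ``candidates'' are nested annuli they are in \emph{series} (an expensive one still must be crossed), and if they are truly disjoint alternatives they are not where the gasket path actually goes. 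Second, the Liggett--Schonmann--Stacey rerouting idea works when one can route around bad regions, and the paper states explicitly (Section~\ref{subsec:otherwork}, discussion of \cite{m2021tightness}) that for $\kappa'\in(4,8)$ this is \emph{not} possible: gasket paths are forced through specific loop-intersection points, so a plane cluster of good squares gives no usable gasket path. So the $\ge\tfrac12$-per-crossing probability cannot be boosted to $1-\delta_0$ by either mechanism, the chain of necklaces at $\sim\log(1/\rho)$ scales would fail with probability $\gtrsim\delta_0\log(1/\rho)$, which does not vanish, and no union bound closes. The missing idea is the bubble-crossing bootstrap (Proposition~\ref{prop:bubble_crossing_exponent}): showing the two sides of a bubble are approximately independent, so that a polynomial tail $O(\delta^{\bestexp})$ self-improves to $O(\delta^{\bubbleexp})$ with $\bubbleexp>\bestexp$ and iterates to a superpolynomial tail. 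Without it, the rest of your outline cannot be completed.
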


In the statement of Theorem~\ref{th:tightness_metrics}, if we view the random function $\metapproxres{\epsilon_n}{V}{\cdot}{\cdot}{\Gamma}$ in the topology of uniform convergence on subsets with positive $\dpath$-distance to $\Upsilon_\Gamma \setminus \ol{V}$, and view the collection $(\median{\epsilon_n}^{-1} \metapproxres{\epsilon_n}{V}{\cdot}{\cdot}{\Gamma})_{V \in \metregions}$ in the product topology, then the theorem says that the laws of this collection of internal metrics are tight. We chose to phrase Theorem~\ref{th:tightness_metrics} for a particular $V$ in order to make a simple and clean statement using the definition of the topology we gave in Section~\ref{subsec:topology}. Note that we can select $V$ in an arbitrary way depending only on $\Gamma$; natural choices are e.g.\ selecting for given $m \in \N$ and $U \subseteq \D$ the $m$ largest loops of $\Gamma$ and the maximal regions $V$ that are bounded between these loops and are contained in $U$.

For each sequence $(\epsilon_n)$ with $\epsilon_n \to 0$ we can use Theorem~\ref{th:tightness_metrics} to extract a subsequence such that the laws of $(\median{\epsilon_n}^{-1} \metapproxres{\epsilon_n}{U}{\cdot}{\cdot}{\Gamma})$ converge along the subsequence for a countable dense collection of $U$. From such a converging subsequence, we can then construct a \clekp{} metric in the sense of Definition~\ref{def:cle_metric} (with $\epsilon=0$). Conceptually, if we let $(D^U)$ denote the subsequential limit, we define
\[ \metres{V}{\cdot}{\cdot}{\Gamma} = \lim_{U \searrow V} D^U \quad\text{for each } V \in \metregions . \]
The precise construction will be given in Section~\ref{se:construction_metric}.

\begin{theorem}
Let $(\epsilon_n)$ be a sequence with $\epsilon_n \to 0$, and let $\met{\cdot}{\cdot}{\Gamma}$ be the metric constructed from $(\median{\epsilon_n}^{-1} \metapprox{\epsilon_n}{\cdot}{\cdot}{\Gamma})$ in Section~\ref{se:construction_metric}. Then $\met{\cdot}{\cdot}{\Gamma}$ is a \clekp{} metric in the sense of Definition~\ref{def:cle_metric} (with $\epsilon=0$).
\end{theorem}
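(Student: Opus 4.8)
The plan is to verify that the limiting object $\met{\cdot}{\cdot}{\Gamma} = (\metres{V}{\cdot}{\cdot}{\Gamma})_{V \in \metregions}$ satisfies each clause of Definition~\ref{def:cle_metric} with $\epsilon = 0$: symmetry, the triangle inequality, $\metres{V}{x}{x}{\Gamma} = 0$, continuity with respect to $\dpath[\ol{V}]$, separability, the Markovian property, translation invariance, compatibility, monotonicity, the series law, and the generalized parallel law. The key input is that each of these is a \emph{closed} condition under the GHf topology (and passes through the operations $\lim_{U \searrow V}$ and restriction to a dense family of $U$), so one shows that it holds for the pre-limit metrics $\median{\epsilon_n}^{-1}\metapprox{\epsilon_n}{\cdot}{\cdot}{\Gamma}$ in a form that survives the limit — taking into account that the constants $\cserial\epsilon_n$ and $\ac{\epsilon_n}$ degenerate to $0$ as $n \to \infty$ (using~\eqref{eq:eps_bound_ass}). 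In particular, the $\cserial\epsilon$-separation hypotheses in the compatibility, monotonicity, series, and generalized parallel axioms become the hypotheses with no separation requirement, the ``move $x,y$ to nearby $x',y'$'' clause in monotonicity disappears since one may take $x' = x$, $y' = y$ in the limit, and the shortcut metric $\metapproxac{\epsilon}{\cdot}{\cdot}{\Gamma}$ agrees with $\metapprox{\epsilon}{\cdot}{\cdot}{\Gamma}$ away from $\Upsilon_\Gamma \setminus \ol{V}$ with probability tending to $1$.

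First I would recall the precise construction of Section~\ref{se:construction_metric}: along the chosen subsequence the laws of $(\median{\epsilon_n}^{-1}\metapproxres{\epsilon_n}{U}{\cdot}{\cdot}{\Gamma})$ converge for $U$ in a countable dense family, we pass to a coupling realizing a.s.\ convergence (Skorokhod), and we set $\metres{V}{\cdot}{\cdot}{\Gamma} = \lim_{U \searrow V} D^U$. Symmetry, the triangle inequality and $\metres{V}{x}{x}{\Gamma} = 0$ are immediate because these are preserved by pointwise/GHf limits and by the monotone limit $\lim_{U \searrow V}$; continuity with respect to $\dpath[\ol{V}]$ is built into the GHf limit (the limiting function is defined on the completion $(X,\dpath[U])$ and is continuous there by construction of the topology in Section~\ref{subsec:topology}, combined with Lemma~\ref{le:ghf_subspace_proj} to pass from $D^U$ to $\metres{V}{\cdot}{\cdot}{\Gamma}$). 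Separability~\eqref{eq:separability_axiom} holds because the defining relation $\metres{V}{x}{y}{\Gamma} = \lim_{V' \searrow V}\metres{V'}{x}{y}{\Gamma}$ is exactly the statement $\metres{V}{\cdot}{\cdot}{\Gamma} = \lim_{U \searrow V} D^U$ together with the nesting of limits. The Markovian property and translation invariance follow from Lemma~\ref{le:cond_laws_compatible} and the corresponding properties of the $\metapprox{\epsilon_n}{\cdot}{\cdot}{\Gamma}$ by noting that measurability-with-respect-to-$U^*$ and the form of the translation kernel are preserved under weak limits and GHf limits (this is where one uses that the joint law of $\Gamma$ and the metrics converges, not just the marginal of the metrics).

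The substantive clauses are compatibility, monotonicity, the series law, and the generalized parallel law, and the main obstacle is handling the interplay between the $\cserial\epsilon_n$-separation hypotheses and the convergence. For compatibility: given $V \subseteq V'$ and $x,y$ with the property that every $u \in V'\setminus V$ is separated from $x,y$ by some $z \in \ol{V}$ (the $\epsilon = 0$ hypothesis, with no distance requirement on $z$), I would approximate from outside by $U, U' \searrow V, V'$ in the dense family; for $n$ large the separating points $z$ automatically satisfy $\dpath[\ol{U'}](z, U'\setminus U) \ge \cserial\epsilon_n$ is \emph{not} guaranteed, so instead one works directly with the limiting spaces: the compatibility identity $\metres{V}{x}{y}{\Gamma} = \metres{V'}{x}{y}{\Gamma}$ should be deduced by combining the approximate compatibility of $\metapprox{\epsilon_n}{\cdot}{\cdot}{\Gamma}$ applied to slightly shrunk/enlarged regions (where the $\cserial\epsilon_n$-gap \emph{is} present because $\epsilon_n \to 0$ while the regions and separating structure are fixed) with the series-law shortcut argument already spelled out in the text before Definition~\ref{def:good_approximation} — namely that an optimizing sequence $(u_i)$ cannot cross a dead end. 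The series law and generalized parallel law are the cleanest: they are inequalities among finitely many distances and they are preserved under GHf limits once the separation hypothesis passes to the limit, which it does because $\abs{z_i - z_{i'}} \ge \cserial\epsilon_n$ is vacuous in the limit and $\distE(K_x,K_y) > 0$ is an open condition stable under perturbation. For monotonicity, the extra subtlety is the points $x',y'$ with $\dpath[\ol{V}](x',x),\dpath[\ol{V}](y',y) \le \epsilon_n$: along the subsequence these converge to $x,y$, so by continuity of the limiting metric (already established) the inequality~\eqref{eq:approx_monotonicity_axiom} passes to $\metres{V'}{x}{y}{\Gamma} \le \metres{V}{x}{y}{\Gamma}$. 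Throughout, the crucial quantitative point — which I expect to be the genuinely delicate step — is to show that $\median{\epsilon_n}^{-1}\ac{\epsilon_n} \to 0$ and $\median{\epsilon_n}^{-1}\metapproxac{\epsilon_n}{\cdot}{\cdot}{\Gamma}$ coincides with $\median{\epsilon_n}^{-1}\metapprox{\epsilon_n}{\cdot}{\cdot}{\Gamma}$ on the relevant regions with high probability, so that the shortcut-metric versions of the axioms (which hold exactly, as noted in the text) transfer to the limit without the $\ac{\epsilon}$ error terms; this is where~\eqref{eq:eps_bound_ass} and~\eqref{eq:approx_error_asymp} are indispensable.
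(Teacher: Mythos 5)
Your overall plan — pass each axiom to the limit, exploiting that the $\cserial\epsilon_n$-separation hypotheses degenerate, that $\median{\epsilon_n}^{-1}\ac{\epsilon_n} \to 0$ via~\eqref{eq:eps_bound_ass}, and that the GHf topology with tracked separation sets $A_n$ lets you transfer the (approximate) compatibility, monotonicity, series, and parallel laws — matches the paper for those axioms. The monotonicity observation ($x'_n, y'_n \to x, y$ so the inequality passes by continuity) and the compatibility argument via shrunk regions $V_r$ with automatic $\dpath$-gap $\ge \cserial\epsilon_n$ are both essentially the paper's route (cf.\ Proposition~\ref{pr:compatibility}).

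However, there is a genuine gap in your treatment of the Markovian property. You assert it ``follows from Lemma~\ref{le:cond_laws_compatible} and the corresponding properties of the $\metapprox{\epsilon_n}{\cdot}{\cdot}{\Gamma}$ by noting that measurability-with-respect-to-$U^*$ \dots\ [is] preserved under weak limits and GHf limits (this is where one uses that the joint law of $\Gamma$ and the metrics converges, not just the marginal of the metrics).'' This is false, and the paper explicitly flags it as the crux of the section. Conditional independence is \emph{not} preserved under weak convergence of joint laws; Appendix~\ref{app:conditional_laws} opens with the counterexample $X \sim \mathrm{Leb}\big|_{[0,1]}$, $Y_n = Z_n = \sin(nX)$, where $(X, Y_n, Z_n)$ converges jointly in law, $Y_n$ and $Z_n$ are trivially conditionally independent given $X$ for every $n$ (both are deterministic functions of $X$), and yet in the weak limit $Y = Z$ are fully dependent while jointly independent of $X$, so conditional independence given $X$ fails. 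The Markovian property asserts precisely a conditional independence of the internal metrics inside $U$ from those outside, given $U^*$, so this is not an innocent issue.

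The paper's actual proof (Proposition~\ref{pr:markov_property}, via Lemmas~\ref{le:weak_markov}--\ref{le:cond_indep_boxes}) circumvents this by establishing \emph{convergence of conditional laws}, not just joint laws, using a non-trivial input: the total-variation continuity of the conditional CLE laws in the domain Markov property (Lemma~\ref{le:partial_cle_tv}, from \cite{amy-cle-resampling}). That continuity yields equicontinuity of the kernels $x \mapsto \mu_n[\,\cdot \mid x]$, hence (Corollary~\ref{co:subsequential_limit_cond_laws}) a.e.\ weak convergence of the conditional laws along a further subsequence, which is the hypothesis under which conditional independence does survive (Lemma~\ref{le:cond_independence_limit}). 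Your proposal omits this entire mechanism, and without it the Markovian property simply does not follow. A similar though milder remark applies to translation invariance: the existence of a single kernel $\mu^{U^*}$ working simultaneously for all $z$ requires Lemmas~\ref{le:weak_limit_abs_cont}, \ref{le:same_cond_law_weak_limit}, and \ref{le:translation_tv} rather than a bare invocation of weak-limit preservation.
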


Of course, the theorem would be empty if the renormalized metrics converge to zero. Our renormalization constants which are defined in Section~\ref{se:intersections_setup} are chosen so that in all the cases we consider, the limits are not identically zero. To avoid adding more technical complications, we have decided not to give a general criterion that ensures this. Instead, we prove that any \clekp{} metric is a.s.\ non-degenerate \emph{provided it is not identically zero}. (See Remark~\ref{rm:compatibility_renormalisation} for a further discussion.)

\begin{theorem}\label{th:nondegeneracy}
Suppose that $\met{\cdot}{\cdot}{\Gamma}$ is a \clekp{} metric in the sense of Definition~\ref{def:cle_metric} (with $\epsilon = 0$). Then either a.s.\ $\metres{V}{\cdot}{\cdot}{\Gamma} = 0$ for each $V \in \metregions$ or a.s.\ $\metres{V}{\cdot}{\cdot}{\Gamma}$ is a true metric for each $V \in \metregions$ (i.e.\ $\metres{V}{x}{y}{\Gamma} > 0$ whenever $x \neq y$).
\end{theorem}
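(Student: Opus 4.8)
The plan is to prove a zero--one law: the event $\{\metres{V}{\cdot}{\cdot}{\Gamma} = 0 \text{ for all } V\}$ has probability $0$ or $1$, and on its complement we show the metrics are genuine. The first step is to reduce the global dichotomy to a statement about a single macroscopic region. Using the separability and Markovian properties, the whole collection $(\metres{V}{\cdot}{\cdot}{\Gamma})_{V}$ is determined by countably many internal metrics, so $\metres{V}{\cdot}{\cdot}{\Gamma}\equiv 0$ for all $V$ iff $\metres{V}{\cdot}{\cdot}{\Gamma}\equiv 0$ for all $V$ in a fixed countable dense family. By the Markovian property together with the resampling/locality structure of $\CLE_{\kappa'}$ (the conditional law of loops inside $U^*$ given $\Gamma\setminus\Gamma_{U^*}$ being conditionally independent $\CLE_{\kappa'}$'s), the event that the internal metric in some small region vanishes is a tail-type event with respect to the nesting of loops; one shows using the Markovian property plus a zero--one law for nested $\CLE$ (or directly by a resampling argument as in \cite{amy-cle-resampling}) that $\p[\metres{V}{\cdot}{\cdot}{\Gamma}\equiv 0 \text{ for all } V] \in \{0,1\}$.

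Assume now we are on the complement, so with positive---hence full---probability there is some $V$ and some $x \neq y$ in $\ol V \cap \Upsilon_\Gamma$ with $\metres{V}{x}{y}{\Gamma} > 0$. First I would upgrade this to a quantitative statement: there exist a deterministic region shape, two deterministic balls, and $\delta, p > 0$ such that with probability at least $p$, the internal metric across the annular region between two intersecting macroscopic $\CLE_{\kappa'}$ loops (as in the definition of $\wh{\mathfrak m}_\epsilon$, Figure~\ref{fi:region_intersecting_loops}) exceeds $\delta$. This uses translation invariance and scale invariance of $\CLE_{\kappa'}$ to move the ``good'' configuration to a fixed location, together with the continuity of $\metres{V}{\cdot}{\cdot}{\Gamma}$ with respect to $\dpath[\ol V]$ (part of the $\epsilon = 0$ definition) to go from a single pair $x \neq y$ to a macroscopic separation.

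The core of the argument is then to propagate positivity down to every pair of distinct points. Fix $V$ and $x \neq y$ in $\ol V \cap \Upsilon_\Gamma$; we must show $\metres{V}{x}{y}{\Gamma} > 0$ a.s. Since $x \neq y$ in the prime-end sense, there is a small Euclidean ball (or a pair of nested $\CLE_{\kappa'}$ loops) such that every admissible path from $x$ to $y$ must cross the annular gasket region between two intersecting loops at some scale $\rho > 0$ with positive conditional probability; more precisely, exploring the loops of $\Gamma_D$ at successive scales, with positive probability there appears a configuration of two intersecting loops $\CL_1, \CL_2$ inside a small annulus separating $x$ from $y$, such that any admissible path from $x$ to $y$ realizes a crossing of the associated region $U_{x',y'}$. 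By the series law, $\metres{V}{x}{y}{\Gamma} \ge \metres{V}{x}{z_1}{\Gamma} + \metres{V}{z_2}{y}{\Gamma}$ where $z_1, z_2$ are the separating intersection points, and by the compatibility axiom (with $\epsilon = 0$ the extra distance condition is vacuous) the relevant piece equals $\metres{U_{x',y'}}{\cdot}{\cdot}{\Gamma}$, an independent copy (given the separating loops, by the Markovian property) of the ``across an intersecting-loop region'' metric which by the previous paragraph is $> \delta$ with conditional probability $\ge p$. Iterating over the countably many nesting scales at which such a separating configuration can occur --- these scales are conditionally independent given the coarser structure, again by the Markovian property --- a Borel--Cantelli argument shows that a.s.\ at least one of them produces a positive contribution, hence $\metres{V}{x}{y}{\Gamma} > 0$. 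Finally, a union bound over a countable dense set of pairs $(x,y)$ combined with the $\dpath[\ol V]$-continuity of $\metres{V}{\cdot}{\cdot}{\Gamma}$ gives that a.s.\ $\metres{V}{x}{y}{\Gamma} > 0$ for \emph{all} $x \neq y$ simultaneously, and the separability reduction extends this to all $V \in \metregions$.

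The main obstacle is the middle step: showing that between any two distinct prime ends $x \neq y$ there is, with positive probability at infinitely many independent scales, a ``barrier'' configuration of two intersecting $\CLE_{\kappa'}$ loops that (a) genuinely separates $x$ from $y$ in $\ol V \cap \Upsilon_\Gamma$, so that the series law applies, and (b) forces every admissible path to cross the intersecting-loop region $U_{x',y'}$ rather than sneaking around it through a ``dead end.'' This requires a careful topological analysis of the gasket near a double point, using the description of prime ends in Section~\ref{se:intersections_setup} and the fact that a double point of a loop corresponds to two prime ends at positive distance; one must ensure the chosen barrier loops $\CL_1, \CL_2$ actually pinch the gasket. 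Controlling the conditional independence across scales so that Borel--Cantelli applies --- rather than merely getting positive probability at a single scale --- also needs the Markovian property to be invoked with some care, since the ``good'' events at different scales are defined in terms of loops that are nested but the metric is only conditionally independent given the separating loops.
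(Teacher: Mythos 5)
Your approach is genuinely different from the paper's and runs into two substantive gaps. The paper's proof of Proposition~\ref{pr:metric_positive} does \emph{not} establish a zero--one law as a preliminary step; instead it shows directly that $\p[\metres{V}{x}{y}{\Gamma}=0\text{ for some }V, x\neq y]>0$ forces the normalizing quantile $\median{}$ (from Section~\ref{se:intersections_setup}) to vanish, and then Proposition~\ref{prop:interior_tightness} (the H\"older estimate, whose bound carries a factor of $\median{}$) immediately forces $\metres{V}{\cdot}{\cdot}{\Gamma}\equiv 0$ a.s. The dichotomy drops out without ever invoking a tail $\sigma$-algebra argument. Your proposed zero--one law is asserted (``one shows using the Markovian property plus a zero--one law for nested CLE\ldots'') but not justified; the Markovian property gives conditional independence of internal metrics across nested regions given $U^*$, but this is not obviously a tail event in the relevant filtration, and the paper's route avoids needing it.

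The more serious gap is the quantitative positivity step. You claim that from ``with full probability there is \emph{some} $V$ and $x\neq y$ with $\metres{V}{x}{y}{\Gamma}>0$'' you can deduce a uniform lower bound: deterministic $\delta, p>0$ so that with probability $\ge p$ the metric across the region between two intersecting macroscopic loops exceeds $\delta$. You appeal to ``translation invariance and scale invariance of $\CLE_{\kappa'}$,'' but the axioms of a $\CLE_{\kappa'}$ metric in Definition~\ref{def:cle_metric} include translation invariance of the metric but no scale invariance, and for an arbitrary $\CLE_{\kappa'}$ metric (in particular before one knows whether the normalizer degenerates) this upgrade is essentially equivalent to what you are trying to prove. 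The paper handles precisely this point through Corollary~\ref{co:quantiles_metric} (comparability of quantiles across scales), which rests on the intersection crossing estimates of Sections~\ref{se:intersection_exponent} and~\ref{se:bubble_exponent}; your Borel--Cantelli scheme would need an analogous input to get conditional lower bounds at every scale, and you have not supplied one. Finally, the decisive tool in the paper's deduction --- the \emph{generalized parallel law}, which from $\metres{V}{x}{y}{\Gamma}=0$ and a separating chain $\{z_1,\ldots,z_N\}$ extracts some $z_i$ with $\metres{V_x}{x}{z_i}{\Gamma}=0$, and hence two distinct intersection points $z_i,z_i'$ on the same pair of loops with zero distance --- does not appear in your argument at all; the series law alone (which you do use) does not let you localize the vanishing to the canonical region of Section~\ref{se:intersections_setup}.
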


We also prove various estimates for the \clekp{} metrics.

\begin{theorem}\label{th:cle_metric_hoelder}
Let $\met{\cdot}{\cdot}{\Gamma}$ be a \clekp{} metric. Almost surely, the metric $\metres{V}{\cdot}{\cdot}{\Gamma}$ is H\"older continuous with respect to $\dpath[\ol{V}]$ for every $V \in \metregions$. If we restrict the metric to the thin gasket $\CT_\Gamma$, then it is H\"older continuous with respect to the Euclidean metric.
\end{theorem}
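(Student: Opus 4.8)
The plan is to prove that on a suitable deterministic annular scale decomposition, the internal metric $\metres{V}{\cdot}{\cdot}{\Gamma}$ picks up at most a polynomial factor when the $\dpath[\ol{V}]$-diameter of a region halves, which will give H\"older continuity. First I would set up a dyadic multiscale structure: for each $z$ in the plane and each $k \in \N$, consider the annuli $A(z, 2^{-k-1}, 2^{-k})$, and count how many \clekp{} loops of $\Gamma_D$ cross these annuli. The key input is an a.s.\ bound (coming from the same crossing estimates used in the proof of Lemma~\ref{le:gasket_compact}, i.e.\ \cite[Lemma~4.4]{amy-cle-resampling}, together with Borel--Cantelli over a union bound) that there is a (random) $k_0$ and a deterministic exponent $\beta>0$ such that for all $k \ge k_0$ and all $z \in 2^{-k}\Z^2$, the annulus $A(z,2^{-k-1},2^{-k})$ is crossed by at most $k$ (say) loops of diameter comparable to $2^{-k}$; more usefully, loops of $\Gamma_D$ of diameter $\ge 2^{-k}$ that come within $2^{-k}$ of a given point are separated from each other by separation points, and so the series law applies.

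The main step is then a \emph{chaining argument across scales}. Fix $V \in \metregions$ and two points $x,y \in \ol{V}\cap\Upsilon_\Gamma$ with small $\dpath[\ol{V}](x,y) = \delta$. I would find a chain of separation points along a near-optimal $\dpath[\ol{V}]$-path from $x$ to $y$, at Euclidean spacing roughly $\delta$, splitting the relevant portion of $\ol{V}\cap\Upsilon_\Gamma$ into a bounded number of pieces each of Euclidean diameter $O(\delta)$; then use the \textbf{series law} to bound $\metres{V}{x}{y}{\Gamma}$ above by a sum of internal distances across these pieces --- wait, the series law is only a \emph{lower} bound, so instead I would use the \textbf{compatibility} axiom together with \textbf{monotonicity}\eqref{it:mon_dead_ends} (for geodesic-type) or\eqref{it:mon_large_loops} (for resistance-type) to replace $\metres{V}{x}{y}{\Gamma}$ by $\metres{V'}{x}{y}{\Gamma}$ for a \emph{smaller} region $V'$ of Euclidean diameter $O(\delta)$, obtained by cutting along separation points guaranteed by the crossing estimate. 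The continuity hypothesis on $\metres{V}{\cdot}{\cdot}{\Gamma}$ (part of the $\epsilon = 0$ case of the definition) together with the Markovian property lets me transfer this to a scaling estimate: by Lemma~\ref{le:scaled_metric} and Lemma~\ref{le:cond_laws_compatible}, the law of $\metres{V'}{x}{y}{\Gamma}$ for a region of diameter $\delta$ deep inside $\ol{V}$ is, after rescaling by $\delta^{-1}$, that of an internal distance across a macroscopic region. Combining with a first-moment (or median/quantile) tail bound on such macroscopic distances --- which I would extract from the renormalization setup in Section~\ref{se:intersections_setup}, using that the limiting metric is a subsequential limit of renormalized approximations with controlled quantiles --- yields $\metres{V}{x}{y}{\Gamma} \le C \delta^{\alpha}$ off an event of probability $O(\delta^{c})$, and then Borel--Cantelli over dyadic $\delta$ and a net of pairs $(x,y)$ gives the a.s.\ H\"older bound simultaneously for all $V$ (using separability to reduce to a countable family).

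For the second assertion, restricted to the thin gasket $\CT_\Gamma$, I would use that a point of $\CT_\Gamma$ is not disconnected from $\partial D$ by any loop, so the prime-end issue disappears: the identity map from $(\CT_\Gamma, \dpath[\ol{V}])$ to $(\CT_\Gamma, \distE)$ is in fact bi-H\"older near such points, because small Euclidean balls around a thin-gasket point contain only boundedly many loop crossings (again by the crossing estimate), hence small Euclidean distance forces small $\dpath$-distance with a polynomial modulus. Composing this with the first part gives H\"older continuity with respect to $\distE$ on $\CT_\Gamma$. The hard part will be the chaining step: making rigorous that one can always cut a region of $\dpath[\ol{V}]$-diameter $\delta$ into finitely many subregions of Euclidean diameter $O(\delta)$ along \emph{genuine separation points} at the right Euclidean spacing (so that the $\cserial\epsilon = 0$ conditions in compatibility/monotonicity/series law are vacuous but the topological separation hypotheses genuinely hold), and that the exponent loss per scale is uniformly bounded --- this is where the \clekp{}-specific crossing estimates and the structure of the gasket as a metric space (prime ends, local finiteness of large loops) do the real work.
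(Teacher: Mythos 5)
Your overall architecture matches the paper's: prove a H\"older bound for $\metres{V}{\cdot}{\cdot}{\Gamma}$ with respect to $\dpath[\ol V]$ via a multiscale chaining through separation points (this is Proposition~\ref{prop:interior_tightness}, built on Lemmas~\ref{le:tightness_loop_chain} and~\ref{le:gasket_interior_bound}), and then compose with the bi-H\"older relation between $\dpath$ and $\distE$ on the thin gasket (Lemma~\ref{le:thin_gasket_hoelder}, giving Corollary~\ref{co:tightness_thin_gasket}). You also correctly note that the theorem should follow simultaneously for all $V$ via separability. So the route is essentially the paper's.

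There are two substantive gaps, however. First, in the stitching step you say the series law is only a lower bound (correct) and then pivot to compatibility and monotonicity — but those alone do not let you concatenate estimates across \emph{overlapping} balls in the chain. The issue is that consecutive balls $B(z_l,\delta_l)$, $B(z_{l+1},\delta_{l+1})$ produce interleaving separation sets (one may have $s'_l<s'_{l+1}<t'_l<t'_{l+1}$), and there is no canonical common anchor point. The paper's Lemma~\ref{le:gasket_interior_bound} uses the \textbf{generalized parallel law} to choose, among the separating points $\{x_1,\ldots,x_N\}$, one $x_i$ such that $\metres{V}{x_i}{\gamma(t'_{l+1})}{\Gamma} \le \cparallel(N)\metres{V}{\gamma(s'_{l+1})}{\gamma(t'_{l+1})}{\Gamma}$; this is the only way to connect the pieces. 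For geodesic metrics ($\cparallel(N)=1$) this is essentially free, but for a general approximate CLE metric — in particular the resistance-type ones this paper is designed to cover — the parallel law is a genuinely needed axiom, and omitting it from your plan leaves the chaining incomplete.

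Second, ``a first-moment (or median/quantile) tail bound'' is not enough to run the Borel--Cantelli. The union bound over a dyadic net at scale $\delta$ involves roughly $\delta^{-2}$ centers, so a polynomial tail with fixed exponent (which is all a first-moment or a single quantile estimate would give) would not leave a positive H\"older exponent after the union bound, let alone one uniform over all $V$. What the argument actually needs is the superpolynomial decay $o^\infty(\delta)$ from Proposition~\ref{prop:intersection_crossing_exponent}, which is the technical heart of Sections~\ref{se:intersection_exponent}--\ref{se:bubble_exponent}. You gesture at ``extract from the renormalization setup,'' but that extraction \emph{is} the hard part, and it cannot be replaced by a soft moment bound. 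Related: the theorem is stated for an arbitrary CLE metric (Definition~\ref{def:cle_metric}), not only for subsequential limits as you assume — the paper handles this by proving all estimates uniformly in $\epsilon\ge 0$, so the $\epsilon=0$ case is covered directly rather than via a limiting procedure.
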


We also have superpolynomial tails for the H\"older constants of $\metapproxres{\epsilon}{V}{\cdot}{\cdot}{\Gamma}$ and $\metres{V}{\cdot}{\cdot}{\Gamma}$, see Section~\ref{subsec:thm1_proof} for precise statements.

We further show polynomial bounds on the scaling behavior of \clekp{} metrics. The exponents $d_\SLE$, $\ddouble$ are defined at the beginning of Section~\ref{se:preliminaries}. To state this result, we condition on the event that $\CL \subseteq \D$, so that for $\lambda \in (0,1]$ the conditional law of $\lambda\CL$ given $\CL \subseteq \D$ is absolutely continuous with respect to the law of $\CL$.\footnote{This can be seen as follows. Consider the \clekp{} exploration in $\lambda\D$. On the event that $\lambda\CL \subseteq \lambda\D$, the exploration will surround a region $U \Subset \lambda\D$ before discovering $\lambda\CL$. The region $U$ can be discovered by a pair of flow lines $\eta_u,\eta_{u,v}$ where $\eta_{u,v}$ is reflected off $\eta_u$ in the opposite direction (see Section~\ref{se:fl_interaction} for details). The \clekp{} configuration inside $U$ are then determined by the GFF values in $U$. By the absolute continuity of the GFF, the law of this configuration is absolutely continuous with respect to the law under a GFF on $\D$. This gives that the law of $\lambda\CL$ is absolutely continuous with respect to the law of \emph{some} loop of $\Gamma_\D$ surrounding $0$. By the resampling operations from \cite{amy-cle-resampling} recalled in Section~\ref{se:cle_resampling}, we then conclude that the law is absolutely continuous with respect to the law of the \emph{outermost} loop surrounding $0$.} In particular, if we have a \clekp{} metric $\met{\cdot}{\cdot}{\Gamma}$, then the \clekp{} metric $\metres{\lambda V}{\lambda\cdot}{\lambda\cdot}{\lambda\Gamma}$ is defined.

\begin{theorem}[tightness across scales]\label{th:metric_scaling}
 Suppose that $\Gamma$ has the law described at the beginning of Section~\ref{subsec:main_statement} \emph{conditioned} on the event that $\CL \subseteq \D$. Let $\met{\cdot}{\cdot}{\Gamma}$ be a non-trivial \clekp{} metric. There is a constant $\median[\lambda]{} > 0$ for each $\lambda \in (0,1]$ such that the family $\{ ((\median[\lambda]{})^{-1}\metres{\lambda V}{\lambda\cdot}{\lambda\cdot}{\lambda\Gamma})_{V \in \metregions} \}_{\lambda \in (0,1]}$ is tight and its subsequential limits are non-trivial. Moreover, the constants satisfy
 \[ r^{d_\SLE+o(1)}\median[\lambda]{} \le \median[r\lambda]{} \le r^{\ddouble+o(1)}\median[\lambda]{} \]
 for $r,\lambda \in {(0,1]}$.
\end{theorem}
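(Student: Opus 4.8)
The plan is to define $\median[\lambda]{}$ in direct analogy with the quantity $\wh{\mathfrak m}_\epsilon$ from the discussion preceding Definition~\ref{def:good_approximation}, but at Euclidean scale $\lambda$ rather than at the approximation scale $\epsilon$. Let $E_\lambda$ be the event that there are loops $\CL_1,\CL_2\in\Gamma_D$ and intersection points $x,y\in\CL_1\cap\CL_2\cap B(0,\lambda/4)$, $x',y'\in\CL_1\cap\CL_2\cap A(0,\lambda/2,3\lambda/4)$ such that, with $U_{x',y'}$ the union of components between the outer boundaries of $\CL_1,\CL_2$ from $x'$ to $y'$, one has $x,y\in\ol{U}_{x',y'}$; on $E_\lambda$ set $X_\lambda=\metres{U_{x',y'}}{x}{y}{\Gamma}$ with $x$ (resp.\ $y$) the first (resp.\ last) point of $\ol{U}_{x',y'}\cap\CL_1\cap\CL_2\cap B(0,\lambda/4)$, and let $\median[\lambda]{}$ be the median of $X_\lambda$ conditionally on $E_\lambda$. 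Standard $\CLE_{\kappa'}$ crossing and second-moment estimates give $\p[E_\lambda]>0$ for every $\lambda\in(0,1]$ and bounded below as $\lambda\to0$; since $\met{\cdot}{\cdot}{\Gamma}$ is non-trivial, Theorem~\ref{th:nondegeneracy} gives $X_\lambda>0$ a.s.\ so $\median[\lambda]{}>0$, while $\dpath[\ol{V}]$-continuity of the internal metrics together with compactness (Lemma~\ref{le:gasket_compact}) gives $\median[\lambda]{}<\infty$. Finally, by Lemma~\ref{le:scaled_metric} and the absolute continuity of $\lambda\CL$ with respect to a loop of $\Gamma_\D$ surrounding $0$ recorded in the footnote above, each $\metres{\lambda V}{\lambda\cdot}{\lambda\cdot}{\lambda\Gamma}$ is again a $\CLE_{\kappa'}$ metric, and $\median[\lambda]{}$ is precisely the median of its standard distance; so the family to be controlled consists of genuine $\CLE_{\kappa'}$ metrics, each renormalized by its own standard scale.

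\textbf{Tightness and non-triviality of limits.} The proof of Theorem~\ref{th:tightness_metrics} establishes GHf-tightness by verifying a compactness criterion whose ingredients are: compactness of the gasket pieces $(\Upsilon_{\Gamma_{V^*}}\cap\ol{V},\dpath)$ (Lemma~\ref{le:gasket_compact}); equicontinuity of the renormalized internal metrics with tight moduli (the H\"older estimate with superpolynomial tails of Theorem~\ref{th:cle_metric_hoelder}); and tightness of the renormalized distances across the standard regions, obtained by comparing the medians of $\metres{V}{\cdot}{\cdot}{\Gamma}$ over the various standard regions using the Markovian property and the series and generalized parallel laws. I would apply these inputs to the family $\{(\median[\lambda]{})^{-1}\metres{\lambda V}{\lambda\cdot}{\lambda\cdot}{\lambda\Gamma}\}_{V\in\metregions}$ — legitimate since each such metric is a $\CLE_{\kappa'}$ metric satisfying all the axioms with the same constants — and check that every estimate involved is scale-covariant, hence holds with bounds uniform in $\lambda\in(0,1]$ (the absolute continuity of $\lambda\CL$ enters only through a log-integrable Radon--Nikodym derivative, which is harmless). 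This yields tightness of the whole family. Extracting a subsequence $\lambda_n\to0$ and passing to the limit, as in the construction of Section~\ref{se:construction_metric} (now for a single metric rather than an approximation scheme), one obtains a limiting metric; its standard distance has median tending to $1$ by the choice of $\median[\lambda]{}$, so the limit is not identically zero, and if it is again a $\CLE_{\kappa'}$ metric then Theorem~\ref{th:nondegeneracy} upgrades it to a true metric.

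\textbf{The polynomial bounds.} Here $\lambda$ plays the role that $\epsilon$ plays elsewhere in the paper. For the upper bound $\median[r\lambda]{}\le r^{\ddouble+o(1)}\median[\lambda]{}$ I would use the series law: in the scale-$\lambda$ region $U_{x',y'}$ one locates points $z_1,\dots,z_N\in\ol{U}_{x',y'}\cap\Upsilon_\Gamma$, pairwise at Euclidean distance $\ge r\lambda$, with $z_i$ separating the earlier points from the later ones in the prescribed way, and such that each consecutive pair bounds a scale-$(r\lambda)$ region $V_i$ of the standard type; one has $N\ge r^{-\ddouble+o(1)}$ by a $\CLE_{\kappa'}$ estimate on the number of pairwise-disjoint scale-$(r\lambda)$ ``double-point cells'' that every separating set of $U_{x',y'}$ must traverse, where $\ddouble$ is the exponent from the start of Section~\ref{se:preliminaries}. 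Iterating the series law and using compatibility gives $X_\lambda\ge\sum_i\metres{V_i}{z_i}{z_{i+1}}{\Gamma}$; the Markovian property makes the summands conditionally independent across $i$ with conditional medians comparable to $\median[r\lambda]{}$, so a Chernoff bound on the number of summands that are at least $\median[r\lambda]{}/2$ gives $X_\lambda\ge(N/8)\median[r\lambda]{}$ with probability tending to $1$, hence $\median[\lambda]{}\ge r^{-\ddouble+o(1)}\median[r\lambda]{}$. For the reverse bound $\median[r\lambda]{}\ge r^{d_{\SLE}+o(1)}\median[\lambda]{}$ I would instead exhibit an admissible path across $U_{x',y'}$ hugging the loop $\CL_1$; since $\CL_1$ locally looks like an $\SLE_{\kappa'}$ of dimension $d_{\SLE}=1+\kappa'/8$, this path can be cut into $M\le r^{-d_{\SLE}+o(1)}$ pieces, each contained in a scale-$(r\lambda)$ region $V_i$ of the standard type. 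Monotonicity (applicable since the cells are separated from the rest of $U_{x',y'}$ in the sense of the monotonicity axiom) and the triangle inequality give $X_\lambda\le\sum_i\metres{V_i}{u_i}{u_{i+1}}{\Gamma}$; using conditional independence, the superpolynomial upper-tail bounds on $\metres{V_i}{\cdot}{\cdot}{\Gamma}/\median[r\lambda]{}$, and a union bound over the $M$ pieces, $X_\lambda\le r^{-d_{\SLE}+o(1)}\median[r\lambda]{}$ with probability bounded away from $0$, so $\median[\lambda]{}\le r^{-d_{\SLE}+o(1)}\median[r\lambda]{}$. (The two bounds are consistent because necessarily $\ddouble\le d_{\SLE}$.)

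\textbf{Main obstacle.} The crux is the two cell-counting estimates and the probabilistic bookkeeping they feed. The upper bound $M\le r^{-d_{\SLE}+o(1)}$ on the number of pieces of a path hugging $\CL_1$ is essentially an $\SLE_{\kappa'}$ Minkowski-content estimate, but one must also check that such a path can be realized inside the gasket as a concatenation of admissible segments living in standard scale-$(r\lambda)$ regions. The harder point is the lower bound $N\ge r^{-\ddouble+o(1)}$ on the number of \emph{mandatory} pairwise-disjoint scale-$(r\lambda)$ double-point cells that every separating set of $U_{x',y'}$ must cross, which is where the exponent $\ddouble$ genuinely enters and requires delicate $\CLE_{\kappa'}$ separation estimates. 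Moreover, the Chernoff and union-bound steps require the conditional medians of the internal metrics in these cells to be uniformly comparable to $\median[r\lambda]{}$, which rests on a quantitative lower-tail companion of Theorem~\ref{th:nondegeneracy}, established uniformly over the cells and over $\lambda$.
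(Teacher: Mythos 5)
Your proposal has the right overall architecture, and the two scaling bounds correspond to the paper's Lemma~\ref{le:median_scaling} (upper) and Lemma~\ref{le:median_scaling_lb} (lower), which feed into Corollary~\ref{co:quantiles_metric}; the tightness part really is a matter of applying the uniform H\"older/compactness machinery (Theorem~\ref{th:cle_metric_hoelder}, Proposition~\ref{prop:interior_tightness}) to the rescaled metrics, which by Lemma~\ref{le:scaled_metric} are again $\CLE_{\kappa'}$ metrics with the same constants, so the estimates are uniform in $\lambda$. The upper bound via ``$N\ge r^{-\ddouble+o(1)}$ mandatory cells $+$ series law $+$ concentration'' is essentially the paper's self-similarity argument.

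However, the lower bound has a genuine error. You write that the admissible path hugs ``the loop $\CL_1$'' which ``locally looks like an $\SLE_{\kappa'}$ of dimension $d_{\SLE}=1+\kappa'/8$''. This is wrong on both counts. First, the paper sets $d_\SLE = 1+\kappa/8$ with $\kappa = 16/\kappa' \in (2,4)$, so $d_\SLE = 1+2/\kappa' \in (5/4,3/2)$, \emph{not} $1+\kappa'/8 \in (3/2,2)$. Second, and more importantly, the relevant curve is not the full loop $\CL_1$ (which is a non-simple $\SLE_{\kappa'}$) but its \emph{outer boundary}, which is a simple $\SLE_\kappa$-type flow line of dimension $1+\kappa/8$. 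An admissible path in the gasket cannot cross $\CL_1$; it runs along one side of it, i.e.\ along the outer boundary. If you cut a path hugging the full $\kappa'$-loop, you would get $M \lesssim r^{-(1+\kappa'/8)+o(1)}$ pieces and hence the strictly weaker bound $\median[r\lambda]{} \gtrsim r^{(1+\kappa'/8)+o(1)}\median[\lambda]{}$, which does not prove the theorem. The paper's proof of Lemma~\ref{le:median_scaling_lb} works directly with the flow line $\eta_1$ and invokes Lemma~\ref{le:variation_one_scale} — a variation estimate for chordal $\SLE_\kappa$ with $\kappa\in(2,4)$ — to produce $O(\delta^{-d_\SLE-a})$ time points along $\eta_1$; then it concatenates using Proposition~\ref{prop:interior_tightness} for the H\"older bound inside each ball rather than trying to fit each piece into a ``standard region'' (which would require an extra step you have not supplied). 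You should replace the role of $\CL_1$ by the flow-line boundary of $U_{x',y'}$ and correct the exponent accordingly.

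One smaller point: you define $\median[\lambda]{}$ through the $\wh{\mathfrak m}$-style construction (intersection points of two CLE loop outer boundaries), whereas the quantity the paper's estimates actually control is the $\median[\lambda]{}$ from Section~\ref{se:intersections_setup}, built out of the intersecting-flow-lines setup. The paper remarks these are comparable for good approximation schemes, but that comparison is an extra step you would need to verify; it is cleaner to work with the flow-line definition directly, as the paper does.
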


Finally, we show for geodesic approximation schemes (which are formally defined in Section~\ref{se:geodesic_metric}) that their subsequential limits are non-degenerate and are geodesic metrics.

\begin{theorem}\label{th:geodesic_approximation}
If $(\metapprox{\epsilon}{\cdot}{\cdot}{\Gamma})_{\epsilon \in (0,1]}$ is a good geodesic approximation scheme, then we can choose the renormalizing constants $\median{\epsilon}$ so that any subsequential limit $\met{\cdot}{\cdot}{\Gamma}$ of $(\median{\epsilon}^{-1}\metapprox{\epsilon}{\cdot}{\cdot}{\Gamma})_{\epsilon \in (0,1]}$ is a collection of non-degenerate geodesic metrics. Further, for each $V \in \metregions$ we have
\[ \metres{V}{x}{y}{\Gamma} = \inf_{\gamma \in \paths{x}{y}{\ol{V}}{\Gamma}} \lenmetres{D}{\gamma} ,\quad x,y \in \ol{V} \cap \Upsilon_\Gamma \]
where $\lenmetres{D}{\gamma}$ denotes the length of the path $\gamma$ under the metric $\metres{D}{\cdot}{\cdot}{\Gamma}$.
\end{theorem}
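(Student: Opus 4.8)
The plan is to upgrade the general subsequential-limit construction of Section~\ref{se:construction_metric} (which already produces a $\CLE_{\kappa'}$ metric) using two features special to geodesic schemes: that the approximations $\metapprox{\epsilon}{\cdot}{\cdot}{\Gamma}$ are \emph{approximate length metrics}, and that the limiting internal metric in $V$ agrees with $\metres{D}{\cdot}{\cdot}{\Gamma}$ near the diagonal. For the first feature, fix $V$ and $x,y\in\ol V\cap\Upsilon_\Gamma$, take a near-optimal admissible path $\gamma$ in $\ol V$ (so $\lebneb{\epsilon}(\gamma)$ is close to $\metapproxres{\epsilon}{V}{x}{y}{\Gamma}$), and look at $f(t)=\lebneb{\epsilon}(\gamma|_{[0,t]})$. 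By the axioms of a geodesic approximation scheme (Section~\ref{se:geodesic_metric})---monotonicity of $\lebneb{\epsilon}$ under passing to sub-arcs, subadditivity under concatenation, and $\lebneb{\epsilon}(\gamma')\le\ac{\epsilon}$ whenever $\gamma'$ has Euclidean diameter $<\epsilon$ (cf.\ the examples in Section~\ref{se:met_examples})---$f$ is non-decreasing with jumps at most $\ac{\epsilon}$, so the point $m=\gamma(s)$ with $f(s)$ closest to $\tfrac12 f(1)$ is an approximate midpoint: $\metapproxres{\epsilon}{V}{x}{m}{\Gamma},\metapproxres{\epsilon}{V}{m}{y}{\Gamma}\le\tfrac12\metapproxres{\epsilon}{V}{x}{y}{\Gamma}+O(\ac{\epsilon})$. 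After dividing by $\median{\epsilon}$, the error is $O(\ac{\epsilon}/\median{\epsilon})\to 0$ by~\eqref{eq:eps_bound_ass}.

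Along the convergent subsequence (Theorem~\ref{th:tightness_metrics} and Section~\ref{se:construction_metric}), these approximate midpoints converge---using the compactness from Lemma~\ref{le:gasket_compact}---to exact midpoints of each subsequential limit $D^U$, so every $D^U$ is a complete, proper length metric, hence a geodesic metric. As $U\searrow V$ we have $D^U\nearrow\metres{V}{\cdot}{\cdot}{\Gamma}$ monotonically with continuous limit (the limit being a $\CLE_{\kappa'}$ metric), hence uniformly by Dini's theorem on the compact space; since the $D^U$-midpoint of $x,y\in\ol V$ lies within $o(1)$ of $\ol V$, it sub-converges to a midpoint of $x,y$ for $\metres{V}{\cdot}{\cdot}{\Gamma}$. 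Thus each $\metres{V}{\cdot}{\cdot}{\Gamma}$ is a geodesic metric, and in particular $\metres{V}{x}{y}{\Gamma}=\inf_{\gamma\in\paths{x}{y}{\ol V}{\Gamma}}\lenmetres{V}{\gamma}$, attained by a geodesic.

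It remains to replace $\lenmetres{V}{\gamma}$ by $\lenmetres{D}{\gamma}$, for which I would prove the locality statement: a.s., for every $V\in\metregions$ and $p\in\ol V\cap\Upsilon_\Gamma$ there is $\delta_p>0$ with $\metres{V}{a}{b}{\Gamma}=\metres{D}{a}{b}{\Gamma}$ whenever $\dpath[\ol V](a,p),\dpath[\ol V](b,p)<\delta_p$. This should follow from the compatibility axiom (with $\epsilon=0$) together with the topology of the $\CLE_{\kappa'}$ gasket: $(D\setminus V)\cap\Upsilon_\Gamma$ meets $\Upsilon_\Gamma$ only along the loops $\CL_1,\dots,\CL_n$ bounding $V$ and in components cut off from the component $V_0\ni p$ by those loops, so any admissible path from $a$ to $b$ that leaves $\ol V$ must re-enter through $\partial V_0$, a detour precluded when $a,b$ are close to $p$; exhibiting, for each such $u\in D\setminus V$, a single point of $\ol V$ that separates $u$ from $a,b$ puts us in the hypothesis of the compatibility axiom (possibly after exhausting the components of $D\setminus V$ one at a time). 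Granting this: for an admissible $\gamma\subseteq\ol V$ from $x$ to $y$, cover $\gamma([0,1])$ by finitely many $\dpath[\ol V]$-balls on which $\metres{V}{\cdot}{\cdot}{\Gamma}=\metres{D}{\cdot}{\cdot}{\Gamma}$ and pick a partition $x=u_0,\dots,u_k=y$ along $\gamma$ with consecutive points in a common ball, giving $\metres{V}{x}{y}{\Gamma}\le\sum_i\metres{V}{u_{i-1}}{u_i}{\Gamma}=\sum_i\metres{D}{u_{i-1}}{u_i}{\Gamma}\le\lenmetres{D}{\gamma}$, hence ``$\le$'' in the asserted identity; conversely the geodesic monotonicity axiom gives $\metres{D}{\cdot}{\cdot}{\Gamma}\le\metres{V}{\cdot}{\cdot}{\Gamma}$ pointwise, so for a near-$\metres{V}{\cdot}{\cdot}{\Gamma}$-geodesic $\gamma\subseteq\ol V$ we get $\lenmetres{D}{\gamma}\le\lenmetres{V}{\gamma}\le\metres{V}{x}{y}{\Gamma}+\delta$, hence ``$\ge$''.

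Finally, for non-degeneracy: by construction (Section~\ref{se:intersections_setup}), $\median{\epsilon}$ is comparable to $\wh{\mathfrak{m}}_\epsilon$, the conditional median of $X_\epsilon=\metapproxres{\epsilon}{U_{x',y'}}{x}{y}{\Gamma}$ given the event $E$, so $\median{\epsilon_n}^{-1}X_{\epsilon_n}$ has conditional median bounded away from $0$; since $U_{x',y'}\in\metregions$ and the points $x,y$ are determined by $\Gamma$ on $E$, passing to the limit forces $\metres{U_{x',y'}}{x}{y}{\Gamma}>0$ with positive probability, so the limit is not a.s.\ identically zero, and Theorem~\ref{th:nondegeneracy} gives that a.s.\ $\metres{V}{\cdot}{\cdot}{\Gamma}$ is a true metric for every $V\in\metregions$. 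Together with the previous paragraphs, this proves the claim. I expect the locality step to be the main obstacle: one must control the pieces of $\gamma$ that run along the loops $\CL_j$ forming $\partial V$ and handle the possibly infinite family of ``gates'' joining $V_0$ to neighbouring gasket components, so that the single-point separation hypothesis of the compatibility axiom genuinely applies.
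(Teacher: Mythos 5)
Your high-level route matches the paper's (Section~\ref{se:geodesic_metric}): approximate midpoints via $\lebneb{\epsilon}$ give a length metric, a length-compatibility step replaces the internal $\lenmetres{V}{\cdot}$ by $\lenmetres{D}{\cdot}$, and a separate non-degeneracy argument ensures the limit is not identically zero. But there are two genuine gaps in your proposal beyond the one you flag.

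First, the approximate midpoint step as you state it is not quite right. From the choice of $m=\gamma(s)$ you obtain $\lebneb{\epsilon}(\gamma[0,s])\le\tfrac12\lebneb{\epsilon}(\gamma)+\ac{\epsilon}$, but you also claim $\metapproxres{\epsilon}{V}{m}{y}{\Gamma}\le\tfrac12\metapproxres{\epsilon}{V}{x}{y}{\Gamma}+O(\ac{\epsilon})$. The only mechanism to control $\lebneb{\epsilon}(\gamma[s,1])$ from above is the approximate additivity axiom, which requires \emph{Euclidean separation by $\cserial\epsilon$} between the two sub-arcs: you get $\lebneb{\epsilon}(\gamma)\ge\lebneb{\epsilon}(\gamma[0,s])+\lebneb{\epsilon}(\gamma[s',1])$ for the last time $s'\ge s$ at which $\gamma$ is $\cserial\epsilon$-close to $\gamma[0,s]$, not for $s$ itself. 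One then needs $\gamma(s)$ and $\gamma(s')$ to converge to the same limit; this is exactly where the paper's proof of Lemma~\ref{le:geodesic_metric_dyadic} invokes the $4$-crossing event $F_\delta$ (via Lemma~\ref{lem:thin_cle_lwb}), so that Euclidean proximity of two points on a simple admissible path implies $\dpath$-proximity away from the endpoints. Without that input your midpoint argument does not close, because the two halves of the path may run very near each other through a pinch-point structure.

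Second, the locality step you identify as ``the main obstacle'' is solved by the paper via a different and more robust mechanism than your compatibility-axiom plan. The compatibility axiom requires a \emph{single} separation point $z\in\ol V$ for each $u\in V'\setminus V$; when $V'$ is all of $D$, components outside $V$ can be attached to $V$ through infinitely many gates along the double points of $\partial V$, and exhibiting a one-point separator is not generally possible. The paper's Lemma~\ref{le:lengths_compatible} instead uses the already-established positivity (Lemma~\ref{le:geodesic_metric_positive}, which in turn depends on Proposition~\ref{pr:metric_positive}): once the limit is a true metric, near-optimal approximating paths between $\dpath$-close points cannot leave a fixed dyadic box $Q$ without incurring a macroscopic cost $r=\dist_{D^Q}(Q_1,\partial Q_2)$, and therefore the internal metrics $D_{\epsilon_n}^Q$ and $D_{\epsilon_n}^{Q'}$ already agree. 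This sidesteps the separation-point issue entirely. Your observation that it is the hard step is correct; the fix is to prove non-degeneracy \emph{first} and then use positivity, not compatibility.

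Finally, your non-degeneracy argument passes to the limit of $\median{\epsilon_n}^{-1}\metapproxres{\epsilon_n}{U_{x',y'}}{x}{y}{\Gamma}$ without justification. As Remark~\ref{rm:compatibility_renormalisation} stresses, for a general approximate CLE metric this random variable need not converge in law to $\metres{U_{x',y'}}{x}{y}{\Gamma}$, because the region $U_{x',y'}$ lies between outer boundaries of loops and the $\epsilon>0$ compatibility axiom carries an extra $\cserial\epsilon$ restriction. The special feature of geodesic schemes, used in Lemma~\ref{le:geodesic_metric_positive}, is precisely that their compatibility holds \emph{without} that restriction, so $\metapproxres{\epsilon}{U_{x,y}}{\cdot}{\cdot}{\Gamma}$ agrees with $\metapproxres{\epsilon}{U'}{\cdot}{\cdot}{\Gamma}$ for $U'\supseteq U_{x,y}$ and the convergence in law does hold. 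You should make this observation explicit; without it the argument does not apply to the class of schemes you are working with.
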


\subsection{Relationship with other work}
\label{subsec:otherwork}

We will now review some of the literature that is connected to this work.

\subsubsection{Critical percolation in two dimensions}

There are a number of works that have studied the chemical distance metric and the random walk on the two-dimensional critical percolation model which we conjecture to converge to the metrics that we construct in this paper for $\kappa = 6$.

The problem of using $\SLE$ techniques to study the chemical distance metric for critical percolation, especially in the context of determining the length of the shortest crossing across an $n \times n$ box (on the event that a crossing exists), is mentioned in Schramm's ICM contribution \cite[Problem~3.3]{s2007icm}.  It is natural to expect that length of the shortest crossing grows like $n^\alpha$ for some exponent $\alpha$.  The value of~$\alpha$ is not explicitly known, but was numerically estimated in \cite{hs1988percnumerical} to be approximately~$1.13$.  It was proved by Aizenman-Burchard \cite{ab1999holder} that $\alpha > 1$ and  Damron-Hanson-Sosoe proved in \cite{dhs2017chemical,dhs2021chemicalineq} that $\alpha$ is strictly smaller than the corresponding exponent for the length of the lowest crossing, verifying a conjecture of Kesten-Zhang \cite{kz1993conj}.  In particular, in the case of critical percolation on the triangular lattice the works \cite{dhs2017chemical,dhs2021chemicalineq} show that $\alpha < 4/3$ (this exact value comes from $\SLE$ techniques).  Let us also mention that radial versions of these statements are proved in \cite{sr2022radialchemical}.  There are also several numerical works which estimate the dimension of the limiting metric space and explore the extent to which the corresponding geodesics might be related to $\SLE$ curves \cite{pose2014shortest}.

For the simple random walk on the incipient infinite percolation cluster, Kesten \cite{kes-rw-percolation} proved its subdiffusive behavior with respect to Euclidean distance. The work \cite{gl2022chemicalsub} obtains stronger bounds for the diffusivity in terms of the chemical distance metric. There have been conjectures regarding the spectral dimension of the simple random walk, however the conjecture $d_s = 4/3$ made by Alexander and Orbach is believed to be false in dimensions smaller than $6$ \cite[Sect.~7.4]{hug-rwre}.

The present paper is the first step to constructing the continuum scaling limits of the critical percolation metrics.

\subsubsection{Critical percolation in high dimensions}

Critical percolation in high dimensions behaves very differently to the two-dimensional case. In the high-dimensional case, the critical percolation clusters are expected to be tree-like. Random walks on high-dimensional percolation models have been studied in \cite{bjks-rw-oriented-percolation,kn-alexander-orbach,hhh-rw-iic}. In this case, the Alexander-Orbach conjecture $d_s = 4/3$ does hold. The scaling limit of the simple random walk on a related model was proved in \cite{acf-rw-lattice-trees}. The chemical distance in high-dimensional percolation is studied in \cite{hs-percolation-high-dim}. In \cite{brbn-hypercube-percolation} the authors announced that they are planning to prove scaling limit results for the metric space induced by the high-dimensional percolation clusters. (They prove an analogous result for the hypercube percolation model.)

\subsubsection{Supercritical percolation}

There are also many works which have studied the analogous problems for supercritical percolation. In contrast to critical percolation, the supercritical percolation clusters resemble the Euclidean space (rather than being fractal). In particular, the chemical distance metric is comparable to the Euclidean metric, and the works \cite{gm1990supercritical,gm1990parabolic,ap1996chemical,gm2007chemical} obtain increasing strong bounds quantifying this. Similarly, the random walk on supercritical percolation clusters converge to ordinary Brownian motion, see \cite{bar-rw-supercritical,ss-rw-percolation,bb-rw-percolation,mp-rw-percolation} for variants of this result.

\subsubsection{Chemical distance metric for simple $\CLE$s}

As mentioned earlier, the analog of Theorem~\ref{th:tightness_metrics} for $\CLE_\kappa$ for $\kappa \in (8/3,4)$ was proved in \cite{m2021tightness}.  We emphasize that there are several key differences between the regime $\kappa \in (8/3,4)$ studied in \cite{m2021tightness} and the regime $\kappa \in (4,8)$ which is the focus of the present work which is why the argument given here is completely different from the one given in \cite{m2021tightness}.  Firstly, in the case $\kappa \in (8/3,4)$, the loops do not intersect themselves, each other, or the domain boundary.  This makes it possible to ``route'' paths around ``bad'' regions.  This is not possible in the case that $\kappa \in (4,8)$.  Secondly, in the case $\kappa \in (8/3,4)$, the topology of the associated metric is the same as the Euclidean topology on the carpet.  In contrast, in the case $\kappa \in (4,8)$ this is not true due to the existence of the non-trivial ``prime ends'' mentioned earlier.

\subsubsection{Uniform spanning tree metric}

The construction of the chemical distance metric for $\CLE_8$ and how it arises as the scaling limit of the chemical distance metric for the uniform spanning tree has been established in \cite{bck2017tightness,hs2018euclideanmating}.  In this situation, constructing the continuous metric is substantially simplified as the distance between two points is equal to the ``natural parameterization'' of the tree branch connecting them and the convergence can be extracted from \cite{lv2021natural}.

\subsubsection{Random planar maps and Liouville quantum gravity}

There have also been many works which are focused on constructing random planar metrics in the context of random planar maps and Liouville quantum gravity (LQG).  The latter refers to the random two-dimensional Riemannian manifold with formal metric tensor
\begin{equation}
\label{eqn:lqg_def}
e^{\gamma h(z)} (dx^2 + dy^2)
\end{equation}
where $\gamma \in (0,2)$ is a parameter, $dx^2  + dy^2$ denotes the Euclidean metric on a planar domain $D$, and $h$ is some form of the Gaussian free field (GFF).  Since the GFF is not a function but rather takes values in the space of distributions, \eqref{eqn:lqg_def} requires interpretation.  The associated metric was first constructed for $\gamma=\sqrt{8/3}$ in \cite{ms2020lqgtbm1,ms2021lqgtbm2,ms2021lqgtbm3} and then for all $\gamma \in (0,2)$ in \cite{dddf2020lqgmetric, gm2021lqgmetric}.  In contrast to the chemical distance metrics associated with $\CLE$, the LQG metric in this regime has the Euclidean topology.  The LQG metric in the so-called \emph{supercritical regime} was recently constructed in \cite{dg2023supertightness, dg2023superuniqueness} and this does not have the Euclidean topology.

There are several key differences between the LQG metric and the $\CLE$ metric.  First, the LQG metric is defined directly out the GFF.  This means that one can make use of many tools from the theory of Gaussian processes (concentration inequalities and independence) which do not have direct analogs in the context of $\CLE$.  Moreover, there are some special properties of the GFF (the Markov property and conformal invariance) whose $\CLE$ analogs are much more difficult to work with.  Second, in the context of LQG one considers surfaces modulo conformal transformation and so the Euclidean shape of a given domain is not important.  This allows one to make use to an extra array of independence properties which are not available in the context of $\CLE$.

Let us also mention the works which study random metrics in the context of random planar maps. Landmark works include the convergence of uniformly chosen random planar maps to the Brownian map given in \cite{lg2013bm,m2013brownian} and also the convergence to the so-called stable gasket or carpet in the context of random planar maps with large faces \cite{lgm2019largefaces,cmr2023largefaces}.  The techniques used in these papers are also quite different from the present work.  This is primarily due to the fact that in both of these settings there is an explicit description of the limiting metric space.  However, the ``quantum'' version of the $\CLE$ metric (where lengths are biased by the exponential of the GFF) should correspond to \cite{lgm2019largefaces,cmr2023largefaces} in the ``dense'' regime.

\subsection{Proof strategy and outline}  The remainder of this article is structured as follows.  We will collect some preliminaries in Section~\ref{se:preliminaries}.  In Section~\ref{se:intersection_exponent} we will obtain superpolynomial tails for the probability that the CLE metrics in the region between two intersecting loops exceed its median value. A key step in the proof involves estimating the distance between the endpoints of an individual connected component will be carried out in Section~\ref{se:bubble_exponent}. Finally, we will prove the tightness and regularity statements in the main theorems in Section~\ref{se:tightness_proof}, and the limiting CLE metric will be constructed in Section~\ref{se:construction_metric}.  Appendix~\ref{app:ghf} contains some preliminaries on the Gromov-Hausdorff-function topology, and Appendix~\ref{app:conditional_laws} contains some measure theoretic inputs regarding the tightness and convergence of conditional laws.  The purpose of Appendix~\ref{app:arms} is to collect results on various intersection exponents for SLE, and we will use them in Appendix~\ref{se:regularity} to obtain some strong estimates for the regularity of $\SLE_\kappa$ curves, in particular serving to quantify the degree to which bottlenecks can accumulate. In Appendix~\ref{se:cle_outer_boundary_exponent} we estimate the probability of multiple crossings of the exterior boundaries of \clekp{} loops.

\begin{figure}[ht]
\centering
\includegraphics[width=0.3\textwidth]{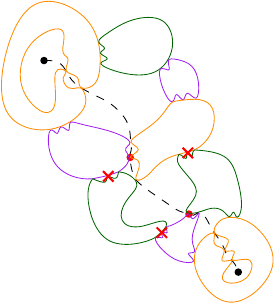}
\hspace{0.1\textwidth}
\includegraphics[width=0.3\textwidth]{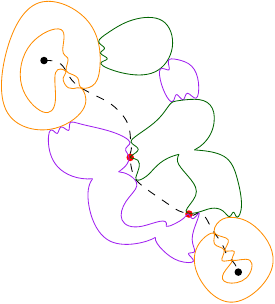}
\caption{\textbf{Left:} A simplified illustration of a few intersecting \clekp{} loops (only their outer boundaries are shown). \textbf{Right:} By resampling the \clekp{} near the points marked with an `x', we can compare the region between the two red points to a region that is bounded between two intersecting loops.}
\label{fi:outline_chaining}
\end{figure}

Let us now explain the strategy that we will use in more detail. By a variant of the Arzel\`a-Ascoli theorem, we have tightness of a sequence of random functions if we can prove H\"older continuity with uniform bounds on their H\"older constants. We perform a chaining argument to prove the latter. Due to the structure of \clekp{} in the non-simple regime $\kappa' \in (4,8)$, paths in the gasket will pass through intersection points of the \clekp{} loops (see the left side of Figure~\ref{fi:outline_chaining}). Therefore we need to gain uniform control over the distances between intersection points of loops.

In the first step we consider the region between the intersection of two \clekp{} loops. Such a region can be described by a pair of intersecting \slek{} flow lines (where $\kappa = 16/\kappa'$) with a particular angle difference $\angledouble$ \cite{mw2017intersections}. The precise setup will be given in Section~\ref{se:intersections_setup}. We define the normalizing constant $\median{\epsilon}$ be the median of $\metapprox{\epsilon}{x_0}{y_0}{\Gamma}$ where $x_0,y_0$ are two appropriately chosen points on the intersection of the two loops.

Our main aim is to show that
\begin{equation}
\label{eqn:main_concentration}
\p[ \metapprox{\epsilon}{\delta x_0}{\delta y_0}{\delta\Gamma} \geq \median{\epsilon} ] \to 0 \quad\text{as}\quad \delta \to 0
\end{equation}
faster than any power of $\delta$ (uniformly in $\epsilon$).

\begin{figure}[ht]
\centering
\includegraphics[width=0.2\textwidth]{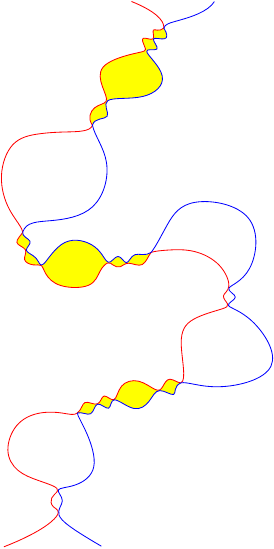}
\hspace{0.2\textwidth}
\includegraphics[width=0.25\textwidth]{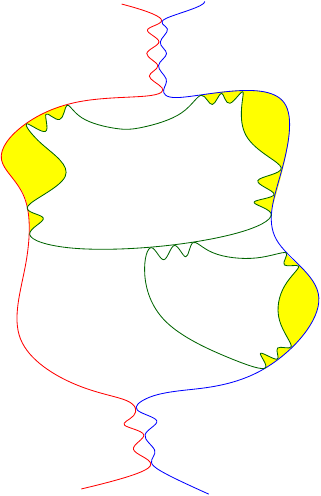}
\caption{\textbf{Left:} The self-similar structure of the regions bounded between two intersecting \clekp{} loops. \textbf{Right:} Inside each bubble, we find the same self-similar structure on both the left and the right side (only the outer boundaries of the \clekp{} loops are shown).}
\label{fi:outline_selfsimilarity}
\end{figure}

The first step to showing~\eqref{eqn:main_concentration} is first to prove an \emph{a priori} bound with a polynomial tail.  In particular, we will show that
\begin{equation}
\label{eqn:main_concentration_initial}
\p[ \metapprox{\epsilon}{\delta x_0}{\delta y_0}{\delta\Gamma} \geq \median{\epsilon} ] = O(\delta^{\ddouble+o(1)})
\end{equation}
where $\ddouble =  2-{(12-\kappa')(4+\kappa')}/({8\kappa'})$ is the double point dimension for $\SLE_{\kappa'}$ \cite{mw2017intersections}. We will prove~\eqref{eqn:main_concentration_initial} in Section~\ref{se:a_priori}. The idea is that the region $U$ bounded between the two intersecting flow lines has a self-similar structure, namely it contains approximately $\delta^{-\ddouble+o(1)}$ copies of $\delta U$ as sub-regions (see the left side of Figure~\ref{fi:outline_selfsimilarity}). If~\eqref{eqn:main_concentration_initial} were not true, then we argue that there is sufficient independence between the disjoint subregions so that it likely contains one with $\metapprox{\epsilon}{x_\delta}{y_\delta}{\Gamma} \geq \median{\epsilon}$. Since every path crossing $U$ needs to pass through these subregions, the series law implies that $\metapprox{\epsilon}{x_0}{y_0}{\Gamma} \geq \median{\epsilon}$ with high probability, contradicting the definition of $\median{\epsilon}$.

More generally, using a similar argument, we also show that
\begin{equation*}
\p[ \metapprox{\epsilon}{\delta x_0}{\delta y_0}{\delta\Gamma} \geq \delta^\zeta \median{\epsilon} ] = O(\delta^{\ddouble-\zeta+o(1)})
\end{equation*}
for $\zeta \ge 0$. This will allow us to bound distances by smaller constants than $\median{\epsilon}$ and sum them up via the triangular inequality.

To obtain~\eqref{eqn:main_concentration}, we let $\bestexp$ be the supremum of exponents $\alpha$ so that the left-hand side of~\eqref{eqn:main_concentration} is $O(\delta^\alpha)$. By~\eqref{eqn:main_concentration_initial} we have $\bestexp \geq \ddouble$. Our aim is to improve this a priori bound and show that $\bestexp = \infty$.  To prove this, we will use a ``bootstrapping'' argument to say that if $\bestexp < \infty$ then there exists $c > 0$ so that $\bestexp \geq \bestexp+c$, which is a contradiction. The argument consists of two parts. First, due to the structure of $U$ (recall Figure~\ref{fi:outline_selfsimilarity}), we can successively explore the region until we find an intersection point $x'$ such that $\metapprox{\epsilon}{x_0}{x'}{\Gamma} \geq \median{\epsilon}$. When this happens, we repeat the exploration from $x'$ onward and argue that we have sufficient independence so that the number of repetitions is bounded by a geometric random variable. It then remains to control the distances across the ``bubbles'' that terminate at the points $x'$ where the exploration has stopped. This is the most technical part of the proof and is carried out in Section~\ref{se:bubble_exponent}.

\begin{figure}[ht]
\centering
\includegraphics[width=0.3\textwidth]{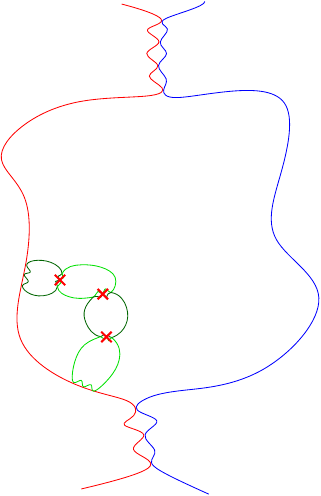}
\hspace{0.1\textwidth}
\includegraphics[width=0.3\textwidth]{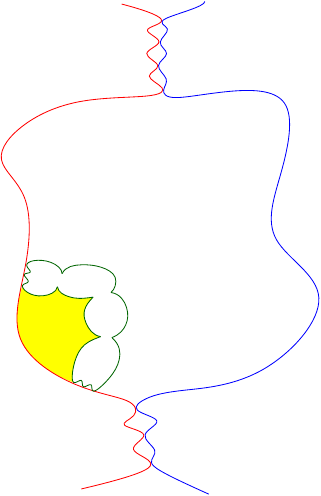}
\caption{By resampling the \clekp{} near the points marked with an `x', we can compare the regions bounded by a finite collection of loops to the type of regions bounded by a single loop.}
\label{fi:outline_bubble}
\end{figure}

For each such bubble, we consider the \clekp{} configuration inside the bubble. Using the self-similar structure of the \clekp{}, we argue that near the left (resp.\ right) boundary of the bubble, the \clekp{} configurations look like smaller copies of the region $U$. There are two cases. When a macroscopic loop intersects the left (resp.\ right) boundary of the bubble, the regions bounded between their intersection points have law comparable to that of $U$ (see the right side of Figure~\ref{fi:outline_selfsimilarity} for an illustration). To deal with the parts that lie between the intersections with multiple loops, we use the resampling argument developed in \cite{amy-cle-resampling} in order to compare their law to the region of the former type. The results in \cite{amy-cle-resampling} show that we can change the linking pattern of \clekp{} loops at their intersection points without significantly affecting the law of the \clekp{} configuration away from these points (see Figure~\ref{fi:outline_bubble} for an illustration). We conclude that, by our assumption, the $\metapprox{\epsilon}{\cdot}{\cdot}{\Gamma}$-distances between points on the left (resp.\ right) boundary of the bubble exceed $\median{\epsilon}$ with probability $O(\delta^{\bestexp})$.

We further argue that the \clekp{} near the left and the right side are sufficiently independent so that the probability that the distance exceeds $\median{\epsilon}$ on both sides is in fact $O(\delta^{2\bestexp})$. This would imply that left-hand side of~\eqref{eqn:main_concentration} is in fact bounded by $O(\delta^{2\bestexp})$. In reality, this independence holds only in regions that are sufficiently far away from the initial and final points of the bubble, and there will be various types of pathological behavior which we will need to rule out (e.g.\ pinch points where the two sides of a bubble come much closer to each other than the overall diameter of the bubble). Therefore we will only obtain a bound of the form $\bestexp \geq \bestexp+c$.

In order to argue that we have sufficient independence so that all of this works, we will need to establish some strong regularity statements which serve to quantify the occurrence of bottlenecks on $\SLE_\kappa$ curves.  These results may be of independent interest and are presented in Section~\ref{se:regularity}.

Once we have established~\eqref{eqn:main_concentration}, we then turn to complete the proof of tightness in Section~\ref{se:tightness_proof}.  The strategy is based on a chaining argument (in the spirit of the Kolmogorov-Chentsov theorem). For each $\delta > 0$ we consider a mesh of points that lie on the intersection of loops such that neighboring points have approximately Euclidean distance $\delta$. We will show for $\zeta \in (0,\ddouble)$ that the $\metapprox{\epsilon}{\cdot}{\cdot}{\Gamma}$-distances between neighboring points is at most $\delta^\zeta \median{\epsilon}$ with overwhelming probability. This is deduced from~\eqref{eqn:main_concentration} using various absolute continuity arguments comparing the regions between intersecting loops to the ones considered in~\eqref{eqn:main_concentration}. We will again rely on the resampling argument developed in \cite{amy-cle-resampling} to compare the regions bounded between multiple loops to the type of regions that are bounded by two loops as in~\eqref{eqn:main_concentration} (see Figure~\ref{fi:outline_chaining} for an illustration).

To conclude the tightness proof, suppose that $x,y \in \Upsilon_\Gamma$ are connected through a path $\gamma$ in the gasket, then we find a sequence of points on $\gamma$ that lie on the mesh points considered above, and hence we can bound the distance between $x,y$ by summing up the distances between pairs of points. This ultimately leads to a bound of the type $\metapprox{\epsilon}{x}{y}{\Gamma} \lesssim \diamE(\gamma)^\zeta$ which gives H\"older-continuity with respect to the metric $\dpath$. 

Finally, in Section~\ref{se:construction_metric} we construct a \clekp{} metric from each subsequential limit, and prove that it satisfies the axioms of a \clekp{} metric. The main noteworthy property is the Markovian property. Since the conditional independence is not preserved under weak limits in general, we need to leverage the tightness result to obtain convergence of the conditional laws. For this, we will use a strong continuity result from \cite{amy-cle-resampling} in the total variation sense for the conditional laws in the domain Markov property of CLE. The auxiliary lemmas regarding the conditional laws of weak limits are collected in Appendix~\ref{app:conditional_laws}.

\subsection*{Notation} All estimates in this paper will be uniform in $\epsilon \ge 0$ and within the class of approximate \clekp{} metrics with the same constants $\cserial,\cparallel(N)$ as in Definition~\ref{def:cle_metric}. Implicit constants such as in $O(\delta^\alpha)$, $o^\infty(\delta)$, etc.\ do not depend on $\epsilon,\delta$ (provided that $0 \le \epsilon < \delta \le 1$), and the choice of the approximate \clekp{} metric, but they may depend on $\cserial,\cparallel(N)$. We denote by $O(\delta^\alpha)$ a function that is bounded by a constant times $\delta^\alpha$. We denote by $o^\infty(\delta)$ a function that is $O(\delta^b)$ for any fixed $b>0$. We write $a \lesssim b$ if $a \le cb$ for a constant $c>0$, and we write $a \asymp b$ if $a \lesssim b$ and $b \lesssim a$. We always assume that $0 \le \epsilon < \delta \le 1$.

For $0<r_1<r_2$, we write $A(z,r_1,r_2) = B(z,r_2)\setminus\ol{B(z,r_1)}$. We sometimes use $\distE$ (resp.\ $\diamE$) to denote Euclidean distance (resp.\ diameter) in order to distinguish from the metric $\dpath$ defined in~\eqref{eq:dpath}. We write $U \Subset V$ to denote that $U$ is compactly contained in $V$, i.e.\ $\ol{U}$ is compact and $\ol{U} \subseteq V$.

\subsection*{Acknowledgements}  V.A., J.M., and Y.Y.\ were supported by ERC starting grant SPRS (804116). J.M.\ and Y.Y.\ also received support from ERC consolidator grant ARPF (Horizon Europe UKRI G120614), and Y.Y.\ in addition received support by the Royal Society.  We thank Maarten Markering for many helpful comments on an earlier draft of the paper.

\section{Preliminaries}
\label{se:preliminaries}

The purpose of this section is to collect a number of preliminaries which will be important for the rest of this work.  We will start in Section~\ref{subsec:sle} by reviewing some of the basic properties of $\SLE$.  We will then describe some of the basic properties of $\CLE$ in Section~\ref{subsec:cle}.  In Section~\ref{subsec:ig} we will review the coupling between $\SLE$ and the Gaussian free field (GFF) and collect some preliminary lemmas for $\SLE$ and the GFF.

Throughout this paper, we set
\[
\kappa \in (2,4), \quad
\kappa' = \frac{16}{\kappa} \in (4,8), \quad
d_\SLE=1+\frac{\kappa}{8}, \quad \lambda = \frac{\pi}{\sqrt{\kappa}}, \quad
\chi=\frac{2}{\sqrt{\kappa}}-\frac{\sqrt{\kappa}}{2} .
\]
Further, let
\begin{equation}\label{eq:angle_double}
\angledouble = (\kappa-2)\lambda/\chi = \pi (\kappa-2)/ (2-\kappa/2)
\end{equation}
be the double point angle and
\begin{equation}\label{eq:dim_double}
 \ddouble = 2 - \frac{(12-\kappa')(4+\kappa')}{8\kappa'}
\end{equation}
be the double point dimension of $\SLE_{\kappa'}$ (cf.\ \cite{mw2017intersections}).

\subsection{Conformal maps}

We collect a few useful lemmas concerning conformal maps.

\begin{lemma}
\label{le:bounded_crosscut}
There exists $c>1$ such that the following is true. Let $D$ be a simply connected domain with $0,\infty \notin D$, and $f\colon \h \to D$ a conformal map. Then there exists a crosscut $\Xi$ of $\h$ from $\R_+$ to $\R_-$ such that $\Xi \subseteq A(0,1,2) \cap \h$ and $f(\Xi) \subseteq A(0,c^{-1}|f(i3/2)|,c|f(i3/2)|)$.
\end{lemma}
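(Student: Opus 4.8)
The plan is to construct $\Xi$ by a length--area (extremal length) argument applied to a one‑parameter family of semicircular crosscuts, using the hypothesis $0\notin D$ to control the distortion of $f$ near $i3/2$.

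\emph{Distortion near the reference point.} Write $R=\abs{f(i3/2)}$. Since $0\notin D$ we have $\distE(f(i3/2),\partial D)\le R$, so the Koebe $1/4$ theorem applied to $f$ on the disc $B(i3/2,3/2)\subseteq\h$ gives $\abs{f'(i3/2)}\le\tfrac83 R$. Because $D\subseteq\C\setminus\{0\}$ and $\h$ is simply connected there is a single‑valued holomorphic branch $g=\log\circ f\colon\h\to\C$, and $g$ is conformal since it is injective ($f$ is). Hence $\abs{g'(i3/2)}=\abs{f'(i3/2)}/R\le\tfrac83$, and the classical growth theorem for univalent maps, transferred to $B(i3/2,3/2)$, yields a universal constant $C_0$ (one computes $C_0=9/16$) with $\abs{g(z)-g(i3/2)}\le C_0$ for all $z\in\ol{B(i3/2,1/6)}$. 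Since $\re(g-g(i3/2))=\log(\abs{f}/R)$, this gives $f(\ol{B(i3/2,1/6)})\subseteq A(0,R/c',c'R)$ with $c'=e^{C_0}$ universal, and note $\ol{B(i3/2,1/6)}\subseteq A(0,1,2)\cap\h$.

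\emph{The crosscut family and the metric.} Let $\mathcal F$ be the set of piecewise smooth crosscuts of $\h$ lying in $A(0,1,2)\cap\h$, joining $[1,2]$ to $[-2,-1]$, and meeting $\ol{B(i3/2,1/6)}$; every such crosscut goes from $\R_+$ to $\R_-$. The family $\mathcal F$ contains all semicircles $\{\abs{z}=r\}\cap\h$, $r\in[4/3,5/3]$, and a direct Cauchy--Schwarz estimate (cleanest after applying $\log$ to map $A(0,1,2)\cap\h$ to a Euclidean rectangle) shows that the modulus of that subfamily equals $\tfrac1\pi\log\tfrac54$, so $\operatorname{mod}(\mathcal F)\ge m_0:=\tfrac1\pi\log\tfrac54$. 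Fix a large universal $C>c'$ to be chosen below, set $S=A(0,R/C,CR)\setminus A(0,R/c',c'R)$, and define the conformal metric $\rho(z)=\abs{f'(z)}/\abs{f(z)}\cdot\one_{\{f(z)\in S\}}$ on $\h$. Since $f$ is injective, the change of variables $w=f(z)$ gives $\int_\h\rho^2\,dA=\int_{D\cap S}\abs{w}^{-2}\,dA(w)\le\int_S\abs{w}^{-2}\,dA(w)=4\pi\log(C/c')$, so by the defining inequality of the modulus there is $\Xi\in\mathcal F$ with $\int_\Xi\rho\,\abs{dz}\le\sqrt{4\pi\log(C/c')/m_0}+1$.

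\emph{Conclusion.} I claim that for $C$ large enough this $\Xi$ already satisfies $f(\Xi)\subseteq A(0,R/C,CR)$, which gives the lemma with $c=C$. If not, $f(\Xi)$ exits $A(0,R/C,CR)$; since $\Xi$ meets $\ol{B(i3/2,1/6)}$, where $\abs{f}\in[R/c',c'R]$, the continuous function $\log\abs{f}$ restricted to $\Xi$ must run from a level in $[\log R-\log c',\log R+\log c']$ to a level at distance $\log C$ from $\log R$, and throughout the corresponding sub‑arc $f$ stays in $S$. As $\abs{f'}/\abs{f}\cdot\abs{dz}=\abs{d\log f}\ge\abs{d\log|f|}$, this forces $\int_\Xi\rho\,\abs{dz}\ge\log(C/c')$. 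Choosing $C=c'\,e^{8\pi/m_0}$ makes $\log(C/c')>\sqrt{4\pi\log(C/c')/m_0}+1$, a contradiction. The two points needing care are the exact lower bound for $\operatorname{mod}(\mathcal F)$ (handled by the explicit semicircle subfamily) and ensuring the extremal‑length inequality is applied only against rectifiable competitors (handled by taking $\mathcal F$ to consist of piecewise smooth curves); everything else is routine, and the hypothesis $\infty\notin D$ is used only to guarantee $D\subseteq\C$.
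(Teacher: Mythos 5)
Your proof is correct, but it takes a genuinely different route from the paper's. The paper's proof is probabilistic: it runs Brownian motion from $i3/2$ in $\h$ (noting it hits $[1,2]$ before exiting $A(0,1,2)$ with probability bounded below), and from $f(i3/2)$ in $D$ it uses Beurling's projection estimate — made available precisely because $0,\infty\notin D$ force $\partial D$ to connect $0$ to $\infty$ — to show that the probability of reaching $\partial B(0,c^{\pm1}R)$ before hitting $\partial D$ is $O(c^{-1/2})$; a Brownian trajectory then supplies the crosscut. Your proof replaces the harmonic-measure estimates by an extremal-length argument: Koebe/growth-theorem distortion controls $|f|$ on $\ol{B(i3/2,1/6)}$ via the auxiliary univalent function $g=\log\circ f$, the modulus of the semicircle subfamily gives a universal lower bound $m_0$ on $\operatorname{mod}(\mathcal F)$, and the length--area inequality with the metric $\rho=|f'|/|f|\cdot\one_{\{f\in S\}}$ forces some member of $\mathcal F$ to have small $\rho$-length, which in turn bounds the radial oscillation of $f$ along it. Both are standard alternatives for proving qualitative boundary-proximity statements; the paper's route is shorter and stays inside the probabilistic toolkit used throughout, while yours is self-contained complex analysis (and pins down explicit, if astronomical, constants). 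One small housekeeping remark: the $\Xi\in\mathcal F$ delivered by the modulus inequality need not be simple, so to get a genuine crosscut you should pass at the end to a simple sub-arc; this is harmless because once $f(\Xi)\subseteq A(0,R/C,CR)$ is established, any sub-arc of $\Xi$ inherits it, and the requirement of meeting $\ol{B(i3/2,1/6)}$ (which a sub-arc might lose) was only needed to run the contradiction, not for the conclusion.
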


\begin{proof}
With positive probability (say $1/100$), a Brownian motion starting from $i3/2$ will hit the interval $[1,2]$ before leaving the annulus $A(0,1,2)$. On the other hand, since $0,\infty \notin D$, by Beurling's estimate, the probability of a Brownian motion in $D$ starting from $f(i3/2)$ hitting $\partial B(0, c|f(i3/2)|)$ before hitting $\partial D$ is $O(c^{-1/2})$. Likewise, its probability of hitting radius $\partial B(0, c^{-1}|f(i3/2)|)$ before hitting $\partial D$ is also $O(c^{-1/2})$. For $c > 1$ large enough, these probabilities will be smaller than $1/100$, and therefore there exists a Brownian trajectory that hits the interval $[1,2]$ and whose image is contained in the desired annulus. The same is true for the interval $[-2,-1]$. Hence, picking a subcurve of the Brownian trajectories, there is also a crosscut of $\h$ with the desired property.
\end{proof}

\begin{lemma}
\label{lem:hyperbolic_geodesic_facts}
 Let $D \subseteq \C$ be a simply connected domain, and $x,y \in \partial D$ distinct. Let $\partial_1 D$ (resp.\ $\partial_2 D$) be the clockwise (resp.\ counterclockwise) arc of $\partial D$ between $x$ and $y$. Let $z$ be a point on the hyperbolic geodesic in $D$ from $x$ to $y$. Then
 \[ \dist(z,\partial_1 D) \le \frac{64}{\pi^2}\dist(z,\partial_2 D) . \]
\end{lemma}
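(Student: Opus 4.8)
The plan is to uniformize and reduce to an explicit computation on a reference domain. Fix a point $z$ on the hyperbolic geodesic $\gamma$ from $x$ to $y$ in $D$. By the Riemann mapping theorem (together with the fact that hyperbolic geodesics and the quantities $\dist(z,\partial_i D)$ transform conformally in a controlled way via the Koebe distortion theorem), I would first reduce to the case where $D$ is the strip $S = \{w : 0 < \im w < \pi\}$, with $x,y$ the two prime ends at $\pm\infty$. Indeed, there is a conformal map $\varphi\colon S \to D$ taking $-\infty \mapsto x$, $+\infty \mapsto y$, and sending the geodesic of $S$ between these prime ends — which is the real-shifted line $\{\im w = \pi/2\}$, i.e.\ $\R + i\pi/2$ — onto $\gamma$. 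Write $z = \varphi(w)$ with $w = t + i\pi/2$ for some $t \in \R$. By the Koebe $1/4$-theorem applied to $\varphi$ at $w$, we have $\dist(z,\partial D) \asymp |\varphi'(w)| \cdot \dist(w,\partial S) = |\varphi'(w)|\cdot \tfrac{\pi}{2}$, up to universal factors of $4$. But this only controls $\dist(z,\partial D) = \min(\dist(z,\partial_1 D),\dist(z,\partial_2 D))$, not each arc separately, so a finer argument is needed.

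The key point is to compare $\dist(z,\partial_1 D)$ and $\dist(z,\partial_2 D)$ directly. In the strip coordinate, $\partial_1 S = \R$ (say) and $\partial_2 S = \R + i\pi$ are the two boundary lines, and the geodesic point $w = t + i\pi/2$ is \emph{equidistant} from them: $\dist(w,\partial_1 S) = \dist(w,\partial_2 S) = \pi/2$. Therefore it suffices to show that, for a conformal map $\varphi\colon S \to D$ and $w$ on the center line, the pushforward distances to the two boundary arcs are comparable with a universal constant. I would prove this by a two-sided application of the Koebe distortion theorem: for any boundary point $\xi \in \partial_j S$, the nearest-point projection argument plus Koebe gives $\dist(\varphi(w),\partial_j D) \asymp |\varphi'(w)|\,\dist(w,\partial_j S)$, where the implicit constants depend only on the geometry of $S$ near the center line and the fact that $\dist(w,\partial_1 S) = \dist(w,\partial_2 S)$. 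More carefully: pick $\xi_1 \in \partial_1 D$, $\xi_2 \in \partial_2 D$ realizing (approximately) the two distances; pull back via $\varphi^{-1}$ to nearly-nearest points on $\partial_1 S$, $\partial_2 S$; and use that on the disk $B(w,\pi/2) \subseteq S$ the derivative $|\varphi'|$ oscillates by at most a universal multiplicative constant (Koebe distortion on $B(w,\pi/2)$, which is compactly contained in $S$ up to touching the boundary). Chasing the universal constants through and optimizing should yield the stated bound with constant $64/\pi^2$.

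The main obstacle I anticipate is bookkeeping the explicit constant $64/\pi^2$ rather than merely an abstract universal one. The abstract comparability $\dist(z,\partial_1 D) \asymp \dist(z,\partial_2 D)$ along a hyperbolic geodesic is standard and follows from the above Koebe-distortion argument without difficulty; extracting the precise value $64/\pi^2$ requires being careful about which disks one applies the distortion theorem to and tracking factors of $4$ (from Koebe $1/4$) and $\pi/2$ (from the strip width). An efficient route is: apply the Koebe $1/4$-theorem to get $\dist(z,\partial_j D) \ge \tfrac14 |\varphi'(w)| \dist(w,\partial_j S)$ on one side, and the growth estimate $|\varphi(w) - \varphi(w')| \le \dots$ (equivalently Koebe's upper distortion bound) to get $\dist(z,\partial_j D) \le |\varphi'(w)|\dist(w,\partial_j S)$ up to the distortion factor on the ball $B(w,\dist(w,\partial_j S)) = B(w,\pi/2)$, whose distortion constant is $(1-\rho)^{-2}$ type with $\rho$ the relative radius; here since the ball has the same radius as the distance to the boundary, $\rho \to 1$ and one instead uses the Koebe bound on a slightly smaller sub-ball or argues via a chain. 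The cleanest implementation replaces $B(w,\pi/2)$ by noting $B(w,\pi/2)\subseteq S$ and applying the distortion theorem on $B(w, r)$ for $r<\pi/2$, then sending $r \uparrow \pi/2$. Combining the lower bound on one arc with the upper bound on the other, and using $\dist(w,\partial_1 S)=\dist(w,\partial_2 S)$, gives the ratio bound; plugging $\pi/2$ and the Koebe constants and simplifying produces $64/\pi^2$. I would present this final constant-chase compactly, citing the Koebe distortion theorem in the form found in standard references (e.g.\ Pommerenke), rather than re-deriving it.
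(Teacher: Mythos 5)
Your proposed route (strip uniformization plus Koebe distortion) is genuinely different from the paper's, which uses Brownian motion: conformal invariance to get that the harmonic measures of the two arcs from $z$ are each $1/2$, and Beurling's projection theorem to bound the harmonic measure of the far arc by $(4/\pi)\sqrt{\dist(z,\partial_2 D)/\dist(z,\partial_1 D)}$; combining gives $\sqrt{\dist(z,\partial_1 D)/\dist(z,\partial_2 D)} \le 8/\pi$, hence $64/\pi^2$. Unfortunately, your route has a genuine gap that I do not think can be closed without importing a tool at the level of Beurling or Gehring--Hayman, at which point you have essentially rediscovered the paper's argument.

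The problem is that the Koebe $1/4$-theorem and distortion/growth estimates for $\varphi\colon S\to D$ only control the distance from $\varphi(w)$ to the \emph{full} boundary $\partial D$, never to an individual arc. Writing $d_j = \dist(w,\partial_j S)$ and noting $d_1 = d_2 = \pi/2$, Koebe gives $\tfrac14|\varphi'(w)|\cdot\tfrac{\pi}{2} \le \dist(\varphi(w),\partial D) = \min_j\dist(\varphi(w),\partial_j D) \le 4|\varphi'(w)|\cdot\tfrac{\pi}{2}$, but nothing about $\max_j\dist(\varphi(w),\partial_j D)$. To bound the distance to the farther arc from above, you would need an inequality of the form $\dist(\varphi(w),\partial_j D) \le C|\varphi'(w)|\,d_j$, i.e.\ an \emph{upper} bound on $|\varphi(\zeta)-\varphi(w)|$ for some $\zeta$ near $\partial_j S$ with $|\zeta-w|$ close to $\pi/2$. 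The growth theorem gives $|\varphi(\zeta)-\varphi(w)| \le |\varphi'(w)|\,r/(1-r/\!(\pi/2))^2$ for $|\zeta-w|=r$, and this blows up as $r\uparrow\pi/2$. This is exactly the limit your proposal takes in the last paragraph ("then sending $r\uparrow\pi/2$"), so the distortion constant is not uniform and the argument does not close. The phrase "compactly contained in $S$ up to touching the boundary" hides the issue: $B(w,\pi/2)$ is not compactly contained in $S$, and the distortion theorem degenerates precisely at the boundary touch points, which are the points you need. Qualitatively, the statement you are trying to prove is a harmonic-measure statement about how a geodesic "sees" two complementary boundary arcs, and no purely local derivative estimate can separate the two arcs; some global extremal-length or probabilistic input (Beurling, or Gehring--Hayman) is required. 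This also explains the constant: $64/\pi^2 = (8/\pi)^2$ carries the $4/\pi$ of Beurling's projection theorem and a factor of $2$ from the harmonic measures being $1/2$, not the Koebe constants $4$ and $1/4$ your plan would naturally produce.
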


\begin{proof}
 We can assume that $\dist(z,\partial_1 D) > \dist(z,\partial_2 D)$. Suppose $\varphi\colon D \to \D$ is a conformal transformation sending $x,y$ to $-i,i$. Since $z$ is on the hyperbolic geodesic from $x$ to $y$, we have $\varphi(z) \in [-i,i]$. Let $B$ be a Brownian motion starting at $z$. By the symmetry and conformal invariance of Brownian motion, the probability that $B$ first hits $\partial_1 D$ (resp.\ $\partial_2 D$) is $1/2$. On the other hand, by Beurling's estimate, the probability that $B$ hits $\partial_1 D$ before $\partial_2 D$ is at most $(4/\pi)(\dist(z,\partial_2 D)/\dist(z,\partial_1 D))^{1/2}$ (see, e.g., \cite[Section~3.8]{L05conf_proc} for the explicit constant in the Beurling estimate). The claim follows by combining these two facts.
\end{proof}

\subsection{Schramm-Loewner evolution}
\label{subsec:sle}

We will now review some basics for the $\SLE_\kappa$ processes and their variants.  The starting point for the definition of \emph{chordal $\SLE_\kappa$} is the \emph{chordal Loewner equation} which for $U \colon \R_+ \to \R$ continuous is given by
\begin{equation}
\label{eqn:loewner_ode}
\partial g_t(z) = \frac{2}{g_t(z) - U_t},\quad g_0(z) = z.
\end{equation}
For each $z \in \h$ we have that $(g_t(z))$ is defined up until $\tau_z = \inf\{t \geq 0 : \im(g_t(z)) = 0\}$.  The particular choice $U = \sqrt{\kappa} B$ for $B$ a standard Brownian motion and $\kappa \geq 0$ gives the definition of \emph{chordal $\SLE_\kappa$}.   It was proved by Rohde-Schramm \cite{rs2005basic} ($\kappa \neq 8$) and Lawler-Schramm-Werner \cite{lsw2004lerw} ($\kappa = 8$, see also \cite{am2022sle8}) that $\SLE_\kappa$ is generated by a continuous curve.  This means that there exists a continuous curve $\eta \colon \R_+ \to \ol{\h}$ such that for each $t \geq 0$ the domain of $g_t$ is equal to the unbounded component of $\h \setminus \eta([0,t])$.  As we mentioned earlier, the $\SLE_\kappa$ curves are simple for $\kappa \in (0,4]$, self-intersecting but not space-filling for $\kappa \in (4,8)$, and space-filling for $\kappa \geq 8$ \cite{rs2005basic}.

The \emph{$\SLE_\kappa(\rho)$ processes} are an important variant of $\SLE_\kappa$ where one keeps track of extra marked points. These processes were first introduced in \cite[Section~8.3]{lsw2003restriction}; see also \cite{sw2005coordinate} for a number of useful results about $\SLE_\kappa(\rho)$. It is given by solving~\eqref{eqn:loewner_ode} with the solution to
\begin{equation}
\label{eqn:sle_kappa_rho}
dW_t = \sqrt{\kappa} dB_t + \sum_i \re\left( \frac{\rho_i}{W_t - V_t^i}\right) dt, \quad dV_t^i = \frac{2}{V_t^i - W_t} dt,\quad V_0^i = z^i
\end{equation}
used in place of $U$ in~\eqref{eqn:loewner_ode}.  It is shown in \cite{ms2016ig1} that~\eqref{eqn:sle_kappa_rho} has a unique solution up until the \emph{continuation threshold}, which is the first time $t$ that the sum of the weights of the force points which have collided with $W_t$ is at most $-2$ and also that the resulting process is generated by a continuous curve.

We remark that it is also possible to make sense of the $\SLE_\kappa(\rho)$ processes in the regime that $\rho \in (-2-\kappa/2,-2)$.  For $\rho \in [\kappa/2-4,-2)$, the continuity of these processes was proved in \cite{ms2019lightcone} and for $\rho \in (-2-\kappa/2,\kappa/2-4)$ the continuity was proved in \cite{msw2017cleperc}.

\emph{Radial $\SLE_\kappa$} is given by solving the \emph{radial Loewner equation}
\begin{equation}
\label{eqn:radial_loewner}
\partial_t g_t(z) = g_t(z) \frac{W_t + g_t(z)}{W_t - g_t(z)},\quad g_0(z) = z
\end{equation}
where $W_t = e^{i \sqrt{\kappa} B_t}$ and $B$ is a standard Brownian motion.   For each $z \in \D$ we have that $(g_t(z))$ is defined up until $\tau_z = \inf\{t \geq 0 : |g_t(z)| = 1\}$.  Let $K_t = \{z : \tau_z \leq t\}$.  Then $g_t$ defines a conformal transformation $g_t \colon \D \setminus K_t \to \D$ with $g_t(0) = 0$ and $g_t'(0) > 0$.  As in the case of chordal $\SLE_\kappa$, for each $\kappa \geq 0$ there exists a continuous curve $\eta \colon \R_+ \to \ol{\D}$ so that for each $t \geq 0$ the connected component of $\D \setminus \eta([0,t])$ containing $0$ is equal to $\D \setminus K_t$.

Let
\begin{equation}
\Psi(w,z) = -z \frac{z+w}{z-w} \quad\text{and}\quad \wt{\Psi}(z,w) = \frac{\Psi(z,w) + \Psi(1/\ol{z},w)}{2}.
\end{equation}
Then the radial $\SLE_\kappa(\rho)$ processes are defined by taking $W$ to be the solution to
\begin{equation}
\label{eqn:radial_sle_kappa_rho}
d W_t = \left(-\frac{\kappa}{2} + \frac{\rho}{2} \wt{\Psi}(O_t,W_t) \right) dt + i \sqrt{\kappa} W_t dB_t,\quad dO_t = \Psi(W_t,O_t) dt.
\end{equation}
It was shown in \cite{ms2017ig4} that for each $\rho > -2$ the radial $\SLE_\kappa(\rho)$ processes correspond to a continuous curve.

\emph{Whole-plane $\SLE_\kappa(\rho)$} is defined by solving~\eqref{eqn:radial_loewner} for all $t \in \R$ (rather than just for $t \in \R_+$) with the time-stationary solution to~\eqref{eqn:radial_sle_kappa_rho} (which is defined for all $t \in \R$).   In this case, for each $z \in \C$ we have that $(g_t(z))$ is defined up until $\tau_z = \inf\{t \in \R : |g_t(z)| = 1\}$.  Let $K_t = \{z : \tau_z \leq t\}$ as before.  Then $g_t$ defines a conformal transformation $\C \setminus K_t \to \C \setminus \ol{\D}$ which fixes and has positive derivative at $\infty$.  It was proved in \cite{ms2017ig4} that for each $\rho > -2$ the whole-plane $\SLE_\kappa(\rho)$ processes are generated by a continuous curve.

\emph{Two-sided whole-plane $\SLE_\kappa$} refers to a curve in $\C$ from $\infty$ to $\infty$ through $0$.  It is defined by first sampling a whole-plane $\SLE_\kappa(2)$ curve $\eta$ from $\infty$ to $0$ and then concatenating with $\eta$ in the component of $\C \setminus \eta$ with $0$ on its boundary a chordal $\SLE_\kappa$ curve from $0$ to $\infty$.  The importance of these curves is that they describe the local behavior of an $\SLE_\kappa$ curve which has been conditioned to pass through a given interior point \cite{fie-two-sided-radial,zhan-decomposition}. An important property of the two-sided whole-plane $\SLE_\kappa$ is that when it is parameterized by its $d_\SLE$-dimensional Minkowski content, it is self-similar with index $1/d_\SLE$ and has stationary increments \cite{zhan-sle-loop}.

We collect below a few lemmas that will be used in the paper, distinguishing between the variants of SLE just mentioned.

\subsubsection{Chordal SLE}

\begin{lemma}
\label{le:variation_one_scale}
For any $p > d_\SLE$ there exists $c>0$ such that the following is true. Let $\varphi$ be a conformal transformation from $(D,x,y)$ to $(\h,0,\infty)$, and let $\delta = \dist(\varphi^{-1}(i),\partial D)$. Let $\eta$ be a chordal \slek{} in $(D,x,y)$. For $r,u>0$, let $E$ be the event that for every $s,t$ with $\varphi(\eta[s,t]) \subseteq [-u^{-1},u^{-1}]\times[u,u^{-1}]$ there are $r^{-p}u^{-c}$ times $s = t_1 < t_2 < \cdots < t_{r^{-p}u^{-c}} = t$ such that $\diam(\eta[t_{\ell},t_{\ell+1}]) \le r\delta$ for each $\ell$. Then $\p[E^c] = o^\infty(ru)$ as $ru \searrow 0$.
\end{lemma}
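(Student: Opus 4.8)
The plan is to reduce the statement to a scale-invariant estimate about chordal $\SLE_\kappa$ in $(\h,0,\infty)$ and then prove that estimate by a combination of a one-step diameter bound and an iteration/concentration argument. First I would use conformal invariance to transport everything to $\h$: writing $\tilde\eta = \varphi(\eta)$, which is a chordal $\SLE_\kappa$ from $0$ to $\infty$ in $\h$, and using the Koebe distortion theorem together with the definition $\delta = \dist(\varphi^{-1}(i),\partial D)$, the condition $\diam(\eta[t_\ell,t_{\ell+1}]) \le r\delta$ becomes comparable (up to a bounded multiplicative factor) to a bound of the form $\diam(\tilde\eta[t_\ell,t_{\ell+1}]) \le c\, r$ for pieces of $\tilde\eta$ that stay in a fixed compact part of $\h$; the extra $u^{-c}$ factor in the number of pieces is there precisely to absorb distortion near the boundary of the window $[-u^{-1},u^{-1}]\times[u,u^{-1}]$, i.e.\ near where $\tilde\eta$ can come close to $\R$ or escape to $\infty$. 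So it suffices to show: for $\SLE_\kappa$ in $\h$, with probability $1-o^\infty(ru)$, any subpath contained in the window can be subdivided into at most $r^{-p}u^{-c}$ pieces each of diameter $\le c r$.

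The core is a one-scale diameter estimate. For a chordal $\SLE_\kappa$ and a point $z$ with $\im z$ bounded below, there is a standard bound of the form $\p[\diam(\tilde\eta \cap B(z,\rho)) \ge t] \lesssim (\rho/t)^{?}$ coming from the fact that $\SLE_\kappa$ has Minkowski/Hausdorff dimension $d_\SLE = 1 + \kappa/8$; more precisely, the expected number of balls of radius $r$ from a fixed grid needed to cover $\tilde\eta$ within the window is $\asymp r^{-d_\SLE + o(1)}$, with good (polynomial or better) tails on the event that this number is unusually large. Turning a covering of the trace by $\asymp r^{-d_\SLE}$ balls into an ordered subdivision of the curve into pieces of diameter $\lesssim r$ costs only a bounded multiplicative factor per ball (a piece of the curve inside a ball of radius $r$ has diameter $\le 2r$, and the curve enters/exits each ball finitely many times in expectation). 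Since $p > d_\SLE$, the target count $r^{-p}$ beats the typical count $r^{-d_\SLE}$ by a definite power of $r$, and this slack is what lets us upgrade a polynomial tail on the covering number into the $o^\infty(ru)$ bound: we choose the window-dependent factor $u^{-c}$ and then sum/union-bound the rare events over the $O(u^{-2})$ grid points in the window, over dyadic scales, and over the excursions of $\tilde\eta$ away from $\R$, each contributing only a polynomial-in-$u$ factor that is swallowed by the superpolynomial decay.

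The main obstacle I expect is the uniformity over \emph{all} pairs $s<t$ with $\tilde\eta[s,t]$ in the window, simultaneously, rather than for a fixed subpath. The issue is that the curve can make many separate excursions into and out of the window, and near $\R$ (where $\SLE_\kappa$ touches the boundary, for $\kappa \in (2,4)$ the curve is simple but still accumulates near $\R$ in a complicated way) the conformal distortion blows up. The way I would handle this: (i) use the strong Markov property of $\SLE_\kappa$ at successive hitting times of the window boundary to control the number of excursions (geometric tail), (ii) for each excursion apply the one-scale estimate on the slightly enlarged window where distortion is still bounded, and (iii) absorb the boundary effects into the $u^{-c}$ factor by a separate Beurling-type estimate controlling how much of the curve's diameter can be concentrated in a thin neighborhood of $\partial\h$ — this is where the $[u,u^{-1}]$ lower bound on $\im$ in the window definition is used. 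Once these three ingredients are in place, a union bound over a $u$-dependent but polynomially-sized mesh of pairs $(s,t)$, combined with continuity of $\tilde\eta$, upgrades the pointwise estimate to the uniform one, and the superpolynomial error $o^\infty(ru)$ survives because each union-bound factor is only polynomial in $u^{-1}$ while the single-event probability is superpolynomially small in $ru$.
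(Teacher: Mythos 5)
Your approach is genuinely different from the paper's and worth comparing. The paper's proof is a two-step reduction: apply Koebe distortion to replace the $r\delta$-scale condition by a $ru^c$-scale condition for $\varphi(\eta)$ in the fixed window $[-1,1]\times[0,1]$ (after scaling by $u$), and then observe that if some subinterval cannot be subdivided into $(ru^{c+1})^{-p}$ pieces of diameter $\le ru^{c+1}$, a greedy construction produces at least that many consecutive pieces of diameter $\ge ru^{c+1}$, which forces the $p'$-variation of the restricted trace (for any $d_\SLE < p' < p$) to be at least $(ru^{c+1})^{-(p-p')/p'}$; the superpolynomial tail then comes directly from the $p'$-variation estimate of \cite[Proposition~6.3]{hy-sle-regularity}. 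You instead aim to reconstruct this from covering numbers and dimension/Minkowski-content bounds. That could be made to work, but it is the longer road, since relating the covering number of the range to the number of pieces in an \emph{ordered} subdivision of the parametrized curve is, in effect, re-proving the $p$-variation bound.

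There is also a real imprecision in your sketch. You say a ``polynomial or better'' tail on the covering number $N$, combined with the gap $p>d_\SLE$, upgrades to the $o^\infty(ru)$ bound. It does not: if the tail is only $\p[N > M\,\E[N]] \lesssim M^{-k_0}$ for a single fixed $k_0$, then taking $M = r^{-(p-d_\SLE)}$ gives $\p[N > r^{-p}] \lesssim r^{k_0(p-d_\SLE)}$, which is a fixed polynomial in $r$, not $o^\infty(r)$. What you actually need is that \emph{all} polynomial moments of $N/\E[N]$ are bounded, i.e.\ for every $k$ there is $C_k$ with tail $\le C_k M^{-k}$; only then does sending $k \to \infty$ deliver a superpolynomial bound. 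This is available (it follows from the higher-moment bounds on the natural parameterization in \cite{rz2017higher}, or directly from the $p$-variation tail bound the paper uses), but your phrasing elides the essential strength of the estimate. Finally, your proposed union bound over a mesh of $(s,t)$ pairs, over dyadic scales, and over excursions is unnecessary machinery: the $p$-variation of the whole restricted trace dominates the $p$-variation over every subinterval, so the paper's one-shot estimate is automatically uniform over all admissible $(s,t)$ at no extra cost, with the $u^{-c}$ factor doing the entire job of absorbing boundary distortion.
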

\begin{proof}
By Koebe's distortion theorem, there exists $c>0$ such that $|(\varphi^{-1})'(z)| \lesssim u^{-c}\delta$ on $[-u^{-1},u^{-1}]\times[u,u^{-1}]$. Therefore it suffices to find $s = t_1 < t_2 < \cdots < t_{r^{-p}u^{-c}} = t$ such that $\diam(\varphi(\eta[t_{\ell},t_{\ell+1}])) \le ru^c$ for each $\ell$.

Consider $\eta_{\h} = u\varphi(\eta)$ so that $\eta_{\h}[s,t] \subseteq [-1,1]\times[0,1]$. Suppose that there are $s,t$ with $\eta_{\h}[s,t] \subseteq [-1,1]\times[0,1]$ and $s = t_1 < t_2 < \cdots < t_{(ru^{c+1})^{-p}} = t$ such that $\diam(\eta_{\h}[t_{\ell},t_{\ell+1}]) \ge ru^{c+1}$ for each~$\ell$. Then, for $p' \in (d_\SLE,p)$, the $p'$-variation of $\eta_{\h}\big|_{[s,t]}$ will be at least $(ru^{c+1})^{-(p-p')/p'}$. By \cite[Proposition~6.3]{hy-sle-regularity}, this event has probability $o^{\infty}(ru^{c+1})$.
\end{proof}

\subsubsection{Two-sided whole-plane SLE}

\begin{lemma}
There exists $c>0$ such that if $\eta$ denotes a two-sided whole-plane $\SLE_\kappa$ in $\C$ from $\infty$ to $\infty$ through $0$, parameterized according to the natural parameterization with $\eta(0) = 0$, then
\begin{align}
\p[\diam(\eta[0,t]) \ge rt^{1/d_\SLE}] &\le c^{-1}\exp(-cr^{d_\SLE/(d_\SLE-1)}) , \label{eq:wpsle_ub}\\
\p[\diam(\eta[0,t]) \le r^{-1}t^{1/d_\SLE}] &= o^\infty(r^{-1}) \label{eq:wpsle_lb}
\end{align}
as $r\to\infty$.
\end{lemma}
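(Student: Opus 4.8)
The two bounds concern the diameter of an initial segment of a two-sided whole-plane $\SLE_\kappa$ parameterized by its natural ($d_\SLE$-dimensional Minkowski content) parameterization. The key structural input, recalled in the excerpt, is that this curve is self-similar with index $1/d_\SLE$ and has stationary increments \cite{zhan-sle-loop}. By self-similarity it suffices to prove both estimates at the single time $t = 1$, i.e.\ to show $\p[\diam(\eta[0,1]) \ge r] \le c^{-1}\exp(-cr^{d_\SLE/(d_\SLE-1)})$ and $\p[\diam(\eta[0,1]) \le r^{-1}] = o^\infty(r^{-1})$ as $r \to \infty$; the general-$t$ statements then follow by rescaling space by $t^{1/d_\SLE}$.

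\emph{Upper tail \eqref{eq:wpsle_ub}.} The plan is to cover $\eta[0,1]$ by unit-Minkowski-content pieces and use a union bound with a submultiplicativity/independence input. Concretely, if $\diam(\eta[0,1]) \ge r$, then the portion of $\eta$ inside, say, $B(0,r)$ has diameter at least $r$, but its Minkowski content is at most $1$; writing $B(0,r)$ as a union of $\asymp (r/s)^2$ balls of radius $s$, one finds that at least one piece of the curve of content $\le 1$ must cross an annulus of modulus comparable to $r/s$ a large number of times, or equivalently must have content bounded below by a positive power of $(r/s)$ — contradicting content $\le 1$ once $s$ is a suitable fractional power of $r$. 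A cleaner route, which I would actually follow, is to iterate the basic one-scale estimate: by stationarity of increments, the content-$1$ curve $\eta[0,1]$ can be divided into $N$ consecutive pieces of content $1/N$, each (by self-similarity) distributed as a rescaled copy of $\eta[0,1]$ shrunk by $N^{-1/d_\SLE}$, and these are ``approximately independent'' across widely separated scales. Feeding in an a priori moment bound $\E[\diam(\eta[0,1])^p] < \infty$ for all $p$ (which follows from \cite[Proposition~6.3]{hy-sle-regularity}, exactly as used in Lemma~\ref{le:variation_one_scale}), a standard chaining of the type behind Kolmogorov--Chentsov with stretched-exponential improvements (e.g.\ the argument giving Gaussian-type tails from a chain of $p$-th moment bounds with $p \to \infty$) upgrades the polynomial tails to $\exp(-c r^{d_\SLE/(d_\SLE-1)})$. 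The exponent $d_\SLE/(d_\SLE-1)$ is precisely what one gets by optimizing: if $\diam(\eta[0,1]) \ge r$ then at some dyadic scale $2^{-k}$ there must be $\gtrsim r \cdot 2^k$ consecutive segments each of diameter $\gtrsim 2^{-k}$, an event of probability $\exp(-c\, r 2^k)$ after accounting for $2^k$ scales' worth of content being only $1$, and optimizing over $k$ with the constraint that $2^{-k}$ segments have content $2^{-k d_\SLE}$ gives the stated power of $r$.

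\emph{Lower tail \eqref{eq:wpsle_lb}.} Here the event is that a curve of content exactly $1$ has tiny diameter $r^{-1}$. Now use self-similarity in the other direction: $\eta$ restricted to content $\le 1$ and having diameter $\le r^{-1}$ is, after rescaling space by $r$, a two-sided whole-plane $\SLE_\kappa$ segment of content $r^{d_\SLE}$ fitting inside the unit ball. By stationarity of increments this forces $\asymp r^{d_\SLE}$ consecutive content-$1$ pieces (each a copy of $\eta[0,1]$) all to be squeezed inside a fixed ball of radius comparable to their own typical size — in particular each must have diameter $O(1)$. Since, by \eqref{eq:wpsle_ub} already established (or directly from the $p$-th moment bound), a single content-$1$ piece has diameter $\le C$ with probability $1 - p_0$ for some $p_0 \in (0,1)$ that can be made $< 1$, and since far-apart pieces are approximately independent, the probability of $\asymp r^{d_\SLE}$ of them simultaneously being small decays faster than any power of $r$. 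Making the ``approximately independent'' step rigorous is the same mechanism as in the simple-CLE tightness arguments: condition successively on the curve up to each content-$1$ marker and use the Markov property of whole-plane $\SLE_\kappa(\rho)$ together with an absolute-continuity comparison of the conditional law to the unconditioned one away from the marked point.

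\emph{Main obstacle.} The genuinely delicate point in both parts is quantifying the approximate independence between the many curve pieces living at comparable-or-separated scales: whole-plane $\SLE_\kappa(\rho)$ has the domain Markov property, but the natural parameterization is not a Markovian time change, and the conditional law of a future content-$1$ segment given the past depends on the geometry of the already-drawn curve. I would handle this exactly as the paper handles analogous steps elsewhere — by a domain-Markov decomposition of the two-sided whole-plane $\SLE_\kappa$ into a whole-plane $\SLE_\kappa(2)$ piece and a chordal $\SLE_\kappa$ piece, plus Koebe/Beurling-type distortion estimates (in the spirit of Lemmas~\ref{le:bounded_crosscut} and \ref{lem:hyperbolic_geodesic_facts}) to control the conformal maps relating consecutive pieces — so that conditionally on a mild ``good geometry'' event the pieces are comparable in law to i.i.d.\ copies, and the bad geometry event has superpolynomially small probability by the a priori $p$-th moment bounds. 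Once this independence input is in hand, both tail estimates reduce to the elementary large-deviations computations sketched above, with the stretched-exponential rate $d_\SLE/(d_\SLE-1)$ emerging from the scaling relation between diameter and content.
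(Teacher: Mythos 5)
Your plan follows a genuinely different — and considerably more laborious — route than the paper on both bounds, and your route for the lower bound has a real gap that the paper's approach elegantly sidesteps.

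For the upper bound \eqref{eq:wpsle_ub}, the paper simply cites \cite[Proposition~3.10]{hy-sle-regularity}, which establishes exactly this stretched-exponential tail. Your proposed chaining/large-deviations argument is plausible in spirit, but it amounts to re-deriving a substantial portion of what that proposition already proves. There is no harm in this, but it is not how the paper proceeds, and carrying it out rigorously (controlling the conditional laws across the many scales you invoke) would be a significant undertaking.

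For the lower bound \eqref{eq:wpsle_lb}, the paper's argument uses no independence at all, and this is the crucial difference. The observation is that by scaling, $\{\diam(\eta[0,t]) \le r^{-1}t^{1/d_\SLE}\}$ is contained in the event $\{\Cont_\eta(B(0,1)) \ge r^{d_\SLE}\}$ for the rescaled curve, since a content-$t$ segment squeezed into a ball of radius $r^{-1}t^{1/d_\SLE}$ forces high Minkowski content in the unit ball after rescaling. Markov's inequality then gives $\p[\Cont_\eta(B(0,1)) \ge r^{d_\SLE}] \le r^{-nd_\SLE}\E[\Cont_\eta(B(0,1))^n]$, so the whole problem reduces to showing $\E[\Cont_\eta(B(0,1))^n] < \infty$ for all $n$. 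That finiteness is obtained by the rooted-measure decomposition of \cite{zhan-decomposition} together with the moment bounds of \cite[Theorem~1.2]{rz2017higher}: the expectation is rewritten as $\E_\h\int (\Cont_{\eta_\h}(B(z,1)))^n\,\Cont_{\eta_\h}(dz)$, which is dominated by $\E_\h[(\Cont_{\eta_\h}(\text{box}))^{n+1}] < \infty$. No decomposition into $r^{d_\SLE}$ consecutive pieces and no independence estimate enter.

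Your proposed lower-bound argument — splitting into $\asymp r^{d_\SLE}$ content-$1$ pieces and arguing they cannot all simultaneously be small — runs into exactly the obstruction you identify yourself: the natural parameterization is not a Markovian time change, and the conditional law of a future content-$1$ piece given the past depends on the realized geometry (if the past ends near the boundary of the confining ball, the conditional probability that the next piece also stays inside is not comparable to the unconditioned probability, and the pieces are strongly correlated through the global confinement event). You propose to fix this via domain-Markov decomposition plus distortion estimates, but you only sketch the fix; the amount of work required is comparable to re-proving the $n$-th-moment bound from scratch, and the paper demonstrates this detour is unnecessary. The takeaway is that when a ``small diameter'' event can be re-expressed as a ``large content in a ball'' event, Markov's inequality with a moment bound is both sharper to implement and avoids the independence issue entirely.
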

\begin{proof}
The upper bound~\eqref{eq:wpsle_ub} is \cite[Proposition~3.10]{hy-sle-regularity}.  The lower bound~\eqref{eq:wpsle_lb} follows from \cite{rz2017higher} and \cite{zhan-decomposition}. Indeed, by scaling, it suffices to show $\E[(\Cont_\eta(B(0,1)))^n] < \infty$ for any $n \in \N$ where $\Cont_\eta(dz)$ denotes the Minkowski content measure of $\eta$. By the conformal covariance of the natural parameterization, the expectation is at most a constant times $\E_{\h,z}[(\Cont_{\eta_{\h,z}}(B(z,1)))^n]$ for two-sided radial SLE $\eta_{\h,z}$ in $(\h,0,\infty)$ through $z \in [0,1]\times[2,3]$ (where the constant does not depend on the choice of $z$). Consider the rooted measure $G(z)dz \otimes \p_{\h,z}$ which by \cite[Theorem~4.1]{zhan-decomposition} agrees with $\p_\h(d\eta_\h) \otimes \Cont_{\eta_\h}(dz)$ where $G$ is the $\SLE_\kappa$ Green's function in $(\h,0,\infty)$.  Hence,
\begin{align*}
\E[(\Cont_\eta(B(0,1)))^n] 
&\lesssim \int_{[0,1]\times[2,3]} \E_{\h,z}[(\Cont_{\eta_{\h,z}}(B(z,1)))^n] \,G(z)\,dz \quad\text{(since $\inf_{z \in [0,1] \times [2,3]} G(z) > 0$)} \\
&= \E_\h \int_{[0,1]\times[2,3]} (\Cont_{\eta_{\h}}(B(z,1)))^{n} \,\Cont_{\eta_\h}(dz) \\
&\le \E_\h[ (\Cont_{\eta_{\h}}([-1,2]\times[1,4]))^{n+1} ] \\
&< \infty,
\end{align*}
where in the final inequality we used \cite[Theorem~1.2]{rz2017higher}.  This is what we wanted to show.
\end{proof}

\subsection{$\CLE$ and $\BCLE$}
\label{subsec:cle}

We will now review the definition of the \emph{conformal loop ensembles} ($\CLE_{\kappa'}$) \cite{s2009cle} in the $\BCLE_{\kappa'}$ framework \cite{msw2017cleperc}.  We will focus on the case that ${\kappa'} \in (4,8)$ since this is the regime which is relevant for this work.  The starting point for the construction of the $\CLE_{\kappa'}$ is the so-called \emph{exploration tree}.  Suppose that $D \subseteq \C$ is a simply connected domain and $x \in \partial D$.   Fix a countable dense set $(y_n)$ in $\partial D$ and for each~$n$ we let~$\eta_n$ be an $\SLE_{\kappa'}({\kappa'}-6)$ process in $D$ from~$x$ to~$y_n$.   We assume that the force point for each $\eta_n$ is located at $x_+$ (i.e., infinitesimally to the right of $x$ on $\partial D$).  We assume that the $\eta_n$ are coupled together so as to agree until they separate their target points \cite{sw2005coordinate} and then evolve independently afterwards.  We can generate from the $\eta_n$ a collection of boundary touching loops as follows.  Fix a value of $m$ and let $[\sigma,\tau]$ be an interval of time in which $\eta_m$ makes an excursion from the counterclockwise arc of $\partial D$ from $x$ to $y_m$.  Now let $(y_{n_k})$ be a subsequence of the $y_n$ with $y_{n_k} \to \eta_m(\sigma)_+$ as $k \to \infty$ and let $\eta_{m,k}$ be the concatenation of $\eta_m|_{[\sigma,\tau]}$ with the part of $\eta_{n_k}$ from when it hits $\eta_m(\tau)$ up until when it first disconnects $y_{n_k}$ from $\eta_m(\sigma)$.  Then as $k \to \infty$, we have that $\eta_{m,k}$ converges to a loop from $\eta_m(\sigma)$ to itself.  The so-called $\cwBCLE_{\kappa'}(0)$ process consists of all such loops as both $m$ and $[\sigma,\tau]$ vary.

We say that $z \in D$ is surrounded by a \emph{true loop} of the $\cwBCLE_{\kappa'}(0)$ if it is surrounded by one of the aforementioned loops.  Otherwise and if $z$ is not on one of the loops, we say that $z$ is surrounded by a \emph{false loop} of the $\cwBCLE_{\kappa'}(0)$.

It is proved in \cite{ms2016ig3} that the law of the resulting loop ensemble does not depend on the choice of~$x$ (i.e., is root invariant) and in \cite{ms2017ig4} that it is \emph{locally finite}, meaning that for each $\epsilon > 0$ the number of loops which have diameter at least $\epsilon$ is finite.

The set of true loops of a $\cwBCLE_{{\kappa'}}(0)$ process in $D$ has the same law as the set of loops of a $\CLE_{\kappa'}$ process $\Gamma$ in $D$ that intersect the boundary.  We obtain the entire $\CLE_{\kappa'}$ by adding an independent $\cwBCLE_{{\kappa'}}(0)$ into each of the regions which is not surrounded by a loop (i.e, inside each of the false loops) and then iterating.  We can also define a \emph{nested} $\CLE_{\kappa'}$ by starting off with a non-nested $\CLE_{\kappa'}$ and then adding into each of the loops an independent $\CLE_{\kappa'}$.

Suppose that $\Gamma$ is a nested $\CLE_{\kappa'}$ in $D$ and $x \in \partial D$.  We can define a space-filling path $\Lambda$ from $\Gamma$ in the following way.  We let $\Lambda_0$ be the path that starts from $x$ and goes around $\partial D$ counterclockwise.  We define $\Lambda_1$ as follows. Choose an arbitrary point $y \in \partial D$ distinct from $x$, and let $\partial_{x,y} D$ be the counterclockwise arc of $\partial D$ from $x$ to $y$. Let $\Gamma_{x,y}$ be the loops of $\Gamma$ that intersect $\partial_{x,y} D$. For each $\CL \in \Gamma_{x,y}$ let $a_\CL$ (resp.\ $b_\CL$) be the first (resp.\ last) intersection point of $\partial_{x,y} D$ with $\CL$. Let $\CL^O$ be the clockwise segment of $\CL$ from $a_\CL$ to $b_\CL$, and $\CL^I$ its clockwise segment from $b_\CL$ to $a_\CL$. We let $\wt{\Gamma}_{x,y} \subseteq \Gamma_{x,y}$ be the collection of loops $\wt{\CL}$ that are not separated from $y$ by $\CL^I$ for any $\CL \in \Gamma_{x,y}$. We let $\Lambda_1$ be the path that begins like $\Lambda_0$, then traces the segments $\wt{\CL}^O$ for $\wt{\CL} \in \wt{\CL}_{x,y}$ in the order of $\partial_{x,y} D$, and finally traces the segments $\wt{\CL}^I$ for $\wt{\CL} \in \wt{\CL}_{x,y}$ in the reverse order. Given that we have defined $\Lambda_n$ for some $n \geq 0$, we define $\Lambda_{n+1}$ from $\Lambda_n$ as follows. Suppose $U$ is a connected component of $D \setminus \Lambda_n$. Let $x_U \in \partial U$ be the last point visited by $\Lambda_n$. For each $U$, after $\Lambda_n$ completes $\partial U$ and arrives at $x_U$, we extend it via the same procedure used to obtain $\Lambda_1$ from $\Lambda_0$ if $\partial U$ is surrounded by $\Lambda_n$ counterclockwise (the reflected procedure if $\partial U$ is surrounded by $\Lambda_n$ clockwise). In the limit as $n \to \infty$, we have that $\Lambda_n$ converges to a continuous path $\Lambda$ which explores all of the loops of $\Gamma$.  This path is the so-called space-filling $\SLE_{\kappa'}$; we will describe another construction of it just below from \cite{ms2017ig4} which is the perspective that was used to prove its continuity in \cite{ms2017ig4}.

\subsubsection{The thin gasket}

Suppose that $\Gamma$ is a \clekp{} in $D$. The \emph{thin gasket} $\CT_\Gamma$ of $\Gamma$ is the set of points in $\ol{D}$ that are connected to $\partial D$ by at least two disjoint admissible paths for $\Gamma$. In other words, $\CT_\Gamma$ is the set of points that are not in the interior components of the outer boundaries of the loops of $\Gamma$. The relevance of the thin gasket for our CLE metrics is that the CLE metrics we consider do not see ``dead ends'' that are disconnected by single CLE loops (every path that enters such a region needs to come back through the same point). In contrast to the full \clekp{} loops, their outer boundaries do not make double points. (This would correspond to a $6$-arm event in the language of lattice models.) The following lemma makes this quantitative.

\begin{lemma}
\label{le:thin_gasket_hoelder}
 For any $b>0$ there exists $\zeta > 0$ such that
 \[
  \p\left[ \sup_{u,v\in\CT_\Gamma} \frac{\dpath(u,v)}{\abs{u-v}^\zeta} > M \right] = O(M^{-b}) 
  \quad\text{as } M \to \infty .
 \]
\end{lemma}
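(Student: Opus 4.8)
The plan is to establish the Hölder bound on the thin gasket $\CT_\Gamma$ by a chaining argument driven by a one-scale estimate: for a fixed $\zeta$ to be chosen, and for each dyadic scale $2^{-k}$, the probability that any two points of $\CT_\Gamma$ at Euclidean distance $\asymp 2^{-k}$ have $\dpath$-distance exceeding $2^{-k\zeta}$ is superpolynomially (or at least polynomially with a large power) small in $2^{-k}$. The key structural input is that $\CT_\Gamma$ consists precisely of points not contained in the interior components of outer boundaries of loops of $\Gamma$, so that $\dpath$ between nearby points of $\CT_\Gamma$ is controlled by the Euclidean diameter of a crossing path that stays away from the ``dead ends''. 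First I would reduce to bounding, for a given small ball $B(z,2^{-k})$ with $z$ on the boundary region between loops, the probability that a crossing of the annulus $A(z, 2^{-k}, 2 \cdot 2^{-k})$ inside the gasket requires a path of large Euclidean diameter; since the path may be taken admissible and the relevant obstruction is only the accumulation of many crossings of loop outer boundaries, this is where the estimate in Appendix~\ref{se:cle_outer_boundary_exponent} on multiple crossings of exterior boundaries of \clekp{} loops enters.

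The main steps, in order, are as follows. \textbf{(1)} A quantitative statement that for $z$ in a fixed compact subset of $D$, the number of distinct outer boundaries of loops of $\Gamma$ that a shortest admissible crossing path of $A(z, r, 2r)$ must pass through has an exponential tail, and that between consecutive such crossings the path stays in $\CT_\Gamma$; this uses that outer boundaries of \clekp{} loops do not make double points (a $6$-arm event is needed), so crossings of a given annulus are controlled exactly as in the proof of Lemma~\ref{le:gasket_compact} via \cite[Lemma~4.4]{amy-cle-resampling}. \textbf{(2)} Conclude that $\dpath(u,v) \lesssim |u-v| \cdot N(u,v)$ where $N(u,v)$ counts the relevant crossings, together with a tail bound on $N$, yielding for any $b$ a $\zeta>0$ with $\p[\,\exists\, u,v \in \CT_\Gamma \cap B(z,2^{-k}):\ \dpath(u,v) > 2^{-k\zeta}\,] = O(2^{-k(b+2)})$ after absorbing the $2^{-k}$ Euclidean factor and the count into the exponent loss (shrinking $\zeta$). \textbf{(3)} A union bound over the $O(2^{2k})$ balls of radius $2^{-k}$ covering the relevant region, then a union over $k$, gives that a.s.\ for all $k$ large, $\dpath(u,v) \le 2^{-k\zeta}$ for all $u,v \in \CT_\Gamma$ with $|u-v| \in [2^{-k-1}, 2^{-k}]$; a standard Kolmogorov–Chentsov-type summation over dyadic scales then upgrades this to $\dpath(u,v) \le M |u-v|^{\zeta'}$ for all $u,v$ simultaneously, with $M$ having the claimed $O(M^{-b})$ tail (after relabeling $\zeta' \rightsquigarrow \zeta$ and shrinking the exponent once more to pay for the geometric sum).

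The hard part will be step \textbf{(1)}: making precise that a \emph{shortest} (small Euclidean diameter) admissible crossing path between two nearby thin-gasket points really does only have to traverse a controlled number of loop outer boundaries, and that the Euclidean length of each traversal segment is comparable to the annulus scale rather than much larger. The subtlety is that even on the thin gasket an admissible path threading between many mutually-intersecting loops could in principle wind; one must use that the thin gasket avoids the interior ``fjords'' of each loop (so the relevant loops crossing a small annulus are genuinely macroscopic at that scale), plus a Beurling-type estimate (as in Lemma~\ref{le:bounded_crosscut}) to control the Euclidean diameter of each crossing segment by the scale $r$. The remaining steps are routine chaining, for which the precise polynomial bookkeeping (choosing $\zeta$ small enough that the per-ball failure probability beats the $2^{2k}$ union bound and the geometric sum over scales with room to spare for an arbitrary power $b$) follows the same template as the proof of Theorem~\ref{th:cle_metric_hoelder}.
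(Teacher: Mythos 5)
Your step~(2) — the relation $\dpath(u,v) \lesssim |u-v| \cdot N(u,v)$ — does not hold and is where the proposal breaks down: $\dpath$ is an infimum of Euclidean \emph{diameters} of admissible paths, not of lengths, so the $\dpath$-distance between two nearby points on $\CT_\Gamma$ is governed by whether a topological obstruction is present, not by how many loop boundaries a shortest path threads through. A single exterior boundary making four crossings of an annulus $A(z,\delta,R)$ can force every admissible path joining two points of $B(z,\delta)\cap\CT_\Gamma$ out to radius $R$, no matter how small the crossing count $N(u,v)$ is. The crossing-tail estimate you borrow from the proof of Lemma~\ref{le:gasket_compact} (via \cite[Lemma~4.4]{amy-cle-resampling}) bounds the $\dpath$-covering number of the full gasket and is the wrong tool here; on the thin gasket the relevant quantity is the four-arm exponent for exterior boundaries.

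The paper's proof is much shorter than the route you sketch and uses exactly the input you cite in passing, but directly rather than through a count. Set $\zeta>0$ small enough that $(1-\zeta)\alpha_{4,\kappa}-2>0$ (possible since $\alpha_{4,\kappa}>2$ for $\kappa<4$), and let $E_\delta^4$ be the event that some loop's exterior boundary makes four crossings of an annulus $A(z,2\delta,\delta^\zeta)\subseteq\D$; Lemma~\ref{lem:thin_cle_lwb} gives $\p[E_\delta^4]=O(\delta^{(1-\zeta)\alpha_{4,\kappa}-2})$. On $\bigcap_{2^{-k}<\delta}(E_{2^{-k}}^4)^c$, the deterministic bound $\dpath(u,v)\le\delta^{-\zeta}|u-v|^\zeta$ holds for all $u,v\in\CT_\Gamma$: when $|u-v|\asymp 2^{-k}$, the absence of a four-arm event at scale $(2^{-k+1},2^{-k\zeta})$ means no exterior boundary can separate $B(z,2^{-k+1})\cap\CT_\Gamma$ inside $B(z,2^{-k\zeta})$, hence $\dpath(u,v)\le 2^{-k\zeta}$. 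A single union bound over $k$ then gives the tail in $M$ — no Kolmogorov--Chentsov chaining, no exponential crossing tails, and no per-ball union bound over $O(2^{2k})$ centers. To repair your argument you would in any case need to add a separate step ruling out the four-arm obstruction, at which point you have reproduced the paper's proof and your crossing-count estimate becomes superfluous.
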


The main input to the proof of Lemma~\ref{le:thin_gasket_hoelder} is the fact that the outer boundary of a $\CLE_{\kappa'}$ loop does not have double points with large probability, which is the content of Lemma~\ref{lem:thin_cle_lwb} in Appendix~\ref{app:arms}.

\begin{proof}[Proof of Lemma~\ref{le:thin_gasket_hoelder}]
 Recall $\alpha_{4,\kappa}$ from \eqref{eqn:double_exponent_simple}. Since $\alpha_{4,\kappa} > 2$ for $\kappa < 4$, we can choose $\zeta > 0$ so that $(1-\zeta)\alpha_{4,\kappa} - 2 > 0$.
 
 For $\delta \in (0,1)$ let $E_{\delta}^4$ be the event that $\Gamma$ contains a loop whose exterior boundary makes $4$ crossings across some annulus $A(z,2\delta,\delta^\zeta) \subseteq \D$. By Lemma~\ref{lem:thin_cle_lwb}, we have $\p[E_{\delta}^4] = O(\delta^{(1-\zeta)\alpha_{4,\kappa}-2})$.
 
 Suppose we are on the event $\bigcap_{2^{-k} < \delta} (E_{2^{-k}}^4)^c$. We claim that $\dpath(u,v) \le \delta^{-\zeta}\abs{u-v}^\zeta$ for any $u,v\in\CT_\Gamma$. This will imply the result since we can pick $\zeta > 0$ small.
 
 To see the claim, we consider two cases. If $\abs{u-v} \le \delta$, pick $k$ such that $\abs{u-v} \in [2^{-k-1},2^{-k}]$. Then $u,v \in B(z,2^{-k+1})$ for some $z$. On the event $(E_{2^{-k}}^4)^c$, we have that $\dpath(u,v) \le 2^{-k\zeta} \le (2\abs{u-v})^\zeta$. On the other hand, if $\abs{u-v} > \delta$, then $\dpath(u,v) \le 1 \le \delta^{-\zeta}\abs{u-v}^\zeta$.
\end{proof}

\subsubsection{Partial explorations and resamplings of $\CLE_{\kappa'}$}
\label{se:cle_resampling}

\newcommand{\domainpair}[1]{{\mathfrak {P}}_{#1}}
\newcommand{\outside}{{\mathrm{out}}}
\newcommand{\inside}{{\mathrm{in}}}
\newcommand{\resampled}{{\mathrm{res}}}

In \cite{amy-cle-resampling} we considered partial resamplings of a \clekp{} where we explore part of the \clekp{} and resample the unexplored parts according to their conditional law, yielding a coupling between two \clekp{} instances that agree on the explored parts. We describe the setup and some of the results which will be relevant for this work.

Let $D \subseteq \C$ be a simply connected domain and $\varphi \colon D \to \D$ be a conformal transformation.  Let $\domainpair{D}$ consist of all pairs $(U,V)$ of simply connected sub-domains $U \subseteq V \subseteq D$ with $\dist(\varphi(D\setminus V), \varphi(U)) > 0$.  (Note that $\domainpair{D}$ does not depend on the choice of $\varphi$.) 

Suppose $\Gamma$ is a nested \clekp{} in $D$, and $(U,V) \in \domainpair{D}$. The \emph{partial exploration} of $\Gamma$ in $D\setminus V$ until hitting $\ol{U}$, denoted by $\Gamma_\outside^{*,V,U}$, is the collection of maximal segments of loops and strands in $\Gamma$ that intersect $D \setminus V$ and are disjoint from $U$. Let $V^{*,U}$ be the connected component containing $U$ after removing from $D$ all loops and strands of $\Gamma_\outside^{*,V,U}$. This defines a simply connected domain with a finite number of marked points $\ul{x}^*$ on its boundary, corresponding to the unfinished strands in $\Gamma_\outside^{*,V,U}$, which are pairwise linked by an \emph{exterior planar link pattern} $\beta^*$ induced by the strands.

The following is one of the main results in \cite{amy-cle-resampling}. The \emph{multichordal \clekp{}} in $(D;\ul{x};\beta)$ where $D$ is a simply connected domain, $\ul{x}$ is a finite collection of distinct prime ends, and $\beta$ is an exterior planar link pattern between the marked points, is a probability measure on the collections of loops and paths in $\ol{D}$ where each path connects a pair of marked points. We refer to \cite{amy-cle-resampling} for a definition of the law, and will only summarize its main properties here. (See in particular \cite[Theorems~1.6 and~1.11]{amy-cle-resampling}.)

\begin{theorem}
\label{thm:cle_partially_explored}
Consider the setup described above. The conditional law of the remainder of $\Gamma$ given $\Gamma_\outside^{*,V,U}$ is a multichordal \clekp{} in $(V^{*,U};\ul{x}^*;\beta^*)$ with marked points $\ul{x}^*$ and exterior link pattern $\beta^*$ given as above. The family of multichordal \clekp{} laws is conformally invariant in the sense that if $\Gamma$ is a multichordal \clekp{} in $(D;\ul{x};\beta)$ and $\varphi\colon D \to \wt{D}$ is a conformal transformation, then $\varphi(\Gamma)$ is a multichordal \clekp{} in $(\wt{D};\varphi(\ul{x});\beta)$. For each \emph{interior link pattern} $\alpha$, the probability that the chords of a multichordal \clekp{} in $(\D;\ul{x};\beta)$ induce the link pattern $\alpha$ is positive and depends continuously on $\ul{x}$.
\end{theorem}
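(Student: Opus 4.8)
The statement is \cite[Theorems~1.6 and~1.11]{amy-cle-resampling}, so the plan is to recall how those results are established. The first step is to make precise sense of the multichordal \clekp{} in $(D;\ul{x};\beta)$. The natural route is to imitate the $\BCLE_{\kappa'}$ construction of ordinary \clekp{}: starting from a boundary point one runs a branching tree of $\SLE_{\kappa'}(\kappa'-6)$-type explorations whose branches are constrained to join the prime ends $\ul{x}$ according to the exterior link pattern $\beta$, reads off from this tree the boundary-touching loops together with the $\beta$-chords, and then fills in each complementary region by iterating. One must verify that this is well defined --- that the exploration terminates and produces a locally finite collection --- which, exactly as for ordinary \clekp{}, is cleanest through the imaginary geometry coupling with the GFF \cite{ms2016ig3,ms2017ig4}: the boundary data of the field on $D$, with jumps at $\ul{x}$ prescribed by $\beta$, determines the whole configuration.

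With the definition in place, the Markov property is the heart of the matter. The key point is that the outer exploration $\Gamma_\outside^{*,V,U}$ can be realized as a concatenation of segments of flow lines of a single GFF on $D$. Conditioning on $\Gamma_\outside^{*,V,U}$ then amounts to conditioning on the restriction of the field to a neighborhood of these curves; by the Markov property of the GFF the conditional law of the field on $V^{*,U}$ is that of a GFF with boundary data read off along $\partial V^{*,U}$, and this boundary data is exactly what is encoded by the marked points $\ul{x}^*$ together with the induced link pattern $\beta^*$ of the unfinished strands. Unwinding the coupling gives that the conditional law of the remainder of $\Gamma$ is a multichordal \clekp{} in $(V^{*,U};\ul{x}^*;\beta^*)$. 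Conformal invariance is then immediate, since the construction uses only the conformal structure of $D$ together with the marked data, and conformal invariance of the GFF (up to the $\chi$-winding change of coordinates) transfers to the loop ensemble.

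For the last assertion, fix an interior link pattern $\alpha$. For positivity one produces a finite chain of positive-probability events for the exploration: each chord is required to stay within a prescribed thin tube around a simple path joining its two endpoints, with the tubes chosen pairwise disjoint in the topological pattern of $\alpha$, and one checks that each such ``stay near a prescribed path'' event has positive probability for the relevant $\SLE_{\kappa'}(\ul{\rho})$ branches. Continuity in $\ul{x}$ follows from continuity of the driving $\SLE_{\kappa'}(\ul{\rho})$ processes in their force points: perturbing $\ul{x}$ changes the law of the exploration by a Girsanov/Radon--Nikodym factor that tends to $1$ in $L^1$ (equivalently, the laws converge in total variation away from the marked points), and since the interior link pattern is a measurable function of the exploration, its probability varies continuously.

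The main obstacle is the first two steps: rigorously defining a ``multichordal'' \clekp{} whose boundary marked points are genuine prime ends, bookkeeping the strand structure and the induced exterior link pattern $\beta^*$ through a partial exploration, and proving local finiteness of the resulting collection. This is precisely the technical core of \cite{amy-cle-resampling}, which for the purposes of the present paper we simply cite.
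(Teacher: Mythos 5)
The paper does not prove this theorem; it imports it verbatim from \cite[Theorems~1.6 and~1.11]{amy-cle-resampling}, and you have correctly identified this. Your sketch of the cited argument (imaginary-geometry / branching-exploration definition of the multichordal law, GFF Markov property for the conditioning statement, tube events for positivity, and total-variation continuity of the configuration away from the marked points for the continuity assertion) matches the strategy of that reference, so there is nothing to flag here beyond noting that the Girsanov phrasing is only heuristic for $\kappa'\in(4,8)$ --- the branches hit their force points, so the rigorous version is exactly the total-variation statement you mention, which is what \cite{amy-cle-resampling} (and Lemma~\ref{le:partial_cle_tv} here) actually uses.
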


Further, we have continuity in total variation when we restrict to regions away from the boundary. We will need the following consequences of \cite[Proposition~6.3]{amy-cle-resampling}. Recall the definition of $U^*$ in Section~\ref{se:assumptions}. We refer to \cite[Section~6.1]{amy-cle-resampling} for the topology with respect to which continuity holds.

\begin{lemma}\label{le:partial_cle_tv}
 Let $(U,V) \in \domainpair{D}$ and $U_1 \Subset U$ open, simply connected. Let $\alpha^*$ denote the interior link pattern induced by the remainders of the unfinished loops in $\Gamma_\outside^{*,V,U}$ (which connect the marked points $\ul{x}^*$ in $V^{*,U}$). The conditional law of $(U_1^*,\alpha^*)$ given $\Gamma_\outside^{*,V,U}$ is a continuous function of $\Gamma_\outside^{*,V,U}$ to the space of probability measures equipped with the total variation distance.
\end{lemma}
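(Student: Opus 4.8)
The plan is to deduce this from \cite[Proposition~6.3]{amy-cle-resampling} together with Theorem~\ref{thm:cle_partially_explored}. By Theorem~\ref{thm:cle_partially_explored}, the conditional law of the remainder of $\Gamma$ given $\Gamma_\outside^{*,V,U}$ is the multichordal \clekp{} in $(V^{*,U};\ul{x}^*;\beta^*)$, and this law depends on $\Gamma_\outside^{*,V,U}$ only through the triple $(V^{*,U},\ul{x}^*,\beta^*)$. Since $U_1$ is fixed, the pair $(U_1^*,\alpha^*)$ is a measurable functional of this multichordal \clekp{}: $U_1^*$ is obtained from the loops of the multichordal \clekp{} by deleting those not entirely contained in $U_1$ and keeping the component containing $U_1$, while $\alpha^*$ is the (finite) interior link pattern formed by its chords. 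It therefore suffices to establish (i) that $\Gamma_\outside^{*,V,U}\mapsto(V^{*,U},\ul{x}^*,\beta^*)$ is continuous with respect to the topology of \cite[Section~6.1]{amy-cle-resampling}, and (ii) that the law of $(U_1^*,\alpha^*)$ under a multichordal \clekp{} in $(D';\ul{x};\beta)$ is a continuous function of $(D',\ul{x})$ (for $\beta$ fixed) into probability measures equipped with the total variation distance.

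For (i) I would argue that this is essentially built into the topology on partial explorations in \cite[Section~6.1]{amy-cle-resampling}: in that topology the combinatorial data — the number of marked points $\ul{x}^*$, their cyclic order along the boundary, and the exterior link pattern $\beta^*$ — is locally constant, while the domain $V^{*,U}$ and the marked points $\ul{x}^*$ vary continuously (e.g.\ in the Carath\'eodory sense). For (ii), the key preliminary observation is a uniform separation estimate: because the loops and strands of $\Gamma_\outside^{*,V,U}$ are disjoint from $U$ by construction, we have $U\subseteq V^{*,U}$, hence $\partial V^{*,U}\subseteq\C\setminus U$, and consequently
\[
 \dist(\ol{U_1},\partial V^{*,U})\ \ge\ \dist(\ol{U_1},\partial U)\ >\ 0 ,
\]
with the right-hand side not depending on $\Gamma_\outside^{*,V,U}$. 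Thus $U_1$ is uniformly bounded away from the marked points $\ul{x}^*$, which is exactly the regime covered by \cite[Proposition~6.3]{amy-cle-resampling}. Applying that proposition gives total variation continuity (in $(V^{*,U},\ul{x}^*)$, with $\beta^*$ fixed) of the law of the multichordal \clekp{} restricted to $U_1$, and likewise of this restriction conditioned on each interior link pattern $\alpha$; combined with the continuity and positivity of the probabilities $\p[\alpha^*=\alpha]$ from Theorem~\ref{thm:cle_partially_explored} and the fact that there are only finitely many $\alpha$, taking the corresponding finite mixture yields total variation continuity of the joint law of $(U_1^*,\alpha^*)$, since $U_1^*$ is a functional of the restriction to $U_1$.

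I expect the only genuine subtleties to be the following two, both resolved by inputs already available. First, $\alpha^*$ is \emph{not} a local functional — a chord of the multichordal \clekp{} need not enter $U_1$ at all — so one cannot read $\alpha^*$ off the restriction to $U_1$; this is why the argument must pair \cite[Proposition~6.3]{amy-cle-resampling} with the link-pattern continuity of Theorem~\ref{thm:cle_partially_explored} rather than treating $\alpha^*$ as part of the restricted picture. Second, total variation continuity cannot hold across configurations where the exterior link pattern $\beta^*$ jumps, since distinct $\beta^*$ give multichordal \clekp{} laws that remain far apart in total variation even after restriction; so it is essential that the topology of \cite[Section~6.1]{amy-cle-resampling} keeps $\beta^*$ locally constant, which it does. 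With these points understood, the lemma is a repackaging of \cite[Proposition~6.3]{amy-cle-resampling} in the form needed here, and no further estimates should be required.
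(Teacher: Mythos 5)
The paper gives no proof of this lemma; it is stated as a direct consequence of \cite[Proposition~6.3]{amy-cle-resampling}, and you are essentially trying to unpack that citation. Your overall plan is the right one: use Theorem~\ref{thm:cle_partially_explored} to reduce to the multichordal \clekp{} in $(V^{*,U};\ul{x}^*;\beta^*)$, observe the uniform separation $\dist(\ol{U_1},\partial V^{*,U})\ge\dist(\ol{U_1},\partial U)>0$, invoke \cite[Proposition~6.3]{amy-cle-resampling} for total variation continuity away from the marked points, and handle $\alpha^*$ via the finiteness of interior link patterns together with the continuity and positivity of their probabilities from Theorem~\ref{thm:cle_partially_explored}. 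The separation estimate and the two subtleties you single out ($\alpha^*$ not being local; $\beta^*$ needing to be locally constant) are indeed the right things to worry about.

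However, there is a genuine gap in the intermediate step: you assert that ``$U_1^*$ is a functional of the restriction to $U_1$,'' and your earlier gloss --- that $U_1^*$ is obtained by ``deleting those [loops] not entirely contained in $U_1$ and keeping the component containing $U_1$'' --- is not what the definition in Section~\ref{se:assumptions} says and does not produce a local functional. By definition $U_1^*$ is the set of points of $U_1\cap D$ not on or \emph{inside} any loop of $\Gamma_D\setminus\Gamma_{U_1^*}$. A loop of the multichordal \clekp{} lying in $V^{*,U}\setminus\ol{U_1}$ can surround $U_1$ without touching it; such a loop removes points from $U_1^*$ but is invisible in the restriction of the configuration to $U_1$ (or to any fixed compact slightly larger than $\ol{U_1}$). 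So the same non-locality you correctly flag for $\alpha^*$ applies to $U_1^*$, and the chain ``TV continuity of the restriction to $U_1$ $\Rightarrow$ TV continuity of $U_1^*$'' does not go through as written. The fix is to apply \cite[Proposition~6.3]{amy-cle-resampling} to the pair $(U_1^*,\alpha^*)$ directly rather than to the restriction; this is exactly how the neighbouring Lemma~\ref{le:translation_tv} uses that proposition (it concludes total variation convergence of the law of $U^*$ itself, not of a restricted configuration). Once you treat $(U_1^*,\alpha^*)$ as the object to which the cited proposition applies, the rest of your argument --- the separation estimate, the continuity of $(V^{*,U},\ul{x}^*,\beta^*)$ in the topology of \cite[Section~6.1]{amy-cle-resampling}, and the finite-mixture step for $\alpha^*$ if it is still needed --- stands, but the detour through ``the restriction to $U_1$'' should be removed, as it both obscures the argument and is false as stated.
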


\begin{lemma}\label{le:translation_tv}
 Let $U \Subset D$ be open, simply connected. For $z \in \C$ let $T_z(w) = w+z$. Then, as $z \to 0$, the pushforward of the law of $(U+z)^*$ under $T_{-z}$ converges in total variation to the law of $U^*$.
\end{lemma}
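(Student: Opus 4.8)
\emph{Plan.} The idea is to transfer the statement to a perturbation of the ambient domain, peel off a fixed partial exploration that carries all the loops able to affect $U^*$, and then invoke the total variation continuity of the conditional laws in Lemma~\ref{le:partial_cle_tv}. First I would reduce: applying the translation $T_{-z}$ to the whole configuration, the definition of $U^*$ in Section~\ref{se:assumptions} gives that $T_{-z}\bigl((U+z)^*\bigr)$ is exactly $U^*$ computed with respect to $T_{-z}\Gamma$, and $T_{-z}\Gamma$ is a \clekp{} in $D-z$ (translation being conformal). So, writing $\mu_z$ for the law of $U^*$ under a \clekp{} in $D-z$, it suffices to show that $\mu_z \to \mu_0$ in total variation as $z \to 0$.

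Next I would peel off an outer exploration. Fix open, simply connected $U',V$ with $U \Subset U' \Subset V$ and $\ol V \subseteq D$; then for all small $|z|$ we also have $\ol V \subseteq D-z$ and $(U',V) \in \domainpair{D-z}$. For a \clekp{} in $D-z$ let $\mathcal E_z$ denote the partial exploration $\Gamma_\outside^{*,V,U'}$ in $(D-z)\setminus V$ until hitting $\ol{U'}$, producing the domain $V^{*,U'}$ with marked points $\ul x^*$ and exterior link pattern $\beta^*$. By Theorem~\ref{thm:cle_partially_explored}, conditionally on $\mathcal E_z$ the remainder inside $V^{*,U'}$ is a multichordal \clekp{} in $(V^{*,U'};\ul x^*;\beta^*)$, whose law is a function of that triple alone. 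Since $U \Subset U'$, the set $U^*$ is a measurable function of $\mathcal E_z$ together with this remainder — every loop of the \clekp{} not contained in $U$ is either traced by $\mathcal E_z$ and completed by a chord of the remainder, or is a loop of the remainder, and whether it meets or surrounds $U$ is read off from $\mathcal E_z$ and the remainder — and this function makes no reference to the ambient domain. Hence the conditional law of $U^*$ given $\mathcal E_z$ equals $\Phi(\mathcal E_z)$ for one and the same map $\Phi$, and by Lemma~\ref{le:partial_cle_tv} (applied in the fixed domain $D$, with inner domain $U$ and pair $(U',V)$, and marginalizing out $\alpha^*$) the map $\Phi$ is continuous for the total variation distance.

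Finally I would pass to the limit. Couple the \clekp{}s in all the domains $D-z$ by putting $\Gamma^{(z)} := T_{-z}\Gamma$ for a single \clekp{} $\Gamma$ in $D$, and let $\mathcal E_z$ be the exploration of $\Gamma^{(z)}$ as above, so that $\mu_z = \E[\Phi(\mathcal E_z)]$. Using $T_{-z}\Gamma \to \Gamma$ and the Carathéodory convergence $D-z \to D$ together with the local finiteness of \clekp{}, one gets $\mathcal E_z \to \mathcal E_0$ almost surely in the topology of Lemma~\ref{le:partial_cle_tv}. By the convexity of the total variation distance,
\[
 d_{\mathrm{TV}}(\mu_z,\mu_0) \le \E\bigl[\, d_{\mathrm{TV}}(\Phi(\mathcal E_z),\Phi(\mathcal E_0)) \,\bigr],
\]
where the integrand tends to $0$ almost surely by continuity of $\Phi$ and is bounded by $1$, so dominated convergence yields $d_{\mathrm{TV}}(\mu_z,\mu_0) \to 0$.

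\emph{Main obstacle.} The delicate step is the almost sure convergence $\mathcal E_z \to \mathcal E_0$: one must check that the collection of revealed loop and strand segments — and hence $V^{*,U'}$ with its marked points and link pattern — depends continuously on the perturbation $z$. This uses that $\partial V$ lies at positive distance from $U'$ together with the local finiteness of \clekp{} to exclude loops accumulating at $\partial V$ and oscillating in and out of $V$ as $z$ varies (one may take $\partial V$ to be a circle that a.s.\ no loop of $\Gamma$ is tangent to), but is otherwise routine. A second point to keep track of is that the conditional-law map $\Phi$ is genuinely independent of the ambient domain — this is what allows the continuity statement of Lemma~\ref{le:partial_cle_tv}, proved in the fixed domain $D$, to be applied to every $D-z$ — and it holds because, conditionally on the exploration, the law of the remainder is a multichordal \clekp{} determined solely by the explored footprint.
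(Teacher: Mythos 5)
Your first step (the translation reduction to a domain-perturbation statement) is exactly the paper's. Where you diverge is in what you do next. The paper's proof is one line longer: after the reduction, it simply invokes the external reference \cite[Proposition~6.3]{amy-cle-resampling}, which directly gives total variation continuity of the law of $U^*$ under perturbation of the ambient domain. You instead unpack this into a self-contained argument built from the paper's own Theorem~\ref{thm:cle_partially_explored} and Lemma~\ref{le:partial_cle_tv}: peel off the partial exploration $\mathcal{E}_z$, argue that the conditional law $\Phi(\mathcal{E}_z)$ of $U^*$ given $\mathcal{E}_z$ is a fixed, ambient-domain-independent, total-variation-continuous function of the exploration, couple by translation so that $\mathcal{E}_z \to \mathcal{E}_0$ a.s., and finish by convexity of TV distance and dominated convergence. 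Both routes are correct, and yours makes the mechanism more explicit, but since Lemma~\ref{le:partial_cle_tv} is itself stated in the paper as a consequence of the very same Proposition~6.3, the two arguments are not independent: you are effectively re-deriving a corollary of the external proposition from another corollary of it, so yours is not a more elementary argument, just a more spelled-out one. Your two flagged technical points — a.s.\ convergence of the partial exploration under the translation coupling (excluding tangencies/oscillations at $\partial V$ via local finiteness and a good choice of $\partial V$), and the domain-independence of the conditional-law map (coming from Theorem~\ref{thm:cle_partially_explored}) — are indeed the places that need care, and your sketches of both are the right ideas; they would need to be made precise for a fully rigorous version, but they are believable, and this is precisely the content that the external reference handles internally.
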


\begin{proof}
 The law of the pushforward of the law of $(U+z)^*$ under $T_{-z}$ is the same as the law of $U^*$ with respect to a \clekp{} in $D-z$. Therefore the claim follows from \cite[Proposition~6.3]{amy-cle-resampling}.
\end{proof}

Given a \clekp{} $\Gamma$, we can consider resamplings of $\Gamma$ within a set $W \subseteq D$. By this we mean that we repeatedly select regions $(U,V) \in \domainpair{D}$ with $V \subseteq W$ randomly in a way that is independent of the CLE configuration within $W$, and let $\wt{\Gamma}$ be the \clekp{} such that $\wt{\Gamma}_\outside^{*,V,U} = \Gamma_\outside^{*,V,U}$ and the remainder of $\wt{\Gamma}$ is sampled from its conditional law given $\wt{\Gamma}_\outside^{*,V,U}$ (independently of the remainder of $\Gamma$). This procedure can be repeated a random number of times, yielding a coupling of $\Gamma$ with another \clekp{}~$\Gamma^\resampled$.

\begin{lemma}[{\cite[Lemma~5.7]{amy-cle-resampling}}]
\label{lem:disconnect_interior}
Suppose that $\Gamma$ is a nested \clek{} in a simply connected domain $D$, and let $\Upsilon_\Gamma$ be its gasket. For each $a\in(0,1)$, $b>0$ there exists $p > 0$ such that the following holds. 

Let $z\in D$ and $j_0\in\Z$ be such that $B(z,2^{-j_0})\subseteq D$. There exists a resampling $\Gamma^\resampled_{z,j}$ of $\Gamma$ within $A(z,2^{-j},2^{-j+1})$ for each $j>j_0$ with the following property. Let $G^\resampled_{z,j}$ be the event that $\Gamma^\resampled_{z,j}$ has a loop $\wt\CL$ that
\begin{itemize}
 \item disconnects every point in $\Upsilon_{\Gamma^\resampled_{z,j}} \cap B(z,2^{-j})$ from $\partial B(z,2^{-j+1})$,
 \item the gasket and the collection of loops of $\Gamma$ and $\Gamma^\resampled_{z,j}$ remain the same in each connected component of $\C \setminus \wt\CL$ that intersects $B(z,2^{-j})$.
\end{itemize}
For $k\in\N$, let $\wt G_{z,j_0,k}$ be the event that the number of $j=j_0+1,\ldots,j_0+k$ so that
\[ \p[ G^\resampled_{z,j} \giv \Gamma ] \geq p\]
is at least $(1-a)k$.  Then
\[ \p[(\wt G_{z,j_0,k})^c] = O(e^{-b k}) \]
where the implicit constant does not depend on $z,j_0,k$.
\end{lemma}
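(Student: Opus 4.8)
The plan is to reduce the statement to two claims: (i) that $\p[G^\resampled_{z,j}\mid\Gamma]\ge p$ holds whenever $j$ lies in a class of \emph{good} scales, defined by a bound on the number of $\CLE_{\kappa'}$ strands near the annulus at scale $2^{-j}$; and (ii) that, for a suitable choice of that threshold, all but at most $ak$ of the scales $j_0+1,\dots,j_0+k$ are good, off an event of probability $O(e^{-bk})$. I would build $\Gamma^\resampled_{z,j}$ by covering a topological annulus around $z$ inside $A(z,2^{-j},2^{-j+1})$ with finitely many (eight, say) overlapping curvilinear rectangles $R^1_j,\dots,R^8_j\subseteq A(z,2^{-j},2^{-j+1})$, cyclically arranged so that consecutive ones overlap, and resampling $\Gamma$ successively within these rectangles, the $i$-th resampling within a pair $(U^i_j,V^i_j)\in\domainpair{D}$ with $V^i_j\Subset R^i_j$, using fresh randomness. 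Since every $V^i_j\subseteq A(z,2^{-j},2^{-j+1})$, each loop of $\Gamma$ lying inside $B(z,2^{-j})$ or outside $B(z,2^{-j+1})$ meets $D\setminus V^i_j$ and avoids $U^i_j$, hence belongs to every partial exploration $\Gamma_\outside^{*,V^i_j,U^i_j}$ and is never resampled; thus $\Gamma^\resampled_{z,j}$ agrees with $\Gamma$ on those two regions. Let $H_j$ be the event that at each step the resampled multichordal $\CLE_{\kappa'}$ inside $(V^i_j)^{*,U^i_j}$ produces a loop-strand crossing $R^i_j$ the long way, arranged (by a standard iterative construction along the overlaps) so that the eight crossings link up into a single $\CLE_{\kappa'}$ loop $\wt\CL\subseteq A(z,2^{-j},2^{-j+1})$ winding once around $z$. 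On $H_j$ the loop $\wt\CL$ disconnects $\Upsilon_{\Gamma^\resampled_{z,j}}\cap B(z,2^{-j})$ from $\partial B(z,2^{-j+1})$, and since no strand of any partial exploration crosses $\wt\CL$ (those strands lie outside the resampled regions, while $\wt\CL$ lies inside their union), $\Gamma$ and $\Gamma^\resampled_{z,j}$ have the same loops and gasket in every component of $\C\setminus\wt\CL$ that meets $B(z,2^{-j})$. Hence $H_j\subseteq G^\resampled_{z,j}$, and it suffices to lower-bound $\p[H_j\mid\Gamma]$.

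For (i), call $j$ \emph{good} if for each $i$ at most $m_0$ disjoint $\CLE_{\kappa'}$ loop-segments cross $R^i_j$ (the constant $m_0$ is fixed at the end); this is a measurable function of $\Gamma$ restricted to $\C\setminus B(z,2^{-j})$. On a good scale each partial exploration in step $i$ produces at most $Cm_0$ marked points on $\partial(V^i_j)^{*,U^i_j}$, and $(V^i_j)^{*,U^i_j}$ still contains a fixed fraction of $R^i_j$. Mapping to a bounded reference configuration by the conformal invariance of Theorem~\ref{thm:cle_partially_explored}, that theorem gives that the conditional probability of the required crossing in step $i$ is positive, and, by the stated continuity of the interior link-pattern probabilities in the marked points together with a compactness argument over the (finitely many marked points' worth of) admissible boundary data, it is bounded below by a constant $p_1=p_1(m_0)>0$ uniformly. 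As the eight steps use independent randomness, $\p[H_j\mid\Gamma]\ge p_1^{8}=:p$ on good scales, which is (i).

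For (ii), if $j$ is not good then some $R^i_j$ is crossed by more than $m_0$ disjoint $\CLE_{\kappa'}$ segments; by the multiple-crossing estimate for $\CLE_{\kappa'}$ used in the proof of Lemma~\ref{le:gasket_compact} (cf.\ \cite[Lemma~4.4]{amy-cle-resampling}) this has probability at most $q(m_0)$ with $q(m_0)\to0$ as $m_0\to\infty$, and, by the domain Markov property of $\CLE_{\kappa'}$ together with the analogous crossing estimate for multichordal $\CLE_{\kappa'}$ (which follows by the total-variation continuity of Lemma~\ref{le:partial_cle_tv} and compactness), the same bound holds for the conditional probability given $\Gamma$ restricted to $\C\setminus B(z,2^{-j+1})$, uniformly in the revealed data. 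Revealing the configuration successively from the outside inward through the annuli $A(z,2^{-\ell},2^{-\ell+1})$, the bad-scale indicators for $j=j_0+1,\dots,j_0+k$ are adapted to this filtration and have conditional probability at most $q(m_0)$ given the past, hence are stochastically dominated by i.i.d.\ $\mathrm{Bernoulli}(q(m_0))$ variables. A Chernoff bound then gives $\p[\#\{\text{bad scales}\}>ak]\le\exp(-k\,I(a,q(m_0)))$ with $I(a,q)\to\infty$ as $q\to0$ for fixed $a\in(0,1)$; choosing $m_0$ so large that $I(a,q(m_0))\ge b$ proves (ii). Combining (i) and (ii) with the corresponding $p=p(m_0)$ finishes the proof.

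I expect the main obstacle to lie in step (i): making the lower bound genuinely uniform, i.e.\ precluding that the boundary data of the multichordal $\CLE_{\kappa'}$ on a good scale forces the resampled configuration to pinch away from $z$, which is exactly where the continuity-in-marked-points statement of Theorem~\ref{thm:cle_partially_explored} and the bound on the number of crossing strands are both essential; closely related, and also delicate, is verifying that the eight successive crossings can be arranged to link into a single $\CLE_{\kappa'}$ loop (this is where the precise design of the rectangles $R^i_j$ and the pairs $(U^i_j,V^i_j)$, and the order of the resampling moves, matters). The accounting in (ii) is comparatively routine once the per-scale estimate is available, the only care being that "good scale" is phrased so as to be measurable with respect to the configuration outside $B(z,2^{-j})$, so that the outside-in filtration argument applies cleanly.
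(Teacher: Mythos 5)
The construction by cyclic rectangles and the reduction to ``good scale $\Rightarrow p$-lower bound'' plus ``most scales good w.h.p.'' is the right overall shape, and matches in spirit what \cite[Section~5.4]{amy-cle-resampling} does. However, step (ii) has a genuine gap. Your Chernoff argument requires the uniform bound $\p[\text{bad at }j\mid\CF_j]\le q(m_0)$ for \emph{all} realisations of $\CF_j$, where $\CF_j$ is the configuration outside $B(z,2^{-j+1})$. Given $\CF_j$, the conditional law of $\Gamma$ inside $B(z,2^{-j+1})$ is a multichordal \clekp{} whose number of marked points (boundary strands) is determined by $\CF_j$ and is \emph{not} uniformly bounded: if the already-revealed data has many strands entering $B(z,2^{-j+1})$, then a comparable number of strands must cross $A(z,2^{-j},2^{-j+1})$ to close up, and scale $j$ is bad with conditional probability close to~$1$. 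The invocation of ``TV continuity (Lemma~\ref{le:partial_cle_tv}) plus compactness'' does not save this: the space of multichordal boundary data (numbers and positions of marked points, domain shapes) is not compact, so no uniform $q(m_0)$ follows. The same non-compactness issue undermines the uniform lower bound $p_1(m_0)$ you claim in step (i)---marked points can collide and degenerate the crossing probability---which you correctly flag as delicate, but you treat (ii) as routine when it carries the same problem in a form that breaks the whole exponential bound.

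What actually makes the argument close in \cite{amy-cle-resampling} (and what the present paper summarises in Section~4, Step~3) is that one does \emph{not} attempt a uniform conditional bound; instead one routes through the GFF independence across scales (Lemma~\ref{lem:gff_independence_across_scales}, or the CLE-level analogue \cite[Proposition~4.7]{amy-cle-resampling}). There the ``good scale'' criterion is tailored so that, at a good scale, the conditional law of the field (hence of the local CLE configuration) in the ball is absolutely continuous with respect to a fixed reference with Radon--Nikodym derivative having uniformly bounded moments; this is what turns the qualitative positivity/continuity statement of Theorem~\ref{thm:cle_partially_explored} into a genuine uniform lower bound $p$, and what makes the fraction-of-good-scales estimate exponential without needing your i.i.d.\ domination. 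The $e^{-bk}$ bound comes from the field-level independence structure, not from an a priori Chernoff over scale-indicators. To fix your argument you would need either to import this machinery, or to reformulate ``good at scale $j$'' to include a bound on the incoming marked-point data (so that a good scale ``resets'' the conditioning for the next one) and then run a renewal/geometric-runs argument in place of straight Chernoff; as written, the uniform conditional estimate is asserted rather than established, and the proof does not go through.
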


\begin{lemma}[{\cite[Lemma~5.8]{amy-cle-resampling}}]
\label{lem:disconnect_boundary}
Suppose that $\Gamma$ is a nested \clekp{} in $\h$. For each $a\in(0,1)$, $b>0$ there exists $p > 0$ such that the following holds. 

There exists a resampling $\Gamma^\resampled_{j}$ of $\Gamma$ within $A(0,2^{-j},2^{-j+1}) \cap \h$ for each $j\in\N$ with the following property. Let $G^\resampled_{j}$ be the event that $\Gamma^\resampled_{j}$ has a loop $\wt\CL$ that
\begin{itemize}
 \item disconnects $[-2^{-j},2^{-j}]$ from $\partial B(0,2^{-j+1})$,
 \item the collection of loops of $\Gamma$ and $\Gamma^\resampled_{j}$ remain the same in each connected component of $\h \setminus \wt\CL$ adjacent to $[-2^{-j},2^{-j}]$.
\end{itemize}
For $k\in\N$, let $\wt G_{k}$ be the event that the number of $j=1,\ldots,k$ so that
\[ \p[ G^\resampled_{j} \giv \Gamma ] \geq p\]
is at least $(1-a)k$.  Then
\[ \p[(\wt G_{k})^c] = O(e^{-b k}) . \]
\end{lemma}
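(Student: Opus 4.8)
The statement is \cite[Lemma~5.8]{amy-cle-resampling}, the boundary-point counterpart of Lemma~\ref{lem:disconnect_interior}, and the plan is to run the same two-step argument, replacing the event ``a loop surrounds the inner disc'' by the half-plane event ``a loop of the resampled $\CLE_{\kappa'}$ crosses the half-annulus from one side of $\partial\h$ to the other''.

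\textbf{Step 1: a single-scale disconnection estimate, uniform in $j$.} Fix $j$ and set $A_j=A(0,2^{-j},2^{-j+1})\cap\h$. The plan is to build $\Gamma^\resampled_j$ by first revealing a partial exploration of $\Gamma$ that exposes its configuration outside $A_j$ together with everything near $\partial B(0,2^{-j})$, stopping as soon as a crosscut-neighbourhood of $\partial B(0,2^{-j})\cap\h$ inside $A_j$ has been discovered; this leaves a simply connected ``middle'' sub-region $M\subseteq A_j$ (still touching both $[-2^{-j+1},-2^{-j}]$ and $[2^{-j},2^{-j+1}]$) carrying, by Theorem~\ref{thm:cle_partially_explored}, a multichordal $\CLE_{\kappa'}$ with marked points $\ul{x}^*$ and exterior link pattern $\beta^*$. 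Then resample that multichordal $\CLE_{\kappa'}$ and let $G^\resampled_j$ be the event that the resampled multichordal $\CLE_{\kappa'}$ has a loop $\wt\CL$ touching both $[-2^{-j+1},-2^{-j}]$ and $[2^{-j},2^{-j+1}]$. Such a $\wt\CL$ separates $[-2^{-j},2^{-j}]$ from $\partial B(0,2^{-j+1})$ inside $\h$, and since $\wt\CL\subseteq M\subseteq A_j$ while the region on the inner side of $\wt\CL$ was revealed rather than resampled, the loop collections of $\Gamma$ and $\Gamma^\resampled_j$ agree in every component of $\h\setminus\wt\CL$ adjacent to $[-2^{-j},2^{-j}]$. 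By the positivity assertion in Theorem~\ref{thm:cle_partially_explored} the conditional probability of $G^\resampled_j$ is positive; I would upgrade this to a bound $\p[G^\resampled_j\giv\Gamma]\ge p$ holding off an event of probability $\le q$, uniformly in $j$, using the scale invariance $z\mapsto 2z$ (which carries $A_j$ to $A_{j-1}$), the continuity of these probabilities in $\ul{x}^*$ from Theorem~\ref{thm:cle_partially_explored}, and a compactness argument controlling where $\ul{x}^*$ can lie, making $q$ small if necessary by composing several resamplings within $A_j$ (and, if needed, neighbouring annuli), as the definition of ``resampling within a set'' permits.

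\textbf{Step 2: multi-scale concentration.} Here the plan is to reveal $\Gamma$ from the outside in. Let $\mathcal F_j$ be generated by the partial exploration of $\Gamma$ stopped at $\partial B(0,2^{-j})$, so that conditionally on $\mathcal F_j$ the configuration inside $B(0,2^{-j})\cap\h$ is again a multichordal $\CLE_{\kappa'}$ (a special case of Theorem~\ref{thm:cle_partially_explored}). For $j'\ge j+1$ the resampling region $A_{j'}$ lies inside $B(0,2^{-j})$, so $\p[G^\resampled_{j'}\giv\Gamma]$ is a measurable function of this conditional configuration, and Step~1 applied conditionally gives $\p[\, \p[G^\resampled_{j+1}\giv\Gamma]<p \giv \mathcal F_j \,]\le q$, uniformly in $j$ and in $\mathcal F_j$. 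Hence the number of $j\in\{j_0+1,\dots,j_0+k\}$ with $\p[G^\resampled_j\giv\Gamma]\ge p$ stochastically dominates a $\mathrm{Binomial}(k,1-q)$ random variable; taking $q$ small enough relative to $a$ and $b$ and applying a Chernoff bound yields $\p[(\wt G_k)^c]=O(e^{-bk})$ with an implicit constant independent of $j_0,k$.

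\textbf{Main obstacle.} The delicate point is the uniformity in Step~1: the marked points $\ul{x}^*$ of the multichordal $\CLE_{\kappa'}$ produced by the partial exploration can a priori crowd towards the endpoints $\pm 2^{-j}$, and in bad configurations the revealed loop segments can block every crossing of $M$, so that a single resampling need not make the exceptional probability $q$ small. Arranging the construction of $\Gamma^\resampled_j$ as a suitable composition of resamplings within $A_j$ (and neighbouring annuli), together with the continuity/positivity input of Theorem~\ref{thm:cle_partially_explored}, so that $q$ becomes as small as needed, is the heart of the matter and is what is carried out in detail in \cite{amy-cle-resampling}; once that is in place, Step~2 is a routine conditional large-deviations argument.
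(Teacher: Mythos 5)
This lemma is quoted verbatim from \cite[Lemma~5.8]{amy-cle-resampling} and is not proved in the present paper, so there is no internal proof to compare against. That said, your two-step sketch is broadly consistent with the framework that the cited reference (and the analogous Lemmas~\ref{lem:disconnect_interior},~\ref{lem:break_loops} and Proposition~\ref{prop:resampling} in this paper) operate in: a single-scale resampling that creates the separating loop with positive conditional probability off a small bad set, followed by a multiscale concentration bound.

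The one place where your route genuinely diverges from the one used in \cite{amy-cle-resampling} is Step~2. You propose a direct Markovian exploration from the outside in, together with a claimed uniform Step~1 bound conditionally on each $\mathcal F_j$, and then stochastic domination by a binomial. The reference instead obtains the concentration via the ``independence across scales'' machinery (cf.\ Lemma~\ref{lem:gff_independence_across_scales} here and \cite[Proposition~4.7]{amy-cle-resampling}), i.e.\ working through the GFF coupling and the notion of $M$-good scales: with probability $1-O(e^{-bk})$ a large fraction of scales are $M$-good, and at an $M$-good scale the law of the restricted field is absolutely continuous (with quantitative moment bounds) with respect to a reference law, which is what makes the single-scale lower bound uniform. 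Your Markovian filtration variant is plausible in principle, but the uniformity you need conditionally on $\mathcal F_j$ — i.e.\ that the exceptional probability $q$ in Step~1 can be made small \emph{uniformly over all configurations outside $B(0,2^{-j})$}, not merely off a fixed small-probability set — is exactly the point that the $M$-good-scale mechanism is designed to finesse and that your sketch defers. You are candid about this: your ``Main obstacle'' paragraph correctly names the crowding of marked points $\ul{x}^*$ toward $\pm 2^{-j}$ and the possibility that revealed loop segments block every crossing of the middle region. Without importing the $M$-good-scale input or an equivalent uniformity device, the stochastic-domination step in your Step~2 does not close. As a blind reconstruction your outline identifies the right architecture and the right hard point; filling that gap requires precisely the absolute-continuity-at-good-scales argument from \cite{amy-cle-resampling} rather than a bare Markov/compactness argument.
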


\begin{lemma}[{\cite[Lemma~5.9]{amy-cle-resampling}}]
\label{lem:break_loops}
Suppose that $\Gamma$ is a nested \clek{} in a simply connected domain $D$. For each $a\in(0,1)$, $b>0$ there exists $p > 0$ such that the following holds. 

Let $z\in D$ and $j_0\in\Z$ be such that $B(z,2^{-j_0})\subseteq D$. There exists a resampling $\Gamma^\resampled_{z,j}$ of $\Gamma$ within $A(z,2^{-j},2^{-j+1})$ for each $j>j_0$ with the following property. Let $G^\resampled_{z,j}$ be the event that
\begin{itemize}
 \item no loop in $\Gamma^\resampled_{z,j}$ crosses the annulus $A(z,2^{-j},2^{-j+1})$,
 \item denoting $\CC$ the loops in $\Gamma$ that cross $A(z,2^{-j},2^{-j+1})$, the collection of loops of $\Gamma$ and $\Gamma^\resampled_{z,j}$ remain the same in each connected component of $\C \setminus \bigcup\CC$ that is not surrounded by a loop in $\CC$ and intersects $B(z,2^{-j})$, and the gasket of $\Gamma$ in these components is contained in the gasket of $\Gamma^\resampled_{z,j}$.
\end{itemize}
For $k\in\N$, let $\wt G_{z,j_0,k}$ be the event that the number of $j=j_0+1,\ldots,j_0+k$ so that
\[ \p[ G^\resampled_{z,j} \giv \Gamma ] \geq p\]
is at least $(1-a)k$.  Then
\[ \p[(\wt G_{z,j_0,k})^c] = O(e^{-b k}) \]
where the implicit constant does not depend on $z,j_0,k$.
\end{lemma}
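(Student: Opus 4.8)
This is a resampling estimate of the same type as Lemmas~\ref{lem:disconnect_interior} and~\ref{lem:disconnect_boundary}, and I would prove it along the same two-step scheme. First, one shows that except on an event of probability $O(e^{-bk})$, all but at most $ak$ of the dyadic scales $j\in\{j_0+1,\ldots,j_0+k\}$ are ``good'', meaning that only boundedly many loops of $\Gamma$ cross the annulus $A(z,2^{-j},2^{-j+1})$ and these crossings are not pathologically close to one another. Second, at each good scale one exhibits a single resampling move within the annulus which, with conditional probability at least a constant $p$ depending only on $a,b$, produces the event $G^\resampled_{z,j}$.

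For the first step, write $\CC_j$ for the set of loops of $\Gamma$ crossing $A_j\defeq A(z,2^{-j},2^{-j+1})$ and call scale $j$ \emph{heavy} if either $\#\CC_j>N$ or two of the crossing subarcs come within Euclidean distance $\eta 2^{-j}$ of each other, where $N\in\N$ and $\eta>0$ are parameters to be fixed. Conditioning on the restriction of $\Gamma$ to $\C\setminus B(z,2^{-j+1})$, the configuration inside is a multichordal \clekp{}, and crossing and arm estimates for nested \clekp{} across annuli of definite modulus — of the same kind as the ones invoked via \cite[Lemma~4.4]{amy-cle-resampling} in the proof of Lemma~\ref{le:gasket_compact}, together with quasi-multiplicativity of the relevant arm events — give that the conditional probability of $j$ being heavy is at most $q(N,\eta)$, uniformly in the conditioning, with $q(N,\eta)\to 0$ as $N\to\infty$ and $\eta\to 0$. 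Since the annuli $A_j$ are nested toward $z$, the heaviness indicators are adapted to the increasing filtration generated by the restrictions of $\Gamma$ to $\C\setminus B(z,2^{-j})$, and each has conditional probability at most $q(N,\eta)$ given the past; hence the number of heavy scales in a block of length $k$ is stochastically dominated by a $\mathrm{Bin}(k,q(N,\eta))$ variable. Choosing $N$ large and $\eta$ small so that the Chernoff rate for $\{\mathrm{Bin}(k,q(N,\eta))\ge ak\}$ exceeds $b$ yields $\p[\#\{\text{heavy scales}\}\ge ak]=O(e^{-bk})$ with implicit constant independent of $z,j_0,k$.

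For the second step, fix a scale $j$ that is not heavy. I would take the resampling within $A_j$ to be a single move with a simply connected region $V$ that is a neighborhood inside $A_j$ of $A_j\setminus\xi$ for a radial crosscut $\xi$ chosen to avoid the at most $N$ crossing subarcs, and with $U\Subset V$ a slightly smaller such neighborhood, arranged so that the partial exploration $\Gamma_\outside^{*,V,U}$ records exactly the crossing subarcs of the loops in $\CC_j$ (at most $2N$ of them, with pairwise-separated endpoints because $j$ is not heavy), leaving every loop contained in $A_j$ and every loop disjoint from $A_j$ in the unexplored remainder, and so that $V^{*,U}$ contains a round sub-annulus of definite modulus separating $\partial B(z,2^{-j})$ from $\partial B(z,2^{-j+1})$ (cut along $\xi$). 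By Theorem~\ref{thm:cle_partially_explored} the conditional law of the remainder is a multichordal \clekp{} in $V^{*,U}$ with at most $2N$ marked points and some exterior link pattern; by conformal invariance and the positivity and continuity of interior-link-pattern probabilities in Theorem~\ref{thm:cle_partially_explored}, together with compactness of the (rescaled, combinatorially bounded, well-separated) configuration space, one obtains $p=p(N,\eta)>0$ such that with probability at least $p$ the realized interior link pattern reconnects all crossing subarcs into loops each confined to $\{2^{-j}<|w-z|<\tfrac{3}{2}2^{-j}\}$ or each confined to $\{\tfrac{3}{2}2^{-j}<|w-z|<2^{-j+1}\}$. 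On this event no loop of $\Gamma^\resampled_{z,j}$ crosses $A_j$, giving the first bullet; the second bullet holds because the resampling alters the configuration only inside $V\subseteq A_j$, so in any component $O$ of $\C\setminus\bigcup\CC_j$ unsurrounded by a loop of $\CC_j$ and meeting $B(z,2^{-j})$ the loop collections of $\Gamma$ and $\Gamma^\resampled_{z,j}$ agree outside $A_j$, while inside $O\cap A_j$ the crossing subarcs of $\Gamma$ have been replaced by loops that do not cross $A_j$ and hence cannot disconnect from $\partial D$ any point of $O$ that was not already disconnected, so the gasket of $\Gamma$ in $O$ is contained in that of $\Gamma^\resampled_{z,j}$. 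Combining the two steps: at every scale that is not heavy we have $\p[G^\resampled_{z,j}\giv\Gamma]\ge p$, and except on an event of probability $O(e^{-bk})$ there are at least $(1-a)k$ such scales, which is precisely $\wt G_{z,j_0,k}$.

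The main obstacle is the uniformity in the second step: obtaining a lower bound $p>0$ that is independent of $z,j_0,j$ and of the random explored configuration on the probability that the multichordal \clekp{} simultaneously re-routes \emph{all} crossing subarcs away from being full crossings of the annulus. This is what forces both the bound on the number of crossings and the separation of their endpoints in the first step, and it requires a careful choice of the resampling region $V$ ensuring that the multichordal domain $V^{*,U}$ always retains a round sub-annulus of controlled modulus, so that Theorem~\ref{thm:cle_partially_explored} can deliver a lower bound depending only on the bounded combinatorial data.
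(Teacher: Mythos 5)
Since Lemma~\ref{lem:break_loops} is imported from \cite[Lemma~5.9]{amy-cle-resampling}, the present paper does not contain a proof; I will therefore assess your argument on its own terms and against the hints the paper gives (the proof sketch in Section~\ref{se:map_in_resampling}, and the machinery around Lemmas~\ref{lem:gff_independence_across_scales} and~\ref{lem:good_scales_for_event}). Your two-step skeleton --- show that all but a small fraction of scales are ``good'', then exhibit a single resampling move at each good scale that achieves $G^\resampled_{z,j}$ with probability bounded below --- is the right one, and step two is essentially in the spirit of how the paper handles such arguments (partial exploration, multichordal \clekp{} conditional law via Theorem~\ref{thm:cle_partially_explored}, positivity and continuity of interior link-pattern probabilities, compactness of the bounded combinatorial data).

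However, step one has a genuine gap. You define ``heavy'' solely in terms of the number and separation of crossings of $A_j$, and then claim that the heaviness indicators are adapted to the filtration of $\Gamma$ restricted to $\C\setminus B(z,2^{-j})$ with a \emph{uniform} conditional probability bound $q(N,\eta)$. Neither assertion is correct as stated. First, conditioning on the naive restriction of $\Gamma$ to $\C\setminus B(z,2^{-j+1})$ does \emph{not} turn the interior into a multichordal \clekp{}: that structure arises only after a proper partial exploration in the sense of Theorem~\ref{thm:cle_partially_explored} (cutting loops at a well-chosen pair $(U,V) \in \domainpair{D}$), and the naive restriction $\sigma$-algebra does not carry a clean domain Markov property. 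Second, and more importantly, the conditional probability of heaviness at scale $j$ given the configuration outside $B(z,2^{-j+1})$ is simply not uniformly bounded away from $1$: if the realized exterior already has an atypically large number of strands entering $B(z,2^{-j+1})$, the conditional probability of many crossings of $A_j$ can be close to $1$. This is precisely the obstruction that the GFF ``$M$-good scale'' mechanism (Lemma~\ref{lem:gff_independence_across_scales}, Lemma~\ref{lem:good_scales_for_event}) or the CLE-specific independence across scales (\cite[Proposition~4.7]{amy-cle-resampling}) is designed to deal with. The correct argument first shows that the GFF (equivalently the CLE exploration) is $M$-good at all but $O(ak)$ scales off an $O(e^{-bk})$ event, and only \emph{on} the $M$-good scales does one get a uniform lower bound on the probability of the good resampling event --- the boundedness of the number of crossings and the well-separation of the strands is itself part of what $M$-goodness provides after conditioning. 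So you cannot simply define ``heavy'' from the CLE crossings and invoke stochastic domination by a binomial: you must route the argument through the scale-wise independence of the underlying field, or the analogous \clekp{} result. Your step two, by contrast, is essentially sound once the good-scale input from step one is repaired: the resampling region must be cut by a crosscut $\xi$ inside $A_j$ so that $V^{*,U}$ is simply connected with boundedly many marked points at pairwise-separated boundary locations, and the desired event is stable under perturbations of this data, which is exactly what gives a lower bound $p$ depending only on $N,\eta$ (hence only on $a,b$).
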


More generally, we proved that with extremely high probability as $k \to \infty$, for a large fraction of annuli $A(z,2^{-j},2^{-j+1})$ as above, any resampling target involving a bounded number of successful resamplings has a positive and bounded below conditional probability of being achieved in the annulus. In order to formally state the result, we recall the following definitions given in \cite[Section~5.1]{amy-cle-resampling}.

\begin{definition}[Target pivotals]
\label{def:resampling_target}   
Let $\Gamma$ be a \clekp{} in $D$.  Let $W\subseteq D$ open and $r > 0$. Suppose that we are given the following random variables.
\begin{enumerate}[(i)]
\item $M^*<\infty$.
\item  A set $\{(\ell_i^1,\ell^2_i)\}_{i=1,\ldots,M^*}$ of pairs of arcs in $\Gamma$ with $\ell_i^1,\ell^2_i \subseteq W$ and such that none of them overlap with each other.
\item A set of $M^*$ points $\ul{y}$ such that
\begin{itemize}
 \item $y_i\in  \ell_i^1\cap \ell^2_i$ and $B(y_i,4r)\subset W$ for $i=1,\ldots,M^*$,
 \item the endpoints of $\ell^1_i, \ell^2_i$ lie outside $B(y_i,4r)$,
 \item $B(y_i,4r)\cap B(y_j,4r)=\varnothing$ and $(\ell_i^1 \cup \ell_i^2) \cap B(y_j,4r) = \varnothing$ for all $i\neq j$.
\end{itemize}
\end{enumerate}
We denote these target pivotals by $\mathfrak F_{W,r} = \{(\ell^1_i,\ell^2_i,y_i)\}_{i=1,\ldots,M^*}$.

If $\Gamma^\resampled$ is a resampling of $\Gamma$, then we say that $\Gamma^\resampled$ is $\mathfrak F_{W,r}$-successful if the following hold.
\begin{enumerate}[(i)]
 \item The loops and strands of $\Gamma$ and $\Gamma^\resampled$ are identical when restricted to the set $D \setminus \bigcup_{i=1}^{M^*} B(y_i,4r)$.
 \item Let $\tau_i^j$ (resp.\ $\ol{\tau}_i^j$) be the first (resp.\ last) time that $\ell_i^j$ is in $\ol{B}(y_i,4 r)$, for $i=1,\ldots,M^*$, $j=1,2$. Then in $\Gamma^\resampled$ the connections between the four strands $\ell_i^1|_{[0,\tau_i^1]}$, $\ell_i^1|_{[\ol{\tau}_i^1,1]}$, $\ell_i^2|_{[0,\tau_i^2]}$, $\ell_i^2|_{[\ol{\tau}_i^2,1]}$ (assuming that they are parameterized on $[0,1]$) are flipped, i.e.\ the strand $\ell^{\resampled,1}_i$ (resp.\ $\ell^{\resampled,2}_i$) in $\Gamma^\resampled$ starting with $\ell_i^1|_{[0,\tau_i^1]}$ (resp.\ finishing with $\ell_i^1|_{[\ol{\tau}_i^1,1]}$) is linked to $\ell_i^2|_{[0,\tau_i^2]}$ or $\ell_i^2|_{[\ol{\tau}_i^2,1]}$. Moreover, it holds again that $\ell^{\resampled,1}_i \cap \ell^{\resampled,2}_i \cap B(y_i,r) \neq \varnothing$.
\end{enumerate}
\end{definition}

\begin{proposition}[{\cite[Proposition~5.3]{amy-cle-resampling}}]
\label{prop:resampling}
Suppose that $\Gamma$ is a nested \clekp{} in a simply connected domain $D$. Given any $M_0 \in \N$, $r > 0$, $a\in(0,1)$, and $b>0$, there exists $p \in (0,1)$ such that the following holds.

Let $j_0 \in\Z$ and $z\in D$ be such that $B(z,2^{-j_0})\subseteq D$. There exists a resampling $\Gamma^\resampled_{z,j}$ of $\Gamma$ within $W_{z,j} \defeq A(z,2^{-j},2^{-j+1})$ for each $j>j_0$ with the following property. Let $G^p_{z,j}$ be the event for $\Gamma$ that for any choice of target pivotals $\mathfrak F_{W_{z,j},r2^{-j}} = \{(\ell^1_i,\ell^2_i,y_i)\}_{i=1,\ldots,M^*}$ with $M^* \le M_0$, the conditional probability given $\Gamma$ that $\Gamma^\resampled_{z,j}$ is $\mathfrak F_{W_{z,j},r2^{-j}}$-successful is at least $p$. For $k\in\N$ let $\wt G_{z,j_0,k}$ be the event that the number of $j = j_0+1,\ldots,j_0+k$ such that $G^p_{z,j}$ occurs is at least $(1-a)k$. Then
\[
 \p[(\wt G_{z,j_0,k})^c] = O(e^{-b k}) 
\]
where the implicit constant does not depend on $z,j_0,k$.
\end{proposition}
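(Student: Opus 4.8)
The plan is to reduce to a single-scale estimate with the stated uniformity and then bootstrap over the $k$ scales $j=j_0+1,\dots,j_0+k$ by a Chernoff bound, following the same scheme as the proofs of Lemmas~\ref{lem:disconnect_interior}, \ref{lem:disconnect_boundary} and~\ref{lem:break_loops}; the new feature here is that the ``target'' of the resampling ranges over all placements of at most $M_0$ pivotals, which forces a discretization argument. First I would fix, for each scale $j$, a deterministic resampling procedure $\Gamma^\resampled_{z,j}$ supported in $W_{z,j}=A(z,2^{-j},2^{-j+1})$: take a $cr2^{-j}$-net of $W_{z,j}$ and, for each net point $w$ and each radius $s$ in a bounded geometric range comparable to $r2^{-j}$ with $B(w,2s)\subseteq W_{z,j}$, independently (given $\Gamma$) explore $\Gamma$ from the complement of $B(w,2s)$ down to $\ol{B(w,s)}$ and resample the remainder; since the net is deterministic and the disks split into boundedly many well-separated families, this is a resampling of $\Gamma$ within $W_{z,j}$ in the sense of Section~\ref{se:cle_resampling}. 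The point of the construction is that for \emph{any} target pivotals $\mathfrak F_{W_{z,j},r2^{-j}}=\{(\ell^1_i,\ell^2_i,y_i)\}_{i\le M^*}$ with $M^*\le M_0$, each ball $B(y_i,4r2^{-j})$ lies inside one of the resampled disks, so the ``flip'' of the four strands at $y_i$ is among the outcomes of the resampling in that disk; as the relevant disks are pairwise disjoint, the flips at the distinct $y_i$ occur simultaneously with conditional probability (given $\Gamma$) at least the product of the per-site flip probabilities.

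The heart of the argument is the single-scale estimate: \emph{for every $\eta>0$ there is $p\in(0,1)$ such that, uniformly in $z$ and $j$ and conditionally on $\Gamma$ restricted to the complement of a neighborhood of $W_{z,j}$, the event $G^p_{z,j}$ has probability at least $1-\eta$.} By scale and conformal invariance it suffices to consider one fixed annulus. Exploring $\Gamma$ from outside $W_{z,j}$ reveals a multichordal \clekp{} configuration near $W_{z,j}$, and by Theorem~\ref{thm:cle_partially_explored} the conditional law near each double point $y_i$ is a multichordal \clekp{} whose chords realize the flipped interior link pattern (and keep a crossing inside $B(y_i,r2^{-j})$) with \emph{positive} probability, depending \emph{continuously} on the finitely many revealed strand endpoints and on the domain. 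I would then introduce a ``good'' event asserting that (a) only boundedly many loops and strands of $\Gamma$ cross $W_{z,j}$, and (b) for every admissible family of $\le M_0$ pairwise-separated disks of radius $\asymp r2^{-j}$ in $W_{z,j}$, the revealed strands entering each disk have endpoints bounded away from one another and from the ends of the disk. On the good event, compactness together with the continuity in Theorem~\ref{thm:cle_partially_explored} yields a uniform lower bound $p_0>0$ for the per-site flip probability, so $G^p_{z,j}$ holds with $p=p_0^{M_0}$. The complement of the good event has probability tending to $0$ as $p\searrow0$: the number of \clekp{} crossings of a fixed annulus has exponential tails, and near-coincidences of strand endpoints around a crossing point are ruled out with probability $\to1$ by a priori regularity estimates for \slekp{} flow-line excursions, after a union bound over the finitely many net disks. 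The persistence of the bound under conditioning on $\Gamma$ outside $W_{z,j}$ is a consequence of the domain Markov property of \clekp.

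For the concentration across scales, reveal $\Gamma$ from coarse scales to fine. Each $G^p_{z,j}$ is determined by $\Gamma$ near $W_{z,j}$, and by the single-scale estimate $\p[(G^p_{z,j})^c\mid\text{coarser scales}]\le\eta$; passing to every $\Delta$-th scale for a fixed gap $\Delta$ (or invoking a concentration bound robust to bounded-range dependence) makes the indicators $\one_{(G^p_{z,j})^c}$ stochastically dominated by i.i.d.\ $\mathrm{Bernoulli}(\eta)$ variables, so $\sum_j\one_{(G^p_{z,j})^c}$ is stochastically dominated, up to a constant factor in $k$, by $\mathrm{Binomial}(k,\eta)$. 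Choosing $p$, hence $\eta$, small enough that $\eta<a$ and that the Chernoff exponent of $\{\mathrm{Binomial}(k,\eta)\ge ak\}$ exceeds $b$ then gives $\p[(\wt G_{z,j_0,k})^c]=O(e^{-bk})$, with implicit constant independent of $z,j_0,k$ since the single-scale estimate was uniform.

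I expect the main obstacle to be the single-scale estimate — more precisely, upgrading the bare positivity of the flip probability furnished by the multichordal \clekp{} description (Theorem~\ref{thm:cle_partially_explored}) to a lower bound that is uniform over all $\le M_0$ target pivotals and valid off an event of probability at most $\eta$. This is where one has to quantify, via SLE regularity, how badly the strands entering a small disk around a double point can degenerate, and to combine this with the discretization of the possible pivotal locations; the remaining ingredients are a fairly routine domain-Markov-plus-Chernoff scheme.
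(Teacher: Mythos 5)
This proposition is quoted verbatim from the companion paper \cite[Proposition~5.3]{amy-cle-resampling} and is stated here without proof, so there is no internal proof in this document to compare against. Judged on its own merits, your two-step outline --- a uniform single-scale estimate built on multichordal \clekp{} positivity and continuity, then a coarse-to-fine Chernoff concentration argument --- is the right shape, and you are right that the crux is upgrading bare positivity to a lower bound that is uniform over all placements of at most $M_0$ pivotals. However, your construction of the resampling procedure $\Gamma^\resampled_{z,j}$ contains a genuine gap that makes the single-scale estimate fail as written.

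You propose to resample $\Gamma$ at \emph{every} disk in a deterministic net of $W_{z,j}$. Look again at condition~(i) of $\mathfrak F_{W,r}$-success in Definition~\ref{def:resampling_target}: the loops and strands of $\Gamma$ and $\Gamma^\resampled$ must be \emph{identical when restricted to $D\setminus\bigcup_i B(y_i,4r)$}. An independent resample of the \clekp{} in a disk almost surely produces a configuration that differs from the original there, so after your procedure the ensemble is changed in every net disk --- in particular in all the disks disjoint from the $y_i$'s --- and condition~(i) fails with probability one regardless of what happens at the $y_i$'s. You also cannot simply restrict the resampling to the disks that contain the $y_i$'s, because the $y_i$'s are determined by $\Gamma\big|_{W_{z,j}}$ and a valid resampling is required to select its regions independently of the configuration inside $W_{z,j}$. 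The resolution is to \emph{randomize the choice of regions}: independently of $\Gamma\big|_{W_{z,j}}$, draw a random subset of at most $M_0$ disks from the net (together with a random intended rewiring at each), and resample only in those. For any target pivotals the probability of having drawn exactly the disks containing the $y_i$'s with the correct intended flips is bounded below by a constant depending only on $r$ and $M_0$, and conditionally on that selection the multichordal \clekp{} description supplies a positive lower bound on the flip probability, as you argue via Theorem~\ref{thm:cle_partially_explored}. With this correction the rest of your plan --- the ``good'' event ruling out excessive crossings and degenerate endpoints, the compactness argument for uniformity, and the revelation-from-coarse-scales Chernoff bound --- can go through. One further point worth flagging: Theorem~\ref{thm:cle_partially_explored} states continuity of the link-pattern probabilities, but the success event also demands $\ell^{\resampled,1}_i\cap\ell^{\resampled,2}_i\cap B(y_i,r)\neq\varnothing$ after the flip; showing that this more refined event still has a continuous, uniformly positive probability needs a (short) separate argument that does not follow purely from the link-pattern statement.
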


\subsection{Imaginary geometry}
\label{subsec:ig}

We are now going to review some aspects from the coupling of $\SLE$ with the GFF developed in \cite{dub2009gff,s2016zipper,ms2016ig1, ms2016ig2, ms2016ig3, ms2017ig4} which will be used in this work.

Suppose that $D \subseteq \C$ is a domain.  We recall that the \emph{Gaussian free field} (GFF) $h$ on $D$ is the distribution valued Gaussian field with covariance function given by the Green's function $G$ for $\Delta$ on $D$.  See \cite{s2007gff} for an introduction to the GFF.

It is proved in \cite{s2016zipper,dub2009gff,ms2016ig1} that for each $\kappa \in (0,4)$ it is possible to make sense of the formal solutions to the equation
\[ \eta'(t) = e^{i h(\eta(t)) / \chi} \quad\text{where}\quad \chi = \frac{2}{\sqrt{\kappa}} - \frac{\sqrt{\kappa}}{2}\]
and it turns out that they are $\SLE_\kappa$ type-curves.  Such $\eta$ are referred to as \emph{flow lines} of $h$.  We will first explain how this works in the context of GFF flow lines starting from the domain boundary and then how it works when the flow lines start from interior points.

For each $\kappa \in (0,4)$ we let
\[ \kappa' = \frac{16}{\kappa},\quad \lambda = \frac{\pi}{\sqrt{\kappa}},\quad\text{and}\quad \lambda' = \frac{\pi}{\sqrt{\kappa'}}.\]

\subsubsection{Flow lines}
Suppose that $h$ is a GFF on $\h$ with boundary conditions given by $-\lambda$ (resp.\ $\lambda$) on $\R_-$ (resp.\ $\R_+$).  Then there exists a unique coupling of $h$ with an $\SLE_\kappa$ process $\eta$ from $0$ to $\infty$ which is characterized by the following property.  Suppose that $(f_t)$ is the centered Loewner flow for $\eta$ and $\tau$ is a stopping time for $\eta$ which is a.s.\ finite.  Then given $\eta|_{[0,\tau]}$,
\[ h \circ f_\tau^{-1} - \chi \arg (f_\tau^{-1})'\]
is a GFF on $\h$ with the same distribution of $h$.  Moreover, in this coupling we have that $\eta$ is a.s.\ determined by $h$.

More generally, suppose that we have $\ell, r \in \N$, $-\infty < x^{\ell,L} < \cdots < x^{1,L} \le 0 \le x^{1,R} < \cdots < x^{r,R} < +\infty$, and~$\rho^{i,q} \in \R$ for each $q \in \{L,R\}$ and $1 \leq i \leq \ell$ (resp.\ $1 \leq i \leq r$) if $q = L$ (resp.\ $q = R$).  We let $x^{\ell+1,L} = - \infty$, $x^{0,L} = 0^-$, $x^{0,R} = 0^+$, and $x^{r+1,R} = + \infty$.  Suppose that $h$ is a GFF on $\h$ with boundary conditions given by
\begin{equation}
\label{eqn:flow_line_bd}
-\lambda\left( 1+ \sum_{i=1}^j \rho^{i,L} \right) \quad\text{in} \quad (x^{j+1,L}, x^{j,L}] \quad \text{and} \quad  \lambda\left( 1+ \sum_{i=1}^j \rho^{i,R} \right) \quad\text{in}\quad (x^{j,R}, x^{j+1,R}] 
\end{equation}
for each $0 \leq j \leq \ell$ on the left side and $0 \leq j \leq r$ on the right side.  Then there exists a unique coupling between $h$ and an $\SLE_\kappa(\ul{\rho}^L; \ul{\rho}^R)$ process $\eta$ from $0$ to $\infty$ which is characterized by the following property.  Suppose that $(f_t)$ is the associated centered Loewner flow for $\eta$ and $\tau$ is a stopping time for $\eta$ which is a.s.\ finite.  Then given $\eta|_{[0,\tau]}$, the field $h \circ f_\tau^{-1} - \chi \arg (f_\tau^{-1})'$ is a GFF on $\h$ with boundary conditions given by
\begin{equation}
\label{eqn:flow_line_bd_map_back}
-\lambda\left( 1+ \sum_{i=1}^j \rho^{i,L} \right) \quad\text{in} \quad (f_\tau(x^{j+1,L}), f_\tau(x^{j,L})] \quad \text{and} \quad  \lambda\left( 1+ \sum_{i=1}^j \rho^{i,R} \right) \quad\text{in}\quad (f_\tau(x^{j,R}), f_\tau(x^{j+1,R})]
\end{equation}
for each $0 \leq j \leq \ell$ on the left side and $0 \leq j \leq r$ on the right side.  Moreover, in this coupling we have that $\eta$ is a.s.\ determined by $h$.

For each $x, \theta \in \R$ we can similarly define the flow line of $h$ with angle $\theta$ from $x$ to be the flow line of $h(\cdot + x) + \theta \chi$.  We also define the flow lines of a GFF $h$ on a simply connected domain $D \subseteq \C$ from $x$ to $y$ as follows.  Let $\varphi \colon \h \to D$ be a conformal transformation with $\varphi(0) = x$ and $\varphi(\infty) = y$.  Then the flow line of $h$ from $x$ to $y$ is equal to the image under $\varphi$ of the flow line of the GFF $h \circ \varphi - \chi \arg \varphi'$ on $\h$ from $0$ to $\infty$.

\begin{lemma}\label{le:sle_reversal}
Let $\kappa \in (2,4)$.  Suppose $h$ is a GFF on $\h$ with boundary values $\lambda$ on $\R_-$ and $\lambda-\pi\chi = \lambda(\kappa/2-1)$ on $\R_+$. Let $\eta$ be the angle $-\angledouble$ flow line of $h$ from $0$ to $\infty$. Let $\wt{\eta}$ be the angle~$0$ flow line of $h$ from~$\infty$ to~$0$. Then~$\eta$ and~$\wt{\eta}$ have the same law (modulo time reversal), namely \slekr{\kappa-4;-\kappa/2}.
\end{lemma}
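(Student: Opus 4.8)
The plan is to compute the laws of $\eta$ and $\wt\eta$ separately and then reconcile them using reversibility of $\SLE_\kappa(\rho_1;\rho_2)$. First I would identify $\eta$. By definition the angle-$(-\angledouble)$ flow line of $h$ from $0$ to $\infty$ is the angle-$0$ flow line of $h-\angledouble\chi$, and since $\angledouble\chi=(\kappa-2)\lambda$ by \eqref{eq:angle_double}, the field $h-\angledouble\chi$ has boundary values $\lambda-(\kappa-2)\lambda=-\lambda(1+(\kappa-4))$ on $\R_-$ and $\lambda(\kappa/2-1)-(\kappa-2)\lambda=\lambda(1+(-\kappa/2))$ on $\R_+$. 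Comparing with the flow-line boundary conditions \eqref{eqn:flow_line_bd} for a single force point on each side, this exhibits $\eta$ as an $\SLE_\kappa(\kappa-4;-\kappa/2)$ from $0$ to $\infty$. Both weights lie in $(-2,\kappa/2-2)$ for $\kappa\in(2,4)$, so the flow line is a well-defined, boundary-touching continuous curve which is a.s.\ determined by $h$.

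Next I would identify $\wt\eta$ through the definition of flow lines in a general domain. Taking $\varphi(z)=-1/z$ (so $\varphi(0)=\infty$, $\varphi(\infty)=0$) with the continuous branch of $\arg\varphi'$ normalised so that $\arg\varphi'\equiv 0$ on $\R_-$ and $\arg\varphi'\equiv 2\pi$ on $\R_+$, the transported field $\hat h=h\circ\varphi-\chi\arg\varphi'$ has boundary values $\lambda(\kappa/2-1)$ on $\R_-$ (the image of $\R_+$) and $\lambda-2\pi\chi=\lambda(\kappa-3)$ on $\R_+$ (the image of $\R_-$), where I use $\pi\chi=(2-\kappa/2)\lambda$. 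Hence the angle-$0$ flow line of $\hat h$ from $0$ to $\infty$ is an $\SLE_\kappa(-\kappa/2;\kappa-4)$, and transporting back by $\varphi$ — which carries the left and right force points at $0^\mp$ to the left and right sides of the resulting curve near $\infty$ — shows that $\wt\eta$ is an $\SLE_\kappa(-\kappa/2;\kappa-4)$ from $\infty$ to $0$.

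Finally, by the reversibility of $\SLE_\kappa(\rho_1;\rho_2)$ for $\kappa\in(0,4)$ and $\rho_1,\rho_2>-2$ \cite{ms2016ig3}, the time reversal of an $\SLE_\kappa(\kappa-4;-\kappa/2)$ from $0$ to $\infty$ is an $\SLE_\kappa(-\kappa/2;\kappa-4)$ from $\infty$ to $0$, which is exactly the law found for $\wt\eta$. Thus $\eta$ and $\wt\eta$ have the same law modulo time reversal, and this common law is $\SLE_\kappa(\kappa-4;-\kappa/2)$; one could upgrade this to a.s.\ equality of the two curves since both are determined by $h$ and the reversal of $\eta$ satisfies the coupling characterising $\wt\eta$, but this is not needed. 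The delicate point is the bookkeeping in the second step: the branch of $\arg\varphi'$ must be chosen correctly (a shift by $2\pi$ changes the effective angle by $2\pi$, hence the pair of weights), and one must track which side of the reversed curve each force point lands on. I would pin both down using the general-domain flow-line conventions of \cite{ms2016ig1,ms2016ig3}, and as a consistency check confirm that the outcome matches the reversibility computation in the last step.
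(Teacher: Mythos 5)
Your proposal is correct in structure and in its final answer, and takes essentially the same route as the paper's proof, which simply invokes the reversibility of $\SLE_\kappa(\rho_L;\rho_R)$. Your Step 1 (identifying $\eta$ as $\SLE_\kappa(\kappa-4;-\kappa/2)$ by subtracting $\angledouble\chi=(\kappa-2)\lambda$ and reading off the weights from \eqref{eqn:flow_line_bd}) is the computation the authors have in mind; the remaining content is precisely the reversibility theorem. Two remarks. First, a citation slip: the reversibility of $\SLE_\kappa(\rho_1;\rho_2)$ for $\kappa\in(0,4]$, $\rho_1,\rho_2>-2$ is Theorem~1.1 of \cite{ms2016ig2} (Imaginary Geometry~II), not \cite{ms2016ig3}, which treats $\kappa'\in(4,8)$; the paper cites \cite{ms2016ig2} and so should you. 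Second, your Step 2 (directly computing the law of $\wt\eta$ via $\varphi(z)=-1/z$ and a choice of branch for $\arg\varphi'$) is more machinery than necessary and arguably circular: the branch that gives $\wt\eta=\SLE_\kappa(-\kappa/2;\kappa-4)$ from $\infty$ to $0$ is in effect \emph{determined} by requiring consistency with the GFF coupling of the reversibility theorem, so you are not gaining independent information over simply quoting \cite{ms2016ig2}. A cleaner route, closer to the paper's, is to compute only the law of $\eta$ as you do, then apply \cite{ms2016ig2} to conclude that the time-reversal of $\eta$ is the flow line of the \emph{same} field $h$ from $\infty$ to $0$ with angle $0$, i.e.\ $\wt\eta$, which is then automatically $\SLE_\kappa(-\kappa/2;\kappa-4)$ in the reversed direction. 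You are honest that the branch bookkeeping is delicate, and indeed if one insists on doing Step~2 by hand it must be pinned down against the IG conventions; but it is not needed to make the argument rigorous.
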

\begin{proof}
This follows from the reversibility of \slekr{\rho_L;\rho_R} \cite{ms2016ig2}.
\end{proof}

\begin{lemma}\label{le:reflected_fl_law}
Let $\kappa \in (2,4)$.  Suppose $h$ is a GFF on $\h$ with boundary values $-\lambda(1+\rho_L)$  on $\R_-$ and $\lambda-\pi\chi = \lambda(\kappa/2-1)$ on $\R_+$ where $\rho_L > -2$. Let $\eta_1$ (resp.\ $\eta_2$) be the angle $0$ (resp.\ $-\angledouble$) flow line of $h$ from $0$ to $\infty$. Given $\eta_1$, let $\ol{\eta}_2$ be the angle $0$ flow line from $\infty$ to $0$ of the restriction of $h$ to the components of $\h\setminus\eta_1$ to the right of $\eta_1$ (i.e.\ $\ol{\eta}_2$ is reflected off $\eta_1$). Then the pair $(\eta_1,\eta_2)$ has the same law as $(\eta_1,\ol{\eta}_2)$ (modulo time reversal).
\end{lemma}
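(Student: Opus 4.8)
The plan is to condition on the curve $\eta_1$ and reduce the assertion to Lemma~\ref{le:sle_reversal} applied inside the region lying to the right of $\eta_1$.

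First I would record the law of $\eta_1$ and the conditional law of the field given $\eta_1$. From the flow line boundary data in~\eqref{eqn:flow_line_bd}, $\eta_1$ is an $\SLE_\kappa(\rho_L;\kappa/2-2)$ process from $0$ to $\infty$; since $\rho_L>-2$ and $\kappa<4$ it is a.s.\ generated by a continuous simple curve which, because its right force point has weight $\kappa/2-2$, does not hit $\R_+$, and it is a.s.\ determined by $h$. Let $V$ denote the connected component of $\h\setminus\eta_1$ lying to the right of $\eta_1$. Then $V$ is a simply connected domain with $0$ and $\infty$ on its boundary and $\partial V\subseteq\eta_1\cup\R_+$. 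By the Markov property of the GFF together with the imaginary geometry coupling, the conditional law of $h|_V$ given $\eta_1$ is that of a GFF on $V$ with boundary value $\lambda-\pi\chi=\lambda(\kappa/2-1)$ along $\R_+$ and boundary value $\lambda$ plus the winding correction $\chi\cdot(\text{winding of }\eta_1)$ along $\eta_1$. Hence, choosing a conformal map $\psi\colon\h\to V$ with $\psi(0)=0$, $\psi(\infty)=\infty$, $\psi(\R_-)\subseteq\eta_1$ and $\psi(\R_+)\subseteq\R_+$, the field $\wh h\defeq h|_V\circ\psi-\chi\arg\psi'$ is, conditionally on $\eta_1$ and regardless of the particular realization of $\eta_1$, a GFF on $\h$ with boundary values $\lambda$ on $\R_-$ and $\lambda-\pi\chi$ on $\R_+$ --- exactly the field appearing in Lemma~\ref{le:sle_reversal}.

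Next I would identify $\eta_2$ and $\ol{\eta}_2$ as flow lines of $\wh h$. Unwinding the definition of flow lines in a domain, $\psi^{-1}(\ol{\eta}_2)$ is the angle $0$ flow line of $\wh h$ from $\infty$ to $0$. For $\eta_2$, the key point is that $\eta_2$ stays weakly to the right of $\eta_1$: $\eta_2$ is the angle $-\angledouble$ flow line and $\eta_1$ the angle $0$ flow line of the same field $h$ from the same point, and by the flow line interaction rules of \cite{ms2016ig1} (see also \cite{mw2017intersections}) two flow lines whose angles differ by $\angledouble$ bounce off one another without crossing. Thus $\eta_2\subseteq\ol V$ and $\eta_2$ is reflected off $\eta_1$; since a flow line is a local functional of $h$, conditionally on $\eta_1$ the curve $\eta_2$ is the angle $-\angledouble$ flow line of $h|_V$ started from $0$ and reflected off $\eta_1$, so $\psi^{-1}(\eta_2)$ is the angle $-\angledouble$ flow line of $\wh h$ from $0$ to $\infty$.

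Lemma~\ref{le:sle_reversal}, applied to $\wh h$, then says that the angle $-\angledouble$ flow line of $\wh h$ from $0$ to $\infty$ and the angle $0$ flow line of $\wh h$ from $\infty$ to $0$ have the same law modulo time reversal. Transporting this back through $\psi$ shows that, conditionally on $\eta_1$, the curves $\eta_2$ and $\ol{\eta}_2$ have the same law modulo time reversal; as this holds for a.e.\ realization of $\eta_1$, the pairs $(\eta_1,\eta_2)$ and $(\eta_1,\ol{\eta}_2)$ have the same law modulo time reversal. I expect the main obstacle to be the step showing that $\eta_2$ genuinely stays weakly to the right of $\eta_1$ and reflects off it rather than crossing it: this is precisely where the value $\angledouble$ of the angle gap is used, and it requires invoking the flow line interaction results of \cite{ms2016ig1,mw2017intersections} together with careful boundary data bookkeeping (including the absorption of winding terms under $\psi$). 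The remaining ingredients --- the law of $\eta_1$, the Markov decomposition of $h$ across $\eta_1$, and the conformal coordinate change --- are routine within imaginary geometry.
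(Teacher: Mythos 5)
Your argument is correct and is the same as the paper's one-line proof: condition on $\eta_1$, map the component to its right back to $\h$, and reduce to Lemma~\ref{le:sle_reversal} applied to the resulting field $\wh h$, with your write-up merely spelling out the coordinate change and boundary-value bookkeeping that the paper leaves implicit. One small wording caveat: what keeps $\eta_2$ from crossing $\eta_1$ is the $\angledouble$ angle gap (conditionally on $\eta_1$ it is an $\SLE_\kappa(\kappa-4;-\kappa/2)$ in $V$ which bounces off the left boundary because $\kappa-4\in(-2,\kappa/2-2)$), not the ``opposite-direction reflection'' mechanism that the paper reserves for same-angle flow lines such as $\ol{\eta}_2$; the conclusion is unaffected.
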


\begin{proof}
It follows from Lemma~\ref{le:sle_reversal} that the conditional law of both $\eta_2$ and $\ol{\eta}_2$ given $\eta_1$ is an \slekr{\kappa-4;-\kappa/2} in the components to the right of $\eta_1$.
\end{proof}

\subsubsection{Counterflow lines}
Suppose that $h$ is a GFF on $\h$ with boundary conditions given by $\lambda'$ (resp.\ $-\lambda'$) on $\R_-$ (resp.\ $\R_+$).  Then there exists a unique coupling of $h$ with an $\SLE_{\kappa'}$ process $\eta'$ from $0$ to $\infty$ which is characterized by the following property.  Suppose that $(f_t)$ is the centered Loewner flow for $\eta'$ and $\tau$ is a stopping time for $\eta'$ which is a.s.\ finite.  Then given $\eta'|_{[0,\tau]}$,
\[ h \circ f_\tau^{-1} - \chi \arg (f_\tau^{-1})'\]
is a GFF on $\h$ with the same distribution of $h$.  Moreover, in this coupling we have that $\eta'$ is a.s.\ determined by $h$.

More generally, suppose we have $\ell, r \in \N$, $-\infty < x^{\ell,L} < \cdots < x^{1,L} \le 0 \le x^{1,R} < \cdots < x^{r,R} < +\infty$, and $\rho^{i,q} \in \R$ for each $q \in \{L,R\}$ and $1 \leq i \leq \ell$ (resp.\ $1 \leq i \leq r$) if $q = L$ (resp.\ $q = R$).  We let $x^{\ell+1,L} = -\infty$, $x^{0,L} = 0^-$, $x^{0,R} = 0^+$, and $x^{r+1,R} = + \infty$.  Suppose that $h$ is a GFF on $\h$ with boundary conditions given by
\begin{equation}
\label{eqn:cfl_bd}
\lambda'\left( 1+ \sum_{i=1}^j \rho^{i,L} \right) \quad\text{in} \quad (x^{j+1,L}, x^{j,L}] \quad \text{and} \quad  -\lambda'\left( 1+ \sum_{i=1}^j \rho^{i,R} \right) \quad\text{in}\quad (x^{j,R}, x^{j+1,R}] 
\end{equation}
for each $0 \leq j \leq \ell$ on the left side and $0 \leq j \leq r$ on the right side.  Then there exists a unique coupling between $h$ and an $\SLE_{\kappa'}(\ul{\rho}^L; \ul{\rho}^R)$ process $\eta'$ from $0$ to $\infty$ which is characterized by the following property.  Suppose that $(f_t)$ is the associated centered Loewner flow for $\eta'$ and $\tau$ is a stopping time for $\eta'$ which is a.s.\ finite.  Then given $\eta'|_{[0,\tau]}$, the field $h \circ f_\tau^{-1} - \chi \arg (f_\tau^{-1})'$ is a GFF on $\h$ with boundary conditions given by
\begin{equation}
\label{eqn:cfl_bd_map_back}
\lambda'\left( 1+ \sum_{i=1}^j \rho^{i,L} \right) \quad\text{in} \quad (f_\tau(x^{j+1,L}), f_\tau(x^{j,L})] \quad \text{and} \quad  -\lambda'\left( 1+ \sum_{i=1}^j \rho^{i,R} \right) \quad\text{in}\quad (f_\tau(x^{j,R}), f_\tau(x^{j+1,R})]
\end{equation}
for each $0 \leq j \leq \ell$ on the left side and $0 \leq j \leq r$ on the right side.  Moreover, in this coupling we have that $\eta'$ is a.s.\ determined by $h$.

For each $x, \theta \in \R$ we can similarly define the counterflow line of $h$ with angle $\theta$ from $x$ to be the counterflow line of $h(\cdot + x) + \theta \chi$.  We also define the counterflow lines of a GFF $h$ on a simply connected domain $D \subseteq \C$ from $x$ to $y$ as follows.  Let $\varphi \colon \h \to D$ be a conformal transformation with $\varphi(0) = x$ and $\varphi(\infty) = y$.  Then the counterflow line of $h$ from $x$ to $y$ is equal to the image under $\varphi$ of the counterflow line of $h \circ \varphi - \chi \arg \varphi'$ from $0$ to $\infty$.

\subsubsection{Flow lines from interior points}

It is also possible to consider flow lines of a GFF starting from \emph{interior} rather than \emph{boundary} points.  Rather than being chordal $\SLE_\kappa$-type curves, they are whole-plane $\SLE_\kappa$-type curves.  Let us first explain how this works in the case of the whole-plane GFF.  Suppose that $h$ is a whole-plane GFF.  It is only possible to define the whole-plane GFF modulo additive constant. Since the flow lines of a GFF only depend on the field modulo $2 \pi \chi$, we view $h$ as a distribution modulo a global multiple of $2\pi \chi$.  Then it is shown in \cite{ms2017ig4} that there exists a unique coupling of $h$ with a whole-plane $\SLE_\kappa(2-\kappa)$ process $\eta$ from $0$ to $\infty$ so that for each stopping time $\tau$ for $\eta$ we have that the conditional law of $h$ given $\eta|_{[0,\tau]}$ is that of a GFF in $\C \setminus \eta([0,\tau])$ with the same boundary conditions along $\eta$ as in the case of a flow line starting from the boundary.  In this coupling, we have that $\eta$ is a.s.\ determined by $h$.  More generally, for each $z \in \C$ and $\theta$ the flow line of $h$ starting from $z$ with angle $\theta$ is defined to be the flow line of $h(\cdot + z) + \theta \chi$ starting from $0$ and this process has the law of an $\SLE_\kappa(2-\kappa)$ process from $z$ to $\infty$ and is a.s.\ determined by $h$.  The manner in which flow lines starting from interior points interact with each other is the same as for flow lines starting from boundary points.

By absolute continuity, we can also consider the flow lines of a GFF on a domain $D \subseteq \C$ and these paths locally look like whole-plane $\SLE_\kappa(2-\kappa)$ processes.

More generally, we can consider $h = \wt{h} - \alpha \arg(\cdot)$ where $\wt{h}$ is a whole-plane GFF, $\alpha > - \chi$, and with $h$ viewed modulo a global multiple of $2\pi(\chi + \alpha)$. Then the flow line of $h$ from $0$ is a whole-plane $\SLE_\kappa(\rho)$ curve where $\rho = 2-\kappa + 2\pi \alpha/\lambda$.  The same is true for the flow lines of $h$ from $0$ to $\infty$ with other angles.

\subsubsection{Coupling with $\CLE_{\kappa'}$}

Suppose that $h$ is a GFF on $\h$ with boundary conditions given by $\lambda'-\pi \chi$.  For each $x \in \R$ we let $\eta_x'$ be the counterflow line of $h$ from $\infty$ to $x$.  Then $\eta_x'$ is an $\SLE_{\kappa'}(\kappa'-6)$ in $\h$ from $\infty$ to $0$ where the force point is located infinitesimally to the right of $\infty$ (when standing at $\infty$ and looking towards $0$).  Moreover, the $\eta_x'$ as $x$ varies are coupled together in exactly the same way as in the definition of the $\CLE_{\kappa'}$ exploration tree.  For this reason, $\CLE_{\kappa'}$ is naturally coupled with $h$.  (The same construction also works if we instead have boundary conditions given by $-\lambda'+\pi \chi$ except in this case the force point of the counterflow line is immediately to the left of $\infty$ when standing at $\infty$ and looking towards $0$.)

\subsubsection{Interaction}
\label{se:fl_interaction}

The manner in which the flow and counterflow lines of the GFF interact with each other is described in \cite{ms2016ig1}.  Let us first describe how the flow lines interact with each other.  Suppose that $x_1,x_2 \in \R$ with $x_1 < x_2$ and $\theta_1, \theta_2 \in \R$ with $\theta_1 > \theta_2 - \pi$.  Let $h$ be a GFF on $\h$ with piecewise constant boundary conditions and let $\eta_i$ be the flow line of $h$ with angle $\theta_i$ from $x_i$ to $\infty$ for $i=1,2$.  Then $\eta_1$ stays to the left of $\eta_2$ if $\theta_1 > \theta_2$, $\eta_1$ merges with $\eta_2$ upon intersecting and does not subsequently separate if $\theta_1 = \theta_2$, and $\eta_1$ crosses $\eta_2$ from left to right upon intersecting and does not subsequently cross back if $\theta_1 \in (\theta_2-\pi,\theta_2)$.

\begin{figure}[ht]
\centering
\includegraphics[width=0.49\textwidth]{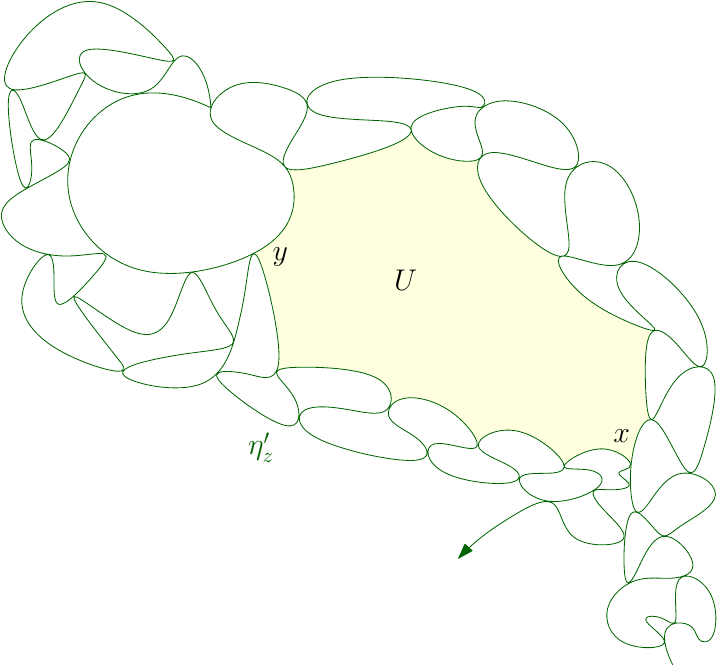}
\includegraphics[width=0.49\textwidth]{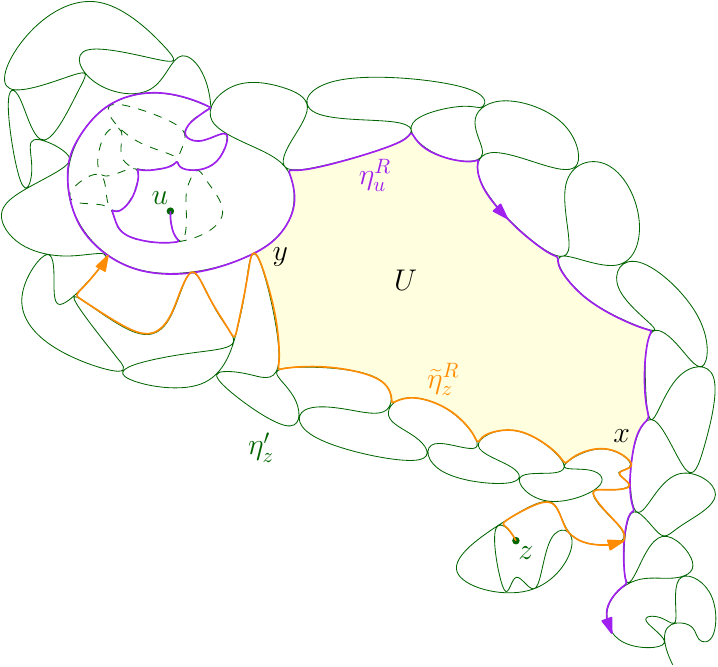}
\caption{By the duality, each component disconnected by a counterflow line can be detected by a pair of flow lines where one flow line is reflected off the other in the opposite direction.}
\label{fi:detection}
\end{figure}

The interaction between flow lines and counterflow lines is as follows. Suppose that $z \in \ol{\h}$ and $\eta'_z$ is a counterflow line of $h$ from $\infty$ targeting $z$. Then the left (resp.\ right) boundary of $\eta'_z$ seen from $z$ is a.s.\ equal to the flow line $\eta^L_z$ (resp.\ $\eta^R_z$) of $h$ from $z$ to $\infty$ with angle $\pi/2$ (resp.\ $-\pi/2$). (This is also called duality between \slek{} and \slekp{}.) This fact can be used to ``detect'' the components separated by a counterflow line using flow lines which we now explain. See Figure~\ref{fi:detection} for an illustration.

Suppose that $U$ is a component of $\h \setminus \eta'_z$, and say that it is surrounded by $\eta'_z$ counterclockwise. Let $x \in \partial U$ be the first (equivalently last) point visited by $\eta'_z$. Select any $y \in \partial U \setminus \{x\}$, and let $\partial_1 U \subseteq \partial U$ be the counterclockwise arc from $x$ to $y$. Note that $\eta'_z$ disconnects infinitely many components between visiting $y$ and coming back to $x$. Let $u$ be a point in one such component. The counterflow line $\eta'_u$ targeting $u$ then agrees with $\eta'_z$ at least until they visit $y$. By the duality, the flow line $\eta^R_u$ contains $\partial_1 U$. Given $\eta'_u$, due to the locality of counterflow lines, we can now view the remainder of $\eta'_z$ as a counterflow line of the restriction of $h$ to the component of $\h \setminus \eta'_u$ containing $z$ which we denote by $\wt{h}$ in the following. Applying the duality to $\wt{h}$, we see that the flow line $\wt{\eta}^R_z$ of $\wt{h}$ contains the remaining part of the boundary $\partial U \setminus \eta^R_u$. Note that it is important to let $\wt{\eta}^R_z$ be a flow line of $\wt{h}$, because the flow line $\eta^R_z$ of $h$ would merge with $\eta^R_u$ upon intersecting. In contrast, the flow line $\wt{\eta}^R_z$ reflects off $\eta^R_u$ in the opposite direction.

\subsubsection{Space-filling $\SLE_{\kappa'}$}
The flow lines of a GFF $h$ starting from interior points can be used to define a \emph{space-filling} version of $\SLE_{\kappa'}$ for each $\kappa' > 4$.  It is constructed in the following way.  Fix a countable dense set of points $(z_n)$ in $\C$.  For each $n$ we let $\eta_n$ be the flow line of $h$ starting from $z_n$ with angle $\pi/2$.  Then we define an ordering on the $(z_n)$ by saying that $z_i$ comes before $z_j$ if and only if $\eta_i$ merges with $\eta_j$ on its right side. It is shown in~\cite[Section 4]{ms2017ig4} that there is a unique space-filling curve that visits these points in order  and it is continuous when parameterized by Lebesgue measure. In the coupling the path is determined by the field $h$.
 
From the flow line interaction rules it follows that when starting a flow line with angle in $[-\pi/2,\pi/2]$ from a point $z$, the space-filling $\SLE_{\kappa'}$ path will visit the range of the flow line in reverse chronological order. For a counterflow line targeting a point $z$, the space-filling $\SLE_{\kappa'}$ will visit the points of the counterflow line in the same order and, whenever the counterflow line cuts off a component, the space-filling $\SLE_{\kappa'}$ fills up this component before continuing along the trajectory of the counterflow line.

The same construction also works if $h$ is a GFF on a domain $D \subseteq \C$ and its boundary conditions are chosen appropriately.  If the boundary conditions are as in the case of the coupling of $\CLE_{\kappa'}$ described above, then the associated space-filling $\SLE_{\kappa'}$ is the same as the space-filling $\CLE_{\kappa'}$ exploration described in Section~\ref{subsec:cle}. 

Suppose that $h$ is a zero-boundary GFF and $F$ is a harmonic function on $D$. We say that $h+F$ is a GFF on $D$ with boundary conditions $F|_{\partial D}$. For two GFFs with different boundary conditions, their laws when restricted to regions away from where the boundary values are changed are mutually absolutely continuous, and their Radon-Nikodym derivatives have finite moments of all orders. See e.g.\ \cite[Proposition~3.4 and Remark~3.5]{ms2016ig1} for a proof.

\begin{lemma}
\label{lem:rn_derivative}
Fix $M > 0$ and let $h$ (resp.\ $h_0$) be a GFF on $\D$ with boundary conditions bounded by $M$ (resp.\ zero boundary conditions).  Fix $r \in (0,1)$ and let $\CZ_r$ be the Radon-Nikodym derivative of the law of $h|_{B(0,r)}$ with respect to $h_0|_{B(0,r)}$.  For each $q \in \R$ there exists a constant $C \in (0,\infty)$ depending only on $q$, $r$, and $M$ so that
\[ \E[ \CZ_r^q] \leq C.\]
\end{lemma}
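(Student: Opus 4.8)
The plan is to deduce this from the Cameron--Martin theorem for the GFF, after localizing the difference of boundary conditions with a cutoff function. Write $h = h_0 + F$, where $F$ is the harmonic function on $\D$ carrying the prescribed boundary values; since these are bounded by $M$, the maximum principle gives $\abs{F} \le M$ throughout $\D$, and $F$ is smooth in the interior. I would fix a cutoff $\phi \in C_c^\infty(\D)$ with $\phi \equiv 1$ on $B(0,r)$ and $\supp\phi \subseteq B(0,(1+r)/2)$, depending only on $r$. The key observation is that $\phi F \in C_c^\infty(\D)$ lies in the Cameron--Martin space $H_0(\D)$ of $h_0$ (the Dirichlet-energy completion of $C_c^\infty(\D)$ with inner product $(f,g)_\nabla = \int_\D \nabla f \cdot \nabla g$), and, using the interior gradient estimate for harmonic functions together with $\abs{F} \le M$, its Dirichlet energy $\sigma^2 \defeq \norm{\phi F}_\nabla^2 = \int_\D \abs{\nabla(\phi F)}^2$ is bounded by a constant depending only on $r$ and $M$.

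Since $\phi \equiv 1$ on $B(0,r)$, the fields $h$ and $h_0 + \phi F$ agree on $B(0,r)$, so $\CZ_r$ equals the Radon--Nikodym derivative of $(h_0+\phi F)|_{B(0,r)}$ with respect to $h_0|_{B(0,r)}$. By Cameron--Martin, the law of $h_0 + \phi F$ on $\D$ is absolutely continuous with respect to that of $h_0$ with density $\exp(Z - \tfrac12\sigma^2)$, where $Z \defeq (h_0,\phi F)_\nabla$ is, under the law of $h_0$, a centered Gaussian of variance $\sigma^2$. Conditioning on $h_0|_{B(0,r)}$ then gives $\CZ_r = \E[\exp(Z-\tfrac12\sigma^2)\giv h_0|_{B(0,r)}]$, and in particular $\E[\CZ_r]=1$.

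To finish I would bound $\E[\CZ_r^q]$ by cases. For $q \ge 1$ and for $q \le 0$ the map $x\mapsto x^q$ is convex on $(0,\infty)$, so conditional Jensen gives $\CZ_r^q \le \E[\exp(q(Z-\tfrac12\sigma^2))\giv h_0|_{B(0,r)}]$, and taking expectations and using the Gaussian moment generating function yields $\E[\CZ_r^q] \le \exp(\tfrac12(q^2-q)\sigma^2)$, which is controlled since $\sigma^2 \le C(r,M)$. For $q \in (0,1)$ one instead uses $x^q \le 1+x$ for $x \ge 0$ together with $\E[\CZ_r] = 1$ to get $\E[\CZ_r^q]\le 2$. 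In all cases the bound depends only on $q$, $r$, and $M$, which is the assertion.

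The only points requiring a little care — and the closest thing to an obstacle — are verifying that $\phi F$ really lies in $H_0(\D)$ with Dirichlet energy bounded in terms of $M$ and $r$ alone (this is where interior regularity of the harmonic function $F$ is used, via the gradient estimate on $\supp\phi \Subset \D$), and remembering to treat negative exponents via convexity of $x \mapsto x^q$ rather than concavity. Alternatively, the statement is essentially contained in \cite[Proposition~3.4 and Remark~3.5]{ms2016ig1}, which could simply be quoted.
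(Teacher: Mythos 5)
Your proof is correct and is essentially the standard Cameron--Martin argument used in the cited source \cite[Proposition~3.4, Remark~3.5]{ms2016ig1}, which the paper itself quotes rather than proves; you also correctly identify this reference at the end. The cutoff localization, the interior gradient estimate to bound $\sigma^2$ in terms of $r$ and $M$ alone, the identification $\CZ_r = \E[\exp(Z-\tfrac12\sigma^2)\giv h_0|_{B(0,r)}]$, and the case split on $q$ (conditional Jensen for $q\ge 1$ or $q\le 0$, the elementary bound $x^q\le 1+x$ for $q\in(0,1)$) are all sound.
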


\begin{lemma}\label{le:fill_ball}
 Let $D \subseteq \C$ be a simply connected domain, and let $\eta'$ be a space-filling \slekp{} in $D$. Fix $a>0$. For $\delta \in (0,1)$ let $E_\delta$ be the event that for every $s,t$ with $\eta'(s) \in B(0,1) \cap D$, $\dist(\eta'(s),\partial D) \ge \delta$, and $\abs{\eta'(s)-\eta'(t)} \ge \delta$, the segment $\eta'[s,t]$ contains a ball of radius $\delta^{1+a}$. Then $\p[E_\delta^c] = o^\infty(\delta)$ as $\delta\searrow 0$.
\end{lemma}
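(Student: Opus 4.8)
The statement asserts that a space-filling $\SLE_{\kappa'}$ cannot make a "macroscopic jump" (connect two points at Euclidean distance $\ge \delta$) within a segment that stays thin (contains no ball of radius $\delta^{1+a}$), except with superpolynomially small probability. My plan is to reduce this to a union bound over a $\delta$-net together with a single-scale estimate, and then to prove the single-scale estimate by exploiting the space-filling $\SLE_{\kappa'}$'s description via flow lines: a segment $\eta'[s,t]$ that fills a region of Euclidean diameter $\ge \delta$ must in particular cover a corresponding chunk of some flow line of angle $\pi/2$, and the thinness of $\eta'[s,t]$ would force that flow line to come very close to itself, i.e.\ create a bottleneck/near-double-point on a scale much smaller than its diameter. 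Such near-bottleneck events for $\SLE_\kappa$ and for space-filling $\SLE_{\kappa'}$ are exactly the kind of event ruled out by the regularity results referenced in the paper (the bottleneck/accumulation estimates in Section~\ref{se:regularity} and Appendix~\ref{app:arms}), and by the Minkowski-content / diameter estimates for two-sided whole-plane $\SLE_\kappa$ recorded earlier (cf.~\eqref{eq:wpsle_ub}, \eqref{eq:wpsle_lb}).

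\textbf{Step 1: discretization.} Fix a grid of mesh $\delta^{1+a/2}$ inside $B(0,1)$, so there are $O(\delta^{-2-a})$ grid points, a polynomial number. If some bad pair $(s,t)$ as in the statement exists, then by choosing grid points $w$ near $\eta'(s)$ and $w'$ near $\eta'(t)$ we get a segment of $\eta'$ that connects $B(w,\delta^{1+a/2})$ to $B(w',\delta^{1+a/2})$ with $|w-w'| \ge \delta/2$, whose trace is at distance $\ge \delta/2$ from $\partial D$ (for $\delta$ small, using $\dist(\eta'(s),\partial D)\ge \delta$), and which is still thin in the sense of containing no ball of radius, say, $\frac12\delta^{1+a}$. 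So it suffices to show that for a \emph{fixed} pair of points $w,w'$ at distance $\ge \delta/2$ and at distance $\ge \delta/2$ from $\partial D$, the probability that $\eta'$ has such a thin connecting segment is $o^\infty(\delta)$ with an implicit constant uniform over such pairs; the union bound over $O(\delta^{-2-a})$ pairs then closes the argument. By absolute continuity of the GFF (Lemma~\ref{lem:rn_derivative}) and a covering of $B(0,1)$ by a bounded number of subdomains, we may further reduce to the whole-plane picture, or to the flow-line coupling on a fixed subdomain.

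\textbf{Step 2: single-scale estimate via flow lines.} Let $\eta'[s,t]$ be the offending segment. By the space-filling $\SLE_{\kappa'}$ construction (Section~\ref{subsec:ig}), $\eta'$ visits points in the order prescribed by merging of angle-$\pi/2$ flow lines; in particular the region $K=\overline{\eta'[s,t]}$ is a union of flow-line segments, and since $\diamE(K)\ge \delta/2$ there is a flow line $\eta_0$ (of some angle in $[-\pi/2,\pi/2]$, started from an interior point $z_0\in K$) whose trajectory inside $K$ has Euclidean diameter $\gtrsim \delta$ — indeed the left/right boundaries of $\eta'[s,t]$ are themselves flow lines with diameter $\gtrsim \delta$ by the duality of Section~\ref{se:fl_interaction}. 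Now the thinness hypothesis: $K$ contains no ball of radius $\delta^{1+a}$, so the tubular $\delta^{1+a}$-neighborhood of $\eta_0$ (restricted to its diameter-$\gtrsim\delta$ portion) has area $\lesssim \delta^{1+a}\cdot(\text{length})$, but it also must contain $K$'s flow-line "width", forcing $\eta_0$ to have a point where it returns within distance $O(\delta^{1+a})$ of a previously-visited point that is at parameter-distance corresponding to macroscopic ($\gtrsim\delta$) progress — a "bottleneck at scale $\delta^{1+a}$ inside a set of size $\delta$". Rescaling by $\delta^{-1}$, this is a bottleneck of relative size $\delta^{a}$ for an $\SLE_\kappa$-type curve of unit size, an event whose probability is $o^\infty(\delta^{a})=o^\infty(\delta)$ by the bottleneck-accumulation estimates in Section~\ref{se:regularity} (together with the diameter lower bound \eqref{eq:wpsle_lb} to ensure the relevant flow-line segments really do have macroscopic diameter, so that thinness is genuinely forcing a bottleneck rather than merely a short excursion). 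Care is needed because the flow line $\eta_0$ may be a chordal-type rather than whole-plane-type curve near $\partial D$, but our reduction in Step~1 placed everything at distance $\ge\delta/2$ from the boundary, so by absolute continuity (again Lemma~\ref{lem:rn_derivative}) the whole-plane regularity estimates apply with moment control on the Radon--Nikodym derivatives.

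\textbf{Main obstacle.} The delicate point is making rigorous the implication ``$\eta'[s,t]$ thin $\Rightarrow$ an associated flow line has a near-self-approach at the right scale.'' One must identify the correct geometric quantity: it is not literally self-approach of a single flow line, but rather that the \emph{union} of flow-line segments composing $\eta'[s,t]$ is squeezed into a $\delta^{1+a}$-thin set while spanning diameter $\delta$, which by a topological/area argument produces either a genuine bottleneck on one flow line or two flow-line segments of the family that pass within $\delta^{1+a}$ of each other while being order-$\delta$ apart in the visiting order — and the latter is controlled by the flow-line interaction estimates plus the Minkowski-content moment bounds from \cite{rz2017higher} used in the earlier lemma. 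Formalizing this dichotomy and quoting precisely the right statement from Section~\ref{se:regularity} (where, presumably, exactly such ``bottlenecks cannot accumulate'' results are proved for this purpose) is where the real work lies; the discretization and absolute-continuity steps are routine. I would structure the write-up so that the flow-line geometry is isolated into a clean deterministic claim, and the probabilistic content is a one-line application of the Section~\ref{se:regularity} bottleneck estimate followed by a union bound.
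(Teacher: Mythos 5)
The paper's own proof is just two sentences: it cites the whole-plane case from \cite{ghm2020kpz} and \cite{hy-sle-regularity}, and then transfers to general domains by absolute continuity of the GFF away from the boundary. Your proposal instead attempts a from-scratch re-derivation of the whole-plane result via a net/union-bound argument plus a single-scale flow-line bottleneck estimate. That is a legitimate thing to try, but it has a quantitative gap that I do not think is repairable at the level of detail you give.

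The problem is in Step~2. A single four-arm (near-self-approach) event for an $\SLE_\kappa$ flow line in an annulus of aspect ratio $\delta^a$ has probability on the order of $\delta^{a(\alpha_{4,\kappa}-o(1))}$ by Proposition~\ref{pr:4arm_fl}; this is a \emph{polynomial} in $\delta$, not $o^\infty(\delta)$. Your assertion that ``a bottleneck of relative size $\delta^a$ \ldots\ has probability $o^\infty(\delta^a)$'' conflates two different statements: the bottleneck-accumulation estimates of Section~\ref{se:regularity} (Proposition~\ref{pr:regularity}) give $o^\infty(M^{-1})$ for the event that the number of bottlenecks at scale $2^{-k}$ \emph{exceeds} $M2^{ak}$, i.e.\ they control the failure of the $(M,a)$-good condition, which involves an accumulation of many bottlenecks across scales; they say nothing superpolynomial about a \emph{single} bottleneck. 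With a single-scale bound of order $\delta^{c a}$ and a union bound over the $O(\delta^{-2-a})$ grid pairs in your Step~1, the net estimate is $\delta^{ca-2-a}$, which need not even be small, let alone superpolynomial. To get $o^\infty(\delta)$ you need a genuinely multi-scale iteration — roughly, showing that failure to fill a ball of radius $\delta^{1+a}$ inside a diameter-$\delta$ chunk forces an order-$\log(1/\delta)$ number of approximately independent unlikely events at intermediate scales. This is precisely the structure of the proofs in \cite{ghm2020kpz} and \cite{hy-sle-regularity}, and it is the mechanism your sketch is missing; the ``deterministic claim $\Rightarrow$ one-line probabilistic application'' organization you propose cannot work because the probabilistic input has to be multi-scale. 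Your Step~1 and the absolute-continuity transfer are fine; the gap is concentrated entirely in making Step~2 superpolynomial.
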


Note that the event $E_\delta$ depends only on the values of a GFF away from the boundary, therefore it is well-defined even when the boundary values are so that the space-filling SLE starting from the boundary is not well-defined.

\begin{proof}
 The case when $D=\C$ was proved in \cite[Section~3]{ghm2020kpz} and \cite[Theorem~1.6]{hy-sle-regularity}. For general domains, the result follows from the absolute continuity of the GFF in $D$ away from the boundary (modulo additive constant $2\pi\chi$) with respect to a whole-plane GFF.
\end{proof}

In some parts of the paper, we will sample GFF flow lines that enclose some region $U$, and conditionally sample the internal metric $\metapproxres{\epsilon}{U}{\cdot}{\cdot}{\Gamma_U}$ according to its conditional law given $U$. Note that we do not require the metric to be measurable with respect to $\Gamma_U$. If we have two GFF variants that are mutually absolutely continuous, then the absolute continuity continues to hold when additionally the metrics are sampled. This is recorded in the following abstract lemma.

\begin{lemma}\label{le:abs_cont_kernel}
Let $\mu,\wt{\mu}$ be probability measures on a space $X$, and let $\nu[dy \mid x]$ be a probability kernel from $X$ to $Y$. Suppose that $\frac{d\wt{\mu}}{d\mu}$ has $\mu$-moments of order $p>1$. Then
\[
\wt{\mu}\otimes\nu[E] \le \norm*{\frac{d\wt{\mu}}{d\mu}}_{L^p(\mu)} ( \mu\otimes\nu[E] )^{1-1/p}
\]
for any event $E$ (depending on both variables $x,y$).
\end{lemma}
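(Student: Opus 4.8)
The plan is to recognize this as essentially a one-line application of Hölder's inequality; the key point is that the extra randomness introduced by the kernel $\nu$ does not interact with the Radon--Nikodym derivative $f = \frac{d\wt{\mu}}{d\mu}$, which lives on the first coordinate only. First I would introduce, for $x \in X$, the section $E_x = \{y \in Y : (x,y) \in E\}$ and disintegrate over the first coordinate using $d\wt{\mu} = f\,d\mu$ together with the Tonelli theorem for the nonnegative integrand $(x,y) \mapsto f(x)\one_E(x,y)$:
\[
 \wt{\mu}\otimes\nu[E] = \int_X \nu[E_x \mid x]\, d\wt{\mu}(x) = \int_X f(x)\, \nu[E_x \mid x]\, d\mu(x) .
\]

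Next I would apply Hölder's inequality with the conjugate exponents $p$ and $q = p/(p-1)$ to the right-hand side:
\[
 \int_X f(x)\, \nu[E_x \mid x]\, d\mu(x) \le \norm*{f}_{L^p(\mu)} \left( \int_X \nu[E_x \mid x]^q\, d\mu(x) \right)^{1/q}.
\]
Since $\nu[E_x\mid x] \in [0,1]$ and $q \ge 1$, we have $\nu[E_x\mid x]^q \le \nu[E_x\mid x]$ pointwise in $x$, so the last factor is at most $\left( \int_X \nu[E_x\mid x]\,d\mu(x) \right)^{1/q} = (\mu\otimes\nu[E])^{1/q}$. Using $1/q = 1-1/p$ and combining the two displays yields the asserted bound $\wt{\mu}\otimes\nu[E] \le \norm*{\frac{d\wt{\mu}}{d\mu}}_{L^p(\mu)}(\mu\otimes\nu[E])^{1-1/p}$.

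There is essentially no obstacle here. The only points that merit a word of care are that $f$ is $\mu$-integrable against the bounded function $x \mapsto \nu[E_x\mid x]$, which is immediate since $\norm{f}_{L^p(\mu)} < \infty$ with $p>1$ forces $f \in L^1(\mu)$; and that the disintegration in the first display is legitimate, which is just Tonelli applied to the nonnegative integrand. (If one prefers to avoid assuming a genuine disintegration of $\wt\mu\otimes\nu$, one can instead take $E$ to be measurable with respect to the product structure and note that $x \mapsto \nu[E_x\mid x]$ is the conditional probability of $E$ given the first coordinate, so the identity $\wt\mu\otimes\nu[E] = \int f\,\nu[E_\cdot\mid\cdot]\,d\mu$ is exactly the change-of-measure formula on $X$.)
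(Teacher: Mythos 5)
Your proof is correct and follows essentially the same route as the paper: disintegrate over the first coordinate to write $\wt\mu\otimes\nu[E]$ as $\int f\,\nu[E_\cdot\mid\cdot]\,d\mu$, apply Hölder with exponents $p$ and $p/(p-1)$, and then use $\nu[E_x\mid x]^{p/(p-1)} \le \nu[E_x\mid x]$ since the integrand lies in $[0,1]$. The remarks on integrability and the legitimacy of the disintegration are sound but unnecessary elaborations of what the paper does in one line.
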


\begin{proof}
By an application of H\"older's inequality we get
\[ \begin{split}
\wt{\mu}\otimes\nu[E] 
&= \int \nu[E \mid x] \wt{\mu}[dx] \\
&= \int \nu[E \mid x] \frac{d\wt{\mu}}{d\mu}(x) \mu[dx] \\
&\le \left( \int \nu[E \mid x]^{1/(1-1/p)} \mu[dx] \right)^{1-1/p} \left( \int \frac{d\wt{\mu}}{d\mu}(x)^{p} \mu[dx] \right)^{1/p} \\
&\le \left( \int \nu[E \mid x] \mu[dx] \right)^{1-1/p} \left( \int \frac{d\wt{\mu}}{d\mu}(x)^{p} \mu[dx] \right)^{1/p} \\
&= ( \mu\otimes\nu[E] )^{1-1/p} \left( \int \frac{d\wt{\mu}}{d\mu}(x)^{p} \mu[dx] \right)^{1/p}
\end{split} \]
where in the second last step we have used that $\nu[E \mid x] \le 1$.
\end{proof}

\subsubsection{Independence across scales}
\label{se:gff}

We introduce some notation that we frequently use in this paper.

Let $h$ be a GFF in a domain $D$ with some choice of boundary values. In case $D=\C$, we regard $h$ as a whole-plane GFF modulo additive constant of $2\pi\chi$. Let $z \in D$ and $r > 0$ so that $B(z,r) \subseteq D$. Let $\CF_{z,r}$ be the $\sigma$-algebra generated by the values of $h$ outside of $B(z,r)$, and let $\Fh_{z,r}$ be the harmonic extension from $\partial B(z,r)$ to $B(z,r)$ of the values of $h$ on $\partial B(z,r)$. Recall that given $\CF_{z,r}$, the conditional law of $h-\Fh_{z,r}$ is that of a zero-boundary GFF in $B(z,r)$.

For $M > 0$, we say that $(z,r)$ is $M$-good for $h$ if
\begin{equation}
\label{eqn:m_good}
\sup_{w \in B(z,7r/8)} |\Fh_{z,r}(z) - \Fh_{z,r}(w)| \leq M.
\end{equation}

We restate the following independence across scales result from \cite{mq2020notsle}.
\begin{lemma}
\label{lem:gff_independence_across_scales}
Suppose that $h$ is a GFF on $D$ with some bounded boundary values.  For every $b>0$ and $q \in (0,1)$ there exists $M > 0$ so that the following is true. For every $z \in D$ and $r > 0$ such that $B(z,r) \subseteq D$, with probability $1-O(e^{-bk})$, the number of $1 \leq j \leq k$ so that $(z,2^{-j}r)$ is $M$-good for $h$ is at least $qk$.
\end{lemma}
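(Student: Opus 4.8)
This is a restatement of a result of \cite{mq2020notsle}, and the cleanest option is simply to cite it; if one wanted to reproduce the argument, the plan would be as follows. The first step is to expose the independent ``scale contributions'' of $h$. Fix $z,r$ with $B(z,r)\subseteq D$ and write $B_j = B(z,2^{-j}r)$. Using the Markov property of the GFF on the nested balls $B_0\supseteq B_1\supseteq\cdots$ (equivalently, the orthogonal decomposition $H^1_0(B_{i-1}) = H^1_0(B_i)\oplus \CH_i$, where $\CH_i$ consists of the functions in $H^1_0(B_{i-1})$ that are harmonic on $B_i$), one writes, on $B_j$,
\[ \Fh_{z,2^{-j}r} = \Fh_{z,r} + \sum_{i=1}^{j} G_i , \]
where $\Fh_{z,r}$ is the part of $h$ coming from outside $B_0$ and $G_i\in\CH_i$ is the contribution added between scales $2^{-i+1}r$ and $2^{-i}r$. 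The spaces $\CH_1,\CH_2,\dots$ are pairwise orthogonal, and orthogonal to the ``outside $B_0$'' space, so $\Fh_{z,r}, G_1, G_2,\dots$ are mutually independent. By scale invariance of the zero-boundary GFF, each $G_i$ rescaled to the unit disk has one fixed law (the harmonic extension into $B(0,1)$ of the boundary values of a zero-boundary GFF on $B(0,2)$); in particular its spherical-harmonic coefficients $(a_n^{(i)},b_n^{(i)})_{n\ge1}$ on $\partial B_i$ are Gaussian with $\E[(a_n^{(i)})^2+(b_n^{(i)})^2]\lesssim 1/n$ uniformly. Since $G_i$ is harmonic on $B_i\supseteq B(z,2^{-j}\cdot 7r/8)$ for $i\le j$, this gives the oscillation bound $\mathrm{osc}_{B(z,2^{-j}\cdot 7r/8)}G_i \le 2^{-(j-i)}S_i$ with $S_i := 2\sum_{n\ge1}(7/8)^n((a_n^{(i)})^2+(b_n^{(i)})^2)^{1/2}$ a sub-Gaussian random variable, and $(S_i)_{i\ge1}$ i.i.d. The top term is handled the same way: writing $h = h_0+F$ with $h_0$ a zero-boundary GFF on $D$ and $|F|\le C_0$, one gets $\mathrm{osc}_{B(z,2^{-j}\cdot 7r/8)}\Fh_{z,r} \le 2^{-j}(CC_0+S_0)$ with $S_0$ sub-Gaussian and independent of $(S_i)_{i\ge1}$. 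The only delicate point here is the uniform-in-$(z,r)$ bound on the Fourier modes of the Green's function restricted to $\partial B(z,r)$ (whose regular part can be singular where $\partial B(z,r)$ meets $\partial D$); I would check it directly for $D=\D$ and $D=\C$ and transfer to a general $D$ by absolute continuity, noting in addition that $S_0$ only ever enters multiplied by $2^{-j}$.

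The second step reduces $M$-goodness to the sequence $(S_i)$. Setting $T_j := 2^{-j}S_0 + \sum_{i=1}^j 2^{-(j-i)}S_i$, the estimates above give $\mathrm{osc}_{B(z,2^{-j}\cdot 7r/8)}\Fh_{z,2^{-j}r}\le CC_0 2^{-j} + CT_j$, so for $M$ larger than a constant times $C_0+M_0$ the event ``$(z,2^{-j}r)$ is $M$-good'' is implied by $\{T_j\le M_0\}$. Thus it suffices to prove $\p[\#\{1\le j\le k: T_j>M_0\}\ge(1-q)k] = O(e^{-bk})$ for $M_0 = M_0(b,q)$ large.

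The third step, the concentration estimate, is the main obstacle, because the events $\{T_j>M_0\}$ are \emph{not} independent across $j$: a single large fresh contribution $S_i$ leaks into all coarser scales $j>i$ through the geometric factor $2^{-(j-i)}$. The key point is that this leakage is only logarithmic. By pigeonhole against the summable weights $2^{-(j-i)/2}$, if $T_j>M_0$ then $S_i > cM_0 2^{(j-i)/2}$ for some $0\le i\le j$ (with $S_0$ included), so
\[ \#\{1\le j\le k: T_j>M_0\} \le \sum_{i=0}^{k} f(S_i), \qquad f(s) := \#\{d\ge0: s>cM_0 2^{d/2}\} \le 2\log_2^+\!\big(s/(cM_0)\big)+O(1). \]
Since $S_i$ is sub-Gaussian, $f(S_i)$ has doubly-exponential tails and mean $\to0$ as $M_0\to\infty$; hence $\E[e^{\lambda f(S_1)}] = 1+\eta_\lambda(M_0)$ with $\eta_\lambda(M_0)\to0$ for each fixed $\lambda$. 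A Chernoff bound applied to the i.i.d. sum $\sum_{i=1}^k f(S_i)$, together with a crude bound on the single extra term $f(S_0)$ (which requires $S_0$ to be exponentially large in $k$, hence has probability far smaller than $e^{-bk}$), gives $\p[\sum_{i=0}^k f(S_i)\ge(1-q)k]\le e^{-bk}$ once $\lambda$ is chosen with $\lambda(1-q)>b+1$ and then $M_0$ large enough that $\log(1+\eta_\lambda(M_0))<1$. All the tail bounds on the $S_i$ are scale- and location-independent (this is exactly where scale invariance of the zero-boundary GFF is used), so the resulting $M$ depends only on $b$, $q$ and the bound on the boundary values of $h$, uniformly over all $z,r$ with $B(z,r)\subseteq D$, as claimed.
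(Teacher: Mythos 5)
The paper's proof is a one-line citation---to \cite[Proposition~4.3]{mq2020notsle} for the whole-plane GFF and to \cite[Lemma~4.1]{amy-cle-resampling} for the extension to domains---which your opening sentence correctly identifies as the cleanest route. Your reconstruction of the cited argument is also sound: the nested Markov decomposition into i.i.d.\ scale contributions $G_i$, the harmonic-extension Fourier bound $\mathrm{osc}_{B(z,7\cdot 2^{-j}r/8)}G_i \lesssim 2^{-(j-i)}S_i$, and especially the pigeonhole step converting the correlated events $\{T_j>M_0\}$ into a Chernoff bound on the i.i.d.\ sum $\sum_i f(S_i)$ are the right ingredients, with the observation that the cross-scale leakage is only logarithmic being the key point. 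The subtlety you flag about the tail of $S_0$ when $B(z,r)$ nears $\partial D$ is worth noting but is not a gap: the $2^{-j}$ damping already makes $f(S_0)$ negligible as you say, and in fact a direct computation for $D=\D$ gives $\E[(a_n^{(0)})^2]=(1-r^{2n})/n$, so the Fourier-mode variances of $h|_{\partial B(z,r)}$ are uniformly bounded over all $B(z,r)\subseteq D$, even as $B(z,r)$ approaches $\partial D$.
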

\begin{proof}
This result for whole-plane GFF is \cite[Proposition~4.3]{mq2020notsle}. The extension of the result to domains is explained in \cite[Lemma~4.1]{amy-cle-resampling}.
\end{proof}

Let also
\[ C_{z,r} = 2\pi \chi \lfloor \Fh_{z,r}(z) / (2\pi \chi) \rfloor \]
and
\[ h_{z,r} = h|_{B(z,r)} - C_{z,r}.\]
Note that the flow lines of $h_{z,r}$ in $B(z,r)$ agree with those of $h$.

Suppose $\Fg_{z,r}$ is a harmonic function on $B(z,r)$ with some prescribed boundary values. We assume that $\Fg_{z,r}$ are chosen so that they do not depend on $z,r$ except for translation and scaling. Then $h - \Fh_{z,r} + \Fg_{z,r}$ is a GFF with boundary values given by $\Fg_{z,r}$ on $B(z,r)$. We construct an auxiliary field $\wt{h}_{z,r}$ on $B(z,r)$ that interpolates between $h$ and $h - \Fh_{z,r} + \Fg_{z,r}$. Let $\phi$ be a fixed $C_0^\infty$ bump function that is supported on $B(0,7/8)$ with $\phi|_{B(0,3/4)} \equiv 1$, and let $\phi_{z,r}(w) = \phi( (w-z)/r)$ so that $\phi_{z,r}$ is supported on $B(z,7r/8)$ and $\phi_{z,r}|_{B(z,3r/4)} \equiv 1$. Let
\begin{equation}
\label{eqn:wt_h_def}
\wt{h}_{z,r} = h_{z,r} + (1-\phi_{z,r})(\Fg_{z,r} - (\Fh_{z,r} - C_{z,r})).
\end{equation}
Note that
\begin{alignat*}{2}
\wt{h}_{z,r} &= h_{z,r} &&\quad\text{on } B(z,3r/4), \\
\wt{h}_{z,r} &= h - \Fh_{z,r} + \Fg_{z,r} &&\quad\text{on } A(z,7r/8,r) .
\end{alignat*}
On the event that $(z,r)$ is $M$-good for $h$ we have that the Radon-Nikodym derivative of the law of $\wt{h}_{z,r}$ with respect to the law of $h - \Fh_{z,r} + \Fg_{z,r}$ has finite moments of all orders which depend only on $M$ (see Lemma~\ref{lem:rn_derivative}).

A useful consequence of the independence across scales is the following result. It roughly says that if an event is likely to occur for the restriction of a GFF to an annulus, then it is very likely to occur in a large fraction of subsequent annuli.

\begin{lemma}
\label{lem:good_scales_for_event}
Suppose that $h$ is a GFF on $D$ with some bounded boundary values. For $p > 0$, let $G^p$ be an event that is measurable with respect to the restriction of a GFF to $A(0,1/4,3/4)$, and suppose that $\lim_{p \to 0} \p[G^p] = 1$. Let $G^p_{z,r}$ denote the event that $\wt{h}_{z,r}(z+r\cdot) \in G^p$ (so that $G^p_{z,r}$ is an event measurable with respect to the restriction of $\wt{h}_{z,r}$ to $A(z,r/4,3r/4)$).

For any $b>0$ and $q \in (0,1)$ there exist $M>0$, $p > 0$ so that the following holds. Let $z \in D$ and $r>0$ such that $B(z,r) \subseteq D$. For $k\in\N$ let $E$ denote the event that the fraction of $1 \leq j \leq k$ so that the scale $(z,2^{-j}r)$ is $M$-good for $h$ and $G^p_{z,2^{-j}r}$ occurs is at least $q$. Then $\p[E^c] = O(e^{-bk})$.
\end{lemma}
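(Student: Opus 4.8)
The plan is to combine the independence-across-scales statement Lemma~\ref{lem:gff_independence_across_scales} with the absolute continuity estimate Lemma~\ref{lem:rn_derivative} and a Chernoff bound for sums of $\{0,1\}$-valued variables with conditionally small means.

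First I would fix $q' \defeq (1+q)/2 \in (q,1)$ and apply Lemma~\ref{lem:gff_independence_across_scales} with exponent $b$ and fraction $q'$ to obtain $M>0$ such that, for all $z,r$ with $B(z,r)\subseteq D$ and all $k$, the event $\CA$ that at least $q'k$ of the scales $(z,2^{-j}r)$, $j=1,\dots,k$, are $M$-good for $h$ has probability $1-O(e^{-bk})$. With this $M$ fixed, the next step is to show that on an $M$-good scale the event $G^p_{z,2^{-j}r}$ fails only with small conditional probability once $p$ is small. Writing $\CF_j \defeq \CF_{z,2^{-j}r}$ for the $\sigma$-algebra of $h$ outside $B(z,2^{-j}r)$, on the event that $(z,2^{-j}r)$ is $M$-good the conditional law of $h|_{B(z,2^{-j}r)}$ given $\CF_j$ is $\Fh_{z,2^{-j}r}$ plus a zero-boundary GFF, and by the definition of $M$-goodness and of $C_{z,2^{-j}r}$ the harmonic function $\Fh_{z,2^{-j}r}-C_{z,2^{-j}r}$ is bounded by $M+2\pi\chi$ on $B(z,7\cdot 2^{-j}r/8)$. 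Since $\wt h_{z,2^{-j}r}=h_{z,2^{-j}r}=h|_{B(z,2^{-j}r)}-C_{z,2^{-j}r}$ on $B(z,3\cdot 2^{-j}r/4)\supseteq A(z,2^{-j}r/4,3\cdot 2^{-j}r/4)$, rescaling by $w\mapsto z+2^{-j}r w$ and using the Markov property of the GFF together with Lemma~\ref{lem:rn_derivative} shows that the conditional law given $\CF_j$ of $\wt h_{z,2^{-j}r}(z+2^{-j}r\,\cdot)$ restricted to $A(0,1/4,3/4)$ is absolutely continuous with respect to the law of a zero-boundary GFF on $B(0,1)$ restricted to $A(0,1/4,3/4)$, with Radon--Nikodym derivative having all moments bounded by a constant depending only on $M$. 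Combining this with the hypothesis $\lim_{p\to0}\p[G^p]=1$ (first transferring to the zero-boundary GFF by the same interior absolute continuity if $G^p$ is defined relative to a different GFF) and H\"older's inequality, I obtain a quantity $\eta(M,p)$ with $\lim_{p\to0}\eta(M,p)=0$ such that $\p[(G^p_{z,2^{-j}r})^c\mid\CF_j]\le\eta(M,p)$ on the $M$-good event.

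Next I would set $Y_j \defeq \one[\text{$(z,2^{-j}r)$ is $M$-good}]\cdot\one[(G^p_{z,2^{-j}r})^c]$ and prove a concentration bound for $\sum_{j=1}^k Y_j$. The $M$-goodness of $(z,2^{-j}r)$ and the constant $C_{z,2^{-j}r}$ are $\CF_j$-measurable, while $G^p_{z,2^{-j}r}$ depends on $h$ only through its restriction to $A(z,2^{-j-2}r,3\cdot 2^{-j-2}r)$, which is measurable with respect to $\CF_{z,2^{-j-2}r}=\CF_{j+2}$; hence $Y_j$ is $\CF_{j+2}$-measurable, and by the previous step $\E[Y_j\mid\CF_j]\le\eta(M,p)$ almost surely (the bound being trivial where the first indicator vanishes). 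Because the annuli $A(z,2^{-j}r/4,3\cdot 2^{-j}r/4)$ overlap between consecutive scales, the sequence $(Y_j)$ is not adapted to $(\CF_{j+1})$ and there is no direct martingale structure; to repair this I would split $\{1,\dots,k\}$ into the three residue classes modulo $3$. Within a fixed class, consecutive indices $j<j'$ satisfy $j'\ge j+3$, so $\sigma(Y_{j''}:j''\le j,\ j''\equiv j')\subseteq\CF_{j+2}\subseteq\CF_{j'}$, and therefore $\E[Y_{j'}\mid\sigma(Y_{j''}:j''\le j,\ j''\equiv j')]\le\eta(M,p)$ by the tower property. A standard Chernoff estimate for $\{0,1\}$-valued sequences with conditionally bounded means then gives $\p[\sum_{j\equiv\iota}Y_j\ge \tfrac{q'-q}{3}k]\le\exp(-c(\eta(M,p))\,k)$ for a constant $c(\eta)$ with $c(\eta)\to\infty$ as $\eta\to0$; choosing $p$ small enough that $c(\eta(M,p))>b$ and taking a union over the three residue classes, I get $\p[\sum_{j=1}^k Y_j\ge(q'-q)k]\le 3\exp(-bk)=O(e^{-bk})$.

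Finally, on $\CA\cap\{\sum_{j=1}^k Y_j<(q'-q)k\}$ the number of $j\le k$ for which $(z,2^{-j}r)$ is $M$-good \emph{and} $G^p_{z,2^{-j}r}$ occurs is at least $q'k-(q'-q)k=qk$, so $\p[E^c]=O(e^{-bk})$, as desired. The point that requires the most care is the measurability bookkeeping in the concentration step: the factor-$3$ ratio between the inner and outer radii of the annuli $A(z,r/4,3r/4)$ is exactly what forces the residue-class decomposition, since it prevents the events $G^p_{z,2^{-j}r}$ at neighbouring scales from being processed through a single increasing filtration. Everything else is a routine application of the independence-across-scales machinery and of Lemma~\ref{lem:rn_derivative}, both already available above.
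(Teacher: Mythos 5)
Your proof is correct and follows essentially the same approach as the paper: Lemma~\ref{lem:gff_independence_across_scales} for the $M$-good scales, Lemma~\ref{lem:rn_derivative} for the conditional bound $\p[(G^p_{z,2^{-j}r})^c\mid\CF_{z,2^{-j}r}]\one_{\{M\text{-good}\}}<\varepsilon$, and a residue-class split to restore a filtration structure. One small inaccuracy in your closing remark: the factor-$3$ ratio of the annulus does not force the mod-$3$ decomposition. Since $Y_j$ is $\CF_{j+2}$-measurable (the inner radius of the annulus at scale $j$ is exactly $2^{-j-2}r$), for $j',j$ in the same residue class mod $2$ with $j'<j$ one already has $j'+2\le j$ and hence $\CF_{j'+2}\subseteq\CF_j$; the paper accordingly uses only the even/odd split. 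Your mod-$3$ version is more conservative than necessary but entirely valid, and causes no harm to the constants.
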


\begin{proof}
We can consider the even and odd scales separately, so let us just show that with probability $1-O(e^{-bk})$ at a large fraction of even scales the event $G^p_{z,2^{-j}r}$ occurs.

For any $b>0$, Lemma~\ref{lem:gff_independence_across_scales} implies that we can choose $M$ sufficiently large so that with probability $1-O(e^{-bk})$, the fraction of $1 \leq j \leq k$ such that $B(z,2^{-2j}r)$ is $M$-good for $h$ is at least $1-(1-q)/2$.

Let $E_{z,k}$ denote the event that the fraction of $1 \leq j \leq k$ such that $B(z,2^{-2j}r)$ is $M$-good for $h$ but $(G^p_{z,2^{-2j}r})^c$ occurs is at least $(1-q)/2$. It suffices to show that by making $p > 0$ sufficiently small we have that $\p[ E_{z,k} ] = O(e^{-bk})$. By the assumption $\lim_{p\to 0} \p[(G^p)^c] = 0$ and the absolute continuity statement described above, there exists for any $\varepsilon>0$ some $p = p(M,u) > 0$ such that
\[
\p[ (G^p_{z,2^{-2j}r})^c \mid \CF_{z,2^{-2j}r} ] \,\one_{\{(z,2^{-2j}r) \text{ is $M$-good for $h$}\}} < \varepsilon .
\]
Observing that the event $G^p_{z,2^{-2j}r}$ is measurable with respect to $\CF_{z,2^{-2(j+1)}r}$, this implies the claim.
\end{proof}

\subsubsection{Flow line merging probabilities}
\label{se:fl_merging}

We will often consider events for interior flow lines within a ball $B(z,r) \subseteq D$. We want to compare the probabilities for such events to corresponding events for flow lines growing from the boundary. This is possible when we have a positive probability bounded from below that the boundary emanating flow lines merge into the given interior flow lines. We describe a sufficient condition to guarantee this.

\begin{figure}[ht]
\centering
\includegraphics[width=0.45\textwidth]{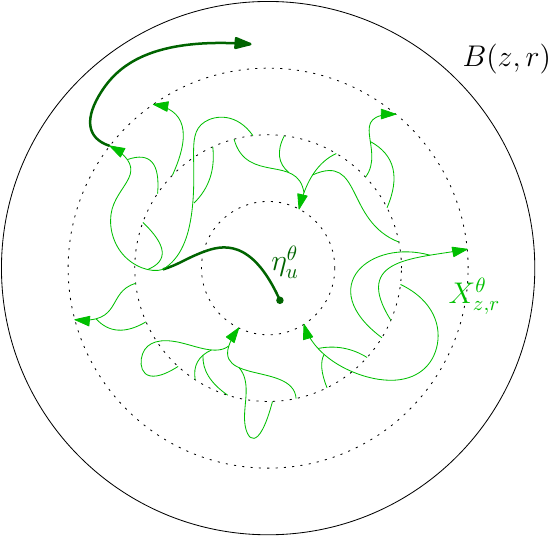}
\caption{Illustration of the set $X_{z,r}^\theta$ and a flow line $\eta_u^\theta$ with angle $\theta$ starting from $u \in B(z,r/4)$.}
\end{figure}

For $w \in \partial B(z,r/2)$ and $\theta\in\R$ we let $\eta^\theta_w$ be the angle $\theta$ flow line of $\wt{h}_{z,r}$ (or equivalently of $h$) starting at $w$ and stopped upon exiting $A(z,r/4,3r/4)$. Let
\begin{equation}\label{eq:fl_annulus}
X^\theta_{z,r} = \ol{\bigcup_{w \in z+re^{i\Q}/2} \eta^\theta_{w}} .
\end{equation}
Then the set $\partial A(z,r/4,3r/4) \cap X^\theta_{z,r}$ is almost surely finite. Indeed, this follows from the almost sure continuity of space-filling $\SLE_{\kappa'}$ (since each distinct point on $\partial A(z,r/4,3r/4) \cap X^\theta_{z,r}$ corresponds to a crossing of a space-filling $\SLE_{\kappa'}$ from $\partial A(z,r/4,3r/4)$ to $\partial B(z,r/2)$).

The sets $X^\theta_{z,r}$ will be useful for describing events where flow lines from the boundary have a sufficient probability of merging into given interior flow lines. Using the independence across scales, we can guarantee with high probability that many annuli satisfy such a condition. We describe a few instances which we will repeatedly use in this paper.

Suppose $\wt{h}$ is a GFF on $\D$ with boundary values so that the flow lines $\eta_1$ (resp.\ $\eta_2$) with angles $\theta_1$ (resp.\ $\theta_2$) from $-i$ to $i$ are defined. Assume that $\theta_1 > \theta_2$. Let $\eta'_1$ (resp.\ $\eta'_2$) be the counterflow lines of $\wt{h}$ with angles $\theta_1+\pi/2$ (resp.\ $\theta_2-\pi/2$) from $i\delta$ to $-i\delta$, so that the right boundary of $\eta'_1$ agrees with $\eta_1$, and the left boundary of $\eta'_2$ agrees with $\eta_2$. Let $X_{0,1}^\theta$ be as in~\eqref{eq:fl_annulus}. For $M,p>0$, let $G$ denote the event that the following hold. (This is an event for the random sets $X_{0,1}^{\theta_1+\pi}, X_{0,1}^{\theta_1}, X_{0,1}^{\theta_2}, X_{0,1}^{\theta_2-\pi}$ and the values of $\wt{h}$ on these sets. In particular, it is measurable with respect to the values of $\wt{h}$ on $A(0,1/4,3/4)$.)
\begin{itemize}
\item Let $\Fh^X$ be the conditional expectation of $\wt{h}$ given its values on $X_{0,1}^{\theta_1+\pi}, X_{0,1}^{\theta_1}, X_{0,1}^{\theta_2}, X_{0,1}^{\theta_2-\pi}$. Then
\[ \sup_{\partial B(0,7/8)} |\Fh^X| \le M . \]
\item The number of points in $\partial A(0,1/4,3/4) \cap X_{0,1}^{\theta_1}$ and $\partial A(0,1/4,3/4) \cap X_{0,1}^{\theta_2}$ is at most $M$.
\item Given $X_{0,1}^{\theta_1+\pi}, X_{0,1}^{\theta_1}, X_{0,1}^{\theta_2}, X_{0,1}^{\theta_2-\pi}$, and $\Fh^X$, for each quadruple of strands ending on $\partial B(0,3/4)$ with height differences $\pi,\theta_1-\theta_2,\pi$ (in counterclockwise order), the conditional probability that $\eta'_1$ merges into the latter two strands and $\eta'_2$ merges into the former two strands before entering $B(0,1/4)$ is at least $p$.
\end{itemize}

Note that the conditional probability described in the last item is not affected if we further condition on the values of $\wt{h}$ within the connected component of $\D \setminus \bigcup\{X^\theta_{0,1}\}$ containing $B(0,1/4)$. Indeed, the merging event is described purely by the values of $\wt{h}$ outside that region (this is the reason why the event requires the merging to happen before entering $B(0,1/4)$). Similar remarks apply to the definition of the events in the Lemmas~\ref{le:good_scales_merging_refl} and~\ref{le:good_scales_merging_multiple} below.

\begin{lemma}\label{le:good_scales_merging}
Suppose that $h$ is a GFF on $D$ with some bounded boundary values. For any $b>0$ and $q \in (0,1)$ there exist $M,p>0$ so that the following is true. 
Let $z \in D$ and $r>0$ such that $B(z,r) \subseteq D$. Let $\wt{h}_{z,r}$ be as defined in Section~\ref{se:gff} with the same boundary values as $\wt{h}$ above (after scaling and translation). Let $G_{z,r}$ be the event that
\begin{itemize}
 \item the scale $(z,r)$ is $M$-good for $h$, and
 \item the event $G$ above occurs for the (scaled and translated) field $\wt{h}_{z,r}$.
\end{itemize}
For $k\in\N$ let $E$ be the event that the fraction of $1 \leq j \leq k$ so that $G_{z,2^{-j}r}$ occurs is at least $q$. Then $\p[E^c] = O(e^{-bk})$.
\end{lemma}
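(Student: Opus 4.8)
The plan is to apply the general independence-across-scales machinery, specifically Lemma~\ref{lem:good_scales_for_event}, with the event $G^p$ there taken to be (a version of) the event $G$ defined just above the statement. The two main inputs needed are (i) that $G$ is measurable with respect to the restriction of the field to the annulus $A(0,1/4,3/4)$, and (ii) that $\p[G] \to 1$ as the parameters $M\to\infty$, $p\to 0$ are chosen appropriately. Once these are in place, the conclusion follows by combining Lemma~\ref{lem:gff_independence_across_scales} (to guarantee that a large fraction of scales are $M$-good, so that the absolute continuity between $\wt h_{z,2^{-j}r}$ and a clean GFF with bounded Radon--Nikodym moments applies) with the event-propagation argument in Lemma~\ref{lem:good_scales_for_event}.

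First I would check the measurability claim (i). The random sets $X_{0,1}^{\theta_1+\pi}, X_{0,1}^{\theta_1}, X_{0,1}^{\theta_2}, X_{0,1}^{\theta_2-\pi}$ are each defined in~\eqref{eq:fl_annulus} as closures of unions of flow lines started from $\partial B(0,1/2)$ and stopped on exiting $A(0,1/4,3/4)$; since a flow line is locally determined by the field and each of these flow lines is stopped before leaving the annulus, these sets, the conditional expectation $\Fh^X$ restricted to the annulus, and the merging events for $\eta_1',\eta_2'$ (which are required to happen before entering $B(0,1/4)$, hence are determined by the field outside $B(0,1/4)$ together with the conditioning) are all measurable with respect to $\wt h|_{A(0,1/4,3/4)}$. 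The parenthetical remark in the excerpt already asserts this; I would simply record why each of the three bullet conditions in the definition of $G$ is annulus-measurable. The small subtlety here — that conditioning on the values of $\wt h$ inside the component of $\D\setminus\bigcup X^\theta_{0,1}$ containing $B(0,1/4)$ does not change the last conditional probability — is exactly the point noted after the definition of $G$, and it is what makes the event usable in the scale-propagation scheme.

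Next I would verify (ii), that $\p[G]\to 1$ as $M\to\infty$ and $p\to 0$. The first bullet (boundedness of $\Fh^X$ on $\partial B(0,7/8)$ by $M$) holds with probability tending to $1$ as $M\to\infty$ since $\Fh^X$ is an a.s.\ finite harmonic-type function and $\partial B(0,7/8)$ is at positive distance from the sets $X^\theta_{0,1}$; the second bullet (at most $M$ points in $\partial A(0,1/4,3/4)\cap X^{\theta_i}_{0,1}$) holds with probability tending to $1$ as $M\to\infty$ because this intersection is a.s.\ finite, as explained after~\eqref{eq:fl_annulus} using continuity of space-filling $\SLE_{\kappa'}$. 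For the third bullet, the key point is that on the event that the first two bullets hold, the configuration of the four-strand data on $\partial B(0,3/4)$ lies in a compact family, and by the interaction rules for counterflow lines reflected off flow lines (Section~\ref{se:fl_interaction}, Figure~\ref{fi:detection}) together with reversibility/absolute continuity of $\SLE_{\kappa'}(\rho)$, the conditional probability that $\eta_1'$ merges into the appropriate pair of strands and $\eta_2'$ into the other pair, before entering $B(0,1/4)$, is bounded below by some $p>0$ uniformly over that compact family; hence by choosing $p$ small enough the third bullet is implied by the first two (possibly after enlarging $M$). Intersecting, $\p[G]\ge 1-o_M(1)$, which is what Lemma~\ref{lem:good_scales_for_event} requires.

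With (i) and (ii) established, I would apply Lemma~\ref{lem:good_scales_for_event}: given $b>0$ and $q\in(0,1)$, pick $M,p$ from that lemma, noting that its event ``$(z,2^{-j}r)$ is $M$-good and $G^p_{z,2^{-j}r}$ occurs'' is precisely our $G_{z,2^{-j}r}$ (possibly after enlarging $M$ so that both the goodness used to transfer absolute continuity and the boundedness in the first bullet of $G$ hold with the same constant). The conclusion $\p[E^c]=O(e^{-bk})$ is then immediate. \textbf{The main obstacle} I anticipate is the uniform lower bound $p$ for the merging event in the third bullet: one must argue that, uniformly over all admissible boundary data for $\wt h_{z,r}$ with $M$-good scale and over the (random but a.s.\ finitely many, $\le M$) strand endpoints on $\partial B(0,3/4)$ with the prescribed height differences $\pi,\theta_1-\theta_2,\pi$, the reflected counterflow lines $\eta_1',\eta_2'$ merge correctly with uniformly positive probability. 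This is a compactness-plus-continuity argument for the merging/interaction probabilities of $\SLE_\kappa$-type curves with variable (but controlled) boundary data, using that the merging rule for equal-angle flow lines is a positive-probability event and is stable under the absolute continuity afforded by $M$-goodness; it is routine in spirit but requires care to phrase uniformly, and it is the one place where the structure of the specific boundary conditions (the angles $\theta_1>\theta_2$ and the duality between flow and counterflow lines) genuinely enters.
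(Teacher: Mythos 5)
Your overall strategy is exactly the paper's: the lemma is proved by feeding the event $G$ into Lemma~\ref{lem:good_scales_for_event}, and the only thing one needs to verify is that $\p[G^c]$ can be made arbitrarily small by choosing $M$ large and $p$ small. Your treatment of the first two bullets (a.s.\ finiteness of $\partial A(0,1/4,3/4)\cap X^\theta_{0,1}$ and a.s.\ boundedness of $\Fh^X$ near $\partial B(0,7/8)$) is correct and matches what the paper records.

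Where you diverge — and where there is a real gap — is in the third bullet. You argue that the four-strand boundary data lies in a ``compact family'' and that the merging probability is ``bounded below by some $p>0$ uniformly over that compact family,'' so that ``by choosing $p$ small enough the third bullet is implied by the first two.'' This is not correct, and it is also not needed. The strand configuration is infinite-dimensional (curve data), the a priori bound in the second bullet only controls the \emph{number} of strand endpoints, not the geometry of the strands, and there is no compactness available from which to extract a deterministic uniform lower bound: two strands can be arbitrarily close, making the merging probability arbitrarily small. Consequently the third bullet is \emph{not} implied by the first two for any fixed $p>0$. The correct (and simpler) argument, which is what the paper uses, is measure-theoretic: for a.e.\ configuration the conditional merging probability is strictly positive (by the flow/counterflow interaction rules), so the event $\{\text{merging prob.}\ge p\}$ increases to a.s.\ probability $1$ as $p\searrow 0$; since on the second bullet there are at most $M$ strand endpoints and hence finitely many quadruples, a union bound gives $\p[\text{third bullet fails}\mid\text{first two hold}]\to 0$. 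You should replace the compactness claim with this monotone-convergence argument; everything else in your plan goes through and coincides with the paper's proof.
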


\begin{proof}
This is a consequence of Lemma~\ref{lem:good_scales_for_event}. To apply the lemma, observe that we can pick $M,p$ such that the probability $\p[G^c]$ for $\wt{h}$ is as small as we wish, since for each quadruple of strands with the correct height differences the merging probability is a.s.\ positive, and the sets $\partial A(0,1/4,3/4) \cap X^\theta_{0,1}$ are almost surely finite.
\end{proof}

We need another variant that considers flow lines reflected off another flow line in the opposite direction. Recall that conditionally on a flow line $\eta_1$ of $h$, we can consider the restriction of $h$ to the complement of $\eta_1$ which has the conditional law of a GFF with flow line boundary values along $\eta_1$. We say that a flow line $\eta_2$ is reflected off another flow line $\eta_1$ in the opposite direction if $\eta_2$ is a flow line of the restriction of $h$ to the complement of $\eta_1$, and whenever it hits $\eta_1$, we continue it with the branch towards the opposite direction of $\eta_1$.

For $w \in \partial B(z,3r/16)$ and $w_1,w_2,\ldots \in \partial B(z,r/2) \cup \partial B(z,r/16)$, let $\eta^\theta_{w_i}$ be the angle $\theta$ flow line starting at $w_i$ and stopped upon exiting $A(z,r/32,3r/4)$, and then let $\eta^{\theta;w_1,w_2,\ldots}_w$ be the angle $\theta$ flow line starting at $w$ and reflected off $\{\eta^\theta_{w_i}\}$ in the opposite direction, stopped upon exiting $A(z,r/8,r/4)$. Let
\begin{equation}\label{eq:fl_refl_annulus}
X^{\theta;*}_{z,r} = \ol{\bigcup_{w \in (z+re^{i\Q}/2) \cup (z+re^{i\Q}/16)} \eta^\theta_{w}} \cup \ol{\bigcup_{\substack{w \in z+(3/16)re^{i\Q} \\ w_1,w_2,\ldots \in (z+re^{i\Q}/2) \cup (z+re^{i\Q}/16)}} \eta^{\theta;w_1,w_2,\ldots}_w} .
\end{equation}
As before, the set $\partial A(z,r/8,r/4) \cap X^{\theta;*}_{z,r}$ is almost surely finite. This is because only finitely many of the strands $\eta^\theta_{w_i}$ do not merge before reaching $\partial B(z,r/8)$ (resp.\ $\partial B(z,r/4)$), hence the collection of choices $w_1,w_2,\ldots$ in the union reduces to a finite number.

We now describe an event similar to the one in Lemma~\ref{le:good_scales_merging} that guarantees a sufficient merging probability for a pair of flow lines where $\eta_2$ is reflected off $\eta_1$. The statement is more technical as we need to first observe $\eta_1$ in order to determine $\eta_2$.

Suppose $\wt{h}$ is a GFF on $\D$ with boundary values so that the angle $\theta$ flow line $\eta_1$ from $-i$ to $i$ and the angle $\theta$ flow line $\ol{\eta}_2$ from $i$ to $-i$ in the components of $\D\setminus\eta_1$ to the right of $\eta_1$ (and reflected off $\eta_1$) are defined. Let $X^\theta_{0,1}$, $X^{\theta;*}_{0,1}$ be as in~\eqref{eq:fl_annulus}, \eqref{eq:fl_refl_annulus}. For $M,p>0$, let $G$ denote the event that the following hold.
\begin{itemize}
 \item The number of points in $\partial A(0,1/4,3/4) \cap X^\theta_{0,1}$ and $\partial A(0,1/8,1/4) \cap X^{\theta;*}_{0,1}$ is at most $M$.
\item For each pair of strands $\eta^\theta_{z_1}$, $\eta^\theta_{\wt{z}_1}$ of $X^\theta_{0,1}$ ending on $\partial B(0,1/4)$ (resp.\ $\partial B(0,3/4)$) and each strand $\eta^{\theta-\pi/2}_{w_1}$ of $X^{\theta-\pi/2}_{0,1}$ ending on $\partial B(0,3/4)$, the conditional probability given $X^\theta_{0,1}$, $X^{\theta-\pi/2}_{0,1}$, and the values of $\wt{h}$ on these sets of the following event is either $0$ or at least $p$:
\begin{itemize}
\item $\eta_1$ merges into $\eta^\theta_{z_1}$ before entering $B(0,1/4)$.
\item The extensions of $\eta^\theta_{\wt{z}_1}$ and $\eta^{\theta-\pi/2}_{w_1}$ do not enter $B(0,1/4)$ until they hit $\partial\D$, and they do not intersect each other.
\end{itemize}
\end{itemize}

The meaning of the event described in the last item is to generate the following scenario. Suppose there is a point $w \in B(0,1/2)$ such that the angle $\theta$ flow line $\eta^\theta_w$ intersects the right side of $\eta_1$ with angle difference $0$, and the angle $\theta-\pi/2$ flow line $\eta^{\theta-\pi/2}_w$ hits $\partial\D$ without intersecting $\eta_1$. Then $\ol{\eta}_2$ necessarily merges into $\eta^\theta_w$. (See e.g.\ Lemma~\ref{lem:interior_intersection_left} where this is applied, and the left picture of Figure~\ref{fi:local_event_left} for an illustration.)

\begin{lemma}\label{le:good_scales_merging_refl}
Suppose that $h$ is a GFF on $D$ with some bounded boundary values. For any $b>0$ and $q \in (0,1)$ there exist $M,p>0$ so that the following is true. 
Let $z \in D$ and $r>0$ such that $B(z,r) \subseteq D$. Let $\wt{h}_{z,r}$ be as defined in Section~\ref{se:gff} with the same boundary values as $\wt{h}$ above (after scaling and translation). Let $G_{z,r}$ be the event that
\begin{itemize}
 \item the scale $(z,r)$ is $M$-good for $h$, and
 \item the event $G$ above occurs for the (scaled and translated) field $\wt{h}_{z,r}$.
\end{itemize}
For $k\in\N$ let $E$ be the event that the fraction of $1 \leq j \leq k$ so that $G_{z,2^{-j}r}$ occurs is at least $q$. Then $\p[E^c] = O(e^{-bk})$.
\end{lemma}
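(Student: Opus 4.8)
The plan is to follow the proof of Lemma~\ref{le:good_scales_merging} and deduce the statement from Lemma~\ref{lem:good_scales_for_event}. Let $G^p$ be the event $G$ for the reference field $\wt h$, where the threshold $M$ in the definition of $G$ is taken to be a function $M(p)$ with $M(p)\to\infty$ as $p\searrow 0$ (say $M(p)=\lceil 1/p\rceil$), so that $(G^p)_{p>0}$ is a one-parameter family as required by Lemma~\ref{lem:good_scales_for_event}. As observed in the remark following the definition of $G$, the conditional probability appearing in the second bullet of $G$ is unaffected by the values of $\wt h$ in the connected component of $\D\setminus\bigcup\{X^\theta_{0,1}\}$ containing $B(0,1/4)$; combined with the fact that the random sets $X^\theta_{0,1}$ and $X^{\theta;*}_{0,1}$ entering $G$ lie in a fixed annulus $A(0,c_0,3/4)$ about the unit circle (with $c_0$ of order $1/32$, by~\eqref{eq:fl_annulus} and~\eqref{eq:fl_refl_annulus}), this makes $G^p$ an event of the type treated by Lemma~\ref{lem:good_scales_for_event}, after a harmless adjustment of the inner radius used in the notion of an $M$-good scale and of the support of the bump function $\phi$ (which affects only the implicit constants, and possibly thins the collection of scales by a bounded factor, at the cost of renaming $q$). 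Granting $\lim_{p\searrow 0}\p[(G^p)^c]=0$, Lemma~\ref{lem:good_scales_for_event} supplies $M,p$ for which the fraction of $1\le j\le k$ with $(z,2^{-j}r)$ being $M$-good for $h$ and $\wt h_{z,2^{-j}r}(z+2^{-j}r\,\cdot)\in G^p$ at least $q$, with probability $1-O(e^{-bk})$; this last event is exactly $G_{z,2^{-j}r}$ (merging the two roles of $M$ by enlarging the threshold, which only weakens the conditions), which is the claim.

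It remains to check that $\p[(G^p)^c]\to 0$ as $p\searrow 0$. The first bullet of $G$ fails with probability tending to $0$ as $M\to\infty$, since the sets $\partial A(0,1/4,3/4)\cap X^\theta_{0,1}$ and $\partial A(0,1/8,1/4)\cap X^{\theta;*}_{0,1}$ are a.s.\ finite — this is recorded in the text just after~\eqref{eq:fl_refl_annulus} and follows from the a.s.\ continuity of space-filling $\SLE_{\kappa'}$. On the event that the first bullet holds, only finitely many pairs of strands enter the second bullet. For each such pair, the conditional probability (given $X^\theta_{0,1}$, $X^{\theta-\pi/2}_{0,1}$ and the values of $\wt h$ on them) of the merging-and-avoidance event is a.s.\ either $0$ — when the relative positions of the strand endpoints are incompatible with the flow-line ordering rules, making the event geometrically impossible — or strictly positive: whenever it is not ruled out one can realize it by a positive-probability scenario, because by the interaction rules of Section~\ref{se:fl_interaction}, $\eta_1$ merges into a strand of equal angle upon hitting it with angle difference $0$, and forcing the extensions of the other two strands to stay out of $B(0,1/4)$ until they reach $\partial\D$ and to avoid one another is a positive-probability event for these flow lines. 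Hence for fixed $M$ the probability that some admissible pair has conditional probability in $(0,p)$ tends to $0$ as $p\searrow 0$ by monotone convergence over the finitely many pairs, and a diagonal choice $M=M(p)\to\infty$ yields $\p[(G^p)^c]\to 0$.

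The step I expect to be the main obstacle is the ``$0$ or strictly positive'' dichotomy: one must verify that every endpoint configuration that is compatible with the flow-line ordering constraints can actually be realized, with positive conditional probability, by a scenario in which $\eta_1$ merges into the prescribed strand and the prescribed avoidance conditions for the other two strands hold. This is the one place where the GFF coupling and the interaction rules for flow lines (in particular those reflected off one another, which enter through $X^{\theta;*}_{0,1}$ in the point count) must be used with genuine care, rather than invoked as a black box via continuity. A secondary, purely bookkeeping difficulty is reconciling the fixed annulus on which $G^p$ is actually determined — which, because of the reflected strands, reaches a bounded factor inside $B(0,1/4)$ — with the hypotheses of Lemma~\ref{lem:good_scales_for_event}; as indicated above, this costs nothing beyond adjusting radii and constants.
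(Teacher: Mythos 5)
Your proposal is correct and follows the same route as the paper: the paper's proof is a one-line appeal to Lemma~\ref{lem:good_scales_for_event} together with the a.s.\ finiteness of the strand sets, exactly as you carry out. Your additional attention to the bookkeeping — in particular that $X^{\theta;*}_{0,1}$ forces the event $G$ to depend on values of $\wt h$ in an annulus reaching down to radius $1/32$, so that the even/odd scale separation in Lemma~\ref{lem:good_scales_for_event} must be thinned to scales modulo roughly $5$ (adjusting $q$ and the bump-function support accordingly) — and your diagonal choice $M=M(p)$ to collapse the two roles of $M$ are both valid clarifications of details the paper leaves implicit.
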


\begin{proof}
This is a consequence of Lemma~\ref{lem:good_scales_for_event} and the fact that the sets $\partial A(0,1/4,3/4) \cap X^\theta_{0,1}$, $\partial A(0,1/8,1/4) \cap X^{\theta;*}_{0,1}$, $\partial A(0,1/4,3/4) \cap X^{\theta-\pi/2}_{0,1}$ are almost surely finite.
\end{proof}

We will use another variant where we merge consecutively into several strands. Suppose $\wt{h}$ is a GFF on $\D$ with boundary values so that the flow line $\eta$ with angle $\theta$ from $-i$ to $i$ is defined. Let $X^\theta_{0,1}$ be as in~\eqref{eq:fl_annulus}. For $M,p > 0$, let $G$ denote the following event.
\begin{itemize}
\item The number of points in $\partial A(0,1/4,3/4) \cap X^\theta_{0,1}$ is at most $M$.
\item Given $X^\theta_{0,1}$ and the values of $\wt{h}$ on $X^\theta_{0,1}$, for any choice of strands $\eta^\theta_{w_1;\mathrm{in}},\eta^\theta_{w_2;\mathrm{out}},\eta^\theta_{w_3;\mathrm{in}},\ldots$ alternatingly ending on $\partial B(0,1/4)$ and $\partial B(0,3/4)$, the conditional probability that $\eta$ merges into $\eta^\theta_{w_1;\mathrm{in}}$ and the continuation of each $\eta^\theta_{w_\ell;\mathrm{out}},\ldots$ merges into $\eta^\theta_{w_{\ell+1};\mathrm{in}}$ before entering $B(0,1/4)$ is either $0$ or at least $p$.
\end{itemize}

\begin{lemma}\label{le:good_scales_merging_multiple}
Suppose that $h$ is a GFF on $D$ with some bounded boundary values. For any $b>0$ and $q \in (0,1)$ there exist $M,p>0$ so that the following is true. 
Let $z \in D$ and $r>0$ such that $B(z,r) \subseteq D$. Let $\wt{h}_{z,r}$ be as defined in Section~\ref{se:gff} with the same boundary values as $\wt{h}$ above (after scaling and translation). Let $G_{z,r}$ be the event that
\begin{itemize}
 \item the scale $(z,r)$ is $M$-good for $h$, and
 \item the event $G$ above occurs for the (scaled and translated) field $\wt{h}_{z,r}$.
\end{itemize}
For $k\in\N$ let $E$ be the event that the fraction of $1 \leq j \leq k$ so that $G_{z,2^{-j}r}$ occurs is at least $q$. Then $\p[E^c] = O(e^{-bk})$.
\end{lemma}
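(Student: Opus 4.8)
The proof will follow the same template as Lemmas~\ref{le:good_scales_merging} and~\ref{le:good_scales_merging_refl}: the plan is to reduce to Lemma~\ref{lem:good_scales_for_event} by taking $G^p = G$ as the event there (where $G$ also depends implicitly on $M$), and then to verify the two hypotheses of that lemma, namely that $G$ is measurable with respect to the restriction of $\wt{h}$ to $A(0,1/4,3/4)$, and that $\p[G^c] \to 0$ as $M \to \infty$ and $p \to 0$.

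For the measurability, I would note that the set $X^\theta_{0,1}$ in~\eqref{eq:fl_annulus} is determined by $\wt{h}|_{A(0,1/4,3/4)}$ (the additive constant is irrelevant for flow lines), being a countable union of angle-$\theta$ flow lines started from $\partial B(0,1/2)$ and stopped upon leaving $A(0,1/4,3/4)$, so $\#(\partial A(0,1/4,3/4)\cap X^\theta_{0,1})$ is measurable with respect to this restriction, and a.s.\ finite since each such point corresponds to a crossing of the annulus by the associated space-filling $\SLE_{\kappa'}$ (as in the discussion after~\eqref{eq:fl_annulus}). Given $X^\theta_{0,1}$ and the values of $\wt{h}$ there, the complement $\D \setminus \bigcup X^\theta_{0,1}$ splits into components, and — exactly as in the remark after the definition of $G$ in Lemma~\ref{le:good_scales_merging} — the chained merging event (that $\eta$ merges into $\eta^\theta_{w_1;\mathrm{in}}$ and each subsequent out-strand merges into the next in-strand \emph{before} entering $B(0,1/4)$) is described purely by the field outside the central component containing $B(0,1/4)$; hence its conditional probability given $X^\theta_{0,1}$ and $\wt{h}|_{X^\theta_{0,1}}$ is a measurable function of that data, so the event that it is $0$ or at least $p$ is measurable with respect to $\wt{h}|_{A(0,1/4,3/4)}$.

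For $\p[G^c] \to 0$: the first bullet fails with probability tending to $0$ as $M \to \infty$ by the a.s.\ finiteness above. On the event $\#(\partial A(0,1/4,3/4)\cap X^\theta_{0,1}) \le M$ there are only finitely many possible alternating chains $\eta^\theta_{w_1;\mathrm{in}},\eta^\theta_{w_2;\mathrm{out}},\eta^\theta_{w_3;\mathrm{in}},\ldots$ (at most some function of $M$). For each such chain, the relevant conditional probability is, by the flow line interaction and merging rules applied inductively along the chain (two equal-angle strands that meet merge, and any individual such merge happens with positive probability inside the region between the relevant strands), a.s.\ either $0$ — when the geometry of $X^\theta_{0,1}$ forbids the merge — or strictly positive. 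Taking the minimum over the finitely many chains of the positive values yields an a.s.\ positive random variable, so choosing $p>0$ small makes the probability that this minimum drops below $p$ as small as we like. Combining, $\p[G^c]$ can be made arbitrarily small by first taking $M$ large and then $p$ small, giving $\lim_{p\to 0}\p[G^p] = 1$, and Lemma~\ref{lem:good_scales_for_event} then yields the conclusion.

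The only mildly delicate point I anticipate is the last one — justifying that the chained conditional merging probability is a.s.\ positive whenever it is not forced to vanish. This is handled in the same way as the single-merge positivity already invoked in the proofs of Lemmas~\ref{le:good_scales_merging} and~\ref{le:good_scales_merging_refl}, now iterated along the (boundedly long, given $M$) chain of strands.
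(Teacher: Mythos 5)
Your proposal is correct and follows exactly the paper's approach: the paper's proof for this lemma is the one-line remark "The proof is the same as for Lemma~\ref{le:good_scales_merging} and~\ref{le:good_scales_merging_refl}," and those proofs consist precisely of the reduction to Lemma~\ref{lem:good_scales_for_event} plus the two observations (a.s.\ finiteness of $\partial A(0,1/4,3/4)\cap X^\theta_{0,1}$, and a.s.\ positivity of the conditional merging probability for compatible strands) that you spell out.
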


\begin{proof}
 The proof is the same as for Lemma~\ref{le:good_scales_merging} and~\ref{le:good_scales_merging_refl}.
\end{proof}

\section{Intersection crossing exponent}
\label{se:intersection_exponent}

\newcommand*{\innexp}{a_1}

The purpose of this section is to show a superpolynomial tail for the $\Fd_\epsilon$-length of crossing the intersection between two loops (i.e., establish~\eqref{eqn:main_concentration}). The precise setup of this section will be the regions bounded between two intersecting SLE$_\kappa$ flow lines with an angle difference given by the double point angle of SLE$_{\kappa'}$ (i.e., $\angledouble$ from \eqref{eq:angle_double}; cf.\ \cite{mw2017intersections}). This is exactly the type of regions that lie between the outer boundaries of two intersecting SLE$_{\kappa'}$ segments. We will define the renormalization factor $\median{\epsilon}$ in the setup of this section which is more convenient to work with than the definition given above Definition~\ref{def:good_approximation}. (It will turn out that the two definitions are comparable for good approximation schemes.) Roughly speaking, we let $\median{\epsilon}$ be the median of the $\Fd_\epsilon$-distance between the first and last intersection point of the two flow lines in this setup. We will show that the probability that the $\Fd_\epsilon$-distance between two intersection points with Euclidean distance $\delta$ exceeds $\median{\epsilon}$ decays superpolynomially as $\delta\searrow 0$.

In Section~\ref{se:bubble_exponent} we will prove an analogous result for the individual bubbles that lie between two intersection points. As we explain below, we will prove these two tail bounds in parallel, and the results from the next section will be used as an intermediate step for the main result of this section.

We now give a brief outline of this section. We describe in Section~\ref{se:intersections_setup} the setup for the intersection crossing exponent, and state a few basic properties. In particular, the relationships to the bubble crossing exponent in Section~\ref{se:bubble_exponent} will be explained. In Section~\ref{se:a_priori}, using (approximate) self-similarity, we prove an a priori estimate which tells us that the $\Fd_\epsilon$-distance across a region with Euclidean diameter $\delta$ exceeds $\median{\epsilon}$ with probability at most $O(\delta^{\ddouble+o(1)})$ as $\delta \to 0$. We will also prove that the metric scales with at least a power of $\delta$ as we decrease the Euclidean scale $\delta$. Starting from our a priori estimate, we then use an inductive argument to show that the probability actually decays faster than any power of $\delta$. Concretely, let $\bestexp$ be the supremum over the exponents $\alpha$ such that this probability is $O(\delta^\alpha)$ as $\delta \to 0$. We assume that $\bestexp < \infty$ and then aim to get a contradiction by showing that it can in fact be improved to an exponent strictly larger than $\bestexp$. For this step, we will first show the result for the probability of crossing an individual bubble. The proof of this will be outsourced to the following Section~\ref{se:bubble_exponent}. Finally, in Section~\ref{se:intersection_exponent_conclude} we combine this with the a priori estimate $\bestexp \ge \ddouble$ to improve the intersection crossing exponent to a value strictly larger than $\bestexp$.

\subsection{Setup}
\label{se:intersections_setup}

Let $h$ be a GFF on $\h$ with boundary conditions given by $-\lambda - \theta_1 \chi$ on~$\R_-$ and by $\lambda-\theta_2 \chi$ on $\R_+$ where
\begin{equation}\label{eq:angles_intersection}
\theta_1 = -\pi/2
\quad\text{and}\quad 
\theta_2 = -\pi/2 - \angledouble \\
\end{equation}
and $\angledouble$ is the double point angle~\eqref{eq:angle_double}. (The reason that we are adding $-\pi/2$ to both angles is that these angles will also be used in Section~\ref{se:bubble_exponent} where the angle $\theta_1 = -\pi/2$ will appear as the right boundary of a counterflow line.)

Let $\eta_1$ (resp.\ $\eta_2$) be the flow line of $h$ from $0$ to $\infty$ with angle $\theta_1$ (resp.\ $\theta_2$). Note that the conditional law of $\eta_2$ given $\eta_1$ is that of an \slekr{\kappa-4}, and vice versa. Finally, we let $\Gamma$ be a collection of conditionally independent $\CLE_{\kappa'}$ in each of the components of $\h \setminus (\eta_1 \cup \eta_2)$ between $\eta_1$, $\eta_2$.

\begin{figure}[ht]
\centering
\includegraphics[width=0.45\textwidth]{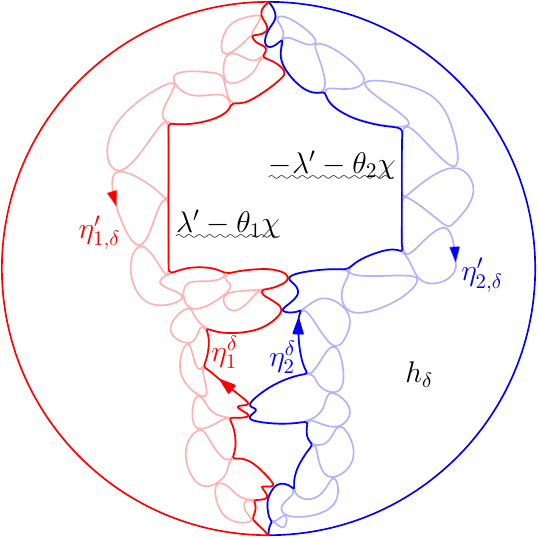}
\caption{The setup of Section~\ref{se:intersection_exponent}.}
\end{figure}

For $\delta \in (0,1)$ let $\varphi_\delta \colon \h \to \delta\D$ be a conformal map that takes $0$ to $-i \delta$ and $\infty$ to $i \delta$.  Let $h_\delta = h \circ \varphi_\delta^{-1} - \chi \arg (\varphi_\delta^{-1})'$, $\eta_i^\delta = \varphi_\delta(\eta_i)$ for $i=1,2$, let $\Gamma_\delta = \varphi_\delta(\Gamma)$.
Let
\begin{equation}\label{eq:pmed_def}
 \pmed = \p[ \eta_1^\delta \cap \eta_2^\delta \cap B(0,\delta/2) \neq \varnothing ] 
\end{equation}
which by scaling does not depend on $\delta$. For $x,y \in \eta_1^\delta \cap \eta_2^\delta \cap \D$, let $U_{x,y}$ denote the region bounded between the segments of $\eta_1^\delta,\eta_2^\delta$ from $x$ to $y$. Let $\intpts{\delta}$ be the set of pairs $(x,y) \in (\eta_1^\delta \cap \eta_2^\delta)^2$ such that $U_{x,y} \subseteq B(0,3\delta/4)$. For an (approximate) \clekp{} metric $\metapprox{\epsilon}{\cdot}{\cdot}{\Gamma}$, we let $\metapproxres{\epsilon}{U_{x,y}}{\cdot}{\cdot}{\Gamma_\delta}$ be the internal metric in $U_{x,y}$. To see that this is well-defined, let $\eta'_{1,\delta}$ (resp.\ $\eta'_{2,\delta}$) be the counterflow line of $h_\delta$ with angle $\theta_1+\pi/2$ (resp.\ $\theta_2-\pi/2$) from $i\delta$ to $-i\delta$. Then $\eta_1^\delta$ (resp.\ $\eta_2^\delta$) agrees with the right boundary of $\eta'_{1,\delta}$ (resp.\ the left boundary of $\eta'_{2,\delta}$), and the conditional law of $\eta'_{1,\delta}$ (resp.\ $\eta'_{2,\delta}$) given $\eta_1^\delta,\eta_2^\delta$ is that of an SLE$_{\kappa'}(\kappa'/2-4;\kappa'/2-2)$ (resp.\ SLE$_{\kappa'}(\kappa'/2-2;\kappa'/2-4)$) in the complementary connected component. Let $\Gamma'_\delta$ consist of the \clekp{} generated by $h$ in each connected component on the right side of $\eta'_{1,\delta}$ and in each connected component on the left side of $\eta'_{2,\delta}$. We now explain that the values of $h_\delta$ in a neighborhood of $U_{x,y}$ determine the configuration of $\Gamma'_\delta$ in some small neighborhood of $U_{x,y}$. This, by the Markovian property and absolute continuity, implies that the internal metrics within $U_{x,y}$ are defined, and due to the triviality of the infinitesimal information of a (Dirichlet-)GFF near the boundary \cite[Proposition~3.2]{ms2016ig1}, are conditionally independent of $\eta'_{1,\delta},\eta'_{2,\delta}, \Gamma'_\delta \setminus \Gamma_\delta$ given $\eta_1^\delta,\eta_2^\delta$.

To see the claim above, let $U$ be a small neighborhood of $U_{x,y}$, and consider the strands of the associated space-filling \slekp{} that intersect $\eta_1^\delta$ (resp.\ $\eta_2^\delta$), extended until exiting $U$. These are determined by the values of $h_\delta$ in $U$, and we know that they are traversed in the reverse order as visited by $\eta_1^\delta$ (resp.\ $\eta_2^\delta$). This determines also the non-space-filling counterflow lines $\eta'_{1,\delta}$ (resp.\ $\eta'_{2,\delta}$) in some smaller neighborhood of $U_{x,y}$, and the \clekp{} configurations in the enclosed regions.

Let $\cserial \ge 0$ be the constant from the compatibility assumption in Section~\ref{se:assumptions}. For $0 \le \epsilon < \delta/(1 \vee 10\cserial)$, let $\intptsapprox{\delta}{\epsilon}$ be the set of tuples $(x',x,y,y') \in (\eta_1^\delta \cap \eta_2^\delta)^4$ in the same order as visited by $\eta_1^\delta,\eta_2^\delta$ and such that $(x,y)\in\intpts{\delta}$, $\dpath[\ol{U}_{x',y'}](x,x') \ge \cserial\epsilon$, $\dpath[\ol{U}_{x',y'}](y,y') \ge \cserial\epsilon$. (The reason we introduce this is so that~\eqref{eq:strong_compatibility_bubbles} below is satisfied.)

For $q \in (0,1)$, we let $\quant[\delta]{q}{\epsilon}$ be the $q$-quantile of the random variable
\begin{equation}\label{eq:quantile_def}
\sup_{(x',x,y,y')\in\intptsapprox{\delta}{\epsilon}}\metapproxres{\epsilon}{U_{x',y'}}{x}{y}{\Gamma_\delta} 
\quad\text{conditioned on the event that $\eta_1^\delta \cap \eta_2^\delta \cap B(0,\delta/2) \neq \varnothing$.}
\end{equation}
We write $\median[\delta]{\epsilon} = \quant[\delta]{1/2}{\epsilon}$ for the median, and $\median{\epsilon} = \median[1]{\epsilon}$, $\quant{q}{\epsilon} = \quant[1]{q}{\epsilon}$. (We allow the quantiles to be zero in case the approximate CLE metric is not almost surely positive.)

Recall the metric $\metapproxacres{\epsilon}{U}{\cdot}{\cdot}{\Gamma}$ defined in~\eqref{eq:shortcutted_metric}. 
The main result of this section is the following proposition which will be proved at the very end of Section~\ref{se:intersection_exponent_conclude}, with a key step outsourced to Section~\ref{se:bubble_exponent}. (We note that the condition $\epsilon < \delta/(1 \vee 10\cserial)$ is not a true restriction since in the case $\delta \le \epsilon$ we automatically have $\metapproxac{\epsilon}{\cdot}{\cdot}{\Gamma} \le \ac{\epsilon}$ by the definition~\eqref{eq:shortcutted_metric}.)

\begin{proposition}
\label{prop:intersection_crossing_exponent}
For any fixed $\epsexp > 0$, we have
\[ 
\p\left[ \sup_{(x',x,y,y')\in\intptsapprox{\delta}{\epsilon}}\metapproxacres{\epsilon}{U_{x',y'}}{x}{y}{\Gamma_\delta} \geq \median{\epsilon}+\epsilon^{-\epsexp}\ac{\epsilon} \right] = o^\infty(\delta) 
\quad\text{as } \delta \searrow 0 .
\]
\end{proposition}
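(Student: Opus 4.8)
The plan is to establish Proposition~\ref{prop:intersection_crossing_exponent} via a bootstrap on the exponent $\bestexp$, defined as the supremum of exponents $\alpha$ for which the left-hand side probability (with $\epsexp$-perturbation stripped off, i.e.\ with $\median{\epsilon}$ on the right) is $O(\delta^\alpha)$ as $\delta\searrow 0$, uniformly over $\epsilon$ and over approximate \clekp{} metrics with the given constants. First I would record the \emph{a priori estimate} proved in Section~\ref{se:a_priori}: using the approximate self-similarity of the region $U$ bounded between the two intersecting \slek{} flow lines, $\delta U$ appears as roughly $\delta^{-\ddouble+o(1)}$ internally disjoint subregions of $U$, and there is enough conditional independence between them (via the Markovian property and the GFF independence across scales, Lemma~\ref{lem:gff_independence_across_scales}, together with the resampling inputs Proposition~\ref{prop:resampling} and Lemmas~\ref{lem:disconnect_interior}--\ref{lem:break_loops}) that if $\p[\,\cdot\geq\median{\epsilon}]$ decayed slower than $\delta^{\ddouble+o(1)}$ then with high probability one of these subregions would have crossing distance $\geq\median{\epsilon}$; since every admissible path across $U$ must pass through each of these subregions, the series law forces the total crossing distance $\geq\median{\epsilon}$ with probability exceeding $1/2$, contradicting the definition of the median $\median{\epsilon}$. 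This gives $\bestexp\geq\ddouble>0$, and the same argument applied at level $\delta^\zeta\median{\epsilon}$ yields $\p[\,\cdot\geq\delta^\zeta\median{\epsilon}]=O(\delta^{\ddouble-\zeta+o(1)})$ for $\zeta\geq 0$.

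Next I would run the bootstrap. Assume $\bestexp<\infty$. Fix a small $\zeta>0$ and an exponent $\alpha<\bestexp$. Exploring the self-similar structure of $U$ (Figure~\ref{fi:outline_selfsimilarity}), one walks along the chain of intersection points $x_0=u_0,u_1,u_2,\dots$ and stops the first time $\metapproxac{\epsilon}{x_0}{u_j}{\Gamma_\delta}\geq\median{\epsilon}$; by conditional independence of the successive excursions (again Lemma~\ref{lem:gff_independence_across_scales} and Proposition~\ref{prop:resampling}) the number of steps before this happens is stochastically dominated by a geometric variable, so with probability $1-o^\infty(\delta)$ it happens within $o^\infty(\delta)^{-1}$ steps, and also the \emph{contributions of the individual bubbles} terminating at the stopping points are controlled: the crossing of a single bubble of Euclidean diameter $\delta$ exceeds $\delta^\zeta\median{\epsilon}$ with probability $O(\delta^{2\bestexp-c'})$ for a small $c'$, because near its left and right boundaries the \clekp{} looks (after resampling at the intersection points, using Theorem~\ref{thm:cle_partially_explored} and Lemma~\ref{le:partial_cle_tv}) like a rescaled copy of $U$, and the two sides are sufficiently independent that the distance exceeds the threshold on both sides only with probability $O(\delta^{2\bestexp})$ up to boundary-effect losses; this is exactly the content of Section~\ref{se:bubble_exponent}, which I would invoke as the key external input. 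Summing the bubble contributions across a geometric number of bubbles via the triangle inequality and the estimate $\p[\,\cdot\geq\delta^\zeta\median{\epsilon}]=O(\delta^{\ddouble-\zeta+o(1)})$ to handle small bubbles, one concludes that $\metapproxac{\epsilon}{\delta x_0}{\delta y_0}{\delta\Gamma}\geq\median{\epsilon}$ with probability $O(\delta^{\bestexp+c})$ for some $c>0$ independent of everything, contradicting maximality of $\bestexp$. Hence $\bestexp=\infty$, i.e.\ $\p[\sup\metapproxac{\epsilon}{\cdots}\geq\median{\epsilon}]=o^\infty(\delta)$.

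It remains to upgrade from $\median{\epsilon}$ to $\median{\epsilon}+\epsilon^{-\epsexp}\ac{\epsilon}$, which is trivial: since $\epsilon^{-\epsexp}\ac{\epsilon}\geq 0$ the threshold is only larger, so the probability is bounded by what we already proved. (The extra term is kept in the statement only because it is the form needed when this estimate is fed into the chaining argument of Section~\ref{se:tightness_proof}, where one rescales and must absorb the approximation error $\ac{\epsilon}$; the bound $o^\infty(\delta)$ is uniform in $\epsilon<\delta$ by the Notation convention, and in particular holds with implicit constants depending only on $\cserial,\cparallel(N)$.) One should also check that the supremum over $(x',x,y,y')\in\intptsapprox{\delta}{\epsilon}$ — rather than a single fixed pair — does not cost more than an $o^\infty(\delta)$ factor: this follows because, on the high-probability event that the metric is H\"older with a controlled constant (from the a priori bounds and the regularity estimates of Section~\ref{se:regularity}, together with the crossing bounds at level $\delta^\zeta\median{\epsilon}$), a net of $O(\delta^{-O(1)})$ candidate tuples suffices and the union bound is absorbed into the $o^\infty(\delta)$.

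The main obstacle is plainly the \emph{bubble crossing estimate} of Section~\ref{se:bubble_exponent}: getting the factor-of-two improvement $O(\delta^{2\bestexp})$ for the two sides of a bubble requires a delicate independence analysis — one must rule out pinch points where the two boundary strands of the bubble come anomalously close (so that a short path connects the two sides without crossing either side's self-similar structure), and one must verify that after the resampling operations the left- and right-boundary \clekp{} configurations are genuinely conditionally independent in the interior, which is why the strong regularity statements quantifying the accumulation of bottlenecks on $\SLE_\kappa$ curves (Section~\ref{se:regularity}, Appendix~\ref{app:arms}) are needed. Everything else — the self-similarity counting, the geometric-domination of the number of exploration steps, and the absolute-continuity comparisons — is a relatively standard (if lengthy) application of the GFF independence-across-scales machinery and the resampling results of \cite{amy-cle-resampling} recalled above.
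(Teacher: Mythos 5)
Your overall architecture is the same as the paper's: establish an a priori exponent $\bestexp \ge \ddouble$ via self-similarity and the series law, then bootstrap using a Markovian exploration with geometric number of steps plus the bubble crossing exponent of Section~\ref{se:bubble_exponent}, and conclude $\bestexp = \infty$. The key lemmas you invoke (Lemma~\ref{le:a_priori_disc}, Lemma~\ref{le:geometric_exploration}, Proposition~\ref{prop:bubble_crossing_exponent}) are indeed the ones the paper uses.

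The genuine gap is in your treatment of the $\epsilon^{-\epsexp}\ac{\epsilon}$ term. You define $\bestexp$ as the optimal exponent for the probability $\p[\sup\metapproxac{\epsilon}{\cdot}{\cdot}{\Gamma_\delta} \geq \median{\epsilon}]$ with ``the $\epsexp$-perturbation stripped off'', and claim that upgrading to $\median{\epsilon} + \epsilon^{-\epsexp}\ac{\epsilon}$ at the end is trivial because the threshold is only larger. That direction of the implication is of course valid, but the bootstrap you propose cannot close with the stripped-down threshold, so you never actually produce a bound on $\p[\cdot \geq \median{\epsilon}]$ in the first place. The hypothesis~\eqref{eq:a_priori_assumption} must carry the $\epsilon^{-\epsexp}\ac{\epsilon}$ term for two reasons internal to the bootstrap, not to Section~\ref{se:tightness_proof}: (i) in Lemma~\ref{le:exploration_step_one_scale} and Lemma~\ref{le:exploration_step}, the crossing of sub-$\epsilon$ scales contributes accumulated $\ac{\epsilon}$ errors via~\eqref{eq:shortcutted_metric}, which are not controlled by $\median{\epsilon}$ rescaled but only by $\ac{\epsilon}$ rescaled; and (ii) the final step removing the constant factor $\asymp n$ proceeds by passing to the rescaled metric $\mettapprox{\lambda^{-1}\epsilon}{\cdot}{\cdot}{\Gamma}$ of Lemma~\ref{le:scaled_metric}, for which $\mediant{\lambda^{-1}\epsilon} = \median[\lambda]{\epsilon} \le \lambda^{\ddouble+o(1)}\median{\epsilon}$ shrinks by Lemma~\ref{le:median_scaling} while $\act{\lambda^{-1}\epsilon} = \ac{\epsilon}$ does \emph{not} shrink. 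Consequently, applying the inductive hypothesis to the rescaled metric only gives control at level $\lambda^{\ddouble+o(1)}\median{\epsilon} + \lambda^{\epsexp}\epsilon^{-\epsexp}\ac{\epsilon}$, and the $\ac{\epsilon}$ contribution would be uncontrolled relative to the rescaled median if it were not present in the hypothesis. So the quantity you are bootstrapping must be the full expression with the $\epsilon^{-\epsexp}\ac{\epsilon}$ perturbation; stripping it and re-attaching at the end is not a valid shortcut, and the claim that it is ``the form needed when this estimate is fed into the chaining argument of Section~\ref{se:tightness_proof}'' misplaces where the term is actually used.

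Two smaller points: the bubble exponent the paper actually proves is $O(\delta^{\bubbleexp})$ for some $\bubbleexp > \bestexp$, not literally $O(\delta^{2\bestexp - c'})$ --- the losses from pinch points, the $\delta^{-ca}$ factors in Lemma~\ref{le:close_geodesic_old}, and the regularity events eat a non-trivial amount, and the final statement of Proposition~\ref{prop:bubble_crossing_exponent} carries an $M^c$ factor that is handled by a further rescaling (Remark~\ref{rm:bubble_exponent_scaling}). And the supremum over $\intptsapprox{\delta}{\epsilon}$ is handled in the paper not by a Hölder/union-bound argument but directly through the compatibility axiom and the monotonicity~\eqref{eq:monotonicity_across_bubbles}, which reduce the sup to the exploration chain; your Hölder-net argument would risk circularity since the Hölder control of Proposition~\ref{prop:interior_tightness} is itself downstream of this proposition.
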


We note that the compatibility assumption for the internal metrics implies that if we have $(x',x,y,y'), (x'',x,y,y'') \in \intptsapprox{\delta}{\epsilon}$, then
\begin{equation}\label{eq:strong_compatibility_bubbles}
 \metapproxres{\epsilon}{U_{x',y'}}{x}{y}{\Gamma_\delta} = \metapproxres{\epsilon}{U_{x'',y''}}{x}{y}{\Gamma_\delta} .
\end{equation}
Further, the series law implies the following monotonicity for the distances between intersection points of $\eta_1^\delta$, $\eta_2^\delta$. If $x,y,x',y' \in \eta_1^\delta \cap \eta_2^\delta$ are such that $U_{x,y} \subseteq U_{x',y'}$, then
\begin{equation}\label{eq:monotonicity_across_bubbles}
 \metapproxres{\epsilon}{U}{x}{y}{\Gamma_\delta} \le \metapproxres{\epsilon}{U}{x'}{y'}{\Gamma_\delta} 
\end{equation}
for any $U \supseteq U_{x',y'}$.
Finally, we note that due to the Markovian property of the CLE metric, we can sample the internal metrics in the regions $U_{x,y}$ without knowing the parts of $\eta_1^\delta$, $\eta_2^\delta$ away from $\partial U_{x,y}$.

In the proofs we will consider the regions bounded between pairs of intersecting flow lines similar to $\eta_1^\delta,\eta_2^\delta$. If $\wt{\eta}_1,\wt{\eta}_2$ is a pair of intersecting flow lines and $x,y \in \wt{\eta}_1 \cap \wt{\eta}_2$, we will also use $U_{x,y}$ to denote the regions bounded between the segments of $\wt{\eta}_1,\wt{\eta}_2$ from $x$ to $y$. We will further denote
\begin{equation}\label{eq:intpts_bubble}
 \intptsapproxbubble{U_{x',y'}}{\epsilon} = \left\{ (x,y) \in (\wt{\eta}_1 \cap \wt{\eta}_2 \cap \ol{U}_{x',y'})^2 \ :\  \dpath[\ol{U}_{x',y'}](x,x') \ge \cserial\epsilon ,\, \dpath[\ol{U}_{x',y'}](y,y') \ge \cserial\epsilon \right\} .
\end{equation}

\subsubsection{Markovian properties}
\label{se:cle_sle_markov}

The following lemma describes the law of a single bubble bounded between $\eta_1$ and $\eta_2$.

\begin{lemma}\label{le:law_bubble}
Let $\eta_1,\eta_2$ be as described in Section~\ref{se:intersections_setup}. Given $\eta_1 \cap \eta_2$, pick two points in $\eta_1(\tau_1), \eta_1(\sigma_1) \in \eta_1 \cap \eta_2$ such that $\eta_1((\tau_1,\sigma_1))$ is a connected component of $\eta_1 \setminus \eta_2$. Let $\tau_2,\sigma_2$ be such that $\eta_1(\tau_1)=\eta_2(\tau_2)$, $\eta_1(\sigma_1)=\eta_2(\sigma_2)$. Then for $i=1,2$ the conditional law of $\eta_i|_{[\tau_i,\sigma_i]}$ given $\eta_{3-i}$ and $\eta_i|_{[0,\tau_i]\cup[\sigma_i,\infty]}$ is an \slek{} in (a connected component of) $\h \setminus (\eta_{3-i} \cup \eta_i([0,\tau_i]\cup[\sigma_i,\infty]))$ from $\eta_i(\tau_i)$ to $\eta_i(\sigma_i)$.
\end{lemma}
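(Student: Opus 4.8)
\textit{Plan.} The natural approach is via the imaginary geometry coupling of flow lines with the GFF (Section~\ref{subsec:ig}). Fix $i\in\{1,2\}$; the two cases are symmetric, so I describe $i=1$. First I would condition on $\eta_{3-i}=\eta_2$. By the flow line interaction rules and the characterization of the coupling recalled in Section~\ref{se:intersections_setup}, the conditional law of $\eta_1$ given $\eta_2$ is that of an $\SLE_\kappa(\kappa-4)$ from $0$ to $\infty$ in the connected component of $\h\setminus\eta_2$ lying to the left of $\eta_2$, with force point immediately on the $\eta_2$-side of $0$; moreover $\eta_1$ is a.s.\ determined by $h$. Next I would further condition on the (local) set $\eta_1|_{[0,\tau_1]\cup[\sigma_1,\infty]}$. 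Since each flow line is a local set for $h$ and a finite union of local sets is local, the conditional law of the restriction of $h$ to the connected component $W$ of $\h\setminus(\eta_2\cup\eta_1|_{[0,\tau_1]\cup[\sigma_1,\infty]})$ containing the open arc $\eta_1((\tau_1,\sigma_1))$ is that of a GFF on $W$ whose boundary values are the flow-line boundary values: the angle-$\theta_2$ values along the arcs of $\partial W$ on $\eta_2$, the angle-$\theta_1$ values along the arcs on $\eta_1|_{[0,\tau_1]\cup[\sigma_1,\infty]}$, and the original values of $h$ along $\partial W\cap\R$, which (by $\theta_1=-\pi/2$) coincide with the angle-$\theta_1$ flow-line values and so introduce no new boundary value (and in any case $\eta_1$ does not return to $\R$). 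Here $W$ is simply connected, being a complementary component of a connected closed set.

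Now $\eta_1|_{[\tau_1,\sigma_1]}$ is the continuation of the angle-$\theta_1$ flow line of $h$ started from $\eta_1(\tau_1)$: it stays in the open set $W$ until it hits $\eta_1(\sigma_1)\in\partial W$, and it is a.s.\ determined by $h|_W$. It therefore only remains to read off from the boundary data of the GFF on $W$ which $\SLE_\kappa$-type curve in $W$ from $\eta_1(\tau_1)$ to $\eta_1(\sigma_1)$ this is. The point is that the angle gap $\angledouble$ is exactly the value for which a direct boundary-value computation shows this data (after subtracting $\theta_1\chi$) to be the balanced $\pm\lambda$ boundary data of an \emph{ordinary} $\SLE_\kappa$, rather than of an $\SLE_\kappa(\rho)$ with $\rho\neq0$; heuristically, conditioning on both ends of the excursion has the effect that the excursion ``no longer sees'' the force point. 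By the uniqueness of the flow-line coupling for given boundary data, the conditional law of $\eta_1|_{[\tau_1,\sigma_1]}$ is then that of an $\SLE_\kappa$ in $W$ from $\eta_1(\tau_1)$ to $\eta_1(\sigma_1)$, as claimed. The case $i=2$ runs identically, once one checks using the flow line interaction rules that the intersection points of $\eta_1$ and $\eta_2$ occur in a consistent order along the two curves, so that $\eta_2((\tau_2,\sigma_2))$ is itself a connected component of $\eta_2\setminus\eta_1$ and hence lies in a single complementary component of $\h\setminus(\eta_1\cup\eta_2|_{[0,\tau_2]\cup[\sigma_2,\infty]})$.

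The step I expect to be the main obstacle is the last one: correctly identifying the component $W$ and describing its boundary (pieced together from arcs of $\eta_{3-i}$ and arcs of the two retained pieces of $\eta_i$), and then checking that the flow-line boundary values combine along $\partial W$ to give exactly the boundary data of an ordinary $\SLE_\kappa$ --- this is precisely where the specific value of the double-point angle $\angledouble$ enters. Everything else is a routine application of the local-set Markov property of the GFF together with the fact that flow lines are a.s.\ determined by the field.
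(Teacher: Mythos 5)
Your plan has a genuine gap at precisely the step you flag as the main obstacle. The boundary-value computation does \emph{not} produce the balanced $\pm\lambda$ data of an ordinary $\SLE_\kappa$. Set $h_1 = h + \theta_1\chi$, so that $\eta_1$ is the zero-angle flow line of $h_1$. The part of $\partial W$ lying on $\eta_{3-i}$ --- say the left side of $\eta_2$ in the case $i=1$ --- carries $h_1$-boundary values $-\lambda + \angledouble\chi + \chi\cdot\mathrm{winding}$. Since $\angledouble\chi = (\kappa-2)\lambda$, this is $(\kappa-3)\lambda$, which differs from $\lambda$ for every $\kappa\in(2,4)$. Mapping $W$ conformally to $\h$ with $\eta_1(\tau_1)\mapsto 0$, $\eta_1(\sigma_1)\mapsto\infty$, the conditional GFF therefore has piecewise-constant boundary data $-\lambda$ on $\R_-$ and $(\kappa-3)\lambda$ on $\R_+$, and the flow line from $0$ of such a field is an $\SLE_\kappa(\kappa-4)$ with force point at $0^+$, not an ordinary $\SLE_\kappa$.

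This discrepancy is not cosmetic. Since $\kappa-4\in(-2,\kappa/2-2)$, the $\SLE_\kappa(\kappa-4)$ dictated by the raw boundary data touches $\R_+$ almost surely before reaching $\infty$, whereas $\eta_1|_{(\tau_1,\sigma_1)}$ is by construction disjoint from $\eta_2$. The resolution is that $\tau_1$ and $\sigma_1$ are not stopping times for $\eta_1$: they are chosen only after observing $\eta_1\cap\eta_2$, so what is actually being conditioned on is the event that $(\tau_1,\sigma_1)$ is an excursion interval of the contact process and that the excursion returns to $\eta_2$ precisely at $\eta_1(\sigma_1)$. This extra conditioning is not captured by the local-set Markov property, and it is a genuine theorem --- not a bookkeeping exercise with flow-line heights --- that it exactly cancels the $\kappa-4$ force-point drift and produces a plain $\SLE_\kappa$ in $W$. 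That theorem is what the paper invokes when it proves the lemma in one line by citing \cite[Theorem~5.6]{dms2021mating}; a self-contained argument along your lines would have to re-derive the excursion-decomposition/reversibility input hidden in that reference rather than read it off the boundary values.
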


\begin{proof}
This follows from \cite[Theorem~5.6]{dms2021mating}.
\end{proof}

We note that Lemma~\ref{le:law_bubble} and \cite[Theorem~4.1]{ms2016ig2} uniquely determine the conditional joint law of $(\eta_1|_{[\tau_1,\sigma_1]}, \eta_2|_{[\tau_2,\sigma_2]})$ given $\eta_1|_{[0,\tau_1]\cup[\sigma_1,\infty]}$ and $\eta_2|_{[0,\tau_2]\cup[\sigma_2,\infty]}$.

Next, we collect the following facts which allow us to discover a \slek{}-\clekp{} pair in certain Markovian ways.

\begin{lemma}
\label{lem:cle_sle_markov}
Suppose that $\eta$ is an $\SLE_\kappa$ in $\h$ from $0$ to $\infty$ and (given $\eta$) $\Gamma$ is a $\CLE_{\kappa'}$ in the component of $\h \setminus \eta$ to the left of $\eta$. Let $-\infty \le a < b \le 0$, and let $\eta'$ be the branch of the exploration tree of $\Gamma$ from $a$ to $b$ that discovers the loops of $\Gamma$ intersecting $[a,b]$, and let $\wt{\eta}$ be the outer boundary of $\eta'$ (we see $\wt{\eta}$ as a path from $0$ to $\infty$, allowed to trace parts of $\R_-$ in case $(a,b)\neq(-\infty,0)$). Given $\wt\eta$ and the set $\wt\eta \cap \eta$, pick two points $\eta(\tau), \eta(\sigma) \in \wt\eta \cap \eta$ such that $\eta((\tau,\sigma))$ is a connected component of $\eta \setminus \wt{\eta}$. Let $U$ be the component of $\h \setminus (\wt{\eta} \cup \eta)$ with $\eta((\tau,\sigma))$ on its right boundary, let $\wt{U}$ the component of $\h \setminus (\wt{\eta} \cup \eta([0,\tau]) \cup \eta([\sigma,\infty)))$ containing $\eta((\tau,\sigma))$, and let $\Gamma_U$ be the loops of $\Gamma$ that are contained in $\ol{U}$. Then the conditional law of $\eta|_{[\tau,\sigma]}$ given $\Gamma \setminus \Gamma_U$ and $\eta|_{[0,\tau]}$, $\eta|_{[\sigma,\infty)}$ is that of an $\SLE_\kappa$ in $\wt{U}$ from $\eta(\tau)$ to $\eta(\sigma)$ and the conditional law of $\Gamma_U$ given $\Gamma \setminus \Gamma_U$ and $\eta$ is that of a $\CLE_{\kappa'}$ in $U$.
\end{lemma}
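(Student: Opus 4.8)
The plan is to realize the pair $(\eta,\Gamma)$ via a single Gaussian free field and then read off both conditional laws from the GFF Markov property together with the flow/counterflow line interaction rules recalled in Section~\ref{subsec:ig}. First I would fix a GFF $h$ on $\h$ with boundary data equal to the $\CLE_{\kappa'}$ value $\lambda'-\pi\chi$ on $\R_-$ and equal to $2\lambda+\lambda'-\pi\chi$ on $\R_+$; together with the angle $\theta$ given by $\theta\chi=-\lambda-\lambda'+\pi\chi$, these choices make the angle-$\theta$ flow line of $h$ from $0$ to $\infty$ a plain $\SLE_\kappa$, and this flow line is $\eta$. With this choice the data $h$ induces along the left side of $\eta$ is again $\lambda'-\pi\chi$, so the boundary of the component $D_L$ of $\h\setminus\eta$ to the left of $\eta$ carries the constant $\CLE_{\kappa'}$ value. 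Hence, given $\eta$, the field $h|_{D_L}$ is a GFF on $D_L$ with $\CLE_{\kappa'}$ boundary conditions, conditionally independent of the rest of $h$; coupling $\Gamma$ with $h|_{D_L}$ reproduces the law in the statement, and the exploration-tree branch $\eta'$ from $a$ to $b$ is then a counterflow line of $h|_{D_L}$, equivalently (by locality) of $h$, i.e.\ an $\SLE_{\kappa'}(\kappa'-6)$.

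Next I would identify $\wt\eta$ with a flow line of $h$: by $\SLE_\kappa/\SLE_{\kappa'}$ duality \cite{ms2016ig1} the outer boundary of $\eta'$ coincides with a flow line of $h$ — of the angle that bounds $\CLE_{\kappa'}$ exploration domains — read as a curve from $0$ to $\infty$, inserting the undiscovered boundary intervals when $(a,b)\neq(-\infty,0)$; along it, from the side facing the unexplored region, $h$ has the constant value $\lambda'-\pi\chi$, which is the GFF form of the statement that a $\CLE_{\kappa'}$ exploration leaves $\CLE_{\kappa'}$ boundary data along its inner boundary. As $\eta$ is also a flow line of $h$, the interaction rules of \cite{ms2016ig1} apply to the pair $(\wt\eta,\eta)$: they touch but do not cross, so $\wt\eta\cap\eta$, the connected components of $\eta\setminus\wt\eta$, and the domains $U,\wt U$ are well defined. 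Now fix a component $\eta|_{(\tau,\sigma)}$ and condition on $\wt\eta$, on $\eta|_{[0,\tau]\cup[\sigma,\infty)}$, and on the restriction of $h$ to $\h\setminus\ol{\wt U}$; the latter determines $\eta'$, hence the discovered loops, hence — with \cite[Lemma~3.2]{gmq2021sphere} applied in the remaining unexplored components — all of $\Gamma\setminus\Gamma_U$ (and conditioning on $\Gamma\setminus\Gamma_U$ in its place changes nothing, $\Gamma\setminus\Gamma_U$ being measurable with respect to these data). By the GFF Markov property $h|_{\wt U}$ is then a GFF on $\wt U$, and the two boundary arcs of $\partial\wt U$ between $\eta(\tau)$ and $\eta(\sigma)$ carry the values $\lambda'-\pi\chi$ (the $\wt\eta$-arc) and $\lambda'-\pi\chi+2\lambda$ (the arc through $\eta|_{[0,\tau]\cup[\sigma,\infty)}$ and $\R_+$, using the right-hand $\eta$-value and the chosen $\R_+$ data); adding $\theta\chi$ one checks this is exactly the plain-$\SLE_\kappa$ data $-\lambda$, $+\lambda$ relative to the angle-$\theta$ flow line. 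Since $\eta|_{[\tau,\sigma]}$ is precisely that flow line of $h|_{\wt U}$ (it is a flow line of $h$ lying in $\wt U$ with these endpoints, and flow lines are a.s.\ determined by the field), it is a plain $\SLE_\kappa$ in $\wt U$ from $\eta(\tau)$ to $\eta(\sigma)$, which is the first assertion.

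For the law of $\Gamma_U$: once all of $\eta$ and $\Gamma\setminus\Gamma_U$ are given, $U$ is determined, and the same bookkeeping shows $h|_U$ is a GFF on $U$ with constant boundary data $\lambda'-\pi\chi$ (the $\wt\eta$-arc as above, and $\eta|_{[\tau,\sigma]}$ from its left side), i.e.\ $\CLE_{\kappa'}$ boundary conditions, so the loops of $\Gamma$ contained in $U$ form a $\CLE_{\kappa'}$ in $U$; equivalently this is \cite[Lemma~3.2]{gmq2021sphere} with $U$ the unexplored component adjacent to $\eta|_{(\tau,\sigma)}$. I expect the crux — and the only step genuinely requiring the GFF coupling rather than soft Markov-property arguments — to be this boundary-data bookkeeping on $\partial\wt U$: that $\wt\eta$ is a flow line of $h$, that its unexplored side carries the $\CLE_{\kappa'}$ value, and that combined with the $\R_+$ data this yields the $2\lambda$ gap forcing $\eta|_{[\tau,\sigma]}$ to be a \emph{plain} $\SLE_\kappa$ rather than some $\SLE_\kappa(\rho)$; everything else reduces to the GFF Markov property and the domain Markov properties of $\SLE_\kappa$ and $\CLE_{\kappa'}$.
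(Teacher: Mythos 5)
Your strategy — realize the pair $(\eta,\Gamma)$ via a single GFF and read off both conditional laws from the Markov property plus boundary-data bookkeeping — is the same \emph{genre} of argument as the paper's special-case step, but your proof glosses over two places where the paper does genuinely different work, and one of them is a real gap.

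First, the general $(a,b)$ case. Your sentence ``inserting the undiscovered boundary intervals when $(a,b)\neq(-\infty,0)$'' hides the actual difficulty. For $(a,b)=(-\infty,0)$ the exploration branch $\eta'$ is a counterflow line of $h$ from $\infty$ to $0$, and duality identifies the whole of $\wt\eta$ as a single flow line of $h$ emanating from the root $0$, so the GFF Markov property applies cleanly to $\wt U$. For $-\infty<a<b<0$ the curve $\wt\eta$ is a concatenation of the boundary segments $[b,0]$ and $(-\infty,a]$ (where $h$ has constant data $\lambda'-\pi\chi$ with no jump at $a$ or $b$) with the outer boundary of a counterflow line of $h|_{D_L}$ from $a$ to $b$. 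That piece \emph{is} a flow line of $h|_{D_L}$ starting at $b$, but your argument needs it to be an appropriately-rooted flow line of the global $h$ so that the boundary data along the $\wt\eta$-arc of $\partial\wt U$ is determined by the imaginary-geometry coupling on $\h$; this does not follow from locality alone, because the angle-$\theta_{\wt\eta}$ flow line of $h$ from $b$ and the flow line of $h|_{D_L}$ from $b$ live in domains with different boundary data at $b$. The paper sidesteps exactly this by first proving the lemma when $(a,b)=(-\infty,0)$ and then \emph{reducing} the general case to it: one stops $\eta$ at a time $T$ and a reverse time $S$ chosen so that the remaining quad is conformally $(\h,0,\infty,a,b)$, applies the special case to the post-$T$, pre-$S$ picture, and notes that the same pair $(\eta([0,T]\cup[S,\infty]),\wt\eta_{\infty,0})$ could have been sampled by first drawing $\wt\eta_{\infty,0}$ and then $\eta$. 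You would need to add something like this reduction, or give a genuinely separate GFF argument for the rooted-at-$a$ exploration.

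Second, the boundary-data computation on $\partial\wt U$. You correctly identify this as ``the crux,'' but you state the conclusion (data $\lambda'-\pi\chi$ on the $\wt\eta$-arc, $\lambda'-\pi\chi+2\lambda$ on the opposite arc, giving the plain-$\SLE_\kappa$ gap $2\lambda$) without actually tracking the winding. The paper's own computation gives $\wt\eta$ the angle $2\lambda'/\chi-\pi$ relative to $\eta$, and if you plug this in naively the constant part of the data on the $\wt U$-side of $\wt\eta$ comes out as $\lambda'+\pi\chi$ rather than $\lambda'-\pi\chi$ — a $2\pi\chi$ discrepancy. This $2\pi\chi$ is exactly the contribution of the boundary winding of $\partial\wt U$ (and of whether one traverses the $\wt\eta$- and $\eta$-arcs in their forward or reversed orientations), and whether it cancels or not is precisely what decides if $\eta|_{[\tau,\sigma]}$ is a \emph{plain} $\SLE_\kappa$ in $\wt U$ or an $\SLE_\kappa(\rho)$. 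Your proof does not resolve this, and it is nontrivial; the paper avoids it entirely by invoking \cite[Theorem~5.6]{dms2021mating}, which packages the pocket domain Markov property for a flow line and the outer boundary of a counterflow line once the interaction is identified as $\SLE_\kappa(\kappa-4)$ via \cite[Proposition~7.4]{ms2016ig1}. If you want to keep the self-contained GFF computation, you must justify the winding bookkeeping across the two marked prime ends $\eta(\tau),\eta(\sigma)$ of $\partial\wt U$; as written it is an assertion, not a proof.

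Minor but worth noting: conditioning on ``$h$ restricted to $\h\setminus\ol{\wt U}$'' needs $\ol{\wt U}$ (equivalently its complement) to be a local set for $h$ before the Markov property applies; $\wt U$ is a fairly complicated random domain built from the touch set $\eta\cap\wt\eta$, and a sentence establishing locality is needed.
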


An equivalent formulation of the conclusion of Lemma~\ref{lem:cle_sle_markov} is that the conditional law of $(\eta|_{[\tau,\sigma]}, \Gamma_U)$ given $\Gamma \setminus \Gamma_U$ and $\eta|_{[0,\tau]}$, $\eta|_{[\sigma,\infty)}$ is equal to that of the conformal image of $(\eta,\Gamma)$ under a conformal map from $\h$ to $\wt{U}$ that sends $0$ to $\eta(\tau)$ and $\infty$ to $\eta(\sigma)$.

\begin{proof}[Proof of Lemma~\ref{lem:cle_sle_markov}]
The statement in the case when $a=\infty$, $b=0$ follows from \cite[Theorem~5.6]{dms2021mating} once we argue that the conditional law of~$\eta$ given~$\eta'$ is that of an $\SLE_\kappa(\kappa-4)$ process from~$0$ to~$\infty$ in the component of $\h \setminus \eta'$ that is to the right of $\eta'$. By \cite[Theorem~5.6]{dms2021mating}, for any component $\eta((\tau,\sigma))$ of $\eta \setminus \wt{\eta}$ as in the statement of the lemma we have that the conditional law of $\eta|_{[\tau,\sigma]}$ given $\eta_{[0,\tau]}$ and $\eta|_{[\sigma,\infty)}$ is that of an $\SLE_\kappa$ in $\wt{U}$ from $\eta(\tau)$ to $\eta(\sigma)$.  Moreover, the conditional law of $\Gamma_U$ given $\eta$, $\eta'$, and $\Gamma \setminus \Gamma_U$ is that of a $\CLE_{\kappa'}$ in $\Gamma_U$.  This proves that the conditional law of $\Gamma_U$ and $\eta|_{[\tau,\sigma]}$ is as described in the lemma statement.

To see that the conditional law of~$\eta$ given~$\eta'$ is an $\SLE_\kappa(\kappa-4)$, we can couple them with a GFF $h$ on $\h$ with boundary conditions given by $-\lambda$ (resp.\ $\lambda$) on $\R_-$ (resp.\ $\R_+$).  Let~$\eta$ be the flow line of $h$ from $0$ to $\infty$ and let~$\eta'$ be the counterflow line of $h+ 2\lambda' - \pi \chi/2$ from $\infty$ to $0$.  Let also $\Gamma$ be the $\CLE_{\kappa'}$ in the component of $\h \setminus \eta$ to the left of $\eta$ and coupled with $h+ 2\lambda' - \pi \chi/2$.  Then $\eta'$ is also the branch of the exploration tree of $\Gamma$ from $\infty$ to $0$ so that the joint law of $\eta$, $\eta'$, $\Gamma$ is exactly as in the statement of the lemma.  In this coupling, the right boundary $\wt{\eta}$ of $\eta'$ is the flow line of~$h$ from~$0$ to~$\infty$ with angle $2\lambda'/\chi - \pi$. The claim thus follows from \cite[Proposition~7.4]{ms2016ig1}

Now let us prove the case for general $a<b$. Let us denote by $\wt{\eta}_{a,b}$ the outer boundary of the exploration path from $a$ to $b$. We pick a stopping time $T$ and a reverse stopping time $S$ for $\eta$ such that $T < S$ and such that the quad $(\h\setminus\eta([0,T]\cup[S,\infty]), \eta(T),\eta(S),\infty^+,0^-)$ is conformally equivalent to $(\h, 0,\infty,a,b)$. Such times exist since the modulus of the quad depends continuously on $T,S$ and attain any value in $[0,\infty]$ as we vary between $T,S = 0$ and $0 < T=S < \infty$. Given $\eta|_{[0,T]\cup[S,\infty]}$, the conditional law of $(\eta|_{[T,S]},\Gamma,\wt{\eta}_{\infty,0})$ is the same as that of the conformal image of $(\eta,\Gamma,\wt{\eta}_{a,b})$. On the other hand, we could have also produced the pair $(\eta([0,T]\cup[S,\infty]), \wt{\eta}_{\infty,0})$ by first sampling $\wt{\eta}_{\infty,0}$ and then sampling $\eta$ conditionally on $\wt{\eta}_{\infty,0}$. Therefore the claim in the general case follows from the special case $(a,b) = (\infty,0)$.
\end{proof}

\begin{lemma}\label{le:cle_sle_domain_markov}
Suppose that $\eta$ is an $\SLE_\kappa$ in $\h$ from $0$ to $\infty$ and (given $\eta$) $\Gamma$ is a $\CLE_{\kappa'}$ in the component of $\h \setminus \eta$ to the left of $\eta$. Let $W \subseteq \ol{\h}$ be a connected, relatively open set that intersects~$\R_-$, and let $\wt{W} \subseteq W$ be the union of connected components of $W \setminus \eta$ that intersect $\R_-$. Let $\Gamma(\wt{W})$ be the the collection of loops in $\Gamma$ that intersect $\wt{W}$. Let $\wt{\eta}$ denote the outer boundary of the union of loops in $\Gamma(\wt{W})$ (as seen from $1$, say). Given $\wt\eta$ and the set $\wt\eta \cap \eta$, pick two points $\eta(\tau), \eta(\sigma) \in \wt\eta \cap \eta \setminus \ol{W}$ such that $\eta((\tau,\sigma))$ is a connected component of $\eta \setminus \wt{\eta}$. Let $U$ be the component of $\h \setminus (\wt{\eta} \cup \eta)$ with $\eta((\tau,\sigma))$ on its right boundary, let $\wt{U}$ the component of $\h \setminus (\wt{\eta} \cup \eta([0,\tau]) \cup \eta([\sigma,\infty)))$ containing $\eta((\tau,\sigma))$, and let $\Gamma_U$ be the loops of $\Gamma$ that are contained in $\ol{U}$. Then the conditional law of $\eta|_{[\tau,\sigma]}$ given $\Gamma \setminus \Gamma_U$ and $\eta_{[0,\tau]}$, $\eta|_{[\sigma,\infty)}$ is that of an $\SLE_\kappa$ in $\wt{U}$ from $\eta(\tau)$ to $\eta(\sigma)$ and the conditional law of $\Gamma_U$ given $\Gamma \setminus \Gamma_U$ and $\eta$ is that of a $\CLE_{\kappa'}$ in $U$.
\end{lemma}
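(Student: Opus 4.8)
The plan is to follow the proof of Lemma~\ref{lem:cle_sle_markov}. As there, both conclusions will follow from \cite[Theorem~5.6]{dms2021mating} — which, once we know that the procedure producing $\wt{\eta}$ and $\Gamma\setminus\Gamma_U$ is a Markovian exploration, yields simultaneously that $\eta|_{[\tau,\sigma]}$ is a chordal $\SLE_\kappa$ in $\wt{U}$ and that $\Gamma_U$ is a $\CLE_{\kappa'}$ in $U$ — provided one also establishes that, conditionally on $\wt{\eta}$, the curve $\eta$ is an $\SLE_\kappa(\kappa-4)$ in the component of $\h\setminus\wt{\eta}$ lying to the right of $\wt{\eta}$. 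The two things to verify are therefore: (i) that revealing the loops of $\Gamma$ meeting $\wt{W}$, together with $\wt{\eta}$, is a legitimate $\CLE_{\kappa'}$-exploration; and (ii) the identification of the conditional law of $\eta$ given $\wt{\eta}$. Note that the special case of Lemma~\ref{lem:cle_sle_markov} with $a=\infty$, $b=0$ is the case where $\wt{W}$ is (a neighbourhood in $\h$ of) all of $\R_-$, so the present statement genuinely generalizes it, while its ``general $a<b$'' part supplies the conformal reduction used below.

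For (i), condition on $\eta$, so that $\Gamma$ is a $\CLE_{\kappa'}$ in the component $D_L$ of $\h\setminus\eta$ to the left of $\eta$ and $\wt{W}\cap\ol{D_L}$ is a deterministic relatively open subset of $\ol{D_L}$. Using the construction of $\CLE_{\kappa'}$ via nested explorations \cite{msw2017cleperc} and the domain Markov property recalled in \cite[Lemma~3.2]{gmq2021sphere}, one discovers all loops of $\Gamma$ meeting $\wt{W}$ — at the outermost level those touching $\partial D_L$ where $\wt{W}$ reaches it, and recursively inside each component into which $\wt{W}$ protrudes further — in such a way that, conditionally on what has been discovered, the loops inside each complementary component form independent $\CLE_{\kappa'}$'s. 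Since $U$ is one such component (it is a component of $\h\setminus(\wt{\eta}\cup\eta)$ lying behind $\wt{\eta}$, on the side away from $\wt{W}$, so that no revealed loop is contained in $\ol{U}$ — this is where the hypothesis $\eta(\tau),\eta(\sigma)\in\wt{\eta}\cap\eta\setminus\ol{W}$ is used), the conditional law of $\Gamma_U$ given $\Gamma\setminus\Gamma_U$ and $\eta$ is that of a $\CLE_{\kappa'}$ in $U$.

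For (ii), use the GFF coupling exactly as in the proof of Lemma~\ref{lem:cle_sle_markov}: take $h$ a GFF on $\h$ with boundary values $-\lambda$ on $\R_-$ and $\lambda$ on $\R_+$, let $\eta$ be the flow line of $h$ from $0$ to $\infty$, and couple $\Gamma$ with $h+2\lambda'-\pi\chi/2$ restricted to $D_L$. By \cite[Proposition~7.4]{ms2016ig1}, the outer boundary of the outermost $\CLE_{\kappa'}$ loops touching a given boundary arc is a flow line of $h$ with angle $2\lambda'/\chi-\pi$; applying this recursively in the subdomains into which $\wt{W}$ reaches identifies $\wt{\eta}$ with a concatenation of flow-line arcs of this fixed angle, glued along boundary arcs. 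Since the angle gap between $\eta$ and $\wt{\eta}$ is the same value $2\lambda'/\chi-\pi$ as in the proof of Lemma~\ref{lem:cle_sle_markov}, the flow-line interaction rules, again via \cite[Proposition~7.4]{ms2016ig1}, give that $\eta$ conditioned on $\wt{\eta}$ is an $\SLE_\kappa(\kappa-4)$ in the region to its right, which is the missing ingredient. Finally, as in the proof of Lemma~\ref{lem:cle_sle_markov}, a conformal map from $\h$ onto a suitable quad cut off by a stopping time and a reverse stopping time for $\eta$ reduces the general position of $\wt{W}$ relative to $\R_-$ to the case just treated.

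The main obstacle is step (i): making precise that ``reveal every loop of $\Gamma$ meeting $\wt{W}$'' is a \emph{bona fide} Markovian $\CLE_{\kappa'}$-exploration when $\wt{W}$ is an arbitrary connected open set that protrudes into the interior (so that the relevant nested loops of $\Gamma$ are revealed as well), and the accompanying topological claim that the resulting outer boundary $\wt{\eta}$ separates $\wt{W}$ from the target component $U$, so that no revealed loop is contained in $\ol{U}$. Once these points are in place, step (ii) and the conformal reduction are routine adaptations of the proof of Lemma~\ref{lem:cle_sle_markov}.
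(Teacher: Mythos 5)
Your proposal correctly isolates the crux — establishing that revealing every loop of $\Gamma$ meeting $\wt{W}$ is a Markovian exploration — and you candidly flag that your step~(i) leaves this open. The gap is genuine, and the paper's proof closes it not by verifying a Markov property for this exploration in one shot (as your plan would require in order to invoke \cite[Theorem~5.6]{dms2021mating} directly) but by iterating the already-proved Lemma~\ref{lem:cle_sle_markov} a finite number of times. The crucial ingredient your proposal is missing is the connectivity fact from \cite[Lemma~2.1]{amy-cle-resampling}: every loop of $\Gamma(\wt{W})$ is joined to $\R_-\cap W$ by a finite chain of loops $\CL_1,\ldots,\CL_n\in\Gamma(\wt{W})$ whose consecutive members intersect inside $W$. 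Given this, one applies Lemma~\ref{lem:cle_sle_markov} to discover the loops meeting a fixed arc $J_1\subseteq\R_-\cap W$, then picks a positive-length arc $J_2$ on the outer boundary of the region thereby carved out and re-applies Lemma~\ref{lem:cle_sle_markov} (in its general $(a,b)$ form) to discover the loops meeting $J_2$, and so on; the chaining fact ensures that after finitely many such steps the arc of $\wt\eta$ between $\eta(\tau)$ and $\eta(\sigma)$ is produced, and the conclusion falls out of the last application of Lemma~\ref{lem:cle_sle_markov}.

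Two further points your step~(i) glosses over. First, the picture you describe — discovering loops first along the boundary and then recursively inside each component into which $\wt{W}$ protrudes — is a nested exploration, which is not the right mechanism here: the non-boundary-touching loops of $\Gamma(\wt{W})$ are reached through chains of mutually intersecting loops, not by descending into nested subdomains. Second, the paper's proof explicitly records and uses the a.s.\ observation that every point of $\wt\eta\cap\eta\setminus\ol{W}$ lies on a loop of $\Gamma(\wt{W})$ (since near any $z\notin\ol{W}$ there are only finitely many loops of diameter at least $\dist(z,\ol{W})$); without this, $\eta(\tau)$ and $\eta(\sigma)$ could be accumulation points of infinitely many tiny loops and the finite-step discovery would not reach them.
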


\begin{proof}
This follows by applying Lemma~\ref{lem:cle_sle_markov} repeatedly. Note that every point in $\wt\eta \cap \eta \setminus \ol{W}$ almost surely lies on a loop of $\Gamma(\wt{W})$ (since for any $z \notin \ol{W}$ there are only finitely many loops of diameter at least $\dist(z,\ol{W})$ in a neighborhood of $z$). Moreover (see e.g.\ \cite[Lemma~2.1]{amy-cle-resampling}), every loop in $\Gamma(\wt{W})$ is connected to $\R_- \cap W$ via a finite number of loops $\CL_1,\CL_2,\ldots \in \Gamma(\wt{W})$ such that each pair $\CL_j,\CL_{j+1}$ intersect in $W$. Therefore we can discover the loops in $\Gamma(\wt{W})$ iteratively as follows. Pick a positive interval $J_1 \subseteq \R_- \cap W$ and let $\wt{\eta}_1$ be the outer boundary of the exploration path discovering the loops intersecting $J_1$ as in Lemma~\ref{lem:cle_sle_markov}. Conditionally on $\wt{\eta}_1$ and $\wt{\eta}_1 \cap \eta$, pick $\wt{\tau}_1,\wt{\sigma}_1 \in \wt{\eta}_1 \cap \eta$ such that $\wt{\eta}_1((\wt{\tau}_1,\wt{\sigma}_1))$ is a connected component of $\wt{\eta}_1 \setminus \eta$, and pick a segment $J_2 \subseteq \wt{\eta}_1[\wt{\tau}_1,\wt{\sigma}_1]$. Repeat this procedure to obtain the outer boundary $\wt{\eta}_2$ of the exploration path discovering the loops intersecting $J_2$. Iterating this procedure, Lemma~\ref{lem:cle_sle_markov} implies that at each step, if we let $\tau_j,\sigma_j$ be such that $\eta(\tau_j)=\wt{\eta}_j(\wt{\tau}_j)$, $\eta(\sigma_j)=\wt{\eta}_j(\wt{\sigma}_j)$, and define $U_j$ and $\wt{U}_j$ analogously to the statement of the lemma, then the conditional law of $(\eta\big|_{[\tau_j,\sigma_j]}, \Gamma_{U_j})$ given $\Gamma \setminus \Gamma_{U_j}$ and $\eta_{[0,\tau_j]}$, $\eta|_{[\sigma_j,\infty)}$ is that of an \slek{}-\clekp{} pair in $\wt{U}_j$. In particular, on the event that $(\tau_j,\sigma_j)$ agrees with $(\tau,\sigma)$ from the statement of the lemma, the conclusion follows. This concludes the proof since we can always find one amongst a countable number of such explorations that discovers $\wt{\eta}((\wt{\tau},\wt{\sigma}))$ in a finite number of iterations.
\end{proof}

\subsection{A priori estimates}
\label{se:a_priori}

Consider the setup described in Section~\ref{se:intersections_setup}. The goal of this subsection is to show the a priori estimates Lemma~\ref{le:a_priori_disc} and Lemma~\ref{le:median_scaling}. Let $\ddouble$ be as in~\eqref{eq:dim_double}.

\begin{lemma}\label{le:a_priori_disc}
Fix $q>0$, $\innexp > 0$. Let $E$ denote the event that there exist $(x',x,y,y') \in \intptsapprox{\delta}{\epsilon}$ such that $U_{x',y'} \subseteq B(0,\delta^{1+\innexp})$ and $\metapproxres{\epsilon}{U_{x',y'}}{x}{y}{\Gamma_\delta} \ge \quant{q}{\epsilon}$. Then $\p[E] = O(\delta^{\ddouble})$ as $\delta\searrow 0$ (uniformly in $\epsilon$).
\end{lemma}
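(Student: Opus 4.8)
The strategy is to exploit the approximate self-similarity of the region bounded between the two intersecting flow lines $\eta_1,\eta_2$. The key point is that $B(0,\delta^{1+\innexp})$ is a factor $\delta^{-\innexp}$ smaller than $B(0,\delta)$, and inside an annulus $A(0,\delta^{1+\innexp},\delta)$ the loops of $\Gamma_\delta$ together with the segments of $\eta_1^\delta,\eta_2^\delta$ produce, with high probability, many disjoint sub-regions each of which, conditionally on its boundary data, has (after a conformal map) a law which is absolutely continuous with respect to $\Gamma$ restricted to a region of the type $U_{x',y'}$. Concretely, I would first recall (using the duality between \slek{} and \slekp{} recalled in Section~\ref{se:fl_interaction}, together with Lemma~\ref{le:law_bubble} and Lemma~\ref{le:cle_sle_domain_markov}) that the successive intersection points of $\eta_1^\delta,\eta_2^\delta$ between radius $\delta^{1+\innexp}$ and $\delta$ cut the region $U_{x',y'}$ (for the outermost admissible pair) into a chain of sub-regions arranged ``in series''. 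The number of such sub-regions at Euclidean scale $\asymp \delta$ that separate $0$ from $\partial B(0,\delta)$ inside the region between the two flow lines has expectation $\asymp \delta^{-\ddouble+o(1)}$; this is exactly the double-point dimension estimate of \cite{mw2017intersections}, since a point lies in the gasket region between the two flow lines at scale $\delta$ iff a double-point event for the associated \slekp{} occurs.

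Next I would run the following union/independence argument. Suppose $E$ holds, i.e.\ there is some admissible tuple with $U_{x',y'}\subseteq B(0,\delta^{1+\innexp})$ and $\metapproxres{\epsilon}{U_{x',y'}}{x}{y}{\Gamma_\delta}\ge \quant{q}{\epsilon}$. By the series law (from the axioms in Section~\ref{se:assumptions}, using that consecutive bubbles are at Euclidean distance $\ge \cserial\epsilon$ once $\delta^{1+\innexp}\gg\epsilon$, which holds since we always assume $\epsilon<\delta$ and here the relevant scale is much larger than $\epsilon$ up to the $o^\infty$ error — this needs a short check using~\eqref{eq:shortcutted_metric} or the assumption $\epsilon<\delta/(1\vee 10\cserial)$), the distance across any larger region between intersection points of $\eta_1^\delta,\eta_2^\delta$ containing $U_{x',y'}$ is also $\ge\quant{q}{\epsilon}$, by monotonicity~\eqref{eq:monotonicity_across_bubbles}. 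Now cover $A(0,\delta^{1+\innexp},\delta/4)$ by $O(\innexp\log(1/\delta))$ dyadic scales; at a positive fraction of these scales (by Lemma~\ref{lem:disconnect_interior}, Lemma~\ref{lem:break_loops} and Lemma~\ref{le:good_scales_merging}, combined with the independence-across-scales estimates of Section~\ref{se:gff}), there is a pair of intersection points of $\eta_1^\delta,\eta_2^\delta$ separating $0$ from $\partial B(0,\delta/2)$ whose enclosed region, conditionally on everything outside, has law comparable (in the sense of Lemma~\ref{le:abs_cont_kernel}, with Radon--Nikodym moments controlled on good scales via Lemma~\ref{lem:rn_derivative}) to a fixed reference region of the type in~\eqref{eq:quantile_def}. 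On the complement of $E$, at each such good scale the internal distance across that region is $<\quant{q}{\epsilon}$ with probability $\ge 1-q$ \emph{independently} (up to the absolute-continuity correction) across scales. Hence $\p[E]$ is dominated by the probability that across all $\asymp\delta^{-\ddouble+o(1)}$ sub-regions — or rather across all the good dyadic scales — we never see a ``large'' crossing, together with the probability that there are fewer than $\asymp\delta^{-\ddouble+o(1)}$ separating bubbles at all. The first is $o^\infty(\delta)$ by a geometric/Borel--Cantelli-type bound, and the second is the double-point dimension estimate which gives exactly $O(\delta^{\ddouble+o(1)})$; more precisely, one phrases it as: $E$ forces that \emph{none} of the $\asymp\delta^{-\ddouble}$ independent ``trials'' produces a large distance, but the existence of $U_{x',y'}$ with a large distance inside $B(0,\delta^{1+\innexp})$ is itself one such trial, contradicting the quantile definition — so really the bound $O(\delta^\ddouble)$ comes from estimating $\p[\exists$ such $U_{x',y'}]$ directly by the expected number of scales at which a double point sits inside $B(0,\delta^{1+\innexp})$, which is $O(\delta^{\ddouble})$ by the multi-arm/double-point exponent, times the conditional probability $\le q'$ (for an appropriate quantile) that the distance is large, which is $O(1)$.

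The cleanest way to organize it: let $N_\delta$ be the number of ``scales'' $j$ with $2^{-j}\in[\delta^{1+\innexp},\delta/4]$ at which the pair $\eta_1^\delta,\eta_2^\delta$ has intersection points forming a separating bubble $W_j\subseteq A(0,2^{-j-1},2^{-j})$ that is good (satisfies the absolute-continuity condition of the previous paragraph). If $E$ occurs then in particular the innermost such bubble has distance $\ge\quant{q}{\epsilon}$, hence at least one of the $W_j$ has distance $\ge\quant{q}{\epsilon}$. But conditionally on $N_\delta$ and on the bubble structure, each good $W_j$ independently has distance $\ge\quant{q}{\epsilon}$ with probability $\le C(1-q)$ on a good scale (by the definition of the quantile and Lemma~\ref{le:abs_cont_kernel}); choosing $q$ close to $1$ this is $<\tfrac12$. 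So $\p[E\mid N_\delta\ge n]\le C(1-q)$ trivially, which is not yet small — the smallness must instead come from combining with the a priori control that the \emph{innermost} bubble exists inside $B(0,\delta^{1+\innexp})$, which has probability $O(\delta^{\ddouble})$ by the double-point exponent of \cite{mw2017intersections} (together with the locality of \slekp{} used to pass from a whole-plane/chordal double-point event to the event that $\eta_1^\delta\cap\eta_2^\delta\cap B(0,\delta^{1+\innexp})\neq\varnothing$, which itself is $\pmed$-type and scales like $\delta^{\ddouble}$ relative to $\delta^{1+\innexp}$, i.e.\ like $(\delta^{\innexp})^{\ddouble}\cdot$const... — so actually the factor is $\delta^{\innexp\ddouble+o(1)}$ which we then absorb by choosing $\innexp$). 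I expect \textbf{the main obstacle} to be the bookkeeping around the $\epsilon$-scale interference: one must ensure that all applications of the series law, monotonicity, and compatibility axioms are legitimate, i.e.\ that the relevant pairs of intersection points are always at $\dpath$-distance $\ge\cserial\epsilon$ and that the regions $U_{x',y'}$ we compare are honestly nested, which forces careful use of~\eqref{eq:strong_compatibility_bubbles} and of the definition of $\intptsapprox{\delta}{\epsilon}$, and a separate (easy) treatment of the regime $\epsilon\gtrsim\delta^{1+\innexp}$ where~\eqref{eq:shortcutted_metric} makes everything $\le\ac{\epsilon}$ and the estimate is immediate. The second genuinely delicate point is making the absolute-continuity comparison (via Lemma~\ref{le:abs_cont_kernel} and the good-scales lemmas of Section~\ref{se:gff}, \ref{se:fl_merging}) uniform in $\epsilon$ and in the choice of approximate metric, so that the implicit constant in $O(\delta^\ddouble)$ does not degrade.
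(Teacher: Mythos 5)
Your proposal gestures at the right ingredients — the double-point dimension of \cite{mw2017intersections}, the series law, the quantile definition, nice pockets, good scales — but the logical engine of the argument is missing and the closing exponent calculation is wrong. Two concrete problems. First, the decomposition you work with, the $O(\innexp\log\delta^{-1})$ dyadic scales of the annulus $A(0,\delta^{1+\innexp},\delta)$, only produces logarithmically many conditionally-independent trials, which cannot yield a polynomial-in-$\delta$ tail. Second, your closing estimate that ``the expected number of scales at which a double point sits inside $B(0,\delta^{1+\innexp})$ is $O(\delta^{\ddouble})$'' is incorrect. The two-arm/double-point event for $\eta_1^\delta,\eta_2^\delta$ at scale $\delta^{1+\innexp}$ relative to $\delta$ scales with the \emph{co-dimension} $2-\ddouble$, not the dimension $\ddouble$, giving probability $\asymp\delta^{\innexp(2-\ddouble)+o(1)}$; for fixed small $\innexp>0$ this is far larger than the target $\delta^{\ddouble}$ (for $\kappa'=6$, $\ddouble=3/4$ and $2-\ddouble=5/4$, so even $\innexp=1/2$ gives $\delta^{5/8}\gg\delta^{3/4}$). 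Your suggestion to ``absorb by choosing $\innexp$'' is not available: $\innexp$ is a fixed parameter of the lemma, and the harder regime is precisely small $\innexp$.

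More fundamentally, $\p[E]=O(\delta^{\ddouble})$ is not a direct arm-exponent estimate at all; it is a bootstrapped bound whose source is the \emph{abundance} of double points (dimension $\ddouble$), forced against the quantile definition. The paper proves the auxiliary Lemma~\ref{le:a_priori_int_fl} first: it passes to a slightly wider-angle pair $\wt{\eta}_1,\wt{\eta}_2$ in $\D$, uses \cite[Theorem~1.5]{mw2017intersections} to produce $N_\delta\asymp\delta^{-\ddouble+o(1)}$ disjoint ``nice pockets'' of diameter $\asymp\delta$, shows each is conditionally (via Lemma~\ref{le:abs_cont_kernel} on good scales) an independent realization of the localized bad event, and argues by contradiction: if the bad-event probability $p$ exceeded $\delta^{\ddouble-O(a)}$, then with probability $1-(1-p^{1+a})^{N_\delta}\to 1$ at least one pocket would have distance at least $\quant{q}{\epsilon}$, which would already force the supremum in~\eqref{eq:quantile_def} to exceed $\quant{q}{\epsilon}$; but that supremum exceeds $\quant{q}{\epsilon}$ with probability at most $(1-q)\pmed$. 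Lemma~\ref{le:a_priori_disc} is then deduced by sampling a random scale $J$ with $2^{-J}\in[\delta^{1+\innexp},\delta^{1+\innexp/2}]$ and two random points in $B(0,2^{-J}/4)$, losing only $\delta^{O(a)}$ and gaining $\delta^{(\innexp/2)\ddouble}$ from the scale reduction; that gain absorbs the $o(1)$ in the exponent, which is the only role of $\innexp$ in the statement. Your passage ``$E$ forces that none of the trials produces a large distance, but the existence of $U_{x',y'}$ \ldots is itself one such trial, contradicting the quantile definition'' is internally inconsistent (you assert both ``none'' and ``at least one''); the actual contradiction is between a hypothetically large $p$ and the quantile bound, and producing it requires the $\delta^{-\ddouble}$-many independent pockets which your argument never constructs.
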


We remark that compared to the statement of Lemma~\ref{le:a_priori_int_fl} below, here we do not need an ``$+o(1)$'' term in the exponent since we are working in $B(0,\delta^{1+\innexp})$.

The intuition behind the proof is as follows: The flow lines $\eta_1,\eta_2$ intersect on a set of dimension $\ddouble$, the double point dimension of \slekp{} (cf.\ \cite{mw2017intersections}). Therefore, if we divide space into boxes of sidelength $\delta$, about $\delta^{-\ddouble}$ of them will contain intersection points. After an intersection point, the law of the remainder of $\eta_1,\eta_2$ (after mapping back to $\h$) will be the same as that of $\eta_1,\eta_2$ again, so with probability approximately $\p[E]$ we will create $\Fd_\epsilon$-length of $\quant{q}{\epsilon}$ in the box, and these events are approximately independent in each box. If $\p[E] > \delta^{\ddouble}$, then it becomes very likely to create length $\quant{q}{\epsilon}$ in at least one box. But by the definition of the quantile, this probability is only $1-q$. This means $\p[E]$ must be smaller than $\delta^{\ddouble}$.

In order to make the argument rigorous, we need to create approximately $\delta^{-\ddouble}$ events that are indeed independent, and each one has probability approximately $\p[E]$. The following elementary lemma is useful as it applies to random variables that are not necessarily independent.

\begin{lemma}\label{le:binary_rv_fraction}
 Let $X_1,\ldots,X_N$ be random variables taking values in $\{0,1\}$, and suppose that $\p[X_i=1] \ge p$ for each $i$. Then
 \[ \p\left[ \sum_i X_i \ge rN \right] \ge p-r . \]
\end{lemma}

\begin{proof}
 We have
 \[ pN \le \E\left[\sum_i X_i\right] \le N \,\p\left[ \sum_i X_i \ge rN \right] + rN , \]
 and the result follows.
\end{proof}

We proceed to proving Lemma~\ref{le:a_priori_disc}. We first prove the following similar, more technical statement which applies under an extra condition that we are on the event from Lemma~\ref{le:good_scales_merging} where flow lines are sufficiently likely to merge. To prove Lemma~\ref{le:a_priori_disc}, we then use Lemma~\ref{le:good_scales_merging} to find with high probability such good scales where this additional event occurs, and apply Lemma~\ref{le:a_priori_int_fl} to these good scales.

Let $h_\delta$ be a GFF in $\delta\D$ with boundary values as described in Section~\ref{se:intersections_setup}. We let $\eta'_{1,\delta}$ (resp.\ $\eta'_{2,\delta}$) be the counterflow line of $h_\delta$ with angle $\theta_1+\pi/2$ (resp.\ $\theta_2-\pi/2$) from $i\delta$ to $-i\delta$. Then the right boundary of $\eta'_{1,\delta}$ agrees with $\eta^\delta_1$, and the left boundary of $\eta'_{2,\delta}$ agrees with $\eta^\delta_2$. Let $X_\delta^\theta = X_{0,\delta}^\theta$ be as defined in~\eqref{eq:fl_annulus}.

\begin{figure}[ht]
\centering
\includegraphics[width=0.5\textwidth]{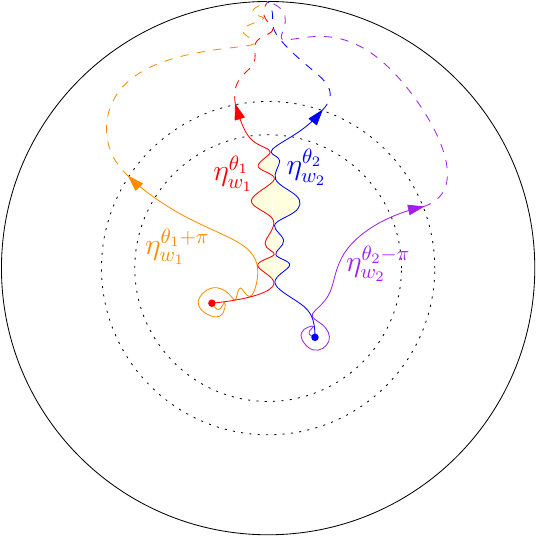}
\caption{The setup of Lemma~\ref{le:a_priori_int_fl}. On the event $G_{0,\delta}$, the conditional probability is at least $p$ that the dashed flow lines exit $\delta\D$ at $i\delta$.}
\label{fi:intersections_localised}
\end{figure}

\begin{lemma}\label{le:a_priori_int_fl}
Fix $q>0$, $M,p > 0$. Let $w_1,w_2 \in B(0,\delta/4)$, and let $E$ denote the event that the following hold.
\begin{itemize}
\item Let $\eta_{w_1}^{\theta_1+\pi}$, $\eta_{w_1}^{\theta_1}$ (resp.\ $\eta_{w_2}^{\theta_2}$, $\eta_{w_2}^{\theta_2-\pi}$) be the flow lines with respective angles starting at $w_1$ (resp.\ $w_2$) and stopped upon exiting $B(0,3\delta/4)$. Then
\begin{itemize}
\item The right side of $\eta_{w_1}^{\theta_1}$ intersects the left side of $\eta_{w_2}^{\theta_2}$ with angle difference $\theta_1-\theta_2$, and they do not intersect in any other way.
\item $\eta_{w_1}^{\theta_1+\pi}$ does not intersect $\eta_{w_2}^{\theta_2}$, $\eta_{w_2}^{\theta_2-\pi}$; and $\eta_{w_2}^{\theta_2-\pi}$ does not intersect $\eta_{w_1}^{\theta_1+\pi}$, $\eta_{w_1}^{\theta_1}$.
\item Sample the internal metrics in the regions bounded between $\eta_{w_1}^{\theta_1}, \eta_{w_2}^{\theta_2}$. There exist $x',x,y,y' \in \eta_{w_1}^{\theta_1} \cap \eta_{w_2}^{\theta_2}$ such that $(x,y) \in \intptsapproxbubble{U_{x',y'}}{\epsilon}$ as in~\eqref{eq:intpts_bubble} and
\[ \metapproxres{\epsilon}{U_{x',y'}}{x}{y}{\Gamma} > \quant{q}{\epsilon} . \] 
\end{itemize}
\item The event $G_{0,\delta}$ from the statement of Lemma~\ref{le:good_scales_merging} occurs for $h_\delta$.
\end{itemize}
Then
\[ \p[E] = O(\delta^{\ddouble+o(1)}) \]
uniformly in the choice of $w_1,w_2$.
\end{lemma}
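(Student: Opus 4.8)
The plan is to prove Lemma~\ref{le:a_priori_int_fl} by reducing it to a statement about the intersection points of the two flow lines $\eta_1,\eta_2$ in the original half-plane setup (before the conformal map $\varphi_\delta$), and then using the self-similar (Markovian) structure of the pair $(\eta_1,\eta_2)$ together with the quantile bound to preclude too large a probability. Concretely, I would first observe that on the event $G_{0,\delta}$, there is a conditional probability at least $p$ (given $X_\delta^{\theta_1+\pi},X_\delta^{\theta_1},X_\delta^{\theta_2},X_\delta^{\theta_2-\pi}$ and the field values on them, and hence independently of the configuration inside the inner ball $B(0,\delta/4)$) that the counterflow lines $\eta'_{1,\delta},\eta'_{2,\delta}$ merge into the appropriate strands before entering $B(0,\delta/4)$, so that $\eta_1^\delta,\eta_2^\delta$ agree with $\eta_{w_1}^{\theta_1},\eta_{w_2}^{\theta_2}$ inside a neighborhood of $B(0,\delta/4)$, and in particular the regions $U_{x',y'}$ appearing in the definition of $\quant{q}{\epsilon}$ (which sit inside $B(0,3\delta/4)$) together with their internal metrics are the same. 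Consequently, writing $E'$ for the event that $\eta_1^\delta\cap\eta_2^\delta\cap B(0,\delta/2)\neq\varnothing$ and there exist $(x',x,y,y')\in\intptsapprox{\delta}{\epsilon}$ with $\metapproxres{\epsilon}{U_{x',y'}}{x}{y}{\Gamma_\delta}>\quant{q}{\epsilon}$, one gets $\p[E]\le p^{-1}\p[E'\cap G_{0,\delta}]\le p^{-1}\p[E']$. By the definition of the quantile in~\eqref{eq:quantile_def}, $\p[E'\mid \eta_1^\delta\cap\eta_2^\delta\cap B(0,\delta/2)\neq\varnothing]\le 1-q$, so $\p[E']\le(1-q)\pmed$. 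Since $\pmed$ is a fixed constant this is $O(1)$ — not yet the claimed $\delta^{\ddouble+o(1)}$ — so this crude bound alone is not enough, and the real content is the self-similarity argument below.

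The right way to get the power $\delta^{\ddouble}$ is to transport everything back to $\h$ via $\varphi_\delta$ and exploit that after any intersection point of $\eta_1,\eta_2$, the pair restarts (modulo conformal map) with the same law, by Lemma~\ref{le:law_bubble} and the characterization of the joint conditional law recorded just after it. So I would tile a macroscopic region (say the unit-scale picture) by $\asymp \delta^{-\ddouble}$ dyadic-type boxes of Euclidean size $\asymp\delta$, each of which, with probability bounded below, contains an intersection point $z_0$ of $\eta_1,\eta_2$ such that the configuration near $z_0$, rescaled by $\delta$, looks (up to absolutely continuous change of law with bounded-moment Radon–Nikodym derivative, using Lemma~\ref{lem:rn_derivative} and the independence across scales, Lemma~\ref{lem:gff_independence_across_scales}) like the $\delta$-scale picture to which we can apply the $G_{0,\delta}$-merging argument of the previous paragraph. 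Conditioning on the coarse information (the part of $\eta_1,\eta_2$ outside the small boxes, and the finitely many crossing data), the events ``in box $i$ we produce internal $\Fd_\epsilon$-distance exceeding $\quant{q}{\epsilon}$ across some $U_{x',y'}$'' each have conditional probability $\asymp\p[E]$ and — crucially — the series law~\eqref{eq:monotonicity_across_bubbles}/\eqref{eq:strong_compatibility_bubbles} means that if even \emph{one} of these boxes succeeds, then the total $\Fd_\epsilon$-distance across the whole unit-scale region between its first and last intersection point is at least $\quant{q}{\epsilon}$. If $\p[E]\gg\delta^{\ddouble}$, then by Lemma~\ref{le:binary_rv_fraction} applied to these (not necessarily independent) indicator variables, with probability bounded away from $0$ at least one box succeeds, forcing $\sup_{(x',x,y,y')\in\intptsapprox{1}{\epsilon}}\metapproxres{\epsilon}{U_{x',y'}}{x}{y}{\Gamma}\ge\quant{q}{\epsilon}$ with probability bounded below — but by definition of $\quant{q}{\epsilon}$ this probability is at most $1-q$ conditionally on the nonemptiness event, a contradiction once $\delta$ is small. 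Hence $\p[E]=O(\delta^{\ddouble+o(1)})$, and working inside $B(0,\delta^{1+\innexp})$ (as in Lemma~\ref{le:a_priori_disc}) will later remove the $o(1)$, but for Lemma~\ref{le:a_priori_int_fl} the $o(1)$ is acceptable.

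I would organize the write-up as: (1) the merging step showing $E$ implies (up to the factor $p^{-1}$) the corresponding event $E'$ for $\Gamma_\delta$ inside $B(0,\delta/2)$, using $G_{0,\delta}$ and the measurability remark after the definition of $G$ that the merging conditional probability is unaffected by conditioning on the field inside the central ball; (2) the transport to $\h$, where I set up the box-tiling and, for each box, use Lemma~\ref{lem:cle_sle_markov}/\ref{le:cle_sle_domain_markov} and Lemma~\ref{le:law_bubble} to say that conditionally on the coarse data the configuration near an intersection point in that box, rescaled, has law absolutely continuous w.r.t.\ the localized picture, with the relevant good scale occurring with probability bounded below (Lemma~\ref{le:good_scales_merging}); (3) the counting step via Lemma~\ref{le:binary_rv_fraction}, combined with the series law, yielding the contradiction with the definition of $\quant{q}{\epsilon}$. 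The main obstacle I anticipate is step (2): making precise that the number of ``good'' boxes containing usable intersection points is $\gtrsim\delta^{-\ddouble+o(1)}$ with high probability, and that in each such box the event of creating internal distance $\ge\quant{q}{\epsilon}$ truly has conditional probability $\gtrsim \delta^{o(1)}\p[E]$ — this requires carefully combining the second-moment / counting estimates for the dimension-$\ddouble$ intersection set of $(\eta_1,\eta_2)$ with the absolute-continuity bookkeeping (bounded moments of Radon–Nikodym derivatives) so that the conditional independence and the series law can be applied cleanly, and it is where the $o(1)$ in the exponent is generated.
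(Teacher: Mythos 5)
Your overall architecture matches the paper's proof closely: tile by $\asymp\delta^{-\ddouble}$ boxes of Euclidean scale $\delta$, argue that each box independently carries a conditional probability $\gtrsim\p[E]^{1+o(1)}$ of producing internal distance exceeding $\quant{q}{\epsilon}$, invoke Lemma~\ref{le:binary_rv_fraction} together with the series law~\eqref{eq:monotonicity_across_bubbles} to conclude that if $\p[E]\gg\delta^{\ddouble}$ then the quantile constraint~\eqref{eq:quantile_def} is violated, and also use absolute continuity and the merging events from $G_{0,\delta}$ per box. You also correctly identify where the difficulty lies (your ``step (2)''). But the step you flag as the main obstacle is exactly where the paper introduces a specific device your sketch does not supply, and your proposed conditioning would not work as written.

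Conditioning ``on the part of $\eta_1,\eta_2$ outside the small boxes'' does not produce conditionally independent configurations inside distinct boxes: $\eta_1,\eta_2$ are flow lines of one GFF $h$, and the curve segments entering different boxes are correlated through $h$. There is no clean local set structure to condition on, because the curve segments interleave in and out of the boxes. Moreover, upon restarting from an intersection point $\eta_1(\tau_1)=\eta_2(\tau_2)$, the conformally mapped pair is not an exact copy of $(\eta_1,\eta_2)$; new force points appear at $0^{\pm}$. The paper's fix (Step~1 of its proof) is to introduce a pair of \emph{slightly wider-angle} flow lines $\wt{\eta}_1,\wt{\eta}_2$ (angles $\theta_1+a$, $\theta_2-a$), so that: (i) the intersection set $\wt{\eta}_1\cap\wt{\eta}_2$ still has dimension $\ddouble-O(a)$ (via \cite{mw2017intersections} and \cite{dkm-fan-adjacency}); (ii) the whole curves $\wt{\eta}_1,\wt{\eta}_2$ are local sets for $h$, so that the restrictions of $h$ to distinct complementary components (the ``nice pockets'' $P_Q$ constructed in Steps~2--4) are conditionally independent given $(\wt{\eta}_1,\wt{\eta}_2)$; (iii) the residual force points from restarting now have weight $O(a)$, which is absorbed into the $o(1)$ in the exponent. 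The real flow lines $\eta_1,\eta_2$ inside each pocket are then flow lines of the independent restrictions of $h$, which is exactly the independence your box-counting needs. The regularity event $G_{M,a}$ (Proposition~\ref{pr:regularity}) and Koebe distortion are used to guarantee the pockets are of size $\asymp\delta$ and well-separated (at least $\delta^{1-a}$ apart), so that compatibility and the series law can be applied cleanly. Your appeals to Lemma~\ref{le:law_bubble} and the Markov lemmas are appropriate for the law of a single bubble, but do not by themselves produce the simultaneous conditional independence over the many boxes; without something like the widened-angle local set, the counting argument in your step (3) has no precise conditioning to hang on.
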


\begin{proof}
We show that for each $a>0$ we have $\p[E] = O(\delta^{\ddouble-a})$. Throughout the proof, we fix $a>0$ small. Recall the definition of $\quant{q}{\epsilon} = \quant[1]{q}{\epsilon}$, i.e.\ we are in the setup of Section~\ref{se:intersections_setup} with $\delta = 1$. (In the following, the variable $\delta$ will only refer to the setup of the present lemma.) Let $\pmed$ be as in~\eqref{eq:pmed_def}.

\textbf{Step 1.} 
Consider two flow lines $\wt{\eta}_1$ (resp.\ $\wt{\eta}_2$) in $\D$ with angles $\theta_1+a$ (resp.\ $\theta_2-a$). In other words, $\wt{\eta}_1,\wt{\eta}_2$ have slightly wider angle than $\eta_1,\eta_2$. Given $\wt{\eta}_1,\wt{\eta}_2$, the curves $\eta_1$ (resp.\ $\eta_2$) are concatenations of flow lines with angles $\theta_1$ (resp.\ $\theta_2$) within the components of $\D \setminus (\wt{\eta}_1 \cup \wt{\eta}_2)$ that are bounded between $\wt{\eta}_1,\wt{\eta}_2$. By \cite[Lemma~3.11]{dkm-fan-adjacency}, the probability $\p[ \wt{\eta}_1 \cap \wt{\eta}_2 \cap B(0,1/2) \neq \varnothing ] \to \pmed$ as $a \to 0$. We assume that $a$ is small enough so that the difference in probabilities is less than $(q/100)\pmed$.

By \cite[Theorem~1.5]{mw2017intersections}, the set $\wt{\eta}_1 \cap \wt{\eta}_2 \cap \D$ almost surely has Hausdorff dimension $\wt{d} = \ddouble-O(a)$. On the event that $\wt{\eta}_1,\wt{\eta}_2$ intersect in $B(0,1/2)$, the same is true for $\wt{\eta}_1 \cap \wt{\eta}_2 \cap B(0,1/2)$.

For $0 < \delta < 1/2$, we divide $B(0,1/2)$ into $\delta^{-2}$ squares of sidelength $\delta$. For a square $Q$, denote by $I_Q$ the indicator of the event that there exists an intersection point $\wt{\eta}_1(t_1) = \wt{\eta}_2(t_2) \in Q$. The result on the dimension implies
\[ 
\p\left[ \sum_Q I_Q \ge \delta^{-\ddouble+O(a)} \right] \ge (1-q/10)\pmed
\quad\text{for small $\delta$.}
\]
Let $F_\delta$ denote the event $\{ \sum I_Q \ge \delta^{-\ddouble+O(a)}\}$.

\textbf{Step 2.} 
Fix $M,a > 0$ and let $G_{M,a}$ denote the event that for every intersection point $\wt{\eta}_1(t_1) = \wt{\eta}_2(t_2)$, the domain $\D \setminus (\wt{\eta}_1[0,t_1] \cup \wt{\eta}_2[0,t_2])$ is $(M,a)$-good within $B(0,3/4)$ (as defined in Definition~\ref{def:regularity}). By Proposition~\ref{pr:regularity} and Example~\eqref{it:intersections_regularity}, we have $\p[G_{M,a}^c] < (q/10)\pmed$ for sufficiently large $M$.

For $t_1,t_2$ as above, let $\varphi_{t_1}\colon \h \to \D \setminus \Fill(\wt{\eta}_1[0,t_1] \cup \wt{\eta}_2[0,t_2])$ be a conformal transformation with $\varphi_{t_1}(0) = \wt{\eta}_1(t_1)$, $\varphi_{t_1}(\infty) = i$, and normalize it so that $\min\{ y>0 : \dist(\varphi_{t_1}(iy), \wt{\eta}_1[0,t_1] \cup \wt{\eta}_2[0,t_2] \cup \partial\D) = \delta \} = 1$. On the event $G_{M,a}$ we have (with the notation from Definition~\ref{def:regularity})
\[
\dist(\varphi_{t_1}(i), \wt{\eta}_1(t_1)) \lesssim \sum_{k \ge \log_2(\delta^{-1})} \abs{\CZ^{B(0,3/4)}_k}2^{-k} \le \sum_{k \ge \log_2(\delta^{-1})} M2^{-(1-a)k} \lesssim M\delta^{1-a} .
\]

\textbf{Step 3.} 
For each square $Q$, we define now a binary random variable $X_Q$ such that $\p[X_Q=1] \ge 1-(q/10)\pmed$. If $I_Q=0$, we set $X_Q=1$. If $I_Q=1$, let $\wt{\eta}_1(t_{Q,1}) = \wt{\eta}_2(t_{Q,2})$ denote the first intersection point in $Q$. It is not hard to see that the set $\wt{\eta}_1[0,t_{Q,1}] \cup \wt{\eta}_2[0,t_{Q,2}]$ is local for $h$ (since for each closed set $A$, the curve $\wt{\eta}_1$ (resp.\ $\wt{\eta}_2$) up until first exiting $A$ is determined by the values of $h$ on $A$, which by \cite[Lemma~3.9]{ss2013continuumcontour} implies locality).

For some fixed $\wt{r} > 1$, we set $X_Q = 1$ if there exists a component $P_Q$ of $\D \setminus (\wt{\eta}_1[t_{Q,1},\infty)) \cup \wt{\eta}_2[t_{Q,2},\infty))$ such that
\begin{itemize}
\item $\varphi_{t_{Q,1}}^{-1}(P_Q) \subseteq [-\wt{r},\wt{r}]\times[\wt{r}^{-1},\wt{r}]$,
\item there is some point $z_{P_Q}$ on the hyperbolic geodesic in $P_Q$ from its first boundary point to its last boundary point traced by $\wt{\eta}_1$ such that $\dist(z_{P_Q}, \partial P_Q) \ge \wt{r}^{-1}\delta$.
\end{itemize}
We will refer to such $P_Q$ as a ``nice pocket''. 
Since $\abs{\varphi_{t_{Q,1}}'(i)} \asymp \delta$, by Koebe's distortion theorem, such a ``nice pocket'' satisfies $\diam(P_Q) \asymp \delta$ and $\dist(P_Q, \varphi_{t_{Q,1}}(i)) \lesssim \delta$ (up to implicit factors depending on $\wt{r}$).

We claim that for any $c<1$ we can pick $\wt{r}$ such that $\p(X_Q=1) \ge c$ uniformly in $\delta$ and $Q$. Indeed, note that the conditional laws of $(\varphi_{t_{Q,1}}^{-1}(\wt{\eta}_1([t_{Q,1},\infty))), \varphi_{t_{Q,1}}^{-1}(\wt{\eta}_2([t_{Q,2},\infty))))$ are comparable for all choices of $\delta$ and $Q$, the only difference being the location of the force points of weights $O(a)$ at $\varphi_{t_{Q,1}}^{-1}(0^+), \varphi_{t_{Q,1}}^{-1}(0^-)$. Observe that
\begin{enumerate}[1.]
\item $\varphi_{t_{Q,1}}^{-1}(\wt{\eta}_1), \varphi_{t_{Q,1}}^{-1}(\wt{\eta}_2)$ intersect with probability $1$, hence the probability of them intersecting in $B(0,\wt{r})\setminus B(0,\wt{r}^{-1})$ tends to $1$ as $\wt{r} \to \infty$.
\item When they intersect, they will intersect infinitely often in the neighborhood.
\item $\varphi_{t_{Q,1}}^{-1}(\wt{\eta}_1), \varphi_{t_{Q,1}}^{-1}(\wt{\eta}_2)$ do not intersect $\R$ (since $\kappa<4$), hence for any $x>0$ the probability of them intersecting $\{ \abs{\re(z)} \ge x, \im(z) \le \wt{r}^{-1} \}$ tends to $0$ as $\wt{r} \to \infty$.
\end{enumerate}
These facts imply that with probability tending to $1$, the curves $\varphi_{t_{Q,1}}^{-1}(\wt{\eta}_1), \varphi_{t_{Q,1}}^{-1}(\wt{\eta}_2)$ will form a pocket that is contained in $[-\wt{r},\wt{r}]\times[\wt{r}^{-1},\wt{r}]$. Denoting by $\wt P$ the largest such pocket, and $z_{\wt P}$ the point on the hyperbolic geodesic from its first to its last boundary point that maximizes $\dist(z_{\wt P},\partial\wt P)$, we have a.s.\@ $\dist(z_{\wt P},\partial\wt P) > 0$, and therefore $\dist(z_{\wt P},\partial\wt P) > \wt{r}^{-1}$ with probability tending to $1$. Then $P_Q = \varphi_{t_{Q,1}}(\wt P)$ is as desired.

\textbf{Step 4.} 
We apply the previous steps to show that with positive probability, we create $\delta^{-\ddouble+O(a)}$ disjoint ``nice pockets'' of sizes comparable to $\delta$. 

We consider now squares with sidelength $\delta^{1-a}$ instead of $\delta$. Let $t_{Q_1} < t_{Q_2} < \cdots$ each denote the first intersection time in a new square of sidelength $\delta^{1-a}$. (Set $t_{Q_i}=\infty$ when there are no more intersection points.) Let $N_\delta = \delta^{-\ddouble+O(a)}$. By the previous steps and Lemma~\ref{le:binary_rv_fraction} we have
\[ 
\p\left[ G_{M,a} \cap F_{\delta^{1-a}} \cap \left\{ \sum_{i=1}^{N_\delta} X_{Q_i} \ge \frac{q}{10}\pmed N_\delta \right\} \right] 
\ge (1-\frac{4}{10}q)\pmed 
\]
for small enough $\delta$. 
On this event we have $t_{Q_{N_\delta}} < \infty$ and for at least $\frac{q}{10}\pmed N_\delta$ such $Q_i$'s there is a ``nice pocket'' $P_{Q_i}$ of size comparable to $\delta$ within distance $\lesssim \delta^{1-a}$ to $\wt{\eta}_1(t_{Q_i})$ (with implicit factors depending on $M,a,\wt{r}$). Each pocket can be within distance $\delta^{1-a}$ to at most a fixed number of such $Q_i$'s. Therefore at least $\gtrsim N_\delta$ of these ``nice pockets'' must be disjoint.

\textbf{Step 5.} 
Now let $p_\bad = \p[E]$ where $E$ is the event in the lemma statement. We want to show that $p_\bad \lesssim \delta^{\ddouble-O(a)}$.

Recall that for each pocket $P_{Q_i}$ the segments of $\eta_1,\eta_2$ within $P_{Q_i}$ are given by the flow lines of the restriction of $h$ to $P_{Q_i}$. Moreover, the laws of the internal metrics within disjoint pockets are conditionally independent given $\wt{\eta}_1,\wt{\eta}_2$ (due to the Markovian property of the metric). It therefore suffices to show that for each ``nice pocket'', with conditional probability at least $p_\bad^{1+a}$ the distance across the segments of $\eta_1,\eta_2$ within $P_{Q_i}$ is at least $\quant{q}{\epsilon}$. Indeed, this will imply that with conditional probability at least $1-(1-p_\bad^{1+a})^{N_\delta}$ this happens in at least one pocket, i.e.
\begin{equation}\label{eq:larger_than_median}
 \metapproxres{\epsilon}{U_{x',y'}}{x}{y}{\Gamma_1} > \quant{q}{\epsilon} \quad\text{for some }(x',x,y,y')\in\intptsapprox{1}{\epsilon} .
\end{equation}
But by the definition of the quantile $\quant{q}{\epsilon}$ in~\eqref{eq:quantile_def}, the overall probability of~\eqref{eq:larger_than_median} is at most $(1-q)\pmed$. If we had $p_\bad^{1+a}N_\delta \to \infty$ as $\delta\searrow 0$, then the overall probability of~\eqref{eq:larger_than_median} would approach at least $(1-(4/10)q)\pmed$ which is a contradiction. Therefore we must have $p_\bad^{1+a} \lesssim N_\delta^{-1}$ which is the statement we wanted to show.

Let $P=P_{Q_i}$ be one of the ``nice pockets'', and let $z_P \in P$ be as described in Step~3. Let $\varphi_P\colon \delta\D \to P$ be a conformal transformation with $\varphi_P(0) = z_P$. Recall that $|\varphi_P'(0)| \in [\wt{r}_1^{-1},\wt{r}_1]$ for some $\wt{r}_1$ depending only on $\wt{r}$. Pick $c_0>0$ small enough (depending only on $\wt{r}$) so that $|(\varphi_P^{-1})'(w)|/|(\varphi_P^{-1})'(z_P)| \in [0.99,1.01]$ for all $w \in B(z_P,c_0\delta)$.

Let $E_P$ denote the event that the event in the lemma statement occurs for the restriction of $h$ to $B(z_P,c_0\delta)$. This is an event that depends only on the values of $h$ on $B(z_P,3c_0\delta/4)$ and the internal metric in the enclosed regions. By absolute continuity (recall Lemma~\ref{le:abs_cont_kernel}) and the translation invariance, we have $\p[ E_P \mid P ] \gtrsim \p[E]^{1+a}$.

Let $\eta_{w^P_1}^{\theta_1+\pi}$, $\eta_{w^P_1}^{\theta_1}$, $\eta_{w^P_2}^{\theta_2}$, $\eta_{w^P_2}^{\theta_2-\pi}$ denote the flow lines occurring in the event $E_P$. Let $\Fh^X_P$ denote the conditional expectation of $h$ given its values on $X_{z_P,c_0\delta}^{\theta_1+\pi}, X_{z_P,c_0\delta}^{\theta_1}, X_{z_P,c_0\delta}^{\theta_2}, X_{z_P,c_0\delta}^{\theta_2-\pi}$. We argue now that, on the event $E_P$, with positive conditional probability $\wt{p}$ the flow lines $\eta_1,\eta_2$ merge with $\eta_{w^P_1}^{\theta_1}$, $\eta_{w^P_2}^{\theta_2}$ before they trace the region $U_P$ as described in the definition of $E_P$. This will imply that the conditional probability given $P$ that the distance across a region bounded between $\eta_1,\eta_2$ in $P$ is at least $\quant{q}{\epsilon}$ is at least $\wt{p}p_\bad^{1+a}$ which is what we wanted to prove.

It remains to show the claim. Let $\wt{X}_P^\theta$ be defined analogously to $X_\delta^\theta$ but in the conformal annulus 
\[ \wt{A}_P := \varphi\left( A\left(0, \frac{0.8c_0\delta}{|\varphi_P'(0)|}, \frac{0.9c_0\delta}{|\varphi_P'(0)|}\right) \right) . \] 
Recall that the choice of $c_0$ implies that $\wt{A}_P$ sits strictly between $\partial B(z_P,3c_0\delta/4)$ and $\partial B(z_P,0.99c_0\delta)$. Let $\wt{F}_P$ be the following event for $\wt{X}_P^{\theta_1+\pi}, \wt{X}_P^{\theta_1}, \wt{X}_P^{\theta_2}, \wt{X}_P^{\theta_2-\pi}$, and the values of $h$ on these sets: For each quadruple of strands ending on the outer boundary of the annulus with height differences $\pi,\theta_1-\theta_2,\pi$ (in counterclockwise order), the conditional probability that the continuation of the flow line with angle $\theta_1+\pi$ (resp.\ $\theta_2-\pi$) hits the left (resp.\ right) boundary of $P$ is at least $p_1$. By absolute continuity there exists $p_1>0$ (depending on $a,M,p_0$) such that
\[ \p[\wt{F}_P^c \mid \Fh^X_P] \one_{\sup_{\partial B(z_P,7c_0\delta/8)|\Fh^X_P|} \le M} < p_0/2 . \]
In particular,
\[ \p[\wt{F}_P^c \mid \Fh^X_P] \one_{E_P} < p_0/2 . \]

Let $\wt{F}^1_P$ be the event that the flow lines occurring in the event $E_P$ extend to the outer boundary of $\wt{A}_P$ in a way that the continuation of $\eta_{w^P_1}^{\theta_1+\pi}$ does not intersect the continuations of $\eta_{w^P_2}^{\theta_2}$, $\eta_{w^P_2}^{\theta_2-\pi}$, and the continuation of $\eta_{w^P_2}^{\theta_2-\pi}$ does not intersect the continuations of $\eta_{w^P_1}^{\theta_1}$, $\eta_{w^P_1}^{\theta_1+\pi}$. By the last item in the definition of $E_P$ and absolute continuity we have $\p[\wt{F}^1_P \mid E_P] \ge p_0$ where $p_0>0$ depends only on $p$. Hence,
\[ \p[\wt{F}^1_P \cap \wt{F}_P \mid E_P] \ge p_0/2 . \]
Finally, by how $\wt{F}_P$ is defined, the conditional probability given $E_P \cap \wt{F}^1_P \cap \wt{F}_P$ that the continuation of $\eta_{w^P_1}^{\theta_1+\pi}$ (resp.\ $\eta_{w^P_2}^{\theta_2-\pi}$) hits the left (resp.\ right) boundary of $P$ without intersecting $\eta_{w^P_2}^{\theta_2}$, $\eta_{w^P_2}^{\theta_2-\pi}$ (resp.\ $\eta_{w^P_1}^{\theta_1+\pi}$, $\eta_{w^P_1}^{\theta_1}$) is at least $p_1$. Hence, the conditional probability of this given just $E_P$ is at least $p_1 p_0/2$, and then $\eta_1,\eta_2$ necessarily merge with $\eta_{w^P_1}^{\theta_1}$, $\eta_{w^P_2}^{\theta_2}$. This finishes the proof of the claim and the lemma.
\end{proof}

\begin{proof}[Proof of Lemma~\ref{le:a_priori_disc}]
Let $G_{0,2^{-j}}$, for $j = \lceil\log_2(\delta^{-1-\innexp/2})\rceil,\ldots,\lfloor\log_2(\delta^{-1-\innexp})\rfloor$, be the event from Lemma~\ref{le:good_scales_merging}. Let $b>0$ large and let $F^1$ be the event that at least $1/3$ fraction of $G_{0,2^{-j}}$ occur, so that $\p[(F^1)^c] = O(\delta^b)$ for suitable $M,p$. Further, pick a small constant $a>0$ and let $F^2$ be the event that the space-filling SLEs associated with $\eta'_{1,\delta}$ and $\eta'_{2,\delta}$ fill a ball of radius $r^{1+a}$ whenever they travel distance $r \le \delta$ within $B(0,3\delta/4)$. By Lemma~\ref{le:fill_ball} we have $\p[(F^2)^c] = o^\infty(\delta)$.

It therefore suffices to show $\p[E \cap F^1 \cap F^2] = O(\delta^\ddouble)$.

Let $J \in \{ \lceil\log_2(\delta^{-1-\innexp/2})\rceil,\ldots,\lfloor\log_2(\delta^{-1-\innexp})\rfloor \}$ be sampled uniformly at random, and let $w_1,w_2 \in B(0,2^{-J}/4)$ be sampled according to the normalized Lebesgue measure (independently of $h_\delta$ and $\metapprox{\epsilon}{\cdot}{\cdot}{\Gamma_\delta}$). Suppose we are on the event $E \cap F^1 \cap F^2$, and let $U_{x',y'}$ be as described in the event $E$. On the event $F^2$, each $\eta'_{1,\delta}$, $\eta'_{2,\delta}$ will cut off a ball of radius $2^{-J(1+a)}$ after it finishes tracing the left (resp.\ right) boundary of $U_{x',y'}$ and before it exits $B(0,2^{-J}/4)$. Let $\wh{E}$ be the event that $J$ is sampled so that $G_{0,2^{-J}}$ occurs and $w_1,w_2$ are sampled within these respective balls, so that
\[ \p[\wh{E} \mid E \cap F^1 \cap F^2] \ge \delta^{5a} . \]
On the event $\wh{E}$ the flow lines $\eta_{w_1}^{\theta_1}$ (resp.\ $\eta_{w_2}^{\theta_2}$) merge into $\eta^\delta_1$ (resp.\ $\eta^\delta_2$) before they trace the left (resp.\ right) boundary of $U_{x',y'}$.

Note that in order to sample the metric in the region between $\eta^\delta_1$, $\eta^\delta_2$, we can alternatively first sample $J,w_1,w_2$, then $\wt{h}_{0,2^{-J}}$ and $\eta_{w_1}^{\theta_1}, \eta_{w_2}^{\theta_2}$, the internal metrics in their enclosed regions (its law depends only on the values of $\wt{h}_{0,2^{-J}}$ in $B(0,(3/4)2^{-J})$ due to the Markovian property), and finally the remainder of the metric according to its conditional law in case $\wh{E}$ occurs (which is an event for $h$, and does not bias the law of the metric). On the event $\wh{E}$, both procedures yield the same result. In particular, if we let $E_{0,J}$ denote the event described in Lemma~\ref{le:a_priori_int_fl} occurring for $\wt{h}_{0,2^{-J}}$ and $w_1,w_2$, then
\begin{equation}\label{eq:condpr_int_fl_event}
\p[E_{0,J} \cap G_{0,2^{-J}}] \ge \delta^{5a}\p[E \cap F^1 \cap F^2] .
\end{equation}
On the event that the scale $2^{-J}$ is $M$-good, the law of $\wt{h}_{0,2^{-J}}$ is equivalent to that of a GFF with boundary values as $h_\delta$, with Radon-Nikodym derivative having bounded moments of all orders (depending on $M$). By Lemma~\ref{le:abs_cont_kernel}, this is also true for the joint law of $\wt{h}_{0,2^{-J}}$ and the internal metric. Hence, Lemma~\ref{le:a_priori_int_fl} implies $\p[E_{0,J} \cap G_{0,2^{-J}}] = O(\delta^{(1+\innexp/2)\ddouble+o(1)})$. This together with~\eqref{eq:condpr_int_fl_event} shows the result.
\end{proof}

\begin{lemma}
\label{le:median_scaling}
Consider the setup in Section~\ref{se:intersections_setup}. Fix $q,q' \in (0,1)$. Then
\[ \quant[\delta]{q'}{\epsilon} \le \delta^{\ddouble+o(1)}\quant{q}{\epsilon} \quad\text{as } \delta \searrow 0 \]
for $\epsilon < \delta \le 1$.
\end{lemma}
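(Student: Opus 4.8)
The plan is to argue by contradiction, reusing the self-similarity machinery from the proof of Lemma~\ref{le:a_priori_int_fl}. Fix $q,q'\in(0,1)$ and $a>0$; it suffices to prove $\quant[\delta]{q'}{\epsilon}\le\delta^{\ddouble-a}\quant{q}{\epsilon}$ for all sufficiently small $\delta$, uniformly in $\epsilon<\delta$, and then let $a\searrow 0$. Assuming this fails along a sequence $\delta\searrow0$, the definition of the quantile in~\eqref{eq:quantile_def} says that, conditioned on $\eta_1^\delta\cap\eta_2^\delta\cap B(0,\delta/2)\neq\varnothing$, the supremum of $\metapproxres{\epsilon}{U_{x',y'}}{x}{y}{\Gamma_\delta}$ over $\intptsapprox{\delta}{\epsilon}$ exceeds $\delta^{\ddouble-a}\quant{q}{\epsilon}$ with probability $>1-q'$.

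First I would work in the scale-$1$ configuration and, exactly as in Steps~1--4 of the proof of Lemma~\ref{le:a_priori_int_fl}, produce a collection of $N_\delta=\delta^{-\ddouble+o(1)}$ pairwise disjoint ``nice pockets'' of the slightly-wider-angle flow lines $\wt\eta_1,\wt\eta_2$, each of Euclidean size $\asymp\delta$, contained in $B(0,1/2)$, and traversed in order along the curves so that they sit ``in series''; I would additionally impose (via Lemma~\ref{le:good_scales_merging} and the $M$-good-scales machinery) that $\eta_1,\eta_2$ have a uniformly positive conditional chance of merging into the fresh flow lines inside each pocket before tracing the corresponding sub-region. All of this can be arranged to hold with probability at least $(1-\varrho)\pmed$ for any prescribed small $\varrho>0$. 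The point is that after uniformizing, each nice pocket contains a fresh scale-$\asymp\delta$ copy of the configuration, conditionally independent across pockets given $\wt\eta_1,\wt\eta_2$ (Markovian property of the metric, together with independence of the GFF restrictions to disjoint regions), and mutually absolutely continuous — with all Radon--Nikodym moments bounded — relative to the configuration defining $\quant[\delta]{q'}{\epsilon}$ conditioned on intersections. By Lemma~\ref{le:abs_cont_kernel} and the contradiction hypothesis, each pocket independently carries, with conditional probability at least some $p_1>0$ depending only on $q'$ and the fixed constants $\cserial,\cparallel(N)$, a sub-region $U_{x',y'}$ with internal distance exceeding $\delta^{\ddouble-a}\quant{q}{\epsilon}$ that is a genuine sub-region of the scale-$1$ configuration (here the merging of $\eta_1,\eta_2$ into the pocket is used, as in Step~5 of Lemma~\ref{le:a_priori_int_fl}).

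Next I would run a Chernoff bound over these conditionally independent Bernoulli events: with probability $(1-O(\varrho))\pmed-e^{-cp_1N_\delta}$, at least $(p_1/2)N_\delta=(p_1/2)\delta^{-\ddouble+o(1)}$ pockets simultaneously carry such a sub-region. These sub-regions are nested in series between the first and last intersection point of $\eta_1,\eta_2$ in $B(0,1/2)$ (taking $x,y$ slightly interior so that $(x',x,y,y')\in\intptsapprox{1}{\epsilon}$, using that there are infinitely many intersection points), and the Euclidean-separation hypotheses of the series law hold because distinct pockets are $\gtrsim\delta\gg\cserial\epsilon$ apart. Repeated application of the series law then gives $\metapproxres{\epsilon}{U_{x',y'}}{x}{y}{\Gamma_1}\ge(p_1/2)\delta^{-\ddouble+o(1)}\cdot\delta^{\ddouble-a}\quant{q}{\epsilon}=(p_1/2)\delta^{-a+o(1)}\quant{q}{\epsilon}$, which exceeds $\quant{q}{\epsilon}$ once $\delta$ is small. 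Conditioning on the intersection event, this forces $\sup_{\intptsapprox{1}{\epsilon}}\metapproxres{\epsilon}{U_{x',y'}}{x}{y}{\Gamma_1}>\quant{q}{\epsilon}$ with conditional probability $\ge 1-O(\varrho)-o(1)$, which for $\varrho$ small enough and $\delta$ small contradicts the definition of $\quant{q}{\epsilon}=\quant[1]{q}{\epsilon}$ in~\eqref{eq:quantile_def}. Since $a>0$ is arbitrary and all bounds are uniform in $\epsilon<\delta$ (the $\epsilon$-dependence entering only through the thresholds $\cserial\epsilon<\delta/10$), the claim follows.

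The step I expect to be the main obstacle is the upgrade from the ``one pocket succeeds'' estimate available in Lemma~\ref{le:a_priori_int_fl} to a genuine law-of-large-numbers statement over the $N_\delta$ pockets. The internal metrics and GFF restrictions in disjoint pockets are conditionally independent given $\wt\eta_1,\wt\eta_2$, but the event that $\eta_1,\eta_2$ actually \emph{merge into} a prescribed pocket — so that its sub-region is part of the scale-$1$ configuration — is not obviously local to that pocket and could couple neighbouring pockets. Handling this will require separating the ``internal distance is large'' event (which is local to the pocket) from the ``merging occurs'' event, and running a second fraction argument over the latter using independence across scales, while also checking that the count of usable pockets is robust to the various pathologies (pinch points, etc.) already confronted in the proof of Lemma~\ref{le:a_priori_int_fl}.
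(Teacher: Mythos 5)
Your proposal is correct and follows essentially the same route as the paper's proof: reuse the ``nice pocket'' construction from Steps~1--4 of Lemma~\ref{le:a_priori_int_fl} to produce $N_\delta=\delta^{-\ddouble+o(1)}$ scale-$\delta$ sub-configurations in series, use the Markovian property of the metric to get conditional independence across disjoint pockets, run Cram\'er/Chernoff to make a positive fraction of them ``bad'', and conclude via the compatibility and (approximate) series law that the quantile at scale~$1$ would be violated. The only cosmetic difference is that the paper avoids the ``fails along a sequence $\delta\searrow 0$'' framing (which requires relating $\quant[\delta]{q'}{\epsilon}$ to $\quant[c_0\delta]{q'}{\epsilon}$ for a fixed constant $c_0$) by directly lower-bounding each pocket distance by $\quant[c_0\delta]{q'}{\epsilon}$ and reading off $\quant[c_0\delta]{q'}{\epsilon}\lesssim N_\delta^{-1}\quant{q}{\epsilon}$ from the definition of $\quant{q}{\epsilon}$; your contradiction hypothesis works too after applying the pocket argument at the slightly larger scale $\delta/c_0$. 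The obstacle you flag — decoupling the ``merging into the pocket'' events from the pocket-local distance events — is handled exactly as in Step~5 of Lemma~\ref{le:a_priori_int_fl}: the segments of $\eta_1,\eta_2$ inside a pocket are flow lines of the restriction of $h$ to that pocket, so the success event is measurable with respect to the GFF (and the conditionally sampled metric) inside the pocket, hence genuinely independent across pockets given $\wt\eta_1,\wt\eta_2$.
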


\begin{proof}
We use the setup in the proof of Lemma~\ref{le:a_priori_int_fl}. Since we are considering both the setup in $\D$ and $\delta\D$, we will write $\p_1$ (resp.\ $\p_\delta$) for the law in $\D$ (resp.\ $\delta\D$). By the definition~\eqref{eq:quantile_def} of $\quant[\delta]{q'}{\epsilon}$,
\[
\p_\delta\left[ \sup_{(x',x,y,y')\in\intptsapprox{\delta}{\epsilon}}\metapproxres{\epsilon}{U_{x',y'}}{x}{y}{\Gamma_\delta} \ge \quant[\delta]{q'}{\epsilon} \right] \ge (1-q')\pmed .
\]
Let $F$ be the event that space-filling SLE fills a ball of radius $c\delta$ whenever it travels distance $\delta/4$. Since this event is scale-invariant, we can pick $c$ small enough so that $\p_\delta[F^c] < ((1-q')/4)\pmed$ for all $\delta$.

Sample $w_1,w_2 \in B(0,3\delta/4)$ uniformly at random. Let $E$ be the event defined similarly as in the statement of Lemma~\ref{le:a_priori_int_fl} but with $\quant[\delta]{q'}{\epsilon}$ in place of $\quant{q}{\epsilon}$, and replacing the radii of the balls $B(0,\delta/4)$, $B(0,3\delta/4)$ by $B(0,3\delta/4)$, $B(0,7\delta/8)$. Then the first item in the definition of $E$ (with $\quant{q}{\epsilon}$ replaced by $\quant[\delta]{q'}{\epsilon}$) is satisfied with probability at least $c'(1-q')\pmed$ (where $c'>0$ depends on $c$). The second item, i.e.\ the analogue of the event $G_{0,\delta}$ from Lemma~\ref{le:good_scales_merging}, is an event for the GFF and does not depend on $\delta$. Hence, we can pick $M,p$ such that $\p_\delta[E] \ge c'(1-q')\pmed$.

Now let $a>0$ be arbitrarily small and let us suppose $\quant[\delta]{q'}{\epsilon} > \delta^{\ddouble-a}\quant{q}{\epsilon}$. We argue that this would imply
\[ \p_1\left[ \sup_{(x',x,y,y')\in\intptsapprox{1}{\epsilon}}\metapproxres{\epsilon}{U_{x',y'}}{x}{y}{\Gamma_1} > \quant{q}{\epsilon} \right] > (1-q)\pmed \]
which is a contradiction.

We set up as in the proof of Lemma~\ref{le:a_priori_int_fl}. Suppose we are on the event in Step~4 that there are at least $N_\delta = \delta^{-\ddouble+O(a)}$ disjoint ``nice pockets'' of sizes comparable to $\delta$, and with distances at least $\delta^{1-a}$ from each other (which occurs with probability at least $(1-(4/10)q)\pmed$ when $\delta$ is small). We use the same argument as in Step~5, only that here we have an event $E$ with $\p_\delta[E] \ge c'(1-q')\pmed$. By the argument in Step~5, for each ``nice pocket'', the conditional probability that the distance across it is at least $\quant[c_0\delta]{q'}{\epsilon}$ (where $c_0>0$ is a suitable constant) is at least some fixed constant $p_1>0$.

By the Markovian property of the metric and Cram\'er's theorem, with probability $1-\exp(-cN_\delta+o(N_\delta)) \to 1$ for at least a positive fraction of the $N_\delta$ ``nice pockets'' the distance across is at least $\quant[c_0\delta]{q'}{\epsilon}$. Since the ``nice pockets'' have distances at least $\delta^{1-a}$ from each other, the compatibility~\eqref{eq:strong_compatibility_bubbles} and the (approximate) series law then imply $\metapproxres{\epsilon}{U_{x',y'}}{x}{y}{\Gamma_1} \gtrsim N_\delta \quant[c_0\delta]{q'}{\epsilon}$ for some $(x',x,y,y')\in\intptsapprox{1}{\epsilon}$. This means that we must have $\quant[c_0\delta]{q'}{\epsilon} \le c_1 N_\delta^{-1}\quant{q}{\epsilon}$ for some constant $c_1$, otherwise we would get a contradiction.
\end{proof}

The Lemma~\ref{le:median_scaling} is useful for obtaining stronger bounds on distances such as the probability that it exceeds $\delta^\zeta \median{\epsilon}$ for some $\zeta>0$. This follows by applying the estimates with the metric $\mettapprox{\delta_0^{-1}\epsilon}{\cdot}{\cdot}{\Gamma} = \metapprox{\epsilon}{\delta_0\cdot}{\delta_0\cdot}{\delta_0\Gamma}$ and noting that $\mediant{\delta_0^{-1}\epsilon} = \median[\delta_0]{\epsilon} \le \delta_0^{\ddouble+o(1)}\median{\epsilon}$ (recall Lemma~\ref{le:scaled_metric}). For example, we have the following statement.

\begin{corollary}
Fix $\innexp > 0$. For $0<\delta<\delta_0 \le 1$ and $\epsilon < \delta_0$, let $E$ denote the event that there exist $(x',x,y,y') \in \intptsapprox{\delta}{\epsilon}$ such that $U_{x',y'} \subseteq B(0,\delta^{1+\innexp})$ and $\metapproxres{\epsilon}{U_{x',y'}}{x}{y}{\Gamma_\delta} \ge \delta_0^{\ddouble}\median{\epsilon}$. Then $\p[E] = O((\delta/\delta_0)^{\ddouble})$.
\end{corollary}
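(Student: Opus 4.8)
The plan is to deduce the corollary from Lemma~\ref{le:a_priori_disc} by a spatial rescaling, following the recipe sketched in the paragraph before the statement. Put $\wt\delta = \delta/\delta_0 \in (0,1)$ and $\wt\epsilon = \delta_0^{-1}\epsilon$; since $\epsilon < \delta_0$ we have $\wt\epsilon < 1$. Normalizing $\varphi_\delta = \delta\varphi_1$ in the setup of Section~\ref{se:intersections_setup}, we get $\delta_0^{-1}\Gamma_\delta = \Gamma_{\wt\delta}$, $\delta_0^{-1}\eta_i^\delta = \eta_i^{\wt\delta}$, and $\delta_0^{-1}(\delta\D) = \wt\delta\D$, and since the boundary data of the GFF is scale invariant the rescaled picture is again an instance of the Section~\ref{se:intersections_setup} setup. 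By Lemma~\ref{le:scaled_metric}, the rescaled family $\mettapproxres{\wt\epsilon}{V}{x}{y}{\Gamma_{\wt\delta}} := \metapproxres{\epsilon}{\delta_0 V}{\delta_0 x}{\delta_0 y}{\Gamma_\delta}$ is again an approximate \clekp{} metric with the same constants $\cserial,\cparallel(N)$ and with approximation parameter $\wt\epsilon$; crucially, the metric \emph{value} is unchanged by this change of coordinates.

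First I would record how $E$ transforms under the similarity $z \mapsto \delta_0^{-1}z$. Since $\dpath$ scales linearly with the domain and $B(0,3\delta/4)$, $B(0,\delta/2)$ are carried to $B(0,3\wt\delta/4)$, $B(0,\wt\delta/2)$, a tuple $(x',x,y,y') \in \intptsapprox{\delta}{\epsilon}$ becomes $(\delta_0^{-1}x',\delta_0^{-1}x,\delta_0^{-1}y,\delta_0^{-1}y') \in \intptsapprox{\wt\delta}{\wt\epsilon}$ --- in particular each constraint ``$\dpath(\cdot,\cdot) \ge \cserial\epsilon$'' becomes ``$\ge \cserial\wt\epsilon$''. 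Moreover, if $U_{x',y'} \subseteq B(0,\delta^{1+\innexp})$, then $\delta_0^{-1}U_{x',y'} \subseteq B(0,\delta_0^{-1}\delta^{1+\innexp}) \subseteq B(0,\wt\delta^{1+\innexp})$, the last inclusion because $\delta_0^{-1} \le \delta_0^{-1-\innexp}$ when $\delta_0 \le 1$. Hence $E$ is contained in the event $\wt E$ that there exist $(x',x,y,y') \in \intptsapprox{\wt\delta}{\wt\epsilon}$ with $U_{x',y'} \subseteq B(0,\wt\delta^{1+\innexp})$ and $\mettapproxres{\wt\epsilon}{U_{x',y'}}{x}{y}{\Gamma_{\wt\delta}} \ge \delta_0^{\ddouble}\median{\epsilon}$.

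Second, I would pin down the renormalization constant of the rescaled metric. The map $z \mapsto \delta_0 z$ is a bijection $\intptsapprox{1}{\wt\epsilon} \to \intptsapprox{\delta_0}{\epsilon}$ carrying the conditioning event $\{\eta_1^1 \cap \eta_2^1 \cap B(0,1/2) \neq \varnothing\}$ to $\{\eta_1^{\delta_0} \cap \eta_2^{\delta_0} \cap B(0,\delta_0/2) \neq \varnothing\}$ and preserving the metric values; hence the median $\mediant{\wt\epsilon}$ of the scaled metric in its own ``$\wt\delta = 1$'' setup (cf.~\eqref{eq:quantile_def}) equals $\median[\delta_0]{\epsilon}$. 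By Lemma~\ref{le:median_scaling}, $\mediant{\wt\epsilon} = \median[\delta_0]{\epsilon} \le \delta_0^{\ddouble+o(1)}\median{\epsilon}$ as $\delta_0 \searrow 0$ --- this is the sense in which the threshold $\delta_0^{\ddouble}\median{\epsilon}$ in the corollary is to be read, and with this reading $\delta_0^{\ddouble}\median{\epsilon} \ge \mediant{\wt\epsilon}$, so the threshold in $\wt E$ dominates the median of the rescaled metric.

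Finally I would apply Lemma~\ref{le:a_priori_disc} to the approximate \clekp{} metric $\mettapprox{\wt\epsilon}{\cdot}{\cdot}{\Gamma_{\wt\delta}}$, at scale $\wt\delta$, with quantile parameter $q = 1/2$; since that lemma holds uniformly over approximate \clekp{} metrics and over the parameter, it yields $\p[\wt E] = O(\wt\delta^{\ddouble}) = O((\delta/\delta_0)^{\ddouble})$, and therefore $\p[E] \le \p[\wt E] = O((\delta/\delta_0)^{\ddouble})$, with an implicit constant depending only on $\innexp,\cserial,\cparallel(N)$. The only point requiring care is the identification $\mediant{\wt\epsilon} = \median[\delta_0]{\epsilon}$: one has to verify that the definitions of $\intpts{\delta}$, $\intptsapprox{\delta}{\epsilon}$, the regions $U_{x,y}$, and the conditioning event in~\eqref{eq:quantile_def} all transform coherently (and in particular that $\cserial\epsilon$ passes to $\cserial\wt\epsilon$, matching the approximation parameter of the rescaled metric). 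There is no genuine analytic difficulty --- the corollary is purely a change-of-scale consequence of the a priori estimate together with the quantile scaling of Lemma~\ref{le:median_scaling}.
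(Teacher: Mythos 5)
Your proof follows exactly the recipe the paper gives in the sentence immediately preceding the corollary: rescale by $\delta_0^{-1}$ using Lemma~\ref{le:scaled_metric}, identify the median of the rescaled metric as $\median[\delta_0]{\epsilon}$, invoke Lemma~\ref{le:median_scaling} for $\median[\delta_0]{\epsilon} \le \delta_0^{\ddouble+o(1)}\median{\epsilon}$, and then apply Lemma~\ref{le:a_priori_disc} at scale $\wt\delta = \delta/\delta_0$. Since the paper offers no separate proof of the corollary beyond that remark, your write-up is a faithful expansion of the intended argument, and your bookkeeping (the containment $\delta_0^{-1}U_{x',y'}\subseteq B(0,\wt\delta^{1+\innexp})$, the transformation of $\intptsapprox{\delta}{\epsilon}$ under the similarity, the identification $\mediant{\wt\epsilon}=\median[\delta_0]{\epsilon}$) is correct.

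One point to be aware of: you observe, correctly, that Lemma~\ref{le:median_scaling} only yields $\mediant{\wt\epsilon}\le\delta_0^{\ddouble+o(1)}\median{\epsilon}$, not $\mediant{\wt\epsilon}\le\delta_0^{\ddouble}\median{\epsilon}$, and since the sub-polynomial factor $\delta_0^{o(1)}$ need not be $\le 1$, the literal threshold $\delta_0^{\ddouble}\median{\epsilon}$ does not formally dominate $\mediant{\wt\epsilon}$. Your way of handling this --- declaring that ``$\delta_0^{\ddouble}$'' in the corollary is to be read as ``$\delta_0^{\ddouble+o(1)}$'' --- is really a comment on the paper's statement rather than a resolution within the proof. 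Since the corollary is explicitly offered as an illustrative example (``For example, we have the following statement''), this imprecision lies in the source, and it would have been cleaner to state outright that the corollary holds with threshold $\delta_0^{\ddouble-a}\median{\epsilon}$ for any fixed $a>0$ (equivalently, $\delta_0^{\ddouble+o(1)}\median{\epsilon}$), which is what the chain of lemmas actually delivers and what is used downstream. This does not change the substance of your argument.
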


We will show later that there is also a lower bound for the scaling behavior of the quantiles, complementing Lemma~\ref{le:median_scaling}. Moreover, we will show that the quantiles are comparable across all scales. These properties are stated as Lemmas~\ref{le:median_scaling_lb} and~\ref{le:quantiles_comparable}.

\subsection{Proof of the intersection crossing exponent}
\label{se:intersection_exponent_conclude}

In this subsection, we conclude the proof of Proposition~\ref{prop:intersection_crossing_exponent} conditionally on the results of Section~\ref{se:bubble_exponent}.

Throughout this subsection, we assume that a variant of Lemma~\ref{le:a_priori_disc} holds for a certain exponent
\[ \bestexp \ge \ddouble . \]
Consider the setup described in Section~\ref{se:intersections_setup}, and recall the metric $\metapproxacres{\epsilon}{U}{\cdot}{\cdot}{\Gamma}$ defined in~\eqref{eq:shortcutted_metric}. Let $\epsexp > 0$. We assume that for any fixed $\innexp > 0$ the following is true. Let $E$ denote the event that there exist $(x',x,y,y') \in \intptsapprox{\delta}{\epsilon}$ such that $U_{x',y'} \subseteq B(0,\delta^{1+\innexp})$ and $\metapproxacres{\epsilon}{U_{x',y'}}{x}{y}{\Gamma_\delta} \ge \median{\epsilon}+\epsilon^{-\epsexp}\ac{\epsilon}$. Then
\begin{equation}\label{eq:a_priori_assumption}
\p[E] \le \delta^{\bestexp+o(1)} \quad\text{as } \delta\searrow 0 .
\end{equation}

Lemma~\ref{le:a_priori_disc} shows that~\eqref{eq:a_priori_assumption} holds with $\bestexp = \ddouble$. Our goal is to prove that the same holds for any exponent $\bestexp > 0$.

\begin{proposition}\label{pr:intersection_exponent_improve}
Consider the setup described at the beginning of Section~\ref{se:intersections_setup}. Assume that~\eqref{eq:a_priori_assumption} holds for some $\bestexp \in [\ddouble, \infty)$ and $\epsexp > 0$. Then there exists $\alpha' > 0$ such that
\[
\p\left[ \sup_{(x',x,y,y')\in\intptsapprox{\delta}{\epsilon}}\metapproxacres{\epsilon}{U_{x',y'}}{x}{y}{\Gamma_\delta} \geq \median{\epsilon}+\epsilon^{-\epsexp}\ac{\epsilon} \right] = O(\delta^{\bestexp+\alpha'})
\quad\text{as } \delta\searrow 0 .
\]
\end{proposition}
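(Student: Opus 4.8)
The plan is to run a bootstrapping argument in the spirit of the outline in the introduction: starting from the a priori bound \eqref{eq:a_priori_assumption} with exponent $\bestexp$, we improve it to $\bestexp + \alpha'$ for some $\alpha' > 0$ depending only on $\cserial, \cparallel(N), \kappa'$. Fix $\innexp > 0$ small, and consider the region $U = U_{x',y'}$ bounded between $\eta_1^\delta, \eta_2^\delta$ for $(x',x,y,y') \in \intptsapprox{\delta}{\epsilon}$ with $U \subseteq B(0, \delta^{1+\innexp})$. The key structural fact (recall the left side of Figure~\ref{fi:outline_selfsimilarity} and Lemma~\ref{le:law_bubble}) is that as we explore $\eta_1^\delta$ (equivalently $\eta_2^\delta$) from $x$ onward, after each intersection point $\eta_1^\delta(t_1) = \eta_2^\delta(t_2)$ the conditional law of the remainder, mapped back to $\h$, is again that of the pair $(\eta_1, \eta_2)$. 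We successively explore until we find the first intersection point $x_1$ along $\eta_1^\delta$ such that $\metapproxac{\epsilon}{x}{x_1}{\Gamma_\delta} \geq \median{\epsilon} + \epsilon^{-\epsexp}\ac{\epsilon}$, using the series law (and the approximate series law for $\metapproxac{\epsilon}{\cdot}{\cdot}{\Gamma_\delta}$, which was noted to hold just after \eqref{eq:shortcutted_metric}) to ensure that distances accumulate. When this stopping happens, it happens within one of the self-similar subregions; we then restart the exploration from $x_1$, and argue via the Markovian property and the (approximate) independence across subregions — quantified using Lemma~\ref{lem:cle_sle_markov} and the resampling machinery of Proposition~\ref{prop:resampling} — that the number of repetitions needed to traverse $U$ is stochastically dominated by a geometric random variable.

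The core of the improvement is to control the distance across the individual \emph{bubbles} at which the exploration has stopped. This is precisely the content of Section~\ref{se:bubble_exponent}, which I am assuming: for each such bubble, the $\metapproxac{\epsilon}{\cdot}{\cdot}{\Gamma_\delta}$-distance across it exceeds $\median{\epsilon} + \epsilon^{-\epsexp}\ac{\epsilon}$ with probability $O(\delta^{\bestexp + o(1)})$ on each of its two sides separately, and crucially, by the approximate independence between the left and right halves of a bubble (away from its endpoints), the probability that \emph{both} sides are large is $O(\delta^{2\bestexp - o(1)})$. Summing over the geometrically-many bubbles and the $O(\delta^{-\ddouble - o(1)})$ subregions, and using Lemma~\ref{le:median_scaling} together with Lemma~\ref{le:binary_rv_fraction} and Cram\'er's theorem to rule out the pathological configurations (pinch points where the two sides of a bubble come anomalously close, and bubbles too near the terminal points $x', y'$ where the independence fails), yields a bound of the form $O(\delta^{\bestexp + c})$ for some $c > 0$ that does not depend on $\bestexp$. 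The gain $c$ comes from the ``doubling'' $\bestexp \to 2\bestexp$ on the good bubbles, damped by the entropy factor $\delta^{-\ddouble}$ from the number of subregions and by the various $o(1)$ losses; choosing $\innexp$ sufficiently small relative to $c$ gives $\alpha' := c/2 > 0$.

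The main obstacle will be the quantification of independence between the two sides of a bubble and between distinct subregions. The conditional independence given the flow lines $\eta_1^\delta, \eta_2^\delta$ is immediate from the Markovian property of the metric, but the two sides of a single bubble are coupled through the shared GFF/flow-line configuration near the bubble's endpoints; to decouple them we must resample the \clekp{} near the endpoints (Figure~\ref{fi:outline_bubble}) using the total-variation continuity from Lemma~\ref{le:partial_cle_tv} and Proposition~\ref{prop:resampling}, and then invoke Lemma~\ref{le:good_scales_merging} (and its variants Lemmas~\ref{le:good_scales_merging_refl}, \ref{le:good_scales_merging_multiple}) to guarantee, on a large fraction of scales, that boundary-emanating flow lines merge into the interior flow lines bounding each half. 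Controlling the bad events — pinch points, scales that are not $M$-good, bubbles straddling the terminal region — requires the strong $\SLE_\kappa$ regularity estimates of Section~\ref{se:regularity} and the bubble-exponent input of Section~\ref{se:bubble_exponent}, and it is the interplay of these that makes the argument delicate; but each individual pathology is suppressed either superpolynomially in $\delta$ (via Lemmas~\ref{le:fill_ball}, \ref{le:variation_one_scale}, \ref{lem:good_scales_for_event}) or by a factor gaining a power of $\delta$, so that the leading-order contribution is the $O(\delta^{2\bestexp - \ddouble - o(1)})$ term, which beats $\delta^{\bestexp}$ since $\bestexp \geq \ddouble > \ddouble - \alpha'$ — wait, more precisely since $\bestexp \geq \ddouble$ we have $2\bestexp - \ddouble \geq \bestexp$, and a short separate argument (exploiting that there are genuinely $\gtrsim \delta^{-\ddouble-o(1)}$ well-separated subregions, not merely in expectation) upgrades this to a strict gain.
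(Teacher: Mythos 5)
Your strategy matches the paper's at the structural level: explore along $\eta_1^\delta,\eta_2^\delta$ restarting whenever the running distance hits $\median{\epsilon}+\epsilon^{-\epsexp}\ac{\epsilon}$, show the number of restarts is geometric (Lemma~\ref{le:geometric_exploration}), and control the distances across the individual bubbles terminating each restart (Lemma~\ref{le:each_bubble_good} via Proposition~\ref{prop:bubble_crossing_exponent}), finally absorbing the $O(n)$ multiple by rescaling with Lemma~\ref{le:median_scaling}. So the identification of the two key lemmas and of Section~\ref{se:bubble_exponent} as the source of the gain is correct.

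However, the final paragraph's numerology does not work, and reveals a gap in how you think the strict gain arises. You argue ``sum $\delta^{-\ddouble}$ subregions each failing with probability $\delta^{2\bestexp}$'' to get $\delta^{2\bestexp-\ddouble}$, then say this ``beats $\delta^{\bestexp}$ since $\bestexp\ge\ddouble$.'' But $2\bestexp-\ddouble\ge\bestexp$ is \emph{not} a strict inequality when $\bestexp=\ddouble$ (which is exactly the initial a priori value), and once you account for the $o(1)$ losses in all exponents, this count gives \emph{no} improvement at the base case --- which is precisely what the bootstrap needs. The paper does not use such a count. In Proposition~\ref{prop:intersection_exponent_improve}, the gain $\alpha'$ comes \emph{entirely} from the strict inequality $\bubbleexp>\bestexp$ furnished by Proposition~\ref{prop:bubble_crossing_exponent} (where the ``doubling'' heuristic lives, with its own entropy and pathology losses already folded in), and Lemma~\ref{le:each_bubble_good} simply sums the per-bubble bound $O(\delta_1^{\bubbleexp})$ over the $\asymp(\delta_1/\delta)^{-\ddouble}$ bubbles of each size $\delta_1$, giving $\delta^{(1-c_1)\bubbleexp-2a}$ with $c_1,a$ chosen small so that the exponent strictly exceeds $\bestexp$. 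The geometric-restart lemma contributes only a freely-adjustable $\delta^{n\zeta\epsexp}$ tail and is not where the improvement originates. Your hedge ``a short separate argument\dots upgrades this to a strict gain'' is exactly where the proof would stall: that upgrade is the entire content of Section~\ref{se:bubble_exponent}, not something that can be recovered by a counting argument at the level of this proposition.
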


\begin{proof}[Proof of Proposition~\ref{prop:intersection_crossing_exponent} given Proposition~\ref{pr:intersection_exponent_improve}]
Let $\epsexp > 0$. From Lemma~\ref{le:a_priori_disc} and Proposition~\ref{pr:intersection_exponent_improve} it follows that~\eqref{eq:a_priori_assumption} holds for any exponent $\bestexp > 0$ (by the form of~\eqref{eq:a_priori_assumption}, if the supremum of the exponents such that~\eqref{eq:a_priori_assumption} holds were finite, it would be a maximum, contradicting Proposition~\ref{pr:intersection_exponent_improve}). This implies that Proposition~\ref{pr:intersection_exponent_improve} holds for any exponent $\bestexp > 0$. This is exactly the statement of Proposition~\ref{prop:intersection_crossing_exponent}.
\end{proof}

In the remainder of this section, we prove Proposition~\ref{pr:intersection_exponent_improve}. We assume the results of Section~\ref{se:bubble_exponent} which are based on the assumption~\eqref{eq:a_priori_assumption}.

The idea is to define an exploration procedure as follows. We explore the region bounded between $\eta_1^\delta$ and $\eta_2^\delta$ until the first point where $\metapproxacres{\epsilon}{U_{x',y'}}{x}{y}{\Gamma_\delta} \ge \median{\epsilon}+\epsilon^{-\epsexp}\ac{\epsilon}$ for some $(x',x,y,y')\in\intptsapprox{\delta}{\epsilon}$. By the definition of the median $\median{\epsilon}$, with probability $1/2$ the exploration will reach $i\delta$ without stopping. If not, we stop after the last bubble we have been exploring and start exploring again. We will show that conditionally on what we have explored so far, we have another positive chance of reaching $i\delta$ within $\Fd_\epsilon$-distance $\median{\epsilon}+\epsilon^{-\epsexp}\ac{\epsilon}$. Iterating, we see that the probability that we stop the exploration more than $N$ times is $O(e^{-cN})$. In case we need at most $N$ steps, the total distance is at most $N(\median{\epsilon}+\epsilon^{-\epsexp}\ac{\epsilon})$ plus the distances across the bubbles in which the exploration steps failed. From the bubble crossing exponent of Section~\ref{se:bubble_exponent}, the probability that the distance across a given bubble of size $\delta_1 < \delta$ is more than $\median{\epsilon}+\epsilon^{-\epsexp}\ac{\epsilon}$ is $O(\delta_1^{\bubbleexp})$ for an exponent $\bubbleexp > \bestexp$. Since there are approximately $(\delta_1/\delta)^{-\ddouble}$ such bubbles and $\bestexp \ge \ddouble$ by Lemma~\ref{le:a_priori_disc}, we can sum over them and conclude that with probability $1-O(\delta^{\bubbleexp})$, each bubble is good. Picking $N \asymp \log(\delta^{-1})$ we get that $\sup_{(x',x,y,y')\in\intptsapprox{\delta}{\epsilon}}\metapproxacres{\epsilon}{U_{x',y'}}{x}{y}{\Gamma_\delta} \le 2N(\median{\epsilon}+\epsilon^{-\epsexp}\ac{\epsilon}) \lesssim \log(\delta^{-1})(\median{\epsilon}+\epsilon^{-\epsexp}\ac{\epsilon})$ with probability $1-O(\delta^{\bubbleexp})$. The result then follows by scaling and Lemma~\ref{le:median_scaling}.

Recall the key property (Lemma~\ref{le:law_bubble}) that the law of a single bubble (a connected component of $\delta\D \setminus (\eta_1^\delta \cup \eta_2^\delta)$ that lies between $\eta_1^\delta$ and $\eta_2^\delta$) is given by the law of the pair $(\eta_0,\eta_2)$ as in Section~\ref{se:bubble_setup} in the connected component of $\delta\D \setminus (\bigcup_{i=1,2}\eta_i^\delta([0,\tau_i] \cup [\sigma_i,\infty]))$ with $\eta_1^\delta(\tau_1),\eta_1^\delta(\sigma_1)$ on its boundary (in the notation of Lemma~\ref{le:law_bubble}). The following lemma is the key step transferring the bubble crossing exponent to the bound described in the paragraph above.

\begin{lemma}\label{le:each_bubble_good}
There exist $\alpha'>0$, $\zeta>0$ such that the following is true. Let $E^\bubble$ denote the event that for every connected component $V$ of $\delta\D \setminus (\eta_1^\delta \cup \eta_2^\delta)$ between $\eta_1^\delta$, $\eta_2^\delta$ with $V \subseteq B(0,3\delta/4)$ and $\diam(V) > \epsilon$, we have
\[ \metapproxacres{\epsilon}{V}{x_V}{y_V}{\Gamma_\delta} \le \diam(V)^\zeta \median{\epsilon}+\diam(V)^{\zeta\epsexp}\epsilon^{-\epsexp}\ac{\epsilon} \]
where $x_V$ (resp.\ $y_V$) denotes the first (resp.\ last) point of $\partial V$ traced by $\eta_1^\delta,\eta_2^\delta$. Then $\p[(E^\bubble)^c] = O(\delta^{\bestexp+\alpha'})$.
\end{lemma}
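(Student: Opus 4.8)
The goal is to control all the "bubbles" $V$ (connected components of $\delta\D \setminus (\eta_1^\delta \cup \eta_2^\delta)$ lying strictly between the two flow lines) of macroscopic size simultaneously. The natural strategy is a union bound over a dyadic decomposition of the range of diameters: for each dyadic scale $\delta_1 = 2^{-k} \in [\epsilon, \delta]$, I would estimate the probability that there is a bubble $V$ with $\diam(V) \asymp \delta_1$ that fails the stated distance bound. The crucial inputs are: (a) the bubble crossing exponent from Section~\ref{se:bubble_exponent}, which (under the assumption~\eqref{eq:a_priori_assumption}) says that a \emph{single} bubble of size $\delta_1$ has $\metapproxacres{\epsilon}{V}{x_V}{y_V}{\Gamma_\delta} \ge \median{\epsilon}+\epsilon^{-\epsexp}\ac{\epsilon}$ with probability $O(\delta_1^{\bubbleexp})$ for some $\bubbleexp > \bestexp$; and (b) a counting estimate for the number of bubbles of a given size, which should be $O((\delta_1/\delta)^{-\ddouble+o(1)})$ since the bubbles are in bijection with the connected components of $\eta_1^\delta \setminus \eta_2^\delta$ that have diameter $\asymp \delta_1$, and these are counted by the double-point structure of the flow lines, whose intersection set has dimension $\ddouble$ (cf.\ \cite{mw2017intersections}).

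**The scaling of the threshold.** The subtlety is that I want the bound $\diam(V)^\zeta\median{\epsilon}+\diam(V)^{\zeta\epsexp}\epsilon^{-\epsexp}\ac{\epsilon}$, which is \emph{smaller} than $\median{\epsilon}+\epsilon^{-\epsexp}\ac{\epsilon}$ for small $\diam(V)$, rather than just the constant threshold. To get this I would apply the single-bubble estimate not to the metric $\metapprox{\epsilon}{\cdot}{\cdot}{\Gamma_\delta}$ directly but to the rescaled metric $\mettapprox{\delta_1^{-1}\epsilon}{\cdot}{\cdot}{\Gamma} = \metapprox{\epsilon}{\delta_1\cdot}{\delta_1\cdot}{\delta_1\Gamma}$ in the spirit of Lemma~\ref{le:scaled_metric}, using Lemma~\ref{le:law_bubble} to identify the law of a bubble of size $\asymp\delta_1$ with a rescaled copy of the $(\eta_0,\eta_2)$ configuration of Section~\ref{se:bubble_setup}. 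The point is that $\mediant{\delta_1^{-1}\epsilon} = \median[\delta_1]{\epsilon} \le \delta_1^{\ddouble+o(1)}\median{\epsilon}$ by Lemma~\ref{le:median_scaling}, so the bubble crossing exponent, applied at this scale with threshold $\mediant{\delta_1^{-1}\epsilon}+\epsilon^{-\epsexp}\act{\delta_1^{-1}\epsilon}$, gives a bound of the form $O(\delta_1^{\bubbleexp})$ on the probability that a rescaled bubble exceeds $\delta_1^{\ddouble+o(1)}\median{\epsilon}+\delta_1^{\epsexp\cdot(\text{something})}\epsilon^{-\epsexp}\ac{\epsilon}$. Choosing $\zeta > 0$ small enough (so that $\zeta < \ddouble$ and $\zeta\epsexp$ is dominated by the natural scaling of the error term) then gives the desired threshold after adjusting constants. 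I also need to handle the definition of $x_V, y_V$: since we use $\metapproxac{\epsilon}{\cdot}{\cdot}{\Gamma}$ and the compatibility/series law, the relevant internal metric can be taken in a region $U_{x',y'}$ containing $V$ with $x',y'$ pushed out by $\cserial\epsilon$, which is harmless because $\diam(V) > \epsilon$; this is exactly why $\intptsapproxbubble{U_{x',y'}}{\epsilon}$ is set up as in~\eqref{eq:intpts_bubble}.

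**Summing and the choice of parameters.** Putting these together: the probability that \emph{some} bubble of size $\asymp\delta_1 = 2^{-k}$ fails is bounded, by a union bound, by (number of such bubbles) $\times$ (failure probability per bubble) $= O((\delta_1/\delta)^{-\ddouble+o(1)}) \cdot O(\delta_1^{\bubbleexp}) = O(\delta^{\ddouble} \delta_1^{\bubbleexp-\ddouble+o(1)})$. Since $\bubbleexp > \bestexp \ge \ddouble$, the exponent $\bubbleexp - \ddouble$ is positive, so summing over $k$ with $2^{-k} \in [\epsilon,\delta]$ (a geometric sum dominated by its largest term, which is $\delta_1 = \delta$) yields $O(\delta^{\ddouble}\delta^{\bubbleexp-\ddouble+o(1)}) = O(\delta^{\bubbleexp+o(1)})$. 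Taking $\alpha' > 0$ with $\alpha' < \bubbleexp - \bestexp$ gives $\p[(E^\bubble)^c] = O(\delta^{\bestexp+\alpha'})$ as claimed, and I should absorb the $o(1)$ by shrinking $\alpha'$ slightly. The number-of-bubbles estimate should itself come with superpolynomially small failure probability — I would invoke a regularity statement for the intersection set (of the type in Appendix~\ref{se:regularity}, or a covering argument via Lemma~\ref{le:variation_one_scale} applied to $\eta_1^\delta$) to say that with probability $1-o^\infty(\delta)$ the number of bubbles of size in $[\delta_1, 2\delta_1]$ is at most $(\delta_1/\delta)^{-\ddouble-a}$ for any fixed $a>0$; this event can be intersected into everything.

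**Main obstacle.** The hard part will be the bubble counting estimate with the right exponent $\ddouble$ and a uniform, superpolynomially-good error — i.e.\ controlling the number of connected components of $\eta_1^\delta \setminus \eta_2^\delta$ (equivalently of the bubbles) of a given dyadic size. One must be careful that this is genuinely governed by the double-point dimension $\ddouble = 2 - (12-\kappa')(4+\kappa')/(8\kappa')$ and not by some larger exponent, and that the tail on the count is good enough to survive multiplication by the per-bubble bound $O(\delta_1^{\bubbleexp})$; here I expect to lean on the dimension upper bound from \cite{mw2017intersections} together with an a~priori Markovian self-similarity argument (successive conformal maps back to $\h$ after each intersection point, exactly as in the heuristic preceding Lemma~\ref{le:binary_rv_fraction}) to get a first-moment bound on the count, then Markov's inequality. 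A secondary technical point is making the rescaling argument via Lemma~\ref{le:law_bubble} precise: the bubble law is that of $(\eta_0, \eta_2)$ in a conformally-distorted domain, not exactly a round disk, so I would use Koebe distortion plus absolute continuity of the GFF (Lemma~\ref{le:abs_cont_kernel}, Lemma~\ref{lem:rn_derivative}) to transfer the bubble crossing exponent from the canonical setup of Section~\ref{se:bubble_setup} to the actual bubbles, at the cost of an arbitrarily small loss in the exponent that is again absorbed into $\alpha'$.
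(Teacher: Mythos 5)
Your high-level plan — dyadic scale decomposition, per-bubble estimate from the bubble crossing exponent, count of bubbles, and summation — is the right skeleton and does match the spirit of the paper's proof. However, there is a genuine gap, and it is not in the counting but in the treatment of \emph{pinched} bubbles. The paper's proof of Lemma~\ref{le:each_bubble_good} is split into two sub-lemmas according to the relationship between $\diam(V)$ and $\abs{x_V-y_V}$: ``typical'' bubbles with $\diam(V) \le \abs{x_V-y_V}^{1-c_1}$, for which Proposition~\ref{prop:bubble_crossing_exponent} is used directly, and ``atypical'' bubbles with $\diam(V) \ge \abs{x_V-y_V}^{1-\wt a}$, for which the improvement over $\delta^{\bestexp}$ comes \emph{not} from $\bubbleexp > \bestexp$ but from the $7$-arm-type estimate of Lemma~\ref{le:7arms}, which combines the double-point exponent with the $4$-arm penalty $\alpha_{4,\kappa}-2 > 0$ for the pinch, together with the one-sided estimate Corollary~\ref{co:close_geodesic_small_parts} (which only sees $\bestexp$). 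Your unified approach ignores this dichotomy, and it fails: if you index by endpoint separation $\delta 2^{-k}$ and bound the per-bubble failure probability at diameter scale, a maximally pinched bubble (endpoint separation $\delta 2^{-k}$ but diameter $\asymp \delta$) contributes $\delta^{\bubbleexp}$ per bubble, and the union over the $\sim 2^{2k}$ positions hit with probability $2^{-k(2-\ddouble)}$ produces $\sum_k 2^{k\ddouble}\,\delta^{\bubbleexp}$, which blows up. Saving this requires quantifying how rare such pinches are, and that is exactly what Lemma~\ref{le:7arms} supplies.

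A second, more minor divergence: the paper never proves, and does not need, your ``number of bubbles of diameter $\asymp\delta_1$ is at most $(\delta_1/\delta)^{-\ddouble-a}$ with probability $1-o^\infty(\delta)$'' estimate (which is a nontrivial tail bound on a random count). Instead it unions over dyadic positions $z$ and endpoint-separation scales $k$, bounds $\p[F_{z,k}]$ by $O(2^{-k(2-\ddouble+o(1))})$ via Proposition~\ref{pr:dbl_exponent} (or by Lemma~\ref{le:7arms} in the pinched case), and bounds the \emph{local} bubble count near $z$ deterministically by $(\delta 2^{-k})^{-2a}$ on the fill-ball event $G_\delta$ from Lemma~\ref{le:fill_ball}. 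This is a coarser, but much easier to justify, accounting than a direct high-probability global count, and the extra $(\delta 2^{-k})^{-2a}$ factor is harmless. You should replace your proposed fine counting estimate with this union-bound bookkeeping, and add the typical/atypical case split with Lemma~\ref{le:7arms} as the extra input for the atypical case.
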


We will prove Lemma~\ref{le:each_bubble_good} separately for ``typical'' and ``atypical'' shapes of $V$. We begin with the ``typical'' case.

\begin{lemma}
There exists $c_1>0$ such that the conclusion of Lemma~\ref{le:each_bubble_good} holds for $V$ with $\diam(V) \le \abs{x_V-y_V}^{1-c_1}$.
\end{lemma}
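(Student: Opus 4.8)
The plan is to exploit the approximate self-similarity of the region between the intersecting flow lines together with the a priori estimate~\eqref{eq:a_priori_assumption}, now in the form valid for the metric $\metapproxac{\epsilon}{\cdot}{\cdot}{\Gamma}$ which (as noted after~\eqref{eq:shortcutted_metric}) also satisfies the compatibility and series laws. Fix $V$ a connected component of $\delta\D \setminus (\eta_1^\delta \cup \eta_2^\delta)$ between $\eta_1^\delta$, $\eta_2^\delta$ with $\diam(V) \le \abs{x_V - y_V}^{1-c_1}$; write $r = \abs{x_V - y_V}$, so $\diam(V) \le r^{1-c_1}$ and $r \ge \diam(V)$. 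By Lemma~\ref{le:law_bubble}, conditionally on the remainders of $\eta_1^\delta,\eta_2^\delta$, the pair of boundary arcs of $V$ is a pair of $\SLE_\kappa$-type curves in a single simply connected component, and by the Markovian property of the approximate CLE metric the internal metric in $V$ is conditionally independent of the exterior given $\partial V$. The first step is to map $V$ back by a conformal transformation $\psi\colon \h \to V$ sending $0 \mapsto x_V$, $\infty \mapsto y_V$; Koebe distortion and the condition $\diam(V) \le r^{1-c_1}$ let us control the image under $\psi^{-1}$ of a small Euclidean ball around $x_V$.

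Next, I would argue that inside $V$, near the point $x_V$, there are with high probability many (namely $r^{-\ddouble c_1 + o(1)}$, say) well-separated sub-bubbles whose conditional law, after mapping back, is absolutely continuous with respect to the reference setup of Section~\ref{se:intersections_setup} at a scale comparable to $r^{1-c_1}$ — this is exactly the self-similar structure illustrated in Figure~\ref{fi:outline_selfsimilarity}, and the relevant inputs are the dimension of the double-point set \cite{mw2017intersections}, Lemma~\ref{le:good_scales_merging} (to find good scales where boundary flow lines merge into the interior ones), Lemma~\ref{le:fill_ball} (to guarantee that the space-filling $\SLE_{\kappa'}$ cuts off balls of controlled size), and the absolute continuity lemma Lemma~\ref{le:abs_cont_kernel} for the coupled field-and-metric. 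Since the scale of these sub-bubbles is $\asymp r^{1-c_1}$, the a priori estimate~\eqref{eq:a_priori_assumption} (applied to the rescaled metric $\mettapprox{\cdot}{\cdot}{\cdot}{\cdot}$ and using $\mediant{} \le (r^{1-c_1})^{\ddouble+o(1)}\median{\epsilon}$ from Lemma~\ref{le:median_scaling}) tells us that each individual sub-bubble has $\metapproxac{\epsilon}{\cdot}{\cdot}{\Gamma_\delta}$-distance across exceeding $r^{(1-c_1)(\bestexp - \zeta')}\median{\epsilon} + r^{(1-c_1)\epsilon_0'}\epsilon^{-\epsexp}\ac{\epsilon}$ only with probability $(r^{1-c_1})^{\bestexp + o(1)}$ for a suitable small $\zeta' > 0$; a union bound over the $r^{-\ddouble c_1 + o(1)}$ sub-bubbles, using $\bestexp \ge \ddouble$ from Lemma~\ref{le:a_priori_disc}, shows that with probability $1 - r^{\bestexp + \alpha'' + o(1)}$ for some $\alpha'' > 0$, \emph{every} sub-bubble is good. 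On this event, by the series law every admissible path from $x_V$ to $y_V$ within $\ol V$ must cross all of these sub-bubbles, but instead I want an upper bound, so I would rather argue the complementary way: use the (approximate) series law together with a chaining of the sub-bubble distances along $\partial V$ to bound $\metapproxac{\epsilon}{V}{x_V}{y_V}{\Gamma_\delta}$ from above by the sum of the distances across the at most $r^{-\ddouble c_1 + o(1)}$ sub-bubbles, each at most $r^{(1-c_1)(\bestexp - \zeta')}\median{\epsilon} + r^{(1-c_1)\epsilon_0'}\epsilon^{-\epsexp}\ac{\epsilon}$, which by choosing $c_1$ small so that $(1-c_1)(\bestexp - \zeta') - \ddouble c_1 > \zeta$ and $(1-c_1)\epsilon_0' - \ddouble c_1 > \zeta\epsexp$ (possible since $\bestexp \ge \ddouble$ and $\zeta', \zeta$ can be taken small) is at most $r^{\zeta}\median{\epsilon} + r^{\zeta\epsexp}\epsilon^{-\epsexp}\ac{\epsilon} \le \diam(V)^{\zeta}\median{\epsilon} + \diam(V)^{\zeta\epsexp}\epsilon^{-\epsexp}\ac{\epsilon}$, where in the last step we used $\diam(V) \le r$.

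Finally, to pass from a single $V$ to all $V$ simultaneously I would take a union bound over dyadic scales: for each $k$ let $\Xi_k$ be the event that some such $V$ with $\diam(V) \in [2^{-k-1}, 2^{-k}]$ violates the conclusion, so $\p[\Xi_k] = O(2^{-k(\bestexp + \alpha'')})$ by the above applied at scale $r \gtrsim (2^{-k})^{1/(1-c_1)}$ together with a counting bound for the number of candidate bubbles of that size (which is $(2^{-k}/\delta)^{-\ddouble + o(1)}$, again absorbed since $\bestexp + \alpha'' > \ddouble$); summing $\sum_{2^{-k} > \epsilon} \p[\Xi_k] = O(\delta^{\bestexp + \alpha'})$ for a possibly smaller $\alpha' > 0$. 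The main obstacle I anticipate is the second step: making rigorous the claim that the sub-bubbles near $x_V$ really do have law absolutely continuous (with good Radon--Nikodym moments, uniformly in $\epsilon$ and $\delta$) with respect to the reference setup, and that there are enough of them that are mutually well-separated (by at least $\cserial\epsilon$ in Euclidean distance, as required for the series law and for~\eqref{eq:strong_compatibility_bubbles}) — this requires combining the regularity estimates for $\SLE_\kappa$ flow lines (the merging events of Lemma~\ref{le:good_scales_merging}, the ``$(M,a)$-good'' scales, and the nice-pocket construction of Steps 2--4 in the proof of Lemma~\ref{le:a_priori_int_fl}) with the conformal distortion bounds coming from $\diam(V) \le \abs{x_V - y_V}^{1-c_1}$, and it is where the constant $c_1$ must be chosen carefully so that all the error exponents line up. Everything else is a routine union bound and application of the already-established a priori estimate.
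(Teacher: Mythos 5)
Your proposal diverges from the paper's argument in a way that creates a genuine gap: you are trying to derive the bound using only the a priori estimate~\eqref{eq:a_priori_assumption} with exponent $\bestexp$, but the exponent arithmetic does not close. After decomposing $V$ into $\asymp r^{-\ddouble c_1}$ sub-pieces each of Euclidean scale $\asymp r^{1-c_1}$ and applying~\eqref{eq:a_priori_assumption} (with Lemma~\ref{le:median_scaling}) to each piece, the union bound gives a failure probability of order $r^{(1-c_1)\bestexp - \ddouble c_1}$. Since $c_1 > 0$, this exponent is \emph{strictly smaller} than $\bestexp$, not larger; there is no $\alpha'' > 0$ coming from this calculation. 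When you then union-bound over all bubbles $V$ at scale $r = \delta 2^{-k}$ (of which there are $\asymp 2^{k\ddouble}$) and sum over $k$, you obtain $\sum_k 2^{k\ddouble}(\delta 2^{-k})^{(1-c_1)\bestexp - \ddouble c_1}$, which even in the favorable case $\bestexp > \ddouble$ produces only $\delta^{(1-c_1)\bestexp - \ddouble c_1}$, a power strictly smaller than $\delta^{\bestexp}$. So this route cannot yield the required $O(\delta^{\bestexp+\alpha'})$.

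What the paper actually uses — and what is genuinely indispensable here — is Proposition~\ref{prop:bubble_crossing_exponent}, the \emph{bubble crossing exponent} $\bubbleexp > \bestexp$ established in Section~\ref{se:bubble_exponent}. That proposition is a strict improvement of the a priori estimate for the conditional law of a single bubble (obtained through the delicate two-sided independence argument), and it is applied here directly to the whole bubble $V$ rather than to sub-bubbles: conditionally on the complement of $\partial V$, the pair $(\eta^V_1,\eta^V_2)$ has the law of Section~\ref{se:bubble_setup} by Lemma~\ref{le:law_bubble}, and on the regularity event $\wt G_{M,a}$ of Corollary~\ref{co:regularity_scaled} one gets $\p[F^{\bad}_{z,k}\cap\wt G_{M,a}\cap G_\delta]\lesssim 2^{-k(2-\ddouble+o(1))}(\delta 2^{-k})^{(1-c_1)\bubbleexp-2a}$ for the ``typical'' shape constraint $\diam(V)\le\abs{x_V-y_V}^{1-c_1}$. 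It is precisely because $\bubbleexp > \bestexp$ that one can then choose $c_1,a>0$ small enough so that $(1-c_1)\bubbleexp - 2a > \bestexp + \alpha'$, absorbing the cost of summing over dyadic scales and grid points. Your plan bypasses Section~\ref{se:bubble_exponent} entirely and so has no access to the needed gain; the self-similarity you invoke is indeed the right intuition, but turning it into a strict exponent improvement is exactly the content of the bubble-exponent section, not something that drops out of a union bound over rescaled copies of~\eqref{eq:a_priori_assumption}. A secondary issue is that ``sub-bubbles near $x_V$'' cannot by themselves chain from $x_V$ to $y_V$; the chain must traverse the full length of $\partial V$, and the paper sidesteps this by treating $V$ as one unit via Proposition~\ref{prop:bubble_crossing_exponent} rather than decomposing it.
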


\begin{proof}
Throughout, we fix $M,a>0$ where $a$ is chosen sufficiently small (depending on $\epsexp$). Let $\wt{G}_{M,a}$ be the regularity event described in Corollary~\ref{co:regularity_scaled} and Example~\eqref{it:expl_regularity}. That is, for $i=1,2$, $s<t$, and $\CC \subseteq \Gamma$, we let $A_{i,s,t,\CC}$ be the closure of the union of the loops in $\CC$, $\eta_{3-i}^\delta$, and $\eta_i^\delta([0,s]) \cup \eta_i^\delta([t,\infty))$. Let $D_{i,s,t,\CC}$ be the component of $\delta\D \setminus A_{i,s,t,\CC}$ containing $\eta_i^\delta([s,t])$. Then we let $\wt{G}_{M,a}$ be the event that $(D_{i,s,t,\CC}, \eta_i^\delta(s), \eta_i^\delta(t))$ is $(M,a)$-good within $B(0,3\delta/4)$ for every $i=1,2$, $s < t$, and $\CC \subseteq \Gamma$. By Corollary~\ref{co:regularity_scaled}, we have $\p[G_{M,a}^c] = o^\infty(\delta)$.

Further, let $G_\delta$ denote the event that the space-filling SLEs whose right (resp.\ left) boundaries are given by the angle $\theta_1$ (resp.\ $\theta_2$) flow lines fill a ball of radius $(\delta')^{1+a}$ whenever they travel distance $\delta' < \delta$ within $B(0,3\delta/4)$. By Lemma~\ref{le:fill_ball} we have $\p[G_\delta^c] = o^\infty(\delta)$.

For $k \in \N$ and $z \in B(0,3\delta/4) \cap \delta 2^{-k}\Z^2$, let $F_{z,k}$ denote the event that there exists a bubble $V \subseteq B(z,(\delta 2^{-k})^{1-c_1})$ with $x_V \in B(z,\delta 2^{-k})$ and $\abs{x_V-y_V} \in [\delta 2^{-k}, \delta 2^{-k+1}]$, and let $F^\bad_{z,k}$ denote the event that there exists such a bubble with $\metapproxacres{\epsilon}{V}{x_V}{y_V}{\Gamma_\delta} \ge \diam(V)^\zeta \median{\epsilon}+\diam(V)^{\zeta\epsexp}\epsilon^{-\epsexp}\ac{\epsilon}$. By Proposition~\ref{pr:dbl_exponent}, we have $\p[F_{z,k}] = O(2^{-k(2-\ddouble+o(1))})$. Moreover, on the event $G_\delta$ there exist at most $O((\delta 2^{-k})^{-2a})$ such bubbles in $B(z,(\delta 2^{-k})^{1-c_1})$.

Let $(\eta^V_1,\eta^V_2)$ denote the segments of $(\eta^\delta_1,\eta^\delta_2)$ between $x_V,y_V$. By Lemma~\ref{le:law_bubble}, the conditional law of $(\eta^V_1,\eta^V_2)$ given the remaining parts of $\eta_1^\delta,\eta_2^\delta$ is the same as the law described in Section~\ref{se:bubble_setup}. On the event $\wt{G}_{M,a}$ in particular the event $G_{M,a}$ described before the statement of Proposition~\ref{prop:bubble_crossing_exponent} occurs for the pair $(\eta^V_1,\eta^V_2)$. Thus, by Proposition~\ref{prop:bubble_crossing_exponent} we have
\[
\p[F^\bad_{z,k} \cap \wt{G}_{M,a} \cap G_\delta] \lesssim 2^{-k(2-\ddouble+o(1))} (\delta 2^{-k})^{(1-c_1)\bubbleexp-2a} .
\]
We sum up the possible choices of $k\in\N$ and $z \in B(0,3\delta/4) \cap \delta 2^{-k}\Z^2$. Since $\bubbleexp > \bestexp \ge \ddouble$, we can pick $c_1,\alpha'>0$ such that $(1-c_1)\bubbleexp-2a > \bestexp+\alpha'$, and hence
\[
\p[\wt{E}^c \cap \wt{G}_{M,a} \cap G_\delta] \lesssim \sum_{k\in\N} 2^{2k} 2^{-k(2-\ddouble+o(1))} (\delta 2^{-k})^{(1-c_1)\bubbleexp-2a} \asymp \delta^{(1-c_1)\bubbleexp-2a} \le \delta^{\bestexp+\alpha'}
\]
where $\wt{E}$ denotes the event that $\metapproxacres{\epsilon}{V}{x_V}{y_V}{\Gamma_\delta} \le \diam(V)^\zeta \median{\epsilon}+\diam(V)^{\zeta\epsexp}\epsilon^{-\epsexp}\ac{\epsilon}$ for every $V$ with $\diam(V) \le |x_V-y_V|^{1-c_1}$.
\end{proof}

We now consider the bubbles with ``atypical'' shapes. We use the fact that such bubbles are rare as specified in the following lemma. Recall the exponents $\ddouble$ from~\eqref{eq:dim_double} and $\alpha_{4,\kappa} > 2$ from~\eqref{eqn:double_exponent_simple}.

\begin{lemma}\label{le:7arms}
Let $\eta_1^1,\eta_2^1$ be as in Section~\ref{se:intersections_setup}. For $z \in B(0,3/4)$, $r,r' \in (0,1)$, let $E$ denote the event that that $\eta_1^1$ and $\eta_2^1$ both intersect $B(z,r)$ and there exist $\eta_1^1(s),\eta_1^1(t) \in B(z,r)$ with $\abs{\eta_1^1(s)-\eta_1^1(t)} \le r'r$ and $\diam(\eta_1^1[s,t]) \ge r$. Then $\p[E] = O((r')^{\alpha_{4,\kappa}-2+o(1)}r^{2-\ddouble+o(1)})$.
\end{lemma}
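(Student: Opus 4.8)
The plan is to express the event $E$ in terms of an arm event at the point $\eta_1^1(s)$ (or, more precisely, near the pinch point where $\eta_1^1$ comes back close to itself) and combine two independent contributions: (i) the event that $\eta_1^1,\eta_2^1$ have a double point near $B(z,r)$, which by \cite[Theorem~1.5]{mw2017intersections} and the standard one-point estimate for the double point set (together with the absolute continuity of the flow line pair away from the force points, as used repeatedly in Section~\ref{se:a_priori}) has probability $O(r^{2-\ddouble+o(1)})$; and (ii) the event that, conditionally on the existence of such a double point, the curve $\eta_1^1$ makes a ``bottleneck'' of the type described, i.e.\ two far-apart strands come within distance $r'r$ of each other. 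The second event is governed by the $4$-arm exponent $\alpha_{4,\kappa}$ for $\SLE_\kappa$: for a segment $\eta_1^1[s,t]$ of diameter $\ge r$ to have its endpoints within distance $r'r$, the curve must cross the annulus $A(\eta_1^1(s), r'r, r)$ four times (two incoming and two outgoing strands near the near-self-intersection, cf.\ the discussion before Lemma~\ref{le:thin_gasket_hoelder} and the arm exponents collected in Appendix~\ref{app:arms}), which has conditional probability $O((r')^{\alpha_{4,\kappa}-2+o(1)})$ after summing over the $O((r'r/\text{diam})^{-2})$ possible centers in $B(z,r)$, i.e.\ $O((r')^{\alpha_{4,\kappa}-2+o(1)})$ uniformly (the $-2$ coming from the two-dimensional sum over centers, which is why $\alpha_{4,\kappa}>2$ is needed for the bound to be useful).

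Concretely, first I would use the Markovian decomposition of the flow line pair: cover $B(z,r)$ by $O(1)$ balls and, on the event that $\eta_1^1\cap\eta_2^1$ meets such a ball, apply the a priori one-point bound for the double-point dimension, exactly as in Step~1 of the proof of Lemma~\ref{le:a_priori_int_fl}, to get the factor $r^{2-\ddouble+o(1)}$. Conditioned on the first double point $\eta_1^1(t_0)=\eta_2^1(t_0')\in B(z,r)$, the remainder of $\eta_1^1$ (after mapping back to $\h$) has the law of an $\SLE_\kappa(\kappa-4)$ process, which is mutually absolutely continuous with a chordal $\SLE_\kappa$ in the bulk with Radon--Nikodym derivative having all moments (cf.\ Lemma~\ref{lem:rn_derivative} and the surrounding discussion). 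So it suffices to bound, for chordal $\SLE_\kappa$ in $B(z,2r)$, the probability that there exist $s<t$ with $\eta_1^1(s),\eta_1^1(t)\in B(z,r)$, $|\eta_1^1(s)-\eta_1^1(t)|\le r'r$, and $\diam(\eta_1^1[s,t])\ge r$. Discretizing the possible pinch centers on a grid of mesh $r'r$ inside $B(z,r)$ (which is $O((r')^{-2})$ points), for each candidate center $w$ the event forces a four-arm crossing of $A(w, Cr'r, r)$ by $\eta_1^1$ — this is where Lemma~\ref{lem:thin_cle_lwb}-type estimates in Appendix~\ref{app:arms} and the SLE interior four-arm exponent $\alpha_{4,\kappa}$ enter — giving conditional probability $O((r'r/r)^{\alpha_{4,\kappa}+o(1)})=O((r')^{\alpha_{4,\kappa}+o(1)})$ per center and $O((r')^{\alpha_{4,\kappa}-2+o(1)})$ after the union bound.

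The main obstacle I anticipate is bookkeeping the precise arm event that characterizes the bottleneck and justifying that it really is a four-arm event (rather than, say, a six-arm event as for the full $\CLE_{\kappa'}$ loop, or a two-arm event which would not be summable): the two strands of $\eta_1^1$ passing near the pinch center $w$ contribute $2$ arms each if one counts ``in'' and ``out'' segments, but because the curve is a single simple $\SLE_\kappa$ (for $\kappa<4$) one must be careful that exactly four strands, not fewer, are forced to cross the annulus, and that the relevant exponent is the interior one $\alpha_{4,\kappa}$ with $\alpha_{4,\kappa}>2$. A second, more technical point is making the two probability bounds genuinely multiply: the double-point event and the bottleneck event are not independent, but the bottleneck estimate holds \emph{conditionally} given the double point with a uniform constant (by the absolute-continuity argument above), so the product bound follows; care is needed to handle the $o(1)$ exponents uniformly in $r,r'$, which is done as usual by choosing $a>0$ arbitrarily small and absorbing into the $o(1)$. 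Apart from these, the argument is routine given the inputs cited in Appendix~\ref{app:arms} and the absolute-continuity machinery of Section~\ref{subsec:ig}.
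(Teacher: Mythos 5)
Your proposal follows essentially the same route as the paper: first a one-point estimate giving $\p[\eta_1^1,\eta_2^1 \text{ both hit } B(z,2r)] = O(r^{2-\ddouble+o(1)})$ (the paper cites Proposition~\ref{pr:dbl_exponent}, applied with $\epsilon\sim r$), and then conditionally a union bound over the $O((r')^{-2})$ grid points $z'\in r'r\Z^2\cap B(z,r)$ of a four-crossing event of $A(z',r'r,r)$, each with probability $O((r')^{\alpha_{4,\kappa}+o(1)})$ by Proposition~\ref{pr:4arm_simple}. The only minor difference is the choice of conditioning event: the paper conditions on $\eta_1^1[0,\tau_1],\eta_2^1[0,\tau_2]$ where $\tau_i$ are the hitting times of $B(z,2r)$, rather than on ``the first double point''; this is cleaner because it is a stopping time for each curve separately, it guarantees that the bottleneck occurs after the conditioning (so one really does decompose the event into a product of independent contributions), and the conditional law of the remainder of $\eta_1^1$ is automatically an $\SLE_\kappa(\ul{\rho})$ of the type covered by Proposition~\ref{pr:4arm_simple}, avoiding the slightly more delicate absolute-continuity argument you invoke. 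Your worry about whether the bottleneck is genuinely a four-arm rather than a two- or six-arm event is unnecessary: the event $\{\eta_1^1(s),\eta_1^1(t)\in B(z',r'r),\ \diam\eta_1^1[s,t]\ge r\}$ directly forces four crossings of the annulus (in--out--in--out), which is exactly what Proposition~\ref{pr:4arm_simple} bounds.
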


\begin{proof}
By Proposition~\ref{pr:dbl_exponent} the probability that both $\eta_1^1,\eta_2^1$ intersect $B(z,2r)$ is $O(r^{2-\ddouble+o(1)})$. Let $\tau_1,\tau_2$ denote their respective hitting times of $B(z,2r)$. Conditionally on $\eta_1^1[0,\tau_1],\eta_2^1[0,\tau_2]$, we apply Proposition~\ref{pr:4arm_simple} for each $z' \in r'r\Z^2 \cap B(z,r)$. Taking a union bound gives the result.
\end{proof}

\begin{lemma}
For every fixed $\wt{a}>0$ there exist $\alpha'>0$, $\zeta>0$ such that the conclusion of Lemma~\ref{le:each_bubble_good} holds for $V$ with $\diam(V) \ge \abs{x_V-y_V}^{1-\wt{a}}$.
\end{lemma}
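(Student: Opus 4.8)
The plan is to combine two facts: atypically shaped bubbles --- where $\abs{x_V-y_V}$ is much smaller than $\diam(V)$ --- are rare (this is exactly Lemma~\ref{le:7arms}), while the $\metapproxac$-distance across any bubble of diameter $r$ can still be controlled by the a priori estimates already at our disposal. Fix $\wt a>0$ (in the end one applies the lemma with $\wt a$ equal to the constant $c_1$ from the preceding ``typical'' lemma, so that the two ranges together cover all bubbles). I would argue scale by scale: for each dyadic $r\in(\epsilon,\delta]$, cover $B(0,3\delta/4)$ by $O((\delta/r)^2)$ balls $B(z,r)$ centred on $r\Z^2$ (using $B(0,7\delta/8)$ itself when $r\asymp\delta$). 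A bubble $V$ with $\diam(V)\in[r/2,r)$ and $\abs{x_V-y_V}\le\diam(V)^{1/(1-\wt a)}\le r^{1/(1-\wt a)}$ lies in some $B(z,2r)$ and forces two points on the $\eta_1^\delta$-part of $\partial V$ at Euclidean distance $\le r'r$ with $r':=r^{\wt a/(1-\wt a)}$, joined by a segment of diameter $\ge r/2$. After rescaling $B(z,2r)$ to unit order and using the absolute continuity of $\eta_1^\delta,\eta_2^\delta$ with $\eta_1^1,\eta_2^1$ away from the boundary, Lemma~\ref{le:7arms} bounds the probability of this happening in a given $B(z,2r)$ by $O((r')^{\alpha_{4,\kappa}-2+o(1)}(r/\delta)^{2-\ddouble+o(1)})$, hence (on the fill-ball event $G_\delta$, which rules out too many such bubbles per ball)
\[
\p[\,\exists\text{ atypical }V\text{ with }\diam(V)\asymp r\text{ in }B(0,3\delta/4)\,] = O\big(\delta^{\ddouble-o(1)}\,r^{\frac{\wt a}{1-\wt a}(\alpha_{4,\kappa}-2)-\ddouble+o(1)}\big).
\]

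Next, conditionally on such a bubble $V$ of diameter $r$ being present, I would show that $\metapproxacres{\epsilon}{V}{x_V}{y_V}{\Gamma_\delta}\le r^\zeta\median\epsilon+r^{\zeta\epsexp}\epsilon^{-\epsexp}\ac\epsilon$ outside an event of conditional probability $O(r^{\bestexp-o(1)})$, for a suitable $\zeta\in(0,\ddouble)$ depending on $\wt a$. By Lemma~\ref{le:law_bubble}, inside $V$ the pair $(\eta_1^\delta,\eta_2^\delta)$ is again an intersecting flow-line pair of the type of Section~\ref{se:intersections_setup} (at scale $r$), so after rescaling by $r^{-1}$ one runs the exploration/chaining argument of the proof of Proposition~\ref{pr:intersection_exponent_improve} at scale $r$, feeding in the rescaled a priori estimate~\eqref{eq:a_priori_assumption} together with Lemma~\ref{le:median_scaling} (so that $\median[r]{\epsilon}\le r^{\ddouble+o(1)}\median\epsilon$) for the small sub-regions of $V$, and the bubble crossing exponent of Section~\ref{se:bubble_exponent} (a prior result, so no circularity) for the typically shaped sub-bubbles the exploration encounters. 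This gives $\metapproxacres{\epsilon}{V}{x_V}{y_V}{\Gamma_\delta}\lesssim\log(r^{-1})\,(r^{\ddouble+o(1)}\median\epsilon+(r/\epsilon)^{\epsexp}\ac\epsilon)$ off the exceptional event; since $\zeta<\ddouble<1$ we have $\log(r^{-1})r^{\ddouble}\le r^{\zeta}$ and $\log(r^{-1})(r/\epsilon)^{\epsexp}\le r^{\zeta\epsexp}\epsilon^{-\epsexp}$ for $r$ small, as needed, and sub-$\epsilon$ steps of the chaining contribute only $\ac\epsilon$ each by~\eqref{eq:shortcutted_metric}, absorbed into the last term using $\diam(V)>\epsilon$.

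Multiplying the two bounds and summing over the dyadic scales $r\in(\epsilon,\delta]$ gives
\[
\p[(E^\bubble)^c\text{ on atypical bubbles}]\lesssim\sum_{r}\delta^{\ddouble-o(1)}\,r^{\frac{\wt a}{1-\wt a}(\alpha_{4,\kappa}-2)+\bestexp-\ddouble-o(1)},
\]
and since $\alpha_{4,\kappa}>2$ (by~\eqref{eqn:double_exponent_simple}) and $\bestexp\ge\ddouble$, the exponent of $r$ is positive for $\delta$ small, so the sum is dominated by $r\asymp\delta$ and is $O(\delta^{\bestexp+\alpha'})$ with $\alpha'=\tfrac12\cdot\frac{\wt a}{1-\wt a}(\alpha_{4,\kappa}-2)>0$. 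I expect the main obstacle to be the second step: getting an $\epsilon$-uniform bound with polynomially small failure probability on the $\metapproxac$-distance across a possibly very degenerate bubble, for which the Section~\ref{se:bubble_exponent} estimate is not directly available; this forces a self-contained chaining argument built only from the a priori estimates and Lemma~\ref{le:median_scaling}, with careful accounting of the logarithmic and sub-$\epsilon$ contributions.
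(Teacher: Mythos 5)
Your first step—using Lemma~\ref{le:7arms} to bound the probability of atypically shaped bubbles, and summing over dyadic scales and positions—matches the paper's, and your arithmetic for the resulting exponent $\alpha'>0$ is sound. The divergence, and the gap, is in the second step.

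First, a factual error: by Lemma~\ref{le:law_bubble} the conditional law of $(\eta^V_1,\eta^V_2)$ inside a single bubble is that of the pair described in Section~\ref{se:bubble_setup} (angles $\theta_0=3\pi/2$ and $\theta_2$, no interior intersections between the two strands), \emph{not} the intersecting pair of Section~\ref{se:intersections_setup}. So one cannot ``run the exploration/chaining argument of the proof of Proposition~\ref{pr:intersection_exponent_improve} at scale $r$'' directly; the geometry inside a bubble is different, and the exploration of Lemma~\ref{le:geometric_exploration} does not apply verbatim.

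More seriously, your second step contains the gap you yourself flag, and it does not go away. Your plan would need a bound for the $\metapproxac$-distance across atypically shaped \emph{sub}-bubbles inside $V$, which is exactly the statement being proved; ``a self-contained chaining argument built only from the a priori estimates and Lemma~\ref{le:median_scaling}'' is not supplied, and is in fact the hard content of the lemma. The paper sidesteps this entirely: after invoking Lemma~\ref{le:law_bubble}, it views $\eta^V_2$ as a chordal $\SLE_\kappa$ in the domain $D=\delta\D\setminus(\eta_1^\delta\cup(\eta_2^\delta\setminus\eta^V_2))$ and applies Corollary~\ref{co:close_geodesic_small_parts} (a result of Section~\ref{subsec:map_in}), which bounds $\metapproxac$-distances in a small neighborhood of an $\SLE_\kappa$ boundary inside an $(M,a)$-good domain (the regularity event $G_{M,a}$ from Corollary~\ref{co:regularity_scaled} makes this available). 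This gives a conditional failure probability $\lesssim(\delta2^{-k})^{\bestexp-c\zeta-ca}$ \emph{using only the a priori exponent} $\bestexp$, uniformly over the shape of $V$ and without any further case distinction between typical and atypical sub-bubbles. The extra power $\alpha'$ in the final bound then comes solely from the rarity supplied by Lemma~\ref{le:7arms}, which is why the paper never needs $\bubbleexp$ in this part of Lemma~\ref{le:each_bubble_good}. In short: you identified the right rarity input, but the distance estimate requires the ``distances along an SLE boundary'' machinery of Section~\ref{subsec:map_in}, not a re-run of the exploration argument, and without it the second step does not close.
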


\begin{proof}
Let $(\eta^V_1,\eta^V_2)$ denote the segments of $(\eta^\delta_1,\eta^\delta_2)$ between $x_V,y_V$. By symmetry, it suffices to consider those bubbles where $\diam(\eta^V_1) \ge \diam(\eta^V_2)$.

Fix $a>0$ sufficiently small (depending on $\wt{a},\epsexp$; the precise condition will become apparent later). Let $G_{M,a}$ denote the event that for any $s<t$, the connected component of $\delta\D \setminus (\eta_1^\delta \cup \eta_2^\delta([0,s] \cup [t,\infty]))$ containing $\eta_2^\delta[s,t]$ is $(M,a)$-good within $B(0,3\delta/4)$. By Corollary~\ref{co:regularity_scaled}, we have $\p[G_{M,a}^c] = o^\infty(\delta)$.

Further, let $G_\delta$ denote the event that the space-filling SLEs whose right (resp.\ left) boundaries are given by the angle $\theta_1$ (resp.\ $\theta_2$) flow lines fill a ball of radius $(\delta')^{1+a}$ whenever they travel distance $\delta' < \delta$ within $B(0,3\delta/4)$. By Lemma~\ref{le:fill_ball} we have $\p[G_\delta^c] = o^\infty(\delta)$.

For $k \in \N$ and $z \in B(0,3\delta/4) \cap \delta 2^{-k}\Z^2$, let $F_{z,k}$ denote the event that there exists a bubble $V \subseteq B(z,\delta 2^{-k+2})$ with $\abs{x_V-y_V} < (\delta 2^{-k})^{1+\wt{a}}$ and $\diam(\eta^V_1) \ge \delta 2^{-k}$. Fix a small constant $\zeta>0$, and let $F^\bad_{z,k}$ denote the event that there exists such a bubble with $\metapproxacres{\epsilon}{V}{x_V}{y_V}{\Gamma_\delta} \ge \diam(V)^\zeta \median{\epsilon}+\diam(V)^{\zeta\epsexp}\epsilon^{-\epsexp}\ac{\epsilon}$. By Lemma~\ref{le:7arms}, we have $\p[F_{z,k}] = O(2^{-k(2-\ddouble+o(1))}(\delta 2^{-k})^{\wt{a}\alpha})$ for some $\alpha > 0$. Moreover, on the event $G_\delta$ there exist at most $O((\delta 2^{-k})^{-2a})$ such bubbles. Therefore, applying Lemma~\ref{le:law_bubble} and Corollary~\ref{co:close_geodesic_small_parts} to $D = \delta\D \setminus (\eta_1^\delta \cup (\eta_2^\delta \setminus \eta^V_2))$ gives us, for a constant $c$,
\[ 
\p[ F^\bad_{z,k} \cap G_{M,a} ] 
\lesssim 2^{-k(2-\ddouble+o(1))}(\delta 2^{-k})^{\wt{a}\alpha}(\delta 2^{-k})^{\bestexp-c\zeta-ca} .
\]
To finish the proof, we just need to sum over all possible choices of $k\in\N$ and $z \in B(0,3\delta/4) \cap \delta 2^{-k}\Z^2$ which gives us (in case $c\zeta+ca < \alpha\wt{a}$)
\[
\sum_{k\in\N} 2^{2k} 2^{-k(2-\ddouble+o(1))}(\delta 2^{-k})^{\wt{a}\alpha}(\delta 2^{-k})^{\bestexp-c\zeta-ca} \asymp \delta^{\bestexp+\wt{a}\alpha-c\zeta-ca}
\]
since $\bestexp \ge \ddouble$.
\end{proof}

We now carry out the second step in the proof of Proposition~\ref{pr:intersection_exponent_improve}. We describe the following exploration procedure. Let $x_0 = -i\delta$. Given $x_{k-1}$, let $x_k \in \eta_1^\delta\cap\eta_2^\delta$ be the first intersection point after $x_{k-1}$ such that there exist intersection points $x',x,y \in \eta_1^\delta\cap\eta_2^\delta$ between $x_{k-1}$ and $x_k$ with $(x',x,y,x_k) \in \intptsapprox{\delta}{\epsilon}$ such that $\metapproxacres{\epsilon}{U_{x',x_k}}{x}{y}{\Gamma_\delta} \ge \median{\epsilon}+\epsilon^{-\epsexp}\ac{\epsilon}$. We set $x_k = i\delta$ if there is no such point. Let $N$ be the smallest integer such that $x_N=i\delta$.

\begin{lemma}\label{le:geometric_exploration}
Let $E^\bubble$ be the event from Lemma~\ref{le:each_bubble_good}. There exists $\zeta>0$ such that
\[ 
\p[ N > n ,\, E^\bubble ] = O(\delta^{n\zeta\epsexp})
\quad\text{as } \delta\searrow 0 . 
\]
\end{lemma}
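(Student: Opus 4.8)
The plan is to run the exploration in a Markovian fashion and show that each step succeeds (reaches $i\delta$) with conditional probability $1-O(\delta^{\zeta\epsexp})$, so that the number $N$ of failed steps is dominated by a geometric variable whose parameter is polynomially small in $\delta$.

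First I would set up the filtration: let $\mathcal G_k$ be the $\sigma$-algebra generated by the points $x_0,\ldots,x_k$, the segments of $\eta_1^\delta,\eta_2^\delta$ traced up to $x_k$, and the internal metrics in all bubbles explored in the first $k$ steps. By the Markovian property of the metric together with the Markov property of the flow-line/$\CLE_{\kappa'}$ pair (Lemma~\ref{le:cle_sle_domain_markov}, Lemma~\ref{lem:cle_sle_markov} and Lemma~\ref{le:law_bubble}), the conditional law given $\mathcal G_{k-1}$ of the remaining configuration strictly between the continuations of $\eta_1^\delta,\eta_2^\delta$ from $x_{k-1}$ to $i\delta$, after a conformal map to the standard domain, agrees with that of a fresh copy of the setup of Section~\ref{se:intersections_setup} at some scale $\delta_{k-1}\le C\delta$; the change of boundary data coming from the two force points is absolutely continuous with $L^p$-bounded Radon--Nikodym derivative (Lemma~\ref{le:abs_cont_kernel} and the GFF absolute continuity reviewed in Section~\ref{subsec:ig}), uniformly in $\delta_{k-1}$ and in the position of $x_{k-1}$.

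Next, and this is the crux, I would bound the conditional failure probability $p_k:=\p[x_k\neq i\delta\mid\mathcal G_{k-1}]$ on the part of $E^\bubble$ concerning the unexplored region. A single bubble $V$ with $\diam(V)\le\delta$ satisfies, on $E^\bubble$, $\metapproxacres{\epsilon}{V}{x_V}{y_V}{\Gamma_\delta}\le\diam(V)^\zeta\median{\epsilon}+\diam(V)^{\zeta\epsexp}\epsilon^{-\epsexp}\ac{\epsilon}\ll\median{\epsilon}+\epsilon^{-\epsexp}\ac{\epsilon}$, so a failure cannot be caused by one bubble; summing the $E^\bubble$-bound over the bubbles of diameter at least $\delta^{1+\innexp}$ in the failure region $U_{x',x_k}$ (there are at most roughly $(\delta/d)^{\ddouble+o(1)}$ of diameter $\asymp d$, so the sum is dominated by the smallest such scale and is $\lesssim\delta^{(1+\innexp)\zeta-\innexp\ddouble}\median{\epsilon}\ll\median{\epsilon}$ once $\zeta$ is chosen small relative to $\innexp$) shows that a failure forces the bubbles of diameter $<\delta^{1+\innexp}$ inside $U_{x',x_k}$ to carry a definite fraction of the threshold distance. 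Those bubbles lie in $O(\delta^{-2(1+\innexp)})$ balls of radius $\delta^{1+\innexp}$, and by the a priori estimate \eqref{eq:a_priori_assumption} applied at scale $\delta^{1+\innexp}$ to the rescaled metric of Lemma~\ref{le:scaled_metric}, combined with Lemma~\ref{le:median_scaling} to control the corresponding rescaled normalization constant, the probability that some such ball carries distance $\gtrsim\median{\epsilon}+\epsilon^{-\epsexp}\ac{\epsilon}$ is $O(\delta^{\zeta\epsexp})$ after a union bound, uniformly over $\delta_{k-1}\le C\delta$ (a smaller remaining region only decreases this probability). Here the exponent $\epsexp$ enters through the $\epsilon^{-\epsexp}\ac{\epsilon}$ term in the threshold and the hypothesis \eqref{eq:eps_bound_ass}, which is what makes the bound a small power of $\delta$ rather than merely a constant below $1$.

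Finally I would iterate. The defining property of $E^\bubble$ is a pointwise bound on each individual bubble, so $E^\bubble=E^\bubble_{\le k}\cap E^\bubble_{>k}$ with $E^\bubble_{\le k}$ measurable with respect to $\mathcal G_k$ and $E^\bubble_{>k}$ an event for the unexplored region; conditioning successively and using the bound of the previous paragraph gives $\p[N>n,\,E^\bubble]\le C\delta^{\zeta\epsexp}\,\p[N>n-1,\,E^\bubble]$, and induction on $n$ (with $\p[N>0]=1$) yields $\p[N>n,\,E^\bubble]=O(\delta^{n\zeta\epsexp})$. The main obstacle is the middle step: extracting a genuinely polynomial-in-$\delta$ bound on the per-step failure probability while avoiding circularity with Proposition~\ref{prop:intersection_crossing_exponent} itself — this forces one to isolate the small-ball contribution and invoke only the a priori estimate \eqref{eq:a_priori_assumption} (valid for regions contained in $B(0,\delta^{1+\innexp})$) together with the $E^\bubble$-control of the finitely many large bubbles, rather than any estimate on the full region.
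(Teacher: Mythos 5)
Your high-level strategy — run the exploration step by step, use the Markovian property (as in Lemma~\ref{le:expl_markov}) to freshen the law after each failure, show the per-step failure probability is $O(\delta^{\zeta\epsexp})$, and iterate — is the same as the paper's. But the crux is the per-step bound, and there your plan has a genuine gap.

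You claim that, conditionally on $\mathcal G_{k-1}$, the remaining configuration ``after a conformal map to the standard domain, agrees with that of a fresh copy of the setup of Section~\ref{se:intersections_setup} at some scale $\delta_{k-1}\le C\delta$.'' This is not correct, and the discrepancy is exactly what makes the per-step bound nontrivial. Lemma~\ref{le:expl_markov} gives a conformal image of the standard setup \emph{in the remaining domain}, but that domain is $\delta\D\setminus\ol U$ where $\ol U$ is the region explored so far; it is not a disk, and the conformal map distorts Euclidean distances (which the internal metric depends on) by an unbounded factor near the bottlenecks left behind by the stopped exploration. So the remaining domain can have nested bottlenecks, and the $\metapproxac{\epsilon}{\cdot}{\cdot}{\Gamma}$-distance across the region from $x_{k-1}$ to $i\delta$ may accumulate across all these nesting levels. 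Controlling this accumulation is what requires the paper's regularity input: the event $G_{M,a}$ that every sub-domain cut out by the pair of flow lines is $(M,a)$-good within $B(0,3\delta/4)$ (Corollary~\ref{co:regularity_scaled} with Example~\eqref{it:intersections_regularity} gives $\p[G_{M,a}^c]=o^\infty(\delta)$). The bound $|\CZ^{B(0,3\delta/4)}_k|\le M2^{ak}$ from that event is then used to organize the region into scales, bound each scale via Lemma~\ref{le:exploration_step_one_scale} (together with Lemma~\ref{le:good_scales_merging} to compare to the standard setup at a random GFF scale, and Lemma~\ref{le:median_scaling} to rescale the threshold), and sum a convergent series $\sum_k M2^{ak}2^{-\zeta k}\lesssim M\delta^{\zeta-a}$. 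This scale-by-scale structure, governed by $(M,a)$-goodness, does not appear in your plan; a crude union bound over balls of radius $\delta^{1+\innexp}$ cannot substitute for it, because the absolute continuity to the standard setup inside a small ball is not uniform in an irregular domain — precisely the point of the good-scale and $(M,a)$-good inputs — and because it does not account for the fact that distance contributions from different scales of nesting all add up.

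A smaller issue: the inequality you need is $(1+\innexp)\zeta-\innexp\ddouble>0$, i.e.\ $\innexp$ small relative to $\zeta$ (which is fixed by Lemma~\ref{le:each_bubble_good}), not ``$\zeta$ chosen small relative to $\innexp$'' as you wrote.
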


The proof of Lemma~\ref{le:geometric_exploration} will be given in several steps.

\begin{lemma}\label{le:expl_markov}
For $k\in\N$, let $t_{1,k},t_{2,k}$ be such that $\eta_1^\delta(t_{1,k}) = \eta_2^\delta(t_{2,k}) = x_k$, and let $U$ be the collection of connected components bounded between $\eta_1^\delta[0,t_{1,k}]$ and $\eta_2^\delta[0,t_{2,k}]$. On the event $\{N>k\}$, the conditional law of $(\eta_1^\delta\big|_{[t_{1,k},\infty)}, \eta_2^\delta\big|_{[t_{2,k},\infty)})$ given $(\eta_1^\delta\big|_{[0,t_{1,k}]}, \eta_2^\delta\big|_{[0,t_{2,k}]})$ is given by the conformal image to $\delta\D \setminus \overline{U}$ of an independent pair $(\eta_1,\eta_2)$ as described at the beginning of Section~\ref{se:intersections_setup}. The conditional law of $\Gamma \setminus \Gamma_U$ is that of an independent \clekp{} in each of the remaining components of $\delta\D \setminus (\eta_1^\delta[t_{1,k},\infty] \cup \eta_2^\delta[t_{2,k},\infty])$.
\end{lemma}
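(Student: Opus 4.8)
The plan is to unwind the definition of the stopping time $x_k$ and show that it is a stopping time for a suitable filtration with respect to which the pair $(\eta_1^\delta,\eta_2^\delta)$ enjoys a domain Markov property. First I would observe that $x_k$ is determined by the initial segments $\eta_1^\delta|_{[0,t_{1,k}]}$, $\eta_2^\delta|_{[0,t_{2,k}]}$ \emph{together with} the internal metrics $\metapproxres{\epsilon}{V}{\cdot}{\cdot}{\Gamma_\delta}$ in the bubbles $V$ already traced (i.e.\ the bubbles contained in $\overline{U}$). On the event $\{N>k\}$ we know moreover that no earlier intersection point triggered the stopping rule, which is again a statement measurable with respect to these explored data. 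The key structural input is Lemma~\ref{le:law_bubble}, which identifies the conditional joint law of $(\eta_1^\delta|_{[t_{1,k},\infty)},\eta_2^\delta|_{[t_{2,k},\infty)})$ given $(\eta_1^\delta|_{[0,t_{1,k}]},\eta_2^\delta|_{[0,t_{2,k}]})$ as the conformal image, under a map sending $(\h,0,\infty)$ to the relevant complementary component $\delta\D\setminus\overline{U}$ with the two marked boundary points going to $x_k$ and $i\delta$, of a fresh pair of flow lines with angles $\theta_1,\theta_2$ as in Section~\ref{se:intersections_setup}. (Strictly speaking one applies Lemma~\ref{le:law_bubble} and the uniqueness remark following it, which also uses \cite[Theorem~4.1]{ms2016ig2}, at the stopping time $(t_{1,k},t_{2,k})$; since $x_k$ is a last-exit-type time one argues by exhausting $x_k$ by a countable family of deterministic-modulus stopping/reverse-stopping pairs exactly as in the proof of Lemma~\ref{lem:cle_sle_markov}, or simply by noting that $x_k$ lies in the countable set of ``bubble endpoints'' and conditioning on which one it is.)

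Next I would handle the metric. The point is that the internal metrics in the \emph{unexplored} bubbles are not used in the definition of $x_k$ on $\{N>k\}$; hence, by the Markovian property of the approximate \clekp{} metric, conditioning on $x_k$ (equivalently on the explored segments and the explored metrics) does not bias the conditional law of the pair $(\eta_1^\delta|_{[t_{1,k},\infty)},\eta_2^\delta|_{[t_{2,k},\infty)})$ together with the \clekp{} $\Gamma\setminus\Gamma_U$ and the associated internal metrics. Concretely: the event $\{N>k\}\cap\{x_k = v\}$ for a fixed bubble endpoint $v$ is measurable with respect to $\sigma(\eta_1^\delta|_{[0,t_{1,k}]},\eta_2^\delta|_{[0,t_{2,k}]})$ together with the $\metapproxres{\epsilon}{V}{\cdot}{\cdot}{\Gamma_\delta}$ for $V\subseteq\overline{U}$, and all of these are conditionally independent of the unexplored metrics given $\Gamma$ by the Markovian property; so an application of the tower property over this $\sigma$-algebra, combined with Lemma~\ref{le:law_bubble} for the flow-line part and the domain Markov property of \clekp{} (as recalled after \cite[Lemma~3.2]{gmq2021sphere}) for the \clekp{} part, yields exactly the claimed conditional law.

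Finally, for the \clekp{} statement, I would note that $\Gamma\setminus\Gamma_U$ consists of conditionally independent \clekp{}'s in each connected component of $\delta\D\setminus(\eta_1^\delta[t_{1,k},\infty]\cup\eta_2^\delta[t_{2,k},\infty])$ that lies between the two flow lines, which is immediate from the construction of $\Gamma$ in Section~\ref{se:intersections_setup} (conditionally independent \clekp{} in each component between $\eta_1^\delta,\eta_2^\delta$) together with the fact that conditioning on the explored bubbles and their metrics does not affect the unexplored ones; under the conformal map this matches the description of $\Gamma$ for the fresh pair $(\eta_1,\eta_2)$. The main obstacle I anticipate is the bookkeeping around $x_k$ being a last-exit-type time rather than a genuine stopping time for the naive filtration generated by $(\eta_1^\delta,\eta_2^\delta)$: one must argue carefully, as in the proof of Lemma~\ref{lem:cle_sle_markov}, that conditioning on the precise bubble at which the stopping rule first fires does not destroy the conditional independence, which is why it is cleanest to decompose over the countable family of candidate endpoints and invoke Lemma~\ref{le:law_bubble} once the endpoint is fixed.
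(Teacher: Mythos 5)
Your proposal takes a genuinely different route from the paper, and it has a gap. The paper's proof does not decompose over bubble endpoints or invoke Lemma~\ref{le:law_bubble} at all. Instead it shows directly that the closed explored region $\ol{U}$ is a \emph{local set} for the GFF $h_\delta$, by verifying the criterion of \cite[Lemma~3.9]{ss2013continuumcontour}: for any relatively closed $A\subseteq\delta\D$, the conditional probability given $h_\delta$ of the event $\ol{U}\subseteq A$ depends only on the restriction of $h_\delta$ to $A$. The key observation making this work is exactly the Markovian property of the approximate metric: the stopping criterion in a bubble $U_{x',x_k}$ is determined (given $h_\delta$) by the values of $h_\delta$ in a neighborhood of that bubble. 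Once locality of $\ol{U}$ is established, the curves $\eta_i^\delta|_{[t_{i,k},\infty)}$ are simply the flow lines of $h|_{\delta\D\setminus\ol{U}}$, and the desired conditional law (for the curves and for the CLE's in the unexplored components) falls out of the GFF domain Markov property with no further bookkeeping.

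The gap in your proposal is the role you assign to Lemma~\ref{le:law_bubble}. That lemma, together with the remark that follows it, gives the conditional law of the \emph{interior} of a single bubble given \emph{everything outside it} — i.e.\ it conditions on the complement of a bubble. What you need is the opposite conditioning: the law of the remainder of $(\eta_1^\delta,\eta_2^\delta)$ given the initial segments up to $x_k$. These are different conditional structures, and Lemma~\ref{le:law_bubble} does not deliver the latter. Your decomposition over the countable family of bubble endpoints is a sensible way to reduce to the case of a fixed candidate intersection point $v$, but even after that reduction you still need a domain Markov property for the \emph{pair} $(\eta_1^\delta,\eta_2^\delta)$ at the pair of times $(t_{1,v},t_{2,v})$, which is precisely what must be established — and which is what the paper's locality argument for $\ol{U}$ supplies. (The situation is also not quite a ``last-exit'' time as you describe; $x_k$ is the first intersection point at which a metric condition is triggered, so the obstruction is that the stopping rule depends on the metric, not that it is a reverse stopping time.) To fix your argument you would need to add, in place of the Lemma~\ref{le:law_bubble} invocation, a proof that the explored set $\ol{U}$ is local for $h_\delta$; once that is in place the rest of your outline (conditional independence of the unexplored metrics given $\Gamma$, the tower property, and the identification of the CLE in the remaining components) goes through.
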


\begin{proof}
We claim that the set $\overline{U}$ is local for $h_\delta$. Indeed, if $A \subseteq \delta\D$ is a relatively closed set, then the conditional probability given $h_\delta$ of the event $\overline{U} \subseteq A$ depends only on the segments of $\eta_1^\delta$ (resp.\ $\eta_2^\delta$) until exiting $A$, due to the Markovian property of the metric. By \cite[Lemma~3.9]{ss2013continuumcontour}, this implies that $\ol{U}$ is a local set. Similarly, we see that $\eta_1^\delta\big|_{[t_{1,k},\infty)}$ (resp.\ $\eta_2^\delta\big|_{[t_{2,k},\infty)}$) are the flow lines of $h\big|_{\delta\D\setminus\overline{U}}$.
\end{proof}

\begin{lemma}\label{le:intersections_good_box}
Consider the setup described at the beginning of Section~\ref{se:intersections_setup}. There exists $\zeta > 0$ such that the following is true. Let $x,y \in \eta_1 \cap \eta_2$ be the first intersection point outside $B(0,1)$ (resp.\@ $B(0,2)$), and $\eta^{x,y}_1$ (resp.\@ $\eta^{x,y}_2$) the segments of $\eta_1$ (resp.\@ $\eta_2$) from $x$ to $y$. Then
\[
\p\left[ \eta^{x,y}_1,\eta^{x,y}_2 \subseteq [-u,u] \times [u^{-1},u] \right] = 1-O(u^{-\zeta}) \quad \text{as } u\to\infty .
\]
\end{lemma}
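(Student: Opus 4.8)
The plan is to bound the probability of the complement event $F_u$ (that $\eta^{x,y}_1\cup\eta^{x,y}_2$ is not contained in $[-u,u]\times[u^{-1},u]$) by splitting it into two parts: a part where one of the two segments \emph{escapes outward}, i.e.\ leaves $B(0,u^{\gamma})$ for a small fixed $\gamma>0$, and a part where one of them \emph{approaches the real line}, i.e.\ reaches imaginary part $<u^{-1}$. For each I would produce a bound of the form $O(u^{-\zeta})$, so that taking $\zeta>0$ small finishes the proof.

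For the outward-escape part, the first step is to show that with probability $1-O(u^{-\zeta})$ both $|x|$ and $|y|$ are at most $u^{\gamma/2}$. Since $\eta_1\cap\eta_2$ has dimension $\ddouble>0$ (by \cite[Theorem~1.5]{mw2017intersections}) and accumulates at the common endpoint $0$, there is $p_0>0$, uniform in the scale, such that the interior angle-$\theta_1$ and angle-$\theta_2$ flow lines have an intersection point inside $A(0,1/4,3/4)$ with probability $\ge p_0$; feeding this into the independence-across-scales packaging of Lemmas~\ref{lem:good_scales_for_event} and~\ref{le:good_scales_merging} — with $G^p$ the event that the interior flow lines intersect in $A(0,1/4,3/4)$ \emph{and} the incoming curves $\eta_1,\eta_2$ have conditional probability $\ge p$ of merging into them — gives that, outside an event of probability $O(e^{-bk})$, a positive fraction of the scales $2^{j}$, $j=0,\dots,k$, carries a detectable intersection of $\eta_1$ with $\eta_2$, and taking $k\asymp\gamma\log u$ yields the claim. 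The second step is a re-entry estimate: by the conformal Markov property, after $\eta_i$ first exits $B(0,R)$ its continuation is an $\SLE_\kappa(\underline{\rho})$ in a slit domain targeting $\infty$, and (on the regularity event that this slit domain is well behaved, cf.\ Appendix~\ref{se:regularity}) a standard one-point/annulus-crossing bound gives $\p[\eta_i\text{ re-enters }B(0,\rho R)\text{ after exiting }B(0,R)]=O(\rho^{\zeta})$. Applying this at $R=u^{\gamma}$, $\rho=u^{-\gamma/2}$ shows that if $|x|,|y|\le u^{\gamma/2}$ and the curves do not re-enter $B(0,u^{\gamma/2})$ after exiting $B(0,u^{\gamma})$, then $\eta^{x,y}_1,\eta^{x,y}_2\subseteq B(0,u^{\gamma})$, since a segment leaving $B(0,u^{\gamma})$ would have to return to its endpoint inside $B(0,u^{\gamma/2})$; applying the same bound at the inner scale $R=1$, $\rho=u^{-\gamma}$ shows that off a further $O(u^{-\zeta})$ event the segments also avoid $B(0,u^{-\gamma})$, as they begin at $x$ with $|x|>1$.

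For the approach-to-$\R$ part I would condition on the good event above, so the segments lie in $B(0,u^{\gamma})\setminus B(0,u^{-\gamma})$; then any point of $\eta^{x,y}_1\cup\eta^{x,y}_2$ with imaginary part $<u^{-1}$ lies within $2u^{-1}$ of some $x_0\in u^{-1}\Z$ with $u^{-\gamma}/2\le|x_0|\le 2u^{\gamma}$, so it suffices to bound $\sum_{x_0}\big(\p[\eta_1\cap B(x_0,2u^{-1})\ne\varnothing]+\p[\eta_2\cap B(x_0,2u^{-1})\ne\varnothing]\big)$ over this range. The key point is that on $\R_-$ the boundary data of $\eta_1$ equals the plain-$\SLE_\kappa$ value $-\lambda$, so away from $0$ the law of $\eta_1$ is locally mutually absolutely continuous with chordal $\SLE_\kappa$ with controlled Radon--Nikodym derivative (and on $\R_+$ the repulsive force point of weight $\kappa-2$ only decreases the proximity probability), and symmetrically for $\eta_2$ on $\R_+$; hence for $x_0$ on the weak side with $|x_0|\ge 4u^{-1}$ one has $\p[\eta_i\cap B(x_0,2u^{-1})\ne\varnothing]\lesssim (u^{-1}/|x_0|)^{\beta_\kappa}$, where $\beta_\kappa$ is the boundary arm exponent for $\SLE_\kappa$ (cf.\ Appendix~\ref{app:arms}). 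Since $\kappa<4$ we have $\beta_\kappa>1$, so grouping the sum into dyadic shells in $|x_0|$ (a shell $|x_0|\asymp 2^\ell$ contains $\asymp 2^\ell u$ lattice points each of weight $\lesssim(2^\ell u)^{-\beta_\kappa}$, hence contributes $\lesssim(2^\ell u)^{1-\beta_\kappa}$) the total is $\lesssim (\log u)\,(u^{1-\gamma})^{1-\beta_\kappa}=O(u^{-\zeta})$; applied with $x_0$ the nearest lattice point to $\re x$ (resp.\ $\re y$) it also gives $\im x,\im y\ge u^{-1}$ on the good event, so the proof is complete.

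The main obstacle is exactly this last estimate, which hinges on the boundary arm exponent of $\eta_1$ near $\R_-$ and of $\eta_2$ near $\R_+$ being strictly larger than $1$ — this is precisely where the hypothesis $\kappa\in(2,4)$ (equivalently $\kappa'>4$) enters, and it is also why the box in the statement can only be taken of width $u^{-1}$ near the real axis rather than a fixed width, since the intersection points $x,y$ themselves can, with small but not-summable-over-scales probability, come within $u^{-1}$ of $\R$. A secondary technical point, handled by the regularity and independence-across-scales machinery of Section~\ref{subsec:ig} and Appendix~\ref{se:regularity}, is making the ``detectable intersection at a positive fraction of scales'' bound uniform as one explores outward through the growing slit domains.
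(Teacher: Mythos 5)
Your two-part decomposition (independence across scales to find a ``clean'' intersection of $\eta_1,\eta_2$ at a controlled distance from $0$, plus a boundary-proximity estimate to keep the segments away from $\R$) is the same strategy the paper uses, and the boundary-proximity part with the exponent $8/\kappa-1>1$ and the dyadic lattice sum is essentially correct and matches the paper's invocation of \cite{sz-boundary-proximity}. The gap is in the independence-across-scales step. You invoke Lemmas~\ref{lem:good_scales_for_event} and~\ref{le:good_scales_merging}, but those are formulated for balls $B(z,r)\subseteq D$ and full interior annuli $A(z,r/4,3r/4)$; the annuli relevant here are the half-annuli $A(0,2^j,2^{j+1})\cap\h$, which are anchored at $\partial\h$ (where $\eta_1,\eta_2$ emanate from $0$), so those lemmas do not directly apply. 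The paper's proof uses a \emph{boundary} version of the good-scales machinery (it cites \cite[Section~4.1.6]{amy-cle-resampling}), with events $E_j$ defined as ``each of the two boundary-emanating flow lines crosses $A(0,2^j,2^{j+1})\cap\h$ exactly once with the correct height, and they intersect.'' Note that this choice of $E_j$ automatically handles what you treat separately with the re-entry/transience bound: requiring a \emph{single} crossing per annulus rules out excursions, so on $E_j$ the segments $\eta^{x,y}_1,\eta^{x,y}_2$ are confined to $B(0,2^{j+1})$ with no further argument. Your separate re-entry estimate via \cite{fl-sle-transience}/Lemma~\ref{lem:get_too_close_ubd} is a valid alternative route, but if you keep it, state carefully that the bound holds uniformly over the random configuration of the slit domain at the stopping time. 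One small inaccuracy: the condition $\beta_\kappa>1$ only needs $\kappa<4$ (giving $8/\kappa-1>1$); the lower bound $\kappa>2$ does not enter the boundary-proximity estimate but matters elsewhere (e.g.\ for $\ddouble>0$ and the existence of flow-line intersections).
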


\begin{proof}
The probability that $\eta^{x,y}_1$ or $\eta^{x,y}_2$ comes back close to $\R$ is bounded by $u^{-(8/\kappa-2)}$ (see e.g.\ \cite{sz-boundary-proximity}). To bound the probability that $\eta_1,\eta_2$ do not intersect while crossing the annulus $A(0,2,u) \cap \h$, we can e.g.\ apply an independence across scales argument (see e.g.\ \cite[Section~4.1.6]{amy-cle-resampling} for the boundary version) with the events $E_j$ that there is only one angle $\theta_1$ (resp.\ $\theta_2$) flow line crossing the annulus $A(0,2^j,2^{j+1}) \cap \h$ with the correct height (i.e.\ the height of the first such crossing of a flow line from $0$) and they intersect. Then we have that $\p[\bigcap_{j=2,\ldots,k} E_j^c] = O(e^{-\zeta k})$ for some $\zeta>0$.
\end{proof}

To prove Lemma~\ref{le:geometric_exploration}, we prove that the probability that the next exploration step makes it to $i\delta$ is positive. We consider the following setup. Let $D$ be a simply connected domain with $\operatorname{inrad}(D) \le \delta$, and $x_0,y_0 \in \partial D$ distinct. Suppose that $(D,x_0,y_0)$ is $(M,a)$-good within $B(0,3\delta/4)$ as defined in Definition~\ref{def:regularity}. Let $h$ be a GFF in $D$ with boundary values as described in Section~\ref{se:intersections_setup} (and transformed to $D$). Let $\eta_1 $(resp.\ $\eta_2$) be the flow line of angle $\theta_1$ (resp.\ $\theta_2$) from $x_0$ to $y_0$. Let $U$ be the region bounded between $\eta_1,\eta_2$. Let $\intptsapprox{\delta}{\epsilon} \subseteq (\eta_1 \cap \eta_2)^4$ and $U_{x',y'}$ for $(x',x,y,y')\in\intptsapprox{\delta}{\epsilon}$ be defined analogously.

\begin{lemma}\label{le:exploration_step}
There exists $\zeta > 0$ such that the following is true. Fix $M,a > 0$ where $a>0$ is sufficiently small (depending on $\epsexp$). Let $D \subseteq \C$ be a simply connected domain with $\operatorname{inrad}(D) \le \delta$, $x_0,y_0 \in \partial D$ distinct, and assume $(D,x_0,y_0)$ is $(M,a)$-good within $B(0,3\delta/4)$. Then
\[
\p\left[ \sup_{(x',x,y,y')\in\intptsapprox{\delta}{\epsilon}}\metapproxacres{\epsilon}{U_{x',y'}}{x}{y}{\Gamma} \ge M\median{\epsilon}+M\epsilon^{-\epsexp}\ac{\epsilon} ,\,E^\bubble \right] = O(M\delta^{\zeta\epsexp})
\quad\text{as } \delta\searrow 0 .
\]
\end{lemma}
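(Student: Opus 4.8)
The plan is to deduce the estimate by combining the a priori bound~\eqref{eq:a_priori_assumption} for the exponent $\bestexp\ge\ddouble$, the scaling of the quantiles (Lemma~\ref{le:median_scaling} and its corollary), and the bound packaged in the event $E^\bubble$, via a decomposition of the crossing distance into the contributions of the individual bubbles lying in series between the two flow lines. First I would reduce to the standard setup of Section~\ref{se:intersections_setup}: the hypothesis that $(D,x_0,y_0)$ is $(M,a)$-good within $B(0,3\delta/4)$ controls the distortion of a conformal map $\delta\D\to D$ sending the two boundary marked points to $x_0,y_0$, and together with the Markov property of the GFF and the triviality of the infinitesimal boundary data (as in Section~\ref{se:intersections_setup}) this shows that the law of $h$ restricted to a neighbourhood of $B(0,3\delta/4)$ is mutually absolutely continuous with that of the reference field $h_\delta$, with Radon--Nikodym derivative having moments of every order bounded in terms of $M$. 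By Lemma~\ref{le:abs_cont_kernel} this persists once the internal metrics are sampled, so — replacing the probability by a fixed power of itself times a constant depending on $M$ — it suffices to prove the estimate in the standard setup, with Lemma~\ref{le:scaled_metric} and Lemma~\ref{le:median_scaling} allowing us to rescale sub-regions freely.

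Next I would decompose over bubbles. By the monotonicity~\eqref{eq:monotonicity_across_bubbles} it is enough to bound $\metapproxacres{\epsilon}{U}{x}{y}{\Gamma_\delta}$ for the extreme intersection points in $B(0,3\delta/4)$. Listing the consecutive intersection points $z_0=x,z_1,\dots,z_m=y$ of $\eta_1^\delta,\eta_2^\delta$ and letting $V_i$ be the bubble between $z_{i-1}$ and $z_i$, the triangle inequality and the compatibility of $\metapproxacres{\epsilon}$ give
\[ \metapproxacres{\epsilon}{U}{x}{y}{\Gamma_\delta}\le\sum_{i}\metapproxacres{\epsilon}{V_i}{z_{i-1}}{z_i}{\Gamma_\delta}, \]
where a bubble, or a run of consecutive bubbles, of $\dpath$-diameter less than $\epsilon$ costs at most a single $\ac{\epsilon}$ by~\eqref{eq:shortcutted_metric}, and the conditional law of each $(\eta_1^\delta,\eta_2^\delta)|_{V_i}$ is the bubble law of Lemma~\ref{le:law_bubble}.

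The heart of the argument is then a counting/summation over dyadic diameter scales $2^{-j}$ with $\epsilon<2^{-j}\lesssim\delta$. With probability $1-o^\infty(\delta)$ the number of bubbles of diameter $\asymp 2^{-j}$ inside $B(0,3\delta/4)$ is at most $(\delta 2^{j})^{\ddouble+o(1)}$ — this follows from the dimension statement \cite[Theorem~1.5]{mw2017intersections} together with an independence-across-scales argument (Lemma~\ref{lem:gff_independence_across_scales}) — and similarly a $\dpath$-$\epsilon$-net of $\eta_1^\delta\cap\eta_2^\delta\cap B(0,3\delta/4)$ has at most $(\delta/\epsilon)^{\ddouble+o(1)}$ points, so choosing $a$ (hence the $o(1)$) small relative to $\epsexp$ bounds the sub-$\epsilon$ contribution by $\tfrac12 M\epsilon^{-\epsexp}\ac{\epsilon}$. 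For bubbles of diameter $\asymp 2^{-j}$, with $\mathrm{typ}_j=2^{-j(\ddouble+o(1))}(\median{\epsilon}+\epsilon^{-\epsexp}\ac{\epsilon})$ denoting the quantile bound of Lemma~\ref{le:median_scaling}, the sum of the truncated distances $\min(\metapproxacres{\epsilon}{V_i}{z_{i-1}}{z_i}{\Gamma_\delta},\mathrm{typ}_j)$ is at most $\delta^{\ddouble+o(1)}(\median{\epsilon}+\epsilon^{-\epsexp}\ac{\epsilon})\le\tfrac18 M(\median{\epsilon}+\epsilon^{-\epsexp}\ac{\epsilon})$ for small $\delta$. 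It remains to control the total excess $\sum_i(\metapproxacres{\epsilon}{V_i}{z_{i-1}}{z_i}{\Gamma_\delta}-\mathrm{typ}_j)_+$; on $E^\bubble$ each summand is at most $2^{-j\zeta}\median{\epsilon}+2^{-j\zeta\epsexp}\epsilon^{-\epsexp}\ac{\epsilon}$, whereas the a priori bound~\eqref{eq:a_priori_assumption} (rescaled using Lemma~\ref{le:median_scaling}) makes the conditional probability that a size-$2^{-j}$ bubble exceeds a given multiple of its typical distance small enough — precisely because $\bestexp\ge\ddouble$ — that the expected excess at each scale $j$ is of order $2^{-j\ddouble+o(j)}(\median{\epsilon}+\epsilon^{-\epsexp}\ac{\epsilon})$; summing over the $\lesssim\log_2(1/\delta)$ scales, with $a$ small so the $o(j)$ terms cost only $\delta^{-o(1)}$, and applying Markov's inequality at the threshold $\tfrac14 M(\median{\epsilon}+\epsilon^{-\epsexp}\ac{\epsilon})$, yields the bound $O(M\delta^{\zeta\epsexp})$ once $\zeta>0$ is fixed small. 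A union bound over the three contributions finishes the proof.

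The main obstacle is the summation in Step 3: the per-bubble bound provided by $E^\bubble$ is far weaker than the typical behaviour (gaining only $2^{-j\zeta}$ rather than $2^{-j\ddouble}$), so it cannot be applied to all bubbles — the probabilistic a priori estimate must handle the overwhelming majority and $E^\bubble$ may only be invoked on a low-probability event — and the two small exponents $a$ and $\zeta$ have to be balanced against $\epsexp$ and $\bestexp-\ddouble$ so that every geometric series over dyadic scales converges to a genuine positive power of $\delta$. Controlling the sub-$\epsilon$ scales through the $\ac{\epsilon}$-shortcut, where the axioms for approximate CLE metrics give nothing, and establishing the bubble-counting and $\dpath$-net estimates uniformly, are the accompanying technical inputs.
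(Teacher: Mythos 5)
The first step of your reduction is the fatal gap. You claim that $(M,a)$-goodness "controls the distortion of a conformal map $\delta\D\to D$" and hence gives mutual absolute continuity (with moment-bounded Radon--Nikodym derivative) between $h$ restricted to a neighbourhood of $B(0,3\delta/4)$ and the reference disk field $h_\delta$. This is false. The $(M,a)$-good condition only bounds the \emph{number} of bottleneck scales, $|\CZ_k^{B(0,3\delta/4)}|\le M2^{ak}$; it allows $D$ to be an arbitrarily long, narrow, winding channel with many bottlenecks at every depth, for which the uniformizing map $\varphi\colon\h\to D$ has unbounded distortion. Since the flow lines run from $x_0$ to $y_0\in\partial D$ and their intersection points can lie arbitrarily close to $\partial D$, there is no two-sided absolute continuity with the disk setup near the boundary — exactly the regime where the estimate must be proved. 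The whole reason Lemma~\ref{le:exploration_step} is stated for general $(M,a)$-good domains, rather than just $\delta\D$, is to handle the residual domain produced by each exploration step in Lemma~\ref{le:geometric_exploration}, and the bottleneck structure is what must be confronted.

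Because of this you never use the $\CZ_k$-decomposition, which is the engine of the paper's argument. The paper pushes $\varphi$ forward and organizes $U_{x',y'}$ into pieces $U_j$ indexed by $j\in\CZ_k^{B(0,3\delta/4)}$, controls the geometry of each piece through the event $F=\bigcap F_j$ (that $\varphi(\eta_i[t_{j,i},t_{j+2,i}])\subseteq 2^jB_k$, via Lemma~\ref{le:intersections_good_box}), and then invokes a per-bottleneck-scale estimate (Lemma~\ref{le:exploration_step_one_scale}) which supplies a distance bound of order $2^{-\zeta k}\median{\epsilon}+2^{-\zeta\epsexp k}\epsilon^{-\epsexp}\ac{\epsilon}$ off an event of probability $O(2^{-\zeta k})$. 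Summing both of these over $j\in\CZ_k$ and $k\ge\log_2(\delta^{-1})$ using $|\CZ_k|\le M2^{ak}$ produces the factor $M\delta^{\zeta-a}$ by a deterministic union bound — no Markov inequality is needed, and no expectation bound is claimed. Your approach instead decomposes over \emph{bubble-diameter} scales, bounds an expected "excess" heuristically, and finishes with Markov's inequality; even granting the absolute-continuity step, you would still need the bubble count and the excess expectation to be uniform over $(M,a)$-good domains, which again requires the bottleneck decomposition you omit. Within each bottleneck scale the paper does employ both $E^\bubble$ (deterministically, for the large bubbles) and a rescaled a priori estimate (for the small ones), so you identified the right raw ingredients; the missing idea is that the organizing structure is the $\CZ_k$-hierarchy of $D$ itself, not the diameter distribution of bubbles in a disk.
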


To prove Lemma~\ref{le:exploration_step}, we divide $U$ into scales and bound the distance across each scale.

Let $\varphi \colon D \to \h$ be a conformal transformation with $\varphi(x_0) = 0$ and $\varphi(y_0) = \infty$. For each $k \in \N$, we let $\CZ^{B(0,3\delta/4)}_k$ as defined in~\eqref{eq:bottleneck_def}. By our assumption, $|\CZ^{B(0,3\delta/4)}_k| \leq M2^{ak}$. Note that the condition $\operatorname{inrad}(D) \leq \delta$ implies $\CZ^{B(0,3\delta/4)}_k = \varnothing$ for $k<\log_2(\delta^{-1})$.

Let $\eta_1(t_{j,1}) = \eta_2(t_{j,2})$ be the first intersection point outside $\varphi^{-1}(B(0,2^j))$, and let $U_j$ denote the region bounded between $\eta_1[t_{j,1},t_{j+2,1}], \eta_2[t_{j,2},t_{j+2,2}]$.

Let $B_k = [-2^{\wt{a}k},2^{\wt{a}k}] \times [2^{-\wt{a}k},2^{\wt{a}k}]$ for some $\wt{a}>0$. For $j\in\CZ^{B(0,3\delta/4)}_k$, let $F_j$ denote the event that $\varphi(\eta_i[t_{j,i},t_{j+2,i}]) \subseteq 2^j B_k$ for $i=1,2$, and $F = \bigcap_{j\in\Z} F_j$. Suppose that $a < \wt{a}\zeta/2$ where $\zeta>0$ is the exponent from Lemma~\ref{le:intersections_good_box}. Then
\[
\p[F^c] \lesssim \sum_{k\ge\log_2(\delta^{-1})} |\CZ^{B(0,3\delta/4)}_k|\,2^{-\zeta\wt{a}k} 
\le \sum_{k\ge\log_2(\delta^{-1})} M2^{-(\zeta/2)\wt{a}k} 
\lesssim M\delta^{\wt{a}\zeta/2} .
\]
Further, for $j\in\CZ^{B(0,3\delta/4)}_k$, let
\[
D_j = \left\{ z \in D \ :\  |z-\varphi^{-1}(i2^j)| \le 2^{-k+6\wt{a}k} ,\ \dist(z,\partial D) \ge 2^{-k-4\wt{a}k} \right\}
\]
and note that $\varphi^{-1}(2^j B_k) \subseteq D_j$. In particular, we have $\eta_i[t_{j,i},t_{j+2,i}] \subseteq D_j$ on the event $F_j$.

\begin{lemma}\label{le:exploration_step_one_scale}
There exists $\zeta>0$ such that if $\wt{a} > 0$ is sufficiently small (depending on $\epsexp$), then
\[
\p\left[ \sup_{(x',x,y,y')\in\intptsapprox{\delta}{\epsilon}\cap\ol{U}_j}\metapproxacres{\epsilon}{U_{x',y'}}{x}{y}{\Gamma} \ge 2^{-\zeta k}\median{\epsilon}+2^{-\zeta\epsexp k}\epsilon^{-\epsexp}\ac{\epsilon} ,\, E^\bubble ,\, F_j \right] = O(2^{-\zeta k})
\]
for every $k < \log_2(\epsilon^{-1})$ and $j\in\CZ^{B(0,3\delta/4)}_k$.
\end{lemma}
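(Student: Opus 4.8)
\emph{Strategy.} The plan is to combine three ingredients: the conformal regularity provided by the $(M,a)$-goodness of $(D,x_0,y_0)$; absolute continuity of the GFF together with Lemma~\ref{le:abs_cont_kernel} and the Markovian property of the metric, which reduce the statement in the general domain $D$ to the clean intersection setup of Section~\ref{se:intersections_setup} at Euclidean scale $\asymp 2^{-k}$; and the a~priori estimate~\eqref{eq:a_priori_assumption} at exponent $\bestexp$ together with Lemma~\ref{le:median_scaling} and the event $E^\bubble$, which give the clean estimate.

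\emph{Step 1 (localisation).} On $F_j$ we have $U_j\subseteq D_j$, and moreover every sub-region $U_{x',y'}$ with $(x',x,y,y')\in\intptsapprox{\delta}{\epsilon}\cap\ol U_j$ is contained in $D_j$, since $x',y'$ lie between $\eta_i(t_{j,i})$ and $\eta_i(t_{j+2,i})$. Here $D_j$ has Euclidean diameter at most $2^{-k+6\wt a k}$ and Euclidean distance at least $2^{-k-4\wt a k}$ from $\partial D$, and by Koebe's distortion theorem and Definition~\ref{def:regularity}, $|(\varphi^{-1})'|$ is comparable to $2^{-k}2^{-j}$ on $2^jB_k$ up to a factor $2^{O(\wt a k)}$, and the scale $(\varphi^{-1}(i2^j),c\,2^{-k})$ is $M$-good for $h$ for an appropriate constant $c$.

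\emph{Step 2 (reduction).} Inside $D_j$ the pair $(\eta_1,\eta_2)$ is a concatenation of flow lines of $h|_{D_j}$, and by the Markovian property the internal metrics in the sub-regions $U_{x',y'}\subseteq\ol U_j$ depend only on the restriction of $h$ to a neighbourhood of $U_j$ inside $D_j$. On the $M$-good event this restriction is absolutely continuous with respect to the corresponding restriction in the intersection setup at Euclidean scale $\delta'\asymp 2^{-k}$ (which, by scale invariance of the GFF boundary data on $\h$, is the $\delta=1$ setup rescaled by $2^{-k}$), with Radon--Nikodym derivative having $L^p$ moments bounded by $C(M)$ for all $p$ (Lemma~\ref{lem:rn_derivative}); Lemma~\ref{le:abs_cont_kernel} upgrades this to the joint law including the metrics, so the probability in the statement is at most $C(M)\,(\p_{\delta'}[\,\mathrm{bad}\,])^{1-1/p}$. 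It then suffices to bound the probability of the analogous event in the clean setup at scale $\delta'\asymp 2^{-k}$ by $O((\delta')^{\alpha})$ for some $\alpha>\zeta$, the $2^{O(\wt a k)}$ distortion factors being absorbed by shrinking $\zeta$ and taking $\wt a$ small; note the hypothesis $k<\log_2(\epsilon^{-1})$ is exactly what guarantees $\epsilon<\delta'$ so that the rescaled metric is still a legitimate approximate \clekp{} metric.

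\emph{Step 3 (the clean estimate) and the main obstacle.} For a sub-region $U_{x',y'}\subseteq\ol U_j$, decompose it into the bubbles $V$ separated by the intersection points of $\eta_1,\eta_2$; by the compatibility axiom, the (approximate) series law, and the triangle inequality for~\eqref{eq:shortcutted_metric},
\[
 \metapproxacres{\epsilon}{U_{x',y'}}{x}{y}{\Gamma_{\delta'}} \le \sum_{V:\,\diamE(V)>\epsilon}\metapproxacres{\epsilon}{V}{x_V}{y_V}{\Gamma_{\delta'}} + L\,\ac{\epsilon},
\]
where $L$ is the number of $\epsilon$-steps in $\dpath$ needed to traverse the sub-$\epsilon$ bubbles; on $E^\bubble$ each summand is at most $\diamE(V)^\zeta\median{\epsilon}+\diamE(V)^{\zeta\epsexp}\epsilon^{-\epsexp}\ac{\epsilon}$. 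Using that $U_j$ spans only a bounded range of scales, so that the number of bubbles of $U_{x',y'}$ of Euclidean diameter $\asymp r$ is $O((\delta'/r)^{\ddouble+o(1)})$ (by the dimension count behind Lemma~\ref{le:a_priori_disc}, cf.\ Proposition~\ref{pr:dbl_exponent}), I would choose $\zeta$ so that the first sum is governed by the $O(1)$ largest bubbles and is $\le C(\delta')^{\zeta-o(1)}\median{\epsilon}$, while $L\,\ac{\epsilon}\le(\delta')^{\zeta\epsexp}\epsilon^{-\epsexp}\ac{\epsilon}$ once $\zeta<\epsexp$ and $\delta'$ is small (using~\eqref{eq:eps_bound_ass}); the remaining task is to upgrade $C(\delta')^{\zeta-o(1)}$ to $(\delta')^{\zeta}$ with error probability $O((\delta')^{\alpha})$, $\alpha>\zeta$. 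For this I would use that by Lemma~\ref{le:median_scaling} the distance across $U_{x',y'}$ has typical order only $(\delta')^{\ddouble+o(1)}\median{\epsilon}$, so that the target threshold $(\delta')^{\zeta}\median{\epsilon}$ (with $\zeta<\ddouble$) exceeds the median by a polynomial factor $(\delta')^{-(\ddouble-\zeta)+o(1)}$, and convert this into the desired probability bound by feeding it into~\eqref{eq:a_priori_assumption} at exponent $\bestexp$ applied, after rescaling by $(\delta')^{-1}$, to the individual bubbles and summed over dyadic sub-scales, finally checking $\alpha>\zeta$ for $\zeta$ small in terms of $\epsexp,\ddouble,\bestexp$ and the H\"older exponent in $E^\bubble$. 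The hard part is precisely this step: unlike Lemma~\ref{le:a_priori_disc}, which controls a single small bubble, here one must control the distance across the entire two-scale region $U_j$, which forces one to carefully organise its bubble decomposition and keep the accumulation of both the metric contributions and the $\ac{\epsilon}$-errors summable over all dyadic sub-scales down to $\epsilon$.
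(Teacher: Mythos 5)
Your Steps 1 and 2 are sound and in line with the paper's approach (localise to $D_j$, use $(M,a)$-goodness plus absolute continuity to reduce to the clean setup at scale $\asymp 2^{-k}$, with the condition $k<\log_2(\epsilon^{-1})$ guaranteeing $\epsilon<\delta'$). The problem is in Step~3, where the strategy is genuinely flawed.

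You propose to bound $\metapproxacres{\epsilon}{U_{x',y'}}{x}{y}{\Gamma}$ by summing the $E^\bubble$ bounds over \emph{all} bubbles of $U_{x',y'}$ with $\diamE(V)>\epsilon$, and you assert that the resulting sum $\sum_V \diamE(V)^\zeta\median{\epsilon}$ is ``governed by the $O(1)$ largest bubbles.'' This is the opposite of what happens. Since the intersection set $\eta_1\cap\eta_2$ has dimension $\ddouble$, the number of bubbles of Euclidean scale $\asymp r$ inside $U_{x',y'}$ (of scale $\delta'$) is $\gtrsim (\delta'/r)^{\ddouble-o(1)}$, so the sum over bubbles at scale $r$ contributes $\gtrsim(\delta'/r)^{\ddouble} r^{\zeta}\median{\epsilon}$. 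Because any workable $\zeta$ satisfies $\zeta<\ddouble$, the exponent $\zeta-\ddouble$ is negative, so the dyadic sum in $r$ is dominated by the \emph{smallest} bubbles (scale $\asymp\epsilon$), giving a total of order $(\delta')^{\ddouble}\epsilon^{\zeta-\ddouble}\median{\epsilon}$, which blows up as $\epsilon\searrow 0$ and is not at all $\lesssim(\delta')^{\zeta}\median{\epsilon}$. Summing $E^\bubble$ contributions bubble-by-bubble is therefore a dead end; the ``remaining task'' you then describe is not an upgrade of a nearly-correct bound but a replacement of the whole argument.

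The paper's proof avoids this in a structurally different way, and the distinction is the missing idea. After localising, it (i) shows (via Lemma~\ref{le:fill_ball} and Koebe) that there are only $2^{c\wt a k}$ ``large'' bubbles of diameter $\ge 2^{-k-6\wt a k}$, and sums the $E^\bubble$ bound over just those — a bounded-multiplicity sum that is genuinely controlled; (ii) observes that once the large bubbles are removed, what is left decomposes (via Lemma~\ref{le:variation_one_scale}, event $E_2$) into at most $2^{c\wt a k}$ segments $U_{j,l}$ each of diameter $\le 2^{-k-5\wt a k}$ containing only small bubbles; and (iii) \emph{does not} treat those segments bubble-by-bubble at all — instead it bounds the $\metapproxac{\epsilon}{\cdot}{\cdot}{\Gamma}$-distance across each entire segment $U_{j,l}$ using the a~priori probability estimate (Lemma~\ref{le:a_priori_int_fl} in conjunction with the good-scales event $E_3$ from Lemma~\ref{le:good_scales_merging}), which gives a single bound $\le\median[2^{-k/2}]{\epsilon}$ for the whole segment with failure probability $O(2^{-k\ddouble/2})$, and then $\median[2^{-k/2}]{\epsilon}\lesssim 2^{-k\ddouble/2+\wt a k}\median{\epsilon}$ by Lemma~\ref{le:median_scaling}. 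This is the crucial structural move that avoids the divergent small-scale sum: Lemma~\ref{le:a_priori_disc} controls the distance across a whole region bounded between the two flow lines (not a single bubble, as your parenthetical suggests), and is applied here to each sub-segment as a whole unit. Without this, the probability bound $O(2^{-\zeta k})$ cannot be obtained; note also that $E^\bubble$ has error probability $O(\delta^{\bestexp+\alpha'})$, not $O(2^{-\zeta k})$, so it cannot supply the $O(2^{-\zeta k})$ error on its own. You should also address where the term $L\,\ac{\epsilon}$ (your number of $\epsilon$-steps) comes from and why it is $O(2^{c\wt a k})$; in the paper this is exactly what $E_2$ gives, and the combination with~\eqref{eq:shortcutted_metric} and the compatibility axiom is spelled out at the end of the proof.
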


\begin{proof}
Throughout the proof, we let $c>0$ denote a fixed constant whose value may change from line to line.

Let $E_1$ be the event that there are at most $2^{c\wt{a}k}$ bubbles $V \subseteq U_j$ with $\diam(V) \ge 2^{-k-6\wt{a}k}$. It follows from Lemma~\ref{le:fill_ball} and Koebe's distortion theorem that $\p[E_1^c \cap F_j] = o^\infty(2^{-\wt{a}k})$. On the event $E^\bubble \cap E_1 \cap F_j$, the sum of distances across all such bubbles is at most $2^{c\wt{a}k}(2^{-\zeta k}\median{\epsilon}+2^{-\zeta\epsexp k}\epsilon^{-\epsexp}\ac{\epsilon})$.

Therefore we only need to consider the remaining regions.

Let $E_2$ be the event that for each $i=1,2$, there are $2^{c\wt{a}k}$ time points $t_{j,i} = s_1<s_2<\cdots= t_{j+2,i}$ such that $\diam(\eta_i[s_l,s_{l+1}]) \le 2^{-k-5\wt{a}k}$ for each $l$. It follows from Lemma~\ref{le:variation_one_scale} that $\p[E_2^c \cap F_j] = o^\infty(2^{-\wt{a}k})$.

On the event $E_1 \cap E_2 \cap F_j$, we can divide the remaining regions of $U_j$ (i.e., after removing the bubbles of diameter at least $2^{-k-6\wt{a}k}$) into at most $2^{c\wt{a}k}$ segments of diameter at most $2^{-k-5\wt{a}k}$. Let us enumerate them by $U_{j,l}$, $l=1,\ldots,L$. Each $U_{j,l}$ is bounded between a segment of $\eta_1$ and a segment of $\eta_2$, and contains only bubbles smaller than $2^{-k-6\wt{a}k}$.

Let $G_{z,r}$ be the event from Lemma~\ref{le:good_scales_merging}, and let $E_3$ be the event that for each $z \in 2^{-k-6\wt{a}k}\Z^2 \cap D_j$, the events $G_{z,2^{-j}}$ occur for at least $9/10$ fraction of scales $j \in \{ \lceil k(1+4\wt{a}) \rceil, \ldots, \lfloor k(1+5\wt{a}) \rfloor \}$. By Lemma~\ref{le:good_scales_merging} there exist $\wt{M},\wt{p}$ (depending on $\wt{a}$) such that $\p[E_3^c] = O(e^{-k})$.

Sample $z \in 2^{-k-6\wt{a}k}\Z^2 \cap D_j$ and a scale $J \in \{ \lceil k(1+4\wt{a}) \rceil, \ldots, \lfloor k(1+5\wt{a}) \rfloor \}$ uniformly at random. Let $E^\bad_{z,J}$ be the event that the event from Lemma~\ref{le:a_priori_int_fl} occurs for $\wt{h}_{z,2^{-J}}$ and the metric $\metapprox{\epsilon}{2^{-k/2}\cdot}{2^{-k/2}\cdot}{2^{-k/2}\Gamma}$. By the proof of Lemma~\ref{le:a_priori_disc}, we have
\[\begin{split} 
\p\left[ \sup_{(x',x,y,y')\in\intptsapprox{\delta}{\epsilon}\cap\ol{U}_{j,l}}\metapproxacres{\epsilon}{U_{x',y'}}{x}{y}{\Gamma} \ge \median[2^{-k/2}]{\epsilon} \text{ for some $l$} ,\, E_3 \right] 
&\lesssim 2^{12\wt{a}k} \p[E^\bad_{z,J} \cap G_{z,2^{-J}}] \\
&= O(2^{-k\ddouble/2}) .
\end{split}\]
By Lemma~\ref{le:median_scaling}, we have $\median[2^{-k/2}]{\epsilon} \lesssim 2^{-k\ddouble/2+\wt{a}k}\median{\epsilon}$. Summing over $l$ yields the bound $2^{-k\ddouble/2+c\wt{a}k}\median{\epsilon}$ for the distances across all small bubbles, off an event with probability $O(2^{-k\ddouble/2})$. The remaining parts from $x'$ to $x$ (resp.\ from $y$ to $y'$) consist of bubbles of total size at most $\cserial\epsilon$ plus at most one larger bubble. Since we are considering the metric defined in~\eqref{eq:shortcutted_metric}, we can combine the bounds for the big and for the small bubbles and use the compatibility assumption for the internal metrics to conclude.
\end{proof}

\begin{proof}[Proof of Lemma~\ref{le:exploration_step}]
Fix $\wt{a}>0$ sufficiently small, and suppose that $a < \wt{a}\zeta/2$. Let $F$ be the event defined below the lemma statement, so that $\p[F^c] = O(M\delta^{\wt{a}\zeta/2})$. It therefore suffices to show that
\[
\p\left[ \sup_{(x',x,y,y')\in\intptsapprox{\delta}{\epsilon}}\metapproxacres{\epsilon}{U_{x',y'}}{x}{y}{\Gamma} \ge M\median{\epsilon}+M\epsilon^{-\epsexp}\ac{\epsilon} ,\,E^\bubble ,\, F \right] = O(M\delta^\zeta) .
\]
For each $(x',x,y,y')\in\intptsapprox{\delta}{\epsilon}$, we divide the region $U_{x',y'}$ into the scales $U_{x',y'} \cap U_j$ and apply Lemma~\ref{le:exploration_step_one_scale} to each scale. For $k < \log_2(\epsilon^{-1})$, by Lemma~\ref{le:exploration_step_one_scale}, we have
\[
\p\left[ \sup_{(x'_j,x_j,y_j,y'_j)\in\intptsapprox{\delta}{\epsilon}\cap\ol{U}_j}\metapproxacres{\epsilon}{U_{x'_j,y'_j}}{x_j}{y_j}{\Gamma} \ge 2^{-\zeta k}\median{\epsilon}+2^{-\zeta\epsexp k}\epsilon^{-\epsexp}\ac{\epsilon} ,\, E^\bubble ,\, F_j \right] = O(2^{-\zeta k}) .
\]
Summing in $\log_2(\delta^{-1}) \le k < \log_2(\epsilon^{-1})$ and $j \in \CZ^{B(0,3\delta/4)}_k$ and using the assumption $|\CZ^{B(0,3\delta/4)}_k| \leq M2^{ak}$ yields the union bound
\[
\sum_{k\ge\log_2(\delta^{-1})} |\CZ^{B(0,3\delta/4)}_k|\,2^{-\zeta k} 
\le \sum_{k\ge\log_2(\delta^{-1})} M2^{ak-\zeta k} 
\lesssim M\delta^{\zeta-a} .
\]
The remaining parts from $x'_j$ to $x_j$ (resp.\ from $y_j$ to $y'_j$) consist of bubbles of total size at most $\cserial\epsilon$ plus at most one larger bubble. By the compatibility assumption for the internal metrics and the definition of $\metapproxac{\epsilon}{\cdot}{\cdot}{\Gamma}$ in~\eqref{eq:shortcutted_metric}, we conclude the following. Off an event with probability $O(M\delta^{\zeta-a})$, on the event $E^\bubble$, we have, denoting $x_j$ (resp.\ $y_j$) the first (resp.\ last) point of $\ol{U}_{x,y} \cap \ol{U}_j$,
\[ \begin{split}
&\sup_{(x',x,y,y')\in\intptsapprox{\delta}{\epsilon}} \sum_{\log_2(\delta^{-1}) \le k < \log_2(\epsilon^{-1})} \sum_{j\in\CZ^{B(0,3\delta/4)}_k} \metapproxacres{\epsilon}{U_{x',y'}}{x_j}{y_j}{\Gamma} \\
&\quad \le \sum_{\log_2(\delta^{-1}) \le k < \log_2(\epsilon^{-1})} |\CZ^{B(0,3\delta/4)}_k|\,(2^{-\zeta k}\median{\epsilon}+2^{-\zeta\epsexp k}\epsilon^{-\epsexp}\ac{\epsilon}) \\
&\quad \lesssim M\delta^{\zeta-a}\median{\epsilon}+M\delta^{\zeta\epsexp-a}\epsilon^{-\epsexp}\ac{\epsilon} 
\end{split} \]
where we assume that $a < \zeta\epsexp$.

It remains to consider the scales $k \ge \log_2(\epsilon^{-1})$. (When $\epsilon=0$, this case is not needed, and the conclusion follows from the continuity of $\met{\cdot}{\cdot}{\Gamma}$.) In this case, the distance across is bounded in the exact same way as in the proof of Lemma~\ref{le:close_geodesic_good_eta}. By the exact same proof, it follows from the assumption $|\CZ^{B(0,3\delta/4)}_k| \leq M2^{ak}$ and the definition of $\metapproxac{\epsilon}{\cdot}{\cdot}{\Gamma}$ in~\eqref{eq:shortcutted_metric} that with probability $1-o^\infty(\epsilon)$, if $j_1,j_1+1,\ldots,j_2 \in \bigcup_{k \ge \log_2(\epsilon^{-1})} \CZ^{B(0,3\delta/4)}_k$, then $\metapproxacres{\epsilon}{U_{x',y'}}{x}{y}{\Gamma} \le M\epsilon^{-a-c\wt{a}}\ac{\epsilon}$ for every $x,y \in \eta_1\cap\eta_2[t_{j_1,2},t_{j_2+1,2}]$ on the event $F$. Moreover, we can pick a collection of such $(j_1,j_2)$ so that all of $\bigcup_{k \ge \log_2(\epsilon^{-1})} \CZ^{B(0,3\delta/4)}_k$ is covered in this way and
\[ \sum_{(j_1,j_2)} \sup_{x,y \in \eta_1\cap\eta_2[t_{j_1,2},t_{j_2+1,2}]}\metapproxacres{\epsilon}{U_{x',y'}}{x}{y}{\Gamma} \le M\epsilon^{-a-c\wt{a}}\ac{\epsilon} . \]
\end{proof}

\begin{proof}[Proof of Lemma~\ref{le:geometric_exploration}]
Fix $M,a>0$ where $a$ is sufficiently small (depending on $\epsexp$), and let $G_{M,a}$ denote the event that for every $t_1,t_2$ with $\eta_1^\delta(t_1) = \eta_2^\delta(t_2)$, the domain $\delta\D \setminus (\eta_1^\delta[0,t_1] \cup \eta_2^\delta[0,t_2])$ is $(M,a)$-good within $B(0,3\delta/4)$. By Corollary~\ref{co:regularity_scaled} and Example~\eqref{it:intersections_regularity}, we have $\p[G_{M,a}^c] = o^\infty(\delta)$.

For every $k<N$, the following holds by Lemma~\ref{le:expl_markov} and the Markovian property of the metric. Conditionally on the segments of $\eta_1^\delta,\eta_2^\delta$ until $x_k$, the remainder of the curves and loop ensembles (after mapping to $\h$) have the same law as $\eta_1,\eta_2,\Gamma$ described at the beginning of Section~\ref{se:intersections_setup}, and the internal metrics in the remainder are conditionally independent of the internal metrics we have explored so far. Therefore, by Lemma~\ref{le:exploration_step},
\[
\p[ N>k+1 ,\, E^\bubble ,\, G_{M,a} \mid N>k ] = O(\delta^{\zeta\epsexp}) 
\]
which implies the result.
\end{proof}

We now put the Lemmas~\ref{le:each_bubble_good} and~\ref{le:geometric_exploration} together and prove Proposition~\ref{pr:intersection_exponent_improve}.

\begin{proof}[Proof of Proposition~\ref{pr:intersection_exponent_improve}]
By Lemma~\ref{le:each_bubble_good}, we have $\p[(E^\bubble)^c] = O(\delta^{\bestexp+\alpha'})$, so that it remains to show
\[ \p\left[ \sup_{(x',x,y,y')\in\intptsapprox{\delta}{\epsilon}}\metapproxacres{\epsilon}{U_{x',y'}}{x}{y}{\Gamma_\delta} \ge \median{\epsilon}+\epsilon^{-\epsexp}\ac{\epsilon} ,\, E^\bubble \right] = O(\delta^{\bestexp+\alpha'}) . \]
Pick $n > \bestexp/(\zeta\epsexp)$ where $\zeta>0$ is the exponent in Lemma~\ref{le:geometric_exploration}. Then, with probability $1-O(\delta^{n\zeta\epsexp})$, the exploration takes at most $n$ steps. Recall that we have defined the exploration so that if $(x',x,y,y')\in\intptsapprox{\delta}{\epsilon}$ lie between $x_{k-1}$ and $x_k$, then $\metapproxacres{\epsilon}{U_{x',y'}}{x}{y}{\Gamma_\delta}$ is bounded by $\median{\epsilon}+\epsilon^{-\epsexp}\ac{\epsilon}$ plus the distance across the bubble terminating at $y$. The remaining parts from $x'$ to $x$ (resp.\ from $y$ to $y'$) consist of bubbles of total size at most $\cserial\epsilon$ plus at most one larger bubble. On the event $E^\bubble$ the distance across each of the bubbles is at most $\median{\epsilon}+\epsilon^{-\epsexp}\ac{\epsilon}$. Since we are considering the metric defined in~\eqref{eq:shortcutted_metric}, we see that for general $(x',x,y,y')\in\intptsapprox{\delta}{\epsilon}$, by the compatibility assumption for the internal metrics, we have $\metapproxacres{\epsilon}{U_{x',y'}}{x}{y}{\Gamma_\delta} \le 3n(\median{\epsilon}+\epsilon^{-\epsexp}\ac{\epsilon})$. The result follows by considering a rescaled metric and using Lemma~\ref{le:median_scaling} (with a constant scaling factor depending on $n$).
\end{proof}

\section{Bubble crossing exponent}
\label{se:bubble_exponent}

In this section, we complete the missing step in the proof of Proposition~\ref{pr:intersection_exponent_improve} and ultimately Proposition~\ref{prop:intersection_crossing_exponent}. We consider an individual bubble of the region described in Section~\ref{se:intersection_exponent}.  Recall that the conditional law of a single bubble is explicitly described in Section~\ref{se:cle_sle_markov}. Our main goal is to show that assuming an a priori estimate for the intersection crossing exponent $\bestexp$ from Section~\ref{se:intersection_exponent}, we can find a bubble crossing exponent $\bubbleexp > \bestexp$ such that the $\Fd_\epsilon$-distance across a bubble of Euclidean size $\delta$ will exceed $\median{\epsilon}$ with probability at most $O(\delta^{\bubbleexp})$. This serves as a key step in the inductive proof in Section~\ref{se:intersection_exponent_conclude} which then shows that also the intersection crossing exponent can be improved to a number strictly larger than $\bestexp$.

We define in Section~\ref{se:bubble_setup} the setup for the bubble crossing exponent, and state a few basic properties. The basic idea of the proof is to consider neighborhoods of the left and the right boundary, and show that we have almost an independent chance that the distance across either of the two sides of the bubble is short. The proof will be divided into two main parts; we use two different localization procedures to deal with the regions between the intersections of large loops with the boundary and with the regions between the large loops. In Section~\ref{subsec:flow_line_intersection}, we will show that the regions underneath large loops that intersect the two sides of the bubble are almost independent using a localization procedure with the GFF. In Section~\ref{subsec:map_in} we will prove a useful lemma that allows us to bound with high probability (i.e., $1-O(\delta^{\bestexp+o(1)})$) the distances in regions near one boundary of the bubble. Using a Markovian exploration of the \clekp{} loops (see Section~\ref{se:cle_sle_markov}), we will have an independent trial of bounding distances in the regions between the large loops on the other side. We combine the two steps in Section~\ref{se:bubble_exponent_conclude} to conclude the proof of the bubble exponent.

\subsection{Setup}
\label{se:bubble_setup}

Throughout this section, we assume that the a priori estimate 
\eqref{eq:a_priori_assumption} holds for a given exponent $\bestexp \ge \ddouble$ and $\epsexp > 0$. We recall the metric $\metapproxacres{\epsilon}{V}{\cdot}{\cdot}{\Gamma}$ defined in~\eqref{eq:shortcutted_metric}.

Note that once Proposition~\ref{prop:intersection_crossing_exponent} has been proved, it implies that all results in this section are valid for arbitrarily large exponents $\bestexp > 0$.

\begin{figure}[ht]
\centering
\includegraphics[width=0.45\textwidth]{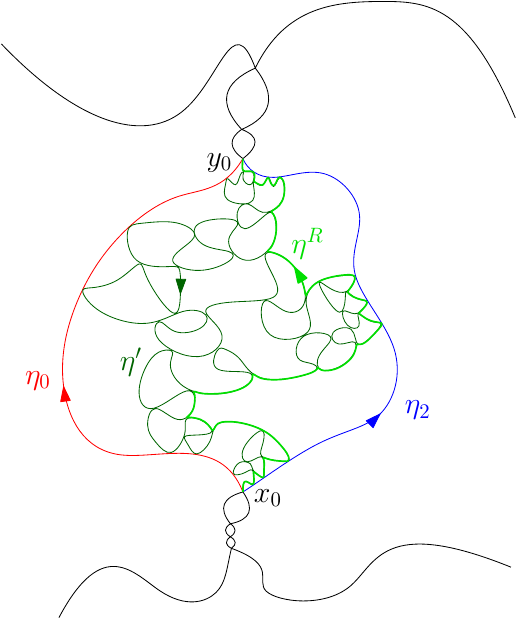}
\caption{The setup of the main result of Section~\ref{se:bubble_exponent}. Note that the right outer boundary $\eta^R$ of $\eta'$ intersects $\eta_0$ if and only if $\kappa' < 6$.}
\end{figure}

Let $\delta>0$, and let $D \subseteq \C$ be a simply connected domain, $x_0,y_0\in\partial D$ with $\dist(x_0,y_0) < \delta$. All results in this section will be uniform in the choice of $(D,x_0,y_0)$ as we let $\delta\searrow 0$.

Let $h_\h$ be a GFF on $\h$ with boundary values given by $-\lambda - \theta_0 \chi$ on~$\R_-$ and $\lambda-\theta_2 \chi$ on~$\R_+$ where
\begin{equation}\label{eq:angles_bubble}
\theta_0 = 	3\pi/2 \quad\text{and}\quad 
\theta_2 = -\pi/2 - \angledouble ,
\end{equation}
and $\angledouble$ is as in~\eqref{eq:angle_double}.

Let $\varphi \colon \h \to D$ be a conformal map that takes $0$ (resp.\ $\infty$) to $x_0$ (resp.\ $y_0$).  Let $h = h_\h \circ \varphi^{-1} - \chi \arg (\varphi^{-1})'$. Let $\eta_0$ (resp.\ $\eta_2$) be the flow line of $h$ from $x_0$ to $y_0$ with angle $\theta_0$ (resp.\ $\theta_2$).  We also let $\eta'$ be the counterflow line of $h$ from $y_0$ to $x_0$.  We note that the conditional law of~$\eta_2$ given~$\eta_0$ is an SLE$_\kappa$ and vice versa, and the conditional law of~$\eta'$ given $\eta_0$, $\eta_2$ is that of an $\SLE_{\kappa'}(\kappa'-6)$ from~$y_0$ to~$x_0$ with its force point located immediately to the counterclockwise side of~$y_0$. Note also that the right outer boundary of $\eta'$ viewed from~$x_0$ to~$y_0$ is given by the flow line of~$h$ with angle $-\pi/2$, and intersects $\eta_2$ with the same angle difference as the flow lines considered in Section~\ref{se:intersections_setup}. Finally, we let $V$ be the connected component of $D \setminus (\eta_0 \cup \eta_2)$ between $\eta_0$, $\eta_2$, and let $\Gamma$ be the $\CLE_{\kappa'}$ that is generated by the restriction of $h$ to $V$ so that $\eta'$ is the branch of the exploration tree associated with $\Gamma$ from $y_0$ to $x_0$. The internal metric $\metapproxres{\epsilon}{V}{\cdot}{\cdot}{\Gamma}$ is defined by an analogous discussion as in Section~\ref{se:intersections_setup}.

Note that the law of $(\eta_0,\eta_2)$ is that of a single bubble of the intersecting flow lines in Section~\ref{se:intersections_setup} in the domain given by their remainder (cf.\ Lemma~\ref{le:law_bubble}).

We state the main result of this section. The results holds on the following regularity event. Recall the Definition~\ref{def:regularity} of $(M,a)$-good domains. For $i=0,2$, $0 \le s < t \le \infty$, and $\CC \subseteq \Gamma$, we let $A_{i,s,t,\CC}$ be the closure of the union of the loops in $\CC$, $\eta_{2-i}$, and $\eta_i([0,s]) \cup \eta_i([t,\infty))$.  Let $D_{i,s,t,\CC}$ be the component of $D \setminus A_{i,s,t,\CC}$ containing $\eta_i([s,t])$. Let $G_{M,a}$ be the event that $(D_{i,s,t,\CC}, \eta_i(s), \eta_i(t))$ is $(M,a)$-good for every $i=0,2$, $s < t$, and $\CC \subseteq \Gamma$. (The argument given below Proposition~\ref{pr:regularity} shows that this is a measurable event.)

\begin{proposition}
\label{prop:bubble_crossing_exponent}
Assume that~\eqref{eq:a_priori_assumption} holds for some $\bestexp \ge \ddouble$ and $\epsexp > 0$. 
There exist $\bubbleexp > \bestexp$ and $\zeta,c>0$ (not depending on $M,a$) such that the following is true. Fix $M,a > 0$ where $a$ is sufficiently small (depending on $\epsexp$). 
Let $D \subseteq \C$ be a simply connected domain. Let $G_{M,a}$ be the event defined above. Let $E_\delta$ denote the event that $\diam(\eta_i) \le \delta$ for $i=0,2$. Then
\[
\p[\metapproxacres{\epsilon}{V}{x_0}{y_0}{\Gamma} \geq \delta^\zeta \median{\epsilon}+\delta^{\zeta}\epsilon^{-\epsexp}\ac{\epsilon} ,\, E_\delta,\ G_{M,a}] = O(M^c \delta^{\bubbleexp})
\quad\text{as } \delta\searrow 0 
\]
uniformly in the choice of the domain and $\epsilon < \delta$.
\end{proposition}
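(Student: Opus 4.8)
\textbf{Proof strategy for Proposition~\ref{prop:bubble_crossing_exponent}.}

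The plan is to bound the distance across the bubble $V$ by showing that \emph{both} the left side (a neighborhood of $\eta_0$ inside $V$) and the right side (a neighborhood of $\eta_2$, or rather of the right outer boundary $\eta^R$ of $\eta'$, inside $V$) are likely to be short, and that these two events are sufficiently independent that their joint failure has probability $O(\delta^{2\bestexp + o(1)})$, which for $\bestexp \ge \ddouble > \bestexp/2$ beats $\delta^{\bestexp}$. First I would set up the $\dpath$-geometry: using the compatibility axiom and the series law for $\metapproxac{\epsilon}{\cdot}{\cdot}{\Gamma}$, reduce bounding $\metapproxacres{\epsilon}{V}{x_0}{y_0}{\Gamma}$ to bounding distances across a chain of sub-bubbles of $V$ together with distances inside the sub-regions of $V$ that lie underneath macroscopic \clekp{} loops intersecting the two sides. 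The regularity event $G_{M,a}$ controls how badly bottlenecks (the sets $\CZ^{B}_k$ from Definition~\ref{def:regularity}) can accumulate, so up to an $M^c\delta^{o(1)}$ loss we only need to control a bounded-per-scale number of ``dangerous'' regions at each dyadic scale $2^{-k}$ between $\log_2(\delta^{-1})$ and $\log_2(\epsilon^{-1})$, and below scale $\epsilon$ the $\metapproxac{}$-shortcut bound $\ac{\epsilon}$ applies as in Lemma~\ref{le:close_geodesic_good_eta}.

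The two main sub-steps are the ones flagged in the section outline. In Section~\ref{subsec:flow_line_intersection} I would handle the regions lying below macroscopic loops that intersect \emph{both} $\eta_0$ and $\eta_2$: a loop intersecting a side of the bubble creates, between its two intersection points with that side, a region whose law is comparable (by absolute continuity of the GFF, Lemma~\ref{lem:rn_derivative}, Lemma~\ref{le:abs_cont_kernel}, and the flow-line merging events of Lemmas~\ref{le:good_scales_merging}--\ref{le:good_scales_merging_multiple}) to the intersecting-flow-line regions of Section~\ref{se:intersections_setup}, so the a priori estimate~\eqref{eq:a_priori_assumption} with exponent $\bestexp$ applies. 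Using a GFF localization inside a ball separating the left intersections from the right intersections, I would argue that the configurations (and internal metrics) near the left side and near the right side are conditionally independent given the ``outer'' field data on the $X^\theta$-type sets, so that the probability that there is a long left-side region \emph{and} a long right-side region is $O(\delta^{2\bestexp+o(1)})$. In Section~\ref{subsec:map_in} I would prove the complementary statement for the parts of $V$ lying \emph{between} the macroscopic loops: here one cannot localize with the GFF directly, so instead I would run a Markovian \clekp{} exploration (Lemma~\ref{lem:cle_sle_markov}, Lemma~\ref{le:cle_sle_domain_markov}, and the resampling Proposition~\ref{prop:resampling} from \cite{amy-cle-resampling}) to change the linking pattern at the relevant intersection points so that these ``between'' regions get compared to regions of the type already handled, while the exploration on one side leaves an independent trial on the other side. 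Summing the resulting per-scale bounds over $k$ and over the $O(M2^{ak})$ many dangerous positions at scale $k$, and using $\bubbleexp := 2\bestexp - (\text{small loss}) > \bestexp$ together with $\bestexp\ge\ddouble$ to make the geometric sum converge, gives $O(M^c\delta^{\bubbleexp})$.

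The main obstacle I expect is the second sub-step (Section~\ref{subsec:map_in}): the independence between the two sides of the bubble is genuine only away from the endpoints $x_0,y_0$, because near $x_0$ and $y_0$ the left and right sides come together, and there can be pathological ``pinch points'' where the two sides of the bubble approach each other much closer than $\diam(V)$. Ruling these out quantitatively is exactly what the strong $\SLE_\kappa$ regularity estimates of Section~\ref{se:regularity} (multiple-crossing and bottleneck-accumulation bounds, e.g.\ the analogue of Lemma~\ref{le:7arms} and the $(M,a)$-good domain bounds) are for, and threading them through the resampling argument so that the conditional-independence loss stays subpolynomial is the delicate part. A secondary technical point is keeping all estimates uniform in the domain $(D,x_0,y_0)$ and in $\epsilon<\delta$; this is handled by only ever invoking the axioms at Euclidean scales $\ge \cserial\epsilon$ (via $\intptsapproxbubble{\cdot}{\epsilon}$ and the $\metapproxac{\epsilon}{\cdot}{\cdot}{\Gamma}$ shortcut), and by the scaling Lemmas~\ref{le:scaled_metric} and~\ref{le:median_scaling}, so that constants depend only on $\cserial,\cparallel(N)$ as promised in the Notation subsection.
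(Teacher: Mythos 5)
Your proposal follows essentially the same strategy as the paper: (1) for the regions underneath macroscopic \clekp{} loops touching each side of the bubble, use GFF flow-line localization to show that the two sides are approximately independent (this is Proposition~\ref{prop:intersect_loops_large}), and (2) for the parts of the boundary between the macroscopic loops, use a Markovian exploration plus the resampling machinery of Section~\ref{se:map_in_resampling} to run an explore-then-refresh argument (events $F_1,F_2$ via Lemma~\ref{le:close_geodesic}), yielding a nearly $\delta^{2\bestexp}$ bound for the joint failure. You also correctly flag that the independence breaks near $x_0,y_0$ and that the $(M,a)$-good bottleneck bounds from Section~\ref{se:regularity} are what thread this through.

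One concrete slip worth noting: the achievable exponent is $\bubbleexp = \bestexp + \alpha'$ with $\alpha'=O(\excexp)$ small, not $2\bestexp - (\text{small loss})$. The cap comes from the contributions where the two-sidedness fails: the narrow-neighborhood event $E_3$ (Corollary~\ref{co:close_geodesic_small_parts}) and the atypical-shape event $E_4$ (via Proposition~\ref{pr:4arm_simple} / Lemma~\ref{le:7arms}) only admit single-sided estimates of size $O(M\delta^{\bestexp+O(\excexp)})$, and the localization parameter $\excexp$ also enters Proposition~\ref{prop:intersect_loops_large} with a polynomial loss $\delta^{-O(\excexp)}$. This does not affect the validity of the strategy since only $\bubbleexp>\bestexp$ is required, but the aside ``$\bestexp \ge \ddouble > \bestexp/2$'' is a non-sequitur: $\ddouble$ only enters via the geometric sum in the downstream Lemma~\ref{le:each_bubble_good}, which converges precisely because $\bubbleexp > \ddouble$ (automatic from $\bubbleexp>\bestexp\ge\ddouble$), with no need for a relation between $\ddouble$ and $\bestexp/2$.
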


We note that it suffices to consider $\delta > \epsilon$ since in the case $\delta \le \epsilon$ we automatically have $\metapproxac{\epsilon}{\cdot}{\cdot}{\Gamma} \le \ac{\epsilon}$ by the definition~\eqref{eq:shortcutted_metric}.

\begin{remark}\label{rm:bubble_exponent_scaling}
We note that by scaling, it suffices to prove the statement of Proposition~\ref{prop:bubble_crossing_exponent} with
\begin{equation}\label{eq:bubble_exp_unscaled}
\p[\metapproxacres{\epsilon}{V}{x_0}{y_0}{\Gamma} \geq \median{\epsilon}+\epsilon^{-\epsexp}\ac{\epsilon} ,\, E_\delta,\ G_{M,a}] = O(M^{c}\delta^{\bubbleexp}) .
\end{equation}
Indeed, consider the metric $\mettapprox{\delta^{-\alpha}\epsilon}{\cdot}{\cdot}{\Gamma} = \metapprox{\epsilon}{\delta^\alpha\cdot}{\delta^\alpha\cdot}{\delta^\alpha\Gamma}$ where $0 < \alpha < (\bubbleexp-\bestexp)/(4\bubbleexp)$ (cf.\ Lemma~\ref{le:scaled_metric}). Note that $\act{\delta^{-\alpha}\epsilon} = \ac{\epsilon}$, and $\mediant{\delta^{-\alpha}\epsilon} = \median[\delta^\alpha]{\epsilon} \lesssim \delta^{\zeta\alpha}\median{\epsilon}$ for some $\zeta>0$ by Lemma~\ref{le:median_scaling}. Recall from Lemma~\ref{le:good_domain_scaling} that if $D$ is $(M,a)$-good, then $\delta^{-\alpha}D$ is $(O(\delta^{-a\alpha}M),a)$-good. Therefore applying~\eqref{eq:bubble_exp_unscaled} with the rescaled metric and $\wt{\delta} = \delta^{1-\alpha}$ yields (for small $a$)
\[ \begin{split} 
\p[\metapproxacres{\epsilon}{V}{x_0}{y_0}{\Gamma} \geq \delta^{\zeta\alpha}\median{\epsilon}+\delta^{\alpha\epsexp}\epsilon^{-\epsexp}\ac{\epsilon} ,\, E_\delta,\ G_{M,a}] 
&= O((\delta^{-a\alpha}M)^{c}\delta^{(1-\alpha)\bubbleexp}) \\
&= O(M^{c}\delta^{(\bubbleexp+\bestexp)/2}) . 
\end{split} \]
\end{remark}

We note that once Proposition~\ref{prop:intersection_crossing_exponent} has been proved, this implies (for fixed $M$)
\[
\p[\metapproxacres{\epsilon}{V}{x_0}{y_0}{\Gamma} \geq \median{\epsilon}+\epsilon^{-\epsexp}\ac{\epsilon} ,\, E_\delta,\ G_{M,a}] = o^\infty(\delta) .
\]

The main idea of the proof is that the intersections of the outer boundary of a loop $\CL \in \Gamma$ with $\eta_0$ (resp.\ $\eta_2$) look locally like the intersections of the flow lines considered in Section~\ref{se:intersection_exponent}. The assumption~\eqref{eq:a_priori_assumption} implies that the probability that the $\Fd_\epsilon$-distance underneath $\CL$ exceeds $\median{\epsilon}+\epsilon^{-\epsexp}\ac{\epsilon}$ is at most $O(\delta^\bestexp)$. We will show that these events are approximately independent on the two sides of the bubble (if they were perfectly independent, the probability that the distances on both sides exceed $\median{\epsilon}+\epsilon^{-\epsexp}\ac{\epsilon}$ would be $O(\delta^{2\bestexp})$, implying the result). For this, we localize these events and show that we can find disjoint neighborhoods of $\eta_0$ (resp.\ $\eta_2$) in which we can bound the distances \emph{conditionally} on the loop configuration and the internal metrics outside the neighborhood. We will treat separately the regions underneath loops exceeding a certain size (Section~\ref{subsec:flow_line_intersection}) and the regions between such loops (Section~\ref{subsec:map_in}).

We begin by collecting a few variations and consequences of the assumption~\eqref{eq:a_priori_assumption}. We remark that compared to the statement of~\eqref{eq:a_priori_assumption}, here we do not need an ``$+o(1)$'' term in the exponent since we are working in $B(0,\delta^{1+\innexp})$ in the lemmas below.

\begin{lemma}\label{le:a_priori_fl_general_boundary}
Fix $M,\innexp >0$. Let $h_\delta$ be a GFF on $\delta\D$ with some boundary values that are bounded by $M$ and so that the flow lines $\eta^\delta_1$ (resp.\ $\eta^\delta_2$) with angles $\theta_1 = -\pi/2$ (resp.\ $\theta_2 = -\pi/2 - \angledouble$) from $-i\delta$ to $i\delta$ are defined. Given $\eta^\delta_1,\eta^\delta_2$, let $\Gamma$ be a \clekp{} independently in each connected component bounded between $\eta^\delta_1,\eta^\delta_2$. Let $E$ denote the event that there exist $x',x,y,y' \in \eta^\delta_1 \cap \eta^\delta_2$ such that $(x,y) \in \intptsapproxbubble{U_{x',y'}}{\epsilon}$ as in~\eqref{eq:intpts_bubble} and $U_{x',y'} \subseteq B(0,\delta^{1+\innexp})$ and $\metapproxacres{\epsilon}{U_{x',y'}}{x}{y}{\Gamma} \ge \median{\epsilon}+\epsilon^{-\epsexp}\ac{\epsilon}$. Then $\p[E] = O(\delta^{\bestexp})$ as $\delta\searrow 0$.
\end{lemma}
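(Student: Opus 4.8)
The plan is to reduce Lemma~\ref{le:a_priori_fl_general_boundary}, which concerns a GFF $h_\delta$ on $\delta\D$ with \emph{arbitrary} bounded boundary values, to the a priori assumption~\eqref{eq:a_priori_assumption}, which is formulated for the specific GFF of Section~\ref{se:intersections_setup} (with the particular boundary data making $\eta_1^\delta,\eta_2^\delta$ run between $-i\delta$ and $i\delta$ in $\delta\D$). The key point is that the event $E$ only concerns the behavior of the configuration inside $B(0,\delta^{1+\innexp})$, which for small $\delta$ is very deep inside $\delta\D$ relative to the boundary where the two GFFs differ. So the strategy is a standard independence-across-scales plus absolute continuity argument: find a good intermediate scale around which $\wt h_{0,r}$ (in the notation of Section~\ref{se:gff}) is comparable to the reference GFF, and then apply~\eqref{eq:a_priori_assumption} in the smaller ball.

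First I would localize. Recall from Lemma~\ref{le:good_scales_merging} that for suitable $M,p$ we can find, with probability $1-O(\delta^b)$ (taking $b$ large), an intermediate scale $r = 2^{-j}$ with $\log_2(\delta^{-1-\innexp/2}) \le j \le \log_2(\delta^{-1-\innexp})$ at which the event $G_{0,r}$ occurs for the (scaled, translated) field $\wt h_{0,r}$, and in particular the scale is $M$-good for $h_\delta$. Also, by Lemma~\ref{le:fill_ball} applied to the space-filling $\SLE_{\kappa'}$ whose boundaries are $\eta_1^\delta,\eta_2^\delta$, with probability $o^\infty(\delta)$ the curves $\eta_1^\delta,\eta_2^\delta$ cut off a ball of radius comparable to a power of $r$ after they finish tracing any region $U_{x',y'}\subseteq B(0,\delta^{1+\innexp})$. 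On the intersection of these events, if $E$ occurs then (choosing $w_1,w_2$ in the cut-off balls, as in the proof of Lemma~\ref{le:a_priori_disc}) the boundary-emanating flow lines $\eta_{w_1}^{\theta_1},\eta_{w_2}^{\theta_2}$ inside $B(0,r)$ merge into $\eta_1^\delta,\eta_2^\delta$ before tracing the relevant bubble, so the event from Lemma~\ref{le:a_priori_int_fl} occurs for $\wt h_{0,r}$.

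Next I would transfer. On the event that the scale $(0,r)$ is $M$-good, the law of $\wt h_{0,r}(r\,\cdot)$ is mutually absolutely continuous with respect to the reference GFF of Section~\ref{se:intersections_setup} on $\D$, with Radon--Nikodym derivative having moments of all orders depending only on $M$ (this is the statement recalled after~\eqref{eqn:wt_h_def}, using Lemma~\ref{lem:rn_derivative}); and by Lemma~\ref{le:abs_cont_kernel} this absolute continuity persists when we also sample the internal metrics in the enclosed regions, at the cost of raising the probability to a power $1-1/q$ arbitrarily close to $1$. Hence by Lemma~\ref{le:a_priori_int_fl} (with $\delta$ there equal to $1$, since after rescaling $B(0,r)$ to $B(0,1)$ we are exactly in the unit-scale setup and the condition $U_{x',y'}\subseteq B(0,\delta^{1+\innexp})=B(0,r^{(1+\innexp)/(1+\innexp/2)})\subseteq B(0,r^{1+\innexp/3})$ just says the bubble is small, allowing us to forgo the $o(1)$ loss exactly as noted in the remark after Lemma~\ref{le:a_priori_disc}) the conditional probability of $E$ given the intermediate scale is $O(r^{\ddouble+o(1)})\le O(\delta^{(1+\innexp/2)\ddouble})$. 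Since $\ddouble \le \bestexp$ we have $(1+\innexp/2)\ddouble \ge \ddouble$, but to get exactly $\bestexp$ we should instead invoke the \emph{assumed} estimate~\eqref{eq:a_priori_assumption} rather than just Lemma~\ref{le:a_priori_int_fl}: the same localization shows the conditional probability of $E$ given the good intermediate scale is $O(\delta^{(1+\innexp/2)\bestexp+o(1)})$ via~\eqref{eq:a_priori_assumption} applied at scale $r$, which is $O(\delta^{\bestexp})$ for small $\innexp$. Combining with the $O(\delta^b)$ bound for the bad scales and the $o^\infty(\delta)$ bound for the filling failure gives $\p[E]=O(\delta^{\bestexp})$.

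The main obstacle is bookkeeping the geometry of the localization carefully: one must ensure that the region $U_{x',y'}$ witnessing $E$, together with the cut-off balls where $w_1,w_2$ sit, all lie inside $B(0,r/4)$ so that the merging argument of Lemma~\ref{le:a_priori_int_fl} applies, and that the intermediate scale $r$ is chosen within the right window so that $B(0,\delta^{1+\innexp})\subseteq B(0,r^{1+c})$ for some $c>0$ while also $r\le \delta$. This is exactly the kind of scale-arithmetic carried out in the proof of Lemma~\ref{le:a_priori_disc}, and the present lemma's proof can essentially follow that template verbatim, with the only new ingredient being that the outer boundary values are general and bounded by $M$ rather than the specific reference values --- which is absorbed harmlessly into the constant $M$ in the $M$-good event and the Radon--Nikodym moment bound. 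I would therefore write: ``The proof is identical to that of Lemma~\ref{le:a_priori_disc}, using that on the event that an intermediate scale is $M$-good the field is absolutely continuous with respect to the reference field of Section~\ref{se:intersections_setup}, with the absolute continuity extending to the internal metrics by Lemma~\ref{le:abs_cont_kernel}; the assumption~\eqref{eq:a_priori_assumption} then gives the bound $O(\delta^{\bestexp})$ in place of $O(\delta^{\ddouble})$.''
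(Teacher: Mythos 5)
Your proposal is correct and follows essentially the same route as the paper: localize at a randomly chosen good intermediate scale via Lemma~\ref{le:good_scales_merging} and Lemma~\ref{le:fill_ball}, compare $\wt h_{0,2^{-J}}$ to the reference field of Section~\ref{se:intersections_setup} using the $M$-good absolute continuity (with Lemma~\ref{le:abs_cont_kernel} to carry it to the internal metrics), and then apply~\eqref{eq:a_priori_assumption} rather than Lemma~\ref{le:a_priori_int_fl} so the exponent is $\bestexp$ instead of $\ddouble$. One small inaccuracy: the phrase ``which is $O(\delta^{\bestexp})$ for small $\innexp$'' has the logic backwards — the bound $\delta^{(1+\innexp/2)\bestexp+o(1)}$ beats $\delta^{\bestexp}$ for \emph{every} $\innexp>0$; what must be taken small is the sampling loss $a$ (the cost of guessing $J,w_1,w_2$), so that it is absorbed into the excess exponent $(\innexp/2)\bestexp$ — but this does not change the validity of the argument.
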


\begin{proof}
This follows from the assumption~\eqref{eq:a_priori_assumption} by the exact same argument as the proof of Lemma~\ref{le:a_priori_disc}. Indeed, let the fields $\wt{h}_{0,2^{-j}}$ (as defined in Section~\ref{se:gff}) have boundary values as in the setup of~\eqref{eq:a_priori_assumption}. Sample $J \in \{ \lceil\log_2(\delta^{-1-\innexp/4})\rceil,\ldots,\lfloor\log_2(\delta^{-1-\innexp/2})\rfloor \}$ uniformly at random. Let $\wt{E}_J$ be the event that the event in~\eqref{eq:a_priori_assumption} occurs for $\wt{h}_{0,2^{-J}}$ (with $\innexp/2$ in place of $\innexp$). We have seen in the proof of Lemma~\ref{le:a_priori_disc} that on a suitable event $F$ with $\p[F^c] = O(\delta^b)$ we have $\p[\wt{E}_J ,\, (0,2^{-J}) \text{ is } M\text{-good for } h ] \gtrsim \delta^a \p[E \cap F]$ where $a>0$ can be chosen arbitrarily small and $b>0$ can be chosen arbitrarily large. We conclude $\p[\wt{E}_J ,\, (0,2^{-J}) \text{ is } M\text{-good for } h ] \le \delta^{(1+\innexp/4)\bestexp+o(1)}$ by~\eqref{eq:a_priori_assumption} and absolute continuity.
\end{proof}

\begin{lemma}\label{le:a_priori_fl_reflected}
Fix $M,\innexp >0$. Let $h_\delta$ be a GFF on $\delta\D$ with some boundary values that are bounded by $M$ and so that the flow line $\eta^\delta_1$ from $-i\delta$ to $i\delta$ and the flow line $\ol{\eta}^\delta_2$ from $i\delta$ to $-i\delta$ in the components of $\delta\D \setminus \eta^\delta_1$ to the right of $\eta^\delta_1$ (and reflected off $\eta^\delta_1$) are defined. Given $\eta^\delta_1,\ol{\eta}^\delta_2$, let $\Gamma$ be a \clekp{} independently in each connected component bounded between $\eta^\delta_1,\ol{\eta}^\delta_2$. Let $E$ denote the event that there exist $x',x,y,y' \in \eta^\delta_1 \cap \ol{\eta}^\delta_2$ such that $(x,y) \in \intptsapproxbubble{U_{x',y'}}{\epsilon}$ as in~\eqref{eq:intpts_bubble} and $U_{x',y'} \subseteq B(0,\delta^{1+\innexp})$ and $\metapproxacres{\epsilon}{U_{x',y'}}{x}{y}{\Gamma} \ge \median{\epsilon}+\epsilon^{-\epsexp}\ac{\epsilon}$. Then $\p[E] = O(\delta^{\bestexp})$ as $\delta\searrow 0$.

The same is true when $\ol{\eta}^\delta_2$ is the (reflected) flow line in the components to the left of $\eta^\delta_1$.
\end{lemma}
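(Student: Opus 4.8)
The plan is to reduce Lemma~\ref{le:a_priori_fl_reflected} to Lemma~\ref{le:a_priori_fl_general_boundary} by first conditioning on $\eta^\delta_1$ and then invoking the reversal/reflection facts for flow lines established in Lemmas~\ref{le:sle_reversal} and~\ref{le:reflected_fl_law}. First I would condition on $\eta^\delta_1$. By Lemma~\ref{le:reflected_fl_law} (applied component-by-component, after mapping each relevant component of $\delta\D\setminus\eta^\delta_1$ to the half-plane), the conditional law of the pair $(\eta^\delta_1,\ol{\eta}^\delta_2)$ agrees, modulo time reversal, with the conditional law of $(\eta^\delta_1,\eta^\delta_2)$ where $\eta^\delta_2$ is the genuine (non-reflected) flow line of the same field with angle $\theta_2$. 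In particular, on the components of $\delta\D\setminus\eta^\delta_1$ lying to the right of $\eta^\delta_1$, the intersection set $\eta^\delta_1\cap\ol{\eta}^\delta_2$ and the family of ``bubbles'' $U_{x',y'}$ bounded between the two curves have the same joint law as in the setup of Lemma~\ref{le:a_priori_fl_general_boundary}, and the conditionally independent $\CLE_{\kappa'}$'s (together with the internal metrics $\metapproxres{\epsilon}{U_{x',y'}}{\cdot}{\cdot}{\Gamma}$, by the Markovian property) are sampled the same way in both pictures. Since the event $E$ is measurable with respect to $\eta^\delta_1$, $\ol{\eta}^\delta_2$, the loop ensemble, and the internal metrics restricted to regions bounded between the two curves, this identification of laws is exactly what is needed.

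The one subtlety is that Lemma~\ref{le:reflected_fl_law} is stated for a GFF on $\h$ with flow lines from boundary points, whereas here the field lives on $\delta\D$ with interior-to-boundary flow lines from $-i\delta$ to $i\delta$ and only boundedly-varying boundary data. This is handled by the usual localization and absolute continuity: after conditioning on $\eta^\delta_1$, in each complementary component to the right of $\eta^\delta_1$ the field is a GFF with flow-line boundary values along $\eta^\delta_1$ together with the original (bounded) boundary data on $\partial(\delta\D)$; the conditional law of $\ol{\eta}^\delta_2$ restricted to any such component, stopped before it gets within Euclidean distance $\sim\dist(\cdot,\partial(\delta\D))$ of the outer boundary, is mutually absolutely continuous with respect to the corresponding $\SLE_\kappa(\rho)$ law in the half-plane with Radon--Nikodym derivative having all moments (cf.\ \cite[Proposition~3.4]{ms2016ig1}). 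Since the event $E$ only concerns bubbles $U_{x',y'}\subseteq B(0,\delta^{1+\innexp})$, which are deep in the interior, this absolute continuity loses only a constant factor and does not affect the power of $\delta$.

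Putting these together: by the reversal identity the probability of $E$ equals (up to a bounded absolute-continuity factor) the probability of the corresponding event for the non-reflected pair $(\eta^\delta_1,\eta^\delta_2)$ with the relevant boundary data, and Lemma~\ref{le:a_priori_fl_general_boundary} gives that this is $O(\delta^{\bestexp})$ as $\delta\searrow 0$. For the final sentence --- where $\ol{\eta}^\delta_2$ is instead the reflected flow line in the components to the \emph{left} of $\eta^\delta_1$ --- the argument is symmetric: one conditions on $\eta^\delta_1$ and applies the left-sided version of Lemma~\ref{le:reflected_fl_law} (obtained by reflecting the whole picture, or equivalently by the reversibility of $\SLE_\kappa(\rho_L;\rho_R)$ from \cite{ms2016ig2}), then invokes Lemma~\ref{le:a_priori_fl_general_boundary} with the mirrored angle configuration. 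The main (and only real) obstacle is making the identification of conditional laws clean given that $\ol{\eta}^\delta_2$ is a concatenation of reflected flow-line segments across the many components of $\delta\D\setminus\eta^\delta_1$; this is exactly what Lemma~\ref{le:reflected_fl_law} is designed to supply, so the proof is short once that reduction is spelled out carefully.
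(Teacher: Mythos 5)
Your argument works cleanly only in the special case where the boundary data matches Lemma~\ref{le:reflected_fl_law}, namely when the piece of $\partial(\delta\D)$ bounding each component to the right of $\eta^\delta_1$ carries the specific constant $\lambda(\kappa/2-1)$ (plus the appropriate angle shift). In that case the component-by-component application of Lemma~\ref{le:reflected_fl_law} identifies the conditional law of $\ol{\eta}^\delta_2$ with that of $\eta^\delta_2$, and the reduction to Lemma~\ref{le:a_priori_fl_general_boundary} is immediate; that is exactly what the paper does for this case.

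For general bounded boundary data, the direct absolute continuity step does not close the gap. The point is that after conditioning on $\eta^\delta_1$, the GFFs with boundary data $B_1$ (original) and $B_2$ (the Lemma~\ref{le:reflected_fl_law} data) are mutually absolutely continuous only after restriction to a subset of the component $C$ that is bounded away from $\partial(\delta\D)$, where the two boundary values differ. But $\ol{\eta}^\delta_2$ starts at $i\delta\in\partial(\delta\D)$, traverses all of $C$, and the question of \emph{which} of its strands enter $B(0,\delta^{1+\innexp})$, in \emph{what order} and linked to \emph{which} strands of $\eta^\delta_1$, is not determined by the GFF restricted to a fixed subset away from $\partial(\delta\D)$. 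In other words, the event $E$ --- existence of a bubble $U_{x',y'}\subseteq B(0,\delta^{1+\innexp})$ with bad internal metric --- is not a measurable functional of $h_\delta|_{B(0,\delta^{1+\innexp})}$ alone: it also depends on the global ``wiring'' of the curves, which is affected by changing the boundary data on $\partial(\delta\D)$, and for that part of the event there is no Radon--Nikodym derivative with uniform moments to invoke. This is precisely what the good-scales device is introduced for. One samples a random scale $J$ with $2^{-J}\in[\delta^{1+\innexp/2},\delta^{1+3\innexp/4}]$ (say), works with the field $\wt h_{0,2^{-J}}$ whose boundary values on $\partial B(0,2^{-J})$ are the desired specific ones, and uses Lemma~\ref{le:good_scales_merging_refl} to guarantee, with superpolynomially high probability, that on the chosen scale every possible configuration of strands of $X^{\theta_1}_{0,2^{-J}}$, $X^{\theta_1;*}_{0,2^{-J}}$ has conditional merging probability bounded below by a fixed $p>0$. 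This converts the global wiring dependence into a cost of order $\delta^{O(a)}$, after which the absolute continuity of $\wt h_{0,2^{-J}}$ (on the $M$-good event, via Lemma~\ref{lem:rn_derivative} and Lemma~\ref{le:abs_cont_kernel}) applies, and the a priori bound from the special case can be invoked. The paper defers this spelled-out argument to the first half of the proof of Lemma~\ref{le:a_priori_loop_intersecting_one_side}, but it is a genuine ingredient and not a routine absolute-continuity remark. You should replace your second paragraph with this localization/merging step; the rest of your reduction (conditioning on $\eta^\delta_1$, the reversal identity in the special case, the left-sided variant by reflection) matches the paper.
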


\begin{proof}
In case the boundary values of $h_\delta$ are as in Lemma~\ref{le:reflected_fl_law}, this follows from Lemma~\ref{le:a_priori_fl_general_boundary} by reversal symmetry. The result transfers to the case of general boundary values via Lemma~\ref{le:good_scales_merging_refl} and a similar argument as before. As we do not need the general statement right now, we refer to the first part of the proof of Lemma~\ref{le:a_priori_loop_intersecting_one_side} where the argument is spelled out.
\end{proof}

We collect another useful property.

\begin{lemma}
\label{lem:loop_fill_in_ball}
Fix $a > 0$ and let $E$ be the event that for every $r \in (0,\delta)$ and every segment $\ell$ of a loop in $\Gamma$ with $\ell \subseteq B(x_0,1) \cap D$, $\dist(\ell,\partial D) \ge r$, and $\diam(\ell) \ge r$, the segment $\ell$ disconnects a ball of radius $r^{1+a}$ from $\infty$. Then $\p[E^c] = o^\infty(\delta)$ as $\delta \to 0$ (uniformly in the choice of $D,x_0,y_0$).
\end{lemma}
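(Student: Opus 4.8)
\textbf{Plan of proof for Lemma~\ref{lem:loop_fill_in_ball}.}

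The statement asserts that, with probability $1-o^\infty(\delta)$, every loop segment $\ell$ of $\Gamma$ which lies in $B(x_0,1)\cap D$, has diameter at least $r$, and is at Euclidean distance at least $r$ from $\partial D$, must disconnect a ball of radius $r^{1+a}$ from $\infty$. The natural approach is to reduce this to the analogous statement for the space-filling $\SLE_{\kappa'}$ that encodes $\Gamma$, namely Lemma~\ref{le:fill_ball}. Recall that $\Gamma$ is generated by the restriction of $h$ to the region $V$ between $\eta_0$ and $\eta_2$, so there is an associated space-filling $\SLE_{\kappa'}$ exploration $\eta'$ of $V$; each loop of $\Gamma$ is traced out by a corresponding excursion of $\eta'$ (or of a nested exploration inside a false loop), and the portion of $V$ filled by that excursion is exactly $\Fill$ of the loop together with what it surrounds. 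Thus if $\ell$ is a segment of a loop $\CL\in\Gamma$ with $\diam(\ell)\ge r$, the space-filling curve, during the time interval corresponding to $\ell$, fills the region enclosed between $\ell$ and the arc of $\partial V$ (or of the surrounding loop) at the "base" of $\ell$.

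First I would make this reduction precise. Fix a loop $\CL$ and let $\ell$ be a sub-arc with endpoints $p,q$ and $\diam(\ell)\ge r$ and $\dist(\ell,\partial D)\ge r$. Consider the exploration $\eta'$ restricted to the (nested) component containing $\CL$; choose a time interval $[s,t]$ during which $\eta'$ traces $\ell$, so $\eta'(s)$ and $\eta'(t)$ are within $r$ of $p,q$ and $|\eta'(s)-\eta'(t)|\gtrsim r$ (up to adjusting constants, possibly after chopping $\ell$ into a bounded number of pieces each of diameter $\asymp r$ with endpoints separated by $\gtrsim r$). Since $\dist(\ell,\partial D)\ge r$, the point $\eta'(s)$ is at distance $\gtrsim r$ from the boundary of the domain in which this exploration lives (using that the boundary of a nested component, away from $\partial D$, consists of earlier CLE loops which the current excursion does not touch during $[s,t]$). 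Then Lemma~\ref{le:fill_ball}, applied with $\delta$ replaced by $c r$ for a suitable constant $c$ and with the parameter $a/2$ in place of $a$, gives that $\eta'[s,t]$ contains a ball of radius $(cr)^{1+a/2}\ge r^{1+a}$ (for $r$ small), off an event of probability $o^\infty(r)$. A ball contained in the region traced by $\ell$ is disconnected from $\infty$ by $\ell$ (together with the base arc), which after a harmless adjustment gives that $\ell$ itself disconnects a ball of radius $r^{1+a}$ — here one uses that the base arc sits on $\partial D$ or on a strictly larger loop, so the disconnection is genuinely by $\ell$ up to a negligible change of radius; alternatively one simply includes the base arc, which is where the hypothesis $\dist(\ell,\partial D)\ge r$ is used to keep everything inside $B(x_0,1)\cap D$ in the interior.

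To upgrade from a fixed $\ell$ to all $\ell$ and all scales $r$ simultaneously, I would take a union bound over dyadic scales $r=2^{-k}$ with $2^{-k}<\delta$ and over a net of $O(2^{2k})$ centers covering $B(x_0,1)$ at scale $2^{-k}$: for each center $z$ and scale $2^{-k}$, consider the event that some loop segment passing through $B(z,2^{-k})$ with diameter in $[2^{-k},2^{-k+1}]$ fails to disconnect a ball of radius $2^{-k(1+a)}$. By the preceding paragraph and the locality/absolute continuity of the GFF (Lemma~\ref{lem:rn_derivative}, to transfer from the whole-plane / reference boundary-data case to the general field $h$ on $D$ away from the boundary, using that the event is measurable with respect to $h$ restricted to a set compactly contained in $D$), each such event has probability $o^\infty(2^{-k})$ with implicit constants uniform in $z$ and in the choice of $(D,x_0,y_0)$. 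Summing, $\sum_k 2^{2k}\, o^\infty(2^{-k}) = o^\infty(\delta)$ since $o^\infty$ beats any fixed polynomial. Finally, a standard chaining argument passes from the dyadic statement to all real $r\in(0,\delta)$ at the cost of adjusting the exponent from $a$ to, say, $a/2$ inside the proof (so the lemma is stated with $a$).

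The main obstacle I anticipate is the bookkeeping in the reduction to the space-filling $\SLE_{\kappa'}$: a loop of $\Gamma$ sits at some nesting level, and its outer boundary, inner boundary, and the strands it is composed of are traced by $\eta'$ in a somewhat intricate order (outer boundary first, then the component below is filled, then the return strand), so I must be careful that the particular sub-arc $\ell$ I care about is indeed traced during a single interval $[s,t]$ with $\eta'(s),\eta'(t)$ close to its endpoints, and that during $[s,t]$ the curve does not wander to the domain boundary — this is exactly where $\dist(\ell,\partial D)\ge r$ enters, and it is the hypothesis that makes Lemma~\ref{le:fill_ball} (whose conclusion also requires distance $\ge \delta$ to the boundary) applicable. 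Everything else — the dyadic union bound, the absolute-continuity transfer, and the chaining to upgrade the exponent — is routine.
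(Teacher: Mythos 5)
The overall strategy (reduce to Lemma~\ref{le:fill_ball}, dyadic union bound over scales and centers, absolute continuity for uniformity) is correct and matches the paper's approach, and your handling of the union bound and chaining is fine. The gap is in your reduction to the space-filling $\SLE_{\kappa'}$, specifically in the parenthetical claim that ``the boundary of a nested component, away from $\partial D$, consists of earlier CLE loops which the current excursion does not touch during $[s,t]$.'' This is false for $\kappa'\in(4,8)$: CLE$_{\kappa'}$ loops intersect themselves, each other, and the boundary of the domain in which they live, so a segment $\ell$ of a loop $\CL\in\Gamma$ will typically intersect $\eta_0$, $\eta_2$, or an earlier loop in $\Gamma$ even when $\dist(\ell,\partial D)\ge r$. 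In that case $\eta'(s)$ for some $s\in[s,t]$ lies on the boundary of the exploration domain $V$ (or of a nested component), so the hypothesis $\dist(\eta'(s),\partial V)\ge r$ needed to apply Lemma~\ref{le:fill_ball} with that exploration domain is violated, and the argument breaks down at exactly this point. Chopping $\ell$ into pieces does not fix it, because the pieces still have endpoints on $\eta_0$ or $\eta_2$.

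The paper resolves this by a case split that your proposal omits. When $\ell$ does not intersect $\eta_0,\eta_2$, the reduction to Lemma~\ref{le:fill_ball} goes through as you describe, with uniformity in $(D,x_0,y_0)$ coming from absolute continuity of the GFFs. When $\ell$ intersects $\eta_0$, the key observation is that $\eta_0$ has angle $\theta_0 = 3\pi/2$, which agrees modulo $2\pi$ with the angle $-\pi/2$ of the right outer boundary of the counterflow line $\eta'$; because of this angle coincidence, the local picture of $\ell$ meeting $\eta_0$ is the same as the picture one sees for a space-filling $\SLE_{\kappa'}$ in the larger domain $D$ (where $\eta_0$ sits in the interior and $\eta'(s)$ is at distance $\ge r$ from $\partial D$), so Lemma~\ref{le:fill_ball} still applies. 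The case $\ell$ intersects $\eta_2$ follows by symmetry. You would need to add this angle observation and the corresponding case analysis for the proof to close; as written, it treats only the generic ``interior'' case and implicitly assumes the boundary-touching case does not occur.
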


\begin{proof}
When $\ell$ does not intersect $\eta_0,\eta_2$, this follows immediately from Lemma~\ref{le:fill_ball} (the uniformity in the choice of $D,x_0,y_0$ follows from the absolute continuity between the GFFs). When $\ell$ intersects $\eta_0$, since $\eta_0$ has the same angle as the right boundary of $\eta'$, the result follows as well. When $\ell$ intersects $\eta_2$, the result follows by symmetry.
\end{proof}

\subsection{Flow line intersection lemma}
\label{subsec:flow_line_intersection}

\newcommand*{\excexp}{a_3}

The aim of this subsection is to show that it is unlikely that for both $i=0,2$ there exists a large loop of $\Gamma$ that intersects $\eta_i$ such that the $\Fd_\epsilon$-distance under the loop exceeds $\median{\epsilon}+\epsilon^{-\epsexp}\ac{\epsilon}$. We state the main result of this subsection in Section~\ref{subsubsec:independence_statement}. For its proof, we will first explain in Section~\ref{subsubsec:detect_loop} how one can detect intersections of loops of $\Gamma$ with $\eta_0$, $\eta_2$ using GFF flow lines. This will lead us to give the definition of a localized version of the event that we have such an intersection, and we will then obtain upper bounds for the localized event probabilities in Section~\ref{subsubsec:localized_event_probabilities}. We conclude the proof in Section~\ref{subsec:approx_both_sides} using the approximate independence of the localized events.

\subsubsection{Main statement}
\label{subsubsec:independence_statement}

Fix $\excexp > 0$. For $i=0,2$, let $\wh{\Gamma}_i^\delta \subseteq \Gamma$ be the collection of loops $\CL$ such that $\CL \subseteq B(x_0,\delta)$, $\diam(\CL) \ge \delta^{1+\excexp}$, and $\CL$ intersects $\eta_i$.

Let $\CL \in \wh{\Gamma}_i^\delta$. We denote by $a_{i,l}^\CL, b_{i,l}^\CL \in \CL \cap \eta_i$, $l=1,\ldots,L$, the collection of intersection points of $\CL$ with $\eta_i$ such that
\begin{itemize}
\item every intersection point in $\CL \cap \eta_i$ that lies between $a_{i,l}^\CL$ and $b_{i,l}^\CL$ has distance at least $\delta^{1+\excexp}$ to $\eta_{2-i} \cup \partial D$,
\item there is at least one intersection point in $\CL \cap \eta_i$ that lies between $a_{i,l}^\CL$ and $b_{i,l}^\CL$ and has distance at least $2\delta^{1+\excexp}$ to $\eta_{2-i} \cup \partial D$,
\item if $a',b' \in \CL \cap \eta_i$ are two consecutive intersection points between $a_{i,l}^\CL$ and $b_{i,l}^\CL$, then both the segment of the outer boundary of $\CL$ and the segment of $\eta_i$ from $a'$ to $b'$ have diameter at most $\delta^{1+4\excexp}$,
\item the segment from $a_{i,l}^\CL$ to $b_{i,l}^\CL$ is maximal in the sense that it is not strictly contained in another segment with the properties above.
\end{itemize}
For each $l$ we let $U_{i,l}^\CL$ be the union of those components of $D \setminus (\eta_i \cup \CL)$ whose boundary consists of one arc of $\eta_i$ between $a_{i,l}^\CL$ and $b_{i,l}^\CL$ and one arc of the outer boundary of $\CL$. Let $\wt{a}_{i,l}^\CL$ (resp.\ $\wt{b}_{i,l}^\CL$)  in $\CL \cap \eta_i$ be the first (resp.\ last) point such that $(\wt{a}_{i,l}^\CL, \wt{b}_{i,l}^\CL) \in \intptsapproxbubble{U_{i,l}^\CL}{\epsilon}$ as in~\eqref{eq:intpts_bubble}, and
\begin{equation}
\label{eqn:d_i_delta_definition}
D_i^\delta = \sum_{\CL \in \wh{\Gamma}_i^\delta} \sum_l \metapproxacres{\epsilon}{U_{i,l}^\CL}{\wt{a}_{i,l}^\CL}{\wt{b}_{i,l}^\CL}{\Gamma} .
\end{equation}

\begin{proposition}
\label{prop:intersect_loops_large}
There exists $\alpha' > 0$ (not depending on $\excexp,\epsexp$) such that the following is true. Suppose that we have the setup from Section~\ref{se:bubble_setup}, and $\excexp > 0$ is sufficiently small (depending on $\epsexp$). Then
\[ \p[ D_i^\delta \geq \median{\epsilon}+\epsilon^{-\epsexp}\ac{\epsilon} \ \text{for both}\ i=0,2] = O(\delta^{\bestexp + \alpha'}).\]
\end{proposition}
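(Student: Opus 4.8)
### Proof proposal for Proposition~\ref{prop:intersect_loops_large}

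The plan is to localize, for each side $i=0,2$, the event that some loop $\CL\in\wh\Gamma_i^\delta$ contributes a large $\Fd_\epsilon$-distance, so that the two localized events become \emph{approximately independent}. We will then show each localized event has probability $O(\delta^{\bestexp+o(1)})$, which together with approximate independence yields $O(\delta^{2\bestexp-o(1)})$ and hence the stated bound with any $\alpha'<\bestexp$ (recall $\bestexp\ge\ddouble>0$).

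\textbf{Step 1: Detecting loop intersections by flow lines.} Using the imaginary geometry machinery from Section~\ref{se:fl_interaction} (the duality between $\SLE_\kappa$ and $\SLE_{\kappa'}$, Figure~\ref{fi:detection}), a loop $\CL\in\Gamma$ that intersects $\eta_0$ and the region $U_{0,l}^\CL$ underneath it between consecutive intersection points can be described via a pair of flow lines of $h$: one angle $-\pi/2$ flow line (the right boundary of the relevant counterflow line segment, which agrees with $\eta_0$'s angle $\theta_0=3\pi/2$ only modulo $2\pi$, so in fact the two sides of $U_{0,l}^\CL$ are bounded by flow lines with angle difference exactly $\angledouble$, i.e.\ the same configuration as in Section~\ref{se:intersections_setup}) and the reflected flow line forming the outer boundary of $\CL$. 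The key point: the segment of $\CL$'s outer boundary near $\eta_0$, reflected off a flow line, is exactly of the type to which Lemma~\ref{le:a_priori_fl_reflected} (and Lemma~\ref{le:law_bubble}, Lemma~\ref{lem:cle_sle_markov} for the \clekp{} inside $U_{0,l}^\CL$) applies. So conditionally on the relevant flow lines, the law of $\metapproxacres{\epsilon}{U_{0,l}^\CL}{\wt a_{0,l}^\CL}{\wt b_{0,l}^\CL}{\Gamma}$ is governed by an intersecting-flow-line configuration of Euclidean size $\le\delta^{1+4\excexp}$, so by~\eqref{eq:a_priori_assumption} (via Lemmas~\ref{le:a_priori_fl_general_boundary}, \ref{le:a_priori_fl_reflected}) it exceeds $\median\epsilon+\epsilon^{-\epsexp}\ac\epsilon$ with probability $O((\delta^{1+4\excexp})^{\bestexp})$ — but we only need $O(\delta^{\bestexp})$, so we can absorb $\excexp$-powers freely.

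\textbf{Step 2: Localizing the event on side $i$.} Define $E_i$ to be the event that $D_i^\delta\ge \tfrac12(\median\epsilon+\epsilon^{-\epsexp}\ac\epsilon)$ (so that $\{D_i^\delta\ge\median\epsilon+\epsilon^{-\epsexp}\ac\epsilon$ for both $i\}\subseteq E_0\cap E_2$, up to a factor-$2$ rescaling absorbed by Lemma~\ref{le:median_scaling}). The point of the size restriction $\diam(\CL)\ge\delta^{1+\excexp}$, $\CL\subseteq B(x_0,\delta)$, and the distance-$\ge\delta^{1+\excexp}$ separation conditions defining the $U_{i,l}^\CL$ is that $E_i$ is, up to a $o^\infty(\delta)$-probability error, measurable with respect to the GFF $h$ restricted to an $\delta^{1+O(\excexp)}$-neighborhood $N_i$ of $\eta_i\cap B(x_0,\delta)$ together with the internal metrics in regions contained in $N_i$ — because the $U_{i,l}^\CL$ all lie within such a neighborhood (they are underneath loop-segments of diameter $\le\delta^{1+4\excexp}$ hugging $\eta_i$) and the Markovian property lets us sample their internal metrics from $h|_{N_i}$. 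I would make this precise using the $M$-good-scale / good-merging-scale technology (Lemmas~\ref{le:good_scales_merging}, \ref{le:good_scales_merging_refl}, \ref{le:good_scales_merging_multiple}) exactly as in the proof of Lemma~\ref{le:a_priori_disc}: with overwhelming probability there are good scales along $\eta_0$ resp.\ $\eta_2$ at which one can resample/compare, and the space-filling $\SLE_{\kappa'}$ fills balls (Lemma~\ref{le:fill_ball}, Lemma~\ref{lem:loop_fill_in_ball}) so distinct loop-segments of macroscopic diameter are genuinely separated.

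\textbf{Step 3: Approximate independence of $E_0$ and $E_2$.} Since $\eta_0$ and $\eta_2$ are two flow lines of $h$ with angle difference $\angledouble>0$, on the bulk of their lengths they are separated by Euclidean distance $\gg\delta^{1+O(\excexp)}$ — the exceptions being near $x_0$ and $y_0$ and near the (rare) ``pinch points'' where the two sides come close, which are controlled by Lemma~\ref{le:7arms} / the $(M,a)$-good event $G_{M,a}$ and are few enough (power of $\delta$ with exponent $>\bestexp$) to be harmless. On the complement of these bad configurations, $N_0$ and $N_2$ are disjoint, so by the Markov property of the GFF (independence across scales, Lemma~\ref{lem:gff_independence_across_scales}, and the fact that $h$ restricted to disjoint sets away from the boundary is close to independent, with Radon–Nikodym derivative of all finite moments) together with the Markovian property of the metric (Definition~\ref{def:cle_metric}) and Lemma~\ref{le:abs_cont_kernel}, we get
\[
\p[E_0\cap E_2\cap G_{M,a}]\ \lesssim\ \p[E_0]^{\,1-o(1)}\,\p[E_2]^{\,1-o(1)} + o^\infty(\delta).
\]
By Step~1 each $\p[E_i]=O(\delta^{\bestexp-o(1)})$ (summing the per-loop bound over the $O(\delta^{-\ddouble+o(1)})$ candidate loops, using that $\bestexp\ge\ddouble$ and the intersection of $\eta_i$ with loop outer boundaries has the relevant dimension — this is where one invokes the dimension/double-point estimates of Appendix~\ref{app:arms} and Proposition~\ref{pr:dbl_exponent}). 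Hence $\p[E_0\cap E_2\cap G_{M,a}]=O(\delta^{2\bestexp-o(1)})=O(\delta^{\bestexp+\alpha'})$ for any $\alpha'<\bestexp$, and since $\p[G_{M,a}^c]=o^\infty(\delta)$ we conclude.

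\textbf{Main obstacle.} The genuinely hard part is Step~2–3: making ``$E_i$ is essentially measurable w.r.t.\ $h|_{N_i}$ and the internal metrics inside $N_i$'' rigorous, and simultaneously controlling the pinch points where the two thin neighborhoods $N_0,N_2$ fail to be disjoint. The internal metric is \emph{not} assumed measurable with respect to $\Gamma$, only Markovian, so one cannot simply condition on $h$; one must instead condition on $h$ together with the metrics in the explored (near-$\eta_i$) region and argue that what remains — the metrics near $\eta_{2-i}$ — has conditional law depending only on the far region, using the Markovian property and the total-variation continuity results of \cite{amy-cle-resampling} (Lemma~\ref{le:partial_cle_tv}, Proposition~\ref{prop:resampling}). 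Getting the quantifiers right so that the $o(1)$ losses in the exponent stay $o(1)$ (and in particular strictly below $\bestexp$, leaving room for a fixed $\alpha'>0$) is the delicate bookkeeping, and this is exactly why $\excexp$ is taken small depending on $\epsexp$ and why the $\metapproxac{\epsilon}{\cdot}{\cdot}{\Gamma}$ (shortcutted) metric is used — it guarantees the contributions of sub-$\epsilon$ pieces are bounded by $\ac\epsilon$ deterministically.
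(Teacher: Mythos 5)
Your high-level plan — localize the bad event on each side into a small ball, use the Markov property of the GFF to decouple the two sides, bound each localized event using the a priori assumption~\eqref{eq:a_priori_assumption}, and multiply — is exactly the strategy of the paper. The detection lemmas you invoke (duality, reflected flow lines, Lemma~\ref{le:a_priori_fl_reflected}) are also the right ingredients. However, two parts of the argument as you have sketched it do not go through as stated.

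\textbf{The sum cannot be localized directly.} You define $E_i=\{D_i^\delta\ge\tfrac12(\median{\epsilon}+\epsilon^{-\epsexp}\ac{\epsilon})\}$ and then treat $E_i$ as if it is a local event for the GFF in a thin neighborhood of $\eta_i$. But $D_i^\delta$ is a \emph{sum} over potentially many excursions at many locations along $\eta_i$; the sum can exceed the threshold because of many moderate contributions spread out over $\eta_i$, in which case there is no single ball in which the event is detected, and the a priori estimate (which bounds a single excursion by $\median{\epsilon}$) does not apply. The missing step is a pigeonhole reduction: on the fill-ball event (Lemma~\ref{lem:loop_fill_in_ball}), there are at most $\delta^{-10\excexp}$ excursions, so if $D_i^\delta\ge\median{\epsilon}$ then at least one excursion satisfies $\metapproxacres{\epsilon}{U_{e_i}}{x_{e_i}}{y_{e_i}}{\Gamma}\gtrsim\delta^{10\excexp}\median{\epsilon}$; this single-excursion event is then compared against the rescaled metric $\mettapprox{\wt{\epsilon}}{\cdot}{\cdot}{\cdot}$ via Lemmas~\ref{le:scaled_metric} and~\ref{le:median_scaling} so that the threshold becomes $\mediant{\wt{\epsilon}}+\wt{\epsilon}^{-\epsexp}\act{\wt{\epsilon}}$ again. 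Without this reduction your per-side probability bound is not justified, and indeed your claim that $\p[E_i]=O(\delta^{\bestexp-o(1)})$ is not an immediate consequence of~\eqref{eq:a_priori_assumption}.

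\textbf{The decoupling must be stated for localized events, not the global $E_i$.} The inequality $\p[E_0\cap E_2\cap G_{M,a}]\lesssim\p[E_0]^{1-o(1)}\p[E_2]^{1-o(1)}+o^\infty(\delta)$ is not how the argument closes, and as written it would require justification you do not provide: $E_0$ and $E_2$ both depend on the same $(\eta_0,\eta_2,\Gamma)$ and are not product-measurable. The correct mechanism is to first replace each $E_i$ by a localized event $E^{i;p}_{\wt{z}_i,R_i}$ that is measurable with respect to $h$ restricted to $B(\wt{z}_i,R_i)$ (together with the conditionally sampled internal metrics in regions determined inside $B(\wt{z}_i,3R_i/4)$), where the balls are \emph{sampled disjoint}. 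The requirement $\dist(U_e,\eta_{2-i}\cup\partial D)\ge\delta^{1+\excexp}$ in the definition of $\CE^\delta_i(\CL)$ is precisely what guarantees the two balls can be taken disjoint, so the ``pinch points'' you worry about in Step~3 are already excluded by construction. One then conditions on $\CF_{\wt{z}_0,R_0}$, notes $E^{2;p}_{\wt{z}_2,R_2}\cap G_{\wt{z}_2,R_2}$ is $\CF_{\wt{z}_0,R_0}$-measurable, and bounds $\p[E^{0;p}_{\wt{z}_0,R_0}\cap G_{\wt{z}_0,R_0}\mid\CF_{\wt{z}_0,R_0}]$ uniformly using absolute continuity (Lemma~\ref{le:abs_cont_kernel}), translation invariance, and a union bound over the at most $M$ choices of $z_1,z_2,w_1$ on the good scale. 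The engine producing these good scales and disjoint balls with a uniform lower bound on the success probability is Lemma~\ref{lem:detect_bad_large_loop} (which you did not identify), and it in turn relies on the case distinctions of Lemmas~\ref{lem:detect_intersection_left}--\ref{lem:interior_intersection_right_first}, which treat $\eta_0$ and $\eta_2$ asymmetrically (reflected vs.\ non-reflected flow lines, near vs.\ far from the first intersection point of the loop) — a distinction your Step~1 elides.
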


\subsubsection{Detecting intersections using flow lines}
\label{subsubsec:detect_loop}

We begin by explaining how we can localize the event that a loop of $\Gamma$ intersects either $\eta_0$ or $\eta_2$. We will divide the outer boundaries of these loops into smaller segments which we call ``excursions'' from $\eta_0$ (resp.\ $\eta_2$).

Fix $\excexp > 0$. Suppose that we have the setup described in Section~\ref{se:bubble_setup}. Let $\wh{\Gamma}^\delta_0$, $\wh{\Gamma}^\delta_2$ be as defined before Proposition~\ref{prop:intersect_loops_large} (recall that we only consider loops that are contained in $B(x_0,\delta)$). For each $\CL \in \wh{\Gamma}_i^\delta$, $i=0,2$, we define a collection $\CE^\delta_i(\CL)$ of excursions of size $\delta^{1+4\excexp}$ that $\CL$ makes from $\eta_i$ and are at distance at least $\delta^{1+\excexp}$ from $\eta_{2-i}$ and the boundary.

Concretely, suppose that $i = 0,2$ and $\CL \in \wh{\Gamma}_i^\delta$. Let $e$ be a segment of the outer boundary of $\CL$ (between its first and last intersection with $\eta_i$) that starts and ends on $\eta_i$, and let $U_e$ denote the region that is bounded between $e$ and $\eta_i$. Let $x_e\in e\cap\eta_i$ (resp.\ $y_e\in e\cap\eta_i$) denote the first (resp.\ last) point such that $(x_e,y_e) \in \intptsapproxbubble{U_e}{\epsilon}$ as in~\eqref{eq:intpts_bubble}. We say $e \in \CE^\delta_i(\CL)$ if
\begin{itemize}
\item $\diam(U_e) \le \delta^{1+4\excexp}$, and
\item $\dist(U_e, \eta_{2-i} \cup \partial D) \ge \delta^{1+\excexp}$.
\end{itemize}
We let $\CE^{\delta,\bad}_i(\CL)$ be the collection of excursions $e \in \CE^\delta_i(\CL)$ for which $\metapproxacres{\epsilon}{U_e}{x_e}{y_e}{\Gamma} \geq \median{\epsilon}+\epsilon^{-\epsexp}\ac{\epsilon}$.

The following lemmas explain how we can detect excursions by observing the flow lines in a small neighborhood. By this we mean that we can find a pair of flow lines that contain the left and right boundary of $U_e$, respectively, as subsegments. Due to the asymmetric nature of the exploration tree, there is also an asymmetry in the intersections of loops with $\eta_0$ vs.\ $\eta_2$. See Figures~\ref{fig:detect_left} and~\ref{fig:detect_right} for illustrations of the statements.

We point out that in the statement of Lemma~\ref{lem:detect_intersection_left}, the flow lines $\eta^R_{w_1}$ and $\ol{\eta}^R$ need to be reflected off $\eta_0$ (in the sense as defined in Section~\ref{se:fl_merging}). The reason is that the exploration tree of $\Gamma$ is coupled as counterflow lines of the \emph{restriction of} $h$ to the component $V$ between $\eta_0,\eta_2$ which we denote by $h_V$ in the following. In particular, it branches whenever it hits $\eta_0$ or $\eta_2$. For each $z \in V$, if we consider the branch $\eta'_z$ targeting $z$, by the duality the left (resp.\ right) boundary of $\eta'_z$ seen from $z$ are described by the flow lines $\eta^L_z$ (resp.\ $\eta^R_z$) of $h_V$ with angles $\pi/2$ (resp.\ $-\pi/2$) starting from $z$. Since they are flow lines of $h_V$, they are reflected off $\eta_0$ and $\eta_2$. In particular, when $\eta^R_z$ intersects $\eta_0$ as in Figure~\ref{fig:detect_left}, it reflects off in the opposite direction of $\eta_0$. In this situation, if we viewed $\eta^R_z$ in the same direction as $\eta_0$, their angle difference at the intersection point would be $3\pi/2-\theta_0 = 0$. Consequently, if we viewed $\eta^R_z$ as a flow line of $h$ instead of $h_V$, then it would merge with $\eta_0$. Therefore, in order to detect the right boundary of $\eta'_z$, we need to \emph{first} detect $\eta_0$ and then reflect $\eta^R_z$ off $\eta_0$. This issue is not present for the intersections of $\eta^R_z$ with $\eta_2$ in Lemma~\ref{lem:detect_intersection_right} since it reflects in the same direction as $\eta_2$ (see Figure~\ref{fig:detect_right}).

\begin{figure}[ht]
\centering
\includegraphics[width=0.33\textwidth]{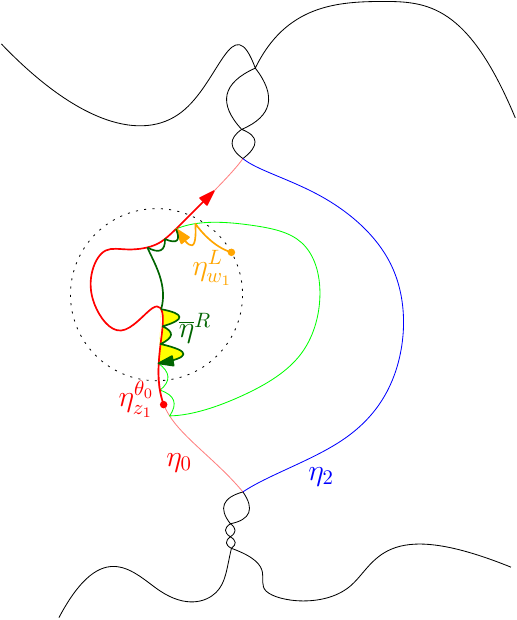}\includegraphics[width=0.33\textwidth]{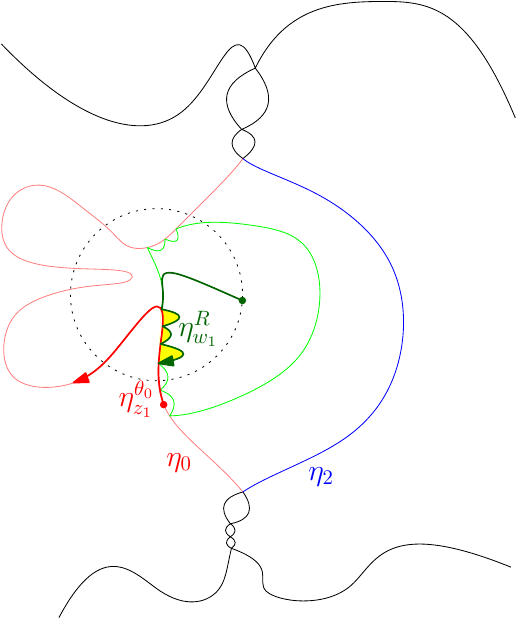}\includegraphics[width=0.33\textwidth]{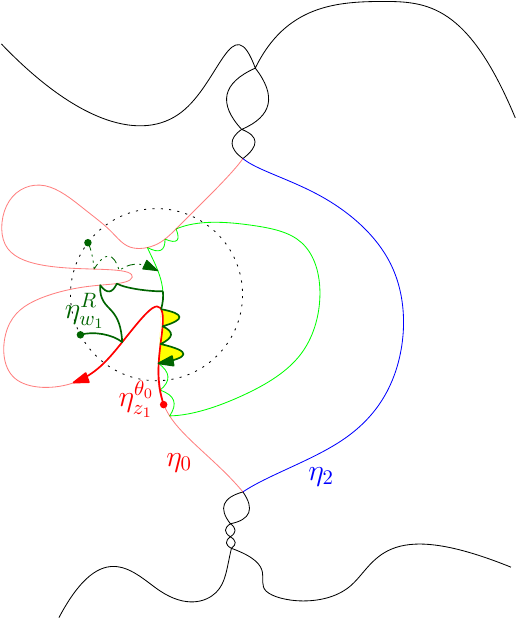}
\caption{The setup described in Lemma~\ref{lem:detect_intersection_left} for the intersections with $\eta_0$. The light green loop represents the outer boundary of $\CL$. Shown are the three cases in the proof of the lemma. In each case, an angle $-\pi/2$ flow line merges into the part of the boundary of $\CL$ that intersects $\eta_0$. In Cases~2a and 2b (center and right), the flow line can come from $\partial B(z,r/4)$. In Case~1 (left), the flow line starts only within $B(z,r/4)$, but its starting point can be detected by an angle $+\pi/2$ flow line coming from $\partial B(z,r/4)$.}
\label{fig:detect_left}
\end{figure}

\begin{figure}[ht]
\centering
\includegraphics[width=0.33\textwidth]{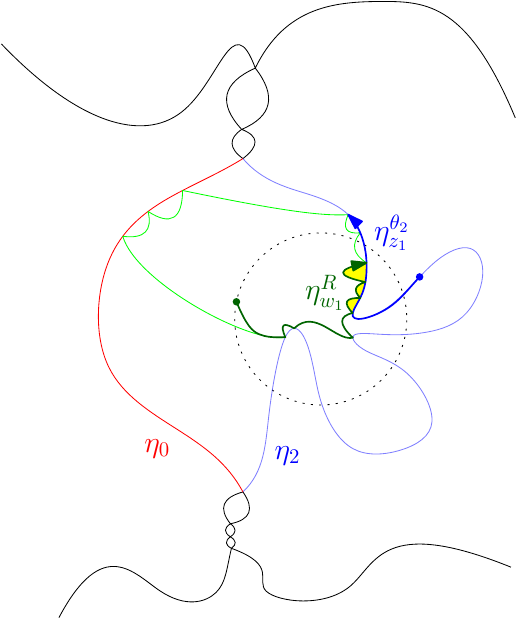}\includegraphics[width=0.33\textwidth]{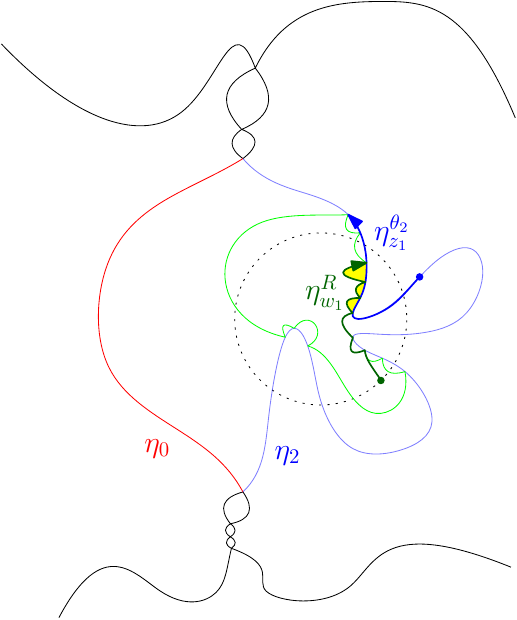}
\includegraphics[width=0.33\textwidth]{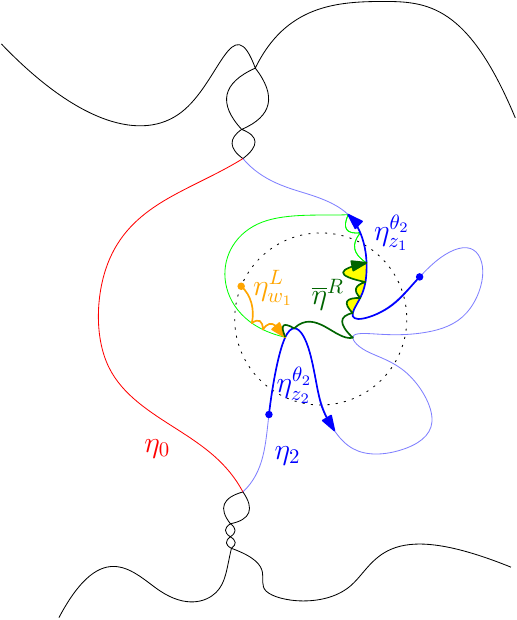}
\caption{The setup described in Lemma~\ref{lem:detect_intersection_right} for the intersections with $\eta_2$. The light green loop represents the outer boundary of $\CL$. Shown are the three cases in the proof of the lemma.}
\label{fig:detect_right}
\end{figure}

\begin{lemma}
\label{lem:detect_intersection_left}
Let $z\in D$ and $r>0$ such that $2r < \delta^{1+\excexp}$ and $B(z,r) \subseteq D$. Let $e \in \CE^\delta_0(\CL)$ for some $\CL \in \wh{\Gamma}_0^\delta$, and suppose that $U_e \subseteq B(z,r/4)$. Then there exist $z_1 \in \partial B(z,r/2)$, $w_1 \in \partial B(z,r/4)$ such that one of the following is true.
\begin{itemize}
\item Let $\eta^{\theta_0}_{z_1}$ be the angle $\theta_0$ flow line of $h$ starting at $z_1$ and stopped upon exiting $B(z,3r/4)$. Let $\eta^R_{w_1}$ be the angle $-\pi/2$ flow line starting at $w_1$, reflected off $\eta^{\theta_0}_{z_1}$ in the opposite direction, and stopped upon exiting $B(z,r/2)$. Then $\partial U_e \subseteq \eta^R_{w_1} \cup \eta^{\theta_0}_{z_1}$.
\item Let $\eta^{\theta_0}_{z_1}$ be the angle $\theta_0$ flow line of $h$ starting at $z_1$ and stopped upon exiting $B(z,3r/4)$. Let $\eta_{w_1}^L$ be the angle $+\pi/2$ flow line starting at $w_1$. Then it hits $\eta^{\theta_0}_{z_1}$ before exiting $B(z,r/2)$. Let $\ol{w}$ denote the first hitting point, and let $\ol{\eta}^R$ be the angle $-\pi/2$ flow line starting at $\ol{w}$, reflected off $\eta^{\theta_0}_{z_1}$ in the opposite direction, and stopped upon exiting $B(z,r/2)$. Then $\partial U_e \subseteq \ol{\eta}^R \cup \eta^{\theta_0}_{z_1}$.
\end{itemize}
\end{lemma}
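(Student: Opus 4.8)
The plan is to detect the boundary of $U_e$ using the imaginary geometry machinery, carefully tracking where the relevant flow lines can be started. Recall that the exploration tree of $\Gamma$ is given by counterflow lines of $h_V$, the restriction of $h$ to the component $V$ between $\eta_0$ and $\eta_2$, and that for any $u \in U_e$ the branch $\eta'_u$ of the exploration tree targeting $u$ has right boundary (seen from $u$) equal to the angle $-\pi/2$ flow line $\eta^R_u$ of $h_V$ starting from $u$, which is reflected off $\eta_0$ and $\eta_2$. Since $e$ is a segment of the outer boundary of $\CL$ and $\CL \in \wh\Gamma^\delta_0$ intersects $\eta_0$, the right boundary of $U_e$ consists of a piece of $e$ (hence of such a reflected flow line $\eta^R_u$) together with a piece of $\eta_0$, while the left boundary of $U_e$ is a piece of $\eta_0$. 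Thus $\partial U_e \subseteq \eta^R_u \cup \eta_0$, and the key point is that $\eta^R_u$ first has to be reflected off $\eta_0$ (angle difference $0$ with $\eta_0$ viewed in the reverse direction), which is exactly why $\eta^R_{w_1}$ in the statement must be taken reflected off the detected piece of $\eta_0$ rather than as a flow line of $h$.

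First I would detect the piece of $\eta_0$ bounding $U_e$. Since $U_e \subseteq B(z,r/4)$ and $\dist(U_e, \eta_2 \cup \partial D) \ge \delta^{1+\excexp} > 2r$, the segment of $\eta_0$ forming $\partial U_e$ does not interact with $\eta_2$ or $\partial D$ inside $B(z,3r/4)$, so by the flow line merging rules, the angle $\theta_0$ flow line $\eta^{\theta_0}_{z_1}$ started from the appropriate point $z_1 \in \partial B(z,r/2)$ where $\eta_0$ last exits $\partial B(z,r/2)$ before entering $B(z,r/4)$ merges into $\eta_0$ and contains the relevant segment; this uses that $\eta_0$ is a flow line of $h$ (not just of $h_V$) as long as it stays away from $\eta_2$, and that distinct angle-$\theta_0$ flow lines started from $\partial B(z,r/2)$ merge upon meeting. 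Second, I would detect the right boundary piece of $\eta^R_u$. Here there are cases according to where $\eta^R_u$ enters the ball $B(z,r/4)$ relative to where it first reflects off $\eta_0$. If $\eta^R_u$ enters $B(z,r/2)$ before it needs to reflect — i.e., it crosses $\partial B(z,r/4)$ as an honest angle $-\pi/2$ flow line of $h_V$ that has not yet been affected by the reflection structure inside $B(z,3r/4)$ — then I can take $w_1 \in \partial B(z,r/4)$ to be that crossing point and let $\eta^R_{w_1}$ be the angle $-\pi/2$ flow line from $w_1$ reflected off $\eta^{\theta_0}_{z_1}$ in the opposite direction; by the merging/locality rules for flow lines reflected off a fixed flow line, $\eta^R_{w_1}$ then contains the right boundary of $U_e$, giving the first bullet. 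If instead $\eta^R_u$ only "begins" (relative to the detectable structure) inside $B(z,r/4)$ — more precisely, its last reflection point $\ol w$ off $\eta_0$ before reaching $U_e$ lies inside $B(z,r/4)$, so no crossing of $\partial B(z,r/4)$ by a not-yet-reflected segment is available — then I detect $\ol w$ itself: the angle $+\pi/2$ flow line $\eta^L_{w_1}$ started from a suitable $w_1 \in \partial B(z,r/4)$ (the point where the left boundary $\eta^L_u$ of $\eta'_u$ last crosses $\partial B(z,r/4)$) must hit $\eta^{\theta_0}_{z_1}$ at $\ol w$ before exiting $B(z,r/2)$ (angle difference $+\pi/2 - \theta_0 = -\pi$ forces intersection rather than merging), and then $\ol\eta^R$, the angle $-\pi/2$ flow line from $\ol w$ reflected off $\eta^{\theta_0}_{z_1}$ in the opposite direction, contains the right boundary of $U_e$; this is the second bullet.

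To make the case distinction exhaustive I would observe that by the structure of a counterflow line and its outer boundary, the right boundary of $U_e$ is always a segment of an angle $-\pi/2$ flow line of $h_V$ that, tracing backwards from $U_e$, either crosses $\partial B(z,r/4)$ before its last reflection off $\eta_0$ (Case with $z_1,w_1 \in \partial B(z,r/2), \partial B(z,r/4)$ respectively as in the first bullet) or has that last reflection point inside $B(z,r/4)$ (second bullet, with $\ol w$ detected via the angle $+\pi/2$ flow line). In both cases the key inputs are: the locality of flow lines and counterflow lines (so that observing $h$ in $B(z,3r/4)$ suffices to determine the relevant initial/reflection segments), the merging rule for equal-angle flow lines and the non-crossing/reflection rules for flow lines with angle difference $0$ or $\pm\pi$ (Section~\ref{se:fl_interaction}), and the fact that $U_e$ together with its defining boundary segments stays within $B(z,r/2)$ and away from $\eta_2 \cup \partial D$ by a definite margin $> r$. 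The main obstacle I expect is the bookkeeping around the reflection: one must argue carefully that the reflected flow line $\eta^R_{w_1}$ (resp.\ $\ol\eta^R$) of $h$ reflected off the \emph{detected} curve $\eta^{\theta_0}_{z_1}$ genuinely agrees with the true reflected flow line of $h_V$ off $\eta_0$ on the relevant segment — this follows because $\eta^{\theta_0}_{z_1}$ coincides with $\eta_0$ on the portion of $\partial U_e$ we care about and because reflected flow lines, like flow lines, are local, but spelling out that the reflection operation commutes with the localization requires invoking the locality of the restricted field and the fact that $\eta^R_u$ does not see the part of $\eta_0$ outside $B(z,3r/4)$ before completing $\partial U_e$.
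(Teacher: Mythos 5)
Your overall strategy---detect the left boundary of $U_e$ via $\eta^{\theta_0}_{z_1}$ started from where $\eta_0$ last crosses $\partial B(z,r/2)$, then detect the right boundary either by an angle $-\pi/2$ flow line from $\partial B(z,r/4)$ (first bullet) or by locating the reflection point $\ol{w}$ with an angle $+\pi/2$ flow line (second bullet)---matches the paper's general plan, and your second case corresponds to the paper's Case~1. However, there is a genuine gap in your first case, and it is precisely the situation that the paper's proof handles by an iterative re-rooting argument (its Case~2b).

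The problem is this. You take $w_1 \in \partial B(z,r/4)$ to be the point where the (not-yet-reflected) angle $-\pi/2$ flow line crosses $\partial B(z,r/4)$ and claim that $\eta^R_{w_1}$, reflected off $\eta^{\theta_0}_{z_1}$, then traces the right boundary of $U_e$, attributing any difficulty to locality and to pieces of $\eta_0$ \emph{outside} $B(z,3r/4)$. But the actual obstruction is different: $\eta_0$ may make several separate excursions into $B(z,3r/4)$, only the last one of which (entering through $z_1$) is captured by $\eta^{\theta_0}_{z_1}$. The true right boundary of the exploration branch $\eta'_\CL$ that traces $\CL$ may reflect off a piece of $\eta_0$ \emph{inside} $B(z,r/2)$ that belongs to an earlier excursion, and hence is not part of $\eta^{\theta_0}_{z_1}$. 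When $\eta^R_{w_1}$ (which reflects only off the detected curve $\eta^{\theta_0}_{z_1}$) encounters that piece of $\eta_0$, it will merge with it rather than reflect, since both have the same angle modulo $2\pi$. So $\eta^R_{w_1}$ diverges from the true boundary of $\CL$ before ever reaching $e$, and the first bullet fails for that choice of $w_1$. Your case split ``crosses $\partial B(z,r/4)$ before last reflection'' vs.\ ``last reflection inside $B(z,r/4)$'' does not account for this scenario, and the closing remark about locality does not repair it.

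The paper closes this gap with an extra subdivision: if, after $u_e$, the branch $\eta'_\CL$ hits $\eta_0$ at a point $u \in \eta_0 \setminus \eta^{\theta_0}_{z_1}$ before exiting $B(z,r/4)$ (Case~2b), one passes to the branch of the exploration tree that branches into the component between $\eta_0$ and $\eta'_\CL$ at $u$, takes its first exit from $B(z,r/4)$ as a new candidate $w_1$, and repeats this re-rooting if the new $\eta^R_{w_1}$ again meets a far piece of $\eta_0$. Eventually one obtains a $w_1 \in \partial B(z,r/4)$ for which $\eta^R_{w_1}$ meets $\eta_0$ only on $\eta^{\theta_0}_{z_1}$ or in the same direction (where no reflection is needed), and only then does $e \subseteq \eta^R_{w_1}$ follow. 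This iterative re-rooting is the missing idea; without it the case analysis in your first bullet is not exhaustive.
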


\begin{proof}
Let $z_1 \in \partial B(z,r/2)$ be the point last visited by $\eta_0$ before intersecting $e$. Then the left boundary of $U_e$ is contained in $\eta^{\theta_0}_{z_1}$. We need to find $w_1$ such that $e$ is contained in either $\eta^R_{w_1}$ or $\ol{\eta}^R$.

Recall that if $\eta'_w$ denotes the branch of the exploration tree of $\Gamma$ from $y_0$ targeted at $w$, then its right (resp.\ left) boundary almost surely agrees with $\eta_w^R$ (resp.\ $\eta_w^L$).

Let $u_\CL \in \CL \cap \eta_0$ be the last intersection point visited by $\eta_0$ (equivalently first visited by $\eta'$). Consider the branch $\eta'_\CL$ of the exploration tree of $\Gamma$ that starts at $y_0$ and traces $\CL$ completely. This is obtained by starting with the branch $\eta'$ from $y_0$ to $x_0$ and branching towards $u_\CL$ when it separates it from $x_0$. Then $e$ is contained in the right boundary of $\eta'_\CL$.

Let $u_e$ be the last point on $e$ visited by $\eta'_\CL$. We distinguish several cases (see Figure~\ref{fig:detect_left}).

\textbf{Case 1:} Both the segments of $\eta_0$ and of $\eta'_\CL$ between $u_e$ and $u_\CL$ are contained in $B(z,3r/4)$. In that case, the segment of $\eta_0$ is also contained in $\eta^{\theta_0}_{z_1}$. Since we assumed $2r < \delta^{1+\excexp} \le \diam(\CL)$, we see that $\eta'_\CL$ must exit $B(z,r/4)$ after first visiting $u_\CL$ (otherwise $\CL$ would be contained in $B(z,3r/4)$). Then if we let $w_1 \in \partial B(z,r/4)$ be the point first visited by $\eta'_\CL$ after $u_\CL$, then $\eta_{w_1}^L$ hits $\eta^{\theta_0}_{z_1}$ at $\ol{w} = u_\CL$, and $e \subseteq \ol{\eta}^R$.

\textbf{Case 2:} The segment of $\eta_0$ between $u_e$ and $u_\CL$ or the segment of $\eta'_\CL$ between $u_e$ and $u_\CL$ exits $B(z,3r/4)$. Then at least one of the following must happen after $\eta'_\CL$ visits $u_e$.\\
(a) $\eta'_\CL$ exits $B(z,r/4)$.\\
(b) $\eta'_\CL$ intersects $\eta_0$ at a point $u \in \eta_0 \setminus \eta^{\theta_0}_{z_1}$ (i.e.\ $\eta_0$ exits $B(z,3r/4)$ between $u_e$ and $u$).\\
We distinguish between whether (a) or (b) occurs first.

\textbf{Case 2a:} Let $w_1 \in \partial B(z,r/4)$ be the first exit point. Then $\eta^R_{w_1}$ intersects $\eta_0$ only on $\eta^{\theta_0}_{z_1}$ until $u_e$, and therefore $e \subseteq \eta^R_{w_1}$.

\textbf{Case 2b:} Consider the branch of $\eta'$ that branches into the component between $\eta_0$ and $\eta'_\CL$ after hitting $u$. Let $\wt{w}_1$ be the first point when it exits $B(z,r/4)$. In case $\eta^R_{\wt{w}_1}$ intersects $\eta_0$ in the opposite direction at a point $\wt{u} \in \eta_0 \setminus \eta^{\theta_0}_{z_1}$, we repeat the procedure by branching into that component after hitting $\wt{u}$. Eventually, we find a point $w_1 \in \partial B(z,r/4)$ such that $\eta^R_{w_1}$ intersects $\eta_0$ only in the same direction or on $\eta^{\theta_0}_{z_1}$ until $u_e$. We do not need to reflect flow lines that intersect in the same direction, therefore $e \subseteq \eta^R_{w_1}$.
\end{proof}

\begin{lemma}
\label{lem:detect_intersection_right}
Let $z\in D$ and $r>0$ such that $2r < \delta^{1+\excexp}$ and $B(z,r) \subseteq D$. Let $e \in \CE^\delta_2(\CL)$ for some $\CL \in \wh{\Gamma}_2^\delta$, and suppose that $U_e \subseteq B(z,r/4)$. Then there exist $z_1,z_2 \in \partial B(z,r/2)$, $w_1 \in \partial B(z,r/4)$ (where $z_1=z_2$ is allowed) such that one of the following is true.
\begin{itemize}
\item Let $\eta^{\theta_2}_{z_1}$ be the angle $\theta_2$ flow line of $h$ starting at $z_1$ and stopped upon exiting $B(z,3r/4)$. Let $\eta^R_{w_1}$ be the angle $-\pi/2$ flow line starting at $w_1$ and stopped upon exiting $B(z,r/2)$. Then $\partial U_e \subseteq \eta^R_{w_1} \cup \eta^{\theta_2}_{z_1}$.
\item Let $\eta^{\theta_2}_{z_1}$ (resp.\ $\eta^{\theta_2}_{z_2}$) be the angle $\theta_2$ flow lines of $h$ starting at $z_1$ (resp.\ $z_2$) and stopped upon exiting $B(z,3r/4)$. Let $\eta_{w_1}^L$ be the angle $+\pi/2$ flow line starting at $w_1$. Then it hits $\eta^{\theta_2}_{z_2}$ before exiting $B(z,r/2)$. Let $\ol{w}$ denote the first hitting point, and let $\ol{\eta}^R$ be the angle $-\pi/2$ flow line starting at $\ol{w}$ and stopped upon exiting $B(z,r/2)$. Then $\partial U_e \subseteq \ol{\eta}^R \cup \eta^{\theta_2}_{z_1}$.
\end{itemize}
\end{lemma}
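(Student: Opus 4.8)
The proof mirrors that of Lemma~\ref{lem:detect_intersection_left}, the essential difference being that the angle $-\pi/2$ flow line meets $\eta_2$ (which has angle $\theta_2 = -\pi/2-\angledouble$) with angle difference $\angledouble \ne 0$, so it \emph{bounces off} $\eta_2$ rather than merging into it, whether viewed as a flow line of $h$ or of the restriction $h_V$ of $h$ to the component $V$ between $\eta_0,\eta_2$. Consequently no ``reflect off $\eta_2$'' operation is needed, which is why $\eta^R_{w_1}$ and $\ol\eta^R$ appear unreflected in the statement.

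I would begin by pinning down the arc of $\eta_2$ forming part of $\partial U_e$: let $z_1 \in \partial B(z,r/2)$ be the last point visited by $\eta_2$ before it enters the excursion $e$. Because $U_e \subseteq B(z,r/4)$, $\diam(U_e) \le \delta^{1+4\excexp}$ and $\dist(U_e,\eta_0 \cup \partial D) \ge \delta^{1+\excexp} > 2r$, the portion of $\eta_2$ on $\partial U_e$ is a subsegment of the angle $\theta_2$ flow line $\eta^{\theta_2}_{z_1}$ run until it exits $B(z,3r/4)$. Next, recalling the coupling of $\Gamma$ with $h_V$ and the duality between $\SLE_{\kappa'}$ and $\SLE_\kappa$ (Section~\ref{se:fl_interaction}), the branch $\eta'_w$ of the exploration tree targeted at a point $w$ has right boundary equal to the angle $-\pi/2$ flow line $\eta^R_w$ and left boundary equal to the angle $+\pi/2$ flow line $\eta^L_w$. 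Let $u_\CL \in \CL \cap \eta_2$ be the last intersection visited by $\eta_2$ (equivalently the first visited by $\eta'$), let $\eta'_\CL$ be the branch of the exploration tree from $y_0$ that traces $\CL$ entirely, so that $e$ lies on its right boundary, and let $u_e$ be the last point of $e$ visited by $\eta'_\CL$.

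The remaining step is a case analysis according to whether the segments of $\eta_2$ and of $\eta'_\CL$ between $u_e$ and $u_\CL$ both stay inside $B(z,3r/4)$. If they do, then since $\diam(\CL) \ge \delta^{1+\excexp} > 2r$, $\eta'_\CL$ must leave $B(z,r/4)$ after first reaching $u_\CL$; taking $w_1 \in \partial B(z,r/4)$ to be this first exit point, the angle $+\pi/2$ flow line $\eta^L_{w_1}$ reaches $\eta_2$ near $u_\CL$, at a point $\ol w$ lying on $\eta^{\theta_2}_{z_2}$ for the appropriate $z_2 \in \partial B(z,r/2)$ (possibly $z_2 \ne z_1$, if $\eta_2$ has excursed out of $B(z,3r/4)$ in between); then the angle $-\pi/2$ flow line $\ol\eta^R$ from $\ol w$ traces $e$ (bouncing off $\eta_2$ whenever it meets it), whence $\partial U_e \subseteq \ol\eta^R \cup \eta^{\theta_2}_{z_1}$, the second alternative. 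Otherwise one of the two segments exits $B(z,3r/4)$, and then, after visiting $u_e$, the branch $\eta'_\CL$ must either first leave $B(z,r/4)$ --- in which case $w_1$ is the exit point, $\eta^R_{w_1}$ meets $\eta_2$ only along $\eta^{\theta_2}_{z_1}$ until $u_e$, so $e \subseteq \eta^R_{w_1}$ and $\partial U_e \subseteq \eta^R_{w_1} \cup \eta^{\theta_2}_{z_1}$ --- or first meet $\eta_2$ at a point not on $\eta^{\theta_2}_{z_1}$, in which case one branches into the newly cut-off component and iterates exactly as in Case~2b of the proof of Lemma~\ref{lem:detect_intersection_left}, the iteration terminating because the angle $-\pi/2$ flow line bounces off every $\theta_2$-strand of $\eta_2$ in the same way.

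The main point requiring care is the bookkeeping of which angle $\theta_2$ flow-line strand of $\eta_2$ carries the boundary arc of $U_e$ and where the auxiliary flow lines are started, so that the event exhibited is indeed a local event of $h$ on $B(z,3r/4)$ and all flow lines can be taken to emanate from $\partial B(z,r/2)$ or $\partial B(z,r/4)$; together with checking, via the flow-line interaction rules and the values $\theta_0 = 3\pi/2$, $\theta_2 = -\pi/2-\angledouble$, that the $-\pi/2$ flow line always bounces off (rather than merges into or crosses) $\eta_2$. Both are routine given the imaginary geometry calculus recalled in Section~\ref{se:fl_interaction}.
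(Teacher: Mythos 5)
Your proposal mirrors the case structure of Lemma~\ref{lem:detect_intersection_left} fairly literally, but the paper organizes the cases for this lemma differently, and the difference is substantive. The paper first isolates the scenario where $\CL$ intersects both $\eta_0$ and $\eta_2$ (its Case~1): there the entire right outer boundary of $\CL$, in particular $e$, lies on the \emph{main} branch $\eta'$ from $y_0$ to $x_0$, so one simply takes $w_1$ to be the next exit of $\eta'$ from $B(z,r/4)$ after $u_e$ and lands in the first alternative. Your write-up never isolates this case. Moreover you define $u_\CL$ as a point of $\CL\cap\eta_2$ (the first such point visited by $\eta'$), whereas the paper's $u_\CL$ is the first point of $\CL$ visited by $\eta'_\CL$ -- which lies on $\eta_0$ when the loop hits $\eta_0$, and on $\eta_2$ otherwise. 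For loops hitting $\eta_0$ your $u_\CL$ is a mid-trajectory point of $\eta'_\CL$, not the branch point from $\eta_2$. Consequently, when your ``both segments stay in $B(z,3r/4)$'' case applies to such a loop, the assertion that $\eta^L_{w_1}$ hits $\eta_2$ at $\ol w = u_\CL$ is not justified: $\eta'_\CL$ may have traced material on other strands of $\CL$ before reaching $u_\CL$, so the left boundary of the branch targeting $w_1$ need not include $u_\CL$. The paper avoids this entirely by restricting the $\ol\eta^R$-alternative to the case where $\CL$ hits only $\eta_2$, in which $u_\CL$ really is the point where $\eta'_\CL$ branches off $\eta_2$.

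The second, smaller point concerns the iterative ``Case~2b'' step you import. That iteration is needed on the $\eta_0$ side precisely because an angle $-\pi/2$ flow line of $h$ would \emph{merge} into the wrong strand of $\eta_0$ (same angle modulo $2\pi$), so one must choose $w_1$ so that no such mis-merge occurs before $u_e$. On the $\eta_2$ side, as the paper notes in the paragraph preceding Lemma~\ref{lem:detect_intersection_left}, the angle difference with $\eta_2$ is $\angledouble$, so the $-\pi/2$ flow line of $h$ bounces off \emph{every} strand of $\eta_2$ in the same way, whether or not it lies on $\eta^{\theta_2}_{z_1}$. Hence the exit point of $\eta'_\CL$ after $u_e$ already gives a valid $w_1$; there is nothing to iterate, and the paper's Case~2 stops there. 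Your acknowledgement that the iteration ``terminates because the $-\pi/2$ flow line bounces off'' is really an observation that the iteration is vacuous. Reorganizing the case split around whether $\CL$ intersects $\eta_0$ (and taking $u_\CL$ to be the first point of $\CL$ visited by $\eta'_\CL$) removes both the gap and the spurious iteration.
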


\begin{proof}
The proof is very similar to the proof of Lemma~\ref{lem:detect_intersection_left}, but due to the opposite orientation of $\eta_0$ and $\eta_2$ (which influences how flow lines need to be reflected) we have to distinguish the cases slightly differently.

We pick $z_1 \in \partial B(z,r/2)$ as before so that the right boundary of $U_e$ is contained in $\eta^{\theta_2}_{z_1}$. We need to find $w_1$ such that $e$ is contained in either $\eta^R_{w_1}$ or $\ol{\eta}^R$.

Consider the branch $\eta'_\CL$ of the exploration tree of $\Gamma$ that starts at $y_0$ and traces $\CL$ completely. Let $u_\CL \in \CL$ be the first (equivalently last) point visited by $\eta'_\CL$. We have $u_\CL \in \eta_0$ in case $\CL$ intersects both $\eta_0,\eta_2$, and $u_\CL \in \eta_2$ in case $\CL$ intersects only $\eta_2$. Let $u_e$ be the last point on $e$ visited by $\eta'_\CL$ (equivalently first visited by $\eta_2$).

We distinguish the following cases (see Figure~\ref{fig:detect_right}).

\textbf{Case 1:} $\CL$ intersects both $\eta_0,\eta_2$. In this case, the right boundary of $\CL$ lies on the branch $\eta'$ from $y_0$ to $x_0$. Letting $w_1$ be the next point where $\eta'$ exits $B(z,r/4)$ after $u_e$, we then have $e \subseteq \eta^R_{w_1}$.

\textbf{Case 2:} $\CL$ intersects only $\eta_2$, and $\eta'_\CL$ exits $B(z,r/4)$ after $u_e$. Let $w_1$ be the first exit point. Then $\eta^R_{w_1}$ intersects $\eta_2$ only in the same direction until $u_e$, and therefore $e \subseteq \eta^R_{w_1}$.

\textbf{Case 3:} $\CL$ intersects only $\eta_2$, and $\eta'_\CL$ stays within $B(z,r/4)$ between $u_e$ and $u_\CL$. Since we assumed $2r < \delta^{1+\excexp} \le \diam(\CL)$, we see that $\eta'_\CL$ must exit $B(z,r/4)$ after first visiting $u_\CL$ and before visiting $e$ (otherwise $\CL$ would be contained in $B(z,r/4)$). It might be that $u_\CL \in \eta_2 \setminus \eta^{\theta_2}_{z_1}$ (i.e.\ $\eta_2$ exits $B(z,3r/4)$ between $u_\CL$ and $u_e$), in which case we pick another $z_2 \in \partial B(z,r/2)$ such that $u_\CL \in \eta^{\theta_2}_{z_2}$. Then if we let $w_1 \in \partial B(z,r/4)$ be the point first visited by $\eta'_\CL$ after $u_\CL$, then $\eta_{w_1}^L$ hits $\eta^{\theta_2}_{z_2}$ at $\ol{w} = u_\CL$, and $e \subseteq \ol{\eta}^R$.
\end{proof}

\subsubsection{Localized event probabilities}
\label{subsubsec:localized_event_probabilities}

In the Lemmas~\ref{lem:interior_intersection_left},~\ref{lem:interior_intersection_left_first},~\ref{lem:interior_intersection_right},~\ref{lem:interior_intersection_right_first}, we are going to define events that we will use to localize the event that a large loop $\CL$ of $\Gamma$ intersects $\eta_0$ or $\eta_2$ such that the regions disconnected by the intersection lead to $\metapproxac{\epsilon}{\cdot}{\cdot}{\Gamma}$-distances exceeding $\median{\epsilon}+\epsilon^{-\epsexp}\ac{\epsilon}$. We will then show that if there is such a loop $\CL$ of $\Gamma$, such events are likely to occur with overwhelming probability. We have seen in the Lemmas~\ref{lem:detect_intersection_left} and~\ref{lem:detect_intersection_right} that there will be an asymmetry in the local versions of the events for the intersections of the loop with $\eta_0$ vs.\ $\eta_2$. Also, we need to distinguish the excursions that are either close or far from the first point where the loop intersects $\eta_i$.

Let $X_\delta^\theta = X_{0,\delta}^\theta$ be as defined in~\eqref{eq:fl_annulus}. The events defined below in Lemmas~\ref{lem:interior_intersection_left},~\ref{lem:interior_intersection_left_first},~\ref{lem:interior_intersection_right},~\ref{lem:interior_intersection_right_first} will be measurable with respect to a collection of $X_\delta^\theta$ with certain values of $\theta$, the values of $h$ on these sets plus the regions they separate from $\partial (\delta\D)$, and the internal metrics in certain regions determined by flow lines within $B(0,3\delta/4)$. They are all measurable with respect to the restriction of $h$ to $B(0,3\delta/4)$ and the internal metrics in regions that are determined by the restriction of $h$ to $B(0,3\delta/4)$. In particular, they do not depend on the values of $h$ outside $B(0,3\delta/4)$.

Note that since $X_\delta^\theta$ separates $0$ from $\partial(\delta\D)$, the values of $h$ in the two complementary components are conditionally independent given its values on $X_\delta^\theta$. In particular, the conditional probability of any event depending only on its values in the exterior component (such as the merging events described in the Lemmas~\ref{le:good_scales_merging},~\ref{le:good_scales_merging_refl},~\ref{le:good_scales_merging_multiple}) is not affected by further conditioning on its values in the interior component. This will be useful later when we try to find good scales on which the localized events described in the Lemmas~\ref{lem:interior_intersection_left},~\ref{lem:interior_intersection_left_first},~\ref{lem:interior_intersection_right},~\ref{lem:interior_intersection_right_first} hold because we can reduce it to finding good events for annuli for which the independence across scales applies.

In order to bound the probability of the event defined in Lemma~\ref{lem:interior_intersection_left_first} below, we use another variant of the assumption~\eqref{eq:a_priori_assumption}. This variant is formulated purely in terms of the CLE, and does not depend on how it is coupled with a GFF. (The proof however does use the coupling with a GFF.)

\begin{lemma}\label{le:a_priori_loop_intersecting_both_sides}
Fix $\innexp > 0$. Consider the setup in Section~\ref{se:bubble_setup} with $(D,x_0,y_0) = (\delta\D,-i\delta,i\delta)$. Let $E$ be the event that there exists a loop $\CL \in \Gamma$ intersecting both $\eta_0$, $\eta_2$ and for $i=0$ or $i=2$ there exist $x',x,y,y' \in \eta_i \cap \CL$ such that if $U$ denotes the region bounded between (the outer boundary of) $\CL$ and the segment of $\eta_i$ from $x'$ to $y'$, then $(x,y) \in \intptsapproxbubble{U}{\epsilon}$ as in~\eqref{eq:intpts_bubble} and $U \subseteq B(0,\delta^{1+\innexp})$ and $\metapproxacres{\epsilon}{U}{x}{y}{\Gamma} \ge \median{\epsilon}+\epsilon^{-\epsexp}\ac{\epsilon}$. Then $\p[E] = O(\delta^{\bestexp})$ as $\delta\searrow 0$.
\end{lemma}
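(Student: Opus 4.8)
The statement of Lemma~\ref{le:a_priori_loop_intersecting_both_sides} concerns the specific configuration where a loop $\CL\in\Gamma$ intersects both boundary flow lines $\eta_0,\eta_2$ of the bubble and cuts off a region $U$ between $\CL$ and one of the $\eta_i$ in which $\metapproxacres{\epsilon}{U}{x}{y}{\Gamma}\ge\median{\epsilon}+\epsilon^{-\epsexp}\ac{\epsilon}$. The plan is to reduce this to the assumption~\eqref{eq:a_priori_assumption} via the Markovian exploration rules for the \slek{}-\clekp{} pair recalled in Section~\ref{se:cle_sle_markov}. First I would observe that, since $\CL$ intersects both $\eta_0$ and $\eta_2$, its right outer boundary lies on the branch $\eta'$ of the exploration tree targeting $x_0$, and more generally (by Lemma~\ref{lem:detect_intersection_left} and Lemma~\ref{lem:detect_intersection_right} and the duality between flow and counterflow lines recalled in Section~\ref{se:fl_interaction}) the outer boundary segments of $\CL$ on the side of $\eta_i$ are described by flow lines of the restricted field. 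Fixing $i=0$ (the case $i=2$ is symmetric by Lemma~\ref{le:sle_reversal}), on the component between $\eta_i$ and the relevant outer-boundary arc of $\CL$, Lemma~\ref{le:law_bubble} together with \cite[Theorem~5.6]{dms2021mating} identifies the conditional law of the pair (segment of $\eta_i$, outer boundary of $\CL$) as that of two flow lines with angle difference equal to the double point angle $\angledouble$; these are exactly the curves $\eta_1,\eta_2$ of the setup of Section~\ref{se:intersections_setup}, and the \clekp{} inside $U$ is conditionally independent and has the law prescribed there.

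The second step would be to run the independence-across-scales machinery. I would fix a small $\innexp>0$ and consider dyadic scales $2^{-j}$ for $j$ ranging over $[\log_2(\delta^{-1-\innexp/4}),\log_2(\delta^{-1-\innexp/2})]$, say. By Lemma~\ref{le:good_scales_merging} (applied with the boundary values of the field $h_\delta$ from Section~\ref{se:intersections_setup}, after translation/scaling), one can choose $M,p$ so that with probability $1-O(\delta^b)$ for arbitrarily large $b$, at least a $1/3$ fraction of the scales $(0,2^{-j})$ are $M$-good and the merging event $G_{0,2^{-j}}$ occurs for the interpolated field $\wt h_{0,2^{-j}}$; and by Lemma~\ref{le:fill_ball} applied to the space-filling \slekp{}'s detecting the branches of the exploration tree, with probability $o^\infty(\delta)$ the associated space-filling curves fill a ball of radius $r^{1+a}$ whenever they travel distance $r\le\delta$. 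On the intersection of these events, if the bad event $E$ of the lemma occurs, then sampling $J$ uniformly from the dyadic range and a point in $B(0,2^{-J}/4)$ inside one of the filled balls, there is conditional probability $\ge\delta^{O(a)}$ that the flow lines emanating from the sampled point merge into the relevant segments of $\eta_i$ and of the outer boundary of $\CL$ before they trace $\partial U$. This converts the event $E$ (on the good event) into the event of Lemma~\ref{le:a_priori_int_fl}/assumption~\eqref{eq:a_priori_assumption} occurring for the field $\wt h_{0,2^{-J}}$ at scale $2^{-J}$, with the loss of a $\delta^{O(a)}$ factor. The absolute continuity of $\wt h_{0,2^{-J}}$ with respect to a GFF with the standard boundary values (Lemma~\ref{lem:rn_derivative}), extended to the coupled metric via Lemma~\ref{le:abs_cont_kernel}, then gives $\p[E]\le\delta^{(1+\innexp/4)\bestexp+o(1)}\cdot\delta^{-O(a)}$, which is $O(\delta^{\bestexp})$ once $a,\innexp$ are chosen small. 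This is the exact same scheme as in the proof of Lemma~\ref{le:a_priori_disc} and Lemma~\ref{le:a_priori_fl_general_boundary}; indeed the cleanest write-up is to simply invoke those proofs verbatim, noting only the additional input from Section~\ref{se:cle_sle_markov} identifying the law of the configuration near $\CL\cap\eta_i$.

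The main obstacle I anticipate is \emph{the detection/reflection bookkeeping}: because the exploration tree of $\Gamma$ is coupled with the restriction $h_V$ of $h$ to the component $V$ between $\eta_0$ and $\eta_2$ rather than with $h$ itself, the flow line $\eta^R$ bounding $\CL$ on the $\eta_0$-side must be taken as a flow line \emph{reflected off $\eta_0$ in the opposite direction} (this is precisely the subtlety flagged in the discussion before Lemma~\ref{lem:detect_intersection_left}), whereas on the $\eta_2$-side no reflection is needed. Consequently, for the $i=0$ case one should use the reflected-flow-line variant Lemma~\ref{le:a_priori_fl_reflected} (together with Lemma~\ref{le:good_scales_merging_refl} to find good scales) in place of Lemma~\ref{le:a_priori_fl_general_boundary}, while for $i=2$ one uses Lemma~\ref{le:a_priori_fl_general_boundary} directly. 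Once the correct variant is chosen the estimate goes through as above. A secondary, purely bookkeeping point is to check that the region $U$ in the present lemma — bounded between $\CL$ and a segment of $\eta_i$ from $x'$ to $y'$ with $(x,y)\in\intptsapproxbubble{U}{\epsilon}$ — matches the regions $U_{x',y'}$ appearing in~\eqref{eq:a_priori_assumption}; this holds because the outer boundary of $\CL$ between consecutive intersections with $\eta_i$ plays the role of the second flow line, and the constraint $U\subseteq B(0,\delta^{1+\innexp})$ places us in the regime where no ``$+o(1)$'' in the exponent is needed, exactly as in Lemma~\ref{le:a_priori_fl_general_boundary}.
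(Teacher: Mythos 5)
Your overall plan—reduce to the a priori bound~\eqref{eq:a_priori_assumption} via flow-line descriptions of $\partial U$—is the right one, and your treatment of $i=2$ matches the paper. The main deviation is in $i=0$, where the paper uses a cleaner trick that you miss: because $\CL$ intersects \emph{both} $\eta_0$ and $\eta_2$, its outer boundary near $\eta_0$ can be described by a genuine (non-reflected) flow line of $h$ simply by choosing a different coupling of the CLE with $h$. Specifically, one takes the CLE exploration path to be the counterflow line with angle $\theta_2+3\pi/2$ (rather than $0$); then the relevant segment of the outer boundary of $\CL$ is part of a flow line with angle $\theta_0-\angledouble$, giving angle difference $\angledouble$ with $\eta_0$, and Lemma~\ref{le:a_priori_fl_general_boundary} applies directly. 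This is exactly what makes the present lemma much simpler than Lemma~\ref{le:a_priori_loop_intersecting_one_side}: the hypothesis that $\CL$ intersects both sides eliminates the need for reflected flow lines entirely, and the whole proof collapses to two applications of Lemma~\ref{le:a_priori_fl_general_boundary}. Your proposal, by contrast, invokes Lemma~\ref{le:a_priori_fl_reflected} for $i=0$ and then re-runs the good-scales/merging machinery of Lemmas~\ref{le:good_scales_merging}, \ref{le:good_scales_merging_refl}, \ref{le:fill_ball}—but that machinery is already encapsulated inside the proofs of the cited a priori lemmas and does not need to be rerun here.

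Two smaller issues. First, your parenthetical claim that ``the case $i=2$ is symmetric by Lemma~\ref{le:sle_reversal}'' is not right: the two cases are genuinely asymmetric (the exploration tree is oriented), and indeed you contradict yourself a few sentences later by prescribing two different lemmas for the two cases. Second, the paper's closing remark that it is ``not assuming or using reflection symmetry of $\metapprox{\epsilon}{}{}{}$'' is a deliberate signal that the alternative-coupling trick is the intended route precisely because the metric axioms do not include a reflection symmetry; your route happens to sidestep this (the reversal in Lemma~\ref{le:reflected_fl_law} is a symmetry of the SLE laws, not of the metric), but it is worth being aware that the paper's phrasing was carefully chosen to avoid needing any reflected-flow-line input in this lemma.
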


\begin{proof}
This is a direct consequence of Lemma~\ref{le:a_priori_fl_general_boundary}. Indeed, consider the coupling with $h$. Since $\CL$ intersects both $\eta_0$, $\eta_2$, the segment of the right boundary of $\CL$ from $x$ to $y$ is part of the flow line with angle $-\pi/2$, so that (in the case $i=2$) $U$ is part of the region bounded between two flow lines with angle difference $\angledouble$. Similarly, the statement for $i=0$ follows by coupling the CLE with $h$ such that the CLE exploration path is the counterflow line with angle $\theta_2+3\pi/2$, and then the segment of the left boundary of $\CL$ is part of the flow line with angle $\theta_0-\angledouble$. (Note that we are not assuming or using reflection symmetry of $\metapprox{\epsilon}{}{}{}$.)
\end{proof}

\begin{figure}[ht]
\centering
\includegraphics[width=0.45\textwidth]{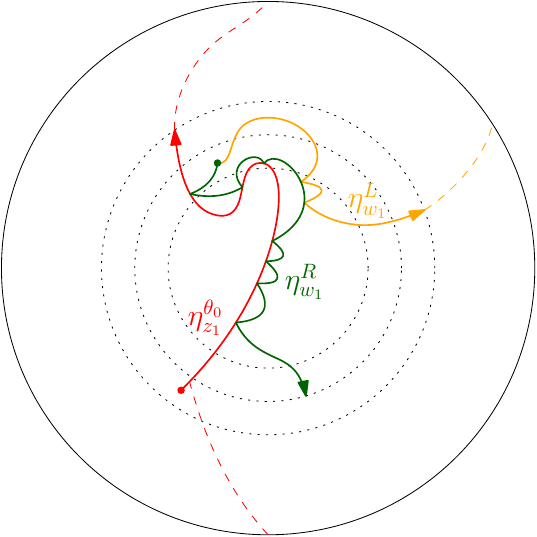}\hspace{0.05\textwidth}\includegraphics[width=0.45\textwidth]{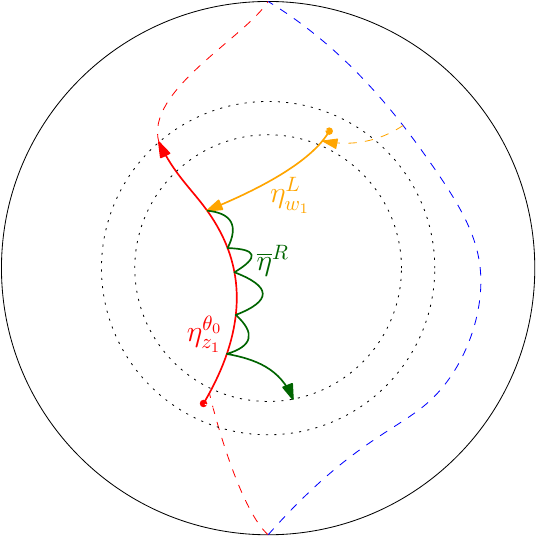}
\caption{Illustration of the setup of Lemma~\ref{lem:interior_intersection_left} (left) and~\ref{lem:interior_intersection_left_first} (right). Note that modulo $2\pi$, the flow lines $\eta_{w_1}^R$ (resp.\ $\ol{\eta}^R$) and $\eta_{z_1}^{\theta_0}$ have the same angle. Rather than merging into $\eta_{z_1}^{\theta_0}$ when hitting it, $\eta_{w_1}^R$ (resp.\ $\ol{\eta}^R$) reflects off $\eta_{z_1}^{\theta_0}$ in the opposite direction. The dashed curves occur on the events described in the last bullet points. In Lemma~\ref{lem:interior_intersection_left} (left), they ensure that $\ol{\eta}_0$ merges into $\eta_{w_1}^R$. In Lemma~\ref{lem:interior_intersection_left_first} (right), they ensure that we draw a loop that also intersects $\eta_2$.}
\label{fi:local_event_left}
\end{figure}

The following lemma deals with the excursions of loops from $\eta_0$ that are away from the first point where the loop intersects $\eta_0$.

\begin{lemma}
\label{lem:interior_intersection_left}
Fix $\innexp >0$, $p \in (0,1)$. Suppose $h$ is a GFF on $\delta\D$ with some bounded boundary values so that the angle $\theta_0$ flow line $\eta_0$ from $-i\delta$ to $i\delta$ and the angle $\theta_0$ flow line $\ol{\eta}_0$ from $i\delta$ to $-i\delta$ in the components of $\delta\D \setminus \eta_0$ to the right of $\eta_0$ (and reflected off $\eta_0$) are defined. Let $z_1 \in \partial B(0,\delta/2)$, $w_1 \in \partial B(0,3\delta/16)$. Let $\eta_{z_1}^{\theta_0}$ be the angle $\theta_0$ flow line of $h$ starting from $z_1$ and stopped upon exiting $B(0,3\delta/4)$. Let $\eta_{w_1}^R$ be the angle $-\pi/2$ flow line starting from $w_1$, reflecting off $\eta_{z_1}^{\theta_0}$ in the opposite direction, and stopped upon exiting $B(0,\delta/4)$. Let $\eta_{w_1}^L$ be the angle $+\pi/2$ flow line starting from $w_1$ and stopped upon exiting $B(0,3\delta/4)$. In case $\eta_{w_1}^R$ and $\eta_{z_1}^{\theta_0}$ intersect, sample the internal metric in the regions bounded between them. 
Let $E^p$ be the event that the following hold.
\begin{itemize}
\item $\eta_{w_1}^R$ intersects the right side of $\eta_{z_1}^{\theta_0}$ with an angle difference of $3\pi/2 - \theta_0 = 0$ (before reflecting off), and they do not intersect in any other way.
\item $\eta_{w_1}^L$ does not intersect $\eta_{z_1}^{\theta_0}$.
\item There exist $x',x,y,y' \in \eta_{z_1}^{\theta_0} \cap \eta_{w_1}^R$ such that $(x,y) \in \intptsapproxbubble{U_{x',y'}}{\epsilon}$ as in~\eqref{eq:intpts_bubble} and $U_{x',y'} \subseteq B(0,\delta^{1+\innexp})$ and $\metapproxacres{\epsilon}{U_{x',y'}}{x}{y}{\Gamma} \geq \median{\epsilon}+\epsilon^{-\epsexp}\ac{\epsilon}$.
\item The following event has conditional probability at least $p$ given $\eta_{z_1}^{\theta_0}$, $\eta_{w_1}^R$, $\eta_{w_1}^L$, $X_\delta^{\theta_0}$, $X_\delta^{+\pi/2}$, and the values of $h$ on these sets:
\begin{itemize}
\item The flow line $\eta_0$ merges into $\eta_{z_1}^{\theta_0}$ before entering $B(0,\delta/4)$.
\item The extension of $\eta_{w_1}^L$ does not intersect $\eta_0$ until it hits $\partial(\delta\D)$.
\end{itemize}
\end{itemize}
Then $\p[E^p] = O(\delta^{\bestexp})$ as $\delta \to 0$.
\end{lemma}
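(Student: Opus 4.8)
The plan is to complete the local configuration observed on $E^p$ into a global one to which the a priori bound of Lemma~\ref{le:a_priori_fl_reflected} (with $\eta_1^\delta = \eta_0$ and $\ol{\eta}_2^\delta = \ol{\eta}_0$) applies, at the cost of only the fixed multiplicative factor $p$. Throughout, $\delta$ is assumed small enough that $U_{x',y'} \subseteq B(0,\delta^{1+\innexp}) \Subset B(0,\delta/4)$.

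First I would fix the conditioning. Let $\CG$ be the $\sigma$-algebra generated by the sets $\eta_{z_1}^{\theta_0}$, $\eta_{w_1}^R$, $\eta_{w_1}^L$, $X_\delta^{\theta_0}$, $X_\delta^{+\pi/2}$, the values of $h$ on these sets together with the connected components they separate from $\partial(\delta\D)$, and the internal metrics in the regions bounded between $\eta_{w_1}^R$ and $\eta_{z_1}^{\theta_0}$. The first three items in the definition of $E^p$ are $\CG$-measurable. Let $F$ denote the event in the fourth item, namely that $\eta_0$ merges into $\eta_{z_1}^{\theta_0}$ before entering $B(0,\delta/4)$ and the extension of $\eta_{w_1}^L$ does not hit $\eta_0$ before reaching $\partial(\delta\D)$. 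Since $F$ is determined by the restriction of $h$ to the complement of the components separated from $\partial(\delta\D)$ by $X_\delta^{\theta_0}$, $X_\delta^{+\pi/2}$ and the listed flow lines, conditioning on the extra data in $\CG$ (the values of $h$ inside those components and the internal metrics there) does not change $\p[F\mid\cdot]$, so the fourth item gives $\p[F \mid \CG] \ge p$ on $E^p$.

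Next I would show that $E^p \cap F$ is contained in the event $\wt E$ of Lemma~\ref{le:a_priori_fl_reflected}. On $E^p \cap F$, since $\eta_0$ merges into $\eta_{z_1}^{\theta_0}$ at a point outside $B(0,\delta/4)$, it traces the arc of $\eta_{z_1}^{\theta_0}$ bounding $U_{x',y'}$; in particular $x',x,y,y' \in \eta_0$. Because $\eta_{w_1}^L$ together with its extension (the angle $+\pi/2$ flow line from $w_1$) avoids $\eta_{z_1}^{\theta_0}$ and $\eta_0$, and the angle $+\pi/2$ and angle $-\pi/2 \equiv \theta_0$ flow lines from $w_1$ are the left and right boundaries, seen from $w_1$, of the counterflow line of $h$ targeting $w_1$ (Section~\ref{se:fl_interaction}), the flow line interaction rules of \cite{ms2016ig1} identify $\eta_{w_1}^R$ near $U_{x',y'}$ — the angle $\theta_0$ flow line from $w_1$ reflected off $\eta_{z_1}^{\theta_0} = \eta_0$ — with the corresponding segment of the reflected flow line $\ol{\eta}_0$. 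Hence $x',x,y,y' \in \eta_0 \cap \ol{\eta}_0$, the set $U_{x',y'}$ is one of the regions bounded between $\eta_0$ and $\ol{\eta}_0$, and $(x,y) \in \intptsapproxbubble{U_{x',y'}}{\epsilon}$; moreover, by the Markovian property of the internal metrics (and Lemma~\ref{le:abs_cont_kernel}) the value $\metapproxacres{\epsilon}{U_{x',y'}}{x}{y}{\Gamma}$ obtained from the global sampling between $\eta_0$ and $\ol{\eta}_0$ agrees with the one appearing in $E^p$, so it is $\ge \median{\epsilon}+\epsilon^{-\epsexp}\ac{\epsilon}$. This is exactly $\wt E$.

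Combining the two steps, $p\,\p[E^p] \le \E[\one_{E^p}\,\p[F \mid \CG]] = \p[E^p \cap F] \le \p[\wt E] = O(\delta^{\bestexp})$ by Lemma~\ref{le:a_priori_fl_reflected}, and since $p$ is a fixed constant this yields $\p[E^p] = O(\delta^{\bestexp})$, uniformly in the choice of $z_1,w_1$. The main obstacle is the geometric identification in the third paragraph: checking that on $E^p \cap F$ the locally observed flow lines $\eta_{z_1}^{\theta_0}$ and $\eta_{w_1}^R$ indeed coincide near $U_{x',y'}$ with $\eta_0$ and its reflection $\ol{\eta}_0$. This requires a careful case analysis of the merging, crossing and reflection behaviour of the flow and counterflow lines of $h$, in the spirit of the detection argument of Lemma~\ref{lem:detect_intersection_left}; the remaining steps are soft conditioning and absolute-continuity arguments.
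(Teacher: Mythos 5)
Your argument is correct and follows the same route as the paper's proof: you bound $\p[E^p]$ by $\frac1p\p[F]$ via the merging event $F$, verify on $E^p\cap F$ that $\ol{\eta}_0$ merges into $\eta_{w_1}^R$ so that $U_{x',y'}$ is a region between $\eta_0$ and $\ol{\eta}_0$ with the same internal metric (by the Markovian property), and invoke Lemma~\ref{le:a_priori_fl_reflected}. One small note: the invocation of Lemma~\ref{le:abs_cont_kernel} in your third paragraph is not really needed — the coupling of the internal metrics is a consequence of the Markovian property alone (as the paper phrases it: sample the metric in the regions bounded between $\eta_{w_1}^R$ and $\eta_{z_1}^{\theta_0}$, then extend conditionally to the rest of the region between $\eta_0$ and $\ol{\eta}_0$) rather than an absolute-continuity estimate; otherwise your treatment, including the point that conditioning on the internal metrics does not alter the merging probability because of the Markovian property, matches the paper.
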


\begin{proof}
Let $F$ denote the event described in Lemma~\ref{le:a_priori_fl_reflected}. We claim that we can couple the internal metrics so that $\p[F \mid E^p] \ge p$, which then implies
\[
\p[E^p] \le \frac{1}{p}\p[F] = O(\delta^{\bestexp}) .
\]
Suppose that we are on the event $E^p$ and also the event described in the last bullet point occurs. Then we see that $\ol{\eta}_0$ necessarily merges into $\eta_{w_1}^R$ before $\eta_{w_1}^R$ intersects $\eta_0$. In particular, $U$ is part of the region bounded between $\eta_0$ and $\ol{\eta}_0$.

Finally, note that we can sample the metric in the region between $\eta_0$, $\ol{\eta}_0$ by first sampling the internal metric in the regions bounded between $\eta_{w_1}^R$, $\eta_{z_1}^{\theta_0}$ (which depend only on the values of $h$ in $B(0,3\delta/4)$ due to the Markovian property), and then sample the remainder of the metric according to its conditional law given the components that match the ones bounded between $\eta_{w_1}^R$, $\eta_{z_1}^{\theta_0}$. Then, if we are on the event $E^p$ and also the event described in the last bullet point occurs, then also $F$ occurs.
\end{proof}

The following lemma deals with the excursions of loops from $\eta_0$ that are near the first point where the loop intersects $\eta_0$.

\begin{lemma}
\label{lem:interior_intersection_left_first}
Fix $\innexp >0$, $p \in (0,1)$. Suppose that we have the setup of Section~\ref{se:bubble_setup} with $(D,x_0,y_0) = (\delta\D,-i\delta,i\delta)$. Let $z_1,w_1 \in \partial B(0,\delta/2)$. Let $\eta_{z_1}^{\theta_0}$ be the angle $\theta_0$ flow line starting from $z_1$, stopped upon exiting $B(0,3\delta/4)$. Let $\eta_{w_1}^L$ be the angle $+\pi/2$ flow line starting from $w_1$, stopped upon hitting $\eta_{z_1}^{\theta_0}$ or exiting $B(0,3\delta/4)$ (whichever occurs first). In case $\eta_{w_1}^L$ intersects $\eta_{z_1}^{\theta_0}$ in $B(0,\delta/4)$, let $\ol{w}$ denote the intersection point, and let $\ol{\eta}^R$ be the angle $-\pi/2$ flow line starting from $\ol{w}$, reflecting off $\eta_{z_1}^{\theta_0}$ in the opposite direction, and stopped upon exiting $B(0,\delta/4)$. Sample the internal metric in the regions bounded between $\ol{\eta}^R$ and $\eta_{z_1}^{\theta_0}$. 
Let $E^p$ be the event that the following hold.
\begin{itemize}
\item $\eta_{w_1}^L$ intersects the right side of $\eta_{z_1}^{\theta_0}$ with an angle difference of $\pi/2 - \theta_0$, and they do not intersect in any other way.
\item There exist $x',x,y,y' \in \eta_{z_1}^{\theta_0} \cap \ol{\eta}^R$ such that $(x,y) \in \intptsapproxbubble{U_{x',y'}}{\epsilon}$ as in~\eqref{eq:intpts_bubble} and $U_{x',y'} \subseteq B(0,\delta^{1+\innexp})$ and $\metapproxacres{\epsilon}{U_{x',y'}}{x}{y}{\Gamma} \geq \median{\epsilon}+\epsilon^{-\epsexp}\ac{\epsilon}$.
\item The following event has conditional probability at least $p$ given $\eta_{z_1}^{\theta_0}$, $\eta_{w_1}^L$, $\ol{\eta}^R$, $X_\delta^{\theta_0}$, $X_\delta^{+\pi/2}$, and the values of $h$ on these sets:
\begin{itemize}
\item The flow line $\eta_0$ merges into $\eta_{z_1}^{\theta_0}$ before entering $B(0,\delta/4)$.
\item There exists a flow line with angle $+\pi/2$ starting from a point on $\eta_2$ that merges into $\eta_{w_1}^L$ before intersecting $\eta_0$.
\end{itemize}
\end{itemize}
Then $\p[E^p] = O(\delta^{\bestexp})$ as $\delta \to 0$.
\end{lemma}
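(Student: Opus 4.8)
The plan is to reduce Lemma~\ref{lem:interior_intersection_left_first} to the a priori estimate in Lemma~\ref{le:a_priori_loop_intersecting_both_sides}, by the same ``couple and condition'' strategy that was just used for Lemma~\ref{lem:interior_intersection_left}. The key geometric observation is the following: on the event $E^p$, the flow line $\ol{\eta}^R$ (reflected off $\eta_{z_1}^{\theta_0}$) together with the segment of $\eta_{z_1}^{\theta_0}$ it bounds describes the outer boundary of a loop $\CL$ of $\Gamma$ that intersects $\eta_0$ near $\ol{w}$; the second sub-bullet in the last item — the existence of an angle $+\pi/2$ flow line starting from a point on $\eta_2$ that merges into $\eta_{w_1}^L$ before hitting $\eta_0$ — is exactly what forces this loop $\CL$ to also intersect $\eta_2$. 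Indeed, once $\eta_0$ merges into $\eta_{z_1}^{\theta_0}$, the branch $\eta'_\CL$ of the exploration tree of $\Gamma$ that traces $\CL$ starts at $i\delta$, reaches $\eta_0$, and the merging of the auxiliary flow line into $\eta_{w_1}^L$ (which becomes part of the left boundary of $\eta'_\CL$ seen appropriately) certifies that $\eta'_\CL$ also visits $\eta_2$, so $\CL \in \Gamma$ is a loop intersecting both $\eta_0$ and $\eta_2$ with the region $U_{x',y'}$ on one side satisfying the bad-distance condition.

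Concretely, I would argue as follows. First, let $F$ denote the event from Lemma~\ref{le:a_priori_loop_intersecting_both_sides} (with the same $\innexp$), i.e.\ the event that there is a loop $\CL \in \Gamma$ intersecting both $\eta_0,\eta_2$ and points $x',x,y,y'$ on $\eta_i \cap \CL$ (for $i=0$ or $i=2$) bounding a region $U \subseteq B(0,\delta^{1+\innexp})$ with $(x,y) \in \intptsapproxbubble{U}{\epsilon}$ and $\metapproxacres{\epsilon}{U}{x}{y}{\Gamma} \ge \median{\epsilon}+\epsilon^{-\epsexp}\ac{\epsilon}$. By Lemma~\ref{le:a_priori_loop_intersecting_both_sides}, $\p[F] = O(\delta^{\bestexp})$. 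The claim is then that we can couple the internal metric so that $\p[F \mid E^p] \ge p$, which gives $\p[E^p] \le p^{-1}\p[F] = O(\delta^{\bestexp})$. To see the coupling: on $E^p$, sample first the internal metrics in the regions bounded between $\ol{\eta}^R$ and $\eta_{z_1}^{\theta_0}$ (these depend only on the values of $h$ in $B(0,3\delta/4)$ by the Markovian property), and then sample the remainder of $\metapprox{\epsilon}{\cdot}{\cdot}{\Gamma}$ according to its conditional law given the components that match the ones between $\ol{\eta}^R$ and $\eta_{z_1}^{\theta_0}$. Conditionally on $\eta_{z_1}^{\theta_0}$, $\eta_{w_1}^L$, $\ol{\eta}^R$, $X_\delta^{\theta_0}$, $X_\delta^{+\pi/2}$ and the GFF values on these sets, the event in the last bullet point has probability $\ge p$; on that event, $\eta_0$ merges into $\eta_{z_1}^{\theta_0}$ and the auxiliary $+\pi/2$ flow line from $\eta_2$ merges into $\eta_{w_1}^L$, so by the flow-line interaction rules the loop $\CL$ whose outer boundary contains the relevant segments of $\ol{\eta}^R$ and $\eta_{z_1}^{\theta_0}$ is a genuine loop of $\Gamma$ that intersects both $\eta_0$ and $\eta_2$. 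Hence $F$ occurs, and since the conditional sampling of the metric does not bias the GFF event, $\p[F \mid E^p] \ge p$.

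The main point to get right — and where I expect the genuine work to be — is the topological/flow-line bookkeeping showing that the merging conditions in the last bullet really do produce a loop $\CL \in \Gamma$ intersecting \emph{both} sides, with the region $U_{x',y'}$ appearing as a sub-region of the region $U$ bounded between $\CL$ and a segment of $\eta_0$ (or $\eta_2$), so that Lemma~\ref{le:a_priori_loop_intersecting_both_sides} applies with the correct $U$. This requires tracking how the exploration-tree branch $\eta'_\CL$ reflects off $\eta_0$ and $\eta_2$ (recall the discussion before Lemma~\ref{lem:detect_intersection_left} about $\eta^R_z$ being a flow line of $h_V$, reflected off $\eta_0$), identifying $\ol{\eta}^R$ with the relevant piece of the right boundary of $\eta'_\CL$, and checking that the angle differences ($3\pi/2 - \theta_0 = 0$ between $\ol{\eta}^R$ and $\eta_{z_1}^{\theta_0}$, and $\pi/2 - \theta_0$ between $\eta_{w_1}^L$ and $\eta_{z_1}^{\theta_0}$) are consistent with the reflection picture in Figure~\ref{fig:detect_left}, Case~1. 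One also has to verify that $U_{x',y'} \subseteq B(0,\delta^{1+\innexp})$ and the $\intptsapproxbubble{\cdot}{\epsilon}$ condition transfer verbatim from $E^p$ to $F$ (they do, since the flow-line segments and points are the same). The remaining ingredient — that the conditional probability in the last bullet is not affected by further conditioning on the GFF values inside $X_\delta^{\theta_0}$ — was already noted in the discussion preceding the lemma, since $X_\delta^{\theta_0}$ separates $0$ from $\partial(\delta\D)$ and the merging events are measurable with respect to the exterior values; this lets us, later, locate good scales via the independence-across-scales lemmas (Lemma~\ref{le:good_scales_merging_refl} and its variants). All other steps are routine given the a priori bound.
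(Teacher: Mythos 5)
Your proposal is correct and follows essentially the same route as the paper: reduce to Lemma~\ref{le:a_priori_loop_intersecting_both_sides} via the coupling $\p[F \mid E^p] \ge p$, with the key geometric point being that the auxiliary angle $+\pi/2$ flow line from a point on $\eta_2$ merging into $\eta_{w_1}^L$ before hitting $\eta_0$ certifies that the branch of the exploration tree re-targeted at $\ol{w}$ traces a loop $\CL \in \Gamma$ intersecting both $\eta_0$ and $\eta_2$, with $\ol{\eta}^R$ as (the relevant piece of) its right boundary and $U_{x',y'}$ contained in the region between $\CL$ and $\eta_0$. The paper states this more compactly (it does not re-derive the coupling mechanism, simply referring back to Lemma~\ref{lem:interior_intersection_left}), but the content is the same.
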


\begin{proof}
Let $F$ denote the event described in Lemma~\ref{le:a_priori_loop_intersecting_both_sides}. Similarly as in the proof of Lemma~\ref{lem:interior_intersection_left}, we will couple the internal metrics so that $\p[F \mid E^p] \ge p$, which then implies
\[
\p[E^p] \le \frac{1}{p}\p[F] = O(\delta^{\bestexp}) .
\]
Suppose we are on the event $E^p$ and also the event described in the last bullet point occurs. The fact that an angle $+\pi/2$ flow line from a point on $\eta_2$ hits $\eta_1$ at $\ol{w}$ means that the counterflow line $\eta'$ intersects $\eta_2$ after visiting $\ol{w}$ and before intersecting $\eta_0$ again. Therefore, if we consider the branch $\eta'_{\ol{w}}$ of the exploration tree of $\Gamma$ that targets back towards $\ol{w}$ after its first visit, it traces a loop $\CL$ starting at $\ol{w}$ and intersecting both $\eta_0$, $\eta_2$. The right boundary of $\eta'_{\ol{w}}$ is given by (the extension of) $\ol{\eta}^R$. Hence, $U_{x',y'}$ is part of the region bounded between $\eta_0$ and $\CL$.
\end{proof}

\begin{figure}[ht]
\centering
\includegraphics[width=0.45\textwidth]{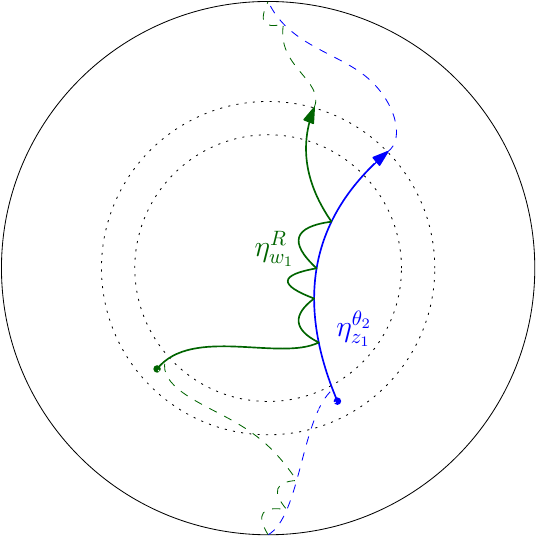}\hspace{0.05\textwidth}\includegraphics[width=0.45\textwidth]{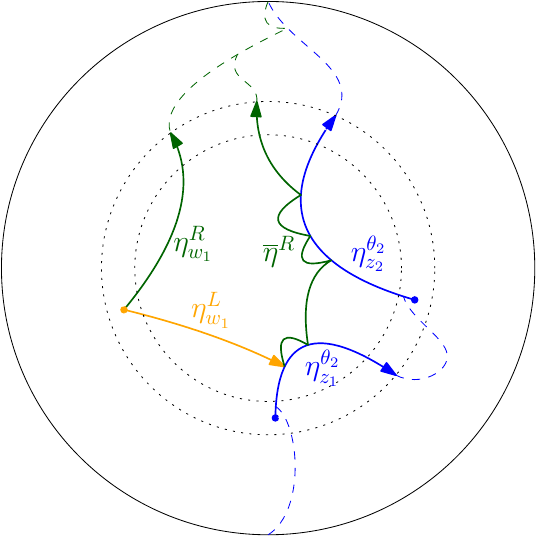}
\caption{Illustration of the setup of Lemma~\ref{lem:interior_intersection_right} (left) and~\ref{lem:interior_intersection_right_first} (right). The dashed curves occur on the events described in the last bullet points. In Lemma~\ref{lem:interior_intersection_right_first} (right), the behavior of the flow lines $\eta_{w_1}^R$, $\ol{\eta}^R$ ensure that we draw a loop that exits $B(0,3\delta/4)$.}
\end{figure}

We now consider the excursions of loops intersecting $\eta_2$. In order to bound the probability of the event defined in Lemma~\ref{lem:interior_intersection_right_first} below, we use another variant of Lemma~\ref{le:a_priori_loop_intersecting_both_sides} for intersections with a large loop that does not necessarily intersect both sides. The proof of it uses the Lemmas~\ref{lem:interior_intersection_left} and~\ref{lem:interior_intersection_left_first}.

\begin{lemma}\label{le:a_priori_loop_intersecting_one_side}
Fix $\innexp > 0$. Consider the setup in Section~\ref{se:bubble_setup} with $(D,x_0,y_0) = (\delta\D,-i\delta,i\delta)$. Let $E$ be the event that there exists a loop $\CL \in \Gamma$ with $\diam(\CL) \ge \delta^{1+\innexp/2}$ and for $i=0$ or $i=2$ there exist $x',x,y,y' \in \eta_i \cap \CL$ such that if $U$ denotes the region bounded between (the outer boundary of) $\CL$ and the segment of $\eta_i$ from $x'$ to $y'$, then $(x,y) \in \intptsapproxbubble{U}{\epsilon}$ as in~\eqref{eq:intpts_bubble} and $U \subseteq B(0,\delta^{1+\innexp})$ and $\metapproxacres{\epsilon}{U}{x}{y}{\Gamma} \ge \median{\epsilon}+\epsilon^{-\epsexp}\ac{\epsilon}$. Then $\p[E] = O(\delta^{\bestexp})$ as $\delta\searrow 0$.
\end{lemma}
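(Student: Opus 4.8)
\textbf{Proof proposal for Lemma~\ref{le:a_priori_loop_intersecting_one_side}.}

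The plan is to reduce the case of a loop $\CL$ that may intersect only one of $\eta_0,\eta_2$ to the situation already handled: either $\CL$ intersects both sides (which is Lemma~\ref{le:a_priori_loop_intersecting_both_sides}), or the local picture of the intersection of $\CL$ with $\eta_i$ is detected by a pair of flow lines with angle difference $\angledouble$ (which reduces to Lemma~\ref{le:a_priori_fl_general_boundary}, or to its reflected variant Lemma~\ref{le:a_priori_fl_reflected} when the detecting flow lines have to be reflected). The key point is that the event in the statement only constrains the configuration inside $U \subseteq B(0,\delta^{1+\innexp})$, which is a region of diameter at most $\delta^{1+\innexp}$ that is bounded between an arc of $\eta_i$ and an arc of the outer boundary of $\CL$. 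Since $\diam(\CL) \ge \delta^{1+\innexp/2} \gg \delta^{1+\innexp}$, the region $U$ corresponds to an ``excursion'' of $\CL$ away from the first/last point of $\CL$, and locally it looks exactly like one of the configurations in Lemmas~\ref{lem:detect_intersection_left} and~\ref{lem:detect_intersection_right}.

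The key steps, in order, would be the following. First I would pick a random scale $2^{-J}$ with $J \in \{\lceil \log_2(\delta^{-1-\innexp/4})\rceil,\ldots,\lfloor\log_2(\delta^{-1-\innexp/2})\rfloor\}$ uniformly at random and a center point $z \in 2^{-J}\Z^2 \cap B(0,\delta)$ uniformly at random, independent of everything else. Using Lemma~\ref{le:fill_ball} (or Lemma~\ref{lem:loop_fill_in_ball}) applied to the space-filling $\SLE_{\kappa'}$ whose boundaries are the relevant flow lines, one argues that on an event of probability $1-o^\infty(\delta)$, any excursion region $U \subseteq B(0,\delta^{1+\innexp})$ as in the statement has a ball of radius $\ge \delta^{(1+\innexp)(1+a)}$ disconnected around it, so with conditional probability at least $\delta^{O(a)}$ the sampled pair $(z,2^{-J})$ has $U \subseteq B(z,2^{-J}/4) \subseteq B(z,2^{-J}) \subseteq D$ and $2^{-J}$ slightly larger than $\diam(U)$. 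Second, on the event that $(z,2^{-J})$ is a good scale in the sense of the merging lemmas (Lemmas~\ref{le:good_scales_merging}, \ref{le:good_scales_merging_refl}, \ref{le:good_scales_merging_multiple}) — which by independence across scales happens for a positive fraction of scales with probability $1-O(e^{-bk})$ — I would invoke Lemmas~\ref{lem:detect_intersection_left} and~\ref{lem:detect_intersection_right} to detect the left and right boundaries of $U$ by GFF flow lines starting in the annulus $A(z,2^{-J}/4,3\cdot 2^{-J}/4)$ (possibly reflected off an angle-$\theta_0$ or angle-$\theta_2$ flow line, possibly involving an auxiliary angle $+\pi/2$ flow line as in the three cases of those lemmas). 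Third, on the good scale, the merging lemmas give a conditional probability bounded below that boundary-emanating flow lines merge into the detected interior flow lines, which means the detected configuration agrees with the one governed by the GFF $\wt h_{z,2^{-J}}$ whose boundary values (after translation and scaling) are either those of Section~\ref{se:bubble_setup} or of Section~\ref{se:intersections_setup}; hence we land in the setup of Lemma~\ref{le:a_priori_fl_general_boundary} (when $\CL$ intersects only $\eta_i$ and no reflection is needed), Lemma~\ref{le:a_priori_fl_reflected} (when a reflection off $\eta_0$ or $\eta_2$ is needed), or Lemma~\ref{le:a_priori_loop_intersecting_both_sides} (when $\CL$ intersects both sides). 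Fourth, combining the absolute-continuity bound of Lemma~\ref{le:abs_cont_kernel} (to transfer from $\wt h_{z,2^{-J}}$ on a good scale to a GFF with bounded boundary values, for the joint law of the field and the internal metric) with the $O((2^{-J})^{\bestexp}) = O((2^{-J})^{\bestexp})$ bound from the cited lemmas, and with the $\delta^{O(a)}$ cost of conditioning on the sampled scale, one concludes $\p[E] \le \delta^{(1+\innexp/4)\bestexp + o(1)} = O(\delta^{\bestexp})$ since $a$ is arbitrarily small and the exponent is larger than $\bestexp$. This is the same chaining used in the proof of Lemma~\ref{le:a_priori_fl_general_boundary} and in the proof of Lemma~\ref{le:a_priori_disc}.

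The main obstacle I expect is the bookkeeping in the detection step: one has to verify that \emph{every} excursion region $U$ of a loop $\CL$ with $\diam(\CL) \ge \delta^{1+\innexp/2}$ lying inside $B(0,\delta^{1+\innexp})$ is genuinely of the ``excursion'' type to which Lemmas~\ref{lem:detect_intersection_left} and~\ref{lem:detect_intersection_right} apply, i.e.\ that the arc of $\CL$ bounding $U$ does not wrap around the full loop and that the relevant segments of $\eta_i$ and of $\partial\CL$ are confined to the ball on the good scale. This requires the size separation $\diam(U) \le \delta^{1+\innexp} \ll \delta^{1+\innexp/2} \le \diam(\CL)$ (used to ensure the loop exits $B(z,2^{-J}/4)$, as in Cases 1, 2b, 3 of those lemmas), together with a regularity input ruling out that $\eta_i$ or $\partial\CL$ makes a long thin excursion out of $B(z,3\cdot2^{-J}/4)$ and back — which is exactly what the $(M,a)$-goodness of the relevant domains provides, and what Lemma~\ref{le:7arms}-type estimates quantify. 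A secondary subtlety is that the detecting flow lines in Lemmas~\ref{lem:detect_intersection_left} and~\ref{lem:detect_intersection_right} are flow lines of the \emph{restriction} $h_V$ of $h$ to the component between $\eta_0,\eta_2$ (hence reflected off those curves), so one must be careful to set up the merging events so that the detected interior flow lines, after the boundary flow lines merge into them, really do coincide with segments of the appropriate $\eta_i$ and $\partial\CL$; this is handled precisely by the three-case analysis in the proofs of those two lemmas combined with the reflected-merging event in Lemma~\ref{le:good_scales_merging_refl}.
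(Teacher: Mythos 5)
Your proposal is structurally close to the paper's, but it introduces a random center point $z$ that the paper does not use, and this creates a genuine gap in the exponent accounting that would make the proof fail for the a~priori value of $\bestexp$.

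In the statement of the lemma, the region $U$ is constrained to lie in $B(0,\delta^{1+\innexp})$: its location is fixed near the origin, not spread throughout $B(0,\delta)$. The paper exploits this by sampling only a random scale $J$ (with $2^{-J}$ between $\delta^{1+3\innexp/4}$ and $\delta^{1+\innexp/2}$, which automatically contains $U$ centered at $0$) and working with $\wt{h}_{0,2^{-J}}$. Your proposal additionally samples $z \in 2^{-J}\Z^2 \cap B(0,\delta)$ uniformly and demands $U \subseteq B(z,2^{-J}/4)$. Since $U$ sits near $0$, essentially only one lattice point qualifies, so the conditional probability of selecting the right $z$ is on the order of $(2^{-J}/\delta)^2 \in [\delta^{\innexp},\delta^{\innexp/2}]$, which is $\delta^{\Theta(\innexp)}$, not $\delta^{O(a)}$ as you claim. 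Your final bound then reads $\p[E] \lesssim \delta^{-\Theta(\innexp)} \cdot O((2^{-J})^{\bestexp+o(1)})$, and working out the exponent with $2^{-J} \approx \delta^{1+\innexp/4}$ gives $\p[E] \lesssim \delta^{\bestexp + \innexp\bestexp/4 - \innexp/2 + o(1)}$, which is \emph{not} $O(\delta^{\bestexp})$ unless $\bestexp \geq 2$. Since the a~priori exponent is $\bestexp = \ddouble$ and $\ddouble < 2$ throughout $\kappa' \in (4,8)$ (e.g.\ $\ddouble = 3/4$ at $\kappa'=6$), the argument would not even close the first iteration of the bootstrap. You appear to have imported the random-center device from Lemma~\ref{lem:detect_bad_large_loop}/Proposition~\ref{prop:intersect_loops_large}, where the excursion region really can be anywhere in $B(x_0,\delta)$ and the resulting $\delta^{8\excexp}$ cost is budgeted for in the final exponent; here it is both unnecessary and unaffordable. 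Dropping the random $z$ and centering at the origin throughout (as the paper does) removes the problem.

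Two secondary points. First, your range for $J$ is shifted towards larger $2^{-J}$ (up to $\delta^{1+\innexp/4}$), which can violate the scale-separation hypothesis $2r < \diam(\CL)$ needed for Lemma~\ref{lem:detect_intersection_left} (here $\diam(\CL)$ is only guaranteed $\geq \delta^{1+\innexp/2}$, and $2\cdot\delta^{1+\innexp/4} > \delta^{1+\innexp/2}$ for $\delta<1$); the paper keeps $2^{-J} < \delta^{1+\innexp/2}$ to avoid this. Second, for $i=2$ the paper dispenses with Lemma~\ref{lem:detect_intersection_right} entirely by a one-line symmetry reduction (recoupling the CLE with the counterflow line of angle $\theta_2+3\pi/2$), rather than running the detection argument on the $\eta_2$ side as you propose; your route could also work but requires handling the extra cases in Lemma~\ref{lem:detect_intersection_right}. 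The remainder of your outline — detecting $\partial U$ with reflected flow lines, using the good-scale merging events (Lemmas~\ref{le:good_scales_merging_refl} and the variant in Lemma~\ref{le:good_scales_merging_multiple}), and transferring to Lemmas~\ref{le:a_priori_fl_reflected} and~\ref{le:a_priori_loop_intersecting_both_sides} via absolute continuity — matches the paper's strategy, which actually packages those steps through the intermediate localized events of Lemmas~\ref{lem:interior_intersection_left} and~\ref{lem:interior_intersection_left_first}.
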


\begin{proof}
We consider the setup in Section~\ref{se:bubble_setup} with $(D,x_0,y_0) = (\delta\D,-i\delta,i\delta)$. It suffices to argue for the case $i=0$. The case $i=2$ follows by a symmetrical argument where we consider a CLE generated by the counterflow line with angle $\theta_2+3\pi/2$ instead of $0$.

Recall the notation introduced in Section~\ref{se:gff}. Let $E^1_j$ (resp.\ $E^2_j$) denote the event that the event described in Lemma~\ref{lem:interior_intersection_left} (resp.~\ref{lem:interior_intersection_left_first}) occurs for $\wt{h}_{0,2^{-J}}$ and some choice of $z_1,w_1$ (when the internal metrics are coupled so that they agree in $U$, which can be done in the analogous way as in the proof of Lemma~\ref{lem:interior_intersection_left}).

Let $J \in \{ \lceil\log_2(\delta^{-1-\innexp/2})\rceil,\ldots,\lfloor\log_2(\delta^{-1-3\innexp/4})\rfloor \}$ be sampled uniformly at random. We argue that for some $p>0$ the conditional probability given $E$ that either $E^1_J$ or $E^2_J$ occurs is at least $1/2$.

Recall that $U$ can be detected as described in Lemma~\ref{lem:detect_intersection_left} (and note that the assumption $\dist(U_e, \eta_{2}) \ge \delta^{1+\excexp}$ in the definition of $\CE^\delta_0(\CL)$ was not used in the proof of the lemma). Depending on which case of the lemma statement occurs, we find $z_1,w_1$ so that the first two items in either $E^1_j$ or $E^2_j$ are satisfied for each $j$ (i.e.\ except for the condition on the merging probabilities). It remains to argue that we can also realize (with high probability) the condition on the positive merging probabilities. We argue for the two cases separately.

Consider the case of $E^1_J$. Let $G^1_r$ be the event defined in Lemma~\ref{le:good_scales_merging_refl}, and let $G^1$ be the event that at least $3/4$ fraction of $G^1_{2^{-j}}$, $j \in \{ \lceil\log_2(\delta^{-1-\innexp/2})\rceil,\ldots,\lfloor\log_2(\delta^{-1-3\innexp/4})\rfloor \}$ occur. By Lemma~\ref{le:good_scales_merging_refl}, we can find for any $b>0$ some $M,p>0$ such that $\p[(G^1)^c] = O(\delta^b)$.

Suppose $J$ is sampled so that $G^1_{2^{-J}}$ occurs. We claim that in the former case of Lemma~\ref{lem:detect_intersection_left} the event $E^1_J$ occurs. Consider the last time $t_1$ before getting to $U$ when $\eta_0$ enters the regions that are separated from $\infty$ by $X^{\theta_0}_{2^{-J}} \cup X^{+\pi/2}_{2^{-J}}$. We can choose $z_1\in \partial B(0,2^{-J}/2)$ so that $\eta_{z_1}^{\theta_0}$ is the first strand of $X^{\theta_0}_{2^{-J}}$ that $\eta_0$ traces after $t_1$. Since there is an angle $\theta_0$ flow line from the outside of the annulus merging into $\eta_{z_1}^{\theta_0}$, the conditional probability given the values of $\wt{h}_{z,r}$ on $X^{\theta_0}_{2^{-J}} \cup X^{+\pi/2}_{2^{-J}}$ that an angle $\theta_0$ flow line from $-i2^{-J}$ merges into $\eta_{z_1}^{\theta_0}$ is almost surely positive. Similarly, we can pick $w_1 \in \partial B(0,(3/16)2^{-J})$ such that $\partial U \subseteq \eta^R_{w_1} \cup \eta^{\theta_0}_{z_1}$ and $\eta^L_{w_1}$ does not intersect $\eta_{z_1}^{\theta_0}$. Since the flow lines $\eta^L_{w_1}$, $\eta^R_{w_1}$, $\eta_{z_1}^{\theta_0}$ intersect with the correct height difference, the conditional probability that the extensions of $\eta^L_{w_1}$ and $\eta_{z_1}^{\theta_0}$ get to $\partial B(0,2^{-J})$ without intersecting is positive. Since we are on the event $G^1_{2^{-J}}$, the probability is at least $p$. This shows the claim.

On the event $G^1_{2^{-J}}$ there are at most $M$ different choices for $z_1,w_1$. Therefore taking a union bound and using absolute continuity (together with Lemma~\ref{le:abs_cont_kernel}) we get $\p[E^1_J \cap G^1_{2^{-J}}] = O(\delta^{(1+\innexp/2)\bestexp+o(1)})$ from Lemma~\ref{lem:interior_intersection_left}.

We now argue for the case of $E^2_J$. Let $\wt{\eta}_0,\wt{\eta}_2$ denote the corresponding flow lines of $\wt{h}_{0,2^{-J}}$. Let $G^2_r$ be the event that the following hold.
\begin{itemize}
\item The scale $(0,r)$ is $M$-good for $h$.
\item The number of points in $\partial A(0,r/4,3r/4) \cap X^{\theta_0}_{r}$ and $\partial A(0,r/4,3r/4) \cap X^{+\pi/2}_{r}$ is at most $M$.
\item For each pair of strands of $X^{\theta_0}_{r}$, $X^{+\pi/2}_{r}$, respectively, that end on $\partial B(0,r/4)$, the conditional probability given $X^{\theta_0}_{r}$, $X^{+\pi/2}_{r}$, and the values of $\wt{h}_{0,r}$ on these sets is either $0$ or at least $p$ that
\begin{itemize}
\item the flow line $\wt{\eta}_0$ merges into the former strand before entering $B(0,r/4)$,
\item there is an angle $+\pi/2$ flow line starting from a point on $\wt{\eta}_2$ that merges into the latter strand before intersecting $\wt{\eta}_0$ or entering $B(0,r/4)$.
\end{itemize}
\end{itemize}
Let $G^2$ be the event that at least $3/4$ fraction of $G^2_{2^{-j}}$, $j \in \{ \lceil\log_2(\delta^{-1-\innexp/2})\rceil,\ldots,\lfloor\log_2(\delta^{-1-3\innexp/4})\rfloor \}$ occur. By Lemma~\ref{lem:good_scales_for_event} (and using the fact that almost surely only finitely many strands end on different points of $\partial A(0,r/4,3r/4)$), we can find for any $b>0$ some $M,p>0$ such that $\p[(G^2)^c] = O(\delta^b)$.

Then, arguing in the same way as for $E^1_J$, we see that on $G^1_{2^{-J}}$, in the latter case of Lemma~\ref{lem:detect_intersection_left} also $E^2_J$ occurs. Again, taking a union bound and using absolute continuity we get $\p[E^2_J \cap G^2_{2^{-J}}] = O(\delta^{(1+\innexp/2)\bestexp+o(1)})$ from Lemma~\ref{lem:interior_intersection_left_first}.

Altogether, we conclude that $\p[E] \le \p[E \cap G^1 \cap G^2] + O(\delta^b) \lesssim \p[E^1_J \cap G^1_{2^{-J}}] + \p[E^2_J \cap G^2_{2^{-J}}] + O(\delta^b) = O(\delta^\bestexp)$.
\end{proof}

The following lemma deals with the excursions of loops from $\eta_2$ that are away from the first point where the loop intersects $\eta_2$.

\begin{lemma}
\label{lem:interior_intersection_right}
Fix $\innexp >0$, $p \in (0,1)$. Suppose that we have the setup of Section~\ref{se:bubble_setup} with $(D,x_0,y_0) = (\delta\D,-i\delta,i\delta)$. Let $z_1,w_1 \in \partial B(0,\delta/2)$. Let $\eta_{z_1}^{\theta_2}$ be the angle $\theta_2$ flow line starting from $z_1$, stopped upon exiting $B(0,3\delta/4)$. Let $\eta_{w_1}^R$ be the angle $-\pi/2$ flow line starting from $w_1$, stopped upon exiting $B(0,3\delta/4)$. In case $\eta_{w_1}^R$ and $\eta_{z_1}^{\theta_2}$ intersect, sample the internal metric in the regions bounded between them. 
Let $E^p$ be the event that the following hold.
\begin{itemize}
\item The right side of $\eta_{w_1}^R$ and the left side of $\eta_{z_1}^{\theta_2}$ intersect with an angle difference of $-\pi/2 - \theta_2$, and they do not intersect in any other way.
\item There exist $x',x,y,y' \in \eta_{z_1}^{\theta_2} \cap \eta_{w_1}^R$ such that $(x,y) \in \intptsapproxbubble{U_{x',y'}}{\epsilon}$ as in~\eqref{eq:intpts_bubble} and $U_{x',y'} \subseteq B(0,\delta^{1+\innexp})$ and $\metapproxacres{\epsilon}{U_{x',y'}}{x}{y}{\Gamma} \geq \median{\epsilon}+\epsilon^{-\epsexp}\ac{\epsilon}$.
\item The following event has conditional probability at least $p$ given $\eta_{z_1}^{\theta_2}$, $\eta_{w_1}^R$, $X_\delta^{\theta_2}$, $X_\delta^{-\pi/2}$, and the values of $h$ on these sets:
\begin{itemize}
\item The flow line $\eta_2$ merges into $\eta_{z_1}^{\theta_2}$ before entering $B(0,\delta/4)$.
\item The angle $-\pi/2$ flow line of $h$ from $-i\delta$ to $i\delta$ merges into $\eta_{w_1}^R$ before entering $B(0,\delta/4)$.
\end{itemize}
\end{itemize}
Then $\p[E^p] = O(\delta^{\bestexp})$ as $\delta \to 0$.
\end{lemma}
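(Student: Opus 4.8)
The plan is to run the argument of Lemma~\ref{lem:interior_intersection_left} almost verbatim, with the single change that the a priori input is now the \emph{non-reflected} estimate Lemma~\ref{le:a_priori_fl_general_boundary} in place of Lemma~\ref{le:a_priori_fl_reflected}. The reason for this swap is that here $\eta_{w_1}^R$ is a genuine angle $-\pi/2$ flow line of $h$ whose right side meets the left side of $\eta_{z_1}^{\theta_2}$ with angle difference $-\pi/2-\theta_2 = \angledouble$, which is exactly the configuration of the two intersecting flow lines in Section~\ref{se:intersections_setup} (where $\theta_1-\theta_2 = \angledouble$). Concretely, I would let $F$ be the event from Lemma~\ref{le:a_priori_fl_general_boundary}, applied with $h_\delta = h$ (the GFF of Section~\ref{se:bubble_setup} with $(D,x_0,y_0) = (\delta\D,-i\delta,i\delta)$, whose boundary data transported to $\partial(\delta\D)$ is bounded), with $\eta_1^\delta$ the angle $-\pi/2$ flow line of $h$ from $-i\delta$ to $i\delta$ (the right outer boundary of $\eta'$) and $\eta_2^\delta = \eta_2$; conditionally on these two flow lines, the loops of $\Gamma$ lying between them form a conditionally independent \clekp{} in each component, by the locality of the CLE--SLE coupling (as in Section~\ref{se:intersections_setup}). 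Lemma~\ref{le:a_priori_fl_general_boundary} then gives $\p[F] = O(\delta^{\bestexp})$.

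Next I would show that the internal metrics can be coupled so that $\p[F \mid E^p] \ge p$, which immediately yields $\p[E^p] \le p^{-1}\p[F] = O(\delta^{\bestexp})$. Suppose we are on $E^p$ and the event in the last bullet point of the definition of $E^p$ also occurs, so that $\eta_2$ merges into $\eta_{z_1}^{\theta_2}$ and the angle $-\pi/2$ flow line of $h$ merges into $\eta_{w_1}^R$, both before entering $B(0,\delta/4)$. Since $U_{x',y'} \subseteq B(0,\delta^{1+\innexp}) \subseteq B(0,\delta/4)$ for small $\delta$, the region $U_{x',y'}$ is then exactly one of the components $U_{x,y}$ bounded between a segment of $\eta_2$ and a segment of the angle $-\pi/2$ flow line of $h$, i.e.\ a region of the type appearing in $F$. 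As in the proof of Lemma~\ref{lem:interior_intersection_left}, one samples the metric in the region between $\eta_2$ and the angle $-\pi/2$ flow line by first sampling the internal metrics in the regions bounded between $\eta_{z_1}^{\theta_2}$ and $\eta_{w_1}^R$ (which depend only on the values of $h$ in $B(0,3\delta/4)$, by the Markovian property) and then the remainder according to its conditional law given the components matching those bounded between $\eta_{z_1}^{\theta_2}$ and $\eta_{w_1}^R$; on $E^p$ together with the merging event, $F$ then occurs. Finally, the merging event is described by the values of $h$ away from $B(0,\delta/4)$ and is therefore conditionally independent of the internal metrics given $\eta_{z_1}^{\theta_2}, \eta_{w_1}^R, X_\delta^{\theta_2}, X_\delta^{-\pi/2}$ and the values of $h$ on these sets, so on $E^p$ its conditional probability is at least $p$; hence $\p[F] \ge \p[E^p \cap \{\text{merging}\}] \ge p\,\p[E^p]$.

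There is no genuine obstacle here, since this statement is essentially a transcription of Lemma~\ref{lem:interior_intersection_left} adapted to the opposite orientation of $\eta_2$; the two points that need a word of justification --- both standard in the framework of this section --- are the identification of the loops of $\Gamma$ lying between $\eta_2$ and the right outer boundary of $\eta'$ as a conditionally independent \clekp{}, and the boundedness of the boundary data of $h$ transported to $\delta\D$, which is what makes Lemma~\ref{le:a_priori_fl_general_boundary} applicable here.
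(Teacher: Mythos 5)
Your proposal is correct and matches the paper's proof: the paper likewise cites Lemma~\ref{le:a_priori_fl_general_boundary} for the event $F$ and couples the internal metrics exactly as in Lemma~\ref{lem:interior_intersection_left} to get $\p[F \mid E^p] \ge p$ and hence $\p[E^p] \le p^{-1}\p[F] = O(\delta^{\bestexp})$. Your additional remarks about the angle difference $-\pi/2-\theta_2 = \angledouble$ and the swap from the reflected to the unreflected a priori estimate are exactly the right observations, just spelled out more fully than the paper does.
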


\begin{proof}
Let $F$ denote the event described in Lemma~\ref{le:a_priori_fl_general_boundary}. As in the proof of Lemma~\ref{lem:interior_intersection_left}, we can couple the internal metrics so that $\p[F \mid E^p] \ge p$, which then implies
\[
\p[E^p] \le \frac{1}{p}\p[F] = O(\delta^{\bestexp}) .
\]
\end{proof}

The following lemma deals with the excursions of loops from $\eta_2$ that are near the first point where the loop intersects $\eta_2$.

\begin{lemma}
\label{lem:interior_intersection_right_first}
Fix $\innexp >0$, $p \in (0,1)$. Suppose that we have the setup of Section~\ref{se:bubble_setup} with $(D,x_0,y_0) = (\delta\D,-i\delta,i\delta)$. Let $z_1,z_2,w_1 \in \partial B(0,\delta/2)$ (where $z_1=z_2$ is allowed). Let $\eta_{z_1}^{\theta_2}$ (resp.\ $\eta_{z_2}^{\theta_2}$) be the angle $\theta_2$ flow lines starting from $z_1$ (resp.\ $z_2$), stopped upon exiting $B(0,3\delta/4)$. Let $\eta_{w_1}^R$ be the angle $-\pi/2$ flow line starting from $w_1$, stopped upon exiting $B(0,3\delta/4)$. Let $\eta_{w_1}^L$ be the angle $+\pi/2$ flow line starting from $w_1$, stopped upon hitting $\eta_{z_1}^{\theta_2}$ or exiting $B(0,3\delta/4)$ (whichever occurs first). In case $\eta_{w_1}^L$ intersects $\eta_{z_1}^{\theta_2}$ in $B(0,\delta/4)$, let $\ol{w}$ denote the intersection point, and let $\ol{\eta}^R$ be the angle $-\pi/2$ flow line starting from $\ol{w}$, stopped upon exiting $B(0,\delta/4)$. In case $\ol{\eta}^R$ and $\eta_{z_2}^{\theta_2}$ intersect, sample the internal metric in the regions bounded between them. 
Let $E^p$ be the event that the following hold.
\begin{itemize}
\item $\eta_{w_1}^L$ intersects the left side of $\eta_{z_1}^{\theta_2}$ with an angle difference of $-3\pi/2 - \theta_2$, and they do not intersect in any other way.
\item The right side of $\ol{\eta}^R$ and the left side of $\eta_{z_2}^{\theta_2}$ intersect with an angle difference of $-\pi/2 - \theta_2$, and they do not intersect in any other way.
\item There exist $x',x,y,y' \in \eta_{z_2}^{\theta_2} \cap \ol{\eta}^R$ such that $(x,y) \in \intptsapproxbubble{U_{x',y'}}{\epsilon}$ as in~\eqref{eq:intpts_bubble} and $U_{x',y'} \subseteq B(0,\delta^{1+\innexp})$ and $\metapproxacres{\epsilon}{U_{x',y'}}{x}{y}{\Gamma} \geq \median{\epsilon}+\epsilon^{-\epsexp}\ac{\epsilon}$.
\item The following event has conditional probability at least $p$ given $\eta_{z_1}^{\theta_2}$, $\eta_{z_2}^{\theta_2}$, $\eta_{w_1}^R$, $\eta_{w_1}^L$, $\ol{\eta}^R$, $X_\delta^{\theta_2}$, $X_\delta^{-\pi/2}$, $X_\delta^{+\pi/2}$, and the values of $h$ on these sets:
\begin{itemize}
\item The flow line $\eta_2$ merges into $\eta_{z_1}^{\theta_2}$ before entering $B(0,\delta/4)$, and then (in case $\eta_{z_1}^{\theta_2}, \eta_{z_2}^{\theta_2}$ do not merge) into $\eta_{z_2}^{\theta_2}$ before entering $B(0,\delta/4)$ the next time.
\item The extension of $\ol{\eta}^R$ merges into the right side of the extension of $\eta_{w_1}^R$ after both exit $B(0,3\delta/4)$.
\end{itemize}
\end{itemize}
Then $\p[E^p] = O(\delta^{\bestexp})$ as $\delta \to 0$.
\end{lemma}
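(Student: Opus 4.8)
The plan is to follow the template of the proofs of Lemmas~\ref{lem:interior_intersection_left},~\ref{lem:interior_intersection_left_first}, and~\ref{lem:interior_intersection_right}: one exhibits an a priori estimate whose event is forced by $E^p$ on a conditionally positive-probability event. Here the relevant estimate is Lemma~\ref{le:a_priori_loop_intersecting_one_side} with $i=2$ and the parameter $\innexp$ of the present lemma; call its event $F$. Let $\CG$ be the $\sigma$-algebra generated by $\eta_{z_1}^{\theta_2}$, $\eta_{z_2}^{\theta_2}$, $\eta_{w_1}^R$, $\eta_{w_1}^L$, $\ol{\eta}^R$, $X_\delta^{\theta_2}$, $X_\delta^{-\pi/2}$, $X_\delta^{+\pi/2}$, the values of $h$ on these sets, and the internal metrics in the regions bounded between $\ol{\eta}^R$ and $\eta_{z_2}^{\theta_2}$. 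I would couple the internal metrics so that, on $E^p$ and on the event $A$ described in the last bullet point of the definition of $E^p$, the event $F$ occurs. Since $E^p$ is $\CG$-measurable and, on $E^p$, the conditional probability of $A$ given $\CG$ is at least $p$ --- the extra conditioning on the internal metrics between $\ol{\eta}^R$ and $\eta_{z_2}^{\theta_2}$ being harmless because, as recalled in Section~\ref{subsubsec:localized_event_probabilities}, $A$ depends only on the values of $h$ on $X_\delta^{\theta_2}, X_\delta^{-\pi/2}, X_\delta^{+\pi/2}$ and in their exterior component, which is conditionally independent of the interior --- this yields $p\,\p[E^p]\le\p[E^p\cap A]\le\p[F]=O(\delta^{\bestexp})$.

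To see that $E^p\cap A$ entails $F$: on this event, $\eta_2$ merges successively into $\eta_{z_1}^{\theta_2}$ and $\eta_{z_2}^{\theta_2}$ before entering $B(0,\delta/4)$, so these are consecutive segments of $\eta_2$ in the annulus, and the angle $+\pi/2$ flow line $\eta_{w_1}^L$ hits the left side of $\eta_{z_1}^{\theta_2}\subseteq\eta_2$ at $\ol{w}$. By the flow-line/counterflow-line duality and the interaction rules of Section~\ref{se:fl_interaction}, this places us in Case~3 of the proof of Lemma~\ref{lem:detect_intersection_right}: the branch $\eta'_{\ol{w}}$ of the exploration tree of $\Gamma$ that returns to $\ol{w}$ after its first visit traces a loop $\CL$ which intersects $\eta_2$ and whose outer boundary is traced, along the relevant excursion, by the extension of $\ol{\eta}^R$, with $z_1$ and $z_2$ recording the position of $\eta_2$ at, respectively, the first intersection of $\CL$ with $\eta_2$ and the detected excursion. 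The requirement that the extension of $\ol{\eta}^R$ merge into the right side of the extension of $\eta_{w_1}^R$ only after both exit $B(0,3\delta/4)$ forces $\CL$ to reach $\partial B(0,3\delta/4)$ while $\ol{w}\in B(0,\delta/4)$, so $\diam(\CL)\ge\delta/2\ge\delta^{1+\innexp/2}$ for $\delta$ small. Finally, the third bullet of $E^p$ supplies $x',x,y,y'\in\eta_{z_2}^{\theta_2}\cap\ol{\eta}^R$ with $(x,y)\in\intptsapproxbubble{U_{x',y'}}{\epsilon}$, $U_{x',y'}\subseteq B(0,\delta^{1+\innexp})$, and $\metapproxacres{\epsilon}{U_{x',y'}}{x}{y}{\Gamma}\ge\median{\epsilon}+\epsilon^{-\epsexp}\ac{\epsilon}$, where $U_{x',y'}$ is part of the region bounded between $\eta_2$ and the outer boundary of $\CL$ --- exactly the configuration witnessing $F$.

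For the coupling of the internal metrics I would argue as in the proof of Lemma~\ref{lem:interior_intersection_left}: by the Markovian property one may first sample the internal metrics in the components bounded between $\ol{\eta}^R$ and $\eta_{z_2}^{\theta_2}$, which depend only on the restriction of $h$ to $B(0,3\delta/4)$, and then sample the remaining internal metrics from their conditional law given the components that coincide with those already sampled. On $E^p\cap A$ the two procedures produce the same value of $\metapproxacres{\epsilon}{U_{x',y'}}{x}{y}{\Gamma}$, so $F$ indeed occurs, which is the containment used above. Note that, unlike in the proofs of Lemmas~\ref{lem:interior_intersection_left}--\ref{lem:interior_intersection_right}, no absolute-continuity input is needed here, since $F$ is stated directly in terms of $\Gamma$.

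The main obstacle is the geometric bookkeeping in the second step. One must track the reflection conventions carefully --- they differ from the $\eta_0$-excursion lemmas because $\eta_2$ carries the opposite orientation, so the angle $-\pi/2$ flow line crosses $\eta_2$ transversally rather than merging with it and $\ol{\eta}^R$ is therefore not reflected --- and verify that the prescribed angle differences $-3\pi/2-\theta_2$ (for $\eta_{w_1}^L\cap\eta_{z_1}^{\theta_2}$) and $-\pi/2-\theta_2$ (for $\ol{\eta}^R\cap\eta_{z_2}^{\theta_2}$) are, modulo $2\pi$, the correct transversal-intersection heights when $\theta_2=-\pi/2-\angledouble$. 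One must also confirm, exactly as in Case~3 of Lemma~\ref{lem:detect_intersection_right}, that the loop produced by branching the exploration tree towards $\ol{w}$ is the loop carrying the excursion detected by $\ol{\eta}^R$ and that it intersects $\eta_2$ but possibly not $\eta_0$ --- the reason the one-sided estimate Lemma~\ref{le:a_priori_loop_intersecting_one_side}, rather than Lemma~\ref{le:a_priori_loop_intersecting_both_sides}, is the right input. The remaining details mirror the earlier lemmas in this subsection.
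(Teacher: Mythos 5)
Your proposal matches the paper's proof: it identifies Lemma~\ref{le:a_priori_loop_intersecting_one_side} as the key a priori input, couples the internal metrics exactly as in Lemma~\ref{lem:interior_intersection_left}, and derives $\p[E^p]\le p^{-1}\p[F]$ from the last bullet of $E^p$ via the same exploration-tree branching argument (the branch $\eta'_{\ol w}$ tracing a loop $\CL$ whose outer boundary contains the extension of $\ol\eta^R$ and which exits $B(0,3\delta/4)$ because $\ol\eta^R$ and $\eta^R_{w_1}$ only merge outside). The geometric bookkeeping you flag as the main obstacle, and your remark that the one-sided estimate is the correct input here, are both exactly right and align with the paper.
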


\begin{proof}
Let $F$ denote the event described in Lemma~\ref{le:a_priori_loop_intersecting_one_side}. Similarly as in the proof of Lemma~\ref{lem:interior_intersection_left}, we will couple the internal metrics so that $\p[F \mid E^p] \ge p$, which then implies
\[
\p[E^p] \le \frac{1}{p}\p[F] = O(\delta^{\bestexp}) .
\]
Suppose we are on the event $E^p$ and also the event described in the last bullet point occurs. Let $\eta'_{\ol{w}}$ be the branch of the exploration tree of $\Gamma$ that targets back towards $\ol{w}$ after its first visit. It traces a loop $\CL$ clockwise starting at $\ol{w}$. The segment of (the extension of) $\ol{\eta}^R$ until it merges into the right side of the extension of $\eta_{w_1}^R$ is visited by $\eta'_{\ol{w}}$ after $\ol{w}$, hence is part of the outer boundary of $\CL$. In particular, $U_{x',y'}$ is part of the region bounded between $\eta_2$ and $\CL$. Since $\ol{\eta}^R$, $\eta_{w_1}^R$ merge only outside of $B(0,3\delta/4)$, we see that $\CL$ exits $B(0,3\delta/4)$.
\end{proof}

\subsubsection{Approximate independence of bad large loops hitting both sides}
\label{subsec:approx_both_sides}

We are now going to show that if we have a large loop $\CL$ of $\wh{\Gamma}_0^\delta$ (resp.\ $\wh{\Gamma}_2^\delta$) as defined before Proposition~\ref{prop:intersect_loops_large}, then we can detect the intersection with overwhelming probability using the events described in Section~\ref{subsubsec:detect_loop}. Recall that we are only considering loops that are contained in $B(x_0,\delta)$.

Throughout the section we will denote
\[
D_{\excexp} = \{ z \in D : \dist(z,\partial D) > \delta^{1+\excexp} \} \cap B(x_0,\delta) 
\]
and
\[ \CD_r = D_{\excexp} \cap r\Z^2 \quad\text{for } 0<r<\delta^{1+\excexp}. \]

Recall the notation introduced in Section~\ref{se:gff}. Here we let the fields $\wt{h}_{z,r}$ on $B(z,r)$ have boundary values as the fields described in Section~\ref{subsubsec:localized_event_probabilities}.

Let $E^{0;p}_{z,r}$ be the event that either the event from Lemma~\ref{lem:interior_intersection_left} or Lemma~\ref{lem:interior_intersection_left_first} occurs for the field $\wt{h}_{z,r}$, some $z_1,w_1$, and the internal metrics in the regions involved. Let $E^{2;p}_{z,r}$ be the event that either the event from Lemma~\ref{lem:interior_intersection_right} or Lemma~\ref{lem:interior_intersection_right_first} occurs for the field $\wt{h}_{z,r}$, some $z_1,z_2,w_1$, and the internal metrics in the regions involved. Recall also that the events $E^{i;p}_{z,r}$ depend only on the flow lines inside $B(z,3r/4)$, and hence are the same for $h_{z,r}$ and $\wt{h}_{z,r}$. Moreover, using the Markovian property, we can couple the internal metrics in the involved regions $U_{x',y'}$ so that they agree with the internal metrics for $\Gamma$ in case the regions agree with corresponding regions for $\Gamma$.

\begin{lemma}
\label{lem:detect_bad_large_loop}
For any $b>0$ there exist $M>0$, $p > 0$ such that the following is true. There exists an event $F$ for $h$ (equivalently $\Gamma$) with $\p[F^c] = O(\delta^b)$ and the following holds. 

Sample $z \in \CD_{\delta^{1+4\excexp}}$ and $J \in \{ \lceil\log_2(\delta^{-1-\excexp})\rceil,\ldots,\lfloor\log_2(\delta^{-1-2\excexp})\rfloor \}$ uniformly at random, and let $R = 2^{-J}$. Then on the event $F$, for every $e \in \CE^{\delta,\bad}_i(\CL)$, $\CL \in \wh{\Gamma}_i^\delta$, $i=0,2$, the following occurs with conditional probability at least $\delta^{8\excexp}$:
\begin{itemize}
\item The scale $(z,R)$ is $M$-good for $h$.
\item The number of points in $\partial A(z,R/4,3R/4) \cap X^{\theta_0}_{z,R}$, $\partial A(z,R/8,R/4) \cap X^{\theta_0;*}_{z,R}$, $\partial A(z,R/4,3R/4) \cap X^{\theta_2}_{z,R}$, $\partial A(z,R/4,3R/4) \cap X^{-\pi/2}_{z,R}$, $\partial A(z,R/4,3R/4) \cap X^{+\pi/2}_{z,R}$ is at most $M$.
\item $U_e \subseteq B(z,R^{1+\excexp})$ and $E^{i;p}_{z,R}$ occurs with $U_e$ for some $z_1,z_2,w_1$, supposing the internal metrics in $U_e$ are coupled so that they agree with the ones for $\Gamma$.
\end{itemize}
\end{lemma}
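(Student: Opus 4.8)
The plan is to combine three ingredients: a counting argument that places a randomly sampled base point near a given bad excursion; the detection Lemmas~\ref{lem:detect_intersection_left} and~\ref{lem:detect_intersection_right}, which exhibit the flow lines carrying $\partial U_e$; and a good event $F$ for $h$, built from independence across scales (Lemma~\ref{lem:gff_independence_across_scales}) together with the merging Lemmas~\ref{le:good_scales_merging},~\ref{le:good_scales_merging_refl},~\ref{le:good_scales_merging_multiple},~\ref{lem:good_scales_for_event}, on which at a positive fraction of the sampled scales the localized events $E^{i;p}_{z,R}$ from Lemmas~\ref{lem:interior_intersection_left}--\ref{lem:interior_intersection_right_first} can be completed. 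The backbone of the argument reuses, with more bookkeeping, the case analysis already performed in the proof of Lemma~\ref{le:a_priori_loop_intersecting_one_side}.

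First I would set up the counting. Fix $\CL\in\wh{\Gamma}_i^\delta$ and $e\in\CE^{\delta,\bad}_i(\CL)$. By definition $\diam(U_e)\le\delta^{1+4\excexp}$ and $\dist(U_e,\eta_{2-i}\cup\partial D)\ge\delta^{1+\excexp}$, while every $R=2^{-J}$ with $J$ in the sampling range satisfies $\delta^{1+2\excexp}\lesssim R\le\delta^{1+\excexp}$, so $R^{1+\excexp}\ge\delta^{(1+2\excexp)(1+\excexp)}$, which for $\excexp$ small lies strictly between the lattice spacing $\delta^{1+4\excexp}$ and $\delta^{1+3\excexp}$. Choosing $\rho$ with $\delta^{1+4\excexp}<\rho<\min_R R^{1+\excexp}$ (for instance $\rho=\delta^{1+(7/2)\excexp}$), call $z\in\CD_{\delta^{1+4\excexp}}$ \emph{$e$-good} if $\dist(z,U_e)\le\rho-\diam(U_e)$, so that $U_e\subseteq B(z,R^{1+\excexp})$ for all such $R$ and $B(z,R)\subseteq D$ (the latter from $\dist(U_e,\partial D)\ge\delta^{1+\excexp}\ge 2R$ after discarding the top two scales from the range, which changes the density of good scales only negligibly). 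Then there are $\gtrsim(\rho/\delta^{1+4\excexp})^2=\delta^{-c_0\excexp}$ many $e$-good lattice points for some fixed $c_0\in(0,8)$, while $\#\CD_{\delta^{1+4\excexp}}=O(\delta^{-8\excexp})$, so a uniformly sampled $z$ is $e$-good with probability $\gtrsim\delta^{(8-c_0)\excexp}$, which exceeds $\delta^{8\excexp}/q$ for $\delta$ small, $q>0$ being the fraction of good scales produced below.

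Second, on the event that $z$ is $e$-good, Lemma~\ref{lem:detect_intersection_left} (for $i=0$) or Lemma~\ref{lem:detect_intersection_right} (for $i=2$) applies with this $z$ and any $R=2^{-J}$ in the truncated range, producing points $z_1$ (and $z_2$) on $\partial B(z,R/2)$ and $w_1$ on $\partial B(z,R/4)$ with $\partial U_e\subseteq\eta^{\theta_i}_{z_1}\cup\eta^R_{w_1}$ or $\partial U_e\subseteq\eta^{\theta_i}_{z_1}\cup\ol{\eta}^R$, the intersections having the angle differences recorded there. Since $e$ is a \emph{bad} excursion, $\metapproxacres{\epsilon}{U_e}{x_e}{y_e}{\Gamma}\ge\median{\epsilon}+\epsilon^{-\epsexp}\ac{\epsilon}$, and coupling the internal metric in $U_e$ with that of $\Gamma$ through the Markovian property (as in the proofs of Lemmas~\ref{lem:interior_intersection_left}--\ref{lem:interior_intersection_right_first}), the flow-line-structure bullet and the internal-metric bullet of the corresponding localized event hold automatically with $U_{x',y'}=U_e$; only the last bullet of that lemma remains, namely a lower bound $p$ on a merging conditional probability. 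A routine point here is that the flow line started at $z_1\in\partial B(z,R/2)$ is a sub-path of a flow line belonging to $X^{\theta_i}_{z,R}$, by density of the rational starting points and continuity of flow lines in their starting point, and likewise for the reflected flow lines and $X^{\theta_i;*}_{z,R}$.

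Finally I would take $F$ to be the intersection over all $z\in\CD_{\delta^{1+4\excexp}}$ of the events that at least a $q$-fraction of the scales $J$ in the sampling range are \emph{good at $z$}, meaning: $(z,2^{-J})$ is $M$-good for $h$; each listed cardinality $\#(\partial A\cap X^\theta_{z,2^{-J}})$ and $\#(\partial A\cap X^{\theta;*}_{z,2^{-J}})$ is at most $M$; and the merging events needed to complete $E^{0;p}_{z,2^{-J}}$ and $E^{2;p}_{z,2^{-J}}$ all occur --- these being, exactly as in the proof of Lemma~\ref{le:a_priori_loop_intersecting_one_side}, the event of Lemma~\ref{le:good_scales_merging_refl} (for Lemma~\ref{lem:interior_intersection_left}), the bespoke event built from Lemma~\ref{lem:good_scales_for_event} (for Lemma~\ref{lem:interior_intersection_left_first}), and the symmetric events built from Lemmas~\ref{le:good_scales_merging} and~\ref{le:good_scales_merging_multiple} (for Lemmas~\ref{lem:interior_intersection_right} and~\ref{lem:interior_intersection_right_first}). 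Because the number of scales in the range is $k\asymp\excexp\log_2(\delta^{-1})$, each ``constant fraction of good scales'' event has complement probability $O(e^{-bk})=O(\delta^{b'})$ with $b'$ as large as we wish (choosing $M,p$ accordingly), so a union bound over the $O(\delta^{-8\excexp})$ lattice points gives $\p[F^c]=O(\delta^{b'-8\excexp})=O(\delta^b)$ after taking $b'$ large. On $F$, for an $e$-good $z$ at least a $q$-fraction of the $J$ are good at $z$; for such $(z,J)$ the case analysis from the proof of Lemma~\ref{le:a_priori_loop_intersecting_one_side} shows that the detection-lemma flow lines make the relevant merging conditional probability at least $p$: it is nonzero because the actual flow line $\eta_i$ already traces $\eta^{\theta_i}_{z_1}$, so a boundary-emanating flow line can merge into it with positive probability, and the good-scale event upgrades positivity to the bound $p$ (the transfer between $\wt{h}_{z,R}$ and the reference GFF on $M$-good scales being supplied by Lemmas~\ref{lem:rn_derivative} and~\ref{le:abs_cont_kernel}). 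Hence all three bullets of the statement hold for $(z,J)$, and the conditional probability over $(z,J)$ is at least $q\cdot\p[z\text{ is }e\text{-good}]\gtrsim q\,\delta^{(8-c_0)\excexp}\ge\delta^{8\excexp}$ for $\delta$ small, as required. The hard part is this last matching step: deciding which case of the detection lemmas occurs, pairing the resulting flow lines with the correct localized event and with the correct merging event in $F$, and verifying the corresponding conditional probabilities are genuinely nonzero; this is essentially a repetition of the argument inside the proof of Lemma~\ref{le:a_priori_loop_intersecting_one_side}, the only new wrinkles being the requirement that the starting points $z_1,z_2,w_1$ lie on the spheres defining the sets $X^\theta_{z,R}$, and checking that edge effects near $\partial D$ and near $\partial B(x_0,\delta)$ change the count of $e$-good lattice points by at most a bounded factor.
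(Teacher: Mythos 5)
Your proof is correct and follows essentially the same route as the paper: the same $\delta^{8\excexp}$ lattice-counting step to place $z$ near a given bad excursion, the same reuse of Lemmas~\ref{lem:detect_intersection_left} and~\ref{lem:detect_intersection_right} to exhibit the flow lines carrying $\partial U_e$, and the same construction of the event $F$ from ``good scales'' events built on Lemmas~\ref{le:good_scales_merging},~\ref{le:good_scales_merging_refl},~\ref{le:good_scales_merging_multiple},~\ref{lem:good_scales_for_event} together with a union bound over $\CD_{\delta^{1+4\excexp}}$, exactly as in the proof of Lemma~\ref{le:a_priori_loop_intersecting_one_side}. Your slightly more careful count of $e$-good lattice points (via $\rho=\delta^{1+(7/2)\excexp}$) is a minor improvement that cleanly absorbs the constant factor; the one thing you left implicit, and the paper writes out, is the explicit form of the merging events for the two cases of Lemma~\ref{lem:detect_intersection_right}, but you correctly identified the ingredients.
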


\begin{proof}[Proof of Lemma~\ref{lem:detect_bad_large_loop}]
Recall that $\diam(U_e) \le \delta^{1+4\excexp}$ by our definition of excursions. Hence, with probability comparable to $\delta^{8\excexp}$ the point $z$ is picked such that $U_e \subseteq B(z,2\delta^{1+4\excexp}) \subseteq B(z,R^{1+\excexp})$.

In the following, we denote by $\wt{\eta}_i$, $i=0,2$, the angle $\theta_i$ flow line of $\wt{h}_{z,r}$ from $z-ir$ to $z+ir$. 
As in Lemma~\ref{lem:detect_intersection_left} and~\ref{lem:detect_intersection_right}, we distinguish between several cases.

The case for $i=0$ is exactly the same as in the proof of Lemma~\ref{le:a_priori_loop_intersecting_one_side}. Define the events $G_{z,r} = G^1_{z,r} \cap G^2_{z,r}$ in the same way but on $B(z,r)$. Then taking a union bound over $z \in \CD_{\delta^{1+4\excexp}}$ we see that off an event with probability $O(\delta^b)$ for $h$, for every $z \in \CD_{\delta^{1+4\excexp}}$ the conditional probability of picking $R$ such that $G_{z,R}$ occurs is at least $3/4$. Moreover, we have seen there that when $U_e \subseteq B(z,R^{1+\excexp})$ and $G_{z,R}$ occurs, then also $E^{0;p}_{z,R}$ occurs.

We now turn to the case $i=2$. This is done similarly by applying Lemma~\ref{lem:detect_intersection_right} and defining similar annulus events $G_{z,r}$. We distinguish between the two cases in the statement of Lemma~\ref{lem:detect_intersection_right}.

For the former case in Lemma~\ref{lem:detect_intersection_right}, consider the event $G_{z,r}$ such that
\begin{itemize}
\item The scale $(z,r)$ is $M$-good for $h$.
\item The number of points in $\partial A(z,r/4,3r/4) \cap X^{\theta_2}_{z,r}$ and $\partial A(z,r/4,3r/4) \cap X^{-\pi/2}_{z,r}$ is at most $M$.
\item For every pair of strands of $X^{\theta_2}_{z,r}$, $X^{-\pi/2}_{z,r}$, respectively, that end on $\partial B(z,r/4)$, the conditional probability given $X^{\theta_2}_{z,r}$, $X^{-\pi/2}_{z,r}$, and the values of $\wt{h}_{z,r}$ on these sets that $\wt{\eta}_2$ merges into the former strand and the angle $-\pi/2$ flow line from $z-ir$ merges into the latter strand before entering $B(z,r/4)$ is either $0$ or at least $p$.
\end{itemize}
Let $G$ be the event that for each $z \in \CD_{\delta^{1+4\excexp}}$, at least $3/4$ fraction of the $G_{z,2^{-j}}$ with $j \in \{ \lceil\log_2(\delta^{-1-\excexp})\rceil,\ldots,\lfloor\log_2(\delta^{-1-2\excexp})\rfloor \}$ occur. As the number of strands that end on different points of $\partial A(0,r/4,3r/4)$ is almost surely finite, we can apply Lemma~\ref{lem:good_scales_for_event} and a union bound over $z \in \CD_{\delta^{1+4\excexp}}$ to find $M,p>0$ such that $\p[G^c] = O(\delta^b)$.

Now suppose that we have the former case in Lemma~\ref{lem:detect_intersection_right}, and are on the events $U_e \subseteq B(z,R^{1+\excexp})$ and $G_{z,R}$. Since the flow lines $\eta^R_{w_1}$, $\eta^{\theta_2}_{z_1}$ intersect with the correct angle difference and there are flow lines from the outside of the annulus that merge into $\eta^{\theta_2}_{z_1}$ resp.\ $\eta^R_{w_1}$, the conditional probability of the merging event described in the last item of Lemma~\ref{lem:interior_intersection_right} is almost surely positive. Since we are on the event $G_{z,R}$, the conditional probability is at least $p$. This proves the claim this case.

For the latter case in Lemma~\ref{lem:detect_intersection_right}, consider the event $G_{z,r}$ such that
\begin{itemize}
\item The scale $(z,r)$ is $M$-good for $h$.
\item The number of points in $\partial A(z,r/4,3r/4) \cap X^{\theta_2}_{z,r}$ and $\partial A(z,r/4,3r/4) \cap X^{+\pi/2}_{z,r}$ is at most $M$.
\item For every triple of strands $\eta^{\theta_2}_{z_1}, \eta^{\theta_2}_{\wt{z}_1}, \eta^{\theta_2}_{z_2}$ of $X^{\theta_2}_{z,r}$ that end, respectively, on $\partial B(z,r/4)$, $\partial B(z,3r/4)$, $\partial B(z,r/4)$, and for each pair of strands $\eta^{-\pi/2}_{w_1}, \eta^{-\pi/2}_{w_2}$ of $X^{-\pi/2}_{z,r}$ that end on $\partial B(z,3r/4)$, the conditional probability given $X^{\theta_2}_{z,r}$, $X^{-\pi/2}_{z,r}$, $X^{+\pi/2}_{z,r}$, and the values of $\wt{h}_{z,r}$ on these sets is either $0$ or at least $p$ that
\begin{itemize}
\item the flow line $\wt{\eta}_2$ merges into $\eta^{\theta_2}_{z_1}$ before entering $B(z,r/4)$,
\item the extension of $\eta^{\theta_2}_{\wt{z}_1}$ merges into $\eta^{\theta_2}_{z_2}$ before entering $B(z,r/4)$,
\item the extensions of $\eta^{-\pi/2}_{w_1}, \eta^{-\pi/2}_{w_1}$ merge before entering $B(z,r/4)$.
\end{itemize}
\end{itemize}
Let $G$ be the event that for each $z \in \CD_{\delta^{1+4\excexp}}$, at least $3/4$ fraction of the $G_{z,2^{-j}}$ with $j \in \{ \lceil\log_2(\delta^{-1-\excexp})\rceil,\ldots,\lfloor\log_2(\delta^{-1-2\excexp})\rfloor \}$ occur. As the number of strands that end on different points of $\partial A(0,r/4,3r/4)$ is almost surely finite, we can apply Lemma~\ref{lem:good_scales_for_event} and a union bound over $z \in \CD_{\delta^{1+4\excexp}}$ to find $M,p>0$ such that $\p[G^c] = O(\delta^b)$.

Now suppose that we have the latter case in Lemma~\ref{lem:detect_intersection_right}, and are on the events $U_e \subseteq B(z,R^{1+\excexp})$ and $G_{z,R}$. Then $\eta^R_{w_1}$ exits $B(0,3R/4)$ before merging with $\ol{\eta}^R$. Since all the flow lines involved in the event of Lemma~\ref{lem:interior_intersection_right_first} intersect with the correct angle difference and there are angle $\theta_2$ flow lines that merge into $\eta^{\theta_2}_{z_1}$ resp.\ $\eta^{\theta_2}_{z_2}$ from the outside of the annulus, the conditional probability of the merging event described in the last item of Lemma~\ref{lem:interior_intersection_right_first} is almost surely positive. Since we are on the event $G_{z,R}$, the conditional probability is at least $p$. This proves the claim in this case.
\end{proof}

We are now ready to complete the proof of Proposition~\ref{prop:intersect_loops_large}.

\begin{proof}[Proof of Proposition~\ref{prop:intersect_loops_large}]
Let $b>\bestexp$ and
\begin{itemize}
\item $F_1$ be the event from Lemma~\ref{lem:loop_fill_in_ball},
\item $F_2$ be the event from Lemma~\ref{lem:detect_bad_large_loop}.
\end{itemize}
Let $F = F_1 \cap F_2$. Then $\p[F^c] = O(\delta^b)$. Therefore it suffices to prove the result on $F$.

Consider the notation introduced in Section~\ref{subsubsec:independence_statement}. We begin by observing that on the event $F_1$ we can find for $i=0,2$ a collection of $\delta^{-10\excexp}$ excursions $e \in \CE^\delta_i(\CL)$, $\CL \in \wh{\Gamma}_i^\delta$, such that all the regions $U_{i,l}^\CL$ that define $D_i^\delta$ are contained in the regions $U_e$. Indeed, whenever a loop $\CL$ travels distance $\delta^{1+4\excexp}$, it disconnects a ball of radius $\delta^{1+5\excexp}$. In particular, there can be at most $\delta^{-10\excexp}$ loops $\CL \in \wh{\Gamma}_i^\delta$ (recall that we are considering only loops that are contained in $B(x_0,\delta)$). By the definition of $a_{i,l}^\CL$ and $b_{i,l}^\CL$ we can split the region $U_{i,l}^\CL$ into sub-segments of diameter at most $\delta^{1+4\excexp}$ such that they satisfy the definition of $\CE^\delta_i(\CL)$. Again there can be at most $\delta^{-10\excexp}$ many since a ball of radius $\delta^{1+5\excexp}$ will be disconnected whenever $\eta_i$ or (the outer boundary of) $\CL$ travels distance $\delta^{1+4\excexp}$.

Let $E$ be the event that $D_i^\delta \geq \median{\epsilon}+\epsilon^{-\epsexp}\ac{\epsilon}$ for both $i=0,2$. On $E \cap F$, by the paragraph above and the compatibility of the internal metrics, there must exist $e_i \in \CE^\delta_i(\CL_i)$, $\CL_i \in \wh{\Gamma}_i^\delta$ such that $\metapproxacres{\epsilon}{U_{e_i}}{x_{e_i}}{y_{e_i}}{\Gamma} \gtrsim \delta^{10\excexp} (\median{\epsilon}+\epsilon^{-\epsexp}\ac{\epsilon})$. By considering the rescaled metric $\mettapprox{\wt{\epsilon}}{\cdot}{\cdot}{\Gamma} = \metapprox{\epsilon}{\delta^{10\excexp/\epsexp}\cdot}{\delta^{10\excexp/\epsexp}\cdot}{\delta^{10\excexp/\epsexp}\Gamma}$ where $\wt{\epsilon} = \delta^{-10\excexp/\epsexp}\epsilon$, this means (recalling Lemma~\ref{le:scaled_metric} and~\ref{le:median_scaling}) that $\mettapprox{\wt{\epsilon}}{\delta^{-10\excexp/\epsexp}x_{e_i}}{\delta^{-10\excexp/\epsexp}y_{e_i}}{\delta^{-10\excexp/\epsexp}\Gamma} \ge \mediant{\wt{\epsilon}}+\wt{\epsilon}^{-\epsexp}\act{\wt{\epsilon}}$.

Sample $(\wt{z}_i,R_i)$ for $i=0,2$ independently from $h$ as in Lemma~\ref{lem:detect_bad_large_loop} and restricted to the event that $B(\wt{z}_0,R_0)$ and $B(\wt{z}_2,R_2)$ are disjoint. Recall that $U_{e_0}$ and $U_{e_2}$ have distance at least $\delta^{1+\excexp}$ from each other. By the Markovian property of the metric, we can couple the internal metrics so that they are conditionally independent in the two disjoint balls, and agree with the ones for $\Gamma$ in case the regions agree.

Let $G_{z,R}$ be the event that the first two items in the statement of Lemma~\ref{lem:detect_bad_large_loop} occur. On the event $F_2$, then the conditional probability that all the events $G_{\wt{z}_0,R_0}$, $G_{\wt{z}_2,R_2}$, $E^{0;p}_{\wt{z}_0,R_0}$, $E^{2;p}_{\wt{z}_2,R_2}$ occur (for $\mettapprox{\wt{\epsilon
}}{\cdot}{\cdot}{\delta^{-10\excexp/\epsexp}\Gamma}$) is at least $\delta^{16\excexp}$. Hence,
\[ \p[ E \cap F] \leq \delta^{-16\excexp} \p[ E^{0;p}_{\wt{z}_0,R_0} \cap G_{\wt{z}_0,R_0} \cap E^{2;p}_{\wt{z}_2,R_2} \cap G_{\wt{z}_2,R_2} ] . \]
It therefore suffices to show that there exists $\alpha' > 100\excexp >0$ so that $\p[ E^{0;p}_{\wt{z}_0,R_0} \cap G_{\wt{z}_0,R_0} \cap E^{2;p}_{\wt{z}_2,R_2} \cap G_{\wt{z}_2,R_2} ] = O(\delta^{\bestexp + \alpha'})$.

Reversing the sampling procedure, we estimate the probability of $E^{i;p}_{\wt{z}_i,R_i} \cap G_{\wt{z}_i,R_i}$ given $(\wt{z}_i,R_i)$. Recall that on the event $G_{\wt{z}_i,R_i}$ there are at most $M$ different choices for $z_1,z_2,w_1$, and that the scale $(\wt{z}_i,R_i)$ is $M$-good. By the Lemmas~\ref{lem:interior_intersection_left}--\ref{lem:interior_intersection_right_first}, translation invariance, absolute continuity (together with Lemma~\ref{le:abs_cont_kernel}), and a union bound we have
\[
\p[ E^{i;p}_{\wt{z}_i,R_i} \cap G_{\wt{z}_i,R_i} \mid \CF_{\wt{z}_i,R_i} ] = O(\delta^{(1-10\excexp/\epsexp)\bestexp+o(1)}) .
\]
Using that $B(\wt{z}_0,R_0)$ and $B(\wt{z}_2,R_2)$ are disjoint, we have
\[ \begin{split}
\p[ E^{0;p}_{\wt{z}_0,R_0} \cap G_{\wt{z}_0,R_0} \cap E^{2;p}_{\wt{z}_2,R_2} \cap G_{\wt{z}_2,R_2} ] 
&= \E\left[ \p[ E^{0;p}_{\wt{z}_0,R_0} \cap G_{\wt{z}_0,R_0} \mid \CF_{\wt{z}_0,R_0} ] \one_{E^{2;p}_{\wt{z}_2,R_2} \cap G_{\wt{z}_2,R_2}} \right] \\
&= O(\delta^{2(1-10\excexp/\epsexp)\bestexp+o(1)})
\end{split} \]
If $\excexp > 0$ is sufficiently small so that $(1-20\excexp/\epsexp)\bestexp > 100\excexp$, we see altogether that this implies that result.
\end{proof}

\subsection{Distances along an SLE boundary}
\label{subsec:map_in}

In this subsection we consider an \slek{} curve $\eta$ in a good domain, and a \clekp{} in its left component. This is the object that arises in the Markovian exploration of CLE in Section~\ref{se:cle_sle_markov}. The main result is that with probability $1-O(\delta^{\bestexp})$ we can find a region close to $\eta$ such that the distance across is bounded by $\median{\epsilon}+\epsilon^{-\epsexp}\ac{\epsilon}$. We would like to deduce this result from the assumption~\eqref{eq:a_priori_assumption}. A challenge here is that the configurations in different domains can only be compared within regions away from the domain boundaries. But the outer boundary of the \clekp{} exploration path may get far away from $\eta$ (in the case $\kappa'<6$ it will even intersect the left domain boundary). To overcome this, we use the resampling argument from \cite{amy-cle-resampling} to show that we can always find a configuration comparable to the one in~\eqref{eq:a_priori_assumption} locally around each point inside the domain.

Throughout this subsection, we let $D \subseteq \C$ be a simply connected domain, and $x,y \in \partial D$ distinct. Let $\eta$ be an \slek{} in $(D,x,y)$, and let $D_\eta$ be the component of $D\setminus\eta$ to the left of $\eta$. Given $\eta$, let $\Gamma$ be a \clekp{} in $D_\eta$. (We remark that by the reversibility of $\eta$, the results of this section hold also when $D_\eta$ is chosen to be the component to the right of $\eta$.)

The main result of this subsection is the following lemma (and its variants Lemma~\ref{le:close_geodesic},~\ref{le:close_geodesic_general}, Corollary~\ref{co:close_geodesic_small_parts}). Recall the Definition~\ref{def:regularity} of $(M,a)$-good domains.

\begin{lemma}
\label{le:close_geodesic_old}
There exists $c>0$ (not depending on $M,a,\epsexp$) such that the following is true. Fix $M,a > 0$ such that $a$ is sufficiently small (depending on $\epsexp$). 
Let $\wh{G}_{M,a}$ be the event that for every $0 \le s < t \le \infty$, the domain $D \setminus \eta([0,s]\cup[t,\infty))$ is $(M,a)$-good. Let $\wh{E}_\delta$ denote the event that $\sup_t\dist(\eta(t),\partial D) \le \delta$.

Let $G$ denote the event that $\metapproxacres{\epsilon}{D_\eta}{x}{y}{\Gamma} \le M\delta^{-ca}(\median{\epsilon}+\epsilon^{-\epsexp}\ac{\epsilon})$. 
Then
\[ \p[ G^c \cap \wh{G}_{M,a} \cap \wh{E}_\delta ] = O(M\delta^{\bestexp-ca}) \quad\text{as}\quad \delta \to 0.\]
\end{lemma}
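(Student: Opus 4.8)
\textbf{Proof plan for Lemma~\ref{le:close_geodesic_old}.}

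The plan is to cover the \slek{} curve $\eta$ by balls at a random scale and argue that at many of these scales we can locally compare the configuration to the one in~\eqref{eq:a_priori_assumption}, so that the distance across a region adjacent to $\eta$ inside such a ball is at most $\median{\epsilon}+\epsilon^{-\epsexp}\ac{\epsilon}$ off an event of probability $O(\delta^{\bestexp-ca})$. First I would invoke the regularity event $\wh{G}_{M,a}$: since $D$ is $(M,a)$-good within all the truncated domains, the bottleneck sets $\CZ_k^{B}$ have cardinality at most $M2^{ak}$, and $\operatorname{inrad}$-type bounds give $\CZ_k = \varnothing$ for $k < \log_2(\delta^{-1})$ (as in the setup preceding Lemma~\ref{le:exploration_step}). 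This reduces the estimate of $\metapproxacres{\epsilon}{D_\eta}{x}{y}{\Gamma}$ to a sum over the scales $k \ge \log_2(\delta^{-1})$ of the distances across the pieces of $D_\eta$ that $\eta$ traverses between consecutive bottleneck crossings, and using the shortcut definition~\eqref{eq:shortcutted_metric} of $\metapproxac{\epsilon}{\cdot}{\cdot}{\Gamma}$ together with the compatibility axiom, it suffices to bound the distance across \emph{one} such piece by a small power of the scale times $\median{\epsilon}+\epsilon^{-\epsexp}\ac{\epsilon}$, off an event of the stated probability. This is exactly the structure of the proof of Lemma~\ref{le:exploration_step_one_scale} via Lemma~\ref{le:exploration_step_one_scale}-type arguments, so the new content is purely in handling a single scale near $\eta$.

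For a single piece at scale $2^{-k}$, I would use the independence-across-scales machinery. The piece of $D_\eta$ adjacent to a segment $\eta[s,t]$ with $\diam(\eta[s,t]) \asymp 2^{-k}$ sits inside a ball $B(z,2^{-k+O(\wt a k)})$ after controlling the variation of $\eta$ on that scale (via Lemma~\ref{le:variation_one_scale}) and the filling behaviour of the associated space-filling \slekp{} (via Lemma~\ref{le:fill_ball}); this isolates, off an event of probability $o^\infty(2^{-\wt a k})$, the region in question. The key point is then that within this ball, \emph{conditionally} on $\eta$ and the exterior loop configuration, the remaining \clekp{} in the part of $D_\eta$ touching $\eta[s,t]$ has the law of a \clekp{} in a domain whose relevant boundary is $\eta$ itself; using the resampling results of \cite{amy-cle-resampling} (Proposition~\ref{prop:resampling}, Lemma~\ref{lem:break_loops}, Lemma~\ref{lem:disconnect_interior}) we can, with conditional probability bounded below, modify the loop linking pattern so that the outer boundary of the \clekp{} exploration path near $\eta[s,t]$ looks like the flow-line configuration of Section~\ref{se:intersections_setup}. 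This lets us apply the a priori bound~\eqref{eq:a_priori_assumption} (more precisely the localized version, cf.\ the proof of Lemma~\ref{le:a_priori_disc} and Lemma~\ref{le:a_priori_fl_general_boundary}), giving that the $\metapproxac{\epsilon}{\cdot}{\cdot}{\Gamma}$-distance across this single piece at scale $2^{-k}$ exceeds $2^{-\zeta k}(\median{\epsilon}+\epsilon^{-\epsexp}\ac{\epsilon})$ with probability $O(2^{-\zeta k})$ for some $\zeta>0$ (using Lemma~\ref{le:median_scaling} for the scaling of the median). Summing over $k \ge \log_2(\delta^{-1})$ with the bound $|\CZ_k^{B}| \le M2^{ak}$ yields a geometric-type series $\sum_{k\ge\log_2(\delta^{-1})} M2^{ak}2^{-\zeta k} \lesssim M\delta^{\zeta-a}$ for the bad-event probability — but here the relevant comparison is with $\bestexp$, so one needs the single-scale estimate to actually be $O(2^{-(\bestexp+\text{something})k})$ after taking into account that there are $\asymp 2^{\wt a k}$ pieces per bottleneck scale; this is where the factor $\delta^{\bestexp-ca}$ in the statement comes from, and choosing $a$ small relative to $\epsexp$ and the various $\zeta$'s makes it work.

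The main obstacle, as flagged in the text of Section~\ref{subsec:map_in}, is precisely the comparison of the \clekp{} configuration in $D_\eta$ — whose exploration path may wander far from $\eta$ (and, when $\kappa'<6$, even touch the left boundary of $D$) — to the clean two-flow-line configuration of Section~\ref{se:intersections_setup} to which the a priori estimate~\eqref{eq:a_priori_assumption} directly applies. Handling this requires the full strength of the resampling toolkit of \cite{amy-cle-resampling}: one has to show that, locally around a typical point inside the domain, one can resample the linking pattern of the \clekp{} loops at a bounded number of intersection points, with conditional probability bounded below, so that the resampled configuration \emph{near} $\eta[s,t]$ agrees in law with a region bounded between two intersecting flow lines, while simultaneously ensuring the resampling does not affect the internal metric in the relevant sub-region (using the Markovian property of the metric together with absolute continuity of the GFF, cf.\ Lemma~\ref{le:abs_cont_kernel}). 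Getting the bookkeeping right — keeping the resampling probability bounded below uniformly in the scale while the number of loops/strands involved stays bounded, and verifying the regularity event $\wh G_{M,a}$ interacts correctly with all of this — is the delicate part; everything else is a routine combination of the independence-across-scales lemmas already established.
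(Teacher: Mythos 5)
Your overall skeleton --- invoke $\wh{G}_{M,a}$ for the bottleneck counts $|\CZ_k^B| \le M2^{ak}$, reduce to per-scale estimates near $\eta$, use the resampling machinery of \cite{amy-cle-resampling} to compare each local configuration to the a priori setup of~\eqref{eq:a_priori_assumption}, and sum over scales --- matches the structure of the paper's Lemmas~\ref{le:close_geodesic_one_scale}, \ref{le:close_geodesic_good_eta}, \ref{le:close_geodesic}. You also correctly identify the central obstacle (the \clekp{} exploration path in $D_\eta$ may wander far from $\eta$, and for $\kappa'<6$ even hit the far boundary). However, the plan leaves three genuine gaps, and they are precisely the parts that carry the weight of the proof.

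First, the concatenation across overlapping pieces is not a matter of ``the shortcut definition together with the compatibility axiom''. The internal metric is not additive along a path, and for the non-geodesic approximations one cannot simply glue estimates; one has to produce, at each scale, a \emph{chain of separating points} $z_1,z_2,\ldots$ so that the generalized parallel law (as used in Lemma~\ref{le:concatenate_overlapping_intervals}) lets one pass from one piece to the next, and the extra condition in the monotonicity axiom lets one compare the small separating region $U'_1$ to the ambient $D_\eta$. This is exactly what Lemma~\ref{le:dist_across_chain} delivers, and its structure --- Step 1: resample to create separating loops so that the local CLE picture is determined by GFF values in a ball; Step 2: compare to a GFF generating a CLE in that ball; Step 3: resample again inside the ball to hit the a priori configuration --- is substantially more involved than ``modify the loop linking so the region near $\eta[s,t]$ looks like a two-flow-line picture''. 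Your use of Lemma~\ref{le:exploration_step_one_scale} as the prototype is also the wrong analogue: that argument lives between two already-given intersecting flow lines, which is precisely the structure you are trying to manufacture here.

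Second, the per-scale probability bound you state, $O(2^{-\zeta k})$, is far too weak. Summing $M2^{ak}\cdot 2^{-\zeta k}$ over $k\ge\log_2(\delta^{-1})$ gives $M\delta^{\zeta-a}$, not $M\delta^{\bestexp-ca}$. What the paper actually does (Lemma~\ref{le:close_geodesic_good_eta}) is rescale the metric at scale $j\in\CZ_k$ by $\lambda = 2^{-(k-k_\delta)/2}$, so that Lemma~\ref{le:median_scaling} shrinks both the target threshold (producing the factor $\lambda^{\ddouble}+\lambda^{\epsexp}$ in~\eqref{eq:len_scale_j_sum}) and the failure probability to $O(2^{-\bestexp(k+k_\delta)/2+c(a+a_1)k})$ in~\eqref{eq:pr_scale_j_sum}, which then sums correctly. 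You acknowledge that the single-scale estimate ``needs to be $O(2^{-(\bestexp+\text{something})k})$'' but the argument you outline does not produce that.

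Third, the localization of $\eta[\tau_{j-1},\sigma_{j+1}]$ inside the ball $2^jB_{ak}$ at each bottleneck scale (the event $F_a$) is not a consequence of $\wh{G}_{M,a} \cap \wh{E}_\delta$. The paper handles $F_a^c$ by a stopping-time induction: stop $\eta$ and its time-reversal at the stopping times $T$, $T^R$ defined before Lemma~\ref{lem:get_too_close_ubd}, observe that $T\le T^R$ on $F_a^c$ and that the resulting truncated domains remain $(M,a)$-good on $\wh{G}_{M,a}$, recurse, and bound the recursion depth by Lemma~\ref{lem:get_too_close_ubd}. Without this step, the per-scale pieces cannot even be isolated inside a ball, so the resampling argument never gets started. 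This induction is a structural ingredient your plan omits entirely.
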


We remark that the statement of Lemma~\ref{le:close_geodesic_old} is only interesting when the distance between $x,y \in \partial D$ is at most $\delta^{1-ca}$ for some fixed $c>0$. Indeed, if $\abs{x-y} > \delta^{1-ca}$, then since $(D,x,y)$ is supposed to be $(M,a)$-good, the hyperbolic geodesic in $D$ from $x$ to $y$ contains points $w$ with $\dist(w,\partial D) > \delta^{1-(c-1)a}$. In that case, $\eta$ would need to stay close to $\partial D$ in order to meet the event $\wh{G}_{M,a} \cap \wh{E}_\delta$. The probability of this is easily estimated (see Lemma~\ref{lem:get_too_close_ubd}).

The proof will require several steps. We will first show the statement of Lemma~\ref{le:close_geodesic} on the event when $\eta$ does not get too close to $\partial D$. To obtain the general result, we will divide $\eta$ into overlapping segments and bound the distance across each segment. In case we have a geodesic metric, then the geodesics connecting the endpoints of the segments necessarily cross, and we can just concatenate them. For metrics satisfying the generalized parallel law, we will find suitable separation points and concatenate at those points (see Figure~\ref{fi:concatenation_loop_chain} below). The separation points will also be important in order to make the regions satisfy the monotonicity~\eqref{eq:approx_monotonicity_axiom}.

The precise definitions of these events are rather technical, but unavoidable. The key input is the following lemma. As indicated at the beginning of the subsection, we will prove it using resampling arguments applied in a small neighborhood of each segment of $\eta$, and then apply the assumption~\eqref{eq:a_priori_assumption}. The proof is outsourced to Section~\ref{se:map_in_resampling}.

\begin{lemma}
\label{le:dist_across_chain}
Fix $\innexp > 0$. There exists $N\in\N$ such that the following is true. Let $\wt{z}\in D$ and $\delta>0$ such that $B(\wt{z},\delta) \subseteq D$, and let $\lambda \in (0,1]$. Let $\wt{G}_{\wt{z},\delta,\lambda}$ be the event that there is a set of at most $N$ points $z_1,z_2,\ldots \in A(\wt{z},\delta^{1+\innexp},\delta)$ with $\abs{z_{i_1}-z_{i_2}} \ge \delta^{1+\innexp}$ for each pair $i_1 \neq i_2$ and such that whenever $\eta \cap B(\wt{z},\delta^{1+\innexp}) \neq \varnothing$, the following hold.
\begin{itemize}
\item The set $\{z_1,z_2,\ldots\}$ separates $\eta \cap B(\wt{z},\delta^{1+\innexp})$ from $\partial B(\wt{z},\delta)$ in $\Upsilon_\Gamma$.
\item Let $K'$ be any connected component of $\Upsilon_\Gamma \setminus \{z_1,z_2,\ldots\}$ adjacent to $\eta \cap B(\wt{z},\delta^{1+\innexp})$, and let $i_1,i_2$ so that $z_{i_1},z_{i_2} \in \partial K'$. Let $U'_1 \in \metregions$ be such that $\ol{U'_1}$ contains all \emph{simple} admissible paths within $K'$ from $z_{i_1}$ to $z_{i_2}$. Then
\[ \metapproxacres{\epsilon}{U'_1}{z_{i_1}}{z_{i_2}}{\Gamma} \le \lambda^{\ddouble}\median{\epsilon}+\lambda^{\epsexp}\epsilon^{-\epsexp}\ac{\epsilon} . \]
\end{itemize}
Then $\p[\wt{G}_{\wt{z},\delta,\lambda}^c] = O((\delta/\lambda)^{\bestexp})$.
\end{lemma}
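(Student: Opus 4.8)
\textbf{Proof plan for Lemma~\ref{le:dist_across_chain}.}

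The plan is to construct the separating points $z_1, z_2, \ldots$ using the \slek{}--\clekp{} structure near $\wt{z}$ together with a resampling argument, and then transfer the a priori estimate~\eqref{eq:a_priori_assumption}. First I would localize: consider the annular region $A(\wt{z}, \delta^{1+\innexp}, \delta)$, and on a good scale (produced by Lemma~\ref{lem:disconnect_interior}, Lemma~\ref{lem:break_loops}, or more precisely Proposition~\ref{prop:resampling}) find a finite collection of loops in $\Gamma$ whose outer boundaries, together with finitely many intersection points, separate $\eta \cap B(\wt{z},\delta^{1+\innexp})$ from $\partial B(\wt{z},\delta)$. The natural candidates for the $z_i$ are intersection points of consecutive separating loops (or intersection points of a loop with $\eta$), and the number $N$ of them can be bounded uniformly because on a good scale only boundedly many loops cross the annulus $A(\wt{z}, \delta^{1+\innexp}, \delta)$ (this is where Lemma~\ref{le:fill_ball}/Lemma~\ref{lem:loop_fill_in_ball}-type estimates control how many macroscopic loops can appear). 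The condition $\abs{z_{i_1} - z_{i_2}} \ge \delta^{1+\innexp}$ is arranged by only keeping separating loops of diameter at least $\delta^{1+\innexp}$ (say), discarding the small ones, which is permissible because the small loops form ``dead ends'' that do not affect the $\metapproxac{\epsilon}{\cdot}{\cdot}{\Gamma}$-distance between the retained separation points by the compatibility axiom and the series law.

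Next I would estimate the distance $\metapproxacres{\epsilon}{U'_1}{z_{i_1}}{z_{i_2}}{\Gamma}$ across each component $K'$. Each such $K'$ is a region bounded between (segments of) $\eta$ and an outer boundary of a \clekp{} loop, or between two outer boundaries of \clekp{} loops, meeting at $z_{i_1}, z_{i_2}$. The crucial observation, exactly as in the proofs of Lemmas~\ref{le:a_priori_loop_intersecting_both_sides} and~\ref{le:a_priori_loop_intersecting_one_side}, is that such a region locally coincides with a region bounded between two flow lines whose angle difference is $\angledouble$ --- so it is precisely the kind of region to which the a priori bound~\eqref{eq:a_priori_assumption} applies (after rescaling so that the relevant Euclidean scale is $\lambda\delta$, which produces the factors $\lambda^{\ddouble}\median{\epsilon}$ and $\lambda^{\epsexp}\epsilon^{-\epsexp}\ac{\epsilon}$ via Lemma~\ref{le:scaled_metric} and Lemma~\ref{le:median_scaling}). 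However, the outer boundary of the \clekp{} exploration path may wander far from $\eta$, and in the regime $\kappa' < 6$ it even touches $\partial D_\eta$; this is precisely the obstruction mentioned in the text. To handle it I would invoke the resampling machinery of \cite{amy-cle-resampling} (Theorem~\ref{thm:cle_partially_explored}, Lemma~\ref{le:partial_cle_tv}, Proposition~\ref{prop:resampling}): after exploring $\Gamma$ away from a small ball around $K'$, the conditional law of the remainder is a multichordal \clekp{}, and by changing the linking pattern at finitely many intersection points (paying only a factor of $1/p$ for a $\mathfrak{F}$-successful resampling) we can arrange that $K'$ sits inside a region of exactly the form in~\eqref{eq:a_priori_assumption} while the internal metric is coupled to agree on $K'$. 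Then~\eqref{eq:a_priori_assumption} and a union bound over the (boundedly many) choices of separating loops and intersection points give that the bad event has probability $O((\delta/\lambda)^{\bestexp})$ --- the ``$+o(1)$'' term absent because we work inside $B(\wt{z},\delta^{1+\innexp}) \subset B(\wt{z},\delta)$ and the extra localization from the good-scale event.

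The main obstacle I expect is the bookkeeping around the resampling step: one must verify that the finite collection of separating loops and the points $z_i$ chosen on them genuinely give target pivotals $\mathfrak{F}_{W,r}$ in the sense of Definition~\ref{def:resampling_target} (non-overlapping arcs, the points at definite distance, balls $B(y_i,4r)$ disjoint and disjoint from the other arcs), and that after the resampling the separating property (first bullet) is preserved with the linking pattern we want. A secondary subtlety is ensuring that the component $K'$, which is only required to contain \emph{simple} admissible paths from $z_{i_1}$ to $z_{i_2}$, can be enlarged to a region $U'_1 \in \metregions$ to which the series law and compatibility apply so that $\metapproxacres{\epsilon}{U'_1}{z_{i_1}}{z_{i_2}}{\Gamma}$ is controlled by the distance in the genuinely ``bubble-shaped'' sub-region --- this uses condition~\eqref{it:mon_large_loops} of the monotonicity axiom and the observation, already recorded below~\eqref{eq:shortcutted_metric}, that $\metapproxac{\epsilon}{\cdot}{\cdot}{\Gamma}$ inherits both the series law and the generalized parallel law. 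Once these points are in place, summing the good-scale probabilities from Proposition~\ref{prop:resampling} (with its $O(e^{-bk})$ tail) against the per-component bound completes the argument.
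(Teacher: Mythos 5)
Your outline correctly identifies the three ingredients that the paper uses (localize via separating loops, create the points $z_i$ by a linking resampling, transfer the a priori estimate~\eqref{eq:a_priori_assumption}), and your observations about the angle difference being $\angledouble$, about the factor of $1/p$ from a successful resampling, and about condition~\eqref{it:mon_large_loops} being what makes the monotonicity axiom applicable to $U'_1$, are all on target. However, there is a genuine gap in the middle of the argument.

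The paper's proof does not go from the localization step directly to a resampling of $\Gamma$ in $D_\eta$; it inserts a further reduction (the paper's ``Step 2'') that you have skipped. After creating the localizing separating loops and concluding that the configuration near $K'$ is determined by the values of $h$ on $B(\wt{z},\delta^{1+\innexp})$, the paper compares this local GFF data (by absolute continuity on a good scale, using Lemma~\ref{le:good_scales_merging_multiple} to merge $\eta$ consecutively into the strands of $X^{-\theta_0}_{0,2^{-j}}$) to a \emph{model} GFF $\wt{h}$ on a small disk $B(0,2^{-j})$ whose boundary values are chosen so that the flow line $\wt{\eta}$ is the left outer boundary of a \clekp{} exploration path in that disk, i.e.\ an $\SLE_\kappa(\kappa-4;2-\kappa)$ rather than a vanilla $\SLE_\kappa$. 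All of the subsequent resampling in Step 3 is performed for the disk CLE $\wt{\Gamma}$, not for the original $\Gamma$ in $D_\eta$. This substitution is not cosmetic: the resampling procedure of \cite{amy-cle-resampling} operates on CLE loops within a fixed simply connected domain, while $\eta$ is not a loop of $\Gamma$ and cannot be resampled; moreover the events controlling merging probabilities (for the GFF \emph{after} a successful resampling) would be very hard to control with $D_\eta$ as the ambient domain, since its boundary $\eta$ is random, fractal, and can have arbitrarily bad bottlenecks inside $B(\wt{z},\delta)$. Your plan treats the a priori estimate as if it could be invoked for a multichordal \clekp{} in the (conditionally given) complicated domain once the linking pattern is fixed; but~\eqref{eq:a_priori_assumption} and its variants (Lemmas~\ref{le:a_priori_fl_general_boundary},~\ref{le:a_priori_fl_reflected}) are stated in a disk setup with flow lines going from boundary to boundary, and the transfer requires an absolute-continuity comparison of laws, not merely of shapes. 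Related to this, the lemma you cite for creating separating loops near $\eta$ should be Lemma~\ref{lem:disconnect_boundary} (the half-plane version), not Lemma~\ref{lem:disconnect_interior}, since near $\eta$ one is effectively near the boundary of $D_\eta$.

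A secondary, smaller issue: in the final resampling step, the merging/linking event needs to be formulated so that it remains a positive-probability event for the \emph{resampled} GFF, not just the original one; this is what forces the somewhat baroque conditions~\eqref{it:sep_chain}--\eqref{it:winding} in the definition of $\Elinkgff_j$, and your sketch does not acknowledge this robustness requirement. If you carry out the comparison to the model disk first (as in the paper's Step 2), these conditions become manageable because the reference domain is fixed and round; if you try to do it directly in $D_\eta$, the problem becomes substantially harder, and I do not see how the argument closes as written.
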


For the purpose of this section, one can just think of $\ol{U'_1}$ being the minimal simply connected set containing $K'$ (i.e.\ the union of $K'$ and the bounded connected components of $\C \setminus K'$). It will be convenient in Section~\ref{se:tightness_proof} that we can consider smaller regions where some of the ``dead ends'' in $K'$ are removed. It suffices to prove Lemma~\ref{le:dist_across_chain} in the case $\lambda=1$ (due to scaling and Lemma~\ref{le:median_scaling}), but it will be convenient to introduce the parameter $\lambda$ in the definition of the event because we will consider a sequence of events $\wt{G}_{\wt{z}_l,\wt{\delta}_l,\lambda_l}$ on various scales, and sum the corresponding bounds on $\metapprox{\epsilon}{\cdot}{\cdot}{\Gamma}$. (See~\eqref{eq:sum_across_intervals} in the definition of the events $G^U_{s,t}(r)$ below.)

Note that when $\delta^{1+\innexp} \ge \cserial\epsilon$ where $\cserial$ is the constant in the assumption~\eqref{it:mon_large_loops} of the monotonicity~\eqref{eq:approx_monotonicity_axiom}, then the monotonicity property~\eqref{eq:approx_monotonicity_axiom} applies to the region $U'_1$ in the statement of Lemma~\ref{le:dist_across_chain}. More precisely, let $\ol{U'_1}$ be the minimal simply connected set containing all simple admissible paths within $K'$ from $z_{i_1}$ to $z_{i_2}$. Let $\ol{U'}$ be the minimal simply connected set containing $K'$. Then, by~\eqref{eq:approx_monotonicity_axiom} and~\eqref{eq:shortcutted_metric}, we have
\[ \metapproxacres{\epsilon}{V}{x}{y}{\Gamma} \le \metapproxacres{\epsilon}{V \cap U'}{x}{y}{\Gamma}+2\ac{\epsilon} \le \metapproxacres{\epsilon}{U'_1}{x}{y}{\Gamma}+4\ac{\epsilon} ,\quad x,y \in \ol{U'_1} \cap \Upsilon_\Gamma \]
for every $V \supseteq U'_1$.

\begin{figure}[ht]
\centering
\includegraphics[width=0.6\textwidth]{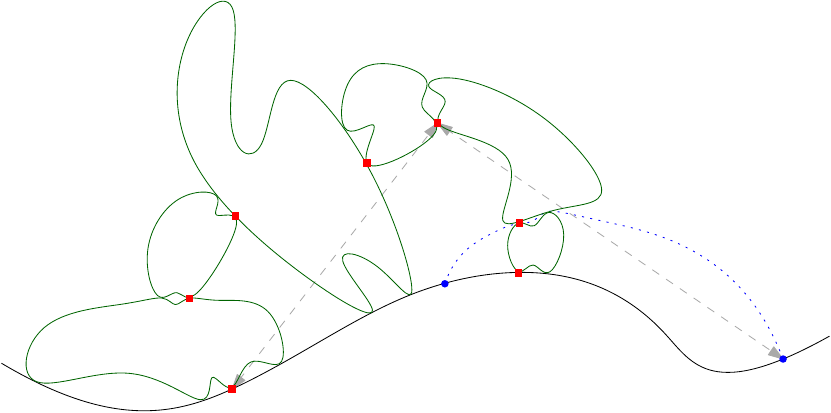}
\caption{Illustration of the concatenation argument used in the proofs of Section~\ref{subsec:map_in}. If the distance between the blue points is bounded, then by the generalized parallel law for at least one of the red points the distance across the right grey arrow is also bounded (up to a constant).}
\label{fi:concatenation_loop_chain}
\end{figure}

We will cover the curve $\eta$ by balls $B(\wt{z}_l,\wt{\delta}_l)$ on which the events from Lemma~\ref{le:dist_across_chain} hold. By the generalized parallel law, this allows us to estimate $\metapproxac{\epsilon}{x}{y}{\Gamma}$ by (a constant times) the sum of the distances across these balls (this is explained in Lemma~\ref{le:concatenate_overlapping_intervals} and illustrated in Figure~\ref{fi:concatenation_loop_chain}). Moreover, we will choose these balls so that $\wt{\delta}_l \le \delta^{1+\innexp}$ for some $\innexp > 0$. This will imply that the event depends only on the internal metrics within the $\delta^{1+\innexp}$-neighborhood of $\eta$, allowing us to use spacial independence properties with other events that depend on regions away from $\eta$.

We now formally define the event that a part of $\eta$ is covered by balls on which the events from Lemma~\ref{le:dist_across_chain} hold. We need to define this event carefully because we will need to use it to bound the internal metric in larger regions, and recall that the monotonicity property~\eqref{eq:approx_monotonicity_axiom} only holds under an additional condition on the regions. While the regions discussed in the paragraph above satisfies the additional condition, the union of overlapping regions does not necessarily satisfy the condition any more. Therefore we keep track of the individual balls $B(\wt{z}_l,\wt{\delta}_l)$ in the definition of the event below. Finally, note that on scales smaller than $\epsilon$, we will ultimately bound the distances by $\ac{\epsilon}$ using~\eqref{eq:approx_error_asymp} which are asymptotically negligible due to~\eqref{eq:eps_bound_ass}. Note that~\eqref{eq:approx_error_asymp} is not a deterministic bound, and moreover the regions smaller than $\epsilon$ do not satisfy the monotonicity~\eqref{eq:approx_monotonicity_axiom}. This is why we keep track of these segments using the definition of $\metapproxac{\epsilon}{\cdot}{\cdot}{\Gamma}$ in~\eqref{eq:shortcutted_metric}.

Fix $\innexp > 0$, $N \in \N$, and let $\wt{G}_{\wt{z},\wt{\delta},\lambda}$ be the event in Lemma~\ref{le:dist_across_chain}. Let $U \subseteq D_\eta$ and $0 \le s \le t \le \infty$. We let $G^U_{s,t}(r)$ be the event that there are times $s = u_1 < u_2 < \cdots < u_L = t$ and $\wt{z}_l,\wt{\delta}_l,\lambda_l$ (random) such that for each $l=1,\ldots,L-1$ we have either
\begin{enumerate}[(a)]
\item\label{it:short_segment} $\diam(\eta[u_l,u_{l+1}]) < \epsilon$ or
\item\label{it:segment_under_chain} $\wt{\delta_l}^{1+\innexp} \ge \cserial\epsilon$, $\eta[u_l,u_{l+1}] \subseteq B(\wt{z}_l,\wt{\delta}_l^{1+\innexp})$, $B(\wt{z}_l,\wt{\delta}_l) \subseteq U$, and $\wt{G}_{\wt{z}_l,\wt{\delta}_l,\lambda_l}$ occurs,
\end{enumerate}
and we have
\begin{equation}\label{eq:sum_across_intervals}
 \sum_l (1+\lambda_l^{\ddouble}+\lambda_l^{\epsexp}) \le r .
\end{equation}
We further write $G^U_{\wh{s},\wh{t}}(r)$ (resp.\ $G^U_{\wh{s},t}(r)$, $G^U_{s,\wh{t}}(r)$) if we can choose the time points and balls so that $\eta(u) \notin B(\wt{z}_l,\wt{\delta}_l)$ for $u \notin (s,t)$ (resp.\ $u \le s$, $u \ge t$) and each $l$.

In the definition above, the requirement $B(\wt{z}_l,\wt{\delta}_l) \subseteq U$ means that the event depends only on the internal metrics within $U$. The reason we distinguish between $G^U_{s,t}(r)$, $G^U_{\wh{s},\wh{t}}(r)$ is that we can bound $\metapproxac{\epsilon}{\eta(s)}{\eta(t)}{\Gamma}$ on the event $G^U_{\wh{s},\wh{t}}(r)$ whereas on $G^U_{s,t}(r)$ we can only bound $\metapproxac{\epsilon}{\eta(s')}{\eta(t')}{\Gamma}$ for some $s' \le s$, $t' \ge t$.

\begin{lemma}\label{le:concatenate_overlapping_intervals}
 There is a constant $c$ depending on $N$ such that the following is true. Suppose that we are on the event $G^U_{\wh{s},\wh{t}}(r)$ defined in the paragraph above. If $V_1 \in \metregions$ is such that every \emph{simple} admissible path within $\ol{U}$ from $\eta(s)$ to $\eta(t)$ is contained in $\ol{V}_1$, then
 \[ \metapproxacres{\epsilon}{V_1}{\eta(s)}{\eta(t)}{\Gamma} \le c\, r(\median{\epsilon}+\epsilon^{-\epsexp}\ac{\epsilon}) . \]
\end{lemma}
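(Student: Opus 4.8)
The plan is to concatenate the estimates across the overlapping segments of $\eta$ using the generalized parallel law together with the separation structure provided by Lemma~\ref{le:dist_across_chain}. First I would recall the setup: on the event $G^U_{\wh{s},\wh{t}}(r)$ we have a subdivision $s = u_1 < \cdots < u_L = t$ and balls $B(\wt z_l,\wt\delta_l)$ with $\eta(u)\notin B(\wt z_l,\wt\delta_l)$ for $u\notin(s,t)$, such that each segment $\eta[u_l,u_{l+1}]$ either has diameter $<\epsilon$ (case~\eqref{it:short_segment}) or is covered by a ball $B(\wt z_l,\wt\delta_l^{1+\innexp})$ on which $\wt G_{\wt z_l,\wt\delta_l,\lambda_l}$ holds (case~\eqref{it:segment_under_chain}), and $\sum_l(1+\lambda_l^{\ddouble}+\lambda_l^{\epsexp})\le r$. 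For a case~\eqref{it:short_segment} segment I would use directly that $\metapproxacres{\epsilon}{V_1}{\eta(u_l)}{\eta(u_{l+1})}{\Gamma}\le\ac{\epsilon}$ by the definition~\eqref{eq:shortcutted_metric} of the shortcutted metric (since $\dpath[\ol{V}_1](\eta(u_l),\eta(u_{l+1}))\le\diam(\eta[u_l,u_{l+1}])<\epsilon$). For a case~\eqref{it:segment_under_chain} segment, the event $\wt G_{\wt z_l,\wt\delta_l,\lambda_l}$ supplies at most $N$ separation points $z_1^{(l)},z_2^{(l)},\ldots$ in $A(\wt z_l,\wt\delta_l^{1+\innexp},\wt\delta_l)$ separating $\eta\cap B(\wt z_l,\wt\delta_l^{1+\innexp})$ from $\partial B(\wt z_l,\wt\delta_l)$, together with the bound $\metapproxacres{\epsilon}{U'_1}{z_{i_1}^{(l)}}{z_{i_2}^{(l)}}{\Gamma}\le\lambda_l^{\ddouble}\median{\epsilon}+\lambda_l^{\epsexp}\epsilon^{-\epsexp}\ac{\epsilon}$ for the regions $U'_1$ associated with the components $K'$ adjacent to $\eta$. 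By the monotonicity remark recorded right after Lemma~\ref{le:dist_across_chain} (which applies since $\wt\delta_l^{1+\innexp}\ge\cserial\epsilon$), the same bound, up to an additive $4\ac{\epsilon}$, holds for $\metapproxacres{\epsilon}{V}{z_{i_1}^{(l)}}{z_{i_2}^{(l)}}{\Gamma}$ for any $V\supseteq U'_1$, in particular for $V=V_1$ once we check the relevant simple admissible paths stay inside $\ol V_1$.

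Next I would build a chain of separation points running from $\eta(s)$ to $\eta(t)$ through the successive annuli. Since $\eta(s),\eta(t)\notin\bigcup_l B(\wt z_l,\wt\delta_l)$ but each $\eta[u_l,u_{l+1}]$ (in case~\eqref{it:segment_under_chain}) enters $B(\wt z_l,\wt\delta_l^{1+\innexp})$, the curve $\eta$ must cross each annulus $A(\wt z_l,\wt\delta_l^{1+\innexp},\wt\delta_l)$ both on entering and on leaving, and the separating set $\{z_j^{(l)}\}$ forces any admissible path from $\eta(s)$ to $\eta(t)$ to pass through at least two of the points $z_j^{(l)}$; I would pick the first and last such crossing points, call them $p_l,q_l\in\{z_j^{(l)}\}$, so that $p_l$ lies on the ``incoming'' side and $q_l$ on the ``outgoing'' side of $B(\wt z_l,\wt\delta_l^{1+\innexp})$ relative to the orientation of $\eta$. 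This produces an ordered sequence of points $\eta(s)=w_0, p_{l_1}, q_{l_1}, p_{l_2}, q_{l_2},\ldots, \eta(t)=w_{\mathrm{end}}$ along $\eta$, where between $q_{l_i}$ and $p_{l_{i+1}}$ the curve $\eta$ either moves through a short ($<\epsilon$) segment or through a region where the next ball applies; the spacing $\abs{z_{i_1}^{(l)}-z_{i_2}^{(l)}}\ge\delta^{1+\innexp}$ guarantees $\cserial\epsilon$-separation whenever needed. Then I would invoke the series law (the second, simpler form: if $z$ separates $x$ from $y$ then $\metapproxres{\epsilon}{V}{x}{y}{\Gamma}\ge\metapproxres{\epsilon}{V}{x}{z}{\Gamma}$), extended to $\metapproxac{\epsilon}{\cdot}{\cdot}{\Gamma}$ as noted after~\eqref{eq:shortcutted_metric}, to reduce $\metapproxacres{\epsilon}{V_1}{\eta(s)}{\eta(t)}{\Gamma}$ to a sum of the pieces $\metapproxacres{\epsilon}{V_1}{p_{l_i}}{q_{l_i}}{\Gamma}$ (bounded via the monotonicity remark by $c(\lambda_{l_i}^{\ddouble}\median{\epsilon}+\lambda_{l_i}^{\epsexp}\epsilon^{-\epsexp}\ac{\epsilon})$ plus $O(\ac{\epsilon})$) and the connecting pieces $\metapproxacres{\epsilon}{V_1}{q_{l_i}}{p_{l_{i+1}}}{\Gamma}$ (each a bounded number of short/$\epsilon$-scale or further nested-ball steps). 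Here is where the generalized parallel law enters: to convert the separation statement into an upper bound on the distance \emph{between consecutive separation points} one uses that the $\cparallel(N)$-distorted distance across $V_1$ dominates the minimal distance to one of the $z_j^{(l)}$ inside the localized region $V_x$, and since we have an actual bound on the distance to each of them this gives the desired control with the constant $c$ absorbing a factor $\cparallel(N)$ and the bounded combinatorial count $N$.

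Summing these contributions over all $L-1$ segments and using~\eqref{eq:sum_across_intervals} together with $\epsilon^{-\epsexp}\ac{\epsilon}\ge\ac{\epsilon}$ (valid since $\epsilon\le1$ and $\epsexp>0$) yields
\[
\metapproxacres{\epsilon}{V_1}{\eta(s)}{\eta(t)}{\Gamma}
\le c\sum_l\bigl(1+\lambda_l^{\ddouble}+\lambda_l^{\epsexp}\bigr)\bigl(\median{\epsilon}+\epsilon^{-\epsexp}\ac{\epsilon}\bigr)
\le c\,r\bigl(\median{\epsilon}+\epsilon^{-\epsexp}\ac{\epsilon}\bigr),
\]
with $c$ depending only on $N$ (hence, ultimately, only on $\innexp$ and the structural constants $\cserial,\cparallel(N)$), which is the assertion. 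I expect the main obstacle to be the bookkeeping that makes the chain of separation points genuinely lie on a single admissible path from $\eta(s)$ to $\eta(t)$ within $\ol V_1$, so that the series and generalized parallel laws can be legitimately applied to the region $V_1$ rather than to the individual small regions $U'_1$ — this is precisely why $V_1$ is assumed to contain every simple admissible path within $\ol U$ from $\eta(s)$ to $\eta(t)$, and one must verify that the localized regions $U'_1\subseteq V_1$ and that the connecting short segments also stay inside $\ol V_1$ (using that $\eta[s,t]\subseteq\ol U$ and the balls $B(\wt z_l,\wt\delta_l)\subseteq U$). A secondary subtlety is handling the case~\eqref{it:short_segment} segments and the ``leftover'' pieces between $q_l$ and $p_{l+1}$ without double counting, which is why the factor $1$ is included in each summand of~\eqref{eq:sum_across_intervals}.
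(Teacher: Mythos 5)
Your proposal assembles the right ingredients (the separation points from Lemma~\ref{le:dist_across_chain}, the monotonicity remark after it, and the generalized parallel law), but it misses the key structural feature that drives the paper's argument, and this leaves a real gap. Writing $s'_l$ (the last time before $u_l$ that $\eta$ hits the $l$-th separation set) and $t'_l$ (the first time after $u_{l+1}$), one always has $s'_l < u_l < u_{l+1} < t'_l$ and $s'_{l+1} < u_{l+1}$, so $s'_{l+1} < t'_l$: consecutive intervals $[s'_l,t'_l]$ and $[s'_{l+1},t'_{l+1}]$ \emph{necessarily overlap} (this is the ``overlapping intervals'' in the lemma's name). Your ordered chain $\eta(s)=w_0, p_{l_1}, q_{l_1}, p_{l_2}, q_{l_2},\ldots$ with ``connecting pieces'' between $q_{l_i}$ and $p_{l_{i+1}}$ therefore does not exist, and the decomposition you describe cannot be carried out as stated. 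The point of the generalized parallel law here is precisely to resolve this overlap: since the $l$-th separation set $\{z_j\}$ separates $\eta(s'_{l+1})$ from $\eta(t'_{l+1})$ in $\ol{V}_1\cap\Upsilon_\Gamma$, the parallel law yields one specific $z_i$ (on the $l$-th chain) with $\metapproxacres{\epsilon}{V_1}{z_i}{\eta(t'_{l+1})}{\Gamma}\le\cparallel(N)\metapproxacres{\epsilon}{V_1}{\eta(s'_{l+1})}{\eta(t'_{l+1})}{\Gamma}$, and then the triangle inequality splices this to the bound on $\metapproxacres{\epsilon}{V_1}{\eta(s'_l)}{z_i}{\Gamma}$ supplied by the $l$-th $\wt{G}$-event, giving a telescoping estimate for $\metapproxacres{\epsilon}{V_1}{\eta(s'_l)}{\eta(t'_{l+1})}{\Gamma}$. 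Your description of the parallel law's role (``converts the separation statement into an upper bound on the distance between consecutive separation points'') is vague and does not capture this bridging mechanism, and you treat the overlap issue only as a ``secondary subtlety'' about double counting.

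A smaller but real error: you invoke the \emph{series law} (``if $z$ separates $x$ from $y$ then $\Fd(x,y)\ge\Fd(x,z)$'') ``to reduce $\metapproxacres{\epsilon}{V_1}{\eta(s)}{\eta(t)}{\Gamma}$ to a sum of the pieces.'' The series law produces \emph{lower} bounds; the tool you need for bounding a distance above by a sum of sub-distances is the \emph{triangle inequality}, which is part of the definition of an approximate CLE metric and transfers to $\metapproxac{\epsilon}{\cdot}{\cdot}{\Gamma}$. Swapping these is not just terminology: with the series law one cannot stitch the segment bounds together at all.
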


See Figure~\ref{fi:concatenation_loop_chain} for an illustration.

\begin{proof}[Proof of Lemma~\ref{le:concatenate_overlapping_intervals}]
Let $l \in \{1,\ldots,L-1\}$. In case~\eqref{it:short_segment} we just apply the definition of $\metapproxac{\epsilon}{\cdot}{\cdot}{\Gamma}$ in~\eqref{eq:shortcutted_metric}. We assume that we are in case~\eqref{it:segment_under_chain}, in particular $\wt{\delta_l}^{1+\innexp} \ge \cserial\epsilon$. Suppose that we are on the event $\wt{G}_{\wt{z}_l,\wt{\delta}_l,\lambda_l}$, and let $z_1,z_2,\ldots \in A(\wt{z}_l,\wt{\delta}_l^{1+\innexp},\wt{\delta}_l)$ be the corresponding points. There must be some $s'_l < u_l$, $t'_l > u_{l+1}$ such that $\eta(s'_l),\eta(t'_l) \in \{z_1,z_2,\ldots\}$ (otherwise there would be an admissible path from $\eta(u_l)$ to $\partial D$ avoiding $\{z_1,z_2,\ldots\}$). Note that the choice of $V_1$ implies that it contains a region $U'_1$ as in the definition of the event $\wt{G}_{\wt{z}_l,\wt{\delta}_l,\lambda_l}$. Recall that $U'_1$ is defined so that the monotonicity~\eqref{eq:approx_monotonicity_axiom} applies. Together with the definition of $\metapproxac{\epsilon}{\cdot}{\cdot}{\Gamma}$ in~\eqref{eq:shortcutted_metric}, we have
 \[ \metapproxacres{\epsilon}{V_1}{\eta(s'_l)}{\eta(t'_l)}{\Gamma} \le \lambda_l^{\ddouble}\median{\epsilon}+\lambda_l^{\epsexp}\epsilon^{-\epsexp}\ac{\epsilon}+2\ac{\epsilon} . \]
Let $s'_{l+1},t'_{l+1}$ be the analogous points for $\wt{G}_{\wt{z}_{l+1},\wt{\delta}_{l+1},\lambda_{l+1}}$, and suppose that $s'_l < s'_{l+1} < t'_l < t'_{l+1}$ (in all other cases we can just ignore $l$ or $l+1$). Then $\{z_1,z_2,\ldots\} \cap \ol{V}_1$ separates $\eta(s'_{l+1})$ from $\eta(t'_{l+1})$ in $\ol{V}_1 \cap \Upsilon_\Gamma$. By the generalized parallel law (which also applies to the metric in~\eqref{eq:shortcutted_metric}), there is some $z_i$ such that
 \[ \metapproxacres{\epsilon}{V_1}{z_i}{\eta(t'_{l+1})}{\Gamma} \le \cparallel(N)\metapproxacres{\epsilon}{V_1}{\eta(s'_{l+1})}{\eta(t'_{l+1})}{\Gamma} \le \cparallel(N)(\lambda_{l+1}^{\ddouble}\median{\epsilon}+\lambda_{l+1}^{\epsexp}\epsilon^{-\epsexp}\ac{\epsilon}) . \]
For every such $z_i$, the corresponding sets $U'_1$ in the definition of $\wt{G}_{\wt{z}_l,\wt{\delta}_l,\lambda_l}$ is also contained in $V_1$. Hence,
\[ \begin{split}
\metapproxacres{\epsilon}{V_1}{\eta(s'_l)}{\eta(t'_{l+1})}{\Gamma} 
&\le \metapproxacres{\epsilon}{V_1}{\eta(s'_l)}{z_i}{\Gamma} + \metapproxacres{\epsilon}{V_1}{z_i}{\eta(t'_{l+1})}{\Gamma} \\
&\le (\lambda_l^{\ddouble}\median{\epsilon}+\lambda_l^{\epsexp}\epsilon^{-\epsexp}\ac{\epsilon}) + \cparallel(N)(\lambda_{l+1}^{\ddouble}\median{\epsilon}+\lambda_{l+1}^{\epsexp}\epsilon^{-\epsexp}\ac{\epsilon}) . 
\end{split} \]
By repeating this argument, we conclude.
\end{proof}

We now phrase the stronger, more technical version of Lemma~\ref{le:close_geodesic_old} which we will use later in this paper.

\begin{lemma}
\label{le:close_geodesic}
There exists $c>0$ (not depending on $M,a,a_1,\epsexp$) such that the following is true. Fix $M,a,a_1 > 0$ such that $a,a_1$ are sufficiently small (depending on $\epsexp$). 
Let $\wh{G}_{M,a}$ be the event that for every $0 \le s < t \le \infty$, the domain $D \setminus \eta([0,s]\cup[t,\infty))$ is $(M,a)$-good. Let $\wh{E}_\delta$ denote the event that $\sup_t\dist(\eta(t),\partial D) \le \delta$. Let $U \subseteq D_\eta$ be the $\delta^{1+a_1}$-neighborhood of $\eta$. Then
\[ \p\left[ G^U_{\wh{0},\wh{\infty}}(M\delta^{-c(a+a_1)})^c \cap \wh{G}_{M,a} \cap \wh{E}_\delta \right] = O(M\delta^{\bestexp-ca-ca_1}) \quad\text{as}\quad \delta \to 0.\]
\end{lemma}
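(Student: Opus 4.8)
The plan is to decompose the curve $\eta$ into dyadic-type pieces, cover each piece by a ball on which the event $\wt{G}_{\wt{z},\wt{\delta},\lambda}$ of Lemma~\ref{le:dist_across_chain} holds (after a suitable choice of $\wt{\delta}$ and $\lambda$), and then invoke the concatenation mechanism of Lemma~\ref{le:concatenate_overlapping_intervals}. First I would fix a scale $\innexp>0$ appearing in Lemma~\ref{le:dist_across_chain}, and take $\wt{\delta}_l$ to be comparable to $\delta^{1+a_1}$ away from the endpoints $x,y$ of $\eta$; near the endpoints one must let $\wt{\delta}_l$ shrink further so that the balls $B(\wt{z}_l,\wt{\delta}_l)$ stay in the $\delta^{1+a_1}$-neighborhood $U$ of $\eta$. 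The key geometric input is the regularity event $\wh{G}_{M,a}$: on this event, for each $s<t$ the domain $D\setminus\eta([0,s]\cup[t,\infty))$ is $(M,a)$-good, which by Definition~\ref{def:regularity} controls the number of ``bottleneck scales'' $\CZ_k^{B(0,3\delta/4)}$ by $M2^{ak}$. Together with the bound $\sup_t\dist(\eta(t),\partial D)\le\delta$ on $\wh{E}_\delta$, this gives a covering of $\eta$ by $O(M\delta^{-ca-ca_1})$ balls of radii between roughly $\delta^{1+a_1}$ and $\delta$, such that neighboring balls overlap and each ball is contained in $U$. (I would also use Lemma~\ref{le:variation_one_scale} to control how finely $\eta$ must be subdivided within a single scale, contributing only polylogarithmic or $\delta^{-o(1)}$ factors, absorbed into $\delta^{-ca_1}$.)

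Second, for each ball $B(\wt{z}_l,\wt{\delta}_l)$ in the covering I would apply Lemma~\ref{le:dist_across_chain} with $\lambda_l$ chosen so that $(\wt{\delta}_l/\lambda_l)^{\bestexp}$ is summable over $l$: concretely, take $\lambda_l \asymp \wt{\delta}_l^{\theta}$ for a small exponent $\theta>0$, so that $\p[\wt{G}_{\wt{z}_l,\wt{\delta}_l,\lambda_l}^c] = O((\wt{\delta}_l/\lambda_l)^{\bestexp}) = O(\wt{\delta}_l^{(1-\theta)\bestexp})$. Summing over the $O(M\delta^{-ca-ca_1})$ balls and noting that each $\wt{\delta}_l \ge \delta^{1+a_1}$ (hence $\wt{\delta}_l^{(1-\theta)\bestexp}\le\delta^{(1-\theta)(1+a_1)\bestexp}$, with the exponent $>\bestexp-ca-ca_1$ for small $a_1,\theta$), a union bound gives that outside an event of probability $O(M\delta^{\bestexp-ca-ca_1})$, all of the events $\wt{G}_{\wt{z}_l,\wt{\delta}_l,\lambda_l}$ hold simultaneously. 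This is where the resampling argument hidden inside Lemma~\ref{le:dist_across_chain} does the heavy lifting; I would treat that lemma as a black box here since it is stated and its proof is deferred to Section~\ref{se:map_in_resampling}. One must also check the side constraint $\wt{\delta}_l^{1+\innexp}\ge\cserial\epsilon$ required in case~\eqref{it:segment_under_chain}: when this fails, the segment $\eta[u_l,u_{l+1}]$ has Euclidean diameter below a power of $\epsilon$, and I would instead route it into case~\eqref{it:short_segment} — subdividing further so that $\diam(\eta[u_l,u_{l+1}])<\epsilon$ — using again Lemma~\ref{le:variation_one_scale} to bound the number of extra pieces by $\delta^{-o(1)}$.

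Third, on the intersection of all these good events I would conclude by Lemma~\ref{le:concatenate_overlapping_intervals}. The covering, by construction, realizes the event $G^U_{\wh{0},\wh{\infty}}(r)$ with $r = \sum_l(1+\lambda_l^{\ddouble}+\lambda_l^{\epsexp}) \lesssim$ (number of balls) $= O(M\delta^{-ca-ca_1})$, since each $\lambda_l\le 1$. The ``hat'' refinement $\wh{0},\wh{\infty}$ — i.e.\ that no $B(\wt{z}_l,\wt{\delta}_l)$ is entered by $\eta$ at a time outside $(0,\infty)$ — is automatic since $\eta$ is parameterized on $[0,\infty]$. Thus $G^U_{\wh{0},\wh{\infty}}(M\delta^{-c(a+a_1)})$ holds, which is exactly the complement of the event in the statement, off an exceptional set of probability $O(M\delta^{\bestexp-ca-ca_1})$.

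The main obstacle I anticipate is the bookkeeping near the endpoints $x,y\in\partial D$, where $\eta$ can return close to the boundary and the balls must shrink. One has to verify that on $\wh{G}_{M,a}\cap\wh{E}_\delta$ the number of scales one descends to near $x$ (resp.\ $y$) before the boundary is reached is controlled — this is precisely the content of the $(M,a)$-good condition, which bounds $|\CZ_k^{B(0,3\delta/4)}|\le M2^{ak}$ and forces $\CZ_k = \varnothing$ for $k<\log_2(\delta^{-1})$ via $\operatorname{inrad}\le\delta$ — so that the total number of covering balls stays $O(M\delta^{-ca-ca_1})$ rather than, say, polynomial in $\delta^{-1}$. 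A secondary subtlety is ensuring that the choice of $V_1$ in the conclusion of Lemma~\ref{le:concatenate_overlapping_intervals} (any simply connected region in $\metregions$ containing all simple admissible paths in $\ol{U}$ from $\eta(s)$ to $\eta(t)$) is compatible with the regions $U'_1$ produced inside each $\wt{G}_{\wt{z}_l,\wt{\delta}_l,\lambda_l}$; this follows because the $U'_1$'s are built from connected components adjacent to short sub-arcs of $\eta$ inside $U$, hence are contained in any such $V_1$, so the monotonicity axiom~\eqref{eq:approx_monotonicity_axiom} (applicable since $\wt{\delta}_l^{1+\innexp}\ge\cserial\epsilon$) can be invoked exactly as in the proof of Lemma~\ref{le:concatenate_overlapping_intervals}. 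Finally, Lemma~\ref{le:close_geodesic_old} is then the special case $V_1 = D_\eta$, $s=0$, $t=\infty$, with $a_1$ absorbed into the constant $c$.
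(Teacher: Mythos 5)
Your proposal skips the key structural device that makes the argument close. The step from the $(M,a)$-good condition (which bounds $|\CZ_k^{B(0,3\delta/4)}|\le M2^{ak}$ through the conformal map $\varphi$) to an \emph{actual Euclidean covering} of $\eta$ by $O(M\delta^{-c(a+a_1)})$ balls of controlled radii is not automatic. Between two consecutive ``scale crossing'' times $\tau_{j-1}$ and $\tau_{j+1}$ the curve $\varphi(\eta)$ can wander far outside a bounded hyperbolic neighbourhood of $i2^j$, in which case the Euclidean preimage of that segment is not contained in any single ball of radius comparable to $\dist(\varphi^{-1}(i2^j),\partial D)$, and the covering count blows up. The paper controls this by introducing the extra event $F_a=\bigcap_{k,j\in\CZ_k}F_{j,a,k}$, which forces $\varphi(\eta[\tau_{j-1},\sigma_{j+1}])\subseteq 2^jB_{ak}$ at each scale; Lemma~\ref{le:close_geodesic_good_eta} proves the desired estimate \emph{on} $F_a$, and the general case is handled by a stopping-time/induction argument: let $T$ be the first time the $F_a$ condition is violated, let $T^R$ be the corresponding time for the reversal, observe (after some case analysis) that $T\le T^R$ on $F_a^c$, show $\p[T<\infty]=O(\delta^{a\zeta})$ by Lemma~\ref{lem:get_too_close_ubd}, and recurse on $\eta|_{[T,\infty]}$ and $\eta|_{[0,T^R]}$ over a binary tree $\CW$ of depth $\bestexp/(a\zeta)$. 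Your write-up simply asserts the covering exists and has the claimed cardinality; this is where the real work lives and it is not supplied by Lemma~\ref{le:dist_across_chain} or Lemma~\ref{le:variation_one_scale} alone.

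A secondary issue: even on the good event $F_a$, the paper does not apply Lemma~\ref{le:dist_across_chain} directly at a single scale $\wt{\delta}_l\asymp\delta^{1+a_1}$ as you suggest. It instead works scale-by-scale via Lemma~\ref{le:close_geodesic_one_scale}, choosing $\lambda=2^{-(k-k_\delta)/2}$ so that the contribution $\lambda^{\ddouble}+\lambda^{\epsexp}$ from scale $k$ is summable over $k\ge k_\delta$ against the bottleneck count $|\CZ_k|\le M2^{ak}$ (see~\eqref{eq:len_scale_j_sum}), and then treating scales $k\ge\log_2(\epsilon^{-1})$ separately via case~\eqref{it:short_segment}. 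Your heuristic ``each $\lambda_l\le 1$ so the sum is $O$(number of balls)'' is consistent with this, but the choice of $\lambda_l$ needs to \emph{depend on the scale} $k$ at which the ball lives, not on $\wt{\delta}_l$ alone; otherwise the probability bound from Lemma~\ref{le:dist_across_chain}, summed over all scales, need not close to $O(M\delta^{\bestexp-ca-ca_1})$. So the overall shape of the argument is right, but the two load-bearing pieces — the $F_a$ event with the ensuing induction, and the scale-dependent choice of $\lambda$ — are missing.
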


In the remainder of the subsection, we will use the following notation. Let $\varphi \colon D \to \h$ be a conformal transformation with $\varphi(x) = 0$ and $\varphi(y) = \infty$. For $a>0$, $m\in\R$, $k>0$ we denote
\begin{itemize}
\item $A_m = A(0,2^{m-1},2^m) \cap \h$,
\item $\tau_m = \inf\{ t>0 : \varphi(\eta(t)) \in \partial B(0,2^m) \}$,
\item $\sigma_m = \sup\{ t>0 : \varphi(\eta(t)) \in \partial B(0,2^m) \}$,
\item $B_{ak} = \{ z : d_{\h,\mathrm{hyp}}(z,i) \le 10ak \}$.\\
Note that $[-2^{4ak},2^{4ak}]\times[2^{-4ak},2^{4ak}] \subseteq B_{ak} \subseteq [-2^{15ak},2^{15ak}]\times[2^{-15ak},2^{15ak}]$ (but our definition of $B_{ak}$ has the advantage of being symmetric with respect to inversion).
\end{itemize}
Further, let
\[
D_{j,ak} = \left\{ z \in D \ :\ \parbox{.45\linewidth}{$|z-\varphi^{-1}(i2^j)| \le 2^{30ak}\dist(\varphi^{-1}(i2^j),\partial D) ,\\ \dist(z,\partial D) \ge 2^{-15ak}\dist(\varphi^{-1}(i2^j),\partial D)$} \right\}
\]
and note that $\varphi^{-1}(2^{j}B_{ak}) \subseteq D_{j,ak}$.

Let $F_{j,a,k}$ be the event that $\varphi(\eta([\tau_{j-1},\sigma_{j+1}])) \subseteq 2^{j}B_{ak}$. Let $F_a = \bigcap_{k\in\N,j\in\CZ_k} F_{j,a,k}$ where $\CZ_k$ is as defined in~\eqref{eq:bottleneck_def}.

\begin{lemma}
\label{le:close_geodesic_good_eta}
There exists $c>0$ (not depending on $M,a,a_1,\epsexp$) such that the following is true. Fix $M,a,a_1 > 0$ such that $a,a_1$ are sufficiently small (depending on $\epsexp$). 
Suppose $(D,x,y)$ is $(M,a)$-good and let $\CZ_k$ be as defined in~\eqref{eq:bottleneck_def}. Suppose also that $\CZ_k = \varnothing$ for $j < \log_2(\delta^{-1})$. Let $U \subseteq D_\eta$ be the $\delta^{1+a_1}$-neighborhood of $\eta$. 
Then
\[ \p\left[ G^U_{\wh{0},\wh{\infty}}(M\delta^{-c(a+a_1)})^c \cap F_a\right] = O(M\delta^{\bestexp-ca-ca_1}) . \]
\end{lemma}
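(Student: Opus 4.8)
The plan is to prove Lemma~\ref{le:close_geodesic_good_eta} by covering the curve $\eta$ with a finite collection of balls (indexed by the ``bottleneck scales'' $k\in\N$ and the points $j\in\CZ_k$), applying Lemma~\ref{le:dist_across_chain} inside each such ball at an appropriately chosen scale, and then chaining the resulting estimates via Lemma~\ref{le:concatenate_overlapping_intervals}. The overall structure mirrors the proof of Lemma~\ref{le:close_geodesic_old}, but keeping track of the event $G^U_{\wh 0,\wh\infty}(\cdot)$ rather than a bare distance bound, so that the monotonicity restriction~\eqref{it:mon_large_loops} and~\eqref{eq:approx_monotonicity_axiom} stay applicable.

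First I would fix the conformal map $\varphi\colon D\to\h$ and work in the coordinates $A_m$, $\tau_m$, $\sigma_m$, $B_{ak}$, $D_{j,ak}$ introduced just before the statement. On the event $F_a$ the segment $\eta([\tau_{j-1},\sigma_{j+1}])$ is contained in $\varphi^{-1}(2^jB_{ak})\subseteq D_{j,ak}$ for every $j\in\CZ_k$; since $(D,x,y)$ is $(M,a)$-good and $\CZ_k=\varnothing$ for $j<\log_2(\delta^{-1})$, we have $|\CZ_k|\le M2^{ak}$ and, writing $\rho_j=\dist(\varphi^{-1}(i2^j),\partial D)$, the scales $\rho_j\lesssim 2^{-(1-a)k}$ so that the pieces are genuinely small. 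For each $j\in\CZ_k$ I would then subdivide $\eta([\tau_{j-1},\sigma_{j+1}])$ into finitely many overlapping sub-segments each of Euclidean diameter at most $\rho_j^{1+a_1}$ (possible by e.g.\ Lemma~\ref{le:variation_one_scale} on a further ``good'' sub-event whose complement has $o^\infty(\delta)$ probability), pick a center $\wt z_l$ and radius $\wt\delta_l\asymp\rho_j^{1/(1+\innexp)}$ (with $\innexp$ chosen so that $\wt\delta_l^{1+\innexp}$ captures the sub-segment but $\wt\delta_l\le\delta^{1+a_1}$-type constraints and $B(\wt z_l,\wt\delta_l)\subseteq U$ hold), set $\lambda_l\asymp\wt\delta_l$, and invoke Lemma~\ref{le:dist_across_chain} with parameter $\lambda_l$. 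The number of sub-segments per $j$ is at most $\rho_j^{-O(a_1)}$, and over all $j\in\CZ_k$ and $k$ the number of balls is at most $\sum_k |\CZ_k|\,2^{O(a_1 k)}\lesssim \sum_k M2^{(a+O(a_1))k}$. Since each $\lambda_l\lesssim 2^{-k}$ and there are at most $M2^{(a+O(a_1))k}$ of them at scale $k$, the sum $\sum_l(1+\lambda_l^{\ddouble}+\lambda_l^{\epsexp})$ in~\eqref{eq:sum_across_intervals} is dominated by the count of balls on the smallest relevant scales, giving $r\lesssim M\delta^{-c(a+a_1)}$ for a universal $c$ (using $\ddouble,\epsexp>0$ to make the geometric sums in $\lambda_l$ converge). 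This establishes membership in $G^U_{\wh 0,\wh\infty}(M\delta^{-c(a+a_1)})$ on the intersection of all the $\wt G_{\wt z_l,\wt\delta_l,\lambda_l}$ events and the auxiliary subdivision events.

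For the probability bound I would take a union bound over the $\lesssim \sum_k M2^{(a+O(a_1))k}$ balls of the estimates $\p[\wt G_{\wt z_l,\wt\delta_l,\lambda_l}^c]=O((\wt\delta_l/\lambda_l)^{\bestexp})$ from Lemma~\ref{le:dist_across_chain}; with $\wt\delta_l/\lambda_l\asymp \rho_j^{\innexp/(1+\innexp)}$ comparable to a fixed small power of $2^{-k}$, this contributes $\sum_k M2^{(a+O(a_1))k}\cdot 2^{-c'\bestexp k}$ for some $c'>0$, which is $O(M\delta^{\bestexp-ca-ca_1})$ after enlarging $c$, provided $a,a_1$ are small compared to $\bestexp$ (which by hypothesis they can be, as $\bestexp\ge\ddouble$ is fixed). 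The auxiliary events (the subdivision into pieces of diameter $\le\rho_j^{1+a_1}$ via Lemma~\ref{le:variation_one_scale}, and Lemma~\ref{le:fill_ball}-type fill-in statements to control the number of sub-segments) each fail with probability $o^\infty(\delta)$ after a union bound over the $O(\delta^{-O(a)})$-many scales and points, so they are absorbed into the error term. Finally, the constraint $B(\wt z_l,\wt\delta_l)\subseteq U$ with $U$ the $\delta^{1+a_1}$-neighborhood of $\eta$ is what forces the relationship between $\wt\delta_l$, $\rho_j$ and $a_1$; on the event $F_a$ the curve stays within $2^jB_{ak}$ of $\varphi^{-1}(i2^j)$, so $\wt\delta_l$ can be taken of order $\rho_j^{1+O(a)}$ which is $\le\delta^{1+a_1}$ once $a_1$ is small relative to $a$ and we are past scale $\log_2(\delta^{-1})$.

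The main obstacle I expect is bookkeeping the interlocking exponent constraints: we need $\innexp$ (from Lemma~\ref{le:dist_across_chain}), $a$, $a_1$, and the radii $\wt\delta_l$ simultaneously small/comparable so that (i) $B(\wt z_l,\wt\delta_l)\subseteq U$, (ii) $\wt\delta_l^{1+\innexp}\ge\cserial\epsilon$ on the scales we use (handled by noting that segments of diameter $<\epsilon$ fall into case~\eqref{it:short_segment} of the definition of $G^U_{s,t}(r)$), (iii) the union bound $\sum_k M2^{(a+O(a_1))k}\cdot 2^{-c'\bestexp k}$ still beats $\delta^{\bestexp}$ up to the allowed $\delta^{-ca-ca_1}$ slack, and (iv) the monotonicity hypothesis~\eqref{it:mon_large_loops} genuinely applies to each region $U'_1$ produced, which requires the separation points $z_i$ to be $\cserial\epsilon$-separated --- exactly the condition built into $\wt G_{\wt z,\wt\delta,\lambda}$ via $|z_{i_1}-z_{i_2}|\ge\delta^{1+\innexp}\ge\cserial\epsilon$. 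Checking that these can all be arranged with a single universal $c$ (independent of $M,a,a_1,\epsexp$) is the delicate point, but it is a routine if tedious exponent chase once the geometric picture above is set up.
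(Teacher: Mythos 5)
Your overall blueprint — work in the bottleneck coordinates $A_m,\tau_m,\sigma_m,B_{ak},D_{j,ak}$, cover $\eta[\tau_{j-1},\sigma_{j+1}]$ at each $j\in\CZ_k$ by small balls on which Lemma~\ref{le:dist_across_chain} applies, sum over $j$ and $k$ using $|\CZ_k|\le M2^{ak}$, and chain via Lemma~\ref{le:concatenate_overlapping_intervals} — is indeed the same approach as the paper's (which routes through Lemma~\ref{le:close_geodesic_one_scale}, itself a packaged application of Lemma~\ref{le:dist_across_chain}). However, there is a genuine gap in your choice of the parameter $\lambda_l$, and as a consequence your union bound does not produce the claimed exponent $\bestexp$.

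You set $\wt\delta_l\asymp\rho_j^{1/(1+\innexp)}$ and $\lambda_l\asymp\wt\delta_l$, which gives $\wt\delta_l/\lambda_l\asymp 1$, so the bound $\p[\wt G_{\wt z_l,\wt\delta_l,\lambda_l}^c]=O((\wt\delta_l/\lambda_l)^{\bestexp})$ from Lemma~\ref{le:dist_across_chain} is vacuous. (You then write $\wt\delta_l/\lambda_l\asymp\rho_j^{\innexp/(1+\innexp)}$, which contradicts $\lambda_l\asymp\wt\delta_l$; but even taking that more favorable reading, the contribution at the dominant scale $k=k_\delta$ would be $M\delta^{\bestexp\innexp/(1+\innexp)-O(a+a_1)}$, which for small $\innexp$ is far larger than the target $M\delta^{\bestexp-c(a+a_1)}$, and no enlargement of $c$ fixes a deficit in the exponent of $\bestexp$ that is bounded away from $1$.) In addition, your radius $\wt\delta_l\asymp\rho_j^{1/(1+\innexp)}$ is \emph{larger} than $\rho_j\asymp\dist(\varphi^{-1}(i2^j),\partial D)$, so $B(\wt z_l,\wt\delta_l)\not\subseteq D$ and a fortiori $\not\subseteq U$; you in fact need $\wt\delta_l$ a shade \emph{smaller} than $\rho_j$, not larger.

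The missing ingredient is the paper's rescaling trick: at scale $k$, one works with the rescaled metric $\mettapprox{\lambda^{-1}\epsilon}{\cdot}{\cdot}{\Gamma}=\metapprox{\epsilon}{\lambda\cdot}{\lambda\cdot}{\lambda\Gamma}$ with $\lambda=2^{-(k-k_\delta)/2}$, apply Lemma~\ref{le:close_geodesic_one_scale} at the rescaled scale $\lambda^{-1}\rho_j=2^{-(k+k_\delta)/2}$, and invoke Lemmas~\ref{le:scaled_metric} and~\ref{le:median_scaling} to convert the normalizing factor $\mediant{\lambda^{-1}\epsilon}=\median[\lambda]{\epsilon}\le 2^{-\zeta(k-k_\delta)}\median{\epsilon}$ back into a factor $\lambda^{\ddouble}+\lambda^{\epsexp}$ for the original metric. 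Equivalently, in your direct language, one should take $\wt\delta_l\asymp\rho_j^{1+O(a+a_1)}$ (a shade below $\rho_j$) together with $\lambda_l\asymp 2^{-(k-k_\delta)/2}\asymp(\rho_j/\delta)^{1/2}$, which is \emph{not} comparable to $\wt\delta_l$; then $\wt\delta_l/\lambda_l\asymp 2^{-(k+k_\delta)/2+O((a+a_1)k)}$, the probability sum evaluates to $\sum_k M2^{ak}2^{-\bestexp(k+k_\delta)/2+O((a+a_1)k)}=O(M\delta^{\bestexp-O(a+a_1)})$ as needed, and $\sum_l\lambda_l^{\ddouble}$ still converges geometrically. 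You should also carry out the case $k\ge\log_2(\epsilon^{-1})$ explicitly: there you cannot apply Lemma~\ref{le:dist_across_chain}, and instead you split $\eta$ into pieces of Euclidean diameter $\le\epsilon$ via Lemma~\ref{le:variation_one_scale} and invoke case~\eqref{it:short_segment} of the definition of $G^U_{s,t}(r)$, as you only hint at in your last paragraph.
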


To prove Lemma~\ref{le:close_geodesic_good_eta}, we bound the distances across each scale $j$ separately. Without loss of generality, we take $j=0$. The following result follows by taking a union bound over the events from Lemma~\ref{le:dist_across_chain}.

\begin{lemma}\label{le:close_geodesic_one_scale}
There exists $c>0$ (not depending on $a,a_1,\epsexp$) such that the following is true. Fix $a,a_1 > 0$. Let $\delta = \dist(\varphi^{-1}(i),\partial D)$, and let $U \subseteq D_\eta$ be the $\delta 2^{-(a+a_1)k}$-neighborhood of $\eta$. 
Then
\[
\p\left[ G^U_{\tau_{-1},\tau_{1}}(\delta^{-2a_1}2^{c(a+a_1)k})^c \cap F_{0,a,k} \right] = O(\delta^{\bestexp-ca_1} 2^{c(a+a_1)k})
.
\]
\end{lemma}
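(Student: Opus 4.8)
The plan is to reduce Lemma~\ref{le:close_geodesic_one_scale} to a union bound over the events $\wt G_{\wt z,\wt\delta,\lambda}$ of Lemma~\ref{le:dist_across_chain}, using the fact that on the event $F_{0,a,k}$ the curve $\eta$ restricted to $[\tau_{-1},\tau_1]$ lives in a bounded conformal image $\varphi^{-1}(2^0 B_{ak}) \subseteq D_{0,ak}$, so in particular $\eta([\tau_{-1},\tau_1])$ has Euclidean diameter $\lesssim \delta 2^{15ak}$ and stays at distance $\gtrsim \delta 2^{-15ak}$ from $\partial D$. First I would set $\wt\delta = \delta 2^{-(a+a_1)k}$ (so that $U$ is the $\wt\delta^{1+\innexp'}$-neighborhood of $\eta$ for a suitable exponent comparison, where $\innexp$ is the fixed exponent from Lemma~\ref{le:dist_across_chain}; one checks $\wt\delta^{1+\innexp} \ge \delta 2^{-(a+a_1)(1+\innexp)k} $, and by choosing $a,a_1$ small relative to $\innexp$ we can arrange $\wt\delta^{1+\innexp} \le \delta 2^{-(a+a_1)k}$, matching the neighborhood in which $U$ is defined) and cover $\eta([\tau_{-1},\tau_1])$ by $O(2^{ck})$ balls $B(\wt z_l, \wt\delta^{1+\innexp})$ with $\wt z_l$ on $\eta$ and consecutive centers at Euclidean distance $\asymp \wt\delta^{1+\innexp}$, chosen so that the enlarged balls $B(\wt z_l,\wt\delta)$ are contained in $D_\eta \cap D_{0,ak} \subseteq U$ (this uses the lower bound $\dist(\eta,\partial D)\gtrsim \delta 2^{-15ak}$ on $F_{0,a,k}$ together with $a$ small). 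The number of balls needed is $O((\delta 2^{15ak}/\wt\delta^{1+\innexp})^2)$, which is $2^{c(a+a_1)k}$ for a constant $c$ depending on $\innexp$.

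Next I would apply Lemma~\ref{le:dist_across_chain} with $\lambda = 1$ (using $\wt\delta$ in place of $\delta$ there) to each ball $B(\wt z_l,\wt\delta)$, giving $\p[\wt G_{\wt z_l,\wt\delta,1}^c] = O(\wt\delta^{\bestexp}) = O((\delta 2^{-(a+a_1)k})^{\bestexp})$. Taking a union bound over the $2^{c(a+a_1)k}$ balls (whose total number is deterministically bounded once we are on $F_{0,a,k}$, or more precisely: off an event of probability $o^\infty(\delta)$ the space-filling $\SLE$ estimates of Lemma~\ref{le:fill_ball} and the variation bound of Lemma~\ref{le:variation_one_scale} guarantee that $\eta([\tau_{-1},\tau_1])$ can indeed be covered by $O(2^{c(a+a_1)k})$ such balls), we obtain that with probability $1 - O(\delta^{\bestexp} 2^{c(a+a_1)k})$, every ball in the cover satisfies $\wt G_{\wt z_l,\wt\delta,1}$. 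On this event, choosing the times $u_l$ to be the successive entry times of $\eta$ into the cover balls and taking $\lambda_l = 1$ for each $l$ (and case~\eqref{it:short_segment} whenever a segment has diameter $<\epsilon$), the chain of balls realizes the event $G^U_{\tau_{-1},\tau_1}(r)$ with $r = O(2^{c(a+a_1)k})$, since $\sum_l(1 + \lambda_l^{\ddouble} + \lambda_l^{\epsexp}) = \sum_l 3 = O(2^{c(a+a_1)k})$. Absorbing the factor $\delta^{-2a_1}$ (which is harmless since we may enlarge $c$) and relabeling constants gives the claimed bound $O(\delta^{\bestexp - ca_1} 2^{c(a+a_1)k})$. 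The hat decorations $G^U_{\tau_{-1},\tau_1}$ (as opposed to $G^U_{\widehat{\tau_{-1}},\widehat{\tau_1}}$) are the correct ones here because the endpoint balls of the chain may contain $\eta(u)$ for $u$ slightly outside $[\tau_{-1},\tau_1]$; this is exactly why the lemma statement uses the non-hatted version, and it is what allows the scales to be glued together in the proof of Lemma~\ref{le:close_geodesic_good_eta}.

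The main obstacle I anticipate is the bookkeeping around the exponents: one must verify that $\wt\delta^{1+\innexp} \ge \cserial\epsilon$ holds on the relevant range of scales (this is part of case~\eqref{it:segment_under_chain} in the definition of $G^U_{s,t}(r)$, and for scales $k < \log_2(\epsilon^{-1})$ — which is implicitly the regime of interest, with scales below $\epsilon$ handled separately by case~\eqref{it:short_segment} — it follows once $a,a_1$ are small enough compared to $\innexp$), and that the neighborhood $U$ in which we cover is contained in the $\delta 2^{-(a+a_1)k}$-neighborhood of $\eta$ as required. A secondary technical point is that the event $\wt G_{\wt z_l,\wt\delta,1}$ must be measurable with respect to, and its separation points lie within, the $\wt\delta$-neighborhood of $\eta([\tau_{-1},\tau_1])$, so that the resulting $G^U$ event genuinely depends only on the internal metrics within $U$; this is built into Lemma~\ref{le:dist_across_chain} (the points $z_i$ lie in $A(\wt z,\wt\delta^{1+\innexp},\wt\delta)$ and the regions $U'_1$ are contained in the corresponding component of $\Upsilon_\Gamma$), so it suffices to check that $B(\wt z_l,\wt\delta) \subseteq U$, which is the geometric inclusion noted above. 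Finally, the failure probability from the covering lemmas (Lemmas~\ref{le:fill_ball} and~\ref{le:variation_one_scale}) is $o^\infty(\delta)$ and hence negligible compared to the stated error; this is where one uses that on $F_{0,a,k}$ the relevant Euclidean and conformal scales are comparable up to factors $2^{O(ak)}$, so the hypotheses of those lemmas apply with $r \asymp \wt\delta$.
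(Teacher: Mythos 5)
Your approach is essentially the paper's: union bound on the events of Lemma~\ref{le:dist_across_chain} at scale $\wt\delta = \delta 2^{-(ca+a_1)k}$, then use Lemma~\ref{le:variation_one_scale} to cover $\eta[\tau_{-1},\tau_1]$ by a controlled number of balls. One point to tighten: the union bound must be taken over a \emph{deterministic} family of ball centers — the paper sums $\p[(\wt G_{\wt z,\wt\delta,1})^c]$ over all lattice points $\wt z \in D_{0,ak}\cap \wt\delta^{1+2a_1}\Z^2$, and it is the cardinality of this lattice family (not the number of balls actually needed to cover the random curve) that produces the $\delta^{-ca_1}$ correction in both the probability bound and the argument of $G^U$. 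Centering the balls on $\eta$ as you propose makes the family random and cannot be union-bounded directly; and the ``absorbing $\delta^{-2a_1}$'' remark should instead be the explicit accounting of this lattice count. Once the argument is rephrased with the deterministic lattice and the covering event $E$ from Lemma~\ref{le:variation_one_scale} kept separate, your reasoning matches the paper's proof.
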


\begin{proof}
Throughout the proof, $c$ denotes a constant whose value may change from line to line.

Recall that on the event $F_{0,a,k}$ we necessarily have $\eta[\tau_{-1},\tau_1] \subseteq D_{0,ak}$. Let $\wt{G}_{\wt{z},\wt{\delta},1}$ denote the event from Lemma~\ref{le:dist_across_chain} with $\wt{\delta} = \delta 2^{-(ca+a_1)k}$. We take a union bound over $\wt{z} \in D_{0,ak} \cap \wt{\delta}^{1+2a_1}\Z^2$, so that
\[ \sum_{\wt{z} \in D_{0,ak} \cap \wt{\delta}^{1+2a_1}\Z^2} \p[(\wt{G}_{\wt{z},\wt{\delta},1})^c] \lesssim \wt{\delta}^{\bestexp-ca_1}2^{c(a+a_1)k} . \]
The curve segment $\eta[\tau_{-1},\tau_1]$ is covered by the balls $B(\wt{z},\wt{\delta}^{1+a_1})$. We want to divide it into a bounded number of sub-segments $\eta([s_i,t_i])$ each contained in some $B(\wt{z},\wt{\delta}^{1+a_1})$. To this end, let $E$ be the event that there are $\wt{\delta}^{-2a_1}2^{cak}$ time points $\tau_{-1} = u_1 < u_2 < \cdots < u_{\wt{\delta}^{-2a_1}2^{cak}} = \tau_{1}$ such that $\diam(\eta[u_l,u_{l+1}]) \le \wt{\delta}^{1+a_1}$ for each $l$. By Lemma~\ref{le:variation_one_scale}, we have $\p[E^c \cap F_{0,a,k}] = o^\infty(\wt{\delta})$.

On the event $E$, for each $l$, if we find the closest $\wt{z} \in D_{ak} \cap \wt{\delta}^{1+2a_1}\Z^2$ to $\eta(u_l)$, then $\eta[u_l,u_{l+1}] \subseteq B(\wt{z},\wt{\delta}^{1+a_1})$.
\end{proof}

\begin{proof}[Proof of Lemma~\ref{le:close_geodesic_good_eta}]
Write $k_\delta = \log_2(\delta^{-1})$. We handle the two cases $k_\delta \le k < \log_2(\epsilon^{-1})$ and $k \ge \log_2(\epsilon^{-1})$ separately (when $\epsilon=0$, the latter case is not needed, and the continuity of $\met{\cdot}{\cdot}{\Gamma}$ is used instead). Throughout the proof, $c$ denotes a constant whose value may change from line to line.

Let $k_\delta \le k < \log_2(\epsilon^{-1})$ and $j \in \CZ_k$. On the scale $j$, we apply Lemma~\ref{le:close_geodesic_one_scale} to the approximate metric $\mettapprox{\lambda^{-1}\epsilon}{\cdot}{\cdot}{\Gamma} = \metapprox{\epsilon}{\lambda\cdot}{\lambda\cdot}{\lambda\Gamma}$ where $\lambda = 2^{-(k-k_\delta)/2}$. Note that $\act{\lambda^{-1}\epsilon} = \ac{\epsilon}$, and $\mediant{\lambda^{-1}\epsilon} = \median[\lambda]{\epsilon} \le 2^{-\zeta(k-k_\delta)}\median{\epsilon}$ for some $\zeta>0$ by Lemma~\ref{le:scaled_metric} and~\ref{le:median_scaling}.

Let
\[ G_j = G^U_{\tau_{j-1},\tau_{j+1}}(2^{c(a+a_1)k}(\lambda^{\ddouble}+\lambda^{\epsexp})) . \]
By Lemma~\ref{le:close_geodesic_one_scale} (applied with $\delta = \lambda^{-1}2^{-k} = 2^{-(k+k_\delta)/2}$) we have the estimate $\p[G_j^c \cap F_{j,a,k}] = O(2^{-\bestexp(k+k_\delta)/2+c(a+a_1)k})$ and hence
\begin{equation}\label{eq:pr_scale_j_sum}
\sum_{k_\delta \le k < \log_2(\epsilon^{-1})} \sum_{j\in \CZ_k} \p[ G_j^c \cap F_{a} ] 
\le \sum_{k_\delta \le k < \log_2(\epsilon^{-1})} |\CZ_k| O(2^{-\bestexp (k+k_\delta)/2 +c(a+a_1)k}) 
= O(M\delta^{\bestexp-c(a+a_1)})
\end{equation}
where we have used the assumption $|\CZ_k| \leq M2^{ak}$.

We now bound the distances. We have
\begin{equation} \begin{split}\label{eq:len_scale_j_sum}
&\sum_{k_\delta \le k < \log_2(\epsilon^{-1})} \sum_{j\in \CZ_k} 2^{c(a+a_1)k}(\lambda^{\ddouble}+\lambda^{\epsexp}) \\
&\quad \le \sum_{k_\delta \le k < \log_2(\epsilon^{-1})} M 2^{c(a+a_1)k}(2^{-\zeta(k-k_\delta)}+2^{-\epsexp(k-k_\delta)/2}) \\
&\quad \lesssim M\delta^{-c(a+a_1)} .
\end{split} \end{equation}
where we are now assuming $c(a+a_1) < \epsexp/2 \wedge \zeta$.

It remains to handle $k \ge \log_2(\epsilon^{-1})$. Let $E_j$ be the event that there are $2^{cak}$ time points $\tau_{j-1} = t_1 < t_2 < \cdots < t_{2^{cak}} = \tau_{j+1}$ such that $\diam(\eta[t_l,t_{l+1}]) \le 2^{-k}$ for each $l$. By Lemma~\ref{le:variation_one_scale}, $\p[E_j^c \cap F_a] = o^\infty(2^{-ak})$ and hence $\sum_{k \ge k_\delta} \sum_{j\in\CZ_k} \p[E_j^c \cap F_a] \le M o^\infty(\delta)$.

Recall that we assumed $|\CZ_{\log_2(\epsilon^{-1})}| \le M\epsilon^{-a}$. Pick $j_1,j_2 \in \CZ_{\log_2(\epsilon^{-1})}$ such that $j_1+1,j_1+2,\ldots,j_2-1 \in \bigcup_{k \ge \log_2(\epsilon^{-1})} \CZ_k$, and consider the segment $\eta[\tau_{j_1},\tau_{j_2}]$. On the event $\bigcap_{j=j_1,\ldots,j_2} E_j$ we can find $M\epsilon^{-ca}$ time points $\tau_{j_1} = t_1 < t_2 < \cdots < t_{M\epsilon^{-ca}} = \tau_{j_2}$ such that $\diam(\eta[t_l,t_{l+1}]) \le \epsilon$ for each $l$. We are in case~\eqref{it:short_segment} in the definition of the event $G^U_{0,\infty}(r)$, and in particular we have $G^U_{\wh{0},\wh{\infty}}(r)$. Considering all such pairs of $(j_1,j_2)$, the total number of such intervals is bounded by $M\epsilon^{-ca}$. Combining this with the previous part~\eqref{eq:len_scale_j_sum}, we conclude that $G^U_{\wh{0},\wh{\infty}}(M\delta^{-c(a+a_1)})$ holds.
\end{proof}

Now we prove the general statement of Lemma~\ref{le:close_geodesic}. The idea is roughly as follows. We stop the SLE curve $\eta$ whenever the condition for event $F_a$ from Lemma~\ref{le:close_geodesic_good_eta} is violated, and consider the remaining part of the SLE curve. On the event $\wh{G}_{M,a}$, the remaining domain is still $(M,a)$-good. There is a positive conditional probability that the remainder of the curve meets the condition $F_a$, and the distance across that part can be bounded. We repeat the same for the time reversal of $\eta$. Suppose again that the distance across the remaining part is bounded. If the two ``good'' parts of $\eta$ overlap, we can concatenate them. In order make sure the two parts overlap, we will need to stop the SLE curve a bit more often.

We define a stopping time $T$ for $\eta$ as follows. With the notation $\tau_m$, $\sigma_m$ defined at the beginning of the section, we let $T \ge 0$ be the first time where one of the following happens for some $k\in\N$ and $j\in\CZ_k$.
\begin{itemize}
\item $\sigma_{j-ak} > \tau_{j-1}$, i.e.\@ $\varphi(\eta)$ revisits $\partial B(0,2^{j-ak})$ after hitting $\partial B(0,2^{j-1})$,
\item $\sigma_{j+ak} > \tau_{j+2ak}$, i.e.\@ $\varphi(\eta)$ revisits $\partial B(0,2^{j+ak})$ after hitting $\partial B(0,2^{j+2ak})$,
\item $\varphi(\eta)$ exits $2^j B_{ak}$ during $[\tau_{j-1},\tau_{j+2ak}]$.
\end{itemize}
We let $T^R$ denote the time corresponding to the stopping time for the time-reversal of $1/\varphi(\eta)$. More precisely, let $\ol{\eta}$ be the time-reversal of $\eta$ so that $\ol{\eta}$ is an $\SLE_\kappa$ in $(D,y,x)$. Let $\ol{T}$ be the first time where one of the items above occur for $1/\varphi(\ol{\eta})$, and let $T^R$ denote the time where $\eta(T^R) = \ol\eta(\ol{T})$.

Note that when the event $F_a^c$ occurs, then we must have $T \le T^R$. Indeed, when $\varphi(\eta[\tau_{j-1},\sigma_{j+1}])$ exits $2^j B_{ak}$, the following scenarios can occur:
\begin{itemize}
\item[(a)] $\sigma_{j+1} > \tau_{j+2ak}$. In this case $\varphi(\eta)$ will first revisit $\partial B(0,2^{j+ak})$ after $\partial B(0,2^{j+2ak})$, and $\varphi(\ol{\eta})$ will revisit $\partial B(0,2^{j+ak})$ after $\partial B(0,2^{j+1})$ before the corresponding time (i.e.\ $1/\varphi(\ol{\eta})$ revisits $\partial B(0,2^{-j-ak})$ after $\partial B(0,2^{-j-1})$).
\item[(b)] $\sigma_{j-2ak} > \tau_{j-1}$. By symmetry, this is case (a) for the time-reversal $\ol{\eta}$.
\item[(c)] In case $\sigma_{j+1} \le \tau_{j+2ak}$ and $\sigma_{j-2ak} \le \tau_{j-1}$, then $\varphi(\eta[\tau_{j-1},\tau_{j+2ak}])$ exits $2^j B_{ak}$. Then $T$ occurs before the first time when $\varphi(\eta[\tau_{j-1},\sigma_{j+1}])$ exits $2^j B_{ak}$. Since the definition of $F_a$ is symmetric for $\eta$ and $\ol{\eta}$, the time $T^R$ will be at or after the last time where $\varphi(\eta[\tau_{j-1},\sigma_{j+1}])$ exits $2^j B_{ak}$.
\end{itemize}

\begin{lemma}
\label{lem:get_too_close_ubd}
With the notation above, there exists $\zeta >0$ such that $\p[T<\infty] = O(\delta^{a\zeta})$.
\end{lemma}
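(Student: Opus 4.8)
The plan is to combine conformal invariance of $\SLE_\kappa$ with a union bound over bottleneck scales. The key observation is that $\varphi(\eta)$ is a chordal $\SLE_\kappa$ in $(\h,0,\infty)$, so each of the three conditions in the definition of $T$ is an event concerning an ordinary $\SLE_\kappa$ in $\h$ evaluated at the (deterministic) scales $j\in\CZ_k$, with no further dependence on the domain $D$. Since $\{T<\infty\}$ is the union over $k\in\N$ and $j\in\CZ_k$ of these bad events, and since the $(M,a)$-good hypothesis gives $|\CZ_k|\le M2^{ak}$ and $\CZ_k=\varnothing$ for $k<\log_2(\delta^{-1})$ (recall~\eqref{eq:bottleneck_def}), it suffices to prove that each bad event at a scale $(j,k)$ has probability $O(2^{-\beta ak})$ for a constant $\beta=\beta(\kappa)>1$, and then to sum:
\[
 \p[T<\infty]\le\sum_{k\ge\log_2(\delta^{-1})}|\CZ_k|\cdot O(2^{-\beta ak})\le M\sum_{k\ge\log_2(\delta^{-1})}2^{(1-\beta)ak}=O\big(M\delta^{(\beta-1)a}\big),
\]
which gives the claim with $\zeta=\beta-1$.

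First I would treat the re-entry events $\{\sigma_{j-ak}>\tau_{j-1}\}$ and $\{\sigma_{j+ak}>\tau_{j+2ak}\}$. In both cases $\varphi(\eta)$ is forced to cross an annulus of conformal modulus $\asymp ak$ "backwards": after first exiting a large ball it re-enters a ball smaller by a factor $\asymp 2^{ak}$. Applying the domain Markov property of $\SLE_\kappa$ successively at the $\asymp ak$ intervening dyadic scales, together with the elementary fact that once an $\SLE_\kappa$ in $\h$ has exited $B(0,R)$ it re-enters $B(0,R/C)$ with conditional probability bounded away from $1$ uniformly over the past (for $C$ a large enough absolute constant, by the domain Markov property and Beurling's estimate), shows that such a backward crossing has probability $O(2^{-\beta ak})$. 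For $\{\sigma_{j+ak}>\tau_{j+2ak}\}$ I would first apply reversibility of $\SLE_\kappa$ (Lemma~\ref{le:sle_reversal}) together with the inversion $z\mapsto-1/z$, under which $\varphi(\eta)$ is mapped to another $\SLE_\kappa$ in $(\h,0,\infty)$ and circles about $0$ to circles about $0$; this reduces the event to the same form as the first one.

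Next I would handle the event that $\varphi(\eta)$ exits $2^jB_{ak}$ during $[\tau_{j-1},\tau_{j+2ak}]$. During this time interval $\varphi(\eta)$ stays in $\ol{B(0,2^{j+2ak})}$, and on the complement of the first re-entry event it also stays outside $B(0,2^{j-ak})$; since $2^jB_{ak}$ is a Euclidean disc of radius $\asymp 2^{j+cak}$ whose lowest point is at height $\asymp 2^{j-cak}$, an elementary computation shows that the part of the annulus $A(0,2^{j-ak},2^{j+2ak})\cap\h$ lying outside $2^jB_{ak}$ is contained in a $2^{j-c'ak}$-neighbourhood of $\R$. Hence this bad event forces $\varphi(\eta)$ to come within relative distance $2^{-c'ak}$ of $\R$ while at Euclidean scale between $2^{j-ak}$ and $2^{j+2ak}$; by the boundary proximity estimate for $\SLE_\kappa$ (with positive exponent $8/\kappa-2$ since $\kappa<4$, cf.\ \cite{sz-boundary-proximity}), summed over the $O(ak)$ dyadic scales in the relevant range, this has probability $O(2^{-\beta ak})$. (The same reasoning, applied to the time-reversal $\ol\eta$, bounds $\p[T^R<\infty]$.)

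The step I expect to be the main obstacle is ensuring that the per-scale exponent $\beta$ can be taken strictly larger than $1$, so that the decay of each bad event beats the bound $|\CZ_k|\le M2^{ak}$ on the number of bottleneck heights. This is precisely why the forbidden windows in the definition of $T$ are taken to be multiples of $ak$ with sufficiently large constants, and why the $\SLE$ estimates above are applied over annuli of conformal modulus proportional to $ak$: after combining these large moduli with the Beurling and boundary-proximity exponents (and, where needed, chaining the per-scale estimates via the Markov property in the spirit of Lemma~\ref{lem:gff_independence_across_scales}), one gains a power $2^{-\beta ak}$ with $\beta>1$. Once the three per-scale bounds are established, the geometric sum displayed above closes the argument.
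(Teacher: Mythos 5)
Your overall structure is right: conformal invariance reduces everything to events for an ordinary chordal $\SLE_\kappa$ in $(\h,0,\infty)$; the event $\{T<\infty\}$ is a union over $k$ and $j\in\CZ_k$; and since $|\CZ_k|\le M2^{ak}$ with $\CZ_k=\varnothing$ for $k<\log_2(\delta^{-1})$, it suffices to show that each per-scale bad event has probability $O(2^{-\beta ak})$ for some $\beta>1$ and to sum the resulting geometric series. You also correctly identify \cite{sz-boundary-proximity} as the input for the third (boundary-proximity) condition. This matches the structure the paper's proof has in mind.

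The gap you flag yourself — establishing $\beta>1$ for the re-entry conditions — is a genuine one, and your proposed route does not close it. Iterating the domain Markov property over $\asymp ak$ dyadic scales and invoking that the per-scale re-entry probability is ``bounded away from $1$ uniformly over the past'' yields only a geometric decay $(1-c)^{ak/\log_2 C}=2^{-\beta ak}$ with $\beta=-\log_2(1-c)/\log_2 C$, and nothing in this argument forces $\beta>1$; moreover Beurling's estimate is a statement about Brownian motion and does not transfer directly to an $\SLE_\kappa$ curve in a way that pins down a quantitative per-scale escape probability. The paper instead cites \cite[Theorem~1.1]{fl-sle-transience}, which is precisely the escape-probability theorem: for chordal $\SLE_\kappa$ in $(\h,0,\infty)$, the probability of returning to $B(0,\epsilon)$ after leaving $B(0,1)$ is $\asymp\epsilon^{8/\kappa-1}$, and for $\kappa\in(2,4)$ this exponent $8/\kappa-1\in(1,3)$ is exactly what you need to beat the $2^{ak}$ count of bottleneck scales. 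Your Markov/Beurling heuristic should be replaced by this citation. (Also, the reversibility you need for the second re-entry condition is plain chordal $\SLE_\kappa$ reversibility for $\kappa\le 4$, not Lemma~\ref{le:sle_reversal}, which concerns a specific $\SLE_\kappa(\rho_1;\rho_2)$.)

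One further caution: for the third condition, you should check more carefully that the geometric constant $c'$ entering the relative distance $2^{-c'ak}$ to $\R$ (which is governed by the hyperbolic radius $10ak$ in the definition of $B_{ak}$) is large enough that $c'(8/\kappa-2)>1$ throughout the full range $\kappa\in(2,4)$; since $8/\kappa-2\to 0$ as $\kappa\to 4^-$, this does not hold for every constant $c'$, so the exact bookkeeping — rather than a back-of-envelope appeal to ``large moduli'' — matters here.
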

\begin{proof}
This follows from \cite{sz-boundary-proximity} and \cite[Theorem~1.1]{fl-sle-transience}.
\end{proof}

\begin{proof}[Proof of Lemma~\ref{le:close_geodesic}]
We are going to prove the result by iteratively using Lemma~\ref{le:close_geodesic_good_eta} and Lemma~\ref{lem:get_too_close_ubd}.

For each $n \in \N$ we let $\CW_n = \{0,1\}^n$ be the set of binary words $\omega$ of length $n$ and let $\CW = \cup_n \CW_n$.  Given $\omega \in \CW$, we define an arc of $\eta$ which we will call $\eta_\omega$ inductively as follows. 
\begin{itemize}
\item Let us first give the definition of $\eta_0$.  Let $T$ be the stopping time described in the paragraph above. If $T=\infty$, we take $\eta_0 = \varnothing$. If $T<\infty$, then we let $\eta_0 = \eta\big|_{[T,\infty]}$.
\item Let us next give the definition of $\eta_1$. Let $T^R$ denote the reverse stopping time described in the paragraph above. If $T^R = 0$ (equivalently $\ol{T} = \infty$), we take $\eta_1 = \varnothing$. If $T^R > 0$, then we let $\eta_1 = \eta\big|_{[0,T^R]}$.
\item We now inductively define $\eta_\omega$ for general $\omega \in \CW$. Suppose that we have defined $\eta_\omega$ for all $\omega \in \CW_n$. We then set $\eta_{\omega 0} = (\eta_\omega)_0$ and $\eta_{\omega 1} = (\eta_\omega)_1$.
\end{itemize}
Further, we set $D_\omega = D \setminus (\eta \setminus \eta_\omega)$.

We note that Lemma~\ref{lem:get_too_close_ubd} implies there exists a constant $\zeta > 0$ so that for each $\omega \in \CW_n$ we have that
\begin{equation}
\label{eqn:eta_omega_non_empty}
\p[ \eta_\omega \neq \varnothing] = O(\delta^{a\zeta n}).
\end{equation}
Let $N$ be the largest $n \geq 0$ so that there exists $\omega \in \CW_n$ with $\eta_\omega \neq \varnothing$.  By taking a union bound over $\omega \in \CW_n$, we see from~\eqref{eqn:eta_omega_non_empty} that
\begin{equation}
\label{eqn:eta_omega_w_n_non_empty}
\p[ N \geq n] = O(\delta^{a\zeta n}).
\end{equation}

We are going to prove by induction on $n \geq 0$ that (up to multiplicative constants depending on $n$)
\begin{equation}
\label{eqn:g_and_n_bound}
\p[G^U_{\wh{0},\wh{\infty}}(M\delta^{-c(a+a_1)})^c, \wh{G}_{M,a}, \wh{E}_\delta, N \leq n] = O(M\delta^{\bestexp-ca-ca_1}) \quad\text{as}\quad \delta \to 0.
\end{equation}
We note that this indeed proves the result by taking $n \geq \bestexp/(a\zeta)$ as then~\eqref{eqn:eta_omega_w_n_non_empty} and~\eqref{eqn:g_and_n_bound} together imply that
\begin{align*}
\p[G^U_{\wh{0},\wh{\infty}}(M\delta^{-c(a+a_1)})^c \cap \wh{G}_{M,a} \cap \wh{E}_\delta] 
&\leq \p[G^U_{\wh{0},\wh{\infty}}(M\delta^{-c(a+a_1)})^c, \wh{G}_{M,a}, \wh{E}_\delta, N \leq n] + \p[N > n] \\
&= O(M\delta^{\bestexp-ca-ca_1}) \quad\text{as}\quad \delta \to 0.
\end{align*}

We now turn to prove~\eqref{eqn:g_and_n_bound}. In case $n=0$, we have seen in the discussion above that $\{N=0\} \subseteq F_a$. Hence, the result in the case that $n=0$ is Lemma~\ref{le:close_geodesic_good_eta}. 

Suppose that we have proved~\eqref{eqn:g_and_n_bound} for some $n \geq 0$.  We now aim to show that~\eqref{eqn:g_and_n_bound} holds with $n+1$ in place of $n$. In case we are on $F_a$, then we resort again to Lemma~\ref{le:close_geodesic_good_eta}. In case we are on $F_a^c$, we know from the discussion above that $0 < T \le T^R < \infty$. Now $\eta_0$ (resp.\ $\eta_1$) is an \slek{} in $D_0$ (resp.\ $D_1$). On the event $\wh{G}_{M,a}$ the $(M,a)$-good condition continues to hold for $\eta_0$ and $\eta_1$. Note also that $\sup_t\dist(\eta_0(t),\partial D_0) \le \sup_t\dist(\eta(t),\partial D) \le \delta$ on the event $\wh{E}_\delta$ (and likewise for $\eta_1$). Hence, the induction hypothesis implies that
\[
\p\left[ G^U_{\wh{T},\wh{\infty}}(M\delta^{-c(a+a_1)})^c ,\, \wh{G}_{M,a} ,\, \wh{E}_\delta ,\, N\le n+1 \right] = O(M\delta^{\bestexp-ca-ca_1}) .
\]
and
\[
\p\left[ G^U_{\wh{0},\wh{T^R}}(M\delta^{-c(a+a_1)})^c ,\, \wh{G}_{M,a} ,\, \wh{E}_\delta ,\, N\le n+1 \right] = O(M\delta^{\bestexp-ca-ca_1}) .
\]
Since $T \le T^R$, we have
\[
G^U_{\wh{T},\wh{\infty}}(M\delta^{-c(a+a_1)}) \cap G^U_{\wh{0},\wh{T^R}}(M\delta^{-c(a+a_1)}) \subseteq 
G^U_{\wh{0},\wh{\infty}}(2M\delta^{-c(a+a_1)})
\]
which finishes the proof.
\end{proof}

\begin{remark}
The proof shows actually that in the setup of Lemmas~\ref{le:close_geodesic} and~\ref{le:close_geodesic_good_eta}
\[ \p\left[ G^U_{\wh{0},\wh{\infty}}(M\delta^{-c(a+a_1)})^c \cap \wh{G}_{M,a} \cap \wh{E}_\delta \cap F_a^c \right] = O(M\delta^{\bestexp+a\zeta-ca-ca_1}) \quad\text{as}\quad \delta \to 0 \]
where $\zeta>0$ is the exponent from Lemma~\ref{lem:get_too_close_ubd}.
\end{remark}

We state the following variant of Lemma~\ref{le:close_geodesic} which replaces the event $\wh{E}_\delta$ by a weaker condition. Namely, the proof shows that when $\wh{E}_\delta$ is relaxed, we can still cover the parts of $\eta$ that are within Euclidean distance $\delta$ to $\partial D$. We will also use it later to get improved bounds for smaller regions of the domain such as the initial and final segments as well as narrow areas (see Corollary~\ref{co:close_geodesic_small_parts} below).

\begin{lemma}\label{le:close_geodesic_general}
There exists $c>0$ (not depending on $M,a,a_1,\epsexp$) such that the following is true. Fix $M,a,a_1 > 0$ such that $a,a_1$ are sufficiently small (depending on $\epsexp$). 
Let $\wh{G}_{M,a}$ be the event that for every $0 \le s < t \le \infty$, the domain $D \setminus \eta([0,s]\cup[t,\infty))$ is $(M,a)$-good. Let $U \subseteq D_\eta$ be the $\delta^{1+a_1}$-neighborhood of $\eta$.

Let $G_{\delta}$ denote the event that there exists a finite collection of intervals $[s_i,t_i] \subseteq [0,\infty]$ such that
\begin{itemize}
\item if $\dist(\eta(t),\partial D) \le \delta$, then $t \in \bigcup_i [s_i,t_i]$,
\item $\bigcap_i G^U_{s_i,t_i}(r_i)$ holds for some $(r_i)$ with
\[
\sum_i r_i \le M\delta^{-c(a+a_1)} .
\]
\end{itemize}
Then
\[ \p[ G_{\delta}^c \cap \wh{G}_{M,a} ] = O(M\delta^{\bestexp-ca-ca_1}) \quad\text{as}\quad \delta \to 0.\]
\end{lemma}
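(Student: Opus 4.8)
The plan is to adapt the proofs of Lemmas~\ref{le:close_geodesic_good_eta} and~\ref{le:close_geodesic} essentially verbatim, the only substantive change being that, in the absence of the hypothesis $\wh E_\delta$, we do not cover all of $\eta$ but only the compact set $J=\{t\in[0,\infty]:\dist(\eta(t),\partial D)\le\delta\}$. The guiding observation is that in the proof of Lemma~\ref{le:close_geodesic_good_eta} the assumption $\CZ_k=\varnothing$ for $k<\log_2(\delta^{-1})$ is used \emph{only} to deduce that the chain-covering produced at the bottleneck scales $k\ge\log_2(\delta^{-1})$ exhausts the whole curve; without that assumption, the very same construction still covers exactly those portions of $\eta$ that run near the ``small'' bottleneck scales $j\in\CZ_k$ with $2^{-k}\lesssim\delta$ (together with the two tails near $x$ and $y$), and on the event $\wh G_{M,a}$ this is precisely $J$.

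First I would isolate the regularity event. Let $F_a$ be the event defined before Lemma~\ref{le:close_geodesic_good_eta}, but with the intersection restricted to bottleneck scales $k\ge\log_2(\delta^{-1})$ (failures at coarser scales correspond to parts of $\eta$ at distance $\gg\delta$ from $\partial D$, which do not enter $J$). Exactly as in the proof of Lemma~\ref{le:close_geodesic}, I would stop $\eta$ at the time $T$ at which $F_a$ first fails, note that on $\wh G_{M,a}$ the remaining domain $D\setminus\eta([0,T])$ is again $(M,a)$-good, and recurse; the number of recursions is controlled by Lemma~\ref{lem:get_too_close_ubd} (whose estimate is at the local scale $2^{-k}\le\delta$ of the failing bottleneck and hence applies), and, since $T\le T^R$, the coverings produced from the two ends overlap and concatenate. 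The only simplification compared to Lemma~\ref{le:close_geodesic} is that the recursion no longer uses $\wh E_\delta$: for a sub-arc $\eta_\omega$ in its sub-domain $D_\omega$, since $D_\omega\subseteq D$ one has $\dist(\cdot,\partial D_\omega)\le\dist(\cdot,\partial D)$ on $D_\omega$, so a covering of the part of $\eta_\omega$ within $\delta$ of $\partial D_\omega$ automatically covers the part within $\delta$ of $\partial D$. This reduces matters to the event $F_a$, with an error of $O(M\delta^{\bestexp-ca-ca_1})$.

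On $F_a$, I would run the argument of Lemma~\ref{le:close_geodesic_good_eta} over the scales $k\ge\log_2(\delta^{-1})$ only. Using the $(M,a)$-good structure and the Koebe distortion theorem for $\varphi\colon D\to\h$, any $t$ with $\dist(\eta(t),\partial D)\le\delta$ forces $\varphi(\eta(t))$ into $B(0,2^{j_x})$ for the scale $j_x$ near $x$ at which the domain first attains width $\asymp\delta$, or outside $B(0,2^{j_y})$ for the analogous scale near $y$, or into a box $2^jB_{ak}$ with $j\in\CZ_k$ and $2^{-k}\lesssim\delta$; on $F_a$ the corresponding curve segments $[0,\tau_{j_x}]$, $[\sigma_{j_y},\infty]$ and $[\tau_{j-1},\sigma_{j+1}]$ are contained in the near-boundary regions $D_{j,ak}$ (resp.\ the analogous regions at the ends), which are themselves $(M,a)$-good. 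To each such segment I would apply Lemma~\ref{le:close_geodesic_one_scale} — with the local scale $2^{-k}$ (or $\delta$, for the two tails) playing the role of $\delta$ there, and with the weight $\lambda_l=2^{-(k-\log_2(\delta^{-1}))/2}$ as in the proof of Lemma~\ref{le:close_geodesic_good_eta} — to produce an interval $[s_i,t_i]$ and a budget $r_i$ with $G^U_{s_i,t_i}(r_i)$, off an event of the appropriate probability; scales $2^{-k}<\epsilon$ are absorbed into case~\eqref{it:short_segment} of $G^U_{s,t}(r)$ via Lemma~\ref{le:variation_one_scale} and the bound $|\CZ_{\log_2(\epsilon^{-1})}|\le M\epsilon^{-a}$, exactly as there. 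Summing over $k\ge\log_2(\delta^{-1})$ and $j\in\CZ_k$ and using $|\CZ_k|\le M2^{ak}$, the total failure probability is $O(M\delta^{\bestexp-ca-ca_1})$ by the computation of~\eqref{eq:pr_scale_j_sum}, and on its complement $\sum_i r_i\lesssim M\delta^{-c(a+a_1)}$ by the computation of~\eqref{eq:len_scale_j_sum}. Finiteness of the collection $\{[s_i,t_i]\}$ follows because $\sum_{\log_2(\delta^{-1})\le k<\log_2(\epsilon^{-1})}|\CZ_k|<\infty$ and the sub-$\epsilon$ scales are grouped into finitely many intervals as in Lemma~\ref{le:close_geodesic_good_eta}; and since each $G^U_{s_i,t_i}(r_i)$ depends only on the internal metrics within the $\delta^{1+a_1}$-neighborhood $U$ of $\eta$, intersecting them over $i$ is harmless and the budgets add, yielding $G_\delta$.

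I expect the main obstacle to be the bookkeeping in the decomposition step: making rigorous, from the definition of an $(M,a)$-good domain and the definition~\eqref{eq:bottleneck_def} of $\CZ_k$, that the part of $\eta$ within $\delta$ of $\partial D$ meets only the bottleneck scales with $2^{-k}\lesssim\delta$ (plus the two ends) and is contained in the associated regions $D_{j,ak}$ — this is where one uses that genuinely $(M,a)$-good domains cannot contain long thin channels, so that $J$ cannot accumulate away from these finitely many scales. Once this geometric input is in place, all the probabilistic ingredients (Lemmas~\ref{le:dist_across_chain}, \ref{le:close_geodesic_one_scale}, \ref{le:variation_one_scale}, \ref{lem:get_too_close_ubd}) and the summations are identical to those in the proof of Lemma~\ref{le:close_geodesic_good_eta}.
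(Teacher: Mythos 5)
Your overall strategy mirrors the paper's — modify $F_a$, recurse as in Lemma~\ref{le:close_geodesic}, and sum only over bottleneck scales near or below $\delta$ — but your specific modification of $F_a$ (simply \emph{dropping} the conditions $F_{j,a,k}$ for $k<k_\delta$) introduces a gap that the paper avoids with a more careful choice.

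The gap is in the claim that ``failures at coarser scales correspond to parts of $\eta$ at distance $\gg\delta$ from $\partial D$'' and, more to the point, that on your restricted $F_a$ the set $J=\{t:\dist(\eta(t),\partial D)\le\delta\}$ is contained in the fine-scale excursion intervals plus the two tails. The bottleneck index set $\CZ_k$ is defined through the hyperbolic geodesic points $\varphi^{-1}(i2^j)$, not through $\eta$ itself; it is precisely the events $F_{j,a,k}$ that confine $\varphi(\eta[\tau_{j-1},\sigma_{j+1}])$ to the hyperbolic ball $2^jB_{ak}$ and hence, via Koebe distortion, pin $\dist(\eta(t),\partial D)$ to within a factor $2^{O(ak)}$ of $2^{-k}$ on that excursion. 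If you delete the constraint at a coarse scale $j\in\CZ_k$ with $k<k_\delta$, nothing prevents $\varphi(\eta)$ from dipping toward $\R$ during $[\tau_{j-1},\sigma_{j+1}]$, producing a segment of $J$ that is associated with no fine-scale $j'\in\CZ_{k'}$ ($k'\ge k_\delta$) of $D$, and whose occurrence does not trigger your (fine-scale-only) stopping time $T$. The hypothesis $\wh G_{M,a}$ alone cannot close this: it bounds bottleneck counts of the cut domains, but says nothing about where $\eta$ wanders during a fixed coarse excursion. You flag this as ``the main obstacle,'' but the suggested fix (no long thin channels in $(M,a)$-good domains) concerns the domain, not the curve's excursion.

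The paper resolves this by \emph{weakening} rather than deleting the coarse-scale conditions: it replaces $F_{j,a,k}$ by $F_{j,a,|k-k_\delta|+k_\delta}$, i.e.\ uses the parameter $2k_\delta-k$ at scales $k<k_\delta$. This weaker box constraint is cheap enough that Lemma~\ref{lem:get_too_close_ubd} and the recursion still apply, yet it is strong enough to force $\dist(\eta[\tau_{j-1},\tau_{j+1}],\partial D)\ge 2^{-ca(2k_\delta-k)}2^{-k}>\delta$ whenever $k<(1-2ca)k_\delta$; thus only the window $k\ge(1-2ca)k_\delta$ can contribute to $J$, and the sums~\eqref{eq:pr_scale_j_sum},~\eqref{eq:len_scale_j_sum} are extended down to $(1-2ca)k_\delta$ (not merely to $k_\delta$) at the cost of a slightly larger constant $c$. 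Your summation range $k\ge k_\delta$ omits this intermediate window $[(1-2ca)k_\delta,k_\delta)$, precisely the scales where excursions can enter $J$ without being fine-scale bottlenecks. So your proposal needs both the modified coarse-scale constraint and the extended summation range to be made correct; as written, the covering of $J$ is not established.
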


\begin{proof}
This follows from a slight adjustment of our proofs of Lemmas~\ref{le:close_geodesic_good_eta} and~\ref{le:close_geodesic}. We define an event $F_a$ very similar to Lemma~\ref{le:close_geodesic_good_eta}, but replace $k$ by $|k-k_\delta|+k_\delta$ where $k_\delta = \log_2(\delta^{-1})$. That is, let $F_a = \bigcap_{k\in\N,j\in\CZ_k} F_{j,a,\abs{k-k_\delta}+k_\delta}$.

Then, with the corresponding adjustments in the definition of $T$, Lemma~\ref{lem:get_too_close_ubd} still holds. Observe that in the proof of Lemma~\ref{le:close_geodesic_good_eta}, it suffices to sum~\eqref{eq:pr_scale_j_sum} and~\eqref{eq:len_scale_j_sum} over $k\ge (1-2ca)k_\delta$ for some $c>1$. Indeed, for $k < (1-2ca)k_\delta$, on the event $F_{j,a,2k_\delta-k}$ we have $\eta[\tau_{j-1},\tau_{j+1}] \subseteq D_{j,a(2k_\delta-k)}$, so that $\dist(\eta[\tau_{j-1},\tau_{j+1}]), \partial D) \ge 2^{-ca(2k_\delta-k)}2^{-k} > \delta$. Therefore the interval $[\tau_{j-1},\tau_{j+1}]$ is not required for the event $G_\delta$ in this lemma.

The final part of the proof of Lemma~\ref{le:close_geodesic} (the induction over $\omega \in \CW$) applies here as well since $\dist(\eta(t),\partial D_\omega) \le \dist(\eta(t),\partial D)$ which means that the result for $\eta_\omega$ continues to hold for $\eta$ in the definition of $G_\delta$.
\end{proof}

Note that (cf.\ Lemma~\ref{le:good_domain_scaling}) if $D$ is $(M,a)$-good and $\lambda>0$, then $\lambda D$ is $((2\lambda)^a M,a)$-good. Using Lemma~\ref{le:median_scaling}, we get the following corollary of Lemma~\ref{le:close_geodesic_general}.

\begin{corollary}\label{co:close_geodesic_small_parts}
There exist $c>0$ and $\zeta>0$ (not depending on $M,a,a_1,\epsexp$) such that the following is true. Fix $M,a,a_1 > 0$ such that $a,a_1$ are sufficiently small (depending on $\epsexp$). 
Let $\wh{G}_{M,a}$ be the event that for every $0 \le s < t \le \infty$, the domain $D \setminus \eta([0,s]\cup[t,\infty))$ is $(M,a)$-good. Let $U \subseteq D_\eta$ be the $\delta_1^{1+a_1}$-neighborhood of $\eta$.

For $0 < \delta_1 < \delta$, let $G_{\delta_1,\delta}$ denote the event that there exists a finite collection of intervals $[s_i,t_i] \subseteq [0,\infty]$ such that
\begin{itemize}
\item if $\dist(\eta(t),\partial D) \le \delta_1$, then $t \in \bigcup_i [s_i,t_i]$,
\item $\bigcap_i G^U_{s_i,t_i}(r_i)$ holds for some $(r_i)$ with
\[
\sum_i r_i \le M(\delta_1/\delta)^{\zeta\epsexp} .
\]
\end{itemize}
Then
\[ \p[ G_{\delta_1,\delta}^c \cap \wh{G}_{M,a} ] = O(M\delta^{\bestexp-c(1+\epsexp^{-1})(a+a_1)}(\delta_1/\delta)^{\zeta\bestexp}) \quad\text{as}\quad \delta \to 0.\]
\end{corollary}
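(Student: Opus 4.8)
\textbf{Proof plan for Corollary~\ref{co:close_geodesic_small_parts}.}

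The plan is to reduce the statement to Lemma~\ref{le:close_geodesic_general} by a rescaling argument that ``zooms in'' on the boundary of $D$ at scale $\delta_1$ rather than at scale $\delta$. More precisely, set $\lambda = \delta_1/\delta \in (0,1)$ and consider the metric $\mettapprox{\lambda^{-1}\epsilon}{\cdot}{\cdot}{\lambda^{-1}\Gamma} = \metapprox{\epsilon}{\lambda\cdot}{\lambda\cdot}{\Gamma}$ from Lemma~\ref{le:scaled_metric}, together with the rescaled domain $\lambda^{-1}D$ and the rescaled curve $\lambda^{-1}\eta$. First I would record the relevant bookkeeping: by Lemma~\ref{le:good_domain_scaling} (cited just above), if $D$ is $(M,a)$-good then $\lambda^{-1}D$ is $((2\lambda^{-1})^a M, a)$-good, and the same holds for the truncated domains $D \setminus \eta([0,s] \cup [t,\infty))$, so that the event $\wh{G}_{M,a}$ for $\eta$ in $D$ is contained in the corresponding event $\wh{G}_{(2\lambda^{-1})^a M, a}$ for $\lambda^{-1}\eta$ in $\lambda^{-1}D$. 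Also $\act{\lambda^{-1}\epsilon} = \ac{\epsilon}$ and, by Lemma~\ref{le:median_scaling}, $\mediant{\lambda^{-1}\epsilon} = \median[\lambda]{\epsilon} \le \lambda^{\ddouble+o(1)}\median{\epsilon}$, but what we really need is a \emph{lower} control — the key point is rather that the events $G^U_{s,t}(r)$ and the quantity $\median{\epsilon}+\epsilon^{-\epsexp}\ac{\epsilon}$ appearing implicitly in their definition both scale compatibly, so that applying Lemma~\ref{le:close_geodesic_general} in the rescaled picture produces an event of the same form for $\eta$ in $D$.

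Second, I would apply Lemma~\ref{le:close_geodesic_general} to $\lambda^{-1}\eta$ in $\lambda^{-1}D$ with the rescaled metric, with the parameter ``$\delta$'' of that lemma set equal to $1$ (so that the threshold ``$\dist \le \delta$'' there becomes ``$\dist \le 1$'' in the rescaled picture, i.e.\ ``$\dist(\eta(t),\partial D) \le \delta_1/\lambda = \delta$'' in the original picture) — but actually it is cleaner to apply it with its parameter equal to $\lambda^{-1}\delta_1 = \delta$ divided by... here one must be careful about which scale plays which role. The correct choice is: in the rescaled domain $\lambda^{-1}D$, whose inradius-to-$\eta$ distance is now of order $\delta/\lambda = \delta^2/\delta_1$, we want to cover the part of $\lambda^{-1}\eta$ within distance $\lambda^{-1}\delta_1 = \delta$ of $\partial(\lambda^{-1}D)$. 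So we apply Lemma~\ref{le:close_geodesic_general} with ``$\delta$'' replaced by $\lambda^{-1}\delta_1 = \delta$, with the good-domain constant $M' = (2\lambda^{-1})^a M$, and with the same $a, a_1$; the neighborhood $U$ rescales to the $(\lambda^{-1}\delta_1)^{1+a_1} = \delta^{1+a_1}$-neighborhood, consistent with the hypothesis that $U$ is the $\delta_1^{1+a_1}$-neighborhood of $\eta$. This yields, off an event of probability $O(M' \delta^{\bestexp - ca - ca_1}) = O(\lambda^{-ca} M \delta^{\bestexp-ca-ca_1})$, a collection of intervals $[s_i,t_i]$ covering $\{t : \dist(\lambda^{-1}\eta(t), \partial(\lambda^{-1}D)) \le \delta\}$ with $\bigcap_i G^U_{s_i,t_i}(r_i)$ holding for the rescaled metric and $\sum_i r_i \le M' \delta^{-c(a+a_1)}$.

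Third, I would translate back. The set $\{t : \dist(\lambda^{-1}\eta(t), \partial(\lambda^{-1}D)) \le \delta\}$ is precisely $\{t : \dist(\eta(t),\partial D) \le \lambda\delta = \delta_1\}$, giving the first bullet of $G_{\delta_1,\delta}$. For the second bullet, the event $G^U_{s_i,t_i}(r_i)$ for the rescaled metric $\mettapprox{\lambda^{-1}\epsilon}{}{}{}$ translates, via the defining relation $\wt{\Fd} = \Fd(\lambda\cdot,\lambda\cdot)$ and $\mett{r}{}{}{} = \metapprox{\cdot}{}{}{}$, into the event $G^U_{s_i,t_i}(r_i)$ for the original metric \emph{up to replacing each $\lambda_l$ in~\eqref{eq:sum_across_intervals} by $\lambda\lambda_l$}; consequently each $r_i$ gets multiplied by a factor which is $\lesssim \lambda^{\ddouble} + \lambda^{\epsexp} \lesssim \lambda^{\zeta\epsexp}$ for a suitable $\zeta \in (0,1)$ depending only on $\ddouble,\epsexp$ (this is where the exponent $\zeta$ of the statement comes from; one can take $\zeta\epsexp = \min(\ddouble,\epsexp)$ up to a harmless constant, absorbed by enlarging $c$). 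Combining, $\sum_i r_i \lesssim \lambda^{\zeta\epsexp} M' \delta^{-c(a+a_1)} = \lambda^{\zeta\epsexp} (2\lambda^{-1})^a M \delta^{-c(a+a_1)} \le M \lambda^{\zeta\epsexp} \delta^{-c(a+a_1)-\text{(correction from }\lambda^{-a})}$; folding the $\lambda^{-a}$ into the $\delta^{-c'(a+a_1)}$ prefactor (using $\lambda \ge \delta$ so $\lambda^{-a} \le \delta^{-a}$) and into $c(1+\epsexp^{-1})$, and likewise rewriting the failure probability $O(\lambda^{-ca}M\delta^{\bestexp-ca-ca_1}) = O(M\delta^{\bestexp-c(a+a_1)}\lambda^{-ca})$ as $O(M\delta^{\bestexp-c(1+\epsexp^{-1})(a+a_1)}(\delta_1/\delta)^{\zeta\bestexp})$ — here one uses that the same rescaling trick applied to the \emph{probability} bound, or alternatively directly iterating the argument as in the proof of Lemma~\ref{le:close_geodesic} on a stopping-time decomposition, produces the extra gain $(\delta_1/\delta)^{\zeta\bestexp}$ because the curve must come within distance $\delta_1 \ll \delta$ of $\partial D$, an event that already costs a power of $\delta_1/\delta$ by Lemma~\ref{lem:get_too_close_ubd} and Lemma~\ref{lem:loop_fill_in_ball}. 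I expect the main obstacle to be precisely this last point: getting the sharp power $(\delta_1/\delta)^{\zeta\bestexp}$ rather than just $(\delta_1/\delta)^{\zeta\epsexp}$ in the probability bound, which requires rerunning the stopping-time/scale-decomposition of the proof of Lemma~\ref{le:close_geodesic} (not merely a black-box rescaling) and carefully tracking that the SLE curve only incurs cost in scales between $\delta_1$ and $\delta$, using that $\bestexp \ge \ddouble$ to sum the geometric series over those scales. The GFF-independence and bottleneck-counting inputs ($|\CZ_k| \le M2^{ak}$) enter exactly as in Lemma~\ref{le:close_geodesic_good_eta}, so no new ideas are needed there.
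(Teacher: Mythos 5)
Your overall structure — rescale the domain and the metric by a factor $\lambda$, apply Lemma~\ref{le:close_geodesic_general} in the rescaled picture, and translate back using Lemmas~\ref{le:scaled_metric}, \ref{le:good_domain_scaling}, and~\ref{le:median_scaling} — is the right one, and it is exactly what the paper does. However, you set $\lambda = \delta_1/\delta$, and this choice is where the argument breaks: with $\lambda = \delta_1/\delta$ the rescaled cutoff $\lambda^{-1}\delta_1$ is just $\delta$, so a black-box application of Lemma~\ref{le:close_geodesic_general} gives failure probability $O(M\lambda^{-ca}\delta^{\bestexp-ca-ca_1})$ — no gain in $\delta_1/\delta$ at all, and in fact the $\lambda^{-ca}$ prefactor slightly hurts. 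You correctly sense that something is off and propose to ``rerun the stopping-time/scale decomposition'' of Lemma~\ref{le:close_geodesic} to retroactively extract the $(\delta_1/\delta)^{\zeta\bestexp}$ gain. That is a much heavier fix than the paper's, and you do not actually carry it out, so as written there is a gap.

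The paper's fix is cleaner: take $\lambda = (\delta_1/\delta)^{1/2}\delta^{c(2+\epsexp^{-1})(a+a_1)}$ (the geometric mean of $1$ and $\delta_1/\delta$, up to a small polynomial correction), and apply Lemma~\ref{le:close_geodesic_general} with $\lambda^{-1}\delta_1 \approx (\delta\delta_1)^{1/2}$ in place of its parameter ``$\delta$''. This single choice simultaneously produces \emph{both} claimed gains from the black box: the failure probability becomes $O(\lambda^{-a}M(\lambda^{-1}\delta_1)^{\bestexp-ca-ca_1}) = O(M\delta^{\bestexp-\dots}(\delta_1/\delta)^{\bestexp/2-\dots})$, giving the $(\delta_1/\delta)^{\zeta\bestexp}$ factor, while the translation of the $\wt G_{\wt z_l,\wt\delta_l,\lambda_l}$ events back to the unrescaled metric shifts $\lambda_l \mapsto \lambda\lambda_l$ (via $\median[\lambda]{\epsilon}\le\lambda^{\ddouble+o(1)}\median{\epsilon}$ and $\act{\lambda^{-1}\epsilon}=\ac{\epsilon}$), giving the $(\delta_1/\delta)^{\zeta\epsexp}$ gain in $\sum_i r_i$. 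Taking $\lambda=\delta_1/\delta$ instead puts all of the gain into $\sum r_i$ and none into the probability; the geometric mean balances the two, which is why $\zeta\approx 1/2$ in the end. No rerunning of the internal argument is needed.

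Two smaller things you also miss. First, the paper opens by reducing to $\delta_1 > \epsilon^2$, noting that otherwise the stated bound is already contained in the proof of Lemma~\ref{le:close_geodesic_general}; this reduction is needed to ensure $\lambda > \epsilon$ so that the rescaled metric $\mettapprox{\lambda^{-1}\epsilon}{\cdot}{\cdot}{\Gamma}$ is in the admissible range. Second, the explicit $\epsexp^{-1}$ in the exponent $c(1+\epsexp^{-1})(a+a_1)$ of the corollary's statement comes precisely from the correction factor $\delta^{c(2+\epsexp^{-1})(a+a_1)}$ built into the definition of $\lambda$; you fold everything into an unspecified ``$c'(a+a_1)$'', which loses track of this dependence.
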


\begin{proof}
We can assume that $\delta_1 > \epsilon^2$ (say), otherwise we have already shown the bound $M\epsilon^{-\epsexp}\ac{\epsilon}$ in the proof of Lemma~\ref{le:close_geodesic_general}.

Apply Lemma~\ref{le:close_geodesic_general} with $\lambda^{-1}\delta_1$ in place of $\delta$ to $\lambda^{-1} D$ and the approximate metric $\metapprox{\epsilon}{\lambda\cdot}{\lambda\cdot}{\lambda\Gamma}$ where $\lambda = (\delta_1/\delta)^{1/2} \delta^{c(2+\epsexp^{-1})(a+a_1)} > \epsilon$. By Lemma~\ref{le:good_domain_scaling}, the domain $\lambda^{-1} D$ is $(O(\lambda^{-a} M),a)$-good. By Lemma~\ref{le:median_scaling} we have $\median[\lambda]{\epsilon} \le \lambda^{\ddouble+o(1)}\median{\epsilon}$. This implies the result where we also see that $\zeta$ can be picked close to $(1-ca-ca_1)\ddouble/2$.
\end{proof}

\subsubsection{Resampling arguments}
\label{se:map_in_resampling}

\newcommand*{\Ecrossings}{E^1}
\newcommand*{\ncrossings}{n_1}
\newcommand*{\Eseploops}{E^2}
\newcommand*{\Fseploops}{F^{\mathrm{sep}}}
\newcommand*{\pseploops}{p_2}
\newcommand*{\Gammasep}{\Gamma^{\mathrm{sep}}}
\newcommand*{\Emerge}{E^3}
\newcommand*{\pmerge}{p_3}
\newcommand*{\Elinkgff}{E^4}
\newcommand*{\Flinkgff}{F^\resampled}
\newcommand*{\plinkdisc}{p_4}
\newcommand*{\plinkmerge}{p_5}

The purpose of this subsection is to prove Lemma~\ref{le:dist_across_chain}. We assume the same setup and notation as in Section~\ref{subsec:map_in}.

The idea of proving Lemma~\ref{le:dist_across_chain} is as follows. We would like to compare the law of the CLE configuration near a point $\eta(u)$ to the object in Section~\ref{se:intersections_setup} and then apply the assumption~\eqref{eq:a_priori_assumption}. With the resampling procedure from Section~\ref{se:cle_resampling} in mind, this is not surprising as we can imagine a neighborhood of $\eta(u)$ being under one single loop of $\Gamma$ whose outer boundary intersects $\eta$ in the same way as the flow lines $\eta_1,\eta_2$ in Section~\ref{se:intersections_setup} intersect (see Figure~\ref{fi:outline_bubble} for an illustration).

The rigorous argument involves several steps. One challenge that needs to be taken into account is that $\eta$ may have bottlenecks in $B(\wt{z},\delta)$ that go outside $B(\wt{z},\delta)$. To deal with this, we pretend that $\eta$ is the outer boundary of another \clekp{} loop. In that case we can apply the resampling procedure from Section~\ref{se:cle_resampling} to link more loops to $\eta$ that block the bottlenecks. To make this comparison, note that if we have a CLE in $\h$, the left outer boundary of its exploration path from $0$ to $\infty$ is an \slekr{\kappa-4;2-\kappa}. We can compare the laws of $(\eta,\Gamma)$ under \slek{} vs.\ \slekr{\kappa-4;2-\kappa} locally by formulating an event for a GFF in a neighborhood of $\wt{z}$. Although the CLE configuration is not locally determined by the GFF, the configuration in a region under a CLE loop can be recovered from the GFF locally. We use again the resampling procedure to create such ``localizing'' loops with positive probability. Finally, for a CLE in $\h$ we know from Lemma~\ref{lem:disconnect_boundary} that with high probability there is a random scale in which there is a chain of loops separating a region $U$ from the outside. They will yield the desired points $\{z_i\}$. In order to bound the distances in $U$, we resample again in order to compare its law to a region as in Section~\ref{se:intersections_setup}.

We now proceed to the proof of Lemma~\ref{le:dist_across_chain}. We first state an auxiliary lemma.

\begin{lemma}\label{le:Ncrossings}
Let $h$ be a GFF in $D$ with bounded boundary values. For any $b>0$ there exists $n \in \N$ such that the following is true. Let $z \in D$ and $\delta > 0$ such that $B(z,\delta) \subseteq D$. For $r \in (0,1)$ let $N_{z,\delta,r}$ denote the number of flow lines that cross the annulus $A(z,r\delta,\delta)$. Then $\p[N_{z,\delta,r} > n] = O(r^b)$.
\end{lemma}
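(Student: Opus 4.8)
\textbf{Proof proposal for Lemma~\ref{le:Ncrossings}.}

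The plan is to prove this via an independence-across-scales argument, exactly in the spirit of Lemma~\ref{lem:gff_independence_across_scales} and Lemma~\ref{lem:good_scales_for_event}. The key point is that the number of flow lines of $h$ crossing an annulus $A(z,2^{-j-1},2^{-j})$ is a.s.\ finite; this is because each such crossing corresponds to a crossing of a space-filling $\SLE_{\kappa'}$ (whose continuity is guaranteed by \cite{ms2017ig4}) between the two boundary circles, cf.\ the discussion following~\eqref{eq:fl_annulus}. Hence, if $\wt{N}^{\text{one}}$ denotes the number of flow lines of a GFF (with the relevant boundary values) crossing a \emph{single} fixed annulus of modulus $2$, then $\p[\wt{N}^{\text{one}} > m] \to 0$ as $m\to\infty$. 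Working with the fields $\wt{h}_{z,r}$ from Section~\ref{se:gff}, we can phrase the event $\{\wt{N}^{\text{one}}_{z,r} \le m\}$ as an event measurable with respect to the restriction of $\wt{h}_{z,r}$ to an annulus $A(z,r/4,3r/4)$, up to the usual caveat that we only observe the flow lines within $B(z,3r/4)$ and stop them upon exiting; one checks that a crossing of $A(z,r/4,3r/4)$ by a flow line of $h$ in $B(z,r)$ is detected this way.

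First I would fix $b>0$ and apply Lemma~\ref{lem:good_scales_for_event} (or directly Lemma~\ref{lem:gff_independence_across_scales} together with the absolute continuity described in Section~\ref{se:gff}) with the event $G^p = \{\wt{N}^{\text{one}} \le m(p)\}$ where $m(p)\to\infty$ as $p\to 0$, and with $q \in (1/2,1)$ fixed. This yields $M>0$ and $m_0 \in \N$ so that, with probability $1-O(e^{-b'k})$ for $b'>0$ as large as we like, at least a fraction $q$ of the scales $1 \le j \le k$ are $M$-good for $h$ with at most $m_0$ flow-line crossings of $A(z,2^{-j}r,2^{-j+1}r)$. Now choose $k \asymp \log_2(r^{-1})$ (more precisely $2^{-k} \asymp r$ for the annulus $A(z,r\delta,\delta)$ in the statement, after rescaling by $\delta$), so that $O(e^{-b'k}) = O(r^{b})$ for suitable $b'$. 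On this event, since $q > 1/2$, there are at least two ``good'' scales $j_1 < j_2$ among the $k$ nested dyadic annuli between $r\delta$ and $\delta$; but the images of the (finitely many) loops and strands of $\Gamma$ crossing the large annulus $A(z,r\delta,\delta)$ must cross every intermediate annulus, so the number $N_{z,\delta,r}$ of flow lines crossing $A(z,r\delta,\delta)$ is bounded above by the number crossing any single good intermediate annulus, which is at most $m_0$. Setting $n = m_0$ completes the argument. (One small bookkeeping point: a flow line crossing the big annulus crosses each small one, so the bound transfers; conversely we never need a lower bound here.)

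The main obstacle I anticipate is the measurability/detectability issue: one must argue carefully that the number of flow-line crossings of an intermediate dyadic annulus $A(z,2^{-j}r,2^{-j+1}r)$, as a function of $h$ restricted to a slightly larger annulus, dominates (or equals) the count $N_{z,\delta,r}$ restricted to those particular loops that cross the entire big annulus — i.e.\ that truncating the flow lines at the boundary of $B(z,3r/4)$ when forming $\wt{h}_{z,r}$ does not lose any of the relevant crossings. This is handled by the same reasoning used to show $\partial A(z,r/4,3r/4) \cap X^\theta_{z,r}$ is finite and by the locality of flow lines (they are determined, up to the first exit of any closed set, by the field on that set, via \cite[Lemma~3.9]{ss2013continuumcontour}). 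A secondary point is that the relevant boundary conditions along the flow lines vary (they pick up multiples of $\pi\chi$ at self-intersections), but this only affects the counting by absolute continuity of the GFF, which is exactly what Lemma~\ref{lem:good_scales_for_event} is built to absorb. Everything else is a routine union bound and choice of constants.
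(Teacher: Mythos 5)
Your proposal is correct and follows essentially the same route as the paper: you observe that any crossing of the large annulus $A(z,r\delta,\delta)$ crosses every nested dyadic annulus, and then apply the independence-across-scales machinery (in the paper, directly via Lemma~\ref{le:good_scales_merging}, which already contains the bound on the number of strands; in your version, by feeding the event $\{\wt{N}^{\text{one}}\le m(p)\}$ into Lemma~\ref{lem:good_scales_for_event}) to find at least one good intermediate scale with at most $m_0$ crossings among the $\asymp \log_2(r^{-1})$ scales, off an event of probability $O(r^b)$. The only cosmetic overkill is requiring two good scales when one suffices, and the extra paragraph on detectability is a reasonable but unnecessary elaboration of the point that $\partial A(z,r/4,3r/4)\cap X^\theta_{z,r}$ is a.s.\ finite, which the paper takes as already established.
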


\begin{proof}
 If there are $n$ disjoint crossings of $A(z,r\delta,\delta)$, then there are at least $n$ disjoint crossings of $A(z,2^{-j}\delta,2^{-j+1}\delta)$ for every $1 \le j \le \log_2(r^{-1})$. By Lemma~\ref{le:good_scales_merging} (we just need the bound on the number of strands), for any $b>0$ there exists $n \in \N$ such that the probability of this is bounded by $O(r^b)$.
\end{proof}

Throughout the proof, let 
\[ b > \bestexp \]
be fixed. To simplify notation, we assume without loss of generality (due to translation invariance) that $\wt{z}=0$. We can also assume $\lambda = 1$, otherwise we consider the rescaled metric as in Lemma~\ref{le:scaled_metric} and apply Lemma~\ref{le:median_scaling}. Moreover, we are going to show the statement of Lemma~\ref{le:dist_across_chain} with $\delta^{1+6\innexp}$ in place of $\delta^{1+\innexp}$. In all the considerations below, we implicitly restrict to the event $\{ \eta \cap B(0,\delta^{1+6\innexp}) \neq \varnothing \}$.

\textbf{Step 1.} Resample and create a local event for a GFF.

Let $\Ecrossings_{\delta}$ be the event that $\eta$ crosses $A(0,\delta^{1+2\innexp},\delta^{1+\innexp})$ or $A(0,\delta^{1+4\innexp},\delta^{1+3\innexp})$ at most $\ncrossings$ times where $\ncrossings$ is picked large enough so that $\p[(\Ecrossings_{\delta})^c] \lesssim \delta^b$ by Lemma~\ref{le:Ncrossings}. We claim that off an event of probability $O(\delta^b)$ we can resample the CLE within $A(0,\delta^{1+4\innexp},\delta^{1+\innexp}) \cap D_\eta$ so that with positive conditional probability we create a collection of loops that separate the inner from the outer parts of $A(0,\delta^{1+4\innexp},\delta^{1+\innexp}) \cap D_\eta$.

More precisely, suppose we are on the event $\Ecrossings_{\delta}$. We claim that there exists a collection of at most $\ncrossings$ arcs $C'_1,C'_2,\ldots$ of $\partial B(0,\delta^{1+3\innexp}) \cap D_\eta$ such that every path from a point in $D_\eta \cap B(0,\delta^{1+4\innexp})$ to a point in $D_\eta \setminus B(0,\delta^{1+\innexp})$ needs to pass through some $C'_i$. Similarly, there exists a collection of at most $\ncrossings$ arcs $C''_1,C''_2,\ldots$ of $\partial B(0,\delta^{1+2\innexp}) \cap D_\eta$ such that every path from a point in $D_\eta \cap B(0,\delta^{1+4\innexp})$ to a point in $D_\eta \setminus B(0,\delta^{1+\innexp})$ needs to pass through some $C''_j$. (See Figure~\ref{fi:local_event_resample}.)

\begin{figure}[ht]
\centering
\includegraphics[width=0.45\textwidth]{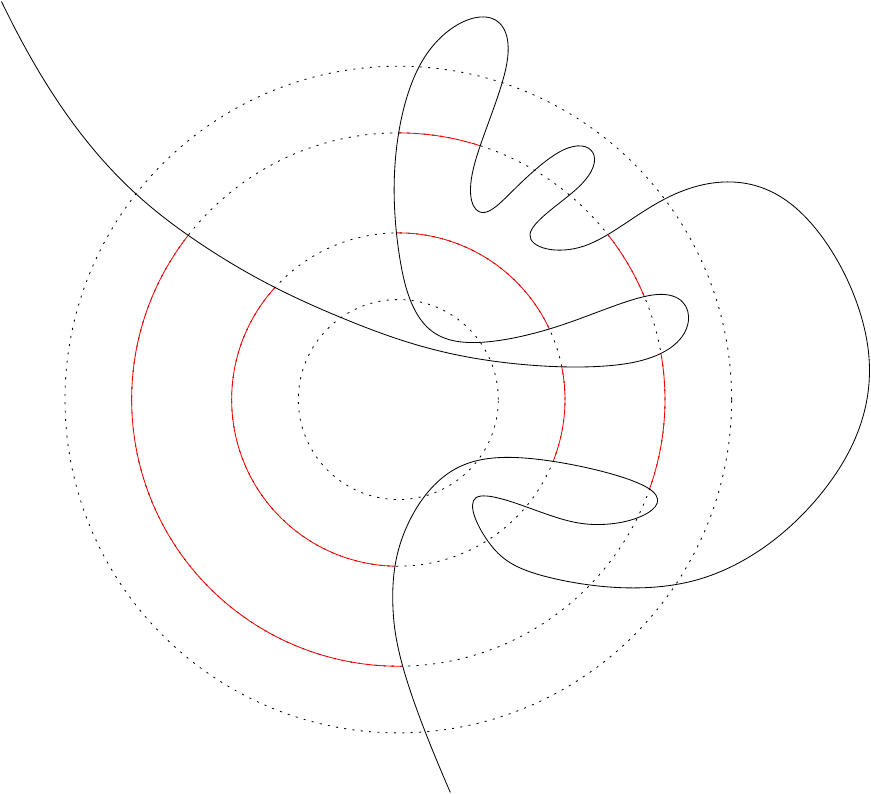}\hspace{0.05\textwidth}\includegraphics[width=0.45\textwidth]{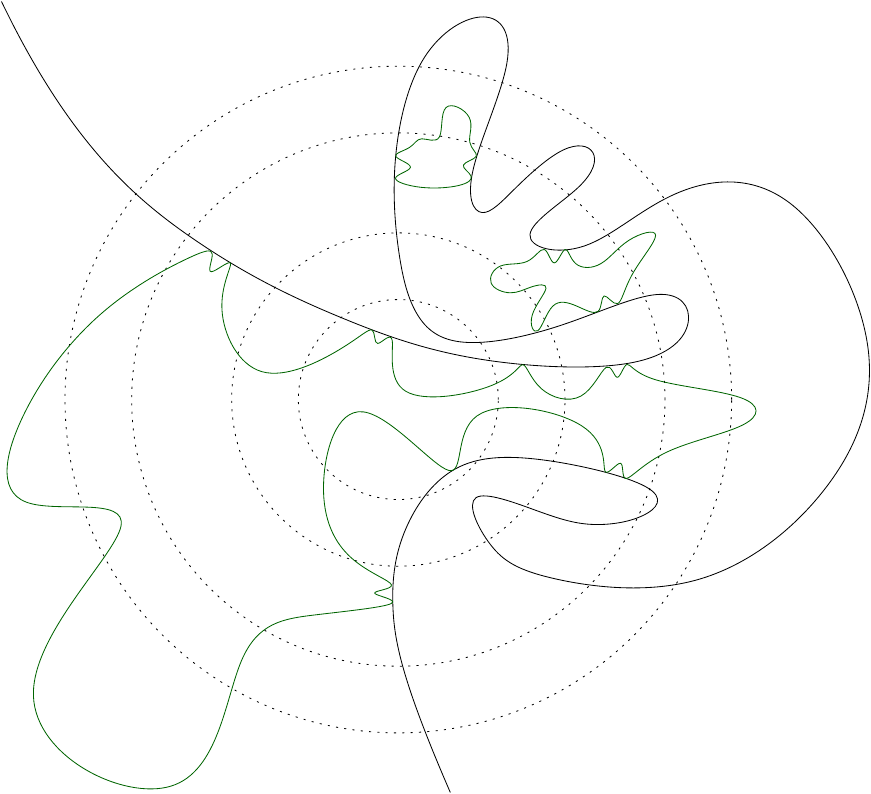}
\caption{Creating an event on which the CLE configuration is locally determined by the GFF.}
\label{fi:local_event_resample}
\end{figure}

Indeed, let $\CC$ be the collection of arcs of $\partial B(0,\delta^{1+2\innexp}) \cap D_\eta$. Suppose $\gamma_1$ (resp.\ $\gamma_2$) are two disjoint crossings of $A(0,\delta^{1+4\innexp},\delta^{1+\innexp})$ by paths in $D_\eta$, and let $C_1$ (resp.\ $C_2$) be the first arc in $\CC$ that it passes. Suppose $C_1 \neq C_2$. There are two possibilities: Either the segments of $\gamma_1,\gamma_2$ until then are connected within $A(0,\delta^{1+2\innexp},\delta^{1+\innexp}) \cap D_\eta$ or there must be a crossing of $A(0,\delta^{1+2\innexp},\delta^{1+\innexp})$ by $\eta$ between $\gamma_1,\gamma_2$. If there is no crossing of $A(0,\delta^{1+2\innexp},\delta^{1+\innexp})$ by $\eta$ between $\gamma_1,\gamma_2$, then there must be a crossing of $A(0,\delta^{1+4\innexp},\delta^{1+2\innexp})$ by $\eta$ between $\gamma_1,\gamma_2$ (otherwise, since $C_1 \neq C_2$, there would be a segment of $\eta$ between $\gamma_1,\gamma_2$ that is disconnected from $\partial D$). This shows that on the event $\Ecrossings_{\delta}$ there can be at most $\ncrossings$ such paths $\gamma$ that pass different arcs of $\CC$.

The same argument applies to the arcs of $\partial B(0,\delta^{1+3\innexp}) \cap D_\eta$. This shows the claim above.

Pick a pair $C''_i,C'_j$, and let $x \in \partial D_\eta \cap B(0,\delta^{1+4\innexp})$, $y \in \partial D_\eta \setminus B(0,\delta^{1+\innexp})$ be two boundary points that are separated by $C''_i,C'_j$. Let $\psi\colon D_\eta \to \h$ be a conformal transformation with $\psi(x) = 0$, $\psi(y) = \infty$. By Koebe's distortion theorem there are at least $(\innexp/100)\log(\delta^{-1})$ annuli $A_m$ such that sit between $\psi(C''_i),\psi(C'_j)$ and such that $\psi^{-1}(i2^{m}) \in A(0,\delta^{1+2.1\innexp},\delta^{1+2.9\innexp})$. By Lemma~\ref{le:bounded_crosscut}, for each such $A_m$ there is a crosscut $\Xi \subseteq A_m$ connecting $[-2^m,-2^{m-1}]$ to $[2^m,2^{m-1}]$ such that $\psi^{-1}(\Xi) \subseteq A(0,\delta^{1+3\innexp},\delta^{1+2\innexp})$. It follows that (on the event $\Ecrossings_{\delta}$) we have $\psi^{-1}(A_m) \subseteq A(0,\delta^{1+4\innexp},\delta^{1+\innexp})$ for all but at most $\ncrossings$ such $A_m$.

We select a scale $m$ uniformly at random and resample $\Gamma$ in $\psi^{-1}(A_m)$ using the procedure in Lemma~\ref{lem:disconnect_boundary}. On an event with probability $1-O(\delta^b)$, for at least $9/10$ fraction of scales $m$ the conditional probability of creating a single separating loop as described in Lemma~\ref{lem:disconnect_boundary} is at least a constant $\pseploops > 0$ (depending on $\innexp,b$).

We repeat this procedure for each pair $C''_i,C'_j$. As a result, we obtain a coupling of $\Gamma$ with another CLE $\Gammasep$ with the following properties:
\begin{itemize}
\item The configurations of $\Gamma$ and $\Gammasep$ differ only within $A(0,\delta^{1+4\innexp},\delta^{1+\innexp}) \cap D_\eta$.
\item Let $\Fseploops_{\delta}$ is the event that $\Gammasep$ contains a collection of at most $\ncrossings^2$ loops $\{\wt{\CL}_i\}$ such that any path from a point in $D_\eta \cap B(0,\delta^{1+4\innexp})$ to a point in $D_\eta \setminus B(0,\delta^{1+\innexp})$ needs to either cross some $\wt{\CL}_i$ or intersect $\eta$. There exists an event $\Eseploops_{\delta}$ for $\Gamma$ with $\p[(\Eseploops_{\delta})^c] = O(\delta^b)$ such that
\[ \p[\Fseploops_{\delta} \mid \Gamma] \one_{\Eseploops_{\delta}} \ge \pseploops^{\ncrossings^2} \]
where $\pseploops > 0$ is a suitable constant depending on $\innexp,b$.
\end{itemize}
We also couple the internal metrics so that $(\metapproxres{\epsilon}{V}{\cdot}{\cdot}{\Gamma})_{V \in \metregions[B(0,\delta^{1+4\innexp})]} = (\metapproxres{\epsilon}{V}{\cdot}{\cdot}{\Gammasep})_{V \in \metregions[B(0,\delta^{1+4\innexp})]}$. This shows the following.

\begin{lemma}
Let $\wt{E}$ be an event that depends on $\eta \cap B(0,\delta^{1+4\innexp})$ and the internal metrics within $B(0,\delta^{1+4\innexp}) \cap \ol{D_\eta}$. Let $\Ecrossings_{\delta}$, $\Fseploops_{\delta}$ be as defined above. Then
\[ \p[\wt{E}] \lesssim \p[\wt{E} \cap \Ecrossings_{\delta} \cap \Fseploops_{\delta}] + \delta^b . \]
\end{lemma}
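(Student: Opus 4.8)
The plan is to read the lemma off the coupling of $\Gamma$ with $\Gammasep$ constructed in Step~1. That coupling comes with four features, all of which I would use: (a) $\p[(\Ecrossings_\delta)^c] = O(\delta^b)$; (b) $\Gamma$ and $\Gammasep$ differ only inside $A(0,\delta^{1+4\innexp},\delta^{1+\innexp})\cap D_\eta$, so in particular $\eta$ and $\ol{D_\eta}$ are the same for both; (c) the internal metrics are coupled so that $(\metapproxres{\epsilon}{V}{\cdot}{\cdot}{\Gamma})_{V\in\metregions[B(0,\delta^{1+4\innexp})]} = (\metapproxres{\epsilon}{V}{\cdot}{\cdot}{\Gammasep})_{V\in\metregions[B(0,\delta^{1+4\innexp})]}$; and (d) there is an event $\Eseploops_\delta$ with $\p[(\Eseploops_\delta)^c] = O(\delta^b)$ and $\p[\Fseploops_\delta\mid\mathcal{G}]\,\one_{\Eseploops_\delta} \geq \pseploops^{\ncrossings^2}$, where $\mathcal{G}$ denotes the $\sigma$-algebra generated by $\eta$, $\Gamma$ and the internal metrics of $\Gamma$ (the bound from Step~1 holds with this enlarged conditioning since the resampling uses only auxiliary randomness). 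The first thing to note is that, by (b) and (c), the indicator of $\wt E$ takes the same value whether one reads the internal metrics off $\Gamma$ or off $\Gammasep$; hence $\wt E\in\mathcal{G}$ and, on the coupled probability space, $\wt E\cap\Fseploops_\delta$ is an unambiguous event. The events $\Ecrossings_\delta$ and $\Eseploops_\delta$ are also in $\mathcal{G}$.

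Then I would condition on $\mathcal{G}$:
\begin{align*}
\p[\wt E \cap \Ecrossings_\delta \cap \Fseploops_\delta]
&= \E\bigl[\one_{\wt E}\,\one_{\Ecrossings_\delta}\,\p[\Fseploops_\delta\mid\mathcal{G}]\bigr] \\
&\geq \E\bigl[\one_{\wt E}\,\one_{\Ecrossings_\delta}\,\one_{\Eseploops_\delta}\,\p[\Fseploops_\delta\mid\mathcal{G}]\bigr] \\
&\geq \pseploops^{\ncrossings^2}\,\p[\wt E \cap \Ecrossings_\delta \cap \Eseploops_\delta].
\end{align*}
Combining this with the trivial bound $\p[\wt E] \leq \p[\wt E \cap \Ecrossings_\delta \cap \Eseploops_\delta] + \p[(\Ecrossings_\delta)^c] + \p[(\Eseploops_\delta)^c]$ and using (a), (d) gives
\[
\p[\wt E] \leq \pseploops^{-\ncrossings^2}\,\p[\wt E \cap \Ecrossings_\delta \cap \Fseploops_\delta] + O(\delta^b),
\]
and since $\pseploops$ and $\ncrossings$ depend only on $\innexp,b$ (and $b>\bestexp$ is fixed) both the constant $\pseploops^{-\ncrossings^2}$ and the $O(\delta^b)$ term are absorbed into the $\lesssim$, which is the claim.

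The deduction itself is entirely routine; the one point that genuinely requires care is feature (c) — that the internal metrics of $\Gamma$ and $\Gammasep$ can be coupled to coincide within $B(0,\delta^{1+4\innexp})$. This is precisely where the Markovian property of an approximate \clekp{} metric enters: since $\Gamma$ and $\Gammasep$ agree on a neighbourhood of $\ol{B(0,\delta^{1+4\innexp})}$, the conditional laws of $(\metapproxres{\epsilon}{V}{\cdot}{\cdot}{\cdot})_{V\in\metregions[B(0,\delta^{1+4\innexp})]}$ given their respective exteriors agree, so the two families may be taken identical. As this has already been carried out in Step~1, there is nothing further to do for the lemma itself.
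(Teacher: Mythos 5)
Your proof is correct and fills in exactly the argument the paper leaves implicit (the paper simply asserts the lemma after listing the coupling properties of $\Gammasep$, ending Step~1 with "This shows the following"). The one genuinely non-trivial observation you make — that the conditional bound $\p[\Fseploops_\delta\mid\cdot]\,\one_{\Eseploops_\delta}\ge\pseploops^{\ncrossings^2}$ upgrades from conditioning on $\Gamma$ to conditioning on $\mathcal{G}$ because the resampling uses auxiliary randomness independent of the internal metrics — is the right way to justify the conditioning step, and the rest of the deduction is routine as you say.
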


Suppose we are on the event $\Fseploops_{\delta}$. Consider a segment of $\wh{\eta} \subseteq \eta \cap B(0,\delta^{1+\innexp})$ that starts and ends on $\partial B(0,\delta^{1+\innexp})$ and enters $B(0,\delta^{1+4\innexp})$. (Recall that on the event $\Ecrossings_{\delta}$ there are at most $\ncrossings/2$ such segments.) Let $U$ be the $\dpath$-connected component of $\Upsilon_\Gamma \cap B(0,\delta^{1+4\innexp})$ adjacent to $\wh{\eta}$ (in other words, $U$ is the set of points that are connected to $\wh{\eta}$ via an admissible path within $B(0,\delta^{1+4\innexp})$). Then there exist some $\{ \wt{\CL}_{i_l} \} \subseteq \{ \wt{\CL}_i \}$ and intersection points $x_l,y_l \in \wt{\CL}_{i_l} \cap \eta \cap A(0,\delta^{1+4\innexp},\delta^{1+\innexp})$ such that the following is true. Let $\whwt{\CL}_{i_l} \subseteq \wt{\CL}_{i_l}$ be the counterclockwise segment from $x_l$ to $y_l$. Let $\wh{\eta}_l$ be the segment of $\eta$ from $x_l$ to $y_{l+1}$ (where cyclic indexing is used). Let $\wt{U}$ be the region that is bounded by the right outer boundaries of $\{ \whwt{\CL}_{i_l} \}$ and the left sides of $\{ \wh{\eta}_l \}$. Then $U \subseteq \wt{U} \subseteq B(0,\delta^{1+\innexp})$. (See the left picture of Figure~\ref{fi:local_gff_loops}.)

\begin{figure}[ht]
\centering
\includegraphics[width=0.45\textwidth]{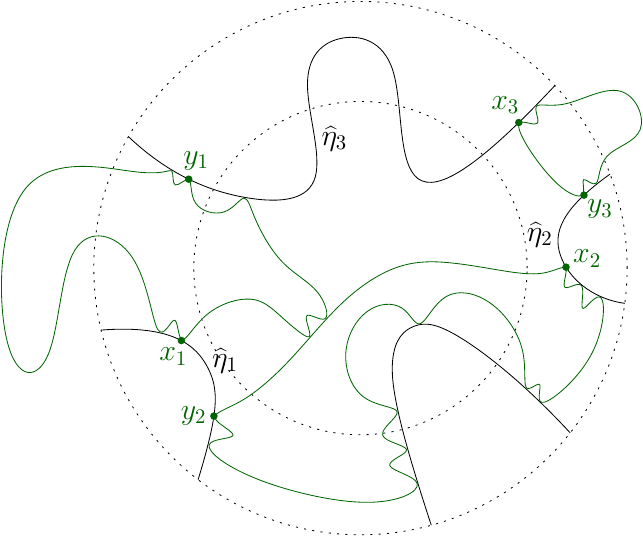}\hspace{0.05\textwidth}\includegraphics[width=0.45\textwidth]{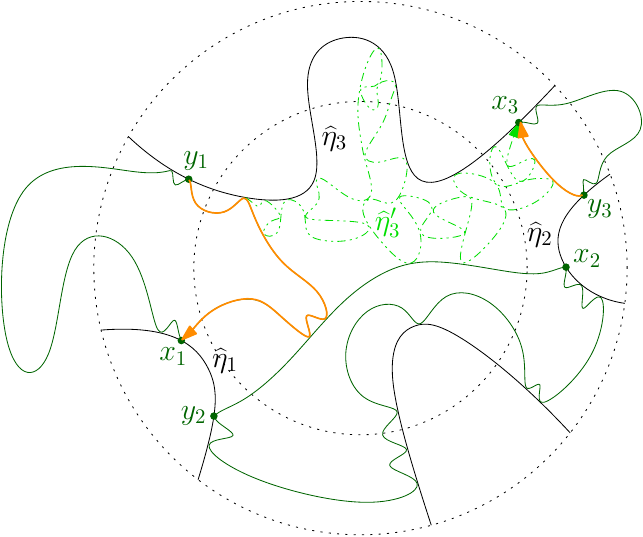}
\caption{The CLE configuration in the regions bounded between the dark green loops can be recovered from the values of a GFF in $B(0,\delta^{1+\innexp})$.}
\label{fi:local_gff_loops}
\end{figure}

Let $h$ be a GFF generating $(\eta,\Gammasep)$ in such a way that $\eta$ is the flow line with angle $-\theta_0=-3\pi/2$ and $\Gammasep$ is constructed from the branching counterflow line which goes clockwise around the boundary (reflecting off $\eta$) and traces the CLE loops counterclockwise. We claim that on the event $\Fseploops_{\delta}$ the CLE configuration within $\wt{U}$ is determined by $\{ \wh{\eta}_l \}$ and the values of $h$ in $B(0,\delta^{1+\innexp})$.

Indeed, suppose that the indices are chosen so that the $\wh{\eta}_l$ are visited in chronological order. The right\footnote{Here, ``right'' is viewed from the direction in which the counterflow line is drawn. It would be ``left'' when viewed from the direction in which the flow lines are drawn.} boundary of $\whwt{\CL}_{i_1}$ can be obtained as the angle $\pi/2$ flow line from $y_1$ to $x_1$ (reflecting off $\{ \wh{\eta}_l \}$). Although this information does not tell us whether the flow line belongs to the boundary of one or several loops, it does tell us that each loop whose boundary shares part of the flow line exits $B(0,\delta^{1+\innexp})$ (otherwise the entire loop would be detectable by the values of $h$ within $B(0,\delta^{1+\innexp})$).

Given $\{ \wh{\eta}_l \}$ and $\whwt{\CL}_{i_1}$, the remaining $\whwt{\CL}_{i_l}$ can be discovered inductively as follows. Run a counterflow line from the first intersection point of $\whwt{\CL}_{i_{l+1}} \cap \wh{\eta}_{l}$ targeting $y_l$. In case it exits $B(0,\delta^{1+\innexp})$ before reaching $y_l$, let $\wh{\eta}'_l$ denote the portion until the last point where it intersected $\wh{\eta}_{l}$. The right boundary of $\whwt{\CL}_{i_l}$ is then obtained as the angle $\pi/2$ flow line from $y_l$ to $x_l$, reflecting off $\wh{\eta}'_l$. (See the right picture of Figure~\ref{fi:local_gff_loops}.)

\textbf{Step 2.} Compare to another GFF which generates a \clekp{} in a ball.

Consider the notation in Section~\ref{se:gff} for the GFF $h$ defined above. Let $\wt{h}$ be a GFF on $B(0,2^{-j})$ with boundary values $\lambda(3-\kappa)+\theta_0 \chi-\chi\arg\varphi'$ where $\varphi\colon B(0,2^{-j}) \to \h$ is a conformal transformation with $\varphi(-i2^{-j}) = 0$, $\varphi(i2^{-j}) = \infty$. The boundary values are chosen so that they are compatible with a \clekp{} on $B(0,2^{-j})$. Namely, we let $\wt{\Gamma}$ be the \clekp{} constructed from the branching counterflow line with angle $-\theta_0-\pi/2$ starting from $i2^{-j}$ (which goes around the domain boundary clockwise and draws the CLE loops counterclockwise). Let $\wt{\eta}$ be the angle $-\theta_0$ flow line from $-i2^{-j}$ to $i2^{-j}$. It agrees with the left boundary of the branch of the exploration tree targeting $-i2^{-j}$. The law of $\wt{\eta}$ is an \slekr{\kappa-4;2-\kappa}.

Let $\wt{h}_{0,2^{-j}}$ be the field defined in Section~\ref{se:gff} with its boundary values as $\wt{h}$. For each $j \in \{ \lceil\log_2(\delta^{-1})\rceil,\ldots,\lfloor\log_2(\delta^{-1-\innexp})\rfloor \}$ we consider a variant of the event described in Lemma~\ref{le:good_scales_merging_multiple}. Concretely, let $\Emerge_{j}$ be the event that
\begin{itemize}
\item the scale $2^{-j}$ is $M$-good for $h$,
\item for any sequence of strands of $X^{-\theta_0}_{0,2^{-j}}$ as described above Lemma~\ref{le:good_scales_merging_multiple}, the conditional probability is either $0$ or at least $\pmerge > 0$ that $\wt{\eta}$ merges into the strands in the correct order and additionally, given $\wt{\eta}$, the exploration path from $-i2^{-j}$ to $i2^{-j}$ stays in the $2^{-j}/100$-neighborhood of the left boundary arc of $\partial B(0,2^{-j})$.
\end{itemize}
Let $\Emerge$ be the event that at least $9/10$ fraction of $\Emerge_{j}$, $j \in \{ \lceil\log_2(\delta^{-1})\rceil,\ldots,\lfloor\log_2(\delta^{-1-\innexp})\rfloor \}$, occur. By the Lemmas~\ref{lem:gff_independence_across_scales} and~\ref{lem:good_scales_for_event}, we have $\p[(\Emerge)^c] = O(\delta^b)$ for suitable values of $M,\pmerge > 0$.

Pick a scale $J \in \{ \lceil\log_2(\delta^{-1})\rceil,\ldots,\lfloor\log_2(\delta^{-1-\innexp})\rfloor \}$ uniformly at random. Then
\[ \p[\wt{E} \cap \Emerge_{J}] \ge (9/10) \p[\wt{E} \cap \Emerge] \]
for any event $\wt{E}$.

Suppose $\wt{E}$ is an event that depends on $\eta \cap B(0,\delta^{1+4\innexp})$ and the internal metrics within the $\dpath$-connected components of $B(0,\delta^{1+4\innexp}) \cap \ol{D_\eta}$ adjacent to $\eta$. For such an event $\wt{E}$, we consider an event $\wt{E}^\GFF_{j}$ for the restriction of $\wt{h}_{0,2^{-j}}$ to $B(0,(3/4)2^{-j})$ and an internal metric as defined below.

Suppose $\eta^{-\theta_0}_{w_1},\eta^{-\theta_0}_{w_2},\ldots$ are strands of $X^{-\theta_0}_{0,2^{-j}}$. We extend each of the flow lines until they exit $B(0,(3/4)2^{-j})$. If we pretend that $\wt{\eta}$ merges into $\eta^{-\theta_0}_{w_1},\eta^{-\theta_0}_{w_2},\ldots$ in exactly this order, we have seen in Step~1 above that on the event $\Fseploops_{\delta}$ the CLE configuration within the $\dpath$-connected components of $B(0,\delta^{1+4\innexp}) \cap \ol{D_{\wt{\eta}}}$ adjacent to $\wt{\eta}$ is determined by the values of $\wt{h}_{0,2^{-j}}$ on $B(0,(3/4)2^{-j})$. In this case, sample the internal metric in that region. Let $\wt{E}^\GFF_{j}$ be the event that $\wt{E}$ occurs for this internal metric. (This event depends also on the choice of $\{\eta^{-\theta_0}_{w_l}\}$.)

Let $\wt{\p}$ denote the law of $\wt{h}$. By absolute continuity (and Lemma~\ref{le:abs_cont_kernel}), we have
\[ \p[ \wt{E}^\GFF_{j} \mid \CF_{0,2^{-j}} ] \one_{\Emerge_{j}} \lesssim \wt{\p}[ \wt{E}^\GFF_{j} ]^{1-o(1)} \one_{\Emerge_{j}} . \]
On $\wt{E}^\GFF_{j} \cap \Emerge_{j}$ (for any choice of $\{\eta^{-\theta_0}_{w_l}\}$), the conditional probability is at least $\pmerge$ that $\wt{\eta}$ merges exactly into $\{\eta^{-\theta_0}_{w_l}\}$ and the branching counterflow line after tracing $\wt{\eta}$ goes clockwise around within the $2^{-j}/100$-neighborhood of the left boundary arc. In this case, the CLE configurations of $\Gamma$ and $\wt{\Gamma}$ within the $\dpath$-connected components of $B(0,\delta^{1+4\innexp}) \cap \ol{D_\eta}$ (resp.\ $B(0,\delta^{1+4\innexp}) \cap \ol{D_{\wt{\eta}}}$) adjacent to $\eta$ (resp.\ $\wt{\eta}$) are determined by the values of $\wt{h}_{0,2^{-j}}$ (resp.\ $\wt{h}$ on $B(0,(3/4)2^{-j})$) in the same way. Let $\wt{E}_{j}$ be the event that $\wt{E}$ occurs for the internal metrics in $B(0,\delta^{1+4\innexp}) \cap \ol{D_{\wt{\eta}}}$ (in place of $B(0,\delta^{1+4\innexp}) \cap \ol{D_\eta}$). Then
\[
\wt{\p}[ \wt{E}_{j}] \ge \pmerge \wt{\p}[ \wt{E}^\GFF_{j} \cap \Emerge_{j}] .
\]
Take a union bound over all choices of $\{\eta^{-\theta_0}_{w_l}\}$, we conclude the following.

\begin{lemma}
Let $\wt{E}$ be an event that depends on $\eta \cap B(0,\delta^{1+4\innexp})$ and the internal metrics within the $\dpath$-connected components of $B(0,\delta^{1+4\innexp}) \cap \ol{D_\eta}$ adjacent to $\eta$. Sample the number $J \in \{ \lceil\log_2(\delta^{-1})\rceil,\ldots,\lfloor\log_2(\delta^{-1-\innexp})\rfloor \}$ uniformly at random. Let $\wt{h}$ be a GFF in $B(0,2^{-J})$ with boundary values described above. Let $\wt{\eta}$ be the angle $-\theta_0$ flow line from $-i2^{-J}$ to $i2^{-J}$, and let $\wt{\Gamma}$ be the \clekp{} generated by $\wt{h}$. Sample the internal metrics in $B(0,\delta^{1+4\innexp}) \cap \ol{D_{\wt{\eta}}}$. Let $\wt{E}_{J}$ be the event that $\wt{E}$ occurs for $\wt{\eta}, (\metapproxres{\epsilon}{V}{\cdot}{\cdot}{\wt{\Gamma}})_{V \in \metregions[B(0,\delta^{1+4\innexp}) \cap \ol{D_{\wt{\eta}}}]}$ in place of $\eta, (\metapproxres{\epsilon}{V}{\cdot}{\cdot}{\Gamma})_{V \in \metregions[B(0,\delta^{1+4\innexp}) \cap \ol{D_{\eta}}]}$. Then
\[ \p[\wt{E} \cap \Ecrossings_{\delta} \cap \Fseploops_{\delta} \cap \Emerge] \lesssim \p[\wt{E}_{J}]^{1-o(1)} . \]
\end{lemma}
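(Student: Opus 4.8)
The plan is to assemble the two localisation reductions set up above: the resampling $\Gammasep$ together with its separating-loop event $\Fseploops_{\delta}$ (which confines the relevant CLE configuration to a region $\wt U\subseteq B(0,\delta^{1+\innexp})$), and the comparison of the generating field $h$ at a random good scale $2^{-J}$ with the field $\wt h$ that generates the \clekp{} $\wt\Gamma$ in a ball. First I would pass from $\Emerge$ to a single good scale: since on $\Emerge$ at least $9/10$ of the events $\Emerge_j$ in the window $\{\lceil\log_2(\delta^{-1})\rceil,\dots,\lfloor\log_2(\delta^{-1-\innexp})\rfloor\}$ occur and $J$ is sampled uniformly and independently of the field and the metrics, one has $\p[\wt{E}\cap\Ecrossings_{\delta}\cap\Fseploops_{\delta}\cap\Emerge_{J}]\ge\tfrac{9}{10}\,\p[\wt{E}\cap\Ecrossings_{\delta}\cap\Fseploops_{\delta}\cap\Emerge]$, so it suffices to bound the left-hand side by $\lesssim\p[\wt{E}_{J}]^{1-o(1)}$.

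Next I would re-express $\wt{E}\cap\Fseploops_{\delta}$ as a GFF-plus-kernel event on $B(0,(3/4)2^{-J})$. On $\Fseploops_{\delta}$, for each of the at most $\ncrossings/2$ segments $\wh\eta\subseteq\eta\cap B(0,\delta^{1+\innexp})$ that enter $B(0,\delta^{1+4\innexp})$ (the bound on their number coming from $\Ecrossings_{\delta}$), the adjacent $\dpath$-component is contained in a region $\wt U\subseteq B(0,\delta^{1+\innexp})$ bounded by the right outer boundaries of finitely many resampled loops $\whwt{\CL}_{i_l}$ and the left sides of segments $\wh\eta_l$ of $\eta$; and — using the internal-metric coupling under which $(\metapproxres{\epsilon}{V}{\cdot}{\cdot}{\Gamma})_{V\in\metregions[B(0,\delta^{1+4\innexp})]}$ agrees with the corresponding family for $\Gammasep$ — the configuration in $\wt U$ together with these internal metrics is a measurable function of $\{\wh\eta_l\}$ and the values of $h$ on $B(0,\delta^{1+\innexp})$, obtained by running the angle $\pi/2$ flow line from $y_1$ to $x_1$ reflected off $\{\wh\eta_l\}$, then the inductive counterflow-line exploration recovering the remaining $\whwt{\CL}_{i_l}$, and finally sampling the internal metrics from their conditional law given the enclosed regions. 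For each admissible strand-sequence $\{\eta^{-\theta_0}_{w_l}\}$ of $X^{-\theta_0}_{0,2^{-J}}$ this turns $\wt{E}\cap\Fseploops_{\delta}$ into an event $\wt{E}^{\GFF}_{J}$ for $\wt h_{0,2^{-J}}|_{B(0,(3/4)2^{-J})}$ together with a metric sampled from the induced kernel. Since on $\Emerge_{J}$ the scale $2^{-J}$ is $M$-good for $h$, the absolute continuity of the GFF away from the boundary (modulo the additive constant $2\pi\chi$), whose Radon--Nikodym derivative has moments of all orders controlled by $M$, together with Lemma~\ref{le:abs_cont_kernel} to carry along the metric kernel, yields $\p[\wt{E}^{\GFF}_{J}\mid\CF_{0,2^{-J}}]\,\one_{\Emerge_{J}}\lesssim\wt\p[\wt{E}^{\GFF}_{J}]^{1-o(1)}\,\one_{\Emerge_{J}}$, where $\wt\p$ denotes the law of $\wt h$. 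Finally, on $\wt{E}^{\GFF}_{J}\cap\Emerge_{J}$ the definition of $\Emerge_{J}$ gives conditional probability at least $\pmerge$ that $\wt\eta$ merges into the strands $\{\eta^{-\theta_0}_{w_l}\}$ in exactly that order and that the subsequent exploration path of $\wt\Gamma$ stays in the $2^{-J}/100$-neighbourhood of the left boundary arc of $\partial B(0,2^{-J})$; on that event the configuration of $\wt\Gamma$ near $\wt\eta$ inside $B(0,\delta^{1+4\innexp})$ is recovered from $\wt h$ by the same rule, so $\wt{E}_{J}$ occurs for $\wt\eta,\wt\Gamma$, whence $\wt\p[\wt{E}_{J}]\ge\pmerge\,\wt\p[\wt{E}^{\GFF}_{J}\cap\Emerge_{J}]$. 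Taking a union bound over the at most $\ncrossings/2$ segments $\wh\eta$ and, on the $M$-good event, the at most $M$ admissible strand-sequences, and chaining the displayed inequalities, gives $\p[\wt{E}\cap\Ecrossings_{\delta}\cap\Fseploops_{\delta}\cap\Emerge_{J}]\lesssim\wt\p[\wt{E}_{J}]^{1-o(1)}=\p[\wt{E}_{J}]^{1-o(1)}$.

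The step I expect to be the main obstacle is the bookkeeping of the metric couplings in the middle paragraph, since the approximate \clekp{} metric is not a measurable function of the CLE or of the field: one must resample the internal metrics via the Markovian property so that they agree with $\Gammasep$ on $\metregions[B(0,\delta^{1+4\innexp})]$, then identify them with a metric sampled from a fixed probability kernel driven only by $h|_{B(0,(3/4)2^{-J})}$ and the chosen flow lines, and match them once more to the metric for $\wt\Gamma$ on the merging event, checking at each identification that $\wt{E}$ only ever inspects components adjacent to $\eta$, and transporting the metric-dependent event through Lemma~\ref{le:abs_cont_kernel} rather than the plain GFF absolute continuity. Secondary technical points are to ensure $B(0,\delta^{1+\innexp})\subseteq B(0,(3/4)2^{-J})$ uniformly over the window for $J$ (shrinking the window, or the resampling annulus $A(0,\delta^{1+4\innexp},\delta^{1+\innexp})$, by a fixed constant if necessary), to confirm that $\Emerge_{J}$ caps the number of strand-sequences by $M$ (as in Lemma~\ref{le:good_scales_merging_multiple}), and to observe that the resulting $M^{O(1)}$ factor is absorbed into the $1-o(1)$ exponent loss.
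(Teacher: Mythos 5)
Your proposal follows the paper's proof essentially step for step: passing from $\Emerge$ to the single good scale $\Emerge_J$ via the $9/10$ fraction and the independence of $J$; expressing the pair (CLE configuration, internal metric) inside $\wt U$ as a function of $h|_{B(0,(3/4)2^{-J})}$ together with a metric kernel, using the local reconstruction from Step~1 on $\Fseploops_{\delta}$; transferring the probability of $\wt{E}^{\GFF}_J$ from $h$ to $\wt h$ by absolute continuity through Lemma~\ref{le:abs_cont_kernel} on the $M$-good scale; using the $\pmerge$-lower bound from $\Emerge_J$ to convert $\wt{E}^{\GFF}_J$ into $\wt{E}_J$ for $(\wt\eta,\wt\Gamma)$; and finishing with a union bound over the boundedly many strand-sequences (which the bounded-strand-count part of the $\Emerge_j$ event controls). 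You also correctly identify the two nontrivial bookkeeping points — that the metric-dependent absolute continuity must go through Lemma~\ref{le:abs_cont_kernel} rather than plain GFF absolute continuity, and that a small amount of slack in the exponents is needed so that $\wt U\subseteq B(0,(3/4)2^{-J})$ over the whole window for $J$ — both of which the paper handles implicitly.
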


In the final step of the proof of Lemma~\ref{le:dist_across_chain} (with $\delta^{1+6\innexp}$ in place of $\delta^{1+\innexp}$), we estimate the probability of $\p[\wt{E}_{j}]$ for each $j \in \{ \lceil\log_2(\delta^{-1})\rceil,\ldots,\lfloor\log_2(\delta^{-1-\innexp})\rfloor \}$ with a suitable event $\wt{E} \supseteq \wt{G}^c$. Now $\wt{E}_{j}$ is an event for a \clekp{} in $B(0,\wt{\delta}) \defeq B(0,2^{-j})$ and $(\metapproxres{\epsilon}{V}{\cdot}{\cdot}{\wt{\Gamma}})_{V \in \metregions[D_{\wt{\eta}}]}$.

\textbf{Step 3.} Resample the CLE in $B(0,\wt{\delta})$ and compare the region to the one from the a priori estimate.

Working with $\wt{\eta}$ has now the advantage that we can resample the CLE $\wt{\Gamma}$ in $B(0,\wt{\delta})$ so that the regions under consideration can be compared to the ones from Section~\ref{se:intersections_setup}. We will apply a similar resampling procedure as in the one considered in Lemma~\ref{lem:disconnect_interior}. A technical complication is that we will need to find a good scale where flow lines are easy to merge into, but \emph{for the GFF generating the resampled CLE}. That is, we need to find with high probability a scale where we have a separating chain of loops with good resampling probabilities and such that \emph{after successful resampling} the flow lines of the new GFF have good merging probabilities. In general, resampling the CLE at one place changes the GFF and the flow lines also at other places (e.g.\ it may change the first point from which a loop is traced, and the merging behavior of flow lines). We will build an event that is robust enough under the change of the GFF. This explains the complicated looking formulation of the event that we define below.

\begin{figure}[ht]
\centering
\includegraphics[width=0.45\textwidth]{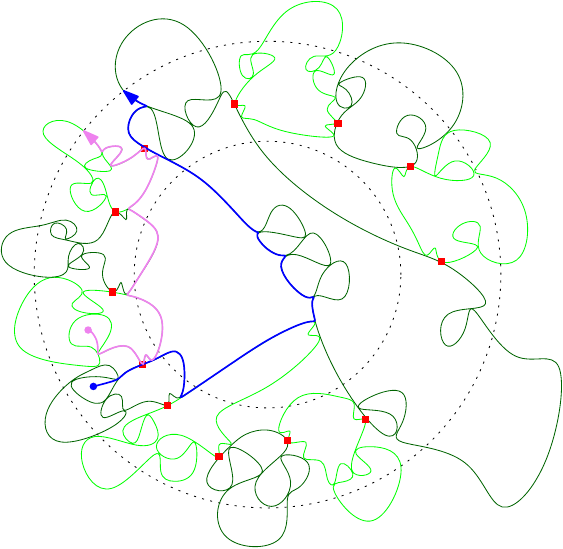}
\caption{Shown in pink and blue is one pair of $\wt{\eta}^{\theta_1}_{i_1,i_2}, \wt{\eta}^{\theta_2}_{i_1,i_2}$ in the definition of the event $\Elinkgff_{j}$.}
\label{fi:sep_chain_merge}
\end{figure}

We define the event $\Elinkgff_{j}$ (depending on the parameters $N^*,\plinkdisc,s,M,\plinkmerge$ which will be chosen later) that
\begin{enumerate}[(i)]
\item\label{it:sep_chain} There exists a collection of at most $N^*$ points $z_1,z_2,\ldots \in A(0,2^{-j-1},2^{-j})$ with $\abs{z_{i_1}-z_{i_2}} \ge s2^{-j}$ for each $i_1 \neq i_2$ and such that every point in $\Upsilon_{\wt\Gamma} \cap B(0,2^{-j-1})$ is separated from $\partial B(0,2^{-j})$ in $\Upsilon_{\wt\Gamma} \setminus \{z_1,z_2,\ldots\}$. Let $\CK$ be the collection of CLE loops that intersect at least one of these points.

\item\label{it:link_prob} The successful resampling probability in $A(0,2^{-j-1},2^{-j})$ for every choice of target pivotals with at most $N^*$ points as in Proposition~\ref{prop:resampling} is at least $\plinkdisc > 0$.

\item\label{it:break_prob} The conditional probability of successfully breaking any choice of crossings of $A(0,2^{-j},2^{-j+1})$ as in Lemma~\ref{lem:break_loops} is at least $\plinkdisc > 0$.

\item\label{it:swallow_ball} For each $z_i$, each of the two loop strands in $\CK$ that intersect at $z_i$ swallows a ball $B_i^+$ (resp.\ $B_i^-$) of radius $s 2^{-j}$ after passing $z_i$ and before exiting $B(0,2^{-j})$.

\item\label{it:merging_prob} For each pair $z_{i_1},z_{i_2}$ the following holds. Let $\wt{\eta}^{\theta_1}_{i_1,i_2}, \wt{\eta}^{\theta_2}_{i_1,i_2}$ be defined as follows. Pretend that the loops in $\CK$ are linked together at the points $z_i$, $i\neq i_1,i_2$. Let $\wt{\eta}^{\theta_2}_{i_1,i_2}$ be a path starting from a point in $B_{i_1}^+$ and behaves like a flow line that traces the outer boundary of the exploration path that traces the loops in $\CK$ (after the linking) counterclockwise (so that $\wt{\eta}^{\theta_2}_{i_1,i_2}$ traces the outer boundaries of the loops clockwise), then passes through $z_{i_2}$ and is stopped before exiting $B(0,2^{-j})$. Let $\wt{\eta}^{\theta_1}_{i_1,i_2}$ be defined analogously, starting in $B_{i_1}^-$ and tracing the outer boundaries of the loops counterclockwise. Then for a GFF in $B(0,2^{-j}) \setminus (\wt{\eta}^{\theta_1}_{i_1,i_2} \cup \wt{\eta}^{\theta_2}_{i_1,i_2})$ with flow line boundary values of angle $\theta_1$ resp.\ $\theta_2$ and winding height at most $M$ on $\wt{\eta}^{\theta_1}_{i_1,i_2}$ resp.\ $\wt{\eta}^{\theta_2}_{i_1,i_2}$, the probability that the respective flow lines from $-i2^{-j}$ merge into them before $z_{i_1}$ is at least $\plinkmerge > 0$. (See Figure~\ref{fi:sep_chain_merge}.)

\item\label{it:winding} The outer boundary of any loop that crosses $A(0,2^{-j-1},2^{-j+1})$ winds within $A(0,2^{-j-1},2^{-j+1})$ at most $M$ times.
\end{enumerate}

Let $\Elinkgff$ be the event that $\Elinkgff_{j}$ occurs for at least $9/10$ fraction of scales $2^{-j} \in [\wt{\delta}^{1+5\innexp},\wt{\delta}^{1+4\innexp}]$. We claim that we can pick the parameters so that $\p[(\Elinkgff)^c] = O(\wt{\delta}^b)$. This follows from the same arguments used in \cite{amy-cle-resampling} to prove Lemmas~\ref{lem:disconnect_interior} and~\ref{lem:break_loops}. We sketch the argument and refer the reader to \cite{amy-cle-resampling} for details. The main input is the independence across scales for \clekp{} \cite[Proposition~4.7]{amy-cle-resampling}. We need to argue that the parameters can be chosen so that\\
(a) The event $\Elinkgff_{j}$ is guaranteed by the CLE configuration inside the annulus $A(0,2^{-j-1},2^{-j+1})$, and its probability can be made as close to $1$ as we like.\\
(b) The conditions for the event are insensitive under slight conformal perturbations.

In the setup of the proof of Lemma~\ref{lem:disconnect_interior} in \cite[Section~5.4]{amy-cle-resampling}, the condition~\eqref{it:sep_chain} is called a chain of loops intersecting in $A(0,2^{-j-1},2^{-j})$ whose inner strand separates $\Upsilon_{\wt\Gamma} \cap B(0,2^{-j-1})$ from $\partial B(0,2^{-j})$. The conditions~\eqref{it:link_prob}, \eqref{it:break_prob} correspond to the events in Proposition~\ref{prop:resampling} and Lemma~\ref{lem:break_loops}. The conditions~\eqref{it:swallow_ball}, \eqref{it:winding} hold almost surely for some $s,M>0$ and therefore can be made to hold with arbitrarily high probability by choosing the parameters suitably. Finally, we comment on the condition~\eqref{it:merging_prob}. To guarantee a positive merging probability $\plinkmerge$, we can consider the event that the flow lines from $-i2^{-j}$ follow a narrow tunnel that avoids coming close to the other loops. By the argument from \cite[Lemma~2.5]{mw2017intersections}, the probability of this event is positive, and is insensitive to the shape of $\wt{\eta}^{\theta_1}_{i_1,i_2}, \wt{\eta}^{\theta_2}_{i_1,i_2}$ inside $B(0,2^{-j-1})$.

We therefore have the following statement.

\begin{lemma}
Let $\wt{E}$ be an event that depends on $\wt{\eta} \cap B(0,\wt{\delta}^{1+5\innexp})$ and the internal metrics within $B(0,\wt{\delta}^{1+5\innexp}) \cap \ol{D_{\wt{\eta}}}$. Pick $J \in \{ \lceil\log_2(\wt{\delta}^{-1-4\innexp})\rceil,\ldots,\lfloor\log_2(\wt{\delta}^{-1-5\innexp})\rfloor \}$ uniformly at random. Let $\Elinkgff_{J}$ be as defined above. Then
\[ \p[\wt{E}] \lesssim \p[\wt{E} \cap \Elinkgff_{J}] + \wt{\delta}^b . \]
\end{lemma}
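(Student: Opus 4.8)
The plan is to establish the last displayed inequality
\[ \p[\wt{E}] \lesssim \p[\wt{E} \cap \Elinkgff_{J}] + \wt{\delta}^b \]
as a direct consequence of the preceding assertion that $\p[(\Elinkgff)^c] = O(\wt{\delta}^b)$, exactly in the spirit of the analogous reductions in Steps 1 and 2. First I would note that $\Elinkgff$ is, by definition, the event that $\Elinkgff_j$ occurs for at least a $9/10$ fraction of the scales $2^{-j} \in [\wt{\delta}^{1+5\innexp},\wt{\delta}^{1+4\innexp}]$. If $J$ is sampled uniformly at random from the integers $j$ with $2^{-j}$ in this range (independently of everything else, in particular of $\wt{\Gamma}$, $\wt{\eta}$, and the internal metrics, so that the sampling does not bias the law of $\wt E$), then on the event $\Elinkgff$ we have $\p[\Elinkgff_J \giv \wt{\Gamma},\wt{\eta},\Fd_\epsilon] \ge 9/10$. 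Hence
\[ \p[\wt{E} \cap \Elinkgff_J] \ge \p[\wt{E} \cap \Elinkgff \cap \Elinkgff_J] \ge \tfrac{9}{10}\,\p[\wt{E} \cap \Elinkgff] . \]
Combining this with $\p[\wt{E} \cap (\Elinkgff)^c] \le \p[(\Elinkgff)^c] = O(\wt{\delta}^b)$ gives
\[ \p[\wt{E}] = \p[\wt{E}\cap\Elinkgff] + \p[\wt{E}\cap(\Elinkgff)^c] \le \tfrac{10}{9}\,\p[\wt{E}\cap\Elinkgff_J] + O(\wt{\delta}^b) , \]
which is the claim.

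The one point requiring a little care is the claim $\p[(\Elinkgff)^c] = O(\wt{\delta}^b)$ itself, which the text asserts follows from the arguments of \cite{amy-cle-resampling} proving Lemmas~\ref{lem:disconnect_interior} and~\ref{lem:break_loops}. The plan here is to invoke the independence across scales for $\CLE_{\kappa'}$ (\cite[Proposition~4.7]{amy-cle-resampling}) together with the two structural inputs (a) and (b) already flagged in the excerpt: that each event $\Elinkgff_j$ is measurable with respect to the CLE configuration in the annulus $A(0,2^{-j-1},2^{-j+1})$ and can be made to have probability arbitrarily close to $1$ by tuning the parameters $N^*,\plinkdisc,s,M,\plinkmerge$, and that the defining conditions are stable under small conformal perturbations of the surrounding configuration. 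Given (a), the number of scales $j$ in a range of length $\asymp \innexp\log_2(\wt\delta^{-1})$ for which $\Elinkgff_j$ fails is stochastically dominated (up to the independence-across-scales error) by a sum of independent Bernoulli variables with small success probability, so a standard large-deviations estimate yields that a $9/10$ fraction of scales are good except on an event of probability $O(e^{-c\innexp\log_2(\wt\delta^{-1})}) = O(\wt\delta^b)$ once the parameters are chosen so that $c\innexp$ exceeds $b\ln 2$; condition (b) is what licenses applying this independence argument despite the fact that resampling within one annulus perturbs the GFF and flow lines in the others. I would go through this conditions (i)--(vi) of $\Elinkgff_j$ one at a time exactly as suggested: (i),(ii),(iii) are the chain-of-loops, resampling-success, and loop-breaking events from \cite[Section~5.4]{amy-cle-resampling}, \cite[Proposition~5.3]{amy-cle-resampling}, and \cite[Lemma~5.9]{amy-cle-resampling}; (iv) and (vi) (a loop strand swallowing a ball of radius $s2^{-j}$, and bounded winding) hold a.s.\ for suitable $s,M$, hence with probability close to $1$ after fixing those parameters; and (vi)'s merging probability bound (v) follows from the narrow-tunnel argument of \cite[Lemma~2.5]{mw2017intersections}, whose positivity is manifestly insensitive to the shape of $\wt\eta^{\theta_1}_{i_1,i_2},\wt\eta^{\theta_2}_{i_1,i_2}$ deep inside $B(0,2^{-j-1})$.

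The main obstacle, such as it is, is bookkeeping rather than anything conceptual: one must verify that conditions (i)--(vi) are \emph{jointly} realizable with probability close to $1$ (not just individually), and that the joint event remains measurable with respect to the annulus $A(0,2^{-j-1},2^{-j+1})$ and robust under conformal perturbation, so that a single application of independence across scales covers all of them at once. This is precisely the content of the cited arguments in \cite{amy-cle-resampling}, and I would present the verification by noting that all six conditions are monotone/open in the appropriate sense, that their defining data (loops, loop strands, crossing counts, windings) are determined by the CLE restricted to the annulus, and that (b) holds because each condition is phrased as a strict inequality on a continuous functional of the configuration (resampling probabilities, merging probabilities, radii of swallowed balls, winding numbers) or as the existence of a finite configuration of loops, both of which persist under small perturbations. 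With this in hand the large-deviations step is routine, and the final reduction displayed above follows immediately.
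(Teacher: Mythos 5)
Your proposal is correct and takes the same route the paper intends: the lemma is nothing more than the statement $\p[(\Elinkgff)^c]=O(\wt\delta^b)$ (established in the paragraphs immediately preceding it), combined with the observation that $J$ is sampled independently of $(\wt\Gamma,\wt\eta,\Fd_\epsilon)$, so that on $\Elinkgff$ the conditional probability of $\Elinkgff_J$ given those data is the fraction of good scales, hence at least $9/10$. Your derivation $\p[\wt E\cap\Elinkgff_J]\ge \tfrac{9}{10}\p[\wt E\cap\Elinkgff]$ followed by $\p[\wt E]\le\tfrac{10}{9}\p[\wt E\cap\Elinkgff_J]+O(\wt\delta^b)$ is exactly the intended argument, and your discussion of conditions (i)--(vi) of $\Elinkgff_j$ with the independence-across-scales input \cite[Proposition~4.7]{amy-cle-resampling} matches the paper's sketch.
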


In the remainder of the proof, we use the following notation. Let $\wt{\Gamma}^+ \subseteq \wt{\Gamma}$ be the collection of loops that intersect the right boundary arc from $-i\wt{\delta}$ to $i\wt{\delta}$, and let $\wh{\Gamma} = \wt{\Gamma}\setminus\wt{\Gamma}^+$. Let $\wt{\eta}'$ be the branch of the exploration path from $i\wt{\delta}$ to $-i\wt{\delta}$ that discovers the loops in $\wt{\Gamma}^+$, and let $\wt{\eta}$ be its left outer boundary from $-i\wt{\delta}$ to $i\wt{\delta}$. Let $D_{\wt{\eta}}$ the region to the left of $\wt{\eta}$.

Suppose that we are on the event $\Elinkgff_{J}$. We now define events $\wt{E}_{i_1,i_2}$ for certain pairs of indices $i_1,i_2$ so that $\wt{G}^c \subseteq \bigcup_{i_1,i_2} \wt{E}_{i_1,i_2}$ where $\wt{G}$ is the event in the statement of Lemma~\ref{le:dist_across_chain}.

\begin{figure}[ht]
\centering
\includegraphics[width=0.45\textwidth]{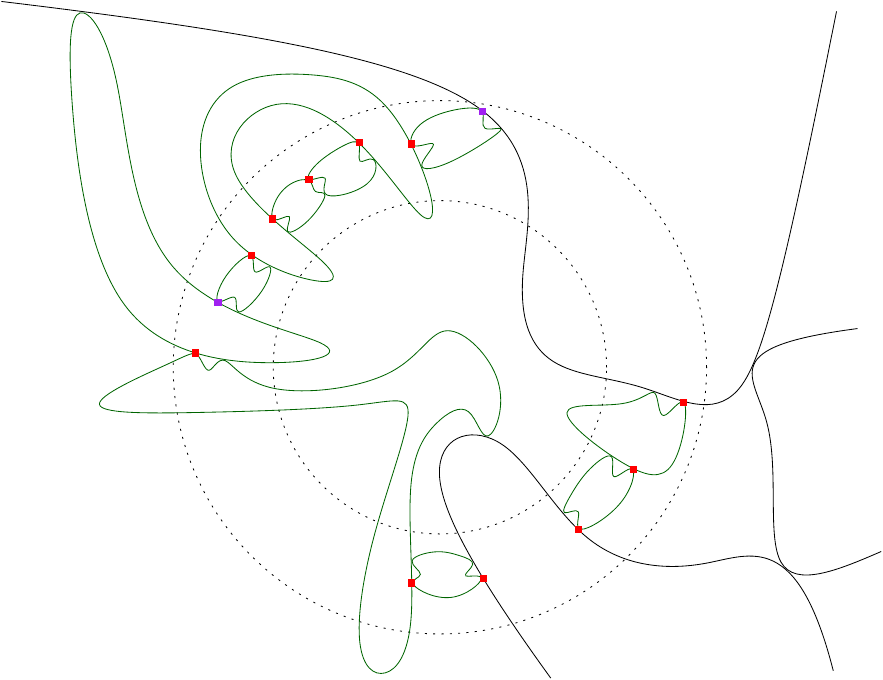}\hspace{0.05\textwidth}\includegraphics[width=0.45\textwidth]{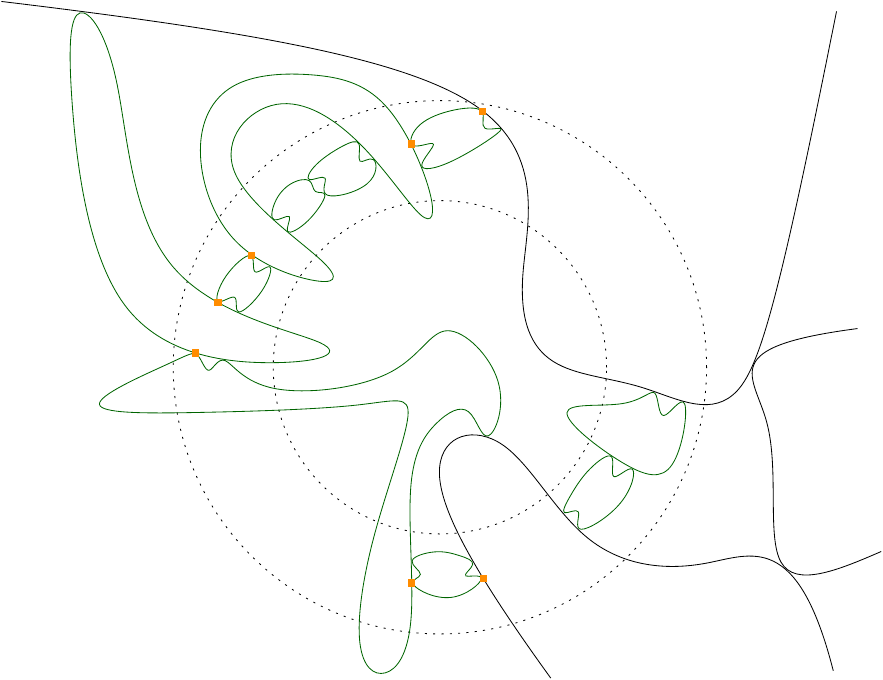}
\caption{Loops in $\wt{\Gamma}^+$ are shown in black, loops in $\wh{\Gamma}$ are shown in dark green. \textbf{Left:} Shown in purple is a pair $z_{i_1},z_{i_2}$ with $(i_1,i_2) \in \CI$. We will link the loops at the red points. \textbf{Right:} The orange points are the set $\{ z_i : i \in I'' \}$.}
\label{fi:allowed_linking}
\end{figure}

Let $(z_i)$ and $\CK$ be as in the definition of $\Elinkgff_{J}$. Consider the region in $D_{\wt{\eta}}$ that is disconnected from $\partial B(0,\wt{\delta})$ by $\CK$ and the points $z_i$. Let $U'$ be one connected component that is adjacent to $\wt{\eta}$ (equivalently $\wt{\Gamma}^+$). Let $\CK' \subseteq \CK$ be the loops in the chain that are adjacent to $\partial U'$, and let $I'$ be the indices such that $z_i \in \partial U'$. Let $\CI \subseteq \{1,\ldots,N^*\} \times \{1,\ldots,N^*\}$ be the set of pairs $(i_1,i_2)$ where $i_1,i_2 \in I'$ for some component $U'$ as above and such that the following hold (see the left picture of Figure~\ref{fi:allowed_linking}).
\begin{itemize}
 \item Let $\partial^1 U' \cup \partial^2 U' = \partial U'$ be the two boundary arcs divided at $z_{i_1},z_{i_2}$. Let $\CK'_1 \subseteq \CK'$ (resp.\ $\CK'_2 \subseteq \CK'$) be the loops adjacent to $\partial^1 U'$ (resp.\ $\partial^2 U'$). Then $\CK'_1 \cap \wt{\Gamma}^+ = \varnothing$ while $\CK'_2 \cap \wt{\Gamma}^+ \neq \varnothing$. 
 \item If there is a loop that belongs to both $\CK'_1$ and $\CK'_2$, its two corresponding segments adjacent to $\partial^1 U'$ (resp.\ $\partial^2 U'$) are not connected within $B(0,2^{-J+1})$.
 \item $z_{i_1},z_{i_2}$ are not separated from $\partial B(0,2^{-J+1})$ in $D_{\wt{\eta}}$ by the points $\{ z_i \in \partial^2 U' \} \setminus \{ z_{i_1},z_{i_2} \}$.
\end{itemize}

We claim that if $\metapproxacres{\epsilon}{U'}{z_{i_1}}{z_{i_2}}{\wh{\Gamma}} \le \median{\epsilon}+\epsilon^{-\epsexp}\ac{\epsilon}$ for all pairs $(i_1,i_2) \in \CI$ (and corresponding $U'$), then the event $\wt{G}$ in the statement of Lemma~\ref{le:dist_across_chain} occurs (with $\delta^{1+6\innexp}$ in place of $\delta^{1+\innexp}$).

It suffices to argue for each connected component $U'$ as above. Among the points $\{ z_i \in \partial U' \}$, let $z_{i_0}$ (resp.\ $z_{i_2}$) be the first (resp.\ last) one visited by $\wt{\eta}$. Let $\partial^L U'$ (resp.\ $\partial^R U'$) be the clockwise (resp.\ counterclockwise) arc of $\partial U'$ from $z_{i_0}$ to $z_{i_2}$. Note that the loops in $\CK$ incident to $\partial^L U'$ all belong to $\wh{\Gamma}$ and there is no single loop in $\CK$ adjacent to both $\partial^L U'$ and $\partial^R U'$ (otherwise $U'$ would not be one single connected component). Let $I''$ be the set of indices $i_1$ such that $z_{i_1} \in \partial^L U'$ and if there is a single loop in $\CK$ that is adjacent to both the arc from $z_{i_0}$ to $z_{i_1}$ and the arc from $z_{i_1}$ to $z_{i_2}$, then the two corresponding segments are not connected within $B(0,2^{-J+1})$ (see the right picture of Figure~\ref{fi:allowed_linking}). Then $(i_1,i_2) \in \CI$ for every $i_1 \in I'' \setminus \{ i_2 \}$.

Let $C_J$ be the clockwise arc of $\partial B(0,2^{-J+1})$ from the last point to the next point visited by $\wt{\eta}$. Then the points $\{ z_i : i \in I'' \}$ separate $U'$ from $C_J$ in $D_{\wt{\eta}}$. Recall from Step~1 that the loop configuration for $\wt{\Gamma}$ and the original $(\eta,\Gamma)$ are identical within $B(0,2^{-J+1})$ (more precisely, the components under the loops $\{\wt{\CL}_i\}$ adjacent to $\eta$). Therefore the points $\{ z_i : i \in I'' \}$ also separate $U'$ from $\partial D$ in the setup of Lemma~\ref{le:dist_across_chain}. This shows the claim above.

To finish the proof of Lemma~\ref{le:dist_across_chain}, we estimate the probability of $\metapproxacres{\epsilon}{U'}{z_{i_1}}{z_{i_2}}{\wh{\Gamma}} > \median{\epsilon}+\epsilon^{-\epsexp}\ac{\epsilon}$ using another resampling procedure, and take a union bound over $(i_1,i_2) \in \CI$.

Apply the resampling procedure described in Proposition~\ref{prop:resampling}. Let $\wt{\Gamma}^{\resampled}$ be the CLE arising from resampling in $A(0,2^{-J-1},2^{-J})$ and then in $A(0,2^{-J},2^{-J+1})$. Let $\wt{\Gamma}^{\resampled,+}$, $\wh{\Gamma}^{\resampled}$, $\wt{\eta}^{\prime,\resampled}$, $\wt{\eta}^{\resampled}$, $D_{\wt{\eta}^{\resampled}}$ be defined from $\wt{\Gamma}^{\resampled}$ analogously as for $\wt{\Gamma}$.

For $(i_1,i_2) \in \CI$ with corresponding $U'$, $\CK'$, we let $\Flinkgff_{i_1,i_2}$ be the event that the loops in $\CK'$ get hooked up together at all $z_i$, $i\in I' \setminus \{i_1,i_2\}$,\footnote{Strictly speaking, the CLE should be resampled in a small neighborhood of an intersection point close to $z_i$ but with positive distance to $U'$; see \cite[Section~5.4]{amy-cle-resampling} for details.} and further all crossings of $A(0,2^{-J},2^{-J+1})$ by loops in $\wh{\Gamma}$ are broken up. In other words, the loops in $\CK'_2$ are linked together to a single loop in $\wt{\Gamma}^{\resampled,+}$, and the loops in $\CK'_1$ are linked together to a single loop in $\wh{\Gamma}^{\resampled}$. When the breaking up of crossings is also successful, the condition on $\CI$ implies that $U'$ still lies to the left of $\wt{\eta}^{\prime,\resampled}$ (although it does not necessarily lie to the left of $\wt{\eta}^{\resampled}$).

On the event $\Flinkgff_{i_1,i_2}$, we couple the internal metrics for $\wh{\Gamma}$ and $\wh{\Gamma}^{\resampled}$ so that they agree on $\ol{U'}$. In particular,
\[
\p\left[ \metapproxacres{\epsilon}{U''}{z_{i_1}}{z_{i_2}}{\wh{\Gamma}^{\resampled}} > \median{\epsilon}+\epsilon^{-\epsexp}\ac{\epsilon} ,\, \Elinkgff_{J}, \Flinkgff_{i_1,i_2} \right] 
\ge \plinkdisc \, \p\left[ \metapproxacres{\epsilon}{U''}{z_{i_1}}{z_{i_2}}{\wh{\Gamma}} > \median{\epsilon}+\epsilon^{-\epsexp}\ac{\epsilon} ,\, \Elinkgff_{J} \right] .
\]
It remains to show
\begin{equation}\label{eq:bad_chain_linked}
\p\left[ \metapproxacres{\epsilon}{U''}{z_{i_1}}{z_{i_2}}{\wh{\Gamma}^{\resampled}} > \median{\epsilon}+\epsilon^{-\epsexp}\ac{\epsilon} ,\, \Elinkgff_{J}, \Flinkgff_{i_1,i_2} \right] 
= O(\wt{\delta}^\bestexp) .
\end{equation}

We couple $\wt{\Gamma}^{\resampled}$ with a GFF $h^{\resampled}$ (in a slightly different way than in Step~2). Let $\theta_1,\theta_2$ be the angles in~\eqref{eq:angles_intersection}. Let $h^{\resampled}$ be a GFF on $B(0,\wt{\delta})$ such that its angle $\theta_2-\pi/2$ counterflow line from $i\wt{\delta}$ to $-i\wt{\delta}$ is the exploration path $\wt{\eta}^{\prime,\resampled}$ which discovers the loops in $\wt{\Gamma}^{\resampled,+}$. Given $\wt{\eta}^{\prime,\resampled}$, let the loops in $\wh{\Gamma}^{\resampled}$ be discovered by a branching angle $\theta_1-3\pi/2$ counterflow line $\wt{\eta}^{\prime,\resampled}_1$ in each of the connected components to the left of $\wt{\eta}^{\prime,\resampled}$ (and are reflected upon $\wt{\eta}^{\prime,\resampled}$). Note the the loops in $\wt{\Gamma}^{\resampled,+}$ are traced by $\wt{\eta}^{\prime,\resampled}$ counterclockwise, and the loops in $\wh{\Gamma}^{\resampled}$ are traced by $\wt{\eta}^{\prime,\resampled}_1$ clockwise.

Let $B_{i_1}^+,B_{i_1}^-$ be as in~\eqref{it:swallow_ball} in the definition of the event $\Elinkgff_{J}$. On the event $\Flinkgff_{i_1,i_2}$, if $w_1 \in B_{i_1}^-$, $w_2 \in B_{i_1}^+$, then the flow lines $\eta^{\theta_1}_{w_1}, \eta^{\theta_2}_{w_2}$ will agree with $\wt{\eta}^{\theta_1}_{i_1,i_2}, \wt{\eta}^{\theta_2}_{i_1,i_2}$ from the definition of the event.

Let $\tau_J$ be the first time when $\wt{\eta}^{\prime,\resampled}$ hits $B(0,2^{-J+1})$. Conditionally on $\wt{\eta}^{\prime,\resampled}[0,\tau_J]$, the conditional law of $h^{\resampled}$ on $B(0,\wt{\delta}) \setminus \wt{\eta}^{\prime,\resampled}[0,\tau_J]$ is a GFF plus a harmonic function with flow line boundary values along $\wt{\eta}^{\prime,\resampled}[0,\tau_J]$. Due to the condition~\eqref{it:winding} in the event $\Elinkgff_J$, the winding heights of $h^{\resampled}$ along $\eta^{\theta_1}_{w_1}, \eta^{\theta_2}_{w_2}$ differ to the height of the harmonic function in $B(0,2^{-J})$ by at most $M$. In particular, the zero-boundary GFF has bounded winding height along $\eta^{\theta_1}_{w_1}, \eta^{\theta_2}_{w_2}$.

By absolute continuity, we can compare the law of the restriction of this GFF to $B(0,2^{-J})$ to that of a GFF $\wtwt{h}$ on $B(0,c2^{-J})$ where $c>1$ is a suitable constant. We pick the boundary values of $\wtwt{h}$ in a way that the flow lines $\wtwt{\eta}_1,\wtwt{\eta}_2$ with respective angles $\theta_1,\theta_2$ starting from $-ic2^{-J}$ are defined.

Consider the following event $\wt{E}$. Sample $w_1,w_2 \in B(0,2^{-J})$ according to Lebesgue measure, and let $\eta^{\theta_1}_{w_1}$, $\eta^{\theta_2}_{w_2}$ be the flow lines with respective angles stopped upon exiting $B(0,2^{-J})$. In case the two flow lines intersect with angle difference $\theta_1-\theta_2$, sample the internal metric in the region $U$ bounded between them. Let $\wt{E}$ be the event that $\metapproxacres{\epsilon}{U}{x}{y}{\Gamma_U} \gtrsim \median{\epsilon}+\epsilon^{-\epsexp}\ac{\epsilon}$ for some $x,y \in \eta^{\theta_1}_{w_1} \cap \eta^{\theta_2}_{w_2}$. Let $F$ be the event that given $\eta^{\theta_1}_{w_1}$, $\eta^{\theta_2}_{w_2}$ and the values of $\wtwt{h}$ along them, the conditional probability that $\wtwt{\eta}_1$, $\wtwt{\eta}_2$ merge into $\eta^{\theta_1}_{w_1}$, $\eta^{\theta_2}_{w_2}$ is at least $\plinkmerge > 0$ where $\plinkmerge$ is as in~\eqref{it:merging_prob}. Using the same argument as in the proof of Lemma~\ref{le:a_priori_int_fl} together with the assumption~\eqref{eq:a_priori_assumption} (or equivalently Lemma~\ref{le:a_priori_fl_general_boundary}), we see that $\p[ \wt{E} \cap F ] = O(\wt{\delta}^{\bestexp})$. 

We have argued previously that on the event $\{ \metapproxacres{\epsilon}{U''}{z_{i_1}}{z_{i_2}}{\wh{\Gamma}^{\resampled}} > \median{\epsilon}+\epsilon^{-\epsexp}\ac{\epsilon} \} \cap \Elinkgff_{J} \cap \Flinkgff_{i_1,i_2}$, the conditional probability given $\wt{\eta}^{\prime,\resampled}[0,\tau_J]$ that the restriction of a GFF to $B(0,2^{-J})$ satisfies $\wt{E} \cap F$ is at least $s^4$ (namely when $w_1 \in B_{i_1}^-$, $w_2 \in B_{i_1}^+$). This implies also
\[
\p\left[ \metapproxacres{\epsilon}{U''}{z_{i_1}}{z_{i_2}}{\wh{\Gamma}^{\resampled}} > \median{\epsilon}+\epsilon^{-\epsexp}\ac{\epsilon} ,\, \Elinkgff_{J}, \Flinkgff_{i_1,i_2} \mmiddle| \wt{\eta}^{\prime,\resampled}[0,\tau_J] \right] 
\lesssim \p[ \wt{E} \cap F ]^{1-o(1)}
= O(\wt{\delta}^\bestexp)
\]
and concludes~\eqref{eq:bad_chain_linked} and finally Lemma~\ref{le:dist_across_chain}.

\subsection{Proof of the bubble crossing exponent}
\label{se:bubble_exponent_conclude}

We now combine the results from the last two sections and complete the proof of Proposition~\ref{prop:bubble_crossing_exponent}. Consider the setup in Section~\ref{se:bubble_setup}, and recall the events $G_{M,a}$ and $E_\delta$ defined in the statement of Proposition~\ref{prop:bubble_crossing_exponent}.

We proceed as outlined below the statement of Proposition~\ref{prop:bubble_crossing_exponent}. Recall the collection of loops $\wh{\Gamma}^\delta_0,\wh{\Gamma}^\delta_2 \subseteq \Gamma$ defined in Section~\ref{subsubsec:independence_statement}. We will divide $\eta_2$ into several pieces, and estimate the $\Fd_\epsilon$-distance across each piece. We will distinguish several cases which we outline here to ease the reading:\\
1. The points $\eta_2(t)$ with small distance to $\eta_0 \cup \partial D$.\\
2. The points that are sufficiently far away from $\eta_0 \cup \partial D$ and lie beneath excursions of loops in $\wh{\Gamma}^\delta_2$.\\
3. The points that are sufficiently far away from $\eta_0 \cup \partial D$ and do not lie beneath such excursions. Here we will distinguish the cases when $\eta_2$ does or does not form a bottleneck around the point.

Throughout the proof, $c$ denotes a constant whose value may change from line to line. We suppose that $a>0$ is sufficiently small, and we fix $\excexp > 0$ (depending on $\epsexp$). Let $a'>0$ be a small constant. We will work on several ``good'' events which we define now.

Let $E_2$ denote the event from Lemma~\ref{lem:loop_fill_in_ball}. Then $\p[E_2^c] = o^\infty(\delta)$. Note also that on the event $E_2 \cap E_\delta$, there are at most $\delta^{-3\excexp}$ loops in $\wh{\Gamma}_0^\delta$, $\wh{\Gamma}_2^\delta$.

Let $E_3$ denote the event from Corollary~\ref{co:close_geodesic_small_parts} with $\delta_1 = 2\delta^{1+\excexp}$ (and $a_1=a$, say). Namely, $E_3$ is the event that (with the notation used in Section~\ref{subsec:map_in}) that there exists a finite collection of intervals $[s_i,t_i]$ such that $t \in \bigcup_i [s_i,t_i]$ for every $t$ with $\dist(\eta_2(t), \eta_0 \cup \partial D) \le 2\delta^{1+\excexp}$, and $\bigcap_i G^U_{s_i,t_i}(r_i)$ holds for some $(r_i)$ with
\begin{equation}\label{eq:narrow_parts}
\sum_i r_i \le M .
\end{equation}
Then $\p[E_3^c \cap G_{M,a}] = O(M\delta^{\bestexp+\zeta\excexp})$ for some constant $\zeta>0$ (provided that $a$ is sufficiently small depending on $\epsexp,\excexp$).

In other words, on the event $E_3$, we can get across the parts of $\eta_2$ that are at most $2\delta^{1+\excexp}$ far from $\eta_0 \cup \partial D$. In the remainder of the proof, we will focus on the parts of $\eta_2$ with distance at least $\delta^{1+\excexp}$ from $\eta_0 \cup \partial D$.

Let $\alpha_{4,\kappa} > 2$ be as in~\eqref{eqn:double_exponent_simple}, and let
\[ c_1 = \frac{8\alpha_{4,\kappa}}{\alpha_{4,\kappa}-2} . \]
Let $E_4$ denote the event that for any $s,t$ with $\eta_2(s) \in B(x_0,\delta)$, $\dist(\eta_2(s),\partial D) \ge \delta^{1+\excexp}$, and $\abs{\eta_2(s)-\eta_2(t)} \le \delta^{1+c_1\excexp}$ we have $\diam(\eta_2[s,t]) \le \delta^{1+4\excexp}$. By Proposition~\ref{pr:4arm_simple} and a union bound we have $\p[ E_4^c \cap E_\delta ] = O(\delta^{\zeta'\excexp})$ where $\zeta' = (c_1-4)\alpha_{4,\kappa}-a'-2c_1 > 3\alpha_{4,\kappa} > 0$. Applying Lemma~\ref{le:close_geodesic_old} to the region of $D \setminus \eta_2$ to the left of $\eta_2$, we see that
\[
\p[ \metapproxacres{\epsilon}{V}{x_0}{y_0}{\Gamma} \ge M\delta^{-ca}(\median{\epsilon}+\epsilon^{-\epsexp}\ac{\epsilon}) ,\, E_4^c \cap E_\delta \cap G_{M,a} ] = O(M\delta^{\bestexp+\zeta'\excexp}) .
\]
To conclude Proposition~\ref{prop:bubble_crossing_exponent}, we will show
\begin{equation}\label{eq:bubble_pf_good_case}
\p[ \metapproxacres{\epsilon}{V}{x_0}{y_0}{\Gamma} \ge M\delta^{-ca}(\median{\epsilon}+\epsilon^{-\epsexp}\ac{\epsilon}) ,\, E_2 \cap E_3 \cap E_4 \cap E_\delta \cap G_{M,a} ] = O(M^2\delta^{\bestexp+\alpha'}) .
\end{equation}
By the scaling argument described in Remark~\ref{rm:bubble_exponent_scaling}, this will conclude the proof of Proposition~\ref{prop:bubble_crossing_exponent}.

We apply Proposition~\ref{prop:intersect_loops_large} with $\excexp>0$. Let $D_i^\delta$, $i=0,2$, be as in~\eqref{eqn:d_i_delta_definition}. Then Proposition~\ref{prop:intersect_loops_large} implies that there exists $\alpha' > 0$ so that
\begin{equation}
\label{eqn:under_the_big_loops}
\p[ D_i^\delta \geq \median{\epsilon}+\epsilon^{-\epsexp}\ac{\epsilon} \ \text{for both}\ i=0,2] = O(\delta^{\bestexp + \alpha'}).
\end{equation}
To complete the proof, we need to estimate the probability that $\median{\epsilon}+\epsilon^{-\epsexp}\ac{\epsilon}$ is exceeded in any of the regions between the excursions of loops in $\wh{\Gamma}_i^\delta$.

Let $W$ be the $\delta^{1+\excexp}$-neighborhood of $\eta_0$ in the component of $D \setminus (\eta_0 \cup \eta_2)$ between $\eta_0,\eta_2$. Let $F_1 = G^W_{\wh{0},\wh{\infty}}(M\delta^{-ca})$. This event is measurable with respect to the internal metrics within $\ol{W}$, and on $F_1$ we have by Lemma~\ref{le:concatenate_overlapping_intervals}
\[ \metapproxacres{\epsilon}{V}{x_0}{y_0}{\Gamma} \lesssim M\delta^{-ca}(\median{\epsilon}+\epsilon^{-\epsexp}\ac{\epsilon}) . \]
By Lemma~\ref{le:close_geodesic} (and scaling), we have $\p[ F_1^c \cap E_\delta \cap G_{M,a} ] = O(M\delta^{\bestexp-ca-c\excexp/\epsexp})$.

It remains to argue that conditionally on $F_1^c$ it is still likely that the $\Fd_\epsilon$-distances in the neighborhood of $\eta_2$ are short. Let $\Gamma(\wt{W}) \subseteq \Gamma$ and $\wt{\eta}$ be as in the statement of Lemma~\ref{le:cle_sle_domain_markov}. Note that the connected component of $W$ adjacent to $\eta_0$ is contained in the regions bounded by $\eta_0$ and the loops in $\Gamma(\wt{W})$. Therefore, by the Markovian property of the metric, the conditional law of $\eta_2$ and the internal metrics in the regions bounded between $\eta_2,\wt{\eta}$ given $\Gamma(\wt{W})$, $\wt{\eta} \cap \eta_2$, $F_1^c$ is as described in Lemma~\ref{le:cle_sle_domain_markov}.

Pick $\wt{\eta}(\wt{\tau}), \wt{\eta}(\wt{\sigma}) \in \wt{\eta} \cap \eta_2$ as in the statement of Lemma~\ref{le:cle_sle_domain_markov} such that additionally one of the following hold.\\
(i) $\wt{\eta}(\wt{\tau})$ or $\wt{\eta}(\wt{\sigma})$ is at distance more than $2\delta^{1+\excexp}$ to $\eta_0$, and they lie on different loops of $\Gamma(\wt{W})$.\\
(ii) $\diam(\wt{\eta}[\wt{\tau},\wt{\sigma}]) \ge \delta^{1+c_1\excexp}$.\\
(iii) $\diam(\wt{\eta}[\wt{\tau},\wt{\sigma}]) \ge \epsilon$ and $\diam(\wt{\eta}[\wt{\sigma}',\wt{\tau}]) \le \cserial\epsilon$ (or $\diam(\wt{\eta}[\wt{\sigma},\wt{\tau}']) \le \cserial\epsilon$) for some $\wt{\sigma}'$ (or $\wt{\tau}'$) as in (i) or (ii) above.\\
Note that if $\CL \in \Gamma(\wt{W})$ is the loop containing the point $\wt{\eta}(\wt{\tau})$ (resp.\ $\wt{\eta}(\wt{\sigma})$) with distance more than $2\delta^{1+\excexp}$ to $\eta_0$, then (since $\CL$ also intersects $W$) we have $\diam(\CL) \ge \delta^{1+\excexp}$ and $\CL \in \wh{\Gamma}_2^\delta$. On the event $E_2 \cap E_\delta$, there are at most $\delta^{-3c_1\excexp}$ such choices of $\wt{\tau},\wt{\sigma}$ satisfying either of the conditions above. 

For each such pair $\wt{\tau},\wt{\sigma}$ and corresponding times $\tau,\sigma$ for $\eta_2$, we apply Lemma~\ref{le:close_geodesic} together with scaling (Lemma~\ref{le:median_scaling}) to the segment $\eta_2[\tau,\sigma]$ (conditionally on $\Gamma(\wt{W})$, $\wt{\eta} \cap \eta_2$, $F_1^c$, and $\eta_2 \setminus \eta_2[\tau,\sigma]$). We find an event $F_2$ with $\p[ F_2^c \mid F_1^c ] = O(M\delta^{\bestexp-ca-c\excexp/\epsexp})$ and such that on the event $F_2 \cap E_2 \cap E_\delta \cap G_{M,a}$ we have
\[
\metapproxacres{\epsilon}{V}{\eta_2(\tau)}{\eta_2(\sigma)}{\Gamma} \le M\delta^{-ca}\delta^{3c_1\excexp}(\median{\epsilon}+\epsilon^{-\epsexp}\ac{\epsilon})
\]
for every pair $\tau,\sigma$ as above, due to the monotonicity of the internal metrics (see the discussion below the statement of Lemma~\ref{le:dist_across_chain}), and hence
\begin{equation}\label{eq:between_big_loops}
\sum_{\tau,\sigma} \metapproxacres{\epsilon}{V}{\eta_2(\tau)}{\eta_2(\sigma)}{\Gamma} \le M\delta^{-ca}(\median{\epsilon}+\epsilon^{-\epsexp}\ac{\epsilon}) .
\end{equation}

Together we have
\begin{equation}\label{eq:both_sides_bad}
\p[ (F_1 \cup F_2)^c \cap E_2 \cap E_\delta \cap G_{M,a} ] = O(M^2 \delta^{2\bestexp-ca-c\excexp/\epsexp}) . 
\end{equation}
This controls the distances across all such segments $\eta_2[\tau,\sigma]$.

We argue that all parts of $\eta_2$ are covered by~\eqref{eq:narrow_parts}, \eqref{eqn:under_the_big_loops}, and~\eqref{eq:between_big_loops}. Suppose that $\dist(\eta_2(t), \eta_0 \cup \partial D) > 2\delta^{1+\excexp}$. Let $\tau,\sigma$ be two consecutive points of $\wt{\eta} \cap \eta_2$ with $\tau < t < \sigma$. In case $\diam(\wt{\eta}[\wt{\tau},\wt{\sigma}]) \ge \delta^{1+c_1\excexp}$, this interval is covered in~\eqref{eq:between_big_loops} with $\tau,\sigma$ as in~(ii). Otherwise, since we are on the event $E_4$, we have $\diam(\eta_2[\tau,\sigma]) \le \delta^{1+4\excexp}$. If $\eta_2(\tau),\eta_2(\sigma)$ lie on two different loops, then this interval is covered in~\eqref{eq:between_big_loops} with $\tau,\sigma$ as in~(i). If they lie on one single loop, then it is covered in~\eqref{eqn:under_the_big_loops} unless they lie between the points $a_{i,l}^\CL,\wt{a}_{i,l}^\CL$ or $b_{i,l}^\CL,\wt{b}_{i,l}^\CL$ defined in~\eqref{eqn:d_i_delta_definition} in which case they are covered in~\eqref{eq:between_big_loops} with $\tau,\sigma$ as in~(iii) or by the definition of $\metapproxac{\epsilon}{\cdot}{\cdot}{\Gamma}$ in~\eqref{eq:shortcutted_metric}.

\begin{figure}[ht]
\centering
\includegraphics[width=0.45\textwidth]{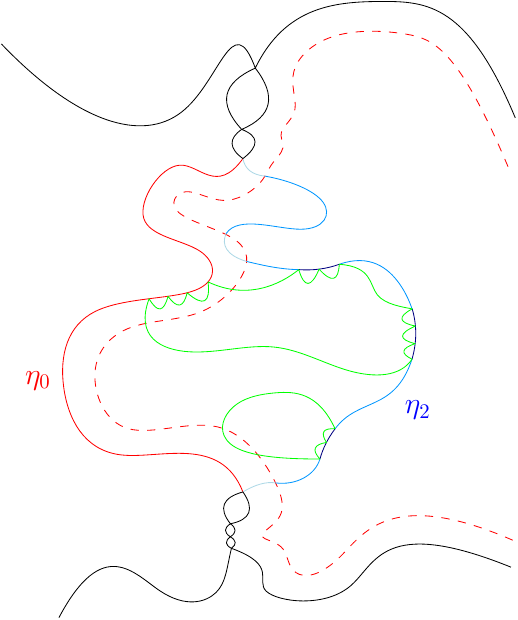}
\caption{In the proof of Proposition~\ref{prop:bubble_crossing_exponent}, we distinguish between the various parts of $\eta_2$ which are shown in different shades of blue.}
\label{fi:bubble_parts}
\end{figure}

To summarize, (on $E_4$) each point of $\eta_2$ satisfies at least one of the following (see Figure~\ref{fi:bubble_parts} for an illustration):
\begin{itemize}
\item $\dist(\eta_2(t), \eta_0 \cup \partial D) \le 2\delta^{1+\excexp}$,
\item $\eta_2(t)$ lies between some $a_{2,l}^\CL$, $b_{2,l}^\CL$ where $\CL \in \wh{\Gamma}_2^\delta$,
\item $\eta_2(t)$ lies between two loops in $\Gamma(\wt{W})$,
\item $\eta_2(t)$ lies between two intersection points $\wt{\eta}(\wt{\tau}), \wt{\eta}(\wt{\sigma})$ where $\diam(\wt{\eta}[\wt{\tau},\wt{\sigma}]) \ge \delta^{1+c_1\excexp}$.
\end{itemize}
By symmetry, the same estimates~\eqref{eq:narrow_parts}, \eqref{eq:both_sides_bad} apply with the roles of $\eta_0$ and $\eta_2$ swapped. We conclude that there is an event $F$ with $\p[F^c] = O(M^2\delta^{\bestexp+\alpha'})$ such that on the event $F \cap E_\delta \cap G_{M,a}$ the following hold.
\begin{itemize}
\item Either $D_0^\delta \le \median{\epsilon}+\epsilon^{-\epsexp}\ac{\epsilon}$ or $D_2^\delta \le \median{\epsilon}+\epsilon^{-\epsexp}\ac{\epsilon}$. Let us suppose the latter (the other case is the same by symmetry).
\item Either $F_1$ or $F_2$ occurs.
\begin{itemize}
\item On $F_1$ we have $\metapproxacres{\epsilon}{V}{x_0}{y_0}{\Gamma} \le M\delta^{-ca}(\median{\epsilon}+\epsilon^{-\epsexp}\ac{\epsilon})$.
\item On $F_2 \cap \{ D_2^\delta \le \median{\epsilon}+\epsilon^{-\epsexp}\ac{\epsilon} \} \cap E_3$ we can partition $\eta_2$ into segments $\tau_1 < \tau_2 < \cdots$ such that for each $k$ either
\begin{itemize}
 \item $(\eta_2(\tau_k),\eta_2(\tau_{k+1})) = (\wt{a}_{2,l}^\CL,\wt{b}_{2,l}^\CL)$ for some $\CL \in \wh{\Gamma}_2^\delta$ or 
 \item $(\tau_k,\tau_{k+1}) = (\tau,\sigma)$ for some $(\tau,\sigma)$ as in~\eqref{eq:between_big_loops} or 
 \item $s_i \le \tau_k < \tau_{k+1} \le t_i$ for some $(s_i,t_i)$ as in~\eqref{eq:narrow_parts} or
 \item $\dpath[\ol{V}](\eta_2(\tau_k),\eta_2(\tau_{k+1})) < \epsilon$.
\end{itemize}
\end{itemize}
\end{itemize}
By Lemma~\ref{le:concatenate_overlapping_intervals} we conclude that in either case we have
\[ \metapproxacres{\epsilon}{V}{x_0}{y_0}{\Gamma} \lesssim M\delta^{-ca}(\median{\epsilon}+\epsilon^{-\epsexp}\ac{\epsilon}) . \]
This proves~\eqref{eq:bubble_pf_good_case}.

\section{Proof of tightness}
\label{se:tightness_proof}

The purpose of this section is to complete the proof of the tightness statement in  Theorem~\ref{th:tightness_metrics}.  The proof will proceed using the following strategy.  First, in Section~\ref{subsec:tightness_at_the_boundary} we will prove a tightness result in the case that we restrict $\metapproxac{\epsilon}{\cdot}{\cdot}{\Gamma}$ to the boundary of a domain with $\SLE_\kappa$-type boundary which arises by considering an infinite volume setup.  Next, in Section~\ref{subsec:tightness_interior_flow_lines} we will use absolute continuity arguments in order to transfer the results of Section~\ref{subsec:tightness_at_the_boundary} to obtain tightness along the boundary of a domain that is formed using flow lines of the GFF starting from interior points.  We will then use these results in order to extend our tightness statements from the boundary to the interior in Section~\ref{subsec:interior_tightness}.  This will proceed by fragmenting our domain into successively smaller components that are bounded by finite collections of CLE loops. The boundaries of these regions look like flow lines starting from interior points, and by a resampling argument we can transfer the tightness results from Section~\ref{subsec:tightness_at_the_boundary} to the boundaries of these regions. Finally, in Section~\ref{subsec:thm1_proof} we will complete the proof of the tightness of the entire CLE metrics.

We assume that $\metapprox{\epsilon}{\cdot}{\cdot}{\Gamma}$ is an approximate \clekp{} metric in the sense defined in Section~\ref{se:assumptions}. Recall the metric $\metapproxacres{\epsilon}{V}{\cdot}{\cdot}{\Gamma}$ defined in~\eqref{eq:shortcutted_metric}. From now on, we will also assume that~\eqref{eq:eps_bound_ass} holds and we fix $\epsexp > 0$ such that
\begin{equation}\label{eq:epsexp2}
\ac{\epsilon} \le \epsilon^{2\epsexp}\median{\epsilon} .
\end{equation}
In particular, in all the statements from Sections~\ref{se:intersection_exponent} and~\ref{se:bubble_exponent} we have the bound
\[ \median{\epsilon}+\epsilon^{-2\epsexp}\ac{\epsilon} \le 2\median{\epsilon} . \]
Let us note that by the main result of Section~\ref{se:intersection_exponent}, all statements in the Sections~\ref{se:intersection_exponent_conclude} and~\ref{se:bubble_exponent} are valid for arbitrarily large $\bestexp > 0$.

\subsection{Tightness at the boundary}
\label{subsec:tightness_at_the_boundary}

We start by proving a tightness result when we restrict to points on the boundary of domains with $\SLE_\kappa$-type boundary conditions. It will be convenient to work in the half-planar setup as we have exact translation invariance. That is, we let $\eta$ be a two-sided whole-plane $\SLE_\kappa$ from $\infty$ to $\infty$ through $0$ normalized so that $\eta(0) = 0$ and equipped with the natural parameterization.  Let $\CH$ be the component of $\C \setminus \eta$ which is to the left of $\eta$ and let $\Gamma$ be a $\CLE_{\kappa'}$ in $\CH$.

The results from Sections~\ref{se:intersection_exponent} and~\ref{se:bubble_exponent} are used to show the following key lemma. Recall the definitions of $G^{U}_{s,t}(r)$, $G^{U}_{\wh{s},\wh{t}}(r)$, etc., above Lemma~\ref{le:concatenate_overlapping_intervals}.

\begin{lemma}
\label{lem:bdry_chaining_tail}
There exists $\zeta > 0$ such that the following is true. Assume~\eqref{eq:epsexp2}. Then
\begin{equation}
\label{eq:bdry_chaining_tail}
\p\left[ G^{\wt{U}}_{\wh{s},\wh{t}}\bigl(\abs{s-t}^\zeta+\epsilon^{\epsexp}\bigr)^c \right] = o^\infty(\abs{s-t}) \quad\text{as } \abs{s-t} \to 0 
\end{equation}
where $\wt{U} \subseteq \CH$ is the $\abs{s-t}^\zeta$-neighborhood of $\eta[s,t]$.
\end{lemma}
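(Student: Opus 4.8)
First I would use the stationarity of the increments of the natural parameterization to reduce to $(s,t)=(0,\delta_0)$ with $\delta_0=\abs{s-t}\to 0$, and condition on $\eta\setminus\eta((0,\delta_0))$. Writing $\eta_1=\eta((-\infty,0])$ and $\eta_2=\eta([\delta_0,\infty))$, the set $\eta_1\cup\eta_2\cup\{\infty\}$ is connected in $\widehat{\C}$, so $D:=\widehat{\C}\setminus(\eta_1\cup\eta_2\cup\{\infty\})$ is a simply connected domain with $\eta(0),\eta(\delta_0)\in\partial D$. By the decomposition of two-sided whole-plane SLE (cf.\ \cite{zhan-decomposition}), the conditional law of $\eta|_{[0,\delta_0]}$ given $\eta\setminus\eta((0,\delta_0))$ is that of a chordal $\SLE_\kappa$ in $(D,\eta(0),\eta(\delta_0))$; moreover the left component of $D\setminus\eta((0,\delta_0))$ is exactly $\CH$, so $\Gamma$ is a $\CLE_{\kappa'}$ in it. By the Markovian property of the metric and the domain Markov property of CLE, the joint law of $\eta|_{[0,\delta_0]}$ together with the internal metrics within the neighborhood $\wt U$ of $\eta[0,\delta_0]$, conditioned on $\eta\setminus\eta((0,\delta_0))$ and on the internal metrics away from $\wt U$, is exactly the object treated in Section~\ref{subsec:map_in}; since $G^{\wt U}_{\wh s,\wh t}(\,\cdot\,)$ depends only on those internal metrics, it suffices to bound its complement in that setup.

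Next I would control the Euclidean scale and the regularity of $\eta[0,\delta_0]$. Fixing a small $a''>0$, the tail bounds $\eqref{eq:wpsle_ub}$--$\eqref{eq:wpsle_lb}$ (applied with $\delta_0$ in place of $t$) give, off an event of probability $o^\infty(\delta_0)$, that $\delta_0^{1/d_\SLE+a''}\le\diam(\eta[0,\delta_0])\le\delta_0^{1/d_\SLE-a''}$; let $\rho$ be a dyadic scale with $\rho\le\diam(\eta[0,\delta_0])<2\rho$. Since $\eta(0)\in\partial D$ we then have $\sup_u\dist(\eta(u),\partial D)\le\rho$, and for $\zeta$ small enough $\wt U$ contains the neighborhood of $\eta[0,\delta_0]$ appearing in Lemma~\ref{le:close_geodesic}. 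By the $\SLE_\kappa$-in-a-good-domain regularity estimates (Proposition~\ref{pr:regularity} together with Corollary~\ref{co:regularity_scaled}, in the example corresponding to the present setup), one can fix a large constant $M$ so that, off a further event of probability $o^\infty(\rho)=o^\infty(\delta_0)$, the event $\wh G_{M,a}$ that $D\setminus\eta([0,s']\cup[t',\infty))$ is $(M,a)$-good at scale $\rho$ holds for all $0\le s'<t'\le\delta_0$. If $\epsilon\gtrsim\rho$ then $\diam(\eta[0,\delta_0])\lesssim\epsilon$ and $G^{\wt U}_{\wh s,\wh t}(\epsilon^{\epsexp})$ holds trivially via alternative~\eqref{it:short_segment}, so I may assume $\epsilon\le\rho$.

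On the good event I would apply Lemma~\ref{le:close_geodesic} after rescaling by an \emph{intermediate} scale $\rho^\theta$, for a fixed $\theta\in(0,1)$ close to $1$. By Lemma~\ref{le:scaled_metric} the family $\metapprox{\epsilon}{\rho^\theta\cdot}{\rho^\theta\cdot}{\Gamma}$ is an approximate $\CLE_{\kappa'}$ metric with shortcut constant $\ac{\epsilon}$ and, by Lemma~\ref{le:median_scaling}, with $\median[\rho^\theta]{\epsilon}\le\rho^{\theta\ddouble+o(1)}\median{\epsilon}$; in these coordinates $\rho^{-\theta}\eta[0,\delta_0]$ is a chordal $\SLE_\kappa$ of diameter $\asymp\rho^{1-\theta}$ in the $((2\rho^{-\theta})^aM,a)$-good domain $\rho^{-\theta}D$ (Lemma~\ref{le:good_domain_scaling}), with $\sup\dist(\cdot,\partial)\le\rho^{1-\theta}$. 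Applying Lemma~\ref{le:close_geodesic} at Euclidean scale $\delta=\rho^{1-\theta}$ — with $\bestexp$ taken arbitrarily large, which is legitimate by the main result of Section~\ref{se:intersection_exponent} — produces, off an event of probability $O\bigl(\rho^{-\theta a}(\rho^{1-\theta})^{\bestexp-c(a+a_1)}\bigr)=o^\infty(\delta_0)$, a covering of $\rho^{-\theta}\eta[0,\delta_0]$ realizing the event $G$ for the rescaled metric. Undoing the rescaling turns this into a covering realizing $G^{\wt U}_{\wh s,\wh t}$ for the original metric, and it yields $\metapproxacres{\epsilon}{\wt U}{\eta(s)}{\eta(t)}{\Gamma}\le\rho^{\theta\ddouble-\theta a-(1-\theta)c(a+a_1)+o(1)}(\median{\epsilon}+\epsilon^{-\epsexp}\ac{\epsilon})$, the sub-$\epsilon$ contributions being absorbed via~\eqref{eq:shortcutted_metric} and~\eqref{eq:epsexp2} exactly as in the proof of Lemma~\ref{le:close_geodesic_good_eta}. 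Since $\rho\asymp\delta_0^{1/d_\SLE}$ up to a $\delta_0^{\pm a''}$ factor, taking $\theta$ close to $1$ and $a,a_1,a''$ small makes the exponent of $\rho$ at least $d_\SLE(\zeta+a'')$ for any $\zeta<\ddouble/d_\SLE$, which gives $G^{\wt U}_{\wh s,\wh t}(\delta_0^\zeta+\epsilon^{\epsexp})$ off an event of probability $o^\infty(\delta_0)$.

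The main obstacle is the three-scale bookkeeping: reconciling the natural-parameterization increment $\abs{s-t}$, the Euclidean diameter $\rho\asymp\abs{s-t}^{1/d_\SLE}$, and the normalizations $\median{\epsilon}$ versus $\median[\rho^\theta]{\epsilon}$. In particular, rescaling by $\rho$ itself does \emph{not} work — it gives the right metric exponent but leaves $\sup\dist(\cdot,\partial D)\asymp 1$ in the rescaled picture, so Lemma~\ref{le:close_geodesic} only produces an $O(M)$ error that does not decay in $\abs{s-t}$ — whereas not rescaling at all gives superpolynomial decay but a useless negative power of $\abs{s-t}$ in the cost; the intermediate choice $\rho^\theta$ makes both succeed simultaneously, crucially because Section~\ref{se:intersection_exponent} allows $\bestexp$ to be arbitrarily large so that the error $O(M\delta^{\bestexp-c(a+a_1)})$ at Euclidean scale $\delta\asymp\rho^{1-\theta}$ becomes $o^\infty(\delta_0)$. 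A secondary point is justifying the first-step reduction, which relies on the Markovian property of the metric, the domain Markov property of CLE, and the decomposition of two-sided whole-plane SLE.
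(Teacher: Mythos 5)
Your rescaling-at-an-intermediate-scale $\rho^\theta$ argument in the third paragraph is sound and is in fact essentially what the paper's Corollary~\ref{co:close_geodesic_small_parts} does internally (the paper applies that corollary with $\delta_1 = t^{1/d_\SLE-a}$, $\delta = \delta_1^{1/2}$; unwinding its proof one sees a rescaling by $\lambda = (\delta_1/\delta)^{1/2}\delta^{O(a+a_1)}$, comparable to your $\rho^\theta$). So the bookkeeping at the level of probabilities, medians and good-domain constants is fine, and your explanation of why a ``pure'' rescaling by $\rho$ would fail is correct.

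The genuine gap is the first reduction. You condition on $\eta\setminus\eta((0,\delta_0))$ where $[0,\delta_0]$ is a \emph{fixed time window in natural parameterization}, and assert that the conditional law of $\eta|_{[0,\delta_0]}$ is chordal $\SLE_\kappa$ in $(D,\eta(0),\eta(\delta_0))$. This is not what the decomposition of two-sided whole-plane SLE gives. Conditioning on the traces $\eta((-\infty,0])$ and $\eta([\delta_0,\infty))$ determines the domain $D$ and the two marked boundary points, but it \emph{also} contains the information that the natural-parameterization length of the missing piece is exactly $\delta_0$, and that is an extra (singular) conditioning. The correct conditional law is chordal $\SLE_\kappa$ in $(D,\eta(0),\eta(\delta_0))$ conditioned to have Minkowski-content parameterization length $\delta_0$ — not the unconditioned chordal $\SLE_\kappa$ to which Lemma~\ref{le:close_geodesic} applies. (Compare the Brownian analogue: conditioning on $B|_{(-\infty,0]}$ and $B|_{[\delta_0,\infty)}$ makes the middle a Brownian bridge, not Brownian motion.) Zhan's decomposition and stationarity results do not give the statement you claim.

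The paper sidesteps this by replacing the fixed time $\delta_0$ with the reverse Euclidean stopping time $\sigma = \sup\{u\ge0:\abs{\eta(u)}=t^{1/d_\SLE-a}\}$: since $\sigma$ is determined by the complement of $\eta[0,\sigma]$ (it is the last return to the circle, equivalently a forward stopping time for the reversal), $\eta[0,\sigma]$ genuinely is chordal $\SLE_\kappa$ given its complement, with no hidden conditioning. This change forces a second, compensating modification that your proposal also misses: because $\sigma$ is typically strictly larger than $t$, the endpoint $\eta(t)$ is an interior point of $\eta[0,\sigma]$, so Lemma~\ref{le:close_geodesic}/Corollary~\ref{co:close_geodesic_small_parts} applied to $\eta[0,\sigma]$ can only produce the one-sided hat event $G^{\wt U}_{\wh 0,t}$ (the covering balls are only guaranteed to avoid $\eta(u)$ for $u\le0$, not for $u\ge t$). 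One must then repeat the argument for the time-reversal of $\eta$ from $t$, using $\ol\sigma$, to get $G^{\wt U}_{0,\wh t}$, and combine via $G^{\wt U}_{\wh 0,t}\cap G^{\wt U}_{0,\wh t}\subseteq G^{\wt U}_{\wh 0,\wh t}$. With those two changes — stopping time instead of fixed natural-parameterization time, and forward-plus-backward application — your rescaling computation would go through and match the paper's argument.

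A minor further point: Lemma~\ref{le:good_domain_scaling} requires $\operatorname{inrad}(D)\le 1$, while your domain $\widehat\C\setminus(\eta_1\cup\eta_2\cup\{\infty\})$ is unbounded, so the good-domain rescaling needs the ``within $B(0,r)$'' qualification from Corollary~\ref{co:regularity_wpsle}; you should say this explicitly when invoking the good-domain constants.
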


In particular, combining Lemma~\ref{lem:bdry_chaining_tail} with Lemma~\ref{le:concatenate_overlapping_intervals} we have
\[
\p\left[ \median{\epsilon}^{-1}\metapproxacres{\epsilon}{V}{\eta(s)}{\eta(t)}{\Gamma} \geq \abs{s-t}^\zeta+\epsilon^{\epsexp} \right] = o^\infty(\abs{s-t}) \quad\text{as } \abs{s-t} \to 0 
\]
for any $V \supseteq \wt{U}$.

\begin{proof}[Proof of Lemma~\ref{lem:bdry_chaining_tail}]
Throughout the proof, we fix a small constant $a>0$. By the stationarity of the two-sided whole-plane \slek{} (\cite[Corollary~4.7]{zhan-sle-loop}) and the translation-invariance of the metric, it suffices to assume $s=0$. We are going to show
\[
\p\left[ G^{\wt{U}}_{\wh{0},\wh{t}}\bigl(t^\zeta+\epsilon^{\epsexp}\bigr)^c \right] = O(t^b)
\]
for any $b>0$.

By~\eqref{eq:wpsle_ub}, we have $\p[ \diam(\eta[0,t]) \ge t^{1/\dsle-a} ] = o^\infty(t)$. In the remainder, we restrict to the event $\{ \diam(\eta[0,t]) \le t^{1/\dsle-a} \}$.

Let $\sigma = \sup\{ u \ge 0 : \abs{\eta(u)} = t^{1/\dsle-a} \}$. Note that $\eta[0,\sigma]$ is an \slek{} given its complement. Let $\wh{G}_{M,a}$ be the event that for every $0 \le s' < t' \le \sigma$ the domain $\C \setminus \eta([-\infty,s'] \cup [t',\infty])$ is $(M,a)$-good within $B(0,t^{1/\dsle-a})$. Then $\p[\wh{G}_{M,a}^c] = o^\infty(t)$ by Corollary~\ref{co:regularity_wpsle}.

Applying Corollary~\ref{co:close_geodesic_small_parts} with $\delta_1 = t^{1/\dsle-a}$, $\delta=\delta_1^{1/2}$ (and with sufficiently large $\bestexp$) we get
\[ \p\left[G^{\wt{U}}_{\wh{0},t}\bigl(t^\zeta+\epsilon^{\epsexp}\bigr)^c \cap \wh{G}_{M,a} \cap \{ t \le \sigma \} \right] = O(t^b) . \]
Similarly, applying the same argument to the time-reversal of $\eta$ from time $t$ to $\ol{\sigma} = \inf\{ u : \abs{\eta(u)-\eta(t)} = t^{1/\dsle-a} \}$, we see that
\[ \p\left[G^{\wt{U}}_{0,\wh{t}}\bigl(t^\zeta+\epsilon^{\epsexp}\bigr)^c \cap \wh{G}_{M,a} \cap \{ \ol{\sigma} \le 0 \} \right] = O(t^b) . \]
Since $G^{\wt{U}}_{\wh{0},t}\bigl(t^\zeta+\epsilon^{\epsexp}\bigr) \cap G^{\wt{U}}_{0,\wh{t}}\bigl(t^\zeta+\epsilon^{\epsexp}\bigr) \subseteq G^{\wt{U}}_{\wh{0},\wh{t}}\bigl(t^\zeta+\epsilon^{\epsexp}\bigr)$, this concludes the proof.
\end{proof}

\begin{proposition}
\label{prop:tightness_at_the_boundary}
There exists $\zeta > 0$ such that the following is true. Assume~\eqref{eq:epsexp2}. For $L>0$ let $G_\delta^L$ be the event that for each $s,t\in[0,1]$ with $\abs{s-t}\le\delta$ the event
\[ G^{\wt{U}^{L^{-\zeta}}_{s,t}}_{\wh{s},\wh{t}}\bigl( \delta^\zeta+\epsilon^{\epsexp}+L\delta \bigr) \]
occurs where $\wt{U}^{L^{-\zeta}}_{s,t} \subseteq \CH$ is the $L^{-\zeta}$-neighborhood of $\eta[s,t]$. Then
\[
 \p[ (G_\delta^L)^c ] = o^\infty(L^{-1}) \quad\text{as}\quad L \to \infty . 
\]
\end{proposition}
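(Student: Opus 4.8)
The plan is to run a Kolmogorov--Chentsov-type chaining argument in which Lemma~\ref{lem:bdry_chaining_tail} supplies the per-scale input. The crucial feature of that lemma is that its tail is $o^\infty(\abs{s-t})$, i.e.\ it beats every power of $\abs{s-t}$; consequently a union bound over the $O(2^{n})$ consecutive dyadic pairs of $[0,1]$ at scale $2^{-n}$ still leaves a bad probability that is $O(2^{-n b})$ for every $b>0$, and summing over $n\ge m_L:=\lceil\log_2 L\rceil$ produces $O(L^{-(b-1)})=o^\infty(L^{-1})$. I would fix once and for all the exponent from Lemma~\ref{lem:bdry_chaining_tail}, call it $\zeta_0$, and take the $\zeta$ of the present statement to be a small positive constant with $\zeta<\min(\zeta_0,1/\dsle)$. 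The reason to chain only down to scale $\approx L^{-1}$ (rather than all the way to $0$) is that Lemma~\ref{lem:bdry_chaining_tail} produces the event only inside the \emph{scale-dependent} $\abs{s-t}^{\zeta_0}$-neighborhood of $\eta[s,t]$: for a subinterval of time-length $\le L^{-1}$ this neighborhood is contained in the \emph{fixed} $L^{-\zeta}$-neighborhood required here, since $(L^{-1})^{\zeta_0}\le L^{-\zeta}$ and subpieces $\eta[u,u']$ of $\eta[s,t]$ have $L^{-\zeta}$-neighborhoods inside that of $\eta[s,t]$; the same remains true for ``end pieces'' straddling $s$ or $t$, because on a regularity event (from \eqref{eq:wpsle_ub}) such a piece has Euclidean diameter $\le (L^{-1})^{1/\dsle-o(1)}\le L^{-\zeta}$, so it sits within $L^{-\zeta}$ of $\eta(s)\in\eta[s,t]$.

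Concretely, for each $n\ge m_L$ let $E_n$ be the event that for every level-$n$ dyadic interval $[j2^{-n},(j+1)2^{-n}]\subseteq[0,1]$ the event $G^{\wt U}_{\wh{\,j2^{-n}},\wh{(j+1)2^{-n}}}\bigl((2^{-n})^{\zeta_0}+\epsilon^{\epsexp}\bigr)$ holds with $\wt U$ the $(2^{-n})^{\zeta_0}$-neighborhood of that arc of $\eta$; by Lemma~\ref{lem:bdry_chaining_tail} and a union bound, $\p[E_n^c]=2^n\,o^\infty(2^{-n})$, whence $\p\bigl[(\bigcap_{n\ge m_L}E_n)^c\bigr]=o^\infty(L^{-1})$. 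In parallel I would impose: (a) the regularity event that $\diam(\eta[u,u'])\le\abs{u-u'}^{1/\dsle-a}$ for all dyadic $u,u'$ down to scale $L^{-1}$, which has probability $1-o^\infty(L^{-1})$ after a union bound over the $O(L)$ intervals using the stretched-exponential bound \eqref{eq:wpsle_ub}; (b) the event $\widehat G_{M,a}$ that the domains cut out by $\eta$ are $(M,a)$-good (via Corollary~\ref{co:regularity_wpsle}), and the event from Lemma~\ref{le:variation_one_scale} allowing each piece to be subdivided into finitely many subpieces as the definition of $G^{U}_{s,t}(r)$ demands --- but these are already folded into the proof of Lemma~\ref{lem:bdry_chaining_tail}. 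On the intersection of all of these good events I would then, for a given pair $s,t\in[0,1]$ with $\abs{s-t}\le\delta$, build the asserted event by cases. If $\abs{s-t}\le L^{-1}$, apply Lemma~\ref{lem:bdry_chaining_tail} directly; since $\abs{s-t}^{\zeta_0}\le L^{-\zeta}$ and $\abs{s-t}^{\zeta_0}\le\delta^{\zeta}$ this is contained in the target event, and uniformity over such pairs is obtained by telescoping $s$ and $t$ through their dyadic approximants at scales $2^{-n}$, $n\ge m_L$ (each gap being a level-$n$ dyadic interval covered by $E_n$), down to the scale at which the residual increment has $\eta$-diameter below $\epsilon$ (a single ``case (a)'' piece), and concatenating the resulting $G^U$-events --- using that $G^U_{s,t}(r)$ is monotone in $r$ and $U$ and that concatenation of $G^U_{\wh s,\wh u}(r_1)$ and $G^U_{\wh u,\wh t}(r_2)$ yields $G^U_{\wh s,\wh t}(r_1+r_2)$. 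If $\abs{s-t}>L^{-1}$, chain across the $O(L\delta)$ dyadic intervals of length $\approx L^{-1}$ covering $[s,t]$ together with the two telescoped end pieces, invoking the events $E_{m_L}$ and the regularity event, and add the costs.

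The main obstacle is exactly this last accounting: one must verify that the total $r$ produced --- of order $(L\delta+O(1))\bigl(L^{-\zeta_0}+\epsilon^{\epsexp}\bigr)$ from the interior pieces plus the telescoping contribution $O(L^{-\zeta_0}+\log(\epsilon^{-1})\epsilon^{\epsexp}+1)$ at each end --- is dominated by $\delta^{\zeta}+\epsilon^{\epsexp}+L\delta$, uniformly in $\delta\in(0,1]$ and $\epsilon\in[0,\delta)$ for $L$ large. Here the $L\delta$ term is precisely what absorbs the $O(L\delta)$ interior pieces and the $O(1)$ end-piece costs (noting that $\abs{s-t}>L^{-1}$ forces $L\delta>1$), the per-piece floor $\epsilon^{\epsexp}$ is controlled because refinement is stopped at the scale where $\eta$-pieces fall below diameter $\epsilon$ and because $\log(\epsilon^{-1})\epsilon^{\epsexp}\le\epsilon^{\epsexp/2}\le\delta^{\epsexp/2}\le L\delta$ when $\delta>L^{-1}$, and the $\delta^{\zeta}$ term covers the regime $\delta\le L^{-1}$ where no chaining is needed; the leftover universal constant in the bound $r\lesssim\delta^{\zeta}+\epsilon^{\epsexp}+L\delta$ is then removed by shrinking $\zeta$ and taking $L$ beyond a $\zeta$-dependent threshold (the statement being vacuous for bounded $L$). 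The only genuinely delicate point beyond this bookkeeping is keeping the neighborhoods nested throughout the telescoping and extending the conclusion from dyadic pairs $(s,t)$ to all real pairs, which is where most of the care goes; everything else is the standard chaining machinery applied to the tail estimate of Lemma~\ref{lem:bdry_chaining_tail}.
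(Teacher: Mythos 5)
Your proposal is correct and follows the same route the paper takes: a dyadic chaining argument with Lemma~\ref{lem:bdry_chaining_tail} supplying the $o^\infty(2^{-k})$ per-scale tail, a union bound over dyadic pairs at scales $k\ge k_0$ with $2^{-k_0}\asymp L^{-1}$, and concatenation of the $G^U_{\wh{\cdot},\wh{\cdot}}(\cdot)$ events along the chain, with separate accounting for $|s-t|\le L^{-1}$ and $|s-t|>L^{-1}$. The paper's proof is essentially a terse version of your argument; the extra regularity inputs you invoke (e.g.\ from~\eqref{eq:wpsle_ub} to control end pieces and keep the neighborhoods nested) are reasonable bookkeeping that the paper leaves implicit.
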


In particular, by Lemma~\ref{le:concatenate_overlapping_intervals} we have
\[ 
\p\left[ \sup_{\substack{s,t \in [0,1]\\ \abs{s-t} \le \delta}} \median{\epsilon}^{-1} \metapproxacres{\epsilon}{V}{\eta(s)}{\eta(t)}{\Gamma} \geq \delta^\zeta+\epsilon^{\epsexp}+L\delta \right] = o^\infty(L^{-1}) \quad\text{as}\quad L \to \infty 
\]
for any $V \supseteq \wt{U}^{L^{-\zeta}}_{0,1}$.

\begin{proof}[Proof of Proposition~\ref{prop:tightness_at_the_boundary}]
This follows from~\eqref{eq:bdry_chaining_tail} via a standard chaining argument. More precisely, if we let
\[ E_{j,k} = G^{\wt{U}^{2^{-k\zeta}}_{(j-1) 2^{-k}, j 2^{-k}}}_{\wh{(j-1) 2^{-k}}, \wh{j 2^{-k}}}(2^{-\zeta k}+\epsilon^{\epsexp}) , \]
then $\p[E_{j,k}^c] \lesssim 2^{-bk}$ for any $b>0$. 
In particular, for any $k_0 \in \N$ we have
\[
\p\left[ \bigcup_{k = k_0}^\infty \bigcup_{j=1}^{2^k} E_{j,k}^c \right] \lesssim 2^{-(b-1) k_0}.
\]
On the event $\bigcap_{k=k_0}^\infty \bigcap_{j=1}^{2^k} E_{j,k}$, for any $s,t \in [0,1]$ with $\abs{s-t} \le 2^{-k_0}$ we have
\[
 G^{\wt{U}^{2^{-k_0\zeta}}_{s,t}}_{\wh{s},\wh{t}}(c(\abs{s-t}^\zeta+\epsilon^{\epsexp})) .
\]
On the other hand, if $\abs{s-t} \ge 2^{-k_0}$, we get
\[
 G^{\wt{U}^{2^{-k_0\zeta}}_{s,t}}_{\wh{s},\wh{t}}\left( c\frac{\abs{s-t}}{2^{-k_0}}(2^{-\zeta k_0}+\epsilon^{\epsexp}) \right) 
 \subseteq G^{\wt{U}^{2^{-k_0\zeta}}_{s,t}}_{\wh{s},\wh{t}}( c2^{k_0}\abs{s-t} ) .
\]
\end{proof}

We will obtain tightness on the boundary in the setting of a bounded domain $D$ as a consequence of Proposition~\ref{prop:tightness_at_the_boundary}. For this, we consider the setup as described at the beginning of this subsection. Further, let $\eta'$ be the branch of the exploration tree of $\Gamma$ from $\infty$ to $0$ so that its force point (when standing at $\infty$ and looking towards $0$) is located infinitesimally to the right of $\infty$. Let $\eta^R$ be the right (when looking from $0$) outer boundary of $\eta'$.

Note that we can couple the objects described above with a whole-plane GFF as follows. Let $h^w$ be a whole-plane GFF, and let $h = h^w - \alpha \arg(\cdot)$ with values considered modulo $2\pi(\chi+\alpha)$ where $\alpha = \frac{\kappa \lambda}{2\pi} = \frac{\sqrt{\kappa}}{2}$. Let $\theta_1,\theta_2$ be as in~\eqref{eq:angles_intersection}. Then we can view $\eta|_{[0,\infty)}$ (resp.\ $\eta|_{(-\infty,0]}$) as being the flow line of $h$ with angle $\theta_2$ (resp.\ $\theta_2+\pi(1+\alpha/\chi)$) from $0$ to $\infty$. We can then view $\eta'$ as the counterflow line of $h$ from $\infty$ to $0$, and $\eta^R$ as the flow line of $h$ with angle $\theta_1 = -\pi/2$ from $0$ to $\infty$.

\begin{proposition}\label{pr:tightness_bubble_bdry}
There exists $\zeta > 0$ such that the following is true. Assume~\eqref{eq:epsexp2}. For $\delta>0$, let $E_\delta$ be the event that there exist $x,y\in \eta[0,\infty] \cap \eta^R$ such that if $U_{x,y}$ denotes the region bounded between the segments of $\eta[0,\infty]$ and $\eta^R$ from $x$ to $y$, then
\begin{itemize}
\item $U_{x,y} \subseteq B(0,1)$,
\item $\diam(U_{x,y}) \le \delta$,
\item $\dist(U_{x,y},\eta[-\infty,0]) \ge \delta^\zeta$, and
\item $\sup_{u,v \in \partial U_{x,y} \cap \eta} \median{\epsilon}^{-1} \metapproxacres{\epsilon}{U_{x,y}}{u}{v}{\Gamma} \ge L\delta^\zeta+\epsilon^{\epsexp}$.
\end{itemize}
Then $\p[E_\delta] = o^\infty(L^{-1}\delta)$.
\end{proposition}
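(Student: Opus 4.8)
The plan is to deduce the statement from the intersection crossing exponent of Section~\ref{se:intersection_exponent}. Work in the coupling with the whole-plane GFF $h = h^w - \alpha\arg(\cdot)$ recalled just before the proposition, so that $\eta[0,\infty]$ is the angle $\theta_2$ flow line of $h$ and $\eta^R$ is the angle $\theta_1 = -\pi/2$ flow line of $h$; their angle difference is $\angledouble$, so every bubble $U_{x,y}$ as in the statement is bounded between two flow lines of $h$ of angle difference $\angledouble$, exactly as in the configuration of Section~\ref{se:intersections_setup}. Moreover, since $\eta^R$ is the outer boundary of the exploration branch $\eta'$, the domain Markov property of $\CLE_{\kappa'}$ shows that, conditionally on $\eta'$, the loops of $\Gamma$ lying in the region between $\eta^R$ and $\eta[0,\infty]$ form a conditionally independent collection of copies of $\CLE_{\kappa'}$ in each component, and by the Markovian property of the metric the internal metric in a bubble $U_{x,y}$ depends only on this part of $\Gamma$. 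First I would cover $B(0,1)$ by $O(\delta^{-2})$ balls $B(z_i,\delta)$, $z_i \in \delta\Z^2$; on $E_\delta$ some bad bubble $U_{x,y}$ of diameter at most $\delta$ lies in $B(z_i,2\delta)$ for some $i$, and then, since $\dist(U_{x,y},\eta[-\infty,0]) \ge \delta^\zeta$, also $\dist(z_i,\eta[-\infty,0]) \ge \delta^\zeta/2$, so $B(z_i,\delta^\zeta/4)$ avoids $\eta[-\infty,0]$ and in particular the singularity $0$ of $h$. It then suffices to bound, uniformly over such $z_i$, the probability $p_{\delta,z_i}$ of a bad bubble inside $B(z_i,2\delta)$ by $o^\infty(\delta/\delta_0)$, where $\delta_0 \asymp (L\delta^\zeta)^{1/\ddouble}\wedge 1$.

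For the per-ball bound, pass to the rescaled approximate $\CLE_{\kappa'}$ metric $\metapprox{\epsilon}{\delta_0\cdot}{\delta_0\cdot}{\delta_0\Gamma}$ (Lemma~\ref{le:scaled_metric}); by Lemma~\ref{le:median_scaling} and~\eqref{eq:epsexp2} its renormalizers satisfy $\mediant{\delta_0^{-1}\epsilon} + (\delta_0^{-1}\epsilon)^{-\epsexp}\act{\delta_0^{-1}\epsilon} \le \delta_0^{\ddouble-o(1)}\median{\epsilon} + \delta_0^{\epsexp}\epsilon^{\epsexp}\median{\epsilon} \le (L\delta^\zeta+\epsilon^\epsexp)\median{\epsilon}$ once $\zeta < \ddouble$ and $\delta$ is small. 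In rescaled coordinates $\delta_0^{-1}\eta$ is again a two-sided whole-plane $\SLE_\kappa$ and $\delta_0^{-1}z_i$ is far from the singularity; using the independence across scales for the whole-plane GFF (Lemma~\ref{lem:gff_independence_across_scales}), the flow-line merging events of Lemma~\ref{le:good_scales_merging} and its variants, and the regularity of two-sided whole-plane $\SLE_\kappa$ (Corollary~\ref{co:regularity_wpsle}) to exclude bottlenecks, one finds -- off an event of probability $o^\infty(\delta/\delta_0)$ -- a dyadic scale $\rho$ of size comparable to $(\delta/\delta_0)^{1/(1+\innexp)}$ such that $h$ minus its harmonic part on $B(\delta_0^{-1}z_i,\rho)$, together with $\delta_0^{-1}\eta^R$, $\delta_0^{-1}\eta[0,\infty]$, the conditionally independent copies of $\CLE_{\kappa'}$ between them, and the conditionally sampled internal metrics, is absolutely continuous -- with Radon--Nikodym derivative of bounded moments (Lemma~\ref{le:abs_cont_kernel}) -- to the configuration of Lemma~\ref{le:a_priori_fl_general_boundary} at scale $\rho$; here the merging events are exactly what let the two flow lines crossing the ball be coupled with flow lines of angles $\theta_1,\theta_2$ issued from $\partial B(\delta_0^{-1}z_i,\rho)$, so that $\delta_0^{-1}U_{x,y}$ is bounded between the latter. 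Since Proposition~\ref{prop:intersection_crossing_exponent} is established, Lemma~\ref{le:a_priori_fl_general_boundary} holds with $\bestexp$ arbitrarily large; applied to the rescaled metric at scale $\rho$ it bounds the probability that some bubble of diameter at most $\delta/\delta_0$ inside $B(\delta_0^{-1}z_i,\rho^{1+\innexp}) \supseteq B(\delta_0^{-1}z_i,2\delta/\delta_0)$ has internal distance exceeding $\mediant{\delta_0^{-1}\epsilon}+(\delta_0^{-1}\epsilon)^{-\epsexp}\act{\delta_0^{-1}\epsilon}$ by $O(\rho^\bestexp) = o^\infty(\rho) = o^\infty(\delta/\delta_0)$. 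Finally, a routine comparison using the series law, the monotonicity~\eqref{eq:monotonicity_across_bubbles}, and the definition~\eqref{eq:shortcutted_metric} replaces the supremum over $u,v \in \partial U_{x,y}\cap\eta$ in $E_\delta$ by the supremum over the extreme intersection points of the two flow lines on $\partial U_{x,y}$, at the cost of a factor depending only on $\cserial,\cparallel$ and an additive $O(\ac{\epsilon})$ absorbed into the threshold; this gives $p_{\delta,z_i} = o^\infty(\delta/\delta_0)$.

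Putting the pieces together, $\p[E_\delta] \le \sum_i p_{\delta,z_i} = O(\delta^{-2})\cdot o^\infty(\delta/\delta_0)$. With $\delta_0 \asymp (L\delta^\zeta)^{1/\ddouble}$ one has $\delta/\delta_0 \asymp \delta^{1-\zeta/\ddouble}L^{-1/\ddouble}$; choosing $\zeta < \ddouble$ (and $\innexp$ small) and, given $b' > 0$, taking $b = (b'+2)/(1-\zeta/\ddouble)$, one computes $\delta^{-2}(\delta/\delta_0)^b = O(\delta^{b'}L^{-(b'+2)/(\ddouble-\zeta)}) = O((\delta L^{-1})^{b'})$ since $(b'+2)/(\ddouble-\zeta) \ge b'$; hence $\p[E_\delta] = o^\infty(L^{-1}\delta)$, as desired. (In the regime $L\delta^\zeta \ge 1$, relevant when $L$ is very large compared to $\delta^{-\zeta}$, one rescales by $\delta_0 \asymp (L\delta^\zeta)^{1/\ddouble}>1$ and invokes the lower scaling bounds established alongside Lemma~\ref{le:median_scaling}, the decay in $L$ then arising from the number of dyadic scales available to the good-scale argument in rescaled coordinates growing like $\log(\delta_0/\delta)$, which incorporates a $\log L$ contribution.)

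The main obstacle is the identification of the local picture in the second step: one must show that around a typical point $z_i$, at a good scale, the triple $(\eta[0,\infty],\eta^R,\Gamma)$ -- together with the internal metrics, which are not measurable with respect to $\Gamma$ -- is genuinely mutually absolutely continuous (with controlled moments of the Radon--Nikodym derivative) to the configuration of Section~\ref{se:intersections_setup}, and that the flow-line merging machinery correctly converts the two flow lines passing through the ball into flow lines issued from its boundary, as required to invoke Lemma~\ref{le:a_priori_fl_general_boundary}. This combines the $\CLE_{\kappa'}$ domain Markov property, the absolute continuity of the whole-plane GFF away from its singularity, the Markovian property of the metric, and the good-scale/merging estimates. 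A secondary but delicate point is the bookkeeping of the exponents $\zeta,\innexp,\delta_0$ so that the per-ball estimate decays in $L$ and the final union bound over $O(\delta^{-2})$ balls closes.
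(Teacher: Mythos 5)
Your approach is genuinely different from the paper's. The paper proves this proposition in a few lines by invoking Proposition~\ref{prop:tightness_at_the_boundary}, which is a chaining estimate established over the natural-parameterization time axis of the two-sided whole-plane $\SLE_\kappa$; it reduces to a union bound over time windows of length $\delta^{\dsle-a}$ together with the transience estimate from \cite{fl-sle-transience} and the lower bound~\eqref{eq:wpsle_lb}. You instead attempt a spatial localization: cover $B(0,1)$ by $\delta$-balls, rescale by $\delta_0 \asymp (L\delta^\zeta)^{1/\ddouble}$, and compare the local configuration at a good GFF scale to the setup of Lemma~\ref{le:a_priori_fl_general_boundary}. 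That route is modeled on the proof of Lemma~\ref{le:a_priori_disc} and is plausible in the regime $L\delta^\zeta \le 1$ (i.e.\ $\delta_0 \le 1$), though the identification of the local picture (merging of the rescaled flow lines, the domain change from $U_{x,y}$ to $U_{x',y'}$, and the replacement of $\sup_{u,v \in \partial U_{x,y}\cap \eta}$ by the extremal pair using the compatibility axiom and~\eqref{eq:shortcutted_metric}) is substantially more work than "a routine comparison" and would need the same care as the proof of Lemma~\ref{le:a_priori_loop_intersecting_one_side}.

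The genuine gap is in the regime $L\delta^\zeta > 1$. There your rescaling factor $\delta_0$ exceeds $1$, and matching the threshold of the rescaled Lemma~\ref{le:a_priori_fl_general_boundary} against $(L\delta^\zeta+\epsilon^\epsexp)\median{\epsilon}$ requires an \emph{upper} bound of the form $\median[\delta_0]{\epsilon} \lesssim \delta_0^{\ddouble+o(1)}\median{\epsilon}$ for $\delta_0 > 1$. Lemma~\ref{le:median_scaling}, the only scaling statement available at this point, gives precisely the reverse: it shows $\median[\delta]{\epsilon} \le \delta^{\ddouble+o(1)}\median{\epsilon}$ for $\delta \searrow 0$, which by the scaling identity $\median[1]{\epsilon} = \median[\delta_0 \cdot \delta_0^{-1}]{\epsilon} \le \delta_0^{-\ddouble+o(1)}\median[\delta_0]{\epsilon}$ yields a \emph{lower} bound $\median[\delta_0]{\epsilon} \ge \delta_0^{\ddouble+o(1)}\median{\epsilon}$ for $\delta_0 > 1$, not the upper bound you need. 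The companion upper bound is Lemma~\ref{le:median_scaling_lb} (or Corollary~\ref{co:quantiles_metric}), but this is established in Section~\ref{subsec:thm1_proof} from Proposition~\ref{prop:interior_tightness}, whose proof depends (through Lemma~\ref{le:tightness_loop_chain}, Lemma~\ref{le:tightness_bdry_single_loop}, Proposition~\ref{prop:tightness_in_nice_bubble}, Lemma~\ref{le:tightness_bdry_bubble_fl}, Lemma~\ref{le:tightness_bdry_good_fl}) on the very Proposition~\ref{pr:tightness_bubble_bdry} you are trying to prove; invoking it here would be circular. Your informal remark that the decay in $L$ "arises from the number of dyadic scales ... growing like $\log L$" does not fix this, since the good-scale argument only yields a small probability bound once the merging/absolute-continuity comparison with Lemma~\ref{le:a_priori_fl_general_boundary} has been set up, and that comparison is exactly where the missing upper bound on $\median[\delta_0]{\epsilon}$ is needed. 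The paper sidesteps this by not rescaling at all: Proposition~\ref{prop:tightness_at_the_boundary} already carries the $L$-dependence through its chaining constant, and the $\delta$-dependence through the transience and speed estimates on $\eta$.
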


\begin{proof}
Let $b>0$ be given. Let $F_1$ be the event that $\eta$ does not revisit $B(0,1)$ after time $\delta^{-b}$. Then $\p[F_1^c] = O(\delta^{\zeta b})$ for some $\zeta>0$ by \cite{fl-sle-transience}. Let $F_2$ be the event that $\diam(\eta[t,t+\delta^{\dsle-a}]) > \delta$ for any $t \in [0,\delta^{-b}]$. Then $\p[F_2^c] = o^\infty(\delta)$ by~\eqref{eq:wpsle_lb}.

On $E_\delta \cap F_1 \cap F_2$, we see that $\partial U_{x,y} \cap \eta \subseteq \eta[t,t+\delta^{\dsle-a}]$ for some $t \in [0,\delta^{-b}]$. By Proposition~\ref{prop:tightness_at_the_boundary} and a union bound we have
\[
\p\left[ \bigcup_{t\in[0,\delta^{-b}]} \bigcup_{s\in[t,t+\delta^{\dsle-a}]} G^{\wt{U}^{\delta^\zeta}_{t,s}}_{\wh{t},\wh{s}}(\delta^\zeta+\epsilon^{\epsexp}+L\delta^{1/2})^c \right] = O(L^{-b}\delta^b) .
\]
If $\dist(\wt{U}^{\delta^\zeta}_{t,s}, \eta[-\infty,0]) > 0$, then there cannot be two disjoint admissible paths within $\wt{U}^{\delta^\zeta}_{t,s}$ from any point in $\wt{U}^{\delta^\zeta}_{t,s} \setminus U_{x,y}$ to $U_{x,y}$ (since otherwise there would be a segment of $\eta^R$ whose incident loops cannot intersect $\eta[-\infty,0]$). Therefore $U_{x,y}$ satisfies the requirement for $U_1$ in Lemma~\ref{le:concatenate_overlapping_intervals}.
\end{proof}

\subsection{Tightness between interior flow lines}
\label{subsec:tightness_interior_flow_lines}

We now turn to establish tightness results on the boundary of collections of bubbles that are bounded between intersecting pairs of GFF flow lines.  The main result (Proposition~\ref{prop:tightness_in_nice_bubble}) is a bound on the $\Fd_\epsilon$-diameters of the boundaries of such bubbles in terms of their Euclidean diameter. This will be used in the next subsection to bound the distances between components that are formed by finite collections of $\CLE_{\kappa'}$ loops.  The strategy of the proof will be to use the results proved in the previous section in the half-planar setup and absolute continuity arguments in order to compare the interior flow line case to the half-planar case.

Throughout this subsection, we write
\begin{equation}\label{eq:bdry_diam}
D_\epsilon^{\partial U} = \sup_{u,v \in \partial U} \median{\epsilon}^{-1} \metapproxacres{\epsilon}{U}{u}{v}{\Gamma}.
\end{equation}

Consider the following setup. Let $\Gamma_\D$ be a nested $\CLE_{\kappa'}$ in $\D$, let $\CL$ be the outermost loop of $\Gamma_\D$ that surrounds~$0$, and let $D$ be the regions surrounded by $\CL$. Let $\Gamma_D$ be the loops of $\Gamma_\D$ contained in $\ol{D}$, and let $\Gamma = \{\CL\} \cup \Gamma_D$. Let $\Upsilon_\Gamma$ be the gasket of $\Gamma_D$.  Consider the exploration tree that starts at $-i$ and traces the outermost loops of $\Gamma_\D$ counterclockwise, and then the outermost loops of $\Gamma_D$ clockwise. For each $z \in \D$ we let $\eta_z'$ be the branch of the exploration tree from $-i$ to $z$. Suppose that $\Gamma$ is coupled with a GFF $h$ so that $\eta_z'$ is given by the counterflow line of $h$ starting at $-i$ and targeting $z$.

\begin{figure}[ht]
\centering
\includegraphics[width=0.5\textwidth]{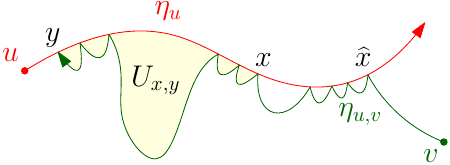}
\caption{The setup of Proposition~\ref{prop:tightness_in_nice_bubble}.}
\label{fi:tightness_int_fl_setup}
\end{figure}

\begin{proposition}
\label{prop:tightness_in_nice_bubble}
There exists $\zeta > 0$ such that the following is true. Fix $0<r<1$. 
Suppose that $h$ is a GFF on $\D$ with zero boundary conditions. Let $u,v \in B(0,r)$, and let $\eta_u$ be the flow line of $h$ starting from $u$. Given $\eta_u$, let $\eta_{u,v}$ be the flow line of $h$ starting from $v$ and reflected off $\eta_u$ in the opposite direction. Let $x,y \in \eta_{u,v} \cap \eta_u$ be chosen such that the segment of $\eta_{u,v}$ from $v$ to $y$ intersects $\eta_u$ only on the right side of $\eta_u$ with angle difference $0$ (before reflecting off), and $x$ is on the segment between $v$ and $y$. Let $U_{x,y}$ be the region bounded between the segments of $\eta_{u,v}$ and $\eta_u$ from $x$ to $y$. Let $\Gamma_{x,y}$ be the collection of conditionally independent $\CLE_{\kappa'}$ naturally coupled with $h$ in each of the components of $U_{x,y}$. Let $\Upsilon_{\Gamma_{x,y}}$ denote the gasket of $\Gamma_{x,y}$. 

Let $E_\delta$ be the event that there exist $u,v,x,y$ as above such that
\begin{itemize}
 \item $U_{x,y} \subseteq B(0,r)$,
 \item $\Upsilon_{\Gamma_{x,y}} \subseteq \Upsilon_\Gamma$,
 \item $\diam(U_{x,y}) \le \delta$,
 \item $D_\epsilon^{\partial U_{x,y}} \ge \delta^\zeta+\epsilon^{\epsexp}$.
\end{itemize}
Let $\wh{x}$ be the first point where $\eta_{u,v}$ hits $\eta_u$. Fix $c_1>0$, and let $E^1_\delta$ be the event that $E_\delta$ occurs and additionally the segment of $\eta_u$ from $x$ to $\wh{x}$ has diameter at least $\delta^{1+c_1}$.
Then
\[ \p[E^1_\delta] = o^\infty(\delta) \quad\text{as } \delta \searrow 0 . \]
\end{proposition}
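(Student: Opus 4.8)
The plan is to reduce the interior-flow-line setup to the half-planar setup of Proposition~\ref{pr:tightness_bubble_bdry} via absolute continuity of the GFF, and then handle the fact that the relevant regions could be far from the origin by a union bound over dyadic scales. Concretely, the region $U_{x,y}$ bounded between $\eta_u$ and the reflected flow line $\eta_{u,v}$ on the right side of $\eta_u$ (with angle difference $0$ along the part from $v$ to $y$) is, by the flow-line interaction rules and the duality between $\SLE_\kappa$ and $\SLE_{\kappa'}$, exactly the type of region bounded between an $\SLE_\kappa$ flow line and the outer boundary of the associated branching counterflow line (i.e.\ the exploration path of a $\CLE_{\kappa'}$). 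Indeed, if $\eta_u$ is the angle $\theta_1 = -\pi/2$ flow line then the reflected flow line $\eta_{u,v}$ with angle $0$ along the detected part plays the role of $\eta$ from the setup before Proposition~\ref{pr:tightness_bubble_bdry}, and $\eta_u$ plays the role of $\eta^R$ there, and the assumption $\Upsilon_{\Gamma_{x,y}} \subseteq \Upsilon_\Gamma$ together with $\diam(U_{x,y}) \le \delta$ puts us in the event $E_\delta$ of that proposition (after a conformal map). So the content of Proposition~\ref{pr:tightness_bubble_bdry} is precisely the estimate we want, but localized to a single chart near $0$ and for a GFF with the specific boundary conditions coming from the whole-plane construction.

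The first step is to cover $B(0,r)$ by balls $B(z_i, \delta^{c_2})$ for a small fixed $c_2>0$, with $z_i$ ranging over a $\delta^{c_2}$-net; there are $O(\delta^{-2c_2})$ of them. If $E^1_\delta$ occurs then $U_{x,y}$ is contained in some $B(z_i, 2\delta^{c_2})$ (using $\diam(U_{x,y}) \le \delta$), and moreover $\dist(U_{x,y}, \partial\D) \ge \delta^\zeta$ — this last point needs the extra input from the condition $\Upsilon_{\Gamma_{x,y}} \subseteq \Upsilon_\Gamma$ and the diameter constraint, or else can simply be built into the event by restricting to $z_i$ with $\dist(z_i,\partial\D)$ not too small and treating the boundary-proximate case separately using a Beurling-type estimate as in Lemma~\ref{lem:get_too_close_ubd}. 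Having localized, I would apply Lemma~\ref{lem:rn_derivative} (absolute continuity of the zero-boundary GFF on $\D$ restricted to $B(z_i, 3\delta^{c_2}/4)$ with respect to a whole-plane GFF with an $\alpha\arg$ singularity placed appropriately) together with Lemma~\ref{le:abs_cont_kernel} to transfer the event probability — here it is crucial that the event $E^1_\delta$ restricted to the chart depends only on the GFF values in that chart and on the internal metrics in the enclosed regions, which by the Markovian property have conditional law depending only on the enclosed region. The Radon--Nikodym derivative has moments of all orders, so Lemma~\ref{le:abs_cont_kernel} gives that the chart-localized probability under the actual law is bounded by a power (arbitrarily close to $1$) of the corresponding probability under the whole-plane/half-planar law. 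The condition that the segment of $\eta_u$ from $x$ to $\wh{x}$ has diameter at least $\delta^{1+c_1}$ is what guarantees that the bubble $U_{x,y}$ is ``macroscopic'' relative to its own starting configuration, so that after conformally mapping $B(z_i,\cdot)$ to a bounded reference domain and comparing to the setup of Proposition~\ref{pr:tightness_bubble_bdry}, the rescaled diameter of $U_{x,y}$ is still polynomially small and Proposition~\ref{pr:tightness_bubble_bdry} applies with a diameter parameter $\asymp \delta^{c_3}$ for some $c_3>0$; this yields a chart-probability of $o^\infty(\delta^{c_3})$, which after summing over the $O(\delta^{-2c_2})$ charts and choosing $c_2$ small compared to $c_3$ still gives $o^\infty(\delta)$.

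The main obstacle I anticipate is the geometric/topological bookkeeping required to verify that the region $U_{x,y}$ arising here genuinely matches (after a conformal map) the region $U_{x,y}$ in Proposition~\ref{pr:tightness_bubble_bdry}, and in particular that the flow-line boundary conditions line up correctly so that $\eta_u$ plays the role of $\eta^R$ (the angle $-\pi/2$ flow line) and the reflected flow line plays the role of $\eta$ (the two-sided whole-plane $\SLE_\kappa$ / exploration path). The subtlety is that $\eta_{u,v}$ is reflected off $\eta_u$ in the opposite direction, whereas in Proposition~\ref{pr:tightness_bubble_bdry} the curve $\eta$ and its outer boundary $\eta^R$ arise from a single GFF; one must check, using the detection arguments from Section~\ref{se:fl_interaction} and the discussion in Section~\ref{se:bubble_setup} (cf.\ the reflected flow line appearing in Lemma~\ref{le:a_priori_fl_reflected} and Lemma~\ref{le:good_scales_merging_refl}), that the conditional law of the $\CLE_{\kappa'}$ and of the relevant flow-line boundary inside $U_{x,y}$ is the same in both setups — this is essentially Lemma~\ref{le:reflected_fl_law} combined with the locality of counterflow lines. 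A secondary, more routine obstacle is ruling out the case where $U_{x,y}$ comes close to $\partial\D$ (or where the relevant flow-line segments develop bottlenecks reaching outside the chart), which I would handle exactly as in the proof of Proposition~\ref{pr:tightness_bubble_bdry} using the transience/boundary-proximity estimates cited there, and by restricting the dyadic cover to charts at distance $\gtrsim \delta^\zeta$ from $\partial\D$ while noting that the event $E^1_\delta$ already forces $\dist(U_{x,y},\eta[-\infty,0])$-type separation implicitly through $\Upsilon_{\Gamma_{x,y}} \subseteq \Upsilon_\Gamma$.
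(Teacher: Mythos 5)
Your high-level target (reduce to Proposition~\ref{pr:tightness_bubble_bdry} via absolute continuity) is correct, and is indeed what the paper ultimately does several lemmas deep. But the single-chart-and-union-bound argument you propose only recovers, at best, Lemma~\ref{le:tightness_long_fl}: the case where $U_{x,y}$ is far (distance $\gtrsim\delta^{1-\innexp}$) from both starting points $u,v$, so that both flow lines enter and exit your chart $B(z_i,\delta^{c_2})$ from the outside. That is exactly the hypothesis under which the merging events $G_{z,r}$ from Lemma~\ref{le:good_scales_merging_refl} can be used to transfer probabilities via absolute continuity --- and this merging step is essential, since a plain Radon--Nikodym comparison of GFFs in a chart does not by itself tell you that the boundary-emanating flow lines in the reference picture land on the interior-emanating flow lines $\eta_u,\eta_{u,v}$ in yours. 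You gesture at ``locality of counterflow lines,'' but the actual mechanism is the good-scales-merging machinery, and it genuinely requires both flow lines to enter the chart from the exterior.

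The case your argument cannot reach is the one the paper handles in Lemma~\ref{le:tightness_bdry_big_bubbles}: $U_{x,y}$ a single connected component of $\D\setminus(\eta_{u,v}\cup\eta_u)$, possibly close to $u$, $v$, or the first hit point $\wh{x}$. There the chart-localization-plus-merging argument breaks down because the flow-line segments bounding $U_{x,y}$ may begin inside or just outside the chart, and the reflected flow line's behavior depends on the portion of $\eta_u$ it has already seen. The paper deals with this by an entirely different mechanism: a harmonic-measure lower bound (Lemma~\ref{le:boundary_normal_big_bubble}), a four-arm estimate to exclude pathological shapes (Lemma~\ref{le:strange_big_bubbles}), and a bound on closely spaced consecutive intersection points (Lemma~\ref{le:consecutive_intersections}). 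The constant $c_1$ in $E^1_\delta$ enters precisely here, not as a generic ``macroscopicity'' assumption as you describe it. This side-case machinery is not subsumed by absolute continuity.

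Finally, even granting both lemmas, the paper does not then apply either lemma once to the whole region $U_{x,y}$. Instead it decomposes $U_{x,y}$ into pieces over dyadic annuli $A(x,2^{-k-1},2^{-k})$ and $A(y,2^{-k-1},2^{-k})$ for $k$ ranging from $\log_2(\delta^{-1})$ to $\log_2(\epsilon^{-1})$, bounds each piece using $E'_{2^{-k}}$ or $E''_{2^{-k}}$, and sums using the triangle inequality, the fill-a-ball event $F_\delta$ to control the number of pieces per annulus, and the monotonicity axiom. The multi-scale sum is unavoidable because the boundary structure of $U_{x,y}$ near $x$ and $y$ has components at all scales between $\delta$ and $\epsilon$, and a chart of fixed size $\delta^{c_2}$ cannot resolve that structure in one application of Proposition~\ref{pr:tightness_bubble_bdry}. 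Your proposal has no analogue of this summation, so even if the two side lemmas were granted, the argument would not close.
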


We will prove Proposition~\ref{prop:tightness_in_nice_bubble} by reducing it to Proposition~\ref{pr:tightness_bubble_bdry}. This is done in several steps. In each of the following lemmas, we consider the boundaries of regions bounded by flow lines of various kinds. Recall the double point angle $\angledouble$ defined in~\eqref{eq:angle_double}.

\begin{lemma}\label{le:tightness_bdry_bubble_fl}
There exists $\zeta > 0$ such that the following is true. 
Fix $M >0$, $\innexp>0$. Let $\delta^{1-\innexp} \le r \le 1$, and let $h$ be a GFF on $r\D$ with some boundary values that are bounded by $M$ and so that the angle $\angledouble$ (resp.\ $0$) flow lines $\eta_1,\eta_2$ from $-ir$ are defined. Given $\eta_1,\eta_2$, sample a \clekp{} and the internal metric in the region bounded between them. Let $E_\delta$ be the event that there exist $x,y \in \eta_1 \cap \eta_2$ such that if $U$ denotes the region bounded between the segments of $\eta_1$ and $\eta_2$ from $x$ to $y$, then $U \subseteq B(0,3r/4)$, $\diam(U) \le \delta$, and
\[ D_\epsilon^{\partial U} \ge \delta^\zeta+\epsilon^{\epsexp} . \]
Then $\p[E_\delta] = o^\infty(\delta)$ as $\delta\searrow 0$.
\end{lemma}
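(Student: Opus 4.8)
The plan is to transfer the bubble-boundary tightness estimate available in the two-sided whole-plane setup (Proposition~\ref{pr:tightness_bubble_bdry}, together with Proposition~\ref{prop:tightness_at_the_boundary}) to the present setting of a GFF on $r\D$ with general bounded boundary data. The starting observation is that adding the constant angle $-\pi/2-\angledouble$ to both $\angledouble$ and $0$ produces the pair of angles $(-\pi/2,-\pi/2-\angledouble)$ appearing in Sections~\ref{se:intersections_setup} and~\ref{subsec:tightness_at_the_boundary}; since a global shift of all flow-line angles is just a rotation, the pair $(\eta_1,\eta_2)$ together with the $\CLE_{\kappa'}$ between them is, up to rotation, precisely the pair consisting of the right outer boundary of the $\CLE_{\kappa'}$ exploration and the flow line through the tip in the two-sided whole-plane picture. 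In particular the internal metric in a bubble $U$ between $\eta_1$ and $\eta_2$ is well defined for the reason spelled out in Section~\ref{se:intersections_setup}, and $\partial U$ is the union of an arc of $\eta_1$ and an arc of $\eta_2$ sharing their two endpoints.

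First I would reduce to a fixed dyadic bubble size. Writing $E_{\delta,k}$ for the event of the statement with the extra constraint $\diam(U)\in(2^{-k-1},2^{-k}]$, we have $E_\delta\subseteq\bigcup_k E_{\delta,k}$, and only $k$ with $\delta\ge 2^{-k}\ge\epsilon$ can contribute: if $\diam(U)\le\epsilon$ then for every $u,v\in\partial U$ there is an admissible path in $\ol U$ running along $\partial U$ of Euclidean diameter $\le\diam(U)\le\epsilon$, so $\metapproxacres{\epsilon}{U}{u}{v}{\Gamma}\le\ac{\epsilon}\le\epsilon^{\epsexp}\median{\epsilon}$ by~\eqref{eq:shortcutted_metric} and~\eqref{eq:epsexp2}, contradicting $D_\epsilon^{\partial U}\ge\delta^\zeta+\epsilon^{\epsexp}$ (here $\delta^\zeta>0$). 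On $E_{\delta,k}$ I would rescale by $2^{-k}$: by Lemma~\ref{le:scaled_metric} and Lemma~\ref{le:median_scaling} the bubble becomes of diameter $\asymp 1$, the approximation scale becomes $2^k\epsilon<1$, and the requirement $\median{\epsilon}^{-1}\metapproxac{\epsilon}{u}{v}{\Gamma}\ge\delta^\zeta+\epsilon^{\epsexp}$ turns into a lower bound $L(2^k\epsilon)^{\epsexp}+L$ on the rescaled normalized distance with $L=\delta^{\zeta-\ddouble-o(1)}$, which tends to $\infty$ polynomially in $\delta^{-1}$ as soon as $\zeta<\ddouble$; we fix such a $\zeta$.

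The core task is then, for a unit-size bubble and uniformly over bounded boundary data, to bound by $o^\infty(L^{-1})$ the probability that its boundary $\Fd_\epsilon$-diameter exceeds $L$ times the reference scale. I would do this by the scale-detection / absolute-continuity scheme already used in the proofs of Lemma~\ref{le:a_priori_disc} and Proposition~\ref{prop:intersect_loops_large}: sample a point $z$ from a fine mesh of $B(0,3r/4)$ and a scale $2^{-J}$ from a logarithmically long band of dyadic scales somewhat larger than the bubble, let $G_{z,2^{-J}}$ be a merging event of the type of Lemma~\ref{le:good_scales_merging} (with fixed boundary data realizing a local copy of the rotated whole-plane pair) guaranteeing a conditional probability $\ge p$ that the angle $\angledouble$ (resp.\ $0$) flow lines merge into prescribed interior strands, and let $E^{\mathrm{loc}}_{z,J}$ be the event that inside $B(z,2^{-J})$ a pair of detected flow lines of angles $\angledouble,0$ together with a conditionally independent $\CLE_{\kappa'}$ and its internal metric (coupled so as to agree in the detected regions) form a bubble of the correct size whose boundary $\Fd_\epsilon$-diameter is too large. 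Using the independence across scales of Lemma~\ref{lem:gff_independence_across_scales} and Lemma~\ref{lem:good_scales_for_event}, with probability $1-o^\infty(\delta)$ a constant fraction of scales is good for every mesh point, and then on $E_{\delta,k}$ a sampled $(z,J)$ localizes the bad bubble at a good scale with conditional probability at least a fixed power of $\delta$, so $\p[E_{\delta,k}]\lesssim\delta^{-c}\sup_{z,J}\p[E^{\mathrm{loc}}_{z,J}\cap G_{z,2^{-J}}]$, the merging event being what ties the local picture back to the genuine $\eta_1,\eta_2$. On the $M$-good event the field $\wt h_{z,2^{-J}}$, rescaled to $\D$, is absolutely continuous with respect to a whole-plane GFF modulo $2\pi\chi$ with Radon--Nikodym moments bounded in terms of $M$ (Lemma~\ref{lem:rn_derivative}), so by Lemma~\ref{le:abs_cont_kernel} applied to the joint law of field and internal metrics, $\p[E^{\mathrm{loc}}_{z,J}\cap G_{z,2^{-J}}]$ is controlled, up to the exponent $1-1/p$, by the corresponding probability in the two-sided whole-plane setup with the detected bubble placed far from the tip and from the complementary arm. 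There Proposition~\ref{pr:tightness_bubble_bdry} applied with the large parameter $L$ bounds the $\Fd_\epsilon$-distances between points of $\partial U\cap\eta_2$; a symmetric application, or Proposition~\ref{prop:tightness_at_the_boundary} applied to the flow line $\eta_1$ (which bounds the gasket on the appropriate side), handles $\partial U\cap\eta_1$, and the triangle inequality through the common endpoint then bounds all of $D_\epsilon^{\partial U}$. Summing the resulting $o^\infty$ bounds over the $O(\log\delta^{-1})$ values of $k$ gives $\p[E_\delta]=o^\infty(\delta)$.

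The hard part will be the localization / absolute-continuity bookkeeping: one must set up $E^{\mathrm{loc}}_{z,J}$ so that it references only the flow lines inside $B(z,2^{-J})$ and the metrics in the regions they enclose (never the global curves or $\partial(r\D)$), while the merging events nonetheless provide, with conditional probability bounded below, the bridge from this local configuration to the true $(\eta_1,\eta_2,\Gamma)$, and while the reference whole-plane picture genuinely contains a bubble lying in the regime to which Proposition~\ref{pr:tightness_bubble_bdry} applies (bounded away from the tip and from the complementary arm of the two-sided curve). Once this is arranged exactly as in Section~\ref{se:a_priori}, the remaining estimates are routine.
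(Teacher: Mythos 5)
Your overall strategy — localize via good GFF scales, use absolute continuity and a flow-line merging event to couple the local picture to the two-sided whole-plane setup, and read off the bound from the boundary tightness estimates of Section~\ref{subsec:tightness_at_the_boundary} — is the same as the paper's. The paper's proof of Lemma~\ref{le:tightness_bdry_bubble_fl} is short: a single scale detection at scale $2^{-J}\in[\delta,\delta^{1-a_1}]$ reduces everything to Lemma~\ref{le:tightness_bdry_good_fl} (applied with $L=1$ and $\delta\to0$), and that auxiliary lemma does the actual work via a nested scale detection and a direct comparison of $B(z,c_0\delta^\zeta)$ to a ball of radius $\asymp\delta^\zeta$ on the hyperbolic geodesic in the whole-plane picture.

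Where you genuinely diverge is the initial reduction: you decompose $E_\delta$ by dyadic bubble diameter $2^{-k}$, rescale each piece so the bubble has unit size, and thereby convert the ``generous'' threshold $\delta^\zeta$ into a large multiplicative parameter $L\asymp 2^{k\ddouble}\delta^\zeta$. This is an attractive idea (it exploits the slack in the threshold for small bubbles, which the paper does not use), but it changes the regime in which you need the whole-plane input. After your rescaling the bubble has diameter $\asymp1$, so you would be invoking Proposition~\ref{pr:tightness_bubble_bdry} with $\delta'=O(1)$ and $L\to\infty$. The proposition's stated bound $o^\infty(L^{-1}\delta)$ is not actually available in that regime: the auxiliary events $F_1,F_2$ in its proof have failure probabilities $O(\delta^{\zeta b})$ and $o^\infty(\delta)$, which are $O(1)$ when $\delta$ is of constant order, and the condition $\dist(U_{x,y},\eta(-\infty,0])\ge\delta^\zeta$ becomes vacuous (or outright impossible) for $U_{x,y}\subseteq B(0,1)$ when $\delta\asymp 1$. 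So the ``unit-size bubble, $L\to\infty$'' estimate $o^\infty(L^{-1})$ that your proof needs is not a corollary of Proposition~\ref{pr:tightness_bubble_bdry}; it would have to be derived separately from Proposition~\ref{prop:tightness_at_the_boundary}, which does give $o^\infty(L^{-1})$ uniformly in $\delta\in(0,1]$, together with a tail bound on the natural-parameterization length of a unit-diameter arc from~\eqref{eq:wpsle_ub}, and a symmetric treatment of the $\eta^R$ boundary arc (which Proposition~\ref{prop:tightness_at_the_boundary} does not cover directly). In effect you would be reproving a variant of Proposition~\ref{pr:tightness_bubble_bdry} for a fixed bubble scale.

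The paper sidesteps all of this by keeping $L=1$ and zooming rather than rescaling: Lemma~\ref{le:tightness_bdry_good_fl} compares a ball of radius $\delta^\zeta$ (not unit radius) to the whole-plane picture, so the bubble stays of diameter $\le\delta$ and Proposition~\ref{pr:tightness_bubble_bdry} applies in its proven regime $\delta\to0$. When the localized domain radius is only $\asymp\delta$ rather than $\asymp\delta^\zeta$, Step~2 of that proof performs a second scale detection to scales $2^{-j}\in[\delta^\zeta,\delta^{\zeta/2}]$ to return to the Step~1 configuration. If you want to pursue your alternative, I would state and prove the constant-$\delta$, large-$L$ variant of Proposition~\ref{pr:tightness_bubble_bdry} as a separate lemma; as written, the proposal borrows a statement in a regime its proof does not cover, and this is the one genuine gap. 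The remaining bookkeeping you flag (localizing the flow lines rather than just the bubble, ensuring the merging event ties the local configuration back to $(\eta_1,\eta_2,\Gamma)$, the symmetry between the two boundary arcs) matches what the paper has to do and is tractable once that base estimate is in place.
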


This will follow from the next lemma in the same way Lemma~\ref{le:a_priori_disc} follows from Lemma~\ref{le:a_priori_int_fl}. See Figure~\ref{fi:intersections_localised} for an illustration of the setup.

\begin{lemma}
\label{le:tightness_bdry_good_fl}
There exists $\zeta>0$ such that the following is true. Fix $M,p > 0$, $r \in (0,1)$, and suppose $\delta < r$. Let $h$ be a GFF on $r\D$ with boundary values as in Section~\ref{se:intersections_setup}, and let $\theta_1,\theta_2$ be as in~\eqref{eq:angles_intersection}. Let $w_1,w_2 \in B(0,r/4)$, and let $E_\delta$ denote the event that the following hold.
\begin{itemize}
\item Let $\eta_{w_1}^{\theta_1+\pi}$, $\eta_{w_1}^{\theta_1}$, $\eta_{w_2}^{\theta_2}$, $\eta_{w_2}^{\theta_2-\pi}$ be the flow lines with respective angles starting at $w_1$ (resp.\ $w_2$) and stopped upon exiting $B(0,3r/4)$. Then
\begin{itemize}
\item The right side of $\eta_{w_1}^{\theta_1}$ intersects the left side of $\eta_{w_2}^{\theta_2}$ with angle difference $\theta_1-\theta_2$, and they do not intersect in any other way.
\item $\eta_{w_1}^{\theta_1+\pi}$ does not intersect $\eta_{w_2}^{\theta_2}$, $\eta_{w_2}^{\theta_2-\pi}$; and $\eta_{w_2}^{\theta_2-\pi}$ does not intersect $\eta_{w_1}^{\theta_1+\pi}$, $\eta_{w_1}^{\theta_1}$.
\item Let $U$ be the region bounded between $\eta_{w_1}^{\theta_1}, \eta_{w_2}^{\theta_2}$, and sample the internal metric in $U$. There exist $x,y \in \eta^{\theta_1}_{w_1} \cap \eta^{\theta_2}_{w_2}$ such that if $U_{x,y} \subseteq U$ denotes the region between $x$ and $y$, then $\diam(U_{x,y}) \le \delta$ and
\[ D_\epsilon^{\partial U_{x,y}} \ge \delta^\zeta+\epsilon^{\epsexp} . \]
\end{itemize}
\item The event $G_{0,r}$ from the statement of Lemma~\ref{le:good_scales_merging} occurs for $h$.
\end{itemize}
Then
\[ \p[E_\delta] = o^\infty(\delta) \]
uniformly in the choice of $w_1,w_2$.
\end{lemma}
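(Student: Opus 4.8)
The plan is to reduce Lemma~\ref{le:tightness_bdry_good_fl} to Proposition~\ref{pr:tightness_bubble_bdry} by the same mechanism used to prove Lemma~\ref{le:a_priori_int_fl} (and its tightness analogue implicit in Lemma~\ref{lem:bdry_chaining_tail}), namely a self-similarity/coupling argument in which one finds many ``nice pockets'' where the configuration near a pair of intersecting flow lines locally looks like the half-plane setup of Proposition~\ref{pr:tightness_bubble_bdry}. First I would fix $a>0$ small and, following Steps 1--4 of the proof of Lemma~\ref{le:a_priori_int_fl}, consider slightly wider-angle flow lines $\wt\eta_1,\wt\eta_2$ (angles $\theta_1+a$, $\theta_2-a$), which by \cite{dkm-fan-adjacency} intersect in $B(0,r/2)$ with probability close to $\pmed$ and whose intersection set has dimension $\ddouble-O(a)$. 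Dividing $B(0,r/2)$ into squares of side $\delta^{1-a}$ and using the regularity event (Proposition~\ref{pr:regularity}, Example~\eqref{it:intersections_regularity}) together with Koebe distortion, one produces with positive probability at least $N_\delta = \delta^{-\ddouble+O(a)}$ disjoint ``nice pockets'' $P$ of Euclidean size comparable to $\delta$ sitting at mutual distance $\ge\delta^{1-a}$, each conformally equivalent (up to bounded distortion) to a ball $\delta\D$ in which the restricted field generates a pair of intersecting flow lines of the correct angle difference together with a conditionally independent \clekp{}.

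Next I would apply Proposition~\ref{pr:tightness_bubble_bdry} inside each nice pocket. The region bounded between $\eta_u=\eta[0,\infty]$ and $\eta^R$ in that proposition, after the reflection discussion in Section~\ref{subsec:flow_line_intersection}, is exactly the type of region bounded between two intersecting flow lines with angle difference $\angledouble$; by absolute continuity of the GFF (Lemma~\ref{lem:rn_derivative}, Lemma~\ref{le:abs_cont_kernel}) on the event that the scale is $M$-good, the conditional law of the field and the internal metric in each pocket is comparable to the one in Proposition~\ref{pr:tightness_bubble_bdry} with Radon--Nikodym derivative of all finite moments. Hence for each pocket the conditional probability that there exist $x,y$ on its boundary flow lines with $\diam(U_{x,y})\le\delta$ and $D_\epsilon^{\partial U_{x,y}}\ge\delta^\zeta+\epsilon^{\epsexp}$ is $o^\infty(\delta)$ (with the ``$+L\delta$'' contribution absorbed by taking $L$ a fixed small power of $\delta^{-1}$, or simply dropped since here we only need a bad-probability bound, not an exact exponent). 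Since the metrics in disjoint pockets are conditionally independent given $\wt\eta_1,\wt\eta_2$ by the Markovian property, and since every pocket has a positive conditional probability (bounded below, by the merging argument of Step~5 of Lemma~\ref{le:a_priori_int_fl} using $G_{0,r}$ and Lemma~\ref{le:good_scales_merging}) that the true flow lines $\eta_i$ merge into $\eta^{\theta_i}_{w_i}$ before tracing the relevant region, if $\p[E_\delta]$ were not $o^\infty(\delta)$ then with overwhelming probability at least one of the $N_\delta$ pockets would produce a bad boundary pair, which would force $D_\epsilon^{\partial U}\ge\delta^\zeta+\epsilon^{\epsexp}$ for some $U$ between $\eta_1,\eta_2$ with positive probability bounded away from zero --- contradicting the $o^\infty(\delta)$ bound we just obtained for the half-plane model.

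To make the contradiction quantitative I would use the elementary Lemma~\ref{le:binary_rv_fraction} (as in Step~4) to guarantee a positive fraction of the $N_\delta$ pockets are ``good'', and then the Markovian independence plus a Borel--Cantelli-type estimate: if each good pocket is bad with conditional probability $p_{\mathrm{bad}}$, then $\p[E_\delta\cap(\text{good event})]\ge 1-(1-p_{\mathrm{bad}})^{cN_\delta}$ as in the argument around~\eqref{eq:larger_than_median}, so $p_{\mathrm{bad}}N_\delta\to\infty$ is impossible, i.e.\ $p_{\mathrm{bad}}=o^\infty(\delta)$ by Proposition~\ref{pr:tightness_bubble_bdry}; unwinding the conformal maps (recalling $|\varphi_P'(0)|\asymp\delta$ so $\delta^\zeta$-scales and $\epsilon$-scales transform with only bounded distortion, using Lemma~\ref{le:scaled_metric} and Lemma~\ref{le:median_scaling} for the renormalization constants) yields $\p[E_\delta]=o^\infty(\delta)$, uniformly in $w_1,w_2$. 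Finally, Lemma~\ref{le:tightness_bdry_bubble_fl} follows from Lemma~\ref{le:tightness_bdry_good_fl} exactly as Lemma~\ref{le:a_priori_disc} follows from Lemma~\ref{le:a_priori_int_fl}: pass to good scales $2^{-j}$ ($M$-good and satisfying $G_{0,2^{-j}}$ via Lemma~\ref{le:good_scales_merging} and Lemma~\ref{lem:gff_independence_across_scales}), use Lemma~\ref{le:fill_ball} to see the space-filling SLEs cut off balls into which $w_1,w_2$ can be sampled so that $\eta_1,\eta_2$ merge into the localized flow lines, and apply Lemma~\ref{le:tightness_bdry_good_fl} to the rescaled field on each such ball, summing the $o^\infty(\delta)$ bounds over the $O(\log\delta^{-1})$ candidate scales. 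The main obstacle I expect is the bookkeeping in the reflection/merging step: ensuring that the true intersecting flow lines $\eta_1,\eta_2$ (which in the \clekp{} coupling arise as reflected boundaries of counterflow lines) actually coincide with the localized flow lines $\eta^{\theta_i}_{w_i}$ on the relevant pocket, so that the internal metric on $U_{x,y}$ in the pocket can be identified with the one appearing in $E_\delta$ --- this is precisely the delicate part handled in Step~5 of the proof of Lemma~\ref{le:a_priori_int_fl} and in the detection Lemmas~\ref{lem:detect_intersection_left}--\ref{lem:detect_intersection_right}, and the same care is needed here.
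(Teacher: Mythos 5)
Your overall strategy—reducing to Proposition~\ref{pr:tightness_bubble_bdry} via a GFF absolute continuity argument together with the merging technology from Step~5 of Lemma~\ref{le:a_priori_int_fl} and the detection Lemmas~\ref{lem:detect_intersection_left}--\ref{lem:detect_intersection_right}—correctly identifies the key ingredients, and your final comment that Lemma~\ref{le:tightness_bdry_bubble_fl} then follows from Lemma~\ref{le:tightness_bdry_good_fl} as Lemma~\ref{le:a_priori_disc} follows from Lemma~\ref{le:a_priori_int_fl} is exactly what the paper does in its Step~3. But the core reduction, as you present it, has a real gap: the ``nice pockets'' amplification scheme is the wrong logical tool here. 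In Lemma~\ref{le:a_priori_int_fl} the amplification works because the target bound is $\delta^{\ddouble+o(1)}$ and the reference point (the $q$-quantile of the metric at scale~$1$) is a fixed constant probability; if the bad probability at scale $\delta$ exceeded $\delta^{\ddouble}$, the $N_\delta\asymp\delta^{-\ddouble}$ conditionally independent pockets would push the probability at scale~$1$ above $(1-q)\pmed$, a contradiction. Here no such a~priori fixed reference exists: the event whose probability you would need to contradict (``some bubble between $\eta_1,\eta_2$ has diameter $\le\delta$ and boundary $\Fd_\epsilon$-diameter $\ge\delta^\zeta+\epsilon^{\epsexp}$'') is precisely what Lemma~\ref{le:tightness_bdry_bubble_fl} asserts, which is downstream of what you are trying to prove, so the contradiction argument is either circular or, if the pockets are taken inside the two-sided whole-plane picture, collapses to the single-pocket direct comparison (and the $N_\delta$ factor is then useless bookkeeping). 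As written, ``with overwhelming probability at least one of the $N_\delta$ pockets would produce a bad boundary pair... with positive probability bounded away from zero'' only holds if $\p[E_\delta]\gtrsim N_\delta^{-1}\asymp\delta^{\ddouble}$, so the argument can at best rule out that threshold, not give $o^\infty(\delta)$.

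The paper avoids this by making the comparison directly at the superpolynomial level. It first proves the statement at outer radius $r=\delta^\zeta$ by placing the lemma's local ball at a point $z$ on the hyperbolic geodesic of $(\C\setminus\eta[-\infty,0],0,\infty)$ at scale $|z|\asymp\dist(z,\eta[-\infty,0])\asymp\delta^\zeta$ (which happens with probability bounded below, independently of $\delta$, by scale invariance of the two-sided whole-plane $\SLE_\kappa$), then uses absolute continuity of the GFF to get $\p[\wt{E}\mid F]\gtrsim\p[E_{c_0\delta}]^{1+a}$, and then uses the merging step to show $\p[E\mid\wt{E}\cap F]\ge\wt{p}$ where $E$ is the event of Proposition~\ref{pr:tightness_bubble_bdry}. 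Because $\p[E]=o^\infty(\delta)$ this chain of inequalities gives $\p[E_{c_0\delta}]\lesssim\p[E]^{1/(1+a)}=o^\infty(\delta)$ with no amplification whatsoever. The paper then has a separate Step~2 to pass from $r=\delta^\zeta$ to $r=\delta$ by a random-scale argument using whole-plane GFF absolute continuity and the independence-across-scales Lemmas~\ref{lem:gff_independence_across_scales} and~\ref{le:good_scales_merging} (paying an acceptable $\delta^{-c}$ factor that is swallowed by $o^\infty(\delta)$), and a Step~3 for general $r>\delta^{1-\innexp}$. You should replace the pockets-and-contradiction machinery with this three-step single-comparison structure; the intuition and the subsidiary lemmas you invoke are the right ones, but the logic of the reduction needs to be run forwards (event implies half-plane event, then bound transfers) rather than as an amplification-based reductio.
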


\begin{proof}
\textbf{Step 1.} We first prove the result in the case $r=\delta^\zeta$ where $\zeta$ is the constant in Proposition~\ref{pr:tightness_bubble_bdry}.

By symmetry, it suffices to bound
\[ \sup_{u,v \in \partial U_{x,y} \cap \eta_{w_2}^{\theta_2}} \median{\epsilon}^{-1} \metapproxacres{\epsilon}{U_{x,y}}{u}{v}{\Gamma}.\]

The proof is similar to Step~5 of Lemma~\ref{le:a_priori_int_fl}, so we shall be brief. Let $\eta$ be a two-sided whole-plane \slek{} and let $F$ be the event that there exists a point $z$ on the hyperbolic geodesic in $(\C\setminus\eta[-\infty,0], 0, \infty)$ such that $\abs{z} \asymp \delta^\zeta$ and $\dist(z,\eta[-\infty,0]) \asymp \delta^\zeta$. By scale-invariance, $\p[F] > 0$ does not depend on $\delta$.

On the event $F$, let $\varphi$ be the conformal transformation from $(\D,-i,i)$ to $(\C\setminus\eta[-\infty,0], 0, \infty)$ with $\varphi(0) = z$. Let $c_0>0$ be sufficiently small such that $\abs{(\varphi^{-1})'(w)}/\abs{(\varphi^{-1})'(z)} \in [0.99,1.01]$ for all $w \in B(z,c_0\delta^\zeta)$. Let $h$ be a GFF in $\C\setminus\eta[-\infty,0]$ with boundary values such that its angle $\theta_2$ flow line is $\eta[0,\infty]$ and its counterflow line is $\eta'$. Let also $\eta'_{\theta_2-\pi/2}$ be the angle $\theta_2-\pi/2$ counterflow line so that its left boundary agrees with $\eta[0,\infty]$.

Let $\wt{E}$ be the event that the event in the lemma statement occurs for the restriction of $h$ to $B(z,c_0\delta^\zeta)$. Then $\p[\wt{E} \mid F] \gtrsim \p[E_{c_0\delta}]^{1+a}$ by absolute continuity and Lemma~\ref{le:abs_cont_kernel}. By the same argument as in Step~5 of the proof of Lemma~\ref{le:a_priori_int_fl}, the conditional probability given $\wt{E} \cap F$ that the counterflow lines $\eta'$, $\eta'_{\theta_2-\pi/2}$ merge into the objects involved in the event $\wt{E}$ is bounded from below by some $\wt{p}>0$.

This implies that if $E$ is the event from Proposition~\ref{pr:tightness_bubble_bdry}, then
\[ \p[E \mid \wt{E} \cap F] \ge \wt{p} \]
and hence
\[
\p[E_{c_0\delta}] \lesssim \p[E]^{1/(1+a)} = o^\infty(\delta) .
\]

\textbf{Step 2.} We now prove the result in the case $r=\delta$. Suppose $\wt{h}$ is a whole-plane GFF modulo additive constant $2\pi\chi$, and let $\wt{h}_{0,\delta}$ be as in Section~\ref{se:gff}. Let $\wt{E}_\delta$ denote the event $E_\delta$ occurring for $\wt{h}_{0,\delta}$ (with $r=\delta$) and let $G_{0,\delta}$ be the event from Lemma~\ref{le:good_scales_merging} for $\wt{h}_{0,\delta}$. Then $\p[E_\delta] \lesssim \p[\wt{E}_\delta]^{1+o(1)}$ by absolute continuity (and Lemma~\ref{le:abs_cont_kernel}). We are going to show $\p[\wt{E}_\delta] = o^\infty(\delta)$.

For the whole-plane GFF $\wt{h}$, we can extend the flow lines until they exit $B(0,1)$; let us denote them by $\wt{\eta}_{w_1}^{\theta_1+\pi}$, $\wt{\eta}_{w_1}^{\theta_1}$, $\wt{\eta}_{w_2}^{\theta_2}$, $\wt{\eta}_{w_2}^{\theta_2-\pi}$. On the events $\wt{E}_\delta$ and $G_{0,\delta}$, the conditional probability that $\wt{\eta}_{w_1}^{\theta_1+\pi}$ does not intersect $\wt{\eta}_{w_2}^{\theta_2}$, $\wt{\eta}_{w_2}^{\theta_2-\pi}$ and $\wt{\eta}_{w_2}^{\theta_2-\pi}$ does not intersect $\wt{\eta}_{w_1}^{\theta_1+\pi}$, $\wt{\eta}_{w_1}^{\theta_1}$ is at least $\delta^c$ where $c$ is a constant depending on $M,p$.

Let $b>0$ be given. Let $G_{0,r}$ be the event from Lemma~\ref{le:good_scales_merging} for $\wt{h}$, and $F$ the event that at least $9/10$ fraction of $G_{0,2^{-j}}$, $j \in \{ \lceil\log_2(\delta^{-\zeta/2})\rceil,\ldots,\lfloor\log_2(\delta^{-\zeta})\rfloor \}$ occur. We can pick $M,p$ such that $\p[F^c] = O(\delta^b)$.

Now sample $J \in \{ \lceil\log_2(\delta^{-\zeta/2})\rceil,\ldots,\lfloor\log_2(\delta^{-\zeta})\rfloor \}$ uniformly at random. Let $\wt{E}_{2^{-J}}$ be the event in the lemma statement occurring for $\wt{h}_{0,2^{-J}}$ for $\delta$ and $r=2^{-J}$. We have just argued that
\[ \p[\wt{E}_\delta] \lesssim \p[F^c]+\p[\wt{E}_\delta \cap F] \lesssim \delta^b+\delta^{-c}\p[\wt{E}_{2^{-J}} \cap G_{0,2^{-J}}] . \]
Now we can apply the result from Step~1 and conclude $\p[\wt{E}_{2^{-J}} \cap G_{0,2^{-J}}] = o^\infty(\delta)$.

\textbf{Step 3.} The result for general $r>\delta$ follows from the result for $r=\delta$. In fact, when $r>\delta^{1-\innexp}$ for fixed $\innexp>0$, the result holds also without requiring the event $G_{0,r}$. The proof of this is exactly the same as for Lemma~\ref{le:tightness_bdry_bubble_fl}, and we will refer the reader to that proof below. (Just note that for the statements are identical except that the flow lines come from the interior vs.\ from the boundary. This does not change anything in the proof.)
\end{proof}

\begin{proof}[Proof of Lemma~\ref{le:tightness_bdry_bubble_fl}]
The proof is analogous to Lemma~\ref{le:a_priori_disc}, so we shall we brief. Let $b>0$ be given.

Let $G_{z,r}$ be the event from Lemma~\ref{le:good_scales_merging}. Let $F^1$ be the event that for every $z \in B(0,3r/4) \cap \delta\Z^2$ at least $1/3$ fraction of $G_{z,2^{-j}}$, $j = \lceil\log_2(\delta^{-1+\innexp})\rceil,\ldots,\lfloor\log_2(\delta^{-1})\rfloor$ occur, so that $\p[(F^1)^c] = O(\delta^b)$ for suitable $M,p$. Further, let $F^2$ be the event that space-filling SLE fills a ball of radius $\delta^{2}$ whenever it travels distance $\delta$ within $B(0,3r/4)$. By Lemma~\ref{le:fill_ball} we have $\p[(F^2)^c] = o^\infty(\delta)$.

It therefore suffices to show $\p[E_\delta \cap F^1 \cap F^2] = o^\infty(\delta)$.

Let $z \in B(0,3r/4) \cap \delta\Z^2$ and $J \in \{ \lceil\log_2(\delta^{-1+\innexp})\rceil,\ldots,\lfloor\log_2(\delta^{-1})\rfloor \}$ be sampled uniformly at random, and let $\wt{w}_1,\wt{w}_2 \in B(0,2^{-J}/4)$ be sampled according to Lebesgue measure (independently of $h$ and $\metapproxacres{\epsilon}{U}{\cdot}{\cdot}{\Gamma}$). Then, as argued in the proof of Lemma~\ref{le:a_priori_disc}, we have
\[
\p[E_{z,J} \cap G_{z,2^{-J}}] \ge \delta^{10}\p[E_\delta \cap F^1 \cap F^2] .
\]
where $E_{z,J}$ denote the event described in Lemma~\ref{le:tightness_bdry_good_fl} occurring for $\wt{h}_{z,2^{-J}}$ and $\wt{w}_1,\wt{w}_2$. By absolute continuity we have $\p[E_{z,J} \cap G_{z,2^{-J}}] = o^\infty(\delta)$. This proves the result.
\end{proof}

\begin{lemma}\label{le:tightness_bdry_fl_refl}
There exists $\zeta > 0$ such that the following is true. 
Fix $M >0$, $\innexp>0$. Let $\delta^{1-\innexp} \le r \le 1$, and let $h$ be a GFF on $r\D$ with some boundary values that are bounded by $M$ and so that the flow line $\eta_1$ from $-ir$ to $ir$ and the flow line $\ol{\eta}_2$ from $ir$ to $-ir$ in the components of $\D \setminus \eta_1$ to the right of $\eta_1$ (and reflected off $\eta_1$) are defined. Given $\eta_1,\ol{\eta}_2$, sample a \clekp{} and the internal metric in the region bounded between them. Let $E_\delta$ be the event that there exist $x,y \in \eta_1 \cap \ol{\eta}_2$ such that if $U$ denotes the region bounded between the segments of $\eta_1$ and $\ol{\eta}_2$ from $x$ to $y$, then $U \subseteq B(0,3r/4)$, $\diam(U) \le \delta$, and
\[ D_\epsilon^{\partial U} \ge \delta^\zeta+\epsilon^{\epsexp} . \]
Then $\p[E_\delta] = o^\infty(\delta)$ as $\delta\searrow 0$.

The same is true when $\ol{\eta}_2$ is the (reflected) flow line in the components to the left of $\eta_1$.
\end{lemma}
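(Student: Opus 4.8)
The plan is to mirror the two–step structure used for the a priori estimate Lemma~\ref{le:a_priori_fl_reflected} and for the non-reflected tightness statement Lemma~\ref{le:tightness_bdry_bubble_fl}: first treat the special boundary values for which the reversal identity Lemma~\ref{le:reflected_fl_law} is available, reducing the reflected case directly to Lemma~\ref{le:tightness_bdry_bubble_fl}; then pass to general $M$-bounded boundary values by a good-scales/absolute-continuity argument, using Lemma~\ref{le:good_scales_merging_refl} in place of Lemma~\ref{le:good_scales_merging}. Throughout we take $\zeta > 0$ to be the exponent provided by Lemma~\ref{le:tightness_bdry_bubble_fl} (which is the only constraint imposed on $\zeta$).

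Suppose first that the boundary values of $h$ on $r\D$ are the images (under a conformal map $\varphi \colon \h \to r\D$ with $\varphi(0) = -ir$, $\varphi(\infty) = ir$, with the $-\chi\arg(\varphi^{-1})'$ correction, so they remain bounded by a universal constant) of the boundary values in Lemma~\ref{le:reflected_fl_law}, and that the angles of $\eta_1$ and $\ol\eta_2$ are the ones there; after replacing $h$ by $h - \angledouble\chi$, which only shifts all flow-line angles globally, $\eta_1$ is the angle $\angledouble$ flow line and $\ol\eta_2$ is the reflection of the angle $0$ flow line off $\eta_1$. By Lemma~\ref{le:reflected_fl_law} the pair $(\eta_1,\ol\eta_2)$ has the same law, modulo time reversal, as the pair $(\eta_1,\eta_2)$ of the angle $\angledouble$ and angle $0$ flow lines from $-ir$ to $ir$; in particular the region bounded between $\eta_1$ and $\ol\eta_2$ has the same law as the region bounded between these non-reflected flow lines. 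Since, conditionally on the curves, the \clekp{} and the internal metrics are sampled through a fixed probability kernel depending only on the region (Definition~\ref{def:cle_metric} and the Markovian property), the joint law of the region together with the \clekp{} and the internal metrics coincides with that in the setup of Lemma~\ref{le:tightness_bdry_bubble_fl}. Hence $\p[E_\delta]$ equals the probability of the corresponding event there, which is $o^\infty(\delta)$. The variant in which $\ol\eta_2$ is reflected off $\eta_1$ from the left follows by applying the reflection $z \mapsto -\bar z$ together with the mirror version of Lemma~\ref{le:reflected_fl_law}.

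For general $M$-bounded boundary values we run the good-scales argument exactly as in the proof of Lemma~\ref{le:tightness_bdry_bubble_fl} (and Lemma~\ref{le:a_priori_disc}), with Lemma~\ref{le:good_scales_merging_refl} replacing Lemma~\ref{le:good_scales_merging} (its event is tailored precisely to merging into a flow line together with a second flow line reflected off it). Fix $\innexp > 0$. Using the event $G_{z,r}$ of Lemma~\ref{le:good_scales_merging_refl}, whose field $\wt h_{z,r}$ we declare to carry the special boundary values $\Fg$ of the previous paragraph, a union bound over $z \in B(0,3r/4) \cap \delta\Z^2$, and Lemma~\ref{le:fill_ball}, there is an event of probability $1 - o^\infty(\delta)$ on which for every such $z$ a fixed fraction of the scales $2^{-j}$, $j \in \{\lceil\log_2(\delta^{-1+\innexp})\rceil,\dots,\lfloor\log_2(\delta^{-1})\rfloor\}$, are good and on which the relevant counterflow lines fill balls of radius $\delta^2$ whenever they travel distance $\delta$ in $B(0,3r/4)$. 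Sampling $z$, $J$ and points $w_1,w_2$ uniformly at random, on $E_\delta$ there is conditional probability at least a fixed power of $\delta$ that the scale $2^{-J}$ at $z$ is good, that $U \subseteq B(z,2^{-J}/4)$, and that $\eta_1,\ol\eta_2$ merge (with $\eta_1$ detected first, then $\ol\eta_2$ reflected off it) into the flow lines from $w_1,w_2$ determined by $\wt h_{z,2^{-J}}$, so that $U$ is a bubble of those flow lines. On the good event the law of $\wt h_{z,2^{-J}}$ is mutually absolutely continuous with respect to that of a GFF with the special boundary values, with Radon-Nikodym moments of all orders (Lemma~\ref{lem:rn_derivative}); combining this with Lemma~\ref{le:abs_cont_kernel}, the previous paragraph, and the usual rescaling of $B(z,2^{-J})$ to $\D$ (cf.\ the proof of Lemma~\ref{le:tightness_bdry_good_fl}), the corresponding localized event has probability $o^\infty(\delta)$, whence $\p[E_\delta] = o^\infty(\delta)$.

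The main obstacle is the set-up of this localized event in the reflected case: because $\ol\eta_2$ is obtained by reflecting off $\eta_1$ in the opposite direction, the merging has to be arranged so that $\eta_1$ is recovered before $\ol\eta_2$ is grown off it, which is exactly what the reflected good-scales event of Lemma~\ref{le:good_scales_merging_refl} provides; keeping track simultaneously of the global angle shift, so that the region between $\eta_1$ and $\ol\eta_2$ genuinely matches the intersecting-flow-line region of Section~\ref{se:intersections_setup}, is the delicate bookkeeping. This reflected merging argument is written out in detail in the first part of the proof of Lemma~\ref{le:a_priori_loop_intersecting_one_side}, which can be followed verbatim here.
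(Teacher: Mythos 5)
Your proof follows exactly the paper's two-step structure: first handle the boundary values of Lemma~\ref{le:reflected_fl_law} via the reversal identity and Lemma~\ref{le:tightness_bdry_bubble_fl}, then pass to general $M$-bounded boundary values by the good-scales/absolute-continuity argument with Lemma~\ref{le:good_scales_merging_refl} in place of Lemma~\ref{le:good_scales_merging}, referencing the first part of the proof of Lemma~\ref{le:a_priori_loop_intersecting_one_side} for the reflected-merging bookkeeping. This matches the paper's proof (which also states a localized intermediate statement, Lemma~\ref{le:tightness_bdry_good_fl_refl}, that you describe inline rather than extract as a separate lemma).
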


\begin{proof}
In case the boundary values of $h$ are as in Lemma~\ref{le:reflected_fl_law}, this follows from Lemma~\ref{le:tightness_bdry_bubble_fl} by reversal symmetry. To transfer the result to the case of general boundary values, we can apply a similar argument as for Lemma~\ref{le:tightness_bdry_bubble_fl} where here we use Lemma~\ref{le:good_scales_merging_refl} and Lemma~\ref{le:tightness_bdry_good_fl_refl} below. (See also the first part of the proof of Lemma~\ref{le:a_priori_loop_intersecting_one_side} where the same argument is used.)
\end{proof}

\begin{lemma}\label{le:tightness_bdry_good_fl_refl}
There exists $\zeta>0$ such that the following is true. Fix $M,p > 0$, $r \in (0,1)$, and suppose $\delta < r$. Let $h$ be a GFF on $r\D$ with boundary values as in Lemma~\ref{le:reflected_fl_law}. Let $\eta_1,\ol{\eta}_2$ be as defined in Lemma~\ref{le:reflected_fl_law}. Let $z_1 \in \partial B(0,r/2)$, $w_1 \in B(0,3r/16)$, and let $E_\delta$ denote the event that the following hold. 
Let $\eta_{z_1}$ be the flow line starting at $z_1$ and stopped upon exiting $B(0,3r/4)$. Let $\eta_{w_1}$ be the flow line starting at $w_1$, reflected off $\eta_{z_1}$ in the opposite direction, and stopped upon exiting $B(0,r/4)$. Then
\begin{itemize}
 \item $\eta_{w_1}$ intersects the right side of $\eta_{z_1}$ with angle difference $0$ (before reflecting off), and they do not intersect in any other way.
 \item Let $U$ be the region bounded between $\eta_{w_1}, \eta_{z_1}$, and sample the internal metric in $U$. There exist $x,y \in \eta_{w_1} \cap \eta_{z_1}$ such that if $U_{x,y} \subseteq U$ denotes the region between $x$ and $y$, then $\diam(U_{x,y}) \le \delta$ and
\[ D_\epsilon^{\partial U_{x,y}} \ge \delta^\zeta+\epsilon^{\epsexp} . \]
 \item The angle $\pi$ flow line $\eta^{\pi}_{w_1}$ starting from $w_1$ exits $B(0,3r/4)$ without intersecting $\eta_{z_1}$.
 \item The conditional probability given $\eta_{z_1}$, $\eta_{w_1}$, $\eta^{\pi}_{w_1}$, $X_r^0$, $X_r^{\pi}$, and the values of $h$ on these sets is at least $p$ that
 \begin{itemize}
  \item $\eta_1$ merges into $\eta_{z_1}$ before entering $B(0,r/4)$.
  \item The continuation of $\eta^{\pi}_{w_1}$ does not intersect $\eta_1$ before it hits $\partial B(0,r)$.
 \end{itemize}
\end{itemize}
Then
\[ \p[E_\delta] = o^\infty(\delta) \]
uniformly in the choice of $z_1,w_1$.
\end{lemma}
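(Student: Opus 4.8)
The plan is to follow the three–step structure of the proof of Lemma~\ref{le:tightness_bdry_good_fl}, the essential difference being that one must now merge the \emph{reflected} flow lines into the localized flow lines $\eta_{z_1},\eta_{w_1}$: the auxiliary non–intersecting flow lines appearing in that proof are here replaced by the single angle $\pi$ flow line $\eta^{\pi}_{w_1}$, whose role is to force the reflected flow line $\ol{\eta}_2$ to merge into $\eta_{w_1}$. As in Lemma~\ref{le:tightness_bdry_good_fl}, by symmetry it suffices to bound $\median{\epsilon}^{-1}\metapproxacres{\epsilon}{U_{x,y}}{u}{v}{\Gamma}$ for $u,v$ on the $\eta_{z_1}$–part of $\partial U_{x,y}$.

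First I would treat the base case $r=\delta^{\zeta}$, with $\zeta$ the exponent of Proposition~\ref{pr:tightness_bubble_bdry}, exactly as in Step~1 of the proof of Lemma~\ref{le:tightness_bdry_good_fl} and Step~5 of the proof of Lemma~\ref{le:a_priori_int_fl}. Take a two–sided whole–plane $\SLE_\kappa$ curve $\eta$; on a $\delta$–independent positive–probability event $F$ one can pick a point $z$ on the hyperbolic geodesic of $(\C\setminus\eta[-\infty,0],0,\infty)$ with $\abs{z}\asymp\delta^{\zeta}$ and $\dist(z,\eta[-\infty,0])\asymp\delta^{\zeta}$. Embedding the configuration of the lemma (a GFF with the reflected boundary values, restricted to a ball $B(z,c_0\delta^{\zeta})$) into the whole–plane GFF via absolute continuity and Lemma~\ref{le:abs_cont_kernel} gives a local event $\wt E$ with $\p[\wt E\mid F]\gtrsim\p[E_{c_0\delta}]^{1+a}$. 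On $\wt E\cap F$, with conditional probability bounded below, the counterflow line whose left outer boundary is $\eta[0,\infty]$ merges into $\eta_{z_1}$ and its right outer boundary merges into $\eta_{w_1}$: the fourth bullet of $E_\delta$ is precisely the event that forces $\eta_1$ to merge into $\eta_{z_1}$ while keeping $\eta^{\pi}_{w_1}$ off $\eta_1$, and by the flow–line interaction rules of Section~\ref{se:fl_interaction} (cf.\ the discussion following Lemma~\ref{lem:interior_intersection_left}) the latter forces the reflected flow line to merge into $\eta_{w_1}$. Since the internal metric in the region between $\eta_{z_1}$ and $\eta_{w_1}$ is determined by the restriction of the field to $B(z,(3/4)c_0\delta^{\zeta})$ (Markovian property), the merging is an event for the field alone and does not bias the metric, so on it $U_{x,y}$ is a genuine sub–bubble of the region bounded between $\eta[0,\infty]$ and the outer boundary $\eta^R$ of Proposition~\ref{pr:tightness_bubble_bdry}, with the same value of $D_\epsilon^{\partial U_{x,y}}$. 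Hence $\p[E_{c_0\delta}]\lesssim\p[E]^{1/(1+a)}=o^\infty(\delta)$ with $E$ the event of that proposition. One could equally route the base case through reversal symmetry: by Lemma~\ref{le:reflected_fl_law} the region between $\eta_1$ and $\ol{\eta}_2$ has the law of the region between two un–reflected flow lines of angle difference $\angledouble$, so once the merging is in place the conclusion also follows from Lemma~\ref{le:tightness_bdry_bubble_fl}.

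Next I would treat the scale $r=\delta$: use a whole–plane GFF and the independence across scales, with Lemma~\ref{le:good_scales_merging} replaced by its reflected counterpart Lemma~\ref{le:good_scales_merging_refl}, so that on a high–probability event a constant fraction of the scales $2^{-j}$ for $j\in\{\lceil\log_2\delta^{-\zeta/2}\rceil,\dots,\lfloor\log_2\delta^{-\zeta}\rfloor\}$ have the required $M$–goodness and reflected merging probabilities; picking such a scale $2^{-J}$ uniformly at random and using absolute continuity reduces the statement to the base case $r=\delta^{\zeta}$. Finally, for general $\delta<r<1$ the reduction is the covering–by–small–balls argument that constitutes the proof of Lemma~\ref{le:a_priori_disc} and Lemma~\ref{le:tightness_bdry_bubble_fl}: tile $B(0,3r/4)$ by $\delta$–squares, sample a square together with an intermediate scale and two auxiliary starting points, use Lemma~\ref{le:good_scales_merging_refl} to locate good scales and Lemma~\ref{le:fill_ball} to bound the number of candidate bubbles, and conclude $\p[E_\delta]=o^\infty(\delta)$; for $r>\delta^{1-\innexp}$ the event $G_{0,r}$ may be dropped, exactly as in Lemma~\ref{le:tightness_bdry_bubble_fl}.

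I expect the main obstacle to be the reflected merging step in the base case. Unlike the non–reflected situation of Lemma~\ref{le:tightness_bdry_good_fl}, where it is enough that four flow lines of prescribed angles avoid one another, here one must first observe that $\eta_1$ merges into $\eta_{z_1}$ and only then analyze $\ol{\eta}_2$, which is a flow line of the field restricted to the complement of $\eta_1$ and reflects off it in the opposite direction. One has to check that the fourth bullet of $E_\delta$ — with conditional probability $\ge p$, $\eta_1$ merges into $\eta_{z_1}$ and the continuation of $\eta^{\pi}_{w_1}$ avoids $\eta_1$ until hitting $\partial B(0,r)$ — together with $\eta_{w_1}$ being the angle $0$ flow line reflected off $\eta_{z_1}$, genuinely forces $\ol{\eta}_2$ to merge into $\eta_{w_1}$ along the entire segment between $x$ and $y$, rather than merely intersecting it or separating from it before reaching $y$. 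This is the mechanism isolated in Lemma~\ref{lem:interior_intersection_left}, and transporting it uniformly in $r\in(\delta,1)$ and in the choice of $z_1,w_1$ into the present $B(0,r)$ setup is where the technical work concentrates.
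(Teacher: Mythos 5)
Your proposal is correct in its mechanism but takes a much longer route than the paper. The paper's proof of Lemma~\ref{le:tightness_bdry_good_fl_refl} is essentially one line: it lets $F_\delta$ be the event of Lemma~\ref{le:tightness_bdry_fl_refl} (with the special boundary values of Lemma~\ref{le:reflected_fl_law}), observes that the fourth bullet of $E_\delta$ guarantees a conditional merging probability $\ge p$, couples the internal metrics so that they agree when the regions agree, and concludes $\p[F_\delta] \ge p\,\p[E_\delta]$, whence $\p[E_\delta] \le p^{-1}\p[F_\delta] = o^\infty(\delta)$. This is the same transfer argument as in Lemma~\ref{lem:interior_intersection_left}. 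Crucially, Lemma~\ref{le:tightness_bdry_fl_refl} for these boundary values is \emph{already available}: it follows from Lemma~\ref{le:tightness_bdry_bubble_fl} by the reversal symmetry of Lemma~\ref{le:reflected_fl_law}. So there is no need to redo the three-step whole-plane embedding of Lemma~\ref{le:tightness_bdry_good_fl}; the paper's logical order is Lemma~\ref{le:tightness_bdry_good_fl} $\Rightarrow$ Lemma~\ref{le:tightness_bdry_bubble_fl} $\Rightarrow$ Lemma~\ref{le:tightness_bdry_fl_refl} (special case) $\Rightarrow$ Lemma~\ref{le:tightness_bdry_good_fl_refl} $\Rightarrow$ Lemma~\ref{le:tightness_bdry_fl_refl} (general case), and the step you are asked to prove is simply the penultimate implication.

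Your three-step re-derivation (base case $r = \delta^\zeta$ via Proposition~\ref{pr:tightness_bubble_bdry}, scale $r=\delta$ via Lemma~\ref{le:good_scales_merging_refl}, general $r$ via covering) is structurally sound but wasteful: you are in effect re-proving Lemma~\ref{le:tightness_bdry_bubble_fl} and Lemma~\ref{le:tightness_bdry_fl_refl} (special case) inside this proof. Your alternative remark --- ``one could equally route the base case through reversal symmetry: by Lemma~\ref{le:reflected_fl_law} the region between $\eta_1$ and $\ol{\eta}_2$ has the law of the region between two un-reflected flow lines of angle difference $\angledouble$, so once the merging is in place the conclusion also follows from Lemma~\ref{le:tightness_bdry_bubble_fl}'' --- is exactly the right observation, but it should be applied globally (to all of $B(0,r)$ via Lemma~\ref{le:tightness_bdry_fl_refl}), not just to the base case. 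One more precision worth flagging: the symmetry you invoke (``by symmetry it suffices to bound $\metapproxac{\epsilon}{u}{v}{\Gamma}$ for $u,v$ on the $\eta_{z_1}$-part'') is not a symmetry of the present setup, since $\eta_{z_1}$ is a genuine flow line while $\eta_{w_1}$ is reflected; it only becomes a symmetry after identifying with the un-reflected pair via Lemma~\ref{le:reflected_fl_law}, which is a statement about laws, not about pathwise identities. This identification is precisely what the reduction to Lemma~\ref{le:tightness_bdry_fl_refl} packages cleanly, and your proposal would benefit from making this explicit rather than importing the symmetry from Lemma~\ref{le:tightness_bdry_good_fl}. You also correctly isolate the reflected-merging subtlety (that one must first observe $\eta_1$ before the reflected $\ol{\eta}_2$ is even defined) as the main technical obstacle; this is the content of the ``exact same argument as Lemma~\ref{lem:interior_intersection_left}'' invoked by the paper.
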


\begin{proof}
 This follows from the case of Lemma~\ref{le:tightness_bdry_fl_refl} when the boundary values of $h$ are as in Lemma~\ref{le:reflected_fl_law}. Indeed, if we let $F_\delta$ denote the event from Lemma~\ref{le:tightness_bdry_fl_refl}, then $\p[F_\delta] \ge p\p[E_\delta]$ (this is the exact same argument as in the proof of Lemma~\ref{lem:interior_intersection_left}).
\end{proof}

We turn towards proving Proposition~\ref{prop:tightness_in_nice_bubble}. We need to distinguish bubbles that are small compared to their distance to $u,v$, and the bubbles that are big. We consider the small bubbles in the following lemma.

\begin{lemma}\label{le:tightness_long_fl}
 Consider the setup of Proposition~\ref{prop:tightness_in_nice_bubble}. Fix $\innexp>0$. Let $E'_\delta$ be the event defined similarly to $E_\delta$ with the additional requirement $\dist(U_{x,y},\{u,v\}) \ge \delta^{1-\innexp}$. Then $\p[E'_\delta] = o^\infty(\delta)$ as $\delta \searrow 0$.
\end{lemma}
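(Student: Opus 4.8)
The plan is to reduce $E'_\delta$ to the localized statement of Lemma~\ref{le:tightness_bdry_good_fl_refl} by an independence-across-scales argument, exactly parallel to the way Lemma~\ref{le:tightness_bdry_bubble_fl} is deduced from Lemma~\ref{le:tightness_bdry_good_fl} (which in turn mirrors the deduction of Lemma~\ref{le:a_priori_disc} from Lemma~\ref{le:a_priori_int_fl}). Fix $b>0$; it suffices to show $\p[E'_\delta]=O(\delta^b)$. The key geometric observation is that on $E'_\delta$ the bubble $U_{x,y}$ has Euclidean diameter at most $\delta$ and Euclidean distance at least $\delta^{1-\innexp}$ from both $u$ and $v$, so at any scale $2^{-j}$ with $\delta^{1-\innexp}\ge 2^{-j}\gg\delta$, for a lattice point $z$ within distance $O(\delta)$ of $U_{x,y}$ one has $U_{x,y}\subseteq B(z,2^{-j}/8)$ while $u,v\notin B(z,3\cdot2^{-j}/4)$ (since $2^{-j}\le\delta^{1-\innexp}\le\dist(U_{x,y},\{u,v\})$ and $\diam U_{x,y}\le\delta\ll2^{-j}$). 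Consequently the segments of $\eta_u$, and of the curve $\eta_{u,v}$ reflected off $\eta_u$, that touch $B(z,3\cdot2^{-j}/4)$ are flow lines of the restricted field, which agrees with the localized field $\wt h_{z,2^{-j}}$ of Section~\ref{se:gff}; locally around $U_{x,y}$ the configuration is then precisely a flow line together with a reflected flow line hitting it with angle difference $0$, as in Lemma~\ref{le:reflected_fl_law} and Lemma~\ref{le:tightness_bdry_good_fl_refl}. (The condition $\Upsilon_{\Gamma_{x,y}}\subseteq\Upsilon_\Gamma$ only restricts the event $E'_\delta$ and is automatically compatible, as $h=\wt h_{z,2^{-j}}$ inside $B(z,3\cdot2^{-j}/4)$.)

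Concretely, I would first set up the good events: let $\wt h_{z,2^{-j}}$ have the boundary values of Lemma~\ref{le:reflected_fl_law} (after scaling and translation), let $G^{\mathrm{refl}}_{z,2^{-j}}$ be the event of Lemma~\ref{le:good_scales_merging_refl}, and let $F^1$ be the event that for every $z\in B(0,r)\cap\delta\Z^2$ at least a third of the scales $j\in\{\lceil\log_2\delta^{-(1-\innexp)}\rceil,\dots,\lfloor\log_2\delta^{-(1-\innexp/2)}\rfloor\}$ have $G^{\mathrm{refl}}_{z,2^{-j}}$ occurring; by Lemma~\ref{le:good_scales_merging_refl} and a union bound one can choose $M,p$ so that $\p[(F^1)^c]=O(\delta^b)$. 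Let $F^2$ be the ball-filling event of Lemma~\ref{le:fill_ball} for the space-filling $\SLE_{\kappa'}$'s associated with $\eta_u$ and $\eta_{u,v}$, so $\p[(F^2)^c]=o^\infty(\delta)$; it then remains to bound $\p[E'_\delta\cap F^1\cap F^2]$. Sample $z\in B(0,r)\cap\delta\Z^2$ and $J$ uniformly from the above range, independently of everything. On $E'_\delta\cap F^1\cap F^2$, with conditional probability bounded below by a power of $\delta$ the pair $(z,J)$ is such that $U_{x,y}\subseteq B(z,2^{-J}/8)$, $u,v\notin B(z,3\cdot2^{-J}/4)$, and $G^{\mathrm{refl}}_{z,2^{-J}}$ (hence $M$-goodness of the scale) holds. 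On this event I would, using $F^2$ to place interior starting points in the balls cut off right next to where $\eta_u$ and $\eta_{u,v}$ trace $\partial U_{x,y}$, together with $G^{\mathrm{refl}}_{z,2^{-J}}$ to connect these interior flow lines to the boundary strands of $X^{\theta}_{z,2^{-J}}$ and $X^{\theta-\pi/2}_{z,2^{-J}}$ (the reflected merging being handled exactly as in the analysis of Lemma~\ref{lem:detect_intersection_left} and Lemma~\ref{lem:interior_intersection_left}), realize with a further conditional probability bounded below by a power of $\delta$ the event $E_{2^{-J}}$ of Lemma~\ref{le:tightness_bdry_good_fl_refl} for $\wt h_{z,2^{-J}}$ and some admissible $z_1,w_1$: a flow line $\eta_{z_1}$ from $\partial B(z,2^{-J}/2)$ into which $\eta_u$ merges before tracing $\partial U_{x,y}$, and a flow line $\eta_{w_1}$ from $\partial B(z,3\cdot2^{-J}/16)$ reflected off $\eta_{z_1}$ into which $\eta_{u,v}$ merges, so that $U_{x,y}$ is a sub-bubble of the region between $\eta_{z_1}$ and $\eta_{w_1}$ with the same internal metric (coupled via the Markovian property). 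Hence $\p[E_{2^{-J}}\cap G^{\mathrm{refl}}_{z,2^{-J}}]\gtrsim\delta^{c}\,\p[E'_\delta\cap F^1\cap F^2]$. On the $M$-good event the law of $\wt h_{z,2^{-J}}$, jointly with the internal metric via Lemma~\ref{le:abs_cont_kernel}, is absolutely continuous with respect to the corresponding GFF with boundary values as in Lemma~\ref{le:reflected_fl_law}, with Radon--Nikodym derivative of all finite moments; so Lemma~\ref{le:tightness_bdry_good_fl_refl} (applied with radius $2^{-J}$ and diameter parameter $\delta$, uniformly over the $O(M^{c})$ choices of $z_1,w_1$) gives $\p[E_{2^{-J}}\cap G^{\mathrm{refl}}_{z,2^{-J}}]=o^\infty(\delta)$. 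Combining the displays yields $\p[E'_\delta\cap F^1\cap F^2]=\delta^{-c}\,o^\infty(\delta)=o^\infty(\delta)$, hence $\p[E'_\delta]=O(\delta^b)$ for every $b>0$, which is the claim.

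The main obstacle is the matching step in the middle: realizing, with conditional probability bounded below on a good scale, that the interior flow line $\eta_u$ and the reflected curve $\eta_{u,v}$ agree, in a neighborhood of $U_{x,y}$, with flow lines $\eta_{z_1},\eta_{w_1}$ emanating from $\partial B(z,2^{-J}/2)$ and $\partial B(z,3\cdot2^{-J}/16)$ having the reflection structure required by Lemma~\ref{le:tightness_bdry_good_fl_refl}. This is precisely where the reflection enters, so one must use the reflected variants Lemma~\ref{le:good_scales_merging_refl} and Lemma~\ref{le:tightness_bdry_good_fl_refl} rather than their non-reflected counterparts, and the argument requires the same case analysis as in the proof of Lemma~\ref{lem:detect_intersection_left}/Lemma~\ref{lem:interior_intersection_left} to relate the interior flow lines to the boundary strands. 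The remaining ingredients — the scale and position sampling, the absolute-continuity transfer through Lemma~\ref{le:abs_cont_kernel}, and the union bound over good annuli — are routine and proceed exactly as in the proofs of Lemma~\ref{le:a_priori_disc} and Lemma~\ref{le:tightness_bdry_bubble_fl}.
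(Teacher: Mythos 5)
Your proposal is correct and follows the same route as the paper: reduce $E'_\delta$ to the localized event of Lemma~\ref{le:tightness_bdry_good_fl_refl} by sampling a mesh point $z$ and a good scale $J$ from the range where $\delta \lesssim 2^{-J} \lesssim \delta^{1-\innexp}$, note that the hypothesis $\dist(U_{x,y},\{u,v\})\ge\delta^{1-\innexp}$ guarantees $\eta_u$, $\eta_{u,v}$ begin and end outside $B(z,2^{-J})$, and transfer via translation invariance, the good-scales event from Lemma~\ref{le:good_scales_merging_refl}, and absolute continuity (Lemma~\ref{le:abs_cont_kernel}). The paper's actual proof is terser — it does not introduce a ball-filling event $F^2$ or spell out the merging step, simply invoking "translation invariance and absolute continuity" with a $\delta^{-2}$ factor from the mesh union bound — but your more detailed elaboration (including the reflected-merging case analysis and the slightly more conservative scale range) is a faithful fleshing-out of the same argument.
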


\begin{proof}
 This follows from Lemma~\ref{le:tightness_bdry_good_fl_refl} and Lemma~\ref{le:good_scales_merging_refl}. Indeed, let $b>0$, and let $G_{z,r}$ be the event from Lemma~\ref{le:good_scales_merging_refl} (with suitable parameters $M,p$). Let $G$ be the event that for each $z \in \delta\Z^2 \cap B(0,r)$ the fraction of $j \in \{ \lceil\log_2(\delta^{-1+\innexp})\rceil,\ldots,\lfloor\log_2(\delta^{-1})\rfloor \}$ where $G_{z,2^{-j}}$ occurs is at least $9/10$. Then we can pick $M,p$ such that $\p[G^c] = O(\delta^b)$.
 
 Sample $z \in \delta\Z^2 \cap B(0,r)$ and $J \in \{ \lceil\log_2(\delta^{-1+\innexp})\rceil,\ldots,\lfloor\log_2(\delta^{-1})\rfloor \}$ uniformly at random. Note that on the event that $U_{x,y} \subseteq B(z,\delta)$ both flow lines $\eta_u$, $\eta_{u,v}$ begin and end outside $B(z,2^{-J})$. Therefore, if we denote by $E'_{z,2^{-J}}$ the event from Lemma~\ref{le:tightness_bdry_good_fl_refl} occurring for $\wt{h}_{z,2^{-J}}$, then by translation invariance and absolute continuity
 \[ \p[E'_\delta \cap G] \lesssim \delta^{-2}\p[E'_{z,2^{-J}} \cap G_{z,2^{-J}}] = o^\infty(\delta) . \]
\end{proof}

The next lemma deals with the big bubbles.

\begin{lemma}\label{le:tightness_bdry_big_bubbles}
 Consider the setup of Proposition~\ref{prop:tightness_in_nice_bubble}. Let $E''_\delta$ be the event defined similarly to $E^1_\delta$ with the additional requirement that $U_{x,y}$ is a single connected component of $\D\setminus (\eta_{u,v}\cup\eta_u)$. Then $\p[E''_\delta] = o^\infty(\delta)$ as $\delta \searrow 0$.
\end{lemma}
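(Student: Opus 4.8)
The plan is to reduce Lemma~\ref{le:tightness_bdry_big_bubbles} to the reflected-flow-line tightness estimate of Lemma~\ref{le:tightness_bdry_fl_refl} (in its localized form Lemma~\ref{le:tightness_bdry_good_fl_refl}) by a localization argument of the same type used in the proofs of Lemma~\ref{le:tightness_long_fl} and Lemma~\ref{le:a_priori_loop_intersecting_one_side}. The key observation is that when $U_{x,y}$ is a \emph{single} connected component of $\D\setminus(\eta_{u,v}\cup\eta_u)$, its boundary consists of exactly one arc of $\eta_u$ and one arc of $\eta_{u,v}$; since $\eta_{u,v}$ is the flow line from $v$ reflected off $\eta_u$ in the opposite direction, the field $h$ restricted to a neighborhood of $U_{x,y}$ has precisely the local structure appearing in Lemma~\ref{le:tightness_bdry_good_fl_refl}, with $\eta_u$ playing the role of the flow line $\eta_{z_1}$ that crosses the localizing ball and the bounding arc of $\eta_{u,v}$ being (a subsegment of) the flow line $\eta_{w_1}$ reflected off $\eta_{z_1}$ in the opposite direction. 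In particular the CLE $\Gamma$ restricted to $U_{x,y}$ plays the role of the internal CLE there, which is legitimate because $\Upsilon_{\Gamma_{x,y}}\subseteq\Upsilon_\Gamma$.

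Concretely, I would fix $b>0$ and use Lemma~\ref{le:good_scales_merging_refl} to choose $M,p>0$ and define a high-probability event $G$ with $\p[G^c]=O(\delta^b)$ saying that for every $z$ in a $\delta$-net of $B(0,r)$, a $9/10$ fraction of the dyadic scales $2^{-j}$ in a range comparable to $\delta$ are such that the good-scale event $G_{z,2^{-j}}$ of Lemma~\ref{le:good_scales_merging_refl} occurs for the translated, rescaled field $\wt{h}_{z,2^{-j}}$. Sampling $z$ from the net and $J$ uniformly from this range, one shows that on $E''_\delta\cap G$, with conditional probability bounded below by a fixed power of $\delta$, the bubble satisfies $U_{x,y}\subseteq B(z,2^{-J}/4)$, the scale $(z,2^{-J})$ is good, and the arc of $\eta_u$ bounding $U_{x,y}$ crosses $B(z,2^{-J})$ from boundary to boundary; here the $\delta^{1+c_1}$ diameter constraint built into $E^1_\delta$ is what is used to keep the first-reflection point $\wh{x}$ (and, where relevant, the starting point of $\eta_u$) from contaminating the local picture inside the ball, and the range of $J$ and the constant $c_1$ must be chosen compatibly. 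The starting point $v$ of $\eta_{u,v}$ is allowed to lie inside $B(z,2^{-J})$ — which is exactly why Lemma~\ref{le:tightness_bdry_good_fl_refl} is formulated with $w_1$ an interior point. Then, by translation invariance and absolute continuity of $h|_{B(z,2^{-J})}$ with respect to a GFF with bounded boundary values on the event that $(z,2^{-J})$ is $M$-good, combined with Lemma~\ref{le:abs_cont_kernel} applied to the jointly sampled internal metric (coupled via the Markovian property to agree with the metric for $\Gamma$), the conditional probability given $(z,J)$ that $E''_\delta$ is witnessed inside $B(z,2^{-J})$ is comparable to $\p[E_{z,2^{-J}}\cap G_{z,2^{-J}}]$, where $E_{z,2^{-J}}$ is the event of Lemma~\ref{le:tightness_bdry_good_fl_refl}. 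Since that lemma gives $\p[E_{z,2^{-J}}\cap G_{z,2^{-J}}]=o^\infty(\delta)$, a union bound over the $O(\delta^{-2})$ points of the net yields
\[ \p[E''_\delta]\le \p[G^c]+\p[E''_\delta\cap G]\lesssim \delta^b+\delta^{-2}o^\infty(\delta)=\delta^b+o^\infty(\delta), \]
and letting $b\to\infty$ gives $\p[E''_\delta]=o^\infty(\delta)$.

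The step I expect to be the main obstacle is the bookkeeping around the endpoints $u,v$. Unlike in Lemma~\ref{le:tightness_long_fl}, in the ``big bubble'' regime $U_{x,y}$ need not be far from $\{u,v\}$, so the localizing ball has radius only comparable to $\diam(U_{x,y})$ and can abut or contain a starting point; one must verify that this does not break the absolute-continuity comparison. The case $v\in B(z,2^{-J})$ is harmless because Lemma~\ref{le:tightness_bdry_good_fl_refl} already allows the reflected flow line to start inside the ball, whereas the pathological configurations — in which $\eta_u$ itself starts close to $U_{x,y}$, or the first-reflection point $\wh{x}$ lies near $U_{x,y}$ — are precisely what the $\delta^{1+c_1}$ separation in $E^1_\delta$ is designed to exclude (or make sufficiently rare). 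Making this dichotomy precise, and checking that the dyadic range for $J$ can be chosen so that the good-scale conclusion of Lemma~\ref{le:good_scales_merging_refl} applies uniformly over it, is the delicate part; everything else follows the template already established for Lemmas~\ref{le:tightness_long_fl} and~\ref{le:a_priori_loop_intersecting_one_side}. Together with Lemma~\ref{le:tightness_long_fl}, this completes the proof of Proposition~\ref{prop:tightness_in_nice_bubble}.
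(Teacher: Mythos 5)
Your localization plan runs into a genuine obstruction that the paper spends three auxiliary lemmas to circumvent, and your appeal to the $\delta^{1+c_1}$ separation does not close the gap.

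The difficulty is twofold. First, in the big-bubble regime the starting point $u$ can sit inside — or abut — any ball of radius comparable to $\diam(U_{x,y})$. When $u\in B(z,2^{-J})$ the flow line $\eta_u$ does not cross the annulus $A(z,2^{-J}/4,2^{-J}/2)$ from outside, so it cannot be realized as (the continuation of) a strand $\eta_{z_1}$ with $z_1\in\partial B(z,2^{-J}/2)$, which is how Lemma~\ref{le:tightness_bdry_good_fl_refl} is set up; absolute continuity of the GFF restricted to the ball does nothing to repair this, since the problem is about where the flow line starts, not about the law of the field. The condition in $E^1_\delta$ concerns only the diameter of the segment of $\eta_u$ from $x$ to $\wh{x}$; it says nothing about the distance from $u$ to $U_{x,y}$, so it does not exclude $u\in B(z,2^{-J})$. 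Second, even when $u$ lies outside the ball, the merging event required in the last bullet of Lemma~\ref{le:tightness_bdry_good_fl_refl} — that the angle-$\pi$ flow line $\eta^\pi_{w_1}$ can reach $\partial(\delta\D)$ without hitting $\eta_{z_1}$ — has a uniform lower bound only when the harmonic measure, seen from $\infty$, of the bounding arc of $\eta_{u,v}$ is not too small; this is a nontrivial constraint on the shape of the bubble, and your argument does not control it.

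The paper handles both issues by a different mechanism: Lemma~\ref{le:boundary_normal_big_bubble} introduces an auxiliary boundary flow line $\eta_{-2i}$ and shows that on the harmonic-measure event $F_0$, with conditional probability at least a fixed power of $\delta$, $\eta_{-2i}$ merges into $\partial U$ — converting the interior flow line $\eta_u$ into one that can be extended to $\partial B(0,2)$, at which point the absolute-continuity / merging comparison with Lemma~\ref{le:tightness_bdry_good_fl_refl} does apply. The complement $F_0^c$ (small harmonic measure) and the remaining pinch/bottleneck configurations are ruled out separately with four-arm estimates (Lemma~\ref{le:strange_big_bubbles} via Proposition~\ref{pr:4arm_fl}, and Lemma~\ref{le:consecutive_intersections}). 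The final case analysis in the paper's proof (Cases 1, 2a, 2b) then tracks exactly the relative position of $u$, $v$, $\wh{x}$, $x$. None of this is present in your proposal, and it is not a matter of careful choice of the dyadic scale $J$ or the constant $c_1$: without the $\eta_{-2i}$ extension trick and the harmonic-measure dichotomy, the argument does not go through.
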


The challenge here is that the flow lines may not be long enough to immediately give us sufficient merging probabilities as in the proof of Lemma~\ref{le:tightness_long_fl}. To prove Lemma~\ref{le:tightness_bdry_big_bubbles}, we will employ the following trick. We consider a flow line $\eta_{-i}$ coming from $\partial B(0,1)$. We argue that for any $U = U_{x,y}$ we can pick the flow lines $\eta_u$, $\eta_{u,v}$ (stopped at suitable stopping times) so that with conditional probability at least some power of $\delta$, the flow line $\eta_{-i}$ hits $\partial U$. When this happens, it merges into $\eta_u$ or $\eta_{u,v}$. Suppose $y$ is the last point on $\partial U$ visited by $\eta_{u,v}$. If we do not reflect $\eta_{u,v}$ at $y$, it will merge into $\eta_u$. This means that $\eta_{-i}$ and $\eta_{-i,v}$ together also detect the same bubble $U$. See Figure~\ref{fi:big_bubble} for an illustration.

\begin{figure}[ht]
\centering
\includegraphics[width=0.5\textwidth]{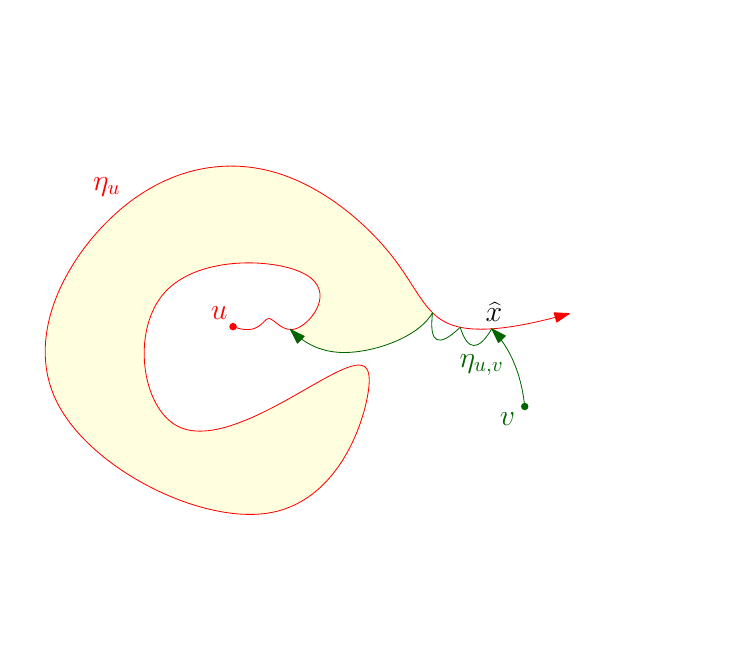}\includegraphics[width=0.5\textwidth]{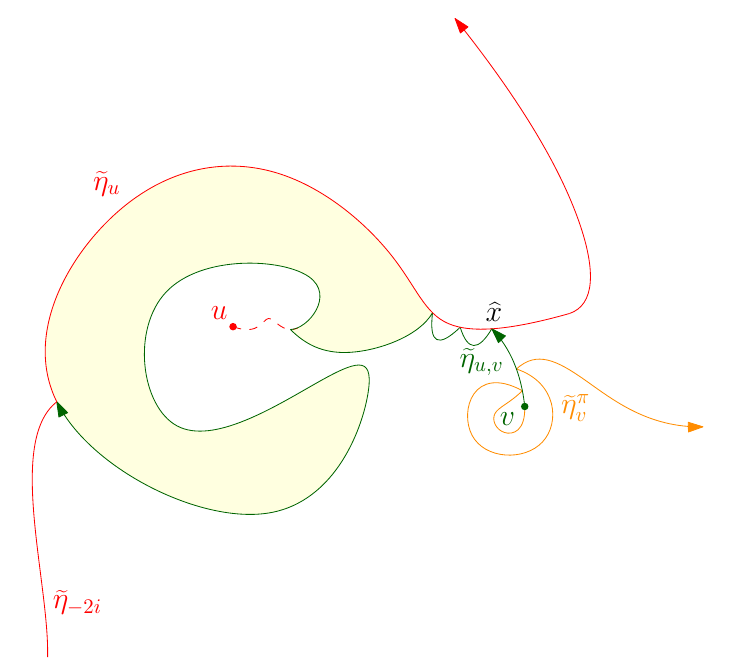}
\caption{The setup and proof of Lemma~\ref{le:boundary_normal_big_bubble}. If the right side of $\wh{\eta}_{u,v}$ does not have too small harmonic measure, then there is a sufficiently high chance that the bubble can be detected and compared to the setup of Lemma~\ref{le:tightness_bdry_good_fl_refl}.}
\label{fi:big_bubble}
\end{figure}

We split the proof of Lemma~\ref{le:tightness_bdry_big_bubbles} into several steps. For the next few lemmas, we consider the following setup. Let $u,v \in B(0,r)$. Consider a flow line $\eta_u$ starting from $u$ and a stopping time for $\eta_u$ that occurs before exiting $B(0,r)$. (We will abstain from introducing an extra notation for the stopping time, and directly assume that $\eta_u$ is stopped at that stopping time.) Given $\eta_u$, let $\eta_{u,v}$ be the flow line starting from $v$ and reflected off $\eta_u$ in the opposite direction. Suppose that $\eta_{u,v}$ intersects $\eta_u$ only on the right side of $\eta_u$ with angle difference $0$ (before reflecting off). Stop $\eta_{u,v}$ at the first time when there is a connected component $U$ of $\C \setminus (\eta_{u,v} \cup \eta_u)$ bounded between $\eta_{u,v}$ and $\eta_u$ with $\diam(U) \in [\delta/2,\delta]$. Sample the internal metric in $U$.

Let $E_{u,v}$ be the event that the above occurs and
\begin{itemize}
 \item $\eta_u \cup \eta_{u,v} \subseteq B(0,r)$,
 \item the segment of $\eta_{u,v}$ before tracing $\partial U$ and the segment of $\eta_u$ after tracing $\partial U$ both have diameter smaller than $\diam(U)$,
 \item $D_\epsilon^{\partial U} \ge \delta^\zeta+\epsilon^{\epsexp}$.
\end{itemize}
(The dependence on the stopping time for $\eta_u$ is implicitly assumed.)

We will bound the probability of $E_{u,v}$ under some additional constraints. Let $\wh{x}$ be the first intersection point of $\eta_{u,v}$ with $\eta_u$. Let $\eta_{\wh{x}}$ denote the segment of $\eta_u$ from $\wh{x}$ to the point where it is stopped. Let $\wh{\eta}_{u,v}$ denote the segment of $\eta_{u,v}$ from $v$ to $\wh{x}$.

\begin{lemma}\label{le:boundary_normal_big_bubble}
 Consider the setup described in the paragraphs above. Fix $c_0>0$. Let $F_0$ be the event that the harmonic measure of (the right side of) $\wh{\eta}_{u,v}$ in $\wh{\C} \setminus (\eta_u \cup \eta_{u,v})$ seen from $\infty$ is at least $\delta^{1+c_0}$. (See Figure~\ref{fi:big_bubble}.) Then $\p[ E_{u,v} \cap F_0 ] = o^\infty(\delta)$.
\end{lemma}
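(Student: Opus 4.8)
The plan is to compare the present (interior flow line) configuration with the macroscopic setting of Lemma~\ref{le:tightness_bdry_fl_refl}, in the same spirit as the proofs of Lemmas~\ref{le:a_priori_int_fl} and~\ref{le:tightness_long_fl}. Let $\eta_{-i}$ be the flow line of $h$ from $-i$ with the same angle as $\eta_u$; it is a deterministic function of $h$, it is macroscopic, and away from $\partial\D$ it locally looks like an $\SLE_\kappa$. Take the stopping time for $\eta_u$ to be the last time $\tau$ before $\eta_u$ begins to trace $\partial U$, so that on $E_{u,v}$ the arc $\eta_u|_{[0,\tau]}$ has Euclidean diameter at most $\diam(U)\le\delta$, while $\eta_u$ and $\eta_{u,v}$ still determine $\partial U$; write $\wh{x}$ for the first intersection point of $\eta_{u,v}$ with $\eta_u$, which is an endpoint of $\wh{\eta}_{u,v}$ and lies on the closure of $\eta_u|_{[0,\tau]}$.

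The main step is to show that, conditionally on $(\eta_u|_{[0,\tau]},\eta_{u,v})$ together with the internal metric in $U$, on the event $E_{u,v}\cap F_0$ there is conditional probability at least $\delta^{c}$, for a constant $c=c(c_0)>0$, that $\eta_{-i}$ does the following: it reaches a $\delta^2$-neighbourhood of $\wh{x}$ before hitting $\eta_u\cup\eta_{u,v}$, it then merges into $\eta_u|_{[0,\tau]}$ near $\wh{x}$ (so that, by the flow line interaction rules and since the angles agree, it afterwards coincides with $\eta_u$ and in particular traces the arc of $\partial U$ lying on $\eta_u$), and it never otherwise enters $U$. Here the harmonic-measure lower bound in $F_0$ is the crucial input: since the right side of $\wh{\eta}_{u,v}$ has harmonic measure at least $\delta^{1+c_0}$ from $\infty$ in $\wh{\C}\setminus(\eta_u\cup\eta_{u,v})$, Beurling's estimate and the Koebe distortion theorem produce a crosscut of $\wh{\C}\setminus(\eta_u\cup\eta_{u,v})$ of Euclidean width at least a fixed power of $\delta$ joining $\partial B(0,1)$ to a $\delta^2$-neighbourhood of $\wh{x}$; the probability that the $\SLE_\kappa$-type curve $\eta_{-i}$ follows this tube is then bounded below by a power of $\delta$ by an argument of the type of \cite[Lemma~2.5]{mw2017intersections} together with absolute continuity, and the final merging into $\eta_u|_{[0,\tau]}$ occurs with positive conditional probability, made uniform by passing to a good scale via Lemma~\ref{le:good_scales_merging_refl}.

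On the event that $\eta_{-i}$ merges into $\eta_u|_{[0,\tau]}$ as above, let $\eta_{-i,v}$ be the flow line of $h$ from $v$ reflected off $\eta_{-i}$ in the opposite direction. Since $\eta_{-i}$ coincides with $\eta_u$ in a neighbourhood of $U$, the curve $\eta_{-i,v}$ coincides with $\eta_{u,v}$ in a neighbourhood of $U$, so $U$ reappears as a bubble bounded between the macroscopic curves $\eta_{-i}$ and $\eta_{-i,v}$, with $U\subseteq B(0,r)$, $\diam(U)\le\delta$, and $D_\epsilon^{\partial U}\ge\delta^\zeta+\epsilon^{\epsexp}$; by the Markovian property the internal metric in $U$ is unchanged, and we couple so that the metrics agree (invoking Lemma~\ref{le:abs_cont_kernel}). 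Localising $\eta_{-i}$ to a good scale via Lemma~\ref{le:good_scales_merging_refl}, this event is of exactly the form whose probability is shown to be $o^\infty(\delta)$ in Lemma~\ref{le:tightness_bdry_fl_refl}. Combining this with the conditional lower bound $\delta^c$ from the previous step yields $\p[E_{u,v}\cap F_0]\le\delta^{-c}\cdot o^\infty(\delta)=o^\infty(\delta)$.

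The main obstacle is the first step: turning the harmonic-measure lower bound encoded in $F_0$ into a genuinely polynomial-in-$\delta$ lower bound on the conditional probability that $\eta_{-i}$ travels from $\partial B(0,1)$ to a microscopic neighbourhood of $\wh{x}$ while \emph{avoiding} $\eta_u\cup\eta_{u,v}$, then merges into $\eta_u|_{[0,\tau]}$ and does not subsequently disturb the bubble $U$. This requires combining the Beurling/distortion estimates, the positive-probability-to-follow-a-thin-tube estimate for flow lines, and the good-scale merging estimate, all while carefully tracking the difference between reflecting off $\eta_{-i}$ and reflecting off $\eta_u$ in a neighbourhood of $U$.
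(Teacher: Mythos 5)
Your overall strategy — use the harmonic-measure lower bound from $F_0$ to extend the configuration to a macroscopic one and then invoke the boundary flow-line results — is essentially the paper's strategy. The paper passes first to a whole-plane GFF $\wt{h}$ via absolute continuity and Lemma~\ref{le:abs_cont_kernel}, takes the flow line $\wt{\eta}_{-2i}$ started at $-2i$ (outside $\D$), and defines the event $\wt{F}_2$ that $\wt{\eta}_{-2i}$ hits $\partial U$ (merging into $\wt{\eta}_u$ or $\wt{\eta}_{u,v}$) \emph{and} the angle $\pi$ flow line $\wt{\eta}^\pi_v$ from $v$ exits $\partial B(0,2)$ without intersecting $\wt{\eta}_{-2i}$; the bound $\p[\wt{F}_2 \mid \wt{F}_0]\gtrsim\delta^{c'}$ comes from iterating~\cite[Section~2.2]{mw2017intersections}, and the final estimate comes from Lemma~\ref{le:tightness_bdry_good_fl_refl} together with Lemma~\ref{le:good_scales_merging_refl}, as in the proof of Lemma~\ref{le:tightness_long_fl}. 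There are, however, two genuine gaps in your argument.

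First, your claim that with $\tau$ the last time before $\eta_u$ begins to trace $\partial U$, the arc $\eta_u|_{[0,\tau]}$ has Euclidean diameter at most $\diam(U)\le\delta$ on $E_{u,v}$, is false. The event $E_{u,v}$ constrains the segment of $\eta_u$ \emph{after} tracing $\partial U$ (and the segment of $\eta_{u,v}$ \emph{before} tracing $\partial U$) to have small diameter; the initial segment of $\eta_u$ is unconstrained and may be macroscopic. Moreover, if $\eta_u$ is truncated before it traces $\partial U$, the pair $(\eta_u|_{[0,\tau]},\eta_{u,v})$ no longer bounds $U$, so the sentence ``$\eta_u$ and $\eta_{u,v}$ still determine $\partial U$'' is inconsistent with the new stopping time. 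The actual point is the opposite of what you wrote: the initial part of $\eta_u$ \emph{could} be macroscopic, but one still needs to extend the picture, because the endpoint configuration near $v$ and near $\wh{x}$ is what is microscopic.

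Second, and more importantly, you have dropped the angle $\pi$ flow line from $v$. Lemma~\ref{le:tightness_bdry_fl_refl} (and its localized version Lemma~\ref{le:tightness_bdry_good_fl_refl}) is stated for a boundary-to-boundary flow line $\eta_1$ from $-ir$ to $ir$ and a reflected flow line $\ol\eta_2$ from $ir$ to $-ir$. Your $\eta_{-i,v}$ is a reflected flow line started at the \emph{interior} point $v$, so the hypothesis of Lemma~\ref{le:tightness_bdry_fl_refl} is not met directly: one has to show that the boundary-started $\ol\eta_2$ merges into $\eta_{-i,v}$ near $U$. The condition that makes this possible is exactly that the angle $\pi$ flow line from $v$ escapes to the boundary without intersecting the extending flow line, which is why the hypothesis of Lemma~\ref{le:tightness_bdry_good_fl_refl} explicitly involves $\eta^\pi_{w_1}$ and the set $X_r^{\pi}$, and why $\wt{F}_2$ in the paper includes the escape of $\wt{\eta}^\pi_v$. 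Without this condition your invocation of Lemma~\ref{le:tightness_bdry_fl_refl} does not go through. A smaller related point: with zero boundary values on $\D$ a flow line from the boundary point $-i$ is not naturally defined, whereas the whole-plane step in the paper gives a flow line from $-2i$ for free and also streamlines the absolute continuity bookkeeping.
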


\begin{proof}
 We would like to compare the probability of $E_{u,v} \cap F_0$ to the probability to a similar event as in Lemma~\ref{le:tightness_bdry_good_fl_refl}. For this, let $\wt{h}$ be a whole-plane GFF modulo additive constant $2\pi\chi$. Let $\wt{E}_{u,v}$, $\wt{F}_0$ denote the corresponding events for $\wt{h}$. Then $\p[E_{u,v} \cap F_0 ] \lesssim \p[\wt{E}_{u,v} \cap \wt{F}_0 ]^{1+o(1)}$ by absolute continuity (and Lemma~\ref{le:abs_cont_kernel}). It therefore suffices to show $\p[\wt{E}_{u,v} \cap \wt{F}_0 ] = o^\infty(\delta)$.
 
 The idea is that on the event $\wt{F}_0$, with positive conditional probability of at least some power of $\delta$, the flow lines extend to $\partial B(0,2)$, and on that event we can compare the probability of $\wt{E}_{u,v}$ to the probability of the event in Lemma~\ref{le:tightness_bdry_good_fl_refl}.
 
 Let $\wt{\eta}_{-2i}$ be the flow line of $\wt{h}$ starting at $-2i$. Let $\wt{\eta}^{\pi}_v$ be the angle $\pi$ flow line starting at $v$. Let $\wt{F}_2$ be the event that $\wt{\eta}_{-2i}$ hits $\partial U$ (upon which it merges into $\wt{\eta}_u$ (resp.\ $\wt{\eta}_{u,v}$)), and $\wt{\eta}^{\pi}_v$ exits $\partial B(0,2)$ without intersecting $\wt{\eta}_{-2i}$ (see Figure~\ref{fi:big_bubble}). By iteratively applying \cite[Section~2.2]{mw2017intersections}, we see that there exists $c' > 0$ (depending on $c_0$) so that $\p[\wt{F}_2 \mid \wt{F}_0] \gtrsim \delta^{c'}$. Therefore it suffices to show $\p[ \wt{E}_{u,v} \cap \wt{F}_2] = o^\infty(\delta)$. But this follows from Lemma~\ref{le:tightness_bdry_good_fl_refl} and Lemma~\ref{le:good_scales_merging_refl} by the same proof as Lemma~\ref{le:tightness_long_fl}.
\end{proof}

\begin{figure}[ht]
\centering
\includegraphics[width=0.33\textwidth]{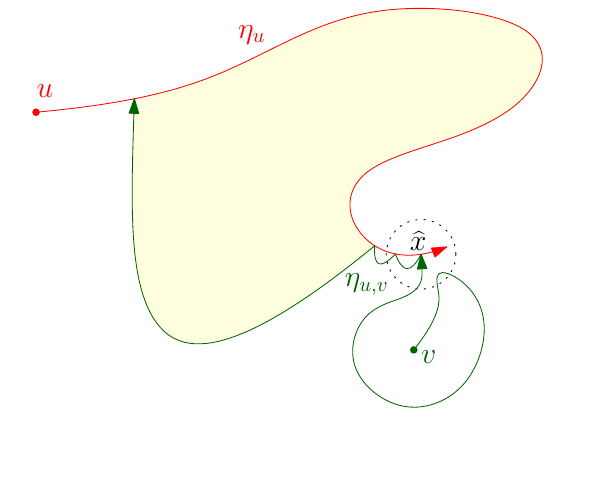}\includegraphics[width=0.33\textwidth]{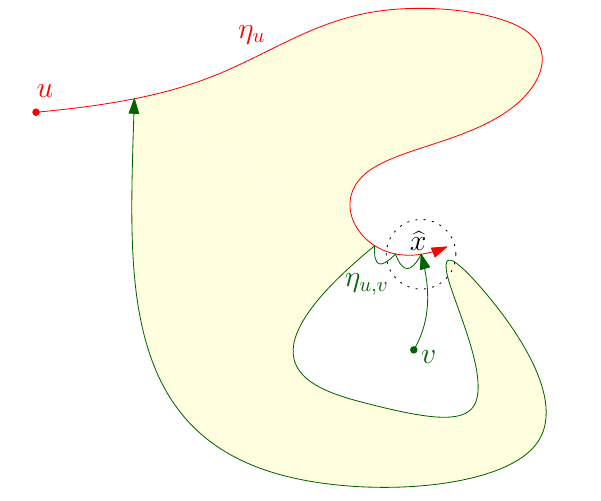}\includegraphics[width=0.33\textwidth]{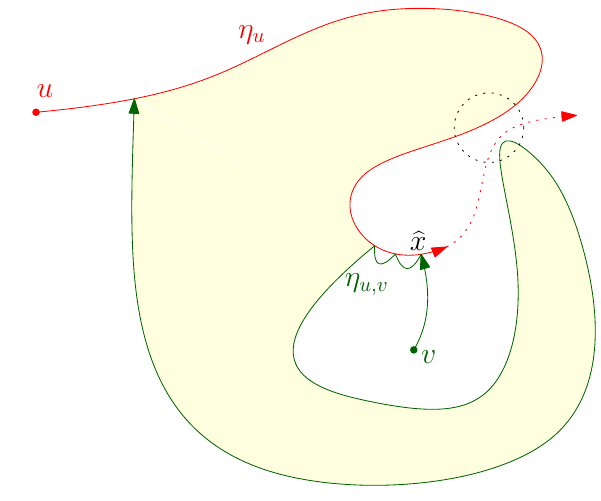}\caption{Several atypical shapes of bubbles that are ruled out in Lemma~\ref{le:strange_big_bubbles}. The first two scenarios are ruled out in the event $F_3$, the third scenario would create an approximate double point and is therefore unlikely.}
\label{fi:strange_big_bubble}
\end{figure}

\begin{lemma}\label{le:strange_big_bubbles}
 Consider the setup of Lemma~\ref{le:boundary_normal_big_bubble}. For any $c_2>c_3>0$ and $b>0$ there exists $c_0 > 0$ such that the following is true. Let $F_3$ be the event that $\abs{\wh{x}-v} \ge \delta^{1+c_3}$, $\diam(\eta_{\wh{x}}) \le \delta^{1+c_2}$, and $\eta_{u,v}$ does not contain another crossing of $A(\wh{x},2\delta^{1+c_2},\delta^{1+c_3})$ to the right of its crossing passing $\wh{x}$ (see Figure~\ref{fi:strange_big_bubble}). Let $F_0$ be the event in Lemma~\ref{le:boundary_normal_big_bubble} with $c_0$. Then $\p[F_3 \cap F_0^c] = O(\delta^b)$.
\end{lemma}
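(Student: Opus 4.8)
The plan is to reduce $F_3 \cap F_0^c$ to a rare $4$-arm (near-double-point) event for $\eta_u \cup \eta_{u,v}$ and then apply Proposition~\ref{pr:4arm_simple} together with a union bound over a mesh of candidate points.

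First I would record the deterministic structure that $F_3$ provides. Write $A = A(\wh x, 2\delta^{1+c_2}, \delta^{1+c_3})$, which is a genuine annulus since $c_2 > c_3$, and let $\gamma$ be the crossing of $A$ by $\eta_{u,v}$ that passes through $\wh x$. Since $\wh x$ is the first point of $\eta_{u,v} \cap \eta_u$, the portion of $\gamma$ up to $\wh x$ is a sub-arc of $\wh\eta_{u,v}$, hence a connected subset of the right side of $\wh\eta_{u,v}$ of diameter $\gtrsim \delta^{1+c_3}$. Using that $\eta_u, \eta_{u,v}$ are simple, that $\eta_{u,v}$ meets $\eta_u$ only on its right side with angle difference $0$ and is reflected off in the opposite direction, that $\diam(\eta_{\wh x}) \le \delta^{1+c_2}$ (so the part of $\eta_u$ beyond $\wh x$ stays well inside $B(\wh x, \delta^{1+c_2})$ and does not enter $A$), and the ``no crossing of $A$ to the right of $\gamma$'' condition, one checks that no segment of $\eta_u \cup \eta_{u,v}$ crosses the arc of $\partial B(\wh x, \delta^{1+c_3})$ lying to the right of $\gamma$. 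Consequently the right side of $\gamma$ and this arc $I \subseteq \partial B(\wh x, \delta^{1+c_3})$ (with $\diam(I) \asymp \delta^{1+c_3}$) lie in a common connected component $V$ of $\wh\C \setminus (\eta_u \cup \eta_{u,v})$. This is precisely the step that rules out the first two atypical scenarios of Figure~\ref{fi:strange_big_bubble}.

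Next I would run a harmonic-measure argument. If the part of the right side of $\gamma$ that lies on $\partial V_\infty$ (where $V_\infty$ denotes the unbounded component of $\wh\C \setminus (\eta_u \cup \eta_{u,v})$) had diameter at least $\delta^{2(1+c_0)}$, then Beurling's projection theorem --- using $\eta_u \cup \eta_{u,v} \subseteq B(0,1)$ --- would give that its harmonic measure from $\infty$ is $\gtrsim \delta^{1+c_0}$, which for $\delta$ small contradicts $F_0^c$ (recall the right side of $\gamma$ is contained in the right side of $\wh\eta_{u,v}$). Hence on $F_3 \cap F_0^c$ all but a piece of diameter $< \delta^{2(1+c_0)}$ of the right side of $\gamma$ is shielded from $\infty$ by $\eta_u \cup \eta_{u,v}$. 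Since the right side of $\gamma$ together with $V$ reaches from distance $\asymp \delta^{1+c_3}$ from $\wh x$ (the arc $I$) in to distance $\le 2\delta^{1+c_2}$ from $\wh x$, this shielding forces some strand of $\eta_u \cup \eta_{u,v}$ to come within distance $\delta^{2(1+c_0)}$ of the right side of $\gamma$ at a point of $B(\wh x, C\delta^{1+c_3})$, the two strands involved (one of them being $\gamma$) reaching Euclidean distance $\gtrsim \delta^{1+c_3}$ on either side of this near-touching. In other words, $\eta_u \cup \eta_{u,v}$ makes a $4$-arm crossing of an annulus $A(z, \delta^{1+c_0'}, c\,\delta^{1+c_3})$ for some $z \in B(0,1)$, where $c_0' = 1 + 2c_0$. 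Then I would union-bound: by Proposition~\ref{pr:4arm_simple} the probability of such a $4$-arm crossing at a fixed $z$ is $O(\delta^{(c_0'-c_3)\alpha_{4,\kappa}+o(1)})$, and summing over $z$ in a $\delta^{1+c_0'}$-mesh of $B(0,1)$ gives $O(\delta^{(c_0'-c_3)\alpha_{4,\kappa} - 2 - 2c_0' + o(1)})$. Since $\alpha_{4,\kappa} > 2$, the exponent $c_0'(\alpha_{4,\kappa}-2) - c_3\alpha_{4,\kappa} - 2$ can be made $\ge b$ by choosing $c_0$ (hence $c_0'$) large depending on $c_2, c_3, b$, yielding $\p[F_3 \cap F_0^c] = O(\delta^b)$.

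The main obstacle is the deterministic extraction step: one has to argue carefully, purely from planar topology and the exclusion of crossings of $A$ to the right of $\gamma$, that the only way the right side of $\gamma$ can have small harmonic measure from $\infty$ while $F_3$ holds is for a third strand of $\eta_u \cup \eta_{u,v}$ to approach it closely, and --- crucially --- that this approach occurs at a pair of points that are macroscopically separated (at scale $\delta^{1+c_3}$, the bubble scale), rather than being a ``free'' close passage at that very scale. This is exactly the content of the third scenario of Figure~\ref{fi:strange_big_bubble}, and turning it into a bona fide annulus-crossing event with the quantitative scales $\delta^{1+c_0'}$ and $\delta^{1+c_3}$ is the crux; once it is done, the probabilistic estimate is a routine union bound of the kind already used in Lemmas~\ref{le:thin_gasket_hoelder} and~\ref{le:7arms}.
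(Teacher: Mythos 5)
The overall strategy matches the paper's: reduce $F_3 \cap F_0^c$ to a near-double-point (four-arm) event and union-bound over a mesh of $B(0,1)$, using $\alpha_{4,\kappa}>2$ to beat the $\delta^{-2-2c_0'}$ mesh cost for $c_0'$ large. The arithmetic in your final union bound is correct. However, your route to the four-arm event is genuinely different from the paper's, and the part you flag as the crux is precisely where your argument is incomplete.

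The paper does not argue through harmonic measure at all. It instead introduces the event $E^2$ that $\eta_{u,v}$ comes within $\delta^{1+c_0'}$ of the \emph{left} side of $\eta_u$ somewhere outside $B(\wh{x},\delta^{1+c_2})$, and proves the implication in the contrapositive direction: on $F_3 \setminus E^2$, one can produce, purely deterministically, a path from $\infty$ to the right side of $\wh{\eta}_{u,v}$ staying at scale $\delta^{1+c_0}$ away from $\eta_u \cup \eta_{u,v}$, hence $F_3 \setminus E^2 \subseteq F_0$ for an appropriate $c_0$. On $E^2$ the four-arm event is extracted cleanly from a single flow line: if $\eta_{u,v}$ reflects off the right of the stopped $\eta_u$ and yet comes $\epsilon$-close to the \emph{left} side, the extension of $\eta_u$ (same angle, must eventually merge with $\eta_{u,v}$, cannot cross it) is forced back through the same $\epsilon$-ball, giving four crossings of $A(z,\delta^{1+c_0'},\delta^{1+c_2})$ by $\eta_u$ alone. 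That is why the paper invokes Proposition~\ref{pr:4arm_fl} — the curves here are interior GFF flow lines, not a chordal $\SLE_\kappa(\ul{\rho})$ from boundary to boundary, so Proposition~\ref{pr:4arm_simple} (which you cite) is not directly the right tool.

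Your Beurling step has two concrete gaps. First, ``the part of the right side of $\gamma$ lying on $\partial V_\infty$'' is a closed but not necessarily connected subset of $\gamma$; the $\sqrt{d}$ lower bound on harmonic measure requires a connected set, and if the exposed portion is a Cantor-like scattering of small pieces across diameter $d$, the estimate can fail. Second, even granting the near-touching, you have not located it: you need the approach point to be deep inside $\gamma$ (at distance $\gtrsim \delta^{1+c_3}$ from both endpoints) so that $\gamma$ supplies two arms of the required length, and you need the approaching strand to extend macroscopically on both sides of the close passage; neither is established. The paper sidesteps both issues by fixing the inner radius $\delta^{1+c_0'}$ and outer radius $\delta^{1+c_2}$ upfront via the definition of $E^2$ (the outer scale comes from $z \notin B(\wh x, \delta^{1+c_2})$, not from a harmonic-measure bootstrap), and by arranging the deterministic containment so that only one flow line need be tracked.
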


\begin{proof}
 Let $c_0'>c_2$ and let $E^2$ be the event that $\eta_{u,v}$ gets $\delta^{1+c_0'}$ close to the left side of $\eta_u$ somewhere outside of $B(\wh{x},\delta^{1+c_2})$. Note that if this happens, say at $B(z,\delta^{1+c_0'})$ for $z\in\delta^{1+c'_0}\Z^2\cap B(0,r)\setminus  B(\wh{x},\delta^{1+c_2})$, then the extension of $\eta_u$ necessarily passes through $B(z,\delta^{1+c_0'})$ again. As a result, we have four crossings of $A(z,\delta^{1+c_0'},\delta^{1+c_2})$ by $\eta_u$, and by Proposition~\ref{pr:4arm_fl} we have $\p[E^2] = O(\delta^{(c_0'-c_2)\alpha_{4,\kappa}-2c_0'-2})$. When $c_0'$ is chosen sufficiently large, this is $O(\delta^b)$. To conclude, observe that $F_3 \setminus E^2 \subseteq F_0$ for some $c_0$ depending on $c_0'$. (See Figure~\ref{fi:strange_big_bubble}.)
\end{proof}

\begin{lemma}\label{le:consecutive_intersections}
 For each $b>0$ there exists $c>0$ such that the following is true. Let $h$ be a GFF in $D$ with bounded boundary conditions. Let $E$ be the event that there are $u,v \in B(0,r)$, flow lines $\eta_u,\eta_{u,v}$ as defined in Proposition~\ref{prop:tightness_in_nice_bubble}, and $x,y \in \eta_{u,v} \cap \eta_u$ such that the segments of $\eta_u$ from $u$ to $y$ and from $y$ to $x$ both have diameter at least $\delta$, and all consecutive intersection points of $\eta_{u,v} \cap \eta_u$ between $x$ and $y$ have distance at most $\delta^c$. Then $\p[E] = O(\delta^b)$.
\end{lemma}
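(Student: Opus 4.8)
The plan is to deduce Lemma~\ref{le:consecutive_intersections} from the multi-arm estimates for flow lines via an independence-across-scales argument. First I would observe that the event $E$ forces a geometric configuration around \emph{some} intersection point: if $x,y \in \eta_{u,v} \cap \eta_u$ are such that the segments of $\eta_u$ from $u$ to $y$ and from $y$ to $x$ both have diameter at least $\delta$, and all consecutive intersection points of $\eta_{u,v}\cap\eta_u$ between $x$ and $y$ have distance at most $\delta^c$, then in particular there must be an intersection point $w \in \eta_{u,v}\cap\eta_u$ with $\abs{w-y}$ comparable to $\delta$ (walk from $x$ towards $y$ along $\eta_{u,v}$ in steps of size at most $\delta^c$ until reaching distance between $\delta^{1/2}$ and $\delta$ from $y$). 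Around such a point $w$, both $\eta_u$ and $\eta_{u,v}$ make at least two crossings of an annulus $A(w,\delta^c, \delta^{1/2})$ of the required kind. More precisely, $\eta_u$ crosses once on its way from $w$-ish towards $y$ and comes back near $y$, and $\eta_{u,v}$ crosses twice since consecutive intersection points are at distance at most $\delta^c \ll \delta^{1/2}$, so $\eta_{u,v}$ must leave and re-enter $B(w,\delta^c)$ while staying inside $B(w,\delta^{1/2})$ until it reaches $y$. This yields (at least) a $4$-arm event for the pair of flow lines in the annulus $A(w, \delta^c, \delta^{1/2})$ with the constraint that the arms realize intersections with the correct angle differences.

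Next I would reduce to a union bound over a $\delta^c$-net of the disk. Cover $B(0,r)$ by $O(\delta^{-2c})$ balls $B(z, \delta^c)$ with $z \in \delta^c \Z^2 \cap B(0,r)$; for each such $z$ we need to bound the probability that there exist flow lines $\eta_u, \eta_{u,v}$ of the given type making the required multiple crossings of $A(z, 2\delta^c, \delta^{1/2})$. Here I would invoke the multi-arm exponent estimates collected in Appendix~\ref{app:arms}, specifically the analogue of Proposition~\ref{pr:4arm_fl} for flow lines (or the relevant higher-arm version for pairs of flow lines with prescribed angle differences), which states that the probability of such a crossing configuration across the annulus $A(z, \delta^c, \delta^{1/2})$ is $O((\delta^{1/2-c})^{\alpha - o(1)})$ for an exponent $\alpha$ that exceeds $2$ once the number of arms is at least four (recall $\alpha_{4,\kappa} > 2$ from~\eqref{eqn:double_exponent_simple}). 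Taking a union bound over the $O(\delta^{-2c})$ balls, the total probability is $O(\delta^{-2c}\cdot \delta^{(1/2-c)(\alpha-o(1))})$. Since $\alpha > 2$, by choosing $c>0$ small enough the exponent $(1/2-c)(\alpha - o(1)) - 2c$ exceeds any prescribed $b>0$, which gives the claim.

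One technical point I would be careful about: the flow lines $\eta_u$ and $\eta_{u,v}$ here depend on an unbounded family of starting points $u,v \in B(0,r)$ and on the angle-$0$ reflection rule, so I cannot directly apply an arm estimate for a fixed curve. The standard way around this (used repeatedly in Appendix~\ref{se:regularity} and in the proof of Lemma~\ref{le:tightness_bdry_big_bubbles}) is to note that the event ``there exist $u,v$ and flow lines making $k$ crossings of $A(z,r_1,r_2)$ realizing intersections with the correct height differences'' is determined by the values of the GFF $h$ in (a neighborhood of) that annulus, since a flow line up to exiting a region is a local set measurable with respect to $h$ on that region, and the merging/intersection rules are local; thus this event has the same probability for $h$ and for a whole-plane GFF (up to Radon--Nikodym factors with all moments, by the absolute continuity of the GFF away from the boundary, Lemma~\ref{lem:rn_derivative}), and for the whole-plane GFF the required bound is exactly one of the arm estimates in Appendix~\ref{app:arms}. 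The main obstacle is therefore bookkeeping: correctly extracting from the event $E$ a point $w$ and an annulus $A(w, \cdot, \cdot)$ across which one genuinely has a $(\geq 4)$-arm event for the flow-line pair (with the right angle constraints so that the relevant exponent is the one strictly above $2$), and making sure the two scales $\delta^c$ and $\delta^{1/2}$ and the constant $c$ can be chosen consistently so that the final union bound beats $\delta^b$; once that is set up, the estimate itself is a direct appeal to the appendix.
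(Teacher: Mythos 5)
Your approach misses a key case split and, in the case it does address, the multi-arm event you try to extract is not actually forced by the hypotheses. The paper's proof splits on $\abs{x-y}$ relative to a scale $\delta^{c'}$. When $\abs{x-y}$ is small (below $\delta^{c'}$), there is no intersection point $w$ with $\abs{w-y}$ comparable to $\delta^{1/2}$ at all — your ``walk from $x$ towards $y$'' terminates before ever reaching the annulus $A(y,\delta,\delta^{1/2})$. In that regime the relevant mechanism is that the segments of $\eta_u$ from $u$ to $y$ and from $y$ to $x$ both have diameter $\geq\delta$ while $x$ and $y$ are within $\delta^{c'}$ of each other, so a \emph{single} flow line $\eta_u$ makes an approximate double point; this is exactly what Proposition~\ref{pr:4arm_fl} and the exponent $\alpha_{4,\kappa}>2$ control. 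You invoke $\alpha_{4,\kappa}$, but for the wrong configuration and without isolating this case.

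In the other case (say $\abs{x-y}\geq\delta^{c'}$), your claimed ``$\geq 4$-arm event for the pair of flow lines'' at a single point $w$ is not justified by the hypothesis that consecutive intersection points are within $\delta^c$ of each other. Nothing forces either $\eta_u$ or $\eta_{u,v}$ to cross $A(w,\delta^c,\delta^{1/2})$ \emph{twice}: the curves pass from $B(w,\delta^c)$ towards $y$ and towards $x$, but $y$ itself can lie inside $B(w,\delta^{1/2})$ (you chose $\abs{w-y}\leq\delta^{1/2}$), and the segment of $\eta_u$ on the $u$-side of $w$ need not re-enter $B(w,\delta^c)$ at all. Moreover, even if one could force all four arms, the exponent for a crossing event involving \emph{two distinct intersecting flow lines} is not $\alpha_{4,\kappa}$; the one-arm intersection exponent is $2-\ddouble$, which is strictly less than $2$ for $\kappa'\in(4,8)$, so a naive 4-arm count for the pair gives an exponent that can fall below $2$ and the union bound over $O(\delta^{-2c})$ balls would not close. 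What actually closes the argument in this regime — and what the paper uses — is a qualitatively different estimate: forcing all consecutive intersection points within some ball $B(z,\delta^{2c'})$ to be $\delta^{cc'}$-close produces \emph{many} intersection points (of order $\delta^{2c'-cc'}$) packed into a small ball, and the probability of this is controlled by iterating the intersection estimate \cite[Lemma~4.3(i)]{mw2017intersections} across scales via the good-scales machinery (the proof pattern of Proposition~\ref{pr:4arm_fl} with Lemma~\ref{le:good_scales_merging_refl} as input), together with the reversibility Lemma~\ref{le:reflected_fl_law}. That iteration gives an exponent that can be made arbitrarily large by taking $c$ large, whereas a single fixed-arm event cannot. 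Your localization/absolute-continuity remark at the end is correct and standard, but it does not repair the two issues above: the missing case and the fact that the event you reduce to is neither forced nor controlled by the exponent you quote.
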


\begin{proof}
 Let $c'>1$ sufficiently large. There are two possible scenarios. In case $\abs{x-y} \le \delta^{c'}$, then $\eta_u$ would make an approximate double point, and the probability of this is bounded in Proposition~\ref{pr:4arm_fl}. In case $\abs{x-y} \ge \delta^{c'}$, this follows from the intersection exponents proved in \cite{mw2017intersections}. Indeed, from \cite[Lemma~4.3(i)]{mw2017intersections}, the reversibility Lemma~\ref{le:reflected_fl_law}, and our good scales argument (see the proof of Proposition~\ref{pr:4arm_fl}, using Lemma~\ref{le:good_scales_merging_refl} as input), it follows that for each $b>0$ there is $c>0$ sufficiently large so that for each $z \in B(0,r)$ the probability that there exist flow lines $\eta_u,\eta_{u,v}$ as above, both crossing $A(z,\delta^{2c'},\delta^{c'})$, such that all their consecutive intersection points within $B(z,2\delta^{2c'})$ have distance at most $\delta^{cc'}$, is at most $O(\delta^b)$. Taking a union bound over $z \in \delta^{2c'}\Z^2 \cap B(0,r)$ then yields the desired result.
 \end{proof}

\begin{figure}[ht]
\centering
\includegraphics[width=0.5\textwidth]{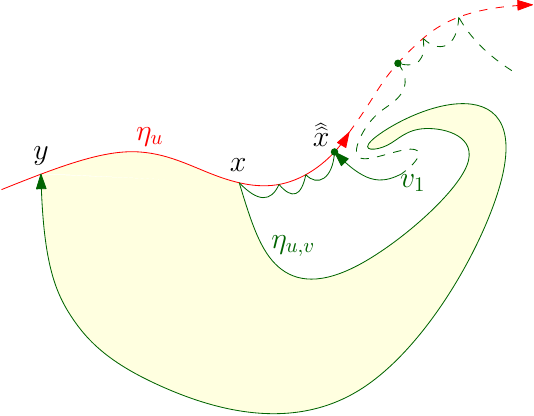}
\caption{An unlikely event in the proof of Lemma~\ref{le:tightness_bdry_big_bubbles}.}
\label{fi:big_bubble_conclude}
\end{figure}

\begin{proof}[Proof of Lemma~\ref{le:tightness_bdry_big_bubbles}]
 Recall the setup of Proposition~\ref{prop:tightness_in_nice_bubble}. We can assume that $\diam(U_{x,y}) \in [\delta/2,\delta]$ and $\abs{u-x} \ge \delta/4$.
 
 Fix $b>0$. Let $G_1$ be the event that the event in Lemma~\ref{le:consecutive_intersections} does not occur for any $\delta' < \delta$. We choose $c$ large enough so that $\p[G_1^c] = O(\delta^b)$. Let $G_2$ be the event that space-filling SLE fills a ball of radius $(\delta')^2$ whenever it travels distance $\delta'$ within $B(0,r)$ for any $\delta' < \delta$. By Lemma~\ref{le:fill_ball} we have $\p[G_2^c] = o^\infty(\delta)$. Let $G_3$ be the event that there is no flow line of $h$ that crosses in and out of an annulus $A(w,(\delta')^c,\delta') \subseteq B(0,r)$ with $\delta' < \delta$ twice. By Proposition~\ref{pr:4arm_fl} we have $\p[G_3^c] = O(\delta^b)$ when $c$ is chosen sufficiently large.
  
 We distinguish several cases.
 
 \textbf{Case 1:} The segment of $\eta_{u,v}$ from $v$ to $x$ has diameter smaller than $\diam(U_{x,y})$. In the event $E^1_\delta$ we assumed that the segment of $\eta_u$ from $x$ to $\wh{x}$ has diameter at least $\delta^{1+c_1}$. On the event $G_1$ there is a bubble $\wh{V}$ between the segments of $\eta_{u,v}$ and $\eta_u$ from $\wh{x}$ to $x$ whose endpoints are distance at least $\delta^{c(1+c_1)}$ apart. Let $\whwh{x} \in \partial\wh{V}$ be its endpoint last visited by $\eta_{u,v}$, and let $v_1 \in \partial\wh{V}$ be the last point before $\whwh{x}$ where $\eta_{u,v}$ enters $B(\whwh{x},\delta^{c(1+c_1)}/2)$. Then, on the event $G_2$, there is a ball of radius $\delta^{2c(1+c_1)}$ so that if $\wt{v}$ is in the ball, the flow line $\eta_{u,\wt{v}}$ merges into the left side of $\eta_{u,v}$ within distance $\delta^{c(1+c_1)}/4$ of $v_1$.
 
 Suppose now that we stopped the flow line $\eta_u$ within $B(\whwh{x},\delta^{c^2(1+c_1)})$. On the event $G_3$ we see that $\eta_{u,\wt{v}}$ cannot make another crossing of $A(\whwh{x},\delta^{c^2(1+c_1)},\delta^{c(1+c_1)}/4)$ on the right side of its initial segment, otherwise $\eta_{u,v} \cap \partial\wh{V}$ would have also made $4$ crossings (see Figure~\ref{fi:big_bubble_conclude}). We are therefore in the situation of Lemma~\ref{le:strange_big_bubbles}. The result now follows from Lemmas~\ref{le:boundary_normal_big_bubble} and~\ref{le:strange_big_bubbles} by considering a union bound over $u,\wt{v} \in \delta^{2c(1+c_1)}\Z^2 \cap \D$, and over a finite sequence of stopping times for $\eta_u$ stopping each time it travels distance $\delta^{c^2(1+c_1)}$ (on $G_2$ we need to consider at most $O(\delta^{-4c^2(1+c_1)})$ times).
 
 \textbf{Case 2:} The segment of $\eta_{u,v}$ from $v$ to $x$ has diameter at least $\diam(U_{x,y})$. Let $v_1$ be the point on $\eta_{u,v}$ such that the diameter of the segment from $v_1$ to $x$ is equal to $\diam(U_{x,y})$. Let $\whwh{x}$ be the next point where $\eta_{u,v_1}$ hits $\eta_u$. We distinguish two more cases.
 
 \textbf{Case 2a:} The diameter of the segment of $\eta_u$ from $x$ to $\whwh{x}$ has diameter at least $\delta/8$. Then we are in the same situation as Case~1.
 
 \textbf{Case 2b:} The diameter of the segment of $\eta_u$ from $x$ to $\whwh{x}$ has diameter at most $\delta/8$. Then $\abs{\whwh{x}-v_1} \ge \delta/8$. Moreover, the choice of $v_1$ implies that $\eta_{u,v_1}$ cannot reach $B(\whwh{x},\delta/8)$ on the right side of $v_1$ again. The rest of the proof then follows by the same argument as in the previous case.
\end{proof}

\begin{proof}[Proof of Proposition~\ref{prop:tightness_in_nice_bubble}]
 Fix a small constant $a>0$. We work on the event $F_\delta$ that the space-filling SLE associated to $\eta'$ fills a ball of radius $(\delta')^{1+a}$ whenever it travels distance $\delta'$ within $B(0,r)$ for any $\delta' < \delta$. By Lemma~\ref{le:fill_ball} we have $\p[F_\delta^c] = o^\infty(\delta)$.
 
 Let $G_\delta$ be the event that neither $E'_{2^{-k}}$, $E''_{2^{-k}}$ from the Lemmas~\ref{le:tightness_long_fl} and~\ref{le:tightness_bdry_big_bubbles} occur for any $k \ge \log_2(\delta^{-1})$. Then $\p[G_\delta^c] = o^\infty(\delta)$. We claim that $G_\delta \cap F_\delta \subseteq (E^1_\delta)^c$. Let $U = U_{x,y}$ be a region as given in the statement of Proposition~\ref{prop:tightness_in_nice_bubble}. We explain how we can divide $U$ into pieces that are covered by the events $E'_{2^{-k}}$, $E''_{2^{-k}}$.
 
 Let $\log_2(\delta^{-1}) \le k \le \log_2(\epsilon^{-1})$. Let $x',x'' \in \eta_{u,v} \cap \eta_u$, and $U'$ the region bounded between the segments of $\eta_{u,v}$ and $\eta_u$ from $x'$ to $x''$. Suppose that $U' \cap A(x,2^{-k-1},2^{-k}) \neq \varnothing$ and $\diam(U') \le 2^{-k(1+a)}$. On the event $(E'_{2^{-k}})^c$ we have $D_\epsilon^{\partial U'} \le 2^{-k\zeta}+\epsilon^{\epsexp}$ for every such $U'$.
 
 The exact same is true when we replace $x$ by $y$ in the paragraph above.
 
 Finally, when $k > \log_2(\epsilon^{-1})$ and $\diam(U') \le \epsilon$, then $D_\epsilon^{\partial U'} \le \median{\epsilon}^{-1}\ac{\epsilon} \le \epsilon^{2\epsexp}$ by the definition~\eqref{eq:shortcutted_metric} and~\eqref{eq:epsexp2}.
 
 To conclude, we decompose the parts of $U$ that intersect $A(x,2^{-k-1},2^{-k})$ (resp.\ $A(y,2^{-k-1},2^{-k})$) into the components of diameter at least $2^{-k(1+a)}$ and the remaining parts. On the event $F_\delta$ there are $O(2^{4ak})$ components of diameter at least $2^{-k(1+a)}$, and the remaining parts can further be decomposed into $O(2^{4ak})$ regions $U'$ of the type described above. Combining the bounds and applying the monotonicity of the internal metrics, we see that
 \[ D_\epsilon^{\partial U} \lesssim \sum_{\log_2(\delta^{-1}) \le k \le \log_2(\epsilon^{-1})} 2^{4ak}(2^{-k\zeta}+\epsilon^{\epsexp}) \lesssim \delta^{\zeta-4a}+\epsilon^{\epsexp-4a} \]
 on the event $G_\delta \cap F_\delta$. (In the case $\epsilon=0$, we used the continuity of the metric).
\end{proof}

\subsection{Tightness in the interior}
\label{subsec:interior_tightness}

We now turn towards proving the tightness result for the CLE metric in the entire gasket. For this, we will apply our tightness results from the previous subsection to bound the $\Fd_\epsilon$-diameters of the boundaries of regions disconnected by finite chains of loops. This will be done via a resampling argument that transforms those regions to the type considered in the previous subsection. The regions considered in this subsection will be sufficient to cover the CLE gasket, and bound the $\Fd_\epsilon$-distances between any pair of points in the gasket via a chaining argument.

We consider the same setup as in Section~\ref{subsec:tightness_interior_flow_lines}. We start by observing that if a region is disconnected by a single loop, then its boundary is described by an intersecting pair of flow lines as considered in Section~\ref{subsec:tightness_interior_flow_lines}, and therefore the bounds we have proved apply to these regions.

\begin{lemma}\label{le:tightness_bdry_single_loop}
There exists $\zeta > 0$ such that the following is true. Fix $0 < r < 1$ and $c_1 > 0$. For $\delta > 0$, let $E_\delta$ be the event that there exists a loop $\CL \in \Gamma$ and two non-overlapping segments $\ell_1,\ell_2 \subseteq \CL$ such that the left boundaries of $\ell_1,\ell_2$ intersect and there are four points $\wh{x},x,y,\wh{y}$ on their intersection in exactly this order such that
\begin{itemize}
\item $\abs{x-\wh{x}} \ge \delta^{c_1}$, $\abs{y-\wh{y}} \ge \delta^{c_1}$,
\item if $U_{x,y}$ is the region bounded between the left boundaries of $\ell_1,\ell_2$ from $x$ to $y$, then $U_{x,y} \subseteq B(0,r)$, $\diam(U_{x,y}) \le \delta$, and $D_\epsilon^{\partial U_{x,y}} \ge \delta^\zeta+\epsilon^{\epsexp}$.
\end{itemize}
Then $\p[E_\delta] = o^\infty(\delta)$.
\end{lemma}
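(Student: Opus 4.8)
The plan is to deduce this from Proposition~\ref{prop:tightness_in_nice_bubble} by recognizing that a region pinched off by a single CLE loop is a bubble bounded between a flow line and a flow line reflected off it in the opposite direction. Fix the coupling of $(\eta,\Gamma)$ with the GFF $h$ as at the beginning of Section~\ref{subsec:tightness_interior_flow_lines}, so that the loops of $\Gamma$ and the branching exploration tree that traces them are coupled with $h$ in the imaginary geometry sense. Suppose that $\CL\in\Gamma$ and $\ell_1,\ell_2\subseteq\CL$ are non-overlapping arcs whose left boundaries intersect, with $\wh x,x,y,\wh y$ and $U_{x,y}$ as in the statement. By the flow line description of the CLE exploration tree (Section~\ref{subsec:cle}) and the detection-by-flow-lines discussion around Figure~\ref{fi:detection} in Section~\ref{se:fl_interaction}, the left boundary of $\ell_1$ is a flow line of $h$, and the left boundary of $\ell_2$ — since $\ell_1,\ell_2$ are distinct arcs of the \emph{same} loop — is a flow line of the same angle that reflects off the left boundary of $\ell_1$ in the opposite direction (rather than merging) wherever the two meet.

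To put this in the form of Proposition~\ref{prop:tightness_in_nice_bubble}, I would take $u$ to be a point on the left boundary of $\ell_1$ preceding $\wh x$, and $v$ a point on the left boundary of $\ell_2$ preceding $\wh x$, chosen close enough to $\wh x$ that the first intersection $\wh x'$ of the forward flow line from $v$ with $\eta_u$ satisfies $\abs{\wh x-\wh x'}\le\tfrac12\delta^{c_1}$ (such $v$ exists by continuity of $\ell_2$, since $\wh x'$ lies on the shrinking sub-arc from $v$ to $\wh x$). Then $\eta_u$ is the flow line of $h$ from $u$ tracing the left boundary of $\ell_1$ forward, $\eta_{u,v}$ is the flow line of $h$ from $v$ reflected off $\eta_u$ in the opposite direction, tracing the left boundary of $\ell_2$, and $\wh x'$ is the first point where $\eta_{u,v}$ hits $\eta_u$. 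With this choice the quadruple $(u,v,x,y)$ witnesses the event $E^1_\delta$ of Proposition~\ref{prop:tightness_in_nice_bubble} taken with constant $c_1$: indeed $U_{x,y}\subseteq B(0,r)$ and $\diam(U_{x,y})\le\delta$ by hypothesis; the CLE inside $U_{x,y}$ is part of $\Gamma_D$ since $U_{x,y}$ lies under $\CL$, so $\Upsilon_{\Gamma_{x,y}}\subseteq\Upsilon_\Gamma$; the segment of $\eta_{u,v}$ from $v$ to $y$ meets $\eta_u$ only on its right side with angle difference $0$, and $x$ lies between $v$ and $y$; and the segment of $\eta_u$ from $x$ to $\wh x'$ has Euclidean diameter at least $\abs{x-\wh x}-\abs{\wh x-\wh x'}\ge\tfrac12\delta^{c_1}\ge\delta^{1+c_1}$ for $\delta<1/2$. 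Since $U_{x,y}\subseteq B(0,r)\Subset\D$, the event $E_\delta$ depends only on $h|_{B(0,r)}$ and on the internal metrics sampled inside $U_{x,y}$, so up to an absolute continuity comparison on $B(0,r)$ to match the boundary data of $h$ (via Lemma~\ref{le:abs_cont_kernel}, the Radon--Nikodym weight having all moments and being independent of $\delta$) it falls within the scope of Proposition~\ref{prop:tightness_in_nice_bubble}, which gives $\p[E_\delta]=o^\infty(\delta)$.

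The only genuinely delicate step is the identification in the first two paragraphs: checking that the left boundaries of the two arcs are realized as $\eta_u$ and a flow line reflected off $\eta_u$ \emph{in the opposite direction}, with angle difference $0$ and the intersections on the correct side of $\eta_u$, so that the marked points $\wh x,x,y$ fall into the configuration required by Proposition~\ref{prop:tightness_in_nice_bubble}; this follows from the imaginary geometry interaction rules (Sections~\ref{subsec:ig} and~\ref{se:fl_interaction}) together with the construction of CLE from the exploration tree (Section~\ref{subsec:cle}), but requires care with orientations and with the choice of the starting points $u,v$. The remaining ingredients — the absolute continuity comparison and Lemma~\ref{le:abs_cont_kernel} — are routine, and the extra hypothesis $\abs{y-\wh y}\ge\delta^{c_1}$ is not needed for this argument (it only shrinks $E_\delta$).
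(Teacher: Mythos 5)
Your proof takes the same route as the paper's — identify $U_{x,y}$ as a bubble between a flow line and a reflected flow line, then apply Proposition~\ref{prop:tightness_in_nice_bubble} — and your verification of the diameter condition in $E^1_\delta$ and the containment $\Upsilon_{\Gamma_{x,y}}\subseteq\Upsilon_\Gamma$ is fine. The difference is your choice of the starting points $u,v$. The paper takes $u$ to be a point in a complementary component that the exploration branch $\eta'$ cuts off between tracing $\ell_1$ and $\ell_2$, and $v$ a point in a complementary component cut off after $\eta'$ traces $\ell_2$. These are points in open sets, so the duality between flow lines and counterflow lines applies directly: $\eta_u$ is the left boundary of the counterflow line $\eta'_u$ targeting $u$, which contains the left boundary of $\ell_1$, and $\eta_{u,v}$ reflected off $\eta_u$ is the second boundary arc as in the discussion of Figure~\ref{fi:detection}. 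You instead place $u$ and $v$ directly on the left boundaries of $\ell_1$ and $\ell_2$, so $u$ lies exactly on an existing flow line, and $v$ lies on a curve which is not a flow line of $h$ but only of the restricted field. Starting a flow line of $h$ at a Lebesgue-null location that already lies on another flow line of the same angle is more delicate than you acknowledge: the merging rule of Miller--Sheffield is stated for flow lines started at \emph{distinct} points that later collide, and one has to separately justify that the flow line from $u$ immediately coincides with the ambient one, rather than treating this as automatic. This is not a mathematical obstruction — the conclusion is true — but the paper's choice avoids having to argue this at all, while your choice puts the weight of the proof on precisely the step you flag as requiring ``care with orientations and with the choice of the starting points,'' without supplying that care. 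A second small point: the absolute continuity comparison you invoke to pass from the coupling of Section~\ref{subsec:tightness_interior_flow_lines} (nonzero boundary data) to the zero-boundary GFF of Proposition~\ref{prop:tightness_in_nice_bubble} is indeed needed, and the paper leaves it implicit; your handling of it with Lemma~\ref{le:abs_cont_kernel} is correct and a bit more explicit than the paper.
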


\begin{proof}
Let $\Gamma$ is coupled with a GFF $h$ on $\D$ as described in Section~\ref{subsec:tightness_interior_flow_lines}. We claim that the region $U_{x,y}$ is described by pairs of intersecting flow lines as in Proposition~\ref{prop:tightness_in_nice_bubble}. Indeed, let $\eta'$ be the branch of the exploration tree that discovers the loop $\CL$, and suppose that it traces $\ell_1$ before $\ell_2$ . Let $u$ be any point in a complementary component that $\eta'$ makes between tracing $\ell_1$ and $\ell_2$. Let $v$ be any point in a complementary component that $\eta'$ makes after tracing $\ell_2$. Then $U_{x,y}$ satisfies the assumption in Proposition~\ref{prop:tightness_in_nice_bubble}, so the result follows.
\end{proof}

Now we extend the bound to regions that are bounded by finite chains of loops.

\begin{lemma}\label{le:tightness_loop_chain}
There exists $\zeta > 0$ such that the following is true. For each $\innexp > 0$ and $b>0$, there exists $N \in \N$ such that the following holds. Fix $0<r<1$. For $z \in B(0,r)$ and $\delta > 0$, let $G_{z,\delta}$ be the event that there is a set of at most $N$ points $x_1,x_2,\ldots \in A(z,\delta,\delta^{1-\innexp})$ with $\abs{x_{i}-x_{i'}} \ge \delta$ for each $i \neq i'$ and such that
\begin{itemize}
 \item The set $\{x_1,x_2,\ldots\}$ separates $\Upsilon_\Gamma \cap B(z,\delta)$ from $\partial B(z,\delta^{1-\innexp})$.
 \item Let $K'$ be any connected component of $\Upsilon_\Gamma \setminus \{x_1,x_2,\ldots\}$ containing some point in $B(z,\delta)$, and let $i,i'$ so that $x_{i},x_{i'} \in \partial K'$. Let $V \in \metregions$ be such that $\ol{V}$ contains all \emph{simple} admissible paths within $K'$ from $x_{i}$ to $x_{i'}$. Then
 \[ \median{\epsilon}^{-1}\metapproxacres{\epsilon}{V}{x_{i}}{x_{i'}}{\Gamma} \le \delta^\zeta+\epsilon^{\epsexp} . \]
\end{itemize}
Then $\p[G_{z,\delta}^c] = O(\delta^b)$.
\end{lemma}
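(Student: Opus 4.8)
The plan is to follow the scheme of the proof of Lemma~\ref{le:dist_across_chain}, of which this statement is the interior analogue: there a separating chain of cut points near a point of an $\SLE_\kappa$ boundary was produced with the distances across the cut regions controlled by the a priori estimate~\eqref{eq:a_priori_assumption}, whereas here the chain is centred at an interior point $z$ and the distances are controlled by the tightness results of Sections~\ref{subsec:tightness_at_the_boundary}--\ref{subsec:tightness_interior_flow_lines}, which give $o^\infty(\delta)$ bounds with a $\delta^\zeta$-scaling and which supersede the polynomial a priori estimate at this point in the paper. Fix $b>0$ large; it suffices to show $\p[G_{z,\delta}^c]=O(\delta^b)$, and we may assume $\delta\ge\cserial\epsilon$ throughout (the complementary regime, which forces $\epsilon\asymp\delta$, being handled by the sub-$\epsilon$ bound $\metapproxacres{\epsilon}{V}{\cdot}{\cdot}{\Gamma}\le\ac{\epsilon}$ in~\eqref{eq:shortcutted_metric} together with~\eqref{eq:epsexp2}, exactly as elsewhere in the paper).

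First I would locate a good scale carrying a separating chain. Combining the crossing estimate \cite[Lemma~4.4]{amy-cle-resampling} and the independence across scales \cite[Proposition~4.7]{amy-cle-resampling} for $\CLE_{\kappa'}$ with the resampling arguments behind Lemma~\ref{lem:disconnect_interior}, Lemma~\ref{lem:break_loops} and Proposition~\ref{prop:resampling} --- exactly the construction of the good event in Step~3 of the proof of Lemma~\ref{le:dist_across_chain}, now centred at $z$ with $B(z,\delta)$ in place of $B(0,\delta^{1+4\innexp})$ --- one obtains constants $N,M,s,p>0$ and an event $F_1$ for $\Gamma$ with $\p[F_1^c]=O(\delta^b)$ on which there is a scale $2^{-j_0}$ with $2\delta\le 2^{-j_0}\le\delta^{1-\innexp}/2$ such that: $\Gamma$ contains a chain $\CK$ of at most $N$ loops meeting at points $x_1,\dots,x_m\in A(z,2^{-j_0-1},2^{-j_0})$ with $\abs{x_i-x_{i'}}\ge s2^{-j_0}\ge\delta$ for $i\ne i'$, whose inner strand separates $\Upsilon_\Gamma\cap B(z,\delta)$ from $\partial B(z,2^{-j_0})$; and for a resampling $\Gamma^{\mathrm{res}}$ of $\Gamma$ within $A(z,2^{-j_0-1},2^{-j_0+1})$, drawn conditionally independently of $\metapprox{\epsilon}{\cdot}{\cdot}{\Gamma}$ given $\Gamma$, the conditional probability given $\Gamma$ is at least $p$ that the loops of $\CK$ are hooked up into a single loop $\wt{\CL}\in\Gamma^{\mathrm{res}}$, all other crossings of $A(z,2^{-j_0},2^{-j_0+1})$ are broken, the gasket and the collection of loops of $\Gamma$ and $\Gamma^{\mathrm{res}}$ coincide in each component of the complement of $\wt{\CL}$ meeting $B(z,\delta)$, and the regularity and winding conditions of that good event hold. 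Since $2\delta\le 2^{-j_0}\le\delta^{1-\innexp}$, the first clause already gives the separation required for $G_{z,\delta}$ on $F_1$.

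It remains to control the distances. Let $A$ be the event that $F_1$ holds but, for the chain above, some component $K'$ of $\Upsilon_\Gamma\setminus\{x_1,\dots,x_m\}$ meeting $B(z,\delta)$ has necks $x_i,x_{i'}\in\partial K'$ and an admissible $V\in\metregions$ (with $\ol{V}$ containing the simple admissible paths within $K'$ from $x_i$ to $x_{i'}$) such that $\median{\epsilon}^{-1}\metapproxacres{\epsilon}{V}{x_i}{x_{i'}}{\Gamma}>\delta^\zeta+\epsilon^{\epsexp}$; by the triangle inequality we may take $x_i,x_{i'}$ to be consecutive necks on $\partial K'$. On the successful resampling event the corresponding component for $\Gamma^{\mathrm{res}}$ coincides with $K'$ (agreement near $B(z,\delta)$) and is bounded by two segments of the single loop $\wt{\CL}$; it lies in $B(z,r)$, has Euclidean diameter $\le 2^{-j_0+1}\le\delta^{1-\innexp}$, and by the regularity conditions its neck pieces have diameter $\gtrsim s2^{-j_0}$, so it is of the ``typical'' type to which Lemma~\ref{le:tightness_bdry_single_loop} applies without an exceptional case; moreover the internal metrics can be coupled to agree on $\ol{K'}$ by the Markovian property, as in the proof of Lemma~\ref{le:dist_across_chain}. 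Reducing to the minimal simply connected region containing those simple paths via the monotonicity axiom~\eqref{it:mon_large_loops} (available since $\delta\ge\cserial\epsilon$) and the series law, with the generalized parallel law handling the reduction to consecutive necks, the quantity $\median{\epsilon}^{-1}\metapproxacres{\epsilon}{V}{x_i}{x_{i'}}{\Gamma}$ is bounded on this event, up to a universal constant and an additive $\median{\epsilon}^{-1}O(\ac{\epsilon})\le O(\epsilon^{2\epsexp})$, by $D_\epsilon^{\partial U}$ (notation of~\eqref{eq:bdry_diam}, with $\Gamma^{\mathrm{res}}$) for some region $U\subseteq B(z,r)$ of diameter $\le\delta^{1-\innexp}$ bounded by a single loop segment of $\Gamma^{\mathrm{res}}$. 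Hence $A$, intersected with a successful resampling, is contained in the event that $\Gamma^{\mathrm{res}}$ has such a region $U$ with $D_\epsilon^{\partial U}>\delta^{(1-\innexp)\zeta'}+\epsilon^{\epsexp}$, where $\zeta'$ is the exponent of Lemma~\ref{le:tightness_bdry_single_loop} and we set $\zeta=(1-\innexp)\zeta'$. Using that the resampling succeeds with conditional probability $\ge p$ given $\Gamma$ and independently of $\metapprox{\epsilon}{\cdot}{\cdot}{\Gamma}$, and that $\Gamma^{\mathrm{res}}$ has the same law as $\Gamma$, we obtain $\p[A]\le p^{-1}\,o^\infty(\delta)=o^\infty(\delta)$ from Lemma~\ref{le:tightness_bdry_single_loop} (equivalently Proposition~\ref{prop:tightness_in_nice_bubble} or Lemma~\ref{le:tightness_bdry_fl_refl} via the flow-line representation of the two boundary arcs of $U$). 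Therefore $\p[G_{z,\delta}^c]\le\p[F_1^c]+\p[A]=O(\delta^b)$, and since $b$ was arbitrary the lemma follows.

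The main obstacle is the resampling bookkeeping in the first step: one must simultaneously arrange a bounded number of necks well separated on the scale $2^{-j_0}$ (so the cut regions are ``typical'' and Lemma~\ref{le:tightness_bdry_single_loop} applies without exceptional cases), the coincidence of the hooked-up chain with $\Gamma$ on the entire region relevant for the gasket near $B(z,\delta)$ (so that separation and the distance bounds transfer from $\Gamma^{\mathrm{res}}$ back to $\Gamma$), and the regularity and winding conditions on a positive fraction of the $\asymp\innexp\log(\delta^{-1})$ scales in $[\delta,\delta^{1-\innexp}]$ --- all available from the machinery of \cite{amy-cle-resampling} but requiring care in the present interior geometry, in particular in identifying how the cut region $K'$ between two consecutive necks is described by segments of a single loop after hooking up.
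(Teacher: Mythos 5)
Your proposal takes essentially the same route as the paper: build a good-scale separating chain of loops in $A(z,\delta,\delta^{1-\innexp})$ via the resampling machinery from \cite{amy-cle-resampling} (specifically Lemma~\ref{lem:disconnect_interior} and Proposition~\ref{prop:resampling}), resample so that the chain is hooked up into a single loop $\CL^\resampled$ whose strands bound each cut region $K'$, couple the internal metrics to agree on $K'$, and then invoke Lemma~\ref{le:tightness_bdry_single_loop} for $\Gamma^\resampled$ to transfer the bound back via the successful-resampling probability $p$. The scale bookkeeping (choosing $J$ uniformly among $\asymp\innexp\log(\delta^{-1})$ scales, with $\zeta\asymp(1-\innexp)\zeta'$ to account for the region having diameter up to $\delta^{1-\innexp}$) also matches, and your observation that the monotonicity axiom is available because $s2^{-j_0}\gtrsim\cserial\epsilon$ is the same point the paper makes below Lemma~\ref{le:dist_across_chain}.

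One small remark: the intermediate reduction ``by the triangle inequality we may take $x_i,x_{i'}$ to be consecutive necks on $\partial K'$'' is not needed and is not fully justified as stated --- a simple admissible path in $K'$ from $x_i$ to $x_{i'}$ need not pass through intermediate necks, and summing $N$ terms each of the form $\delta^{\zeta'}+\epsilon^{\epsexp}$ leaves a factor $N$ on the $\epsilon^{\epsexp}$ contribution. The paper avoids this entirely: after resampling, for \emph{any} pair $x_i,x_{i'}\in\partial K'$ the minimal simply connected set $V'$ containing the simple admissible paths between them is bounded between the left boundaries of two strands of $\CL^\resampled$, so it is directly of the form handled by the event $E_\delta$ of Lemma~\ref{le:tightness_bdry_single_loop} (whose bound $D_\epsilon^{\partial U_{x,y}}\le\delta^{\zeta'}+\epsilon^\epsexp$ is already a sup over all boundary pairs). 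Dropping the consecutive-neck reduction tightens your argument and makes it coincide with the paper's.
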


\begin{proof}
We are going to reduce this to the statement of Lemma~\ref{le:tightness_bdry_single_loop} via the resampling procedure from Lemma~\ref{lem:disconnect_interior}.

Let $s>0$, $p>0$. For $j = \lceil\log_2(\delta^{-1+\innexp})\rceil,\ldots,\lfloor\log_2(\delta^{-1})\rfloor$ let $G^\resampled_{z,j}$ be the event that there are points $y_1,y_2,\ldots \in A(z,2^{-j-1},2^{-j})$ such that
\begin{itemize}
\item $\{y_1,y_2,\ldots\}$ separates $B(z,2^{-j-1}) \cap \Upsilon_\Gamma$ from $\partial B(z,2^{-j})$,
\item $\abs{y_i-y_{i'}} \ge 9s2^{-j}$ for each $i \neq i'$,
\item for each $i$ there is a point $x_i$ with $\abs{x_i-y_i} \ge 4s2^{-j}$ on the intersection of the same two loops that intersect at $y_i$, and $\{x_1,x_2,\ldots\}$ separates $\{y_1,y_2,\ldots\}$ from $B(z,2^{-j-1})$,
\item if we apply the resampling procedure in the annulus $A(z,2^{-j-1},2^{-j})$, then for any subset $\{y_{i_k}\}_k \subseteq \{y_1,y_2,\ldots\}$ the conditional probability given $\Gamma$ that the resampling is successful for the target pivotals $\{y_{i_k}\}_k$ is at least $p$.
\end{itemize}
Let $\wt{G}$ be the event that $G^\resampled_{z,j}$ occurs for at least $9/10$ fraction of $j \in \{ \lceil\log_2(\delta^{-1+\innexp})\rceil,\ldots,\lfloor\log_2(\delta^{-1})\rfloor \}$. Using Proposition~\ref{prop:resampling} one can show, following the same proof for Lemma~\ref{lem:disconnect_interior} in \cite{amy-cle-resampling} that for any given $\innexp > 0$, $b>0$ there are $s>0$, $p>0$ such that $\p[\wt{G}^c] = O(\delta^b)$.

When $x_{i},x_{i'}$ are on the boundary of the same connected component $K'$ of $\Upsilon_\Gamma \setminus \{x_1,x_2,\ldots\}$, then on the event $G^\resampled_{z,j}$ there is a subset $\{y_{i_k}\}_k \subseteq \{y_1,y_2,\ldots\}$ such that if the resampling is successful, then $K'$ is disconnected from $\partial\D$ by a single loop $\CL^\resampled \in \Gamma^\resampled_{z,j}$. If we let $\ol{V'}$ be the minimal simply connected set containing all simple admissible paths within $K'$ from $x_i$ to $x_{i'}$, then $V'$ is bounded between the left boundaries of two strands of $\CL^\resampled$. In that case we couple the internal metrics so that $\metapproxacres{\epsilon}{V'}{\cdot}{\cdot}{\Gamma} = \metapproxacres{\epsilon}{V'}{\cdot}{\cdot}{\Gamma^\resampled_{z,j}}$.

Note also that when $s2^{-j} \ge \cserial\epsilon$ where $\cserial$ is the constant in the assumption~\eqref{eq:approx_monotonicity_axiom}, then the monotonicity~\eqref{eq:approx_monotonicity_axiom} applies, so that $\metapproxacres{\epsilon}{V}{\cdot}{\cdot}{\Gamma} \le \metapproxacres{\epsilon}{V'}{\cdot}{\cdot}{\Gamma}+2\ac{\epsilon}$ for every $V \supseteq V'$ (where we also used the definition of $\metapproxac{\epsilon}{\cdot}{\cdot}{\Gamma}$ in~\eqref{eq:shortcutted_metric}).

To conclude, we choose $J \in \{ \lceil\log_2(\delta^{-1+\innexp})\rceil,\ldots,\lfloor\log_2(\delta^{-1})\rfloor \}$ uniformly at random, and apply the resampling procedure in the annulus $A(z,2^{-J-1},2^{-J})$. If we let $E^\resampled_\delta$ denote the event from Lemma~\ref{le:tightness_bdry_single_loop} occurring for $\Gamma^\resampled_{z,J}$, we have $\p[ E^\resampled_\delta \mid G_{z,\delta}^c \cap \wt{G} ] \ge p$ and hence
\[ \p[ G_{z,\delta}^c \cap \wt{G} ] \le \frac{1}{p}\p[ E^\resampled_\delta ] = o^\infty(\delta) . \]
\end{proof}

We now explain that the bounds in Lemma~\ref{le:tightness_loop_chain} are sufficient to bound the distances between any pair of points in the gasket. This is because any admissible path in the gasket needs to pass through intersection points of loops.

\begin{lemma}\label{le:gasket_interior_bound}
 Fix $\innexp > 0$, $N \in \N$, and let $G_{z,\delta}$ be the event in Lemma~\ref{le:tightness_loop_chain}. There is a constant $c$ depending on $N$ such that the following is true. Suppose that $\gamma\colon [0,1] \to \Upsilon_\Gamma$ is an admissible path and there are times $0 < s_1 < \cdots < s_L < 1$ and $z_l,\delta_l$ such that the following hold for each $l$.
 \begin{itemize}
  \item $\gamma[s_l,s_{l+1}] \subseteq B(z_l,\delta_l)$,
  \item $\gamma[0,s_l], \gamma[s_{l+1},1] \nsubseteq B(z_l,\delta_l^{1-\innexp})$,
  \item $G_{z_l,\delta_l}$ occurs.
 \end{itemize}
 Then there exist $s' \in [0,s_1]$, $t' \in [s_L,1]$ such that
 \[ \median{\epsilon}^{-1}\metapproxacres{\epsilon}{V}{\gamma(s')}{\gamma(t')}{\Gamma} \le c\sum_l (\delta_l^\zeta + \epsilon^{\epsexp}) \]
 for each $V \in \metregions$ such that $\ol{V}$ contains every \emph{simple} admissible path $\wt{\gamma}$ between two points on $\gamma$ with $\diamE(\wt{\gamma}) \le 2\max_l \delta_l^{1-\innexp}$.
\end{lemma}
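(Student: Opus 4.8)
\textbf{Proof plan for Lemma~\ref{le:gasket_interior_bound}.} The key structural fact is that an admissible path in $\Upsilon_\Gamma$ cannot freely leave and re-enter a ball through the gasket: whenever $\gamma$ enters $B(z_l,\delta_l)$ and the event $G_{z_l,\delta_l}$ holds, the separating set $\{x^l_1,x^l_2,\ldots\}$ of at most $N$ points from that event must block $\gamma$. So the first step is to use the separation property to extract, for each $l$, two (not necessarily distinct) points $x^l_{i_l}, x^l_{i'_l}$ on the chain $\{x^l_1,x^l_2,\ldots\}$ that lie on $\gamma$ on either side of the excursion through $B(z_l,\delta_l)$; concretely, let $s'_l$ be the last time before $s_l$ that $\gamma$ is on $\{x^l_1,x^l_2,\ldots\}$ and $t'_l$ the first time after $s_{l+1}$, and note $\gamma(s'_l),\gamma(t'_l) \in \{x^l_1,\dots,x^l_N\}$. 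Since $\gamma(s'_l)$ and $\gamma(t'_l)$ lie in the closure of a common connected component $K'_l$ of $\Upsilon_\Gamma \setminus \{x^l_1,x^l_2,\ldots\}$ adjacent to $\gamma[s_l,s_{l+1}]$, the second bullet in $G_{z_l,\delta_l}$ gives $\median{\epsilon}^{-1}\metapproxacres{\epsilon}{V}{\gamma(s'_l)}{\gamma(t'_l)}{\Gamma} \le \delta_l^\zeta + \epsilon^{\epsexp}$ for any $V$ whose closure contains every simple admissible path within $K'_l$ from $\gamma(s'_l)$ to $\gamma(t'_l)$ — and such paths have Euclidean diameter at most $2\delta_l^{1-\innexp}$, so the hypothesis on $V$ in the statement guarantees this.

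\textbf{Chaining the estimates.} The second step is to concatenate these bounds along $l=1,\dots,L$ using the generalized parallel law, in exactly the manner of Lemma~\ref{le:concatenate_overlapping_intervals}. The points $\gamma(s'_l), \gamma(t'_l)$ need not coincide with $\gamma(s'_{l+1}), \gamma(t'_{l+1})$, but the chain $\{x^l_1,\dots,x^l_N\}$ separates the later chain's points from the part of $\gamma$ that has already been covered. Thus, having bounded $\metapproxacres{\epsilon}{V}{\gamma(s'_l)}{\gamma(t'_l)}{\Gamma}$, the generalized parallel law (which also holds for the $\metapproxac{\epsilon}{\cdot}{\cdot}{\Gamma}$ metric, as noted after~\eqref{eq:shortcutted_metric}) produces some $x^l_j$ on the chain with $\median{\epsilon}^{-1}\metapproxacres{\epsilon}{V}{x^l_j}{\gamma(t'_{l+1})}{\Gamma} \le \cparallel(N)(\delta_{l+1}^\zeta+\epsilon^{\epsexp})$, provided the separation/spacing hypotheses of the generalized parallel law are met — the spacing $\abs{x^l_i - x^l_{i'}} \ge \delta_l$ from $G_{z_l,\delta_l}$ and the requirement $\delta_l \ge \cserial\epsilon$ (which we may assume, since otherwise the whole excursion is handled by the $\ac{\epsilon}$-term in~\eqref{eq:shortcutted_metric} and contributes only $\epsilon^{2\epsexp}$ by~\eqref{eq:epsexp2}). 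Then the triangle inequality combines this with the bound for the $l$-th excursion. Summing geometrically as in Lemma~\ref{le:concatenate_overlapping_intervals}, one obtains $\median{\epsilon}^{-1}\metapproxacres{\epsilon}{V}{\gamma(s')}{\gamma(t')}{\Gamma} \le c\sum_l(\delta_l^\zeta+\epsilon^{\epsexp})$ with $s' = s'_1 \in [0,s_1]$, $t' = t'_L \in [s_L,1]$ and $c = c(N)$, which is the claim.

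\textbf{Main obstacle.} The bookkeeping obstacle is ensuring that when we apply the generalized parallel law at step $l+1$, the relevant region $V$ (the one in the statement, which must contain simple admissible paths between nearby points of $\gamma$) is large enough to dominate \emph{all} the regions $U'_1$ appearing in $G_{z_{l+1},\delta_{l+1}}$ for whichever separating point $x^{l+1}_j$ the parallel law selects; this needs the monotonicity axiom~\eqref{eq:approx_monotonicity_axiom} applied to the $U'_1$'s, which is available precisely because those $U'_1$ are defined via simple admissible paths and $\delta_{l+1}^{1-\innexp} \ge \cserial\epsilon$ (see the discussion following the statement of Lemma~\ref{le:dist_across_chain}). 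One also has to be slightly careful that the chains from consecutive balls interact correctly — i.e., that $\{x^l_1,\dots,x^l_N\}$ genuinely separates $\gamma(s'_{l+1})$ from $\gamma(t'_{l+1})$ in $\ol V \cap \Upsilon_\Gamma$ rather than just along $\gamma$ — but this follows since $\gamma(s'_{l+1})$ lies on $\gamma[0,s_l] \cup (\text{chain}_l)$, on one side of the chain, while $\gamma(t'_{l+1})$ lies on $\gamma[s_{l+1},1]$, on the other, and the chain disconnects $\Upsilon_\Gamma \cap B(z_l,\delta_l)$ from its complement. The rest is routine.
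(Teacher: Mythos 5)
Your proposal follows the paper's own proof essentially line for line: extract from each $G_{z_l,\delta_l}$ the separating-chain points $\gamma(s'_l),\gamma(t'_l)$, bound the cost across each excursion using the second bullet of the event, and then chain consecutive excursions via the generalized parallel law plus the triangle inequality, exactly as in Lemma~\ref{le:concatenate_overlapping_intervals}. The extra discussion of the $\delta_l < \cserial\epsilon$ edge case and of the monotonicity axiom goes slightly beyond what the paper writes out but does not change the argument.
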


\begin{proof}
 Let $l \in \{1,\ldots,L-1\}$. Let $x_1,x_2,\ldots \in A(z_l,\delta,\delta^{1-\innexp})$ be the points in the event $G_{z_l,\delta_l}$. Consider the connected component $K'$ of $\Upsilon_\Gamma \setminus \{x_1,x_2,\ldots\}$ containing $\gamma[s_l,s_{l+1}]$, and let $s'_l$ (resp.\ $t'_l$) be the last (resp.\ next) time where $\gamma$ passes $\{x_1,x_2,\ldots\}$. These times exist since we assumed $\gamma[0,s_l], \gamma[s_{l+1},1] \nsubseteq B(z_l,\delta_l^{1-\innexp})$. The condition on $V$ implies that it satisfies the condition in Lemma~\ref{le:tightness_loop_chain}, so that
 \[ \median{\epsilon}^{-1}\metapproxacres{\epsilon}{V}{\gamma(s')}{\gamma(t')}{\Gamma} \le \delta_l^\zeta + \epsilon^{\epsexp} \]
 since we are on the event $G_{z_l,\delta_l}$.
 
 Let $s'_{l+1},t'_{l+1}$ be the analogous points for $G_{z_{l+1},\delta_{l+1}}$, and suppose that $s'_l < s'_{l+1} < t'_l < t'_{l+1}$ (in all other cases we can just ignore $l$ or $l+1$). Then $\{x_1,x_2,\ldots\}$ separates $\eta(s'_{l+1})$ from $\eta(t'_{l+1})$. By the generalized parallel law (which also applies to the metric in~\eqref{eq:shortcutted_metric}), there is some $x_i$ such that
 \[ \median{\epsilon}^{-1}\metapproxacres{\epsilon}{V}{x_i}{\gamma(t'_{l+1})}{\Gamma} \le \cparallel(N)\median{\epsilon}^{-1}\metapproxacres{\epsilon}{V}{\gamma(s'_{l+1})}{\gamma(t'_{l+1})}{\Gamma} \le \cparallel(N)(\delta_{l+1}^\zeta + \epsilon^{\epsexp}) . \]
 Hence, again using the condition on $V$, we have
\[ \begin{split}
\median{\epsilon}^{-1}\metapproxacres{\epsilon}{V}{\gamma(s'_l)}{\gamma(t'_{l+1})}{\Gamma} 
&\le \median{\epsilon}^{-1}\metapproxacres{\epsilon}{V}{\gamma(s'_l)}{x_i}{\Gamma} + \median{\epsilon}^{-1}\metapproxacres{\epsilon}{V}{x_i}{\gamma(t'_{l+1})}{\Gamma} \\
&\le (\delta_l^\zeta + \epsilon^{\epsexp}) + \cparallel(N)(\delta_{l+1}^\zeta + \epsilon^{\epsexp}) . 
\end{split} \]
By repeating this argument, we conclude.
\end{proof}

\subsection{Proof of the main theorems}
\label{subsec:thm1_proof}

In this subsection we complete the proofs of the tightness statement in Theorem~\ref{th:tightness_metrics}, the H\"older continuity in Theorem~\ref{th:cle_metric_hoelder}, and the tightness across scales in Theorem~\ref{th:metric_scaling}. Recall the setup from Sections~\ref{se:assumptions}--\ref{subsec:main_statement} and the definition of the metric $\dpath$~\eqref{eq:dpath}. The main result will be the proof of tightness of the approximations, which we establish in Proposition~\ref{prop:interior_tightness} where we show uniform H\"older continuity of the approximations with respect to the metric $\dpath$. As a consequence of Lemma~\ref{le:thin_gasket_hoelder} we obtain in Corollary~\ref{co:tightness_thin_gasket} the uniform H\"older continuity with respect to the Euclidean metric when we restrict to the thin gasket. In Lemma~\ref{le:median_scaling_lb} and~\ref{le:quantiles_comparable}, we complete the proofs of the polynomial scaling and the comparability of the quantiles of the metric.

Recall the definition of $\metregions$ in Section~\ref{se:assumptions}. For each $r>0$, let $\metregions[(r)] \subseteq \metregions$ be the collection of regions such that $V \subseteq B(0,1-r)$ and $V$ is a union of connected components of
$D \setminus (\CL_1 \cup \cdots \cup \CL_n)$ where $\diamE(\CL_i) \ge r$ for each $i$.

\begin{proposition}\label{prop:interior_tightness}
 There exists $\zeta > 0$ such that the following is true. Consider the setup in Section~\ref{subsec:tightness_interior_flow_lines}. For each $r>0$, there is a random variable $X_{\epsilon,r,\zeta}$ with
 \[
  \p[ X_{\epsilon,r,\zeta} > M ] = o^\infty(M^{-1}) 
  \quad\text{ (uniformly in $\epsilon$) as } M \to \infty 
 \]
 such that
 \[ 
  \sup_{V \in \metregions[(r)]} \sup_{x,y \in \ol{V} \cap \Upsilon_\Gamma} \frac{\median{\epsilon}^{-1}\metapproxacres{\epsilon}{V}{x}{y}{\Gamma}}{\dpath[\ol{V}](x,y)^\zeta+\epsilon^{\epsexp}} \le X_{\epsilon,r,\zeta} .
 \]
\end{proposition}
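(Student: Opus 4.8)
The plan is to derive Proposition~\ref{prop:interior_tightness} from the interior tightness estimates established in Section~\ref{subsec:interior_tightness}, in particular from Lemmas~\ref{le:tightness_loop_chain} and~\ref{le:gasket_interior_bound}, combined with a Kolmogorov--Chentsov-type chaining argument along a mesh of points. First I would fix $\innexp>0$ small and, for $\delta>0$, consider the mesh $\delta\Z^2 \cap B(0,1-r/2)$, together with the events $G_{z,\delta}$ from Lemma~\ref{le:tightness_loop_chain} (with $N = N(\innexp,b)$ chosen so $\p[G_{z,\delta}^c] = O(\delta^b)$ for $b$ as large as we like). Taking a union bound over the $O(\delta^{-2})$ mesh points and over dyadic scales $\delta = 2^{-k}$, define the ``good event''
\[
 \CG_M = \bigcap_{k \geq \log_2 M}\ \bigcap_{z \in 2^{-k}\Z^2 \cap B(0,1-r/2)} G_{z,2^{-k}} ,
\]
so that $\p[\CG_M^c] = O(M^{b-2})$, which is $o^\infty(M^{-1})$ since $b$ is arbitrary. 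On $\CG_M$, for any admissible path $\gamma$ connecting two points $x,y$ in the gasket with small Euclidean diameter, one covers $\gamma$ by balls $B(z_l,\delta_l)$ centered at mesh points with $\delta_l$ comparable to the relevant scale, checks the overlap hypotheses of Lemma~\ref{le:gasket_interior_bound}, and applies that lemma to get $\median{\epsilon}^{-1}\metapproxacres{\epsilon}{V}{\gamma(s')}{\gamma(t')}{\Gamma} \le c\sum_l(\delta_l^\zeta+\epsilon^{\epsexp})$ for suitable $s',t'$ near the endpoints.

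The key step is to convert this into a bound on $\metapproxacres{\epsilon}{V}{x}{y}{\Gamma}$ itself (rather than at nearby points $\gamma(s'),\gamma(t')$) in terms of $\dpath[\ol V](x,y)$. Given $x,y \in \ol V \cap \Upsilon_\Gamma$ with $\dpath[\ol V](x,y) = d$, there is an admissible path $\gamma$ within $\ol V$ of Euclidean diameter at most $2d$; one runs a dyadic decomposition of $\gamma$, at scale $2^{-k}$ covering it by $O(2^k d)$ mesh balls of radius $\sim 2^{-k}$, summing the resulting $\delta_l^\zeta$ contributions geometrically down to the scale $\epsilon$ (below which the segment contributes only $\ac{\epsilon} \le \epsilon^{2\epsexp}\median{\epsilon}$ by~\eqref{eq:epsexp2} and the definition~\eqref{eq:shortcutted_metric}). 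This yields a bound of the form $\median{\epsilon}^{-1}\metapprox{\epsilon}{x}{y}{\Gamma} \lesssim d^\zeta + \epsilon^{\epsexp}$ on $\CG_M$ with $M \asymp d^{-1}$, after slightly decreasing $\zeta$ to absorb the polynomial factors $2^k d$ from the number of balls and the factor $\cparallel(N)$ accumulated in Lemma~\ref{le:gasket_interior_bound}. To handle the case $x \neq y$ at scales larger than one, i.e.\ when $d$ is not small, one uses the trivial bound together with the a priori estimates of Section~\ref{se:intersection_exponent_conclude} (valid now for arbitrarily large $\bestexp$); alternatively one absorbs this into the definition of $X_{\epsilon,r,\zeta}$ by noting it is automatically bounded on $\metregions[(r)]$ since $\dpath$ is bounded there. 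The monotonicity of the internal metrics (the discussion below Lemma~\ref{le:dist_across_chain}) is what lets us pass from the region $V'$ produced by the chaining construction to the given region $V \in \metregions[(r)]$, at the cost of additive $\ac{\epsilon}$ terms per segment, again absorbed by~\eqref{eq:epsexp2}. Finally, defining $X_{\epsilon,r,\zeta}$ as the smallest $M$ for which the displayed inequality holds uniformly, the tail bound $\p[X_{\epsilon,r,\zeta}>M] = o^\infty(M^{-1})$ follows from $\p[\CG_M^c] = o^\infty(M^{-1})$, uniformly in $\epsilon$ because all the inputs (Lemmas~\ref{le:tightness_loop_chain}, \ref{le:gasket_interior_bound}) are uniform in $\epsilon$.

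The main obstacle I anticipate is the geometric bookkeeping in the chaining argument: one must verify that the mesh balls $B(z_l,\delta_l)$ chosen along $\gamma$ genuinely satisfy the separation/overlap hypotheses of Lemma~\ref{le:gasket_interior_bound} (namely $\gamma[s_l,s_{l+1}] \subseteq B(z_l,\delta_l)$ while $\gamma$ exits $B(z_l,\delta_l^{1-\innexp})$ on both sides), which requires controlling how finely $\gamma$ oscillates. This is handled by choosing $\delta_l$ adaptively — roughly $\delta_l$ equal to the scale at which $\gamma$ first re-enters a ball — and by working on an additional event (Lemma~\ref{le:fill_ball} and the regularity estimates of Section~\ref{se:regularity}) ensuring $\gamma$ cannot have too many bottlenecks; since those events also have $o^\infty$-probability complements, they can be folded into $\CG_M$. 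A secondary point to be careful about is uniformity over all $V \in \metregions[(r)]$ simultaneously: the chaining bound is proved for a countable dense family of $(x,y)$ and regions, and then extended by the $\epsilon=0$ continuity assumption, or for $\epsilon>0$ by the compatibility axiom~\eqref{eq:separability_axiom}, which guarantees the whole family is determined by a countable subfamily.
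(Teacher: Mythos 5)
Your proposal is essentially correct and follows the same route as the paper: union bound over the chain-separation events $G_{z,\delta}$ of Lemma~\ref{le:tightness_loop_chain} across dyadic scales $2^{-k}$ down to $k=\log_2(\epsilon^{-1})$, the ball-filling event $F_\delta$ of Lemma~\ref{le:fill_ball} to control how many times an admissible path can cross each annulus (hence how many mesh balls are genuinely needed), the concatenation Lemma~\ref{le:gasket_interior_bound}, and~\eqref{eq:epsexp2} together with the shortcut definition~\eqref{eq:shortcutted_metric} to handle scales below $\epsilon$. The one place your write-up is a bit loose is the case $\dpath[\ol{V}](x,y)\gtrsim \delta$: ``$X$ is automatically bounded on $\metregions[(r)]$'' is not a substitute for an actual estimate, since the numerator $\metapproxacres{\epsilon}{V}{x}{y}{\Gamma}$ does not come with a free bound. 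What the paper does (and what your first suggestion gestures at) is to split $\gamma$ into $O((\dpath/\delta)^2\,\delta^{-2a})$ segments of Euclidean diameter at most $\delta$, apply the small-scale estimate to each, and sum; this produces the explicit polynomial-in-$\delta^{-1}$ bound on $X_{\epsilon,r,\zeta}$ on $\ol{G}_\delta\cap F_\delta$ that then feeds into the $o^\infty(M^{-1})$ tail. That step should be spelled out rather than folded informally into the definition of $X$.
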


\begin{proof}
 Let $b>0$ be given. For $0 < \delta < r^{1+2\innexp}$, let
 \[ \ol{G}_\delta = \bigcap_{\log_2(\delta^{-1}) \le k \le \log_2(\epsilon^{-1})} \bigcap_{z \in 2^{-2k}\Z^2 \cap B(0,1-r)} G_{z,2^{-k}} \]
 where $G_{z,\delta}$ is the event from Lemma~\ref{le:tightness_loop_chain}. Then $\p[\ol{G}_\delta^c] = O(\delta^b)$. (Note that when $\delta < \epsilon$, we automatically have $\median{\epsilon}^{-1}\metapproxacres{\epsilon}{V}{u}{v}{\Gamma} \le \median{\epsilon}^{-1}\ac{\epsilon} \le \epsilon^{2\epsexp}$ whenever $\dpath[\ol{V}](u,v) \le \delta$ by~\eqref{eq:shortcutted_metric} and~\eqref{eq:epsexp2}.)
 
 Fix $a>0$ small, and let $F_\delta$ be the event that the space-filling SLE associated to $\eta'$ fills a ball of radius $(\delta')^{1+a}$ whenever it travels distance $\delta' \le \delta$ within $B(0,r)$. By Lemma~\ref{le:fill_ball}, we have $\p[F_\delta^c] = o^\infty(\delta)$.
 
 Now suppose we are on the event $\ol{G}_\delta \cap F_\delta$. We explain that $\median{\epsilon}^{-1}\metapproxacres{\epsilon}{V}{x}{y}{\Gamma}$ can be bounded uniformly for all $V \in \metregions[(r)]$, $x,y \in \ol{V} \cap \Upsilon_\Gamma$, and that $X_{\epsilon,r,\zeta} = O(\delta^{-3})$ on the event $\ol{G}_\delta \cap F_\delta$. Let $\gamma$ be an admissible path in $\ol{V}$ from $x$ to $y$ such that $\diamE(\gamma) < 2\dpath[\ol{V}](x,y)$. We distinguish two cases.
 
 In case $\diamE(\gamma) \le \delta$, we cover $\gamma$ by balls of the type $B(z,2^{-k})$ where $z \in 2^{-2k}\Z^2 \cap B(0,1-r)$ and $k \ge \log_2(\diamE(\gamma)^{-1})$. For each such ball, on the event $F_\delta$ we can upper bound the number of crossings. Indeed, if two crossings of $A(z,2^{-j-1},2^{-j})$ by $\gamma$ are not separated in $A(z,2^{-j-1},2^{-j})$ by a loop $\CL \in \Gamma$, then we can find an admissible path within $A(z,2^{-j-1},2^{-j})$ connecting the two crossings. On the event $F_\delta$, there can be at most $O(2^{2aj})$ crossings by loops $\CL \in \Gamma$. Therefore we can modify $\gamma$ so that it crosses each $A(z,2^{-j-1},2^{-j})$ at most $O(2^{2aj})$ times. Moreover, by the definition of $\metregions[(r)]$ and since $2\delta^{1-\innexp} < r$, the region $V$ satisfies the condition in Lemma~\ref{le:gasket_interior_bound}, therefore
 \[
  \median{\epsilon}^{-1}\metapproxacres{\epsilon}{V}{x}{y}{\Gamma} 
  \lesssim \sum_{\log_2(\dpath[\ol{V}](x,y)^{-1}) \le k \le \log_2(\epsilon^{-1})} 2^{2ak}(2^{-k\zeta}+2^{-k\epsexp}\epsilon^{\epsexp}) 
  \lesssim \dpath[\ol{V}](x,y)^\zeta+\epsilon^{\epsexp} .
 \]
 
 In case $\diamE(\gamma) \ge \delta$, we divide $\gamma$ into segments of diameter at most $\delta$. On the event $F_\delta$, by the same argument as before, within each ball of radius $\delta$ we need at most $O(\delta^{-2a})$ segments. Therefore
 \[
  \median{\epsilon}^{-1}\metapproxacres{\epsilon}{V}{x}{y}{\Gamma} \lesssim \left(\frac{\dpath[\ol{V}](x,y)}{\delta}\right)^2 \delta^{-2a} (\delta^\zeta+\epsilon^{\epsexp}) .
 \]
 This concludes the proof.
\end{proof}

Recall that the thin gasket $\CT_\Gamma$ of $\Gamma_D$ is defined to be the set of points in $D$ which are not disconnected from $\partial D$ by any loop in $\Gamma_D$. That is, $\CT_\Gamma$ is equal to the closure of the union of the outer boundaries of the loops of $\Gamma_D$. Recall from Lemma~\ref{le:thin_gasket_hoelder} that when restricted to the thin gasket, the metric $\dpath$ is H\"older continuous with respect to Euclidean distance. Now that we have established H\"older continuity of $\median{\epsilon}^{-1} \metapproxac{\epsilon}{\cdot}{\cdot}{\Gamma}$ respect to $\dpath$, this implies that its restriction to $\CT_\Gamma \times \CT_\Gamma$ is H\"older continuous with respect to the Euclidean metric.

\begin{corollary}\label{co:tightness_thin_gasket}
 For any $b>0$ there exists $\zeta > 0$ such that the following is true. Consider the setup in Section~\ref{subsec:tightness_interior_flow_lines}. Then, for each $r>0$,
 \[ 
  \p \left[ \sup_{x,y \in \CT_\Gamma \cap B(0,1-r)} \frac{\median{\epsilon}^{-1}\metapproxacres{\epsilon}{D}{x}{y}{\Gamma}}{\abs{x-y}^\zeta+\epsilon^{\epsexp}} \geq M \right] = O(M^{-b}) 
  \quad\text{ (uniformly in $\epsilon$) as } M \to \infty .
 \]
\end{corollary}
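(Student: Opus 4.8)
The plan is to deduce the Euclidean H\"older estimate by composing the two H\"older estimates just established: Lemma~\ref{le:thin_gasket_hoelder}, which controls $\dpath$ in terms of the Euclidean metric on $\CT_\Gamma$, and Proposition~\ref{prop:interior_tightness}, which controls $\median{\epsilon}^{-1}\metapproxac{\epsilon}{\cdot}{\cdot}{\Gamma}$ in terms of $\dpath$. First I would record that Proposition~\ref{prop:interior_tightness} applies also with the region taken to be $D$ itself. The hypothesis $V\in\metregions[(r)]$ was used there only to (i) verify the hypothesis of Lemma~\ref{le:gasket_interior_bound} that $\ol V$ contains every short simple admissible path between points of the chosen path $\gamma$, and (ii) keep the covering balls inside $B(0,1-r)$, where the loop-chain events of Lemma~\ref{le:tightness_loop_chain} are controlled. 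For $V=D$, (i) is automatic: no admissible path for $\Gamma$ crosses $\CL=\partial D$, so every simple admissible path between points of $\Upsilon_\Gamma$ lies in $\ol D$. And if $x,y\in\Upsilon_\Gamma\cap B(0,1-r)$ and $\dpath[\ol D](x,y)\le r/2$, then the realizing path has Euclidean diameter less than $r$ and hence stays in $B(0,1-r/2)$, so (ii) holds after running the argument with the loop-chain grid over $B(0,1-r/2)$. Thus the proof of Proposition~\ref{prop:interior_tightness} gives a random variable $Y_{\epsilon,r}$ with $\p[Y_{\epsilon,r}>M]=o^\infty(M^{-1})$ uniformly in $\epsilon$ and a fixed $\zeta_0\in(0,1]$ so that $\median{\epsilon}^{-1}\metapproxacres{\epsilon}{D}{x}{y}{\Gamma}\le Y_{\epsilon,r}(\dpath[\ol D](x,y)^{\zeta_0}+\epsilon^{\epsexp})$ whenever $x,y\in\Upsilon_\Gamma\cap B(0,1-r)$ and $\dpath[\ol D](x,y)\le r/2$.

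Now fix $b>0$, let $\zeta_1$ be the exponent of Lemma~\ref{le:thin_gasket_hoelder} applied with $4b/\zeta_0$ in place of $b$, so that its H\"older constant $X_1$ satisfies $\p[X_1>M]=O(M^{-4b/\zeta_0})$, and set $\zeta=\zeta_0\zeta_1$. On the event $\{X_1\le M^{1/2}\}\cap\{Y_{\epsilon,r}\le M^{1/2}\}$, whose complement has probability $O(M^{-b})$, consider $x,y\in\CT_\Gamma\cap B(0,1-r)$. If $M^{1/2}\abs{x-y}^{\zeta_1}\le r/2$ then $\dpath[\ol D](x,y)\le M^{1/2}\abs{x-y}^{\zeta_1}\le r/2$, hence
\[
\median{\epsilon}^{-1}\metapproxacres{\epsilon}{D}{x}{y}{\Gamma}\le Y_{\epsilon,r}\bigl(\dpath[\ol D](x,y)^{\zeta_0}+\epsilon^{\epsexp}\bigr)\le M^{1/2}\bigl(M^{\zeta_0/2}\abs{x-y}^{\zeta}+\epsilon^{\epsexp}\bigr)\le M\bigl(\abs{x-y}^{\zeta}+\epsilon^{\epsexp}\bigr)
\]
since $\zeta_0\le1$. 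If instead $M^{1/2}\abs{x-y}^{\zeta_1}>r/2$, then $\abs{x-y}^{\zeta}\ge(r/2)^{\zeta_0}M^{-\zeta_0/2}$, so the bound $\median{\epsilon}^{-1}\metapproxacres{\epsilon}{D}{x}{y}{\Gamma}\le M(\abs{x-y}^{\zeta}+\epsilon^{\epsexp})$ follows once we also know the global estimate
\[
\sup_{x,y\in\Upsilon_\Gamma\cap\ol{B(0,1-r)}}\median{\epsilon}^{-1}\metapproxacres{\epsilon}{D}{x}{y}{\Gamma}\le Z_{\epsilon,r},\qquad \p[Z_{\epsilon,r}>M]=O(M^{-b})\ \text{for every }b,
\]
since then, on $\{Z_{\epsilon,r}\le(r/2)^{\zeta_0}M^{1-\zeta_0/2}\}$ (a further $O(M^{-b})$-likely event), the left side is at most $(r/2)^{\zeta_0}M^{1-\zeta_0/2}\le M\abs{x-y}^{\zeta}$.

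The remaining global estimate is proved by the covering and chaining argument of Proposition~\ref{prop:interior_tightness}: cover the compact set $\Upsilon_\Gamma\cap\ol{B(0,1-r)}$ (compactness is Lemma~\ref{le:gasket_compact}), join $x$ to $y$ by an admissible path, break it into $O(\delta^{-c})$ pieces of Euclidean diameter at most $\delta$, bound the internal distance across each piece on the loop-chain events of Lemma~\ref{le:tightness_loop_chain}, and concatenate via the generalized parallel law as in Lemma~\ref{le:gasket_interior_bound}; taking $\delta=M^{-c'}$ with $c'$ large makes the exceptional probability $O(M^{-b})$ while keeping the resulting bound polynomial in $M$, so after renaming $M$ one gets the stated tail. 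The one new feature is that the connecting path may now approach $\partial D=\CL$, which is harmless precisely because we work with an interior cluster: by Remark~\ref{rm:domain_choice} the metric does not degenerate along $\CL$, and the loop-chain estimates remain valid up to the boundary via the boundary versions of the resampling inputs (Lemma~\ref{lem:disconnect_boundary}, as in the proof of Proposition~\ref{pr:tightness_bubble_bdry}). I expect this global estimate --- and within it the control of the metric and of the loop-chain events up to the fractal boundary $\CL$ --- to be the main point requiring care; the rest is a routine union bound over two good events.
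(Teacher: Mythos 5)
Your approach --- composing the Euclidean-to-$\dpath$ H\"older bound of Lemma~\ref{le:thin_gasket_hoelder} with the $\dpath$-to-$\Fd_\epsilon$ H\"older bound of Proposition~\ref{prop:interior_tightness} --- is exactly what the paper intends; the remark preceding the corollary says no more than this. Your observations that for $V=D$ the hypothesis of Lemma~\ref{le:gasket_interior_bound} is automatic, and that the realizing path stays in $B(0,1-r/2)$ once $\dpath(x,y)\le r/2$, are both correct and fill in details the paper leaves implicit. The splitting into the two regimes $M^{1/2}\abs{x-y}^{\zeta_1}\lessgtr r/2$ and the use of a crude global bound in the second regime is also a standard and sound way to assemble the tail estimate.

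The one place where the argument is not pinned down is the ``global estimate'' on $\sup_{x,y\in\Upsilon_\Gamma\cap\ol{B(0,1-r)}}\median{\epsilon}^{-1}\metapproxacres{\epsilon}{D}{x}{y}{\Gamma}$, and here your diagnosis is slightly misaimed. You worry about the connecting path approaching $\CL=\partial D$ and propose to handle it via Lemma~\ref{lem:disconnect_boundary} and the proof of Proposition~\ref{pr:tightness_bubble_bdry}; but those concern a fixed half-plane boundary and are not the relevant inputs here. Passing near $\CL$ is actually harmless on its own: the loop-chain events of Lemma~\ref{le:tightness_loop_chain} and the resamplings of Proposition~\ref{prop:resampling} are performed with respect to the nested \clekp{} in the ambient domain $\D$, and $\CL$ is just another loop of $\Gamma_\D$ that the resampling treats like any other. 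The genuine constraint is that the resampling annuli $A(z,\cdot,\delta^{1-\innexp})$ must fit inside $\D$, so what needs care is the path approaching $\partial\D$ (which can happen, since for $\kappa'\in(4,8)$ the loop $\CL$ touches $\partial\D$, and hence $\ol D$ does too). One clean way to close this is to use that the loop-chain events remain controlled for $z$ at distance $\gtrsim\delta^{1-\innexp}$ from $\partial\D$ and shrink $\delta$ as $z$ approaches $\partial\D$; the resulting extra factor is absorbed since $\delta$ can be sent to $0$ polynomially in $M$. This is a technical rather than conceptual gap, and you correctly flag the global estimate as the delicate point, but the specific mechanism you invoke is not the right one.
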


Our estimates so far concern the metric $\metapproxacres{\epsilon}{V}{\cdot}{\cdot}{\Gamma}$ defined in~\eqref{eq:shortcutted_metric}. With the assumption~\eqref{eq:approx_error_asymp}, we see that the tightness result applies also to the original metric $\metapproxres{\epsilon}{V}{\cdot}{\cdot}{\Gamma}$. For $r>0$, let $A_{\epsilon,r}$ denote the event that $\metapproxres{\epsilon}{V}{x}{y}{\Gamma} \le \ac{\epsilon}$ for every $V \in \metregions$ and $x,y$ with $\dpath[\ol{V}](x,y) < \epsilon$ and $\dpath(x, \Upsilon_\Gamma \setminus \ol{V}) \ge r$.

\begin{corollary}\label{co:tightness_original_metric}
 There exists $\zeta > 0$ such that the following is true. Consider the setup in Section~\ref{subsec:tightness_interior_flow_lines}. For each $r>0$, there is a random variable $Y_{\epsilon,r,\zeta}$ with
 \[
  \p[ Y_{\epsilon,r,\zeta} > M ] = o^\infty(M^{-1}) 
  \quad\text{ (uniformly in $\epsilon$) as } M \to \infty 
 \]
 such that for every $V \in \metregions$ and $U \subseteq \ol{V}$ with $\dpath(U, \Upsilon_\Gamma \setminus \ol{V}) \ge r$, on the event $A_{\epsilon,r}$ we have
 \[ 
  \sup_{x,y \in U \cap \Upsilon_\Gamma} \frac{\median{\epsilon}^{-1}\metapproxres{\epsilon}{V}{x}{y}{\Gamma}}{\dpath[U](x,y)^\zeta+\epsilon^{\epsexp}} \le Y_{\epsilon,r,\zeta} .
 \]
\end{corollary}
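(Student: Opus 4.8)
The plan is to obtain Corollary~\ref{co:tightness_original_metric} from Proposition~\ref{prop:interior_tightness} --- which already provides the required H\"older bound for the shortcut metric $\metapproxacres{\epsilon}{V}{\cdot}{\cdot}{\Gamma}$ over the subclass $\metregions[(r)]$ --- together with two reductions that involve no new probabilistic input: passing from $\metapproxacres{\epsilon}{V}{\cdot}{\cdot}{\Gamma}$ back to $\metapproxres{\epsilon}{V}{\cdot}{\cdot}{\Gamma}$ on the event $A_{\epsilon,r}$, and extending the bound from $\metregions[(r)]$ to an arbitrary $V \in \metregions$ under the hypothesis $\dpath(U,\Upsilon_\Gamma\setminus\ol{V})\ge r$. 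I would take $Y_{\epsilon,r,\zeta} = C X_{\epsilon,r/C,\zeta}$ for a suitable constant $C$, where $X_{\epsilon,r/C,\zeta}$ is the variable furnished by Proposition~\ref{prop:interior_tightness}; its superpolynomial tail, uniform in $\epsilon$, then transfers directly. Since admissible paths within $U$ are admissible within $\ol{V}$ we have $\dpath[U](x,y)\ge\dpath[\ol{V}](x,y)$, so it is enough to bound $\median{\epsilon}^{-1}\metapproxres{\epsilon}{V}{x}{y}{\Gamma}$ by $Y_{\epsilon,r,\zeta}(\dpath[\ol{V}](x,y)^\zeta+\epsilon^{\epsexp})$; moreover for pairs with $\dpath[\ol{V}](x,y)<\epsilon$ the bound is immediate because then $\metapproxres{\epsilon}{V}{x}{y}{\Gamma}\le\ac{\epsilon}\le\epsilon^{2\epsexp}\median{\epsilon}$ on $A_{\epsilon,r}$ by~\eqref{eq:epsexp2}, so one may assume $\dpath[\ol{V}](x,y)\ge\epsilon$.

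For the first reduction, fix $x,y\in U\cap\Upsilon_\Gamma$, so that $x,y$ lie at $\dpath$-distance at least $r$ from $\Upsilon_\Gamma\setminus\ol{V}$. I would start from a near-optimal finite sequence $(u_i)$ in the definition~\eqref{eq:shortcutted_metric} of $\metapproxacres{\epsilon}{V}{x}{y}{\Gamma}$. Exactly as in the verification of the compatibility property of $\metapproxacres{\epsilon}{V}{\cdot}{\cdot}{\Gamma}$ recorded after~\eqref{eq:shortcutted_metric}, the series law lets one take $(u_i)$ so that it makes no ``dead-end'' excursion, and in particular so that it stays at $\dpath$-distance at least $r$ from $\Upsilon_\Gamma\setminus\ol{V}$ (near-boundary points of $\ol{V}$, being separated from the deep-inside endpoints $x,y$, only increase the cost and can be excised). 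For such a chain, each step with $\dpath[\ol{V}](u_i,u_{i+1})<\epsilon$ contributes $\ac{\epsilon}$ to the shortcut metric and, on the event $A_{\epsilon,r}$, also satisfies $\metapproxres{\epsilon}{V}{u_i}{u_{i+1}}{\Gamma}\le\ac{\epsilon}$, while the remaining steps contribute $\metapproxres{\epsilon}{V}{u_i}{u_{i+1}}{\Gamma}$ to both. Summing by the triangle inequality yields $\metapproxres{\epsilon}{V}{x}{y}{\Gamma}\le\metapproxacres{\epsilon}{V}{x}{y}{\Gamma}$ on $A_{\epsilon,r}$, which is the precise form of the identity stated after~\eqref{eq:shortcutted_metric}.

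For the second reduction, recall that $\metapproxacres{\epsilon}{V}{\cdot}{\cdot}{\Gamma}$ also obeys the compatibility axiom and the generalized parallel law, so Lemma~\ref{le:gasket_interior_bound} and Lemma~\ref{le:tightness_loop_chain} apply to it. The only place the proof of Proposition~\ref{prop:interior_tightness} used $V\in\metregions[(r)]$ was to guarantee the hypothesis of Lemma~\ref{le:gasket_interior_bound} --- that $\ol{V}$ contains every short simple admissible path joining two points of the chaining path. Here this follows from $\dpath(U,\Upsilon_\Gamma\setminus\ol{V})\ge r$: picking an admissible $\gamma\subseteq\ol{V}$ from $x$ to $y$ with $\diamE(\gamma)<2\dpath[\ol{V}](x,y)$, when $\dpath[\ol{V}](x,y)\le r/4$ every point of $\gamma$ is at $\dpath$-distance at least $r/2$ from $\Upsilon_\Gamma\setminus\ol{V}$, so the simple admissible detours of diameter $\ll r$ used in the chaining cannot exit $\ol{V}$; when $\dpath[\ol{V}](x,y)>r/4$ one chains through intermediate points of mutual $\dpath$-distance at most $r/8$, whose number is controlled by the space-filling regularity (Lemma~\ref{le:fill_ball}) exactly as in Proposition~\ref{prop:interior_tightness}. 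Thus the chaining argument goes through with $V$ in place of a region of $\metregions[(r)]$ and gives $\median{\epsilon}^{-1}\metapproxacres{\epsilon}{V}{x}{y}{\Gamma}\le C X_{\epsilon,r/C,\zeta}(\dpath[\ol{V}](x,y)^\zeta+\epsilon^{\epsexp})$. Combining this with the first reduction, with $\dpath[U]\ge\dpath[\ol{V}]$, and absorbing the $\ac{\epsilon}\le\epsilon^{2\epsexp}\median{\epsilon}$ corrections into the $\epsilon^{\epsexp}$ term completes the proof.

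The main obstacle is entirely one of bookkeeping: verifying that ``$\dpath$-distance at least $r$ from $\Upsilon_\Gamma\setminus\ol{V}$'' is preserved by the short local moves used above --- the series-law rerouting in the first reduction and the short simple admissible detours in the chaining --- so that the axioms (the series law, compatibility, and the generalized parallel law for $\metapproxacres{}$, and the event $A_{\epsilon,r}$) can be invoked without their auxiliary $\cserial\epsilon$-thresholds getting in the way. Here one uses that $\cserial\epsilon$ is negligible against the fixed scale $r$, and that $\diamE(\gamma)$ is small precisely when $\dpath[\ol{V}](x,y)$ is small. No part of the genuinely hard analysis --- the chaining tail bounds, Lemma~\ref{le:tightness_loop_chain}, or Proposition~\ref{prop:interior_tightness} itself --- needs to be revisited.
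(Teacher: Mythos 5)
Your overall plan matches the paper's: obtain the bound for $\metapproxacres{\epsilon}{V}{\cdot}{\cdot}{\Gamma}$ from the chaining argument behind Proposition~\ref{prop:interior_tightness}, then transfer it to $\metapproxres{\epsilon}{V}{\cdot}{\cdot}{\Gamma}$ on the event $A_{\epsilon,r}$. Your second reduction --- that the hypothesis $\dpath(U,\Upsilon_\Gamma\setminus\ol{V})\ge r$ supplies what was previously furnished by $V\in\metregions[(r)]$, so that Lemma~\ref{le:gasket_interior_bound} and Lemma~\ref{le:tightness_loop_chain} still apply --- is correct and is exactly what the paper asserts when it says the proof of Proposition~\ref{prop:interior_tightness} yields the bound ``for every $U\subseteq\ol{V}$ as above.''

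The gap is in your first reduction. You claim that a near-optimal chain $(u_i)$ in the definition~\eqref{eq:shortcutted_metric} of $\metapproxacres{\epsilon}{V}{x}{y}{\Gamma}$ can, by the series law, be rerouted to stay at $\dpath$-distance at least $r$ from $\Upsilon_\Gamma\setminus\ol{V}$, on the grounds that ``near-boundary points of $\ol{V}$, being separated from the deep-inside endpoints $x,y$, only increase the cost and can be excised.'' That parenthetical is not justified: a point of $\ol{V}$ which is $\dpath$-close to $\Upsilon_\Gamma\setminus\ol{V}$ need not be cut off from $x,y$ by a single separation point, and without such a bottleneck the series-law excision argument (the one used after~\eqref{eq:shortcutted_metric} to check compatibility) does not apply. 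Moreover you are aiming for the stronger inequality $\metapproxres{\epsilon}{V}{x}{y}{\Gamma}\le\metapproxacres{\epsilon}{V}{x}{y}{\Gamma}$ on $A_{\epsilon,r}$, which requires \emph{every} near-optimal chain (or at least some near-optimal chain) to have the desired property --- a claim that isn't needed and isn't obviously true. The paper's argument is constructive and sidesteps this entirely: the specific chain produced by Lemma~\ref{le:gasket_interior_bound} runs through separation points lying in small annuli around a path $\gamma\subseteq\ol{V}$ of small diameter joining $x,y\in U$, so by construction all its points satisfy $\dpath(u_i,U)\le\delta^{1-\innexp}$; taking $\delta$ small enough, $A_{\epsilon,r}$ then gives $\metapproxres{\epsilon}{V}{u_i}{u_{i+1}}{\Gamma}\le\ac{\epsilon}$ on the $\epsilon$-small steps, and the triangle inequality bounds $\metapproxres{\epsilon}{V}{x}{y}{\Gamma}$ directly by the same sum that bounds $\metapproxacres{\epsilon}{V}{x}{y}{\Gamma}$. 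Your second reduction already contains the ingredients for this; the fix is simply to drop the rerouting step and read the required chain off the chaining construction itself.
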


\begin{proof}
 The proof of Proposition~\ref{prop:interior_tightness} shows that off an event with probability $o^\infty(\delta)$ we have
 \[ 
  \sup_{x,y \in U \cap \Upsilon_\Gamma} \frac{\median{\epsilon}^{-1}\metapproxacres{\epsilon}{V}{x}{y}{\Gamma}}{\dpath[U](x,y)^\zeta+\epsilon^{\epsexp}} \le \delta^{-1} ,
 \]
 for every $U \subseteq \ol{V}$ as above, where in the metric $\metapproxacres{\epsilon}{V}{\cdot}{\cdot}{\Gamma}$ we can choose the finite sequence of points $(u_i)$ so that $\dpath(u_i,U) \le \delta^{1-\innexp}$ for a small constant $\innexp>0$. In particular, when $\delta^{1-\innexp} < r/2$, on the event $A_{\epsilon,r}$ the same bound applies to $\metapproxres{\epsilon}{V}{x}{y}{\Gamma}$.
\end{proof}

We conclude this section showing a lower bound for the polynomial scaling behavior of CLE metrics which is the counterpart to Lemma~\ref{le:median_scaling}, and the comparability of the quantiles defined in Section~\ref{se:intersections_setup}. We need to assume the analogue of~\eqref{eq:approx_error_asymp} in the setup of Section~\ref{se:intersections_setup}, namely that, if $U$ denotes the region bounded between $\eta_1^1,\eta_2^1$, then
\begin{equation}\label{eq:approx_error_bubbles}
 \lim_{\epsilon \searrow 0} \p\left[ \sup_{\substack{u,v \in B(0,3/4) \\ \dpath(u,v) < \epsilon}} \metapproxres{\epsilon}{U}{u}{v}{\Gamma_1} \le \ac{\epsilon} \right] = 1 .
\end{equation}

\begin{lemma}\label{le:median_scaling_lb}
 Consider the setup in Section~\ref{se:intersections_setup}, and assume~\eqref{eq:approx_error_bubbles}. Fix $q \in (0,1)$, $\epsexp > 0$. For sufficiently small $\epsilon$ we have
 \[ 
  \quant[1]{q}{\epsilon} \le \delta^{-d_\SLE+o(1)}(\median[\delta]{\epsilon} + \epsilon^{-\epsexp}\ac{\epsilon})
  \quad\text{as } \delta \searrow 0 .
 \]
\end{lemma}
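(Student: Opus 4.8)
The plan is to reverse the logic of the proof of Lemma~\ref{le:median_scaling}. There we used a self-similar decomposition to conclude that distances \emph{on scale $\delta$} are short compared to $\delta^{\ddouble}$ times the scale-$1$ quantile; here I want to go the other direction and show that a single scale-$1$ crossing cannot be too large compared to the scale-$\delta$ quantile. The key geometric input is that the region $U$ bounded between $\eta_1^1,\eta_2^1$ contains, with high probability, a chain of order $\delta^{-d_\SLE+o(1)}$ consecutive ``nice pockets'' of Euclidean size comparable to $\delta$ lined up along a macroscopic \slek{} flow line segment (this is the double-point/whole-plane self-similarity already exploited in Sections~\ref{se:intersection_exponent} and~\ref{se:bubble_exponent}, and the exponent $d_\SLE$ arises because the number of macroscopic-to-$\delta$ pockets strung along a single SLE$_\kappa$ curve is controlled by the dimension $d_\SLE=1+\kappa/8$ of the curve rather than by the double-point dimension). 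Each such pocket, after mapping back to $\D$, has law comparable to the setup of Section~\ref{se:intersections_setup} at scale $\delta$, so that conditionally on the pocket structure the distance across each pocket exceeds $\median[\delta]{\epsilon}$ with probability bounded below by a fixed constant.

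The key steps, in order: (1) Fix $a>0$ small and, as in Step~1--4 of the proof of Lemma~\ref{le:a_priori_int_fl}, work with slightly wider-angle flow lines $\wt{\eta}_1,\wt{\eta}_2$ so that, on an event of probability bounded below, $U$ contains a macroscopic pocket $P$ with a point $z_P$ on the hyperbolic geodesic at distance $\gtrsim \diam(P)$ from $\partial P$; inside $P$, the flow lines $\eta_1^1,\eta_2^1$ are the flow lines of the restricted GFF. (2) Inside $P$, use the whole-plane/two-sided SLE self-similarity (Lemma~\ref{le:variation_one_scale}, and the fact that $\eta_1^1\cap\eta_2^1$ has dimension $\ddouble$) together with Lemma~\ref{le:fill_ball} to produce, with probability tending to $1$, a chain of $N_\delta = \delta^{-d_\SLE+o(1)}$ disjoint ``nice pockets'' $P_1,\ldots,P_{N_\delta}$ of size $\asymp\delta$ arranged consecutively between two intersection points $x_0,y_0$ of $\eta_1^1,\eta_2^1$ in $B(0,1/4)$, with pairwise Euclidean distance $\ge \delta^{1-a}$. (3) For each $P_i$, using absolute continuity (Lemma~\ref{le:abs_cont_kernel}), the translation invariance and Lemma~\ref{le:good_scales_merging} exactly as in Step~5 of the proof of Lemma~\ref{le:a_priori_int_fl}, show the conditional probability (given $\wt\eta_1,\wt\eta_2$ and the pocket structure) that the distance across the $\eta_1^1,\eta_2^1$-segments inside $P_i$ is at least $\median[c_0\delta]{q'}{\epsilon}$ is at least a fixed $p_1>0$, with $c_0>0$ depending only on the geometry. (4) By the Markovian property the internal metrics in the disjoint $P_i$ are conditionally independent, so by Cram\'er's theorem with probability $1-e^{-cN_\delta+o(N_\delta)}\to 1$ a positive fraction of the $P_i$ have distance $\ge \median[c_0\delta]{\epsilon}$ across; since the pockets are $\ge\delta^{1-a}$ apart, the compatibility~\eqref{eq:strong_compatibility_bubbles} and the series law~\eqref{eq:monotonicity_across_bubbles} give $\metapproxres{\epsilon}{U_{x_0,y_0}}{x_0}{y_0}{\Gamma_1} \gtrsim N_\delta \median[c_0\delta]{\epsilon}$ for a suitable $(x_0,y_0)\in\intptsapprox{1}{\epsilon}$ lying in our pocket chain, modulo the short segments (of total $\dpath$-size $\le\cserial\epsilon$, contributing at most $O(\epsilon^{-\epsexp}\ac{\epsilon})$ once we pass to the shortcutted metric via~\eqref{eq:approx_error_bubbles}). (5) Since this event has probability bounded below (away from $0$ uniformly in $\delta$), the definition of the quantile $\quant[1]{q}{\epsilon}$ forces $\quant[1]{q}{\epsilon} \lesssim N_\delta\,(\median[c_0\delta]{\epsilon} + \epsilon^{-\epsexp}\ac{\epsilon})$; absorbing the $c_0$ into a constant shift of $\delta$ and using $N_\delta=\delta^{-d_\SLE+o(1)}$ gives the claim.

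The main obstacle is step~(2): producing order $\delta^{-d_\SLE+o(1)}$ \emph{consecutive} nice pockets along a single flow-line segment, each comparable to $\delta$, with uniformly good inner geometry and with enough independence to run the Cram\'er bound in step~(4). The point is that along one macroscopic flow line the accumulation of scale-$\delta$ bubbles is governed by the SLE$_\kappa$ curve dimension $d_\SLE$, not by $\ddouble$, so the self-similar counting must be done for the curve $\wt\eta_1$ (or its two-sided whole-plane version via Lemma~\ref{le:law_bubble} and the stationary-increments property of the natural parameterization) rather than for the intersection set; making this precise while keeping the ``nice pocket'' regularity (a point on the hyperbolic geodesic with $\dist\gtrsim\diam$) uniformly controlled is the technical heart, but all the needed ingredients---Lemma~\ref{le:variation_one_scale}, Lemma~\ref{le:fill_ball}, Lemma~\ref{le:good_scales_merging}, \cite[Theorem~1.5]{mw2017intersections} for the double-point dimension, and \cite{zhan-sle-loop} for the whole-plane self-similarity---are already available in the excerpt. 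A secondary, routine obstacle is handling the short ($<\epsilon$) segments between the macroscopic pockets: here assumption~\eqref{eq:approx_error_bubbles} plus the definition~\eqref{eq:shortcutted_metric} of $\metapproxac{\epsilon}{\cdot}{\cdot}{\Gamma}$ reduce their contribution to $O(\epsilon^{-\epsexp}\ac{\epsilon})$, which is exactly the error term appearing in the statement.
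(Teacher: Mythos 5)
Your proposal has a genuine gap at step~(5): the inequality comes out in the wrong direction. You show that with probability tending to $1$ the scale-$1$ crossing distance is \emph{at least} $N_\delta\,\median[c_0\delta]{\epsilon}$. By the definition of $\quant[1]{q}{\epsilon}$ (in~\eqref{eq:quantile_def}), this forces $\quant[1]{q}{\epsilon} \gtrsim N_\delta\,\median[c_0\delta]{\epsilon}$, a \emph{lower} bound on the quantile, whereas the lemma asserts an \emph{upper} bound. Rewritten for the median, your argument would give $\median[\delta]{\epsilon} \lesssim \delta^{d_\SLE+o(1)}\quant[1]{q}{\epsilon}$, which is the reverse of the claim and, if true, would contradict Lemma~\ref{le:median_scaling} (which forces $\median[\delta]{\epsilon}\lesssim\delta^{\ddouble+o(1)}\quant{q}{\epsilon}$ with $\ddouble<d_\SLE$). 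The pocket-counting strategy that works in Lemma~\ref{le:median_scaling} for the $\ddouble$ exponent cannot simply be inverted to produce the $d_\SLE$ upper bound, because the logical structure is asymmetric: a Cram\'er bound over independent pockets gives lower bounds on crossing distances, never upper bounds.

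There is also a second issue, in step~(2). The number of disjoint scale-$\delta$ pockets (complementary components between $\eta_1^1$ and $\eta_2^1$) along the crossing is governed by the dimension of the intersection set $\eta_1^1\cap\eta_2^1$, i.e.\ it is $\delta^{-\ddouble+o(1)}$, not $\delta^{-d_\SLE+o(1)}$. This is exactly the count used in Step~4 of the proof of Lemma~\ref{le:a_priori_int_fl}. The exponent $d_\SLE$ in the present lemma does \emph{not} come from counting pockets: it comes from Lemma~\ref{le:variation_one_scale}, which says the flow line $\eta_1$ can be partitioned into $O(\delta^{-d_\SLE-a})$ time-intervals each of Euclidean diameter $\le\delta$. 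These are pieces of the curve, most of which do not correspond to pockets. To prove the lemma one needs to control the metric across \emph{every} such piece, not just the pockets, and sum. This is what the paper does: it applies Proposition~\ref{prop:interior_tightness} to the rescaled metric $\metapprox{\epsilon}{\delta\cdot}{\delta\cdot}{\delta\Gamma}$ (whose normalization is $\median[\delta]{\epsilon}$), giving a H\"older bound with superpolynomial tails that holds simultaneously in all balls $B(z,\delta)$; it then uses Lemma~\ref{le:variation_one_scale} to chop the crossing into $O(\delta^{-d_\SLE-a})$ pieces of $\dpath$-diameter $\le\delta$ and sums the H\"older bounds via the triangle inequality. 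The superpolynomial tail is essential for the union bound over the $O(\delta^{-2})$ balls, which a quantile argument per pocket cannot furnish.
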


\begin{lemma}\label{le:quantiles_comparable}
 Consider the setup in Section~\ref{se:intersections_setup}, and assume~\eqref{eq:approx_error_bubbles}. For any $q,q' \in (0,1)$, $\epsexp > 0$, there exists $c>0$ such that, for sufficiently small $\epsilon$,
 \[ \quant{q}{\epsilon} \le c(\quant{q'}{\epsilon} + \epsilon^{-\epsexp}\ac{\epsilon}) . \]
\end{lemma}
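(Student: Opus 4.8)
The plan is to obtain Lemma~\ref{le:quantiles_comparable} by chaining the two scaling estimates for the quantiles proved above, Lemma~\ref{le:median_scaling_lb} and Lemma~\ref{le:median_scaling}, evaluated at a single \emph{fixed} auxiliary scale $\delta$. The one thing to be aware of is that $\ddouble < d_\SLE$ for $\kappa' \in (4,8)$, so letting $\delta \searrow 0$ in such a chain would produce a power of $\delta$ with negative exponent; but once $\delta$ is frozen this contributes only a finite multiplicative constant, which is exactly what the statement allows. So there is no need to optimize over $\delta$.

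Concretely, I would fix a (small) $b > 0$. By Lemma~\ref{le:median_scaling}, applied with its two quantile levels taken to be $1/2$ and $q'$, there is $\delta_1 > 0$ so that for all $\delta < \delta_1$ and all $\epsilon < \delta$,
\[ \median[\delta]{\epsilon} = \quant[\delta]{1/2}{\epsilon} \le \delta^{\ddouble - b}\quant{q'}{\epsilon} . \]
By Lemma~\ref{le:median_scaling_lb}, applied with level $q$ and with the $\epsexp$ given in the statement (this is where the standing hypothesis~\eqref{eq:approx_error_bubbles} is used), there is $\delta_2 > 0$ so that for all $\delta < \delta_2$ and all sufficiently small $\epsilon$,
\[ \quant{q}{\epsilon} \le \delta^{-d_\SLE - b}\bigl(\median[\delta]{\epsilon} + \epsilon^{-\epsexp}\ac{\epsilon}\bigr) . \]
I would then fix $\delta := \tfrac12\min(\delta_1,\delta_2)$ once and for all. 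For all sufficiently small $\epsilon$ (in particular $\epsilon < \delta$), inserting the first display into the second gives
\[ \quant{q}{\epsilon} \le \delta^{-d_\SLE - b}\bigl(\delta^{\ddouble - b}\quant{q'}{\epsilon} + \epsilon^{-\epsexp}\ac{\epsilon}\bigr) \le c\bigl(\quant{q'}{\epsilon} + \epsilon^{-\epsexp}\ac{\epsilon}\bigr) \]
with $c := \max\bigl(\delta^{\ddouble - d_\SLE - 2b},\, \delta^{-d_\SLE - b}\bigr)$, a finite constant depending only on $q, q', \epsexp$ (through the choice of $\delta$). This is the desired bound. (When $q \le q'$ the inequality is already immediate from monotonicity of quantiles, but the argument above does not need this reduction and in fact gives it for all $q, q' \in (0,1)$.)

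I do not expect a substantive obstacle here: the genuinely hard inputs — the superpolynomial tail Proposition~\ref{prop:intersection_crossing_exponent} and the two scaling lemmas built on it — are already in place. The only care required is bookkeeping: one must read the $o(1)$ exponents in Lemmas~\ref{le:median_scaling} and~\ref{le:median_scaling_lb} as holding uniformly in $\epsilon$ (as they are stated), so that after $\delta$ is fixed the sole remaining requirement is that $\epsilon$ be small, and one should carry the additive term $\epsilon^{-\epsexp}\ac{\epsilon}$ through both applications rather than attempt to absorb it prematurely.
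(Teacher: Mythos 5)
Your proof is correct and matches the paper's own argument almost verbatim: both chain Lemma~\ref{le:median_scaling} (to control $\median[\delta]{\epsilon}$ by $\quant{q'}{\epsilon}$) with Lemma~\ref{le:median_scaling_lb} (to control $\quant{q}{\epsilon}$ by $\median[\delta]{\epsilon}$) at a single fixed auxiliary scale $\delta$, and then absorb the resulting powers of $\delta$ into the constant $c$. The only cosmetic difference is that the paper chooses $\delta$ small enough to make the factor $\delta^{\ddouble+o(1)}\le 1$ before substituting, whereas you carry that factor along and fold it into $c$ at the end — these are equivalent.
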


\begin{corollary}\label{co:quantiles_metric}
 Suppose $\met{\cdot}{\cdot}{\Gamma}$ is a \clekp{} (pseudo-)metric in the sense of Definition~\ref{def:cle_metric}. Let $\median[\delta]{}$ and $\quant[\delta]{q}{}$ be as defined in Section~\ref{se:intersections_setup}. Then
 \[ \delta^{d_\SLE+o(1)}\median[r]{} \le \median[r\delta]{} \le \delta^{\ddouble+o(1)}\median[r]{} \]
 for any $r,\delta \in (0,1)$. Further,
 \[ \sup_{r\in (0,1]} \frac{\quant[r]{q}{}}{\quant[r]{q'}{}} < \infty \]
 for any $q,q' \in (0,1)$.
\end{corollary}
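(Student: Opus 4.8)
The plan is to derive Corollary~\ref{co:quantiles_metric} by combining the scaling lemmas already proved for approximate metrics (Lemma~\ref{le:median_scaling}, Lemma~\ref{le:median_scaling_lb}, Lemma~\ref{le:quantiles_comparable}) with the fact that a \clekp{} metric is the case $\epsilon=0$ of an approximate \clekp{} metric, so that $\ac{\epsilon}$ plays no role. More precisely, a \clekp{} metric in the sense of Definition~\ref{def:cle_metric} is by definition an approximate \clekp{} metric with $\epsilon = 0$; in that case the terms $\ac{\epsilon}$ do not appear, and the metric $\metapproxacres{0}{V}{\cdot}{\cdot}{\Gamma}$ in~\eqref{eq:shortcutted_metric} coincides with $\metres{V}{\cdot}{\cdot}{\Gamma}$ (since the second branch of~\eqref{eq:shortcutted_metric} is vacuous as $\epsilon = 0$, and the first branch reproduces the metric by the triangle inequality and continuity). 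Thus all the quantities $\median[\delta]{}$, $\quant[\delta]{q}{}$ from Section~\ref{se:intersections_setup} are well-defined for $\met{\cdot}{\cdot}{\Gamma}$, simply by specializing the definitions~\eqref{eq:quantile_def} to $\epsilon = 0$.

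First I would record the upper bound $\median[r\delta]{} \le \delta^{\ddouble+o(1)}\median[r]{}$. Applying Lemma~\ref{le:median_scaling} to the metric $\met{\cdot}{\cdot}{\Gamma}$ (i.e. with $\epsilon=0$) and with the roles played by the scale $r$ fixed first gives, after the rescaling $\mettres{r^{-1}}{\cdot}{\cdot}{r^{-1}\Gamma} = \metres{}{r\cdot}{r\cdot}{\Gamma}$ used in Lemma~\ref{le:scaled_metric}, that $\mediant[\delta]{r^{-1}} = \median[r\delta]{} \le \delta^{\ddouble+o(1)}\mediant{r^{-1}} = \delta^{\ddouble+o(1)}\median[r]{}$. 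Here I use that the hypotheses of Lemma~\ref{le:median_scaling} are stable under the scaling in Lemma~\ref{le:scaled_metric}, and that for $\epsilon = 0$ the $\ac{\epsilon}$-terms vanish so the conclusion is clean. For the lower bound $\delta^{d_\SLE+o(1)}\median[r]{} \le \median[r\delta]{}$, I would likewise apply Lemma~\ref{le:median_scaling_lb} to the rescaled metric; the condition~\eqref{eq:approx_error_bubbles} is automatic when $\epsilon = 0$ (the supremum is over an empty set of pairs, or equivalently the bound $\metres{U}{u}{v}{\Gamma} \le \ac{0} = 0$ for $\dpath(u,v) < 0$ is vacuous), so Lemma~\ref{le:median_scaling_lb} gives $\quant[r]{1/2}{} = \median[r]{} \le \delta^{-d_\SLE+o(1)}\median[r\delta]{}$, which rearranges to the claimed inequality. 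The comparability of quantiles across fixed scales, $\sup_{r\in(0,1]}\quant[r]{q}{}/\quant[r]{q'}{} < \infty$, then follows by applying Lemma~\ref{le:quantiles_comparable} to the rescaled metrics: for $\epsilon=0$ it yields $\quant[r]{q}{} \le c\,\quant[r]{q'}{}$ with $c$ not depending on $r$ (one must check the constant $c$ in Lemma~\ref{le:quantiles_comparable} is uniform over the choice of approximate \clekp{} metric, which is part of the standing convention in the \textbf{Notation} paragraph that all estimates are uniform within the class of approximate \clekp{} metrics with the same constants $\cserial,\cparallel(N)$).

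The main obstacle I anticipate is purely bookkeeping: one must make sure that the rescaled metric $\metres{}{r\cdot}{r\cdot}{\Gamma}$ genuinely falls within the class to which Lemmas~\ref{le:median_scaling}, \ref{le:median_scaling_lb}, and \ref{le:quantiles_comparable} apply, with the same constants $\cserial, \cparallel(N)$, and that the ``$o(1)$'' error terms there are uniform in the fixed scale $r$ (they are, because those lemmas are stated uniformly over the relevant class, and scaling by a fixed $r$ only changes the domain to $r\D$, which by Remark~\ref{rm:domain_choice} is an admissible ambient domain). A secondary subtlety is verifying the identification $\metapproxacres{0}{V}{\cdot}{\cdot}{\Gamma} = \metres{V}{\cdot}{\cdot}{\Gamma}$ used implicitly when transferring the statements from the $\metapproxac{\epsilon}{}{}{}$-setting to the $\met{}{}{}$-setting; this uses the continuity assumption on $\metres{V}{\cdot}{\cdot}{\Gamma}$ with respect to $\dpath[\ol{V}]$ imposed in the $\epsilon=0$ case in Section~\ref{se:assumptions}, together with the elementary fact that a continuous pseudometric equals its own ``shortcut'' infimum. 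Once these points are checked, the corollary is immediate.
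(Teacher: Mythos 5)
Your argument is correct and is the natural (essentially unavoidable) proof: Corollary~\ref{co:quantiles_metric} is stated in the paper immediately after Lemmas~\ref{le:median_scaling_lb} and~\ref{le:quantiles_comparable} precisely because it follows from specializing Lemmas~\ref{le:median_scaling}, \ref{le:median_scaling_lb}, and~\ref{le:quantiles_comparable} to $\epsilon=0$ (where the $\ac{\epsilon}$-terms vanish and the hypotheses~\eqref{eq:approx_error_bubbles} and~\eqref{eq:approx_error_asymp} are vacuous), combined with the rescaling in Lemma~\ref{le:scaled_metric} to pass from $(\delta,1)$ to $(r\delta,r)$. You correctly identify the two real checks: that $\metapproxacres{0}{V}{\cdot}{\cdot}{\Gamma}=\metres{V}{\cdot}{\cdot}{\Gamma}$ (from the triangle inequality and the emptiness of the second branch of~\eqref{eq:shortcutted_metric}), and that all implicit constants and $o(1)$ errors are uniform within the class of (approximate) \clekp{} metrics with fixed $\cserial,\cparallel(N)$, which the rescaled metric stays inside — this is exactly what guarantees uniformity in $r$ and hence the $\sup_r$ finiteness. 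One small notational slip: in the display $\mediant[\delta]{r^{-1}}=\median[r\delta]{}$ the argument $r^{-1}$ should not appear, since at $\epsilon=0$ the rescaled metric carries no approximation parameter (the correct identity is $\mediant[\delta]{}=\median[r\delta]{}$ for the rescaled metric with $\lambda=r$); this does not affect the substance of the argument.
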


\begin{proof}[Proof of Lemma~\ref{le:median_scaling_lb}]
 Assume $\epsilon$ is small enough to that the probability in~\eqref{eq:approx_error_bubbles} is at least $1-(1-q)\pmed/2$.
 
 We can assume that the boundary values of $h$ are as in Lemma~\ref{le:reflected_fl_law}. We can transfer to the boundary values as in the setup of Section~\ref{se:intersections_setup} via a local absolute continuity argument (as done in the proof of Lemma~\ref{le:median_scaling}).
 
 In particular, we assume that $\eta_1,\eta_2$ are coupled with $h$ so that $\eta_1$ is the flow line from $-i$ to $i$, and the reversal of $\eta_2$ is the flow line from $i$ to $-i$, reflected off $\eta_1$. Let $\Gamma$ be the collection of \clekp{} in the components bounded between $\eta_1,\eta_2$, coupled with $h$.
 
 Fix $a>0$ small. We apply Proposition~\ref{prop:interior_tightness} to the metric $\mettapprox{\delta^{-1}\epsilon}{\cdot}{\cdot}{\Gamma} = \metapprox{\epsilon}{\delta\cdot}{\delta\cdot}{\delta\Gamma}$ so that $\mediant{\delta^{-1}\epsilon} = \median[\delta]{\epsilon}$ (cf.\ Lemma~\ref{le:scaled_metric}). By the translation invariance and the monotonicity of the metrics (applied to suitable regions as in the proof of Proposition~\ref{prop:interior_tightness}), it follows that within every ball $B(z,\delta) \subseteq B(0,1/2)$ we have
 \[
  \metapproxac{\epsilon}{u}{v}{\Gamma_1} \le \delta^{-a}((\delta^{-1}\dpath(u,v))^\zeta \median[\delta]{\epsilon} + (\delta^{-1}\epsilon)^{-\epsexp}\ac{\epsilon})
  \quad\text{for } u,v \in B(z,\delta) \cap \Upsilon_{\Gamma}
 \]
 off an event with probability $o^\infty(\delta)$. Taking a union bound, this holds simultaneously for each ball $B(z,\delta) \subseteq B(0,1/2)$.
 
 By Lemma~\ref{le:variation_one_scale} and local absolute continuity, we can find an event with probability as close to $1$ as we like such that for each $\delta > 0$ and $\eta_1(s),\eta_1(t) \in B(0,1/2)$ there is a sequence of $O(\delta^{-d_\SLE-a})$ time points $(t_l)$ such that $t_1 = s$, $t_L = t$, and $\diam(\eta_1[t_l,t_{l+1}]) \le \delta$ for each $l$. In particular, $\dpath(\eta_1(t_l),\eta_1(t_{l+1})) \le \delta$ for each $l$. We conclude that in the setup of Section~\ref{se:intersections_setup} we have
 \[
  \p\left[ \sup_{(x',x,y,y')\in\intptsapprox{1}{\epsilon}}\metapproxacres{\epsilon}{U_{x',y'}}{x}{y}{\Gamma_1} > \delta^{-d_\SLE-2a}(\median[\delta]{\epsilon} + \epsilon^{-\epsexp}\ac{\epsilon}) \right] = o(1) 
  \quad\text{as } \delta \searrow 0 .
 \]
 In particular, for $\delta$ small enough the probability is smaller than $(1-q)\pmed/2$. If we are on the event in~\eqref{eq:approx_error_bubbles}, then $\metapproxres{\epsilon}{U_{x',y'}}{x}{y}{\Gamma_1} = \metapproxacres{\epsilon}{U_{x',y'}}{x}{y}{\Gamma_1}$. We conclude that $\quant[1]{q}{\epsilon} \le \delta^{-d_\SLE-2a}(\median[\delta]{\epsilon} + \epsilon^{-\epsexp}\ac{\epsilon})$.
\end{proof}

\begin{proof}[Proof of Lemma~\ref{le:quantiles_comparable}]
 By Lemma~\ref{le:median_scaling}, we have $\median[\delta]{\epsilon} \le \delta^{\ddouble+o(1)}\quant[1]{q'}{\epsilon}$ for $\epsilon < \delta$. In particular, we can find $\delta > 0$ such that $\median[\delta]{\epsilon} \le \quant[1]{q'}{\epsilon}$. By Lemma~\ref{le:median_scaling_lb}, we then have $\quant[1]{q}{\epsilon} \le \delta^{-d_\SLE+o(1)}(\median[\delta]{\epsilon} + \epsilon^{-\epsexp}\ac{\epsilon}) \le \delta^{-d_\SLE+o(1)}(\quant[1]{q'}{\epsilon} + \epsilon^{-\epsexp}\ac{\epsilon})$. 
\end{proof}

\section{Construction of the metric}
\label{se:construction_metric}

\newcommand*{\FS}{\mathfrak{S}}
\newcommand*{\FQ}{\mathfrak{Q}}

We have shown in Section~\ref{se:tightness_proof} that the approximate CLE metrics are tight. This yields weak limits along subsequences. In this section, we construct a CLE metric from the subsequential limits. We show in Section~\ref{se:proof_axioms} that it satisfies the axioms of a CLE metric in Section~\ref{se:assumptions}, and in Section~\ref{se:nondegeneracy} that it either is a true (positive) metric or it is identically zero. Finally, we show in Section~\ref{se:geodesic_metric} that if the approximation scheme is geodesic, the limiting metric is geodesic.

We consider the setup from Sections~\ref{se:assumptions}--\ref{subsec:main_statement}. In particular, let $\Gamma_\D$ be a nested $\CLE_{\kappa'}$ in $\D$, let $\CL$ be the outermost loop of $\Gamma_\D$ that surrounds~$0$, and let $D$ be the regions surrounded by $\CL$. Let $\Gamma_D$ be the loops of $\Gamma_\D$ contained in $\ol{D}$, and let $\Gamma = \{\CL\} \cup \Gamma_D$. Let $\Upsilon_\Gamma$ be the gasket of $\Gamma_D$. Recall the definition of the set of regions $\metregions$ on which the internal metrics are defined. We will assume from now on that $(\metapprox{\epsilon}{\cdot}{\cdot}{\Gamma})_{\epsilon \in (0,\epsilon_0]}$ is a good approximation scheme in the sense of Definition~\ref{def:good_approximation} (this includes the assumption that $\median{\epsilon} > 0$).

Throughout this section, we let
\begin{align*}
 \CD_k &= 2^{-k}\Z^2 ,\quad \CD = \bigcup_{k \in \N} \CD_k ,\\
 \FS_k &= \left\{ [i2^{-k},(i+1)2^{-k}] \times [j2^{-k},(j+1)2^{-k}] \subseteq \D \ :\  i,j\in\Z \right\} ,\\
 \FQ_k &= \{ \operatorname{int}(S_1 \cup S_2 \cup \cdots) \text{ simply connected} : S_1,S_2,\ldots \in \FS_k \} ,\\
 \FQ &= \bigcup_{k \in \N} \FQ_k ,
\end{align*}
where $\operatorname{int}(S)$ denotes the topological interior of a set $S$.

For each open, simply connected $Q \subseteq \D$, we define a metric $D_\epsilon^{Q}$ as follows. Let $\cserial$ be the constant in the assumption~\eqref{it:mon_large_loops} of the monotonicity~\eqref{eq:approx_monotonicity_axiom}. By \cite[Lemma~5.14]{amy-cle-resampling}, for each $Q_1 \Subset Q$ there exists a finite set of points $z_1,z_2,\ldots \in (Q \setminus Q_1) \cap \Upsilon_\Gamma$ that separate $Q_1 \cap \Upsilon_\Gamma$ from $\partial Q$. For each $\epsilon > 0$, we select a set of points $\{z_1,z_2,...\} \subseteq Q \cap \Upsilon_\Gamma$ with $\abs{z_i-z_{i'}} \ge \cserial\epsilon$ and $\distE(z_i,\partial Q) \ge \cserial\epsilon$ for each $i\neq i'$, and let $K \subseteq Q \cap \Upsilon_\Gamma$ be the set of points separated from $\partial Q$ by $\{z_1,z_2,...\}$. We select $\{z_1,z_2,...\}$ so that $\max_{x \in Q \cap \Upsilon_\Gamma \setminus K} \distE(x, \partial Q)$ is minimized among all such sets of points (breaking ties e.g.\ with the lexicographical ordering of $\{\distE(z_i,\partial Q)\}_i$). Let $\wt{Q}_\epsilon$ be the minimal simply connected set with $K \subseteq \ol{\wt{Q}_\epsilon}$ (i.e.\ $\ol{\wt{Q}_\epsilon}$ is the union of $\ol{K}$ with its bounded complementary components). Let $D_\epsilon^{Q} = \median{\epsilon}^{-1}\metapproxres{\epsilon}{\wt{Q}_\epsilon}{\cdot}{\cdot}{\Gamma}$.\footnote{The concrete choices we made in the construction of $D_\epsilon^{Q}$ are not relevant as long as the monotonicity~\eqref{eq:approximations_monotone} is satisfied. This is because we will construct the limiting metric via the infimum~\eqref{eq:limiting_metric} below.}

For $Q_1 \Subset Q$, let $D_\epsilon^{Q,Q_1}$ be the restriction of $D_\epsilon^{Q}$ to $Q_1 \times Q_1$. By the monotonicity~\eqref{eq:approx_monotonicity_axiom} of the approximating metrics, we have
\begin{equation}\label{eq:approximations_monotone}
 D_\epsilon^{Q',Q_1} \le D_\epsilon^{Q,Q_1} \quad\text{for each } Q_1 \Subset Q \Subset Q'
\end{equation}
when $\epsilon$ is sufficiently small.

For each $Q_1,Q \in \FQ$ with $Q_1 \Subset Q$, consider the random function $D_\epsilon^{Q,Q_1}$ in the topology defined in Section~\ref{subsec:topology}. By Corollary~\ref{co:tightness_original_metric}, the assumption~\eqref{eq:approx_error_asymp}, Lemma~\ref{le:gasket_compact}, and Proposition~\ref{pr:ghf_tightness_hoelder_approx}, the laws of
\begin{equation}\label{eq:converging_metrics}
 \left( \Upsilon_\Gamma ,\, \left( D_\epsilon^{Q,Q_1} \right)_{Q,Q_1} \right)
\end{equation}
are tight as we let $\epsilon \searrow 0$. Pick a subsequence $\epsilon_n \searrow 0$ so that the joint vector for all $Q_1,Q \in \FQ$, $Q_1 \Subset Q$ converges in law. Write
\[ \left( \Upsilon_\Gamma ,\, \left( D^{Q,Q_1} \right)_{Q,Q_1} \right) \]
for the limit.

\begin{lemma}\label{le:limiting_rv}
 The following hold almost surely.
 \begin{enumerate}[(i)]
  \item\label{it:extension} If $Q_1 \subseteq Q_1' \Subset Q$, then $D^{Q,Q_1} = D^{Q,Q_1'}$ on $(Q_1 \cap \Upsilon_\Gamma) \times (Q_1 \cap \Upsilon_\Gamma)$.
  \item\label{it:monotonicity} If $Q_1 \Subset Q \Subset Q'$, then $D^{Q',Q_1} \le D^{Q,Q_1}$.
 \end{enumerate}
\end{lemma}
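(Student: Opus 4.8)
The plan is to obtain both assertions by passing to the limit statements that hold \emph{exactly} (resp.\ eventually) for the prelimit metrics $D_\epsilon^{Q,Q_1}$. For \eqref{it:extension}: since $D_\epsilon^{Q,Q_1}$ and $D_\epsilon^{Q,Q_1'}$ are by definition both restrictions of the single function $D_\epsilon^{Q}$, whenever $Q_1 \subseteq Q_1' \Subset Q$ we have, for \emph{every} $\epsilon > 0$, the exact identity $D_\epsilon^{Q,Q_1'} = D_\epsilon^{Q,Q_1}$ on $(Q_1\cap\Upsilon_\Gamma)\times(Q_1\cap\Upsilon_\Gamma)$. For \eqref{it:monotonicity}: the monotonicity \eqref{eq:approximations_monotone} of the approximating metrics gives, for each fixed triple $Q_1 \Subset Q \Subset Q'$, a threshold $\epsilon_\ast(Q_1,Q,Q') > 0$ such that $D_\epsilon^{Q',Q_1} \le D_\epsilon^{Q,Q_1}$ on $(\ol{Q_1}\cap\Upsilon_\Gamma)^2$ for all $\epsilon < \epsilon_\ast(Q_1,Q,Q')$; in particular $D_{\epsilon_n}^{Q',Q_1} \le D_{\epsilon_n}^{Q,Q_1}$ for all sufficiently large $n$.

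To transfer these relations to the limit, I would first invoke the Skorokhod representation theorem (cf.\ Appendix~\ref{app:conditional_laws}) to realize the joint convergence in law in \eqref{eq:converging_metrics} of the countable family $\big(\Upsilon_\Gamma, (D_{\epsilon_n}^{Q,Q_1})_{Q_1\Subset Q,\,Q_1,Q\in\FQ}\big)$ as an almost sure convergence. The crucial point is that the ``space'' component of each tuple --- namely $\Upsilon_\Gamma$ together with the fixed metric $\dpath$, the embedding $\Pi$ into the plane, and the separation sets $A_n$ --- does not depend on $\epsilon$, so by Lemma~\ref{le:ghf_subspace} and Lemma~\ref{le:ghf_subspace_proj} the limiting space is canonically identified with $\Upsilon_\Gamma$, equivalently with the appropriate $\dpath$-completions $(\ol{Q_1}\cap\Upsilon_\Gamma,\dpath[\ol{Q_1}])$. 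Combining this with the uniform H\"older bound of Corollary~\ref{co:tightness_original_metric}, the assumption \eqref{eq:approx_error_asymp}, and the compactness from Lemma~\ref{le:gasket_compact} (which together underlie the tightness via Proposition~\ref{pr:ghf_tightness_hoelder_approx}), the GHf convergence upgrades in this coupling to uniform convergence $D_{\epsilon_n}^{Q,Q_1} \to D^{Q,Q_1}$ on the compact space $(\ol{Q_1}\cap\Upsilon_\Gamma)^2$ for each pair $Q_1 \Subset Q$, with $D^{Q,Q_1}$ a continuous function there.

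With this in hand the conclusions are immediate. For \eqref{it:extension}, passing to the limit in the exact identity $D_{\epsilon_n}^{Q,Q_1'} = D_{\epsilon_n}^{Q,Q_1}$ on $(Q_1\cap\Upsilon_\Gamma)^2$ yields $D^{Q,Q_1'} = D^{Q,Q_1}$ there. For \eqref{it:monotonicity}, passing to the limit in $D_{\epsilon_n}^{Q',Q_1} \le D_{\epsilon_n}^{Q,Q_1}$ (valid for all $n$ past $\epsilon_\ast(Q_1,Q,Q')$) preserves the non-strict inequality, giving $D^{Q',Q_1} \le D^{Q,Q_1}$. Since $\FQ$ is countable, a single null set handles all admissible choices of $Q_1, Q_1', Q, Q'$ simultaneously.

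The main obstacle is the step in the second paragraph: making precise that, although the compact domains $(\ol{Q_1}\cap\Upsilon_\Gamma)^2$ vary with $Q_1$ and the functions are a priori only known to converge in the GHf topology, the limiting functions $D^{Q,Q_1}$ all live on one and the same ($\epsilon$-independent) space, so that equality and ordering can be compared pointwise. This is exactly what the compatibility lemmas for the GHf topology (Lemma~\ref{le:ghf_subspace}, Lemma~\ref{le:ghf_subspace_proj}) provide, together with the equicontinuity furnished by the uniform H\"older estimates of Section~\ref{subsec:interior_tightness}; once these are invoked, the remainder is the routine principle that closed relations are preserved under (uniform) limits.
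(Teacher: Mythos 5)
Your proof is correct and takes essentially the same route as the paper's one-line proof: observe that both relations hold for the prelimit metrics (identically for~\eqref{it:extension}, and eventually in $\epsilon$ via~\eqref{eq:approximations_monotone} for~\eqref{it:monotonicity}), then transfer them to the GHf limit using the appendix compatibility lemmas. The one difference is that for~\eqref{it:monotonicity} the paper simply cites Lemma~\ref{le:ghf_monotonicity}, which is exactly the statement that a pointwise order between two function sequences on a common prelimit space is preserved by GHf limits; you instead argue via Skorokhod representation plus an upgrade to uniform convergence in a common embedding, which is in effect the content of the proof of Lemma~\ref{le:ghf_monotonicity} (it proceeds through Lemma~\ref{le:ghf_common_embedding}, equicontinuity via Lemma~\ref{le:hf_approx_equicont}, and Proposition~\ref{pr:ghf_positive}). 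Either formulation is fine; citing Lemma~\ref{le:ghf_monotonicity} directly would save you the discussion. One small imprecision to flag: after the Skorokhod coupling the prelimit gaskets $\Upsilon_{\Gamma_n}$ are different realizations, not literally the fixed $\Upsilon_\Gamma$, so ``uniform convergence on $(\ol{Q_1}\cap\Upsilon_\Gamma)^2$'' should be read as $d_\infty$-convergence in a common embedding of all these compact spaces; the identification of the resulting limit with $\Upsilon_\Gamma$ is precisely what Lemmas~\ref{le:ghf_subspace} and~\ref{le:ghf_subspace_proj} (via the embedding $\Pi$) deliver, and you invoke them correctly.
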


\begin{proof}
 These properties are satisfied by the approximations $(D_\epsilon^{Q,Q_1})$, in particular~\eqref{eq:approximations_monotone}. They are preserved by the GHf convergence, cf.\ Lemmas~\ref{le:ghf_subspace},~\ref{le:ghf_subspace_proj}, and~\ref{le:ghf_monotonicity}.
\end{proof}

Lemma~\ref{le:limiting_rv}\eqref{it:extension} allows us to define the random function $D^Q$ on $(Q \cap \Upsilon_\Gamma) \times (Q \cap \Upsilon_\Gamma)$ as the common extension of $D^{Q,Q_1}$, $Q_1 \Subset Q$. For $V \in \metregions$ define
\begin{equation}\label{eq:limiting_metric}
 \metres{V}{x}{y}{\Gamma} = \sup_{Q \in \FQ, Q \Supset V} D^Q(x,y) ,\quad x,y \in \ol{V} \cap \Upsilon_\Gamma .
\end{equation}

It follows immediately from~\eqref{eq:limiting_metric} that the family of metrics just constructed satisfy the monotonicity
\[ \metres{V'}{x}{y}{\Gamma} \le \metres{V}{x}{y}{\Gamma} ,\quad x,y \in \ol{V} \cap \Upsilon_\Gamma , \]
for $V \subseteq V'$, and the separability condition
\[ \metres{V}{x}{y}{\Gamma} = \lim_{V' \searrow V} \metres{V'}{x}{y}{\Gamma} ,\quad x,y \in \ol{V} \cap \Upsilon_\Gamma . \]

\begin{theorem}\label{th:metric}
 Let $(\metapprox{\epsilon}{\cdot}{\cdot}{\Gamma})_{\epsilon \in (0,\epsilon_0]}$ be a good approximation scheme. Let $(\epsilon_n)$ be any subsequence as above. Then the random family of functions defined in~\eqref{eq:limiting_metric} is almost surely a \clekp{} metric in the sense of Definition~\ref{def:cle_metric} (with $\epsilon = 0$).
\end{theorem}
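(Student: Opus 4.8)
The plan is to verify the axioms of Definition~\ref{def:cle_metric} (with $\epsilon = 0$) for the family $(\metres{V}{\cdot}{\cdot}{\Gamma})_{V\in\metregions}$ defined in~\eqref{eq:limiting_metric}, one at a time. Symmetry, the triangle inequality and non-negativity are closed conditions satisfied by each approximation $D_{\epsilon_n}^{Q,Q_1}$, and they are preserved both under the GHf convergence and under the pointwise suprema in~\eqref{eq:limiting_metric}. The separability axiom~\eqref{eq:separability_axiom} and the full monotonicity $\metres{V'}{\cdot}{\cdot}{\Gamma}\le\metres{V}{\cdot}{\cdot}{\Gamma}$ for $V\subseteq V'$ — which is stronger than the monotonicity axiom, hence covers the cases~\eqref{it:mon_dead_ends} and~\eqref{it:mon_large_loops} — are immediate from~\eqref{eq:limiting_metric} together with Lemma~\ref{le:limiting_rv}. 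To obtain $\metres{V}{x}{x}{\Gamma}=0$ and continuity of $\metres{V}{\cdot}{\cdot}{\Gamma}$ with respect to $\dpath[\ol V]$ I would pass the uniform Hölder bounds of Proposition~\ref{prop:interior_tightness} and Corollary~\ref{co:tightness_original_metric} to the subsequential limit; since these bounds hold with a random constant having superpolynomial tails uniformly in $\epsilon$, the limit inherits an a.s.\ finite Hölder constant with respect to $\dpath[\ol V]$, which yields both continuity and the vanishing on the diagonal.

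For the series law and the generalized parallel law I would use that the separating structure, together with the separating tuples of $\Upsilon_\Gamma$, is recorded in the GHf topology through the sets $A_n$ of Section~\ref{subsec:topology}. Each $\epsilon_n$-approximation satisfies the corresponding approximate inequality, and its side conditions of the form $\distE(K_x,K_y)\ge\cserial\epsilon_n$ and $\abs{z_i-z_{i'}}\ge\cserial\epsilon_n$ become vacuous as $\epsilon_n\searrow 0$. Hence for each fixed $Q\in\FQ$ with $Q\supseteq V$ one obtains the limiting inequality for $D^Q$ by approximating $z_1,\dots,z_N$ by separating points of the $\epsilon_n$-gaskets and invoking the continuity of the limit metric; taking $Q$ along a decreasing cofinal sequence in $\FQ$ and using that $D^{Q}(x,\cdot)$ is then monotone nondecreasing (Lemma~\ref{le:limiting_rv}\eqref{it:monotonicity}), monotone convergence promotes these to the inequalities for $\metres{V}{\cdot}{\cdot}{\Gamma}$. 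For the generalized parallel law one uses $V_x\subseteq V$ to choose nested cofinal sequences for $V$ and $V_x$, so that the $\epsilon_n$-parallel law (whose buffer around $K_x$ shrinks to $\varnothing$) applies to the nested pair of regions. The compatibility axiom passes to the limit by the same scheme, or can be deduced from monotonicity and the series law exactly as is done for $\metapproxac{\epsilon}{\cdot}{\cdot}{\Gamma}$ below~\eqref{eq:shortcutted_metric}.

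The remaining two axioms — translation invariance and the Markovian property — are the crux, since conditional independence and conditional measurability are not closed under weak limits, and I would treat them in parallel. For the Markovian property, fix $U\subseteq\C$ open and simply connected. For $Q,Q_1\in\FQ$ with $Q_1\Subset Q\Subset U$, the rescaled internal metrics $D_{\epsilon_n}^{Q,Q_1}$ are, conditionally on $\Gamma\setminus\Gamma_{U^*}$ and on the exterior metrics, measurable with respect to $U^*$ by the Markovian property of the $\epsilon_n$-approximation, while by the domain Markov property of $\CLE_{\kappa'}$ the conditional law of the interior CLE data (i.e.\ of $U_1^*$ together with the induced link patterns) given the partial exploration $\Gamma_\outside^{*,V,U}$ depends on $\Gamma_\outside^{*,V,U}$ continuously in total variation (Lemma~\ref{le:partial_cle_tv}). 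For each $\epsilon_n$ the joint law of the exterior gasket, the exterior metrics, the interior CLE data and the interior metrics therefore factors as an outer law, followed by a total-variation--continuous ``CLE kernel'', followed by a kernel from $U_1^*$ to the interior metrics. Using the tightness of Section~\ref{se:tightness_proof} I would pass to a further subsequence along which the whole joint law, and this last kernel, converge (this does not change the already determined limit metric). The measure-theoretic lemmas of Appendix~\ref{app:conditional_laws} then convert the total-variation continuity of the CLE kernel, together with the joint convergence, into the statement that in the limit the conditional law of the interior metrics given $\Gamma\setminus\Gamma_{U^*}$, the exterior metrics and $U_1^*$ is measurable with respect to $U_1^*$, and is precisely the kernel that realizes the interior metric in the global limit~\eqref{eq:limiting_metric}. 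Since $\metres{V}{x}{y}{\Gamma}=\lim_{Q\searrow V,\,Q\Subset U}D^Q(x,y)$ for $V\in\metregions[U]$, letting $U_1\nearrow U$ and invoking the separability axiom gives the Markovian property for the full family $(\metres{V}{\cdot}{\cdot}{\Gamma})_{V\in\metregions[U]}$. Translation invariance follows by the identical argument with Lemma~\ref{le:translation_tv} replacing Lemma~\ref{le:partial_cle_tv}, the total-variation convergence of the law of $(U+z)^*$ to that of $U^*$ providing the continuity needed to identify the limiting kernel $\mu^{U^*}$. The hardest step, which is exactly what the lemmas of Appendix~\ref{app:conditional_laws} are built to handle, is (i) showing that the kernels from $U_1^*$ (respectively from $U^*$) to the interior metrics converge to a genuine limiting kernel that remains $U_1^*$-measurable (respectively $U^*$-measurable) — using the uniform tightness of these conditional laws as a family indexed by the CLE configuration — and (ii) checking that this limiting kernel is consistent with the metric constructed globally in~\eqref{eq:limiting_metric}.
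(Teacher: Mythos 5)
Your overall plan is the paper's own: verify the axioms one at a time, use the $A_n$-encoded separation structure in the GHf topology to pass the series and parallel laws to the limit, use Lemma~\ref{le:partial_cle_tv} and Lemma~\ref{le:translation_tv} together with the Appendix~\ref{app:conditional_laws} machinery to control the Markov and translation-invariance kernels, and combine the H\"older bounds of Proposition~\ref{prop:interior_tightness} with Lemma~\ref{le:limiting_rv} for continuity and monotonicity. In that sense the proposal is on the right track and matches Section~\ref{se:construction_metric} closely, including the factorisation of the joint law and the bootstrap from the weak (CLE-only) Markov property to the full one.

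There are, however, two points where the sketch as written would not go through. First, the claim that the side condition $\distE(K_x,K_y)\ge\cserial\epsilon_n$ in the series law ``becomes vacuous as $\epsilon_n\searrow 0$'' is false for a fixed configuration: the two components $K_x$ and $K_y$ of $\ol{V}\cap\Upsilon_\Gamma\setminus\{z_1,z_2\}$ generically share the double point $z_i$ of the enclosing CLE loops in their Euclidean closures, so $\distE(K_x,K_y)=0$ for \emph{every} $\epsilon_n>0$. Approximating the $z_i$ by separating points of the $\epsilon_n$-gasket does not by itself fix this. The paper's device is to first perturb $z$ to a non-constant sequence $z^m\to z$ that still separates, and then invoke the \emph{already proven} compatibility (Proposition~\ref{pr:compatibility}) to restrict $K_{x_n}$ and $K_{y_n}$ to sub-components that have uniformly positive Euclidean distance, after which the approximate series law applies for $\epsilon_n$ small. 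Without this step the series law has a gap. Second, and related to this ordering, the suggested alternative route of ``deducing compatibility from monotonicity and the series law as is done for $\metapproxac{\epsilon}{\cdot}{\cdot}{\Gamma}$'' is not available here: the argument below~\eqref{eq:shortcutted_metric} shows that $\metapproxac{\epsilon}{\cdot}{\cdot}{\Gamma}$ \emph{inherits} compatibility from the assumed compatibility of $\metapprox{\epsilon}{\cdot}{\cdot}{\Gamma}$, it does not synthesise compatibility from monotonicity and the series law. For the limit metric, compatibility must be proved directly (by passing to the limit with $x_n,y_n\to x,y$ and using the tracked separation points), and it must come \emph{before} the series law because the series-law proof calls it; deriving compatibility from the series law would be circular.
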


We complete the proof of Theorem~\ref{th:metric} in Section~\ref{se:proof_axioms}. At this point, it is not guaranteed that the pseudo-metrics $(\metres{V}{\cdot}{\cdot}{\Gamma})$ are true metrics. We will show in Section~\ref{se:nondegeneracy} that either $(\metres{V}{\cdot}{\cdot}{\Gamma})$ are true metrics or they are identically zero. For approximations of geodesic metrics we will show in Section~\ref{se:geodesic_metric} that the former is the case.

\begin{remark}\label{rm:compatibility_renormalisation}
The choice of our renormalization factors $\median{\epsilon}$ in Section~\ref{se:intersections_setup} suggest that the subsequential limits should not be identically zero. However, without further assumptions it is not clear that the random variables $\median{\epsilon}^{-1}\metapproxres{\epsilon}{U_{x',y'}}{x}{y}{\Gamma_1}$ in Section~\ref{se:intersections_setup} converge in law to the random variable $\metres{U_{x',y'}}{x}{y}{\Gamma_1}$ constructed in~\eqref{eq:limiting_metric}, and hence the non-triviality of the latter is not clear. This is due to the fact that the setup of Section~\ref{se:intersections_setup} corresponds to the region bounded between the \emph{outer boundaries} of two loop segments. If we let $U'$ contain a neighborhood of the region, i.e.\ we attach to $U_{x',y'}$ the ``dead ends'' disconnected by either $\eta'_{1,\delta}$ or $\eta'_{2,\delta}$, then for $\epsilon > 0$ the internal metric $\metapproxres{\epsilon}{U'}{x}{y}{\Gamma_1}$ might be different from $\metapproxres{\epsilon}{U_{x',y'}}{x}{y}{\Gamma_1}$ because we have imposed an additional condition in the compatibility assumption in Section~\ref{se:assumptions} which does not hold for $U_{x',y'}$ and $U'$. (In the case of geodesic approximation schemes, the compatibility property holds without the extra restriction, hence we do have $\median{\epsilon}^{-1}\metapproxres{\epsilon}{U_{x',y'}}{x}{y}{\Gamma_1} \to \metres{U_{x',y'}}{x}{y}{\Gamma_1}$ in that case, see Lemma~\ref{le:geodesic_metric_positive}.)
\end{remark}

\subsection{Proof that the limits are CLE metrics}
\label{se:proof_axioms}

\begin{proposition}[compatibility]\label{pr:compatibility}
 Almost surely the following holds. Let $V,V' \in \metregions$, $V \subseteq V'$, and $x,y \in \ol{V} \cap \Upsilon_\Gamma$ such that for every $u \in V' \setminus V$ there is a point $z \in \ol{V}$ that separates $u$ from $x,y$ in $\ol{V'} \cap \Upsilon_\Gamma$. Then $\metres{V}{x}{y}{\Gamma} = \metres{V'}{x}{y}{\Gamma}$.
\end{proposition}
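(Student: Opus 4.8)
The inequality $\metres{V'}{x}{y}{\Gamma} \le \metres{V}{x}{y}{\Gamma}$ is immediate from the monotonicity of the family $(\metres{V}{\cdot}{\cdot}{\Gamma})$ recorded just after~\eqref{eq:limiting_metric}, since $V \subseteq V'$. The plan is therefore to prove the reverse inequality $\metres{V}{x}{y}{\Gamma} \le \metres{V'}{x}{y}{\Gamma}$, and by the defining formula~\eqref{eq:limiting_metric} it suffices to show $D^Q(x,y) \le \metres{V'}{x}{y}{\Gamma}$ for every $Q \in \FQ$ with $Q \Supset V$. Given such a $Q$, I would first enlarge it to a region $Q' = Q \cup N \in \FQ$ with $Q' \Supset V'$, where $N$ is a dyadic neighbourhood of $V' \setminus V$ taken fine enough that every point of $(Q' \setminus Q) \cap \Upsilon_\Gamma$ is still separated from $x,y$ in $\ol{Q'} \cap \Upsilon_\Gamma$ by a point of $\ol{Q} \cap \Upsilon_\Gamma$. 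This is possible because by hypothesis $V' \setminus V$ lies in dead ends of $\ol{V'} \cap \Upsilon_\Gamma$ relative to $x,y$, attaching thin dead ends cannot destroy the separating points, and by local finiteness of $\Gamma$ only finitely many of the loops involved are macroscopic, so a sufficiently fine $N$ preserves the separation structure (one also chooses $N$ so that $Q \cup N$ has simply connected interior, filling in any holes created). Granting this, it remains to prove the following compatibility statement for the limiting outer metrics: if $Q \subseteq Q' \in \FQ$ and every point of $(Q' \setminus Q) \cap \Upsilon_\Gamma$ is separated from $x,y$ in $\ol{Q'} \cap \Upsilon_\Gamma$ by a point of $\ol{Q} \cap \Upsilon_\Gamma$, then $D^Q(x,y) = D^{Q'}(x,y)$. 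Indeed this gives $D^Q(x,y) = D^{Q'}(x,y) \le \sup_{\wt{Q} \in \FQ,\, \wt{Q} \Supset V'} D^{\wt{Q}}(x,y) = \metres{V'}{x}{y}{\Gamma}$, and taking the supremum over $Q \Supset V$ completes the argument.

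To prove $D^Q(x,y) = D^{Q'}(x,y)$, recall that $D^{Q}(x,y) = \lim_n \median{\epsilon_n}^{-1}\metapproxres{\epsilon_n}{\wt{Q}_{\epsilon_n}}{x}{y}{\Gamma}$, and similarly for $Q'$, where $\wt{Q}_\epsilon$ is the region used in the construction of $D_\epsilon^Q$ (obtained by retaining the part of $\Upsilon_\Gamma$ separated from $\partial Q$ by a collection of cutting points pairwise and boundary-separated at Euclidean distance $\ge \cserial\epsilon$, chosen so as to cut away as little of $Q$ as possible); evaluation at the fixed pair $x,y$ passes to the Gromov--Hausdorff-function limit using the uniform H\"older continuity of the approximations from Corollary~\ref{co:tightness_original_metric} together with the convergence of the embeddings. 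Hence it suffices to show $\metapproxres{\epsilon}{\wt{Q}_\epsilon}{x}{y}{\Gamma} = \metapproxres{\epsilon}{\wt{Q'}_\epsilon}{x}{y}{\Gamma}$ for all sufficiently small $\epsilon$. Since $Q \subseteq Q'$ with $Q' \setminus Q$ contained in dead ends relative to $x,y$, the ``main body'' of $\wt{Q'}_\epsilon$ coincides with that of $\wt{Q}_\epsilon$, and $\wt{Q'}_\epsilon$ differs from $\wt{Q}_\epsilon$ only by appendages lying inside $Q' \setminus Q$; because the $\cserial\epsilon$-separated cutting points sever the thin necks leading into these appendages, each appendage is separated from $x,y$ by a point $z \in \ol{\wt{Q}_\epsilon} \cap \Upsilon_\Gamma$ with $\dpath[\ol{\wt{Q'}_\epsilon}](z, \wt{Q'}_\epsilon \setminus \wt{Q}_\epsilon) \ge \cserial\epsilon$ (after possibly enlarging $\cserial$ by a fixed factor, and comparing both regions to $\wt{Q}_\epsilon \cap \wt{Q'}_\epsilon$ via the monotonicity of the approximate metrics in order to absorb the part of $\wt{Q}_\epsilon$ not contained in $\wt{Q'}_\epsilon$). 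The buffered compatibility axiom of Section~\ref{se:assumptions} applied to the pair $\wt{Q}_\epsilon \subseteq \wt{Q'}_\epsilon$ then yields the claimed equality, and letting $\epsilon = \epsilon_n \to 0$ finishes the proof.

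The main obstacle is this last step: making precise the geometric comparison of $\wt{Q}_\epsilon$ and $\wt{Q'}_\epsilon$ and verifying, uniformly in $\epsilon$, that the extra appendages of $\wt{Q'}_\epsilon$ meet the hypothesis of the buffered compatibility axiom, namely a $\dpath$-separation of size a fixed multiple of $\cserial\epsilon$ from a single point of $\ol{\wt{Q}_\epsilon}$. Two features need care. First, $V' \setminus V$ may have infinitely many dead-end components; by local finiteness of $\Gamma$ and Lemma~\ref{le:gasket_compact} only finitely many exceed any given diameter $\delta$, so one handles those via buffered compatibility and treats the remaining ones --- which are in any case removed in the $\wt{(\cdot)}_\epsilon$ construction --- using the uniform H\"older bound of Corollary~\ref{co:tightness_original_metric}, which shows their total effect on the $x$--$y$ distance tends to $0$ as $\delta \to 0$. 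Second, a dead-end component may be pinched at a point of $\partial V$, leaving no room for a buffer inside $V$; here the required buffer is produced inside the dead end itself, by excising a $\dpath$-ball of radius a fixed multiple of $\cserial\epsilon$ about the pinch point --- which is precisely the effect of the $\cserial\epsilon$-separated cutting in the $\wt{(\cdot)}_\epsilon$ construction. Modulo these technical points the proof is a routine passage to the limit from the buffered compatibility of the approximating metrics.
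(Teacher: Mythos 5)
Your overall strategy (reduce to $D^Q=D^{Q'}$ for appropriately chosen $Q\subset Q'$, then use the approximate compatibility axiom) matches the paper's, and the monotonicity step and the reduction to dyadic $Q,Q'$ are fine. The serious gap is in the final passage to the limit. You write ``$D^{Q}(x,y) = \lim_n \median{\epsilon_n}^{-1}\metapproxres{\epsilon_n}{\wt{Q}_{\epsilon_n}}{x}{y}{\Gamma}$'' and then aim to prove $\metapproxres{\epsilon}{\wt{Q}_\epsilon}{x}{y}{\Gamma} = \metapproxres{\epsilon}{\wt{Q'}_\epsilon}{x}{y}{\Gamma}$ for the limiting $\Gamma$ and let $\epsilon\to 0$. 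But the convergence in Section~\ref{se:construction_metric} is in law in the GHf topology: in a Skorokhod coupling the underlying gaskets $\Gamma_n$ at stage $\epsilon_n$ are \emph{not} the limiting $\Gamma$, only copies converging to it, and the points $x,y$ become sequences $x_n,y_n \in \Upsilon_{\Gamma_n}$ converging in the common embedding. So the statement you actually need is $\metapproxres{\epsilon_n}{\cdot}{x_n}{y_n}{\Gamma_n} = \metapproxres{\epsilon_n}{\cdot}{x_n}{y_n}{\Gamma_n}$, i.e.\ the compatibility axiom applied in the approximating gasket $\Gamma_n$. That requires the separation hypothesis to hold for $\Gamma_n$, not just for $\Gamma$. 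This is precisely why the topology of Section~\ref{subsec:topology} carries the sets $A_n$ encoding separation points: it is the mechanism for transferring the hypothesis ``every $u\in Q'\setminus Q$ is separated from $x,y$ by a point of $\ol{Q}$'' from the limit gasket to $\Gamma_n$ for $n$ large, at which point one can apply approximate compatibility in $\Gamma_n$ (with a $V_r$-buffer giving the required $\cserial\epsilon_n$ separation) and take the GHf limit of the resulting equalities $D_{\epsilon_n}^Q(x_n,y_n)=D_{\epsilon_n}^{Q'}(x_n,y_n)$. Your write-up never invokes this tracked separation structure, so the ``letting $\epsilon=\epsilon_n\to 0$ finishes the proof'' step does not close.

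A second, smaller remark: your detailed analysis of the geometry of $\wt{Q}_\epsilon$ versus $\wt{Q'}_\epsilon$ (the paragraphs about infinitely many dead-end components and pinch points) is more than is needed once the above is in place, and some of it runs the wrong way --- the $\wt{Q}_\epsilon$-construction already prunes a $\cserial\epsilon$-buffer near $\partial Q$, so after moving to the coupled gaskets the approximate compatibility axiom applies essentially directly, with the $r$-neighbourhood $V_r$ of $V$ inside $Q$ supplying the buffer and the condition ``separated by a point of $V_r\cap\Upsilon_{\Gamma_n}$'' being the event tracked by the topology. Also note the paper avoids your $Q'=Q\cup N$ enlargement by instead taking $Q'$ to be a small neighbourhood of $\ol{V'}$ (so that points of $Q'\setminus\ol{V'}$ are cut off by a single point of $\partial V'$) and then $V\Subset Q\Subset Q'$; this is a cleaner way to guarantee the separation hypothesis holds in $\ol{Q'}\cap\Upsilon_\Gamma$ rather than merely in $\ol{V'}\cap\Upsilon_\Gamma$, which your sketch only handles heuristically.
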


\begin{proof}
 By the definition of $\metregions$, the boundary of $V'$ is part of finitely many loops of $\Gamma$. Therefore, if $Q' \supseteq \ol{V'}$ is contained in a sufficiently small neighborhood of $\ol{V'}$, then every $u \in Q' \setminus \ol{V'}$ is separated from $V'$ in $Q' \cap \Upsilon_\Gamma$ by a single point $z' \in \partial V'$. Let $Q,Q' \in \FQ$, $V \Subset Q \Subset Q'$. We argue that $D^Q(x,y) = D^{Q'}(x,y)$ for every pair $x,y \in \ol{V} \cap \Upsilon_\Gamma$ as above. This will imply the result due to~\eqref{eq:limiting_metric} and Lemma~\ref{le:limiting_rv}.
  
 Let $0 < r < \distE(V,\partial Q)$, and let $V_r$ be the $r$-neighborhood of $V$.
 
 Suppose $\Gamma$ and $(\Gamma_n)$ together with their internal metrics are coupled so that the random vector~\eqref{eq:converging_metrics} converges almost surely. Recall that we have chosen the topology in Section~\ref{subsec:topology} to keep track of the separation points. Therefore, almost surely, for each pair $x,y \in \ol{V} \cap \Upsilon_\Gamma$ as above there are sequences of $x_n,y_n \in Q \cap \Upsilon_{\Gamma_n}$, $x_n \to x$, $y_n \to y$ such that for every $u \in (Q' \setminus V_r) \cap \Upsilon_{\Gamma_n}$ there is a point $z \in V_r \cap \Upsilon_{\Gamma_n}$ that separates $u$ from $x_n,y_n$ in $Q' \cap \Upsilon_{\Gamma_n}$. Therefore we can apply the compatibility assumption for the approximating metrics which implies that $D_{\epsilon_n}^{Q}(x_n,y_n) = D_{\epsilon_n}^{Q'}(x_n,y_n)$ for large $n$, and hence $D^Q(x,y) = D^{Q'}(x,y)$.
 \end{proof}

\begin{lemma}
 Almost surely, for each $V \in \metregions$, the function $\metres{V}{\cdot}{\cdot}{\Gamma}$ defines a pseudo-metric on $\ol{V} \cap \Upsilon_\Gamma$, and is continuous with respect to $\dpath[\ol{V}]$.
\end{lemma}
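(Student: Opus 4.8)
The plan is to obtain each required property of $\metres{V}{\cdot}{\cdot}{\Gamma}$ --- nonnegativity, symmetry, the triangle inequality, vanishing on the diagonal, and continuity --- from the corresponding property of the approximating functions $D_\epsilon^Q$, pushing it first through the GHf limit that defines $D^Q$ and then through the supremum over $Q\in\FQ$, $Q\Supset V$, in~\eqref{eq:limiting_metric}. The pseudo-metric structure is essentially algebraic. The functions $\metapproxres{\epsilon}{\wt Q_\epsilon}{\cdot}{\cdot}{\Gamma}$, hence the rescaled $D_{\epsilon_n}^{Q,Q_1}$, are nonnegative, symmetric, and sub-additive by the axioms of Section~\ref{se:assumptions}; these are closed relations, so realizing the spaces in a common compact metric space (Lemmas~\ref{le:ghf_common_embedding} and~\ref{le:ghf_subspace}) and passing $D_{\epsilon_n}(x_n,y_n)=D_{\epsilon_n}(y_n,x_n)$ and $D_{\epsilon_n}(x_n,y_n)\le D_{\epsilon_n}(x_n,z_n)+D_{\epsilon_n}(z_n,y_n)$ to the limit shows that $D^{Q,Q_1}$, hence $D^Q$ (by Lemma~\ref{le:limiting_rv}\eqref{it:extension}), is nonnegative, symmetric and sub-additive; then so is $\metres{V}{\cdot}{\cdot}{\Gamma}=\sup_{Q\Supset V}D^Q$, the triangle inequality surviving the supremum since $\sup_Q(D^Q(x,z)+D^Q(z,y))\le\sup_QD^Q(x,z)+\sup_QD^Q(z,y)$. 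For the diagonal one uses that, by Corollary~\ref{co:tightness_original_metric} and~\eqref{eq:approx_error_asymp}, $D_{\epsilon_n}^{Q,Q_1}(x,x)\le Y_{\epsilon_n,r,\zeta}\,\epsilon_n^{\epsexp}$ on the events $A_{\epsilon_n,r}$, whose probabilities tend to $1$, while the $Y_{\epsilon_n,r,\zeta}$ are tight; hence $D^Q(x,x)=0$ and $\metres{V}{x}{x}{\Gamma}=0$.

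Continuity --- together with the finiteness needed for $\metres{V}{\cdot}{\cdot}{\Gamma}$ to be a genuine pseudo-metric --- I would obtain from a single H\"older estimate. Since $V\in\metregions$ lies in $\metregions[(r_V)]$ for some $r_V>0$ depending only on $V$ (the minimum of $\distE(\ol V,\partial\D)$ and of the Euclidean diameters of the finitely many loops forming $\partial V$), Proposition~\ref{prop:interior_tightness} gives a random variable $X_{\epsilon,r_V,\zeta}$ with superpolynomial tails, uniform in $\epsilon$, with $\median{\epsilon}^{-1}\metapproxacres{\epsilon}{V}{x}{y}{\Gamma}\le X_{\epsilon,r_V,\zeta}\bigl(\dpath[\ol V](x,y)^\zeta+\epsilon^{\epsexp}\bigr)$. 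Enlarging the joint convergence fixed in the construction of~\eqref{eq:converging_metrics} so that, along the same subsequence $(\epsilon_n)$, the countable family $\{X_{\epsilon_n,r,\zeta}\}_{r\in\Q_{>0}}$ also converges jointly (and using Proposition~\ref{pr:ghf_tightness_hoelder_approx}, so that the GHf limits inherit $\zeta$-H\"older continuity with respect to $\dpath$), I would show that $D^Q(x,y)\le C_V\,\dpath[\ol V](x,y)^\zeta$ for every $Q\Supset V$ and all $x,y\in\ol V\cap\Upsilon_\Gamma$, with $C_V$ a.s.\ finite and independent of $Q$. Granting this, the supremum in~\eqref{eq:limiting_metric} is at most $C_V\,\dpath[\ol V](x,y)^\zeta<\infty$, and the triangle inequality then yields $\lvert\metres{V}{x}{y}{\Gamma}-\metres{V}{x'}{y'}{\Gamma}\rvert\le C_V\bigl(\dpath[\ol V](x,x')^\zeta+\dpath[\ol V](y,y')^\zeta\bigr)$, i.e.\ joint $\zeta$-H\"older (in particular continuous) dependence of $(x,y)\mapsto\metres{V}{x}{y}{\Gamma}$ on $\dpath[\ol V]$; compactness of $\ol V\cap\Upsilon_\Gamma$ (Lemma~\ref{le:gasket_compact}) additionally confirms the pseudo-metric is bounded.

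The hard part is the uniformity in $Q$ in the last display. Proposition~\ref{prop:interior_tightness} controls $\metapproxres{\epsilon}{V}{\cdot}{\cdot}{\Gamma}$, whereas $D_\epsilon^Q$ involves the internal metric $\metapproxres{\epsilon}{\wt Q_\epsilon}{\cdot}{\cdot}{\Gamma}$ of the strictly larger region $\wt Q_\epsilon$, and applying Corollary~\ref{co:tightness_original_metric} to $\wt Q_\epsilon$ directly costs a separation parameter that degenerates as $Q\searrow V$, so the naive bound on $\sup_Q D^Q$ blows up. The fix is that, for $\epsilon$ small and any $Q\Supset V$, $\wt Q_\epsilon$ differs from $V$ only by ``dead ends'' attached across $\partial V$ --- the pieces of $\Upsilon_\Gamma$ cut off from $V$ by single loops of $\Gamma$, which, since the outer boundaries of $\CLE_{\kappa'}$ loops are simple, are each separated from any given pair $x,y\in\ol V\cap\Upsilon_\Gamma$ by a single point of $\ol V$ inside $\ol{\wt Q_\epsilon}\cap\Upsilon_\Gamma$ --- so that the compatibility axiom, in the limiting form already established in Proposition~\ref{pr:compatibility}, together with the monotonicity property~\eqref{eq:approx_monotonicity_axiom} under condition~\eqref{it:mon_dead_ends}, lets one replace $\metapproxres{\epsilon_n}{\wt Q_{\epsilon_n}}{x_n}{y_n}{\Gamma}$ by $\metapproxres{\epsilon_n}{V}{x_n'}{y_n'}{\Gamma}$ for points $x_n',y_n'$ within $\epsilon_n$ of $x_n,y_n$, up to the negligible additive error $\ac{\epsilon_n}\le\epsilon_n^{2\epsexp}\median{\epsilon_n}$, and then invoke Proposition~\ref{prop:interior_tightness} with the fixed parameter $r_V$. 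Making precise this separation structure of $\wt Q_\epsilon\setminus V$, via the description of the thin gasket, is the one genuinely technical point of the argument.
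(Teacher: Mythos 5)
Your proof follows essentially the same route as the paper's: symmetry, nonnegativity and the triangle inequality are closed under the GHf limit and under the monotone supremum over $Q$, the zero diagonal and interior continuity come from the H\"older tightness (Proposition~\ref{prop:interior_tightness} fed through Proposition~\ref{pr:ghf_tightness_hoelder_approx}), and the only genuine issue is uniformity near $\partial V$ as $Q\searrow\ol V$. The difference is how that last issue is resolved. The paper routes it through the already-proved Proposition~\ref{pr:compatibility}: for $V'\supseteq\ol V$ in a small enough neighborhood of $\ol V$, every $u\in\ol{V'}\setminus\ol V$ is cut off by a single point of $\partial V$, so $\metres{V}{\cdot}{\cdot}{\Gamma}=\metres{V'}{\cdot}{\cdot}{\Gamma}$ on $\ol V\cap\Upsilon_\Gamma$, and the boundary continuity for $V$ is reduced to the already-available interior continuity for $V'$. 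You instead propose to go back down to the $\epsilon_n$-approximation level, comparing $\metapproxres{\epsilon_n}{\wt Q_{\epsilon_n}}{\cdot}{\cdot}{\Gamma}$ to $\metapproxres{\epsilon_n}{V}{\cdot}{\cdot}{\Gamma}$ via the approximate compatibility and monotonicity axioms (with the $x_n',y_n'$ adjustments, the $\cserial\epsilon_n$ separation requirement, and the $\ac{\epsilon_n}$ error). This works, but it re-derives inside the present lemma exactly the bookkeeping that Proposition~\ref{pr:compatibility} has already packaged, so it is a detour. Two small cautions. First, your description of $\wt Q_\epsilon\setminus V$ consisting only of dead ends, and the uniformity of the H\"older constant $C_V$, are only valid for $Q$ sufficiently close to $\ol V$; you should say explicitly that by Lemma~\ref{le:limiting_rv}\eqref{it:monotonicity} the supremum in~\eqref{eq:limiting_metric} is the increasing limit as $Q\searrow\ol V$, so restricting to small $Q$ is harmless. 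Second, the justification you give for the single-point separation --- ``since the outer boundaries of $\CLE_{\kappa'}$ loops are simple'' --- is not the fact the paper leans on; what matters is that $\partial V$ is carried by finitely many loops of $\Gamma$, so a sufficiently small enlargement of $\ol V$ can only add components that are disconnected from $V$ at a single point. The simplicity of outer boundaries enters elsewhere (Lemma~\ref{le:thin_gasket_hoelder}), not here.
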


\begin{proof}
 Since $D_\epsilon^{Q}$ is symmetric and satisfies the triangular inequality for each $\epsilon$, these properties are preserved for $D^Q$ by the topology in which we take limits. Thanks to Lemma~\ref{le:limiting_rv}, they are also preserved under taking the supremum in~\eqref{eq:limiting_metric}. The fact that it is zero on the diagonal also follows from Propositions~\ref{prop:interior_tightness} and~\ref{pr:ghf_tightness_hoelder_approx}. Together, this implies also the continuity with respect to $\dpath$, at least in the interior of $V$. To see that the continuity also holds up to the boundary, note that by the definition of $\metregions$, the boundary of $V$ is part of finitely many loops of $\Gamma$. Therefore, if $V' \supseteq \ol{V}$ is contained in a sufficiently small neighborhood of $\ol{V}$, then every $u \in \ol{V'} \setminus \ol{V}$ is separated from $V$ in $\ol{V'} \cap \Upsilon_\Gamma$ by a single point $z \in \partial V$. By the compatibility Proposition~\ref{pr:compatibility} we have $\metres{V}{\cdot}{\cdot}{\Gamma} = \metres{V'}{\cdot}{\cdot}{\Gamma}$ on $\ol{V}$, in particular $\metres{V}{\cdot}{\cdot}{\Gamma}$ is continuous on $\ol{V} \cap \Upsilon_\Gamma$.
\end{proof}

\subsubsection{Proof of the Markovian property}

\begin{proposition}\label{pr:markov_property}
 Let $U \subseteq \D$ be a simply connected domain. The conditional law of the collection $(\metres{V}{\cdot}{\cdot}{\Gamma})_{V \in \metregions[U]}$ given $\Gamma\setminus\Gamma_{U^*}$ and $(\metres{V'}{\cdot}{\cdot}{\Gamma})_{V' \in \metregions[\D\setminus\ol{U}]}$ is almost surely measurable with respect to $U^*$.
\end{proposition}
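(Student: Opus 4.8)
The plan is to deduce the Markovian property for the limiting metric from the Markovian property of the approximations by leveraging the tightness established in Section~\ref{se:tightness_proof} together with the total variation continuity of the conditional laws of CLE (Lemma~\ref{le:partial_cle_tv}) and the abstract measure-theoretic inputs of Appendix~\ref{app:conditional_laws}. The essential difficulty is that conditional independence is not preserved under weak limits in general: the approximations $(\metapproxres{\epsilon_n}{V}{\cdot}{\cdot}{\Gamma})$ satisfy a Markovian property relative to the $\sigma$-algebra generated by $\Gamma\setminus\Gamma_{U^*}$, but weak convergence of the joint law does not automatically transfer a conditional independence statement to the limit. So the main obstacle is to upgrade weak convergence to convergence of the relevant conditional laws.

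First I would reduce to the discretized setting: it suffices to prove the statement for $U \in \FQ$ (a dyadic polygon), $V,V' \in \metregions$ with $V \Subset U$, $V' \Subset \D\setminus\ol{U}$, and for the random functions $D^{Q,Q_1}$, $D^{Q',Q'_1}$ on dyadic polygons $Q,Q_1 \Subset U$ and $Q',Q'_1 \Subset \D\setminus\ol{U}$, because the limiting internal metrics $\metres{V}{\cdot}{\cdot}{\Gamma}$ are built from these via the supremum~\eqref{eq:limiting_metric} and Lemma~\ref{le:limiting_rv}. Next, I would set up the conditioning carefully. Condition on the triple $\bigl(\Gamma\setminus\Gamma_{U^*},\, (D^{Q',Q'_1})_{Q',Q'_1},\, U^*\bigr)$. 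For each $\epsilon_n$, the Markovian property of the approximate metric (combined with Lemma~\ref{le:cond_laws_compatible}, which says the conditional law of the internal metrics within $U^*$ depends only on $U^*$ and not the choice of $U$) tells us that the conditional law of $(D_{\epsilon_n}^{Q,Q_1})_{Q,Q_1}$ given $\Gamma\setminus\Gamma_{U^*}$ and $(D_{\epsilon_n}^{Q',Q'_1})_{Q',Q'_1}$ is measurable with respect to $U^*$. The key point is that the space $U^*$ depends continuously (in total variation, away from the boundary, by Lemma~\ref{le:partial_cle_tv}) on the exterior data $\Gamma\setminus\Gamma_{U^*}$; more precisely, the conditional law of $\Gamma_{U^*}$ and of the interior link pattern given the exterior is a continuous function of the exterior observation into the space of probability measures with the total variation distance.

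The central step is then to pass to the limit. I would invoke the tightness result (Corollary~\ref{co:tightness_original_metric}, Proposition~\ref{prop:interior_tightness}) to ensure that the family of conditional laws $\mu_n^{U^*} := \mathrm{Law}\bigl((D_{\epsilon_n}^{Q,Q_1})_{Q,Q_1}\,\big|\, U^*\bigr)$ is tight as random probability measures, hence (along a further subsequence) converges; by the measure-theoretic lemmas of Appendix~\ref{app:conditional_laws}, which handle exactly the tightness and convergence of conditional laws, the limiting conditional law $\mu^{U^*}$ of $(D^{Q,Q_1})_{Q,Q_1}$ given $\Gamma\setminus\Gamma_{U^*}$ and $(D^{Q',Q'_1})_{Q',Q'_1}$ exists and is the weak limit of $\mu_n^{U^*}$. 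Since each $\mu_n^{U^*}$ is $\sigma(U^*)$-measurable and the map $U^* \mapsto \mu_n^{U^*}$ converges in a way compatible with the total variation continuity of the CLE conditional laws, the limit $\mu^{U^*}$ is measurable with respect to $U^*$. The role of Lemma~\ref{le:partial_cle_tv} is precisely to guarantee that no extra randomness leaks in from the exterior in the limit: the exterior enters only through $U^*$, and the dependence is continuous.

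Finally, I would assemble the pieces: having shown that for the countable collection of dyadic polygons the conditional law of $(D^{Q,Q_1})_{Q,Q_1}$ given the exterior data is $\sigma(U^*)$-measurable, taking suprema over $Q \Supset V$ as in~\eqref{eq:limiting_metric} (a deterministic, $\sigma(U^*)$-measurable operation once the $D^{Q,Q_1}$ are given) yields that the conditional law of $(\metres{V}{\cdot}{\cdot}{\Gamma})_{V \in \metregions[U]}$ given $\Gamma\setminus\Gamma_{U^*}$ and $(\metres{V'}{\cdot}{\cdot}{\Gamma})_{V' \in \metregions[\D\setminus\ol{U}]}$ is measurable with respect to $U^*$, which is the claim. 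The part I expect to require the most care is the application of Appendix~\ref{app:conditional_laws} to identify the limit of the conditional laws with the conditional law of the limit, and checking that the convergence respects the $\sigma(U^*)$-measurability — this is where one genuinely uses that CLE satisfies a total variation (not merely weak) domain Markov continuity via Lemma~\ref{le:partial_cle_tv}, rather than the weaker statement that would follow from plain weak convergence.
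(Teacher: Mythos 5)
Your proposal has the right ingredients (tightness, the total variation continuity of Lemma~\ref{le:partial_cle_tv}, and the measure-theoretic inputs of Appendix~\ref{app:conditional_laws}), but as written it has a real gap in the mechanism by which the conditional laws are identified in the limit.

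The issue is that Lemma~\ref{le:partial_cle_tv} gives total variation continuity of the conditional law of $(U_1^*,\alpha^*)$ as a function of the \emph{partial exterior exploration} $\Gamma_\outside^{*,V,U}$ — not as a function of $U^*$. You phrase the key step as ``the map $U^* \mapsto \mu_n^{U^*}$ converges in a way compatible with the total variation continuity of the CLE conditional laws,'' but there is no TV-continuity statement with $U^*$ as the conditioning variable, and the conditional kernel $U^*\mapsto \mu_n^{U^*}$ is not obviously equicontinuous; Corollary~\ref{co:subsequential_limit_cond_laws} needs equicontinuity of the kernels precisely to extract a subsequence along which they converge pointwise. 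The correct observable for that argument is the exterior data $(\Gamma_\outside^{*,V,U}, \alpha^*)$. Concretely, the paper first establishes (what becomes Lemma~\ref{le:cond_indep_partial}) conditional independence of the interior $D^{Q_i}$'s from the exterior $D^{Q'_{i'}}$'s given $(\Gamma_\outside^{*,V,U},\alpha^*)$, using Lemma~\ref{le:partial_cle_tv} to get the TV-equicontinuity needed for Corollary~\ref{co:subsequential_limit_cond_laws} and then Lemma~\ref{le:cond_independence_limit} to carry the conditional independence through the limit. That statement conditions on more than $U^*$.

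To get down to conditioning only on $U^*$ you need a second ingredient that your proposal does not mention: a ``weak Markov'' statement (Lemma~\ref{le:weak_markov}) asserting that the conditional law of the interior limiting metrics given the full exterior CLE data $\Gamma\setminus\Gamma_{U^*}$ — with no conditioning on the exterior metrics — is $\sigma(U^*)$-measurable. That weaker statement is preserved under weak limits for free, by Lemma~\ref{le:cond_independence_limit_fixed_kernel}, because the conditional law of the CLE exterior given $U^*$ is a \emph{fixed} kernel that does not change with $n$. The paper's Lemma~\ref{le:cond_indep_boxes} then combines the two: conditional independence given $(\Gamma_\outside^{*,V,U'},\alpha^*,U^*)$ from the hard step, joint conditional independence of interior metrics, exterior metrics, and the remainder of the CLE using the weak Markov, and a final application of the weak Markov to reduce the conditioning $\sigma$-algebra to $\sigma(U^*)$. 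Without this two-step structure — establish conditional independence given the exterior exploration, then downgrade the conditioning using the weak Markov — the passage from ``conditional law depends on exterior data continuously'' to ``conditional law is $\sigma(U^*)$-measurable'' does not close.
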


We first prove a weaker variant of Proposition~\ref{pr:markov_property} where we do not condition on the internal metrics outside of $\ol{U}$.

\begin{lemma}\label{le:weak_markov}
 Let $U_1,\ldots,U_m \subseteq \D$ be disjoint open, simply connected subsets. The conditional law of the collection $(\metres{V}{\cdot}{\cdot}{\Gamma})_{V \in \metregions[U_i] ,\, i=1,\ldots,m}$ given $\Gamma\setminus\bigcup_{i=1,\ldots,m}\Gamma_{U_i^*}$ is almost surely measurable with respect to $(U_1^*,\ldots,U_m^*)$.
\end{lemma}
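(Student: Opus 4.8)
The plan is to transfer the Markovian property of the approximate metrics $\metapprox{\epsilon_n}{\cdot}{\cdot}{\Gamma}$ (the Markovian assumption of Section~\ref{se:assumptions}) to the limiting metric $\met{\cdot}{\cdot}{\Gamma}$ constructed in~\eqref{eq:limiting_metric}. The fundamental difficulty is that conditional independence --- equivalently, measurability of a conditional law with respect to a sub-$\sigma$-algebra --- is not in general preserved under weak convergence. To overcome this I would include the conditional laws themselves, viewed as random probability measures, in the joint weak convergence, and then identify the limit using the abstract results on tightness and convergence of conditional laws from Appendix~\ref{app:conditional_laws}, together with the total-variation continuity of CLE conditional laws (Lemma~\ref{le:partial_cle_tv}).

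Concretely, fix disjoint simply connected $U_1,\dots,U_m$ (one may assume the closures $\ol{U_i}$ are disjoint). For $V \in \metregions[U_i]$ we have $\ol V \Subset U_i$, and by the monotonicity of Lemma~\ref{le:limiting_rv}\eqref{it:monotonicity} together with~\eqref{eq:limiting_metric} we may write $\metres{V}{\cdot}{\cdot}{\Gamma} = \sup\{D^Q : Q\in\FQ,\ V\subseteq Q\Subset U_i\}$; since each such $D^Q$ is the weak limit of $D^Q_{\epsilon_n} = \median{\epsilon_n}^{-1}\metapproxres{\epsilon_n}{\wt Q_{\epsilon_n}}{\cdot}{\cdot}{\Gamma}$ with $\wt Q_{\epsilon_n}\in\metregions[U_i]$, the collection $(\metres{V}{\cdot}{\cdot}{\Gamma})_{V\in\metregions[U_i],\,i}$ is a measurable functional of the weak limit of $(\metapproxres{\epsilon_n}{V}{\cdot}{\cdot}{\Gamma})_{V\in\metregions[U_i],\,i}$. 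For each $n$, applying the Markovian assumption successively to $U_1,U_2,\dots$ and invoking Lemma~\ref{le:cond_laws_compatible}, the conditional law $\mu_n := \mathcal{L}\bigl((\metapproxres{\epsilon_n}{V}{\cdot}{\cdot}{\Gamma})_{V\in\metregions[U_i],\,i}\,\big|\,\Gamma\setminus\bigcup_i\Gamma_{U_i^*}\bigr)$ is almost surely a measurable function of $(U_1^*,\dots,U_m^*)$. By the tightness established in Section~\ref{se:tightness_proof} (Corollary~\ref{co:tightness_original_metric}, Proposition~\ref{pr:ghf_tightness_hoelder_approx}, Lemma~\ref{le:gasket_compact}), tightness of the $D^{Q,Q_1}_{\epsilon_n}$, and the corresponding tightness of the conditional laws as random elements of the space of probability measures (Appendix~\ref{app:conditional_laws}), the joint laws of $\bigl(\Gamma,\ (\metapproxres{\epsilon_n}{V}{\cdot}{\cdot}{\Gamma})_V,\ (D^{Q,Q_1}_{\epsilon_n})_{Q,Q_1},\ \mu_n\bigr)$ are tight; passing to a further subsequence (relabeled $\epsilon_n$) we obtain joint convergence in law to $\bigl(\Gamma,\ \mathbf D,\ (D^{Q,Q_1})_{Q,Q_1},\ \mu\bigr)$, where $\mathbf D$ determines $(\metres{V}{\cdot}{\cdot}{\Gamma})_{V\in\metregions[U_i],\,i}$ as above and $\mu$ is a random probability measure. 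Note also that $(U_1^*,\dots,U_m^*)$ is an almost surely continuous functional of the outside data $\Gamma\setminus\bigcup_i\Gamma_{U_i^*}$, since its boundary lies on finitely many loops, so it converges jointly along this subsequence.

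It remains to show that $\mu$ is a version of $\mathcal{L}\bigl((\metres{V}{\cdot}{\cdot}{\Gamma})_{V\in\metregions[U_i],\,i}\,\big|\,\Gamma\setminus\bigcup_i\Gamma_{U_i^*}\bigr)$ and that it is measurable with respect to $\sigma(U_1^*,\dots,U_m^*)$. The first assertion is the content of the abstract lemma on weak limits of conditional laws in Appendix~\ref{app:conditional_laws}: including $\mu_n$ in the convergence forces $\mathcal{L}(\mathbf D\,|\,\Gamma\setminus\bigcup_i\Gamma_{U_i^*},\mu)=\mu$ in the limit. The second assertion --- that the limiting conditional law is measurable with respect to the coarse $\sigma$-algebra $\sigma(U_1^*,\dots,U_m^*)$ rather than the full $\sigma$-algebra generated by the outside exploration --- is the main obstacle, because the functions $u\mapsto\mu_n^{u}$ describing $\mu_n$ change with $n$, so one cannot simply read off measurability of the limit. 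I would handle this by a resampling/coupling argument: run two conditionally independent copies $\Xi^{(1)},\Xi^{(2)}$ of the outside exploration given $(U_1^*,\dots,U_m^*)$ and, for each, conditionally independent copies $\mathbf D^{(j)}_n$ of the inside approximate metrics distributed according to the common kernel applied to $(U_1^*,\dots,U_m^*)$; at the approximate level $(\Xi^{(1)},\mathbf D^{(1)}_n)$ and $(\Xi^{(2)},\mathbf D^{(2)}_n)$ are exactly conditionally independent given $(U_1^*,\dots,U_m^*)$, and each $\mathbf D^{(j)}_n$ has conditional law $\mu_n$ given $\Xi^{(j)}$. Passing to the limit and invoking the total-variation continuity of the CLE conditional laws (Lemma~\ref{le:partial_cle_tv}) --- which is what keeps the conditional-independence structure and the identity $\mathcal{L}(\mathbf D^{(j)}\,|\,\Xi^{(j)})=\mu$ intact in the limit --- one forces $\mu$ to coincide with $\mathcal{L}(\mathbf D^{(1)}\,|\,U_1^*,\dots,U_m^*)$, which is $\sigma(U_1^*,\dots,U_m^*)$-measurable. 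Combining the two assertions with the fact that $(U_1^*,\dots,U_m^*)$ is $\sigma(\Gamma\setminus\bigcup_i\Gamma_{U_i^*})$-measurable gives $\mathcal{L}(\mathbf D\,|\,\Gamma\setminus\bigcup_i\Gamma_{U_i^*})=\mu$ with $\mu$ measurable with respect to $(U_1^*,\dots,U_m^*)$, which is the claim. (The passage from this weak Markovian property to the full Proposition~\ref{pr:markov_property}, where one additionally conditions on the internal metrics outside $\ol{U}$, is then carried out by a further, entirely analogous step, again using the total-variation continuity input.)
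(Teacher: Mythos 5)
Your argument misses the key structural simplification that makes Lemma~\ref{le:weak_markov} so much easier than the full Proposition~\ref{pr:markov_property}, and as a consequence you invoke substantially heavier machinery than is needed. The paper's proof is essentially one line: since here we are \emph{not} conditioning on the internal metrics outside $\ol{U_i}$, the conditional law $\wt{\mu}$ of $\Gamma\setminus\bigcup_i\Gamma_{U_i^*}$ given $(U_1^*,\dots,U_m^*)$ is a \emph{fixed} kernel that does not change with $n$ (only the metrics depend on $\epsilon_n$, not $\Gamma$). The Markovian property of the approximations expresses the joint law of $\bigl((U_1^*,\dots,U_m^*), \text{inside metrics}_{\epsilon_n}, \Gamma\setminus\bigcup_i\Gamma_{U_i^*}\bigr)$ as $\nu\otimes(\mu_n\otimes\wt{\mu})$ where $\nu\otimes\mu_n$ converges weakly along the chosen subsequence, and then Lemma~\ref{le:cond_independence_limit_fixed_kernel} directly yields convergence of the triple to $\nu\otimes(\mu\otimes\wt{\mu})$ --- i.e.\ the conditional independence structure persists in the limit because a fixed kernel can be ``carried along'' through weak limits. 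No tracking of conditional laws as random measures, no total-variation continuity, and no resampling coupling is needed.

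Your proposal instead deploys the machinery designed for the stronger Proposition~\ref{pr:markov_property}: tracking $\mu_n$ as random probability measures (Corollary~\ref{co:subsequential_limit_cond_laws}), the total-variation continuity input of Lemma~\ref{le:partial_cle_tv}, and an ad hoc two-copy resampling/coupling step. That machinery becomes genuinely necessary only in Lemma~\ref{le:cond_indep_partial} (used to prove Proposition~\ref{pr:markov_property}), where the conditioning data includes the $n$-dependent internal metrics outside $\ol{U}$, so the relevant kernel is no longer fixed and one must control the conditional laws themselves. For Lemma~\ref{le:weak_markov} this is overkill, and several of your intermediate assertions (e.g.\ that ``including $\mu_n$ in the convergence forces $\mathcal{L}(\mathbf D \mid \Gamma\setminus\bigcup_i\Gamma_{U_i^*},\mu)=\mu$ in the limit'', or that the proposed two-copy coupling argument forces $\sigma(U_1^*,\dots,U_m^*)$-measurability) are not supported by the appendix lemmas as stated and would themselves require justification. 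You should spot that the reason the paper separates out this lemma is precisely that the fixed-kernel Lemma~\ref{le:cond_independence_limit_fixed_kernel} applies, reducing the proof to a triviality.
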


\begin{proof}
 This property holds for the approximating metrics which we can see by repeatedly applying their Markovian property. To see that this continues to hold for their weak limits, we can apply Lemma~\ref{le:cond_independence_limit_fixed_kernel} with $\wt{\mu}$ being the conditional law of $\Gamma\setminus\bigcup_{i=1,\ldots,m}\Gamma_{U_i^*}$ given $(U_1^*,\ldots,U_m^*)$.
\end{proof}

The main step for proving Proposition~\ref{pr:markov_property} is to show the following.

\begin{lemma}\label{le:cond_indep_boxes}
 Let $U \subseteq \D$ be a simply connected domain. Let $Q_1,\ldots,Q_n,Q'_1,\ldots,Q'_{n'} \in \FQ$ such that $Q_i \Subset U$ and $\dist(Q'_{i'},U) > 0$ for every $i,i'$. Then the conditional law of $(D^{Q_i})_i$ given $\Gamma\setminus\Gamma_{U^*}$ and $(D^{Q_{i'}})_{i'}$ is measurable with respect to $U^*$.
\end{lemma}

The proof relies on the continuity result Lemma~\ref{le:partial_cle_tv}. Recall the partial exploration $\Gamma_\outside^{*,V,U}$ of $\Gamma$ introduced in Section~\ref{se:cle_resampling}. Let $\alpha^*$ denote the interior link pattern induced by the remainders of the unfinished loops in $\Gamma_\outside^{*,V,U}$. By Lemma~\ref{le:partial_cle_tv}, for each $U_1 \Subset U$, the conditional law of the pair $(U_1^*,\alpha^*)$ is a continuous in total variation function of $\Gamma_\outside^{*,V,U}$, and by Theorem~\ref{thm:cle_partially_explored} the conditional probability of $\alpha^* = \alpha_0$ is positive for each link pattern $\alpha_0$. It follows that the conditional law of $U_1^*$ given $(\Gamma_\outside^{*,V,U}, \alpha^*)$ is continuous in total variation.

\begin{lemma}\label{le:cond_indep_partial}
 Let $U,V$ be open, simply connected such that $U \Subset V$. Let $Q_1,\ldots,Q_n,Q'_1,\ldots,Q'_{n'} \in \FQ$ such that $Q_i \Subset U$ and $Q'_{i'} \cap V = \varnothing$ for every $i,i'$. Then the law of $(D^{Q_i})_i$ is conditionally independent of $(D^{Q_{i'}})_{i'}$ given $(\Gamma_{\mathrm{out}}^{*,V,U}, \alpha^*)$.
\end{lemma}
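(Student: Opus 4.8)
The plan is to combine the weak Markovian property of the limiting metric (Lemma~\ref{le:weak_markov}) with the decomposition of $\Gamma$ provided by the partial exploration (Theorem~\ref{thm:cle_partially_explored}), so as to reduce the claim to a conditional independence statement for $\CLE_{\kappa'}$ alone. Fix $U_1\in\FQ$ with $\bigcup_i Q_i\Subset U_1\Subset U$ and, for each $i'$, a simply connected $U_{2,i'}\in\FQ$ with $Q'_{i'}\Subset U_{2,i'}$ and $\ol{U_{2,i'}}\subseteq\D\setminus\ol{V}$; shrinking if necessary we may take $U_1,U_{2,1},\dots,U_{2,n'}$ to be pairwise disjoint. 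From the construction of $D^Q$ and Lemma~\ref{le:limiting_rv}, each $D^{Q_i}$ is a measurable function of $(\metres{V'}{\cdot}{\cdot}{\Gamma})_{V'\in\metregions[U_1]}$; and each $D^{Q'_{i'}}$ is a measurable function of $(\metres{V'}{\cdot}{\cdot}{\Gamma})_{V'\in\metregions[U_{2,i'}]}$, of $U_{2,i'}^*$, and of $\Gamma_{\mathrm{out}}^{*,V,U}$ — indeed, since $U_{2,i'}\subseteq D\setminus V$, every loop of $\Gamma$ relevant to the gasket inside $U_{2,i'}$ meets $D\setminus V$, hence is not contained in $U_1\Subset U$, so its geometry (and the collection of regions $\metregions[U_{2,i'}]$ itself) is recorded by $\Gamma_{\mathrm{out}}^{*,V,U}$.

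Apply Lemma~\ref{le:weak_markov} to the disjoint sets $U_1,U_{2,1},\dots,U_{2,n'}$: conditionally on $\mathcal G'\defeq\sigma\bigl(\Gamma\setminus(\Gamma_{U_1^*}\cup\bigcup_{i'}\Gamma_{U_{2,i'}^*})\bigr)$, the family $(\metres{V'}{\cdot}{\cdot}{\Gamma})_{V'\in\metregions[U_1]}$ is conditionally independent of $\bigl((\metres{V'}{\cdot}{\cdot}{\Gamma})_{V'\in\metregions[U_{2,i'}]}\bigr)_{i'}$, with the conditional law of the former measurable with respect to $U_1^*$ and that of the latter measurable with respect to $(U_{2,i'}^*)_{i'}$. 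Moreover $U_1^*$, the $U_{2,i'}^*$, and the pair $(\Gamma_{\mathrm{out}}^{*,V,U},\alpha^*)$ are all $\mathcal G'$-measurable: the marked points $\ul{x}^*$, the unfinished strands defining $\Gamma_{\mathrm{out}}^{*,V,U}$, and the link patterns $\beta^*,\alpha^*$ lie on loops touching $\partial V^{*,U}$, which is disjoint from $U_1$, so removing $\Gamma_{U_1^*}$ (and likewise the $\Gamma_{U_{2,i'}^*}$, which are deep inside $D\setminus V$) leaves them untouched. A routine tower-property computation using these facts shows that, for bounded measurable test functions $f,g$, one has $\E\bigl[f\bigl((D^{Q_i})_i\bigr)\,g\bigl((D^{Q'_{i'}})_{i'}\bigr)\bigm|\Gamma_{\mathrm{out}}^{*,V,U},\alpha^*\bigr]=\E\bigl[\phi_f(U_1^*)\,\psi_g\bigl((U_{2,i'}^*)_{i'}\bigr)\bigm|\Gamma_{\mathrm{out}}^{*,V,U},\alpha^*\bigr]$ for suitable measurable $\phi_f,\psi_g$. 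Hence it suffices to prove that $U_1^*$ is conditionally independent of $(U_{2,i'}^*)_{i'}$ given $(\Gamma_{\mathrm{out}}^{*,V,U},\alpha^*)$ — a statement that no longer mentions the metrics or the boxes.

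This last statement follows from Theorem~\ref{thm:cle_partially_explored}. Conditionally on $\Gamma_{\mathrm{out}}^{*,V,U}$, the part of $\Gamma$ inside $V^{*,U}$ is a multichordal $\CLE_{\kappa'}$ in $(V^{*,U};\ul{x}^*;\beta^*)$, and it is conditionally independent of the nested $\CLE_{\kappa'}$'s that fill in the false loops of $\Gamma_{\mathrm{out}}^{*,V,U}$ lying inside $D\setminus V$; since the interior link pattern $\alpha^*$ is a function of the configuration inside $V^{*,U}$, conditioning on it in addition preserves this conditional independence. Because $U_1\Subset U\subseteq V^{*,U}$ and, by the defining property of $\domainpair{D}$, the set $D\setminus V$ is at positive distance from $U$, the region $U_1^*$ is a measurable function of the multichordal $\CLE_{\kappa'}$ inside $V^{*,U}$ only, whereas each $U_{2,i'}^*$ is a measurable function of $\Gamma_{\mathrm{out}}^{*,V,U}$ together with the nested $\CLE_{\kappa'}$'s inside the false loops in $D\setminus V$. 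The asserted conditional independence is then immediate.

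The main obstacle is the $\sigma$-algebra bookkeeping in the middle step: one must verify carefully that $(\Gamma_{\mathrm{out}}^{*,V,U},\alpha^*)$ and the $U_{2,i'}^*$ are measurable with respect to the conditioning field of Lemma~\ref{le:weak_markov}, even though a loop contained entirely in some $U_{2,i'}\subseteq D\setminus V$ belongs both to $\Gamma_{\mathrm{out}}^{*,V,U}$ and to $\Gamma_{U_{2,i'}^*}$; the point is that to identify $\Gamma_{\mathrm{out}}^{*,V,U}$ it is enough to discard $\Gamma_{U_1^*}$ (the loops contained in $U_1\Subset U$), which affects none of the loops meeting $D\setminus V$. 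One also has to be mindful that the topology of Section~\ref{subsec:topology} retains the embedded gasket, so that the loop geometry needed to build the regions $\metregions[U_{2,i'}]$ is indeed recoverable from the data, and that the separation of $U$ from $D\setminus V$ built into $\domainpair{D}$ is what keeps the chords of the multichordal CLE and the points $\ul{x}^*$ away from $U$ and makes the two sides genuinely decouple.
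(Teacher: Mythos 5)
Your argument hinges on the claim that, conditionally on $\mathcal G'=\sigma\bigl(\Gamma\setminus(\Gamma_{U_1^*}\cup\bigcup_{i'}\Gamma_{U_{2,i'}^*})\bigr)$, the limiting internal metrics inside $U_1$ are conditionally \emph{independent} of those inside the $U_{2,i'}$, with conditional laws depending on $U_1^*$ and $(U_{2,i'}^*)_{i'}$ separately. You cite Lemma~\ref{le:weak_markov} for this, but Lemma~\ref{le:weak_markov} asserts only that the \emph{joint} conditional law of the metrics inside all the disjoint sets given $\mathcal G'$ is a measurable function of $(U_1^*,\ldots,U_m^*)$; it says nothing about this joint law factorizing into a product. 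That factorization is exactly the hard part, and it is exactly the content of the statement you are trying to prove. Tracing the proof of Lemma~\ref{le:weak_markov} makes this visible: it passes to the weak limit using Lemma~\ref{le:cond_independence_limit_fixed_kernel}, where one kernel --- the conditional law of the exterior CLE given $(U_i^*)$ --- is \emph{fixed} in $n$; this mechanism lets the CLE-exterior decouple from the metrics in the limit, but it cannot decouple two sequences of metric-valued kernels from each other, since weak convergence of two marginals does not in general give weak convergence of the product of their conditional laws. (Compare the cautionary remark at the start of Appendix~\ref{app:conditional_laws}.) Once this step is asserted, the rest of your reduction is essentially trivial, because $(U_{2,i'}^*)_{i'}$ is already $(\Gamma_{\mathrm{out}}^{*,V,U},\alpha^*)$-measurable, so there is nothing left to prove --- a sign that the whole burden has been pushed into the unjustified factorization.

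The paper resolves this genuine difficulty by a different route. One starts from the conditional independence at the level of the \emph{approximating} metrics $(D_{\epsilon_n}^{Q_i})_i$ and $(D_{\epsilon_n}^{Q'_{i'}})_{i'}$ given $(\Gamma_{\mathrm{out}}^{*,V,U},\alpha^*)$, which holds directly from the Markovian axiom of Definition~\ref{def:cle_metric}. To transport this to the limit, the key input is Lemma~\ref{le:partial_cle_tv}: the conditional law of $U_1^*$ given $(\Gamma_{\mathrm{out}}^{*,V,U},\alpha^*)$ is \emph{continuous in total variation} as a function of the conditioning. Pushed through the approximating kernels, this yields equicontinuity of the conditional laws of the metric pair, which (together with the H\"older tightness from Propositions~\ref{prop:interior_tightness} and~\ref{pr:ghf_tightness_hoelder_approx} and Corollary~\ref{co:subsequential_limit_cond_laws}) lets one extract a further subsequence along which the conditional laws themselves converge for almost every conditioning. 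Lemma~\ref{le:cond_independence_limit} is then exactly the tool that preserves the conditional independence along this subsequence. Without this total-variation continuity step your proof has no mechanism for passing the independence to the limit, and the claim remains a gap.
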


\begin{proof}
 Pick $U_1 \Subset U$ simply connected such that $Q_i \Subset U_1$ for every $i$. Note that the gasket outside $V$ is determined by $(\Gamma_{\mathrm{out}}^{*,V,U}, \alpha^*)$. Therefore, by the Markovian property of the approximating metrics, the law of $(D_{\epsilon_n}^{Q_i})_i$ is conditionally independent of $(D_{\epsilon_n}^{Q'_{i'}})_{i'}$ given $(\Gamma_{\mathrm{out}}^{*,V,U}, \alpha^*)$. Further, the conditional law is obtained by first sampling $U_1^*$ and then sampling the internal metrics in $U_1$ according their conditional laws given $U_1^*$. As discussed in the paragraph above, the conditional law of $U_1^*$ given $(\Gamma_{\mathrm{out}}^{*,V,U}, \alpha^*)$ is a continuous function of $(\Gamma_{\mathrm{out}}^{*,V,U}, \alpha^*)$ in the total variation sense. This implies that also the conditional law of the internal metrics is a continuous function of $(\Gamma_{\mathrm{out}}^{*,V,U}, \alpha^*)$ in the total variation sense, uniformly in $\epsilon_n$. Therefore, by Propositions~\ref{prop:interior_tightness} and~\ref{pr:ghf_tightness_hoelder_approx} combined with Corollary~\ref{co:subsequential_limit_cond_laws}, we can find a further subsequence of $(\epsilon_n)$ so that the conditional laws of the joint vector converge almost surely. The result follows by applying Lemma~\ref{le:cond_independence_limit} to this subsequence.
\end{proof}

\begin{proof}[Proof of Lemma~\ref{le:cond_indep_boxes}]
 Pick $U',V$ open, simply connected such that $U \Subset U' \Subset V$ and $Q'_{i'} \cap V = \varnothing$ for every $i'$. By Lemma~\ref{le:cond_indep_partial}, $(D^{Q_i})_i$ and $(D^{Q_{i'}})_{i'}$ are conditionally independent given $(\Gamma_{\mathrm{out}}^{*,V,U'}, \alpha^*, U^*)$. Lemma~\ref{le:weak_markov} then implies that $(D^{Q_i})_i$, $(D^{Q_{i'}})_{i'}$, and the remainder of $\Gamma\setminus\Gamma_{U^*}$ are jointly conditionally independent given $(\Gamma_{\mathrm{out}}^{*,V,U'}, \alpha^*, U^*)$. Applying Lemma~\ref{le:weak_markov} one more time with just $U$ shows the result.
\end{proof}

\begin{proof}[Proof of Proposition~\ref{pr:markov_property}]
 By~\eqref{eq:limiting_metric} and Lemma~\ref{le:limiting_rv}, the collection of $(\metres{V}{\cdot}{\cdot}{\Gamma})_{V \in \metregions[U]}$ is determined by the countable collection of $(D^{Q_1})_{Q_1 \in \FQ, Q_1 \Subset U}$, and similarly the collection of $(\metres{V'}{\cdot}{\cdot}{\Gamma})_{V' \in \metregions[\D\setminus\ol{U}]}$ is determined by the countable collection of $(D^{Q'})_{Q' \in \FQ, Q' \Subset \D\setminus\ol{U}}$. These are generated by the finite collections of $D^{Q_1}$ (resp.\ $D^{Q'}$). By Lemma~\ref{le:cond_indep_boxes}, the conditional independence holds for the intersection stable generators. This implies the result.
\end{proof}

\subsubsection{Proof of translation invariance}

For $U \subseteq \D$ open, simply connected, we let $\p_U$ denote the law of $U^*$. For $z \in \C$, let $T_z$ be the translation operator $T_z\Fd(x,y) = \Fd(x-z,y-z)$.

The following lemma shows that the conditional law of $(D^Q)_{Q \Subset U}$ given $U^*$ is a translation invariant function of $U^*$.

\begin{lemma}\label{le:translation_invariant_limit}
 Let $(\epsilon_n)$ be chosen as in the beginning of Section~\ref{se:construction_metric}. Let $U \subseteq \D$ be open, simply connected. There exists a probability kernel $\mu^{U^*}$ from the space of $U^*$ to the space of $(D^Q)_{Q \in \FQ, Q \Subset U}$ such that the following is true. For every $z \in \C$ the joint laws of $((U+z)^*, (D_{\epsilon_n}^{Q+z,Q_1+z})_{Q_1 \Subset Q \Subset U})$ converge to the annealed law $\p_{U+z} \otimes (T_z)_* \mu^{U^*}(\cdot -z)$.
\end{lemma}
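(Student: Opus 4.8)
The plan is to transfer the exact translation covariance enjoyed by each approximating metric $\metapprox{\epsilon_n}{\cdot}{\cdot}{\Gamma}$ to the subsequential limit. The one genuinely delicate point — and the main obstacle — is that conditional laws do not pass to weak limits in general; as elsewhere in this section we circumvent this using the total-variation continuity of conditional $\CLE_{\kappa'}$ laws from \cite{amy-cle-resampling} (Lemma~\ref{le:partial_cle_tv}), together with the tightness and H\"older estimates already proved and the abstract conditional-law lemmas of Appendix~\ref{app:conditional_laws}.

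\textbf{Step 1 (covariance before the limit, and the kernel $\mu^{U^*}$).} For each $n$ the translation invariance axiom of Section~\ref{se:assumptions}, applied to $\metapprox{\epsilon_n}{\cdot}{\cdot}{\Gamma}$ (the deterministic rescaling by $\median{\epsilon_n}^{-1}$ being irrelevant), furnishes a probability kernel $\mu_n^{U^*}$ such that, for every $z \in \C$, the conditional law of $(\median{\epsilon_n}^{-1}\metapproxres{\epsilon_n}{V}{\cdot}{\cdot}{\Gamma})_{V \in \metregions[U+z]}$ given $(U+z)^*$ is $(T_z)_*\mu_n^{U^*}(\cdot - z)$. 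Since the sets $\wt{Q}_{\epsilon_n}$ used to define $D_{\epsilon_n}^{Q}$ in Section~\ref{se:construction_metric} are, for $Q \Subset U$, measurable functions of $U^*$, there is an $n$- and $z$-independent measurable map $\Phi$ (merely extracting and translating the relevant internal metrics) with the property that the joint law of $\big((U+z)^*,\,(D_{\epsilon_n}^{Q+z,Q_1+z})_{Q_1 \Subset Q \Subset U}\big)$ equals $\p_{U+z} \otimes (T_z)_*\nu_n^{U^*}(\cdot - z)$ where $\nu_n^{U^*} = \Phi_*\mu_n^{U^*}$; this is an exact identity. Taking $z = 0$, the choice of subsequence makes $\big(U^*,(D_{\epsilon_n}^{Q,Q_1})_{Q_1 \Subset Q \Subset U}\big)$ converge in law ($U^*$ is a function of $\Upsilon_\Gamma$, continuous for the topology of Section~\ref{subsec:topology}, with law $\p_U$ for every $n$); disintegrating the limit over $U^*$, with both spaces Polish, defines $\mu^{U^*}$ with $\p_U \otimes \mu^{U^*}$ the limit, which settles the case $z = 0$. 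By Proposition~\ref{pr:markov_property} the conditional law of $(D^Q)_{Q \Subset U}$ given $\Gamma\setminus\Gamma_{U^*}$ is measurable with respect to $U^*$, so $\mu^{U^*}$ is genuinely a kernel on the space of $U^*$.

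\textbf{Step 2 (general $z$: convergence of the conditional laws).} By the H\"older tightness (Corollary~\ref{co:tightness_original_metric}, Proposition~\ref{pr:ghf_tightness_hoelder_approx}) and the monotonicity~\eqref{eq:approximations_monotone}, the family $\big((U+z)^*,(D_{\epsilon_n}^{Q+z,Q_1+z})_{Q_1 \Subset Q \Subset U}\big)$ is tight, so it suffices to identify every subsequential limit as $\p_{U+z} \otimes (T_z)_*\mu^{U^*}(\cdot - z)$; by Step 1 this reduces, after applying $T_{-z}$, to showing that $\nu_n^{U^*}$, evaluated against the law $(T_{-z})_*\p_{U+z}$ of $U^*$ under a $\CLE_{\kappa'}$ in $D - z$, converges to $\mu^{U^*}$. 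Fix simply connected $U_1 \Subset U \Subset V$ with $\ol{V} \subseteq \D \cap (\D - z)$, and consider the partial exploration $\Gamma_\outside^{*,V,U}$ and the induced interior link pattern $\alpha^*$ (Section~\ref{se:cle_resampling}). By the Markovian property of the approximating metrics and \cite[Lemma~3.2]{gmq2021sphere}, the conditional law of $(D_{\epsilon_n}^{Q,Q_1})_{Q \Subset U_1}$ given $\Gamma_\outside^{*,V,U}$ is obtained by sampling $\alpha^*$ (finitely many values, with probabilities depending continuously on the data by Theorem~\ref{thm:cle_partially_explored}), then $U_1^*$ (with conditional law total-variation continuous in $(\Gamma_\outside^{*,V,U},\alpha^*)$ by Lemma~\ref{le:partial_cle_tv}), then the internal metrics given $U_1^*$; hence it is a total-variation continuous function of $\Gamma_\outside^{*,V,U}$, uniformly in $n$. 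Combining the tightness with Corollary~\ref{co:subsequential_limit_cond_laws} and Lemma~\ref{le:cond_independence_limit}, along a further subsequence these conditional laws converge almost surely, and since the limit is a fixed disintegration of the already-determined joint limit, the convergence holds along all of $(\epsilon_n)$ to one and the same total-variation continuous functional of $\Gamma_\outside^{*,V,U}$.

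\textbf{Step 3 (change of domain and conclusion).} Because $\ol{V}$ is compactly contained in both $\D$ and $\D - z$, the law of $\Gamma_\outside^{*,V,U}$ under a $\CLE_{\kappa'}$ in $D - z$ is mutually absolutely continuous with its law under a $\CLE_{\kappa'}$ in $D$ (local absolute continuity of CLE away from the domain boundary, as used in Lemma~\ref{le:partial_cle_tv}), so the almost-sure convergence of the conditional laws from Step 2 transfers verbatim to the $(D - z)$-setup. Exhausting $U$ by such $U_1$ and letting the dyadic mesh tend to zero identifies the conditional law of $(D_{\epsilon_n}^{Q+z,Q_1+z})_{Q \Subset U}$ given $(U+z)^*$ as converging to $(T_z)_*\mu^{U^*}(\cdot - z)$, which together with Step 1 completes the proof. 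The difficulty is concentrated in Steps 2--3: weak convergence of the $z=0$ annealed laws yields $\nu_n^{U^*} \to \mu^{U^*}$ only for $\p_U$-almost every $U^*$, whereas for $z \ne 0$ the kernels must be evaluated against $(T_{-z})_*\p_{U+z}$, which is not absolutely continuous with respect to $\p_U$; the remedy is to recondition on a partial exploration staying compactly inside $\D$, where the conditional law of the internal metrics becomes a total-variation continuous functional of that exploration — strong enough for the conditional-law limit lemmas to apply, and robust under the absolutely continuous change of CLE domain.
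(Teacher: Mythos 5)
Your overall plan is correct in spirit but takes a far heavier route than the paper and, more importantly, has a genuine gap at exactly the point you correctly identified as delicate. The paper's proof is short and avoids any conditional-law machinery: it lets $\mu^{U^*}_n$ be the conditional law of $(D_{\epsilon_n}^{Q,Q_1})$ given $U^*$, uses translation covariance of the approximating metrics to get $\mu^{(U+z)^*}_n = (T_z)_*\mu^{U^*}_n(\cdot - z)$, recalls that the subsequence was chosen so that $\p_{U+z}\otimes\mu^{(U+z)^*}_n$ converges for all dyadic $z$, then applies Lemma~\ref{le:same_cond_law_weak_limit} to identify all the dyadic limit kernels with a single $\mu^{U^*}$, and finally handles general $z$ by observing that $(T_{z-w})_*\p_{Q+w}\to\p_{Q+z}$ in total variation (Lemma~\ref{le:translation_tv}), so $\p_{Q+z}$ is absolutely continuous with respect to the countable family $(\p_{Q+w})_{w\in\CD}$, and Lemma~\ref{le:weak_limit_abs_cont} upgrades the dyadic convergence to all of $\C$. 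You instead import the full partial-exploration and total-variation-of-conditional-laws apparatus from the proof of the Markovian property (Proposition~\ref{pr:markov_property}). That machinery does apply, but is not needed here, since translation invariance does not require preserving conditional independence under weak limits — only identifying one kernel.

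The gap is in Step~3. You assert that, for a \emph{fixed} $z\ne 0$ with $\ol V\Subset \D\cap(\D-z)$, the law of $\Gamma_\outside^{*,V,U}$ under a $\CLE_{\kappa'}$ in $D-z$ is \emph{mutually absolutely continuous} with its law under a $\CLE_{\kappa'}$ in $D$, attributing this to "local absolute continuity of CLE away from the domain boundary, as used in Lemma~\ref{le:partial_cle_tv}." This is not established anywhere in the paper, and I do not believe it is known. CLE, unlike the GFF, does not have an elementary local absolute continuity property under change of domain: the Radon--Nikodym derivative of the CLE restricted to a compact subdomain is not a tractable object. What is available is Lemma~\ref{le:translation_tv} (based on \cite[Prop.~6.3]{amy-cle-resampling}), namely \emph{total-variation convergence} of $(T_{-z})_*\p_{(U+z)^*}$ to $\p_{U^*}$ as $z\to 0$. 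TV convergence $\mu_z\to\mu_0$ does not imply $\mu_z\ll\mu_0$ for fixed $z$. The paper sidesteps this by extracting a subsequence for which the convergence holds simultaneously for all \emph{dyadic} $z$, and then using TV convergence as $w\to z$ ($w$ dyadic) to conclude that $\p_{Q+z}$ is absolutely continuous with respect to a countable convex combination $\sum_w 2^{-i_w}\p_{Q+w}$ — which is exactly the hypothesis Lemma~\ref{le:weak_limit_abs_cont} needs. Without this dyadic-approximation step, your almost-sure convergence of conditional laws (under the CLE-in-$D$ law) does not transfer to the CLE-in-$(D-z)$ setup, because the relevant supports could differ. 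So Step~3 as written would fail; it needs to be replaced by the dyadic-plus-TV argument of the paper, at which point your extra machinery from Step~2 becomes unnecessary.
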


\begin{proof}
 Let $\mu^{U^*}_n$ be the conditional law of $(D_{\epsilon_n}^{Q,Q_1})_{Q_1 \Subset Q \Subset U}$ given $U^*$. By the translation invariance of the approximating metrics, there is a version of $\mu^{U^*}_n$ such that for each $z$, we have $\mu^{(U+z)^*}_n = (T_z)_* \mu^{U^*}_n(\cdot -z)$ almost surely. We have picked the sequence $(\epsilon_n)$ so that $\p_{U+z} \otimes \mu^{(U+z)^*}_n$ converges weakly for every $z\in\CD$. By applying Lemma~\ref{le:same_cond_law_weak_limit} repeatedly, we find a probability kernel $\mu^{U^*}$ so that
 \begin{equation}\label{eq:weak_convergence_translation}
  \p_{U+z} \otimes \mu^{(U+z)^*}_n = \p_{U+z} \otimes (T_z)_* \mu^{U^*}_n(\cdot -z) \to \p_{U+z} \otimes (T_z)_* \mu^{U^*}(\cdot -z)
 \end{equation}
 weakly for every $z\in\CD$.
  
 For general $z$, by the total variation convergence $\lim_{w\to z} (T_{z-w})_* \p_{Q+w} = \p_{Q+z}$ from Lemma~\ref{le:translation_tv}, the law $\p_{Q+z}$ is absolutely continuous with respect to the collection $(\p_{Q+w})_{w\in\CD}$. Therefore, by Lemma~\ref{le:weak_limit_abs_cont}, the convergence~\eqref{eq:weak_convergence_translation} holds for each $z\in\C$.
\end{proof}

\begin{proposition}\label{pr:translation_invariance}
 Let $U \subseteq \D$ be a simply connected domain. There exists a probability kernel $\mu^{U^*}$ such that for each $z$, the conditional law of $(\metres{V}{\cdot}{\cdot}{\Gamma})_{V \in \metregions[U+z]}$ given $(U+z)^*$ is $(T_z)_* \mu^{U^*}(\cdot -z)$.
\end{proposition}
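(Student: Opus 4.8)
The plan is to deduce Proposition~\ref{pr:translation_invariance} from Lemma~\ref{le:translation_invariant_limit}, essentially by passing from the countable collection of ``box metrics'' $(D^Q)_{Q \in \FQ}$ to the full family of internal metrics $(\metres{V}{\cdot}{\cdot}{\Gamma})_{V \in \metregions}$ via the representation~\eqref{eq:limiting_metric}, and using the total variation continuity of the law of $U^*$ under translations (Lemma~\ref{le:translation_tv}).

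First I would fix $z \in \C$ and work with $U$ replaced by $U + z$. Let $\mu^{U^*}$ be the probability kernel supplied by Lemma~\ref{le:translation_invariant_limit}, which is a kernel from the space of $U^*$ to the space of $(D^Q)_{Q \in \FQ, Q \Subset U}$. By Lemma~\ref{le:translation_invariant_limit}, for each $z$ the joint laws of $((U+z)^*, (D_{\epsilon_n}^{Q+z,Q_1+z})_{Q_1 \Subset Q \Subset U})$ converge to $\p_{U+z} \otimes (T_z)_* \mu^{U^*}(\cdot - z)$. Since the subsequence $(\epsilon_n)$ was chosen so that the whole vector~\eqref{eq:converging_metrics} converges in law (jointly over all $Q_1 \Subset Q$ in $\FQ$), and since $(U+z)^*$ is a measurable function of $\Gamma$ (hence its joint law with the limiting metrics is the limit of the joint laws for the approximations), the limit of the joint law of $((U+z)^*, (D^{Q+z,Q_1+z})_{Q_1 \Subset Q \Subset U})$ is exactly $\p_{U+z} \otimes (T_z)_* \mu^{U^*}(\cdot - z)$. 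In other words, the conditional law of $(D^{Q'})_{Q' \in \FQ, Q' \Subset U+z}$ given $(U+z)^*$ is $(T_z)_* \mu^{U^*}(\cdot - z)$, where I use that every $Q' \in \FQ$ with $Q' \Subset U+z$ is of the form $Q + z$ for some $Q \in \FQ$ with $Q \Subset U$ (this is immediate once one notes $\FQ$ and the relation $\Subset$ are translation-covariant; if $z$ is not dyadic one uses instead that the countably many $Q'$ with $Q' \Subset U+z$ can each be written as a limit, or simply applies the argument with a translated dyadic lattice — this is a cosmetic point).

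Next, I would promote this from the box metrics to the internal metrics. By~\eqref{eq:limiting_metric} together with Lemma~\ref{le:limiting_rv}, for every $V \in \metregions[U+z]$ and $x,y \in \ol{V} \cap \Upsilon_\Gamma$ we have $\metres{V}{x}{y}{\Gamma} = \sup_{Q' \in \FQ, Q' \Supset V} D^{Q'}(x,y)$, and this supremum is attained along the countable decreasing cofinal family of $Q' \in \FQ$ with $V \Subset Q' \Subset U+z$ (using that $\ol{V} \subseteq U+z$ for $V \in \metregions[U+z]$). Hence the entire collection $(\metres{V}{\cdot}{\cdot}{\Gamma})_{V \in \metregions[U+z]}$ is a deterministic measurable functional $\Phi$ of the countable collection $(D^{Q'})_{Q' \in \FQ, Q' \Subset U+z}$ — a functional which moreover is translation-equivariant in the sense that $\Phi \circ (T_z)_* = (T_z)_* \circ \Phi$ when applied to the correspondingly translated box-metric data. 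Therefore the conditional law of $(\metres{V}{\cdot}{\cdot}{\Gamma})_{V \in \metregions[U+z]}$ given $(U+z)^*$ is the pushforward of $(T_z)_* \mu^{U^*}(\cdot - z)$ under $\Phi$, which equals $(T_z)_* \bigl( \Phi_* \mu^{U^*} \bigr)(\cdot - z)$; relabelling $\Phi_* \mu^{U^*}$ as $\mu^{U^*}$ (a kernel from the space of $U^*$ to the space of internal-metric families $(\metres{V}{\cdot}{\cdot}{\Gamma})_{V \in \metregions[U]}$) gives precisely the statement.

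The main obstacle is bookkeeping rather than a genuine difficulty: one has to make sure the kernel $\mu^{U^*}$ genuinely only depends on $U^*$ (and not on the ambient $\Gamma$) and that the translation equivariance holds for \emph{every} $z \in \C$, not just dyadic $z$. The dyadic case is handled by the weak convergence in~\eqref{eq:weak_convergence_translation}; for general $z$ one invokes Lemma~\ref{le:translation_tv}, which gives that $\p_{U+z}$ is absolutely continuous with respect to the family $(\p_{U+w})_{w \in \CD}$, and then Lemma~\ref{le:weak_limit_abs_cont} to extend the weak convergence — exactly as in the proof of Lemma~\ref{le:translation_invariant_limit}. Once Lemma~\ref{le:translation_invariant_limit} is in hand, the passage to $(\metres{V}{\cdot}{\cdot}{\Gamma})$ via~\eqref{eq:limiting_metric} is purely formal, so there is no serious new input needed here.
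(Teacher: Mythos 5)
Your first paragraph follows Lemma~\ref{le:translation_invariant_limit} and is fine, but the parenthetical remark that handling non-dyadic $z$ is ``a cosmetic point'' is where the argument actually breaks. The collection $\FQ$ of dyadic unions of squares is \emph{not} translation-covariant for non-dyadic $z$: a set $Q' \in \FQ$ with $Q' \Subset U+z$ is not of the form $Q+z$ for $Q \in \FQ$ unless $z$ is dyadic. Consequently, the family $(D^{Q'})_{Q' \in \FQ,\, Q' \Subset U+z}$ (the one that actually determines $(\metres{V}{\cdot}{\cdot}{\Gamma})_{V \in \metregions[U+z]}$ via \eqref{eq:limiting_metric}) and the family $(\wt{D}^{Q+z})_{Q \in \FQ,\, Q \Subset U}$ (the one whose conditional law Lemma~\ref{le:translation_invariant_limit} describes) are two genuinely distinct collections of random functions when $z$ is not dyadic. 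Your functional $\Phi$ is applied to the former, but your probabilistic input concerns only the latter, so the conclusion about the conditional law of $\Phi((D^{Q'})_{Q'})$ does not follow.

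Bridging these two collections is precisely the substantive content of the paper's proof. There, one passes to a further subsequence of $(\epsilon_n)$ along which both $(D_{\epsilon_n}^{Q+z})_{Q \Subset U}$ and $(D_{\epsilon_n}^{\wt{Q}})_{\wt{Q} \Subset U+z}$ converge jointly, and then invokes the monotonicity property of Lemma~\ref{le:limiting_rv}\eqref{it:monotonicity} for the extended joint collection to show that $\lim_m D^{\wt{Q}_m} = \lim_m \wt{D}^{Q_m+z}$ almost surely for every $\wt{V} \in \metregions[U+z]$ (where $\wt{Q}_m \searrow \ol{\wt{V}}$ runs through $\FQ$ and $Q_m \searrow \ol{V}$ runs through $\FQ$ for $V = \wt{V}-z$). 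That identification is not bookkeeping: it requires establishing the monotonicity for a mixed family of dyadic and translated-dyadic box metrics and deducing that the two nested infima agree. Without this step the passage from Lemma~\ref{le:translation_invariant_limit} to the statement about $(\metres{V}{\cdot}{\cdot}{\Gamma})$ is incomplete for $z \notin \CD$.
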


\begin{proof}
 For $z \in \CD$, this is an immediate consequence of Lemma~\ref{le:translation_invariant_limit} and~\eqref{eq:limiting_metric}. It remains to show the case for general $z$.
 
 For every $V \in \metregions[U]$, due to Lemma~\ref{le:limiting_rv}, we can pick a decreasing sequence of $Q_m \in \FQ$, $Q_m \searrow \ol{V}$, such that $\metres{V}{\cdot}{\cdot}{\Gamma} = \lim_m D^{Q_m}$. Similarly, for every $\wt{V} \in \metregions[U+z]$ we can pick a decreasing sequence of $\wt{Q}_m \in \FQ$, $\wt{Q}_m \searrow \ol{\wt{V}}$, such that $\metres{\wt{V}}{\cdot}{\cdot}{\Gamma} = \lim_m D^{\wt{Q}_m}$. Pick a further subsequence of $(\epsilon_n)$ such that the law of the joint vector $((U+z)^*, (D_{\epsilon_n}^{Q+z})_{Q \Subset U}, (D_{\epsilon_n}^{\wt{Q}})_{\wt{Q} \Subset U+z})$ converges (when restricted to compact subsets of $Q$) to the law of a random vector $((U+z)^*, (\wt{D}^{Q+z})_{Q \Subset U}, (D^{\wt{Q}})_{\wt{Q} \Subset U+z})$. By Lemma~\ref{le:translation_invariant_limit}, the conditional law of $(\wt{D}^{Q+z})_{Q \Subset U}$ given $(U+z)^*$ agrees with the pushforward of the conditional law of $(D^{Q})_{Q \Subset U}$ under translation. Hence, the conditional law of $(\lim_m \wt{D}^{Q_m+z})_{\wt{V} \in \metregions[U+z]}$ agrees with the conditional law of $(\metres{V}{\cdot}{\cdot}{\Gamma})_{V \in \metregions[U]}$ under translation. On the other hand, the proof of Lemma~\ref{le:limiting_rv}\eqref{it:monotonicity} shows that the same monotonicity property holds for the collection of metrics $((\wt{D}^{Q+z})_{Q \Subset U}, (D^{\wt{Q}})_{\wt{Q} \Subset U+z})$. Therefore we have $\lim_m D^{\wt{Q}_m} = \lim_m \wt{D}^{Q_m+z}$ almost surely for each $\wt{V} \in \metregions[U+z]$ which concludes the proof.
\end{proof}

\subsubsection{Proof of the series law and the generalized parallel law}

\begin{proposition}
 Almost surely the following holds. Let $V \in \metregions$ and $x,y,z \in \ol{V} \cap \Upsilon_\Gamma$ such that $z$ disconnects $x$ from $y$ in $\ol{V} \cap \Upsilon_\Gamma$. Then
 \[ \metres{V}{x}{y}{\Gamma} = \metres{V}{x}{z}{\Gamma}+\metres{V}{z}{y}{\Gamma} . \]
\end{proposition}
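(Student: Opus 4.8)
The plan is to deduce this from the series law for the approximating metrics (which has been checked to pass to the metric $\metapproxacres{\epsilon}{V}{\cdot}{\cdot}{\Gamma}$, see the discussion after~\eqref{eq:shortcutted_metric}) together with the upper bound coming from the triangle inequality, exactly in the spirit of the proof of the compatibility Proposition~\ref{pr:compatibility}. The inequality $\metres{V}{x}{y}{\Gamma} \le \metres{V}{x}{z}{\Gamma}+\metres{V}{z}{y}{\Gamma}$ is automatic since each $\metres{V}{\cdot}{\cdot}{\Gamma}$ is a pseudo-metric. So the work is entirely in the reverse inequality $\metres{V}{x}{y}{\Gamma} \ge \metres{V}{x}{z}{\Gamma}+\metres{V}{z}{y}{\Gamma}$, and by~\eqref{eq:limiting_metric} and Lemma~\ref{le:limiting_rv} it suffices to prove the corresponding statement for $D^Q$, namely that $D^Q(x,y) \ge D^Q(x,z)+D^Q(z,y)$ whenever $z$ separates $x$ from $y$ in $\ol{V}\cap\Upsilon_\Gamma$, for all $Q \in \FQ$ with $V \Subset Q$ and all separated triples $x,y,z \in \ol{V} \cap \Upsilon_\Gamma$ in the limit.

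First I would fix the coupling of $\Gamma$ and $(\Gamma_n)$ with their internal metrics so that the random vector~\eqref{eq:converging_metrics} converges almost surely in the GHf topology; recall that the topology of Section~\ref{subsec:topology} retains the separation sets $A_n$, so separation of points is stable under the limit. Given $x,y,z \in \ol{V} \cap \Upsilon_\Gamma$ with $z$ separating $x$ from $y$, I would pick approximating triples $x_n,y_n,z_n \in Q \cap \Upsilon_{\Gamma_n}$ with $x_n \to x$, $y_n \to y$, $z_n \to z$ and such that $z_n$ separates $x_n$ from $y_n$ in $\ol{V}\cap\Upsilon_{\Gamma_n}$ (possible precisely because the separation sets converge). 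Then the second part of the series law applied to $\metapproxacres{\epsilon_n}{\wt{Q}_{\epsilon_n}}{\cdot}{\cdot}{\Gamma_n}$ gives, for the region $\wt{Q}_{\epsilon_n}$ used to define $D_{\epsilon_n}^{Q}$,
\[ \metapproxacres{\epsilon_n}{\wt{Q}_{\epsilon_n}}{x_n}{y_n}{\Gamma_n} \ge \metapproxacres{\epsilon_n}{\wt{Q}_{\epsilon_n}}{x_n}{z_n}{\Gamma_n} ; \]
but I need the stronger split into $x_n$-to-$z_n$ plus $z_n$-to-$y_n$. For that I would invoke the first part of the series law, which requires two separation points with Euclidean distance at least $\cserial\epsilon_n$ apart: since $z$ has a neighborhood in $\Upsilon_\Gamma$ which is split by $z$ itself, for $n$ large I can choose $z_n^1, z_n^2 \in \Upsilon_{\Gamma_n}$ on the two sides of $z_n$, at distance $\ge \cserial\epsilon_n$ from each other, with $z_n^1$ separating $x_n$ from $y_n, z_n^2$ and $z_n^2$ separating $y_n$ from $x_n, z_n^1$, and with $\distE(K_{x_n},K_{y_n}) \ge \cserial\epsilon_n$ where $K_{x_n}, K_{y_n}$ are the components of $\ol{\wt{Q}_{\epsilon_n}}\cap\Upsilon_{\Gamma_n}\setminus\{z_n^1,z_n^2\}$ containing $x_n,y_n$; this is exactly where the definition of $\metregions$ (boundaries are finite unions of loops) and the fact that $z$ is a genuine separation point give room. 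Then the series law yields $\metapproxacres{\epsilon_n}{\wt{Q}_{\epsilon_n}}{x_n}{y_n}{\Gamma_n} \ge \metapproxacres{\epsilon_n}{\wt{Q}_{\epsilon_n}}{x_n}{z_n^1}{\Gamma_n} + \metapproxacres{\epsilon_n}{\wt{Q}_{\epsilon_n}}{z_n^2}{y_n}{\Gamma_n}$; dividing by $\median{\epsilon_n}$, using $|z_n^1 - z_n^2| \le \cserial\epsilon_n \to 0$ together with the uniform H\"older continuity from Proposition~\ref{prop:interior_tightness} (so $D_{\epsilon_n}^Q(z_n^1,z_n^2) \to 0$) and the triangle inequality, and passing to the GHf limit gives $D^Q(x,y) \ge D^Q(x,z) + D^Q(z,y)$.

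Finally, since $\FQ$ is countable and for each $Q$ the set of admissible separated triples $(x,y,z)$ can be exhausted by a countable dense family (using the $\dpath$-continuity of $D^Q$ from Proposition~\ref{prop:interior_tightness} and the stability of separation under limits of the $A_n$'s, plus the compatibility Proposition~\ref{pr:compatibility} to extend from interior points to $\ol{V}$), the inequality holds almost surely simultaneously for all $Q$ and all separated triples, and then~\eqref{eq:limiting_metric} upgrades it to $\metres{V}{x}{y}{\Gamma} \ge \metres{V}{x}{z}{\Gamma} + \metres{V}{z}{y}{\Gamma}$ for every $V \in \metregions$. The main obstacle I anticipate is the bookkeeping around the $\cserial\epsilon_n$-separation condition in the (exact, non-approximate) series law: one must produce the auxiliary points $z_n^1, z_n^2$ and control $K_{x_n}, K_{y_n}$ while ensuring $x_n, y_n, z_n$ still converge to $x, y, z$, and verify that the extra ``$+\,o(1)$'' errors introduced by moving from $z_n$ to $z_n^1, z_n^2$ and by the shortcut metric $\metapproxac{\epsilon}{\cdot}{\cdot}{\Gamma}$ (whose $\ac{\epsilon}/\median{\epsilon}$ contributions vanish by~\eqref{eq:eps_bound_ass}) genuinely tend to zero; the GHf convergence of the separation structure is the technical crutch that makes all of this go through.
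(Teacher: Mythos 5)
Your overall strategy matches the paper's: reduce to $D^Q$, use the fact that the GHf topology retains the separation sets $A_n$, and apply the series law for the approximating metrics. But there are two concrete gaps in the execution.

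First, the H\"older-continuity step is wrong as stated. You write that $|z_n^1-z_n^2|\le\cserial\epsilon_n\to 0$ plus Proposition~\ref{prop:interior_tightness} gives $D_{\epsilon_n}^Q(z_n^1,z_n^2)\to 0$. But the uniform modulus in Proposition~\ref{prop:interior_tightness} is with respect to $\dpath$, not Euclidean distance, and in the $\kappa'\in(4,8)$ gasket two points at Euclidean distance $O(\epsilon_n)$ can have $\dpath$-distance of order $1$ (this is exactly the prime-end issue the whole framework is built to respect). To close this you would have to \emph{construct} $z_n^1,z_n^2$ so that the bubble between them has small diameter, giving $\dpath(z_n^1,z_n^2)\to 0$; nothing in your sketch does this. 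Moreover, the existence of two auxiliary separation points straddling $z_n$ is not automatic: they must be double points, and you need the fact (used explicitly in the paper's proof) that double points of non-simple SLE are not isolated. You never invoke this.

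Second, and related, you flag the $\distE(K_{x_n},K_{y_n})\ge\cserial\epsilon_n$ condition as the ``main obstacle'' but offer only the remark that ``$\metregions$ gives room,'' which is not an argument. The paper is explicit here: even with well-separated pivot points the components $K_{x_n},K_{y_n}$ can touch at \emph{other} double points, and the fix is to use compatibility (Proposition~\ref{pr:compatibility}) to replace $K_{x_n},K_{y_n}$ by sub-components that do have Euclidean separation. Structurally, the paper also avoids your double-small-scale construction by first fixing a second separation point $z^m$ at positive distance from $z$ \emph{in the limit gasket}, then sending $n\to\infty$ for fixed $m$ (so the two pivots converge to distinct points and the $\cserial\epsilon_n$ threshold is eventually trivial), and only then sending $m\to\infty$ using continuity of $D^Q$. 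Your approach — producing both pivots at scale $\epsilon_n$ simultaneously — is strictly more delicate and, as written, leaves both the existence of the pivots and the vanishing of the extra $D^Q(z_n^1,z_n^2)$ term unjustified.
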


\begin{proof}
 By the definition of $\metregions$, the boundary of $V$ is part of finitely many loops of $\Gamma$. Therefore, if $Q \supseteq \ol{V}$ is contained in a sufficiently small neighborhood of $\ol{V}$, then every $u \in Q \setminus \ol{V}$ is separated from $V$ in $Q \cap \Upsilon_\Gamma$ by a single point $z' \in \partial V$. Therefore, due to~\eqref{eq:limiting_metric} and Lemma~\ref{le:limiting_rv}, it suffices to show that
 \[ D^Q(x,y) \ge D^Q(x,z)+D^Q(z,y) \]
 for each $Q \in \FQ$ and each $x,y,z \in Q \cap \Upsilon_\Gamma$ such that $z$ disconnects $x$ from $y$ in $Q \cap \Upsilon_\Gamma$.
 
 Suppose $\Gamma$ and $(\Gamma_n)$ together with their internal metrics are coupled so that the random vector~\eqref{eq:converging_metrics} converges almost surely. Since the set of double points of non-simple SLE does not contain isolated points, for each $x,y,z$ as above there is a non-constant sequence of $z^m \to z$ such that each $z^m$ disconnects $x$ from $y$ in $Q \cap \Upsilon_\Gamma$. Upon swapping the roles of $x,y$, we can assume that $z^m$ disconnects $x$ from $z$. By continuity, it suffices to show that
 \[ D^Q(x,y) \ge D^Q(x,z^m)+D^Q(z,y) \]
 for each $m$.
 
 Recall that we have chosen the topology in Section~\ref{subsec:topology} to keep track of the separation points. Hence, there are sequences of $x_n,y_n,z_n,z^m_n \in Q \cap \Upsilon_{\Gamma_n}$ converging to $x,y,z,z^m$, respectively, such that $z^m_n$ disconnects $x_n$ from $z_n,y_n$, and $z_n$ disconnects $y_n$ from $x_n,z^m_n$ in $Q \cap \Upsilon_{\Gamma_n}$. Although the connected components of $Q \cap \Upsilon_{\Gamma_n} \setminus \{z_n,z^m_n\}$ containing $x_n$ and $y_n$ may not have positive Euclidean distance from each other due to double points of CLE loops, we can use the compatibility Proposition~\ref{pr:compatibility} to restrict to sub-components that do have uniformly positive Euclidean distance. For $\epsilon_n$ small enough, the series law then implies
 \[ D_{\epsilon_n}^{Q}(x_n,y_n) \ge D_{\epsilon_n}^{Q}(x_n,z^m_n)+D_{\epsilon_n}^{Q}(z_n,y_n) . \]
 This concludes the proof.
\end{proof}

 \begin{proposition}
  Let $N \in \N$ and $\cparallel(N) > 0$ be the constant in the generalized parallel law for $\metapprox{\epsilon}{\cdot}{\cdot}{\Gamma}$. Almost surely the following holds. Let $V \in \metregions$ and let $x,y,z_1,\ldots,z_N \in \ol{V} \cap \Upsilon_\Gamma$ such that $x,y$ are separated in $\ol{V} \cap \Upsilon_\Gamma \setminus\{z_1,\ldots,z_N\}$. Then
 \[ \cparallel(N)\metres{V}{x}{y}{\Gamma} \ge \min_{i}\metres{V_x}{x}{z_i}{\Gamma} \]
 where $V_x \subseteq V$ is any region such that $\ol{V_x}$ contains the connected component of $\ol{V} \cap \Upsilon_\Gamma \setminus\{z_1,\ldots,z_N\}$ containing $x$.
 \end{proposition}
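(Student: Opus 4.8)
\emph{Overview.} The plan is to follow the pattern of the series law and compatibility proofs above: first reduce the claim to a statement about the functions $D^Q$, $Q \in \FQ$, and then transfer the approximate generalized parallel law through the almost sure coupling along which the random vector in~\eqref{eq:converging_metrics} converges in the Gromov--Hausdorff--function topology of Section~\ref{subsec:topology}.

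\emph{Reduction to $D^Q$.} By~\eqref{eq:limiting_metric} we have $\metres{V}{x}{y}{\Gamma} = \sup_{Q \in \FQ,\, Q \Supset \ol V} D^Q(x,y)$ and $\metres{V_x}{x}{z_i}{\Gamma} = \sup_{Q' \in \FQ,\, Q' \Supset \ol{V_x}} D^{Q'}(x,z_i)$, and by Lemma~\ref{le:limiting_rv}\eqref{it:monotonicity} both suprema are approached as the cubes shrink towards $\ol V$, resp.\ $\ol{V_x}$. Fix $\eta>0$; picking for each of the finitely many $i$ a cube on which $D^{\cdot}(x,z_i) > \metres{V_x}{x}{z_i}{\Gamma}-\eta$ and intersecting them yields a single $Q_x \in \FQ$ with $\ol{V_x}\Subset Q_x$ and $D^{Q_x}(x,z_i) > \metres{V_x}{x}{z_i}{\Gamma}-\eta$ for all $i$. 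Since $V \in \metregions$, the boundary of $V$ lies on finitely many loops of $\Gamma$; hence for every sufficiently small $Q \in \FQ$ with $\ol V \subseteq Q$, every point of $Q\setminus\ol V$ is separated from $V$ in $Q\cap\Upsilon_\Gamma$ by a single point of $\partial V$, and an argument of filling in dead-end excursions (as in the proof of Proposition~\ref{pr:compatibility}) shows that $\{z_1,\dots,z_N\}$ still separates $x$ from $y$ in $Q\cap\Upsilon_\Gamma$ and that the connected component of $Q\cap\Upsilon_\Gamma\setminus\{z_1,\dots,z_N\}$ containing $x$ is contained in $Q_x$ (once $Q$ is small enough and $Q_x \subseteq Q$). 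It therefore suffices to prove, for such $Q,Q_x$,
\[ \cparallel(N)\, D^Q(x,y) \ge \min_i D^{Q_x}(x,z_i) ; \]
combined with $\metres{V}{x}{y}{\Gamma} \ge D^Q(x,y)$ and $\eta\downarrow 0$ this gives the proposition.

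\emph{Transfer along the coupling.} Work on the full-probability event on which~\eqref{eq:converging_metrics} converges along $(\epsilon_n)$. Since the topology of Section~\ref{subsec:topology} also records the separation sets $A_n$, one obtains, exactly as in the series law proof, points $x_n,y_n,z_{1,n},\dots,z_{N,n}\in Q\cap\Upsilon_{\Gamma_n}$ with $x_n\to x$, $y_n\to y$, $z_{i,n}\to z_i$, such that $\{z_{1,n},\dots,z_{N,n}\}$ separates $x_n$ from $y_n$ in $Q\cap\Upsilon_{\Gamma_n}$. We may assume the limiting $z_i$ are pairwise distinct (after discarding duplicates; in the cases of interest $\cparallel$ is non-decreasing in $N$), so that for $n$ large the $z_{i,n}$ are at Euclidean distance $\ge \cserial\epsilon_n$ apart; and by Proposition~\ref{pr:compatibility} we may, if necessary, pass to sub-components so that the component $K_{x_n}$ containing $x_n$ and its complement stay at uniformly positive Euclidean distance, without changing the distances below. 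For $n$ large, apply the generalized parallel law for $\metapprox{\epsilon_n}{\cdot}{\cdot}{\Gamma_n}$ to the regions defining $D_{\epsilon_n}^Q$ and $D_{\epsilon_n}^{Q_x}$ and the points $x_n,y_n,z_{1,n},\dots,z_{N,n}$: the hypotheses hold because these regions are nested appropriately and $\ol{V_x}\Subset Q_x \subseteq Q$ together with $\cserial\epsilon_n\to0$ forces the $\cserial\epsilon_n$-neighborhood of $K_{x_n}$ to lie in the closure of the $D_{\epsilon_n}^{Q_x}$-region. This gives $\cparallel(N)\, D_{\epsilon_n}^Q(x_n,y_n)\ge\min_i D_{\epsilon_n}^{Q_x}(x_n,z_{i,n})$, and letting $n\to\infty$ the GHf convergence together with the uniform H\"older bound of Proposition~\ref{prop:interior_tightness} (which makes evaluation at converging points continuous) yields $D_{\epsilon_n}^Q(x_n,y_n)\to D^Q(x,y)$ and $D_{\epsilon_n}^{Q_x}(x_n,z_{i,n})\to D^{Q_x}(x,z_i)$, hence the displayed inequality.

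\emph{Main obstacle.} The genuinely delicate step is verifying, for $n$ large, the region hypotheses of the approximate generalized parallel law — the inclusion of the $D_{\epsilon_n}^{Q_x}$-region in the $D_{\epsilon_n}^Q$-region, and the containment of the $\cserial\epsilon_n$-neighborhood (with respect to $\dpath$) of the separating component $K_{x_n}$ in the closure of the $D_{\epsilon_n}^{Q_x}$-region — against the limiting hypothesis that $\ol{V_x}$ contains the component of $\ol V\cap\Upsilon_\Gamma\setminus\{z_1,\dots,z_N\}$ containing $x$. This is where the auxiliary cubes $Q,Q_x$ chosen in the reduction must be coordinated with the regularization $\wt Q_\epsilon$ used to define $D_\epsilon^Q$, and where one uses that the GHf topology transports the convergence of the separating components $K_{x_n}$ to (a subset of) the $Q$-component of $x$. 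The remaining ingredients — reducing separation in $\ol V\cap\Upsilon_\Gamma$ to separation in a small neighborhood, and reducing to distinct $z_i$ — are routine adaptations of the arguments already used for Proposition~\ref{pr:compatibility} and the series law.
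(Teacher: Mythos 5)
Your proposal follows the same overall route as the paper: reduce via \eqref{eq:limiting_metric} and Lemma~\ref{le:limiting_rv} to an inequality between the $D^Q$'s, transfer separation through the almost-sure GHf coupling using the recorded sets $A_N$, apply the approximate generalized parallel law for $\metapprox{\epsilon_n}{\cdot}{\cdot}{\Gamma_n}$, and pass to the limit. The main structure, and the place where the actual work happens, agree with the paper's proof.

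The one place your sketch genuinely stops short is exactly where you flag the ``main obstacle,'' and it is worth noting what the paper does there that you do not. Rather than trying to arrange for the $\dpath$-$\cserial\epsilon_n$-neighborhood of the approximating component $K_{x_n}$ to sit directly inside the $\wt{Q}_{x,\epsilon_n}$-region defining $D_{\epsilon_n}^{Q_x}$ (the coordination you allude to), the paper inserts an intermediate region $\wt{V}_x \in \metregions$ chosen to contain a small neighborhood of the $\wt Q_{\epsilon_n}$-component of $x_n$, applies the parallel law with this $\wt{V}_x$ as the $V_x$ of the axiom (so its hypothesis on the $\cserial\epsilon_n$-neighborhood holds by construction), and then only afterwards uses the region-monotonicity~\eqref{eq:approximations_monotone} to bound $\median{\epsilon_n}^{-1}\metapproxres{\epsilon_n}{\wt{V}_x}{x_n}{(z_i)_n}{\Gamma_n}$ from below by $D_{\epsilon_n}^{Q_x}(x_n,(z_i)_n)$ for a fixed cube $Q_x$ containing $\wt V_x$ inside a small neighborhood of $\ol{V_x}$. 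This two-step ``apply the axiom on an $n$-dependent $\metregions$-region, then clean up with monotonicity to a fixed $Q_x$'' is the device that resolves the coordination you name; your proposal would be complete once you write it in.

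Two small remarks. First, your parenthetical appeal to $\cparallel$ being non-decreasing in $N$ to discard coincident $z_i$ is not an assumption in Definition~\ref{def:cle_metric}; it holds for the geodesic and resistance examples but is not part of the abstract framework, so either make that a stated hypothesis or handle duplicates by choosing distinct approximating $(z_i)_n$ at mutual distance $\ge\cserial\epsilon_n$ (possible since $\epsilon_n\to0$ and the separating set still contains the original one). Second, the passage to the limit is most cleanly phrased, as the paper does, via Lemma~\ref{le:ghf_monotonicity} together with continuity of $D^Q$, rather than invoking Proposition~\ref{prop:interior_tightness} at the level of the prelimiting $D_{\epsilon_n}^Q$; both work, but the former matches how the surrounding proofs (series law, compatibility) are organized.
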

 
 \begin{proof}
  Let $V \in \metregions$, $x,y,z_1,\ldots,z_N \in \ol{V} \cap \Upsilon_\Gamma$, and $V_x$ be as in the proposition statement. We argue that we can find sequences $Q_m \searrow \ol{V}$, $(Q_x)_m \searrow \ol{V_x}$ such that
  \[ \cparallel(N)D^{Q_m}(x,y) \ge \min_{i}D^{(Q_x)_m}(x,z_i) \quad\text{for each } m . \]
  This will imply the result due to~\eqref{eq:limiting_metric} and Lemma~\ref{le:limiting_rv}.
  
  Suppose $\Gamma$ and $(\Gamma_n)$ together with their internal metrics are coupled so that the random vector~\eqref{eq:converging_metrics} converges almost surely. By the definition of $\metregions$, the boundary of $V$ is part of finitely many loops of $\Gamma$. Therefore, if $Q \supseteq \ol{V}$ is contained in a sufficiently small neighborhood of $\ol{V}$, then every $u \in Q \setminus \ol{V}$ is separated from $V$ in $Q \cap \Upsilon_\Gamma$ by a single point $z' \in \partial V$. Recall that we have chosen the topology in Section~\ref{subsec:topology} to keep track of the separation points. Therefore there are sequences of points with $x_n \to x$, $y_n \to y$, $(z_i)_n \to z_i$ for each $i$ such that $x_n,y_n$ are separated in $Q \cap \Upsilon_{\Gamma_n} \setminus\{(z_1)_n,\ldots,(z_N)_n\}$.
    
  Let $Q \in \FQ$, $V \Subset Q$. Let $\wt{Q}_\epsilon$ be as in the definition of $D_\epsilon^{Q}$. We apply the generalized parallel law to $\metapproxres{\epsilon_n}{\wt{Q}_{\epsilon_n}}{x_n}{y_n}{\Gamma_n}$. Let $\wt{V}_x \in \metregions$ contain a small neighborhood of the connected component of $\wt{Q}_{\epsilon_n} \cap \Upsilon_{\Gamma_n} \setminus\{(z_1)_n,\ldots,(z_N)_n\}$. We can choose $Q_x \in \FQ$ such that $\wt{V}_x \Subset Q_x$ and $Q_x$ is contained in a small neighborhood of $\ol{V_x}$. By the generalized parallel law of the approximating metrics and the monotonicity~\eqref{eq:approximations_monotone} we have
  \[ \cparallel(N)D_{\epsilon_n}^{Q}(x_n,y_n) \ge \min_{i}\median{\epsilon}^{-1}\metapproxres{\epsilon_n}{\wt{V}_x}{x_n}{(z_i)_n}{\Gamma_n} \ge \min_{i}D_{\epsilon_n}^{Q_x}(x_n,(z_i)_n) . \]
  The claim then follows since monotonicity is preserved under GHf convergence (Lemma~\ref{le:ghf_monotonicity}).
 \end{proof}

\subsection{Proof of non-degeneracy}
\label{se:nondegeneracy}

\ExplSyntaxOn

\NewDocumentCommand \footnotenumber { O{} }
  {
    \footnotemark
      {
        \tl_if_blank:nF { #1 }
          {
            \addtocounter { footnote } { -1 }
            \refstepcounter { footnote }
            \label { #1 }
          }
      }
  }

\NewDocumentCommand \footnotemultiple { O{} m }
  {
    \group_begin:
    \tl_if_blank:nF { #1 }
      {
        \clist_clear:N \l_tmpa_clist
        \clist_map_inline:nn { #1 }
          {
            \clist_put_right:Nn \l_tmpa_clist { \ref{ ##1 } }
          }
        \def \thefootnote { \clist_use:Nn \l_tmpa_clist { , } }
      }
    \footnotetext { #2 }
    \group_end:
  }

\ExplSyntaxOff

\begin{proposition}\label{pr:metric_positive}
 Suppose $\met{\cdot}{\cdot}{\Gamma}$ is a \clekp{} metric (with $\epsilon = 0$). Then either a.s.\ $\metres{V}{\cdot}{\cdot}{\Gamma} = 0$ for each $V \in \metregions$ or a.s.\ $\metres{V}{\cdot}{\cdot}{\Gamma}$ is a true metric for each $V \in \metregions$ (i.e.\ $\metres{V}{x}{y}{\Gamma} > 0$ whenever $x \neq y$).
\end{proposition}

\begin{proof}
 We argue that if with positive probability $\metres{V}{x}{y}{\Gamma} = 0$ for some $V$ and $x \neq y$, then there exists $q>0$ such that $\quant{q}{} = 0$ where $\quant{q}{}$ is the quantile defined in Section~\ref{se:intersections_setup}. By Corollary~\ref{co:quantiles_metric}, this would imply $\median{} = 0$, and then Proposition~\ref{prop:interior_tightness} would imply that the metric is constant zero.
 
 Suppose that with positive probability $\metres{V}{x}{y}{\Gamma} = 0$ for some $x \neq y$. Almost surely, each pair of points is separated by a finite chain of loops in the sense that there exist $\CL_1,\ldots,\CL_n \in \Gamma$ (some of them may belong to the boundary of $V$) and points $z_i \in \CL_i \cap \CL_{i+1}$, $i=1,\ldots,n$ (where $\CL_{n+1} = \CL_1$), that separate $x$ from $y$ in $\ol{V} \cap \Upsilon_\Gamma$ (see e.g.\ \cite[Lemma~2.1]{amy-cle-resampling}). For each such separating set of points, by the generalized parallel law, we must have $\metres{V_x}{x}{z_i}{\Gamma} = 0$ for some $i$. Since each pair of intersecting loops intersect at infinitely many points, there must be two distinct points $z_i,z'_i \in \CL_i \cap \CL_{i+1}$ for some $i$ such that $\metres{V_x}{x}{z_i}{\Gamma} = \metres{V_x}{x}{z'_i}{\Gamma} = 0$ (where we assumed $z'_i$ separates $z_i$ from $x$ in $\ol{V_x}$). In particular, $\metres{V_x}{z_i}{z'_i}{\Gamma} = 0$. By the compatibility property, we then have $\metres{V'}{z_i}{z'_i}{\Gamma} = 0$ where $V'$ is the region bounded between the segments of $\CL_i$ and $\CL_{i+1}$ from $z_i$ to $z'_i$.
 
 Let $\Gamma$ be coupled with $h$ as in Section~\ref{subsec:tightness_interior_flow_lines}. Sample $u,v \in B(0,1)$ uniformly at random. Let $\eta_u$ be the flow lines of $h$ starting from $u$. Given $\eta_u$, let $\eta_{u,v}$ be the flow line of $h$ starting from $v$ and reflected off $\eta_u$ in the opposite direction. Then the following event has positive probability. There exist $z,z' \in \eta_{u,v} \cap \eta_u$ such that the segment of $\eta_{u,v}$ from $z$ to $z'$ intersects $\eta_u$ only on the right side of $\eta_u$ with angle difference $0$ (before reflecting off), and $\metres{U_{z,z'}}{z}{z'}{\Gamma} = 0$ where $U_{z,z'}$ is the region bounded between the segments of $\eta_{u,v}$ and $\eta_u$ from $z$ to $z'$. Using the Markovian property and translation invariance, together with a local absolute continuity argument (similarly as in the first part of the proof of Lemma~\ref{le:a_priori_loop_intersecting_one_side})\footnotenumber[fn:transfer1], this implies that the following event has positive probability. Let $\wt{h}$ be a GFF on $\D$ with boundary values as in Lemma~\ref{le:reflected_fl_law}. Let $\wt{\eta}_1$ be the flow line of $\wt{h}$ from $-i$ to $i$, and let $\wt{\eta}_2$ be the flow line of $\wt{h}$ in the components of $\D\setminus\wt{\eta}_1$ to the right of $\wt{\eta}_1$ from $i$ to $-i$, reflected off $\wt{\eta}_1$. Then there exist $\wt{z},\wt{z}' \in \wt{\eta}_1 \cap \wt{\eta}_2 \cap B(0,1/2)$ such that
 \[ \p[ \met{\wt{z}}{\wt{z}'}{\wt{\Gamma}} = 0 ] > 0 . \]
 Finally, using the reversal symmetry (Lemma~\ref{le:reflected_fl_law}) and another local absolute continuity argument (similarly as in the proof of Lemma~\ref{le:a_priori_disc} or~\ref{le:tightness_bdry_bubble_fl})\footnotenumber[fn:transfer2]\footnotemultiple[fn:transfer1,fn:transfer2]{In fact, the argument simplifies in this case as we do not need quantitative bounds on the probabilities.}, we conclude that the following event has positive probability. Consider the setup in Section~\ref{se:intersection_exponent}. Then
 \[ \p\left[ \sup_{(x,y) \in \intpts{1}} \metres{U_{x,y}}{x}{y}{\Gamma} = 0 \right] > 0 . \]
 This means that $\quant{q}{} = 0$ for some $q>0$ which is what we wanted to show.
\end{proof}

\subsection{Geodesic metric}
\label{se:geodesic_metric}

In this subsection we define geodesic approximation schemes and prove that their subsequential limits yield geodesic metrics.

Let $\spaths{x}{y}{U}{\Gamma} \subseteq \paths{x}{y}{U}{\Gamma}$ denote the set of \emph{simple} admissible paths from $x$ to $y$ within $U$. We identify paths with different parameterizations and orientations. Let $\len{\Fd}{\gamma}$ denote the length of a path $\gamma$ under a metric $\Fd$.

Suppose that for each $\epsilon > 0$ we have a deterministic measurable function $\lebneb{\epsilon}(\gamma) \ge 0$ defined for each trace $\gamma$ of a simple path (i.e.\ not distinguishing orientation). Suppose that $\lebneb{\epsilon}$ satisfies the following properties.
\begin{itemize}
 \item Positivity: $\lebneb{\epsilon}(\gamma) > 0$ for every $\gamma$ consisting of more than a point.
 \item Translation invariance: $\lebneb{\epsilon}(\gamma) = \lebneb{\epsilon}(\gamma+z)$ for every $z\in\C$.
 \item Monotonicity: If $\gamma \subseteq \gamma'$, then $\lebneb{\epsilon}(\gamma) \le \lebneb{\epsilon}(\gamma')$.
 \item Subadditivity: $\lebneb{\epsilon}(\gamma_1 \oplus \gamma_2) \le \lebneb{\epsilon}(\gamma_1)+\lebneb{\epsilon}(\gamma_2)$ if the concatenation $\gamma_1 \oplus \gamma_2$ is a simple path.
 \item Approximate additivity: If $\gamma_1,\gamma_2 \subseteq \gamma$ and $\distE(\gamma_1,\gamma_2) \ge \cserial\epsilon$, then $\lebneb{\epsilon}(\gamma) \ge \lebneb{\epsilon}(\gamma_1)+\lebneb{\epsilon}(\gamma_2)$.
 \item Approximation scale: If $\diamE(\gamma) \le \epsilon$, then $\lebneb{\epsilon}(\gamma) \le \ac{\epsilon}$ where $\ac{\epsilon}$ is a fixed constant.
\end{itemize}
We consider the approximate CLE metrics defined by
\begin{equation}\label{eq:geodesic_approx_metric}
\metapproxres{\epsilon}{V}{x}{y}{\Gamma} = \inf_{\gamma \in \spaths{x}{y}{\ol{V}}{\Gamma}} \lebneb{\epsilon}(\gamma) .
\end{equation}
We further say that $(\metapprox{\epsilon}{\cdot}{\cdot}{\Gamma})_{\epsilon > 0}$ is a \emph{good geodesic approximation scheme} if there is a constant $\epsexp > 0$ such that for every $r>0$ there is $\epsilon_0 > 0$ such that
\begin{equation}\label{eq:eps_bound_geodesic}
 \inf_{\diamE(\gamma) \ge r} \lebneb{\epsilon}(\gamma) \ge \epsilon^{-\epsexp}\ac{\epsilon}
 \quad\text{for } \epsilon < \epsilon_0 .
\end{equation}
Note that this condition implies in particular~\eqref{eq:eps_bound_ass}. The following is immediate from definition.
\begin{proposition}
 Suppose we have the setup above. Then $\metapprox{\epsilon}{\cdot}{\cdot}{\Gamma}$ is an approximate \clekp{} metric in the sense of Definition~\ref{def:cle_metric} with the constant $\cserial$ as above, and $\cparallel(N) = 1$. If further~\eqref{eq:eps_bound_geodesic} holds, then $(\metapprox{\epsilon}{\cdot}{\cdot}{\Gamma})_{\epsilon > 0}$ is a good approximation scheme in the sense of Definition~\ref{def:good_approximation}.
\end{proposition}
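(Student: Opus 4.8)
The plan is to verify, in turn, the eight properties of Definition~\ref{def:cle_metric} (with the given $\cserial$ and with $\cparallel(N)=1$), and then, under~\eqref{eq:eps_bound_geodesic}, the two conditions of Definition~\ref{def:good_approximation}. The workhorse for the axioms is a path-surgery argument resting on two elementary facts: (1) any admissible path from $x$ to $y$ within $\ol V$ contains a \emph{simple} admissible sub-path from $x$ to $y$ (chronological loop erasure), whose trace is contained in that of the original path, so that its $\lebneb{\epsilon}$-value does not exceed that of the original path by monotonicity of $\lebneb{\epsilon}$; and (2) a simple path meets any point separating its two endpoints exactly once. Measurability of $(\Gamma,V)\mapsto\metapproxres{\epsilon}{V}{x}{y}{\Gamma}$ is the standard measurable-selection argument.

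\textbf{The easy axioms.} Symmetry is immediate from $\spaths{x}{y}{\ol V}{\Gamma}=\spaths{y}{x}{\ol V}{\Gamma}$; the triangle inequality follows by loop-erasing the concatenation of near-optimal paths and applying monotonicity and subadditivity of $\lebneb{\epsilon}$. Since $\metapproxres{\epsilon}{V}{\cdot}{\cdot}{\Gamma}$ is a deterministic functional of $\Gamma$ restricted to a neighbourhood of $\ol V$ — hence of $U^*$ when $V\in\metregions[U]$, as $\lebneb{\epsilon}$ is deterministic and admissibility within $\ol V$ only involves loops meeting $\ol V\subseteq U$ — the conditional law in the Markovian property is a.s.\ a fixed measurable function of $U^*$; taking $\mu^{U^*}$ to be that point mass and using translation-equivariance of $\Gamma\mapsto(\text{admissible paths})$ and translation-invariance of $\lebneb{\epsilon}$ yields translation invariance. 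For compatibility: if some $\gamma'\in\spaths{x}{y}{\ol{V'}}{\Gamma}$ entered $V'\setminus V$ at a point $u$, the separating point $z\in\ol V$ of $u$ from $\{x,y\}$ would be met by $\gamma'$ twice, contradicting simplicity; hence $\spaths{x}{y}{\ol{V'}}{\Gamma}=\spaths{x}{y}{\ol V}{\Gamma}$ and the internal metrics agree (and the extra condition $\dpath[\ol{V'}](z,V'\setminus V)\ge\cserial\epsilon$ is not used, as expected). Monotonicity holds in the strong form $\metapproxres{\epsilon}{V'}{\cdot}{\cdot}{\Gamma}\le\metapproxres{\epsilon}{V}{\cdot}{\cdot}{\Gamma}$ for all $V\subseteq V'$ since $\spaths{x}{y}{\ol V}{\Gamma}\subseteq\spaths{x}{y}{\ol{V'}}{\Gamma}$, so one may take $x'=x$, $y'=y$. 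For the series law a simple $\gamma$ from $x$ to $y$ meets $z_1,z_2$ once each, necessarily in the order $x,z_1,z_2,y$ (as $z_1$ separates $x$ from $z_2$), so $\gamma=\gamma_1\oplus\gamma_2\oplus\gamma_3$ with $\gamma_1\subseteq\overline{K_x}$, $\gamma_3\subseteq\overline{K_y}$; since $\distE(\overline{K_x},\overline{K_y})=\distE(K_x,K_y)\ge\cserial\epsilon$, approximate additivity gives $\lebneb{\epsilon}(\gamma)\ge\lebneb{\epsilon}(\gamma_1)+\lebneb{\epsilon}(\gamma_3)\ge\metapproxres{\epsilon}{V}{x}{z_1}{\Gamma}+\metapproxres{\epsilon}{V}{z_2}{y}{\Gamma}$, and the infimum over $\gamma$ concludes; the second form of the series law is immediate from monotonicity of $\lebneb{\epsilon}$. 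Finally, for the generalized parallel law with $\cparallel(N)=1$: given $\gamma\in\spaths{x}{y}{\ol V}{\Gamma}$, let $z_i$ be the first of $z_1,\dots,z_N$ that $\gamma$ meets and $\gamma_1$ its initial segment; then $\gamma_1\subseteq\overline{K_x}\subseteq\ol{V_x}$ is a simple admissible path from $x$ to $z_i$ within $\ol{V_x}$, so $\lebneb{\epsilon}(\gamma)\ge\lebneb{\epsilon}(\gamma_1)\ge\metapproxres{\epsilon}{V_x}{x}{z_i}{\Gamma}\ge\min_j\metapproxres{\epsilon}{V_x}{x}{z_j}{\Gamma}$, and taking the infimum over $\gamma$ finishes (the condition $|z_i-z_{i'}|\ge\cserial\epsilon$ is not needed here).

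\textbf{Separability — the main obstacle.} The inequality $\metapproxres{\epsilon}{V}{x}{y}{\Gamma}\ge\lim_{V'\searrow V}\metapproxres{\epsilon}{V'}{x}{y}{\Gamma}$ is immediate from $\spaths{x}{y}{\ol V}{\Gamma}\subseteq\spaths{x}{y}{\ol{V'}}{\Gamma}$, and the limit exists since it is monotone along $V'_n\searrow V$. The reverse inequality is the one genuinely technical point. I would take near-optimal $\gamma_n\in\spaths{x}{y}{\ol{V'_n}}{\Gamma}$, observe that all their traces lie in the compact space $\Upsilon_\Gamma\cap\ol{V'_1}$ (Lemma~\ref{le:gasket_compact}), extract by Blaschke selection a Hausdorff-convergent subsequence with limit a compact set $K$, and use the hypothesis $\sup_u\dpath[\ol{V'_n}](u,V)\to0$ together with $\ol{V'_n}\searrow$ and the fact that boundaries of $\metregions$-regions consist of finitely many loops (so $V'_n\setminus V$ is a finite union of pockets whose mouths are separating points) to show that $K\subseteq\ol V$ and that $K$ is the trace of a simple admissible path from $x$ to $y$ within $\ol V$. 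It then remains to check $\lebneb{\epsilon}(K)\le\liminf_n\lebneb{\epsilon}(\gamma_n)$, i.e.\ a lower-semicontinuity of $\lebneb{\epsilon}$ under Hausdorff convergence of traces; I expect the bulk of the work to be here, and this is the point at which one uses slightly more than the bare list of properties of $\lebneb{\epsilon}$ (for Example~1 it follows from a coarea/Minkowski-content estimate for $\epsilon$-neighbourhoods of connected sets, and for Example~2 from a short combinatorial argument after a further approximation; the prime-end bookkeeping for the $d_{\ol V}$-completions is an additional, purely topological, complication). Everything else is routine.

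\textbf{The good approximation scheme.} Condition~\eqref{eq:approx_error_asymp} is in fact deterministic here: if $\dpath[\ol V](x,y)<\epsilon$ there is $\gamma\in\paths{x}{y}{\ol V}{\Gamma}$ with $\diamE(\gamma)<\epsilon$, and loop-erasing to a simple $\gamma'$ with $\diamE(\gamma')<\epsilon$ the approximation-scale property gives $\metapproxres{\epsilon}{V}{x}{y}{\Gamma}\le\lebneb{\epsilon}(\gamma')\le\ac{\epsilon}$, so that event has probability $1$ for every $\epsilon,r$. For~\eqref{eq:eps_bound_ass}, work in the setup of Section~\ref{se:intersections_setup} with $\delta=1$. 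Conditionally on $\eta_1^1\cap\eta_2^1\cap B(0,1/2)\neq\varnothing$ the intersection set is a.s.\ infinite with no isolated point (\cite{mw2017intersections}), so by considering sufficiently short sub-segments of $\eta_1^1,\eta_2^1$ one sees that $R=\sup\{|x-y|:(x,y)\in\intpts{1},\ \exists\,x',y' \text{ with } x',x,y,y' \text{ in order along } \eta_1^1,\eta_2^1,\ x'\neq x,\ y'\neq y\}$ is a.s.\ positive; hence for each fixed $q<1$ there is $r_0>0$ with $\p[R\ge r_0\mid\cdot]\ge q$, and, since $\dpath[\ol{U_{x',y'}}](x,x')$ and $\dpath[\ol{U_{x',y'}}](y,y')$ are a.s.\ positive, for $\epsilon$ small the conditional probability that there is $(x',x,y,y')\in\intptsapprox{1}{\epsilon}$ with $|x-y|\ge r_0/2$ is at least $1/2$. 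Every $\gamma\in\spaths{x}{y}{\ol{U_{x',y'}}}{\Gamma}$ then has $\diamE(\gamma)\ge r_0/2$, so $\lebneb{\epsilon}(\gamma)\ge\epsilon^{-\epsexp}\ac{\epsilon}$ by~\eqref{eq:eps_bound_geodesic} with $r=r_0/2$; hence $\metapproxres{\epsilon}{U_{x',y'}}{x}{y}{\Gamma}\ge\epsilon^{-\epsexp}\ac{\epsilon}$, and therefore $\median{\epsilon}\ge\epsilon^{-\epsexp}\ac{\epsilon}$ for all small $\epsilon$ (in particular $\median{\epsilon}>0$). Consequently $\ac{\epsilon}/(\epsilon^{\epsexp/2}\median{\epsilon})\le\epsilon^{\epsexp/2}\to0$, so~\eqref{eq:eps_bound_ass} holds with $\epsexp/2$ in place of $\epsexp$, completing the verification.
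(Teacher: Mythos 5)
Your verification of the algebraic axioms (symmetry, triangle inequality, Markov/translation via locality and translation-equivariance of $\lebneb{\epsilon}$, compatibility, strong monotonicity, series law, and the $\cparallel(N)=1$ parallel law) is correct and, apart from the separability axiom, is exactly the kind of routine unwinding the paper has in mind when it labels the proposition ``immediate from definition.'' The verification of Definition~\ref{def:good_approximation} is also sound: the deterministic estimate $\metapproxres{\epsilon}{V}{x}{y}{\Gamma}\le\ac\epsilon$ whenever $\dpath[\ol V](x,y)<\epsilon$ gives~\eqref{eq:approx_error_asymp} with probability one, and your deduction of $\median{\epsilon}\ge\epsilon^{-\epsexp}\ac\epsilon$ from~\eqref{eq:eps_bound_geodesic} together with the a.s.\ positivity of the intersection diameter is the right argument for~\eqref{eq:eps_bound_ass}.

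The one place where you depart from what the paper intends, and where your proposed route is heavier than it needs to be and not fully justified, is separability. You propose Blaschke selection of near-optimal traces followed by lower semicontinuity of $\lebneb{\epsilon}$ under Hausdorff convergence, and you candidly flag that the latter is not among the listed abstract properties of $\lebneb{\epsilon}$; as stated, that step is indeed a genuine gap. But notice that you already possess the tool that dispenses with all of this, because you used it in your compatibility argument: a \emph{simple} admissible path cannot enter a region of $\ol{V'}$ that is separated from $\{x,y\}$ by a single point $z\in\ol V$, since it would have to pass through $z$ twice. For $V'_n\searrow V$ with $V'_n\in\metregions$ and $\sup_u\dpath[\ol{V'_n}](u,V)\to0$, the boundary arcs that separate $V$ from $V'_n\setminus V$ are finitely many pieces of $\CLE$ loops whose shrinking pockets have single-point mouths (the same structural fact you invoke in the middle of your Blaschke argument), so once $n$ is large enough one actually has $\spaths{x}{y}{\ol{V'_n}}{\Gamma}=\spaths{x}{y}{\ol V}{\Gamma}$ and separability holds as an \emph{identity}, with no compactness argument and no appeal to semicontinuity of $\lebneb{\epsilon}$. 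Replacing your selection-and-LSC sketch by this observation closes the gap and brings the argument in line with the paper's ``immediate'' claim; as written, your proposal is morally correct but the separability step, phrased through lower semicontinuity, is not supported by the hypotheses on $\lebneb{\epsilon}$.
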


\begin{example}\label{ex:geodesic_approx}
We give two examples of good geodesic approximation schemes.
\begin{enumerate}
 \item Let $\lebneb{\epsilon}(\gamma)$ be the Lebesgue measure of the $\epsilon$-neighborhood of $\gamma$. Here we can set $\ac{\epsilon} = 4\pi\epsilon^2$, and we have $\inf_{\diamE(\gamma) \ge r} \lebneb{\epsilon}(\gamma) \ge 2r\epsilon$.
 \item Let $\lebneb{\epsilon}(\gamma)$ be the largest number $N$ such that there exist $t_1 < t_2 < \cdots < t_N$ with $\abs{\gamma(t_j)-\gamma(t_{j-1})} \ge \epsilon$ for each $j$. Here we can set $\ac{\epsilon} = 1$, and we have $\inf_{\diamE(\gamma) \ge r} \lebneb{\epsilon}(\gamma) \ge r\epsilon^{-1}$.
\end{enumerate}
\end{example}

\begin{lemma}\label{le:approximate_midpoint}
 Let $\lebneb{\epsilon}$ satisfy the properties stated above. Then for every simple path $\gamma\colon [0,1] \to \C$ there exists $s \in [0,1]$ such that
 \[ \abs*{ \lebneb{\epsilon}(\gamma[0,s])-\frac{1}{2}\lebneb{\epsilon}(\gamma) } \le \ac{\epsilon} . \]
\end{lemma}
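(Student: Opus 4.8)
The statement is a discrete intermediate value property for the functional $\lebneb{\epsilon}$ along a fixed simple path, so the natural approach is to run a bisection/continuity argument on the function $s \mapsto \lebneb{\epsilon}(\gamma[0,s])$. The plan is to first establish that this function is nondecreasing in $s$ (immediate from the monotonicity property, since $\gamma[0,s] \subseteq \gamma[0,t]$ for $s \le t$) and that it has jumps of size at most $\ac{\epsilon}$ at every point. The jump bound is the key local estimate: if $s_n \nearrow s$, then $\gamma[0,s] = \gamma[0,s_n] \oplus \gamma[s_n,s]$, and by subadditivity $\lebneb{\epsilon}(\gamma[0,s]) \le \lebneb{\epsilon}(\gamma[0,s_n]) + \lebneb{\epsilon}(\gamma[s_n,s])$; since $\diamE(\gamma[s_n,s]) \le \epsilon$ for $n$ large (by continuity of $\gamma$), the approximation-scale property gives $\lebneb{\epsilon}(\gamma[s_n,s]) \le \ac{\epsilon}$, so left limits differ from the value by at most $\ac{\epsilon}$. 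A symmetric argument handles right limits.

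With these two facts in hand, I would set $f(s) = \lebneb{\epsilon}(\gamma[0,s])$, note $f(0) = \lebneb{\epsilon}(\{\gamma(0)\})$, which by the approximation-scale property (a single point has Euclidean diameter $0 \le \epsilon$) is at most $\ac{\epsilon}$, in particular $f(0) \le \ac{\epsilon}$; and $f(1) = \lebneb{\epsilon}(\gamma)$. Let $m = \tfrac12\lebneb{\epsilon}(\gamma)$ and define $s^* = \sup\{ s \in [0,1] : f(s) \le m \}$. If $\lebneb{\epsilon}(\gamma) \le 2\ac{\epsilon}$ the claim is trivial by taking $s = 0$ and using $f(0) \le \ac{\epsilon}$, so assume $\lebneb{\epsilon}(\gamma) > 2\ac{\epsilon}$; then $f(0) \le \ac{\epsilon} < m$ so the set is nonempty, and $s^* \in [0,1]$ is well-defined. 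By the left-continuity-up-to-$\ac{\epsilon}$ bound, $f(s^*) \le \lim_{s \nearrow s^*} f(s) + \ac{\epsilon} \le m + \ac{\epsilon}$ (using that $f(s) \le m$ for $s < s^*$; if $s^* = 0$ this bound is $f(0) \le \ac{\epsilon} \le m + \ac{\epsilon}$ directly). For the lower bound, if $s^* = 1$ then $f(s^*) = \lebneb{\epsilon}(\gamma) = 2m > m - \ac{\epsilon}$; if $s^* < 1$, then for $s > s^*$ we have $f(s) > m$, and by the right-limit bound $f(s^*) \ge \lim_{s \searrow s^*} f(s) - \ac{\epsilon} \ge m - \ac{\epsilon}$. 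In all cases $|f(s^*) - m| \le \ac{\epsilon}$, which is the assertion.

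The one point requiring a little care — and the mild obstacle — is that $\lebneb{\epsilon}$ is defined on traces of simple paths, so I must make sure every sub-path $\gamma[0,s]$, $\gamma[s_n,s]$, $\gamma[s,s']$ that I feed into $\lebneb{\epsilon}$ is itself simple; but a sub-path of a simple path is simple, and the concatenations used are along $\gamma$ itself, hence remain simple, so subadditivity applies. A second minor point is measurability of $f$, needed for $s^*$ to be attained as a genuine supremum with the stated one-sided limit behaviour; monotonicity of $f$ already guarantees left and right limits exist everywhere, which is all the argument uses, so no measurability input beyond that is needed. I do not foresee any genuine difficulty; the proof is a routine packaging of the six listed axioms (only monotonicity, subadditivity, and the approximation-scale bound are actually invoked).
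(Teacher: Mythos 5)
Your proof is correct and follows essentially the same line as the paper's: define the critical time by an infimum/supremum over the superlevel/sublevel set of $s \mapsto \lebneb{\epsilon}(\gamma[0,s])$, then control the one-sided discrepancy by $\ac{\epsilon}$ via subadditivity together with the approximation-scale property. Your write-up is in fact slightly more complete than the paper's, which only records the upper bound $\lebneb{\epsilon}(\gamma[0,s]) \le \tfrac12\lebneb{\epsilon}(\gamma)+\ac{\epsilon}$ explicitly; your symmetric treatment of the lower bound (via the right-limit estimate) fills in what the paper leaves to the reader.
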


\begin{proof}
 In case $\lebneb{\epsilon}(\gamma) \le 2\ac{\epsilon}$, this is trivial. Otherwise let
 \[ s = \inf\left\{ t \ :\  \lebneb{\epsilon}(\gamma[0,t]) \ge \frac{1}{2}\lebneb{\epsilon}(\gamma) \right\} . \]
 Then, for any $s' \in [0,s]$ with $\diamE(\gamma[s',s]) \le \epsilon$, by the approximation scale property $\lebneb{\epsilon}(\gamma[s',s]) \le \ac{\epsilon}$. Therefore
 \[ \lebneb{\epsilon}(\gamma[0,s]) \le \lebneb{\epsilon}(\gamma[0,s'])+\lebneb{\epsilon}(\gamma[s',s]) \le \frac{1}{2}\lebneb{\epsilon}(\gamma)+\ac{\epsilon} . \]
\end{proof}

In the remainder of this subsection, we let $(\metapprox{\epsilon}{\cdot}{\cdot}{\Gamma})_{\epsilon > 0}$ be a good geodesic approximation scheme as defined above. Let $(\epsilon_n)$ be chosen as in Section~\ref{se:construction_metric}, and consider the metrics defined in~\eqref{eq:limiting_metric}.

\begin{proposition}\label{pr:internal_length_metric}
 Consider a good geodesic approximation scheme. Then $\metres{V}{\cdot}{\cdot}{\Gamma}$ defined in~\eqref{eq:limiting_metric} is a non-degenerate geodesic metric for each $V \in \metregions$. Moreover, for each $V \subseteq V'$ we have
 \[ \metres{V}{x}{y}{\Gamma} = \min_{\gamma \in \paths{x}{y}{\ol{V}}{\Gamma}} \lenmetres{V'}{\gamma} ,\quad x,y \in \ol{V} \cap \Upsilon_\Gamma . \]
\end{proposition}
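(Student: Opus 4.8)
The plan is to prove the two assertions in order: first that $\metres{V}{\cdot}{\cdot}{\Gamma}$ is a non-degenerate geodesic metric, then the length formula, using only the GHf convergence set up in Section~\ref{se:construction_metric} and the axioms already verified in Theorem~\ref{th:metric}. For non-degeneracy, note that the limit is a \clekp{} metric by Theorem~\ref{th:metric}, so Proposition~\ref{pr:metric_positive} gives the dichotomy that either $\metres{V}{\cdot}{\cdot}{\Gamma}$ is a true metric for every $V\in\metregions$ or it vanishes identically. To exclude the latter for a geodesic scheme we use that for geodesic approximations the compatibility axiom holds without the extra $\cserial\epsilon$-restriction, so (as recorded in Remark~\ref{rm:compatibility_renormalisation}) in the setup of Section~\ref{se:intersections_setup} one has $\median{\epsilon_n}^{-1}\metapproxres{\epsilon_n}{U_{x',y'}}{x}{y}{\Gamma}\to\metres{U_{x',y'}}{x}{y}{\Gamma}$ in distribution along $(\epsilon_n)$; since by the very definition of the median the left-hand side is $\ge 1$ with probability $\ge 1/2$ conditionally on the intersection event, the limit is positive with positive probability. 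As a single bubble has the law of the intersecting–flow–line object (Lemma~\ref{le:law_bubble}), if the constructed metric vanished identically it would also vanish in that setup, a contradiction; hence the metric is a.s.\ a true metric.

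For geodesic-ness I would argue by producing midpoints and passing through the two limits. Each approximation $D^{Q}_\epsilon$, $Q\in\FQ$ with $Q\Supset V$, is a geodesic-type metric, $D^{Q}_\epsilon(u,v)=\inf_\gamma\median{\epsilon}^{-1}\lebneb{\epsilon}(\gamma)$ over simple admissible $\gamma$ in $\wt{Q}_\epsilon$; by the approximate-midpoint Lemma~\ref{le:approximate_midpoint} a near-optimal $\gamma$ contains a point $m$ with $D^{Q}_\epsilon(u,m),D^{Q}_\epsilon(m,v)\le\tfrac12 D^{Q}_\epsilon(u,v)+\median{\epsilon}^{-1}\ac{\epsilon}+o(1)$, and $\median{\epsilon}^{-1}\ac{\epsilon}\to0$ by~\eqref{eq:eps_bound_ass}. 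Passing to the GHf limit along $(\epsilon_n)$ (realizing the convergence in a common embedding as in Lemma~\ref{le:ghf_common_embedding}, so that the approximate midpoints converge), $D^{Q}$ has exact midpoints on $\ol{Q}\cap\Upsilon_\Gamma$. Now take $Q_k\searrow\ol{V}$ with $D^{Q_k}\nearrow\metres{V}{\cdot}{\cdot}{\Gamma}$ (monotonicity in Lemma~\ref{le:limiting_rv}, increasing because a smaller domain gives a larger metric, together with~\eqref{eq:limiting_metric}); fix $u,v\in\ol{V}\cap\Upsilon_\Gamma$, pick $D^{Q_k}$-midpoints $m^{(k)}\in\ol{Q_k}\cap\Upsilon_\Gamma$, and extract a $\dpath[\ol{V}]$-convergent subsequence $m^{(k)}\to m\in\ol{V}\cap\Upsilon_\Gamma$ using compactness (Lemma~\ref{le:gasket_compact}). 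For fixed $j$ and $k\ge j$ one has $D^{Q_j}(u,m^{(k)})\le D^{Q_k}(u,m^{(k)})\le\tfrac12 D^{Q_k}(u,v)\le\tfrac12\metres{V}{u}{v}{\Gamma}$ by monotonicity ($Q_k\subseteq Q_j$), and letting $k\to\infty$ (continuity of $D^{Q_j}$ from the Hölder bounds of Proposition~\ref{prop:interior_tightness}) then $j\to\infty$ gives $\metres{V}{u}{m}{\Gamma}\le\tfrac12\metres{V}{u}{v}{\Gamma}$, symmetrically for $m,v$; the triangle inequality forces equalities, so $\metres{V}{\cdot}{\cdot}{\Gamma}$ has midpoints. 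Non-degeneracy makes $\metres{V}{\cdot}{\cdot}{\Gamma}$ metrize the compact set $\ol{V}\cap\Upsilon_\Gamma$ (the identity from $(\,\cdot\,,\dpath[\ol{V}])$ is a continuous bijection of compact Hausdorff spaces, hence a homeomorphism), so it is complete; a complete space with midpoints is geodesic, and — since the two metrics induce the same topology — every $\metres{V}{\cdot}{\cdot}{\Gamma}$-geodesic is $\dpath[\ol{V}]$-continuous, hence by the definition of $\dpath[\ol{V}]$ it is an admissible path for $\Gamma$ lying in $\ol{V}$, i.e.\ an element of $\paths{x}{y}{\ol{V}}{\Gamma}$.

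For the length formula, the crux is that for $V\subseteq V'$ and $\gamma\in\paths{x}{y}{\ol{V}}{\Gamma}$ one has $\lenmetres{V'}{\gamma}=\lenmetres{V}{\gamma}$. The inequality $\lenmetres{V'}{\gamma}\le\lenmetres{V}{\gamma}$ is monotonicity; for the reverse, take a partition of $\gamma$ realizing $\lenmetres{V}{\gamma}$ up to $\eta$, refine it until every sub-arc has tiny $\dpath[\ol{V}]$-diameter, and note that such a sub-arc of the gasket sits inside a region $W\in\metregions$ with $W\subseteq V$ whose complement is separated from the endpoints $u,v$ by points on $\ol{W}$ (the surrounding-chain structure of the CLE gasket, as used elsewhere in Section~\ref{se:construction_metric}); the compatibility axiom then gives $\metres{V'}{u}{v}{\Gamma}=\metres{W}{u}{v}{\Gamma}=\metres{V}{u}{v}{\Gamma}$, and summing yields $\lenmetres{V'}{\gamma}\ge\lenmetres{V}{\gamma}-\eta$. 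Granting this, for any $\gamma\in\paths{x}{y}{\ol{V}}{\Gamma}$ we get $\lenmetres{V'}{\gamma}=\lenmetres{V}{\gamma}\ge\metres{V}{x}{y}{\Gamma}$ by the triangle inequality for $\metres{V}{\cdot}{\cdot}{\Gamma}$, so $\inf_\gamma\lenmetres{V'}{\gamma}\ge\metres{V}{x}{y}{\Gamma}$; and for a $\metres{V}{\cdot}{\cdot}{\Gamma}$-geodesic $\gamma^\ast$ from $x$ to $y$ (which by the previous paragraph belongs to $\paths{x}{y}{\ol{V}}{\Gamma}$) we have $\lenmetres{V'}{\gamma^\ast}=\lenmetres{V}{\gamma^\ast}=\metres{V}{x}{y}{\Gamma}$, so the minimum is attained and equals $\metres{V}{x}{y}{\Gamma}$. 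I expect the main obstacles to be: the double limit in the second paragraph, where one must keep the approximate midpoints under control in the common GHf-embedding (including possible escape towards $\partial Q_k$) and where the monotonicity of $D^{Q}$ is exactly what makes the transfer through $Q_k\searrow\ol{V}$ work; the compatibility-based identity $\lenmetres{V'}{\gamma}=\lenmetres{V}{\gamma}$ near $\partial V$, which needs the fine structure of the gasket to produce the surrounding region $W$ with $W\subseteq V$; and the (soft but genuine) point that a $\dpath$-continuous path in the abstract metric space $\ol{V}\cap\Upsilon_\Gamma$ corresponds to an honest admissible planar path, which is where the formalism of Section~\ref{subsec:topology} is needed.
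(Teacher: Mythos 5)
Your overall structure matches the paper's: non-degeneracy via the dichotomy of Proposition~\ref{pr:metric_positive} plus the observation that geodesic schemes satisfy compatibility without the $\cserial\epsilon$ restriction (this is exactly Lemma~\ref{le:geodesic_metric_positive}); geodesicness via approximate midpoints and completeness (this is Lemma~\ref{le:geodesic_metric_dyadic} in the paper, and your double-limit argument is in the same spirit, though the paper runs the Burago--Burago--Ivanov iteration once at the level of $D^{Q}$ and then invokes compatibility to pass to $\metres{V}{\cdot}{\cdot}{\Gamma}$ rather than extracting midpoints along the shrinking sequence $Q_k\searrow\ol{V}$, which keeps the control of where the midpoint lives slightly cleaner). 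The interesting divergence is in how you prove $\lenmetres{V'}{\gamma}=\lenmetres{V}{\gamma}$.

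The paper's Lemma~\ref{le:lengths_compatible} is a \emph{quantitative} argument at the level of the approximating metrics: take $Q_1\Subset Q_2\Subset Q$, use non-degeneracy to get $r:=\dist_{D^Q}(Q_1,\partial Q_2)>0$ a.s., note that $D^Q(x,y)$ is H\"older-small for $\dpath$-close $x,y$, and conclude that for $\epsilon_n$ small a near-optimal simple admissible path realizing $D^{Q'}_{\epsilon_n}(x_n,y_n)$ cannot exit $Q_2$ (else its $\lebneb{\epsilon_n}$-length would exceed $\median{\epsilon_n}r/2$). Hence $D^Q_{\epsilon_n}=D^{Q'}_{\epsilon_n}$ on nearby pairs, and this passes to the limit. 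Your route instead takes a fine partition of $\gamma$, asserts that each short sub-arc is enclosed in $W\in\metregions$ with $W\subseteq V$ and with the separation hypothesis of Proposition~\ref{pr:compatibility} holding simultaneously for $W\subseteq V$ and $W\subseteq V'$, and then sums. This is a genuinely different mechanism; it trades the quantitative ``short distances cannot escape'' estimate for a topological surrounding-chain statement.

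There is, however, a real gap you only gesture at. Applying Proposition~\ref{pr:compatibility} with the pair $(W,V')$ requires that \emph{every} $w\in V'\setminus W$ — including points far from $u,v$ and on the $V'\setminus V$ side of $\partial V$ — be separated from $u,v$ by a point of $\ol{W}$. When the sub-arc endpoints $u,v$ lie on or very near a boundary loop $\CL_i$ of $V$, the required separating cycle must pass through intersection points of $\CL_i$ with other loops; one must argue these can be chosen inside $\ol{W}\subseteq\ol{V}$, and one also needs a compactness/uniformity argument so a single mesh size works along all of $\gamma$ simultaneously. You flag this as ``the fine structure of the gasket'' but do not actually produce the $W$'s or the uniform $\delta$. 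The paper's threshold argument sidesteps this entirely: once $r>0$ is known, it is automatic and uniform that $\dpath$-close points cannot have their optimal paths escape $Q_2$, with no need to locate any separating chain. If you intend to keep your route, you should prove (e.g.\ by adapting Lemma~\ref{le:tightness_loop_chain} or \cite[Lemma~5.14]{amy-cle-resampling}) that for each $z\in\gamma$ there is a radius $\rho_z>0$ and $W_z\in\metregions$, $W_z\subseteq V$, whose boundary chain separates $B(z,\rho_z)\cap\Upsilon_\Gamma$ from $V'\setminus W_z$, and then cover $\gamma$ by finitely many such neighborhoods; as stated, that step is missing.
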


\begin{lemma}\label{le:geodesic_metric_dyadic}
 Consider a good geodesic approximation scheme. Then for each $Q,Q' \in \FQ$ with $Q \Subset Q'$ we have
 \[ 
   \inf_{\gamma \in \paths{x}{y}{Q'}{\Gamma}} \len{D^{Q'}}{\gamma} \le D^Q(x,y) ,\quad x,y \in Q \cap \Upsilon_\Gamma .
 \]
\end{lemma}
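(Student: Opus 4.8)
Since the inequality $\metres{V}{x}{y}{\Gamma}\ge \inf_\gamma\lenmetres{V'}{\gamma}$ is of a routine nature, the content of Lemma~\ref{le:geodesic_metric_dyadic} is the assertion that $D^{Q'}$, restricted to $Q$, is a length metric dominated by $D^Q$. The plan is to exhibit, for $x,y\in Q\cap\Upsilon_\Gamma$, a single admissible path $\gamma$ from $x$ to $y$ inside $Q'$ with $\len{D^{Q'}}{\gamma}\le D^Q(x,y)$; this $\gamma$ is obtained as a limit of near-geodesics for the approximating metrics, and the length bound is read off from a dyadic bisection of those near-geodesics passed through the GHf convergence.

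First I would work at the approximate level. Couple $(\Gamma_n,\metapprox{\epsilon_n}{\cdot}{\cdot}{\Gamma_n})$ with the limit so that the vector~\eqref{eq:converging_metrics} converges almost surely, and by Lemma~\ref{le:ghf_common_embedding} embed all the gaskets isometrically into a common compact space. For each $n$, using that the approximations are geodesic ones~\eqref{eq:geodesic_approx_metric}, choose a near-geodesic $\gamma_n\in\spaths{x_n}{y_n}{\wt Q_{\epsilon_n}}{\Gamma_n}$ with $\median{\epsilon_n}^{-1}\lebneb{\epsilon_n}(\gamma_n)\le D_{\epsilon_n}^Q(x_n,y_n)+\epsilon_n$, where $x_n\to x$ and $y_n\to y$; since $Q\Subset Q'$ one has $\gamma_n\subseteq\wt Q_{\epsilon_n}\subseteq\ol{\wt{Q'}_{\epsilon_n}}$ for $\epsilon_n$ small (alternatively one appeals to the monotonicity~\eqref{eq:approximations_monotone}), so every subarc of $\gamma_n$ is a competitor in the infimum~\eqref{eq:geodesic_approx_metric} defining $D_{\epsilon_n}^{Q'}$ between its endpoints. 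Applying Lemma~\ref{le:approximate_midpoint} (together with its analogue for the time reversal, and monotonicity of $\lebneb{\epsilon_n}$ along prefixes) recursively to $\gamma_n$ and all of its subpieces produces, for every $k$, nested partitions $0=t^{n,k}_0<\dots<t^{n,k}_{2^k}=1$ with $\lebneb{\epsilon_n}(\gamma_n[t^{n,k}_{i-1},t^{n,k}_i])\le 2^{-k}\lebneb{\epsilon_n}(\gamma_n)+2\ac{\epsilon_n}$; hence, with $w^{n,k}_i=\gamma_n(t^{n,k}_i)$,
\[ D_{\epsilon_n}^{Q'}(w^{n,k}_{i-1},w^{n,k}_i)\le 2^{-k}D_{\epsilon_n}^Q(x_n,y_n)+2^{-k}\epsilon_n+2\,\ac{\epsilon_n}/\median{\epsilon_n}. \]

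The crux is to extract a genuine limiting path. For each fixed $k$ the finitely many points $w^{n,k}_i$ subconverge (diagonalize over $k$) to points $w^k_i\in\ol Q\cap\Upsilon_\Gamma$ with $w^k_0=x$, $w^k_{2^k}=y$, the families being nested in $k$; since $\ac{\epsilon_n}/\median{\epsilon_n}=o(\epsilon_n^{\epsexp})\to0$ by~\eqref{eq:eps_bound_ass}, passing to the limit in the display gives $D^{Q'}(w^k_{i-1},w^k_i)\le 2^{-k}D^Q(x,y)$ for all $i,k$. To see that $\bigcup_k\{w^k_i\}$, ordered as along the $\gamma_n$'s, extends to a continuous path, one needs that the subarcs $\gamma_n[t^{n,k}_{i-1},t^{n,k}_i]$ have $\dpath$-diameter tending to $0$ uniformly in $n$; this is the only genuinely delicate point. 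I would derive it from the a priori regularity of the approximations (Proposition~\ref{prop:interior_tightness}) combined with the lower bound~\eqref{eq:eps_bound_geodesic} on $\lebneb{\epsilon}$ of paths of macroscopic Euclidean diameter, so that after reparameterizing the $\gamma_n$ so that the level-$k$ bisection points sit at the dyadic times $i/2^k$ one obtains uniform $\dpath$-equicontinuity and, by Arzel\`a--Ascoli and completeness of the gasket (Lemma~\ref{le:gasket_compact}), a continuous $\gamma\colon[0,1]\to\ol Q\cap\Upsilon_\Gamma$ with $\gamma(0)=x$, $\gamma(1)=y$ and $\gamma(i/2^k)=w^k_i$ along the chosen subsequence. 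Since admissibility for $\Gamma_n$ and containment in $\ol Q\subseteq Q'$ pass to the limit, $\gamma\in\paths{x}{y}{Q'}{\Gamma}$.

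Finally one bounds $\len{D^{Q'}}{\gamma}$. Any partition of $[0,1]$ refines to a dyadic one $\{i/2^k\}_{i=0}^{2^k}$ without decreasing the associated sum (triangle inequality for $D^{Q'}$), and for such a partition, using $\gamma(i/2^k)=w^k_i$ and the GHf convergence of the metrics together with the displayed bound,
\[ \sum_{i=1}^{2^k}D^{Q'}\bigl(\gamma((i-1)/2^k),\gamma(i/2^k)\bigr)=\sum_{i=1}^{2^k}\lim_n D_{\epsilon_n}^{Q'}(w^{n,k}_{i-1},w^{n,k}_i)\le 2^k\cdot 2^{-k}D^Q(x,y)=D^Q(x,y). \]
Taking the supremum over $k$ (and over partitions) gives $\len{D^{Q'}}{\gamma}\le D^Q(x,y)$, hence $\inf_{\gamma\in\paths{x}{y}{Q'}{\Gamma}}\len{D^{Q'}}{\gamma}\le D^Q(x,y)$, which is the claim. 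I expect the principal obstacle to be precisely the equicontinuity estimate for the near-geodesics needed to produce the limiting path; a secondary but necessary check is the inclusion $\wt Q_{\epsilon_n}\subseteq\ol{\wt{Q'}_{\epsilon_n}}$ (or the appeal to~\eqref{eq:approximations_monotone}) that legitimizes using subarcs of $\gamma_n$ as competitors for $D_{\epsilon_n}^{Q'}$.
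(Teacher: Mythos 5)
Your overall architecture is genuinely different from the paper's: you build \emph{all} dyadic levels of bisection at the approximating level and then pass the whole tower through the GHf convergence to read the length bound from dyadic partitions, whereas the paper proves only the single-midpoint property $D^{Q'}(x,z)\le \tfrac12 D^{Q}(x,y)$, $D^{Q'}(y,z)\le\tfrac12 D^{Q}(x,y)$ in the limit and then iterates by completeness (Burago--Burago--Ivanov, Theorem~2.4.16). In principle your route could also work, but there is a concrete gap in the bisection step that your tools do not close.

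The gap is the claim that Lemma~\ref{le:approximate_midpoint}, ``together with its analogue for the time reversal, and monotonicity of $\lebneb{\epsilon_n}$ along prefixes,'' produces nested partitions with $\lebneb{\epsilon_n}(\gamma_n[t^{n,k}_{i-1},t^{n,k}_i])\le 2^{-k}\lebneb{\epsilon_n}(\gamma_n)+2\ac{\epsilon_n}$ for \emph{every} $i$. Lemma~\ref{le:approximate_midpoint} produces a time $s$ that controls $\lebneb{\epsilon}(\gamma[0,s])$ only; for the other piece, subadditivity gives $\lebneb{\epsilon}(\gamma[s,1])\ge\lebneb{\epsilon}(\gamma)-\lebneb{\epsilon}(\gamma[0,s])$, i.e.\ a \emph{lower} bound, never an upper one. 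Cutting a path can only increase the total $\lebneb{\epsilon}$-measure (think of Example~1 where the $\epsilon$-neighborhoods of the two halves overlap near the cut), so a bound on the first half says nothing about the second. The time reversal of the lemma produces a \emph{different} midpoint $1-r'$ controlling $\lebneb{\epsilon}(\gamma[1-r',1])$, and in the subadditive case one generically has $1-r'>s$; neither $\max(s,1-r')$ nor $\min(s,1-r')$ bounds both halves, and monotonicity along prefixes only transports bounds in the unhelpful direction. The only hypothesis in the scheme that converts an upper bound on one piece into one on its complement is the \emph{approximate additivity} $\lebneb{\epsilon}(\gamma)\ge\lebneb{\epsilon}(\gamma_1)+\lebneb{\epsilon}(\gamma_2)$, and this requires $\distE(\gamma_1,\gamma_2)\ge\cserial\epsilon$. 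That is precisely why the paper's proof, after choosing $t_n$ by Lemma~\ref{le:approximate_midpoint}, slides forward to the last time $u_n\ge t_n$ at which $\gamma'_n(u_n)$ is within $\cserial\epsilon_n$ of $\gamma'_n[0,t_n]$, so that $\gamma'_n[0,t_n]$ and $\gamma'_n[u_n,1]$ are $\cserial\epsilon_n$-separated and approximate additivity applies, and then uses the 4-arm event $F_\delta$ to identify the limits of $\gamma'_n(s_n)$ and $\gamma'_n(u_n)$ (so that the ``wasted'' stretch $[t_n,u_n]$ disappears in the limit). Your proposal never invokes approximate additivity or any separation argument, so the quantitative bisection you feed into the subsequent limit computation is not established. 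You could likely repair the proposal by performing the $u_n$/$s_n$ slide and the $F_\delta$-identification at each level of the recursion, but that is exactly the missing content and it is more delicate than the single-midpoint argument the paper iterates.

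Two secondary remarks: (i) you should quote the containment needed to make subarcs of $\gamma_n$ competitors for $D_{\epsilon_n}^{Q'}$ via the monotonicity~\eqref{eq:approximations_monotone} rather than via the unproved inclusion $\wt Q_{\epsilon_n}\subseteq\ol{\wt{Q'}_{\epsilon_n}}$; and (ii) the equicontinuity step you flag as the hard part is indeed handled in the paper by restricting to $\gamma'_n$ (the subsegment at $\dpath$-distance $\ge\delta^\zeta$ from the endpoints) and the event $F_\delta$, which also plays a role in your setup but is not the obstacle discussed above.
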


\begin{proof}
 We show that for each pair $x,y \in Q \cap \Upsilon_\Gamma$ there is $z \in \ol{Q} \cap \Upsilon_\Gamma$ with
 \begin{equation}\label{eq:approx_midpoint}
  D^{Q'}(x,z) \le D^Q(x,y)/2 ,\qquad D^{Q'}(y,z) \le D^Q(x,y)/2 .
 \end{equation}
 By applying~\eqref{eq:approx_midpoint} iteratively with a sequence $(Q'_m)$ with $Q \Subset Q'_1 \Subset Q'_2 \Subset \cdots \Subset Q'$ and using the completeness of $\ol{Q'} \cap \Upsilon_\Gamma$ we can construct a path $\gamma \in \paths{x}{y}{Q'}{\Gamma}$ as desired (cf.\ \cite[Proof of Theorem~2.4.16]{bbi-metric-geometry}).
 
 Suppose $\Gamma$ and $(\Gamma_n)$ are coupled so that the random vector~\eqref{eq:converging_metrics} converges almost surely. To control the interference between path segments with small Euclidean distance (resulting from the approximation), we introduce the following event. Let $F_\delta$ be the event that there is no loop of $\Gamma$ whose exterior boundary makes $4$ crossings across some annulus $A(w,\delta,\delta^\zeta) \subseteq \D$. By Lemma~\ref{lem:thin_cle_lwb} we can choose $\zeta>0$ so that $\lim_{\delta\searrow 0} \p[F_\delta^c] = 0$. On $F_\delta$, for any \emph{simple} admissible path $\gamma$ from $x$ to $y$ and $u,v \in \gamma$, $\abs{u-v} < \delta$, $\distE(\{u,v\},\{x,y\}) \ge \delta^\zeta$ we have $\dpath(u,v) \le \abs{u-v}^\zeta$.
 
 Suppose $Q \Subset Q'$. Fix $\delta_0 > 0$. By the continuity of $D^{Q'}$, there exists $\delta > 0$ such that $D^{Q'}(u,v) \le \delta_0$ for every $u,v \in Q$ with $\dpath(u,v) \le \delta^\zeta$. In the remainder of the proof, we suppose that we are on the event $F_\delta$.
 
 Let $x_n,y_n \in Q \cap \Upsilon_{\Gamma_n}$ such that $x_n \to x$, $y_n \to y$, and $D_{\epsilon_n}^{Q}(x_n,y_n) \to D^Q(x,y)$. Let $\gamma_n \in \spaths{x_n}{y_n}{Q}{\Gamma_n}$ be an admissible path with
 \[ \lebneb{\epsilon_n}(\gamma_n) \le \median{\epsilon_n}D_{\epsilon_n}^{Q}(x_n,y_n)+o(\median{\epsilon_n}) . \]
 Let $x'_n$ (resp.\ $y'_n$) be the first (resp.\ last) point on $\gamma_n$ such that $\dpath(x_n,x'_n) = \delta^\zeta$ (resp.\ $\dpath(y_n,y'_n) = \delta^\zeta$). Let $\gamma'_n \subseteq \gamma_n$ be the subsegment from $x'_n$ to $y'_n$. By Lemma~\ref{le:approximate_midpoint} and the assumption~\eqref{eq:eps_bound_ass}, there is $t_n \in [0,1]$ such that
 \begin{equation}\label{eq:half_length}
  \abs*{ \lebneb{\epsilon_n}(\gamma'_n[0,t_n])-\frac{1}{2}\lebneb{\epsilon_n}(\gamma'_n) } \le \ac{\epsilon} = o(\median{\epsilon_n}) .
 \end{equation}
 Let $u_n \in [t_n,1]$ be the last time that $\distE(\gamma'_n(u_n),\gamma'_n[0,t_n]) \le \cserial\epsilon_n$, and let $s_n \in [0,t_n]$ be the point with $\abs{\gamma'_n(u_n)-\gamma'_n(s_n)} = \cserial\epsilon_n$. By compactness we can assume that $x'_n$, $y'_n$, $\gamma'_n(s_n)$, $\gamma'_n(u_n)$ converge along a further subsequence to $x',y',z,w \in \ol{Q} \cap \Upsilon_\Gamma$. Since $\abs{z-w} = 0$, on $F_\delta$ we must have $\dpath(z,w) = 0$, i.e.\ $z=w$. We now argue that $z$ is an approximate midpoint of $x,y$.
 
 By the approximate additivity we have $\lebneb{\epsilon_n}(\gamma'_n[0,t_n])+\lebneb{\epsilon_n}(\gamma'_n[u_n,1]) \le \lebneb{\epsilon_n}(\gamma'_n)$. In particular, by~\eqref{eq:half_length},
 \[
  \lebneb{\epsilon_n}(\gamma'_n[u_n,1]) \le \frac{1}{2}\lebneb{\epsilon_n}(\gamma'_n)+o(\median{\epsilon_n}) \le \frac{1}{2}\median{\epsilon_n}D_{\epsilon_n}^{Q}(x_n,y_n)+o(\median{\epsilon_n}) .
 \]
 Since $\gamma_n(s_n),\gamma_n(u_n) \to z$, we have $D_{\epsilon_n}^{Q'}(x_n,\gamma_n(s_n)) \to D^{Q'}(x,z)$ and $D_{\epsilon_n}^{Q'}(y_n,\gamma_n(u_n)) \to D^{Q'}(y,z)$ for $Q' \Supset Q$. Finally, since $\dpath(x,x') = \dpath(y,y') = \delta^\zeta$, we have $D^{Q'}(x,x') \le \delta_0$, $D^{Q'}(y,y') \le \delta_0$. Therefore we have shown
 \begin{align*}
  D^{Q'}(x,z) &\le D^{Q'}(x,x')+D^{Q'}(x',z) \le \frac{1}{2}D^Q(x,y)+\delta_0 ,\\
  D^{Q'}(y,z) &\le D^{Q'}(y,y')+D^{Q'}(y',z) \le \frac{1}{2}D^Q(x,y)+\delta_0 .
 \end{align*}
 Since $\delta_0$ and $\delta$ can be chosen arbitrarily small, \eqref{eq:approx_midpoint} follows by the compactness of $\ol{Q} \cap \Upsilon_\Gamma$ and the continuity of $D^{Q'}$.
\end{proof}

\begin{lemma}\label{le:length_metric}
 Consider a good geodesic approximation scheme. Then for each $V \in \metregions$ we have
 \[ \metres{V}{x}{y}{\Gamma} = \min_{\gamma \in \paths{x}{y}{\ol{V}}{\Gamma}} \lenmetres{V}{\gamma} ,\quad x,y \in \ol{V} \cap \Upsilon_\Gamma . \]
\end{lemma}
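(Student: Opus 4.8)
The statement to be proved, Lemma~\ref{le:length_metric}, asserts that for a good geodesic approximation scheme, the limiting internal metric $\metres{V}{\cdot}{\cdot}{\Gamma}$ is an \emph{intrinsic} (length) metric on $\ol{V} \cap \Upsilon_\Gamma$, with the infimum attained. The plan is to deduce this from Lemma~\ref{le:geodesic_metric_dyadic}, which already gives the ``length metric'' property at the level of the dyadic approximants $D^Q$, $Q \in \FQ$, and then to pass to the limit $Q \searrow \ol{V}$ using the definition~\eqref{eq:limiting_metric} together with the compatibility property (Proposition~\ref{pr:compatibility}) that lets us reduce to domains $Q$ in a small neighborhood of $\ol{V}$ on which ``dead ends'' outside $V$ are cut off by single separation points. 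The key structural point is that $\metres{V}{\cdot}{\cdot}{\Gamma}$ is a supremum of the $D^Q$ over $Q \Supset V$, and by Lemma~\ref{le:limiting_rv}\eqref{it:monotonicity} this supremum is in fact a decreasing limit along any sequence $Q_m \searrow \ol{V}$, so we may work with a fixed such sequence.

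\textbf{Lower bound: $\metres{V}{x}{y}{\Gamma} \le \lenmetres{V}{\gamma}$ for every admissible $\gamma$.}
First I would fix $\gamma \in \paths{x}{y}{\ol{V}}{\Gamma}$ and argue that for each $Q \in \FQ$ with $V \Subset Q$ contained in a small enough neighborhood of $\ol{V}$, we have $D^Q(x,y) \le \lenmetres{V}{\gamma}$. Indeed, partition $\gamma$ into subsegments with endpoints $x = w_0, w_1, \dots, w_L = y$ on $\Upsilon_\Gamma$; by the triangle inequality for $D^Q$ and the definition of path length under $D^Q$, $D^Q(x,y) \le \sum_j D^Q(w_{j-1}, w_j)$, and taking the supremum over partitions gives $D^Q(x,y) \le \len{D^Q}{\gamma}$. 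Since $Q$ is close to $\ol{V}$, the compatibility property (every point of $Q \setminus \ol{V}$ is separated from $V$ by a single boundary point) shows $D^Q$ and $\metres{V}{\cdot}{\cdot}{\Gamma}$ agree on $\ol{V} \cap \Upsilon_\Gamma$ after taking $Q \searrow \ol{V}$; more directly, monotonicity~\eqref{it:monotonicity} gives $D^Q(x,y) \ge \metres{V}{x}{y}{\Gamma}$ for the relevant $Q$, but actually I want the reverse inequality after the limit. The cleanest route: fix $\gamma$, take $Q_m \searrow \ol{V}$, and use $\len{D^{Q_m}}{\gamma} \le \len{D^{Q_m}}{\gamma}$ with $D^{Q_m} \searrow \metres{V}{\cdot}{\cdot}{\Gamma}$ pointwise and $\metres{V}{\cdot}{\cdot}{\Gamma}$ continuous with respect to $\dpath[\ol{V}]$; by monotone convergence for the length functional (lengths of a fixed path under a decreasing sequence of continuous metrics converge to the length under the limit — this uses continuity of $\metres{V}{\cdot}{\cdot}{\Gamma}$ to ensure $\gamma$ is rectifiable in the limit metric, or if not, both sides are $+\infty$), we get $\metres{V}{x}{y}{\Gamma} = \lim_m D^{Q_m}(x,y) \le \lim_m \len{D^{Q_m}}{\gamma} = \lenmetres{V}{\gamma}$. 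Taking the infimum over $\gamma$ yields $\metres{V}{x}{y}{\Gamma} \le \inf_\gamma \lenmetres{V}{\gamma}$.

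\textbf{Upper bound: existence of a geodesic realizing $\metres{V}{x}{y}{\Gamma}$.}
For the reverse inequality I would fix $x,y$ and a sequence $Q_m \searrow \ol{V}$, and apply Lemma~\ref{le:geodesic_metric_dyadic} with $Q = Q_{m+1}$, $Q' = Q_m$: there is $\gamma_m \in \paths{x}{y}{Q_m}{\Gamma}$ with $\len{D^{Q_m}}{\gamma_m} \le D^{Q_{m+1}}(x,y)$. In fact the construction of such a path in the proof of Lemma~\ref{le:geodesic_metric_dyadic} (iterated approximate midpoints, exploiting completeness of $\ol{Q} \cap \Upsilon_\Gamma$ from Lemma~\ref{le:gasket_compact}) can be run once more, inside $\ol{V}$ itself rather than inside a slightly larger $Q$, provided we have the approximate-midpoint property at the level of $\metres{V}{\cdot}{\cdot}{\Gamma}$. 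Concretely: since $\metres{V}{\cdot}{\cdot}{\Gamma} = \lim_m D^{Q_m}$ and each $D^{Q_m}$ has the ``approximate midpoint'' property of Lemma~\ref{le:geodesic_metric_dyadic} (with error going to $0$), I would show $\metres{V}{\cdot}{\cdot}{\Gamma}$ has \emph{exact} midpoints: for $x,y \in \ol{V}\cap\Upsilon_\Gamma$ there is $z \in \ol{V} \cap \Upsilon_\Gamma$ with $\metres{V}{x}{z}{\Gamma} = \metres{V}{z}{y}{\Gamma} = \tfrac12 \metres{V}{x}{y}{\Gamma}$; this uses the compactness of $\ol{V} \cap \Upsilon_\Gamma$ (Lemma~\ref{le:gasket_compact}, since $V \Subset \D$), continuity of $\metres{V}{\cdot}{\cdot}{\Gamma}$ w.r.t.\ $\dpath[\ol{V}]$, and a diagonal extraction of approximate midpoints at scales $Q_m$. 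By the Hopf--Rinow-type argument (\cite[Theorem~2.4.16]{bbi-metric-geometry}, as already invoked in the proof of Lemma~\ref{le:geodesic_metric_dyadic}), a complete locally compact metric space with the midpoint property is geodesic, so there is a $\metres{V}{\cdot}{\cdot}{\Gamma}$-geodesic $\gamma^*$ from $x$ to $y$ with $\lenmetres{V}{\gamma^*} = \metres{V}{x}{y}{\Gamma}$; and $\gamma^*$ is admissible because its image lies in $\ol{V} \cap \Upsilon_\Gamma$ (geodesics cannot leave the space), and not crossing any loop of $\Gamma$ follows since any such crossing would force a detour of positive $\metres{V}{\cdot}{\cdot}{\Gamma}$-length by the series law. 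Combining the two bounds gives the claimed identity with the infimum attained. The main obstacle I anticipate is the interchange of limits in the lower-bound step — verifying that $\len{D^{Q_m}}{\gamma}$ decreases to $\lenmetres{V}{\gamma}$ (or that both diverge) — which requires a careful argument using the uniform Hölder continuity from Proposition~\ref{prop:interior_tightness}/Corollary~\ref{co:tightness_original_metric} to control path lengths at small scales, plus the $\ac{\epsilon}$-robustness built into the approximate additivity of $\lebneb{\epsilon}$; and, relatedly, ruling out that the limiting metric could be non-intrinsic because of the interference at Euclidean scale $\epsilon$, which is handled exactly as in Lemma~\ref{le:geodesic_metric_dyadic} via the event $F_\delta$ controlling $6$-arm-type configurations (Lemma~\ref{lem:thin_cle_lwb}).
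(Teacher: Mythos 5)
Your plan — reduce to the dyadic approximants $D^Q$ and import the midpoint/Hopf--Rinow construction of Lemma~\ref{le:geodesic_metric_dyadic}, then pass to $V$ — is in the right spirit, but it misses the crucial shortcut that makes the paper's proof a few lines, and it contains a consistent error in the direction of monotonicity that infects both halves of the argument.

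First, the direction error. You write that the supremum in~\eqref{eq:limiting_metric} ``is in fact a decreasing limit'' and later that ``$D^{Q_m}\searrow\metres{V}{\cdot}{\cdot}{\Gamma}$ pointwise.'' Lemma~\ref{le:limiting_rv}\eqref{it:monotonicity} states $D^{Q',Q_1}\le D^{Q,Q_1}$ for $Q\Subset Q'$, i.e.\ enlarging $Q$ \emph{decreases} $D^Q$; so as $Q_m\searrow\ol{V}$ the functions $D^{Q_m}$ \emph{increase}, and $\metres{V}{\cdot}{\cdot}{\Gamma}=\sup_Q D^Q$ is the increasing limit. This matters for your lower bound, where you want to argue $\len{D^{Q_m}}{\gamma}\to\lenmetres{V}{\gamma}$: with the increasing orientation the argument is easier (monotone convergence of a supremum over partitions), but as written your concern about lower semicontinuity of the length functional is aimed at the wrong limit.

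Second, and more importantly, you do not use the observation that makes everything collapse: since $\partial V$ is built from finitely many loops, for $Q\in\FQ$ in a sufficiently small neighborhood of $\ol{V}$ every $u\in Q\setminus\ol{V}$ is separated from $V$ in $Q\cap\Upsilon_\Gamma$ by a single boundary point. By the compatibility Proposition~\ref{pr:compatibility} this gives the \emph{equality} $D^Q(x,y)=\metres{V}{x}{y}{\Gamma}$ for all $x,y\in\ol{V}\cap\Upsilon_\Gamma$, already for such fixed $Q$ — there is no limit along $Q_m\searrow\ol{V}$ to interchange with the length functional. The same separation property gives the other piece the paper uses and you do not state: every admissible path in $Q$ between two points of $\ol{V}$ has a subpath within $\ol{V}$ (detours into $Q\setminus\ol{V}$ enter and exit through the same separating point, hence can be excised). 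With those two observations, the claim reduces directly to Lemma~\ref{le:geodesic_metric_dyadic} applied with $Q\Subset Q'$ both close to $\ol{V}$: the inf of $\len{D^{Q'}}{\gamma}$ over paths in $Q'$ is $\le D^Q(x,y)=\metres{V}{x}{y}{\Gamma}$, paths can be restricted to $\ol{V}$, and by compatibility $\len{D^{Q'}}{\cdot}$ agrees with $\lenmetres{V}{\cdot}$ there. Your plan re-derives the exact-midpoint and geodesic-existence argument for the limiting metric from scratch; this could in principle be made to work (your appeal to Lemma~\ref{le:gasket_compact} for completeness and to the approximate-midpoint Lemma~\ref{le:approximate_midpoint} is in the right territory), but it duplicates work that Lemma~\ref{le:geodesic_metric_dyadic} has already done, and it leaves you fighting limit-interchange issues that simply do not arise once compatibility is invoked. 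The typo ``$\len{D^{Q_m}}{\gamma}\le\len{D^{Q_m}}{\gamma}$'' should also be fixed (presumably meant $D^{Q_m}(x,y)\le\len{D^{Q_m}}{\gamma}$).
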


\begin{proof}
By the definition of $\metregions$, the boundary of $V$ is part of finitely many loops of $\Gamma$. Therefore, if $Q \supseteq \ol{V}$ is contained in a sufficiently small neighborhood of $\ol{V}$, then every $u \in Q \setminus \ol{V}$ is separated from $V$ in $Q \cap \Upsilon_\Gamma$ by a single point $z' \in \partial V$. In that case, by the compatibility Proposition~\ref{pr:compatibility} we have $D^Q(x,y) = \metres{V}{x}{y}{\Gamma}$. Moreover, for every admissible path in $Q$ between $x,y \in \ol{V}$ we can find a subpath within $\ol{V}$. The result then follows from~\eqref{eq:limiting_metric} and Lemma~\ref{le:geodesic_metric_dyadic}.
\end{proof}

\begin{lemma}\label{le:geodesic_metric_positive}
 Consider a good geodesic approximation scheme. Then, for each $V \in \metregions$, the metric $\metres{V}{\cdot}{\cdot}{\Gamma}$ is a true metric.
\end{lemma}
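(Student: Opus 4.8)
Proof proposal for Lemma~\ref{le:geodesic_metric_positive}:

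The plan is to invoke the dichotomy established in Proposition~\ref{pr:metric_positive}: a \clekp{} metric is either a.s.\ identically zero on every $V \in \metregions$ or a.s.\ a true metric on every $V \in \metregions$. Since the metric constructed in~\eqref{eq:limiting_metric} from a good approximation scheme is a \clekp{} metric (Theorem~\ref{th:metric}), it suffices to rule out the degenerate alternative, i.e.\ to exhibit one region $V$ and a positive-probability event on which $\metres{V}{x}{y}{\Gamma} > 0$ for some $x \neq y$. By Proposition~\ref{pr:metric_positive} again, it is enough to show that the median $\median{}$ (equivalently, some quantile $\quant{q}{}$, by Corollary~\ref{co:quantiles_metric}) is strictly positive for the limiting metric. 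So the real content is: in a geodesic approximation scheme, the normalizing constants $\median{\epsilon_n}$ were chosen so that the renormalized quantities do not collapse to zero in the limit, and this survives passage to the subsequential limit.

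The key step is to argue that for geodesic approximation schemes, the random variables $\median{\epsilon_n}^{-1}\metapproxres{\epsilon_n}{U_{x',y'}}{x}{y}{\Gamma_1}$ from Section~\ref{se:intersections_setup} actually converge in law to $\metres{U_{x',y'}}{x}{y}{\Gamma_1}$ — unlike the general case discussed in Remark~\ref{rm:compatibility_renormalisation}, where the extra restriction in the compatibility axiom prevents this identification. For geodesic metrics $\cparallel(N) = 1$ and, crucially, the compatibility property holds \emph{without} the extra $\cserial\epsilon$-separation condition, since $\metapproxres{\epsilon}{V}{x}{y}{\Gamma}$ in~\eqref{eq:geodesic_approx_metric} is an infimum over simple admissible paths and attaching ``dead ends'' to a region cannot change the infimal path length between two points of the thin part. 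Consequently, if $U_{x',y'}$ denotes the region between the outer boundaries of two intersecting loop segments and $U' \supseteq U_{x',y'}$ is the minimal simply connected region containing it, then $\metapproxres{\epsilon}{U_{x',y'}}{x}{y}{\Gamma_1} = \metapproxres{\epsilon}{U'}{x}{y}{\Gamma_1}$ exactly (not just up to $\ac{\epsilon}$), and the latter fits into the framework of Section~\ref{se:construction_metric}. Hence, using Lemma~\ref{le:length_metric} and the GHf convergence together with the control of separation points built into the topology of Section~\ref{subsec:topology}, one gets $\median{\epsilon_n}^{-1}\metapproxres{\epsilon_n}{U_{x',y'}}{x}{y}{\Gamma_1} \to \metres{U_{x',y'}}{x}{y}{\Gamma_1}$ in distribution along the subsequence. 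Since the left-hand side has median exactly $1$ (up to the comparability of $\wh{\mathfrak m}_\epsilon$ and $\median{\epsilon}$, and of different quantiles via Lemma~\ref{le:quantiles_comparable}), the limit cannot be a.s.\ zero, so $\quant{q}{} > 0$ for some $q > 0$.

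I would then finish as follows. By Corollary~\ref{co:quantiles_metric} (or directly by Lemma~\ref{le:median_scaling_lb}, whose hypothesis~\eqref{eq:approx_error_bubbles} is trivially satisfied here since geodesic approximations have the deterministic bound $\lebneb{\epsilon}(\gamma) \le \ac{\epsilon}$ when $\diamE(\gamma) \le \epsilon$), positivity of one quantile forces $\median{} > 0$. Plugging $\median{} > 0$ into Proposition~\ref{prop:interior_tightness} — which asserts a uniform H\"older bound with the metric normalized by $\median{}$ — shows the metric is not identically zero on regions $V \in \metregions[(r)]$, and hence, by Proposition~\ref{pr:metric_positive}, a.s.\ $\metres{V}{\cdot}{\cdot}{\Gamma}$ is a true metric for every $V \in \metregions$. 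I expect the main obstacle to be the careful verification that the compatibility of geodesic approximations really does hold without the $\cserial\epsilon$-restriction and that this genuinely upgrades the convergence in Remark~\ref{rm:compatibility_renormalisation} to the identification $\median{\epsilon_n}^{-1}\metapproxres{\epsilon_n}{U_{x',y'}}{x}{y}{\Gamma_1} \to \metres{U_{x',y'}}{x}{y}{\Gamma_1}$ — one must check that the minimal simply connected region $U'$ containing $U_{x',y'}$ is the limit, in the GHf sense together with its separation points, of the regions $\wt{Q}_{\epsilon_n}$ used in the construction of $D^Q$, and that no mass of $\Fd$-length is lost or gained in the passage to the limit near the boundary of the bubble.
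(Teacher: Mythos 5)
Your proposal is correct and follows the same route as the paper: invoke the dichotomy of Proposition~\ref{pr:metric_positive}, observe that for geodesic schemes the compatibility axiom holds without the $\cserial\epsilon$ restriction (so $\metapproxres{\epsilon}{U_{x,y}}{x}{y}{\Gamma_\delta} = \metapproxres{\epsilon}{U'}{x}{y}{\Gamma_\delta}$ for $U' \supseteq U_{x,y}$), and hence the normalizing median passes to the subsequential limit, ruling out the identically-zero alternative. The final detour through $\quant{q}{} > 0 \Rightarrow \median{} > 0 \Rightarrow$ Proposition~\ref{prop:interior_tightness} is unnecessary — once the limiting random variable $\metres{U_{x,y}}{x}{y}{\Gamma_1}$ has median $1$ it is not a.s.\ zero, and Proposition~\ref{pr:metric_positive} concludes directly; the paper uses Lemma~\ref{le:weak_limit_abs_cont} (absolute continuity) rather than GHf separation-point tracking for the convergence of laws, but these are interchangeable.
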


\begin{proof}
By Proposition~\ref{pr:metric_positive}, either $\metres{V}{\cdot}{\cdot}{\Gamma}$ is a true metric for each $V$ or it is identically zero for each $V$. To see that the latter cannot be the case, note that the compatibility property in Section~\ref{se:assumptions} holds for the geodesic approximation schemes $\metapprox{\epsilon}{\cdot}{\cdot}{\Gamma}$ even without the restriction $\dpath[\ol{V'}](z, V' \setminus V) \ge \cserial\epsilon$. In particular, in the setup of Section~\ref{se:intersections_setup} we have $\metapproxres{\epsilon}{U_{x,y}}{\cdot}{\cdot}{\Gamma_\delta} = \metapproxres{\epsilon}{U'}{\cdot}{\cdot}{\Gamma'_\delta}$ for any $U' \supseteq U_{x,y}$. Therefore (by absolute continuity, cf.\ Lemma~\ref{le:weak_limit_abs_cont}) the law of the random variables $\median{\epsilon}^{-1}\metapproxres{\epsilon}{U_{x,y}}{x}{y}{\Gamma_\delta}$ converge to the corresponding $\metres{U_{x,y}}{x}{y}{\Gamma_\delta}$ for the metric in~\eqref{eq:limiting_metric}, and $\median{\epsilon}^{-1}\metapproxres{\epsilon}{U_{x,y}}{x}{y}{\Gamma_1}$ is normalized to have median $1$.
\end{proof}

\begin{lemma}\label{le:lengths_compatible}
 Consider a good geodesic approximation scheme. Then for each $V \subseteq V'$ and each admissible path $\gamma$ within $\ol{V}$ we have $\lenmetres{V}{\gamma} = \lenmetres{V'}{\gamma}$.
\end{lemma}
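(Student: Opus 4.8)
\textbf{Proof plan for Lemma~\ref{le:lengths_compatible}.}

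The plan is to reduce the statement to the compatibility property (Proposition~\ref{pr:compatibility}) combined with the length formula established in Lemma~\ref{le:length_metric}. The key point is that the length $\lenmetres{V}{\gamma}$ of a path $\gamma$ is, by definition, the supremum over finite partitions $x = u_0, u_1, \ldots, u_L = y$ of $\gamma$ of the sum $\sum_i \metres{V}{u_i}{u_{i+1}}{\Gamma}$, where the $u_i$ are taken along $\gamma$ in order. Since $\lenmetres{V}{\gamma} \ge \lenmetres{V'}{\gamma}$ follows immediately from the monotonicity $\metres{V'}{\cdot}{\cdot}{\Gamma} \le \metres{V}{\cdot}{\cdot}{\Gamma}$ (valid since $V \subseteq V'$), it suffices to prove the reverse inequality, i.e.\ that each term $\metres{V'}{u_i}{u_{i+1}}{\Gamma}$ appearing in a partition sum for $\lenmetres{V'}{\gamma}$ can be matched (up to taking a finer partition) by the corresponding term with $V$ in place of $V'$.

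First I would reduce to the case where $\gamma$ lies in a single component bounded between finitely many loops, so that $\ol{V}$ can be taken to be an element of $\metregions$ of the form relevant to Proposition~\ref{pr:compatibility}. The crucial observation is that for consecutive points $u_i, u_{i+1}$ on a \emph{simple} admissible path $\gamma$ within $\ol{V}$, one can refine the partition so that each consecutive pair is separated in $\ol{V} \cap \Upsilon_\Gamma$ by any given point of $\gamma$ lying between them. More to the point: since $\gamma$ is contained in $\ol{V}$ and $V \subseteq V'$, for $u_i, u_{i+1}$ close together along $\gamma$ every point $u \in V' \setminus V$ is separated from $\{u_i, u_{i+1}\}$ in $\ol{V'} \cap \Upsilon_\Gamma$ by some point $z \in \ol{V}$ — indeed, since $\partial V$ consists of finitely many loop arcs, the ``dead ends'' $V' \setminus V$ are each cut off from the interior of $V$ by a single separation point of $\partial V$, and a point $z$ on $\gamma$ (or on $\partial V$) between $u_i$ and $u_{i+1}$ witnesses this separation once the partition is fine enough that the relevant arc of $\partial V$ is ``passed'' between $u_i$ and $u_{i+1}$. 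By Proposition~\ref{pr:compatibility}, this gives $\metres{V}{u_i}{u_{i+1}}{\Gamma} = \metres{V'}{u_i}{u_{i+1}}{\Gamma}$ for sufficiently fine partitions. Summing over $i$ and taking the supremum over partitions yields $\lenmetres{V}{\gamma} \le \lenmetres{V'}{\gamma}$, completing the proof.

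The main obstacle I anticipate is the bookkeeping around the separation condition in Proposition~\ref{pr:compatibility}: one must argue carefully that, given a partition of $\gamma$, it can be refined so that for \emph{every} consecutive pair $(u_i, u_{i+1})$ and \emph{every} $u \in V' \setminus V$ there is a separating point $z \in \ol{V}$. The point is that $V' \setminus V$ is a union of at most finitely many ``dead-end'' regions, each separated from $V$ by a single point of $\partial V$; as $\gamma$ traverses $\ol{V}$ it may re-enter a neighborhood of such a separation point several times, so one needs to choose the partition fine enough relative to the (finite) collection of separation points of $\partial V$ and the continuity modulus of $\gamma$ with respect to $\dpath[\ol{V}]$. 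A secondary subtlety is handling the case where $\gamma$ itself is not simple: one passes to a simple subpath with the same endpoints (admissibility and the length being computed along $\gamma$ are compatible with this reduction since $\metres{V}{\cdot}{\cdot}{\Gamma}$ is a genuine metric on $\ol{V} \cap \Upsilon_\Gamma$ by Lemma~\ref{le:geodesic_metric_positive}), or one simply notes that the partition-sum definition of length is insensitive to loops in $\gamma$ being traversed, since each appears as a separate block to which the same compatibility argument applies.
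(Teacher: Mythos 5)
Your approach is genuinely different from the paper's, and it contains a gap that I do not see how to repair.

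The crux of your argument is the claim that, after refining the partition of $\gamma$, the compatibility property (Proposition~\ref{pr:compatibility}) applies to each consecutive pair $u_i,u_{i+1}$ and to the pair $(V,V')$. But the compatibility property has a nontrivial hypothesis: for \emph{every} $u\in V'\setminus V$ there must exist a point $z\in\ol{V}$ that separates $u$ from both $u_i$ and $u_{i+1}$ in $\ol{V'}\cap\Upsilon_\Gamma$. You assert that this holds because ``the dead ends $V'\setminus V$ are each cut off from the interior of $V$ by a single separation point of $\partial V$,'' but this is not a general fact about nested regions in $\metregions$: $V'\setminus V$ need not consist of dead ends at all, and a component of $V'\setminus V$ can be attached to $V$ through several distinct intersection points of the bounding loops, in which case no single $z\in\ol{V}$ disconnects it. Compounding this, you argue that a point of $\gamma$ lying between $u_i$ and $u_{i+1}$ ``witnesses'' the separation; but a point interior to a simple admissible path does \emph{not} in general separate the path's endpoints inside the gasket, since the gasket has loops in its topology (that is precisely what makes the parallel law nontrivial). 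So even in the dead-end picture, the consecutive partition points may straddle a pinch point of $\partial V$, leaving no valid separator. In short, the hypothesis of Proposition~\ref{pr:compatibility} is an honest condition on the geometry of $(V,V',x,y)$, not something one can manufacture by taking a fine enough partition.

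The paper's proof uses an entirely different mechanism, one that is special to \emph{geodesic} metrics and sidesteps compatibility altogether. It shows that for any $Q_1\Subset Q_2\Subset Q$ there is a random $\delta>0$ such that $D^Q(x,y)=D^{Q'}(x,y)$ whenever $x,y\in Q_1$ with $\dpath[Q_1](x,y)<\delta$ and $Q'\supseteq Q$. The point is quantitative: by non-degeneracy (Lemma~\ref{le:geodesic_metric_positive}) the distance $r=\dist_{D^Q}(Q_1,\partial Q_2)$ is almost surely positive, and by the H\"older bound from Proposition~\ref{prop:interior_tightness} one can choose $\delta$ so that $D^Q(x,y)<r/2$. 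Then one passes to the $\epsilon_n$-approximations: any admissible path from $x_n$ to $y_n$ that exits $Q_2$ must have renormalized $\lebneb{\epsilon_n}$-length at least $r/2-o(1)$, so the minimizing paths defining $D^Q_{\epsilon_n}(x_n,y_n)$ stay inside $Q_2\subseteq Q$, forcing $D^Q_{\epsilon_n}=D^{Q'}_{\epsilon_n}$ at $(x_n,y_n)$ and hence $D^Q(x,y)=D^{Q'}(x,y)$ in the limit. Since $\metres{V}{x}{y}{\Gamma}=D^Q(x,y)$ when $Q$ is a small enough neighborhood of $\ol{V}$ (Lemma~\ref{le:length_metric}), equality of lengths over partitions follows. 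If you want to salvage your idea, you would need to replace the appeal to compatibility with this ``short geodesics stay local'' step, which is the only place where the geodesic structure is genuinely used.
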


\begin{proof}
 We argue that for any $Q_1,Q \in \FQ$, $Q_1 \Subset Q$ there exists a (random) $\delta > 0$ such that if $x,y \in Q_1$ and $\dpath[Q_1](x,y) < \delta$, then $D^Q(x,y) = D^{Q'}(x,y)$ for any $Q' \in \FQ$, $Q' \supseteq Q$. This will imply the result since we have seen in Lemma~\ref{le:length_metric} that $D^Q(x,y) = \metres{V}{x}{y}{\Gamma}$ when $Q$ is contained in a sufficiently small neighborhood of $\ol{V}$.
 
 Suppose $\Gamma$ and $(\Gamma_n)$ are coupled so that the random vector~\eqref{eq:converging_metrics} converges almost surely.
 
 Pick any $Q_2$ such that $Q_1 \Subset Q_2 \Subset Q$. Let $r = \dist_{D^Q}(Q_1, \partial Q_2)$. By the non-degeneracy (Lemma~\ref{le:geodesic_metric_positive}), we have $r>0$ almost surely. Let $X_\zeta$ be as in Proposition~\ref{prop:interior_tightness}, so that $D^{Q,Q_1}(x,y) \le X_\zeta \dpath[Q_1](x,y)^\zeta$. Suppose $\delta$ is small enough so that $X_\zeta \delta^\zeta < r/4$, and consider $x,y$ such that $\dpath[Q_1](x,y) < \delta$. Let $x_n,y_n \in \Upsilon_{\Gamma_n}$ such that $x_n \to x$, $y_n \to y$, and $D_{\epsilon_n}^{Q}(x_n,y_n) \to D^Q(x,y)$. Then
 \[ D_{\epsilon_n}^{Q}(x_n,y_n) \le X_\zeta \delta^\zeta+o(1) < r/2 . \]
 On the other hand, we have
 \[ \inf_{z \in \partial Q_2} D_{\epsilon_n}^{Q}(x_n,z) \ge r-o(1) . \]
 In particular, any admissible path from $x$ to $y$ that exits $Q$ must satisfy $\median{\epsilon_n}^{-1}\lebneb{\epsilon_n}(\gamma) > r/2$. Therefore $D_{\epsilon_n}^{Q}(x_n,y_n) = D_{\epsilon_n}^{Q'}(x_n,y_n)$ and hence $D^Q(x,y) = D^{Q'}(x,y)$ for any $Q' \supseteq Q$.
\end{proof}

\begin{proof}[Proof of Proposition~\ref{pr:internal_length_metric}]
 This follows from Lemmas~\ref{le:length_metric} and~\ref{le:lengths_compatible}.
\end{proof}

\appendix

\section{Gromov-Hausdorff-function topology}
\label{app:ghf}

We first recall the definition of the Gromov-Hausdorff topology.  Suppose that $(X_i,d_i)$ for $i=1,2$ are non-empty compact metric spaces.  Then the \emph{Gromov-Hausdorff distance} between $(X_1,d_1)$ and $(X_2,d_2)$ is defined as
\[ \dGH( (X_1,d_1), (X_2,d_2) ) = \inf\{ \dH( \psi_1(X_1), \psi_2(X_2)) \}\]
where the infimum is over all metric spaces $(W,d_W)$ and isometric embeddings $\psi_i \colon X_i \to W$ for $i=1,2$. It is a standard fact \cite[Theorem~7.4.15]{bbi-metric-geometry} that a collection $\CK$ of compact metric spaces is compact with respect to $\dGH$ if and only if $\CK$ is uniformly bounded and uniformly totally bounded.

Let $(Y,d_Y)$ be a proper metric space (in the sense that every closed ball is compact). Suppose that $(W,d_W)$ is a metric space and for $i=1,2$ we have that $K_i \subseteq W$ is a non-empty compact subset and $f_i \colon K_i \to Y$ a bounded function. Then we define
\[ 
d_\infty(f_1,f_2) = \inf\left\{ \delta > 0 \ :\  \parbox{.5\linewidth}{For each $i=1,2$ and $x \in K_i$ there is $x' \in K_{3-i}$ with $d_W(x,x') \le \delta$ and $d_Y(f_i(x),f_{3-i}(x')) \le \delta$} \right\} .
\]
Note that when $f_1$ or $f_2$ is continuous, then $d_\infty(f_1,f_2) = 0$ if and only if $f_1=f_2$. For general bounded functions, we can have $d_\infty(f_1,f_2) = 0$ even when $f_1$, $f_2$ do not agree on every point.

Suppose now that $(X_i,d_i,f_i)$ for $i=1,2$ where $(X_i,d_i)$ are non-empty compact metric spaces and $f_i \colon K_i \to Y$ are bounded functions defined on some compact subsets $K_i \subseteq X_i$.  The \emph{Gromov-Hausdorff-function distance} between $(X_1,d_1,f_1)$ and $(X_2,d_2,f_2)$ is defined by
\[ \dGHf( (X_1,d_1,f_1), (X_2,d_2,f_2) ) = \inf\{ \dH( \psi_1(X_1), \psi_2(X_2)) + d_\infty( f_1 \circ \psi_1^{-1}, f_2 \circ \psi_2^{-1}) \} \]
where the infimum is over all metric spaces $(W,d_W)$ and isometric embeddings $\psi_i \colon X_i \to W$ for $i=1,2$.

Note that $\dGHf$ defines a pseudo-metric on the space of non-empty compact metric spaces equipped with bounded functions. To see the triangular inequality, suppose that we have isometric embeddings $\psi_1\colon X_1 \to W_{12}$, $\psi_2\colon X_2 \to W_{12}$, and $\psi'_2\colon X_2 \to W_{23}$, $\psi'_3\colon X_3 \to W_{23}$. Then we can define a metric space $W_{13}$ as the quotient $(W_{12} \sqcup W_{23})/{\sim}$ identifying $\psi_2(x) \sim \psi'_2(x)$ for every $x \in X_2$.

\begin{proposition}\label{pr:ghf_positive}
 Let $(X,d,f)$, $(X',d',f')$ be two non-empty compact metric spaces each equipped with a continuous function. Then $\dGHf( (X,d,f), (X',d',f') ) = 0$ if and only if there is an isometry $\psi$ between $(X,d)$ and $(X',d')$ such that $d_\infty(f, f' \circ \psi) = 0$.
\end{proposition}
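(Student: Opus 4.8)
The plan is to prove both directions of the equivalence; the ``if'' direction is essentially immediate, and the ``only if'' direction is the standard compactness/Arzel\`a--Ascoli-type argument used to upgrade a vanishing Gromov--Hausdorff distance to an actual isometry, with the extra bookkeeping needed to carry the functions along.

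For the ``if'' direction, suppose $\psi\colon X \to X'$ is an isometry with $d_\infty(f, f'\circ\psi) = 0$. Embed both spaces into $W = X'$ via $\psi$ and the identity, respectively (these are isometric embeddings since $\psi$ is an isometry). Then $\psi(X) = X' = \mathrm{id}(X')$, so the Hausdorff distance term vanishes, and $f\circ\psi^{-1}$ versus $f'\circ\mathrm{id}^{-1} = f'$ have $d_\infty$-distance $d_\infty(f\circ\psi^{-1}, f') = d_\infty(f, f'\circ\psi) = 0$ by relabeling. Hence $\dGHf((X,d,f),(X',d',f')) = 0$.

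For the ``only if'' direction, by definition of the infimum there is a sequence of metric spaces $(W_n,d_{W_n})$ and isometric embeddings $\psi_n\colon X \to W_n$, $\psi_n'\colon X' \to W_n$ with $\dH(\psi_n(X),\psi_n'(X')) + d_\infty(f\circ\psi_n^{-1}, f'\circ(\psi_n')^{-1}) \to 0$. First I would reduce to a single ambient space: using the standard gluing trick (take $W = (\bigsqcup_n W_n)/{\sim}$, or more simply embed $X \sqcup X'$ into $\ell^\infty$ of a dense countable set and encode $d_{W_n}$ there), one obtains on $X \sqcup X'$ a sequence of admissible ``correspondences'' realized via maps. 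Concretely: for each $n$ and each $x \in X$ pick $\phi_n(x) \in X'$ with $d_{W_n}(\psi_n(x),\psi_n'(\phi_n(x))) \le \delta_n$ where $\delta_n = \dH(\psi_n(X),\psi_n'(X')) \to 0$. Since $\psi_n,\psi_n'$ are isometric embeddings, $|d'(\phi_n(x_1),\phi_n(x_2)) - d(x_1,x_2)| \le 2\delta_n$, i.e.\ $\phi_n$ is an approximate isometry. Likewise build approximate inverses $\phi_n'\colon X' \to X$. Now $X'$ is compact: fix a countable dense set $\{x_k\} \subseteq X$ and, by a diagonal argument, pass to a subsequence so that $\phi_n(x_k) \to \psi(x_k)$ for every $k$; the approximate-isometry bound shows $d'(\psi(x_k),\psi(x_l)) = d(x_k,x_l)$, so $\psi$ extends uniquely to an isometric embedding $\psi\colon X \to X'$. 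Symmetrically $\phi_n'$ subconverges to an isometric embedding $X' \to X$; a short argument (the composition is an isometric self-embedding of the compact space $X$, hence surjective, e.g.\ by \cite[Theorem~1.6.14]{bbi-metric-geometry}) shows $\psi$ is onto, so $\psi$ is an isometry.

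It then remains to check $d_\infty(f, f'\circ\psi) = 0$, which is where the function data enters and which I expect to be the only mildly delicate point. Write $\varepsilon_n = d_\infty(f\circ\psi_n^{-1}, f'\circ(\psi_n')^{-1}) \to 0$. Fix $x \in X$. Approximating $x$ by a dense point $x_k$ with $d(x,x_k)$ small and using continuity of $f$ and of $\psi$ (an isometry, hence continuous) together with continuity of $f'$, it suffices to bound $d_Y(f(x_k), f'(\psi(x_k)))$. For fixed $k$ and large $n$, $\psi_n(x_k)$ lies within $\delta_n + \varepsilon_n$ (in $W_n$) of some point $\psi_n'(x')$ with $d_Y(f(x_k), f'(x')) \le \varepsilon_n$; pushing through the isometric embeddings gives $d'(x_k,\text{-- via }\psi_n^{-1}\text{,}\psi_n'\text{ --}) $ control showing $x'$ is within $O(\delta_n+\varepsilon_n)$ of $\phi_n(x_k)$, which converges to $\psi(x_k)$; by continuity of $f'$ we conclude $d_Y(f(x_k), f'(\psi(x_k))) \le \liminf_n(\varepsilon_n + \omega_{f'}(O(\delta_n+\varepsilon_n))) = 0$, where $\omega_{f'}$ is the modulus of continuity of $f'$ on the compact space $X'$. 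Since $x$ and the dense point $x_k$ were arbitrary and both $f$, $f'\circ\psi$ are continuous, this gives $f = f'\circ\psi$ pointwise, hence $d_\infty(f,f'\circ\psi) = 0$. The main obstacle is purely organizational: keeping the three simultaneously converging quantities ($\dH$, $d_\infty$, and the approximate-isometry defect) aligned along a single subsequence and a single ambient space, and carefully transferring the $d_\infty$ bound (which is stated in $W_n$) into a statement about $X'$ via the isometric embeddings.
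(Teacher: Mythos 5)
Your proof is essentially correct and runs parallel to the paper's, with two cosmetic differences: you build the limiting isometry from approximate correspondences $\phi_n\colon X \to X'$ (via a diagonal argument on a dense set and then the compact-self-embedding lemma for surjectivity), whereas the paper first invokes Lemma~\ref{le:ghf_common_embedding} to place everything into one compact ambient space $W$ with $X \subseteq W$ fixed, applies Arzel\`a--Ascoli directly to the embeddings $\psi_n\colon X' \to W$, and gets surjectivity of the limit $\psi$ from $\dH(X,\psi_n(X')) \to 0$. Both are standard and trade off a gluing lemma against some back-and-forth bookkeeping. You also prove the ``if'' direction explicitly, which the paper omits as immediate.

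There is one genuine gap. In the appendix's framework a tuple $(X,d,f)$ carries a function $f\colon K \to Y$ defined on a \emph{compact subset} $K \subseteq X$, not necessarily on all of $X$ (see the definition immediately before $\dGHf$). Your argument implicitly assumes $K = X$ and $K' = X'$: you approximate an arbitrary $x \in X$ by a dense point and evaluate $f$ there, and you conclude ``$f = f' \circ \psi$ pointwise'' without saying on what set. When $K$ is a proper compact subset, the claim $d_\infty(f, f'\circ\psi) = 0$ (for continuous $f$, $f'$) additionally requires $\psi(K) = K'$, i.e.\ the supports must match up exactly under the isometry. The paper's proof establishes this by showing, from $d_\infty(f, f'\circ\psi_n^{-1}) \to 0$, that for each $x \in K$ the approximating points $x'_n \in K'$ subconverge to $\psi^{-1}(x) \in K'$, and conversely. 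Your scheme can absorb the same fix — run your $d_\infty$ limit argument only for $x \in K$, note the $x'$ you produce lies in $K'$, and use compactness of $K'$ to get $\psi(x) \in K'$, then argue the reverse inclusion symmetrically — but as written the step is missing.
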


\begin{lemma}\label{le:ghf_common_embedding}
 Suppose that $(X_n,d_n,f_n) \to (X,d,f)$ in the GHf topology. Then there exists a compact metric space $(W,d_W)$ and isometric embeddings $\psi_n\colon X_n \to W$ (resp.\ $\psi\colon X \to W$) such that $\dH( \psi(X), \psi_n(X_n)) + d_\infty(f \circ \psi^{-1}, f_n \circ \psi_n^{-1}) \to 0$.
\end{lemma}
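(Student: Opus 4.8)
The statement is the standard ``convergence in GHf distance implies realization in a common ambient space'' lemma, and the plan is to adapt the classical argument for the Gromov--Hausdorff topology (as in \cite[Section~7.4]{bbi-metric-geometry}) to the version with functions. First I would recall that, by definition of $\dGHf$, for each $n$ there is a metric space $(W_n,d_{W_n})$ together with isometric embeddings $\iota_n\colon X_n \to W_n$ and $\jmath_n\colon X \to W_n$ such that
\[
 \dH( \iota_n(X_n), \jmath_n(X) ) + d_\infty( f_n \circ \iota_n^{-1}, f \circ \jmath_n^{-1} ) \le \dGHf\bigl( (X_n,d_n,f_n), (X,d,f) \bigr) + 2^{-n} \eqqcolon \delta_n ,
\]
with $\delta_n \to 0$. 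Without loss of generality we may take $W_n = \iota_n(X_n) \cup \jmath_n(X)$, so that each $W_n$ is compact.

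\textbf{Building the common space.} The key step is to glue all the $W_n$ together with the single copy of $X$ as the ``spine''. Concretely, form the disjoint union $\wt{W} = X \sqcup \bigsqcup_n W_n$ and define a pseudo-metric $\rho$ on it: on each $W_n$ use $d_{W_n}$, on $X$ use $d$, identify $X$ with $\jmath_n(X) \subseteq W_n$ (i.e.\ declare $\rho(x, \jmath_n(x)) = 0$ for $x \in X$), and extend to all pairs by taking the infimum over chains through the identified points. One checks in the usual way that $\rho$ restricts to $d$ on $X$ and to $d_{W_n}$ on $W_n$ (using that the gluing maps $\jmath_n$ are isometric embeddings, so no shortcuts are introduced). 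Let $(W,d_W)$ be the metric quotient $(\wt{W}/{\sim}, \rho)$ together with its completion; write $\psi\colon X \to W$ and $\psi_n\colon X_n \to W$ for the induced isometric embeddings (the composition of $\iota_n$ with the inclusion $W_n \hookrightarrow W$, and similarly $\psi = \jmath_n$ followed by the inclusion, consistently for all $n$).

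\textbf{Compactness and the convergence conclusion.} It remains to verify that $W$ is compact and that the claimed convergence holds. For compactness: $W = \psi(X) \cup \bigcup_n \psi_n(X_n) \cup \{\text{limit points}\}$; since $\dH(\psi_n(X_n), \psi(X)) \le \delta_n \to 0$ inside $W$, every point of $W$ is within $\delta_n$ of the fixed compact set $\psi(X)$ for $n$ large, so $W$ is totally bounded, and being complete it is compact. The bound $\dH(\psi(X), \psi_n(X_n)) \le \delta_n \to 0$ is immediate from the construction. For the function part, note $f \circ \psi^{-1} = f \circ \jmath_n^{-1}$ on $\psi(X) = \jmath_n(X)$ and $f_n \circ \psi_n^{-1} = f_n \circ \iota_n^{-1}$ on $\psi_n(X_n) = \iota_n(X_n)$, both viewed as functions on subsets of the common space $W$; hence $d_\infty(f \circ \psi^{-1}, f_n \circ \psi_n^{-1})$ computed in $W$ is at most the corresponding quantity computed in $W_n$, which is $\le \delta_n$. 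Therefore $\dH(\psi(X),\psi_n(X_n)) + d_\infty(f\circ\psi^{-1}, f_n\circ\psi_n^{-1}) \le 2\delta_n \to 0$, as desired.

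\textbf{Main obstacle.} The only genuinely delicate point is checking that the glued pseudo-metric $\rho$ does not collapse or distort the individual pieces --- i.e.\ that $\rho|_{W_n} = d_{W_n}$ and $\rho|_X = d$ --- since a chain passing through $X$ and back into some $W_m$ with $m \ne n$ could a priori create a shortcut between two points of $W_n$. This is handled by the standard observation that any such chain can be shortened: each excursion from $W_n$ into $X$ and another $W_m$ and back returns to $W_n$ at a point whose $d_{W_n}$-distance to the departure point is, by the triangle inequality within $X$ (to which all the relevant $\jmath$-images are isometric), no larger than the length of the excursion. I would also note that no continuity of the $f_n$ is needed here --- $d_\infty$ is used throughout, consistent with the rest of the appendix --- so the lemma applies to the possibly-discontinuous metric functions $\Fd$ appearing in Section~\ref{subsec:topology}.
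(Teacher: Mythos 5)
Your proof is correct and follows the same standard gluing construction that the paper invokes by reference to \cite[Lemma~A.1]{gpw-gromov-prokhorov} and \cite[Proposition~1.5]{gm-uihpq}: glue the approximating spaces $W_n$ along the isometric copies of $X$, verify no shortcuts are introduced, and observe that the $d_\infty$ quantities pass through unchanged since they depend only on the restriction of the metric to $\jmath_n(X)\cup\iota_n(X_n)=W_n$. The paper simply cites those references without spelling out the argument, whereas you have written it out explicitly; the content is the same.
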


\begin{proof}
 The proof is identical to the analogous statement for GH convergence, see e.g.\ \cite[Lemma~A.1]{gpw-gromov-prokhorov} or \cite[Proposition~1.5]{gm-uihpq}.
\end{proof}

\begin{proof}[Proof of Proposition~\ref{pr:ghf_positive}]
 Suppose that $\dGHf( (X,d,f), (X',d',f') ) = 0$. By Lemma~\ref{le:ghf_common_embedding}, we can find a compact space $(W,d_W)$ containing $X$ as a subspace, and a sequence of isometric embeddings $\psi_n\colon X' \to W$ such that $\dH(X, \psi_n(X')) + d_\infty(f,f'\circ\psi_n^{-1}) \to 0$. The sequence $(\psi_n)$ is equicontinuous, hence converges along a subsequence uniformly to a function $\psi\colon X' \to W$.
 
 We show that $\psi$ is an isometry between $X'$ and $X$. By the uniform convergence $\psi_n \to \psi$, we see immediately that $\psi(X') \subseteq X$ and $d_W(\psi(x'),\psi(y')) = d'(x',y')$. It remains to show that $\psi(X') = X$. Let $x \in X$. Since $\dH(X, \psi_n(X')) \to 0$, there exists a sequence of $x'_n \in X'$ such that $d_W(x,\psi_n(x'_n)) \to 0$. Pick a further subsequence such that $x'_n$ converges to some $x'$ in $X'$. Since $\psi_n \to \psi$ uniformly, we get $\psi_n(x'_n) \to \psi(x')$, hence $\psi(x') = x$. 
 
 Finally, we show that $d_\infty(f, f' \circ \psi^{-1}) = 0$. Let $K$ (resp.\ $K'$) be the domain of $f$ (resp.\ $f'$). Let $x \in K$. Since $d_\infty(f,f'\circ\psi_n^{-1}) \to 0$, there exists a sequence of $x'_n \in K'$ such that $d_W(x,\psi_n(x'_n)) \to 0$ and $f'(x'_n) \to f(x)$. By the same argument as above, we have $x'_n \to \psi^{-1}(x) \in K'$ along a subsequence. Then also $\psi(x'_n) \to x$ due to the uniform convergence $\psi_n \to \psi$. Conversely, let $x' \in K'$. Again, since $d_\infty(f,f'\circ\psi_n^{-1}) \to 0$, there is a sequence of $x_n \in K$ with $d_W(x_n,\psi_n(x')) \to 0$ and $f(x_n) \to f'(x')$. Since also $\psi_n(x') \to \psi(x')$, we have $x_n \to \psi(x')$.
\end{proof}

We now prove compactness of equicontinuous families in the $d_\infty$ and GHf topologies.

\begin{proposition}\label{pr:hf_compactness}
 Let $(W,d_W)$ be a compact metric space. Suppose that $\CK^W$ is a collection of non-empty compact subsets $K \subseteq W$ each equipped with a continuous function $f\colon K \to Y$. Suppose that
 \begin{enumerate}[(i)]
  \item\label{it:uniformly_bounded} There exist $0_Y \in Y$ and $M \in (0,\infty)$ such that $\sup_{x \in K} d_Y(f(x),0_Y) \leq M$ for all $(K,f) \in \CK^W$.
  \item\label{it:equicontinuous} For every $\varepsilon > 0$ there exists $\delta > 0$ so that $d(x,y) < \delta$ implies $d_Y(f(x),f(y)) < \varepsilon$ for all $(K,f) \in \CK^W$.
 \end{enumerate}
 Then $\CK^W$ is relatively compact with respect to the metric $d_\infty$.
\end{proposition}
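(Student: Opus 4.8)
The plan is to show that any sequence $(K_n,f_n)$ in $\CK^W$ has a subsequence converging in $d_\infty$. Since $(W,d_W)$ is compact, the space of non-empty compact subsets of $W$ equipped with the Hausdorff metric $\dH$ is itself compact (Blaschke's theorem), so after passing to a subsequence we may assume $K_n \to K$ in $\dH$ for some non-empty compact $K \subseteq W$. The task then reduces to extracting a further subsequence along which the functions $f_n$ converge, in an appropriate sense, to a continuous function $f\colon K \to Y$ with $d_\infty(f_n,f) \to 0$.

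First I would build the limit function by a diagonal/Arzel\`a--Ascoli argument. Fix a countable dense subset $\{w_j\}$ of $W$. For each $j$, pick $x_{n,j} \in K_n$ with $d_W(x_{n,j}, w_j) = \dist(w_j, K_n) \to \dist(w_j, K)$; by~\eqref{it:uniformly_bounded} the values $f_n(x_{n,j})$ lie in the closed ball $\ol{B}_Y(0_Y, M)$. To get compactness of these values in $Y$ I need $\ol{B}_Y(0_Y,M)$ to be compact; this holds because $Y$ is a proper metric space (every closed ball is compact), which is exactly the standing assumption on $Y$ in the definition of $d_\infty$. Hence by a diagonal argument we may pass to a subsequence so that $f_n(x_{n,j})$ converges in $Y$ for every $j$ simultaneously. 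Using the uniform equicontinuity~\eqref{it:equicontinuous}, for $w \in K$ one shows that $\lim_n f_n(y_n)$ exists and is independent of the choice of $y_n \in K_n$ with $d_W(y_n,w)\to 0$: if $y_n, y_n'$ are two such choices and $w_j$ is close to $w$, then $x_{n,j}, y_n, y_n'$ are all within a small $d_W$-ball for large $n$, so~\eqref{it:equicontinuous} forces $d_Y(f_n(y_n), f_n(x_{n,j}))$ and $d_Y(f_n(y_n'), f_n(x_{n,j}))$ to be small, while $f_n(x_{n,j})$ converges. Define $f(w)$ to be this common limit; equicontinuity passes to the limit to give that $f\colon K \to Y$ is (uniformly) continuous, and $d_Y(f(w),0_Y)\le M$.

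It then remains to verify $d_\infty(f_n, f) \to 0$. Given $\varepsilon>0$, choose $\delta>0$ from~\eqref{it:equicontinuous} (shrinking so $\delta<\varepsilon$). For large $n$ we have $\dH(K_n,K)<\delta/2$. For $x \in K_n$, pick $w\in K$ with $d_W(x,w)<\delta/2$; I claim $d_Y(f_n(x), f(w))<\varepsilon$ for $n$ large. Indeed, approximate $w$ by some $w_j$ within $d_W$-distance $\delta/4$ and use that $d_W(x_{n,j}, w_j)\to\dist(w_j,K)$ is small, that $d_W(x,w_j)<\delta$, hence $d_Y(f_n(x),f_n(x_{n,j}))<\varepsilon$ by~\eqref{it:equicontinuous} while $f_n(x_{n,j})\to f(w)$ up to an arbitrarily small error; a careful bookkeeping of these four contributions (each controllable by $\delta$ or by the convergence of $f_n(x_{n,j})$) gives the bound. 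The symmetric statement, for $w\in K$ finding $x\in K_n$ with $d_W(x,w)$ and $d_Y(f_n(x),f(w))$ small, is proved the same way using $\dH(K_n,K)<\delta/2$ to produce $x$. This yields $d_\infty(f_n,f)<C\varepsilon$ for large $n$ and a universal constant $C$, completing the argument.

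\textbf{Main obstacle.} The routine part is the Arzel\`a--Ascoli-style extraction; the delicate point is the bookkeeping in the last step, where one must carefully chain together the $\dH$-closeness of $K_n$ and $K$, the density of $\{w_j\}$, the uniform equicontinuity, and the pointwise convergence $f_n(x_{n,j})\to f(w)$ — all while the base points $x_{n,j}$ live in the moving spaces $K_n$ rather than in $K$. One should set up all the $\delta$-thresholds at the outset so that each of the finitely many error terms is $\le\varepsilon$, and take care that the definition of $d_\infty$ (which allows $d_\infty=0$ for genuinely different bounded functions, but here both limit candidates are continuous so Proposition~\ref{pr:ghf_positive}-type rigidity is not needed) is used correctly. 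I would also remark that Proposition~\ref{pr:hf_compactness} is the $d_\infty$-level input behind the GHf-compactness statement (not shown in the excerpt but referenced as Proposition~\ref{pr:ghf_tightness_hoelder_approx}), obtained by combining it with the standard GH-compactness criterion via Lemma~\ref{le:ghf_common_embedding}.
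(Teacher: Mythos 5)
Your proposal is correct and follows essentially the same Arzel\`a--Ascoli-type argument as the paper: extract a Hausdorff-convergent subsequence $K_n \to K$, use boundedness and properness of $Y$ to diagonalize so that $f_n$ converges along a countable dense set, use equicontinuity to show the limit is well-defined and uniformly continuous, and conclude $d_\infty(f_n,f)\to 0$. The only cosmetic difference is that you take a countable dense subset of $W$ and project to $K_n$ via nearest points, while the paper works with a countable dense subset $N\subseteq K$ directly; your final verification of $d_\infty(f_n,f)\to 0$ is also spelled out in somewhat more detail than in the paper.
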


\begin{proof}
 Let $(K_n,f_n)$ be a sequence in $\CK^W$. Recall that the space of compact subsets of $W$ is compact in the Hausdorff topology \cite[Theorem~7.3.8]{bbi-metric-geometry}. Therefore we can find a converging subsequence (again indexed by $n$) such that $(K_n)$ converges to a non-empty compact subset $K \subseteq W$. We need to construct a function $f\colon K \to Y$ such that $d_\infty(f,f_n) \to 0$ along a further subsequence.
 
 Since $\dH(K,K_n) \to 0$, for every $x \in K$ there is a sequence of $x_n \in K_n$ such that $d_W(x,x_n) \to 0$. By~\eqref{it:uniformly_bounded}, we can pick a subsequence (again indexed by $n$) such that $f_n(x_n)$ converges to a limit which we define as $f(x)$. By~\eqref{it:equicontinuous}, this definition of $f(x)$ does not depend on the choice of $x_n$. Repeating this procedure for a countable dense set of points $x \in N \subseteq K$, this defines a function $f$ on $N$. Again by~\eqref{it:equicontinuous}, we see that $f$ is uniformly continuous on $N$, so that $f$ extends to $K$, and $d_\infty(f,f_n) \to 0$ along the chosen subsequence.
\end{proof}

\begin{proposition}\label{pr:ghf_compactness}
 Suppose $\CK$ is a collection of non-empty compact metric spaces $(X,d)$ each equipped with a continuous function $f\colon K \to Y$. Suppose that
 \begin{enumerate}[(i)]
  \item $\{ (X,d) : (X,d,f) \in \CK\}$ is relatively compact with respect to the GH topology.
  \item There exist $0_Y \in Y$ and $M \in (0,\infty)$ such that $\sup_{x \in K} d_Y(f(x),0_Y) \leq M$ for all $(X,d,f) \in \CK$.
  \item For every $\varepsilon > 0$ there exists $\delta > 0$ so that $d(x,y) < \delta$ implies $d_Y(f(x),f(y)) < \varepsilon$ for all $(X,d,f) \in \CK$.
 \end{enumerate}
 Then $\CK$ is relatively compact with respect to the GHf topology.
\end{proposition}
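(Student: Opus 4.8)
The plan is to deduce Proposition~\ref{pr:ghf_compactness} by combining the GH compactness of the underlying metric spaces with Proposition~\ref{pr:hf_compactness}, using the standard device of embedding everything into a common compact space. First I would take a sequence $(X_n,d_n,f_n)$ in $\CK$ and, using hypothesis~(i) together with the characterization via uniform boundedness and uniform total boundedness \cite[Theorem~7.4.15]{bbi-metric-geometry}, pass to a subsequence along which $(X_n,d_n)$ converges in the GH topology to some compact $(X,d)$. By Lemma~\ref{le:ghf_common_embedding} (or rather its GH analogue, e.g.\ \cite[Lemma~A.1]{gpw-gromov-prokhorov}), I may assume there is a compact metric space $(W,d_W)$ and isometric embeddings $\psi_n\colon X_n \to W$, $\psi\colon X \to W$ with $\dH(\psi(X),\psi_n(X_n)) \to 0$.

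Next I would transport the functions: set $K'_n = \psi_n(K_n) \subseteq W$ and $f'_n = f_n \circ \psi_n^{-1}\colon K'_n \to Y$. Hypotheses~(ii) and~(iii) are isometry-invariant, so the family $\{(K'_n,f'_n)\}$ satisfies conditions~\eqref{it:uniformly_bounded} and~\eqref{it:equicontinuous} of Proposition~\ref{pr:hf_compactness} with the same $0_Y$, $M$, and modulus $\delta(\varepsilon)$. Applying that proposition, I pass to a further subsequence along which there is a non-empty compact $K' \subseteq W$ and a continuous $f'\colon K' \to Y$ with $d_\infty(f'_n, f') \to 0$. Here I would note that since $\dH(K'_n, K'_m) \to 0$ as produced in the proof of Proposition~\ref{pr:hf_compactness}, and since $K'_n = \psi_n(K_n)$ is contained in $\psi_n(X_n)$ with $\dH(\psi_n(X_n),\psi(X)) \to 0$, the limit set $K'$ is contained in $\psi(X)$; so $K \defeq \psi^{-1}(K') \subseteq X$ is compact and $f \defeq f' \circ \psi|_K\colon K \to Y$ is continuous.

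Then I would check that $(X_n,d_n,f_n) \to (X,d,f)$ in the GHf topology along this subsequence. Using the embeddings $\psi_n$ and $\psi$ into $W$, we have
\[
\dGHf\bigl((X_n,d_n,f_n),(X,d,f)\bigr) \le \dH(\psi_n(X_n),\psi(X)) + d_\infty\bigl(f_n\circ\psi_n^{-1}, f\circ\psi^{-1}\bigr) = \dH(\psi_n(X_n),\psi(X)) + d_\infty(f'_n, f'),
\]
and both terms tend to $0$ by construction. Since every sequence in $\CK$ has a subsequence converging in the GHf topology to an element of the form $(X,d,f)$ with $(X,d)$ compact and $f$ continuous (hence in the closure of $\CK$), the collection $\CK$ is relatively compact in the GHf topology.

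I do not expect a serious obstacle here; the one point requiring a little care is the identification of the limiting domain, namely verifying $K' \subseteq \psi(X)$ so that $K = \psi^{-1}(K')$ makes sense and $f$ is well-defined and continuous. This follows because each point of $K'$ is a limit of points $\psi_n(x_n)$ with $x_n \in K_n \subseteq X_n$, and such points have accumulation points in $\psi(X)$ by the GH convergence, exactly as in the argument establishing surjectivity of $\psi$ in the proof of Proposition~\ref{pr:ghf_positive}; equicontinuity (iii) then guarantees that the limiting function does not depend on choices and is uniformly continuous, so it extends continuously to all of $K'$.
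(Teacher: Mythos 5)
Your proof is correct and takes essentially the same approach as the paper's: pass to a GH-convergent subsequence of the base spaces, embed everything in a common compact metric space, apply Proposition~\ref{pr:hf_compactness} to the transported functions, and identify the limit. The paper's proof is more terse (in particular, it asserts $K\subseteq\psi(X)$ without elaboration), while you spell out that verification; the substance is the same.
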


\begin{proof}
 Pick any sequence of $(X_n,d_n,f_n) \in \CK$ such that $(X_n,d_n)$ converges to some $(X,d)$ in the GH topology. We can isometrically embed them in a common compact space $(W,d_W)$ so that $\dH(\psi_n(X_n),\psi(X)) \to 0$ \cite[Lemma~A.1]{gpw-gromov-prokhorov}. By Proposition~\ref{pr:hf_compactness}, we find a compact subset $K \subseteq W$, a function $\wt{f}\colon K \to Y$, and a subsequence (again indexed by $n$) along which $d_\infty(\wt{f},f_n\circ\psi_n^{-1}) \to 0$. In particular, $K \subseteq \psi(X)$ and $\dGHf((X,d,\wt{f}\circ\psi),(X_n,d_n,f_n)) \to 0$.
\end{proof}

We need variants of Proposition~\ref{pr:hf_compactness} and~\ref{pr:ghf_compactness} when the equicontinuity holds only up to an approximate scale $r_n$ where $r_n \searrow 0$. Their proofs are completely analogous, and therefore omitted.

\begin{lemma}\label{le:hf_approx_equicont}
 Let $(W,d_W)$ be a compact metric space. Suppose we have a sequence of non-empty compact subsets $K_n \subseteq W$ each equipped with a bounded function $f_n\colon K_n \to Y$. Suppose that there is a sequence $r_n \searrow 0$ such that
 \begin{enumerate}[(i)]
  \item There exist $0_Y \in Y$ and $M \in (0,\infty)$ such that $\sup_{x \in K} d_Y(f_n(x),0_Y) \leq M$ for every $n$.
  \item For every $\varepsilon > 0$ there exists $\delta > 0$ so that $d_n(x,y) < \delta$ implies $d_Y(f_n(x),f_n(y)) < \varepsilon \vee r_n$ for every $n$.
 \end{enumerate}
 Then there is a subsequence $(K_{n_k},f_{n_k})$ that converges in the $d_\infty$ metric.
\end{lemma}

\begin{lemma}\label{le:ghf_approx_equicont}
 Suppose we have a sequence $(X_n,d_n,f_n)$ of non-empty compact metric spaces each equipped with a bounded function $f_n\colon K_n \to Y$. Suppose that there is a sequence $r_n \searrow 0$ such that
 \begin{enumerate}[(i)]
  \item $(X_n,d_n) \to (X,d)$ in the GH topology.
  \item There exist $0_Y \in Y$ and $M \in (0,\infty)$ such that $\sup_{x \in K_n} d_Y(f_n(x),0_Y) \leq M$ for every $n$.
  \item For every $\varepsilon > 0$ there exists $\delta > 0$ so that $d_n(x,y) < \delta$ implies $d_Y(f_n(x),f_n(y)) < \varepsilon \vee r_n$ for every $n$.
 \end{enumerate}
 Then there is a subsequence $(X_{n_k},d_{n_k},f_{n_k})$ that converges in the GHf topology.
\end{lemma}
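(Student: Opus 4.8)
The final statement, Lemma~\ref{le:ghf_approx_equicont}, is the GHf analogue of Lemma~\ref{le:ghf_approx_equicont} with the exact equicontinuity replaced by equicontinuity up to a vanishing scale $r_n$. Its proof should parallel the proof of Proposition~\ref{pr:ghf_compactness}, substituting Lemma~\ref{le:hf_approx_equicont} for Proposition~\ref{pr:hf_compactness} in the construction of the limiting function. I record the plan below.

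\begin{proof}[Proof of Lemma~\ref{le:ghf_approx_equicont}]
The plan is to reduce to Lemma~\ref{le:hf_approx_equicont} by embedding everything into a common compact metric space. Since $(X_n,d_n) \to (X,d)$ in the GH topology, by (the proof of) \cite[Lemma~A.1]{gpw-gromov-prokhorov} (see also Lemma~\ref{le:ghf_common_embedding}) there is a compact metric space $(W,d_W)$ and isometric embeddings $\psi_n\colon X_n \to W$, $\psi\colon X \to W$ such that $\dH(\psi_n(X_n),\psi(X)) \to 0$. Set $\wt{K}_n = \psi_n(K_n) \subseteq W$ and $\wt{f}_n = f_n \circ \psi_n^{-1}\colon \wt{K}_n \to Y$. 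The uniform boundedness hypothesis is preserved verbatim under this embedding, and the approximate equicontinuity hypothesis is preserved as well because $\psi_n$ is an isometry: if $d_W(x,y) < \delta$ for $x,y \in \wt{K}_n$, then $d_n(\psi_n^{-1}(x),\psi_n^{-1}(y)) < \delta$, hence $d_Y(\wt{f}_n(x),\wt{f}_n(y)) < \varepsilon \vee r_n$. Therefore the sequence $(\wt{K}_n,\wt{f}_n)$ in $W$ satisfies the hypotheses of Lemma~\ref{le:hf_approx_equicont}, and we extract a subsequence (still indexed by $n$) along which $(\wt{K}_n,\wt{f}_n)$ converges in the $d_\infty$ metric; denote the limit by $(K,\wt{f})$ with $K \subseteq W$ compact and $\wt{f}\colon K \to Y$.

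It remains to produce a GHf limit of the original spaces. Since $\dH(\wt{K}_n,K) \to 0$ and $\dH(\psi_n(X_n),\psi(X)) \to 0$ with $\wt{K}_n \subseteq \psi_n(X_n)$, we have $K \subseteq \psi(X)$. Define $f = \wt{f} \circ \psi|_{\psi^{-1}(K)}\colon \psi^{-1}(K) \to Y$, a bounded function on the compact subset $\psi^{-1}(K) \subseteq X$. Then, using the common embeddings $\psi_n$ and $\psi$ into $W$,
\[
\dGHf\bigl((X,d,f),(X_n,d_n,f_n)\bigr) \le \dH(\psi(X),\psi_n(X_n)) + d_\infty(f \circ \psi^{-1}, f_n \circ \psi_n^{-1}) = \dH(\psi(X),\psi_n(X_n)) + d_\infty(\wt{f},\wt{f}_n) \to 0
\]
along the chosen subsequence, which proves that $(X_n,d_n,f_n) \to (X,d,f)$ in the GHf topology.
\end{proof}

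I do not expect any genuine obstacle here; the only points requiring a little care are (i) checking that the hypothesis ``$d_n(x,y)<\delta \Rightarrow d_Y(f_n(x),f_n(y))<\varepsilon\vee r_n$'' transfers cleanly to the embedded copies $\wt K_n$, which is immediate since the $\psi_n$ are isometries and $\wt K_n = \psi_n(K_n)$, and (ii) verifying that the $d_\infty$ limit $\wt f$ indeed has its domain contained in $\psi(X)$ so that $f$ is well-defined on a compact subset of $X$ — this follows from $\wt K_n \subseteq \psi_n(X_n)$ together with the two Hausdorff convergences. Since the statement and the proof of Lemma~\ref{le:hf_approx_equicont} are also only sketched (``their proofs are completely analogous''), one should, if being fully rigorous, also spell out that argument: it follows the proof of Proposition~\ref{pr:hf_compactness} verbatim, with the only change that in the diagonal extraction over a countable dense set $N \subseteq K$ one uses the modulus $\varepsilon \vee r_n$ instead of $\varepsilon$, and concludes that the resulting candidate function $\wt f$ is uniformly continuous (its modulus of continuity is $\varepsilon$ since $r_n \to 0$) and that $d_\infty(\wt f,\wt f_n) \to 0$.
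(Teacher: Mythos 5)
Your proof is correct and is precisely the ``completely analogous'' argument the paper refers to but omits: it follows the proof of Proposition~\ref{pr:ghf_compactness} verbatim, with Lemma~\ref{le:hf_approx_equicont} substituted for Proposition~\ref{pr:hf_compactness}. The two points you flag for care (preservation of approximate equicontinuity under the isometric embeddings $\psi_n$, and the inclusion $K\subseteq\psi(X)$ via the two Hausdorff convergences) are exactly the right checks, and both are handled correctly.
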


Suppose $\CK^0$ is a compact set in the GH topology, and $0_Y \in Y$. For $\alpha, C > 0$ and $r \ge 0$ we let $\CK_{\alpha,C;r}$ be the set of $(X,d,f)$ such that
\begin{itemize}
 \item $(X,d) \in \CK^0$,
 \item $\sup_{x \in K} d_Y(f(x),0_Y) \leq C$,
 \item $d_Y(f(x),f(y)) \le C(d(x,y) \vee r)^\alpha$ for every $x,y \in K$.
\end{itemize}
Let $\CK_{\alpha,C} = \CK_{\alpha,C;0}$.

It follows from Proposition~\ref{pr:ghf_compactness} that $\CK_{\alpha,C}$ is a compact set in the GHf topology. For fixed $r>0$, the set $\CK_{\alpha,C;r}$ is not compact. However, given a sequence of $(X_n,d_n,f_n) \in \CK_{\alpha,C;r_n}$ with $r_n \to 0$, this sequence satisfies the assumptions of Lemma~\ref{le:ghf_approx_equicont}, and hence has subsequential limits in the GHf topology which lie in $\CK_{\alpha,C}$. The following lemma is a similar statement regarding the tightness of a sequence of probability measures with uniform bounds for $\p_n[\CK_{\alpha,C;r_n}^c]$.

\begin{remark}
 It is easy to show that the spaces $\CK_{\alpha,C;r}$ are separable. However, they are not complete when $r>0$. Their completions contain multivalued functions. In order to not distract ourselves with discussions on the reference spaces, we have avoided using the term ``tightness'' in the statement of the proposition below.
\end{remark}

\begin{proposition}\label{pr:ghf_tightness_hoelder_approx}
 Let $\CK_{\alpha,C;r}$ be given as above. Let $(\p_n)$ be a sequence of probability measures on the space of non-empty compact metric spaces equipped with bounded functions, equipped with the GHf topology. Fix $\alpha > 0$. Suppose that for every $\varepsilon > 0$ there is $C>0$ such that for any $r > 0$ there is $N \in \N$ such that
 \begin{equation}\label{eq:tightness_approx_hoelder}
  \sup_{n \ge N} \p_n[\CK_{\alpha,C;r}^c] < \varepsilon .
 \end{equation}
 Then there is a subsequence $(\p_{n_k})$ that converges in the Prokhorov metric to a probability measure supported on $\bigcup_{C>0} \CK_{\alpha,C}$.
\end{proposition}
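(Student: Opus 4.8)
The plan is to deduce this from Prokhorov's theorem together with the compactness result for $\CK_{\alpha,C}$ coming from Proposition~\ref{pr:ghf_compactness} and the approximate-equicontinuity variant Lemma~\ref{le:ghf_approx_equicont}. The obstacle is that the sets $\CK_{\alpha,C;r}$ for $r>0$ are not compact (their closures contain multivalued limit functions), so the hypothesis~\eqref{eq:tightness_approx_hoelder} does \emph{not} directly give tightness on a fixed Polish space; we must instead extract a subsequence by a diagonal argument and identify the limit by hand. First I would reduce to the following statement: for each fixed $\varepsilon>0$ (and the corresponding $C>0$ from the hypothesis) there is a \emph{compact} set $\CK_\varepsilon$ in the GHf topology with $\liminf_n \p_n[\CK_\varepsilon] \ge 1-\varepsilon$. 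The natural candidate is $\CK_\varepsilon = \CK_{\alpha,C}$ itself (which is compact by Proposition~\ref{pr:ghf_compactness}, using that $\CK^0$ is GH-compact, the uniform bound $C$, and the equicontinuity forced by the Hölder bound with $r=0$). The issue is that $\p_n[\CK_{\alpha,C}]$ need not be large — only $\p_n[\CK_{\alpha,C;r}]$ is large, for each $r>0$ and $n$ large. So $\CK_{\alpha,C}$ is too small; we need a slightly larger compact set that still captures the approximate-Hölder spaces.

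The fix is to take an \emph{enlargement} of $\CK_{\alpha,C}$ in the GHf metric. Concretely, pick a sequence $r_j \searrow 0$ and, using~\eqref{eq:tightness_approx_hoelder}, an increasing sequence $N_j$ with $\sup_{n\ge N_j}\p_n[\CK_{\alpha,C;r_j}^c] < \varepsilon$. Then set $\CK_\varepsilon^{(j)} = \CK_{\alpha,C;r_j}$. By Lemma~\ref{le:ghf_approx_equicont}, any sequence $(X_m,d_m,f_m)$ with $(X_m,d_m,f_m)\in\CK_{\alpha,C;r_{j(m)}}$ and $r_{j(m)}\to 0$ has a GHf-convergent subsequence whose limit lies in $\CK_{\alpha,C}$; more is true — the GHf-closure of $\bigcup_{j\ge J}\CK_{\alpha,C;r_j}$ is contained in the closed $r_J^{\alpha'}$-neighborhood (some explicit modulus) of $\CK_{\alpha,C}$, hence $\overline{\CK_\varepsilon} := \bigcap_J \overline{\bigcup_{j\ge J}\CK_{\alpha,C;r_j}}$ equals (a subset of) $\CK_{\alpha,C}$, which is compact. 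The key point to verify is then: for the subsequence $(\p_{n_k})$ obtained by diagonalizing (choosing $n_k \ge N_{j(k)}$ with $j(k)\to\infty$ along a suitable schedule so the sets $\CK_{\alpha,C;r_{j(k)}}$ shrink compatibly), the measures $\p_{n_k}$ are tight on the Polish space $\bigcup_{C'>0}\CK_{\alpha,C'}$ — because $\p_{n_k}[\CK_{\alpha,C;r_{j(k)}}] \ge 1-\varepsilon$ and $\CK_{\alpha,C;r_{j(k)}}$ is within GHf-distance $o(1)$ of the compact set $\CK_{\alpha,C}$. Thus for every $\eta>0$, the closed $\eta$-neighborhood of $\CK_{\alpha,C}$ eventually has $\p_{n_k}$-mass at least $1-\varepsilon$, and a closed neighborhood of a compact set in a Polish space is not itself compact, so one more step is needed: intersect over a countable set of $\eta\to 0$ to land back inside $\CK_{\alpha,C}$. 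Since $\p_{n_k}[\,\cdot\,]$ of the $\eta$-neighborhood is $\ge 1-\varepsilon$ for all large $k$ \emph{uniformly in} $\eta$ bounded below by the (deterministic) modulus of $r_{j(k)}$, and $r_{j(k)}\to 0$, we get $\liminf_k \p_{n_k}[\CK_{\alpha,C}^{\text{(closed }\eta)}] \ge 1-\varepsilon$ for every fixed $\eta>0$, hence $\liminf_k\p_{n_k}[\CK_{\alpha,C}] \ge 1-\varepsilon$ by letting $\eta\downarrow 0$ inside the $\liminf$ using monotone convergence of the neighborhoods. This is the one genuinely delicate point — exchanging the two limits — and I expect it to be the main obstacle; it should go through because $\CK_{\alpha,C}$ is closed (indeed compact) and $\bigcap_{\eta>0}\CK_{\alpha,C}^{\text{(closed }\eta)} = \CK_{\alpha,C}$, combined with the \emph{uniform}-in-$k$ lower bounds.

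Granting this, the remainder is routine. Running the above for a sequence $\varepsilon_i \searrow 0$ with corresponding constants $C_i \nearrow \infty$ and taking a further diagonal subsequence, we obtain a single subsequence $(\p_{n_k})$ and compact sets $\CK_{\alpha,C_i}$ with $\liminf_k \p_{n_k}[\CK_{\alpha,C_i}] \ge 1-\varepsilon_i$; since $\bigcup_i \CK_{\alpha,C_i} = \bigcup_{C>0}\CK_{\alpha,C}$ is a countable union of compacts inside the Polish space of compact metric spaces equipped with continuous functions (GHf topology), this exhibits $(\p_{n_k})$ as a tight family there, so by Prokhorov's theorem it has a weakly convergent (equivalently, Prokhorov-convergent) sub-subsequence with limit $\p_\infty$. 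Finally, $\p_\infty$ is supported on $\bigcup_{C>0}\CK_{\alpha,C}$: for each $i$, the portmanteau theorem gives $\p_\infty[\CK_{\alpha,C_i}] \ge \limsup_k \p_{n_k}[\CK_{\alpha,C_i}] \ge 1-\varepsilon_i$ (using that $\CK_{\alpha,C_i}$ is closed), and letting $i\to\infty$ yields $\p_\infty\bigl[\bigcup_{C>0}\CK_{\alpha,C}\bigr] = 1$. Relabelling the sub-subsequence as $(\p_{n_k})$ completes the proof.
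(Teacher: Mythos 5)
The plan has the right structure, but the step you flag as ``genuinely delicate'' is in fact a genuine gap and does not go through. You want to deduce $\liminf_k\p_{n_k}[\CK_{\alpha,C}] \ge 1-\varepsilon$ from $\liminf_k \p_{n_k}[\CK_{\alpha,C}^{(\eta)}] \ge 1-\varepsilon$ for every $\eta > 0$ (writing $\CK_{\alpha,C}^{(\eta)}$ for the closed $\eta$-neighborhood) by ``letting $\eta\downarrow 0$ inside the $\liminf$.'' The interchange is not justified, and the conclusion is false in general: take $\p_{n_k} = \delta_{x_k}$ with $x_k \notin \CK_{\alpha,C}$ but $\dGHf(x_k,\CK_{\alpha,C}) \to 0$. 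Then for each fixed $\eta$ you have $\p_{n_k}[\CK_{\alpha,C}^{(\eta)}] = 1$ for all large $k$, yet $\p_{n_k}[\CK_{\alpha,C}] = 0$ for every $k$. The identity $\bigcap_{\eta>0}\CK_{\alpha,C}^{(\eta)} = \CK_{\alpha,C}$ gives $\mu(\CK_{\alpha,C}^{(\eta)}) \to \mu(\CK_{\alpha,C})$ for a \emph{single fixed} measure $\mu$, but it does not commute with a $\liminf$ over a sequence of measures. And since the GHf topology is not locally compact, you cannot instead appeal to compactness of the closed $\eta$-neighborhoods.

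The paper sidesteps this via a projection. Rather than showing $\p_n$ itself places mass on the compact set $\CK_{\alpha,C}$, one constructs a genuinely tight sequence that is Prokhorov-close to $\p_n$: let $P_r\colon\CK_{\alpha,C;r}\to\CK_{\alpha,C}$ be the metric projection (extended arbitrarily off $\CK_{\alpha,C;r}$). By the observation preceding the proposition, $\sup_{\CK_{\alpha,C;r}}\dGHf(\cdot,P_r(\cdot)) \to 0$ as $r\searrow 0$, so for $r$ small and $n$ large one gets $\dP(\p_n,(P_r)_*\p_n) \le \varepsilon$. The pushforwards $(P_r)_*\p_n$ are tight (supported in a compact set up to a defect of mass $\varepsilon$), hence converge along a subsequence; a diagonal argument over $\varepsilon_i \to 0$ then yields a Cauchy subsequence of $(\p_n)$. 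The support claim for the resulting limit $\p_\infty$ \emph{does} follow by the monotone-convergence argument you had in mind --- but only when applied to the single fixed measure $\p_\infty$, via the closed-set portmanteau inequality $\p_\infty(\CK_{\alpha,C}^{(\eta)}) \ge \limsup_k\p_{n_k}(\CK_{\alpha,C}^{(\eta)}) \ge 1-\varepsilon$, and not inside a $\liminf$ over $k$. To repair your proof, replace the broken interchange with this projection/Prokhorov-distance estimate; the diagonal extraction and support verification that follow in your outline are then fine.
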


\begin{proof}
 Let $\varepsilon > 0$, and pick the corresponding $C>0$. By Proposition~\ref{pr:ghf_compactness}, $\CK_{\alpha,C}$ is compact. Moreover, by the observation in the paragraph above,
 \begin{equation}\label{eq:ghf_dist_approx_hoelder}
  \lim_{r \searrow 0} \sup_{(X,d,f) \in \CK_{\alpha,C;r}}\dist_{\mathrm{GHf}}((X,d,f), \CK_{\alpha,C}) = 0 .
 \end{equation}
 Let $P_r\colon \CK_{\alpha,C;r} \to \CK_{\alpha,C}$ be the metric projection. Then~\eqref{eq:ghf_dist_approx_hoelder} can be equivalently written as
 \begin{equation}\label{eq:ghf_approx_hoelder_projection}
  \lim_{r \searrow 0} \sup_{(X,d,f) \in \CK_{\alpha,C;r}}\dGHf((X,d,f), P_r(X,d,f)) = 0 .
 \end{equation}
 Fix $r>0$ and consider the probability measures $(P_r)_*\p_n$ which are tight since $\CK_{\alpha,C}$ is compact. Therefore $((P_r)_*\p_n)$ converges along a subsequence. By~\eqref{eq:tightness_approx_hoelder} and~\eqref{eq:ghf_approx_hoelder_projection} we see that
 \[ \limsup_{n\to\infty} \dP(\p_n, (P_r)_*\p_n) \le \varepsilon . \]
 Since we can repeat this for every $\varepsilon > 0$, we conclude that there is a Cauchy and hence convergent subsequence of $(\p_n)$.
\end{proof}

\begin{lemma}\label{le:ghf_subspace}
 Let $(X'_n,d'_n)$ be a non-empty compact subspace of a compact metric space $(X_n,d_n)$ for each $n$. Let $f_n$ be a bounded function defined on $X_n$, and $f'_n = f_n\big|_{X'_n}$. Suppose that $(X_n,d_n,f_n) \to (X,d,f)$ and $(X'_n,d'_n,f'_n) \to (X',d',f')$ in the GHf topology, and that $f$ is continuous. Then there is an isometric embedding $\psi$ of $(X',d')$ into $(X,d)$ such that $f' = f\circ\psi$.
\end{lemma}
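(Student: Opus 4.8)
The plan is to realise everything inside a single ambient space and then exploit uniqueness of limits for the Gromov--Hausdorff(-function) topology. First apply Lemma~\ref{le:ghf_common_embedding} to $(X_n,d_n,f_n)\to(X,d,f)$: this produces a compact metric space $(W,d_W)$ together with isometric embeddings $\phi_n\colon X_n\to W$ and $\phi\colon X\to W$ such that $\dH(\phi_n(X_n),\phi(X))\to 0$ and $d_\infty(f_n\circ\phi_n^{-1},f\circ\phi^{-1})\to 0$. The sets $\phi_n(X'_n)$ are non-empty compact subsets of $W$; since the hyperspace of compact subsets of a compact space is compact in the Hausdorff metric, after passing to a subsequence $\phi_n(X'_n)\to A$ for some non-empty compact $A\subseteq W$, and from $\phi_n(X'_n)\subseteq\phi_n(X_n)$ together with $\phi_n(X_n)\to\phi(X)$ one gets $A\subseteq\phi(X)$.

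The next step is to show that, along this subsequence, $(X'_n,d'_n,f'_n)\to\bigl(A,\,d_W|_A,\,(f\circ\phi^{-1})|_A\bigr)$ in the GHf topology, using the embeddings $\phi_n|_{X'_n}$ and the inclusion $A\hookrightarrow W$ as witnesses; the Hausdorff part is immediate from $\phi_n(X'_n)\to A$. For the functions, since $f'_n\circ\phi_n^{-1}$ is the restriction of $f_n\circ\phi_n^{-1}$ to $\phi_n(X'_n)$, it suffices to prove $(f_n\circ\phi_n^{-1})|_{\phi_n(X'_n)}\to(f\circ\phi^{-1})|_A$ in $d_\infty$, and this is the one place continuity of $f$ enters: given $y_n\in\phi_n(X'_n)$ choose $y\in A$ with $d_W(y_n,y)\to 0$ and, from $d_\infty(f_n\circ\phi_n^{-1},f\circ\phi^{-1})\to 0$, a point $z$ with $d_W(y_n,z)\to 0$ and $|f_n\circ\phi_n^{-1}(y_n)-f\circ\phi^{-1}(z)|\to 0$; uniform continuity of $f\circ\phi^{-1}$ then forces $|f_n\circ\phi_n^{-1}(y_n)-f\circ\phi^{-1}(y)|\to 0$, and the reverse matching (given $y\in A$, approximate it by $y_n\in\phi_n(X'_n)$ and run the same estimate) is handled identically.

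Now $(X'_n,d'_n,f'_n)$ has two GHf limits: $(X',d',f')$ by hypothesis and $\bigl(A,d_W|_A,g\bigr)$ with $g:=(f\circ\phi^{-1})|_A$ by the previous step. By the triangle inequality for $\dGHf$ these two limits have $\dGHf$-distance $0$, and $g$ is continuous, being a continuous function restricted to a compact set. Running the common-embedding construction of Lemma~\ref{le:ghf_common_embedding} on a sequence of embeddings realising $\dGHf\bigl((X',d',f'),(A,d_W|_A,g)\bigr)=0$ (exactly as in the proof of Proposition~\ref{pr:ghf_positive}, which only needs one of the two functions to be continuous, namely $g$) yields an isometry $\chi\colon X'\to A$ with $d_\infty(f',g\circ\chi)=0$. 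Finally set $\psi:=\phi^{-1}\circ\chi\colon X'\to X$, which is well defined since $\chi(X')=A\subseteq\phi(X)$ and is an isometric embedding; then $f\circ\psi=f\circ\phi^{-1}\circ\chi=g\circ\chi$, so $d_\infty(f',f\circ\psi)=0$, which is the assertion (with $f'=f\circ\psi$ read in the GHf-space, i.e.\ as equality of $f'$ with the continuous function $f\circ\psi$).

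The step I expect to require the most care, and the only genuinely delicate one, is the last: extracting the isometry together with the identification of the functions when $f'$ is merely bounded rather than continuous. It does go through because the continuity used in (the proof of) Proposition~\ref{pr:ghf_positive} is exactly the continuity of the \emph{limit} space's function, here $g=(f\circ\phi^{-1})|_A$, which we have arranged, while $f'$ is never required to be continuous; everything else reduces to the standard compactness-of-hyperspace and diagonal arguments already used in Lemma~\ref{le:ghf_common_embedding} and Proposition~\ref{pr:ghf_positive}.
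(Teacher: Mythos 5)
Your proof is correct and follows essentially the same route as the paper: embed in a common compact space via Lemma~\ref{le:ghf_common_embedding}, extract a Hausdorff-convergent subsequence of $\phi_n(X'_n)$, use continuity of $f$ to transfer the $d_\infty$ convergence to the subspaces, and apply Proposition~\ref{pr:ghf_positive} to obtain the isometry. Your remark that Proposition~\ref{pr:ghf_positive} as stated requires both functions continuous but its proof only needs one (the limit side, which is $g=(f\circ\phi^{-1})|_A$ here) is a worthwhile clarification that the paper invokes implicitly.
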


\begin{proof}
 By Lemma~\ref{le:ghf_common_embedding}, we can embed $(X_n,d_n)$ and $(X,d)$ into a common space $(W,d_W)$ so that $d_\infty(f,f_n) \to 0$ in $W$. Let us identify $X'_n \subseteq X_n$ with their embeddings into $W$. By the compactness in the Hausdorff topology, we can find a subsequence along which $(X'_n)$ converges to some $\wt{X}' \subseteq X$. It follows from the uniform continuity of $f$ that also $d_\infty(f\big|_{\wt{X}'},f'_n) \to 0$ in $W$. Then Proposition~\ref{pr:ghf_positive} implies that $(X',d',f')$ is isometric to $(\wt{X}',d,f\big|_{\wt{X}'})$.
\end{proof}

\begin{lemma}\label{le:ghf_monotonicity}
 Let $(X_n,d_n)$ be a non-empty compact metric space and $f_n$, $f'_n$ be bounded real-valued functions defined on $X_n$ with $f_n \le f'_n$ pointwise for each $n$. Suppose that $(X_n,d_n,f_n) \to (X,d,f)$ and $(X_n,d_n,f'_n) \to (X',d',f')$ in the GHf topology, and that $f$ is continuous. Then there is an isometry $\psi$ between $(X,d)$ and $(X',d')$ such that $f \le f'\circ\psi$ pointwise.
\end{lemma}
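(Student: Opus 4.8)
The plan is to mimic the proof of Lemma~\ref{le:ghf_subspace}: realize both GHf limits inside one common compact ambient space, deduce that $(X,d)$ and $(X',d')$ are the \emph{same} subset there (hence isometric), and then transfer the pointwise inequality $f_n\le f'_n$ to the limit.

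First I would apply Lemma~\ref{le:ghf_common_embedding} to the convergence $(X_n,d_n,f_n)\to(X,d,f)$ to obtain a compact metric space $(W,d_W)$ together with isometric embeddings of the $X_n$ and of $X$ such that, identifying spaces with their images, $\dH(X_n,X)\to0$ and $d_\infty(f,f_n)\to0$ in $W$; and similarly Lemma~\ref{le:ghf_common_embedding} applied to $(X_n,d_n,f'_n)\to(X',d',f')$ gives a compact $(W',d_{W'})$ with $\dH(X_n,X')\to0$ and $d_\infty(f',f'_n)\to0$ in $W'$. (Note also that the $f'_n$ are uniformly bounded, since for large $n$ the $d_\infty$-term in the GHf distance to $(X',d',f')$ is $<1$.) Next I would assemble these into a single compact space: redoing the Gromov--Hausdorff gluing construction (the one used in the proof of Lemma~\ref{le:ghf_common_embedding}, see \cite{gpw-gromov-prokhorov,gm-uihpq} and \cite[\S7.3]{bbi-metric-geometry}) but now with the \emph{two} families of decaying-distortion correspondences $X_n\leftrightarrow X$ and $X_n\leftrightarrow X'$ glued along the $X_n$, I obtain one compact metric space $(\hat W,\hat d)$ into which all $X_n$, $X$, and $X'$ embed isometrically and for which $\dH(X_n,X)\to0$, $\dH(X_n,X')\to0$, $d_\infty(f,f_n)\to0$, and $d_\infty(f',f'_n)\to0$, all computed in $\hat W$. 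Since $X$ and $X'$ are both Hausdorff limits of $(X_n)$ in $\hat W$, the triangle inequality gives $\dH^{\hat W}(X,X')=0$, and as both are compact this forces $X=X'$ as subsets of $\hat W$; the inclusion is then the desired isometry $\psi\colon(X,d)\to(X',d')$. (Along the way one records that $\operatorname{dom}f=X$ and $\operatorname{dom}f'=X'$, which follows from the $d_\infty$-convergences together with $\operatorname{dom}f_n=\operatorname{dom}f'_n=X_n$.)

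It then remains to check $f\le f'\circ\psi$, i.e.\ $f(x)\le f'(x)$ for every $x\in X=X'$ under the identification. The key point is to approximate from the correct side so as not to use continuity of $f'$ (which is \emph{not} assumed): using the ``$X'$-to-$X_n$'' half of $d_\infty(f',f'_n)\to0$, for each $x\in X'$ pick $z_n\in X_n$ with $\hat d(z_n,x)\to0$ and $f'_n(z_n)\to f'(x)$. Then $\hat d(z_n,X)\to0$, so by $d_\infty(f,f_n)\to0$ there are $y_n\in X$ with $\hat d(z_n,y_n)\to0$ and $|f_n(z_n)-f(y_n)|\to0$; since $y_n\to x$ and $f$ is continuous, $f(y_n)\to f(x)$ and hence $f_n(z_n)\to f(x)$. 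Passing $f_n(z_n)\le f'_n(z_n)$ to the limit yields $f(x)\le f'(x)$, which is exactly $f\le f'\circ\psi$.

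The main obstacle is the gluing step that produces $\hat W$. Naively identifying the two common spaces $W$, $W'$ from Lemma~\ref{le:ghf_common_embedding} along the copies of the $X_n$ can create shortcuts, so that the result need not restrict isometrically to each piece; the fix is to run the standard metric gluing with jump costs given by the actual (decaying) distortion bounds of the two correspondence families, exactly as in the proof of the cited common-embedding result. Everything else is routine bookkeeping; in particular, the proof uses continuity of $f$ but, because the approximating sequence $z_n$ is chosen from the $X'$-side, it genuinely does not require continuity of $f'$.
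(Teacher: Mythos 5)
Your proof is correct in outline, but it takes a genuinely different route from the paper's, and it reconstructs by hand a step the paper deliberately sidesteps.

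The paper applies Lemma~\ref{le:ghf_common_embedding} only once, to the second family $(X_n,d_n,f'_n)\to(X',d',f')$, obtaining a single compact $W$ in which $\dH(X_n,X')\to0$ and $d_\infty(f',f'_n)\to0$. It then exploits the \emph{other} convergence $(X_n,d_n,f_n)\to(X,d,f)$ together with the uniform continuity of $f$ to conclude that the $f_n$ are approximately equicontinuous, so Lemma~\ref{le:hf_approx_equicont} (the Arzel\`a--Ascoli-type compactness for $d_\infty$ on a fixed ambient space) produces a \emph{continuous} limit $\wt f\colon X'\to\R$ of $(f_n)$ inside $W$ along a subsequence. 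Then $\wt f\le f'$ follows by your same limiting argument (using continuity of $\wt f$, not of $f'$), and Proposition~\ref{pr:ghf_positive} supplies the isometry $(X,d,f)\simeq(X',d',\wt f)$ directly, since $(X_n,d_n,f_n)$ now has two continuous GHf limits. You instead apply Lemma~\ref{le:ghf_common_embedding} twice and propose gluing the two resulting ambient spaces $W$ and $W'$ along the copies of $X_n$ to realize $X$, $X'$ and all $X_n$ in one space $\hat W$, whence $X=X'$ by Hausdorff uniqueness. This works, but the double gluing is genuinely delicate --- you correctly flag the shortcut problem, and verifying that the glued pseudometric restricts isometrically to $W$ and $W'$ (so no distances in $X$ or $X'$ get shortened by chains through the other side) requires carefully tracking both families of distortion bounds, essentially re-proving a two-sided version of Lemma~\ref{le:ghf_common_embedding} from scratch. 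The paper's single-embedding approach pays off here: it buys the identification $X\cong X'$ for free out of the already-proved Proposition~\ref{pr:ghf_positive}, at the cost of the (lighter) extra observation that $f$ continuous forces the $f_n$ to be approximately equicontinuous so that Lemma~\ref{le:hf_approx_equicont} applies. Your argument for passing the inequality $f_n\le f'_n$ to the limit is the same one the paper needs and is carried out correctly; in particular your observation that continuity of $f'$ is not required is the right one, and both proofs share this feature.
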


\begin{proof}
 By Lemma~\ref{le:ghf_common_embedding}, we can embed $(X_n,d_n)$ and $(X',d')$ into a common space $(W,d_W)$ so that $d_\infty(f',f'_n) \to 0$ in $W$. Using the convergence $(X_n,d_n,f_n) \to (X,d,f)$ and the uniform continuity of $f$, it follows from Lemma~\ref{le:hf_approx_equicont} that there is a continuous function $\wt{f}\colon X' \to \R$ and a subsequence along which also $d_\infty(\wt{f},f_n) \to 0$ in $W$. It follows from the continuity of $\wt{f}$ that $\wt{f} \le f'$. Finally, Proposition~\ref{pr:ghf_positive} implies that $(X',d',\wt{f})$ is isometric to $(X,d,f)$.
\end{proof}

\section{Tightness and convergence of conditional laws}
\label{app:conditional_laws}

In general, conditional laws and conditional independence may not be preserved under weak limits, e.g.\ let $X \sim \mathrm{Leb}\big|_{[0,1]}$ and $Y_n = Z_n = \sin(nX)$. In this section, we prove some sufficient conditions that guarantee conditional independence in the weak limit. We formulate the results for Polish spaces, without making an effort in finding the most general assumptions for the spaces.

We will use the following notation for composition-products of kernels. If $\nu$ is a probability measure on $X$, and $\mu = \mu[dy \mid x]$ (resp.\ $\wt{\mu} = \wt{\mu}[dz \mid x]$) is a probability kernel from $X$ to $Y$ (resp.\ $Z$), then we write
\begin{align*}
 \nu\otimes\mu[dx\,dy] &= \nu[dx]\,\mu[dy \mid x] ,\\
 \mu\otimes\wt{\mu}[dy\,dz \mid x] &= \mu[dy \mid x]\,\wt{\mu}[dz \mid x] .
\end{align*}

We begin with a useful lemma.

\begin{lemma}\label{le:weak_limit_abs_cont}
 Let $X,Y$ be Polish spaces. Let $\nu$ be a probability measure on $X$, and $d\wt{\nu} = fd\nu$ where $f \in L^1(\nu)$. Let $\mu_n[dy \mid x]$ be a sequence of probability kernels from $X$ to $Y$. Suppose that $\nu\otimes\mu_n \to \nu\otimes\mu$ weakly. Then also $\wt{\nu}\otimes\mu_n \to \wt{\nu}\otimes\mu$ weakly.
\end{lemma}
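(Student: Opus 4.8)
The plan is to reduce the statement to the case where the density $f$ is bounded and continuous, where it is immediate, by approximating $f$ in $L^1(\nu)$ with an error that is uniform in $n$. First I would record the elementary identity that for every bounded measurable $g\colon X\times Y\to\R$, writing $(g\cdot f)(x,y)=g(x,y)f(x)$,
\[
 \int_{X\times Y} g \, d(\wt{\nu}\otimes\mu_n) = \int_{X\times Y} (g\cdot f)\, d(\nu\otimes\mu_n) ,
\]
and likewise with $\mu$ in place of $\mu_n$. Hence it suffices to prove $\int (g\cdot f)\,d(\nu\otimes\mu_n)\to\int (g\cdot f)\,d(\nu\otimes\mu)$ for every $g\in C_b(X\times Y)$. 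I would also note that, since each $\mu_n$ is a probability kernel, the first marginal of $\nu\otimes\mu_n$ is $\nu$ for every $n$; this is the source of the uniformity below.

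Next I would use that $C_b(X)$ is dense in $L^1(\nu)$, which holds because $\nu$ is a Borel probability measure on a Polish space and is therefore Radon (one approximates indicators of closed sets $C$ by $x\mapsto\max(0,1-k\,\dist(x,C))$, reduces general Borel sets to closed sets by inner regularity, and truncates). Given $\varepsilon>0$, choose $f_\varepsilon\in C_b(X)$ with $\norm{f-f_\varepsilon}_{L^1(\nu)}<\varepsilon$. Then $g\cdot f_\varepsilon\in C_b(X\times Y)$, so the hypothesis $\nu\otimes\mu_n\to\nu\otimes\mu$ gives
\[
 \int (g\cdot f_\varepsilon)\,d(\nu\otimes\mu_n)\;\longrightarrow\;\int (g\cdot f_\varepsilon)\,d(\nu\otimes\mu) .
\]
On the other hand, because the first marginal of $\nu\otimes\mu_n$ equals $\nu$,
\[
 \abs*{\int (g\cdot f)\,d(\nu\otimes\mu_n) - \int (g\cdot f_\varepsilon)\,d(\nu\otimes\mu_n)}
 \le \norm{g}_\infty\int_X\abs{f-f_\varepsilon}\,d\nu < \norm{g}_\infty\varepsilon
\]
for every $n$, and the same bound holds with $\mu$ replacing $\mu_n$. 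Combining these three facts yields
\[
 \limsup_{n\to\infty}\abs*{\int (g\cdot f)\,d(\nu\otimes\mu_n) - \int (g\cdot f)\,d(\nu\otimes\mu)}\le 2\norm{g}_\infty\varepsilon ,
\]
and letting $\varepsilon\searrow 0$ completes the proof.

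There is no genuine difficulty here; the only points needing a remark are the density of $C_b(X)$ in $L^1(\nu)$ for Polish $X$ and --- more essentially --- the observation that the approximation error is independent of $n$ because the first marginal of $\nu\otimes\mu_n$ is pinned to $\nu$. It is precisely this uniformity that lets one interchange the limit in $n$ with the approximation of $f$ in $L^1(\nu)$.
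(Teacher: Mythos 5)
Your proof is correct and is essentially the same as the paper's: both rely on density of $C_b(X)$ in $L^1(\nu)$, use that $g\cdot f_\varepsilon\in C_b(X\times Y)$ together with the hypothesized weak convergence, and exploit that the $X$-marginal of $\nu\otimes\mu_n$ is $\nu$ for every $n$ to get a uniform-in-$n$ bound on the approximation error. The only cosmetic difference is that you run an $\varepsilon$--$\limsup$ argument where the paper interchanges limits along a sequence $f_m\to f$.
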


\begin{proof}
 Since $X$ is Polish, the set of bounded continuous functions is dense in $L^1(\nu)$. Therefore we can find a sequence $f_m \in C_b(X)$ with $f_m \to f$ in $L^1(\nu)$. By the weak convergence $\nu\otimes\mu_n \to \nu\otimes\mu$ we have
 \[ \iint g(x,y) f_m(x) \mu_n[dy \mid x] \nu[dx] \xrightarrow{n\to\infty} \iint g(x,y) f_m(x) \mu[dy \mid x] \nu[dx] . \]
 for every bounded continuous $g$. On the other hand, we have
 \[ \sup_n \iint \abs{g(x,y)} \abs{f(x)-f_m(x)} \mu_n[dy \mid x] \nu[dx] \le \norm{g}_\infty \norm{f-f_m}_{L^1(\nu)} \xrightarrow{m\to\infty} 0 . \]
 Therefore also
 \[ \iint g(x,y) f(x) \mu_n[dy \mid x] \nu[dx] \xrightarrow{n\to\infty} \iint g(x,y) f(x) \mu[dy \mid x] \nu[dx] \]
 which shows that $\wt{\nu}\otimes\mu_n \to \wt{\nu}\otimes\mu$.
\end{proof}

\begin{lemma}\label{le:same_cond_law_weak_limit}
 Let $X,Y$ be Polish spaces. Let $\nu,\wt{\nu}$ be two probability measures on $X$, and $\mu_n[dy \mid x]$ a sequence of probability kernels from $X$ to $Y$. Suppose that $\nu\otimes\mu_n \to \nu\otimes\mu$ and $\wt{\nu}\otimes\mu_n \to \wt{\nu}\otimes\wt{\mu}$ weakly. Then there are versions of $\mu$ (with respect to $\nu$) and $\wt{\mu}$ (with respect to $\wt{\nu}$) such that $\mu[\cdot \mid x] = \wt{\mu}[\cdot \mid x]$ for every $x \in X$.
\end{lemma}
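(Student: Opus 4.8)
The plan is to introduce the mixture measure $\nu_* = \frac{1}{2}(\nu+\wt{\nu})$ and to show that both $\mu$ and $\wt{\mu}$ agree, after modification on null sets, with the disintegration over $\nu_*$ of the weak limit of $\nu_*\otimes\mu_n$. This will in particular produce a single kernel that serves simultaneously as a version of $\mu$ and of $\wt{\mu}$.

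First I would note that adding the two assumed weak convergences gives
$\nu_*\otimes\mu_n = \frac{1}{2}(\nu\otimes\mu_n)+\frac{1}{2}(\wt{\nu}\otimes\mu_n) \to \frac{1}{2}(\nu\otimes\mu)+\frac{1}{2}(\wt{\nu}\otimes\wt{\mu}) =: \rho$ weakly on $X\times Y$. Since the projection $X\times Y\to X$ is continuous, the first marginal of $\rho$ is the weak limit of the first marginals of $\nu_*\otimes\mu_n$, each of which equals $\nu_*$; hence the first marginal of $\rho$ is $\nu_*$. As $X$ and $Y$ are Polish, $\rho$ admits a disintegration $\rho = \nu_*\otimes\mu_*$ for some probability kernel $\mu_*$ from $X$ to $Y$.

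Next, I would apply Lemma~\ref{le:weak_limit_abs_cont} with base measure $\nu_*$, tilted measure $\nu$ (writing $d\nu = f\,d\nu_*$ with $f = d\nu/d\nu_*$, which satisfies $f\le 2$ and hence $f\in L^1(\nu_*)$), and the kernels $\mu_n$, which by the previous paragraph satisfy $\nu_*\otimes\mu_n \to \nu_*\otimes\mu_*$. The conclusion is $\nu\otimes\mu_n \to \nu\otimes\mu_*$ weakly. Combined with the hypothesis $\nu\otimes\mu_n \to \nu\otimes\mu$ and uniqueness of weak limits, this gives $\nu\otimes\mu = \nu\otimes\mu_*$ as measures on $X\times Y$; since both sides disintegrate over $\nu$, uniqueness of disintegration on Polish spaces gives $\mu[\cdot\mid x] = \mu_*[\cdot\mid x]$ for $\nu$-a.e.\ $x$. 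The identical argument with $\wt{\nu}$ in place of $\nu$ gives $\wt{\mu}[\cdot\mid x] = \mu_*[\cdot\mid x]$ for $\wt{\nu}$-a.e.\ $x$. Finally, one replaces $\mu$ by the kernel $\mu_*$ (a version of $\mu$ with respect to $\nu$, since they agree $\nu$-a.e.) and likewise replaces $\wt{\mu}$ by $\mu_*$; these are the required versions, and they are equal as kernels, so $\mu[\cdot\mid x] = \wt{\mu}[\cdot\mid x]$ for every $x\in X$.

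This argument is essentially routine; the only points requiring care are the existence and ($\nu_*$-a.e.) uniqueness of disintegrations on Polish spaces, which is standard, and keeping track of which equalities hold for every $x$ versus for almost every $x$ before passing to the common version $\mu_*$. I do not anticipate a genuine obstacle, so the main work is simply in assembling these pieces carefully.
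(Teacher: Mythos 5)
Your argument is correct. It is closely related to the paper's proof but uses a genuinely different decomposition, so let me compare. The paper performs a Lebesgue decomposition $\nu = \nu_c + \nu_s$ with respect to $\wt{\nu}$, applies Lemma~\ref{le:weak_limit_abs_cont} twice to conclude $\nu_c\otimes\mu_n \to \nu_c\otimes\wt{\mu}$ and $\nu_c\otimes\mu_n \to \nu_c\otimes\mu$, deduces $\mu = \wt{\mu}$ $\nu_c$-a.e., and then patches $\wt{\mu}$ on the $\wt{\nu}$-null set carrying $\nu_s$ (and $\mu$ on a remaining $\nu$-null set). You instead pass to the symmetric dominating measure $\nu_* = \frac{1}{2}(\nu + \wt{\nu})$, observe that $\nu_*\otimes\mu_n$ converges to a limit $\rho$ with first marginal $\nu_*$, and let $\mu_*$ be the disintegration of $\rho$ over $\nu_*$; a single application of Lemma~\ref{le:weak_limit_abs_cont} (with the bounded density $d\nu/d\nu_* \le 2$, and symmetrically for $\wt{\nu}$) then shows $\mu_*$ is simultaneously a version of $\mu$ and of $\wt{\mu}$. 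Your version is more symmetric, requires no explicit null-set patching, and produces the common kernel $\mu_*$ directly rather than by modification; the paper's version is slightly more parsimonious in that it does not invoke the existence of disintegrations beyond what is already implicit in the statement. Both are correct and of comparable length.
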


\begin{proof}
 Decompose $\nu = \nu_c+\nu_s$ where $\nu_c$ (resp.\ $\nu_s$) is absolutely continuous (resp.\ singular) with respect to $\wt{\nu}$. By Lemma~\ref{le:weak_limit_abs_cont}, the convergence $\wt{\nu}\otimes\mu_n \to \wt{\nu}\otimes\wt{\mu}$ implies also $\nu_c\otimes\mu_n \to \nu_c\otimes\wt{\mu}$.
 
 Let $A$ be a measurable set so that $\nu_s(A) = 0$, $\nu_c(A^c) = 0$. Applying Lemma~\ref{le:weak_limit_abs_cont} to $f=1_A$, we see that the convergence $\nu\otimes\mu_n \to \nu\otimes\mu$ implies $\nu_c\otimes\mu_n \to \nu_c\otimes\mu$. This shows that $\mu = \wt{\mu}$ holds $\nu_c$-almost everywhere. Since $\nu_s$ is singular with respect to $\wt{\nu}$, there is a version of $\wt{\mu}$ so that $\mu = \wt{\mu}$ also holds $\nu_s$-almost everywhere. We conclude that there is a version of $\mu$ so that $\mu=\wt{\mu}$.
\end{proof}

The following lemma guarantees conditional independence in the weak limit when the conditional laws of one of the random variables remain fixed.

\begin{lemma}\label{le:cond_independence_limit_fixed_kernel}
 Let $X,Y,Z$ be Polish spaces. Let $\nu$ be a probability measure on $X$, and $\mu_n[dy \mid x]$ a sequence of probability kernels from $X$ to $Y$, and suppose that $\nu\otimes\mu_n \to \nu\otimes\mu$ weakly. Then, if $\wt{\mu}$ is another probability kernel from $X$ to $Z$, then $\nu\otimes(\mu_n\otimes\wt{\mu}) \to \nu\otimes(\mu\otimes\wt{\mu})$ weakly.
\end{lemma}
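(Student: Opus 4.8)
The plan is to test against an arbitrary bounded continuous function $g\colon X\times Y\times Z\to\R$ and to show that $\int g\,d(\nu\otimes\mu_n\otimes\wt{\mu})\to\int g\,d(\nu\otimes\mu\otimes\wt{\mu})$. Set $\Phi(x,y,\rho)=\int_Z g(x,y,z)\,\rho(dz)$ for $\rho\in\CP(Z)$ (the space of probability measures on $Z$ with the weak topology), and $G(x,y)=\Phi(x,y,\wt{\mu}[\cdot\mid x])$. I would first check that $\Phi$ is jointly continuous and bounded: if $(x_k,y_k,\rho_k)\to(x,y,\rho)$, then tightness of $\{\rho_k\}$ together with uniform continuity of $g$ on compact subsets of $X\times Y\times Z$ controls the variation of the integrand, while $\rho_k\to\rho$ weakly controls the variation of the measure. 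Consequently $G$ is a bounded measurable function on $X\times Y$, and by Fubini $\int g\,d(\nu\otimes\mu_n\otimes\wt{\mu})=\int_{X\times Y}G\,d(\nu\otimes\mu_n)$, and likewise with $\mu$ in place of $\mu_n$. The difficulty is that $G$ need not be continuous, since $x\mapsto\wt{\mu}[\cdot\mid x]$ need not be continuous, so one cannot directly feed $G$ into the hypothesis $\nu\otimes\mu_n\to\nu\otimes\mu$.

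To circumvent this I would recall that, $Z$ being Polish, $\CP(Z)$ is Polish and $x\mapsto\wt{\mu}[\cdot\mid x]$ is a Borel map $X\to\CP(Z)$. Fix $\varepsilon>0$. Since $\nu$ is Radon, Lusin's theorem provides a compact set $K\subseteq X$ with $\nu(X\setminus K)<\varepsilon$ on which $x\mapsto\wt{\mu}[\cdot\mid x]$ is continuous. By the joint continuity of $\Phi$, the function $G$ is then continuous on the closed set $K\times Y\subseteq X\times Y$; since $X\times Y$ is metrizable and hence normal, Tietze's extension theorem yields $\wt{G}\in C_b(X\times Y)$ with $\wt{G}=G$ on $K\times Y$ and $\norm{\wt{G}}_\infty\le\norm{g}_\infty$.

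Then, using that $\mu_n$ and $\mu$ are probability kernels, I would estimate, for every $n$,
\[ \abs*{ \int_{X\times Y} G\,d(\nu\otimes\mu_n) - \int_{X\times Y} \wt{G}\,d(\nu\otimes\mu_n) } \le \int_{(X\setminus K)\times Y} \abs{G-\wt{G}}\,d(\nu\otimes\mu_n) \le 2\norm{g}_\infty\,\nu(X\setminus K) < 2\norm{g}_\infty\varepsilon, \]
and identically with $\mu_n$ replaced by $\mu$. Since $\wt{G}\in C_b(X\times Y)$, the hypothesis gives $\int\wt{G}\,d(\nu\otimes\mu_n)\to\int\wt{G}\,d(\nu\otimes\mu)$, so combining the three bounds yields $\limsup_n\abs{\int G\,d(\nu\otimes\mu_n)-\int G\,d(\nu\otimes\mu)}\le 4\norm{g}_\infty\varepsilon$. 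Letting $\varepsilon\searrow 0$ and unwinding the Fubini identities then gives $\int g\,d(\nu\otimes\mu_n\otimes\wt{\mu})\to\int g\,d(\nu\otimes\mu\otimes\wt{\mu})$, and since $g\in C_b(X\times Y\times Z)$ was arbitrary this is precisely the asserted weak convergence. The only points needing care are the joint continuity of $\Phi$ and the applicability of Lusin's theorem, i.e.\ that $x\mapsto\wt{\mu}[\cdot\mid x]$ is genuinely Borel into the Polish space $\CP(Z)$; both are standard in the Polish setting, so once these are in place the argument is essentially routine.
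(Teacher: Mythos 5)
Your proof is correct, but it takes a genuinely different route from the paper's. The paper first observes that the sequence $\nu\otimes(\mu_n\otimes\wt{\mu})$ is tight (its $X$-marginal is $\nu$, its $Z$-marginal is that of the fixed measure $\nu\otimes\wt{\mu}$, and its $X\times Y$-marginal is the convergent $\nu\otimes\mu_n$), which reduces the problem to identifying the limit on a measure-determining class of test functions; it then takes products $g(x,y)h(z)$ with $g\in C_b(X\times Y)$, $h\in C_b(Z)$, and applies Lemma~\ref{le:weak_limit_abs_cont} with $f(x)=\int h(z)\,\wt{\mu}[dz\mid x]\in L^1(\nu)$. You instead test directly against an arbitrary $g\in C_b(X\times Y\times Z)$ and regularize the merely Borel function $G(x,y)=\int g(x,y,z)\,\wt{\mu}[dz\mid x]$ via Lusin's theorem applied to $x\mapsto\wt{\mu}[\cdot\mid x]\colon X\to\CP(Z)$ followed by a bounded Tietze extension. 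At heart, both arguments approximate the Borel map $x\mapsto\wt{\mu}[\cdot\mid x]$ by something continuous in an $L^1(\nu)$-controlled way that is uniform in $n$; your version is self-contained (no separate tightness step, no appeal to Lemma~\ref{le:weak_limit_abs_cont}) at the cost of the Lusin/Tietze machinery, whereas the paper's is shorter by reusing the density-of-$C_b$-in-$L^1$ argument already encapsulated in Lemma~\ref{le:weak_limit_abs_cont}. One minor point: in your verification that $\Phi$ is jointly continuous, you should explicitly use the tightness of the convergent sequence $\{\rho_k\}$ (via Prokhorov) to localize to a compact $K_Z\subseteq Z$ before invoking uniform continuity of $g$ on the resulting compact product; the phrasing as written is a little terse, but the intended argument is sound.
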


\begin{proof}
 It is straightforward to verify that the sequence $(\nu\otimes(\mu_n\otimes\wt{\mu}))$ is tight. Therefore we only need to identify the limit. Let $g \in C_b(X \times Y)$, $h \in C_b(Z)$. Then Lemma~\ref{le:weak_limit_abs_cont} applied to $f(x) = \int h(z)\wt{\mu}[dz \mid x]$ implies
 \[ \iiint g(x,y)h(z) \wt{\mu}[dz \mid x] \mu_n[dy \mid x] \nu[dx] \to \iiint g(x,y)h(z) \wt{\mu}[dz \mid x] \mu[dy \mid x] \nu[dx] . \]
\end{proof}

For more general sequences of random variables we can make use of the convergence of their conditional laws. The following lemmas aim to find conditions under which we can extract subsequences for which the conditional laws converge.

\begin{lemma}\label{le:tightness_cond_law}
 Let $X,Y$ be Polish spaces. Let $\nu$ be a probability measure on $X$, and $\mu_n[dy \mid x]$ a sequence of probability kernels from $X$ to $Y$. Suppose that the sequence of measures $(\nu\otimes\mu_n)$ on $X \times Y$ is tight. Then there exists a sequence $(K^m)$ of compact sets in $Y$ such that
 \[ \lim_{m \to \infty} \liminf_{n \to \infty} \mu_n[(K^m)^c \mid x] = 0 . \]
 for $\nu$-almost every $x \in X$.
\end{lemma}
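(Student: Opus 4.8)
The plan is to read off the compact sets $K^m$ directly from a tightness exhaustion of the product measures $\nu\otimes\mu_n$, and then convert the resulting uniform-in-$n$ bound on the $Y$-marginals into an almost sure statement about the conditional kernels via Fatou's lemma and a Borel--Cantelli type summation.

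First I would use the assumed tightness of $(\nu\otimes\mu_n)$ on $X\times Y$ to pick, for each $m\in\N$, a compact set $C^m\subseteq X\times Y$ with $\sup_n(\nu\otimes\mu_n)[(C^m)^c]\le 2^{-m}$, and then set $K^m=\pi_Y(C^m)$, the projection onto $Y$, which is compact as the continuous image of a compact set. Since $y\notin K^m$ forces $(x,y)\notin C^m$, one has $X\times(K^m)^c\subseteq(C^m)^c$, whence
\[
 \int_X \mu_n[(K^m)^c\mid x]\,\nu[dx] = (\nu\otimes\mu_n)[X\times(K^m)^c] \le 2^{-m}
 \quad\text{for every }n ,
\]
the integrand being measurable in $x$ because $\mu_n$ is a probability kernel and $(K^m)^c$ is Borel.

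Next, for fixed $m$ I would set $g_m(x):=\liminf_{n\to\infty}\mu_n[(K^m)^c\mid x]$, which is measurable as a countable $\liminf$ of measurable functions, and apply Fatou's lemma (legitimate since the integrands are nonnegative) to obtain $\int_X g_m\,d\nu\le\liminf_{n\to\infty}\int_X\mu_n[(K^m)^c\mid x]\,\nu[dx]\le 2^{-m}$. Summing over $m$ and invoking monotone convergence gives $\int_X\sum_m g_m\,d\nu\le\sum_m 2^{-m}<\infty$, so $\sum_m g_m(x)<\infty$, and in particular $g_m(x)\to 0$, for $\nu$-almost every $x$; this is precisely the claim. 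If one prefers an increasing sequence of compacts, replacing $K^m$ by $K^1\cup\cdots\cup K^m$ changes nothing.

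I do not expect a genuine obstacle in this argument; the only points that need a moment's attention are the measurability of $x\mapsto\mu_n[(K^m)^c\mid x]$, which is immediate from the definition of a kernel together with the fact that complements of compact sets in a metric space are open (hence Borel), and the direction of Fatou's lemma — it is the lower-semicontinuity direction that is available here, which is exactly why the statement is (and must be) phrased with $\liminf$ rather than $\limsup$.
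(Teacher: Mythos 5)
Your proof is correct and takes essentially the same route as the paper: both extract $K^m$ as the $Y$-projection of a compact from the tightness of $(\nu\otimes\mu_n)$, bound $\int_X\mu_n[(K^m)^c\mid x]\,\nu[dx]$ uniformly in $n$, and apply Fatou's lemma. The only cosmetic difference is that you apply Fatou directly to the functions $x\mapsto\liminf_n\mu_n[(K^m)^c\mid x]$ and sum over $m$, whereas the paper first uses a Markov-inequality threshold to pass to the set version of Fatou and then a Borel--Cantelli argument; your variant is slightly more streamlined and reaches the stated conclusion just as well.
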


\begin{proof}
 By the assumption, there exists a sequence of compact sets $\wt{K}^m \subseteq X \times Y$ such that
 \begin{equation}\label{eq:product_tight}
  \sup_n (\nu\otimes\mu_n)[(K^m)^c] < 2^{-2m} .
 \end{equation}
 Let $K^m = \{ y : (x,y) \in \wt{K}^m \}$, and $A^m_n = \{ x \in X : \mu_n[(K^m)^c \mid x] > 2^{-m} \}$. Then it follows from~\eqref{eq:product_tight} that $\sup_n \nu[A^m_n] \le 2^{-m}$. By Fatou's lemma,
 \[ \nu[\liminf_n A^m_n] \le \liminf_n \nu[A^m_n] \le 2^{-m} . \]
 In particular, for $\nu$-almost every $x$ there exists $m(x) \in \N$ such that $x \in (\liminf_n A^m_n)^c$ for every $m \ge m(x)$. In other words, there is a subsequence $(n_{k(x)})$ such that
 \[ \limsup_k \mu_{n_{k(x)}}[(K^m)^c \mid x] \le 2^{-m} \]
 for every $m \ge m(x)$.
\end{proof}

\begin{corollary}\label{co:subsequential_limit_cond_laws}
 Let $X,Y$ be Polish spaces. Let $\nu$ be a probability measure on $X$, and $\mu_n[dy \mid x]$ a sequence of probability kernels from $X$ to $Y$. Suppose that the sequence of measures $(\nu\otimes\mu_n)$ on $X \times Y$ is tight, and that the sequence $(\mu_n)$ is equicontinuous as functions from $X$ to the space of probability measures on $Y$ equipped with the Prokhorov metric. Then there exists a subsequence $(n_k)$ such that $\mu_{n_k}[\cdot \mid x]$ converges weakly for $\nu$-almost every every $x \in X$.
\end{corollary}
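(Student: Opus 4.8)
The plan is to combine the tightness of the product measures on $X \times Y$ (which gives, after passing to a subsequence, weak convergence $\nu \otimes \mu_n \to \pi$ for some probability measure $\pi$ on $X\times Y$ whose first marginal is $\nu$, hence $\pi = \nu \otimes \mu$ for some kernel $\mu$) with the equicontinuity of $(\mu_n)$ and a diagonal-type selection along a countable dense set, upgraded to a full a.e.\ statement via Lemma~\ref{le:tightness_cond_law}. The key point is that equicontinuity of $x \mapsto \mu_n[\cdot\mid x]$ together with the mass-tightness estimate of Lemma~\ref{le:tightness_cond_law} is exactly what is needed to run a (generalized) Arzel\`a--Ascoli argument in the space of probability measures on $Y$, which is \emph{not} compact but is compactly exhausted along the relevant subsequence thanks to the compact sets $K^m$.

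First I would fix a countable dense subset $D = \{x_1, x_2, \ldots\} \subseteq X$. By Lemma~\ref{le:tightness_cond_law} there is a sequence of compact sets $(K^m)$ in $Y$ and a $\nu$-conull set $X_0 \subseteq X$ such that $\lim_m \liminf_n \mu_n[(K^m)^c \mid x] = 0$ for $x \in X_0$; after enlarging the $K^m$ (take $K^m$ to be the $1/m$-neighborhood intersected suitably, or just pass to $\bigcup_{m'\le m}K^{m'}$) we may assume $K^m \nearrow$. For each fixed $x_j \in D$, the measures $\mu_n[\cdot\mid x_j]$ need not be tight on their own, but I would instead argue as follows: it suffices to produce the limit $\nu$-a.e., so I may restrict attention to $x \in X_0$. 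For each $x \in X_0$, the quantity $\liminf_n \mu_n[(K^m)^c\mid x]$ tends to $0$ as $m\to\infty$; combined with equicontinuity, if $x' \in D$ is within the equicontinuity radius of $x$ for threshold $\eta$, then $\limsup_n \mu_n[(K^m)^c \mid x']$ is controlled by $\eta$ plus $\limsup_n \mu_n[(K^m)^c\mid x]$. Iterating the equicontinuity estimate one shows that for every $x'\in D$ the sequence $(\mu_n[\cdot\mid x'])_n$ is itself tight (this is the content one needs: tightness propagates from the a.e.\ set $X_0$, which is dense, to all of $D$ by equicontinuity, since $\eta$ can be taken arbitrarily small). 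Hence by Prokhorov's theorem and a diagonal argument over $D$ I can extract a subsequence $(n_k)$ along which $\mu_{n_k}[\cdot\mid x']$ converges weakly for every $x' \in D$.

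Then I would upgrade convergence from $D$ to all of $X_0$ using equicontinuity once more: for $x \in X_0$ and $x' \in D$ close to $x$, the Prokhorov distance between $\mu_{n_k}[\cdot\mid x]$ and $\mu_{n_k}[\cdot\mid x']$ is uniformly small, and the limits along $x'\in D$ form a Cauchy family in the (complete) Prokhorov metric as $x'\to x$; so $\mu_{n_k}[\cdot\mid x]$ is Cauchy along $k$ and converges. This gives a kernel $\wt\mu$ defined on $X_0$ (hence $\nu$-a.e.) with $\mu_{n_k}[\cdot\mid x] \to \wt\mu[\cdot\mid x]$ weakly for $\nu$-a.e.\ $x$. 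One last routine check (dominated convergence against $g\in C_b(X\times Y)$, using the uniform mass bound off $K^m$) identifies $\nu\otimes\wt\mu$ with the weak limit $\nu\otimes\mu$ of the product measures, so $\wt\mu$ is a legitimate version of the conditional law, which is exactly the assertion.

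The main obstacle I anticipate is the step where tightness of $(\mu_n[\cdot\mid x'])_n$ for \emph{every} $x'\in D$ has to be deduced, since Lemma~\ref{le:tightness_cond_law} only supplies it for $\nu$-a.e.\ $x$, i.e.\ on a set that meets $D$ in a set we cannot control pointwise. The fix is that $X_0$ is $\nu$-conull hence (if $\nu$ has full support, or after intersecting with the support) dense, so every $x'\in D$ has points of $X_0$ arbitrarily close; feeding pairs $(x', x)$ with $x\in X_0$, $d(x,x')$ less than the equicontinuity radius for a sequence of thresholds $\eta \to 0$ into the equicontinuity bound shows $\limsup_n \mu_n[(K^m)^c\mid x'] \le \inf_{\eta>0}\bigl(\eta + \sup_{x\in X_0, d(x,x')<r(\eta)} \liminf_n \mu_n[(K^m)^c\mid x]\bigr)$, and letting $m\to\infty$ makes the right side vanish. (If $\nu$ does not have full support one simply replaces $X$ by $\operatorname{supp}\nu$ at the outset, which is harmless since all statements are $\nu$-a.e.) Everything else is a standard Arzel\`a--Ascoli/Prokhorov diagonal argument together with Lemma~\ref{le:same_cond_law_weak_limit}-style identification of the limit, so I do not expect further difficulties.
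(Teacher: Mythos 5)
Your overall strategy matches the paper's (Lemma~\ref{le:tightness_cond_law} for a.e.\ tightness, Prokhorov plus a diagonal argument over a countable dense set, then equicontinuity to extend), but you introduce an unnecessary extra step that contains a flaw. You take a countable dense set $D$ in all of $X$ and then try to transport tightness from the conull set $X_0$ to $D$ by equicontinuity. The displayed inequality
\[
\limsup_n \mu_n[(K^m)^c\mid x'] \le \inf_{\eta>0}\Bigl(\eta + \sup_{x\in X_0,\ d(x,x')<r(\eta)} \liminf_n \mu_n[(K^m)^c\mid x]\Bigr)
\]
does not hold: the Prokhorov bound gives a pointwise-in-$n$ estimate, so at best you obtain $\liminf_n$ on the left, not $\limsup_n$; moreover the Prokhorov comparison replaces $(K^m)^c$ by its $\eta$-fattening, which does not sit inside any $(K^{m'})^c$ without further assumptions on the exhaustion. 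Consequently the claim that $(\mu_n[\cdot\mid x'])_n$ is ``itself tight'' for every $x'\in D$ is unjustified --- Lemma~\ref{le:tightness_cond_law} only yields tightness along an $x$-dependent subsequence.

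The paper sidesteps all of this: since $\wt X$ has full $\nu$-measure, one can choose the countable dense set directly inside $\wt X$ (after restricting to $\operatorname{supp}\nu$, as you correctly observe). Then each point in the dense set already comes with a tight subsequence from the lemma, so the diagonal extraction can be run without transporting tightness at all, and equicontinuity is used only once, at the end, to extend the convergence from the dense set to all of $\wt X$. Your final ``upgrade'' step and the identification of the limit kernel are in line with what the paper intends, but the intermediate tightness-propagation step should be removed in favor of picking the dense set inside $\wt X$.
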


\begin{proof}
 By Lemma~\ref{le:tightness_cond_law}, there is a set $\wt{X} \subset X$ of full measure such that for every $x \in \wt{X}$ the sequence $(\mu_n[dy \mid x])$ contains a tight subsequence. In particular, we can find a subsequence along which it converges for a countable dense set of $x \in \wt{X}$. By the equicontinuity, the convergence will then also hold for every $x \in \wt{X}$.
\end{proof}

\begin{lemma}\label{le:limit_cond_laws}
 Let $X,Y$ be Polish spaces. Let $\nu$ be a probability measure on $X$, and $\mu_n[dy \mid x]$ a sequence of probability kernels from $X$ to $Y$. Suppose that $\mu_n[dy \mid x] \to \mu[dy \mid x]$ weakly for $\nu$-almost every $x$. Then $\nu\otimes\mu_n \to \nu\otimes\mu$ weakly.
\end{lemma}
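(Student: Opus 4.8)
\textbf{Proof plan for Lemma~\ref{le:limit_cond_laws}.}
The statement is that pointwise (almost everywhere) weak convergence of the conditional kernels $\mu_n[\cdot\mid x]\to\mu[\cdot\mid x]$ implies weak convergence of the joint laws $\nu\otimes\mu_n\to\nu\otimes\mu$. The plan is to test against a dense enough class of functions and pass to the limit using dominated convergence in the $x$-variable. Concretely, fix $g\in C_b(X\times Y)$; I want to show
\[
\int_X\!\int_Y g(x,y)\,\mu_n[dy\mid x]\,\nu[dx]\;\longrightarrow\;\int_X\!\int_Y g(x,y)\,\mu[dy\mid x]\,\nu[dx].
\]
For each fixed $x$ in the full-measure set where $\mu_n[\cdot\mid x]\to\mu[\cdot\mid x]$ weakly, the map $y\mapsto g(x,y)$ is bounded and continuous on $Y$, so by the definition of weak convergence the inner integrals $G_n(x):=\int_Y g(x,y)\,\mu_n[dy\mid x]$ converge to $G(x):=\int_Y g(x,y)\,\mu[dy\mid x]$ as $n\to\infty$. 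Since $|G_n(x)|\le\|g\|_\infty$ uniformly in $n$ and $x$, and $\nu$ is a probability measure, the dominated convergence theorem gives $\int_X G_n\,d\nu\to\int_X G\,d\nu$, which is exactly the displayed convergence. Finally, since $X\times Y$ is Polish (a product of Polish spaces), weak convergence of Borel probability measures is characterized by convergence of integrals against $C_b(X\times Y)$ (the portmanteau theorem), so this completes the proof.

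The one technical point worth a sentence is measurability: $G_n$ and $G$ must be Borel measurable functions of $x$ for the outer integrals to make sense, but this is automatic from the defining property of a probability kernel (for each Borel $g$, $x\mapsto\int g(x,y)\mu_n[dy\mid x]$ is measurable) — one first checks it for $g=\mathbf 1_{A\times B}$ and extends by a monotone-class argument, or simply invokes the standard fact that $x\mapsto\int g(x,y)\,\mu_n[dy\mid x]$ is measurable whenever $g$ is bounded measurable. There is no real obstacle here; the lemma is essentially a restatement of dominated convergence combined with the portmanteau theorem, and the only thing to be careful about is that the exceptional $\nu$-null set where pointwise convergence fails does not affect the outer integral, which is immediate since $\nu$ ignores null sets.
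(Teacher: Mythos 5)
Your proposal is correct and follows essentially the same route as the paper's proof: for each $g\in C_b(X\times Y)$, use the almost-everywhere weak convergence to get pointwise convergence of the inner integrals $x\mapsto\int g(x,y)\,\mu_n[dy\mid x]$, then apply dominated convergence (using $\|g\|_\infty$ as the dominating bound) to pass to the limit in the outer $\nu$-integral. Your extra remarks on measurability and the portmanteau characterization are standard and add no new content, but they do make explicit the details the paper leaves implicit.
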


\begin{proof}
 Let $f = f(x,y)$ be a bounded continuous function. Then we have $\int_Y f(x,y) \mu_n[dy \mid x] \to \int_Y f(x,y) \mu[dy \mid x]$ for almost every $x$. Since $f$ is bounded, this implies also
 \[ \iint f(x,y) \mu_n[dy \mid x] \nu[dx] \to \iint f(x,y) \mu[dy \mid x] \nu[dx] . \]
\end{proof}

The following lemma generalizes the Lemmas~\ref{le:cond_independence_limit_fixed_kernel} and~\ref{le:limit_cond_laws}.

\begin{lemma}\label{le:cond_independence_limit}
 Let $X,Y,Z$ be Polish spaces. Let $\nu$ be a probability measure on $X$, and $\mu_n[dy \mid x]$ (resp.\ $\wt{\mu}_n[dz \mid x]$) a sequence of probability kernels from $X$ to $Y$ (resp.\ $Z$). Suppose that $\nu\otimes\wt{\mu}_n \to \nu\otimes\wt{\mu}$ weakly, and that $\mu_n[dy \mid x] \to \mu[dy \mid x]$ weakly for $\nu$-almost every $x$. Then $\nu\otimes(\mu_n\otimes\wt{\mu}_n) \to \nu\otimes(\mu\otimes\wt{\mu})$ weakly.
\end{lemma}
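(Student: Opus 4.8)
\textbf{Proof proposal for Lemma~\ref{le:cond_independence_limit}.}
The plan is to combine the two special cases that have just been established: Lemma~\ref{le:cond_independence_limit_fixed_kernel} (which handles a fixed second kernel) and Lemma~\ref{le:limit_cond_laws} (which handles convergence of conditional laws on the first coordinate). The key point is that the hypotheses are asymmetric: $\mu_n$ converges $\nu$-a.e.\ as a kernel, whereas $\wt\mu_n$ converges only weakly after integrating against $\nu$, so we cannot simply multiply the two pointwise limits. First I would verify tightness of the sequence $(\nu\otimes(\mu_n\otimes\wt\mu_n))$ on $X\times Y\times Z$: tightness of $(\nu\otimes\wt\mu_n)$ gives a sequence of compacts controlling the $(X,Z)$-marginal uniformly in $n$, and the a.e.\ weak convergence $\mu_n[\cdot\mid x]\to\mu[\cdot\mid x]$ together with Lemma~\ref{le:tightness_cond_law}-type reasoning (or directly Lemma~\ref{le:limit_cond_laws} applied to test functions of $y$ alone) gives tightness of the $(X,Y)$-marginal; a product of these two families of compacts controls the joint law. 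Hence along any subsequence we may pass to a weak limit, and it remains to identify it uniquely.

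To identify the limit, fix bounded continuous $f\in C_b(X\times Y)$ and $h\in C_b(Z)$ and consider
\[
 \iiint f(x,y)h(z)\,\wt\mu_n[dz\mid x]\,\mu_n[dy\mid x]\,\nu[dx].
\]
Write $g_n(x) = \int f(x,y)\,\mu_n[dy\mid x]$ and $g(x) = \int f(x,y)\,\mu[dy\mid x]$. Since $\mu_n[\cdot\mid x]\to\mu[\cdot\mid x]$ weakly for $\nu$-a.e.\ $x$ and $f(x,\cdot)$ is bounded continuous, $g_n\to g$ pointwise $\nu$-a.e., and $\|g_n\|_\infty\le\|f\|_\infty$. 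The integral above equals $\int g_n(x)\,\big(\int h(z)\,\wt\mu_n[dz\mid x]\big)\,\nu[dx]$. Set $\phi_n(x)=\int h(z)\,\wt\mu_n[dz\mid x]$ and $\phi(x)=\int h(z)\,\wt\mu[dz\mid x]$; by the weak convergence $\nu\otimes\wt\mu_n\to\nu\otimes\wt\mu$ applied to test functions $\psi(x)h(z)$ we get $\phi_n\to\phi$ weakly in the sense that $\int\psi\phi_n\,d\nu\to\int\psi\phi\,d\nu$ for all $\psi\in C_b(X)$, equivalently (since $\phi_n$ is uniformly bounded) $\phi_n\,\nu \to \phi\,\nu$ in the weak-$*$ topology of measures. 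The product $\int g_n\phi_n\,d\nu$ then converges to $\int g\phi\,d\nu$: split $g_n\phi_n - g\phi = (g_n-g)\phi_n + g(\phi_n-\phi)$; the first term tends to $0$ by dominated convergence ($g_n\to g$ a.e., $|\phi_n|\le\|h\|_\infty$); the second term tends to $0$ because $g$ can be approximated in $L^1(\nu)$ by bounded continuous functions (as $X$ is Polish) exactly as in the proof of Lemma~\ref{le:weak_limit_abs_cont}, and against bounded continuous functions $\int g\,(\phi_n-\phi)\,d\nu\to 0$ by the weak-$*$ convergence. Thus
\[
 \iiint f(x,y)h(z)\,\mu_n\otimes\wt\mu_n[dy\,dz\mid x]\,\nu[dx] \to \iiint f(x,y)h(z)\,\mu\otimes\wt\mu[dy\,dz\mid x]\,\nu[dx],
\]
and since such functions $f(x,y)h(z)$ are measure-determining on the tight family, the subsequential limit is $\nu\otimes(\mu\otimes\wt\mu)$, completing the proof.

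The main obstacle is the second error term $g\,(\phi_n-\phi)$: $g$ is only an $L^1(\nu)$ function (a conditional expectation), not continuous, so we cannot directly feed it into the weak-$*$ convergence of $\phi_n\,\nu$; the fix is the density argument from Lemma~\ref{le:weak_limit_abs_cont}, approximating $g$ by $g_k\in C_b(X)$ in $L^1(\nu)$ and using $\sup_n\|\phi_n\|_\infty<\infty$ to control the tail uniformly in $n$. Everything else—tightness and the identification of test functions—is routine. Note this gives Lemma~\ref{le:cond_independence_limit_fixed_kernel} when $\wt\mu_n\equiv\wt\mu$ and Lemma~\ref{le:limit_cond_laws} when $Z$ is a point, so the statement is a genuine common generalization.
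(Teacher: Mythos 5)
Your proof is correct and follows essentially the same strategy as the paper's: split the error by first replacing $\mu_n$ by $\mu$ (dominated convergence, using the $\nu$-a.e.\ convergence of the first kernel), then handle the remaining term via the $L^1(\nu)$-density of $C_b(X)$ and the weak convergence of $\nu\otimes\wt\mu_n$. The only difference is presentational: the paper tests against $g(x,z)h(y)$ with $g\in C_b(X\times Z)$, $h\in C_b(Y)$ so that after the first replacement the residual term is exactly an instance of Lemma~\ref{le:cond_independence_limit_fixed_kernel}, whereas you test against $f(x,y)h(z)$ and re-derive the $L^1$ approximation step in place rather than invoking that lemma as a black box.
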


\begin{proof}
 By Lemma~\ref{le:limit_cond_laws}, the sequence $(\nu\otimes\mu_n)$ also converges weakly. It is then straightforward to verify that the sequence $(\nu\otimes(\mu_n\otimes\wt{\mu}_n))$ is tight. Therefore we only need to identify the limit. Let $g \in C_b(X \times Z)$, $h \in C_b(Y)$. Then
 \begin{multline*}
  \abs*{ \iiint g(x,z)h(y) \mu_n[dy \mid x] \wt{\mu}_n[dz \mid x] \nu[dx] - \iiint g(x,z)h(y) \mu[dy \mid x] \wt{\mu}_n[dz \mid x] \nu[dx] } \\
  \le \norm{g}_\infty \int \abs*{ \int h(y) \mu_n[dy \mid x] - \int h(y) \mu[dy \mid x] } \nu[dx] 
  \to 0
 \end{multline*}
 due to the assumption $\mu_n[dy \mid x] \to \mu[dy \mid x]$ for almost every $x$. Moreover, by Lemma~\ref{le:cond_independence_limit_fixed_kernel},
 \[
  \iiint g(x,z)h(y) \mu[dy \mid x] \wt{\mu}_n[dz \mid x] \nu[dx] \to \iiint g(x,z)h(y) \mu[dy \mid x] \wt{\mu}[dz \mid x] \nu[dx] .
 \]
 Combining the above, we conclude.
\end{proof}

\section{Arm exponents}
\label{app:arms}

In this section we collect a few results on the double point and intersection point probabilities for various types of \slek{} curves. In Section~\ref{se:regularity} we will use them to prove certain regularity statements for domains arising from explorations considered in this paper. In Section~\ref{se:cle_outer_boundary_exponent} we apply them to bound the probability of multiple crossings of the exterior boundaries of \clekp{} loops.

Let
\begin{equation}
\label{eqn:double_exponent_simple}
\alpha_{4,\kappa} = \frac{(12-\kappa)(4+\kappa)}{8\kappa} > 2
\end{equation}
be the double point exponent for $\SLE_\kappa$.

The following result is essentially proved in \cite{Wu2016}, but there some extra constraints are assumed. We will explain below how to extend the result to the general setup.

\begin{proposition}\label{pr:4arm_simple}
 For any $M>0$, $a>0$ there exists $c>0$ such that the following holds. Let $D \subsetneq \C$ be a simply connected domain, $x,y \in \partial D$ distinct, and $\kappa \in (0,4)$. Let $\eta$ be a chordal \slekr{\ul{\rho}} in $D$ from $x$ to $y$ where all partial sums of the weights are bounded by $M$. Let $z\in D$, $r>0$ such that $B(z,r) \subseteq D$. For $\epsilon\in(0,1)$, let $E$ be the event that $\eta$ makes $4$ crossings of $A(z,\epsilon r,r)$. Then
 \[
  \p[E] \le c\epsilon^{\alpha_{4,\kappa}-a} .
 \]
\end{proposition}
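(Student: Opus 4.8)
\textbf{Proof plan for Proposition~\ref{pr:4arm_simple}.}

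The plan is to reduce the general statement to the special case treated in \cite{Wu2016}, which concerns the $4$-arm (double-point) event for a chordal $\SLE_\kappa$ with a bounded number of boundary force points lying at distinguished locations, via an independence-across-scales argument combined with absolute continuity. First I would recall the precise content of what is proved in \cite{Wu2016}: for the ``standard'' setup (say $\SLE_\kappa$ in a reference domain like $\h$ or $\D$ with force points at fixed positions, bounded weights), the probability that the curve enters and exits a ball $B(z,\epsilon r)$ from a fixed dyadic annulus $A(z,\epsilon r, r)$ four times decays like $\epsilon^{\alpha_{4,\kappa}-a}$ for any $a>0$, uniformly over the allowed configurations. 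The exponent $\alpha_{4,\kappa}$ is exactly the one in~\eqref{eqn:double_exponent_simple}, and the crossing event in question is (up to the $o(1)$ in the exponent) equivalent to the event that $\eta$ has a near-double-point in $B(z,\epsilon r)$ at scale $r$.

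The main step is then to handle the general simply connected domain $D$ and the general weight vector $\ul\rho$ with all partial sums bounded by $M$. For this I would use the coupling of $\SLE_\kappa(\ul\rho)$ with the GFF (Section~\ref{subsec:ig}) and the independence-across-scales machinery from Section~\ref{se:gff}: by Lemma~\ref{lem:gff_independence_across_scales} (or more precisely the version used throughout the paper), for a suitable $\tilde M$ depending on $M$, with overwhelming probability a definite fraction of the dyadic scales $A(z,2^{-j}r,2^{-j+1}r)$, $j=1,\dots,k$, are $\tilde M$-good for the underlying GFF. On each good scale, the conditional law of the restriction of $\eta$ to that annulus is absolutely continuous with respect to the corresponding restriction in the reference (half-plane, fixed force points) setup, with Radon--Nikodym derivative having moments of all orders bounded in terms of $\tilde M$ (Lemma~\ref{lem:rn_derivative} and Proposition~3.4/Remark~3.5 of \cite{ms2016ig1}). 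A $4$-crossing of $A(z,\epsilon r,r)$ forces a $4$-crossing of each sub-annulus $A(z,2^{-j}r,2^{-j+1}r)$ it contains; applying the reference estimate on each good scale and using H\"older's inequality to absorb the Radon--Nikodym derivative (as in Lemma~\ref{le:abs_cont_kernel}), each good scale contributes an independent multiplicative factor bounded by a constant times $2^{-(\alpha_{4,\kappa}-a/2)}$. Multiplying over the $\Theta(k)$ good scales and choosing $k = \log_2(1/\epsilon)$ yields the bound $c\,\epsilon^{\alpha_{4,\kappa}-a}$ after adjusting constants; the exceptional event where too few scales are good contributes only $O(e^{-bk}) = O(\epsilon^b)$, which is negligible for $b$ large.

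The main obstacle I anticipate is bookkeeping the uniformity: one must ensure that the implicit constants in the reference $4$-arm estimate, the Radon--Nikodym moment bounds, and the fraction-of-good-scales statement are all uniform over the class of domains $(D,x,y)$ and weight vectors $\ul\rho$ with partial sums bounded by $M$ --- in particular when a force point is close to $z$ or to the annulus $A(z,\epsilon r,r)$, in which case the local picture near $z$ may genuinely differ from the force-point-free case. This is handled by noting that only boundedly many force points can lie within a given dyadic scale of $z$ (and even one adjacent force point only shifts the boundary condition by a bounded amount, since the partial sums are bounded by $M$), so the relevant local law still lies within a bounded-moment absolutely continuous class of the reference law; one may need to enlarge the reference class to include $\SLE_\kappa(\rho)$ with one force point at an arbitrary boundary location, for which the same $4$-arm exponent $\alpha_{4,\kappa}$ is known to hold. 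Once this uniformity is in place, the scale-multiplication argument goes through verbatim and gives the claimed estimate, with $c$ depending only on $M$ and $a$.
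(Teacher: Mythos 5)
Your overall strategy—reduce to the Wu (2016) estimate via absolute continuity at good GFF scales—is indeed the right starting point and is the one the paper uses. However, the way you propose to exploit the good scales does not work, and the actual argument in the paper is structured quite differently.

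The core gap is in the claim that ``each good scale contributes an independent multiplicative factor bounded by a constant times $2^{-(\alpha_{4,\kappa}-a/2)}$.'' The reference estimate (Proposition~\ref{pr:alternating_crossing_simple}, i.e.\ \cite[Proposition~4.1]{Wu2016}) gives the decay rate $\epsilon^{\alpha_{4,\kappa}+o(1)}$ for the full inner-to-outer $4$-crossing as a function of the aspect ratio $\epsilon$; applied to a single dyadic sub-annulus $A(z,2^{-j}r,2^{-j+1}r)$ with aspect ratio $1/2$, it yields only an $O(1)$ bound, not a factor $2^{-\alpha_{4,\kappa}}$. In fact the probability that a chordal $\SLE_\kappa$ makes $4$ crossings of a single aspect-ratio-$2$ annulus is bounded \emph{below}, so no per-scale multiplicative decay factor of the form $2^{-\alpha_{4,\kappa}}$ can be extracted this way. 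Getting such a per-scale conditional probability would amount to a quasi-multiplicativity statement, which is not what the Wu estimate provides and which would be circular to assume. A further difficulty is that the restriction of $\eta$ to a dyadic sub-annulus is not a single chordal SLE (it is a union of several excursions), so one cannot directly apply the reference estimate at an intermediate scale by conditioning on the exterior; some resampling/coupling argument is needed to convert the local picture into the reference setup, and this is exactly where the GFF good-scales machinery must be deployed.

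The paper's argument (given in the proof of Proposition~\ref{pr:4arm_fl}, which subsumes the statement of Proposition~\ref{pr:4arm_simple}) applies the Wu reference estimate only \emph{once}, for the full inner-to-outer crossing, after localizing \emph{both} ends of the annulus. Concretely, it selects two random scales $J_1 \in \{1,\ldots,\lfloor a'\log_2(\epsilon^{-1})\rfloor\}$ and $J_2 \in \{\lceil(1-a')\log_2(\epsilon^{-1})\rceil,\ldots,\lfloor\log_2(\epsilon^{-1})\rfloor\}$, i.e.\ one near the outer boundary and one near the inner boundary. At each of these it uses the flow-line merging events of Lemma~\ref{le:good_scales_merging_multiple} (and the crucial resampling Lemma~\ref{le:4arm_good_fl}, which rewires the curve so that its crossings occur in the alternating ``in-out-in-out'' order required by the Wu setup and so that a flow line from the reference boundary merges into the correct strands). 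On these good scales, the configuration between radii $2^{-J_2}$ and $2^{-J_1}$ is absolutely continuous with respect to the reference case, and a single application of the Wu estimate across the full range $2^{-J_2}$ to $2^{-J_1}$ gives $\epsilon^{(1-2a')\alpha_{4,\kappa}}$. The only losses are at the $O(a'\log_2(1/\epsilon))$ scales near the two endpoints; there is no iteration over intermediate scales and no per-scale multiplication. You should replace the per-scale-multiplication step with this two-endpoint localization, and incorporate the resampling step (Lemma~\ref{le:4arm_good_fl}) to ensure the local configuration matches the order of crossings required by the reference estimate.
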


This result can be stated more generally in terms of crossings of flow lines.

\begin{proposition}\label{pr:4arm_fl}
 For any $M>0$, $a>0$ there exists $c>0$ such that the following holds. Let $h$ be a GFF in $D \subseteq \C$ with boundary values bounded by $M$, or a whole-plane GFF modulo additive constant $2\pi\chi$. Let $z\in D$, $r>0$ such that $B(z,r) \subseteq D$. For $\epsilon\in(0,1)$, let $E$ be the event that there are two disjoint flow lines $\eta_{w_1},\eta_{w_2}$ crossing in and out of the annulus $A(z,\epsilon r,r)$ and their heights are so that a flow line from $\partial D$ can merge into $\eta_{w_1}$ and then into $\eta_{w_2}$ without further crossings of the annulus. Then
 \[
  \p[E] \le c\epsilon^{\alpha_{4,\kappa}-a} .
 \]
\end{proposition}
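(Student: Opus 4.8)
The plan is to deduce Proposition~\ref{pr:4arm_fl} from the chordal case, Proposition~\ref{pr:4arm_simple}, by the familiar device of comparing the two interior flow lines that cross the annulus to a single boundary-emanating flow line $\wt\eta$ that \emph{follows} them: on the event $E$, the height hypothesis is precisely what is needed so that some chordal flow line $\wt\eta$ of $h$ can merge into the ``in-and-out'' strand of $\eta_{w_1}$ and then into that of $\eta_{w_2}$, whereupon $\wt\eta$ makes at least four crossings of (a slightly smaller) annulus. Since $\wt\eta$ is a chordal $\SLE_\kappa(\ul\rho)$-type curve with partial sums of weights bounded in terms of $M$, Proposition~\ref{pr:4arm_simple} applies to it. The only work is to make the merging event have \emph{uniformly} positive conditional probability, which is exactly what the flow-line merging machinery of Section~\ref{subsec:ig} provides. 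First I would reduce to $z=0$, $r=1$, $B(0,1)\subseteq D$, by translation and scaling (the whole-plane GFF mod $2\pi\chi$ and its flow lines are scale and translation invariant, and rescaling a GFF on $D$ with boundary values bounded by $M$ gives one on $(D-z)/r$ with the same bound).

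\textbf{Good scales.} Fix a small constant $a_0>0$ and put $k=\lceil a_0\log_2\epsilon^{-1}\rceil$. Applying the appropriate combination of Lemmas~\ref{le:good_scales_merging}, \ref{le:good_scales_merging_refl} and \ref{le:good_scales_merging_multiple} (the last being exactly a ``merge consecutively into a prescribed sequence of strands'' statement, and the reflected variants being needed when $\eta_{w_2}$ is reflected off $\eta_{w_1}$) with this $k$ and a large parameter $b$, one obtains: with probability $1-O(e^{-bk})=1-O(\epsilon^{\,ba_0/\ln 2})$, a positive fraction of the scales $(0,2^{-j})$, $j=1,\dots,k$, are \emph{good}, meaning that $(0,2^{-j})$ is $M$-good for $h$ and, conditionally on the relevant collections of interior flow lines $X^\theta_{0,2^{-j}}$ (for the angles in the height condition) together with the field on them, a flow line of $\wt{h}_{0,2^{-j}}$ from $\partial B(0,2^{-j})$ can follow a prescribed admissible sequence of strands before entering $B(0,2^{-j}/4)$ with conditional probability at least a fixed $p>0$. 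Choosing $b$ so that $ba_0/\ln 2\ge\alpha_{4,\kappa}$, the event that no scale $j\le k$ is good contributes only $O(\epsilon^{\alpha_{4,\kappa}})$, which is within the target bound; the whole-plane case is identical since these lemmas (and Lemma~\ref{lem:gff_independence_across_scales} underneath them) are stated for the whole-plane GFF mod $2\pi\chi$ as well.

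\textbf{Comparison and conclusion.} On $E$ and on the event that scale $J\le k$ is good, the four flow-line crossings of $A(0,\epsilon,1)$ restrict to four crossings of the dyadic annulus at scale $J$ carried by strands of $\eta_{w_1},\eta_{w_2}$ whose heights — by hypothesis — are compatible with a flow line $\wt\eta$ of $\wt{h}_{0,2^{-J}}$ from $\partial B(0,2^{-J})$ that follows first the in-and-out strand of $\eta_{w_1}$ and then that of $\eta_{w_2}$; the good-scale merging event makes this have conditional probability at least $p$, and when it occurs $\wt\eta$ makes at least four crossings of $A(0,\epsilon,2^{-J}/4)$. Since $(0,2^{-J})$ is $M$-good, after rescaling $B(0,2^{-J})$ to $B(0,1)$ the law of $\wt{h}_{0,2^{-J}}$ is absolutely continuous with respect to a GFF with boundary values bounded in terms of $M$, with Radon--Nikodym derivative having $L^q$ norms bounded in terms of $M$ for all $q$ (the text following the definition of $\wt{h}_{z,r}$ in Section~\ref{se:gff}, with Lemma~\ref{lem:rn_derivative}); under the latter field $\wt\eta$ is a chordal $\SLE_\kappa(\ul\rho)$ with bounded weights, so Proposition~\ref{pr:4arm_simple} bounds the probability that it makes four crossings of $A(0,2^J\epsilon,1/4)$ by $O\big((2^J\epsilon)^{\alpha_{4,\kappa}-a}\big)$; transferring back costs only a $(\cdot)^{1-o(1)}$ factor by Hölder (Lemma~\ref{le:abs_cont_kernel}). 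Taking a union bound over $J=1,\dots,k$ and using $2^{-J}/4\ge 2^{-k}/4\ge\epsilon^{a_0}/8$,
\[
\p\big[E,\ \exists\,J\le k\text{ good}\big]\;\le\;\frac1p\sum_{J=1}^{k}\Big(O\big((2^J\epsilon)^{\alpha_{4,\kappa}-a}\big)\Big)^{1-o(1)}\;=\;O\big(\epsilon^{(1-a_0)(\alpha_{4,\kappa}-a)-o(1)}\big),
\]
and combining with the $O(\epsilon^{\alpha_{4,\kappa}})$ term and letting $a_0,a\downarrow 0$ gives $\p[E]\le c\epsilon^{\alpha_{4,\kappa}-a}$ for the prescribed $a$ after relabelling.

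\textbf{Main obstacle.} I expect the delicate point to be the case analysis showing that $E$ together with a good scale really forces an \emph{admissible} merging sequence with uniformly positive conditional probability: as in the detection Lemmas~\ref{lem:detect_intersection_left}--\ref{lem:detect_intersection_right}, one must distinguish according to how $\eta_{w_1}$ and $\eta_{w_2}$ enter and exit the dyadic annulus at scale $J$ relative to each other — which strand is ``outermost'', whether a reflected flow line is needed, and whether a strand begins or ends inside $B(0,2^{-J})$ — and check in each case that the heights match those controlled by the merging events of Lemmas~\ref{le:good_scales_merging}, \ref{le:good_scales_merging_refl}, \ref{le:good_scales_merging_multiple}. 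The remaining bookkeeping that keeps the exponent sharp (taking the window of scales of length only $\asymp a_0\log_2\epsilon^{-1}$, so that $\wt\eta$ still follows the interior flow lines across all but an $\epsilon^{a_0}$-proportion of the annulus) is routine once this is in place.
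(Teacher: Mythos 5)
Your proposal takes a genuinely different route from the paper, and the difference is not cosmetic: it hides a circularity. You reduce to Proposition~\ref{pr:4arm_simple} (the four-arm bound for chordal $\SLE_\kappa(\ul\rho)$), and to do so you only need to sample a \emph{single} good outer scale $J\lesssim a_0\log_2\epsilon^{-1}$, since after a successful merge $\wt\eta$ makes an ``unstructured'' four-crossing of $A(0,\epsilon,2^{-J}/4)$ and Proposition~\ref{pr:4arm_simple} bounds any such crossing. But the paper does \emph{not} treat Proposition~\ref{pr:4arm_simple} as an independent input: the introductory sentence (``essentially proved in \cite{Wu2016}, but there some extra constraints are assumed. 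We will explain below how to extend the result to the general setup'') makes clear that the extension to be explained is precisely the argument that follows, and the actual external black box is Wu's Proposition~\ref{pr:alternating_crossing_simple}. The paper's proof of Proposition~\ref{pr:4arm_fl} therefore goes only through Proposition~\ref{pr:alternating_crossing_simple} (packaged as Lemma~\ref{le:4arm_good_fl}); Proposition~\ref{pr:4arm_simple} is then downstream of this and is used only later (e.g.\ in Lemma~\ref{lem:double_flow_lines_interior}). Using Proposition~\ref{pr:4arm_simple} inside the proof of Proposition~\ref{pr:4arm_fl} inverts this dependence.

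This is why the paper's proof needs to control merging at \emph{two} scales, $J_1$ near the outer boundary of the annulus and $J_2$ near the inner boundary. Wu's estimate is not for an arbitrary four-crossing event: the controlled event in Proposition~\ref{pr:alternating_crossing_simple} is $\CE_4\cap\{\eta[0,\tau_1]\cap B(i,1/8)=\varnothing\}$, which additionally requires $\eta$ to re-enter the inner disk through the ``good'' component $C^b$, to hit $B(y,s)$, and to satisfy the topological condition $\CE^g$. The outer-scale merging event $G^{(1)}$ (targeting strands of $X^0_{0,2^{-J_1}}$, avoiding $B(i,1/8)$, and routing the in-between strand through $B(y,s)$) produces the right outer behavior; the inner-scale merging event $G^{(2)}$ (at $J_2$ close to $\log_2\epsilon^{-1}$, ``passing to the left of $0$'') forces the re-entry through $C^b$. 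With only your single outer scale, you cannot guarantee that $\wt\eta$ satisfies the topological constraints of $\CE_4$, so you cannot invoke Wu's bound -- you are forced to invoke the unstructured four-arm estimate, which is exactly the statement being established. In the opposite direction, your reduction of Proposition~\ref{pr:4arm_fl} to Proposition~\ref{pr:4arm_simple} is the ``easy'' direction and closely mirrors what the paper does elsewhere (compare the proof of Lemma~\ref{lem:double_flow_lines_interior} together with Lemma~\ref{le:double_bound_given_pts}, which does use Proposition~\ref{pr:4arm_simple} as a black box for a downstream purpose), so your intuition about the merging and union-bound bookkeeping is otherwise fine; the missing ingredient is a self-contained derivation of Proposition~\ref{pr:4arm_simple} from Wu, which in this paper is the content of the two-scale argument you bypass.
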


We first recall the setup of \cite[Proposition~4.1]{Wu2016}. Let $\kappa\in(0,4)$ and let $\eta$ be an $\SLE_\kappa$ in $\D$ from $-i$ to $i$. Let $\partial_- \D$ be the clockwise arc of $\partial \D$ from $-i$ to $i$, and suppose $y \in \partial_- \D$. Fix $0<s<1/8$ such that $\pm i \notin B(y,s)$ and let $y_-$ be the clockwise most point on the arc $\partial\D \cap B(y,s)$. Let $\tau_1$ be the first time $\eta$ hits $B(0,\epsilon)$. Let
\[ \CE_2 = \{\tau_1<\infty\} .\]
Let $\sigma_1$ be the first time after $\tau_1$ that $\eta$ hits the connected component of $\partial B(y,s)\setminus \eta[0,\tau_1]$ containing $y_-$. Let $\CE^g$ be the event that $0$ is connected to $i$ in $\D\setminus(\eta[0,\sigma_1]\cup B(y,s))$. Given $\eta[0,\sigma_1]$, let $C$ be the connected component of $B(0,\epsilon)\setminus\eta[0,\sigma_1]$ that contains $0$. Let $C^b$ be the connected component of $\partial C \cap \partial B(0,\epsilon)$ that is connected to $i$ in $\D\setminus(\eta[0,\sigma_1]\cup B(0,\epsilon))$. Let $\tau_2$ be the first time after $\sigma_1$ that $\eta$ hits $C^b$, and let
\[ \CE_{4}=\CE^g\cap\{\tau_2<\infty\} .\]

\begin{proposition}[{\cite[Proposition~4.1]{Wu2016}}]\label{pr:alternating_crossing_simple}
Fix $\kappa\in(0,4)$. There exist $y \in \partial_- \D$, $0<s<1/8$ with $\pm i \notin B(y,s)$ such that
\[\p[\CE_{4}\cap \{\eta[0,\tau_1] \cap B(i,1/8) = \varnothing \}]=O(\epsilon^{\alpha_{4,\kappa}+o(1)}).\]
\end{proposition}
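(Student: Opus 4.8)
This is \cite[Proposition~4.1]{Wu2016}, so the plan is to invoke it directly; for orientation I describe the strategy behind it and how I would reprove it. The argument is the standard decomposition of a polychromatic ($4$-arm, alternating) crossing event into a chain of successive hitting events, together with one $\SLE_\kappa$ one-point estimate per link of the chain. One conditions in turn on $\eta|_{[0,\tau_1]}$ and on $\eta|_{[0,\sigma_1]}$ and applies the domain Markov property of chordal $\SLE_\kappa$, so that the remainder of the curve is again a chordal $\SLE_\kappa$ in the slit domain, conformally mapped to $(\D,\cdot,i)$. In this way the event $\CE_4 \cap \{\eta[0,\tau_1]\cap B(i,1/8)=\varnothing\}$ factors, up to an $\epsilon^{o(1)}$ error, as a product of three conditional probabilities: (i) $\eta$ first reaches $B(0,\epsilon)$ (and does so before coming near $i$); (ii) from the state at $\tau_1$, $\eta$ then reaches the arc of $\partial B(y,s)$ through $y_-$ while $0$ stays connected to $i$; (iii) from the state at $\sigma_1$, $\eta$ returns to hit the distinguished sub-arc $C^b \subseteq \partial B(0,\epsilon)$, again keeping $0$ connected to $i$.

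Each factor is then bounded by an $\SLE_\kappa$ one-point estimate. Step~(i) is the interior one-arm (Green's function) bound $\p[\dist(0,\eta)\le\epsilon]=\epsilon^{2-d_\SLE+o(1)}=\epsilon^{1-\kappa/8+o(1)}$, where the choice of $y$ far from $\pm i$ and $s$ small but fixed guarantees that the conditional configuration at $\tau_1$ is generic at scale $\epsilon$ with conditional probability bounded away from $0$. Step~(ii), after mapping the slit domain to $(\D,\eta(\tau_1)\mapsto-i,i)$, is an order-one event -- reaching a fixed macroscopic arc while preserving a macroscopic connection has $\SLE_\kappa$-probability bounded below -- so it contributes only $\epsilon^{o(1)}$. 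Step~(iii) is a boundary-type one-arm bound: once the pocket containing $0$ has been cut to width comparable to $\epsilon$, the probability that the continued curve returns to the prescribed $\epsilon$-scale sub-arc $C^b$ is $\epsilon^{6/\kappa+o(1)}$. Multiplying, and using the algebraic identity $\alpha_{4,\kappa}=\frac{(12-\kappa)(4+\kappa)}{8\kappa}=\frac{6}{\kappa}+1-\frac{\kappa}{8}=(2-d_\SLE)+\frac{6}{\kappa}$, yields $\p[\CE_4\cap\{\eta[0,\tau_1]\cap B(i,1/8)=\varnothing\}]=\epsilon^{\alpha_{4,\kappa}+o(1)}$.

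To make the individual estimates rigorous I would work with $\SLE_\kappa$ martingales rather than bare event probabilities: for each link one exhibits a nonnegative (super)martingale $M_t$ with $M_0$ of order one such that, at the relevant stopping time, $M\gtrsim\epsilon^{-(\text{exponent})+o(1)}$ on the target configuration (the exponent being read off from the drift and diffusion of the driving function together with Koebe distortion to convert conformal radii into Euclidean distances), and then optional stopping gives the bound. Since only the upper bound $O(\epsilon^{\alpha_{4,\kappa}+o(1)})$ is claimed, no matching second-moment (lower bound) argument is required.

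The delicate part is the bookkeeping of the conditional slit domains along the chain: one must check that at step~(iii) the pocket containing $0$ has conformal radius and harmonic measure comparable to $\epsilon$ up to $\epsilon^{o(1)}$ with high conditional probability, i.e.\ rule out atypical pinching of the domain near $0$ (which is itself suppressed by a further one-arm estimate and absorbed into the $o(1)$), and verify that the three step-exponents sum to exactly $\alpha_{4,\kappa}$ and not to something larger. The precise auxiliary data $(y,s)$ is chosen so as to keep steps~(i) and (ii) decoupled and generic, and the constraint $\eta[0,\tau_1]\cap B(i,1/8)=\varnothing$ plays the parallel role of keeping the first visit to $0$ away from the terminal behaviour of the curve.
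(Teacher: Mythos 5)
The paper, like you, simply cites \cite[Proposition~4.1]{Wu2016} for this statement and does not give a proof, so your primary plan of invoking the citation matches the paper exactly. Your sketch of Wu's argument — decomposing $\CE_4$ into three links via the domain Markov property and bounding each link by an $\SLE_\kappa$ one-point (martingale) estimate — is a reasonable description of the strategy; the one thing I would flag in the sketch is that the step-(iii) exponent $6/\kappa$, which your algebra forces, is not the ordinary boundary one-arm exponent $8/\kappa-1$ (they agree only at $\kappa=2$), so labelling step~(iii) as ``a boundary-type one-arm bound'' is loose: the event that $\eta$ returns from macroscopic scale to hit the $\epsilon$-scale arc $C^b$ while preserving the connection from $0$ to $i$ is a constrained two-sided approach, and its exponent has to be derived as such rather than read off from the standard one-arm table.
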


In order to deduce Proposition~\ref{pr:4arm_fl} from Proposition~\ref{pr:alternating_crossing_simple}, we use a resampling argument to resample the connections between the crossings of the flow lines. Using the good scales for the GFF (see Section~\ref{se:gff}), we see that it is very likely that at a random scale the probability of successfully resampling the connections is bounded from below.

\begin{lemma}\label{le:4arm_good_fl}
Fix $p \in (0,1)$. Let~$h$ be a GFF on~$\D$ with boundary values so that its flow line $\eta$ is an \slek{} from $-i$ to $i$. Let $X^0_{0,1},X^0_{0,\epsilon}$ be as defined in~\eqref{eq:fl_annulus}. Let $I_\epsilon$ be the event that there exist two strands $\eta_{w_1;\mathrm{in}}$, $\eta_{w_3;\mathrm{in}}$ of $X^0_{0,1}$ and a strand $\eta_{w_2;\mathrm{out}}$ of $X^0_{0,\epsilon}$ so that the following hold.
\begin{itemize}
 \item The continuations of $\eta_{w_i;\mathrm{in}}$, $i=1,3$, exit $A(0,\epsilon/4,3/4)$ on the inner boundary and without merging, and the continuation of $\eta_{w_2;\mathrm{out}}$ exits $A(0,\epsilon/4,3/4)$ on the outer boundary.
 \item Let $\wt{I}_\epsilon$ be the event that
 \begin{itemize}
  \item $\eta$ merges consecutively into $\eta_{w_1;\mathrm{in}}$, $\eta_{w_2;\mathrm{out}}$, $\eta_{w_3;\mathrm{in}}$ before hitting $B(i,1/8)$ and without crossing $A(0,1/4,3/4)$ (resp.\ $A(0,\epsilon/4,3\epsilon/4)$) inbetween,
  \item the event $\CE_4$ in Proposition~\ref{pr:alternating_crossing_simple} occurs for $\eta$.
 \end{itemize}
 The conditional probability of $\wt{I}_\epsilon$ given $\eta_{w_1;\mathrm{in}}$, $\eta_{w_2;\mathrm{out}}$, $\eta_{w_3;\mathrm{in}}$, $X^0_{0,1},X^0_{0,\epsilon}$, and the values of $h$ on $X^0_{0,1},X^0_{0,\epsilon}$ is at least $p$.
\end{itemize}
Then
\[\p[I_\epsilon]=O(\epsilon^{\alpha_{4,\kappa}+o(1)}).\]
\end{lemma}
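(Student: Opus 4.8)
The plan is to reduce Lemma~\ref{le:4arm_good_fl} to Proposition~\ref{pr:alternating_crossing_simple} by observing that the event $I_\epsilon$ forces the configuration of Proposition~\ref{pr:alternating_crossing_simple} to occur with conditional probability at least $p$, so that $\p[I_\epsilon] \le p^{-1}\p[\text{alternating $4$-crossing event for }\eta] = O(\epsilon^{\alpha_{4,\kappa}+o(1)})$. Concretely, on $I_\epsilon$ there is the event $\wt{I}_\epsilon$ (depending on $\eta$ given the sets $X^0_{0,1}$, $X^0_{0,\epsilon}$) with conditional probability at least $p$ on which $\eta$ merges consecutively into $\eta_{w_1;\mathrm{in}}$, $\eta_{w_2;\mathrm{out}}$, $\eta_{w_3;\mathrm{in}}$ before hitting $B(i,1/8)$, and on which the event $\CE_4$ from Proposition~\ref{pr:alternating_crossing_simple} occurs. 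The key point is that the conditional probability of $\wt{I}_\epsilon$ given $\eta_{w_1;\mathrm{in}}, \eta_{w_2;\mathrm{out}}, \eta_{w_3;\mathrm{in}}, X^0_{0,1}, X^0_{0,\epsilon}$ and the values of $h$ on the sets $X^0_{0,1}, X^0_{0,\epsilon}$ is not affected by further conditioning on the values of $h$ in the interior component of $\D\setminus\bigcup\{X^0_{0,\cdot}\}$ containing $B(0,\epsilon/4)$, because the sets $X^\theta_{0,1}$, $X^\theta_{0,\epsilon}$ separate their interior from the rest, as noted in the discussion following the definition of the events in Section~\ref{se:fl_merging}; this is exactly why $\wt{I}_\epsilon$ was required to happen before entering the inner ball. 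Hence the event $I_\epsilon$ together with $\wt{I}_\epsilon$ is contained in the event that $\eta$ realizes the $4$-crossing picture of Proposition~\ref{pr:alternating_crossing_simple}, so Fubini (conditioning on $X^0_{0,1}, X^0_{0,\epsilon}$ and the field values there) gives $p\cdot\p[I_\epsilon] \le \p[\CE_4 \cap \{\eta[0,\tau_1] \cap B(i,1/8) = \varnothing\}] = O(\epsilon^{\alpha_{4,\kappa}+o(1)})$.

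First I would make precise the claim that on $I_\epsilon \cap \wt{I}_\epsilon$ the alternating crossing event for $\eta$ holds. Here $\wt{I}_\epsilon$ ensures $\eta$ hits $B(0,\epsilon/4)$, so $\tau_1 < \infty$ and (again by $\wt I_\epsilon$) $\eta[0,\tau_1]\cap B(i,1/8)=\varnothing$. Since the merging into $\eta_{w_1;\mathrm{in}}, \eta_{w_2;\mathrm{out}}, \eta_{w_3;\mathrm{in}}$ happens with the continuations crossing the annulus in the prescribed directions (in, out, in) and without intermediate crossings, this produces $\eta$-strands crossing $A(0,\epsilon/4,3/4)$ in alternating directions, which is precisely the geometric content of $\CE_4$ in the parametrization of Proposition~\ref{pr:alternating_crossing_simple} (possibly after a bounded conformal change, since Proposition~\ref{pr:alternating_crossing_simple} is stated in $\D$ with specific $y,s$; the $o(1)$ in the exponent absorbs this). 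Then I would write out the conditioning argument carefully: let $\CG$ be the $\sigma$-algebra generated by $X^0_{0,1}, X^0_{0,\epsilon}$ and the restriction of $h$ to these sets; $I_\epsilon$ is $\CG$-measurable, and on $I_\epsilon$ we have $\p[\wt{I}_\epsilon \mid \CG] \ge p$, while $\wt{I}_\epsilon \subseteq \CE_4 \cap \{\eta[0,\tau_1]\cap B(i,1/8)=\varnothing\}$. Taking expectations, $p\,\p[I_\epsilon] \le \E[\one_{I_\epsilon}\p[\wt I_\epsilon\mid\CG]] \le \p[\CE_4 \cap \{\eta[0,\tau_1]\cap B(i,1/8)=\varnothing\}]$.

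The main obstacle I anticipate is matching the precise geometric bookkeeping of Proposition~\ref{pr:alternating_crossing_simple} — which involves the auxiliary point $y\in\partial_-\D$, the scale $s$, the stopping times $\sigma_1,\tau_2$, and the specific connectivity conditions $\CE^g$ — with the more flexible ``merging into prescribed strands'' description in the statement of Lemma~\ref{le:4arm_good_fl}. I would handle this by noting that the event $\CE_4$ is, up to the explicit constant, equivalent to an unconditional statement of ``$\eta$ makes four alternating crossings of a fixed annulus around $0$, with the first/last going out to near $\pm i$,'' which is the form used throughout; the constants $y,s$ are part of the quantification in Proposition~\ref{pr:alternating_crossing_simple} and can be absorbed via a bounded conformal map, contributing only to the $o(1)$ term. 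A secondary technical point is that the continuations of the detected strands $\eta_{w_i;\cdot}$ are flow lines of the restricted field, so one must check that on $\wt I_\epsilon$ they genuinely coincide with segments of $\eta$ after merging — but this is immediate from the flow line merging rules (angle difference $0$) recalled in Section~\ref{se:fl_interaction}, and the requirement in $\wt I_\epsilon$ that the merging occurs before $\eta$ re-crosses the relevant sub-annulus prevents $\eta$ from subsequently splitting off.

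Finally I would remark that the statement is uniform in the natural way (the constants in the $O(\cdot)$ do not depend on the choice of the strands $w_1,w_2,w_3$ since there are only finitely many of them on the event that the sets $\partial A(0,\cdot,\cdot)\cap X^0_{0,\cdot}$ are finite, which holds almost surely as noted below~\eqref{eq:fl_annulus}), so that a union bound over the strand choices is harmless and does not affect the exponent. This lemma will then feed, together with the good-scales machinery of Lemma~\ref{lem:good_scales_for_event}, into the proofs of Propositions~\ref{pr:4arm_simple} and~\ref{pr:4arm_fl} in the standard way: one samples a uniform scale among many $M$-good scales and applies the present estimate at that scale.
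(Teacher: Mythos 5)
Your argument is correct and takes essentially the same route as the paper: observe that $\p[\wt I_\epsilon \mid I_\epsilon]\ge p$ (since $I_\epsilon$ is $\CG$-measurable and the conditional probability given $\CG$ is at least $p$ on $I_\epsilon$), note $\wt I_\epsilon \subseteq \CE_4\cap\{\eta[0,\tau_1]\cap B(i,1/8)=\varnothing\}$, and apply Proposition~\ref{pr:alternating_crossing_simple}. The one unnecessary detour is your paragraph trying to \emph{derive} the occurrence of $\CE_4$ from the merging picture via a ``bounded conformal change''---the lemma builds $\CE_4$ directly into the definition of $\wt I_\epsilon$ as its second bullet precisely so that nothing needs to be derived (and in fact the merging picture alone would not give $\CE_4$, which has additional connectivity constraints involving $y$, $s$, and $\CE^g$).
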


\begin{proof}
By the definition of $I_\epsilon$, we have that
\[ \p[\wt{I}_\epsilon \giv I_\epsilon] \geq p.\]
By Proposition~\ref{pr:alternating_crossing_simple} we have $\p[\wt{I}_\epsilon]=O(\epsilon^{\alpha_{4,\kappa}+o(1)})$. 
This, in turn, implies that
\[ \p[ I_\epsilon] \leq \frac{1}{p} \p[\wt{I}_\epsilon] =  O(\epsilon^{\alpha_{4,\kappa}+o(1)}) . \]
\end{proof}

Note that the event $I_\epsilon$ in Lemma~\ref{le:4arm_good_fl} is measurable with respect to the values of a GFF in $A(0,\epsilon/4,3/4)$.

\begin{proof}[Proof of Proposition~\ref{pr:4arm_fl}]
By translation and scaling, it suffices to assume $z=0$, $r=1$. By symmetry, it suffices to consider the case where the second time $\eta$ crosses into the annulus $A(0,\epsilon,1)$, it does so on the right of the first crossing.

Let the fields $\wt{h}_{0,r}$ be as defined in Section~\ref{se:gff}, with boundary values compatible with an \slek{} in $B(0,r)$ from $-ir$ to $ir$. Let $y,s$ be as in Proposition~\ref{pr:alternating_crossing_simple}.

We define variants of the event in Lemma~\ref{le:good_scales_merging_multiple} where in the event $G$ we consider for each choice of strands $\eta_{w_1;\mathrm{in}},\eta_{w_2;\mathrm{out}},\eta_{w_3;\mathrm{in}}$ of $X^0_{0,1}$ ending on $\partial B(0,1/4),\partial B(0,3/4),\partial B(0,1/4)$, respectively, the conditional probability that the flow line from $-i$ merges into $\eta_{w_1;\mathrm{in}}$ before hitting $B(i,1/8)$ or entering $B(0,1/4)$, and the continuation of $\eta_{w_2;\mathrm{out}}$ first hits $B(y,s)$ and then merges into $\eta_{w_3;\mathrm{in}}$ before entering $B(0,1/4)$. Let $G^{(1)}$ denote this (stronger) event that the conditional probability is either $0$ or at least $p$.

We also define a similar event $G^{(2)}$ where we consider for each pair of strands $\eta_{w_1;\mathrm{in}},\eta_{w_2;\mathrm{out}}$ of $X^0_{0,1}$ ending on $\partial B(0,1/4)$ (resp.\ $\partial B(0,3/4)$) the conditional probability that the continuation of $\eta_{w_1;\mathrm{in}}$ merges into $\eta_{w_2;\mathrm{out}}$ before exiting $B(0,3/4)$ and passes to the left of $0$. 

Fix $a'>0$ sufficiently small. Let $G^{(1)}_{0,r},G^{(2)}_{0,r}$ be defined analogously as in Lemma~\ref{le:good_scales_merging_multiple}, and let $F^{(1)}$ (resp.\ $F^{(2)}$) be the event that the fraction of $1 \leq j_1 \leq a'\log_2(\epsilon^{-1})$ (resp.\ $(1-a')\log_2(\epsilon^{-1}) \leq j_2 \leq \log_2(\epsilon^{-1})$) so that $G^{(1)}_{0,2^{-j_1}}$ (resp.\ $G^{(2)}_{0,2^{-j_2}}$) occurs is at least $1/2$. By the exact same proof we can choose $M,p>0$ so that $\p[(F^{(1)})^c], \p[(F^{(2)})^c] = O(\epsilon^{\alpha_{4,\kappa}})$.

Now let $J_1 \in \{1,\ldots,\lfloor a'\log_2(\epsilon^{-1})\rfloor\}$, $J_2 \in \{\lceil(1-a')\log_2(\epsilon^{-1})\rceil,\ldots,\lfloor\log_2(\epsilon^{-1})\rfloor\}$ be sampled uniformly at random, independently of $h$. On the event $F^{(1)} \cap F^{(2)}$ the conditional probability is at least $1/4$ that we get scales where $G^{(1)}_{0,2^{-J_1}} \cap G^{(2)}_{0,2^{-J_2}}$ occurs. Note that on the event $E \cap G^{(1)}_{0,2^{-J_1}} \cap G^{(2)}_{0,2^{-J_2}}$, since there are flow lines merging into the corresponding strands of $\eta_{w_1},\eta_{w_2}$, the merging probabilities as described in the events $G^{(1)}_{0,2^{-J_1}}, G^{(2)}_{0,2^{-J_2}}$ are a.s.\ at least $p$. Therefore, if $\wt{E}_{J_1,J_2}$ denotes the event that the event in Lemma~\ref{le:4arm_good_fl} occurs for the field $\wt{h}_{0,2^{-J_1}}$ and $\wt{\epsilon} = 2^{-J_2+J_1}$, then
\[ \p[ \wt{E}_{J_1,J_2} \cap G^{(1)}_{0,2^{-J_1}} \cap G^{(2)}_{0,2^{-J_2}} ] \ge (1/4) \p[ E \cap F^{(1)} \cap F^{(2)} ] . \]
On the other hand, for each $j_1,j_2$, by Lemma~\ref{le:4arm_good_fl} and absolute continuity we see that
\[ \p[ \wt{E}_{j_1,j_2} \cap G^{(1)}_{0,2^{-j_1}} \cap G^{(2)}_{0,2^{-j_2}} ] \lesssim \epsilon^{(1-2a')\alpha_{4,\kappa}} . \]
Since we can equally first sample $J_1,J_2$ and then independently $h$, we conclude that
\[
 \p[E] = \p[ E \cap F^{(1)} \cap F^{(2)} ] + \p[(F^{(1)})^c] + \p[(F^{(2)})^c] \lesssim \epsilon^{(1-2a')\alpha_{4,\kappa}} .
\]
\end{proof}

Next, we recall that the double point exponent for \slekp{} is $2-\ddouble$ where $\ddouble$ is given by~\eqref{eq:dim_double}.

\begin{proposition}\label{pr:dbl_exponent}
 Let $\angledouble$ be given by~\eqref{eq:angle_double}. For any $M>0$, $a>0$ there exists $c>0$ such that the following holds. Let $h$ be a GFF in $D \subseteq \C$ with boundary values bounded by $M$, or a whole-plane GFF modulo additive constant $2\pi\chi$. Let $z\in D$, $r>0$ such that $B(z,r) \subseteq D$. For $\epsilon\in(0,1)$, let $E$ be the event that there exist flow lines $\eta^0_{w_1},\eta^{\angledouble}_{w_2}$ with angles $0$ (resp.\ $\angledouble$) starting from some $w_1,w_2 \notin B(z,r)$, both crossing the annulus $A(z,\epsilon r,r)$, and their height difference upon crossing is $\angledouble$. Then
 \[
  \p[E] \le c\epsilon^{2-\ddouble-a} .
 \]
\end{proposition}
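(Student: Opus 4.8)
The plan is to deduce Proposition~\ref{pr:dbl_exponent} from the corresponding statement in the \emph{chordal, boundary-emanating} setting, which is exactly \cite[Theorem~1.5]{mw2017intersections} (and its quantitative refinement giving the exponent $2-\ddouble$): there, for a GFF $h$ on $\h$ with appropriately bounded boundary data, the probability that the angle $0$ and angle $\angledouble$ flow lines from $0$ intersect inside a given ball of radius $\epsilon$ centered at a fixed interior point (at Euclidean distance $\asymp 1$) decays like $\epsilon^{2-\ddouble+o(1)}$. The mechanism for transferring this is the same ``good scales plus resampling-free merging'' argument used for Propositions~\ref{pr:4arm_simple}, \ref{pr:4arm_fl}: if two \emph{interior} flow lines $\eta^0_{w_1}$, $\eta^{\angledouble}_{w_2}$ with the correct height difference cross the annulus $A(z,\epsilon r, r)$ and meet near $z$, then at a positive fraction of intermediate scales the GFF is $M$-good and there are finitely many boundary-emanating strands of $X^0_{z,\rho}$ and $X^{\angledouble}_{z,\rho}$ into which flow lines from $\partial D$ can merge with a conditional probability bounded below. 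Sampling a random scale $\rho = 2^{-J}$ among these good scales, we get that the corresponding boundary-flow-line double-point event (for the field $\wt h_{z,\rho}$) occurs with probability $\gtrsim \epsilon^{c'}\cdot\p[E]$ for arbitrarily small $c'$, and by absolute continuity (Lemma~\ref{lem:rn_derivative}) plus Lemma~\ref{le:abs_cont_kernel} this is $O(\epsilon^{2-\ddouble+o(1)})$; rearranging gives $\p[E] = O(\epsilon^{2-\ddouble-a})$ for any $a>0$.

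Concretely, I would proceed as follows. First, reduce to $z=0$, $r=1$ by translation and scaling. Second, state the input: the boundary version of the double-point estimate, namely that for a GFF on $\D$ with boundary values bounded by $M$ and such that the angle $0$ flow line $\eta_1$ and the angle $\angledouble$ flow line $\eta_2$ from $-i$ are defined, the probability that $\eta_1\cap\eta_2\cap B(0,\epsilon)\neq\varnothing$ while neither curve enters $B(i,1/8)$ before reaching $B(0,\epsilon)$ is $O(\epsilon^{2-\ddouble+o(1)})$; this follows from \cite{mw2017intersections} by the same reasoning used to obtain Proposition~\ref{pr:alternating_crossing_simple} from \cite{Wu2016}. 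Third, prove the ``good event'' lemma analogous to Lemma~\ref{le:4arm_good_fl}: define, for the field $\wt h_{0,1}$, the event $I_\epsilon$ that there are strands $\eta^0_{w_1;\mathrm{in}}$ of $X^0_{0,1}$ and $\eta^{\angledouble}_{w_2;\mathrm{in}}$ of $X^{\angledouble}_{0,1}$ whose continuations cross $A(0,\epsilon/4,3/4)$ to its inner boundary, together with the requirement that the conditional probability that $\eta_1$ merges into $\eta^0_{w_1;\mathrm{in}}$ and $\eta_2$ merges into $\eta^{\angledouble}_{w_2;\mathrm{in}}$ (both before hitting $B(i,1/8)$ or re-crossing the outer annulus) and that $\eta_1,\eta_2$ subsequently meet in $B(0,\epsilon/4)$, is at least $p$; then $\p[I_\epsilon]\le p^{-1}\p[\wt I_\epsilon]=O(\epsilon^{2-\ddouble+o(1)})$ exactly as in the proof of Lemma~\ref{le:4arm_good_fl}. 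Fourth, use Lemma~\ref{le:good_scales_merging} (the independence-across-scales merging lemma, which already has the form we need since we are merging \emph{into} two boundary strands with prescribed height differences $\pi$, $\angledouble$, $\pi$ — indeed this is precisely the event $G$ described there with $\theta_1-\theta_2 = \angledouble$) to produce, with probability $1-O(\epsilon^{2-\ddouble})$, a positive fraction of scales $2^{-j}\in[\epsilon^{a'},1]$ at which $G_{0,2^{-j}}$ holds; sample $J$ uniformly among $\lfloor a'\log_2(\epsilon^{-1})\rfloor$ candidate scales, observe that on $E\cap G_{0,2^{-J}}$ the merging probabilities are a.s.\ at least $p$ (since there really are flow lines $\eta^0_{w_1},\eta^{\angledouble}_{w_2}$ crossing the annulus, so the relevant conditional probability is positive, hence $\ge p$ on the good event), and conclude $\p[E]\lesssim \epsilon^{(1-2a')(2-\ddouble)}$ by absolute continuity and Lemma~\ref{le:abs_cont_kernel}, then send $a'\to 0$. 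For the whole-plane GFF case, the only change is that one additionally works on the $M$-good-scale event from Lemma~\ref{lem:gff_independence_across_scales} and uses absolute continuity of $\wt h_{0,2^{-j}}$ with respect to a GFF with bounded boundary data.

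The main technical obstacle is matching the setup of \cite{mw2017intersections} precisely — in particular making sure the boundary data of $\wt h_{z,\rho}$ can be chosen so that the two flow lines from $-i$ have exactly the height difference $\angledouble$ and so that the finite-set-of-strands statement ($\#(\partial A(0,1/4,3/4)\cap X^\theta_{0,1})<\infty$ a.s., needed to apply Lemma~\ref{lem:good_scales_for_event}) holds; both facts are available (the first from the flow-line interaction rules of \cite{ms2016ig1}, the second from continuity of space-filling $\SLE_{\kappa'}$ as noted after \eqref{eq:fl_annulus}), but one must be careful that the double-point event for the interior flow lines is genuinely implied by the merging-plus-boundary-double-point event rather than merely correlated with it. The rest — the good-scales bookkeeping, the union bound over candidate scales, and the absolute continuity estimates — is routine and parallels the proofs of Propositions~\ref{pr:4arm_simple} and~\ref{pr:4arm_fl} line by line. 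I would also remark that the statement for the specific pair $(\theta_1,\theta_2)=(0,\angledouble)$ immediately implies it for any pair of angles differing by $\angledouble$ by applying it to $h(\cdot)+\theta_1\chi$.
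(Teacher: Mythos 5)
Your approach is essentially the same as the paper's: reduce to the chordal boundary-emanating double-point estimate from \cite{mw2017intersections} and relax its constraints using the good-scales merging argument from the proof of Proposition~\ref{pr:4arm_fl}, with absolute continuity (Lemmas~\ref{lem:rn_derivative}, \ref{le:abs_cont_kernel}) supplying the transfer. The precise reference the paper uses is \cite[Lemma~4.3(i)]{mw2017intersections} rather than Theorem~1.5, but you explicitly invoke ``its quantitative refinement giving the exponent $2-\ddouble$,'' which is that lemma, so this is only a labeling discrepancy.
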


\begin{proof}
This is proved in \cite[Lemma~4.3(i)]{mw2017intersections} under some extra constraints. The extra constraints can be relaxed using the same argument used in the proof of Proposition~\ref{pr:4arm_fl}.
\end{proof}

\subsection{Bottleneck bounds}
\label{se:regularity}

In this section we will establish a regularity statement for domains that arise from exploring certain regions on one side of an \slekr{\rho} curve. In particular, we are interested in the case where we explore some loops of a \clekp{} that lie on one side of the curve.  This regularity statement quantifies the number of nested bottlenecks that can occur in such domains. One novelty of this result is that the assertion holds \emph{simultaneously} for an entire class of such explorations. The main result is Proposition~\ref{pr:regularity}.

\begin{definition}
\label{def:regularity}
Suppose that $D \subsetneq \C$ is a simply connected domain and $x,y \in \partial D$ are distinct. For $U \subseteq \C$ and constants $M , a > 0$ we say that \emph{$(D,x,y)$ is $(M,a)$-good within $U$} if there is a conformal transformation $\varphi \colon \h \to D$ with $\varphi(0) = x$ and $\varphi(\infty) = y$ such that the following is true. Let
\begin{equation}\label{eq:bottleneck_def}
\begin{split}
 \CZ^U_k &= \{j \in \Z : \varphi(i 2^{j}) \in U,\ 2^{-k-1} < \dist(\varphi(i 2^{j}), \partial D) \leq 2^{-k}\} ,\quad k \in \N ,\\
 \CZ_k &= \CZ^{\C}_k .
\end{split}
\end{equation}
Then $|\CZ^U_k| \leq M2^{ak}$ for every $k \in \N$.

In the case $U = \C$, we say that $(D,x,y)$ is $(M,a)$-good.
\end{definition}

The following scaling property is immediate from the definition.
\begin{lemma}\label{le:good_domain_scaling}
 Suppose $\operatorname{inrad}(D) \le 1$, and let $\lambda > 0$. If $(D,x,y)$ is $(M,a)$-good, then $(\lambda D, \lambda x, \lambda y)$ is $((2\lambda)^a M,a)$-good.
\end{lemma}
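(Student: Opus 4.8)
The plan is to exhibit an explicit conformal map witnessing goodness of $(\lambda D,\lambda x,\lambda y)$ and read off the scale count from the one for $D$. Let $\varphi\colon\h\to D$ be a conformal transformation with $\varphi(0)=x$, $\varphi(\infty)=y$ realizing the $(M,a)$-good condition for $(D,x,y)$, and set $\psi=\lambda\varphi$. Then $\psi$ is conformal from $\h$ onto $\lambda D$ with $\psi(0)=\lambda x$ and $\psi(\infty)=\lambda y$, so it is an admissible map for testing whether $(\lambda D,\lambda x,\lambda y)$ is good; it therefore suffices to bound the sets $\CZ_k$ (with $k\in\N$) computed from $\psi$ and the domain $\lambda D$, call them $\CZ_k^{\lambda D}$.

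Next I would translate scales. Since $\dist(\psi(i2^j),\partial(\lambda D))=\lambda\,\dist(\varphi(i2^j),\partial D)$, we have
$\CZ_k^{\lambda D}=\{j\in\Z:\dist(\varphi(i2^j),\partial D)\in(2^{-k-1}/\lambda,\,2^{-k}/\lambda]\}$. The interval $(2^{-k-1}/\lambda,\,2^{-k}/\lambda]$ has endpoint ratio $2$, hence is contained in the union of at most two consecutive standard dyadic blocks $(2^{-l-1},2^{-l}]$, namely for $l=\lceil k+\log_2\lambda\rceil$ and $l-1$. Consequently $\CZ_k^{\lambda D}\subseteq \CZ_l\cup\CZ_{l-1}$ (the sets associated with $(D,\varphi)$), and the hypothesis on $D$ gives $|\CZ_k^{\lambda D}|\le M2^{al}+M2^{a(l-1)}\le 2M2^{al}\le (2\lambda)^aM\,2^{ak}$ after a routine estimate, the factor $(2\lambda)^a$ coming precisely from admitting the two neighboring dyadic blocks at index $\approx k+\log_2\lambda$ (absorbing, if necessary, a harmless absolute constant — the conclusion is only ever used through an $O(\cdot)$). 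For the finitely many $k$ for which $l-1\le 0$ — which arises only when $\lambda<1$ — the hypothesis $\operatorname{inrad}(D)\le 1$ forces $\dist(\varphi(i2^j),\partial D)\le 1$ for all $j$, so $\CZ_{l-1}=\varnothing$ when $l-1\le -1$, and the remaining scale is covered by the base case; the claimed bound is then immediate.

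There is no substantial obstacle here: this is a bookkeeping lemma and the argument above is essentially its entire content. The only point deserving a remark is that scaling by a $\lambda$ which is not a power of $2$ shifts the sampled points $\varphi(i2^j)$ onto scales strictly between the dyadic scales at which the $(M,a)$-good condition for $D$ was verified, which is exactly why one must cover a width-$2$ dyadic interval by two standard dyadic blocks and why the constant degrades from $\lambda^a$ to $(2\lambda)^a$.
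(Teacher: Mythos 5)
The paper offers no proof here---it simply states that the scaling property ``is immediate from the definition''---so your task was to supply what the authors considered obvious, and your argument does that correctly. Taking $\psi=\lambda\varphi$ as the witness map, observing that distances to the boundary rescale by $\lambda$, covering the shifted interval $(2^{-k-1}/\lambda,\,2^{-k}/\lambda]$ by the two adjacent standard dyadic blocks, and invoking $\operatorname{inrad}(D)\le1$ to kill the sets $\CZ_\ell$ with $\ell\le-1$ when $\lambda<1$ pushes indices down, is exactly the right bookkeeping. There is no alternative route to compare against.

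Two remarks. First, as you note yourself, the constant does not quite come out to $(2\lambda)^a$. Your bound reads
\[
|\CZ_k^{\lambda D}|\ \le\ M2^{a(l-1)}+M2^{al}\ =\ M2^{al}\bigl(1+2^{-a}\bigr),
\qquad l=\bigl\lceil k+\log_2\lambda\bigr\rceil\le k+\log_2(2\lambda),
\]
which gives $(1+2^{-a})(2\lambda)^aM2^{ak}$. One checks that the factor $1+2^{-a}$ cannot be dropped when $a$ is small and $\lambda$ is not a power of $2$: writing $\log_2\lambda=n+s$ with $s\in(0,1)$, the required inequality becomes $1+2^{-a}\le2^{as}$, which fails for $s$ near $0$ (and for all $s<1$ once $a$ is small). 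So strictly speaking the lemma should read $2(2\lambda)^aM$ or similar. Since the lemma is only ever invoked through $O(\cdot)$-estimates (e.g.\ inside the proofs of Corollary~\ref{co:close_geodesic_small_parts} and Corollary~\ref{co:regularity_scaled}), this is indeed harmless, but it is worth being explicit rather than attributing the factor $2^a$ ``precisely'' to the two-block cover---that explanation, taken literally, is what leads to the off-by-$(1+2^{-a})$ claim. Second, when $l-1=0$ you appeal to ``the base case'' to control $|\CZ_0|$; this rests on reading $0\in\N$ in Definition~\ref{def:regularity}, which is the sensible reading given how the index is used, but stating it would forestall the obvious question.
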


We state the main result of this section. Let $\eta$ be an \slekr{\rho} in $(\D,-i,i)$ where $\rho > -2$ and the force point is on the boundary. In fact, we can allow an arbitrary number of force points on the boundary. Let $D^L$ (resp.\ $D^R$) be the union of connected components of $\D \setminus \eta$ that lie to the left (resp.\ right) of $\eta$. For $0 \le s < t \le \infty$ and $A \subseteq D^L$, let $D_{s,t,A}$ be the connected component of $\D \setminus (\eta[0,s] \cup \eta[t,\infty] \cup \ol{A})$ with $\eta(s),\eta(t)$ on its boundary (supposing that the two points are on the boundary of the same connected component).

\begin{proposition}\label{pr:regularity}
 Fix $a>0$. Let $G_{M,a}$ be the event that $(D_{s,t,A}, \eta(s), \eta(t))$ is $(M,a)$-good within $B(0,3/4)$ for every $0 \le s < t \le \infty$ and $A \subseteq D^L$ as above. Then
 \[ \p[G_{M,a}^c] = o^\infty(M^{-1}) \quad\text{as } M \to \infty . \]
\end{proposition}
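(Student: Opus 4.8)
The plan is to reduce, deterministically, the occurrence of a scale-$2^{-k}$ bottleneck of the hyperbolic geodesic of $D_{s,t,A}$ inside $B(0,3/4)$ to a four-arm event for $\eta$ at that scale, and then to control the number of such events at each scale using the four-arm exponent $\alpha_{4,\kappa}>2$ (Proposition~\ref{pr:4arm_simple}, equivalently Proposition~\ref{pr:4arm_fl}) together with the independence-across-scales machinery, which supplies the required superpolynomial tail.

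\textbf{The deterministic step.} I would first prove that there is $C=C(\kappa)>1$ so that, for every realization of $\eta$, every $0\le s<t\le\infty$, and every $A\subseteq D^L$: if $z\in B(0,3/4)$ lies on the hyperbolic geodesic $\gamma_{s,t,A}$ of $D_{s,t,A}$ from $\eta(s)$ to $\eta(t)$ with $r:=\dist(z,\partial D_{s,t,A})\le 1/C$, then $\eta$ makes four crossings of the annulus $A(z,r/C,Cr)$ --- a four-arm configuration of the kind bounded in Proposition~\ref{pr:4arm_simple}. The input is Lemma~\ref{lem:hyperbolic_geodesic_facts}: at $z$ the two arcs of $\partial D_{s,t,A}$ that $\gamma_{s,t,A}$ separates both lie within $\tfrac{64}{\pi^2}r$ of $z$, and since $z\in B(0,3/4)$ with $r$ small they are not on $\partial\D$, so each is a piece of $\eta$ or of $\partial A$, while the two directions along $\gamma_{s,t,A}$ at $z$ supply the two ``open'' arms. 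When both boundary arcs are pieces of $\eta$ one reads off the four crossings directly. The delicate case is when one arc is a piece of $\partial A$, i.e.\ $z\in D^L\setminus\ol{A}$ is pinched between $\eta$ and $\ol{A}$; here one uses crucially that $A\subseteq D^L$ forces $D^R\subseteq D_{s,t,A}$, so that the route through $D^R$ hugging $\eta((s,t))$ is an $A$-free competitor for the geodesic, and an extremal-length / Beurling-type comparison of the two routes shows that $\gamma_{s,t,A}$ can only be pinched against $\ol{A}$ at scale $\asymp r$ along a stretch on which $D^R$ is itself pinched at some scale $\le r$ --- forcing $\eta$ to make the required near-self-approach there, and putting the offending dyadic points $\varphi_{s,t,A}(i2^j)$ into bounded-multiplicity correspondence with near-self-approaches of $\eta$. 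I expect this step --- controlling the interaction with an \emph{arbitrary} $A\subseteq D^L$ by ruling out ``free'' bottlenecks created by $A$ --- to be the main obstacle, and it is here that the hyperbolic-geometry estimates have to be carried out carefully.

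\textbf{The probabilistic step.} Granting the deterministic lemma, let $N_k$ be the maximal number of disjoint scale-$2^{-k}$ four-crossing configurations of $\eta$ of the above type with centre in $B(0,9/10)$; this is a single random variable, independent of $(s,t,A)$. By Koebe's distortion theorem each such configuration is localized in a ball of radius $\asymp2^{-k}$ and accounts for $O(1)$ of the points $\varphi_{s,t,A}(i2^j)$ at scale $2^{-k}$, so $\sup_{s,t,A}\abs{\CZ^{B(0,3/4)}_k(D_{s,t,A})}\le C N_k$; in particular $G_{M,a}^c\subseteq\{\exists k:N_k>(M/C)2^{ak}\}$. (The measurability of $G_{M,a}$, needed elsewhere, follows similarly: the deterministic lemma sandwiches it between $\eta$-measurable events and lets one reduce the supremum over $A$ to a countable family.) It remains to bound $\sum_k\p[N_k>(M/C)2^{ak}]$. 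For a fixed point $z$ in a $2^{-k}$-net of $B(0,9/10)$, Proposition~\ref{pr:4arm_simple} gives that $\eta$ makes four crossings of $A(z,c2^{-k},1/16)$ with probability $O(2^{-k(\alpha_{4,\kappa}-a')})$ for any $a'>0$; to upgrade this to a tail bound on $N_k$ I would re-run the good-scales argument behind Proposition~\ref{pr:4arm_fl} (Lemma~\ref{le:4arm_good_fl} together with Lemma~\ref{lem:gff_independence_across_scales} and Lemma~\ref{lem:good_scales_for_event}), now tracking how many net points are bad, which realizes $N_k$ as stochastically dominated by a sum of indicators that, at a positive fraction of scales, are conditionally bad with probability at most a small power of $2^{-k}$; a Cram\'er/Chernoff bound then yields a decay of the form $\p[N_k\ge n]\le (C2^{-k(\alpha_{4,\kappa}-2-a')})^{n}/n!$, with $\p[N_k\ge n]=0$ once $n$ exceeds the number of disjoint net points. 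Taking $n=\lceil(M/C)2^{ak}\rceil$ and summing over $k\in\N$ --- the series converges since $a$ may be chosen with $\alpha_{4,\kappa}-2-a'-a>0$, and since for $2^{-k}$ below a threshold $N_k$ vanishes with overwhelming probability --- gives $\p[G_{M,a}^c]=o^\infty(M^{-1})$, as required; this applies in the stated setting ($\kappa\in(2,4)$, $\eta$ simple, force points on $\partial\D$), since Propositions~\ref{pr:4arm_simple} and~\ref{pr:4arm_fl} are available there.
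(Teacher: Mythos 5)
The overall strategy — reduce bottlenecks of $D_{s,t,A}$ to near-self-approaches of $\eta$, then exploit $\alpha_{4,\kappa}>2$ via a good-scales/independence argument — matches the paper. However, your deterministic step contains a genuine error that undermines the rest.

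You claim that if $z\in B(0,3/4)$ lies on the hyperbolic geodesic with $r:=\dist(z,\partial D_{s,t,A})$ small, then $\eta$ makes four crossings of the bounded-aspect-ratio annulus $A(z,r/C,Cr)$. This is false. What Lemma~\ref{lem:hyperbolic_geodesic_facts} actually gives is two separated pieces of $\eta$ within distance $O(r)$ of $z$: one on $\eta[0,s]$ (or $\eta[t,\infty]$, coming from the right boundary arc, which by construction never contains $\partial A$) and one on $\eta[s,t]$ (which separates the two boundary arcs of $D_{s,t,A}$). But these two pieces meet at $\eta(s)$, and $\eta(s)\in\partial D_{s,t,A}$ so $|z-\eta(s)|$ can be as small as $r/2$. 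When $|z-\eta(s)|\asymp r$, the curve $\eta$ enters $B(z,Cr)$, passes through $\eta(s)$, and exits — only two crossings of any annulus with both radii $\asymp r$, not four. The correct reduction (which the paper carries out) produces a four-arm event for the annulus $A(\wt{z},\asymp r,\asymp|z-\eta(s)|)$, whose outer radius is a \emph{second free scale} $2^{-n}$ with $n$ ranging from $1$ to $k$. You then have to sum over that extra scale $n$, and at $n\approx k$ (where the double-point claim degenerates) you must instead invoke Koebe separation of the points $\varphi(i2^j)$ to bound the count by $O(1)$. None of this is present in your argument, and it is precisely what forces the paper to introduce $(r,s)$-double points with $r\gg s$ (Definition~\ref{def:double-points}) rather than four-crossing events at a single scale.

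The probabilistic step is also not coherent with the deterministic one: you define $N_k$ as a count of disjoint configurations with bounded-aspect-ratio annuli $A(z,r/C,Cr)$, for which the four-crossing probability is $\Theta(1)$, not $O(2^{-k(\alpha_{4,\kappa}-a')})$, so the Chernoff bound you write down has no content; then you silently switch to annuli $A(z,c2^{-k},1/16)$ of aspect ratio $\approx 2^k$, which do have small probability but for which the "disjointness" of $N_k$ is meaningless since all such annuli overlap. Even with the correct notion, the clustering phenomenon illustrated in Figure~\ref{fig:pinch-points} — many nearby $(r,2^{-k})$-double points arising from a single non-disjoint crossing — means the tail of the count is not controlled by a naive Cram\'er bound for near-independent indicators; the paper handles this through the scale-stratified combinatorial argument in Lemma~\ref{le:jk-double-points} (finding a scale $i$ at which $k$ double points are pairwise comparably spaced), feeding into the multipoint estimate of Lemma~\ref{le:double_bound_given_pts}/Lemma~\ref{le:double_bound_integrated}. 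Finally, your worry about the "delicate case" where $\gamma_{s,t,A}$ is pinched against $\partial A$ is a red herring: since $A\subseteq D^L$, the right boundary arc of $D_{s,t,A}$ is always a piece of $\eta$ or $\partial\D$, and together with the fact that $\eta[s,t]$ separates the two arcs this already forces two separated near-approaches of $\eta$ without any extremal-length comparison.
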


We explain that the event $G_{M,a}$ defined in Proposition~\ref{pr:regularity} is indeed measurable with respect to $\eta$. Suppose first that $s,t$ and $A$ are fixed. Then the event that $(D_{s,t,A}, \eta(s), \eta(t))$ is $(M,a)$-good is measurable with respect to $\eta$ because the conformal map $\varphi\colon \h \to D_{s,t,A}$ (normalized in an arbitrary way) restricted to any compact subset depends continuously on $\eta$. By considering $s,t \in \Q$ and sets $A$ consisting of dyadic squares, and using the locally continuous dependence of $\varphi$ on $s,t,A$, we conclude that $G_{M,a}$ is measurable with respect to $\eta$.

\begin{remark}
 The analogous result without restricting to $B(0,3/4)$ holds in case the (potential) force point to the right of $\eta$ has weight strictly larger than $\kappa/2-2$. In that regime, the boundary intersection exponent is strictly larger than $1$. From this, that variant of the result can be proved by a generalization of our strategy.
\end{remark}

By scaling and Lemma~\ref{le:good_domain_scaling}, we deduce immediately the following.
\begin{corollary}\label{co:regularity_scaled}
 Let $\delta \in (0,1]$. Let $\eta$ be an \slekr{\rho} in $(\delta\D,-i\delta,i\delta)$ where $\rho > -2$ and the force point is on the boundary. Let $D^L$, $D^R$, and $D_{s,t,A}$ be defined as above. Let $G_{M,a}$ be the event that $(D_{s,t,A}, \eta(s), \eta(t))$ is $(M,a)$-good within $B(0,3\delta/4)$ for every $0 \le s < t \le \infty$ and $A \subseteq D^L$ as above. Then
 \[ \p[G_{M,a}^c] = o^\infty(M^{-1}\delta) \quad\text{as } M^{-1}\delta \searrow 0 . \]
\end{corollary}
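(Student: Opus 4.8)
The plan is to deduce Corollary~\ref{co:regularity_scaled} from Proposition~\ref{pr:regularity} by the scale invariance of $\SLE_\kappa(\rho)$, as the sentence preceding the statement indicates. First I would set $\wt\eta=\delta^{-1}\eta$; since $\SLE_\kappa(\rho)$ with boundary force points is invariant in law under $z\mapsto\delta^{-1}z$, the curve $\wt\eta$ is an $\SLE_\kappa(\rho)$ in $(\D,-i,i)$ with the corresponding rescaled boundary force points, so Proposition~\ref{pr:regularity} applies to it. The components of $\D\setminus\wt\eta$ to the left and right of $\wt\eta$ are $\delta^{-1}D^L$ and $\delta^{-1}D^R$, and every domain $D_{s,t,A}$ associated with $\eta$ (for $0\le s<t\le\infty$, $A\subseteq D^L$) equals $\delta\wt D$ for a domain $\wt D$ of the same type associated with $\wt\eta$, with marked points $\wt x,\wt y$ satisfying $\delta\wt x=\eta(s)$, $\delta\wt y=\eta(t)$; the collection of such domains is unaffected by the fact that $\wt\eta$ carries a rescaled time parametrisation.

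Next I would transfer the $(M,a)$-good property along $z\mapsto\delta z$. Each such $\wt D$ lies in $\D$, so $\operatorname{inrad}(\wt D)\le1$ and the hypothesis of Lemma~\ref{le:good_domain_scaling} is met; what is actually needed is the evident ``within $U$'' refinement of that lemma, proved the same way: a conformal map $\wt\varphi\colon\h\to\wt D$ sending $0,\infty$ to $\wt x,\wt y$ is carried to $\delta\wt\varphi\colon\h\to\delta\wt D$ sending $0,\infty$ to $\eta(s),\eta(t)$; the condition $\delta\wt\varphi(i2^j)\in B(0,3\delta/4)$ is equivalent to $\wt\varphi(i2^j)\in B(0,3/4)$, while $2^{-k-1}<\dist(\cdot,\partial(\delta\wt D))\le2^{-k}$ becomes $2^{-k-1}/\delta<\dist(\cdot,\partial\wt D)\le2^{-k}/\delta$, an interval meeting at most two consecutive dyadic shells of $\wt D$, of index $\approx k-\log_2\delta^{-1}$. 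Tracking these in~\eqref{eq:bottleneck_def} yields an absolute constant $c$ so that: if $(\wt D,\wt x,\wt y)$ is $(\wt M,a)$-good within $B(0,3/4)$, then $(\delta\wt D,\eta(s),\eta(t))$ is $(c(2\delta)^a\wt M,a)$-good within $\delta B(0,3/4)=B(0,3\delta/4)$. Hence, writing $\wt G_{\wt M,a}$ for the event of Proposition~\ref{pr:regularity} attached to $\wt\eta$, we have $\wt G_{\wt M,a}\subseteq G_{M,a}$ provided $c(2\delta)^a\wt M\le M$, i.e.\ for $\wt M=M/(c(2\delta)^a)$.

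Finally, Proposition~\ref{pr:regularity} gives $\p[\wt G_{\wt M,a}^c]=o^\infty(\wt M^{-1})$, which means that for every $b>0$ there is $C_b$ with $\p[\wt G_{\wt M,a}^c]\le C_b\wt M^{-b}$ for all $\wt M>0$ (the case $\wt M\le1$ being trivial). Substituting $\wt M^{-1}=c(2\delta)^aM^{-1}$ and using $\delta\le1$, for any target exponent $b'$ one chooses the $o^\infty$-exponent $b=\max(b',b'/a)$ to turn $O\bigl((c(2\delta)^aM^{-1})^b\bigr)$ into $O\bigl((M^{-1}\delta)^{b'}\bigr)$; hence $\p[G_{M,a}^c]\le\p[\wt G_{M/(c(2\delta)^a),a}^c]=o^\infty(M^{-1}\delta)$ as $M^{-1}\delta\searrow0$. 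There is essentially no obstacle here: the probabilistic content is entirely in Proposition~\ref{pr:regularity}, and the only points requiring a moment of care are the routine ``within $U$'' scaling lemma --- in particular the non-integer shift $\log_2\delta^{-1}$, which costs only a harmless absolute constant and an extra factor $(2\delta)^a$ --- and passing to the unscaled curve with a \emph{larger} goodness constant $\wt M=M/(c(2\delta)^a)$, so that Proposition~\ref{pr:regularity} is applied in the favourable direction.
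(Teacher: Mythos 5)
Your proof is correct and follows the route the paper intends: the sentence preceding the corollary says it is deduced ``by scaling and Lemma~\ref{le:good_domain_scaling},'' and you supply exactly the missing details --- the scale invariance of $\SLE_\kappa(\rho)$ identifying the domains $D_{s,t,A}$ with $\delta\wt D$, the ``within $U$'' refinement of Lemma~\ref{le:good_domain_scaling} (which is genuinely needed here, since Proposition~\ref{pr:regularity} only gives goodness within $B(0,3/4)$), the factor-of-two slack absorbing the non-integer shift of dyadic shells, and the exponent bookkeeping converting $o^\infty(\wt M^{-1})$ with $\wt M\asymp M\delta^{-a}$ into $o^\infty(M^{-1}\delta)$. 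The only caveat, which the paper glosses over as well, is that the final substitution $M^{-(b-b')}\le 1$ implicitly uses $M\gtrsim 1$ when $a<1$; this is harmless since the corollary is invoked in the paper only with $M$ fixed (or bounded below) and $\delta\searrow 0$.
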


\begin{example}
We are in particular interested in the following examples. Suppose that we have the setup from Section~\ref{se:intersections_setup}. We will apply Corollary~\ref{co:regularity_scaled} to the following domains $D$ (obtained from $\eta=\eta_2^\delta$).
\begin{enumerate}[(i)]
 \item\label{it:expl_regularity} Consider a subset $\CC \subseteq \Gamma_\delta$ and let $A$ be the the union of $\eta_1^\delta$ and the loops in $\CC$. Let $D$ be the component of $\delta\D \setminus (\eta_2^\delta([0,s]\cup[t,\infty]) \cup \ol{A})$ with $\eta_2^\delta(s),\eta_2^\delta(t)$ on its boundary. 
 \item\label{it:intersections_regularity} Let $0 \le s_1 < t_1 \le \infty$ and $0 \le s_2 < t_2 \le \infty$ such that $\eta_1^\delta(s_1) = \eta_2^\delta(s_2)$ and $\eta_1^\delta(t_1) = \eta_2^\delta(t_2)$. Consider the component of $\delta\D \setminus (\eta_1^\delta([0,s_1]\cup[t_1,\infty]) \cup \eta_2^\delta([0,s_2]\cup[t_2,\infty]))$ with $\eta_2^\delta(s_2),\eta_2^\delta(t_2)$ on its boundary.
\end{enumerate}
\end{example}

We also record a variant of Corollary~\ref{co:regularity_scaled} for two-sided whole-plane \slek{}.

\begin{corollary}\label{co:regularity_wpsle}
 Let $\wt{\eta}$ be a two-sided whole-plane \slek{} from $\infty$ to $\infty$ through $0$. For $\delta > 0$ and $w \in \C$, let $G_{M,a}$ be the event that $(\C \setminus \wt{\eta}([-\infty,s] \cup [t,\infty]), \wt{\eta}(s), \wt{\eta}(t))$ is $(M,a)$-good within $B(w,\delta)$ for every $-\infty \le s < t \le \infty$. Then
 \[ \p[G_{M,a}^c] = o^\infty(M^{-1}\delta) \quad\text{as } M^{-1}\delta \searrow 0 . \]
\end{corollary}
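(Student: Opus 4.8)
The plan is to deduce Corollary~\ref{co:regularity_wpsle} from Corollary~\ref{co:regularity_scaled} by a local absolute continuity argument, just as Corollary~\ref{co:regularity_scaled} follows from Proposition~\ref{pr:regularity} via scaling. The key structural fact is that a two-sided whole-plane $\SLE_\kappa$ through $0$, viewed near a generic point of its trace, looks locally like a chordal $\SLE_\kappa(\rho)$ curve with a boundary force point. Concretely, recall that two-sided whole-plane $\SLE_\kappa$ is a whole-plane $\SLE_\kappa(2)$ from $\infty$ to $0$ concatenated with a chordal $\SLE_\kappa$ from $0$ to $\infty$, and that when parameterized by Minkowski content it is self-similar with stationary increments (as recalled at the end of Section~\ref{subsec:sle}, citing \cite{zhan-sle-loop}). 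The key point is that for the purpose of the event $G_{M,a}$ we only care about the shape of the domain $\C \setminus \wt\eta([-\infty,s]\cup[t,\infty])$ \emph{within $B(w,\delta)$}, i.e.\ within a bounded region; and by absolute continuity of $\SLE_\kappa$-type curves when restricted to regions bounded away from their special points (which holds for the $\SLE$--GFF coupling, cf.\ the absolute continuity statements used throughout, e.g.\ \cite{ms2016ig1}), the law of the portion of $\wt\eta$ relevant to the annuli $A(w, \cdot, \delta)$ is comparable to that of a chordal $\SLE_\kappa(\rho)$ with a boundary force point, with Radon--Nikodym derivative having moments of all orders.

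First I would set up the comparison. Fix $w \in \C$ and $\delta > 0$; by scaling (and Lemma~\ref{le:good_domain_scaling}, which tells us the good-domain property transforms with at most a polynomial factor in the scaling constant) we may as well normalize so that $\delta$ is comparable to $1$ but $w$ is far from $0$ — more precisely we should rescale so that the ball $B(w,\delta)$ becomes the unit ball and $0$ lies at distance of order $\delta^{-1}$. On the event that $\wt\eta$ actually hits $B(w,\delta)$ (otherwise $G_{M,a}$ holds trivially since $\CZ^{B(w,\delta)}_k = \varnothing$ for all $k$), consider the first hitting time $\sigma$ of a slightly larger ball $B(w,2\delta)$. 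Given $\wt\eta$ up to and after the excursion of $\wt\eta$ through $B(w,2\delta)$, the law of $\wt\eta$ restricted to this excursion, viewed as a curve in the simply connected domain it traverses, is absolutely continuous with respect to that of a chordal $\SLE_\kappa(\rho)$ with a single boundary force point, with the density having finite moments of all orders depending only on the ratio of scales (which is bounded). Then the events $\{(D_{s,t,A},\wt\eta(s),\wt\eta(t)) \text{ is } (M,a)\text{-good within } B(w,\delta)\}$ are determined by the excursions of $\wt\eta$ through $B(w,2\delta)$ together with the complementary arcs defining $D_{s,t,A}$, and these translate (after removing the part of $\wt\eta$ outside $B(w,2\delta)$, which plays the role of the "already explored" set $\eta[0,s]\cup\eta[t,\infty]\cup\ol A$) into exactly the events controlled by Corollary~\ref{co:regularity_scaled} applied to the chordal $\SLE_\kappa(\rho)$ in $B(w,2\delta)$. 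Applying Lemma~\ref{le:abs_cont_kernel} (with $p$ any fixed exponent $>1$) then gives $\p[G_{M,a}^c] \lesssim (\p^{\mathrm{chordal}}[G_{M,a/2}^c])^{1-1/p} = o^\infty(M^{-1}\delta)$, after adjusting $a$ and $M$ by the bounded Radon--Nikodym moments; here I use that the bound $o^\infty(M^{-1}\delta)$ from Corollary~\ref{co:regularity_scaled} is stable under multiplying $M$ by a constant and raising to a power close to $1$.

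A minor technical point to handle carefully is that $\wt\eta$ may make several disjoint excursions through $B(w,2\delta)$, not just one. This is dealt with by the same device used in the proof of Proposition~\ref{pr:regularity} for multiple re-entries: either $\wt\eta$ re-enters $B(w,2\delta)$ a bounded number of times (and then we iterate the absolute continuity comparison over each excursion, with each comparison costing only a bounded Radon--Nikodym factor), or $\wt\eta$ makes many disjoint crossings of an annulus around $w$, which is itself a superpolynomially unlikely event in the number of crossings by the boundary/interior proximity estimates (cf.\ \cite{fl-sle-transience} for transience and the crossing estimates via independence across scales, e.g.\ Lemma~\ref{le:good_scales_merging}). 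Collecting these, the total failure probability is $o^\infty(M^{-1}\delta)$.

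I expect the main obstacle to be bookkeeping rather than a genuine new difficulty: one must check that the "good within $B(w,\delta)$" event for the whole-plane curve really does coincide, on the relevant excursion, with the corresponding event for a chordal $\SLE_\kappa(\rho)$ after identifying the unexplored complementary region, and that the boundary force point arising from the conformal map $\h \to$ (excursion domain) has a weight in the admissible range $\rho > -2$ so that Corollary~\ref{co:regularity_scaled} applies. The self-similarity and stationarity of the Minkowski-content parameterization (used to reduce to a fixed scale) and the fact that $0$ stays far from $B(w,2\delta)$ after rescaling (so that no force point accumulates near the relevant annuli) are what make this identification clean; once they are in place the rest is the standard absolute continuity plus Lemma~\ref{le:abs_cont_kernel} argument.
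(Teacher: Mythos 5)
The overall shape of your argument—restricting to the event that $\wt\eta$ hits $B(w,2\delta)$, isolating the excursion through that ball, and comparing it to a chordal $\SLE_\kappa(\rho)$ in a good domain so that Corollary~\ref{co:regularity_scaled} applies—is the right one, and your handling of multiple excursions via crossing estimates is reasonable. However, you use absolute continuity where none is needed: conditionally on $\wt\eta$ outside the time interval $[\sigma,\tau]$ between its first and last visit to $\partial B(w,2\delta)$, the segment $\wt\eta|_{[\sigma,\tau]}$ is \emph{exactly} a chordal $\SLE_\kappa$ (with boundary force points) in $\C\setminus\wt\eta([-\infty,\sigma]\cup[\tau,\infty])$, and a conformal map to $\delta\D$ (distorting $B(w,\delta)$ only by a bounded factor by Koebe, since $\dist(B(w,\delta),\partial)\ge\delta$) puts one directly in the situation of Corollary~\ref{co:regularity_scaled}. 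That is cleaner than passing through Lemma~\ref{le:abs_cont_kernel}.

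The genuine gap is in your assertion that ``$0$ stays far from $B(w,2\delta)$ after rescaling,'' which you treat as a structural given. This is simply false in the regime $\abs{w}\lesssim\delta$, and in particular for $w=0$ — which is exactly the case in which the paper subsequently applies this corollary (in the proof of Lemma~\ref{lem:bdry_chaining_tail}, with $w=0$ and $\delta=t^{1/\dsle-a}$). When $0\in B(w,2\delta)$ one has $\sigma<0<\tau$, the excursion through $B(w,2\delta)$ straddles the marked point of the two-sided whole-plane $\SLE_\kappa$, and the conditional law is no longer a chordal $\SLE_\kappa(\rho)$ with boundary force points. This is also precisely where your absolute continuity argument breaks down: the field generating the two-sided whole-plane $\SLE_\kappa$ has an $\alpha\arg(\cdot)$ singularity at $0$, so the Radon--Nikodym derivative against a GFF without that singularity fails to have moments near $0$. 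You need to handle this case by a separate device; the paper's proof does so via the stationarity of the two-sided whole-plane $\SLE_\kappa$ under its Minkowski-content parameterization together with a union bound, reducing to the case where the relevant marked time $0$ is not inside the ball. Your brief mention of stationarity is used only to normalize the scale, not to dispose of the case $0\in B(w,2\delta)$, so as written the argument does not cover the case of interest.
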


\begin{proof}
 We only need to consider the case when $\wt{\eta}$ intersects $B(w,2\delta)$. Let $\sigma = \inf\{t\in\R : \wt{\eta}(t) \in \partial B(w,2\delta)\}$ and $\tau = \sup\{t\in\R : \wt{\eta}(t) \in \partial B(w,2\delta)\}$.
 
 Suppose that we are on the event that either $0 \le \sigma < \tau$ or $\sigma < \tau \le 0$. Then the result follows from Corollary~\ref{co:regularity_scaled} by conformally mapping $\C \setminus \wt{\eta}([-\infty,\sigma] \cup [\tau,\infty])$ to $\delta\D$.
 
 To handle the case $\sigma < 0 < \tau$, we can use the stationarity of the two-sided whole-plane \slek{} along with a union bound.
\end{proof}

In the remainder of this section, we prove Proposition~\ref{pr:regularity}. Recall the double point exponent $\alpha_{4,\kappa}$ for \slek{} given by~\eqref{eqn:double_exponent_simple}. For $k \in \N$, we let
\[ \CD_k = 2^{-k}\Z^2\cap B(0,3/4) . \]

The main idea of the proof of Proposition~\ref{pr:regularity} is that whenever $D_{s,t,A}$ has too many small bottlenecks, $\eta$ must come very close to itself. See Figure~\ref{fig:bad-domains-examples} for illustrations. Since $\alpha_{4,\kappa} > 2$, it is unlikely for $\eta$ to make ``approximate double points'', and it is extremely unlikely to make many of those.

To make this precise, we introduce the definition of $(r,s)$-double points. We then state a tail bound for the number of these points in Lemma~\ref{le:jk-double-points}. 

\begin{definition}
\label{def:double-points}  
Let $\eta$ be a curve contained in $\D$. Let $z\in\CD_k$ and $r>s>0$. We say $z$ is a \emph{$(r,s)$-double point for $\eta$} if $\eta$ crosses the annulus $A(z,s,r)$ at least $4$ times.
\end{definition}

We note that different double points in $\CD_k$ can be placed within distance $2^{-k}$ of each other, even in the case where $\eta$ makes only $4$ crossings of $A(z,s,r)$. This is because the crossings corresponding to different double points are not required to be disjoint from each other. See Figure~\ref{fig:pinch-points} for an illustration.

\begin{figure}[ht]
\centering
\includegraphics[width=0.5\textwidth]{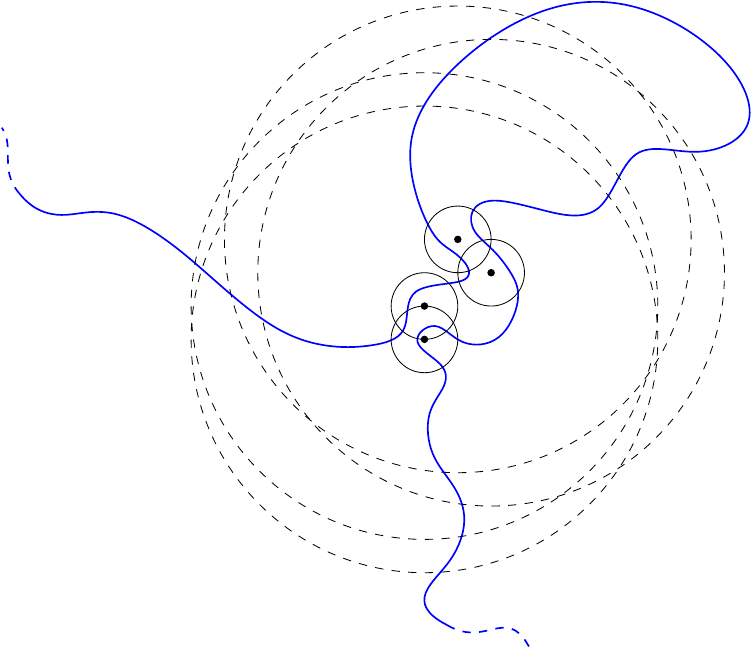}
\caption{Illustration of Definition~\ref{def:double-points}. The black dots depict $(r,s)$-double points for $\eta$. The smaller circles with solid boundary have radius $s$, the dashed circles have radius $r$.}
\label{fig:pinch-points}
\end{figure}

\begin{lemma}
\label{le:jk-double-points}
Let $\eta$ be an \slekr{\rho} in $(\D,-i,i)$ where $\rho > -2$ and the force point is on the boundary. Fix $a >0$. For $w \in B(0,3/4)$ and $r>s>0$, let $N^{w}_{r,s}$ be the number of $(r,s)$-double points for $\eta$ in $B(w,r)\cap\CD_k$. Then
\[ \p\left[ \bigcup_{w \in B(0,3/4)} \bigcup_{r > 2^{-k}} \{N^{w}_{r,2^{-k}} \geq 2^{a k}\} \right] = o^\infty(2^{-k}) \quad\text{as}\quad k \to \infty . \]
\end{lemma}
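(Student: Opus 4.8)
The strategy is a first-moment bound combined with the four-arm (double-point) exponent $\alpha_{4,\kappa} > 2$ for chordal $\SLE_\kappa$ with boundary force points, in the quantitative form of Proposition~\ref{pr:4arm_simple}. First I would reduce the statement over all $w \in B(0,3/4)$ and all $r > 2^{-k}$ to a discrete union. Since $\eta$ has diameter at most $2$, it suffices to consider radii $r = 2^{-j}$ for $0 \le j \le k$, and for each such $j$, a covering of $B(0,3/4)$ by $O(2^{2j})$ balls $B(w_i, 2^{-j})$ with $w_i \in 2^{-j}\Z^2$. If $N^w_{2^{-j},2^{-k}} \ge 2^{ak}$ for some $w$, then the nearby lattice point $w_i$ satisfies $N^{w_i}_{2^{-j+1},2^{-k}} \gtrsim 2^{ak}$ (enlarging the outer radius by a factor $2$ absorbs the displacement $|w - w_i| \le 2^{-j}$). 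So it is enough to bound, uniformly in $j \le k$ and $w \in 2^{-j}\Z^2 \cap B(0,1)$, the probability that $B(w, 2^{-j+1}) \cap \CD_k$ contains at least $c\,2^{ak}$ points $z$ each of which is an $(2^{-j+1}, 2^{-k})$-double point for $\eta$.

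Next I would estimate this probability by a truncated moment computation. Fix $j$, $w$, and write $\CZ = B(w,2^{-j+1}) \cap \CD_k$, which has cardinality $O(2^{2(k-j)})$. For each $z \in \CZ$, Proposition~\ref{pr:4arm_simple} (applied with $\epsilon = 2^{-(k-j)}$, $r = 2^{-j+1}$, and noting the partial sums of weights are bounded since $\rho$ is fixed) gives $\p[z \text{ is a } (2^{-j+1},2^{-k})\text{-double point}] \le c\,2^{-(k-j)(\alpha_{4,\kappa} - a')}$ for any $a' > 0$. To get a superpolynomial bound on having $\ge 2^{ak}$ such points I would use higher moments: for an integer $m$ to be chosen large, bound $\E[(N^w_{2^{-j+1},2^{-k}})^m]$ by $\sum_{z_1,\dots,z_m \in \CZ} \p[z_1,\dots,z_m \text{ all double points}]$. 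Grouping the $z_\ell$ by how many are within mutual distance $2^{-k}$ of each other (so that the corresponding four-arm events live in disjoint annuli after coarsening), a quasi-multiplicativity / separation argument in the spirit of the proof of Proposition~\ref{pr:4arm_fl} shows this sum is at most $C_m (|\CZ| \cdot 2^{-(k-j)(\alpha_{4,\kappa}-a')})^m \cdot 2^{o(k)m}$, i.e.\ each additional ``well-separated'' double point costs a factor $2^{(2 - \alpha_{4,\kappa} + a' + o(1))(k-j)}$, which is $\le 1$ for $a'$ small since $\alpha_{4,\kappa} > 2$. Hence by Markov's inequality $\p[N^w_{2^{-j+1},2^{-k}} \ge 2^{ak}] \le C_m 2^{-(a - o(1))mk} \cdot 2^{o(k)m}$. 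Choosing $m = m(a,b)$ large and summing over the $O(2^{2k})$ pairs $(j, w)$ yields $o^\infty(2^{-k})$.

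The main obstacle I anticipate is the higher-moment / quasi-multiplicativity step: one needs that four-arm events at a bounded-below collection of scales combine multiplicatively, and that an $m$-tuple of double points in $\CZ$, once coarsened so overlapping points are merged, decomposes into well-separated configurations whose probabilities factorize up to $2^{o(k)}$ errors. For chordal $\SLE_\kappa$ with $\kappa < 4$ and boundary force points this type of estimate is available (it underlies the proof of Propositions~\ref{pr:4arm_simple} and~\ref{pr:4arm_fl} and the references therein, e.g.\ \cite{Wu2016}); the bookkeeping is that $m$ double points in a ball of radius $2^{-j+1}$, being lattice points of $\CD_k$, can cluster, but after identifying points within distance $2^{-k}$ one is left with $m' \le m$ clusters, each contributing at least one genuine four-arm crossing of $A(\cdot, 2^{-k}, 2^{-j+1})$, and the conditional probability of the $\ell$-th cluster's event given the previous ones is still $\le c\,2^{-(k-j)(\alpha_{4,\kappa}-a'')}$ by the strong Markov property and conformal distortion estimates (Koebe) for the remaining domain. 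A secondary, purely technical point is measurability of the event $\bigcup_w \bigcup_r \{N^w_{r,2^{-k}} \ge 2^{ak}\}$, which follows from the reduction to the countable union over dyadic $j$ and lattice $w$ together with right-continuity of the crossing counts in $r$.
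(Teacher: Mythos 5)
Your proposal takes a genuinely different route from the paper: you want a higher-moment estimate $\E[(N^w)^m]\lesssim(|\CZ|\,p)^m\,2^{o(k)m}$ followed by Markov's inequality, whereas the paper avoids moments entirely. The paper first proves a factorized bound (Lemma~\ref{le:double_bound_given_pts}) \emph{only for configurations whose balls $B(z_u,r_u)$ are pairwise disjoint}, using the GFF Markov property across disjoint balls together with good-scale merging events; it then shows by a dyadic pigeonhole argument (Lemmas~\ref{le:double_bound_integrated} and the proof body) that $2^{ak}$ double points in some ball force, at some intermediate dyadic scale, a collection of $k$ points that are mutually separated at that scale, to which the factorized bound applies. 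Your reduction to a dyadic union over $(j,w)$ and your use of $\alpha_{4,\kappa}>2$ are aligned with the paper.

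The gap in your argument is the quasi-multiplicativity step, which you acknowledge but treat as available. It is not: Propositions~\ref{pr:4arm_simple} and~\ref{pr:4arm_fl} give a bound on a \emph{single} four-arm event, and their proofs use the GFF good-scale machinery, not any multiplicativity across several balls. The issue is twofold. First, the conditional bound~\eqref{eq:double_point_given_ball} holds only given $\CF_{z_u,r_u}$ (the field outside $B(z_u,r_u)$), so it chains across the $m$ points only when the balls are mutually disjoint; your moment sum runs over \emph{all} $m$-tuples in $\CZ$, most of which cluster and overlap. Second, merging points within distance $2^{-k}$ is not sufficient: clusters of clusters at intermediate scales $2^{-\ell}$, $j<\ell<k$, give correlated four-arm events that share the outer crossings of $A(\cdot,2^{-\ell},2^{-j+1})$, and controlling their total contribution requires a full tree decomposition across scales with quasi-multiplicative arm estimates of a percolation type. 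For SLE such an estimate is not stated in the paper and is not a standard input; the pinch-point phenomenon in Figure~\ref{fig:pinch-points} (several nearby $(r,s)$-double points arising from the same four crossings) is exactly the obstruction, and it is why the authors introduce the combinatorial pigeonhole rather than a moment computation. A related omission is that the paper's factorized bound is proved only on the good-scales event $E_k$; your moment bound would likewise need to be established conditionally on some such event, since the unconditional conditional-law comparison to a standard SLE in a small ball requires $M$-goodness of the relevant scales.

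Concretely, to close the gap you would need to replace the vague quasi-multiplicativity claim with (a) a factorized bound for $m$-tuples with pairwise disjoint balls at heterogeneous radii, conditionally on a GFF good-scales event, which is essentially Lemma~\ref{le:double_bound_given_pts}; and (b) a summation over the tree of cluster scales showing that the total $m$-th moment is dominated by the well-separated term. Step (b) is the hard part and is precisely what the paper sidesteps via the pigeonhole: one does not need the contribution of all configurations to be small, only that some well-separated sub-configuration of size $\ge k$ must exist, and a first-moment bound over such well-separated configurations already yields $2^{2k-\zeta k^2}=o^\infty(2^{-k})$. If you insist on the moment route, the bookkeeping of (b) is at least as involved as the paper's argument and requires the same GFF conditioning input, so it would not be a simplification.
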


The intuition behind the proof is that, as mentioned above, the exponent $\alpha_{4,\kappa}$ is strictly larger than $2$ for $\kappa \in (0,4)$, hence the probability of any double point occurring is small. We will further argue that there is sufficient independence so that the probability of having as many as $2^{a k}$ double points is extremely small. To this end, we localize the event of a double point occurring and use independence across disjoint balls containing the double points. An additional argument is needed in order to make this work, as in principle the double points could be placed very close to each other and correspond to the same excursion of $\eta$ across a common large annulus as in Figure~\ref{fig:pinch-points}.

We first prove an upper bound for the probability that a given collection of points in $\CD_k$ are all $(r,2^{-k})$-double points for $\eta$. To this end, fix distinct non-neighboring points $z_1,\ldots,z_m \in \CD_k$ and let $\ul{z} = (z_1,\ldots,z_m)$. For $1 \leq u \leq m$, let $2^{-k} < r_u < \min_{v\neq u}\abs{z_u-z_v}$. Let $F(\ul{z},\ul{r})$ be the event that $z_u$ is a $(r_u,2^{-k})$-double point for $\eta$ for each $u$.

We will bound the probability of $F(\ul{z},\ul{r})$ by comparing to an \slek{} in each $B(z_u,r_u)$, and use the approximate independence for the GFF within disjoint regions. For this, we we recall the notation introduced in Section~\ref{se:gff}. We let the fields $\wt{h}_{z,r}$ have boundary values compatible with a usual \slek{}.

In the remainder of the section, we let $G_{z,r}$ be the events from Lemma~\ref{le:good_scales_merging_multiple} with suitable parameters $M,p > 0$ chosen below. Fix $\innexp > 0$. Let $E_{z,j,k}$ be the event that at least $9/10$ fraction of $j' = j,\ldots,j+\lfloor\innexp k\rfloor$ satisfy $G_{z,2^{-j'}}$. (The dependence on $\innexp$ is implicitly assumed in the notation.) Let
\begin{equation}\label{eq:good_merging_everywhere}
 E_k = \bigcap_{z \in \CD_k} \bigcap_{j \le k} E_{z,j,k} .
\end{equation}
By Lemma~\ref{le:good_scales_merging_multiple}, for any given $\innexp,b > 0$ there are $M,p > 0$ so that $\p[E_k^c] = O(e^{-bk})$.

\begin{lemma}
\label{le:double_bound_given_pts} 
Fix $\innexp,a' > 0$. Then for each $\ul{z} \in \CD_k^m$, $\ul{r} \in \R_+^m$ with $r_u > 2^{-k(1-\innexp)}$ for each $u$ we have
\begin{equation}\label{eq:bound-double-points-u}
\p[F(\ul{z},\ul{r}) \cap E_k] \lesssim \prod_u \left(\frac{ 2^{-k(1-\innexp)}}{r_u}\right)^{\alpha_{4,\kappa}-a'}
\end{equation}
where the implicit constant does not depend on the choice of $\ul{z},\ul{r}$.
\end{lemma}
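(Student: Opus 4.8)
The plan is to realize $\eta$ as the flow line from $-i$ to $i$ of a GFF $h$ on $\D$ and to reduce $F(\ul z,\ul r)\cap E_k$ to a conjunction of \emph{localized} four-arm events of the form appearing in Proposition~\ref{pr:4arm_fl}, one inside each of the (essentially) disjoint balls $B(z_u,\rho_u)$ with $\rho_u$ a dyadic radius comparable to $r_u/2$. For each $u$ set $j_u=\lceil\log_2(1/r_u)\rceil+c_0$ for a large fixed constant $c_0$ (so $2^{-j_u}\ll r_u$ and, since $r_u>2^{-k(1-\innexp)}$, $j_u\le k$ for $k$ large), and consider the window $j_u\le J_u\le j_u+\lceil\innexp k/10\rceil$. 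On $E_k$, applied with $z=z_u$ and $j=j_u$, at most $\innexp k/10$ of the scales in $\{j_u,\dots,j_u+\lfloor\innexp k\rfloor\}$ are bad, so some scale $2^{-J_u^*}$ in the window is $M$-good; if moreover $F(z_u,r_u)$ holds then $\eta$ crosses $A(z_u,2^{-k},c_1 2^{-J_u^*})$ (contained in $B(z_u,3\cdot 2^{-J_u^*}/4)$ for $c_1<3/4$, and nondegenerate for $k$ large) at least four times, and since $\eta$ enters $B(z_u,2^{-J_u^*})$ from its boundary, these crossing segments are flow-line segments of $h$ into which a flow line from $\partial B(z_u,2^{-J_u^*})$ merges, with compatible heights. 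Hence $F(\ul z,\ul r)\cap E_k\subseteq\bigcap_u B_u$, where $B_u$ is the union over $J_u$ in the window of the events $A_u^{J_u}:=\{(z_u,2^{-J_u})\text{ is }M\text{-good}\}\cap\{\text{the Proposition~\ref{pr:4arm_fl} four-arm event holds in }A(z_u,2^{-k},c_1 2^{-J_u})\}$. The point of choosing the window below $\rho_u$ is that each $B_u$ is then measurable with respect to $h|_{\overline{B(z_u,\rho_u)}}$, and these closed balls are pairwise disjoint because $r_u<\min_{v\neq u}|z_u-z_v|$.

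For the single-ball estimate I would condition on $\CF_{z_u,2^{-J_u}}=\sigma(h|_{\C\setminus B(z_u,2^{-J_u})})$: the $M$-good condition at scale $2^{-J_u}$ is then determined, and on that event the field $\wt h_{z_u,2^{-J_u}}$ from Section~\ref{se:gff} agrees with $h_{z_u,2^{-J_u}}=h|_{B(z_u,2^{-J_u})}-C_{z_u,2^{-J_u}}$ on $B(z_u,3\cdot2^{-J_u}/4)$, has flow lines equal to those of $h$, and has Radon--Nikodym derivative (with respect to a GFF with boundary values bounded by a constant depending on $M$) with all moments bounded, again depending only on $M$ (Lemma~\ref{lem:rn_derivative} and the discussion in Section~\ref{se:gff}). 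Applying Proposition~\ref{pr:4arm_fl} to the latter field and Hölder (Lemma~\ref{le:abs_cont_kernel}), and choosing the loss parameter there small enough, gives $\p[A_u^{J_u}\mid\CF_{z_u,2^{-J_u}}]\le c(M)(2^{-k}/2^{-J_u})^{\alpha_{4,\kappa}-a'/2}$. Summing over the at most $\innexp k/10+1$ scales in the window and using that the smallest $2^{-J_u}$ in the window is $\ge c' r_u 2^{-\innexp k/10}$, I get $\p[B_u\mid\CF_{z_u,2^{-J_u}}]\le C(M)\,\innexp k\,(2^{-k(1-\innexp/10)}/r_u)^{\alpha_{4,\kappa}-a'/2}$, a deterministic bound.

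To combine, condition on $\CF_0:=\sigma(h|_{\C\setminus\bigcup_u\overline{B(z_u,\rho_u)}})$. By the Markov property of the GFF the restrictions $h|_{\overline{B(z_u,\rho_u)}}$ are conditionally independent given $\CF_0$, hence so are the events $B_u$; since $\CF_0\subseteq\CF_{z_u,2^{-J_u}}$ for each $u$, the tower property and the previous paragraph bound $\p[B_u\mid\CF_0]$ by the same deterministic quantity, so
\[
\p[F(\ul z,\ul r)\cap E_k]\le\p\Big[\bigcap_u B_u\Big]=\E\Big[\prod_u\p[B_u\mid\CF_0]\Big]\le\prod_u C(M)\,\innexp k\,\Big(\frac{2^{-k(1-\innexp/10)}}{r_u}\Big)^{\alpha_{4,\kappa}-a'/2}.
\]
Writing $z=2^{-k(1-\innexp)}/r_u\in(0,1]$ (which uses $r_u>2^{-k(1-\innexp)}$), the ratio of the $u$-th factor above to the target $(2^{-k(1-\innexp)}/r_u)^{\alpha_{4,\kappa}-a'}$ equals, up to a constant, $C(M)\,\innexp k\,2^{-0.9\,\innexp k(\alpha_{4,\kappa}-a'/2)}\,z^{a'/2}\le C(M)\,\innexp k\,2^{-0.9\,\innexp k(\alpha_{4,\kappa}-a'/2)}$, which is $\le 1$ once $k$ is large (we may assume $a'<2\alpha_{4,\kappa}$, else \eqref{eq:bound-double-points-u} is trivial since every factor exceeds $1$). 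As this holds uniformly in $u$ and in $r_u$, the product of factors is at most the product of targets, proving \eqref{eq:bound-double-points-u} for $k$ large; the finitely many small $k$ are absorbed into the implicit constant.

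The main obstacle is precisely this coordination: $E_k$ only furnishes a good scale somewhere in a window of length $\asymp\innexp k$ around the natural scale $r_u$, not at a deterministic scale, so one must simultaneously (i) keep $B_u$ measurable with respect to a \emph{fixed} disjoint ball, so that the Markov property yields conditional independence, and (ii) check that the resulting polynomial-in-$k$ union bound over the window, raised to the $m$-th power (with $m$ potentially as large as $|\CD_k|\asymp 2^{2k}$), is absorbed by the exponential gain $2^{-c\,\innexp k}$ coming from possibly only getting a four-arm estimate at a scale a factor $2^{\innexp k/10}$ below $r_u$; this is the role played by the slack between $2^{-k(1-\innexp/10)}$ in the bound we produce and $2^{-k(1-\innexp)}$ in the target. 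A secondary point is to verify carefully that $F(z_u,r_u)$ together with $M$-goodness genuinely produces the Proposition~\ref{pr:4arm_fl} form of the four-arm event (disjoint flow-line crossings whose heights are compatible with a flow line coming from the boundary of the localizing ball), which follows from locality of flow lines and the fact that $\eta$ enters $B(z_u,2^{-J_u})$ through its boundary.
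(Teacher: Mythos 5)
Your argument is correct and, at its core, follows the same route as the paper: reduce $F(\ul z,\ul r)\cap E_k$ to a localized four-arm event inside each ball, bound that localized event via absolute continuity (using $M$-goodness) and a four-arm estimate, and combine via the GFF Markov property applied to disjoint balls. There are, however, a few concrete differences worth flagging.

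First, for the single-ball step the paper avoids the union bound over the $\asymp\innexp k$ scales in the window by choosing the scale $J_u$ \emph{uniformly at random} (independently of $h$): conditionally on $F(z_u,r_u)\cap E_{z_u,\log_2(r_u^{-1}),k}$ a random scale is good with probability $\ge 9/10$, so $\p[F(z_u,r_u)\cap E_{z_u,\cdot,k}\mid\CF_{z_u,r_u}]\lesssim 2^{-(k-j)(\alpha_{4,\kappa}-a')}$ drops out directly with no $k$-factor. You instead take a deterministic union bound over the window and absorb the resulting $C(M)\,\innexp k$ factor by the slack between the window depth $2^{-\innexp k/10}$ and the exponent deficit; this is also fine, and you correctly verify that the extra polynomial factor is dominated by $2^{-0.9\innexp k(\alpha_{4,\kappa}-a'/2)}$ uniformly in $u$ even before multiplying over $u$, so it does not blow up when $m$ is large. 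Either bookkeeping works; the randomization trick is a bit cleaner.

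Second, the paper's four-arm input at a single scale is Lemma~\ref{lem:double_flow_lines_interior}, which is phrased directly in terms of the event $\wt I_\epsilon$ on the $X^0$ strands and plugged in verbatim, whereas you invoke Proposition~\ref{pr:4arm_fl} and then argue that $F(z_u,r_u)$ together with $M$-goodness produces that form of the four-arm event. This is fine but a bit more work is hidden there: you need the four crossings of $\eta$ to be splittable into two disjoint flow-line segments whose heights are compatible with a flow line coming from $\partial B(z_u,2^{-J_u^*})$, and that the $G$-part of $E_k$ (not only the $M$-good part) guarantees the merging-probability condition. That is exactly the content of Lemma~\ref{lem:double_flow_lines_interior}, so you are in effect re-deriving it.

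Third, and to your credit, you are actually more careful about disjointness than the paper's last sentence suggests: the hypothesis $r_u<\min_{v\ne u}|z_u-z_v|$ alone does not make the balls $B(z_u,r_u)$ disjoint, but it does make the shrunken balls $B(z_u,\rho_u)$ with $\rho_u\le r_u/2$ disjoint, which is what both arguments actually need (the paper's conditioning on $\CF_{z_u,r_u}$ with a deterministic bound achieves the same effect implicitly). Your explicit choice of disjoint $B(z_u,\rho_u)$ and conditioning on $\CF_0$ makes the independence step cleaner to state.
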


The main input for proving Lemma~\ref{le:double_bound_given_pts} is the following lemma which is analogous to Lemma~\ref{le:4arm_good_fl}.

\begin{lemma}
\label{lem:double_flow_lines_interior}
Fix $p \in (0,1)$, $a'>0$. Let~$h$ be a GFF on~$\D$ with boundary values so that its flow line $\eta$ is an \slek{} from $-i$ to $i$. Let $X^0_{0,1}$ be as defined in~\eqref{eq:fl_annulus}. Let $I_\epsilon$ be the event that there exist two strands $\eta_{w_1}$, $\eta_{w_2}$ of $X^0_{0,1}$ so that the following hold.
\begin{itemize}
 \item The continuations of $\eta_{w_i}$, $i=1,2$, hit $B(0,\epsilon)$ before exiting $B(0,3/4)$ and without merging.
 \item The conditional probability given $X^0_{0,1}$ and the values of $h$ on $X^0_{0,1}$ that $\eta$ merges into $\eta_{w_1}$ before entering $B(0,1/4)$ and then into $\eta_{w_2}$ before entering $B(0,1/4)$ again is at least $p$.
\end{itemize}
Then there exists $\epsilon_0 > 0$ so that
\[
 \p[I_\epsilon] \leq \epsilon^{\alpha_{4,\kappa}-a'}  \quad\text{for}\quad \epsilon \in (0,\epsilon_0).
\]
\end{lemma}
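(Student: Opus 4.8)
The statement is the ``interior'' analogue of Lemma~\ref{le:4arm_good_fl}, and I would prove it the same way: reduce $I_\epsilon$ to the double-point (four-arm) event for the chordal $\SLE_\kappa$ curve $\eta$ and then invoke Proposition~\ref{pr:4arm_simple}. Concretely, let $\wt{I}_\epsilon$ be the event that $\eta$ makes at least four crossings of the annulus $A(0,\epsilon,1/4)$, i.e.\ (in the language of Definition~\ref{def:double-points}) that $0$ is an $(1/4,\epsilon)$-double point for $\eta$. The two steps are: (1) show $\p[\wt{I}_\epsilon\mid I_\epsilon]\ge p$, which yields $\p[I_\epsilon]\le p^{-1}\p[\wt{I}_\epsilon]$; and (2) bound $\p[\wt{I}_\epsilon]$ by Proposition~\ref{pr:4arm_simple}.

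For step (1): on $I_\epsilon$, by the second bullet in its definition the conditional probability given $X^0_{0,1}$ and the values of $h$ on $X^0_{0,1}$ that $\eta$ merges into $\eta_{w_1}$ before entering $B(0,1/4)$ and subsequently merges into $\eta_{w_2}$ before re-entering $B(0,1/4)$ is at least $p$. On this merging event, flow-line monotonicity forces $\eta$ to coincide with $\eta_{w_1}$ from the merge time onward, so by the first bullet $\eta$ follows the continuation of $\eta_{w_1}$ into $B(0,\epsilon)$ without first leaving $B(0,3/4)$ — one inward crossing of $A(0,\epsilon,1/4)$. Since $\eta$ then exits $B(0,1/4)$ before merging into $\eta_{w_2}$, we get an outward crossing; following the continuation of $\eta_{w_2}$ gives a second inward crossing; and since $\eta$ terminates at $i\notin B(0,1/4)$ it must cross outward once more. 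The ``without merging'' hypothesis in the first bullet ensures the two continuations are genuinely distinct, so the configuration is non-degenerate and $\wt{I}_\epsilon$ holds. Finally, since the merging event is measurable with respect to the values of $h$ in the component of $\D\setminus X^0_{0,1}$ not containing $0$, while the first bullet (which is part of $I_\epsilon$) is measurable with respect to the complementary inner component, and these are conditionally independent given $(X^0_{0,1},h|_{X^0_{0,1}})$ (cf.\ the remark following Lemma~\ref{le:good_scales_merging}), additionally conditioning on $I_\epsilon$ does not lower the conditional probability of the merging event; this gives $\p[\wt{I}_\epsilon\mid I_\epsilon]\ge p$.

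For step (2): since $\eta$ is a chordal $\SLE_\kappa$ in $\D$ from $-i$ to $i$ with no force points and $B(0,1/4)\subseteq\D$, Proposition~\ref{pr:4arm_simple} applies with $D=\D$, $z=0$, $r=1/4$ and $4\epsilon$ in place of the ratio $\epsilon$; taking the exponent loss there to be $a'/2$ gives $\p[\wt{I}_\epsilon]\le c(4\epsilon)^{\alpha_{4,\kappa}-a'/2}$ for a constant $c=c(a',\kappa)$. Hence $\p[I_\epsilon]\le p^{-1}c(4\epsilon)^{\alpha_{4,\kappa}-a'/2}\le\epsilon^{\alpha_{4,\kappa}-a'}$ for all $\epsilon$ below a threshold $\epsilon_0=\epsilon_0(p,a',\kappa)$. (Alternatively one could invoke the flow-line version Proposition~\ref{pr:4arm_fl} for the field $h$, but since $\eta$ here is literally a chordal $\SLE_\kappa$ this is not needed.) No genuinely new difficulty arises; the only point requiring care is the crossing bookkeeping in step (1) — checking that the merging-then-remerging event really forces four crossings of the fixed macroscopic annulus $A(0,\epsilon,1/4)$, and that the non-merging condition rules out the degenerate case in which $\eta_{w_1}$ and $\eta_{w_2}$ have already merged before reaching $B(0,\epsilon)$.
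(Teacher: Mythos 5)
Your proof is correct and is essentially the paper's own argument. The paper sets $F$ to be the event that $\eta$ hits $B(0,\epsilon)$, exits $B(0,1/4)$, and hits $B(0,\epsilon)$ again (which is equivalent, up to the inclusion $F\subseteq\wt{I}_\epsilon$, to your four-crossing event for $A(0,\epsilon,1/4)$), notes $\p[F\mid I_\epsilon]\ge p$ directly from the definition of $I_\epsilon$, and applies Proposition~\ref{pr:4arm_simple} to $F$; your version simply spells out the crossing bookkeeping and makes explicit the conditional-independence-across-$X^0_{0,1}$ step that the paper leaves implicit, both of which are faithful to the setup of Section~\ref{se:fl_merging}.
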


\begin{proof} 
Let $F$ be the event that $\eta$ hits $B(0,\epsilon)$, then exits $B(1/4)$, and hits $B(0,\epsilon)$ again. By Proposition~\ref{pr:4arm_simple} we have that
\begin{equation}
\label{eqn:p_f_probability}
\p[F] = O(\epsilon^{\alpha_{4,\kappa}-a'}).
\end{equation}
By the definition of $I_\epsilon$, we have that
\[ \p[F \giv I_\epsilon] \geq p.\]
This, in turn, implies that
\[ \p[ I_\epsilon] \leq \frac{1}{p} \p[F] =  O(\epsilon^{\alpha_{4,\kappa}-a'}) . \]
\end{proof}

For the proof of Lemma~\ref{le:double_bound_given_pts}, recall the notation $\CF_{z,r}$, $\wt{h}_{z,r}$ introduced in Section~\ref{se:gff} where let the fields $\wt{h}_{z,r}$ have boundary values compatible with a usual \slek{}.

\begin{proof}[Proof of Lemma~\ref{le:double_bound_given_pts}]
 We claim that
 \begin{equation}\label{eq:double_point_given_ball}
  \p[ F(z_u,r_u) \cap E_{z_u,\log_2(r_u^{-1}),k} \mid \CF_{z_u,r_u} ] \lesssim \left(\frac{ 2^{-k(1-\innexp)}}{r_u}\right)^{\alpha_{4,\kappa}-a'} .
 \end{equation}
 To see this, let $J \in \{ \lceil\log_2(r_u^{-1})\rceil,\ldots,\lfloor\log_2(r_u^{-1})+\innexp k\rfloor \}$ be sampled uniformly at random, independently of $h$. Then with conditional probability at least $9/10$ we find a scale where $G_{z_u,2^{-J}}$ occurs. In particular, if $\wt{I}_{z_u,2^{-J},2^{-k}}$ denotes the event from Lemma~\ref{lem:double_flow_lines_interior} occurring for $\wt{h}_{z,2^{-J}}$ and $\epsilon = 2^{-(k-J)}$, then
 \[ \p[ \wt{I}_{z_u,2^{-J},2^{-k}} \cap G_{z_u,2^{-J}} \mid \CF_{z_u,r_u} ] \ge (9/10) \p[ F(z_u,r_u) \cap E_{z_u,\log_2(r_u^{-1}),k} \mid \CF_{z_u,r_u} ] . \]
 On the other hand, for each $j$, by Lemma~\ref{lem:double_flow_lines_interior} and absolute continuity we see that
 \[ \p[ \wt{I}_{z_u,2^{-j},2^{-k}} \cap G_{z_u,2^{-j}} \mid \CF_{z_u,r_u} ] \lesssim 2^{-(k-j)(\alpha_{4,\kappa}-a')} \le (2^{-k}/(r_u 2^{-\innexp k}))^{\alpha_{4,\kappa}-a'} . \]
 Since we can equally first sample $J$ and then independently $h$, we conclude~\eqref{eq:double_point_given_ball}.
 
 Since we have chosen the balls $B(z_u,r_u)$ mutually disjoint, \eqref{eq:bound-double-points-u} follows from~\eqref{eq:double_point_given_ball}.
\end{proof}

\newcommand*{\stepsize}{a_2}

We turn to the proof of Lemma~\ref{le:jk-double-points}. We will sum the bound~\eqref{eq:bound-double-points-u} over a chosen set of point configurations, and then argue that whenever there are too many $(r,2^{-k})$-double points, we can find a configuration among them that is in our chosen set.

Let $\stepsize > 0$. For $i = 1,\ldots,\lfloor\stepsize^{-1}\rfloor$, let $\CA_{i,m}$ be the set of $\ul{z} = (z_1,\ldots,z_m)$ such that $2^{-i\stepsize k} \le \abs{z_u-z_v} \le 2^{-(i-1)\stepsize k}$ for each pair $u \neq v$.

\begin{lemma}\label{le:double_bound_integrated}
 Let $c_0 > 0$. There exists $\zeta > 0$ (depending on $c_0$) such that the following is true. Suppose $\innexp,\stepsize > 0$ are sufficiently small (depending on $c_0$). Let $E_k$ be the event defined in~\eqref{eq:good_merging_everywhere}. Then
 \[
  \max_{i < (1-c_0)\stepsize^{-1}} \p\left[ \bigcup_{\ul{z} \in \CA_{i,m}} F(\ul{z},2^{-i\stepsize k}) \cap E_k \right] \lesssim 2^{2k-\zeta km} .
 \]
\end{lemma}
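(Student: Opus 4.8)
The plan is to fix $i < (1-c_0)\stepsize^{-1}$ and bound the probability in question by a union bound over $\ul z\in\CA_{i,m}$, combined with the pointwise estimate from Lemma~\ref{le:double_bound_given_pts}. First I would note that for $\ul z\in\CA_{i,m}$, all the pairwise distances are at least $2^{-i\stepsize k}$, and the radii $r_u = 2^{-i\stepsize k}$ satisfy $r_u < \min_{v\neq u}\abs{z_u-z_v}$, so the configuration is admissible for the event $F(\ul z,\ul r)$ with $\ul r=(2^{-i\stepsize k},\ldots,2^{-i\stepsize k})$. Moreover, since $i < (1-c_0)\stepsize^{-1}$ we have $r_u = 2^{-i\stepsize k} > 2^{-(1-c_0)k}$, which (for $\innexp$ small enough, say $\innexp < c_0/2$) falls within the range $r_u > 2^{-k(1-\innexp)}$ required by Lemma~\ref{le:double_bound_given_pts}. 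Hence for each such $\ul z$,
\[
 \p[F(\ul z,\ul r)\cap E_k] \lesssim \left(\frac{2^{-k(1-\innexp)}}{2^{-i\stepsize k}}\right)^{m(\alpha_{4,\kappa}-a')} = 2^{-k m(1-\innexp-i\stepsize)(\alpha_{4,\kappa}-a')}.
\]
Since $i\stepsize \le (1-c_0)$, the exponent satisfies $1-\innexp-i\stepsize \ge c_0-\innexp \ge c_0/2$, so this bound is at most $2^{-k m c_0(\alpha_{4,\kappa}-a')/2}$.

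Next I would count the number of configurations $\ul z\in\CA_{i,m}$ modulo the obvious overcount. The $m$ points all lie within mutual distance $\le 2^{-(i-1)\stepsize k}$, hence within a ball of radius $2^{-(i-1)\stepsize k}$; the first point $z_1$ ranges over $\CD_k$, which has $O(2^{2k})$ elements, and each subsequent point lies in a ball of radius $2^{-(i-1)\stepsize k}$ around $z_1$, contributing at most $O(2^{2\stepsize k}\cdot 2^{2(1-(i-1)\stepsize)k})$... but more crudely, each of $z_2,\dots,z_m$ ranges over at most $O(2^{2k})$ points of $\CD_k$, so $\abs{\CA_{i,m}} \le C_m 2^{2km}$ for a constant $C_m$ depending on $m$. (Even simpler: $\abs{\CA_{i,m}}\le\abs{\CD_k}^m = O(2^{2km})$.) Combining with the previous display and the estimate $\p[E_k^c]=O(e^{-bk})$ (valid for the chosen $M,p$; note we may absorb this into the bound since $2^{2k-\zeta k m}$ dominates $e^{-bk}$ once $\zeta m < 2$, and we will arrange the statement so that $\zeta$ is chosen appropriately), we get
\[
 \p\left[\bigcup_{\ul z\in\CA_{i,m}} F(\ul z,2^{-i\stepsize k})\cap E_k\right] \lesssim 2^{2km}\cdot 2^{-km c_0(\alpha_{4,\kappa}-a')/2} = 2^{-km(c_0(\alpha_{4,\kappa}-a')/2 - 2)}.
\]
Here is where $\alpha_{4,\kappa}>2$ is used: since $\alpha_{4,\kappa}-2 > 0$, for $c_0$ fixed I can choose $a'$ small enough that $c_0(\alpha_{4,\kappa}-a')/2 - 2 \ge c_0(\alpha_{4,\kappa}-2)/4 =: \zeta' > 0$, say. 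However, the stated bound is $2^{2k-\zeta km}$, which is weaker than $2^{-\zeta' km}$; so in fact the bound I obtain is stronger, and a fortiori implies $\lesssim 2^{2k-\zeta km}$ with, e.g., $\zeta = \zeta'/2$ once $m$ is large — but to get it for all $m\ge 1$ one keeps the factor $2^{2k}$ explicitly, writing $2^{2km - km c_0(\alpha_{4,\kappa}-a')/2} \le 2^{2k}\cdot 2^{-\zeta km}$ with $\zeta = c_0(\alpha_{4,\kappa}-a')/2 - 2 + 2\cdot\frac{m-1}{m}$; the cleanest route is to bound $\abs{\CA_{i,m}} \le 2^{2k}\cdot(C 2^{2\stepsize k})^{m-1}$ using the localization of $z_2,\dots,z_m$ near $z_1$, which for $\stepsize$ small gives $\abs{\CA_{i,m}}\le 2^{2k+c_1\stepsize km}$ with $c_1$ absolute, and then the final exponent is $2k - km(c_0(\alpha_{4,\kappa}-a')/2 - c_1\stepsize)$, yielding the claim with $\zeta = c_0(\alpha_{4,\kappa}-a')/2 - c_1\stepsize > 0$ for $\stepsize$ small.

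The main obstacle I anticipate is the combinatorial bookkeeping: ensuring the counting bound $\abs{\CA_{i,m}}$ has the right form (a prefactor $2^{2k}$ times something $2^{o(1)\cdot km}$) so that after multiplying by the probability bound from Lemma~\ref{le:double_bound_given_pts} one genuinely gets $2^{2k-\zeta km}$ with $\zeta>0$ uniform in $m$ and $i$. This forces the use of the localization of the configuration within a ball of radius $2^{-(i-1)\stepsize k}$ rather than the naive $\abs{\CD_k}^m$ bound, and one must verify that $\stepsize$ (and $\innexp$) can be chosen small enough — depending only on $c_0$ (and $\kappa$) — that the gain $\alpha_{4,\kappa}-2>0$ from Lemma~\ref{le:double_bound_given_pts} survives. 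A secondary point to handle with care is that the radii $r_u = 2^{-i\stepsize k}$ must be both large enough (for the hypothesis $r_u > 2^{-k(1-\innexp)}$ of Lemma~\ref{le:double_bound_given_pts}, using $i<(1-c_0)\stepsize^{-1}$) and small enough relative to the pairwise distances $\ge 2^{-i\stepsize k}$ (so the balls $B(z_u,r_u)$ are disjoint) — both are immediate from $\ul z\in\CA_{i,m}$ and the constraint on $i$, but should be stated. Everything else is a direct application of Lemma~\ref{le:double_bound_given_pts} and a union bound.
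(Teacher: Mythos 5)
Your overall strategy (union bound over $\ul z\in\CA_{i,m}$ combined with Lemma~\ref{le:double_bound_given_pts}) is the correct one and matches the paper, but both routes you outline to complete the estimate break down. The naive route fails because the claimed inequality $c_0(\alpha_{4,\kappa}-a')/2-2\ge c_0(\alpha_{4,\kappa}-2)/4>0$ is false for small $c_0$ (the left side tends to $-2$ as $c_0\to0$), so the product of the crude count $2^{2km}$ and the crude probability bound $2^{-km c_0(\alpha_{4,\kappa}-a')/2}$ (obtained from $1-\innexp-i\stepsize\ge c_0/2$) does not decay unless $c_0 > 4/(\alpha_{4,\kappa}-a')$ — which contradicts the lemma's hypothesis that $c_0>0$ is arbitrary. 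The refined counting bound you then propose, $\abs{\CA_{i,m}}\le 2^{2k}(C2^{2\stepsize k})^{m-1}$, is incorrect: the points $z_2,\ldots,z_m$ are lattice points of $\CD_k$ (mesh $2^{-k}$) constrained to a ball of radius $2^{-(i-1)\stepsize k}$ about $z_1$, so each one ranges over $\asymp (2^{-(i-1)\stepsize k}/2^{-k})^2 = 2^{2(1-(i-1)\stepsize)k}$ positions, not $O(2^{2\stepsize k})$. For $i=1$ this is $\asymp 2^{2k}$ and the true count is essentially $2^{2km}$, so your bound is off by a factor that grows rapidly in $m$. (The number $2^{2\stepsize k}$ bounds how many $2^{-i\stepsize k}$-separated points can \emph{fit} in the ball, not the number of $\CD_k$-positions each of them can occupy.)

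What rescues the argument — and what your simplification discards — is that the $i$-dependence of the counting bound and of the probability bound cancel precisely because $\alpha_{4,\kappa}>2$. Writing $\alpha=\alpha_{4,\kappa}-a'$, the correct count is $\abs{\CA_{i,m}}\lesssim 2^{2k}\cdot 2^{(2k-2(i-1)\stepsize k)(m-1)}$ and the probability bound from Lemma~\ref{le:double_bound_given_pts} is $(2^{-(1-\innexp-i\stepsize)k})^{\alpha m}$, so the exponent per additional $z_u$ is
\[
 2\bigl(1-(i-1)\stepsize\bigr)-\alpha(1-\innexp-i\stepsize)=-(\alpha-2)(1-i\stepsize)+2\stepsize+\alpha\innexp .
\]
As $i$ decreases the counting term $-2(i-1)\stepsize$ worsens, but the probability term $+\alpha i\stepsize$ improves at a strictly faster rate because $\alpha>2$. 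Since $i\stepsize<1-c_0$ gives $1-i\stepsize>c_0$, the per-point exponent is $<-c_0(\alpha-2)+2\stepsize+\alpha\innexp<-c_0(\alpha-2)/2=:-\zeta$ once $\innexp,\stepsize$ are small, and the total exponent is $2(i-1)\stepsize k-\zeta km\le 2k-\zeta km$. Bounding $1-\innexp-i\stepsize\ge c_0/2$ from below, as you do, throws away exactly this compensation, which is why you are then forced into an incorrect counting bound to make the numbers work.
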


\begin{proof}
 For fixed $\ul{z} \in \CA_{i,m}$, by Lemma~\ref{le:double_bound_given_pts}, we have
 \[ \p[ F(\ul{z},2^{-i\stepsize k}) \cap E_k ] \lesssim (2^{-k(1-\innexp)+i\stepsize k})^{(\alpha_{4,\kappa}-a')m} . \]
 We need to count the number of elements in $\CA_{i,m}$. Due to the constraint $\abs{z_u-z_v} \le 2^{-(i-1)\stepsize k}$ for each $u,v$, we see that
 \[ |\CA_{i,m}| \lesssim 2^{2k} 2^{(2k-2(i-1)\stepsize k)(m-1)} . \] 
 Since $\alpha_{4,\kappa} > 2$, we can pick $a' > 0$ small enough so that $\alpha_{4,\kappa}-a'-2 > 0$. For $i\stepsize < 1-c_0$ we get
 \[ \begin{split}
  |\CA_{i,m}| (2^{-k(1-\innexp)+i\stepsize k})^{(\alpha_{4,\kappa}-a')m} 
  &\le 2^{2k} 2^{-[(\alpha_{4,\kappa}-a')(1-\innexp)-2 -i\stepsize(\alpha_{4,\kappa}-a'-2)-2\stepsize]km} \\
  &\le 2^{2k} 2^{-[c_0(\alpha_{4,\kappa}-a'-2) -\innexp(\alpha_{4,\kappa}-a') -2\stepsize]km} .
 \end{split} \] 
 When $\innexp,\stepsize > 0$ are sufficiently small, we have $\innexp(\alpha_{4,\kappa}-a') +2\stepsize < c_0(\alpha_{4,\kappa}-a'-2)/2 \eqdef \zeta$. This concludes the proof.
\end{proof}

\begin{proof}[Proof of Lemma~\ref{le:jk-double-points}]
 Throughout the proof, we fix $\stepsize > 0$ sufficiently small depending on $a$ (how small will become apparent later). We can assume that $r > 2^{-k(1-a/4)}$, otherwise the number of points in $\CD_k \cap B(w,2^{-k(1-a/4)})$ is trivially bounded by $2^{ak/2}$.
 
 Let $b>0$ be given. We want to show that
 \[ \p\left[ \bigcup_{w \in B(0,3/4)} \bigcup_{r > 2^{-k}} \{N^{w}_{r,2^{-k}} \geq 2^{a k}\} \right] = O(2^{-bk}) . \]
 Let $E_k$ be the event defined in~\eqref{eq:good_merging_everywhere}. Pick $M,p>0$ so that $\p[E_k^c] = O(e^{-bk})$. Then it suffices to show $\p[\cdot \cap E_k] = O(2^{-bk})$.
 
 For $i=1,\ldots,\lfloor(1-a/4)\stepsize^{-1}\rfloor$, let
 \[ F_i = \bigcup_{\ul{z} \in \CA_{i,k}} F(\ul{z},2^{-i\stepsize k}) . \]
 Then $\p[F_i \cap E_k] \lesssim 2^{2k-\zeta k^2} \lesssim 2^{-\zeta' k^2}$ by Lemma~\ref{le:double_bound_integrated}.
 
 Suppose that there are $2^{ak}$ many $(r,2^{-k})$-double points in some $B(w,r) \cap \CD_k$. We claim that we can find a scale $i$ so that $F_i$ occurs. This will conclude the proof of the lemma.
 
 For each $i$, we divide the set $B(0,3/4)$ into $2^{2i\stepsize k}$ boxes of radius $2^{-i\stepsize k}$. Let $i_r = \lfloor \log_2(r^{-1})/(\stepsize k) \rfloor$. Suppose that there are $2^{ak}$ many $(r,2^{-k})$-double points in one of the boxes $Q_{i_r}$ of radius $2^{-i_r\stepsize k}$.
 
 We distinguish between two cases. Consider the boxes of radius $2^{-(i_r+1)\stepsize k}$ inside $Q_{i_r}$. In case at least $k$ of them contain a $(r,2^{-k})$-double point, then we see that the event $F_{i_r+1}$ occurs. If not, then there must be a box $Q_{i_r+1} \subseteq Q_{i_r}$ of radius $2^{-(i_r+1)\stepsize k}$ that contains at least $2^{ak}/k$ many $(r,2^{-k})$-double points. We then attempt to find at least $k$ boxes of radius $2^{-(i_r+2)\stepsize k}$ each containing a $(r,2^{-k})$-double point. Inductively, if a box $Q_{i-1} \subseteq B(0,3/4)$ of radius $2^{-(i-1)\stepsize k}$ contains at least $2^{ak}/k^{i-1}$ many $(r,2^{-k})$-double points, then among the $2^{2\stepsize k}$ many sub-boxes of radius $2^{-i\stepsize k}$ within $Q_{i-1}$, either $k$ of them contain a $(r,2^{-k})$-double point, or there is a sub-box $Q_i \subseteq Q_{i-1}$ of radius $2^{-i\stepsize k}$ that contains at least $2^{ak}/k^{i}$ many $(r,2^{-k})$-double points. Finally, if $i\stepsize > 1-a/4$, then a box of radius $2^{-(1-a/4)k}$ can contain at most $2^{(a/2)k}$ (double) points in $\CD_k$. In particular, it cannot contain more than $2^{ak}/k^{i}$ many. We conclude that $F_i$ must have occurred for some $i=1,\ldots,\lfloor(1-a/4)\stepsize^{-1}\rfloor$.
\end{proof}

\begin{figure}[ht]
\centering
\includegraphics[width=0.3\textwidth]{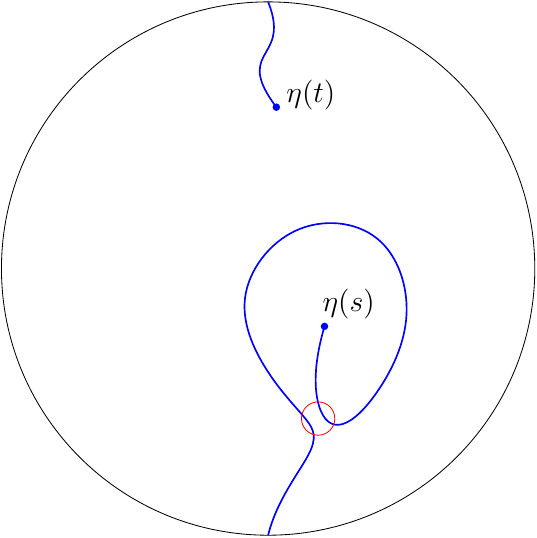}\hspace{0.04\textwidth}\includegraphics[width=0.3\textwidth]{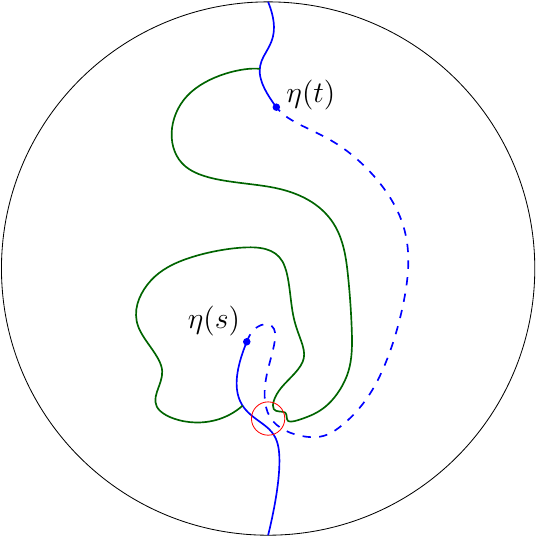}\hspace{0.04\textwidth}\includegraphics[width=0.3\textwidth]{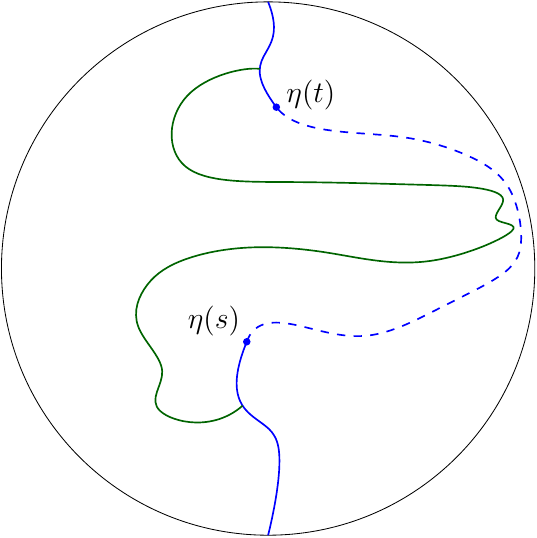}
\caption{We illustrate some of the possible events that produce bottlenecks in the domain $D_{s,t,A}$ in Proposition~\ref{pr:regularity}. These are the places where the left and the right boundary of $D_{s,t,A}$ are very close to another. The boundary of $A$ is depicted in green. In the first two depicted scenarios, $\eta$ makes an approximate double point. In the third depicted scenario (which we do not consider in the statement of Proposition~\ref{pr:regularity}), $\eta$ needs to pass close to the boundary of $\D$.}
\label{fig:bad-domains-examples}
\end{figure}

\begin{proof}[Proof of Proposition~\ref{pr:regularity}] 
 We are going to deduce the result from Lemma~\ref{le:jk-double-points}.
 
 Let $k_0 \in \N$ such that $M \asymp 2^{2k_0}$. Let $E$ be the event that for every $k \ge k_0$ and $n < k$ the number of $(2^{-n},2^{-k})$-double points for $\eta$ in any ball $B(w,2^{-n}) \cap \CD_k$ is at most $2^{ak}$. By Lemma~\ref{le:jk-double-points} we have $\p[E^c] = o^\infty(2^{-k_0}) = o^\infty(M^{-1})$.
 
 We argue that $E \subseteq G_{M,a}$. First note that for $k \le k_0$ the number of points in $\CZ_k$ is trivially bounded by $O(2^{2k_0}) = O(M)$. 
 
 Suppose that $0 \le s < t \le \infty$ and $A \subseteq D^L$ as in the statement of the proposition. Let $z$ be a point on the hyperbolic geodesic in $D_{s,t,A}$ from $\eta(s)$ to $\eta(t)$, and suppose that $\dist(z,\partial D_{s,t,A}) \in [2^{-k-1},2^{-k}]$. By Lemma~\ref{lem:hyperbolic_geodesic_facts}, it has distance at most $8\cdot 2^{-k}$ to both the left and right boundary of $D_{s,t,A}$. In particular, $\eta[s,t] \cap B(z,8\cdot 2^{-k}) \neq \varnothing$. The right boundary of $D_{s,t,A}$ consists of the right arc of $\partial\D$ and the right sides of $\eta[0,s]$, $\eta[t,\infty]$. Since we are only considering points within $B(0,3/4)$, we must have $\dist(z,\eta[0,s]) \le 8\cdot 2^{-k}$ or $\dist(z,\eta[t,\infty]) \le 8\cdot 2^{-k}$. Let us assume the former (the latter case is exactly the same by symmetry).
 
 Let $n \in \N$ such that $\abs{z-\eta(s)} \in [2^{-n-1},2^{-n}]$. Let $\wt{z} \in \CD_{k+2}$ be the point closest to $z$. Then $\wt{z}$ is a $(2^{-n-1},2^{-k+4})$-double point for $\eta$. Moreover, if $z = \varphi(i2^{j})$, $z' = \varphi(i2^{j'})$ where $j \neq j' \in \CZ^{B(0,3/4)}_k$, then $\abs{z-z'} \gtrsim 2^{-k}$ due to Koebe's $1/4$-theorem. In particular, on the event that there are at most $2^{ak}$ many $(2^{-n},2^{-k})$-double points in $B(\eta(s),2^{-n}) \cap \CD_k$, we can have at most $O(2^{ak})$ many points $j \in \CZ^{B(0,3/4)}_k$ with $\varphi(i2^j) \in A(\eta(s),2^{-n-1},2^{-n})$. We conclude by summing over $n=1,\ldots,k$.
\end{proof}

\subsection{Multiple crossings of \clekp{} exterior boundaries}
\label{se:cle_outer_boundary_exponent}

\begin{lemma}
\label{lem:thin_cle_lwb}
Let $\Gamma$ be a nested $\CLE_{\kappa'}$ in $\D$.  For $0 < \epsilon < r < 1$ let $E_{r,\epsilon}^4$ be the event that there exists $z \in B(0,1-r)$ and a loop of~$\Gamma$ whose exterior boundary makes~$4$ crossings of the annulus $A(z,\epsilon,r)$.  Then $\p[E_{r,\epsilon}^{4}] = O(\epsilon^{\alpha_{4,\kappa}-2+o(1)}r^{-\alpha_{4,\kappa}})$ as $\epsilon \to 0$ where $\alpha_{4,\kappa} > 2$ is as in~\eqref{eqn:double_exponent_simple}.
\end{lemma}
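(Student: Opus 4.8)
The plan is to combine a union bound over a mesh of side length $\epsilon$ with the flow-line four-arm estimate of Proposition~\ref{pr:4arm_fl}. The point is that a four-fold crossing of an annulus by the exterior boundary of a $\CLE_{\kappa'}$ loop is a double-point event for a simple $\SLE_\kappa$-type curve (with $\kappa=16/\kappa'$), and the exterior boundaries of all loops of a nested $\CLE_{\kappa'}$ in $\D$ are unions of flow-line arcs of the Gaussian free field $h$ on $\D$ coupled to $\Gamma$ through its space-filling $\SLE_{\kappa'}$ exploration (with CLE-type boundary conditions, bounded by an absolute constant $M=M(\kappa')$), as recalled in Section~\ref{subsec:cle}.

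First I would reduce to a single-point estimate. Cover $B(0,1-r)$ by $O(\epsilon^{-2})$ balls $B(w_i,\epsilon)$ with $w_i\in B(0,1-r)$. If $E_{r,\epsilon}^4$ occurs with witnessing point $z$ and loop $\CL$, then $z\in B(w_i,\epsilon)$ for some $i$, and since $B(w_i,2\epsilon)\supseteq B(z,\epsilon)$ and $B(w_i,r-\epsilon)\subseteq B(z,r)$, the exterior boundary of $\CL$ makes four crossings of $A(w_i,2\epsilon,r-\epsilon)$, and hence of the sub-annulus $A(w_i,2\epsilon,r/2)$ provided $\epsilon\le r/4$. When $\epsilon>r/4$ the asserted bound is $\gtrsim r^{-2}\ge 1$, so there is nothing to prove; assume $\epsilon\le r/4$ from now on. Since $w_i\in B(0,1-r)$ we have $B(w_i,r)\subseteq\D$, so it suffices to show that for each fixed $w$ with $B(w,r)\subseteq\D$ and each $a>0$,
\[
 p_w \defeq \p\bigl[\, \text{some loop of } \Gamma \text{ has exterior boundary crossing } A(w,2\epsilon,r/2) \text{ four times}\,\bigr] = O\bigl((\epsilon/r)^{\alpha_{4,\kappa}-a}\bigr),
\]
with implicit constant depending only on $\kappa'$ and $a$; summing over the $O(\epsilon^{-2})$ net points and using $r\le 1$ then gives $\p[E_{r,\epsilon}^4]=O(\epsilon^{\alpha_{4,\kappa}-2-a}\,r^{-\alpha_{4,\kappa}})$, which is the claim since $a>0$ is arbitrary.

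For the single-point estimate, observe that if the exterior boundary of some loop $\CL$ of $\Gamma$ makes four crossings of $A(w,2\epsilon,r/2)$, then there are two disjoint arcs of flow lines of $h$ crossing this annulus in and out, and their heights are exactly those required so that a flow line from $\partial\D$ of matching winding could merge first into one and then into the other without any further crossing of the annulus — this is automatic because the two arcs are consecutive pieces of a single loop boundary, which in the exploration is traced by a flow line emanating (ultimately) from $\partial\D$. Thus the event defining $p_w$ is contained in the event of Proposition~\ref{pr:4arm_fl} for the GFF $h$ on $\D$ with the annulus $A(w,2\epsilon,r/2)=A(w,(4\epsilon/r)\cdot r/2,\,r/2)$, and since the boundary values of $h$ are bounded by the absolute constant $M=M(\kappa')$, that proposition yields $p_w\le c(M,a)(4\epsilon/r)^{\alpha_{4,\kappa}-a}$, as needed.

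The step I expect to be the main obstacle is making precise and uniform — over all nesting levels — the assertion that the exterior boundary of every loop of $\Gamma$ is locally a flow-line arc of the single GFF $h$ on $\D$, together with the compatibility of the windings of the two relevant arcs so that the height condition in Proposition~\ref{pr:4arm_fl} holds automatically; this rests on the imaginary-geometry description of the space-filling $\SLE_{\kappa'}$ associated with the nested CLE. An alternative that sidesteps the master coupling is to argue level by level: reveal $\Gamma$ down to nesting level $n-1$, note that a level-$n$ witnessing loop forces its parent $\CL'$ to meet $B(w,2\epsilon)$ and to leave $B(w,r/2)$, apply the non-nested version of the estimate inside $\Fill(\CL')$ using absolute continuity of the GFF away from the boundary (whose Radon--Nikodym moments are bounded by an absolute constant because the relevant scale is comparable to the distance from $\partial\,\Fill(\CL')$ once one intersects with $\Fill(\CL')$), and control deep levels via local finiteness of the nested $\CLE_{\kappa'}$; the bookkeeping for parents that cut through the annulus is the delicate point there.
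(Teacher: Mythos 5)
Your central claim --- that the exterior boundary of every loop of a nested $\CLE_{\kappa'}$ in $\D$ is made up of flow-line arcs whose winding heights are automatically compatible so that ``a flow line from $\partial\D$ of matching winding could merge first into one and then into the other'' --- is precisely the point that fails, and the paper's proof is designed entirely around avoiding it. As the discussion preceding Lemma~\ref{lem:thin_cle_wp} makes explicit, the exterior boundary $\partial\Fill(\CL)$ of a $\CLE_{\kappa'}$ loop is \emph{not} a flow line of the GFF, whereas the interior boundary (the boundary of a complementary component of $\Fill(\CL)$, seen from inside that component) \emph{is}, by SLE duality. Concretely, if the associated space-filling $\SLE_{\kappa'}$ fills $\Fill(\CL)$ on the time interval $[\tau,\sigma]$, then $\partial\Fill(\CL)$ splits into two arcs at $\eta'(\tau),\eta'(\sigma)$: one arc bounds the ``past'' region $\eta'[0,\tau]$ and lies on a flow line of angle $-\pi/2$ emanating from $\eta'(\tau)$, while the other arc bounds the ``future'' region $\eta'[\sigma,\cdot]$ and lies on a flow line of angle $+\pi/2$ from $\eta'(\sigma)$. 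These two arcs carry winding heights differing by $\pi\chi$, so no single flow line emanating from $\partial\D$ can merge into both; the hypothesis of Proposition~\ref{pr:4arm_fl} is not met when the four crossings of $A(w,2\epsilon,r/2)$ are split between the two arcs, which is generically what happens. You flagged this as ``the step I expect to be the main obstacle'' and guessed it would be ``automatic''; it is not, and there is no purely local fix inside $\D$.

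The paper's actual route is the one your claim would render unnecessary: it passes to a whole-plane nested $\CLE_{\kappa'}$ and invokes the inversion symmetry of \cite{gmq2021sphere} (in Lemma~\ref{lem:thin_cle_wp}) to turn the exterior boundary of a loop surrounding $0$ into the \emph{interior} boundary of a loop of the inverted ensemble, which \emph{is} a flow line and to which a four-arm estimate can be applied. Even then a second issue you glossed over arises: Proposition~\ref{pr:4arm_fl} requires the four crossings to occur in the order ``in--out--in--out'' as seen by the flow line from the boundary, which is not automatic when the root $x_0$ and the first interior-boundary point $y_0$ both sit deep inside the annulus; the paper handles this with a rerooting/resampling argument (using Lemma~\ref{le:good_scales_merging_multiple} and the resampling machinery of Section~\ref{se:cle_resampling}). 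Finally, the reduction from Lemma~\ref{lem:thin_cle_wp} to Lemma~\ref{lem:thin_cle_lwb} in the paper is done not by a mesh union bound over $O(\epsilon^{-2})$ points but by exploiting translation invariance of the whole-plane $\CLE$ together with the ball-filling property of space-filling $\SLE$ (Lemma~\ref{le:fill_ball}), so that a uniformly random translate lands the crossing point at the origin with probability $\gtrsim\epsilon^{2+2a}$; this costs the same polynomial factor as your net but also handles all nesting levels at once, rather than by the level-by-level parent decomposition you sketch as an alternative, which has its own bookkeeping issues that you yourself note.
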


The difficulty in proving Lemma~\ref{lem:thin_cle_lwb} is that the exterior boundary of a $\CLE_{\kappa'}$ loop does not correspond to a flow line of the GFF.  On the other hand, it does hold that the boundary of the complementary components cut out by a $\CLE_{\kappa'}$ loop do correspond to GFF flow lines.  The idea of the following proof is to use the inversion symmetry of whole-plane nested $\CLE_{\kappa'}$ proved in \cite{gmq2021sphere} in order to convert the exterior boundary of a loop into an interior boundary and then apply Proposition~\ref{pr:4arm_simple} to the corresponding flow line.

\begin{lemma}
\label{lem:thin_cle_wp}
Let $\Gamma$ be a whole-plane nested $\CLE_{\kappa'}$.  For $0 < \epsilon < 1$ let $E_{\epsilon}^{4,0}$ be the event that~$\Gamma$ contains a loop surrounding $0$ whose exterior boundary makes~$4$ crossings of the annulus $A(0,\epsilon,1)$.  Then $\p[E_{\epsilon}^{4,0}] = O(\epsilon^{\alpha_{4,\kappa}+o(1)})$ as $\epsilon \to 0$ where $\alpha_{4,\kappa}$ is as in~\eqref{eqn:double_exponent_simple}.
\end{lemma}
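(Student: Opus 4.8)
The plan is to use the inversion invariance of whole-plane nested $\CLE_{\kappa'}$, together with scale invariance, to replace the exterior boundary of a loop surrounding $0$ by the boundary of the complementary face containing $0$ of a loop of the (same-law) CLE; unlike the exterior boundary, this boundary is a union of two GFF flow lines, so the bound then follows from the flow-line four-arm estimate Proposition~\ref{pr:4arm_fl}.

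First I would set up the inversion. Let $\iota(z) = 1/z$. By the inversion symmetry of whole-plane nested $\CLE_{\kappa'}$ proved in \cite{gmq2021sphere}, $\iota(\Gamma) \overset{d}{=} \Gamma$, and by scale invariance $\epsilon\iota(\Gamma) \overset{d}{=} \Gamma$. If $\CL$ is a loop of $\Gamma$ surrounding $0$ with unbounded complementary component $U$ (so its exterior boundary is $\partial U$), then $\epsilon\iota(U)$ is a bounded complementary component of the loop $\epsilon\iota(\CL) \in \epsilon\iota(\Gamma)$, it contains $0$, and since $\iota$ maps $A(0,\epsilon,1)$ conformally onto $A(0,1,\epsilon^{-1})$ we have $\epsilon\iota(A(0,1,\epsilon^{-1})) = A(0,\epsilon,1)$. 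Hence the image of $E_\epsilon^{4,0}$ under $\epsilon\iota$ is contained in the event that $\Gamma$ has a loop $\CL'$ with a bounded complementary face $O' \ni 0$ whose boundary $\partial O'$ makes $4$ crossings of $A(0,\epsilon,1)$, and $\p[E_\epsilon^{4,0}]$ is bounded by the probability of this event.

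Next I would bring in the GFF. Couple $\Gamma$ with the field $h$ (of the form $\tilde h - \alpha\arg(\cdot)$) so that the loops of $\Gamma$ are traced by branches of the exploration tree, i.e.\ radial/whole-plane counterflow lines of $h$. By the counterflow-line/flow-line duality recalled in Section~\ref{se:fl_interaction} (see Figure~\ref{fi:detection}), the boundary $\partial O'$ of the complementary face of $\CL'$ containing $0$ equals $\eta^L \cup \eta^R$, where $\eta^L$ (resp.\ $\eta^R$) is the left (resp.\ right) flow line of $h$ seen from $0$, one being reflected off the other in the opposite direction. Since the exterior boundary of a $\CLE_{\kappa'}$ loop is a simple loop and $O' = \epsilon\iota(U)$ is the $\epsilon\iota$-image of the unbounded face of $\CL$, the curve $\partial O'$ is a Jordan curve around $0$, so $\eta^L$ and $\eta^R$ meet only at their two common endpoints. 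Because $O'$ lies on a fixed side of $\eta^L$ and on a fixed side of $\eta^R$, with opposite orientations, if $\partial O'$ crosses $A(0,\epsilon,1)$ four times then, away from the at most two common endpoints of $\eta^L,\eta^R$, exactly two of the four crossing strands belong to $\eta^L$ and two to $\eta^R$; in particular each of $\eta^L$ and $\eta^R$ crosses $A(0,\epsilon,1)$ once in and once out, with the height difference produced by two flow lines bounding a counterflow line. This is exactly the configuration of Proposition~\ref{pr:4arm_fl} with $z = 0$, $r = 1$. Using that on a neighborhood of $A(0,\epsilon,1)$ the field $h$ is absolutely continuous with respect to a whole-plane GFF with Radon--Nikodym derivative having all moments, Proposition~\ref{pr:4arm_fl} gives $\p[E_\epsilon^{4,0}] = O(\epsilon^{\alpha_{4,\kappa}-a})$ for every $a > 0$, hence $\p[E_\epsilon^{4,0}] = O(\epsilon^{\alpha_{4,\kappa}+o(1)})$.

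The main obstacle is the identification in the third step: pinning down, via the inversion symmetry of \cite{gmq2021sphere}, the whole-plane $\CLE_{\kappa'}$--GFF coupling, and flow-line/counterflow-line duality, that after inversion the exterior boundary becomes a union of two GFF flow lines of exactly the reflected type appearing in Proposition~\ref{pr:4arm_fl}, with the matching height difference. The combinatorial observation that the four crossing strands necessarily split as $2+2$ between $\eta^L$ and $\eta^R$ (forced by $O'$ lying on a fixed side of each of them) is what puts us in the genuine four-arm situation, and hence recovers the full exponent $\alpha_{4,\kappa}$ rather than a weaker three-arm bound.
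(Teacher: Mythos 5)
Your high-level strategy---invert via the sphere symmetry of \cite{gmq2021sphere} so that the exterior boundary of a loop around $0$ becomes the interior boundary of a loop around $0$, then detect that interior boundary with a pair of GFF flow lines and apply the flow-line four-arm bound Proposition~\ref{pr:4arm_fl}---is indeed the approach the paper takes. But the step where you claim that the four crossing strands of $\partial O'$ must split as two from $\eta^L$ and two from $\eta^R$, ``each crossing once in and once out,'' is a genuine gap, and it is exactly the difficulty the paper's proof spends most of its length handling.

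The ``$2$--$2$ split'' is simply not forced by the geometry. The two flow lines making up $\partial O'$ meet at two specific points: one of them is the first point $y_0$ of the interior boundary visited by the exploration branch, and the other is a branching point whose location also depends on the exploration. If $y_0$ (and the loop's first visit point $x_0$) happen to lie inside $B(0,\epsilon^a)$ for small $a$, then the two arcs can cross the annulus in the wrong order, e.g.\ $4$--$0$ or $3$--$1$; there is nothing in ``$O'$ lies on a fixed side of each arc'' that prevents one arc from accounting for three or four of the crossings while the other never enters the annulus. Proposition~\ref{pr:4arm_fl} is phrased for two \emph{disjoint} flow lines, each of which crosses ``in and out,'' and it crucially uses the merging compatibility (a flow line started from $\partial D$ can merge into $\eta_{w_1}$ and then $\eta_{w_2}$ without re-entering the annulus) to reduce to Wu's alternating-arm estimate Proposition~\ref{pr:alternating_crossing_simple}. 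When $x_0,y_0$ sit near $0$, the flow lines start from inside the annulus, the crossing pattern is not ``in-out-in-out,'' and neither the disjointness nor the merging-order condition of Proposition~\ref{pr:4arm_fl} is available.

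The paper resolves precisely this obstruction: it splits into the case $y_0\notin B(0,\epsilon^a)$ (where your argument does go through directly, at the cost of an $o(1)$ in the exponent from shrinking the annulus) and the case $x_0,y_0\in B(0,\epsilon^a)$, and in the latter case runs a rather elaborate rerooting/resampling argument (the event depicted in Figure~\ref{fi:interior_bdry_rerooting}, built from variants of Lemma~\ref{le:good_scales_merging_multiple}) that uses the rerooting invariance of $\CLE_{\kappa'}$ to move the branch point of the interior-boundary flow lines away from $B(0,1/4)$ at a bounded multiplicative cost in probability, after which the crossings do occur in the alternating order required by Proposition~\ref{pr:4arm_fl}. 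You flag this identification as ``the main obstacle,'' which is the right instinct, but the combinatorial observation you offer to close it is false in general, and the actual rerooting argument that closes it is missing from your proposal.
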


\begin{proof}
We apply the inversion map $1/z$ and let $\wh{\Gamma}$ be the image of $\Gamma$.  By \cite{gmq2021sphere} we have that $\wh{\Gamma}$ is a whole-plane nested $\CLE_{\kappa'}$.  If $E_{\epsilon}^{4,0}$ occurs for $\Gamma$, then there exists a loop $\wh{\CL}$ of $\wh{\Gamma}$ that surrounds $0$ and such that if $V$ is the component of $\C \setminus \wh{\CL}$ containing $0$, then $\partial V$ makes $4$ crossings across $A(0,1,1/\epsilon)$. Let $\eta'$ be the space-filling $\SLE_{\kappa'}$ in $\C$ from $\infty$ to $\infty$ through $0$ associated with $\wh{\Gamma}$.\footnote{To see that the space-filling \slekp{} associated to a whole-plane \clekp{} is well-defined, we can argue as follows. By repeatedly applying the domain Markov property, we see that a.s.\ for every $R>0$ there is a loop of $\wh{\Gamma}$ contained in $\C\setminus B(0,R)$ that does not surround $0$ but disconnects $0$ from $\infty$. For such a loop, the first point on the interior boundary drawn by the CLE exploration is determined (regardless of which point on the exterior boundary of the loop is visited first by the exploration), and so is the entire exploration inside the interior component.} Let $h$ be the whole-plane GFF modulo additive constant $2\pi\chi$ coupled with $\eta'$. We would like to deduce from Proposition~\ref{pr:4arm_fl} that the probability of this event is $O( \epsilon^{\alpha_{4,\kappa} + o(1)})$.

Strictly speaking, Proposition~\ref{pr:4arm_fl} applies only when the $4$ crossings of the interior boundary occur in the specified order ``in-out-in-out'', but we can use the rerooting invariance of \clekp{} together with a resampling argument similar to the proof of Proposition~\ref{pr:4arm_fl} to overcome this constraint.

Fix $a>0$ small. Suppose that $\wh{\CL} \in \wh{\Gamma}$ and $V$ is the component of $\C \setminus \wh{\CL}$ containing $0$ such that $\partial V$ makes $4$ crossings across $A(0,\epsilon,1)$. Let $x_0 \in \wh{\CL}$ be the first point visited by the exploration path, and let $y_0 \in \wh{\CL}$ be the first point visited on $\partial V$. In case $y_0 \notin B(0,\epsilon^a)$, then we are in the situation of Proposition~\ref{pr:4arm_fl} and conclude immediately. Suppose we are on the event that $x_0,y_0 \in B(0,\epsilon^a)$. The case when $x_0 \notin B(0,\epsilon^a)$ is similar and simpler.

\begin{figure}[ht]
\centering
\includegraphics[width=0.45\textwidth]{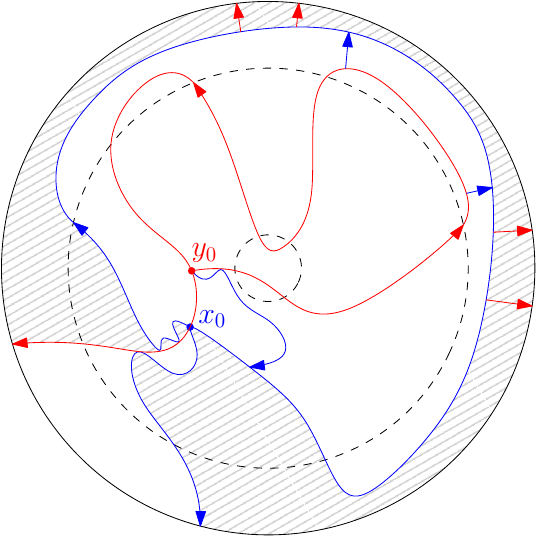}
\caption{Illustration of the resampling event used in the proof of Lemma~\ref{lem:thin_cle_wp}. The red (resp.\ blue) flow lines have angle $0$ (resp.\ $-\pi$). The region bounded by the shaded parts is the resampled version of $U$.}
\label{fi:interior_bdry_rerooting}
\end{figure}

We will now use some notation from Section~\ref{subsec:ig}, we refer to Section~\ref{se:fl_merging} for details and for the statement of Lemma~\ref{le:good_scales_merging_multiple} which we will use shortly. Consider a variant of the event in Lemma~\ref{le:good_scales_merging_multiple} where in the event $G$ we consider for each choice of strands $\eta_{w_1;\mathrm{out}},\eta_{w_2;\mathrm{in}},\eta_{w_3;\mathrm{out}},\eta_{w_4;\mathrm{in}},\eta_{w_5;\mathrm{out}}$ of $X^0_{0,1}$ alternatingly ending on $\partial B(0,3/4),\partial B(0,1/4)$ and each choice of strands $\eta^\pi_{z_1;\mathrm{out}},\eta^\pi_{z_2;\mathrm{in}},\eta^\pi_{z_3;\mathrm{out}}$ of $X^\pi_{0,1}$ alternatingly ending on $\partial B(0,3/4),\partial B(0,1/4)$, the conditional probability that the following occurs is either $0$ or at least $p$:
\begin{itemize}
 \item The continuation of $\eta_{w_1;\mathrm{out}}$ (resp.\ $\eta_{w_3;\mathrm{out}}$) merges into $\eta_{w_2;\mathrm{in}}$ (resp.\ $\eta_{w_4;\mathrm{in}}$) before entering $B(0,1/4)$, and the continuation of $\eta_{w_5;\mathrm{out}}$ hits the clockwise arc of $\partial\D$ between $-i$ and $-1$ without entering $B(0,1/4)$.
 \item The continuation of $\eta^\pi_{z_1;\mathrm{out}}$ merges into $\eta^\pi_{z_2;\mathrm{in}}$ before entering $B(0,1/4)$, and the continuation of $\eta^\pi_{z_3;\mathrm{out}}$ hits the clockwise arc of $\partial\D$ between $-i$ and $-1$ without entering $B(0,1/4)$.
 \item There are points $v_1,v_2,v_3,v_4$ on the continuation of $\eta^\pi_{z_1;\mathrm{out}}$ such that if we let $\eta^L_{v_i}$ be the corresponding angle $0$ flow lines starting from $v_i$ in the component to the left, then they hit the clockwise arcs of $\partial\D$ from, respectively, $-1$ to $i$, $i$ to $1$, $i$ to $1$, and $1$ to $-i$ before entering $B(0,1/4)$, and none of them merge with each other before.
 \item There are points $u_1,u_2$ on the continuation of $\eta_{w_1;\mathrm{out}}$ such that if we let $\eta^R_{u_1}$ (resp.\ $\eta^R_{u_2}$) be the corresponding $-\pi$ flow line starting from $u_1$ (resp.\ $u_2$) in the component to the right, then it hits the segment of the continuation of $\eta^\pi_{z_1;\mathrm{out}}$ between $v_2$ and $v_3$ before entering $B(0,1/4)$, and is contained in the sector of $\D$ with angles between $0$ and $\pi/8$ (resp.\ $3\pi/8$ and $\pi/2$).
\end{itemize}
See Figure~\ref{fi:interior_bdry_rerooting} for an illustration.

As in the proof of Proposition~\ref{pr:4arm_fl}, we select a scale $J \in \{1,\ldots, \lfloor a\log_2(\epsilon^{-1})\rfloor \}$ uniformly at random. On an event $F$ with $\p[F^c] = O(\epsilon^{\alpha_{4,\kappa}})$, the conditional probability is at least $1/2$ that we select a scale where $G_{0,2^{-J}}$ (defined analogously as in Lemma~\ref{le:good_scales_merging_multiple}) occurs.

Suppose that $\wt{h}_{0,2^{-J}}$ (defined in Section~\ref{se:gff}) has boundary values compatible with a \clekp{} exploration rooted at $-i2^{-J}$, and its values in $B(0,(3/4)2^{-J})$ agree with those of $h$ modulo $2\pi\chi$. Suppose we are on the event above where $x_0,y_0 \in B(0,2^{-J}/4)$. Let $U$ be the remaining connected component containing $0$ at the time $\eta'$ first visits $\wh{\CL}$ at $x_0$. On the event $G_{0,2^{-J}}$ we can resample the connections of $\partial U$ outside $B(0,(3/4)2^{-J})$ as illustrated in Figure~\ref{fi:interior_bdry_rerooting}. Similarly, we can resample the connections of $\partial V$ outside $B(0,(3/4)2^{-J})$. Note also that the right boundary of $\eta'$ upon completing the loop $\partial V$ at $y_0$ does not hit the segment of its left boundary between $x_0$ and $y_0$ (otherwise it would branch instead of completing the same loop $\wh{\CL}$), and this property is preserved upon resampling. We conclude that with conditional probability at least $p$ the resampling achieves a configuration as depicted in Figure~\ref{fi:interior_bdry_rerooting}.

On the event depicted in Figure~\ref{fi:interior_bdry_rerooting}, if we first explore the \clekp{} loops intersecting the counterclockwise arc of $\partial\D$ from $1$ to $i$, and then consider the exploration of the remainder (upon mapping back to $\D$) rooted at $e^{i\pi/4}$ with force point at $e^{i\pi/4-}$, then we see that the first point of the interior boundary of (the resampled version $\wt{\CL}$ of) $\wh{\CL}$ visited by the exploration is on the segment between $u_1$ and $u_2$ as in the definition of the event $G$ above. In particular, it is outside $B(0,1/4)$, so that the crossings of the interior boundary are traversed in the order ``in-out-in-out'' as in Proposition~\ref{pr:4arm_fl}. We now can bound the probability of this by Proposition~\ref{pr:4arm_fl}, concluding the proof.
\end{proof}

\begin{proof}[Proof of Lemma~\ref{lem:thin_cle_lwb}]
Let $\wt{\Gamma}$ be a whole-plane nested $\CLE_{\kappa'}$ and let $\wt{\CL}_0$ be the outermost loop that intersects $\D$. Let $\wt{D}$ be the component of $\C \setminus \wt{\CL}_0$ containing $0$, and let $\wt{E}_{r,\epsilon}^4$ be the event that $\partial\wt{D} \subseteq A(0,1/2,1)$ and there exists $z \in \wt{D}$ and a loop $\wt{\CL}$ of $\wt{\Gamma}$ whose exterior boundary makes $4$ crossings across $A(z,\epsilon,r)$. By applying a conformal map $\wt{D} \to \D$ (on the event $\partial\wt{D} \subseteq A(0,1/2,1)$ which has positive probability and is independent of $E_{r,\epsilon}^4$), we see that there exists a constant $c > 1$ so that
\[ \p[ E_{r,\epsilon}^4] \leq c\,\p[ \wt{E}_{r/c, c\epsilon}^4].\]
Therefore it suffices to prove the upper bound for $\p[ \wt{E}_{r,  \epsilon}^4]$ in place of $\p[ E_{r,\epsilon}^4]$.

Let $\wt{\eta}'$ be the space-filling $\SLE_{\kappa'}$ in $\wt{D}$ associated with the \clekp{} within $\wt{D}$. Fix $a > 0$ and let $\wt{G}$ be the event that whenever $\wt{\eta}'$ travels distance $\epsilon$ while being distance $r$ away from $\partial\wt{D}$, it fills a ball of diameter at least $\epsilon^{1+a}$.  By Lemma~\ref{le:fill_ball} we have $\p[\wt{G}^c] = o^\infty(\epsilon)$.  

Suppose that we are on the event $\wt{E}_{r,  \epsilon}^4 \cap \wt{G}$ and let $\wt{\CL}$ be a loop of $\wt{\Gamma}$ whose exterior boundary makes $4$ crossings across $A(z,\epsilon,r)$.  Since we are working on $\wt{G}$, we know that when $\wt{\CL}$ travels across $A(z,\epsilon,2\epsilon)$, it surrounds a ball $B$ of diameter $\epsilon^{1+a}$. Suppose that $w \in \D$ is sampled from Lebesgue measure independently of $\wt{\Gamma}$.  Then on $\wt{E}_{r,  \epsilon}^4 \cap \wt{G}$ the conditional probability that $w$ is sampled in such a ball $B$ is at least $\epsilon^{2+2a}$. Since the translation $\wt{\Gamma}-w$ has the same law as $\wt{\Gamma}$, it follows that the following is true.  Let $\wt{E}_{r,\epsilon}^{4,0}$ be the event that there is a loop of $\wt{\Gamma}$ surrounding $0$ whose exterior boundary makes $4$ crossings across $A(0,\epsilon,r)$, then
\[ \p[\wt{E}_{r,\epsilon}^4] \lesssim \epsilon^{-2-2a} \p[\wt{E}_{r,2\epsilon}^{4,0}] + o^\infty(\epsilon).\]
By Lemma~\ref{lem:thin_cle_wp} we have $\p[\wt{E}_{r,\epsilon}^{4,0}] = O((\epsilon/r)^{\alpha_{4,\kappa}+o(1)})$. Since this holds for any given choice of $a>0$, this concludes the proof.
\end{proof}

\bibliographystyle{alpha}
\providecommand{\noopsort}[1]{}

\end{document}